\renewcommand{\k}{\ensuremath{\mathbb{K}}}
\newcommand{\Z}{\ensuremath{\mathbb{Z}}}
\newcommand{\A}{\ensuremath{\mathcal{A}}}
\newcommand{\antishriek}{\text{\raisebox{\depth}{\textexclamdown}}}
\newcommand{\as}{\ensuremath{\mathrm{As}}}
\newcommand{\ass}{\ensuremath{\mathrm{Ass}}}
\renewcommand{\Bar}{\mathrm{B}}
\newcommand{\Bara}{\mathrm{B}_\alpha}
\newcommand{\Bari}{\mathrm{B}_\iota}
\newcommand{\bmanin}{\,\mbox{\tikz\draw[black,fill=black] (0,0) circle (.7ex);}\,}
\newcommand{\C}{\ensuremath{\mathscr{C}}}
\newcommand{\cCobar}{\widehat{\Omega}}
\newcommand{\Cobar}{\Omega}
\newcommand{\Cobara}{\Omega_\alpha}
\newcommand{\colie}{\ensuremath{\mathrm{coLie}}}
\newcommand{\com}{\ensuremath{\mathrm{Com}}}
\newcommand{\D}{\ensuremath{\mathscr{D}}}
\newcommand{\End}{\ensuremath{\mathrm{End}}}
\newcommand{\Ind}{\ensuremath{\mathrm{Ind}}}
\newcommand{\infcomp}{\ensuremath{\circ_{(1)}}}
\renewcommand{\L}{\ensuremath{\mathcal{L}}}
\newcommand{\lie}{\ensuremath{\mathrm{Lie}}}
\renewcommand{\P}{\ensuremath{\mathscr{P}}}
\newcommand{\Pbar}{\ensuremath{\overline{\mathscr{P}}}}
\newcommand{\Phat}{\ensuremath{\widehat{\mathscr{P}}}}
\newcommand{\Q}{\ensuremath{\mathscr{Q}}}
\newcommand{\R}{\ensuremath{\mathscr{R}}}
\renewcommand{\S}{\ensuremath{\mathbb{S}}}
\newcommand{\Sh}{\ensuremath{\mathrm{Sh}}}
\newcommand{\shriek}{\text{\raisebox{\depth}{!}}}
\newcommand{\SAoo}{\ensuremath{\mathcal{s}\mathscr{A}_\infty}}
\newcommand{\SLoo}{\ensuremath{\mathcal{s}\mathscr{L}_\infty}}
\newcommand{\susp}{\ensuremath{\mathscr{S}}}
\newcommand{\T}{\ensuremath{\mathcal{T}}}
\newcommand{\Tw}{\ensuremath{\mathrm{Tw}}}
\newcommand{\uass}{\ensuremath{\mathrm{uAss}}}
\newcommand{\ucom}{\ensuremath{\mathrm{uCom}}}
\newcommand{\vdl}{\ensuremath{\mathrm{VdL}}}
\newcommand{\wmanin}{\,\mbox{\tikz\draw[black] (0,0) circle (.7ex);}\,}
\newcommand{\augOp}{\ensuremath{\mathsf{aug.\ Op}}}
\newcommand{\cat}{\ensuremath{\mathsf{C}}}
\newcommand{\cCcog}{\ensuremath{\mathsf{conil.}\ \mathscr{C}\text{-}\mathsf{cog}}}
\newcommand{\Ccog}{\ensuremath{\mathscr{C}\text{-}\mathsf{cog}}}
\newcommand{\cchain}{\ensuremath{\mathsf{cCh}}}
\newcommand{\cdgl}{\ensuremath{\mathsf{cdgLie}}}
\newcommand{\cghaus}{\ensuremath{\mathsf{CGHaus}}}
\newcommand{\chain}{\ensuremath{\mathsf{Ch}}}
\newcommand{\cochain}{\ensuremath{\mathsf{coCh}}}
\newcommand{\coaugCoop}{\ensuremath{\mathsf{coaug.\ coOp}}}
\newcommand{\cocom}{\mathsf{coCom}}
\newcommand{\comalg}{\ensuremath{\mathsf{Com}}}
\newcommand{\coop}{\ensuremath{\mathsf{coOp}}}
\newcommand{\cctr}{\ensuremath{\mathsf{cCtr}}}
\newcommand{\dgl}{\ensuremath{\mathsf{dgLie}}}
\newcommand{\fchain}{\ensuremath{\mathsf{fCh}}}
\newcommand{\fdvect}{\ensuremath{\mathsf{fdVect}}}
\newcommand{\fsets}{\ensuremath{\mathsf{fSets}}}
\newcommand{\ooPooAlg}{\ensuremath{\infty\text{-}\mathscr{P}_\infty\text{-}\mathsf{alg}}}
\newcommand{\op}{\ensuremath{\mathsf{Op}}}
\newcommand{\Palg}{\ensuremath{\mathscr{P}\text{-}\mathsf{alg}}}
\newcommand{\Phatalg}{\ensuremath{\widehat{\mathscr{P}}\text{-}\mathsf{alg}}}
\newcommand{\PooAlg}{\ensuremath{\mathscr{P}_\infty\text{-}\mathsf{alg}}}
\newcommand{\Qalg}{\ensuremath{\mathscr{Q}\text{-}\mathsf{alg}}}
\newcommand{\rhcat}{\ensuremath{\mathsf{RH}}}
\newcommand{\sets}{\ensuremath{\mathsf{Sets}}}
\newcommand{\Smod}{{\ensuremath{\mathbb{S}\text{-}\mathsf{mod}}}}
\newcommand{\SLooalg}{\ensuremath{\mathcal{s}\mathscr{L}_\infty\text{-}\mathsf{alg}}}
\newcommand{\ssets}{\ensuremath{\mathsf{sSets}}}
\newcommand{\Top}{\mathsf{Top}}
\newcommand{\vect}{\ensuremath{\mathsf{Vect}}}
\newcommand{\FCA}{\ensuremath{\mathrm{FCA}}}
\newcommand{\rRT}{\ensuremath{\mathrm{rRT}}}
\newcommand{\RT}{\ensuremath{\mathrm{RT}}}
\newcommand{\wPT}{\ensuremath{\mathrm{wPT}}}
\newcommand{\PT}{\ensuremath{\mathrm{PT}}}
\newcommand{\PTt}{\ensuremath{\widetilde{\mathrm{PT}}}}
\DeclareMathOperator{\ad}{ad}
\DeclareMathOperator{\Cyl}{Cyl}
\DeclareMathOperator*{\colim}{colim}
\DeclareMathOperator{\id}{id}
\DeclareMathOperator{\Path}{Path}
\DeclareMathOperator{\pr}{pr}
\DeclareMathOperator{\BCH}{BCH}
\DeclareMathOperator{\Def}{Def}
\DeclareMathOperator{\Del}{Del}
\DeclareMathOperator{\proj}{proj}
\newcommand{\APL}{\ensuremath{\mathrm{A}_\mathrm{PL}}}
\newcommand{\CPL}{\ensuremath{\mathrm{C}_\mathrm{PL}}}
\newcommand{\CE}{\ensuremath{\mathrm{CE}}}
\newcommand{\F}{\ensuremath{\mathcal{F}}}
\newcommand{\g}{\ensuremath{\mathfrak{g}}}
\newcommand{\h}{\ensuremath{\mathfrak{h}}}
\newcommand{\ho}[1]{\ensuremath{\mathrm{Ho}(#1)}}
\newcommand{\Hoch}{\ensuremath{\mathrm{Hoch}}}
\newcommand{\homa}{\ensuremath{\hom^\alpha}}
\newcommand{\homi}{\ensuremath{\hom^\iota}}
\newcommand{\ind}[1]{\ensuremath{\mathsf{ind}(#1)}}
\newcommand{\forget}[1]{\ensuremath{#1^\#}}
\newcommand{\manin}{\rotatebox[origin=c]{180}{$\mathsf{M}$}}
\newcommand{\manintensor}{\ensuremath{\mathsf{M}}}
\newcommand{\map}{\ensuremath{\mathrm{Map}}}
\newcommand{\mc}{\ensuremath{\mathfrak{mc}}}
\newcommand{\MC}{\ensuremath{\mathrm{MC}}}
\newcommand{\MCbar}{\ensuremath{\overline{\mathrm{MC}}}}
\newcommand{\mcoo}{\ensuremath{\mathfrak{mc}^\infty}}
\DeclareFontFamily{U}{shuffle}{} 
\DeclareFontShape{U}{shuffle}{m}{n}{<5-8>shuffle7  <8->shuffle10}{}
\DeclareSymbolFont{Shuffle}{U}{shuffle}{m}{n}
\DeclareMathSymbol\sh{\mathbin}{Shuffle}{"001}
\newcommand{\fqi}{\mathrm{Fqi}}
\newcommand{\gaugeeq}{\ensuremath{\sim_\mathrm{g}}}
\newcommand{\cyc}{\ensuremath{\mathcal{Z}}}
\newcommand{\moore}{\ensuremath{\mathcal{M}}}
\newcommand{\pro}[1]{\ensuremath{\mathsf{pro}(#1)}}
\newcommand{\pt}{\ensuremath{\mathrm{pt}}}
\newcommand{\rect}{\ensuremath{\mathrm{Rect}}}
\newcommand{\smallmanin}{\ensuremath{\mathsf{m}}}
\newcommand{\adjunction}{\@ifstar\named@adjunction\normal@adjunction}
\newcommand{\normal@adjunction}[4]{%
  #1\colon #2%
  \mathrel{\vcenter{%
    \offinterlineskip\m@th
    \ialign{%
      \hfil$##$\hfil\cr
      \longrightharpoonup\cr
      \noalign{\kern-.3ex}
      \smallbot\cr
      \longleftharpoondown\cr
    }%
  }}%
  #3 \noloc #4%
}
\newcommand{\named@adjunction}[4]{%
  #2%
  \mathrel{\vcenter{%
    \offinterlineskip\m@th
    \ialign{%
      \hfil$##$\hfil\cr
      \scriptstyle#1\cr
      \noalign{\kern.1ex}
      \longrightharpoonup\cr
      \noalign{\kern-.3ex}
      \smallbot\cr
      \longleftharpoondown\cr
      \scriptstyle#4\cr
    }%
  }}%
  #3%
}
\newcommand{\longrightharpoonup}{\relbar\joinrel\rightharpoonup}
\newcommand{\longleftharpoondown}{\leftharpoondown\joinrel\relbar}
\newcommand\noloc{%
  \nobreak
  \mspace{6mu plus 1mu}
  {:}
  \nonscript\mkern-\thinmuskip
  \mathpunct{}
  \mspace{2mu}
}
\newcommand{\smallbot}{%
  \begingroup\setlength\unitlength{.15em}%
  \begin{picture}(1,1)
  \roundcap
  \polyline(0,0)(1,0)
  \polyline(0.5,0)(0.5,1)
  \end{picture}%
  \endgroup
}
\newcommand{\whiteleaf}{\mathord{\mathpalette\nicoud@YESNO\relax}}
\newcommand{\blackleaf}{\mathord{\mathpalette\nicoud@YESNO{\nicoud@path{\fillpath}}}}
\newcommand{\nicoud@YESNO}[2]{%
	\begingroup
	\settoheight{\unitlength}{$#1X$}%
	\begin{picture}(0.7,1)
	\linethickness{\variable@rule{#1}}%
	\roundcap\roundjoin
	\nicoud@path{\strokepath}
	#2
	\Line(0.35,0)(0.35,0.5)
	\end{picture}%
	\endgroup
}
\newcommand{\nicoud@path}[1]{%
	\moveto(0.1,0.5)
	\lineto(0.1,1)\lineto(0.6,1)\lineto(0.6,0.5)
	\closepath
	#1
}
\newcommand{\variable@rule}[1]{%
	\fontdimen8  
	\ifx#1\displaystyle\textfont3\else
	\ifx#1\textstyle\textfont3\else
	\ifx#1\scriptstyle\scriptfont3\else
	\scriptscriptfont3\relax
	\fi\fi\fi
}
\newtheorem{lemma}{Lemma}[section]
\newtheorem{remark}[lemma]{Remark}
\newtheorem{example}[lemma]{Example}
\newtheorem{theorem}[lemma]{Theorem}
\newtheorem{corollary}[lemma]{Corollary}
\newtheorem{definition}[lemma]{Definition}
\newtheorem{proposition}[lemma]{Proposition}
\newtheorem{warning}[lemma]{Warning}
\newtheorem{notation}[lemma]{Notation}
\newcounter{counter}
\begin{document}

	\pagenumbering{gobble}

	\newgeometry{margin=1cm}
\begin{titlepage}
	\thispagestyle{empty}
	\begin{center}
		\includegraphics[width=4.5cm]{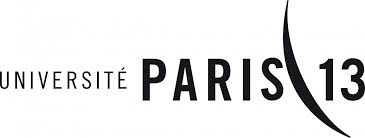} \\
		\vspace{0.5cm}
		\textsc{\large École doctorale Galilée}\\ 
		\vspace{0.5cm}
		\textsc{\large Laboratoire Analyse, Géométrie et Applications, UMR 7539}\\

		\vspace{2cm}

		\textsc{\LARGE Thèse}

		\vspace{0.5 cm}

		{\large presentée et soutenue publiquement par}

		\vspace{0.5 cm}

		{\Large\textbf{Daniel ROBERT-NICOUD}}

		\vspace{0.5cm}

		{le 22 juin 2018}

		\vspace{0.5cm}

		{pour obtenir le grade de}

		\vspace{0.5 cm}

		\textbf{Docteur de l'Université Paris 13}

		\vspace{0.5cm}

		{Discipline : Mathématiques}

		\vspace{1.5cm}

		{\LARGE\textbf{Opérades et espaces de Maurer--Cartan}\par\medskip\LARGE\textbf{Operads and Maurer--Cartan spaces}}\\[0.3cm]

		\vspace{1.5cm}

		{\large Directeur de thèse : \textbf{Bruno VALLETTE}}

		\vspace{1.4cm}

		{\large Membres du Jury}

		\vspace{0.8cm}
		
		\begin{tabular}[t]{@{}l@{\hspace{40pt}}p{.32\textwidth}@{}}
			Christian AUSONI \\
			Alexander BERGLUND \\
			Joana CIRICI \\
			Eric HOFFBECK \\
			Giovanni FELDER \\
			Bruno VALLETTE \\
			Thomas WILLWACHER
		\end{tabular}%
		\begin{tabular}[t]{@{}l@{\hspace{40pt}}p{.32\textwidth}@{}}
			PR, Université Paris 13 \\
			PR, Stockholm University \\
			PR, Universitat de Barcelona \\
			MCF, Université Paris 13 \\
			PR, ETH Zürich \\
			PR, Université Paris 13 \\
			PR, ETH Zürich
		\end{tabular}%
		\begin{tabular}[t]{@{}l@{\hspace{40pt}}p{.32\textwidth}@{}}
			Examinateur \\
			Rapporteur \\
			Examinateur \\
			Examinateur \\
			Examinateur \\
			Directeur de thèse \\
			Rapporteur
		\end{tabular}

	\end{center}
\end{titlepage}
\restoregeometry
	
	\afterpage{
    	\null
    	\thispagestyle{empty}
    	\addtocounter{page}{-1}
    	\newpage}
	
	\pagenumbering{roman}
	\setcounter{page}{1}

	\setcounter{tocdepth}{1}
	\tableofcontents
	
	\cleardoublepage
	
	\newpage
	\pagestyle{empty}
	\setlength\epigraphwidth{0.55\linewidth}
	\renewcommand{\epigraphsize}{\footnotesize}
	\setlength{\beforeepigraphskip}{40ex}
	\leavevmode
	\epigraph{The math is dark and full of functors.}{\em Damien Calaque}
	\clearpage
	
	\cleardoublepage
	
	\pagestyle{headings}
	\pagenumbering{arabic}
	\setcounter{page}{1}
	
	\chapter{Introduction}

\section{Préambule (version française)}

Pour paraphraser Adrien Douady \cite{Douady}, le but de ce travail est de munir son auteur du titre de docteur en mathématiques et plusieurs différents espaces de structures d'algèbres de Lie à homotopie près. Nous en étudierons les propriétés et nous les appliquerons à la théorie de la déformation et à l'homotopie rationnelle.

\subsection{Opérades}

Notre outil de travail principal sont les opérades algébriques. Une opérade est une structure algébrique qui code un ``type d'algèbres''. Par exemple, il y a une opérade qui code les algèbres associatives, une autre les algèbres commutatives, une troisième les algèbres de Lie, etc. Un point de vue intuitif sur ces objets est le suivant. La plupart des types d'algèbres qu'on rencontre sont définis comme un espace vectoriel muni d'opérations avec un certain nombre d'entrées --- leur \emph{arité} --- et une sortie, qui satisfont certaines relations entre elles. Par exemple, une algèbre associative est un espace vectoriel $A$ avec une opération binaire
\[
m:A\otimes A\longrightarrow A
\]
qui est associative :
\[
m(m(a,b),c) = m(a,m(b,c))
\]
pour tout $a,b,c\in A$. La relation d'associtivité peut être exprimée sans faire référence aux éléments de l'espace vectoriel sous-jacent comme
\[
m(m\otimes1_A) = m(1_A\otimes m)\ .
\]
L'idée des opérades est d'oublier l'espace vectoriel sous-jacent et de se concentrer sur les op\-é\-rations. Une opérade $\P$ est une collection d'espaces vectoriels $\P(n)$, pour $n\ge0$, qui codent les opérations d'arité $n$, munie d'une règle pour composer de telles opérations. Dans l'exemple précédent on a $m\in\P(2)$, et une règle de composition qui nous dit comment on peut composer deux copies de $m$ de deux façons différentes pour obtenir au final la même opération dans $\P(3)$. Les opérades nous permettent de donner des énoncés généraux sur les types possibles d'algèbres ainsi que sur les relations entre algèbres de types différents de façon propre et catégorique. Dualement, il y a une notion de coopérade, qui code les cogèbres.

\subsection{Algèbre homotopique}

La théorie des opérades est profondément liée à l'algèbre homotopique, c'est-à-dire l'étude des structures algébriques où l'on n'identifie pas deux algèbres seulement si elles sont isomorphes, mais aussi quand elles sont reliées que par un quasi-isomorphisme, c'est-à-dire un morphisme qui induit un isomorphisme en homologie. Cette construction peut être formalisée par une structure de modèles sur la catégorie des algèbres. Faire de l'algèbre homotopique nous permet d'utiliser certains outils puissants. Ceux que l'on va manipuler le plus souvent sont les $\infty$-morphismes d'algèbres et le théorème de transfert homotopique. Les premiers sont une version relachée de la notion de morphisme d'algèbres qui peut être interprétée comme des applications qui sont des morphismes d'algèbres ``à homotopie près''. Le deuxième est un théorème qui dit que si l'on a deux complexes de chaînes homotopiquement équivalents et une structure algébrique sur un des deux, alors on peut mettre une structure algébrique ``à homotopie près'' sur le second complexe de chaînes de façon telle que les deux algèbres soient homotopiquement équivalentes.

\subsection{Algèbres de convolution}

Donnons-nous un certain type d'algèbres, codé par une opérade $\P$, un type de cogèbres, codé par une coopérade $\C$ et une relation entre $\C$ et $\P$, donnée par ce qui est appelé un morphisme tordant $\alpha:\C\to\P$. Dans cette situation, on peut mettre une structure naturelle d'algèbre de Lie à homotopie près sur l'espace des applications linéaires entre $\C$-cogèbres et $\P$-algèbres. Plus précisement, si $C$ est une $\C$-cogèbre et $A$ est une $\P$-algèbre alors on a un complexe de chaînes d'applications linéaires $\hom(C,A)$. En ayant fixé un morphisme tordant $\alpha$, on a une structure naturelle d'algèbre de Lie à homotopie près sur $\hom(C,A)$. On note  $\homa(C,A)$ l'algèbre que l'on obtient de cette façon  et on l'appelle l'algèbre de convolution de $C$ et $A$ (relative à $\alpha$). Le premier objectif de cette thèse est de développer la théorie des algèbres de convolution et de comprendre comment elles se comportent par rapport aux outils de l'algèbre homotopique. Au cours de cette thèse, nous allons montrer les résultats suivants.
\begin{enumerate}
	\item Les algèbres de convolution sont partiellement compatibles avec les $\infty$-morphismes. Plus précisement, la notion d'algèbre de convolution définit un bifoncteur des $\C$-cogèbres et des $\P$-algèbres vers les algèbres de Lie à homotopie près. Il est possible d'étendre ce bifoncteur soit en un bifoncteur qui accepte les $\infty$-morphismes de $\C$-cogèbres dans sa première entrée ou en un bifoncteur qui accepte les $\infty$-morphismes de $\P$-algèbres dans sa seconde entrée. Par contre il n'est pas possible de l'étendre ultérieurement en un bifoncteur qui accepte des $\infty$-morphismes dans ses deux entrées en même temps.
	\item Si l'on fait des suppositions ultérieures sur le morphisme tordant $\alpha$ --- notamment, en demandant que $\alpha$ soit de Koszul --- alors on peut étendre le bifoncteur des algèbres de convolution de façon à ce qu'il accepte des $\infty$-morphismes dans ses deux entrées en même temps, mais le bifoncteur qu'on obtient de cette façon n'est défini qu'à homotopie près.
	\item Les algèbres de convolution sont complètement compatibles avec le théorème de transfert homotopique.
\end{enumerate}
Par la suite, on va appliquer la théorie des algèbres de convolution à la théorie de la déformation dérivée et à la théorie de l'homotopie rationnelle.

\subsection{Théorie de la déformation}

Un problème de déformation est donné en fixant un objet sous-jacent, comme un espace vectoriel, et un type de structure qu'on aimerait mieux comprendre sur cet objet, par exemple les structures d'algèbre associative sur l'espace vectoriel. Une fois donnée une telle structure, on veut la perturber de façon à obtenir une autre structure du même type. Ce processus peut nous donner des informations utiles sur la structure que l'on était en train d'étudier. Il y a une myriade d'exemples de problèmes de déformation : on peut considérer les déformations des structures d'algèbre d'un certain type sur un espace vectoriel, mais aussi les déformations de structures complexes analytiques sur une variété fixée, les déformations de points ou de sous-schémas fermés dans un schéma sous-jacent, les déformations de connexions plates sur un fibré vectoriel, et beaucoup d'autres encore. L'exemple le plus célèbre d'un résultat provenant de la théorie de la déformation est probablement la quantification par déformation des variétés de Poisson, due à Konstevich.

\medskip

Un principe heuristique énoncé par Deligne affirme que chaque problème de déformation (en caractéristique zéro) est équivalent à l'étude de l'espace des éléments de Maurer--Cartan d'une algèbre de Lie différentielle graduée ou plus généralement d'une algèbre de Lie à homotopie près. Cet espace d'éléments de Maurer--Cartan a été décrit par Hinich comme un ensemble simplicial $\MC_\bullet(\g)$ qui peut être fonctoriellement associé à chaque algèbre de Lie à homotopie près $\g$. R\'ecemment ce dernier a été étudié en détail par Getzler. La première application qu'on donnera des algèbres de convolution est la construction explicite d'une algèbre de Lie à homotopie près cosimpliciale $\mcoo_\bullet$ qui représente l'espace de Maurer--Cartan à homotopie près. Plus précisément, pour chaque algèbre de Lie à homotopie près $\g$ qui satisfait certaines conditions de complétude, on a une équivalence homotopique naturelle d'ensembles simpliciaux
\[
\MC_\bullet(\g)\simeq\hom_{\SLoo\text{-}\mathsf{alg}}(\mcoo_\bullet,\g)\ .
\]
On étudiera cette algèbre  $\mcoo_\bullet$ et on l'utilisera pour comprendre certaines propriétés de $\MC_\bullet(\g)$ et d'autres ensembles simpliciaux associés.

\subsection{Homotopie rationnelle}

En homotopie rationnelle on s'intéresse à un invariant des espaces simplement connexes qui est plus faible que leur type d'homotopie mais qui a l'avantage d'être calculable dans beaucoup de cas intéressants. Soit $X$ un espace simplement connexe. Ses groupes d'homotopie sont tous abéliens et donc on peut définir ses groupes d'homotopie rationnelle par
\[
\pi_\bullet(X,x)\otimes_\mathbb{Z}\mathbb{Q}\ ,
\]
où $x\in X$ est un point base. On dit qu'un morphisme est une équivalence rationnelle s'il induit un isomorphisme entre les groupes d'homotopie rationnelle. Par un résultat du travail fondateur de Quillen, l'étude de l'homotopie rationnelle, c'est-à-dire l'étude de la catégorie des espaces simplement connexes modulo les équivalences rationnelles, est la même chose que l'étude de la catégorie des algèbres de Lie différentielles graduées concentrées en degré supérieur ou égal à $1$ modulo les quasi-isomorphismes, qui est elle-même équivalente à l'étude de la catégorie des cogèbres cocommutatives concentrées en degré supérieur ou égal à $2$ modulo les quasi-isomorphismes. Ceci nous dit qu'on peut modéliser les espaces par des algèbres de Lie ou des cogèbres cocommutatives pour avoir des outils calculatoires explicites qui nous permettent de calculer le type d'homotopie rationnelle d'un espace. Les travaux de Sullivan nous donnent une autre approche encore, en modélisant les espaces par des algèbres commutatives différentielles graduées.

\medskip

On peut aussi considérer des types de modèles plus généraux, comme des modèles en algèbres de Lie à homotopie près et des modèles en cogèbres cocommutatives à homotopie près. Un théorème de Berglund nous dit comment construire un modèle en algèbres de Lie à homotopie près pour l'espace des morphismes entre deux espaces en partant d'un modèle commutatif de l'espace de départ et d'un modèle Lie à homotopie près de l'espace d'arrivée. On généralise légèrement ce résultat en démontrant qu'on peut prendre des modèles cocommutatifs à homotopie près pour l'espace de départ et on exprime le modèle résultant pour l'espace d'applications entre les deux espaces comme une algèbre de convolution.

\subsection{Invariance homotopique des espaces de Maurer--Cartan}

Le dernier chapitre de cette thèse est dédié à des résultats seulement partiellement reliés aux algèbres de convolution. Deux résultats importants sur les espaces de Maurer--Cartan sont le théorème de Goldman--Millson et sa généralisation plus récente, le théorème de Dol\-gu\-shev--Rogers. Ces résultats nous disent que certains morphismes entre algèbres de Lie à homotopie près --- les $\infty$-quasi-isomorphismes filtrés --- induisent des équivalences en homotopie entre les espaces de Maurer--Cartan respectifs. Ce résultat a un parfum homotopique : il nous dit qu'une certaine classe de morphismes d'algèbres de Lie à homotopie près est envoyée sur les équivalences faibles d'ensembles simpliciaux par un certain foncteur. Par contre sa preuve n'est pas faite en n'utilisant que de la théorie de l'homotopie et se base sur une démonstration par récurrence sur certaines filtrations, en travaillant directement sur les algèbres de Lie à homotopie près en jeu. On donne une nouvelle approche à la preuve de ce théorème qui n'utilise que de la théorie de l'homotopie. Même si l'on ne récupère pas complètement le théorème de Dolgushev--Rogers, on réussit tout de même à en obtenir une version plus faible et l'on croise des constructions intéressantes le long du chemin. Par exemple cette approche nous fait redécouvrir l'algèbre de Lie cosimpliciale qui représente les espaces de Maurer--Cartan.

\subsection{Résultats ultérieurs}

Cachés dans les rappels, il y a deux résultats originaux supplémentaires dans cette thèse. Le premier est une caractérisation complète des équivalences faibles dans la structure de modèles construite sur la catégorie des cogèbres sur une coopérade par Vallette. Le degré d'originalité de ce résultat n'est pas énorme, car tous les ingrédients dont on a besoin pour sa démonstration étaient déja présents dans les travaux de Vallette. Le second est une démonstration du fait que le dual d'un modèle cocommutatif pour un espace simplement connexe est un modèle commutatif pour le même espaces et \emph{vice versa}, sous certaines hypothèses peu contraignantes de finitude. Ce dernier résultat était certainement bien connu par les experts, mais nous n'avons pas su en trouver une démonstration dans la litterature.

\subsection{Remarques d'ordre général}

Cette thèse veut être le plus autonome possible. Pour cette raison, on consacre une importante première partie  à l'exposition de plusieurs notions préliminaires dont on a besoin dans le reste du texte : la théorie de opérades, les catégories de modèles, l'homotopie simpliciale, l'homotopie rationnelle, les espaces de Maurer--Cartan et la théorie de la déformation. On suppose que le lecteur possède des notions de base d'algèbre homologique, de théorie des catégories et de topologie algébrique.

\medskip

Les r\'esultats nouveaux contenus dans cette thèse sont extraits des articles \cite{rn17tensor}, \cite{rn17cosimplicial}, \cite{rnw17}, \cite{rnw18}, \cite{rn18} et de l'esquisse \cite{rnv18}.

\section{Preamble}

To paraphrase Adrien Douady \cite{Douady}, the goal of this work is to endow the author with the degree of Doctor in Mathematics, and many different spaces with structures of homotopy Lie algebras, as well as studying their properties and apply the developed theory to various fields, such as deformation theory and rational homotopy theory.

\subsection{Operads}

Our main working tool is algebraic operads. An operad is an algebraic structure that codes a ``type of algebra''. For example, there is an operad coding associative algebras, one coding commutative algebras, one for Lie algebras, and so on. One possible intuitive point of view is the following one. Most types of algebras are defined as a vector space endowed with certain operations with a certain number of entries --- their \emph{arity} --- and one exit, which satisfy certain relations between them. For example, an associative algebra is a vector space $A$ together with a binary multiplication
\[
m:A\otimes A\longrightarrow A
\]
which is associative:
\[
m(m(a,b),c) = m(a,m(b,c))
\]
for all $a,b,c\in A$. The associativity relation can be expressed without referring to elements of the underlying vector space as
\[
m(m\otimes1_A) = m(1_A\otimes m)\ .
\]
The idea of operads is to forget the vector spaces and concentrate on the operations. An operad $\P$ is a sequence of vector spaces $\P(n)$, for $n\ge0$, encoding the operations of arity $n$, together with a rule for composing such operations. In the example above, we have $m\in\P(2)$, and a composition rule telling us how composing two copies of $m$ in two different ways gives us the same operation in $\P(3)$. Operads allow us to give general statements about types of algebras and relations between algebras of different types in a clean and elegant way. Dually, one also has cooperads, which are a similar object encoding coalgebras.

\subsection{Homotopical algebra}

Operad theory is closely linked to homotopical algebra, i.e. the study of algebraic structures where one does not identify two algebras only if they are isomorphic, but also when they are related in a more flexible way. Namely, in homotopical algebra one studies algebras up to quasi-isomorphisms, that is up to morphisms inducing isomorphisms in homology. This construction can be formalized as a model category of algebras. Doing homotopical algebra allows us to use some powerful instruments. The ones we will manipulate the most are $\infty$-morphisms of algebras and the homotopy transfer theorem. The former are a ``relaxed'' version of morphisms of algebras, which can be interpreted as maps that are morphisms of algebras, but only ``up to homotopy''. The latter is a theorem that essentially says that if we have two chain complexes that are homotopically the same, and some algebraic structure on one of the two, then we can put an algebraic structure on the other chain complex so that the resulting \emph{algebras} are homotopically the same.

\subsection{Convolution algebras}

Given a certain type of algebra, encoded by an operad $\P$, a certain type of coalgebras, encoded by a cooperad $\C$, and a relation between $\C$ and $\P$ given by what is called a twisting morphisms $\alpha:\C\to\P$, then one can give natural homotopy Lie algebra structures on the spaces of linear maps between $\C$-coalgebras and $\P$-algebras. Namely, if $C$ is a $\C$-coalgebra, and $A$ is a $\P$-algebra, then we have the chain complex of linear maps $\hom(C,A)$. Given the twisting morphism $\alpha$, there is a natural homotopy Lie algebra on $\hom(C,A)$. We denote the resulting algebra by $\homa(C,A)$, and call it the convolution algebra of $C$ and $A$ (relative to $\alpha$). The first goal of the present work is to develop the theory of convolution algebras, and to understand how they behave with respect to the tools of homotopical algebra. We will show what follows.
\begin{enumerate}
	\item Convolution algebras are partially compatible with $\infty$-morphisms. Namely, taking convolution algebras defines a bifunctor from $\C$-coalgebras and $\P$-algebras to homotopy Lie algebras. It is possible to extend this bifunctor either to a bifunctor also accepting $\infty$-morphisms of $\C$-coalgebras in the first slot, or accepting $\infty$-morphisms of $\P$-algebras in the second slot. However, it is not possible to further extend it into a bifunctor accepting $\infty$-morphisms in both slots simultaneously.
	\item If one makes further assumptions about the twisting morphism $\alpha$ --- namely if one asks that $\alpha$ is Koszul --- then one can extend the convolution algebra bifunctor to a bifunctor accepting $\infty$-morphisms in both slots at the same time, but the resulting bifunctor is only defined up to homotopy.
	\item Convolution algebras are completely compatible with the homotopy transfer theorem.
\end{enumerate}
We will then apply the theory of convolution algebras to derived deformation theory and rational homotopy theory.

\subsection{Deformation theory}

A deformation problem is given by a fixed underlying object, such as a vector space, and a type of structure on that object one wants to understand, for example the possible associative algebra structures on the vector space. Given one structure, one wants to perturb it in such a way as to obtain another structure of the same type. This process can yield useful information on the structure one is studying. Examples of deformation problems are legion: one can consider deformations of algebraic structures of a certain type on a vector space, but also deformations of complex analytic structures on a fixed underlying manifold, deformations of points or closed sub-schemes in an underlying scheme, deformations of flat connections on a vector bundle, and many others. Perhaps one of the most celebrated results of deformation theoretical nature is Kontsevich's deformation quantization of Poisson manifolds.

\medskip

It is a heuristic principle due to Deligne that any deformation problem (in characteristic zero) is equivalent to the study of the space of Maurer--Cartan elements of a differential graded Lie algebra, or more generally of a homotopy Lie algebra. This space of Maurer--Cartan elements was expressed by Hinich as a simplicial set $\MC_\bullet(\g)$ functorially associated to any homotopy Lie algebra $\g$, and then studied in depth by Getzler in more recent years. The first application of convolution algebras we will give is the explicit construction of a universal cosimplicial homotopy Lie algebra $\mcoo_\bullet$ representing the Maurer--Cartan space up to homotopy. To be more precise, for any homotopy Lie algebra $\g$ satisfying some completeness condition, we have a natural homotopy equivalence of simplicial sets
\[
\MC_\bullet(\g)\simeq\hom_{\SLoo\text{-}\mathsf{alg}}(\mcoo_\bullet,\g)\ .
\]
We will study the cosimplicial homotopy Lie algebra $\mcoo_\bullet$, and use it to derive some properties of $\MC_\bullet(\g)$ and other related simplicial sets.

\subsection{Rational homotopy theory}

In rational homotopy theory, one is interested in an invariant of simply connected spaces that is weaker than their homotopy type, but which has the advantage of being computable in many interesting cases: its rational homotopy type. Let $X$ be a simply connected space. Then its homotopy groups are all abelian, so that one defines its rational homotopy groups as
\[
\pi_\bullet(X,x)\otimes_\mathbb{Z}\mathbb{Q}\ ,
\]
where $x\in X$ is a basepoint. One says that a morphism of spaces is a rational homotopy equivalence if it induces isomorphisms on the rational homotopy groups. It is a result of the seminal work of Quillen that rational homotopy theory, i.e. the study of the category of simply connected spaces modulo rational equivalences, is the same as the study of the category of differential graded Lie algebra concentrated in degree greater or equal than $1$ modulo quasi-isomorphisms, which is itself equivalent to the study of the category of differential graded cocommutative coalgebras concentrated in degree greater or equal than $2$ modulo quasi-isomorphisms. This tells us that we can model spaces by Lie algebras or cocommutative coalgebras in order to have explicit computational tools to find the rational homotopy type of a space. Sullivan took another approach, modeling spaces with differential graded commutative algebras.

\medskip

One can also consider more general models, such as homotopy Lie algebra models, and homotopy cocommutative coalgebra models. A theorem of Berglund tells us how one can construct a homotopy Lie algebra rational model for the mapping space of two spaces starting from a commutative model of the source space and a homotopy Lie model of the target space. We slightly generalize this result by proving that one can in fact allow homotopy cocommutative models for the source space, and express the resulting model for the mapping space as a convolution algebra.

\subsection{Homotopy invariance of Maurer--Cartan spaces}

The last chapter of this thesis is dedicated to some material only tangentially related to convolution algebras. Two important result on Maurer--Cartan spaces are the Goldman--Millson theorem, and its modern generalization, the Dolgushev--Rogers theorem. They tell us that certain maps between homotopy Lie algebras, namely filtered $\infty$-quasi-isomorphisms, induce homotopy equivalences between the respective Maurer--Cartan spaces. This result has a strong homotopy theoretical flavor: it tells us that a certain class of maps of homotopy Lie algebras is sent to weak equivalences of simplicial sets under a certain functor. However, its proof is not completely homotopy theoretical, and relies on a proof by induction on certain filtrations, working directly with the homotopy Lie algebras in play. We give a new approach to the proof of this theorem, relying purely on homotopy theory. Although we do not recover the full strength of the Dolgushev--Rogers theorem, we are still able to obtain a weaker version, and incur in interesting constructions along the way. For example, this approach allows us to recover the cosimplicial Lie algebra representing Maurer--Cartan spaces.

\subsection{Other results}

There are two further original results in the present work, hiding in the recollections. The first result is a complete characterization of the weak equivalences in the model structure put on coalgebras over a cooperad by Vallette. The degree of originality of this result is not enormous, as all the ingredients needed in the proof were already present in Vallette's work. The second one is a proof of the fact that the dual of a cocommutative rational model for a simply connected space is a commutative model for the same space, and \emph{vice versa}, under some slight finiteness assumptions the dual of a commutative rational model is a cocommutative rational model. This last result was folklore and certainly well-known to experts. However, we were not able to find a proof in the existing literature.

\subsection{General remarks}

Our goal is to be as self-contained as possible. For this reason, we indulge in a lengthy exposition of preliminary notions in the first part of the thesis, explaining the necessary notions of operad theory, model categories, simplicial homotopy theory, rational homotopy theory, Maurer--Cartan spaces, and deformation theory. Still, we assume that the reader is acquainted with the basic notions of homological algebra, category theory, and algebraic topology.

\medskip

All the original material contained in this thesis is extracted from the articles \cite{rn17tensor}, \cite{rn17cosimplicial}, \cite{rnw17}, \cite{rnw18}, \cite{rn18}, and from the draft \cite{rnv18}.

\section{Acknowledgements}

First and foremost, I am immensely grateful to my family for their constant support. Whatever I chose to do, you were always there.

\medskip

A close second is my advisor, Bruno Vallette. This thesis would not have been possible without you, and I want to thank you for all the time you spent explaining things to me, discussing ideas, proofreading my work, and offering comments, critiques and support.

\medskip

Another person who is really important to me mathematically speaking, and as a friend as well, is my collaborator Felix Wierstra. A big thanks goes also to his advisor Alexander Berglund.

\medskip

From my life before Paris, I am grateful to all the teachers, mentors, and professors who pushed me to give my best in what I do and helped me along the way. Starting from Raffaella and Gianluigi in primary school, to Alvaro Zorzi and Gianmarco Zenoni in secondary school, to Fabio Lucchinetti, Fausta Leonardi and Matthias Venzi in high school. Franchino Sonzogni, who teached me to play checks. Christian Pezzatti and Loris Galbusera, who were always an example of dedication and motivation. And finally, Damien Calaque, Mathieu Anel, Giovanni Felder, and Dietmar Salamon, who taught me to do maths.

\medskip

From my time in Paris, the people I would like to thank are legion. I will list them in no particular order, and I am sorry if I forgot anyone. A lot of friends made my period in Paris, and these three years spent doing my PhD in general, a nice experience. The colleagues in the algebraic topology group in Paris 13, and the many other PhD students of the university provided a nice atmosphere in which to work --- despite the \emph{décor} of Paris 13 itself. The secretaries, Isabelle and Yolande, were always a helpful, smiling presence in the department. I had a lot of fun with other PhD students in various conferences around Europe, in particular with Elise, Jérémy, Adélie, Sylvain, Lyne, Brice, Damien, and others still. I also had the occasion of meeting many people outside of the world of mathematics. I spent a lot of time in my first year exploring the city with Angelina, and I hold dear memories of that period of time. I had countless enjoyable moments with the tango dancers: Amrei, Enrique, Auréa, Dominique, Florence, Susan, Viviane, Miguel, Adelaida, and many others. Another group of people I loved to spend time with are the climbers: Vincent, Lou, Bilal, Chakib, Zak, Hugo, Guillaume, Maeva, Laurène, Juju, Elise, and so on. There are also my judo and jujitsu club --- Jannick, Marcel, Steve, Félix, Tom, and so on --- and the salsa dancers at Paris 13.

\medskip

Finally, some miscellaneous people I absolutely don't want to leave out are Matteo, Devis, Simone, Andrea, Davide, Daniele, Federico, Marco, Kai, Aude, Simona, Valerio, Sabrina, Solvène, Charlotte, Karine, and Elise. All of you played an important role in this journey, and I am happy and grateful of having met you.

\medskip

Last, but certainly not least, I would like to thank the Region Île de France and the ANR-14-CE25-0008-01 project SAT for the financial support.

\section{Structure of this work}

This thesis is divided in two parts. The first part is an overview of prerequisites for the second part, which is composed of the original results of this work.

\medskip

The recollections begin in \cref{ch:operads} with an overview of operad theory. We recall all of the necessary notions of operads, cooperads, algebras and coalgebras over them, Koszul duality, twisting morphisms, bar and cobar constructions for (co)operads and (co)algebras, and homotopy theory of algebras up to homotopy. In \cref{ch:model categories}, we recall the basic notions of model categories, which is the framework we will use to do homotopy theory. We give various examples of model categories in \cref{sect:examples model categories}. This chapter contains an original result, \cref{thm:characterization of we in the Vallette model structure}, which gives a complete characterization of the weak equivalences in the Vallette model structure on conilpotent coalgebras over a cooperad. \cref{ch:simplicial homotopy theory} is a review of simplicial homotopy theory. \cref{chapter:RHT} is a recollection on rational homotopy theory. It contains an original result, \cref{thm:dualization of (co)commutative models}, stating that one can dualize cocommutative rational models to obtain commutative rational models, and \emph{vice versa}. \cref{ch:Lie algebras} covers in detail the notions of differential graded Lie algebras, their Maurer--Cartan elements and the equivalence relations between them, the analogous notions for homotopy Lie algebras, and some modern results: the Dolgushev--Rogers theorem and the formal Kuranishi theorem. We conclude the first part of the thesis with a review of deformation theory in \cref{chapter:deformation theory}. This chapter is mostly a motivation of why one is interested in studying the space of Maurer--Cartan elements on homotopy Lie algebras.

\medskip

The second part starts with the introduction and the study of basic properties of $\infty$-morphisms of algebras and coalgebras relative to a twisting morphism, in \cref{chapter:oo-morphisms relative to a twisting morphism}. We continue with \cref{chapter:convolution homotopy algebras}, where we introduce convolution homotopy algebras and study them in depth. They are a central object in this thesis, and we give applications of their theory in the subsequent two chapters. In \cref{ch:representing the deformation oo-groupoid}, we apply it to construct a universal cosimplicial (homotopy) Lie algebra representing the Maurer--Cartan space functor. In \cref{ch:rational models mapping spaces}, we apply it to rational homotopy theory and use it to generalize a theorem due to Berglund. The last chapter of this part is \cref{ch:model structure for GM}. There, we give a completely homotopical approach to the proof of the Goldman--Millson theorem and of the Dolgushev--Rogers theorem.

\medskip

Finally, there are three appendices containing auxiliary notions we need here and there. In \cref{chapter:appendix on trees}, we give the definitions and notations we use for trees (rooted and planar). In \cref{appendix:filtered stuff}, we explain the notions of filtered chain complexes and filtered algebras over operads. In \cref{appendix:formal fixed-pt eq and diff eq}, we define formal fixed-point equations and formal differential equations, and prove some existence and uniqueness results for their solutions.

\section{Notations and conventions}

Before really starting with the main body of this work, we fix some notations and conventions we will use throughout the text.

\subsection{General conventions}

We work over a field of characteristic $0$ unless stated otherwise. The symbol $\S_n$ is reserved for the symmetric group on $n$ elements.

\medskip

All operads, cooperads, algebras and coalgebras are always differential graded, unless explicitly stated otherwise. We usually work over chain complexes, with some exceptions where it is more natural to use cochain complexes, for example in \cref{chapter:deformation theory}.

\medskip

We reserve the symbol $\circ$ for the composite product of $\S$-modules and related concepts, see \cref{def:composite product}, and omit compositions whenever talking about functions, writing $fg$ for the composition of two functions $f$ and $g$.

\subsection{Categories into play}

Here are the notations we will use for some of the most frequently appearing categories.

\begin{tabular}{ll}
	$\Delta$ & The ordinal number category.\\
	$\chain$ & The category of chain complexes and chain maps.\\
	$\Ccog$ & The category of conilpotent coalgebras over a cooperad $\C$.\\
	$\coop$ & The category of (conilpotent) cooperads.\\
	$\SLooalg$ & The category of shifted homotopy Lie algebras.\\
	$\Smod$ & The category of $\S$-modules.\\
	$\op$ & The category of differential graded operads.\\
	$\Palg$ & The category of algebras over an operad $\P$.\\
	$\Phatalg$ & The category of proper complete algebras over an operad $\P$.\\
	$\ssets$ & The category of simplicial sets.\\
	$\Top$ & The category of topological spaces.
\end{tabular}

Moreover, we will denote by either $\cat(x,y)$ or by $\hom_\cat(x,y)$ the set of morphisms in the category $\cat$ with source $x\in\cat$ and target $y\in\cat$.

\subsection{Chain complexes}\label{subsection:intro chain complexes}

We will mainly work over the category $\chain$ of chain complexes over some field $\k$. We fix from the start the notations and conventions we will use in what follows. We assume that the reader is already familiar with the notion of chain complex.

\medskip

We reserve the letter $s$ to denote a formal element of degree $1$. If $V$ is a chain complex, the chain complex $sV$ is the suspension of $V$, that is the chain complex with $(sV)_n \cong V_{n-1}$ and differential
\[
d_{sV}(sv)\coloneqq -sd_Vv
\]
for $v\in V$, coherently with the Koszul sign rule of Subsection \ref{subsection:intro Koszul sign rule}. We denote by $s^{-1}$ the dual of $s$.

\medskip

Let $V$ and $W$ be two chain complexes. We denote by $V\otimes W$ the tensor product\index{Tensor product!of chain complexes} of $V$ and $W$. Its element of degree $n$ are those in
\[
(V\otimes W)_n = \bigoplus_{p+q=n}V_p\otimes W_q\ ,
\]
and its differential is given by
\[
d_{V\otimes W}(v\otimes w)\coloneqq d_Vv\otimes w + (-1)^vv\otimes d_Ww\ ,
\]
which, once again, is coherent with the Koszul sign rule.

\medskip

Let $V$ and $W$ be two chain complexes. By $\hom(V,W)$ we will always mean the inner hom\index{Inner!hom!of chain complexes} of $V$ and $W$, i.e. the chain complex having in degree $n$ the linear maps $V\to W$ of degree $n$, or equivalently the degree zero linear maps $s^nV\to W$. Its differential is given by
\[
\partial(f)\coloneqq d_Wf - (-1)^ffd_V
\]
on $f\in\hom(V,W)$. Notice that the closed elements of degree $0$ are exactly the chain maps, while the exact elements of degree $0$ are chain maps that are homotopic to the zero map.

\medskip

Let $V$ be a chain complex. We denote by $V^\vee\coloneqq\hom(V,\k)$ its dual chain complex, where $\k$ is seen as a chain complex concentrated in degree $0$.

\medskip

A chain complex $V$ is said to be \emph{of finite type} if $V_n$ is finite dimensional for all $n\in\Z$.

\subsection{The Koszul sign rule}\label{subsection:intro Koszul sign rule}\index{Koszul!sign rule}

The Koszul sign rule is a sign convention that is put on the switching maps in the (symmetric monoidal) category of graded vector spaces. Namely, if $V,W$ are two graded vector spaces, then the isomorphism
\[
V\otimes W\longrightarrow W\otimes V
\]
is given by
\[
v\otimes w\longmapsto(-1)^{|v||w|}w\otimes v
\]
on homogeneous elements. This gives an automatic way of obtaining the correct signs in computations. An example of application of the Koszul sign rule is the following. Let $V_1,V_2,W_1,W_2$ be graded vector spaces, and let $f_i:V_i\to W_i$ be linear maps of homogeneous degree. Then the map $f_1\otimes f_2$ is given by
\[
(f_1\otimes f_2)(v_1\otimes v_2) = (-1)^{|f_2||v_1|}f_1(v_1)\otimes f_2(v_2)\ .
\]
In particular, notice that the dual to $s^n\coloneqq s\otimes\cdots\otimes s$ is
\[
(-1)^{\frac{n(n-1)}{2}}s^{-n}\coloneqq(-1)^{\frac{n(n-1)}{2}}s^{-1}\otimes\cdots\otimes s^{-1},
\]
and not simply $s^{-n}$ as one might naively expect.

\subsection{Shuffles}\index{Shuffles}

Shuffles are a special type of permutation of a finite set. One can think of them as taking a deck of cards, cutting it into two, and then shuffling the two parts once --- hence the name. The order of both parts remains unchanged, while the order of the whole deck is not the original one anymore. The formal definition is as follows.

\begin{definition}
	Let $p,q\ge0$, and let $n\coloneqq p+q$. A \emph{$(p,q)$-shuffle}\index{Shuffles} is a permutation $\sigma\in\S_n$ such that
	\[
	\sigma(1)<\cdots<\sigma(p)\quad\text{and}\quad\sigma(p+1)<\cdots<\sigma(n)\ .
	\]
	Equivalently, it is a partition $I\sqcup J = \{1,\ldots,n\}$ such that $|I|=p$ and $|J|=q$. The set of all $(p,q)$-shuffles is denoted by $\sh(p,q)$. A shuffle is \emph{trivial} if either $p=0$ or $q=0$.
\end{definition}

\subsection{Invariants and coinvariants}\label{subsect:invariants and coinvariants}

Let $V$ be a vector space with a left action by a finite group $G$. The \emph{invariants}\index{Invariants} of $V$ are
\[
V^G\coloneqq\{v\in V\mid \forall g\in G:g\cdot v = v\}\subseteq V\ .
\]
The \emph{coinvariants}\index{Coinvariants} of $V$ are
\[
V_G\coloneqq V/(v - g\cdot v)_{v\in V,g\in G}\ .
\]
It is well know and easy to check that the coinvariants of $V$ are the dual space to the invariants of $V$. Moreover, in characteristic $0$, we have an isomorphism
\[
V^G\stackrel{\cong}{\longrightarrow}V_G
\]
given by sending an invariant element of $V$ to its equivalence class. The inverse is given by sending a class $[v]\in V_G$ with representative $v\in V$ to
\[
\frac{1}{|G|}\sum_{g\in G}g\cdot v\in V^G.
\]
We will sometimes implicitly use this identification in the main body of the text.
	
	\part{Prerequisites}
	
	\chapter{Operad theory}\label{ch:operads}

The main tool used throughout the present thesis is algebraic operads. These objects give us an effective way to encode "types of algebras" and to study their properties. They were first introduced in the context of algebraic topology in the works of J. M. Boardman and R. M. Vogt \cite{BoardmanVogt} and J. P. May \cite{May} -- the name ``operad" is due to J. P. May himself, coming from the two words ``operation" and ``monad" --- but have since then found many applications in various domains of mathematics, in the works of many authors. It would be impossible to give a complete survey of the history and applications of operads throughout mathematics, and besides, it would be outside the scope of this thesis, so we will leave it at that.

\medskip

In this chapter, we give a short overview on the subject of operads --- more precisely, algebraic operads, meaning operads in the symmetric monoidal category of chain complexes --- and review the notions we will need in subsequent chapters. Our main reference is the book \cite{LodayVallette}. We will try to stay as close as possible to the notations and conventions used there, and to stress whenever we choose to do things differently.

\medskip

Here and throughout the whole thesis, unless otherwise explicitly stated, we will always work in the differential graded context. We work in the category $\chain$ of chain complexes over a field $\k$ of characteristic $0$. The symbol $\S_n$ is reserved to denote the symmetric groups. All operads and cooperads we will consider are \emph{reduced}, meaning that they are trivial in arity $0$, and that their arity $1$ component is spanned by the identity operation.

\section{(Co)operads and (co)algebras over them}

In this first section, we introduce the basic objects of operad theory: $\S$-modules, operads, cooperads, algebras and coalgebras over operads and cooperads respectively.

\subsection{\texorpdfstring{$\S$}{S}-modules}

The underlying object of all operads and cooperads is the $\S$-module. For further details on the material presented here, see \cite[Sect. 5.1]{LodayVallette}.

\begin{definition}
	An $\S$-module\index{$\S$-module} $M$ is a sequence of chain complexes
	\[
	M=(M(0),M(1),M(2),\ldots)\ ,
	\]
	where for each $n\ge0$, the chain complex $M(n)$ --- called the component of $M$ of \emph{arity}\index{Arity} $n$ --- is a right $\k[\S_n]$-module. A \emph{morphism of $\S$-modules} $f:M\longrightarrow N$ is a collection of $\S_n$-equivariant chain maps $f(n):M(n)\to N(n)$. The category of $\S$-modules and their morphisms is denoted by $\Smod$.
\end{definition}

\begin{example}\label{example:unit and endomorphism S-module}
	The following two $\S$-modules play a central role in the theory of operads and algebras over them.
	\begin{enumerate}
		\item The \emph{unit $\S$-module} is
		\[
		I\coloneqq(0,\k,0,0,\ldots)
		\]
		with the trivial $\S$-action. It is the unit for the composite product of $\S$-modules described below in \cref{def:composite product}.
		\item To any chain complex $V$ we can associate a canonical $\S$-module $\End_V$, the \emph{endomorphism $\S$-module}, defined by
		\[
		\End_V(n)\coloneqq\hom(V^{\otimes n},V)
		\]
		with the $\S_n$-action given by permuting the $n$ starting copies of $V$.
	\end{enumerate}
\end{example}

\begin{example}
	Given a chain complex $V$, one can see it as an $\S$-module concentrated in arity $0$, i.e. $(V,0,0,\ldots)$ with trivial $\S$-action.
\end{example}

There are various natural operations that one would like to be able to do on $\S$-modules, such as direct sums and tensor products. There is a good way to define such operations, which we review here.

\begin{definition}
	Let $M$ and $N$ be two $\S$-modules.The \emph{direct sum} $M\oplus N$ of $M$ and $N$ is the $\S$-module given by
	\[
	(M\oplus N)(k)\coloneqq M(k)\oplus N(k)
	\]
	for $k\ge0$, with the obvious $\S_k$-action.
\end{definition}

\begin{definition}
	Let $M$ and $N$ be two $\S$-modules. The \emph{(Hadamard) tensor product}\index{Tensor product!of $\S$-modules} $M\otimes N$ of $M$ and $N$ is the $\S$-module
	\[
	(M\otimes N)(k)\coloneqq M(k)\otimes N(k)
	\]
	for $k\ge0$, with the diagonal $\S_k$-action.
\end{definition}

\begin{remark}
	In the literature, one sometimes finds specific symbols for the Hadamard tensor product, in order to distinguish it from other notions of tensor product. Since in this thesis a tensor product between $\S$-modules or (co)operads is always meant as a Hadamard tensor product, no confusion is possible and we will simply use the symbol $\otimes$.
\end{remark}

\begin{definition}
	Let $M$ and $N$ be two $\S$-modules. The \emph{inner hom}\index{Inner!hom!of $\S$-modules} $\hom(M,N)$ of $M$ and $N$ is the $\S$-module
	\[
	\hom(M,N)(k)\coloneqq\hom(M(k),N(k))
	\]
	for $k\ge0$, and the $\S_k$ action given by $(f^\sigma)(x)\coloneqq f(x^{\sigma^{-1}})^\sigma$.
\end{definition}

\begin{definition}\label{def:composite product}
	Let $M$ and $N$ be two $\S$-modules. The \emph{composite}\index{Composite of $\S$-modules} $M\circ N$ of $M$ and $N$ is the $\S$-module
	\[
	(M\circ N)(k)\coloneqq\bigoplus_{i\ge0}M(i)\otimes_{\S_i}\left(\bigoplus_{j_1+\cdots j_i = k}\Ind_{\S_{j_1}\times\cdots\times\S_{j_i}}^{\S_k}\left(N(j_1)\otimes\cdots\otimes N(j_i)\right)\right)
	\]
	for $k\ge0$, and the $\S_k$-action induced by the $\S$-actions on $M$ and $N$. The $\S_i$-action on
	\[
	\Ind_{\S_{j_1}\times\cdots\times\S_{j_i}}^{\S_k}\left(N(j_1)\otimes\cdots\otimes N(j_i)\right)
	\]
	is given by permutations of the tuple $(j_1,\ldots,j_i)$.
\end{definition}

The spaces $(M\circ N)(k)$ are spanned by the equivalence classes of the elements
\[
(\mu;\nu_1,\ldots,\nu_i;\sigma)
\]
under the action of $\S_i$, where $\mu\in M(i)$, $\nu_a\in N(j_a)$, and $\sigma\in\Sh(j_1,\ldots,j_i)$ is a shuffle. If $\sigma$ is the identity permutation, then we will denote the element simply by
\[
(\mu;\nu_1,\ldots,\nu_i) = \mu\circ(\nu_1,\ldots,\nu_i)\ .
\]
If $f:M\to M'$ and $g:N\to N'$ are two morphisms of $\S$-modules, then there is an obvious induced morphism of $\S$-modules
\[
f\circ g:M\circ N\longrightarrow M'\circ N'
\]
given by
\[
(f\circ g)(\mu;\nu_1,\ldots,\nu_i)\coloneqq(f(\mu);g(\nu_1),\ldots,g(\nu_i))\ .
\]
Notice that the (arity-wise) homology of an $\S$-module is again an $\S$-module in a natural way. There is the following important result relating the homology of the composite of two $\S$-mo\-du\-les with the composite of the homologies.

\begin{theorem}[Operadic Künneth formula]\label{thm:operadic Kunneth}\index{Operadic!Künneth formula}
	Suppose $M$ is an $\S$-module and assume that the base field $\k$ has characteristic $0$. Then we have
	\[
	H_\bullet(M\circ N)\cong H_\bullet(M)\circ H_\bullet(N)\ .
	\]
\end{theorem}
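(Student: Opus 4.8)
The plan is to prove the isomorphism arity by arity, by pushing the homology functor through the layers of the definition of $M\circ N$ one at a time. Recall from \cref{def:composite product} that
\[
(M\circ N)(k)=\bigoplus_{i\ge0}M(i)\otimes_{\S_i}\Bigl(\bigoplus_{j_1+\cdots+j_i=k}\Ind_{\S_{j_1}\times\cdots\times\S_{j_i}}^{\S_k}\bigl(N(j_1)\otimes\cdots\otimes N(j_i)\bigr)\Bigr)\ ,
\]
so the operations in play are: iterated tensor products over $\k$, the inductions $\Ind$, direct sums, and the tensor product $M(i)\otimes_{\S_i}-$ over the group algebra $\k[\S_i]$. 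All of these except the last are exact functors, hence commute with homology: direct sums are exact; each $\Ind_{\S_{j_1}\times\cdots\times\S_{j_i}}^{\S_k}(-)=\k[\S_k]\otimes_{\k[\S_{j_1}\times\cdots\times\S_{j_i}]}(-)$ is exact because $\k[\S_k]$ is free, hence flat, as a right $\k[\S_{j_1}\times\cdots\times\S_{j_i}]$-module; and the iterated tensor product over the field $\k$ satisfies the classical Künneth formula $H_\bullet(V_1\otimes\cdots\otimes V_r)\cong H_\bullet(V_1)\otimes\cdots\otimes H_\bullet(V_r)$, naturally and hence equivariantly for the relevant symmetric group actions.

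The one layer that genuinely requires characteristic $0$ is $M(i)\otimes_{\S_i}-$, and this is where I would spend the real effort. For a chain complex $P$ of left $\k[\S_i]$-modules, by definition $M(i)\otimes_{\S_i}P$ is the space of coinvariants $\bigl(M(i)\otimes_\k P\bigr)_{\S_i}$ for the diagonal action, where $M(i)\otimes_\k P$ is a chain complex on which $\S_i$ acts by chain automorphisms (the differential being $\S_i$-equivariant). Since $\operatorname{char}\k=0$, the averaging element $e\coloneqq\frac{1}{i!}\sum_{\sigma\in\S_i}\sigma$ is a chain idempotent which splits $M(i)\otimes_\k P$ as a complex and whose image is canonically the invariants, which in turn are canonically identified with the coinvariants. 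As taking the image of a chain idempotent commutes with homology, and as the idempotent induced by $e$ on $H_\bullet(M(i)\otimes_\k P)$ is again the averaging idempotent for the induced $\S_i$-action, we obtain $H_\bullet\bigl((M(i)\otimes_\k P)^{\S_i}\bigr)\cong H_\bullet(M(i)\otimes_\k P)^{\S_i}$. Combining this with the field Künneth formula and with the characteristic-$0$ identification of invariants and coinvariants once more yields
\[
H_\bullet\bigl(M(i)\otimes_{\S_i}P\bigr)\cong H_\bullet(M)(i)\otimes_{\S_i}H_\bullet(P)\ ,
\]
naturally in $P$.

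The rest is assembly. First I would use that homology commutes with the (possibly infinite) direct sum over $i$; then apply the characteristic-$0$ step above to move $H_\bullet$ inside $M(i)\otimes_{\S_i}-$, with $P$ the inner direct sum over tuples; then use exactness of direct sums and of the inductions together with the field Künneth formula to identify $H_\bullet$ of $\bigoplus_{j_1+\cdots+j_i=k}\Ind_{\S_{j_1}\times\cdots\times\S_{j_i}}^{\S_k}(N(j_1)\otimes\cdots\otimes N(j_i))$ with the corresponding expression built out of $H_\bullet(N)$. Reading the definition of the composite product backwards then gives $H_\bullet\bigl((M\circ N)(k)\bigr)\cong\bigl(H_\bullet(M)\circ H_\bullet(N)\bigr)(k)$ for every $k$, and since every isomorphism used along the way is natural, it is in particular $\S_k$-equivariant, so these assemble into an isomorphism of $\S$-modules.

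The main obstacle is precisely the $M(i)\otimes_{\S_i}-$ layer: this is a derived-tensor-product situation over the noncommutative ring $\k[\S_i]$ — and $M(i)$ is itself a complex, not just a module — which would fail in positive characteristic due to the higher $\operatorname{Tor}^{\k[\S_i]}$ groups. Everything hinges on Maschke's theorem, which makes $\k[\S_i]$ semisimple, hence $M(i)$ flat over it, so that the ordinary tensor product already computes the derived one; concretely this is packaged by the averaging idempotent. The remaining steps are bookkeeping, the only mild care being to track equivariance so that the arity-wise isomorphisms constitute a morphism of $\S$-modules.
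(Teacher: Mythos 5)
Your proof is correct, and it is essentially the standard argument (the paper states this result without proof, deferring to \cite{LodayVallette}, where the same strategy is used): reduce to the layers of the explicit formula for $\circ$, observe that direct sums, inductions, and tensor products over $\k$ commute with homology, and handle the one nontrivial layer $M(i)\otimes_{\S_i}-$ via the averaging idempotent furnished by Maschke's theorem in characteristic $0$. Your identification of the genuine obstruction --- the higher $\operatorname{Tor}^{\k[\S_i]}$ that would appear in positive characteristic --- is exactly the right point to emphasize.
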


One can also consider the following version of the composite product.

\begin{definition}
	Let $M$ and $N$ be two $\S$-modules. We define
	\[
	(M\bar{\circ}N)(k)\coloneqq\bigoplus_{i\ge0}\left(M(i)\otimes\left(\bigoplus_{j_1+\cdots j_i = k}\Ind_{\S_{j_1}\times\cdots\times\S_{j_i}}^{\S_k}\left(N(j_1)\otimes\cdots\otimes N(j_i)\right)\right)\right)^{\S_i}.
	\]
\end{definition}

\begin{remark}
	When working over a field $\k$ of characteristic $0$, one has an identification between invariants and coinvariants, as explained in \cref{subsect:invariants and coinvariants}, so that there is a natural isomorphism $M\circ N\cong M\bar{\circ}N$. Since we sill never work in positive characteristic when using symmetric (co)operads, we will implicitly use this identification throughout the rest of the present work.
\end{remark}

As it is often the case, one desires a nice monoidal structure on the category of $\S$-modules. Almost surprisingly, the correct monoidal product one is led to consider if one wants to use $\S$-modules to encode algebraic structures is not the Hadamard tensor product, but the composite product.

\begin{proposition}
	The category of $\S$-modules $(\Smod,\circ,I)$ endowed with the composite product of $\S$-modules and the unit $\S$-module is a monoidal category. The same is true by replacing $\circ$ by $\bar{\circ}$.
\end{proposition}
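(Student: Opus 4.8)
The plan is to produce the data of a monoidal category and check the axioms: functoriality of the bifunctor $\circ\colon\Smod\times\Smod\to\Smod$ has already been recorded right after \cref{def:composite product}, so what remains is to exhibit a natural associativity isomorphism and natural left/right unit isomorphisms, and then to verify the pentagon and triangle coherences. I would begin with the unit constraints, since they are immediate from the shape of $I=(0,\k,0,0,\dots)$. In the expansion of $I\circ M$ only the summand $i=1$ survives, and there $\Ind_{\S_{j_1}}^{\S_k}$ is trivial because $j_1=k$; this identifies $(I\circ M)(k)$ with $M(k)$ as $\S_k$-modules. Dually, in $M\circ I$ the factor $I(j_a)$ vanishes unless every $j_a=1$, which forces $i=k$, and the $\S_k$-coinvariants then turn $M(k)\otimes_{\S_k}\k^{\otimes k}$ into $M(k)$ equivariantly. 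One checks in both cases that these identifications are natural in $M$, giving $\lambda_M\colon I\circ M\xrightarrow{\cong}M$ and $\rho_M\colon M\circ I\xrightarrow{\cong}M$.

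For the associativity constraint the cleanest route is combinatorial. I would unfold both $(M\circ N)\circ P$ and $M\circ(N\circ P)$ using three standard ingredients: distributivity of $\otimes$ over $\oplus$ over the base field; the compatibility of induction with direct sums and with external tensor products, $\Ind_{H_1\times H_2}^{G_1\times G_2}(V_1\otimes V_2)\cong\Ind_{H_1}^{G_1}V_1\otimes\Ind_{H_2}^{G_2}V_2$; and the transitivity of induction $\Ind_K^G\cong\Ind_H^G\Ind_K^H$ for the relevant chains of Young subgroups of $\S_k$. After these manipulations both sides become the \emph{same} direct sum, indexed by two-level decompositions of $k$ --- equivalently, by three-level trees carrying $M$-, $N$- and $P$-labels --- of a tensor product of one factor from $M$, several from $N$, and several from $P$, induced up to $\S_k$ and passed to the appropriate coinvariants. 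Reading off this common description yields the natural isomorphism $\alpha_{M,N,P}\colon(M\circ N)\circ P\xrightarrow{\cong}M\circ(N\circ P)$. Alternatively, one may invoke the Schur-functor realization $M\mapsto\widetilde M$, under which $\circ$ corresponds to composition of endofunctors of $\chain$ and $I$ to the identity functor, and transport the strictly associative monoidal structure back along this embedding; but the bookkeeping is essentially the same.

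It then remains to verify the pentagon and triangle axioms. After the identifications above, both reduce to the assertion that regrouping an iterated composite along the two possible orders of association produces the same reindexing of the common tree description --- that is, to the associativity and unit coherence of the underlying operations on the indexing sets, on the induced representations, and on the (co)invariants, all of which are classical. Chasing indices through the nested induced representations and shuffles is the one genuinely laborious step, and it is where I expect the only real difficulty to lie; the rest is formal.

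Finally, for the statement about $\bar\circ$: the definition of $(M\bar\circ N)(k)$ is word-for-word the same as that of $(M\circ N)(k)$ with $\S_i$-coinvariants replaced by $\S_i$-invariants, so every argument above applies verbatim. Alternatively, since we work over a field of characteristic $0$, the natural isomorphism $M\circ N\cong M\bar\circ N$ obtained by averaging (see \cref{subsect:invariants and coinvariants}) is itself compatible with the unit and associativity constraints just constructed, and hence transports the monoidal structure from $(\Smod,\circ,I)$ to $(\Smod,\bar\circ,I)$.
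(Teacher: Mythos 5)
Your argument is correct, and it is the standard one: the paper itself states this proposition without proof (deferring, like the rest of the section, to Loday--Vallette), so there is no in-text argument to compare against, but your route --- unit constraints read off directly from the shape of $I$, associativity via the identification of both iterated composites with a sum over two-level (three-tier) decompositions using distributivity, compatibility of induction with external tensor products, and transitivity of induction, then pentagon/triangle by coherence of the indexing --- is exactly the proof one finds in the references. The unit computations are right: for $I\circ M$ only $i=1$ contributes, and for $M\circ I$ only $j_1=\cdots=j_i=1$ contributes, giving $M(k)\otimes_{\S_k}\k[\S_k]\cong M(k)$.

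One small caution on the final paragraph: the claim that the arguments apply ``verbatim'' to $\bar\circ$ is a little quick, since transitivity of induction and its compatibility with tensor products are statements about coinduced/induced modules, and the invariants-based product does not literally inherit them without comment. But your second alternative --- transporting the monoidal structure along the averaging isomorphism $M\circ N\cong M\bar\circ N$ of the preceding remark, which is available because the paper works over a field of characteristic $0$ and is readily checked to intertwine the unit and associativity constraints --- closes this gap cleanly, so the proof stands as written.
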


Given an $\S$-module, one can associate an endofunctor of chain complexes to it.

\begin{definition}
	Let $M$ be an $\S$-module. The \emph{Schur functor}\index{Schur functor}
	\[
	M:\chain\longrightarrow\chain
	\]
	associated to $M$ is defined by
	\[
	M(V)\coloneqq\bigoplus_{n\ge0}M(n)\otimes_{\S_n}V^{\otimes n},
	\]
	with $\S_n$ acting on $V^{\otimes n}$ by permuting the $n$ copies of $V$.
\end{definition}

Notice that the notation is coherent: the Schur functor associated to the composite of two $\S$-modules is the composite of the two Schur functors. Also, morphisms of $\S$-modules induce natural transformations between the associated Schur functors.

\subsection{Operads and cooperads}

We go on by defining the central objects of this chapter --- and indeed the central objects of this whole thesis: operads and cooperads. A possible point of view to interpret and understand operads is the following. A type of algebra --- such as associative algebras, commutative algebras, Lie algebras, and so on --- is usually defined as a vector space endowed with some operations respecting some relations. For example, a commutative algebra is a vector space $A$ together with a commutative binary operation
\[
\mu:A^{\otimes 2}\longrightarrow A
\]
which is associative, i.e. $\mu(\mu\otimes1_A) = \mu(1_A\otimes\mu)$. An operad is an object encoding the operations of a type of algebra, as well as the relations between them. This gives us a way to speak and to prove theorems about all algebras of the same type at once, and to prove general relations between different types of algebras. Cooperads are the analogue notion for coalgebras, and are almost dual to operads. For more details, see e.g. \cite[Sect. 5.2--8]{LodayVallette}.

\medskip

We begin with the definition of an operad.

\begin{definition}
	An \emph{operad}\index{Operad} $\P$ is an $\S$-module $\P$ endowed with two morphisms of $\S$-modules
	\[
	\gamma_\P:\P\circ\P\longrightarrow\P\ ,
	\]
	called the \emph{composition map} of $\P$, and
	\[
	\eta_\P:I\longrightarrow\P\ ,
	\]
	the \emph{unit map} of $\P$, making $\P$ into a monoid. Explicitly, that means that we have natural isomorphisms
	\[
	\gamma_\P(\gamma_\P\circ1_\P) = \gamma_\P(1_\P\circ\gamma_\P),\qquad\gamma_\P(\eta_\P\circ1_\P) = \gamma_\P = \gamma_\P(1_\P\circ\eta_\P)\ .
	\]
	A morphism $f:\P\to\Q$ of operads is a morphism commuting with the compositions and the unit maps of the source and of the target. Explicitly, one asks that
	\[
	f\gamma_\P = \gamma_\Q(f\circ f),\qquad f\eta_\P = \eta_\Q
	\]
	on $\P\circ\P$ and $I$ respectively. The category of operads and their morphisms is denoted by $\op$.
\end{definition}

This means that for any $k\ge0$ and $n_1,\ldots,n_k\ge0$ we can compose
\[
\P(k)\otimes_{\S_n}\P(n_1)\otimes\cdots\otimes\P(n_k)\longrightarrow\P(n_1+\cdots+n_k)\ ,
\]
which can be seen as taking an operation of arity $k$ of an algebra, and $k$ other operations of arity $n_1,\ldots,n_k$, and putting the latter operations in the $k$ slots of the former operation, thus obtaining an operation of arity $n_1+\cdots+n_k$. The original definition by P. May \cite{May} was given in term of these composition maps.

\medskip

Another equivalent way to define an operad is to give just the \emph{partial composition maps}
\[
\circ_j:\P(k)\otimes\P(n)\longrightarrow\P(k+n-1)
\]
for $1\le j\le k$, corresponding to composing an arity $n$ operation in the $j$th slot of an arity $k$ operation, without touching the other slots. Those maps must satisfy certain "associativity" axioms, see e.g. \cite[Sect. 5.3.4]{LodayVallette}.

\medskip

An operad is \emph{augmented}\index{Augmented operad} if there is a morphism $\varepsilon_\P:\P\to I$ such that $\varepsilon_\P\eta_\P=1_I$. If $\P$ is augmented, the kernel $\ker(\varepsilon_\P)$ of $\varepsilon_\P$ is called the \emph{augmentation ideal} of $\P$ and is denoted by $\Pbar$.

\medskip

An operad is \emph{reduced}\index{Reduced!operad}\footnote{This convention differs slightly from the one of \cite{LodayVallette}, where a reduced operad is only required to satisfy $\P(0)=0$.} if $\P(0)=0$ and $\P(1)=\k$. Notice that if $\P$ is reduced, then the unit map $\eta_\P$ is invertible, and its inverse gives a canonical augmentation $\varepsilon_\P$ of $\P$.

\begin{example}
	We look back at the $\S$-modules of \cref{example:unit and endomorphism S-module}. Both can be made into an operad with very natural composition and unit map.
	\begin{enumerate}
		\item The unit $\S$-module $I$ becomes an operad with both maps
		\[
		\gamma_I:I\circ I=I\stackrel{1_I}{\longrightarrow}I,\qquad\eta_I:I\stackrel{1_I}{\longrightarrow}I
		\]
		given by the identity map.
		\item Given a chain complex $V$, the endomorphism $\S$-module $\End_V$ becomes an operad --- the \emph{endomorphism operad}\index{Endomorphism operad} of $V$ --- by defining
		\[
		\gamma_{\End_V}(\mu;\nu_1,\ldots,\nu_k;\sigma)\coloneqq\mu(\nu_1,\ldots,\nu_k)^\sigma
		\]
		given by composing the operations $\nu_1,\ldots,\nu_k$ into the $k$ slots of $\mu:V^{\otimes k}\to V$, and
		\[
		\eta_{\End_V}(1)\coloneqq1_V
		\]
		given by sending $1\in\k$ to the identity map of $V$.
	\end{enumerate}
\end{example}

Given two operads $\P$ and $\Q$ one can endow the tensor product of the underlying $\S$-modules with an operad structure, thus constructing the \emph{(Hadamard) tensor product} $\P\otimes\Q$. Details are given e.g. in \cite[Sect. 5.3.2]{LodayVallette}.

\medskip

A very important construction is the free operad\index{Free!operad} over a given $\S$-module. We give an explicit construction and some properties of this object, but omit all of the proofs. See \cite[Sect. 5.5]{LodayVallette} for a complete treatment.

\begin{definition}
	Let $M$ be an $\S$-module. The \emph{tree module}\index{Tree module} $\T(M)$ over $M$ is defined by the recursion
	\[
	\T_0M\coloneqq I,\qquad\T_{n+1}M\coloneqq I\oplus(M\circ\T_nM)
	\]
	and by setting $\T(M)\coloneqq\colim_n\T_nM$.
\end{definition}

Elements of $\T(M)$ can be visualized as rooted trees with vertices of arity $n$ labeled by an element of $M(n)$. Then one defines the composition map
\[
\gamma_{\T(M)}:\T(M)\circ\T(M)\longrightarrow\T(M)
\]
simply as grafting of trees, and the unit map
\[
\eta_{\T(M)}:I\longrightarrow\T(M)
\]
by sending $1\in\k=I(1)$ to the trivial tree of arity one without any vertex. Moreover, there is a natural inclusion of $\S$-modules
\[
M\longrightarrow\T(M)
\]
given by sending an element $\mu\in M(n)$ to the $n$-corolla with its single vertex labeled by $\mu$.

\begin{theorem}
	The triple $\T(M)\coloneqq(\T(M),\gamma_{\T(M)},\eta_{\T(M)})$ forms an operad, called the \emph{free operad}\index{Free!operad} over $M$. It satisfies the following universal property. For any morphism of $\S$-modules $M\to\P$, where $\P$ is an operad, there exists a unique morphism of operads $\widetilde{f}:\T(M)\to\P$ making the diagram
	\begin{center}
		\begin{tikzpicture}
			\node (a) at (0,0){$M$};
			\node (b) at (2,0){$\T(M)$};
			\node (c) at (2,-1.5){$\P$};
			
			\draw[->] (a) -- (b);
			\draw[->] (a) -- node[below left]{$f$} (c);
			\draw[->,dashed] (b) -- node[right]{$\widetilde{f}$} (c);
		\end{tikzpicture}
	\end{center}
	commute in $\Smod$.
\end{theorem}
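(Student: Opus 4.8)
The plan is to prove the two halves of the statement in turn: first that $\T(M)$, equipped with grafting and the trivial tree, is an operad, and then the universal property by induction along the defining filtration $\T_0 M \subseteq \T_1 M \subseteq \cdots$.

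\textbf{The operad structure.} Since every structure map $\T_n M \to \T_{n+1} M$ of the filtration is a split monomorphism (split monos are preserved by any functor, and the maps are built inductively from $\id_I \oplus (\id_M \circ (\T_{n-1}M \to \T_n M))$), and since $M \circ -$, being built from direct sums, tensor products and (co)invariants, commutes with filtered colimits, passing to the colimit in the recursion gives a natural isomorphism $\T(M) \cong I \oplus (M \circ \T(M))$. This is the algebraic incarnation of the fact that a labelled rooted tree is either the trivial tree or a root vertex labelled by some $\mu \in M(i)$ carrying $i$ subtrees on its leaves. Using this identification and the fact that $\circ$ is linear in its left argument, one defines $\gamma_{\T(M)} \colon \T(M) \circ \T(M) \to \T(M)$ by induction on the filtration degree of the left-hand argument: grafting onto the trivial tree returns the tuple of trees on the right (via $I \circ \T(M) \cong \T(M)$), while on $(M \circ \T(M)) \circ \T(M)$ one uses the associativity of $\circ$ and the compatibility with the relevant shuffles to distribute the trees on the right among the leaves of the subtrees, applies $\gamma_{\T(M)}$ recursively one level down, and re-grafts onto the root corolla; the unit $\eta_{\T(M)}$ selects the trivial tree. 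One then verifies the monoid axioms — associativity of $\gamma_{\T(M)}$ and left/right unitality — by induction on filtration degrees. Combinatorially these simply express that substitution of rooted trees into rooted trees is associative and that the trivial tree is neutral for it.

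\textbf{The universal property, existence.} Given a morphism of $\S$-modules $f \colon M \to \P$ with $\P$ an operad, I construct $\widetilde f$ level by level. Set $\widetilde f_0 \colon \T_0 M = I \to \P$ to be $\eta_\P$. Assuming $\widetilde f_n \colon \T_n M \to \P$ constructed, define $\widetilde f_{n+1} \colon \T_{n+1}M = I \oplus (M \circ \T_n M) \to \P$ to be $\eta_\P$ on the summand $I$ and the composite
\[
M \circ \T_n M \xrightarrow{\ f\, \circ\, \widetilde f_n\ } \P \circ \P \xrightarrow{\ \gamma_\P\ } \P
\]
on the other summand. An easy induction shows these maps are compatible with the structure maps of the filtration, so they assemble into a morphism of $\S$-modules $\widetilde f \colon \T(M) \to \P$. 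Using the right unit axiom $\gamma_\P(1_\P \circ \eta_\P) = 1_\P$ one checks that the restriction of $\widetilde f$ along the corolla inclusion $\iota_M \colon M \to \T_1 M$ equals $f$. Finally, that $\widetilde f$ commutes with the composition and unit maps is proved by induction on filtration degree, the inductive step being precisely an instance of the associativity axiom $\gamma_\P(\gamma_\P \circ 1_\P) = \gamma_\P(1_\P \circ \gamma_\P)$ for $\P$ matched against the inductive definition of $\gamma_{\T(M)}$.

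\textbf{Uniqueness, and the main obstacle.} If $g \colon \T(M) \to \P$ is any morphism of operads with $g\, \iota_M = f$, then $g$ agrees with $\widetilde f$ on $\T_0 M = I$ (both are $\eta_\P$), and since every element of $\T_{n+1} M$ is obtained by applying $\gamma_{\T(M)}$ to corollas (elements in the image of $\iota_M$) and elements of $\T_n M$, the requirement that $g$ commute with $\gamma$ and with $\iota_M$ forces $g = \widetilde f$ by induction on $n$, hence on all of $\T(M) = \colim_n \T_n M$. I expect the genuinely delicate point to be not the universal property — a clean induction once the structure maps are available — but the careful construction of $\gamma_{\T(M)}$ and the verification of its associativity, because one must track the $\S_i$-actions and the shuffles $\sigma \in \Sh(j_1,\dots,j_i)$ occurring in the composite product together with the Koszul signs. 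I plan to contain this by leaning on the tree combinatorics of the appendix on trees, where substitution of labelled rooted trees and its associativity can be established once and for all, so that the statement here becomes a transcription of that combinatorial fact into the language of $\S$-modules.
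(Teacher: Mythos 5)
The paper itself omits the proof of this theorem entirely, deferring to \cite[Sect. 5.5]{LodayVallette}; your argument is essentially the standard one given there, built on the identification $\T(M)\cong I\oplus(M\circ\T(M))$, the inductive definition of grafting along the filtration, and the level-by-level construction $\widetilde f_{n+1}=\eta_\P\oplus\gamma_\P(f\circ\widetilde f_n)$ with uniqueness forced by the fact that every tree is an iterated composite of corollas. The outline is correct, and you correctly identify the only delicate points (the split monomorphisms making the colimit well behaved, compatibility of $\widetilde f_{n+1}$ with the structure maps via right unitality, and the bookkeeping of shuffles and $\S_i$-actions in the associativity check).
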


Notice that all operads satisfying the universal property in the Theorem above are isomorphic. Moreover, the universal property gives a bijection
\[
\hom_\op(\T(M),\P)\cong\hom_\Smod(M,\P)\ ,
\]
that is to say that the free operad functor is left adjoint to the forgetful functor from operads to $\S$-modules.

\medskip

Almost dual to the notion of operad is the notion of cooperad.

\begin{definition}
	A \emph{cooperad}\index{Cooperad} $\C$ is an $\S$-module $\C$ together with two morphisms of $\S$-modules
	\[
	\Delta_\C:\C\longrightarrow\C\bar{\circ}\C\ ,
	\]
	called the \emph{decomposition map} of $\C$, and
	\[
	\varepsilon_\C:\C\longrightarrow I\ ,
	\]
	the \emph{counit map} of $\C$, making $\C$ into a comonoid.	A morphism of cooperads is a morphism commuting with the decomposition maps and the counit maps of the source and of the target. The category of cooperads and their morphisms is denoted by $\coop$.
\end{definition}

\begin{warning}
	The notion defined above is often called \emph{conilpotent cooperad} in the literature, e.g. in \cite{LodayVallette}. We will never use non-conilpotent cooperads.
\end{warning}

A cooperad $\C$ is \emph{coaugmented}\index{Coaugmented cooperad} if there is a morphism $\eta_\C:I\to\C$ such that $\varepsilon_\C\eta_\C=1_I$. The image of $1\in\k=I(1)$ under $\eta_\C$ is denoted by $\id\in\C(1)$ and is called the \emph{identity cooperation}.

\medskip

A cooperad $\C$ is \emph{reduced}\index{Reduced!cooperad} if $\C(0) = 0$ and $\C(1)=\k$. Again, a reduced cooperad is automatically coaugmented.

\medskip

Similarly to what happens with operads, given an $\S$-module $M$ one can build a cofree cooperad\index{Cofree!cooperad} by taking once again the tree module $\T(M)$ as underlying $\S$-module, defining the decomposition map
\[
\Delta_{\T(M)}:\T(M)\longrightarrow\T(M)\circ\T(M)
\]
by sending a tree to the sum of all its possible decompositions in $2$-leveled trees by cutting the edges of the original tree, and defining the counit
\[
\varepsilon_{\T(M)}:\T(M)\longrightarrow I
\]
as the projection. Moreover, we consider the natural projection of $\S$-modules
\[
\T(M)\longrightarrow M
\]
More details are found in \cite[Sect. 5.8.6--7]{LodayVallette}.

\begin{theorem}
	The triple $\T^c(M)\coloneqq(\T(M),\Delta_{\T(M)},\varepsilon_{\T(M)})$ forms a cooperad, called the \emph{cofree cooperad} over $M$. It satisfies the following universal property, dual to the one of the free operad. For any morphism of $\S$-modules $f:\C\to M$, where $\C$ is a cooperad, there exists a unique morphism of cooperads making the diagram
	\begin{center}
		\begin{tikzpicture}
			\node (a) at (0,0){$\C$};
			\node (b) at (2,0){$\T^c(M)$};
			\node (c) at (2,-1.5){$M$};
			
			\draw[->,dashed] (a) -- node[above]{$\widetilde{f}$} (b);
			\draw[->] (a) -- node[below left]{$f$} (c);
			\draw[->] (b) -- (c);
		\end{tikzpicture}
	\end{center}
	commute in $\Smod$.
\end{theorem}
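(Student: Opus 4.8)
The plan is to prove this in close analogy with the free operad theorem: first upgrade the combinatorial data on $\T(M)$ to a genuine cooperad, then verify the universal property by an explicit "iterated decomposition" construction, using conilpotency of the source to guarantee well-definedness.

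\textbf{The cooperad structure.} I would first grade the tree module by the number of vertices, $\T(M)=\bigoplus_{s\ge0}\T(M)^{(s)}$, where $\T(M)^{(s)}$ is spanned by trees with exactly $s$ vertices; thus $\T(M)^{(0)}=I$, and $\T(M)^{(1)}$ is the image of the canonical inclusion $M\to\T(M)$ by corollas. Coassociativity of $\Delta_{\T(M)}$ then reduces to the combinatorial identity that both $(\Delta_{\T(M)}\,\bar{\circ}\,\id)\Delta_{\T(M)}$ and $(\id\,\bar{\circ}\,\Delta_{\T(M)})\Delta_{\T(M)}$ compute the same thing, namely the sum over all ways of cutting a tree into a $3$-leveled tree by choosing two nested sets of internal edges; this is a check on the tree basis, compatible with the symmetric group actions. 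Counitality with respect to the projection $\varepsilon_{\T(M)}$ is the parallel statement that applying $\varepsilon_{\T(M)}$ to one tensor factor of a cut selects the unique cut in which that factor consists of trivial trees, returning the original tree. Conilpotency follows from the weight grading: the reduced decomposition sends $\T(M)^{(s)}$ into $\bigoplus_{a+b\le s,\ a,b\ge1}\T(M)^{(a)}\,\bar{\circ}\,\T(M)^{(b)}$, so every element is killed by a high enough iterate of the reduced decomposition and the coradical filtration is exhaustive.

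\textbf{The universal property --- existence.} Given a morphism of $\S$-modules $f\colon\C\to M$ with $\C$ a cooperad, I would define $\widetilde f\colon\C\to\T^c(M)$ one tree at a time. For a tree $t$ with vertex set $V(t)$ (and the appropriate number of leaves in each arity), coassociativity of $\Delta_\C$ produces a well-defined iterated decomposition $\Delta_t\colon\C\to\bigotimes_{v\in V(t)}\C$, with tensor factors indexed by the vertices of $t$ in the arities dictated by $t$, and independent of the order in which the cuts are performed; here the trivial tree gives $\Delta_t=\varepsilon_\C$. Postcomposing with $f^{\otimes V(t)}$ lands in the summand of $\T(M)$ corresponding to the shape $t$, and I set $\widetilde f\coloneqq\sum_t f^{\otimes V(t)}\circ\Delta_t$. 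Conilpotency of $\C$ ensures that for each $c\in\C$ only finitely many trees contribute, so the sum makes sense; the result is equivariant and a chain map by construction. Compatibility with the counits is the weight-$0$ part of the formula, and compatibility with the decomposition maps is once more the identity "cut, then relabel by $f$" equals "relabel by $f$, then cut", which follows from coassociativity of $\Delta_\C$ together with the description of $\Delta_{\T(M)}$ as edge-cutting. Finally, composing $\widetilde f$ with the projection $\T^c(M)\to M$ extracts the weight-$1$ component, which is exactly $f$.

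\textbf{Uniqueness and the main obstacle.} If $g\colon\C\to\T^c(M)$ is another morphism of cooperads with $p\,g=f$ for the projection $p\colon\T^c(M)\to M$, then for each tree $t$ the composite $p^{\otimes V(t)}\circ\Delta^{\T^c(M)}_t\circ g$ equals $f^{\otimes V(t)}\circ\Delta_t$, since $g$ intertwines the decomposition maps and $p\,g=f$; unwinding the definitions, the former records exactly the shape-$t$ component of $g$, and the latter is the shape-$t$ component of $\widetilde f$. As a linear map into $\T(M)$ is determined by its components along all trees, $g=\widetilde f$. The hard part will not be any single estimate but the bookkeeping: making "iterated decomposition along a tree $t$" a well-defined single map $\Delta_t$ uniformly over all trees (this is precisely what coassociativity buys, but it must be phrased carefully), and the systematic use of conilpotency of $\C$ to see that the defining sum for $\widetilde f$ is locally finite --- the one hypothesis it is easy to overlook. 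Everything else is tree combinatorics mirroring the free operad case. (For $M$ of finite type one could instead dualize the free operad theorem applied to $M^\vee$ and then pass to the general case by writing $M$ as a filtered colimit of finite-type sub-$\S$-modules; but the direct argument above avoids the colimit subtleties in $\coop$.)
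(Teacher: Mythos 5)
Your argument is correct and is essentially the standard proof: the paper gives no proof of this statement itself, deferring to Loday--Vallette, Sect.~5.8.6--7, and your construction of $\widetilde f$ by summing $f^{\otimes V(t)}\circ\Delta_t$ over tree shapes is exactly the iterated (monadic) decomposition construction used there and recalled later in this chapter. The one point to make precise is that $\Delta_t$ must be built from the \emph{reduced} decomposition $\overline{\Delta}_\C$ (equivalently, the universal property really concerns maps out of the coaugmentation coideal $\overline{\C}$), since iterating the full $\Delta_\C$ produces infinitely many terms involving the identity cooperation and local finiteness would fail; with that convention, your appeal to conilpotency does exactly what you claim.
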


Observe that all cooperads satisfying the universal property in the Theorem above are isomorphic. Moreover, the universal property gives a natural bijection
\[
\hom_\coop(\C,\T^c(M))\cong\hom_\Smod(\C,M)\ ,
\]
that is to say that the cofree cooperad functor is right adjoint to the forgetful functor from cooperads to $\S$-modules.

\medskip

\label{page:monadic decomposition map}Given a cooperad $\C$, one can iterate the decomposition map $\Delta_\C$ in order to obtain a \emph{monadic decomposition map}\index{Monadic decomposition map $\Delta_\C^\mathrm{mon}$}
\[
\Delta_\C^\mathrm{mon}:\overline{\C}\longrightarrow\T^c(\overline{\C})
\]
sending an element of $\overline{\C}$ to all of its possible decompositions not including the identity element $\id\in\C(1)$. Formally, we define the \emph{reduced decomposition map} $\overline{\Delta}_\C$ of $\C$ as
\[
\overline{\Delta}_\C(c)\coloneqq\Delta_\C-\id\circ c - c\circ\id^{\otimes n}
\]
for $c\in\C(n)$. Now we define iteratively
\[
\widetilde{\Delta}_1\coloneqq1_\C:\overline{\C}\longrightarrow\overline{\C}\ ,
\]
and
\[
\widetilde{\Delta}_n\coloneqq1_\C + (1_\C\circ\widetilde{\Delta}_{n-1})\overline{\Delta}_\C:\overline{\C}\longrightarrow\T_n(\overline{\C})
\]
for $n\ge2$. Then, we set
\[
\Delta_\C^\mathrm{mon}\coloneqq\colim_n\widetilde{\Delta}_n:\overline{\C}\longrightarrow\T(\overline{\C})\ .
\]
One can always dualize a cooperad to obtain an operad, while the converse is not always true. This works as follows. Let $\C$ be a cooperad, then for $n\ge0$ the chain complex $\C(n)^\vee$ is a left $\S_n$-module. We make it into a right $\S_n$-module by $c^\sigma\coloneqq\sigma\cdot c$ for $c\in\C(n)^\vee$. The dual of the decomposition map $\Delta_\C$ gives the composition map $\gamma_{\C^\vee}$, and the dual of the counit map gives the unit. The problem when trying to do the same thing with an operad $\P$ is that the dual of the composition map $\gamma_\P$ could give us infinite sums, and thus not land in $\P^\vee\circ\P^\vee$. However, if we assume that $\P$ is reduced and that either
\begin{itemize}
	\item for all $n\ge2$, the degrees in which $\P(n)$ is non-zero are bounded below, and $\P(n)$ is finite-dimensional in every degree, or that
	\item for all $n\ge2$, the degrees in which $\P(n)$ is non-zero are bounded above, and $\P(n)$ is finite-dimensional in every degree,
\end{itemize}
then everything works out and the dual of an operad gives a cooperad. Notice that The conditions we gave in order to be able to dualize an operad and obtain a cooperad are sufficient, but not necessary. In particular, if an operad is finite dimensional in every arity, then it can be dualized to obtain a cooperad.

\subsection{(Co)algebras over (co)operads}\label{subsection:(co)algebras over (co)operads}

As already mentioned, the principal use of (co)operads is to encode (co)algebras.

\begin{definition}
	Let $\P$ be an operad. A \emph{$\P$-algebra}\index{Algebra over an operad}\index{$\P$-algebra} $A$ is a chain complex $A$ together with a composition map
	\[
	\gamma_A:\P(A)\longrightarrow A
	\]
	satisfying the relations
	\[
	\gamma_A(1_\P\circ\gamma_A) = \gamma_A(\gamma_\P\circ1_A)
	\]
	on $\P(\P(A))$, and
	\[
	\gamma_A(\eta_\P\circ1_A)=\gamma_A
	\]
	on $I(A)\cong A$. A \emph{morphism of $\P$-algebras} $f:A\to B$ is a chain map commuting with the respective composition maps, that is to say
	\[
	f\gamma_A = \gamma_B(1_\P\circ f)
	\]
	on $\P(A)$. The category of $\P$-algebras with their morphisms is denoted by $\Palg$.
\end{definition}

The elements of the operad can be interpreted as operations on the algebra, as is made evident by the following result.

\begin{proposition}\label{prop:P-alg is map P->End}
	The structure of a $\P$-algebra on a chain complex $A$ is equivalent to a map of operads
	\[
	\rho_A:\P\longrightarrow\End_A\ .
	\]
\end{proposition}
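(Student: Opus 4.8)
The plan is to reduce the statement to an adjunction at the level of $\S$-modules and then check that this adjunction matches the extra structure carried by the two sides.

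First I would establish a natural bijection between $\hom_{\Smod}(\P,\End_A)$ and the set of degree-zero chain maps $\gamma_A\colon\P(A)\to A$. A morphism of $\S$-modules $\rho_A\colon\P\to\End_A$ is a family of $\S_n$-equivariant chain maps $\rho_A(n)\colon\P(n)\to\hom(A^{\otimes n},A)$. By the tensor–hom adjunction in $\chain$, such a map is the same datum as a chain map $\P(n)\otimes A^{\otimes n}\to A$ which is $\S_n$-equivariant for the diagonal action on the source and the trivial action on the target, that is, a chain map $\P(n)\otimes_{\S_n}A^{\otimes n}\to A$. Taking the direct sum over $n\ge 0$ this is exactly a chain map $\P(A)\to A$, and the assignment $\rho_A\mapsto\gamma_A$ is visibly natural and bijective (note that the chain-map condition on one side corresponds to the chain-map condition on the other, both sides being morphisms in $\chain$).

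Next I would check that this bijection carries operad morphisms to $\P$-algebra structures, in both directions. The unit axioms match immediately: $\rho_A\eta_\P=\eta_{\End_A}$ says that $\rho_A$ sends the operadic unit of $\P$ to $1_A=\eta_{\End_A}(1)$, which transports under the adjunction to $\gamma_A(\eta_\P\circ 1_A)=\gamma_A$ on $I(A)\cong A$. For the composition axiom I would evaluate both $\rho_A\gamma_\P$ and $\gamma_{\End_A}(\rho_A\circ\rho_A)$ on a generator $(\mu;\nu_1,\dots,\nu_k)$ of $\P\circ\P$, with $\mu\in\P(k)$ and $\nu_j\in\P(n_j)$: the first is $\rho_A$ applied to the operadic composite $\gamma_\P(\mu;\nu_1,\dots,\nu_k)$, and the second, by the defining formula $\gamma_{\End_A}(\mu';\nu'_1,\dots,\nu'_k;\sigma)=\mu'(\nu'_1,\dots,\nu'_k)^\sigma$, is the multilinear map obtained by plugging the operations $\rho_A(\nu_1),\dots,\rho_A(\nu_k)$ into the inputs of $\rho_A(\mu)$. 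Under the adjunction of the first step, the latter is precisely the value on the corresponding element of the composite $\P(\P(A))\xrightarrow{1_\P\circ\gamma_A}\P(A)\xrightarrow{\gamma_A}A$, whereas the former is the value of $\P(\P(A))\xrightarrow{\gamma_\P\circ 1_A}\P(A)\xrightarrow{\gamma_A}A$. Hence $\rho_A$ commutes with composition if and only if $\gamma_A$ satisfies the $\P$-algebra associativity relation $\gamma_A(1_\P\circ\gamma_A)=\gamma_A(\gamma_\P\circ 1_A)$, and since the first step already identifies the underlying data this completes the argument.

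The main obstacle is the bookkeeping hidden in the penultimate identification. To see that $\gamma_{\End_A}(\rho_A\circ\rho_A)$ genuinely corresponds to $\gamma_A(1_\P\circ\gamma_A)$ one must track the Koszul signs produced when the shuffle $\sigma$ attached to an element of $\P\circ\P$ permutes the tensor factors of $A^{\otimes m}$, match the summands $\Ind_{\S_{n_1}\times\cdots\times\S_{n_k}}^{\S_m}$ appearing in the definition of the composite product against the reordering implicit in $\mu(\nu_1,\dots,\nu_k)^\sigma$, and verify that passing to $\S_n$-coinvariants throughout is consistent with all of this. These verifications are routine but tedious; granting them, both implications follow and the proposition is proved.
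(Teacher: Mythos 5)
Your proposal is correct and follows the same route as the paper, whose entire proof is the formula $\rho_A(p)(a_1,\ldots,a_n)\coloneqq\gamma_A(p\otimes a_1\otimes\cdots\otimes a_n)$ (i.e.\ exactly your tensor--hom adjunction) with the remaining verifications left to the reader. You have simply supplied the unit and composition checks that the paper omits.
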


\begin{proof}
	Given $\gamma_A$, one obtains $\rho_A$ by
	\[
	\rho_A(p)(a_1,\ldots,a_n)\coloneqq\gamma_A(p\otimes a_1\otimes\cdots\otimes a_n)\ ,
	\]
	and \emph{vice versa}. The details are left to the reader.
\end{proof}

\begin{proposition}\label{prop:free algebra}
	Let $\P$ be an operad and $V$ be a chain complex. Then the map
	\[
	\gamma_{\P(V)}\coloneqq\gamma_\P\circ1_V:\P(\P(V))\cong(\P\circ\P)(V)\longrightarrow\P(V)
	\]
	makes $\P(V)$ into a $\P$-algebra. It is called the \emph{free $\P$-algebra}\index{Free!$\P$-algebra} over $V$, and it satisfied the following universal property. For any morphism of chain complexes $f:V\to A$, where $A$ is a $\P$-algebra, there exists a unique morphism of $\P$-algebras $\widetilde{f}:\P(V)\to A$ making the diagram
	\begin{center}
		\begin{tikzpicture}
			\node (a) at (0,0){$V$};
			\node (b) at (2,0){$\P(V)$};
			\node (c) at (2,-1.5){$A$};
			
			\draw[->] (a) -- (b);
			\draw[->] (a) -- node[below left]{$f$} (c);
			\draw[->,dashed] (b) -- node[right]{$\widetilde{f}$} (c);
		\end{tikzpicture}
	\end{center}
	commute in $\chain$, where $V\to\P(V)$ is the canonical inclusion.
\end{proposition}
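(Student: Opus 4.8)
The plan is to deduce everything from the monoid axioms of $\P$ together with the functoriality of the Schur functor, by systematically passing between morphisms of $\S$-modules postcomposed with $-\circ V$ and natural transformations of the associated Schur functors. I will use repeatedly that the Schur functor of a composite is the composite of Schur functors, that applying $-\circ V$ to a morphism $g\colon\P\circ M\to N$ of $\S$-modules and using this identification produces a natural map $\P(M(V))\to N(V)$, and that $1_\P\circ h$ corresponds to $\P(h)$ at the level of Schur functors --- all of these identifications being natural in every variable. Throughout I write $\iota_V\colon V = I(V)\xrightarrow{\eta_\P\circ 1_V}\P(V)$ for the canonical inclusion.

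First I would check that $\gamma_{\P(V)}$ makes $\P(V)$ into a $\P$-algebra. Applying $-\circ V$ to the associativity axiom $\gamma_\P(\gamma_\P\circ 1_\P) = \gamma_\P(1_\P\circ\gamma_\P)$ of the operad $\P$ and unwinding the identifications above gives precisely $\gamma_{\P(V)}(\gamma_\P\circ 1_{\P(V)}) = \gamma_{\P(V)}(1_\P\circ\gamma_{\P(V)})$ on $\P(\P(\P(V)))$; applying $-\circ V$ to the unit axiom $\gamma_\P(\eta_\P\circ 1_\P) = \gamma_\P$ gives the $\P$-algebra unit axiom $\gamma_{\P(V)}(\eta_\P\circ 1_{\P(V)}) = \gamma_{\P(V)}$ on $I(\P(V))\cong\P(V)$. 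Doing the same to the remaining unit axiom $\gamma_\P(1_\P\circ\eta_\P) = 1_\P$ records the key identity
\[
\gamma_{\P(V)}\circ\P(\iota_V) = 1_{\P(V)}\ ,
\]
which is the precise form of the statement that $\P(V)$ is generated by $V$.

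Next, for the universal property, given $f\colon V\to A$ with $A$ a $\P$-algebra, I would set $\widetilde f\coloneqq\gamma_A\circ\P(f)\colon\P(V)\to\P(A)\to A$. To see that $\widetilde f$ is a morphism of $\P$-algebras, i.e.\ that $\widetilde f\,\gamma_{\P(V)} = \gamma_A(1_\P\circ\widetilde f)$, I would start from $\widetilde f\,\gamma_{\P(V)} = \gamma_A\,\P(f)\,(\gamma_\P\circ 1_V)$, use the naturality of $\gamma_\P$ to move $\P(f)$ across $\gamma_\P\circ 1_V$, then the $\P$-algebra axiom for $A$ to replace $\gamma_A(\gamma_\P\circ 1_A)$ by $\gamma_A(1_\P\circ\gamma_A)$, and finally functoriality of $\P$ to collapse $(1_\P\circ\gamma_A)\circ\P(\P(f))$ into $\P(\gamma_A\circ\P(f)) = \P(\widetilde f) = 1_\P\circ\widetilde f$. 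That $\widetilde f$ extends $f$ along $\iota_V$ follows from the naturality of $\iota$ and the unit axiom of the $\P$-algebra $A$: one has $\widetilde f\,\iota_V = \gamma_A\,\P(f)\,\iota_V = \gamma_A\,\iota_A\,f = f$, because $\gamma_A\,\iota_A = \gamma_A(\eta_\P\circ 1_A) = 1_A$.

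Finally, uniqueness comes for free from the key identity: if $g\colon\P(V)\to A$ is any morphism of $\P$-algebras with $g\,\iota_V = f$, then
\[
g = g\,\gamma_{\P(V)}\,\P(\iota_V) = \gamma_A(1_\P\circ g)\,\P(\iota_V) = \gamma_A\,\P(g\,\iota_V) = \gamma_A\,\P(f) = \widetilde f\ .
\]
I expect the only genuine difficulty to be bookkeeping: making the translation between the ``$\S$-modules with $-\circ V$'' picture and the Schur-functor picture fully precise, and verifying all the invoked naturality statements (of $\gamma_\P$, of $\iota$, and of the correspondence $1_\P\circ h\leftrightarrow\P(h)$). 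No signs intervene, since none of these manipulations disturbs the symmetric-group actions; it is purely careful diagram chasing, which is doubtless why the author leaves the details to the reader.
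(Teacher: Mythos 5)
Your proof is correct, and the paper in fact gives no proof of this proposition (it is recalled as standard, following Loday--Vallette); your argument --- transporting the monoid axioms of $\P$ along $-\circ V$, setting $\widetilde f = \gamma_A\,\P(f)$, and deducing uniqueness from the identity $\gamma_{\P(V)}\,\P(\iota_V)=1_{\P(V)}$ coming from the right unit axiom --- is exactly the standard one. The only point worth flagging is notational: the paper's stated unit axioms read $\gamma_\P(\eta_\P\circ 1_\P)=\gamma_\P$ and $\gamma_A(\eta_\P\circ 1_A)=\gamma_A$, where the right-hand sides should be the identities under the unit isomorphisms, and you correctly use the identity versions where it matters.
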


Once again, the free $\P$-algebra functor is left adjoint to the forgetful functor from $\P$-algebras to chain complexes.

\begin{lemma}\label{lemma:isomorphisms of P-algebras}
	Let $A,B$ be two $\P$-algebras, and let $f:A\to B$ be a morphism of $\P$-algebras. Suppose that $f$ is an isomorphism of chain complexes. Then $f$ is an isomorphism of $\P$-algebras as well.
\end{lemma}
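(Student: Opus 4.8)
The plan is to show that the chain-level inverse $f^{-1}\colon B\to A$ — which exists precisely because $f$ is an isomorphism of chain complexes, and which satisfies $f^{-1}f=1_A$, $ff^{-1}=1_B$ as chain maps — is automatically a morphism of $\P$-algebras. Once this is established, $f$ is an isomorphism in $\Palg$ with inverse $f^{-1}$, which is the assertion.

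First I would write down the compatibility of $f$ with the structure maps, namely $f\gamma_A = \gamma_B(1_\P\circ f)$ as maps $\P(A)\to B$, and apply $f^{-1}$ on the left to obtain $\gamma_A = f^{-1}\gamma_B(1_\P\circ f)$. Next I would precompose both sides with $1_\P\circ f^{-1}\colon\P(B)\to\P(A)$, the map obtained by applying the Schur functor $\P(-)$ to $f^{-1}$. By functoriality of the composite product one has $(1_\P\circ f)(1_\P\circ f^{-1}) = 1_\P\circ(ff^{-1}) = 1_\P\circ 1_B = 1_{\P(B)}$, so the right-hand side collapses and we are left with $\gamma_A(1_\P\circ f^{-1}) = f^{-1}\gamma_B$ on $\P(B)$. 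This is exactly the statement that $f^{-1}$ is a morphism of $\P$-algebras, so the proof is complete.

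There is essentially no serious obstacle here: the only points requiring (minor) care are that an isomorphism of chain complexes carries a two-sided chain-map inverse, and that applying the Schur functor is functorial, so that $1_\P\circ f$ and $1_\P\circ f^{-1}$ are mutually inverse. Both are immediate. Alternatively, one could phrase the argument through \cref{prop:P-alg is map P->End}: the structures correspond to operad morphisms $\rho_A\colon\P\to\End_A$ and $\rho_B\colon\P\to\End_B$, an isomorphism $f$ of underlying complexes induces an isomorphism of operads $\End_A\cong\End_B$, and compatibility of $f$ with the structures says precisely that this isomorphism intertwines $\rho_A$ and $\rho_B$; transporting back along it then shows $f^{-1}$ is compatible as well. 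Either way the argument is a short diagram chase, so I would simply present the first version.
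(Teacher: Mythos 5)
Your argument is correct and is essentially the paper's own proof: both show that the chain-level inverse is automatically a morphism of $\P$-algebras by combining the relation $f\gamma_A=\gamma_B(1_\P\circ f)$ with the functoriality of $1_\P\circ(-)$. The only difference is cosmetic bookkeeping in which side the inverse is applied first, so nothing further is needed.
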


\begin{proof}
	Write $g:B\to A$ for the inverse of $f$ in chain complexes. We prove that $g$ is a morphism of $\P$-algebras. We have
	\begin{align*}
		g\gamma_B =&\ g\gamma_B(1_\P\circ fg)\\
		=&\ gf\gamma_B(1_\P\circ g)\\
		=&\ \gamma_B(1_\P\circ g)
	\end{align*}
	as maps $\P(B)\to A$, where in the second line we used the fact that $f$ is a morphism of $\P$-algebras.
\end{proof}

Suppose we have a morphism of operads $f:\Q\to\P$. Then it is obvious from \cref{prop:P-alg is map P->End} that every $\P$-algebra $A$ is also a $\Q$-algebra, which we denote by $f^*A$, by $\rho_{f^*A} = f^*\rho_A = \rho_Af$. This defines a functor from $\P$-algebras to $\Q$-algebras, called \emph{restriction of structure}\index{Restriction of structure}. This functor is right adjoint to a functor $f_!$, called \emph{extension of structure}\index{Extension of structure}, i.e.
\[
\adjunction{f_!}{\Qalg}{\Palg}{f^*}.
\]
The $\P$-algebra $f_!A$ associated to a $\Q$-algebra $A$ can be described as a reflexive coequalizer, see \cite[Sect. 3.3.5]{Fresse} for details. These functors define a Quillen adjunction between the respective categories of algebras. For these model categorical aspects, we invite the interested reader to consult \cite[Sect. 16]{Fresse}.

\medskip

Almost dually to the notion of algebra over an operad is the notion of coalgebra over a cooperad.

\begin{definition}
	Let $\C$ be a cooperad. A \emph{$\C$-coalgebra}\index{Coalgebra over a cooperad}\index{$\C$-coalgebra} $C$ is a chain complex $C$ together with a decomposition map
	\[
	\Delta_C:C\longrightarrow\widehat{\C}(C)\coloneqq\prod_{n\ge0}\left(\C(n)\otimes C^{\otimes n}\right)^{\S_n}
	\]
	such that
	\[
	(\Delta_\C\circ1_C)\Delta_C = (1_\C\circ\Delta_C)\Delta_C,\qquad (\varepsilon_\C\circ1_C)\Delta_C=\Delta_C
	\]
	on $C$. A morphism of $\C$-coalgebras $f:C\to D$ is a chain map commuting with the respective decomposition maps, that is
	\[
	(1_\C\circ f)\Delta_C = \Delta_Df
	\]
	on $C$.
\end{definition}

There is no analogue to \cref{prop:P-alg is map P->End} since it is not possible to define an "endomorphism cooperad" playing the dual role of $\End_V$.

\medskip

Let $\C$ be a reduced, coaugmented cooperad, and let $C$ be a $\C$-coalgebra. We denote the image of $x\in C$ under the decomposition map $\Delta_C$ by
\[
\Delta_C(x) = (x_1,x_2,\ldots)\in\prod_{n\ge1}\left(\C(n)\otimes C^{\otimes n}\right)^{\S_n}.
\]
There is a canonical ascending filtration on $C$ given by
\[
\F^0_{\C}C\coloneqq 0\ ,\quad\text{and}\quad\F^n_{\C}C\coloneqq\{x\in C\mid x_k=0\text{ for all }k>n\}
\]
for $n\ge1$. It is called the \emph{coradical filtration}\index{Coradical filtration} of the coalgebra.

\begin{remark}
	If one works more in general with connected weight graded cooperads instead of reduced cooperads, then one must be a little more careful when defining the coradical filtration, see e.g. \cite[Def. 2.2]{val14}. What one desires in the end, is that the coproduct $\Delta_C$ is a morphism of connected weighted cooperads from $C$ to $\C(C)$ endowed with the induced connected weight grading.
\end{remark}

\begin{definition}
	A $\C$-coalgebra $C$ is \emph{conilpotent}\index{Conilpotent coalgebra} if the canonical filtration is exhaustive, that is
	\[
	C = \colim_n\F^n_\C C
	\]
	as $\C$-coalgebras. Conilpotent $\C$-coalgebras form a full subcategory of the category of $\C$-coalgebras and their morphisms, which we will denote by $\Ccog$.
\end{definition}

\begin{remark}
	In the rest of this work, we will always work with conilpotent coalgebras, and never with non-conilpotent ones. For this reason, we will sometimes omit the adjective conilpotent when talking about coalgebras, and simply say $\C$-coalgebras when we really mean conilpotent $\C$-coalgebras.
\end{remark}

The coradical filtration is well-behaved with respect to the coproduct.

\begin{lemma}\label{lemma:property of coradical filtration}
	Let $C$ be a conilpotent $\C$-coalgebra. For each $n\ge0$, we have
	\[
	\Delta_C(\F_\C^nC)\subseteq\bigoplus_{\substack{k\ge1\\n_1+\cdots+n_k=n}}\left(\C(k)\otimes\F_\C^{n_1}C\otimes\cdots\otimes\F_\C^{n_k}C\right)^{\S_k}\ .
	\]
\end{lemma}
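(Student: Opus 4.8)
The plan is to deduce the statement from the coassociativity of the decomposition map together with the injectivity of $\Delta_C$. Fix $x\in\F^n_{\C}C$ and write $\Delta_C(x)=\sum_{k\ge1}\omega_k$ with $\omega_k=\pi_k\Delta_C(x)\in(\C(k)\otimes C^{\otimes k})^{\S_k}$, where $\pi_k$ denotes the projection onto the arity-$k$ part of $\widehat{\C}(C)$. By the very definition of the coradical filtration, $\omega_k=0$ for $k>n$, so it suffices to show that for each $1\le k\le n$ the invariant element $\omega_k$ lies in $\bigl(\C(k)\otimes\sum_{n_1+\cdots+n_k=n}\F^{n_1}_{\C}C\otimes\cdots\otimes\F^{n_k}_{\C}C\bigr)^{\S_k}$ — this being the intended reading of the arity-$k$ summand of the claimed right-hand side. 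Since $C$ is conilpotent, $\Delta_C(C)$ lies in the \emph{direct} sum $\bigoplus_l\pi_l\widehat{\C}(C)$, so all sums occurring below are finite.

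First I would feed $x$ into the coassociativity axiom $(1_{\C}\circ\Delta_C)\Delta_C=(\Delta_{\C}\circ1_C)\Delta_C$ and compare outer arities. On the left-hand side, the outer-arity-$k$ component is exactly $(1_{\C(k)}\otimes\Delta_C^{\otimes k})(\omega_k)$. On the right-hand side, $\Delta_{\C}$ decomposes an arity-$m$ cooperation into pieces whose inner arities sum to $m$; since $\omega_m=0$ for $m>n$, the outer-arity-$k$ component of the right-hand side is supported on tuples $(m_1,\dots,m_k)$ with $m_1+\cdots+m_k\le n$. Equating the two, I get that $(1_{\C(k)}\otimes\Delta_C^{\otimes k})(\omega_k)$ lies in $\C(k)\otimes\bigoplus_{m_1+\cdots+m_k\le n}\pi_{m_1}\widehat{\C}(C)\otimes\cdots\otimes\pi_{m_k}\widehat{\C}(C)$.

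Next I would upgrade this to a statement about $\omega_k$ itself by an elementary linear-algebra lemma. Recall that $\Delta_C$ is injective (its arity-$1$ component is, under $\C(1)=\k$, the identity of $C$, by the counit axiom) and that $\F^m_{\C}C=\Delta_C^{-1}\bigl(\bigoplus_{l\le m}\pi_l\widehat{\C}(C)\bigr)$ by definition. The lemma is: if $\iota\colon V\hookrightarrow\bigoplus_lG_l$ is a linear injection into a graded vector space and $\F^mV:=\iota^{-1}\bigl(\bigoplus_{l\le m}G_l\bigr)$, then $(\iota^{\otimes k})^{-1}\bigl(\bigoplus_{m_1+\cdots+m_k\le n}G_{m_1}\otimes\cdots\otimes G_{m_k}\bigr)=\sum_{m_1+\cdots+m_k\le n}\F^{m_1}V\otimes\cdots\otimes\F^{m_k}V$. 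One inclusion is immediate; for the other I would choose a basis of $V$ adapted to $\F^\bullet$ whose top-degree components $\pi_{\nu(e_i)}\iota(e_i)$ are linearly independent — possible because each induced map $\mathrm{gr}^m_{\F}V\hookrightarrow G_m$ is injective — and then run a top-total-degree (leading-term) argument on $\iota^{\otimes k}(z)$. Applying the lemma with $V=C$, $\iota=\Delta_C$, $G_l=\pi_l\widehat{\C}(C)$, and tensoring with the fixed space $\C(k)$, I obtain $\omega_k\in\C(k)\otimes\sum_{m_1+\cdots+m_k\le n}\F^{m_1}_{\C}C\otimes\cdots\otimes\F^{m_k}_{\C}C$. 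Since $\omega_k$ is $\S_k$-invariant and this subspace is $\S_k$-stable, $\omega_k$ lies in its invariants; finally, using $\F^{m_a}_{\C}C\subseteq\F^{m_a+j}_{\C}C$ for $j\ge0$ and that $k\ge1$, I enlarge one index to make the $m_a$ sum to exactly $n$, which gives the statement.

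I expect the third step to be the crux. It is genuinely false in general that $\iota^{\otimes k}(z)$ being concentrated in low graded degrees forces $z$ to be a sum of tensors of low-filtration elements; one needs $\F^\bullet V$ to be precisely the filtration induced by $\iota$ from the grading — equivalently, that $\mathrm{gr}_{\F}V$ embed into $\bigoplus_lG_l$ — which is what allows a basis with linearly independent leading terms and thereby rules out spurious cancellation in $\iota^{\otimes k}(z)$. In our setting this is exactly the role of the injectivity of the coproduct and of the defining description of the coradical filtration; coassociativity by itself only controls the iterated coproduct, which a priori could "see" less than the true coradical degrees of the $\omega_k$.
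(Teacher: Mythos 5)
Your proposal is correct and follows essentially the same route as the paper's proof: both apply coassociativity to $x$, compare the components indexed by two-level trees (outer arity $k$, inner arities $m_1,\dots,m_k$), and use the vanishing of $\Delta_C^m(x)$ for $m>n$ to force the components of $\Delta_C(x)$ of total coradical degree $>n$ to vanish. Your adapted-basis leading-term lemma is exactly the rigorous version of the step the paper leaves implicit (``that term lives in $\C(k)\otimes\F_\C^{\widetilde{n}_1}C\otimes\cdots\otimes\F_\C^{\widetilde{n}_k}C$ \dots which contradicts the relation''), so you have if anything filled a small gap rather than introduced one.
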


\begin{proof}
	The fact that we land in invariants is automatic, so that we can work in the non-symmetric case without loss of generality. The case $n=0$ is trivial. Let $n\ge1$ and $x\in\F^n_\C C$. The relation $(\Delta_\C\circ1_C)\Delta_C=(1_\C\circ\Delta_C)\Delta_C$ implies that, for every $m>n$, we have
	\begin{align*}
		0 =&\ (\Delta_\C\circ1_C)\Delta^m_C(x)\\
		=&\ \sum_{\substack{k\ge1\\n_1+\cdots+n_k=m}}(1_\C\circ(\Delta^{n_1}_C,\ldots,\Delta^{n_k}_C))\Delta^k_C(x)\ .
	\end{align*}
	Since $C$ is conilpotent, the last sum is finite. Every term has a different underlying tree, so that we must have
	\begin{equation}\label{eq:rel on coproducts}
	(1_\C\circ(\Delta^{n_1}_C,\ldots,\Delta^{n_k}_C))\Delta^k_C(x) = 0
	\end{equation}
	for each $k\ge1$ and $n_1+\cdots+n_k=m$. Now fix $k\ge1$ and suppose that $\Delta^k_C(x)$ has a term which is not in the correct space. Since $C$ is conilpotent, that term lives in
	\[
	\C(k)\otimes\F_\C^{\widetilde{n}_1}C\otimes\cdots\otimes\F_\C^{\widetilde{n}_k}C
	\]
	for some $\widetilde{n}_1+\cdots+\widetilde{n}_k > n$, which contradicts relation (\ref{eq:rel on coproducts}).
\end{proof}

\begin{proposition}\label{prop:cofree conil coalgebra}
	Let $\C$ be an operad and $V$ be a chain complex. Then the map
	\[
	\Delta_{\C(V)}\coloneqq\Delta_\C\circ1_V:\C(V)\longrightarrow(\C\circ\C)(V)\cong\C(\C(V))
	\]
	makes $\C(V)$ into a conilpotent $\C$-coalgebra. It is called the \emph{cofree conilpotent $\C$-coalgebra}\index{Cofree!$\C$-coalgebra} over $V$, and it satisfies the following universal property. For any morphism of chain complexes $g:C\to V$, where $C$ is a conilpotent $\C$-coalgebra, there exists a unique morphism of $\C$-coalgebras $\widetilde{g}:C\to \C(V)$ making the diagram
	\begin{center}
		\begin{tikzpicture}
		\node (a) at (0,0){$\C(V)$};
		\node (b) at (0,1.5){$C$};
		\node (c) at (2,0){$V$};
		
		\draw[->] (a) -- (c);
		\draw[->] (b) -- node[above right]{$f$} (c);
		\draw[->,dashed] (b) -- node[left]{$\widetilde{f}$} (a);
		\end{tikzpicture}
	\end{center}
	commute in $\chain$, where the map $\C(V)\to V$ is the canonical projection.
\end{proposition}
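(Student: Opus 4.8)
The statement is the conilpotent-coalgebra counterpart of \cref{prop:free algebra}, and my plan is to prove it in the same spirit, although it is worth stressing that it is \emph{not} a formal dualization: cofree conilpotent $\C$-coalgebras are genuinely different from duals of free $\P$-algebras, and in particular the conilpotency clause must be checked by hand. The one bit of bookkeeping that pervades the whole argument is the distinction between the Schur construction $\C(-)$, which is a direct sum over arities, and $\widehat{\C}(-)=\prod_n(\C(n)\otimes(-)^{\otimes n})^{\S_n}$, which is a product; conilpotency of the source coalgebra is exactly what forces expressions that a priori live in the product to land in the direct sum. First I would check that $\Delta_{\C(V)}=\Delta_\C\circ 1_V$ makes $\C(V)$ into a $\C$-coalgebra: coassociativity and the counit identity follow by applying $-\circ 1_V$ to the comonoid axioms of $\C$ in $(\Smod,\bar{\circ},I)$ together with associativity of the composite product, and since $\C$ is reduced, $\Delta_\C$ applied to an element of $\C(m)$ only produces components of outer arity $\le m$, so $\Delta_{\C(V)}$ indeed takes values in $\widehat{\C}(\C(V))$. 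For conilpotency, I would compute the coradical filtration: reducedness of $\C$ forces the reduced decomposition of an element of $\C(m)\otimes_{\S_m}V^{\otimes m}$ to have nonzero arity-$k$ component only for $k\le m$, with the arity-$m$ component nonzero, so that $\F^n_\C\C(V)=\bigoplus_{k\le n}\C(k)\otimes_{\S_k}V^{\otimes k}$, which is visibly exhaustive.

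For the universal property, given a chain map $g\colon C\to V$ out of a conilpotent $\C$-coalgebra $C$, I would define $\widetilde{g}\coloneqq\widehat{\C}(g)\circ\Delta_C$, where $\widehat{\C}(g)$ applies $g$ in every tensor slot. If $x\in\F^n_\C C$, then $\Delta_C(x)$ has only finitely many nonzero components, so $\widetilde{g}(x)$ lies in the direct sum $\C(V)$; since $C=\colim_n\F^n_\C C$, the map factors through $\C(V)$, and it is a chain map because $\Delta_C$ and $g$ are. The triangle $\proj\circ\widetilde{g}=g$ follows from the counit axiom for $C$, which says precisely that the arity-one component of $\Delta_C(x)$ is $\id\otimes x$; applying $g$ and projecting yields $g(x)$. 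That $\widetilde{g}$ is a morphism of $\C$-coalgebras is a consequence of coassociativity of $C$ together with naturality of $X\mapsto(\Delta_\C\circ 1_X)$: one computes $\Delta_{\C(V)}\widetilde{g}=(\Delta_\C\circ 1_V)\widehat{\C}(g)\Delta_C=\widehat{\C}\widehat{\C}(g)(\Delta_\C\circ 1_C)\Delta_C=\widehat{\C}\widehat{\C}(g)(1_\C\circ\Delta_C)\Delta_C=(1_\C\circ\widetilde{g})\Delta_C$.

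For uniqueness, suppose $h\colon C\to\C(V)$ is any $\C$-coalgebra map with $\proj\circ h=g$. I would apply $\widehat{\C}(\proj)$ to the defining identity $\Delta_{\C(V)}h=(1_\C\circ h)\Delta_C$. The right-hand side becomes $(1_\C\circ\proj h)\Delta_C=(1_\C\circ g)\Delta_C$, which is $\widetilde{g}$ seen inside $\widehat{\C}(V)$; the left-hand side becomes $h$ seen inside $\widehat{\C}(V)$, since $\widehat{\C}(\proj)\circ(\Delta_\C\circ 1_V)$ is the counit axiom $(1_\C\bar{\circ}\varepsilon_\C)\Delta_\C=1_\C$ of $\C$ evaluated on $V$ (recall that $\proj$ is $\varepsilon_\C(V)$). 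As the inclusion $\C(V)\hookrightarrow\widehat{\C}(V)$ is injective, this gives $h=\widetilde{g}$.

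I expect the only real obstacle to be bookkeeping rather than conceptual: at each step one must confirm that reducedness of $\C$ and conilpotency of $C$ keep all expressions inside the direct sum $\C(V)$ and not merely in the product $\widehat{\C}(V)$. The $\S_n$-equivariance is harmless in characteristic $0$ via the identification of invariants with coinvariants (\cref{subsect:invariants and coinvariants}), and all signs are forced by the Koszul sign rule.
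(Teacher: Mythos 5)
Your proof is correct and follows the standard argument one would expect here (the paper itself states this proposition without proof, as a recalled fact): coassociativity and the counit identity come from the comonoid axioms of $\C$ applied via $-\circ 1_V$, conilpotency from reducedness of $\C$, the extension is $\widetilde{g}=(1_\C\circ g)\Delta_C$ landing in the direct sum by conilpotency of $C$, and uniqueness follows by postcomposing the coalgebra-morphism identity with $1_\C\circ\proj$ and using the cooperadic counit axiom. Your attention to the distinction between $\C(-)$ and $\widehat{\C}(-)$ is exactly the point that needs care, and you handle it correctly.
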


\begin{lemma}\label{lemma:isomorphisms of C-coalgebras}
	Let $C,D$ be two conilpotent $\C$-coalgebras, and let $f:C\to D$ be a morphism of $\C$-coalgebras. Suppose that $f$ is an isomorphism of chain complexes. Then $f$ is an isomorphism of $\C$-coalgebras as well.
\end{lemma}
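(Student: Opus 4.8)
The plan is to mirror the proof of \cref{lemma:isomorphisms of P-algebras} almost verbatim, working directly with the decomposition maps rather than invoking any universal property. Let $g\colon D\to C$ be the inverse of $f$ in $\chain$; since $f$ is in particular a chain map, so is $g$. It then suffices to check that $g$ is a morphism of $\C$-coalgebras, i.e. that $(1_\C\circ g)\Delta_D=\Delta_C g$ as maps $D\to\widehat{\C}(C)$: once this is established, $f$ and $g$ are mutually inverse morphisms of $\C$-coalgebras, hence $f$ is an isomorphism in $\Ccog$.

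The verification is a short diagram chase. I would use three ingredients: the defining relation $(1_\C\circ f)\Delta_C=\Delta_D f$ of the morphism $f$; the functoriality of $h\mapsto 1_\C\circ h$, so that $(1_\C\circ g)(1_\C\circ f)=1_\C\circ(gf)$ and $1_\C\circ 1_C=1_{\widehat{\C}(C)}$; and the identities $fg=1_D$, $gf=1_C$. Then
\[
(1_\C\circ g)\Delta_D=(1_\C\circ g)\Delta_D\,(fg)=(1_\C\circ g)(\Delta_D f)\,g=(1_\C\circ g)(1_\C\circ f)\Delta_C\,g=(1_\C\circ(gf))\Delta_C\,g=\Delta_C g,
\]
which is precisely the commutation of $g$ with the decomposition maps.

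There is essentially no obstacle here; the only point worth a passing remark is that all the composites in the display are well defined and that $1_\C\circ(-)$ is indeed functorial on the morphisms in play, which is immediate from the level-wise description $(1_\C\circ h)(c;x_1,\ldots,x_n)=(c;h(x_1),\ldots,h(x_n))$ (up to Koszul signs) and does not even use conilpotency. Thus the proof is the formal dual of the one for $\P$-algebras.
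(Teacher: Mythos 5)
Your proof is correct and is exactly the argument the paper intends: the paper simply states that the proof is dual to that of \cref{lemma:isomorphisms of P-algebras} and leaves it to the reader, and your diagram chase is precisely that dualization carried out explicitly.
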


\begin{proof}
	The proof is dual to the proof of \cref{lemma:isomorphisms of P-algebras} and left to the reader.
\end{proof}

\begin{remark}
	One can also define coalgebras over an operad --- see \cite[Sect. 5.2.15]{LodayVallette} --- and algebras over a cooperad. For a modern treatment of these notions and the study of their homotopical behavior, see e.g. \cite{lgl18}.
\end{remark}

\subsection{Limits and colimits of (co)algebras}

The categories of algebras over operads and conilpotent coalgebras over cooperads are complete and cocomplete.

\begin{theorem}[{\cite[Sect. 1.6--7]{gj94}}]
	Let $\P$ be an operad, and let $\C$ be a cooperad.
	\begin{enumerate}
		\item The category $\Palg$ of $\P$-algebras admits all limits and colimits.
		\item The category $\Ccog$ of conilpotent $\C$-coalgebras admits all limits and colimits.
	\end{enumerate}
\end{theorem}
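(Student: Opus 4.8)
The plan is to use that $\Palg$ and $\Ccog$ are, respectively, the category of algebras over the monad $V\mapsto\P(V)$ on $\chain$ and the category of coalgebras over the comonad $V\mapsto\C(V)$ on $\chain$, and then to split the proof into an \emph{easy half} (limits of $\P$-algebras, colimits of conilpotent $\C$-coalgebras), handled by a creation-of-(co)limits argument, and a \emph{hard half} (colimits of $\P$-algebras, limits of conilpotent $\C$-coalgebras), handled by recognising the relevant (co)monad as finitary. First I would set up the two descriptions: the monadicity of $\Palg\to\chain$ is essentially \cref{prop:free algebra}, while for $\Ccog$ one uses \cref{prop:cofree conil coalgebra} together with the observation that the coassociativity and counit axioms over $\C(-)$ force conilpotence — because the coproduct of any element lands in the \emph{direct sum} $\C(C)$, hence is supported on finitely many arities, so every element sits in a finite stage of the coradical filtration.

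Next I would dispatch the easy half. For a diagram $D$ in $\Palg$ with underlying limit $L=\lim D(j)$ in $\chain$, the composites $\P(L)\to\P(D(j))\xrightarrow{\gamma_{D(j)}}D(j)$ form a cone over $D$ in $\chain$, so they induce a unique $\gamma_L\colon\P(L)\to L$; the monad-algebra axioms for $\gamma_L$ follow from the uniqueness clause of the universal property of $L$, and one then checks that $(L,\gamma_L)$ is the limit in $\Palg$. The assertion for colimits of conilpotent $\C$-coalgebras is the formal dual, using the comonadic description of $\Ccog$; in particular a colimit of conilpotent $\C$-coalgebras is conilpotent, with underlying chain complex the colimit of the underlying chain complexes. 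Since $\chain$ is bicomplete, this produces all limits in $\Palg$ and all colimits in $\Ccog$.

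The hard half is colimits of $\P$-algebras and limits of conilpotent $\C$-coalgebras, where the forgetful functor to $\chain$ does \emph{not} create the relevant (co)limits — for instance, the underlying chain complex of an infinite product of conilpotent $\C$-coalgebras need not admit a conilpotent structure. The key input is that the Schur endofunctor underlying both $\P(-)$ and $\C(-)$ preserves filtered colimits: writing $\P(V)=\bigoplus_n\P(n)\otimes_{\S_n}V^{\otimes n}$, each of $(-)^{\otimes n}$, $\P(n)\otimes_{\S_n}(-)$ and $\bigoplus_n$ commutes with filtered colimits. For $\Palg$ this makes $\P(-)$ a finitary monad on the locally presentable category $\chain$, so $\Palg$ is itself locally presentable and hence cocomplete; more concretely, one builds coequalizers of $\P$-algebras from the underlying coequalizer in $\chain$ (a finitary functor preserves reflexive coequalizers), coproducts as suitable reflexive coequalizers of free $\P$-algebras, and then all colimits from these. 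Dually, the comonad $\C(-)$ is accessible; since moreover every conilpotent $\C$-coalgebra is the filtered union of its sub-coalgebras of bounded size, $\Ccog$ is accessible, and being cocomplete by the previous step it is locally presentable, hence complete.

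I expect the only genuinely non-formal point — the main obstacle — to be this construction of limits of conilpotent $\C$-coalgebras: one must manufacture the correct conilpotent coalgebra representing the limit out of the ``too large'' naive candidate (roughly, by passing to its maximal conilpotent sub-coalgebra) and then verify its universal property. For the explicit constructions I would follow \cite[Sect.~1.6--7]{gj94}.
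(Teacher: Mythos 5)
The paper offers no proof of this statement: it is quoted wholesale from \cite[Sect.~1.6--7]{gj94}, so there is no in-text argument to compare yours against. Judged on its own terms, your plan is the standard one and is essentially correct. The identification of $\Ccog$ with coalgebras over the direct-sum comonad $\C(-)$ is the key reduction, and it is valid here precisely because the paper's cooperads are assumed conilpotent (and we are in characteristic $0$, so the invariants in $\widehat{\C}(C)$ may be traded for the coinvariants in $\C(C)$) --- this is also why the Remark following the theorem warns that the coalgebra half needs extra hypotheses on $\C$ in general. The creation of limits of algebras and of colimits of coalgebras by the forgetful functors, and the presentability arguments for the remaining (co)limits, all go through; the paper's own explicit descriptions of binary (co)products (e.g.\ \cref{prop:products of coalgebras}, which realises $C\times D$ as an equalizer inside the cofree conilpotent coalgebra $\C(C\oplus D)$) and its later citation of \cite[Lemma 4, 5 and Prop.~12]{leg16} for the presentability of $\Ccog$ confirm that this is the intended route.

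One justification needs repair: ``a finitary functor preserves reflexive coequalizers'' is false in general --- preservation of filtered colimits and of reflexive coequalizers are independent conditions. What saves your concrete construction is that the Schur functor $V\mapsto\bigoplus_n\P(n)\otimes_{\S_n}V^{\otimes n}$ preserves all \emph{sifted} colimits, since each $V\mapsto V^{\otimes n}$ is a colimit-preserving multifunctor precomposed with the diagonal, and both filtered colimits and reflexive coequalizers are sifted. With that fix, building coequalizers of $\P$-algebras on the underlying reflexive coequalizer and coproducts as reflexive coequalizers of free algebras is correct (and matches the coequalizer presentation of $A\sqcup B$ given in the paper). The remaining non-formal point is, as you identify, the construction of limits of conilpotent $\C$-coalgebras; your proposal to carve them out of the cofree conilpotent coalgebra on the underlying limit is exactly what the explicit formulas in the paper and in \cite{gj94} do.
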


\begin{remark}
	The category of $\P$-algebras is always complete and cocomplete, even when working over a field of positive characteristic. For conilpotent $\C$-coalgebras, one gets all limits and colimits provided one supposes that the cooperad $\C$ satisfies some additional properties.
\end{remark}

Here are some easily described limits and colimits of (co)algebras.

\medskip

\cref{prop:free algebra} tells us that we have a natural bijection
\[
\hom_\chain(V,\forget{A})\cong\hom_\Palg(\P(V),A)\ ,
\]
where $\forget{(-)}$ is the forgetful functor from $\P$-algebras to chain complexes. In other words, we have an adjunction
\[
\adjunction{\P}{\chain}{\Palg}{\forget{(-)}}\ .
\]
In particular, the forgetful functor preserves limits, since it is right adjoint.

\begin{proposition}
	Let $A,B$ be $\P$-algebras. The product $A\times B$\index{Product!of $\P$-algebras} of $A$ and $B$ has the product of the chain complexes $A$ and $B$ as underlying chain complex, which we identify with the direct sum $A\oplus B$, and the $\P$-algebra structure is given by the composite
	\[
	\gamma_{A\times B}\coloneqq\left(\P(A\oplus B)\xrightarrow{\proj_A\oplus\proj_B}\P(A)\oplus\P(B)\xrightarrow{\gamma_A\oplus\gamma_B}A\oplus B\right).
	\]
	Here $\proj_A:\P(A\oplus B)\to\P(A)$ is the canonical projection, and similarly for $\proj_B$.
\end{proposition}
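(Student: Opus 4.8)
The plan is to deduce everything from the free–forgetful adjunction between $\chain$ and $\Palg$ recalled just above. Because the forgetful functor $\forget{(-)}$ is a right adjoint it preserves all limits, and by the theorem above $\Palg$ is complete, so the product $A\times B$ exists; hence $\forget{(A\times B)}$ is the product of $\forget{A}$ and $\forget{B}$ in $\chain$. Since $\chain$ is additive this product is the direct sum $A\oplus B$, and the two structural projections $A\times B\to A$ and $A\times B\to B$ are, on underlying chain complexes, the canonical projections $\proj_A,\proj_B$ out of $A\oplus B$. This already pins down the underlying object and the two projections of the product; what remains is to identify its $\P$-algebra structure map.

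The point is that this structure map is forced. Any $\P$-algebra structure $\gamma\colon\P(A\oplus B)\to A\oplus B$ on $A\oplus B$ making both $\proj_A$ and $\proj_B$ into morphisms of $\P$-algebras must satisfy $\proj_A\gamma=\gamma_A(1_\P\circ\proj_A)$ and $\proj_B\gamma=\gamma_B(1_\P\circ\proj_B)$; and since $A\oplus B$ is the product of $A$ and $B$ in $\chain$, a chain map into it is determined by its two components, so these two equations determine $\gamma$ uniquely. It therefore suffices to check that the composite $\gamma_{A\times B}$ displayed in the statement satisfies them. This is a one-line diagram chase: writing $\pi_A\colon\P(A)\oplus\P(B)\to\P(A)$ for the biproduct projection, functoriality of the Schur functor gives $\pi_A\circ(\proj_A\oplus\proj_B)=1_\P\circ\proj_A$, while $\gamma_A\oplus\gamma_B$ being the biproduct of $\gamma_A$ and $\gamma_B$ gives $\proj_A\circ(\gamma_A\oplus\gamma_B)=\gamma_A\circ\pi_A$; composing these, $\proj_A\circ\gamma_{A\times B}=\gamma_A(1_\P\circ\proj_A)$, and symmetrically for $B$.

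It then remains only to know that the displayed composite genuinely is a $\P$-algebra structure, i.e.\ that it satisfies the associativity and unit axioms from the definition of a $\P$-algebra. This comes for free: the product $A\times B$ exists by the theorem above, and its structure map makes the two structural projections into morphisms of $\P$-algebras (projections always are), so by the uniqueness just established it coincides with the displayed composite — which is therefore a bona fide $\P$-algebra structure, and $A\oplus B$ equipped with it, together with $\proj_A$ and $\proj_B$, is the product $A\times B$. If one prefers a self-contained argument avoiding the existence theorem, one instead verifies the two axioms by hand — using that $\gamma_A$ and $\gamma_B$ satisfy them and that $\P(\P(A\oplus B))\to\P(A\oplus B)$ factors through $\P(\P(A))\oplus\P(\P(B))$ — and then checks the universal property directly by pairing the components of a cone of $\P$-algebras; this is routine. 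I expect no real difficulty in any of this; the only thing demanding care is keeping distinct the three maps all written $\proj_A$ — the chain projection $A\oplus B\to A$, its image $\P(A\oplus B)\to\P(A)$ under the Schur functor, and the biproduct projection $\P(A)\oplus\P(B)\to\P(A)$ — and recording the evident commuting squares relating them.
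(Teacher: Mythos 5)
Your argument is correct and complete: the forgetful functor preserves products because it is a right adjoint, the structure map on $A\oplus B$ is then forced by the requirement that the two chain-level projections be morphisms of $\P$-algebras, and the displayed composite visibly satisfies those two equations. The paper states this proposition without proof (it is quoted from the Getzler--Jones reference), so there is nothing to compare against; your route via the free--forgetful adjunction and the uniqueness of a map into a product is exactly the standard one and needs no repair.
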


In other words, the product of $A$ and $B$ is given by $A\oplus B$ with the $\P$-algebra structures of $A$ and $B$ on the two factors, and nothing mixing them, i.e. if we put some elements of $A$ and some elements of $B$ in some operation, we get $0$.

\medskip

The description of coproducts is only slightly more complicated.

\begin{proposition}
	Let $A,B$ be $\P$-algebras. The coproduct $A\sqcup B$\index{Coproduct!of $\P$-algebras} of $A$ and $B$ is
	\[
	A\sqcup B\coloneqq\P(A\oplus B)/\sim\ ,
	\]
	where the equivalence relation $\sim$ is generated by
	\begin{align*}
		p\otimes_{\S_n}(a_1\otimes\cdots\otimes a_n)\sim\gamma_A(p\otimes_{\S_n}(a_1\otimes\cdots\otimes a_n))\ ,\\
		p\otimes_{\S_n}(b_1\otimes\cdots\otimes b_n)\sim\gamma_B(p\otimes_{\S_n}(b_1\otimes\cdots\otimes b_n))
	\end{align*}
	for any $p\in\P(n)$, $a_i\in A$, and $b_i\in B$.
\end{proposition}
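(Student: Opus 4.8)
The plan is to exhibit $(A\sqcup B,f_A,f_B)$ and to verify the universal property of a coproduct in $\Palg$, leaning throughout on the universal property of the free $\P$-algebra (\cref{prop:free algebra}). First I would make the quotient precise: read $\sim$ as the smallest $\P$-algebra congruence on the free algebra $\P(A\oplus B)$ containing the displayed relations; equivalently, write $A\sqcup B=\P(A\oplus B)/\mathcal I$, where $\mathcal I\subseteq\P(A\oplus B)$ is the $\P$-ideal --- the sub-chain-complex stable under the differential and under plugging its elements into any slot of $\gamma_{\P(A\oplus B)}$ --- generated by the elements $p\otimes_{\S_n}(a_1\otimes\cdots\otimes a_n)-\gamma_A(p\otimes_{\S_n}(a_1\otimes\cdots\otimes a_n))$ and their $B$-counterparts. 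Then $A\sqcup B$ inherits a $\P$-algebra structure for which the projection $\pi\colon\P(A\oplus B)\to A\sqcup B$ is a morphism of $\P$-algebras, and $\pi$ is an epimorphism in $\Palg$ since it is surjective on underlying chain complexes.

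Next I would construct the structure maps. Let $\iota_A\colon A\to A\oplus B$ and $\iota_B\colon B\to A\oplus B$ be the inclusions of chain complexes and $\kappa\colon A\oplus B\to\P(A\oplus B)$ the canonical inclusion of generators, and set $f_A\coloneqq\pi\kappa\iota_A$ and $f_B\coloneqq\pi\kappa\iota_B$. A priori these are only chain maps. To see that $f_A$ is a morphism of $\P$-algebras, one unwinds the free structure: $\gamma_{\P(A\oplus B)}(p\otimes(\kappa\iota_A a_1,\ldots,\kappa\iota_A a_n))=p\otimes_{\S_n}(\iota_A a_1\otimes\cdots\otimes\iota_A a_n)$ by the unit axiom of $\P$; pushing this through $\pi$ and using the first family of relations, it becomes the class of $\gamma_A(p\otimes_{\S_n}(a_1\otimes\cdots\otimes a_n))$, which is exactly $f_A\gamma_A(p\otimes_{\S_n}(a_1\otimes\cdots\otimes a_n))$. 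Since such elements span $\P(A)$, this proves $f_A\gamma_A=\gamma_{A\sqcup B}(1_\P\circ f_A)$, and symmetrically for $f_B$.

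For the universal property, let $C$ be a $\P$-algebra with morphisms $g_A\colon A\to C$ and $g_B\colon B\to C$. The chain map $\varphi\colon A\oplus B\to\forget{C}$ determined by $\varphi\iota_A=\forget{g_A}$ and $\varphi\iota_B=\forget{g_B}$ extends, by \cref{prop:free algebra}, to a unique morphism of $\P$-algebras $\widetilde g\colon\P(A\oplus B)\to C$. Because $\widetilde g$ restricts to $g_A$ on $A$ and to $g_B$ on $B$, and both are $\P$-morphisms, $\widetilde g$ sends every generator of $\mathcal I$ to $0$; being a morphism of $\P$-algebras and a chain map, it then annihilates all of $\mathcal I$, hence factors as $\widetilde g=h\pi$ for a unique morphism of $\P$-algebras $h\colon A\sqcup B\to C$ (uniqueness because $\pi$ is epi), and $hf_A=g_A$, $hf_B=g_B$ by construction. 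Conversely, any $\P$-morphism $h'\colon A\sqcup B\to C$ with $h'f_A=g_A$ and $h'f_B=g_B$ has $h'\pi$ extending the same $\varphi$, so $h'\pi=\widetilde g=h\pi$ by the uniqueness clause of \cref{prop:free algebra}, whence $h'=h$. This establishes that $A\sqcup B$ with $f_A,f_B$ is the coproduct.

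The main obstacle is purely bookkeeping and lives in the second paragraph: one must carry the identification $\P(\P(A\oplus B))\cong(\P\circ\P)(A\oplus B)$, together with the attendant Koszul signs from permuting tensor factors, through the computation that $f_A$ and $f_B$ are $\P$-algebra maps, and --- if one wants to be fully rigorous about the quotient --- check directly that the $\P$-ideal $\mathcal I$ is stable under the differential and under operadic composition in every slot, which uses the associativity axiom of $\P$. None of this is conceptually difficult; it is simply where the verification has to be done carefully.
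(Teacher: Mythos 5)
Your proof is correct. The paper does not actually prove this proposition: it states it, adds an informal gloss, and then records the equivalent presentation of $A\sqcup B$ as a coequalizer of $(\P\circ_{(1)}\P)(A\oplus B)\rightrightarrows\P(A\oplus B)$, so there is no argument to compare yours against step by step. Your quotient-by-an-operadic-ideal construction is precisely the explicit, elementwise form of that coequalizer, and your verification of the universal property via \cref{prop:free algebra} is the standard one: the key points — that the ideal generated by the displayed relations is a sub-chain complex (because $\gamma_A$ and $\gamma_B$ are chain maps) and that the kernel of the induced morphism $\widetilde g$ is an operadic ideal (by multilinearity of the composition map), so that $\widetilde g$ kills all of $\mathcal I$ once it kills the generators — are both present and correctly justified. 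The only thing I would flag is cosmetic: since the coproduct in $\Palg$ is in general a coequalizer of the two maps $\gamma_\P\circ 1_{A\oplus B}$ and $1_\P\circ_{(1)}(\gamma_A+\gamma_B)$ computed in $\Palg$ rather than in $\chain$, it is worth one sentence noting that reflexive coequalizers of $\P$-algebras are created by the forgetful functor (or, as you do, that the underlying chain complex of the quotient by the operadic ideal already satisfies the universal property), so that the displayed formula really computes the coproduct on underlying chain complexes.
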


In other words, the coproduct of $A$ and $B$ is given by a copy of $A$, one copy of $B$, plus all the elements generated freely by mixing elements of $A$ and $B$ and acting with operations from the operad $\P$. Another useful way to express the coproduct of two algebras is as the coequalizer in chain complexes
\begin{center}
	\begin{tikzpicture}
		\node (a) at (0,0){$(\P\circ_{(1)}\P)(A\oplus B)$};
		\node (b) at (6,0){$\P(A\oplus B)$};
		\node (c) at (9,0){$A\sqcup B$};
		
		\draw[->,transform canvas={yshift=3pt}] (a) to node[above]{$\gamma_\P\circ1_{A\oplus B}$} (b);
		\draw[->,transform canvas={yshift=-1pt}] (a) to node[below]{$1_\P\circ_{(1)}(\gamma_A+\gamma_B)$} (b);
		\draw[->] (b) to (c);
	\end{tikzpicture}
\end{center}
where
\[
\gamma_A+\gamma_B:\P(A\oplus B)\longrightarrow A\oplus B
\]
is given by $\gamma_A$ on $\P(A)$, by $\gamma_B$ on $\P(B)$, and by zero on anything containing mixed terms. The $\P$-algebra structure is the one induced by the $\P$-algebra structure of $\P(A\oplus B)$.

\medskip

The situation for cooperads is essentially dual. By \cref{prop:cofree conil coalgebra}, we have the adjunction
\[
\adjunction{\forget{(-)}}{\Ccog}{\chain}{\C}
\]
between the forgetful functor $\forget{(-)}$ and the cofree conilpotent coalgebra functor $\C$. In particular, the forgetful functor preserves colimits.

\begin{proposition}
	Let $C,D$ be conilpotent $\C$-coalgebras. The coproduct $C\sqcup D$\index{Coproduct!of $\C$-coalgebras} of $C$ and $D$ is the conilpotent $\C$-coalgebra
	\[
	C\sqcup D\coloneqq C\oplus D
	\]
	with structure map
	\[
	\Delta_{C\sqcup D}\coloneqq\left(C\oplus D\xrightarrow{\Delta_C\oplus\Delta_D}\C(C)\oplus\C(D)\xhookrightarrow{\quad}\C(C\oplus D)\right).
	\]
\end{proposition}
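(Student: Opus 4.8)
The plan is to exploit the fact, recalled just above, that the forgetful functor $\forget{(-)}\colon\Ccog\to\chain$ is a left adjoint (to the cofree conilpotent coalgebra functor $\C$) and hence preserves colimits; this is dual to the situation making the \emph{product} of two $\P$-algebras have underlying chain complex $A\oplus B$. Since $\Ccog$ is cocomplete, the coproduct $C\sqcup D$ exists, and applying $\forget{(-)}$ shows that its underlying chain complex is $C\oplus D$, with the two structure morphisms of the coproduct carried to the canonical inclusions $i_C\colon C\hookrightarrow C\oplus D$ and $i_D\colon D\hookrightarrow C\oplus D$ in $\chain$. It therefore remains only to pin down the coalgebra structure map $\Delta_{C\sqcup D}$ on $C\oplus D$, which I would do using that $i_C$ and $i_D$ are morphisms of $\C$-coalgebras.

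First I would record that, $i_C$ being a morphism of $\C$-coalgebras, $\Delta_{C\sqcup D}\,i_C=(1_\C\circ i_C)\,\Delta_C$, and similarly for $i_D$. The map $1_\C\circ i_C\colon\C(C)\to\C(C\oplus D)$ --- that is, the functor $\C$ applied to $i_C$ --- is precisely the evident inclusion, sending a class $c\otimes_{\S_n}(c_1\otimes\cdots\otimes c_n)$ to the same expression read inside $\C(C\oplus D)$; likewise for $i_D$. Since $C\oplus D$ is spanned by $i_C(C)$ and $i_D(D)$ and $\Delta_{C\sqcup D}$ is linear, these two identities determine $\Delta_{C\sqcup D}$ completely, and it must equal the composite
\[
C\oplus D\xrightarrow{\ \Delta_C\oplus\Delta_D\ }\C(C)\oplus\C(D)\xhookrightarrow{\quad}\C(C\oplus D),
\]
which is the formula in the statement. (Concretely: $C\sqcup D$ contains a copy of $C$ and a copy of $D$ with their own decompositions, and any cooperation produces only terms lying entirely in $C$ or entirely in $D$, dual to the description of the product of two $\P$-algebras.) For readers preferring to avoid the abstract existence result, one can instead check everything directly: that the displayed $\Delta_{C\sqcup D}$ is coassociative and counital and makes $C\oplus D$ conilpotent, its coradical filtration being $\F^n_\C(C\oplus D)=\F^n_\C C\oplus\F^n_\C D$ (exhaustive since each summand's is), using the naturality of $\Delta_\C,\varepsilon_\C$ and the comonad axioms behind \cref{prop:cofree conil coalgebra}; that $i_C,i_D$ are then $\C$-coalgebra morphisms; and that the universal property holds, since for $\C$-coalgebra morphisms $f\colon C\to E$, $g\colon D\to E$ the unique chain map $h=(f,g)\colon C\oplus D\to E$ with $h\,i_C=f$, $h\,i_D=g$ satisfies $(1_\C\circ h)\,\Delta_{C\sqcup D}=\Delta_E\,h$ after precomposing with each of $i_C$ and $i_D$ (by functoriality of $\C$ and because $f,g$ are coalgebra morphisms), hence on all of $C\oplus D$.

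The only genuinely delicate point is the bookkeeping with the functor $\C$ and the invariants $(\C(n)\otimes(-)^{\otimes n})^{\S_n}$: one must verify that $1_\C\circ i_C$ really is the evident inclusion $\C(C)\hookrightarrow\C(C\oplus D)$ and that no ``mixed'' terms appear, so that the image of $\Delta_C\oplus\Delta_D$ does land in the sub-object $\C(C)\oplus\C(D)\subseteq\C(C\oplus D)$. Everything else is routine diagram chasing, dual to the already-recorded computation of products of $\P$-algebras.
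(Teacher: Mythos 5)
Your argument is correct and follows exactly the route the paper intends: the proposition is stated immediately after the observation that the forgetful functor $\forget{(-)}$ is left adjoint to the cofree conilpotent coalgebra functor and hence preserves colimits, which is precisely your first step, and the identification of $\Delta_{C\sqcup D}$ from the requirement that $i_C$ and $i_D$ be coalgebra morphisms is the standard way to pin down the structure map. The paper leaves this verification implicit, so your write-up simply makes explicit what the text takes for granted.
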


The product of two conilpotent $\C$-coalgebras $C$ and $D$ is a sub-coalgebra of $\C(C\oplus D)$, which we can express as an equalizer.

\begin{proposition}\label{prop:products of coalgebras}
	Let $C,D$ be conilpotent $\C$-coalgebras. The product $C\times D$\index{Product!of $\C$-coalgebras} of $C$ and $D$ is the conilpotent $\C$-coalgebra given by the equalizer
	\begin{center}
		\begin{tikzpicture}
			\node (a) at (0,0){$C\times D$};
			\node (b) at (3,0){$\C(C\oplus D)$};
			\node (c) at (9,0){$(\C\circ_{(1)}\C)(C\oplus D)$};
			
			\draw[->] (a) to (b);
			\draw[->,transform canvas={yshift=3pt}] (b) to node[above]{$\Delta_{(1)}\circ1_{C\oplus D}$} (c);
			\draw[->,transform canvas={yshift=-1pt}] (b) to node[below]{$1_\C\circ(\Delta_C+\Delta_D)$} (c);
		\end{tikzpicture}
	\end{center}
	where
	\[
	\Delta_C+\Delta_D:C\oplus D\xrightarrow{\Delta_C\oplus\Delta_D}\C(C)\oplus\C(D)\xhookrightarrow{\quad}\C(C\oplus D)\ ,
	\]
	with the $\C$-coalgebra structure induced by the $\C$-coalgebra structure of $\C(C\oplus D)$.
\end{proposition}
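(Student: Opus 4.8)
The plan is to verify directly that the proposed equalizer has the correct universal property, using the dual of the coproduct-of-algebras description and the adjunction between the forgetful functor and the cofree conilpotent coalgebra functor. First I would recall that, since $\forget{(-)}\colon\Ccog\to\chain$ is a right adjoint (to $\C$ by \cref{prop:cofree conil coalgebra}), it preserves all limits; hence a product of $C$ and $D$ in $\Ccog$, if it exists with the stated formula, must have underlying chain complex the equalizer of the two displayed maps computed in $\chain$. The two parallel maps $\Delta_{(1)}\circ1_{C\oplus D}$ and $1_\C\circ(\Delta_C+\Delta_D)$ out of $\C(C\oplus D)$ are both morphisms of $\C$-coalgebras (the first because $\Delta_{(1)}$ comes from the cooperad structure, the second because it is $1_\C$ composed with a coalgebra map), so their equalizer in $\Ccog$ exists by the completeness statement quoted from \cite{gj94}, and — because the forgetful functor creates this limit — its underlying object is exactly the equalizer in $\chain$. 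This already identifies $C\times D$ as a sub-coalgebra of $\C(C\oplus D)$ with the induced structure map, so the only real content is to check that this equalizer \emph{is} the categorical product, i.e. that the two composites $C\times D\hookrightarrow\C(C\oplus D)\twoheadrightarrow\C(C)\xrightarrow{\text{proj}}C$ and similarly to $D$ serve as product projections.

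Next I would set up the candidate projections. Write $\pi\colon\C(C\oplus D)\to C\oplus D\to C$ for the composite of the canonical projection $\C(V)\to V$ (from \cref{prop:cofree conil coalgebra}) with the projection $C\oplus D\to C$, and let $p_C\colon C\times D\to C$ be its restriction along the equalizer inclusion; define $p_D$ analogously. Given a conilpotent $\C$-coalgebra $E$ with maps $f\colon E\to C$ and $g\colon E\to D$, I would use the universal property of the cofree coalgebra (\cref{prop:cofree conil coalgebra}) applied to the chain map $(f,g)\colon E\to C\oplus D$ to obtain a unique coalgebra morphism $\widetilde{(f,g)}\colon E\to\C(C\oplus D)$ with $\pi_{C\oplus D}\circ\widetilde{(f,g)}=(f,g)$. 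The key verification is then that $\widetilde{(f,g)}$ equalizes the two parallel maps, so that it factors (uniquely, since the equalizer inclusion is mono) through $C\times D$; this factorization is the desired map $E\to C\times D$, and one checks $p_C$ and $p_D$ recover $f$ and $g$ from it, with uniqueness following from uniqueness in \cref{prop:cofree conil coalgebra} together with monicity of the equalizer.

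The main obstacle — and the only step requiring a genuine computation — is showing that $\widetilde{(f,g)}$ equalizes $\Delta_{(1)}\circ1_{C\oplus D}$ and $1_\C\circ(\Delta_C+\Delta_D)$. Here one unwinds what $\widetilde{(f,g)}$ is explicitly: by the standard formula for the comparison map into a cofree conilpotent coalgebra it is built from the iterated coproduct $\Delta_E^{\mathrm{mon}}$ of $E$ followed by applying $(f,g)$ leafwise, landing in $\C(C\oplus D)$ via the conilpotent (coradical) filtration as controlled by \cref{lemma:property of coradical filtration}. Post-composing with $\Delta_{(1)}\circ1_{C\oplus D}$ performs one cooperadic decomposition at the root, while post-composing with $1_\C\circ(\Delta_C+\Delta_D)$ applies the coalgebra coproducts of $C$ and $D$ at the leaves; the coassociativity relation $(\Delta_\C\circ1)\Delta_\C=(1_\C\circ\Delta_\C)\Delta_\C$ for the cooperad, the coalgebra coassociativity $(\Delta_\C\circ1)\Delta_C=(1_\C\circ\Delta_C)\Delta_C$ for both $C$ and $D$, together with the fact that $f$ and $g$ are coalgebra morphisms and that there are no ``mixed'' terms (the $C$- and $D$-parts do not interact, exactly as in the dual coproduct-of-algebras picture), force the two composites to agree. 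I expect this to be a routine but notationally heavy diagram chase that is genuinely dual, term by term, to the verification that the displayed coequalizer computes $A\sqcup B$ for $\P$-algebras, so in the write-up I would phrase it as ``the proof is dual to the coproduct case'' after indicating the one place where conilpotence (via \cref{lemma:property of coradical filtration}) is used to guarantee the comparison map lands in the sub-coalgebra $C\times D$.
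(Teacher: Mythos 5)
Your overall strategy---exhibit the projections, lift a pair $(f,g)\colon E\to C\oplus D$ to the cofree coalgebra via \cref{prop:cofree conil coalgebra}, and check that the lift equalizes the two parallel maps using coassociativity and the fact that $f,g$ are coalgebra morphisms---is the right one, and the final computation you sketch is indeed the crux. (The paper itself states this proposition without proof, so there is nothing to compare against beyond the dual coequalizer presentation of $A\sqcup B$.)

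However, your opening step contains a genuine error that you should not let stand: the forgetful functor $\forget{(-)}\colon\Ccog\to\chain$ is the \emph{left} adjoint in the adjunction with the cofree coalgebra functor $\C$ (this is exactly what \cref{prop:cofree conil coalgebra} says: morphisms of coalgebras \emph{into} $\C(V)$ correspond to chain maps $\forget{C}\to V$), so it preserves colimits, not limits. If it preserved products, the underlying chain complex of $C\times D$ would be $C\oplus D$, which contradicts the very statement you are proving --- the product is a typically much larger sub-object of $\C(C\oplus D)$. The same confusion underlies your claim that the forgetful functor ``creates'' the equalizer: in general the chain-level kernel of the difference of two coalgebra morphisms need not be a subcoalgebra (the equalizer in $\Ccog$ is the largest subcoalgebra it contains), and moreover $(\C\circ_{(1)}\C)(C\oplus D)$ is not naturally a conilpotent $\C$-coalgebra, so the diagram should be read in $\chain$. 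What you actually need, and what replaces this step, is a direct verification that the chain-level equalizer $E=\ker\bigl((\Delta_{(1)}\circ1_{C\oplus D})-(1_\C\circ(\Delta_C+\Delta_D))\bigr)$ satisfies $\Delta_{\C(C\oplus D)}(E)\subseteq\C(E)$; this follows from coassociativity of $\Delta_\C$ and is precisely dual to checking that the image of the difference of the two maps defining $A\sqcup B$ is already an ideal of $\P(A\oplus B)$. Once that is in place, the rest of your argument (the lift $\widetilde{(f,g)}=(1_\C\circ(f,g))\Delta_E$ equalizes the two maps by coassociativity of $\Delta_E$ together with $f,g$ being coalgebra morphisms, and uniqueness follows from the cofree universal property plus monicity of the inclusion) goes through as you describe.
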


\subsection{Quadratic (co)operads}

Many classically arising operads and cooperads can be presented by generators and relations in the way we expose now. More details can be found in \cite[Sect. 7.1]{LodayVallette}.

\begin{definition}
	An \emph{operadic quadratic data}\index{Quadratic!data} $(E,R)$ is a graded $\S$-module $E$ (i.e. an $\S$-module where all the differentials are trivial) together with a graded sub-$\S$-module $R\subseteq\T(E)^{(2)}$ of the graded sub-$\S$-module $\T(E)^{(2)}$ of the tree module $\T(E)$ given by trees with $2$ vertices. The elements of $E$ are called the \emph{generating operations}, and the elements of $R$ the \emph{relations}. A \emph{morphism} of quadratic data
	\[
	f:(E,R)\longrightarrow(F,S)
	\]
	is a morphism $f:E\to F$ of $\S$-modules such that $\T(f)(R)\subseteq S$.
\end{definition}

One can associate an operad $\P(E,R)$ to an operadic quadratic data $(E,R)$ by
\[
\P(E,R)\coloneqq\T(E)/(R)\ ,
\]
where $(R)$ is the smallest operadic ideal of $\T(E)$ containing $R$. Equivalently, $\P(E,R)$ is universal among the quotient operads $\P$ of $\T(E)$ such that the composite
\[
(R)\xrightarrow{\mathrm{incl.}}\T(E)\xrightarrow{\mathrm{proj.}}\P
\]
is zero. An operad obtained in this way is called a \emph{quadratic operad}\index{Quadratic!operad}. If moreover $E$ is concentrated in arity $2$, one speaks of a \emph{binary quadratic operad}\index{Binary!quadratic operad}.

\medskip

Dually, one can associate a cooperad $\C(E,R)$ to an operadic quadratic data $(E,R)$. It is defined through an universal property dual to the one given above, but it doesn't have a nice presentation such as $\P(E,R)$. Explicitly, $\C(E,R)$ is such that for any sub-cooperad $\C$ of $\T^c(E)$ such that the composite
\[
\C\xrightarrow{\mathrm{incl.}}\T^c(E)\xrightarrow{\mathrm{proj.}}\T^c(E)^{(2)}/(R)
\]
is zero, there exists a unique morphism of cooperads $\C\to\C(E,R)$ such that
\[
\C\longrightarrow\C(E,R)\xrightarrow{\mathrm{incl.}}\T^c(E) = \C\xrightarrow{\mathrm{incl.}}\T^c(E)\ .
\]
Any morphism of quadratic data induces a morphism between the associated operads, respectively cooperads.

\subsection{Some classical (co)operads and (co)algebras over them}\label{subsection:three graces}

We will now give some examples of operads and cooperads encoding some classical types of algebras and coalgebras. All of them are induced by quadratic data. We begin by the \emph{three graces}\index{Three graces}, the operads $\com$, $\ass$, and $\lie$.

\medskip

To define the operad $\com$\index{Commutative!algebras}\index{$\com$}, we take the $\S$-module $E$ with $E(2) = \k\mu$ and $E(n)=0$ for all $n\neq2$, with $|\mu|=0$ and the trivial action of $\S_2$ on $\mu$, and the relations $R$ spanned by $\mu\circ(\id\otimes\mu)-\mu\circ(\mu\otimes\id)$. We set
\[
\com\coloneqq\P(E,R)\ .
\]
Notice that, because of the relations, we have $\com(n)=\k\mu_n$ for $n\ge2$, where $\mu_n$ is any composition of copies of $\mu$ giving an operation of arity $n$. All such compositions are equal in $\com$ because of the relations. The algebras over $\com$ are exactly the commutative (not necessarily unital) algebras, i.e. vector spaces $A$ together with a commutative, associative multiplication. Commutativity is encoded in the fact that the $\S_2$-action on $\mu$ is trivial, while associativity is given by $R$.

\medskip

For the operad $\ass$\index{Associative algebras}\index{$\ass$}, we take $E$ with $E(2)=\k[\S_2]=\k a_e\oplus\k a_{(12)}$ with $|a_e|=|a_{(12)}|=0$ and $\S$-action given by $a_e^{(12)}=a_{(12)}$, and $R$ spanned by $a_e\circ(\id\otimes a_e)-a_e\circ(a_e\otimes\id)$ and the elements of $\T(E)^{(2)}$ that can be obtained by it by $\S_3$-action. We set
\[
\ass\coloneqq\P(E,R)\ .
\]
The operad $\ass$ is spanned by operations $\{a_\sigma\mid\sigma\in\S_n\}$ in arity $n\ge2$, corresponding to the multiplication
\[
(x_1,\ldots,x_n)\longmapsto x_{\sigma(1)}\cdots x_{\sigma(n)}
\]
in an associative (not necessarily unital) algebra.

\medskip

There is a non-symmetric version of $\ass$, which is denoted by $\as$\index{$\as$}. It is also a binary quadratic ns operad. It is generated by a single binary operation $\mu_2$ in arity $2$ and has only one relation, given by $m\circ(m\otimes\id)-m\circ(\id\otimes m)$. The resulting operad is $1$-dimensional in every arity $n\ge1$, where it is spanned by the operation $\mu_n$ multiplying $n$ elements in the order they are given. We recover $\ass$ from $\as$ by tensoring by the regular representation $\k[\S]$ of the symmetric groups.

\medskip

The last one of the three graces is the operad $\lie$\index{Lie!algebras}\index{$\lie$}. In order to define it, we define $E$ by $E(2)=\k b$ with $|b|=0$ and the sign representation of $\S_2$, and $R$ spanned by $b\circ(b\otimes\id) + b\circ(b\otimes\id)^{(123)} + b\circ(b\otimes\id)^{(321)}$ and the other elements obtained from this one by $\S_3$-action. An algebra over $\lie$ is the usual notion of (differential graded) Lie algebra: the operation $b$ gives the Lie bracket, which is antisymmetric since $b$ carries the sign representation, and $R$ gives the Jacobi rule.

\medskip

The three graces are related by morphisms of operads
\[
\lie\stackrel{a}{\longrightarrow}\ass\stackrel{u}{\longrightarrow}\com\ ,
\]
all of them coming from morphisms of operadic quadratic data. The morphism $a:\lie\to\ass$ is the \emph{antisymmetrization morphism}\index{Antisymmetrization morphism}, and it is given by sending $b\in\lie(2)$ to $a_e-a_{(12)}\in\ass(2)$. This corresponds to the fact that if $A$ is an associative algebra, then we can see it as a Lie algebra by $[x,y]\coloneqq xy - (-1)^{xy}yx$. The morphism $u:\ass\to\com$ is induced by sending both $a_e$ and $a_{(12)}$ to $\mu$, and correspond to the fact that every commutative algebra is trivially an associative algebra.

\medskip

One can also define operads $\ucom$ and $\uass$ encoding unital commutative, respectively associative algebras simply by setting $E(0)=\k u$ with $|u|=0$ in both cases, and adding the relations $\mu\circ(u\otimes\id) = \id$, respectively $a_e\circ(u\otimes\id)$ and $a_{(12)}\circ(u\otimes\id)$ to $R$. Notice that those relations are now contained in $\T(E)^{(1)}\oplus\T(E)^{(2)}$, so we are not in the quadratic case anymore.

\medskip

On the coalgebra side, we give the example of conilpotent coassociative coalgebras in the non-symmetric setting. These coalgebras are encoded by the cooperad $\mathrm{coAs}$ which is dual to $\as$. In other words, take $E = E(2) = \k a^\vee$, and $R = a^\vee\circ(\id\otimes a^\vee) - a^\vee\circ(a^\vee\otimes\id)$. We set
\[
\mathrm{coAs}\coloneqq\C(E,R)\ .
\]
A conilpotent coalgebra over this cooperad is a vector space\footnote{Or graded vector space, or chain complex. Here we work over vector spaces to avoid signs.} $C$ together with a decomposition map
\[
\Delta:C\longrightarrow\C\otimes C\ ,
\]
corresponding to the binary part of the whole coproduct $\Delta_C$, such that
\[
(\Delta\otimes 1_C)\Delta(x) = (1_C\otimes\Delta)\Delta(x)
\]
and such that any iteration of $\Delta$ on any element eventually terminates (which corresponds to the fact of being conilpotent).

\subsection{Non-symmetric (co)algebras and (co)operads}

There is a version of (co)operads which codes (co)algebras without symmetries (such as commutativity of a multiplication, and so on). It is called \emph{non-symmetric}, or \emph{ns (co)operads}\index{Non-symmetric (co)operads}. It is given by replacing $\S$-modules by sequences of chain complexes without any group action --- also called \emph{arity graded chain complexes} --- and by forgetting all of the group actions in the definitions. For example the composite product of $M$ and $N$ in this context becomes
\[
(M\circ N)(k)\coloneqq\bigoplus_{\substack{i\ge0\\j_1+\cdots+j_i=k}}M(i)\otimes N(j_1)\otimes\cdots\otimes N(j_i)\ ,
\]
and similarly for all the rest. More details are given in \cite[Sect. 5.9]{LodayVallette}.

\medskip

One can always pass from the symmetric world to the ns world by forgetting the $\S$-actions, and go the other way around by tensoring by the regular representation of the symmetric groups. This gives a pair of adjoint functors.

\medskip

It is often true that results holding in the symmetric world are also true in the ns world, while results that are true in the ns world hold in the symmetric world \emph{in characteristic $0$}. This is because e.g. when making operads and cooperads (or algebras and coalgebras) interact, in the symmetric world one has to identify invariants and coinvariants, which one can only do in characteristic $0$. One possible way to go around this restriction is to work with operads with divided powers \cite[Sect. 5.2.9]{LodayVallette}.

\section{Operadic homological algebra}

In order to go on, we have to introduce the constructions and notations allowing us to do homological algebra in the operadic context, such as the correct notion of suspension, the infinitesimal composites corresponding in some sense to "derivatives" of the composite product, and so on.

\subsection{Operadic suspension}

Notice that, given an operad $\P$, there is no natural way to put an operad structure on $s\P$. However, there is a good way to suspend and desuspend (co)operads. We present it here, following \cite[Sect. 7.2.2]{LodayVallette}.

\medskip

Let $\susp\coloneqq\End_{s\k}$ be the endomorphism operad of the chain complex $s\k$, which is of dimension $1$ and concentrated in degree $1$. We have
\[
\susp(n) = \k\susp_n\ ,
\]
where $\susp_n$ is the linear map of degree $1-n$ sending $s^n$ to $s$ and carries the sign representation. We have
\[
\susp_n\circ_1\susp_m = \susp_{n+m-1}
\]
and thus
\[
\susp_n\circ_j\susp_m = (-1)^{(j-1)(1-m)}\susp_{n+m-1}\ .
\]
This determines most of the signs in operad theory, for example in minimal models for Koszul operads, see \cref{section:minimal models and Koszul duality}.

\begin{definition}
	Let $\P$ be an operad. The \emph{(operadic) suspension}\index{Operadic!suspension} of $\P$ is the operad $\susp\otimes\P$.
\end{definition}

In order to desuspend operads, we consider instead $\susp^{-1}\coloneqq\End_{s^{-1}\k}$. It behaves similarly to $\susp$, and we have an isomorphism of operads
\begin{equation}\label{eq:suspension desuspension}
	\susp^{-1}\otimes\susp\cong\End_\k
\end{equation}
with the unit for the Hadamard tensor product given by
\[
\susp^{-1}_n\otimes\susp_n\longmapsto(-1)^{\frac{n(n-1)}{2}}m_n\ ,
\]
where $m_n\in\End_\k(n)$ is the map sending $1_\k^{\otimes n}$ to $1_\k$. The sign can be found by
\begin{align*}
	(-1)^{\frac{n(n-1)}{2}} =&\ s^{-n}s^n\\
	=&\ (-1)^\epsilon\susp^{-1}_n\susp_ns^{-n}s^n\\
	=&\ (-1)^{\epsilon+(1-n)(-n)}\susp^{-1}_ns^{-n}\susp_ns^n\\
	=&\ (-1)^\epsilon s^{-1}s^n\\
	=&\ (-1)^\epsilon\ ,
\end{align*}
with the additional sign appearing in the third line comes from the Koszul sign rule. Explicitly, the composition in $\susp^{-1}$ is given by
\[
\susp^{-1}_n\circ_j\susp^{-1}_m = (-1)^{(j-1)(1-m)}\susp^{-1}_{n+m-1}\ .
\]
A straightforward computation shows that this is compatible with the isomorphism (\ref{eq:suspension desuspension}).

\begin{definition}
	Let $\P$ be an operad. The \emph{(operadic) desuspension}\index{Operadic!desuspension} of $\P$ is the operad $\susp^{-1}\otimes\P$.
\end{definition}

\begin{proposition}
	The structure of a $\P$-algebra on a chain complex $A$ is equivalent to the structure of a $\susp\otimes\P$-algebra on $sA$, respectively to the structure of a $\susp^{-1}\otimes\P$-algebra on $s^{-1}A$.
\end{proposition}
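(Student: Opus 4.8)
The plan is to deduce this from \cref{prop:P-alg is map P->End}, which already identifies $\P$-algebra structures on $A$ with operad morphisms $\P\to\End_A$, together with the invertibility of $\susp$ for the Hadamard tensor product. The key geometric input is a natural isomorphism of operads
\[
\End_{sA}\cong\susp\otimes\End_A\ ,
\]
and, dually, $\End_{s^{-1}A}\cong\susp^{-1}\otimes\End_A$.

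To set up this isomorphism, note first that on the level of $\S$-modules it is immediate: one has $\hom\bigl((sA)^{\otimes n},sA\bigr)\cong s^{1-n}\hom(A^{\otimes n},A)$, while $\susp(n)=\k\susp_n$ is one-dimensional and concentrated in degree $1-n$, so the underlying graded vector spaces agree arity by arity; moreover, under the identification $(sA)^{\otimes n}\cong s^{\otimes n}\otimes A^{\otimes n}$ the action of $\S_n$ permuting the $n$ input copies of $sA$ differs from the corresponding action on $\hom(A^{\otimes n},A)$ precisely by the sign representation obtained from permuting $n$ copies of the degree-one element $s$ via the Koszul sign rule --- and the sign representation is exactly what $\susp(n)$ carries. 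The real content of this step, and the part I expect to be the main obstacle, is checking that the two operad structures also agree: one must verify that the signs produced by sliding the formal (de)suspensions through the slots of a composite in $\End_{sA}$ match the structural signs $\susp_n\circ_j\susp_m=(-1)^{(j-1)(1-m)}\susp_{n+m-1}$ recorded above. Everything in this verification is a finite Koszul-sign computation, but it is the only genuinely non-formal point.

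Granting the isomorphism, the rest is bookkeeping of adjunctions. By \eqref{eq:suspension desuspension} one has $\susp^{-1}\otimes\susp\cong\End_\k$, the unit of the Hadamard tensor product, so the endofunctor $\susp\otimes(-)$ on operads is an equivalence with quasi-inverse $\susp^{-1}\otimes(-)$; in particular it induces a natural bijection
\[
\hom_\op(\P,\End_A)\xrightarrow{\ \cong\ }\hom_\op(\susp\otimes\P,\ \susp\otimes\End_A)\ .
\]
Composing with the isomorphism $\susp\otimes\End_A\cong\End_{sA}$ identifies the right-hand side with $\hom_\op(\susp\otimes\P,\End_{sA})$. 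Reading off both ends with \cref{prop:P-alg is map P->End}, the left-hand set is the set of $\P$-algebra structures on $A$ and the right-hand set is the set of $\susp\otimes\P$-algebra structures on $sA$; all the maps involved are natural in $A$, which gives the first equivalence. The desuspension statement follows verbatim, replacing $\susp$ by $\susp^{-1}$, $sA$ by $s^{-1}A$, and using $\End_{s^{-1}A}\cong\susp^{-1}\otimes\End_A$.

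As an alternative that avoids endomorphism operads altogether, one can argue directly that $(\susp\otimes\P)(sA)\cong s\,\P(A)$ naturally --- the suspension $\susp(n)$ absorbing the $n$ internal suspensions of $(sA)^{\otimes n}$ and its sign representation cancelling the Koszul signs coming from reordering them --- so that a structure map $(\susp\otimes\P)(sA)\to sA$ is literally the same datum as a structure map $\P(A)\to A$, and one then checks that the associativity and unit axioms correspond term by term. The sign bookkeeping is the same either way.
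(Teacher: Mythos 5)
Your proof is correct, but it takes a different route from the one in the text. The paper's own proof is the ``alternative'' you sketch at the end: it simply writes down the transferred structure map explicitly, $\gamma_{sA}(\susp_n p\otimes sa_1\otimes\cdots\otimes sa_n)\coloneqq(-1)^\epsilon s\,\gamma_A(p\otimes a_1\otimes\cdots\otimes a_n)$ with $\epsilon=np+\sum_{i=1}^n\sum_{j=1}^{i-1}a_j$ coming from the Koszul sign rule, and leaves the verification of the algebra axioms to the reader. Your primary argument instead factors everything through the operad isomorphism $\End_{sA}\cong\susp\otimes\End_A$ together with the fact that $\susp\otimes(-)$ is an autoequivalence of operads for the Hadamard product (since $\susp^{-1}\otimes\susp\cong\End_\k$ is the unit), and then reads off the statement from \cref{prop:P-alg is map P->End}. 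What your route buys is that the associativity and unit axioms never have to be checked by hand --- they come for free from functoriality --- and the sign bookkeeping is concentrated in a single, reusable isomorphism of operads; this is in fact exactly the form in which the paper later invokes the result in \cref{sect:shifted Loo-alg}. What the paper's route buys is an explicit closed formula for the suspended structure map, which is what one actually needs in computations. Your identification of the one non-formal step (matching the Koszul signs of $\End_{sA}$ against $\susp_n\circ_j\susp_m=(-1)^{(j-1)(1-m)}\susp_{n+m-1}$) is accurate, and leaving it as a finite sign check is no less complete than the paper's own ``we leave the rest to the reader.''
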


\begin{proof}
	The structure on $sA$ is given by
	\[
	\gamma_{sA}(\susp_np\otimes sa_1\otimes sa_n)\coloneqq(-1)^\epsilon s\gamma_A(p\otimes a_1\otimes\cdots\otimes a_n)
	\]
	with
	\[
	\epsilon = np + \sum_{i=1}^n\sum_{j=1}^{i-1}a_j
	\]
	coming from the Koszul sign rule. We leave the rest of the proof to the reader.
\end{proof}

For cooperads, one can endow the $\S$-module $\End_{s\k}$ with the cooperad structure dual to the operad structure of $\End_{s^{-1}\k}$ to obtain a cooperad $\susp^c$, and similarly a cooperad $(\susp^{-1})^c$, defining cooperadic suspension and desuspension.

\begin{definition}
	The cooperads $\susp^c$ and $(\susp^{-1})^c$ are defined by
	\[
	(\susp^c)^\vee\coloneqq\susp^{-1},\qquad\text{and}\qquad(\susp^{-1})^c\coloneqq\susp^\vee.
	\]
\end{definition}

One can explicitly describe the decomposition map by finding the signs in
\[
\Delta_{(\susp^{-1})^c}(\susp^{-1}_n) = \sum_{\substack{k\ge1\\n_1+\cdots+n_k=n}}(-1)^{\epsilon(k,n_1,\ldots,n_k)}\susp^{-1}_k\otimes\susp^{-1}_{n_1}\otimes\cdots\susp^{-1}_{n_k}
\]
by computing
\begin{align*}
	(-1)^{\epsilon(k,n_1,\ldots,n_k)} =&\ \left\langle\Delta_{(\susp^{-1})^c}(\susp^{-1}_n),\susp_k\otimes\susp_{n_1}\otimes\cdots\susp_{n_k}\right\rangle\\
	=&\ \left\langle\susp^{-1}_n,\gamma_{\susp}(\susp_k\otimes\susp_{n_1}\otimes\cdots\susp_{n_k})\right\rangle\ .
\end{align*}

\subsection{Connected weight gradings}

Sometimes, in order to make certain homological arguments, one needs an additional grading on (co)operads. This is encoded in the notion of \emph{connected weight graded}\index{Connected weight grading} (co)operads, see e.g. \cite[Sect. 1.7]{val14}. When working with reduced (co)operads, one can usually consider the grading given by putting in weight $w$ the elements of arity $w+1$. We will always implicitly consider this canonical additional grading on reduced (co)operads. Many results presented in this thesis for reduced (co)operads also hold in the context of connected weight graded (co)operads.

\subsection{Infinitesimal composites}

The composite product of $\S$-modules is linear in the first slot, i.e.
\[
(M_1\oplus M_2)\circ N\cong(M_1\circ N)\oplus(M_2\circ N)
\]
for $\S$-modules $M_1,M_2$ and $N$, but not in the second one. In order to do homological algebra, we need a version of the composite product which is linear also on the right. This is the material of \cite[Sect. 6.1]{LodayVallette}.

\medskip

Let $M,N_1$ and $N_2$ be three $\S$-modules. Then we can consider the sub-module $M\circ(N_1;N_2)$ of $M\circ(N_1\oplus N_2)$ where $N_2$ appears exactly once in each summand. This construction is linear in both $M$ and $N_2$. Notice that $M\circ(N;N)$ is not isomorphic to $M\circ N$, because in $M\circ(N_1;N_2)$ we always remember the position of the copy of $N_2$. However, we have an obvious forgetful map $M\circ(N;N)\to M\circ N$.

\begin{definition}
	Let $f:M_1\to M_2$ and $g:N_1\to N_2$ be two morphisms of $\S$-modules. The \emph{infinitesimal composite of morphisms}\index{Infinitesimal!composite!of morphisms} is defined as
	\[
	f\circ'g:M_1\circ N_1\longrightarrow M_2\circ(N_1;N_2)
	\]
	by
	\[
	(f\circ'g)(\mu;\nu_1,\ldots,\nu_n) = \sum_{i=1}^n(f(\mu);\nu_1,\ldots,g(\nu_i),\ldots,\nu_n)
	\]
	for $(\mu;\nu_1,\ldots,\nu_n)\in(M_1\circ N_1)(n)$.
\end{definition}

\begin{example}
	Let $M$ and $N$ be two $\S$-modules, then the differential of $M\circ N$ can be seen as
	\[
	d_{M\circ N} = d_M\circ1_N + 1_M\circ'd_N
	\]
	after applying the canonical forgetful map $M\circ(N;N)\to M\circ N$ to the last term.
\end{example}

\begin{definition}
	Let $M$ and $N$ be two $\S$-modules. The \emph{infinitesimal composite}\index{Infinitesimal!composite!of $\S$-modules} of $M$ and $N$ is the $\S$-module
	\[
	M\infcomp N\coloneqq M\circ(I;N)\ .
	\]
	If $f:M_1\to M_2$ and $g:N_1\to N_2$ are two morphisms of $\S$-modules, we define
	\[
	f\infcomp g:M_1\infcomp N_1\longrightarrow M_2\infcomp N_2
	\]
	is defined by
	\[
	(f\infcomp g)(\mu;\id,\ldots,\nu,\ldots,\id) = (f(\mu);\id,\ldots,g(\nu),\ldots,\id)
	\]
	for $\mu\in M_1$ and $\nu\in N_1$.
\end{definition}

Notice that we always have
\[
M\infcomp M\cong \T(M)^{(2)}.
\]

\begin{lemma}
	The infinitesimal composite product $\infcomp$ is linear in both variables.
\end{lemma}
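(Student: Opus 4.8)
The plan is to unwind the definition $M\infcomp N=M\circ(I;N)$ and to check that, in each variable separately, the asserted distributivity over direct sums is nothing but the additivity of the elementary operations out of which the composite product is built: arity-wise direct sums, induction of $\S$-representations, tensor products of chain complexes, and $(-)\otimes_{\S_i}(-)$. So no clever idea is needed; the content is bookkeeping.

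First I would record the explicit description of $M\circ(N_1;N_2)$ as the sub-$\S$-module of $M\circ(N_1\oplus N_2)$ spanned by the summands in which exactly one of the $i$ inputs carries an element of $N_2$ and the others carry elements of $N_1$: in arity $k$,
\[
M\circ(N_1;N_2)(k)\cong\bigoplus_{i\ge0}M(i)\otimes_{\S_i}\Bigg(\bigoplus_{\substack{j_1+\cdots+j_i=k\\1\le a\le i}}\Ind_{\S_{j_1}\times\cdots\times\S_{j_i}}^{\S_k}\big(N_1(j_1)\otimes\cdots\otimes N_2(j_a)\otimes\cdots\otimes N_1(j_i)\big)\Bigg),
\]
so that $M\infcomp N$ is this with $N_1=I$ (which, $I$ being concentrated in arity $1$, forces all the non-distinguished inputs to be labelled by the identity and the distinguished one to have arity $k-i+1$). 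For linearity in the left slot I would observe that $M$ enters only via the functor $M(i)\mapsto M(i)\otimes_{\S_i}(-)$ with $(-)$ independent of $M$; since $(M_1\oplus M_2)(i)=M_1(i)\oplus M_2(i)$ and $(A\oplus B)\otimes_{\S_i}C\cong(A\otimes_{\S_i}C)\oplus(B\otimes_{\S_i}C)$ naturally and $\S_k$-equivariantly, summing over $i$ gives $(M_1\oplus M_2)\infcomp N\cong(M_1\infcomp N)\oplus(M_2\infcomp N)$. For linearity in the right slot, the key point in the formula above (with $N_1=I$) is that the argument $N$ occupies a \emph{single} tensor slot in each summand; hence if $N=N'\oplus N''$ then $N(j_a)=N'(j_a)\oplus N''(j_a)$ and the multilinear factor splits into the two corresponding factors, only the $a$-th slot being affected. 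Since $\otimes$, $\Ind$, $(-)\otimes_{\S_i}(-)$ and all the direct sums are additive, the whole expression splits, yielding $M\infcomp(N'\oplus N'')\cong(M\infcomp N')\oplus(M\infcomp N'')$. The same computations go through verbatim for an arbitrary direct sum in either slot.

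Finally I would check that all of these are isomorphisms of $\S$-modules — automatic, since every $\S_k$-action in sight is the one induced from the $\S$-actions on $M$ and $N$, and the splitting $N=N'\oplus N''$ is $\S_i$-equivariant even though $\S_i$ permutes which slot is distinguished — and that on morphisms the isomorphisms identify $(f_1\oplus f_2)\infcomp g$ with $(f_1\infcomp g)\oplus(f_2\infcomp g)$ and $f\infcomp(g'\oplus g'')$ with $(f\infcomp g')\oplus(f\infcomp g'')$, which is immediate from $(f\infcomp g)(\mu;\id,\ldots,\nu,\ldots,\id)=(f(\mu);\id,\ldots,g(\nu),\ldots,\id)$. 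There is no genuine obstacle; the only step deserving attention is tracking the ``exactly one copy of $N_2$'' condition, which is precisely what guarantees that in the right variable a single tensor factor is touched and the splitting is clean — in contrast with the plain composite product $\circ$, which fails to be linear on the right exactly because there a factor of $N$ may occur in arbitrarily many slots simultaneously.
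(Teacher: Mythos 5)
Your proof is correct. The paper states this lemma without proof (it is part of the recollection of \cite[Sect.~6.1]{LodayVallette}), and your argument is exactly the verification the paper leaves implicit: left linearity is inherited from the left linearity of $\circ$, and right linearity follows because the ``exactly one copy of $N_2$'' condition in $M\circ(I;N_2)$ confines $N$ to a single tensor slot in each summand, so that $N=N'\oplus N''$ splits the expression cleanly with no cross terms --- correctly identifying why $\infcomp$ succeeds where $\circ$ fails on the right.
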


If $\P$ is an operad, we define the \emph{infinitesimal composition map}\index{Infinitesimal!composition map}
\[
\gamma_{(1)}:\P\infcomp\P\longrightarrow\P
\]
as the composite
\[
\P\infcomp\P = \P\circ(I;\P)\longrightarrow\P\circ(I\oplus\P)\xrightarrow{1_\P\circ(\eta_\P+1_\P)}\P\circ\P\xrightarrow{\gamma_\P}\P\ .
\]
It is simply the composition map $\gamma_\P$ restricted to the composition of two operations in $\P$.

\medskip

Dually, if $\C$ is a cooperad we define its \emph{infinitesimal decomposition map}\index{Infinitesimal!decomposition map}
\[
\Delta_{(1)}:\C\longrightarrow\C\infcomp\C
\]
by
\[
\C\xrightarrow{\Delta_\C}\C\circ\C\xrightarrow{1_\C\circ'1_\C}\C\circ(\C;\C)\xrightarrow{1_\C\circ(\varepsilon_\C;1_\C)}\C\circ(I;\C)=\C\infcomp\C\ .
\]
It can be seen as the decomposition of elements of $\C$ into two parts.

\subsection{Convolution operads}

Given a cooperad $\C$ and an operad $\P$, one can give a natural operad structure to $\hom(\C,\P)$. This was first proven in \cite[p. 3]{bm03} and is detailed in \cite[Sect. 6.4.1]{LodayVallette}.

\medskip

The composition map on $\hom(\C,\P)$ is defined as follows. Let $f\in\hom(\C(k),\P(k))$, and let $g_i\in\hom(\C(n_i),\P(n_i))$ for $1\le i\le k$. Then $\gamma_{\hom(\C,\P)}(f;g_1,\ldots,g_k)$ is given by the composite
\begin{align*}
	\C(n)&\xrightarrow{\Delta_\C}(\C\bar{\circ}\C)(n)\xrightarrow{\mathrm{proj.}}\C(k)\otimes\C(n_1)\otimes\cdots\otimes\C(n_k)\otimes\k[\S_n]\\
	&\xrightarrow{f\otimes g_1\otimes\cdots\otimes g_k\otimes1_{\k[\S_n]}}\P(k)\otimes\P(n_1)\otimes\cdots\otimes\P(n_k)\otimes\k[\S_n]\\
	&\xrightarrow{\mathrm{incl.}}(\P\circ\P)(n)\xrightarrow{\gamma_\P}\P(n)\ ,
\end{align*}
where $n=n_1+\cdots+n_k$. The projection map sends an element of $(\C\bar{\circ}\C)(n)$ to all its components that live in $\C(k)\otimes\C(n_1)\otimes\cdots\otimes\C(n_k)$, with possibly a permutation of the arguments encoded by an $(n_1,\ldots,n_k)$-shuffle. Moreover, the composite
\[
\C\xrightarrow{\varepsilon_\C}I\xrightarrow{\eta_\P}\P
\]
defines. the unit map $\eta_{\hom(\C,\P)}$.

\begin{theorem}
	Let $\C$ be a cooperad, and let $\P$ be an operad. With the maps described above, the $\S$-module $\hom(\C,\P)$ becomes an operad, which is called the \emph{convolution operad}\index{Convolution!operad} of $\C$ and $\P$.
\end{theorem}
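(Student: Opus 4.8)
The plan is to verify the operad axioms directly for the triple $(\hom(\C,\P),\gamma_{\hom(\C,\P)},\eta_{\hom(\C,\P)})$. The underlying $\S$-module structure and the $\S_n$-action are already in hand, and the excerpt already records that the unit map singles out the composite $\C\xrightarrow{\varepsilon_\C}I\xrightarrow{\eta_\P}\P$, so what must be shown is that $\gamma_{\hom(\C,\P)}$ is well defined and $\S$-equivariant, satisfies the two unit axioms, and is associative. The first move is to repackage $\gamma_{\hom(\C,\P)}$ structurally as a composite of precomposition with $\Delta_\C$, an ``external product'' of linear maps, and postcomposition with $\gamma_\P$. Writing
\[
\Theta\colon \hom(\C,\P)\circ\hom(\C,\P)\longrightarrow\hom\bigl(\C\bar\circ\C,\ \P\circ\P\bigr)
\]
for the natural map sending $(f;g_1,\ldots,g_k;\sigma)$ to the linear map that acts on the $(k;n_1,\ldots,n_k;\sigma)$-summand of $\C\bar\circ\C$ by $f\otimes g_1\otimes\cdots\otimes g_k$ into the corresponding summand of $\P\circ\P$, one has $\gamma_{\hom(\C,\P)}=\hom(\Delta_\C,\gamma_\P)\circ\Theta$, i.e. $\phi\mapsto\gamma_\P\,\Theta(\phi)\,\Delta_\C$. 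Well-definedness and equivariance are then immediate from the defining formulas of the composite and Hadamard products and from $\Delta_\C$, $\gamma_\P$ being morphisms of $\S$-modules: the shuffle data appearing in the formula for $\gamma_{\hom(\C,\P)}$ is exactly the indexing data of $\C\bar\circ\C$ and $\P\circ\P$, so nothing beyond naturality needs checking.

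For the unit axioms I would trace an element $c\in\C(n)$ through the definitions. To get $\gamma_{\hom(\C,\P)}(\eta;f)=f$: applying $\Delta_\C$ and keeping the $(1;n)$-component gives $\id\otimes c\in\C(1)\otimes\C(n)$ by the counit identity $(\varepsilon_\C\bar\circ 1_\C)\Delta_\C=1_\C$; applying $\eta\otimes f=1_\k\otimes f$ gives $1_\P\otimes f(c)$; and $\gamma_\P$ returns $f(c)$ by the left unit axiom of $\P$. Dually, $\gamma_{\hom(\C,\P)}(f;\eta,\ldots,\eta)=f$ follows from $(1_\C\bar\circ\varepsilon_\C)\Delta_\C=1_\C$ and the right unit axiom of $\P$. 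These are short once $\Theta$ is available.

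Associativity is the substantive step. Write $\gamma$ for $\gamma_{\hom(\C,\P)}$. Using the characteristic-zero identification $\bar\circ\cong\circ$ and the associativity constraint of the monoidal category $(\Smod,\circ)$, both iterated composites
\[
\gamma\bigl(\gamma(f;g_1,\ldots,g_k);h_1,\ldots,h_n\bigr)
\qquad\text{and}\qquad
\gamma\bigl(f;\gamma(g_1;\ldots),\ldots,\gamma(g_k;\ldots)\bigr)
\]
unwind to a single composite of the shape: iterated decomposition $\C\to\C\circ\C\circ\C$, then the triple external product of $f$, the $g_i$ and the $h_j$, then iterated composition $\P\circ\P\circ\P\to\P$. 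Equality of the two bracketings then reduces to (a) coassociativity of $\Delta_\C$, $(\Delta_\C\bar\circ 1_\C)\Delta_\C=(1_\C\bar\circ\Delta_\C)\Delta_\C$, which forces the two iterated decompositions to agree; (b) associativity of $\gamma_\P$, which forces the two iterated compositions to agree; and (c) naturality of $\Theta$ together with its coherence, i.e. that the two ways of building $\hom(\C\circ\C\circ\C,\P\circ\P\circ\P)$ out of $\Theta$ coincide. I expect (c) --- checking that the shuffle permutations threading the iterated decomposition of $\C$ and the iterated composition of $\P$ are literally the same on both sides --- to be the only real obstacle; (a) and (b) are given and everything else is formal bookkeeping. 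A clean way to organize this is to prove once and for all that $\hom(-,-)$, with structure map $\Theta$, is a lax monoidal functor of two variables from $(\Smod^{\mathrm{op}},\bar\circ)\times(\Smod,\circ)$ to $(\Smod,\circ)$, and then to invoke the general fact that such a functor carries a comonoid--monoid pair to a monoid; the operad axioms for $\hom(\C,\P)$ are then exactly the comonoid axioms of $\C$ together with the monoid axioms of $\P$.
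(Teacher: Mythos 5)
Your proposal is correct, and there is nothing in the paper to compare it against: the paper's own ``proof'' of this theorem is literally ``by inspection and left to the reader,'' so what you have written is precisely the verification being omitted. Your packaging of the composition map as $\phi\mapsto\gamma_\P\,\Theta(\phi)\,\Delta_\C$ and the reduction of the operad axioms to (co)associativity and (co)unitality of $\C$ and $\P$ via the lax monoidality of $\hom(-,-)$ is the standard clean route, and is in fact how the original reference \cite{bm03} frames the construction. The only place I would urge a little extra care is the well-definedness of $\Theta$: its source $\hom(\C,\P)\circ\hom(\C,\P)$ is a quotient by the $\S_k$-action permuting the $g_i$ together with the blocks, while its target mixes the invariants defining $\C\,\bar{\circ}\,\C$ with the coinvariants defining $\P\circ\P$, so one genuinely needs the characteristic-zero identification of \cref{subsect:invariants and coinvariants} already at this stage, not only in the associativity step --- but you flag this identification explicitly, so the gap is one of emphasis rather than substance.
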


\begin{proof}
	By inspection and left to the reader.
\end{proof}

Notice that if $\P$ is reduced, then so is $\hom(\C,\P)$.

\medskip

Now suppose that $C$ is a conilpotent $\C$-coalgebra, and that $A$ is a $\P$-algebra. Then we can endow the chain complex $\hom(C,A)$ with a $\hom(\C,\P)$-algebra structure as follows. Let $\phi\in\hom(\C,\P)(n)$, and let $f_1,\ldots,f_n\in\hom(C,A)$, then $\gamma_{\hom(C,A)}(\phi\otimes_{\S_n}(f_1\otimes\cdots\otimes f_n))$ is given by the composite
\begin{align*}
	C\xrightarrow{\Delta_C^n}\left(\C(n)\otimes C^{\otimes n}\right)^{\S_n}&\xhookrightarrow{\quad}\C(n)\otimes C^{\otimes n}\xrightarrow{\sum_{\sigma\in\S_n}(-1)^\epsilon\phi\otimes f_{\sigma(1)}\otimes\cdots\otimes f_{\sigma(n)}}\P(n)\otimes A^{\otimes n}\\
	&\xrightarrow{\quad}\mathrel{\mkern-14mu}\rightarrow\P(n)\otimes_{\S_n} A^{\otimes n}\xrightarrow{\gamma_A}A\ ,
\end{align*}
where $\epsilon$ is the Koszul sign appearing from the permutation of the $f_i$.

\begin{theorem}\label{thm:hom of algebras is algebras over hom}
	Let $\C$ be a cooperad, let $\P$ be an operad, let $C$ be a conilpotent $\C$-coalgebra, and let $A$ be a $\P$-algebra. With the structure described above, $\hom(C,A)$ is a $\hom(\C,\P)$-algebra, which we call the \emph{convolution algebra}\index{Convolution!algebra} of $C$ and $A$.
\end{theorem}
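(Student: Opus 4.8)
The plan is to use \cref{prop:P-alg is map P->End}: since $\hom(\C,\P)$ is an operad by the previous theorem, it suffices to package the composite described just before the statement into a morphism of operads
\[
\rho \colon \hom(\C,\P) \longrightarrow \End_{\hom(C,A)}, \qquad \rho(\phi)(f_1,\ldots,f_n) \coloneqq \gamma_{\hom(C,A)}\bigl(\phi \otimes_{\S_n}(f_1\otimes\cdots\otimes f_n)\bigr),
\]
and then to check that $\rho$ is well defined, $\S$-equivariant, unital and multiplicative. (Equivalently, one may verify the two $\hom(\C,\P)$-algebra axioms for $\gamma_{\hom(C,A)}$ directly; this is the same computation rearranged, and I would choose whichever packaging reads more cleanly.) So the first task is to dispatch the formal points.

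Well-definedness and equivariance are routine: the summation over $\S_n$ built into the formula, together with the fact that $\Delta_C^n$ factors through the invariants $(\C(n)\otimes C^{\otimes n})^{\S_n}$, guarantees that the assignment descends to $\hom(\C,\P)(n)\otimes_{\S_n}\hom(C,A)^{\otimes n}$ and transforms correctly under the $\S_n$-action on $\End_{\hom(C,A)}(n)$. For the unit, recall that $\eta_{\hom(\C,\P)}=\eta_\P\varepsilon_\C$; evaluating $\rho(\eta_\P\varepsilon_\C)$ on a map $f\colon C\to A$ produces the composite $C\xrightarrow{\Delta_C^1}\C(1)\otimes C\to\P(1)\otimes A\xrightarrow{\gamma_A}A$, which collapses to $f$ by the counit axiom for $\C$ (which identifies the arity-one component of $\Delta_C$ with the identity cooperation, acting as $\id\otimes 1_C$) together with the unit axiom $\gamma_A(\eta_\P\circ 1_A)=\gamma_A$ for the $\P$-algebra $A$.

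The substance of the proof is multiplicativity of $\rho$. Fixing $\phi\in\hom(\C,\P)(k)$, elements $\psi_i\in\hom(\C,\P)(n_i)$ and maps $f_{i,j}\colon C\to A$, I would expand both sides of
\[
\rho\bigl(\gamma_{\hom(\C,\P)}(\phi;\psi_1,\ldots,\psi_k)\bigr)(f_{1,1},\ldots,f_{k,n_k}) = \gamma_{\End_{\hom(C,A)}}\bigl(\rho(\phi);\rho(\psi_1),\ldots,\rho(\psi_k)\bigr)(f_{1,1},\ldots,f_{k,n_k})
\]
into explicit composites of maps. On the left, unrolling the definition of $\gamma_{\hom(\C,\P)}$ first applies $\Delta_C$, then $\Delta_\C$, then the $\psi_i$ and $\phi$, then $\gamma_\P$, and finally $\gamma_A$; on the right one iterates $\Delta_C$ and then nests the maps $\gamma_A$. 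The two are matched by using the coassociativity relation $(\Delta_\C\circ 1_C)\Delta_C=(1_\C\circ\Delta_C)\Delta_C$ of the $\C$-coalgebra $C$ to rewrite the single $\Delta_C$-then-$\Delta_\C$ as the two-step decomposition of $C$, and the associativity relation $\gamma_A(1_\P\circ\gamma_A)=\gamma_A(\gamma_\P\circ 1_A)$ of the $\P$-algebra $A$ to rewrite $\gamma_\P$-then-$\gamma_A$ as the nested $\gamma_A$. In other words, the convolution-operad structure on $\hom(\C,\P)$ is defined precisely so that these two structural identities, one on the coalgebra side and one on the algebra side, assemble into the operadic composition axiom for $\hom(C,A)$.

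The only genuine difficulty I expect is combinatorial and sign bookkeeping: the projection in the definition of $\gamma_{\hom(\C,\P)}$ records a shuffle $\sigma\in\Sh(n_1,\ldots,n_k)$, and one must check that this is exactly the reshuffling of the tensor factors $C^{\otimes n}$ produced by coassociativity of $\Delta_C$, and that the Koszul signs $\epsilon$ arising from permuting the $f_{i,j}$ on both sides, together with the signs implicit in $\gamma_{\End}$, cancel. Once the underlying trees on the two sides are matched this is purely mechanical, so — as for the convolution operad itself — I would carry out the identification of the two composites carefully and leave the detailed sign computation to the reader.
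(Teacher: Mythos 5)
Your proposal is correct and follows exactly the argument the paper has in mind: the paper's own proof is simply ``by inspection and left to the reader,'' and your reduction via \cref{prop:P-alg is map P->End} to a morphism of operads $\hom(\C,\P)\to\End_{\hom(C,A)}$, with coassociativity of $\Delta_C$ and associativity of $\gamma_A$ supplying the multiplicativity, is the standard inspection being alluded to. No gaps.
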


\begin{proof}
	By inspection and left to the reader.
\end{proof}

\subsection{Twisting morphisms}\label{subsect:operadic twisting morphisms}

A central notion for the theory of operads is that of twisting morphisms. More details on the subject can be found in \cite[Sect. 6.4]{LodayVallette}.

\medskip

To any reduced operad $\P$ it is possible to associate a (pre-)Lie algebra structure on the chain complex $\prod_{n\ge0}\P(n)$, see \cite[Sect. 5.4.3]{LodayVallette}. If the operad is a convolution operad $\hom(\C,\P)$ with $\C$ and $\P$ reduced\footnote{So that $\hom(\C,\P)$ is also reduced.}, then the pre-Lie product is simply given by the composite
\[
f\star g = \big(\C\xrightarrow{\Delta_{(1)}}\C\infcomp\C\xrightarrow{f\infcomp g}\P\infcomp\P\xrightarrow{\gamma_{(1)}}\P\big)
\]
for $f,g\in\prod_{n\ge0}\hom(\C,\P)(n)$, and it preserves the subspace of $\S$-equivariant maps, i.e. (possibly graded) morphisms of $\S$-modules.

\begin{definition}
	Let $\C$ be a reduced cooperad, and let $\P$ be a reduced operad. An \emph{(operadic) twisting morphism}\index{Twisting morphism!operadic} $\alpha:\C\to\P$ from $\C$ to $\P$ is a morphism of $\S$-modules of degree $-1$ satisfying the Maurer--Cartan equation
	\[
	\partial(\alpha) + \alpha\star\alpha = 0\ .
	\]
	We denote the set of twisting morphisms from $\C$ to $\P$ by $\Tw(\C,\P)$.
\end{definition}

Given a morphism of $\S$-modules $\alpha:\C\to\P$ of degree $-1$, we can consider the unique derivation $d_\alpha^r$ on $\C\circ\P$ extending 
\[
\C\xrightarrow{\Delta_{(1)}}\C\infcomp\C\xrightarrow{1_\C\infcomp\alpha}\C\infcomp\P\longrightarrow\C\circ\P\ ,
\]
cf. \cite[Prop. 6.3.9]{LodayVallette}. Explicitly, it is given by $d_\alpha^r = (1_\C\circ\gamma_\P)(((1_\C\circ\alpha)\Delta_{(1)})\circ1_\P)$. Define $d_\alpha\coloneqq d_{\C\circ\P}+d_\alpha^r$ on $\C\circ\P$. Dually, on $\P\circ\C$ we consider the unique derivation $d_\alpha^\ell$ extending
\[
\C\xrightarrow{\Delta_\C}\C\circ\C\xrightarrow{\alpha\circ1_\C}\P\circ\C
\]
and define $d_\alpha\coloneqq d_{\P\circ\C}+d_\alpha^\ell$.

\begin{lemma}
	On $\C\circ\P$ we have $d_\alpha^2 = d^r_{\partial(\alpha)+\alpha\star\alpha}$, and on $\P\circ\C$ we have $d_\alpha^2 = d^\ell_{\partial(\alpha)+\alpha\star\alpha}$. Therefore, in both cases we have that $\alpha\in\Tw(\C,\P)$ if, and only if $d_\alpha^2=0$.
\end{lemma}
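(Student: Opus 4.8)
The plan is to identify $d_\alpha^2$ and $d^r_{\partial(\alpha)+\alpha\star\alpha}$ as one and the same derivation of the quasi-free right $\P$-module $\C\circ\P$ (and, dually, of the quasi-free left $\P$-module $\P\circ\C$), thereby reducing everything to a comparison on the generators $\C\cong\C\circ I$.

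First I would record the structural facts that make this possible. Both $d_{\C\circ\P}$ and $d_\alpha^r$ are derivations of $\C\circ\P$ with respect to its right $\P$-module structure --- the former because $\gamma_\P$ is a chain map, the latter by its very construction, cf. \cite[Prop. 6.3.9]{LodayVallette} --- so $d_\alpha = d_{\C\circ\P}+d_\alpha^r$ is a derivation and hence so is $d_\alpha^2 = \tfrac12[d_\alpha,d_\alpha]$, a derivation of degree $-2$. Such a derivation is determined by its restriction to $\C\circ I\cong\C$; moreover, for any degree $-2$ map $\beta\colon\C\to\P$ the derivation $d^r_\beta$ restricts on $\C$ to $(1_\C\infcomp\beta)\Delta_{(1)}$ followed by the forgetful map $\C\infcomp\P\to\C\circ\P$, and the assignment $\beta\mapsto d^r_\beta$ is injective because $\Delta_{(1)}$ contains the ``cut off the top'' summand $\id\otimes c$, whose image under $1_\C\infcomp\beta$ recovers $\beta(c)$. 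Hence it suffices to show $d_\alpha^2\big|_\C = (1_\C\infcomp(\partial(\alpha)+\alpha\star\alpha))\Delta_{(1)}$.

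Now I would expand $d_\alpha^2\big|_\C = d_{\C\circ\P}^2\big|_\C + \big(d_{\C\circ\P}d_\alpha^r + d_\alpha^r d_{\C\circ\P}\big)\big|_\C + (d_\alpha^r)^2\big|_\C$. The first term vanishes. For the cross term I use that $d_\alpha^r|_\C = (1_\C\infcomp\alpha)\Delta_{(1)}$ lands in $\C\infcomp\P$, where $d_{\C\circ\P}$ acts as $d_\C\infcomp 1_\P + 1_\C\infcomp d_\P$, that $d_{\C\circ\P}|_\C = d_\C$, and that $\Delta_{(1)}$ is a morphism of chain complexes; after collecting terms the two contributions involving $d_\C$ on the bottom factor cancel by the Koszul sign rule, leaving $(1_\C\infcomp(d_\P\alpha+\alpha d_\C))\Delta_{(1)} = (1_\C\infcomp\partial(\alpha))\Delta_{(1)}$. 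For the quadratic term, applying the module derivation $d_\alpha^r$ twice to $c\in\C$ (the second application only hits the surviving $\C$-factor) and invoking the infinitesimal coassociativity of $\Delta_{(1)}$ together with the formula $\alpha\star\alpha = \gamma_{(1)}(\alpha\infcomp\alpha)\Delta_{(1)}$ recalled above, one gets $(1_\C\infcomp(\alpha\star\alpha))\Delta_{(1)}$. Thus $d_\alpha^2|_\C = d^r_{\partial(\alpha)+\alpha\star\alpha}|_\C$, and the two derivations coincide on all of $\C\circ\P$. The statement for $\P\circ\C$ follows the same pattern, using instead the left $\P$-module structure, the derivation $d^\ell_\beta$ extending $(\beta\circ1_\C)\Delta_\C$, and the coassociativity of $\Delta_\C$ in place of that of $\Delta_{(1)}$ (alternatively it can be deduced by a duality argument). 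The final equivalence is then immediate: since $\beta\mapsto d^r_\beta$ and $\beta\mapsto d^\ell_\beta$ are injective, $d_\alpha^2 = 0$ if and only if $\partial(\alpha)+\alpha\star\alpha = 0$, that is if and only if $\alpha\in\Tw(\C,\P)$.

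I expect the only genuine difficulty to be the bookkeeping of Koszul signs, concentrated in two places: the cancellation of the two $d_\C$-terms in the cross term, and the sign picked up when $d_\alpha^r$ is commuted past itself in the quadratic term, where one must stay consistent with the operadic suspension conventions and with \cref{subsection:intro Koszul sign rule}. Conceptually nothing is hard once derivations of quasi-free (co)modules are available; the risk lies entirely in the signs, and it is precisely to keep them under control that one checks the identity on generators rather than on a general element of $\C\circ\P$.
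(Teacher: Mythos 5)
Your proof is correct and is essentially the standard argument that this lemma is recalling from Loday--Vallette: since $d_\alpha^2=\tfrac12[d_\alpha,d_\alpha]$ is again a derivation of the quasi-free right $\P$-module $\C\circ\P$, one only checks the identity on the generators $\C$, where the cross terms assemble into $d^r_{\partial(\alpha)}$ (after the two $d_\C$-contributions cancel by Koszul signs) and $(d^r_\alpha)^2$ into $d^r_{\alpha\star\alpha}$ via infinitesimal coassociativity. The one point you leave implicit is that in $(d^r_\alpha)^2|_{\C}$ the summands where the two copies of $\alpha$ land in disjoint slots of the twice-decomposed cooperation must cancel in pairs, which they do precisely because $|\alpha|=-1$ is odd --- this is exactly the sign bookkeeping you flag at the end, so nothing essential is missing.
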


Thus, given a twisting morphism $\alpha\in\Tw(\C,\P)$, one can define two chain complexes
\[
\C\circ_\alpha\P\coloneqq\big(\C\circ\P,d_\alpha\big)\qquad\text{and}\qquad\P\circ_\alpha\C\coloneqq\big(\P\circ\C,d_\alpha\big)\ ,
\]
called the \emph{left} and \emph{right twisted composite product}\index{Twisted composite products} of $\C$ and $\P$ respectively. These constructions are functorial in the sense that if we have a commutative square
\begin{center}
	\begin{tikzpicture}
		\node (a) at (0,1.5){$\C$};
		\node (b) at (2,1.5){$\C'$};
		\node (c) at (0,0){$\P$};
		\node (d) at (2,0){$\P'$};
		
		\draw[->] (a) -- node[above]{$f$} (b);
		\draw[->] (a) -- node[left]{$\alpha$} (c);
		\draw[->] (b) -- node[right]{$\alpha'$} (d);
		\draw[->] (c) -- node[above]{$g$} (d);
	\end{tikzpicture}
\end{center}
where $\alpha,\alpha'$ are twisting morphisms and $f,g$ are a morphism of cooperads and a morphism of operad respectively, then
\[
f\circ g:\C\circ_\alpha\P\longrightarrow\C'\circ_{\alpha'}\P'
\]
is a chain map, and similarly for $g\circ f$.

\begin{theorem}[Comparison lemma for twisted composite products]
	Suppose that we are in the situation above.
	\begin{enumerate}
		\item If two among $f$, $g$ and $f\circ g:\C\circ_\alpha\P\longrightarrow\C'\circ_\alpha\P'$ are quasi-isomorphisms, then so is the third.
		\item Dually, if two among $f$, $g$ and $g\circ f:\P\circ_\alpha\C\longrightarrow\P'\circ_\alpha\C'$ are quasi-isomorphisms, then so is the third.
	\end{enumerate}
\end{theorem}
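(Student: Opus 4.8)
The plan is to prove statement (1) and then obtain (2) by the mirror argument, replacing $\C\circ\P$ by $\P\circ\C$ and $d_\alpha^r$ by $d_\alpha^\ell$; I will only indicate the changes at the end.

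First I would filter $\C\circ_\alpha\P$ and $\C'\circ_{\alpha'}\P'$ by the weight of the $\C$-label: put a generator $(\mu;\nu_1,\ldots,\nu_i)$, with $\mu\in\C(i)$ and $\nu_a\in\P(j_a)$, in $F_p$ precisely when $\operatorname{wt}(\mu)=i-1\le p$. In each arity and each homological degree this filtration is finite (the weight of the $\C$-label of an arity-$n$ generator lies between $0$ and $n-1$), so there is no convergence issue. The tensor differential $d_{\C\circ\P}=d_\C\circ 1_\P+1_\C\circ' d_\P$ preserves (indeed respects) this grading. The key computation is that $d_\alpha^r=(1_\C\circ\gamma_\P)\big(((1_\C\circ\alpha)\Delta_{(1)})\circ 1_\P\big)$ \emph{strictly} lowers the filtration: $\Delta_{(1)}$ decomposes $\mu$ into terms $\mu'\otimes\mu''$ with $\operatorname{wt}(\mu')+\operatorname{wt}(\mu'')=\operatorname{wt}(\mu)$, then $\mu''$ is sent into $\P$ by $\alpha$ and grafted onto the $\nu_a$'s, so the new $\C$-label is $\mu'$; the term with $\mu''=\id$ contributes $\alpha(\id)$, which vanishes since $\alpha$ has degree $-1$ while $\P(1)=\k$ sits in degree $0$, so every surviving term has $\operatorname{wt}(\mu')<\operatorname{wt}(\mu)$. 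Hence $d_\alpha=d_{\C\circ\P}+d_\alpha^r$ is a filtered differential whose twisting part dies on the associated graded, and $f\circ g$, which only changes labels without altering arities or weights, is a filtered chain map.

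Next I would identify the induced map on the associated spectral sequences. By the above, the associated graded complex is $(\C\circ\P,d_{\C\circ\P})$, so the operadic Künneth formula (\cref{thm:operadic Kunneth}, valid since $\k$ has characteristic $0$) gives $E^1\cong H_\bullet(\C)\circ H_\bullet(\P)$, and by naturality of the Künneth isomorphism the map induced by $f\circ g$ is $H_\bullet(f)\circ H_\bullet(g)$. Now comes the two-out-of-three. If $f$ and $g$ are quasi-isomorphisms, then $H_\bullet(f)$ and $H_\bullet(g)$ are isomorphisms, hence so is $H_\bullet(f)\circ H_\bullet(g)$, so $f\circ g$ induces an isomorphism on $E^1$; the standard comparison argument for spectral sequences of finite filtrations then shows $f\circ g$ is a quasi-isomorphism. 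If instead $g$ and $f\circ g$ are quasi-isomorphisms, identify $H_\bullet(\P)$ with $H_\bullet(\P')$ along $H_\bullet(g)$; then $H_\bullet(f)\circ\id$ is an isomorphism, and since this map acts diagonally with respect to the decomposition of the composite product indexed by two-level trees (as $H_\bullet(f)\otimes H_\bullet(g)^{\otimes i}$ on the piece with top vertex of arity $i$), its restriction to the summand of shape $(n;1,\ldots,1)$ — which is a copy of $H_\bullet(\C)(n)$ because $\P(1)=\k$ — is $H_\bullet(f)(n)$; thus $H_\bullet(f)$ is an isomorphism and $f$ is a quasi-isomorphism. The case where $f$ and $f\circ g$ are quasi-isomorphisms is symmetric: restrict instead to the summand of two-level trees with a single $\C$-input, which is a copy of $H_\bullet(\P)$ because $\C(1)=\k$, to conclude $H_\bullet(g)$ is an isomorphism.

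The step I expect to be the main obstacle is this purely formal two-out-of-three for the Künneth composite $H_\bullet(f)\circ H_\bullet(g)$: it is exactly here that reducedness of $\C$ and $\P$ enters, through the splitting-off of their arity-one components, and one must keep careful track of the fact that $f\circ g$ genuinely respects the two-level-tree decomposition. Everything else is bookkeeping: checking that $d_\alpha^r$ is filtered and degenerate on the associated graded, and that the filtration is finite in each bidegree. For statement (2) one argues identically on $\P\circ_\alpha\C$ with the analogous filtration by the weight of the $\C$-label, using that $d_\alpha^\ell$ — which extends $\C\xrightarrow{\Delta_\C}\C\circ\C\xrightarrow{\alpha\circ 1_\C}\P\circ\C$ — strictly lowers it for the same degree reason, and concluding again via \cref{thm:operadic Kunneth}.
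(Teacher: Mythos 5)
Your overall strategy --- filter by the weight of the $\C$-label so that the twisting part $d_\alpha^r$ of the differential becomes trivial on the associated graded, identify $E^1$ with $H_\bullet(\C)\circ H_\bullet(\P)$ via the operadic K\"unneth formula, and work arity by arity so the filtration is bounded --- is exactly the spectral-sequence argument the paper defers to (Loday--Vallette, Sect.~6.7, after Fresse). Your treatment of the first direction ($f$ and $g$ quasi-isomorphisms imply $f\circ g$ is one) is correct, including the observation that $\alpha$ vanishes on $\C(1)$ for degree reasons, so that $d_\alpha^r$ strictly lowers the filtration.

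The gap is in the two converse directions. From the hypotheses that, say, $g$ and $f\circ g$ are quasi-isomorphisms you assert that ``$H_\bullet(f)\circ\id$ is an isomorphism'', i.e.\ that the induced map on the $E^1$-page is an isomorphism. But the hypothesis on $f\circ g$ only tells you that the induced map on the \emph{abutment} is an isomorphism, and the implication ``isomorphism on the abutment $\Rightarrow$ isomorphism on $E^1$'' is the converse of the spectral-sequence comparison theorem, which is false in general. Your subsequent extraction of $H_\bullet(f)$ (resp.\ $H_\bullet(g)$) from the block-diagonal map $H_\bullet(f)\circ H_\bullet(g)$ by restricting to the two-level trees whose lower (resp.\ upper) vertices are labelled by the arity-one components is fine, but it only applies once you already know that the map on $E^1$ is an isomorphism --- which is precisely the point at issue. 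The correct argument is a Zeeman-type comparison carried out by induction on the weight $w$: in weight $w$ the summand $\C^{(w)}\circ I\cong\C^{(w)}$ of the associated graded is complemented by summands involving only the components of $\C$ of weight strictly less than $w$, so the inductive hypothesis together with a five-lemma or mapping-cone argument on the subquotients of the filtration forces $H_\bullet(f)$ to be an isomorphism in weight $w$. This induction is the actual content of the converse directions in the cited proof, and it is what your proposal omits.
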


The proof of this theorem uses a spectral sequence argument. It is found in \cite[Sect. 6.7]{LodayVallette} and it is based on \cite[Sect. 2]{fre04}.

\begin{remark}
	This result also holds for (co)operads that are not reduced, but in that case it is crucial to assume that everything is connected weight graded.
\end{remark}

\subsection{Operadic bar and cobar construction}\label{subsection:operadic bar and cobar}

It is natural ask whether the functors $\Tw(\C,-)$ and $\Tw(-,\P)$ are representable or not, and whether one can find canonical resolutions of operads and cooperads in general or not. Both questions are answered in the positive by the two constructions we present here, following \cite[Sect. 6.5]{LodayVallette}. The argument will be completed in \cref{sect:koszul twisting morphisms,sect:bar-cobar resolution of operads}.

\medskip

We start by defining the \emph{operadic bar construction}\index{Bar construction!operadic}, which is a functor
\[
\Bar:\augOp\longrightarrow\coaugCoop\ .
\]
Let $\P$ be an augmented operad. The functor $\Bar$ associates to $\P$ the quasi-free cooperad
\[
\Bar\P\coloneqq\big(\T^c(s\overline{\P}),d\coloneqq d_1+d_2\big)\ ,
\]
where $d_1$ is the natural differential induced on $\T^c(s\overline{\P})$ by the differential $d_\P$ of $\P$, and $d_2$ is defined as the unique coderivation extending the composite
\begin{align*}
	\T^c(s\overline{\P})&\xrightarrow{\mathrm{proj.}}\T^c(s\overline{\P})^{(2)}\cong(\k s\otimes\overline{\P})\infcomp(\k s\otimes\overline{\P})\\
	&\cong(\k s\otimes\k s)\otimes(\overline{\P}\infcomp\overline{\P})\xrightarrow{\susp_2\otimes\gamma_{(1)}}s\overline{\P}\ .
\end{align*}
Here we see $\susp_2$ as the operation sending $s^2$ to $s$. It is straightforward to prove that $d_2$ squares to zero and anticommutes with $d_1$, so that also $d_1+d_2$ squares to zero.

\begin{proposition}
	Suppose the characteristic of the base field $\k$ is $0$. The operadic bar construction $\Bar$ preserves quasi-isomorphisms.
\end{proposition}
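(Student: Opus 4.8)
The plan is to compare $\Bar$ with its ``linearization'', i.e.\ the same cooperad equipped with the internal differential $d_1$ only, by means of a filtration argument that feeds into the operadic Künneth formula. Let $f\colon\P\to\Q$ be a quasi-isomorphism of augmented operads. First I would observe that, since $f$ commutes with the augmentations, the induced map $\overline{f}\colon\Pbar\to\overline{\Q}$ on augmentation ideals is again a quasi-isomorphism (in arities $\ge 2$ it equals $f$, and the arity-one parts split off the augmentations), and therefore so is $s\overline{f}$. The functor $\Bar$ then produces a morphism of dg cooperads $\Bar f\colon\Bar\P\to\Bar\Q$ which, on generators, sends a tree labelled by elements of $\Pbar$ to the same tree labelled via $\overline{f}$; in particular $\Bar f$ preserves the number of internal vertices of a tree.

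Next I would introduce on $\T^c(s\Pbar)$ the increasing filtration $F_p$ spanned by the labelled trees with at most $p$ vertices. The internal differential $d_1$ acts vertex by vertex and hence preserves the number of vertices, whereas $d_2$ contracts an internal edge and so strictly lowers it; consequently each $F_p$ is a subcomplex of $\Bar\P$. This filtration is exhaustive and bounded below ($F_{-1}=0$), and — crucially, since a reduced operad has vertices of arity $\ge 2$, so that a tree with $n$ leaves carries at most $n-1$ vertices — it is \emph{finite} in each arity. The same applies to $\Bar\Q$, and $\Bar f$ is a morphism of filtered complexes. On the associated graded $d_2$ induces zero, so $\mathrm{gr}\,\Bar\P=\bigl(\T^c(s\Pbar),d_1\bigr)$ and $\mathrm{gr}\,\Bar f=\T^c(s\overline{f})$.

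The key step is to show that $\mathrm{gr}\,\Bar f$ is a quasi-isomorphism. Writing $\T^c(M)=\colim_n\T_nM$ with $\T_{n+1}M=I\oplus(M\circ\T_nM)$ and applying the operadic Künneth formula (\cref{thm:operadic Kunneth}) — this is exactly where the hypothesis $\mathrm{char}\,\k=0$ is used, through the identification of invariants with coinvariants under the symmetric group actions occurring in the composite product — one obtains by induction on $n$ a natural isomorphism $H_\bullet(\T_nM,d_1)\cong\T_n(H_\bullet M)$, and since in each arity the colimit defining $\T^c$ stabilizes at a finite stage, $H_\bullet\bigl(\T^c(M),d_1\bigr)\cong\T^c(H_\bullet M)$. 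Applying this to $M=s\Pbar$ and $M=s\overline{\Q}$ and invoking naturality, $\mathrm{gr}\,\Bar f$ induces on homology the map $\T^c\bigl(sH_\bullet(\overline{f})\bigr)$, which is an isomorphism because $\overline{f}$ is a quasi-isomorphism.

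Finally, a morphism of filtered chain complexes that is a quasi-isomorphism on the associated graded, with a filtration that is bounded below, exhaustive, and (here) finite in each arity, is a quasi-isomorphism: either by the comparison theorem for the associated spectral sequences — which converges without difficulty given the boundedness — or, concretely, by inducting over the finite filtration in each arity and applying the five lemma to the long exact homology sequences of the pairs $(F_p,F_{p-1})$. Hence $\Bar f$ is a quasi-isomorphism. The only genuinely delicate point is the identification of $H_\bullet\bigl(\T^c(s\Pbar),d_1\bigr)$: one has to track carefully the $\S_k$-actions in each composite product, and it is precisely this bookkeeping that forces the characteristic-zero hypothesis via \cref{thm:operadic Kunneth}.
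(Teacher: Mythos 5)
Your proof is correct and follows essentially the same route as the standard argument the paper is implicitly invoking from its reference \cite{LodayVallette}: filter $\Bar\P$ by the number of vertices (so that $d_2$ strictly lowers the filtration degree and the associated graded is $(\T^c(s\Pbar),d_1)$), identify the homology of the associated graded via the operadic Künneth formula — the only place where $\mathrm{char}\,\k=0$ enters — and conclude by the comparison theorem for the resulting bounded-below, exhaustive (indeed arity-wise finite, since reduced operads force vertices of arity $\ge 2$) filtrations. All the individual steps, including the reduction from $f$ to $\overline{f}$ and the stabilization of $\T^c$ in each arity, check out.
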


Dually, the \emph{operadic cobar construction}\index{Cobar construction!operadic} is the functor
\[
\Cobar:\coaugCoop\longrightarrow\augOp
\]
defined as
\[
\Cobar\C\coloneqq\big(\T(s^{-1}\overline{\C}),d\coloneqq d_1+d_2\big)\ ,
\]
where again $d_1$ is induced by $d_\C$, and $d_2$ is the unique derivation extending the composite
\begin{align*}
	s^{-1}\overline{\C}&\xrightarrow{\Delta_s\otimes\Delta_{(1)}}(\k s^{-1}\otimes\k s^{-1})\otimes(\overline{\C}\infcomp\overline{\C})\\
	&\cong(\k s^{-1}\otimes\overline{\C})\infcomp(\k s^{-1}\otimes\overline{\C})\cong\T(s^{-1}\overline{\C})^{(2)}\xrightarrow{\mathrm{incl.}}\T(s^{-1}\overline{\C})\ .
\end{align*}
Here, $\Delta_{s^{-1}}$ is the dual of $\susp_2$, and is given by sending $s^{-1}$ to $-s^{-1}\otimes s^{-1}$. Once again, one easily proves that $d_2$ squares to zero and anticommutes with $d_1$, so that $d_1+d_2$ also squares to zero. We also have a compatibility with quasi-isomorphisms, but with rather more stringent assumptions.

\begin{proposition}
	The operadic cobar construction $\Cobar$ preserves quasi-isomorphisms between cooperads that are non-negatively graded, $I$ in degree $0$, and $0$ in degree $1$.
\end{proposition}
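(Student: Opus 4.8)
The plan is to equip the cobar construction with a suitable filtration and then invoke the comparison theorem for the associated spectral sequences. First I would record what the hypotheses buy in terms of homological degree: if $\C$ is non-negatively graded, equals $I$ in degree $0$, and vanishes in degree $1$, then the coaugmentation coideal $\overline{\C}$ is concentrated in homological degrees $\geq 2$, so $s^{-1}\overline{\C}$ lies in degrees $\geq 1$. Hence in $\Cobar\C = \big(\T(s^{-1}\overline{\C}), d_1 + d_2\big)$ a decorated tree with $p$ vertices contributes only in homological degrees $\geq p$.

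Next I would filter $\Cobar\C$, and likewise $\Cobar\C'$, by the number of vertices of the underlying tree: let $F^p$ be the span of the decorated trees with at least $p$ vertices. Since $d_1$ is induced by $d_\C$ and preserves the shape of each tree, while $d_2$ is built from the infinitesimal decomposition $\Delta_{(1)}$ and replaces one vertex by two — raising the vertex count by exactly one — each $F^p$ is a subcomplex, and $F^0 \supseteq F^1 \supseteq \cdots$ is a decreasing filtration by subcomplexes preserved by $\Cobar f$ for every morphism of cooperads $f$. By the degree bound above, $F^p$ vanishes in homological degrees $< p$, so the filtration is finite in each fixed homological degree, hence exhaustive and Hausdorff, and the associated spectral sequence converges.

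Then I would compute the $E_0$-page. The associated graded $\mathrm{gr}^p\Cobar\C$ is the sub-$\S$-module of $\T(s^{-1}\overline{\C})$ spanned by the trees with exactly $p$ vertices, carrying the differential induced by $d_\C$ alone, since $d_2$ lands in $F^{p+1}$. If $f\colon\C\to\C'$ is a quasi-isomorphism of cooperads, then, $f$ respecting the coaugmentations, it splits as $\id_I \oplus \overline{f}$ with $\overline{f}\colon\overline{\C}\to\overline{\C'}$ a quasi-isomorphism of $\S$-modules, whence $s^{-1}\overline{f}$ is one as well. Working in characteristic $0$, the operadic Künneth formula (\cref{thm:operadic Kunneth}), applied iteratively through the recursion $\T_{n+1}M = I\oplus(M\circ\T_nM)$ — which stabilises on the $p$-vertex part after finitely many steps — shows that $M\mapsto\T(M)^{(p)}$ carries quasi-isomorphisms of $\S$-modules to quasi-isomorphisms. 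Therefore $\mathrm{gr}^p\Cobar f$ is a quasi-isomorphism for every $p$, i.e.\ $\Cobar f$ induces an isomorphism on the $E_1$-pages.

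Finally, because the filtrations on $\Cobar\C$ and $\Cobar\C'$ are finite in each homological degree, the comparison theorem for spectral sequences of filtered complexes yields that $\Cobar f$ is a quasi-isomorphism. The step I expect to need the most care is the degree bookkeeping used to build the filtration: it is exactly the three hypotheses — non-negatively graded, $I$ in degree $0$, $0$ in degree $1$ — that force the vertex-count filtration to be finite in each homological degree and hence the spectral sequence to converge; without them the statement fails. A secondary technical point is that characteristic $0$ is genuinely used to commute homology past the symmetric-group (co)invariants and inductions appearing inside the tree module, which is why the operadic Künneth formula is invoked in place of an ordinary Künneth theorem.
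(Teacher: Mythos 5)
Your argument is correct and is essentially the standard proof: the paper states this result without proof (deferring to the treatment in Loday--Vallette), and the intended argument there is exactly your vertex-count filtration, with the $E_0$-page computed via the operadic K\"unneth formula and convergence guaranteed by boundedness of the filtration in each homological degree. You also correctly isolate the role of the connectivity hypotheses --- forcing $s^{-1}\overline{\C}$ into degrees $\ge 1$ so that the filtration is finite degreewise --- which is the only delicate point and precisely where the unrestricted statement fails.
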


The operadic bar and cobar constructions form a pair of adjoint functors.

\begin{theorem}\label{thm:Rosetta stone operads}
	Let $\P$ be an augmented operad, and let $\C$ be a coaugmented cooperad. There are natural isomorphisms
	\[
	\hom_\op(\Cobar\C,\P)\cong\Tw(\C,\P)\cong\hom_\coop(\C,\Bar\P)\ .
	\]
	In particular, the functors $\Bar$ and $\Cobar$ form an adjoint pair.
\end{theorem}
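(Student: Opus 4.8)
The plan is to obtain each of the two bijections from the universal property of the appropriate (co)free object, with the Maurer--Cartan equation emerging exactly as the obstruction for a morphism of \emph{graded} (co)operads to commute with the differentials. I would establish the two isomorphisms separately, the second being the formal dual of the first, and then read off the adjunction from naturality.

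\emph{The bijection $\hom_\op(\Cobar\C,\P)\cong\Tw(\C,\P)$.} Since $\Cobar\C=\bigl(\T(s^{-1}\overline{\C}),\,d_1+d_2\bigr)$ is free as a graded operad, the universal property of the free operad identifies morphisms of graded operads $\T(s^{-1}\overline{\C})\to\P$ with degree-$0$ morphisms of $\S$-modules $s^{-1}\overline{\C}\to\P$, hence — after composing with the desuspension — with degree $-1$ morphisms of $\S$-modules $\alpha\colon\C\to\P$ (the arity $0$ and $1$ components being forced to vanish since $\P$ is reduced). Write $g_\alpha$ for the graded-operad morphism attached to $\alpha$. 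One then checks that $g_\alpha$ is a dg morphism iff $g_\alpha\,d_{\Cobar\C}=d_\P\,g_\alpha$; both sides are $g_\alpha$-derivations $\Cobar\C\to\P$ (being composites of $g_\alpha$ with a derivation, on the source or on the target), and a $g_\alpha$-derivation out of a free operad is determined by its restriction to the generators $s^{-1}\overline{\C}$, so it suffices to test the equality on a generator $s^{-1}c$. There the summand $d_1$, induced by $d_\C$, is carried by $g_\alpha$ to $-\alpha d_\C c$, which together with the right-hand term $d_\P\alpha(c)$ assembles into $\partial(\alpha)(c)$; while the summand $d_2$, built from $\Delta_{(1)}$ landing in $\T(s^{-1}\overline{\C})^{(2)}$, is carried by $g_\alpha$ precisely to $\bigl(\gamma_{(1)}\circ(\alpha\infcomp\alpha)\circ\Delta_{(1)}\bigr)(c)=(\alpha\star\alpha)(c)$. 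The desuspension sign $\Delta_{s^{-1}}(s^{-1})=-s^{-1}\otimes s^{-1}$ baked into $d_2$ (dual to $\susp_2$) is exactly what makes the total sign come out so that the equation reads $\partial(\alpha)+\alpha\star\alpha=0$, i.e. $\alpha\in\Tw(\C,\P)$.

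\emph{The bijection $\Tw(\C,\P)\cong\hom_\coop(\C,\Bar\P)$.} This is the formal dual. Here $\Bar\P=\bigl(\T^c(s\overline{\P}),\,d_1+d_2\bigr)$ is cofree as a graded cooperad, so the universal property of the cofree cooperad identifies morphisms of graded cooperads $\C\to\T^c(s\overline{\P})$ with morphisms of $\S$-modules $\C\to s\overline{\P}$, again with degree $-1$ morphisms $\alpha\colon\C\to\P$; let $f_\alpha$ be the corresponding graded-cooperad morphism. It is a dg morphism iff $f_\alpha\,d_\C=d_{\Bar\P}\,f_\alpha$, both sides are $f_\alpha$-coderivations $\C\to\Bar\P$, hence agree iff their corestrictions to the cogenerators $s\overline{\P}$ agree, and that identity is again the Maurer--Cartan equation for $\alpha$ (now with $\susp_2$ supplying the signs). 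Finally, the three constructions are manifestly functorial — in $\P$ for $\hom_\op(\Cobar\C,-)$ and $\Tw(\C,-)$, and in $\C$ for $\Tw(-,\P)$ and $\hom_\coop(-,\Bar\P)$ — so the composite bijection $\hom_\op(\Cobar\C,\P)\cong\hom_\coop(\C,\Bar\P)$ is natural in both variables, which is precisely the assertion that $(\Cobar,\Bar)$ is an adjoint pair.

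The genuinely delicate point — the one I would spell out — is the reduction to generators: it rests on the lemma that a derivation along an operad morphism with free source is determined by its values on the generators (dually, a coderivation along a cooperad morphism with cofree target is determined by its projection to the cogenerators), together with the verification that $g_\alpha d_{\Cobar\C}$ and $d_\P g_\alpha$ (resp.\ $f_\alpha d_\C$ and $d_{\Bar\P}f_\alpha$) are indeed of this type. The other step requiring care, which I would not carry out in detail, is the Koszul-sign bookkeeping inside $d_2$ needed to recognise $\alpha\star\alpha$ with the correct sign; this is exactly where the normalisations $\susp_2(s^2)=s$ and $\Delta_{s^{-1}}(s^{-1})=-s^{-1}\otimes s^{-1}$ built into the definitions of $\Bar$ and $\Cobar$ pay off.
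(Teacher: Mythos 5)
Your proof is correct and is essentially the standard argument (the one in Loday--Vallette, Section 6.5, to which the text defers for this statement): freeness of $\Cobar\C$, respectively cofreeness of $\Bar\P$, as a graded (co)operad reduces everything to the generators, where the (co)derivation identity $g_\alpha d_{\Cobar\C}=d_\P g_\alpha$ (resp.\ $f_\alpha d_\C=d_{\Bar\P}f_\alpha$) becomes exactly the Maurer--Cartan equation $\partial(\alpha)+\alpha\star\alpha=0$, and naturality of the two bijections yields the adjunction. The two points you flag as delicate --- the reduction of a $g_\alpha$-derivation out of a free operad to its values on generators, and the sign bookkeeping coming from $\susp_2$ and $\Delta_{s^{-1}}$ --- are indeed the only places where care is required, and your treatment of both is sound.
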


The unit and the counit of the adjunction give rise to canonical twisting morphisms. Namely, let $\C$ be a cooperad, then the unit of the adjunction is a map
\[
\C\longrightarrow\Bar\Cobar\C
\]
of cooperads and as such it corresponds to a canonical twisting morphism
\[
\iota:\C\longrightarrow\Cobar\C\ .
\]
It is given by the composite
\[
\iota = \left(\C\xrightarrow{s^{-1}}s^{-1}\C\xrightarrow{\mathrm{incl.}}\Cobar\C\right).
\]
Dually, for $\P$ an operad, the counit of the adjunction is a map
\[
\Cobar\Bar\P\longrightarrow\P
\]
of operads, and gives us a canonical twisting morphism
\[
\pi:\Bar\P\longrightarrow\P
\]
given by the composite
\[
\pi = \left(\Bar\P\xrightarrow{\mathrm{proj.}}s\P\xrightarrow{s^{-1}}\P\right).
\]
These two twisting morphisms are universal, in the sense that every twisting morphism splits through them.

\begin{theorem}\label{thm:universal twisting morphisms}
	Let $\alpha:\C\to\P$ be a twisting morphism. There exist a unique morphism of cooperads $f_\alpha:\C\to\Bar\P$ and a unique morphism of operads $g_\alpha:\Cobar\C\to\P$ such that the diagram
	\begin{center}
		\begin{tikzpicture}
			\node (a) at (0,0){$\C$};
			\node (b) at (3,0){$\P$};
			\node (c) at (1.5,1.2){$\Cobar\C$};
			\node (d) at (1.5,-1.2){$\Bar\P$};
			
			\draw[->] (a) -- node[above]{$\alpha$} (b);
			\draw[->] (a) -- node[above left]{$\iota$} (c);
			\draw[->,dashed] (c) -- node[above right]{$g_\alpha$} (b);
			\draw[->,dashed] (a) -- node[below left]{$f_\alpha$} (d);
			\draw[->] (d) -- node[below right]{$\pi$} (b);
		\end{tikzpicture}
	\end{center}
	commutes.
\end{theorem}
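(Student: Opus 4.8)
The plan is to deduce this theorem as an essentially immediate consequence of the Rosetta stone \cref{thm:Rosetta stone operads}, which already contains all the substantive work. First I would recall how the two isomorphisms of \cref{thm:Rosetta stone operads} are concretely realized: postcomposing a twisting morphism by a morphism of operads yields a twisting morphism, and precomposing by a morphism of cooperads yields a twisting morphism (in both cases because the relevant map preserves infinitesimal (de)composition and the internal differentials, hence induces a morphism of convolution dg pre-Lie algebras, which sends Maurer--Cartan elements to Maurer--Cartan elements). Since $\iota\colon\C\to\Cobar\C$ and $\pi\colon\Bar\P\to\P$ are themselves twisting morphisms, this means that the assignments $\phi\mapsto\phi\circ\iota$ and $\psi\mapsto\pi\circ\psi$ are exactly the bijections
\[
\hom_\op(\Cobar\C,\P)\xrightarrow{\ \cong\ }\Tw(\C,\P)\xleftarrow{\ \cong\ }\hom_\coop(\C,\Bar\P)
\]
asserted there.

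Granting this, the proof is a one-liner. Given $\alpha\in\Tw(\C,\P)$, define $g_\alpha\colon\Cobar\C\to\P$ to be the unique morphism of operads with $g_\alpha\circ\iota=\alpha$, namely the preimage of $\alpha$ under the left-hand bijection, and define $f_\alpha\colon\C\to\Bar\P$ to be the unique morphism of cooperads with $\pi\circ f_\alpha=\alpha$, the preimage under the right-hand bijection. Existence and uniqueness of each are precisely the bijectivity, and the triangle of the statement commutes by construction, since both of its legs compute $\alpha$. (One also gets for free that $g_\alpha$ and $f_\alpha$ correspond to one another under the adjunction isomorphism $\hom_\op(\Cobar\C,\P)\cong\hom_\coop(\C,\Bar\P)$, both composite bijections to $\Tw(\C,\P)$ factoring through it, though this is not needed for the statement.)

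Should one prefer not to use \cref{thm:Rosetta stone operads} as a black box, I would argue directly. The operad $\Cobar\C=\bigl(\T(s^{-1}\overline\C),d_1+d_2\bigr)$ is free as a graded operad on $s^{-1}\overline\C$, so a morphism of graded operads out of it is the same datum as a morphism of $\S$-modules $s^{-1}\overline\C\to\P$, equivalently a degree $-1$ map $\overline\C\to\P$; and since $\alpha$ has degree $-1$ while $\P(1)=\k$ is concentrated in degree $0$, the twisting morphism $\alpha$ vanishes on $\C(1)$ and factors as $\overline\C\to\overline\P\hookrightarrow\P$, so it induces such a map. Let $g_\alpha$ be the freely induced morphism of graded operads. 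Because $g_\alpha$ and the differentials are derivations, $g_\alpha d-d_\P g_\alpha$ is a $g_\alpha$-derivation, hence determined by its restriction to the generators $s^{-1}\overline\C$; tracing the internal part $d_1$ and the part $d_2$ (which on generators is a desuspension of $\Delta_{(1)}$) through $g_\alpha$, this restriction is, up to sign, $\partial(\alpha)+\gamma_{(1)}(\alpha\infcomp\alpha)\Delta_{(1)}=\partial(\alpha)+\alpha\star\alpha$. Thus $g_\alpha$ is a chain map, i.e. a morphism of operads, exactly when $\alpha$ is a twisting morphism; and the composite of the restriction of $g_\alpha$ to $s^{-1}\overline\C$ with $\iota=\bigl(\C\xrightarrow{s^{-1}}s^{-1}\C\hookrightarrow\Cobar\C\bigr)$ is $\alpha$, while uniqueness follows since a morphism out of a free operad is determined by its values on generators. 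The construction of $f_\alpha$ is strictly dual, with $\Cobar\C$ free on $s^{-1}\overline\C$ replaced by $\Bar\P=\bigl(\T^c(s\overline\P),d_1+d_2\bigr)$ cofree on $s\overline\P$, $\iota$ replaced by $\pi=\bigl(\Bar\P\twoheadrightarrow s\P\xrightarrow{s^{-1}}\P\bigr)$, derivations by coderivations, and $\Delta_{(1)}$ by $\gamma_{(1)}$.

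The only place where care is required is the sign and (de)suspension bookkeeping in the equivalence ``$g_\alpha$ is compatible with $d_2$ $\iff$ $\partial(\alpha)+\alpha\star\alpha=0$'' and its dual for $f_\alpha$ — but this is precisely the computation already performed in establishing \cref{thm:Rosetta stone operads}, so in practice there is no genuine obstacle: the present theorem is just the Rosetta stone recast in the form of a commuting triangle through the universal twisting morphisms $\iota$ and $\pi$.
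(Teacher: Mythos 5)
Your proposal is correct and is exactly the intended argument: the paper states this theorem without proof as a recollection from Loday--Vallette, where it is obtained precisely by observing that the two bijections of \cref{thm:Rosetta stone operads} are realized by $g\mapsto g\iota$ and $f\mapsto \pi f$, so that existence and uniqueness of $g_\alpha$ and $f_\alpha$ are just the bijectivity of these assignments. Your direct verification via the (co)freeness of $\Cobar\C$ and $\Bar\P$ and the equivalence of compatibility with $d_2$ with the Maurer--Cartan equation is likewise the standard computation underlying that theorem.
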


\begin{notation}\label{notation:iota and pi}
	From now on, we reserve the Greek letters $\iota$ and $\pi$ for the canonical twisting morphisms we just described.
\end{notation}

\subsection{Koszul twisting morphisms}\label{sect:koszul twisting morphisms}

Certain twisting morphisms behave especially well with respect to the homotopy theory of (co)o\-perads and (co)algebras over them. The archetype for such twisting morphisms are the universal twisting morphisms $\iota$ and $\pi$.

\begin{lemma}\label{lemma:universal twisting morphisms are Koszul}
	Let $\C$ be a coaugmented cooperad and let $\P$ be an augmented operad. The chain complexes
	\[
	\C\circ_\iota\Cobar\C\ ,\quad\Cobar\C\circ\iota\C\ ,\quad\P\circ_\pi\Bar\P\quad\text{and}\quad\Bar\P\circ_\pi\P
	\]
	are acyclic.
\end{lemma}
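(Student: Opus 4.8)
The plan is to prove all four statements by the same method, applied four times. The four twisted composite products are related in pairs by the left--right symmetry in the definitions of the twisted differentials $d^r_\alpha$ and $d^\ell_\alpha$, and by the duality between the bar and cobar constructions, so it suffices to carry out one case in detail --- I would do $\Bar\P\circ_\pi\P$ --- and then indicate the entirely parallel modifications for the other three. Here the underlying graded $\S$-module is $\T^c(s\overline{\P})\circ\P$, the canonical augmentation is the composite $\Bar\P\circ_\pi\P\to I\circ I = I$ built from $\varepsilon_{\Bar\P}$ and $\varepsilon_\P$, and ``acyclic'' is understood as the assertion that this augmentation is a quasi-isomorphism, equivalently that $\eta\varepsilon$ is chain-homotopic to the identity. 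The differential decomposes as $d = d_\mathrm{int} + d_{\Bar} + d^r_\pi$, where $d_\mathrm{int}$ is induced by $d_\P$, where $d_{\Bar}$ contracts one internal edge of a tree and composes the two adjacent labels via $\gamma_{(1)}$, and where $d^r_\pi$ deletes a leaf-most vertex $sq$ (with $q\in\overline{\P}$) of a tree, desuspends it, and composes $q$ with the $\P$-elements sitting at its inputs.

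A convenient first step is to reduce to the case $d_\P = 0$. Each of $d_\mathrm{int}$, $d_{\Bar}$ and $d^r_\pi$ preserves the total weight --- weights add under operadic (de)composition and are preserved by $\pi$ --- so the complex splits as a direct sum over total weights, and in each fixed total weight the number of internal vertices that can occur is bounded. Filtering by the number of internal vertices, the associated graded differential is $d_\mathrm{int}$, so by the operadic Künneth formula (\cref{thm:operadic Kunneth}) the first page of the resulting spectral sequence is $\T^c\!\left(s\,\overline{H_\bullet(\P)}\right)\circ H_\bullet(\P)$ with differential induced by $d_{\Bar}+d^r_\pi$; this is precisely $\Bar(H_\bullet(\P))\circ_\pi H_\bullet(\P)$ with trivial internal differential, and the filtration is bounded so the spectral sequence converges. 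Hence it is enough to treat an operad $\P$ with $d_\P=0$ and to exhibit, in that case, an explicit contracting homotopy of $\T^c(s\overline{\P})\circ\P$ onto the summand $I = I\circ I$ (which is a subcomplex, since $d$ kills it, and is the whole complex in arity $1$).

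This contracting homotopy $h$ is the heart of the argument, and it is the operadic incarnation of the classical extra degeneracy of the one-sided bar resolution $\mathrm{B}A\circ_\pi A$ of an augmented associative algebra $A$: whereas $d^r_\pi$ absorbs a leaf-most vertex into the $\P$-elements plugged below it, $h$ runs this backwards, lifting the $\P$-elements sitting at the leaves up into freshly created leaf-most vertices labelled by the images of those elements in $\overline{\P}$ (placing $\id$ at the new inputs), and vanishing on $I\circ I$. One then checks that $dh+hd = \id - \eta\varepsilon$, where $\eta\varepsilon$ is the projection onto $I\circ I$: the terms of $d^r_\pi h$ rebuild the identity on the augmentation ideal, while the terms of $d_{\Bar}h$ cancel those of $h\,d_{\Bar}$ together with a boundary contribution of $h\,d^r_\pi$, exactly as in the algebra case. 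I expect this verification to be the only genuine obstacle: the combinatorial bookkeeping of which $\P$-elements get lifted into which vertices --- which forces $h$ to be defined by an iteration rather than a one-step formula, so that it inverts $d^r_\pi$ up to the $d_{\Bar}$-terms --- and, as ever in operadic homological algebra, tracking the suspension and Koszul signs, are where the real work lies. For $\C\circ_\iota\Cobar\C$ and $\Cobar\C\circ_\iota\C$ one runs the dual argument: filter by the weight grading on $\Cobar\C = \T(s^{-1}\overline{\C})$ to reduce to $d_\C=0$, then use a contracting homotopy that instead deletes and creates root-most vertices, exploiting that $\Cobar\C$ is free (equivalently that $\Bar\P$ is cofree) as the graded (co)operad underlying it.
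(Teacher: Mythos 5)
The paper itself offers no argument for this lemma: its ``proof'' is the citation \cite[Lemma 6.5.9]{LodayVallette}, so the only thing to assess is your proposal on its own merits. Your reduction to the case $d_\P=0$ is correct and cleanly executed: since $\P$ is reduced, every vertex has arity at least $2$, so the filtration by the number of vertices is bounded in each arity, its associated graded differential is $d_{\mathrm{int}}$, and the operadic K\"unneth formula identifies the first page with $\Bar(H_\bullet(\P))\circ_\pi H_\bullet(\P)$. Up to that point the argument is sound, and the overall strategy (an explicit contraction generalizing the extra degeneracy of the one-sided bar resolution) is indeed the standard route.

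The gap is that the contracting homotopy --- which you yourself call the heart of the argument --- is never constructed, and the operation you describe does not satisfy the homotopy identity. Lifting \emph{all} the $\P$-labels at the leaves into new top vertices in one step already fails on a one-vertex tree: for $t=(sp)\circ(q_1,q_2)$ with $p,q_1,q_2\in\overline{\P}$, every term of $d\,h(t)$ is a two-vertex tree, while $h\,d(t)$ is a one-vertex tree whose vertex is labelled by $\gamma_\P(p;q_1,q_2)$, so $t$ itself never appears in $(dh+hd)(t)$ and the claim that ``the terms of $d^r_\pi h$ rebuild the identity'' is false for this $h$. Lifting one leaf at a time instead requires a choice that must descend to $\S$-coinvariants, and its cross-terms against $d_{\Bar}$ and the remaining summands of $d^r_\pi$ do not cancel ``exactly as in the algebra case'', precisely because there are several $\P$-factors rather than one; the ``iteration'' you invoke is exactly the content of the lemma and is left undefined. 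The clean way to realise your idea is to pass to the \emph{unnormalized} simplicial bar construction $\mathrm{B}_\bullet(I,\P,\P)$ with $\mathrm{B}_n=\P^{\circ n}\circ\P$, where inserting the unit $I\to\P$ as a new last level is a genuine extra degeneracy and contractibility follows from the simplicial identities, and then to deduce the statement for $\Bar\P\circ_\pi\P$ because the normalized (tree) complex is a deformation retract of the unnormalized (levelled) one. Note also that the four cases are less symmetric than you suggest: in $\P\circ_\pi\Bar\P$ there is a single $\P$-factor at the root, the analogous lift \emph{is} a one-step formula, and that case is genuinely easier than the one you chose to treat in detail.
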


\begin{proof}
	This is \cite[Lemma 6.5.9]{LodayVallette}.
\end{proof}

Using this fact, one can prove the following important fact \cite[Thm. 6.6.1]{LodayVallette}.

\begin{theorem}\label{thm:fundamental thm of operadic twisting morphisms}
	Let $\C$ be a connected weight graded cooperad, let $\P$ be a connected weight graded operad, and let $\alpha:\C\to\P$ be a twisting morphism. The following are equivalent.
	\begin{enumerate}
		\item The right twisted composite product $\C\circ_\alpha\P$ is acyclic.
		\item The left twisted composite product $\P\circ_\alpha\C$ is acyclic.
		\item The morphism of cooperads $f_\alpha:\C\to\Bar\P$ of \cref{thm:universal twisting morphisms} is a quasi-isomorphism.
		\item The morphism of operads $g_\alpha:\Bar\P\to\C$ of \cref{thm:universal twisting morphisms} is a quasi-isomorphism.
	\end{enumerate}
\end{theorem}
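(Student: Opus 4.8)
The plan is to derive all four equivalences formally from two inputs already at our disposal: the comparison lemma for twisted composite products, and the acyclicity statements of \cref{lemma:universal twisting morphisms are Koszul}, glued together by the factorizations $\alpha=\pi f_\alpha$ and $\alpha=g_\alpha\iota$ coming from \cref{thm:universal twisting morphisms}. Since $\C$ and $\P$ are connected weight graded, the comparison lemma applies in the form we need.

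First I would establish $(1)\Leftrightarrow(3)$. The identity $\pi f_\alpha=\alpha$ says precisely that the commutative square with horizontal maps $f_\alpha\colon\C\to\Bar\P$ and $\id_\P\colon\P\to\P$ and vertical twisting morphisms $\alpha$ and $\pi$ exists, so functoriality of twisted composite products yields a chain map
\[
f_\alpha\circ\id_\P\colon\C\circ_\alpha\P\longrightarrow\Bar\P\circ_\pi\P\ .
\]
By \cref{lemma:universal twisting morphisms are Koszul} the target is acyclic, and $\id_\P$ is a quasi-isomorphism. If $f_\alpha$ is a quasi-isomorphism, part~1 of the comparison lemma makes $f_\alpha\circ\id_\P$ one as well, so $\C\circ_\alpha\P$ is acyclic; conversely, if $\C\circ_\alpha\P$ is acyclic then $f_\alpha\circ\id_\P$ is a morphism between two acyclic complexes (compatible with their canonical copies of $I$), hence a quasi-isomorphism, and part~1 of the comparison lemma applied to $\id_\P$ and $f_\alpha\circ\id_\P$ forces $f_\alpha$ to be a quasi-isomorphism.

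Running the same argument for the \emph{right} twisted composite products gives $(2)\Leftrightarrow(3)$: from $\pi f_\alpha=\alpha$ one gets a chain map $\id_\P\circ f_\alpha\colon\P\circ_\alpha\C\to\P\circ_\pi\Bar\P$ with acyclic target, and part~2 of the comparison lemma concludes as before. Dually, the factorization $g_\alpha\iota=\alpha$ provides the commutative square with horizontal maps $\id_\C\colon\C\to\C$ and $g_\alpha\colon\Cobar\C\to\P$, hence a chain map $g_\alpha\circ\id_\C\colon\Cobar\C\circ_\iota\C\to\P\circ_\alpha\C$ whose source is acyclic by \cref{lemma:universal twisting morphisms are Koszul}; part~2 of the comparison lemma then yields $(2)\Leftrightarrow(4)$. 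Chaining these, $(1)\Leftrightarrow(3)\Leftrightarrow(2)\Leftrightarrow(4)$, which proves the theorem.

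Once the comparison lemma and \cref{lemma:universal twisting morphisms are Koszul} are granted, this is pure bookkeeping, so there is no real obstacle beyond keeping straight which of the two sides of the comparison lemma (left or right twisted composite product) is in play and checking that the four squares commute --- the latter being exactly the content of the triangle in \cref{thm:universal twisting morphisms}. The one slightly delicate elementary point is that a chain map between acyclic twisted composite products is automatically a quasi-isomorphism, which relies on these maps respecting the canonical sub-object $I$; this is automatic here because they are assembled from morphisms of (co)operads and identity maps.
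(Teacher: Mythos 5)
Your argument is correct and is precisely the proof the paper delegates to \cite[Thm. 6.6.1]{LodayVallette}: factor $\alpha$ through $\pi$ and $\iota$, map each twisted composite product to (or from) one of the acyclic complexes of \cref{lemma:universal twisting morphisms are Koszul}, and run the 2-out-of-3 comparison lemma, with the only delicate point --- that a map of acyclic twisted composite products respecting $I$ is automatically a quasi-isomorphism --- correctly identified and justified. Note in passing that item (4) of the statement contains a typo ($g_\alpha$ should be $\Cobar\C\to\P$, not $\Bar\P\to\C$); you work with the correct map.
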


\begin{definition}
	Let $\C$ be a connected weight graded cooperad and let $\P$ be a connected weight graded operad. A twisting morphism $\alpha:\C\to\P$ is \emph{Koszul}\index{Koszul!twisting morphism} if any of the equivalent conditions of \cref{thm:fundamental thm of operadic twisting morphisms} is satisfied.
\end{definition}

\begin{example}
	By \cref{lemma:universal twisting morphisms are Koszul}, the universal twisting morphisms $\pi$ and $\iota$ are Koszul.
\end{example}

\subsection{Bar-cobar resolution}\label{sect:bar-cobar resolution of operads}

Finally, we can prove that the bar-cobar adjunction gives a canonical resolution of operads and cooperads, following \cite[Thm. 6.6.3]{LodayVallette}.

\begin{theorem}
	Let $\C$ be a cooperad. The unit
	\[
	\C\longrightarrow\Bar\Cobar\C
	\]
	of the bar-cobar adjunction is a quasi-isomorphism of cooperads. Dually, let $\P$ be an operad. The counit
	\[
	\Cobar\Bar\P\longrightarrow\P
	\]
	of the bar-cobar adjunction is a quasi-isomorphism of operads.
\end{theorem}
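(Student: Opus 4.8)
The plan is to deduce both statements from the fundamental theorem of operadic twisting morphisms (\cref{thm:fundamental thm of operadic twisting morphisms}), applied to the universal twisting morphisms $\iota\colon\C\to\Cobar\C$ and $\pi\colon\Bar\P\to\P$, after identifying the unit and the counit of the bar--cobar adjunction with the comparison morphisms of \cref{thm:universal twisting morphisms}. The main point is that nothing new needs to be computed here: \cref{lemma:universal twisting morphisms are Koszul} and \cref{thm:fundamental thm of operadic twisting morphisms} already carry all the analytic content.

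First I would check the weight-grading hypotheses so that \cref{thm:fundamental thm of operadic twisting morphisms} is applicable. Every reduced (co)operad carries its canonical connected weight grading, with the arity-$n$ component placed in weight $n-1$, and I would observe that $\Cobar\C = \T(s^{-1}\overline{\C})$ and $\Bar\P = \T^c(s\overline{\P})$ inherit connected weight gradings of the same kind: a tree with $k$ vertices and $n$ leaves has vertex arities summing to $n+k-1$, so its total weight $\sum_i(n_i-1) = n-1$ depends only on the arity, and moreover these (co)operads are again reduced and $\iota$, $\pi$ respect the gradings. Hence \cref{thm:fundamental thm of operadic twisting morphisms} applies to $\iota$, with $\Cobar\C$ in the role of the operad, and to $\pi$, with $\Bar\P$ in the role of the cooperad. (As usual, the whole argument then extends verbatim to the connected weight graded setting.)

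Next I would invoke \cref{lemma:universal twisting morphisms are Koszul}: the twisted composite products $\C\circ_\iota\Cobar\C$ and $\P\circ_\pi\Bar\P$ are acyclic. For $\iota$ this is condition (1) of \cref{thm:fundamental thm of operadic twisting morphisms}, so condition (3) also holds and $f_\iota\colon\C\to\Bar\Cobar\C$ is a quasi-isomorphism; for $\pi$ the acyclicity of $\P\circ_\pi\Bar\P$ is condition (2), so the corresponding conclusion gives that $g_\pi\colon\Cobar\Bar\P\to\P$ is a quasi-isomorphism. It then remains to recognize $f_\iota$ as the unit and $g_\pi$ as the counit of the bar--cobar adjunction, which is a formal unwinding of the natural isomorphisms of \cref{thm:Rosetta stone operads}: the unit $\C\to\Bar\Cobar\C$ corresponds under $\hom_\coop(\C,\Bar\Cobar\C)\cong\Tw(\C,\Cobar\C)\cong\hom_\op(\Cobar\C,\Cobar\C)$ to $\id_{\Cobar\C}$, hence to the twisting morphism $\iota$, and by the uniqueness clause in \cref{thm:universal twisting morphisms} the cooperad morphism attached to $\iota$ is exactly $f_\iota$; dually the counit $\Cobar\Bar\P\to\P$ corresponds to $\id_{\Bar\P}$, hence to $\pi$, hence is $g_\pi$.

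The main obstacle has therefore already been handled upstream: the substance of the result lies in the acyclicity of $\C\circ_\iota\Cobar\C$ and $\P\circ_\pi\Bar\P$ (proved by exhibiting explicit contracting homotopies on these twisted composite products) and in the spectral-sequence comparison behind \cref{thm:fundamental thm of operadic twisting morphisms}. What is left for this proof is only bookkeeping — verifying that the connected weight gradings are in place, and tracing through the adjunction isomorphisms to see that the unit and counit are precisely the comparison morphisms $f_\iota$ and $g_\pi$ — so I expect no real difficulty beyond being careful with these identifications.
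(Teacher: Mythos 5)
Your proposal is correct and follows exactly the paper's own route: the paper proves this theorem as a direct corollary of \cref{thm:fundamental thm of operadic twisting morphisms} and \cref{lemma:universal twisting morphisms are Koszul}, which is precisely the argument you give. Your additional bookkeeping — verifying the connected weight gradings on $\Cobar\C$ and $\Bar\P$ and identifying the unit and counit with $f_\iota$ and $g_\pi$ via \cref{thm:Rosetta stone operads} and \cref{thm:universal twisting morphisms} — is exactly what the paper leaves implicit, and it is carried out correctly.
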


\begin{proof}
	This is a direct corollary of \cref{thm:fundamental thm of operadic twisting morphisms} and \cref{lemma:universal twisting morphisms are Koszul} in the connected weight graded case. The more general case can be found in \cite{fre04}.
\end{proof}

\begin{remark}
	In particular, $\Cobar\Bar\P$ provides a functorial cofibrant resolution of $\P$, see \cref{cor:cobar-bar is cofibrant resolution}. One should notice that if $\C$ is a cooperad, then in general $\Cobar\C$ is not cofibrant. However, this is the case if one has a connected weight graded cooperad, and in particular $\Cobar\C$ is always cofibrant if $\C$ is a reduced cooperad.
\end{remark}

\section{Minimal models and Koszul duality}\label{section:minimal models and Koszul duality}

The bar-cobar resolution of operads is very useful, but almost always really big. Fortunately, in some cases it is possible to find smaller resolutions for operads, such as minimal models. One way to do that is Koszul duality, which recall briefly in this section. It is a theory originally developed by V. Ginzburg and M. Kapranov \cite{gk94} for operads. Koszul duality for associative and Lie algebras existed prior to it, and other versions for related concepts have been developed since, for example in \cite{val07} for props.

\subsection{Minimal models}

Let $\P$ be an operad. A \emph{model} for $\P$ is an operad $\Q$ together with a surjective quasi-isomorphism of operads $\Q\to\P$.

\medskip

An operad $\Q$ is \emph{minimal} if it is \emph{quasi-free}\index{Quasi-free operad}, i.e. of the form $\Q=(\T(M),d)$ for some graded $\S$-module $M$ (that is, for each $n\ge0$ the object $M(n)$ is just a graded vector space, not a chain complex), and such that the differential satisfies the following two conditions.
\begin{enumerate}
	\item The differential $d$ of $\Q$ is \emph{decomposable}, that is $d(E)\subseteq\T(E)^{(\ge2)}$.
	\item The graded $\S$-module $E$ admits a decomposition
	\[
	E = \bigoplus_{k\ge1}E^{(k)}
	\]
	such that
	\[
	d(E^{(k+1)})\subseteq\T\left(\bigoplus_{i=1}^kE^{(i)}\right)
	\]
	for any $k\ge0$.
\end{enumerate}

\begin{definition}
	A \emph{minimal model}\index{Minimal!models for operads} for an operad $\P$ is a model $\Q\to\P$ such that $\Q$ is a minimal operad.
\end{definition}

\begin{theorem}[{\cite[Prop. 3.7]{dcv13}}]
	If an operad $\P$ admits a minimal model, then the model is unique up to (non-unique) isomorphism.
\end{theorem}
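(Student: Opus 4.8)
The plan is to argue purely with homotopical algebra. Minimal operads are cofibrant, so any two minimal models of $\P$ are connected by a homotopy equivalence over $\P$; I would then show that such a homotopy equivalence is automatically an isomorphism by passing to the $\S$-module of generators. The homotopical input I would use is: in the model structure on differential graded operads, a quasi-free operad $(\T(E),d)$ satisfying the two minimality conditions is cofibrant --- the decomposition $E=\bigoplus_k E^{(k)}$ being precisely a cellular filtration of the generators --- every differential graded operad is fibrant, and a surjective quasi-isomorphism is an acyclic fibration (see \cite[Ch. 11--12]{LodayVallette}). I would also record the elementary observation that, since the differential of a minimal operad $\Q=(\T(E),d)$ is decomposable, the differential it induces on the $\S$-module of indecomposables of $\Q$ --- canonically identified with $E$ --- is zero.

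Given two minimal models $p_1\colon\Q_1=(\T(E_1),d_1)\to\P$ and $p_2\colon\Q_2=(\T(E_2),d_2)\to\P$, I would lift $p_1$ through the acyclic fibration $p_2$, using cofibrancy of $\Q_1$, to obtain a morphism of operads $g\colon\Q_1\to\Q_2$ over $\P$, and symmetrically a morphism $h\colon\Q_2\to\Q_1$ over $\P$; both are quasi-isomorphisms by two-out-of-three. Working in the slice category $\op/\P$, in which $\Q_1\to\P$ and $\Q_2\to\P$ are cofibrant--fibrant objects, $g$ and $h$ are mutually inverse weak equivalences, so $hg\simeq\id_{\Q_1}$ and $gh\simeq\id_{\Q_2}$; concretely, $hg$ and $\id_{\Q_1}$ are two lifts of $p_1$ against the acyclic fibration $p_1$ out of the cofibrant operad $\Q_1$, hence are homotopic.

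Next I would apply the indecomposables functor, obtaining maps $g_1\colon E_1\to E_2$ and $h_1\colon E_2\to E_1$, namely the linear parts of $g$ and $h$ on generators. This functor is the left adjoint in a Quillen pair --- its right adjoint is the square-zero operad functor $M\mapsto I\oplus M$ --- so it carries homotopic morphisms out of cofibrant operads to homotopic maps of chain complexes; but $E_1$ and $E_2$ carry the zero differential, and homotopic maps between zero-differential complexes coincide. Hence $h_1g_1=\id_{E_1}$ and $g_1h_1=\id_{E_2}$, so $g_1$ is an isomorphism of graded $\S$-modules. Finally, $g$ respects the weight filtrations $\T(E_i)^{(\geq p)}$ and induces on the associated graded the free operad morphism $\T(g_1)$, which is an isomorphism because $g_1$ is; since the operads in play are reduced, this filtration is finite in each arity (a tree of arity $n$ with all vertices of arity at least $2$ has at most $n-1$ vertices), and therefore $g$ itself is an isomorphism over $\P$. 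The ``non-uniqueness'' is exactly the freedom in the choice of the lift $g$.

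I expect the only genuinely delicate point to be the claim that the indecomposables functor is left Quillen --- equivalently, at the level actually needed, that it sends homotopies of operad morphisms to chain homotopies. This amounts to knowing it is a left adjoint preserving cofibrations and trivial cofibrations, the operadic counterpart of the standard fact underlying André--Quillen cohomology, and it calls for a careful inspection of the model structure. If one prefers to sidestep the abstract statement, one can instead produce the homotopies $hg\simeq\id$ and $gh\simeq\id$ as explicit derivation homotopies on the quasi-free operads $\Q_1$ and $\Q_2$ and read off their linear parts on generators directly. Everything else --- lifting, two-out-of-three, and the weight-filtration argument upgrading ``$g_1$ an isomorphism'' to ``$g$ an isomorphism'' --- is routine; the complete argument is \cite[Prop. 3.7]{dcv13}.
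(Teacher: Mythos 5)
Your argument is correct. Note that the paper itself gives no proof of this statement --- it simply cites \cite[Prop.~3.7]{dcv13} --- and your reconstruction is essentially the standard argument of that reference: minimality gives cofibrancy, lifting against the two surjective quasi-isomorphisms produces mutually homotopy-inverse comparison maps, and minimality again (decomposability of the differentials) forces the induced maps on generators to be mutually inverse on the nose, after which the weight-filtration argument upgrades this to an isomorphism of operads. The one point you flag as delicate does go through cleanly: the indecomposables functor is left adjoint to the square-zero extension $M\mapsto I\oplus M$, and the latter visibly preserves fibrations and trivial fibrations since these are detected arity-wise on the underlying $\S$-modules, so indecomposables is left Quillen and carries the left homotopy $hg\sim\id_{\Q_1}$ (between maps out of a cofibrant object) to a chain homotopy between maps of complexes with zero differential, i.e.\ to an equality.
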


\subsection{Koszul duality}\label{subsect:Koszul duality}

For the rest of this section, we work with quadratic (co)operads. We refer the reader to \cite[Sect. 7.2 and 7.4]{LodayVallette} for more details.

\begin{definition}
	Let $\P=\P(E,R)$ be a quadratic operad. The \emph{Koszul dual cooperad}\index{Koszul!dual (co)operad} of $\P$ is the quadratic cooperad
	\[
	\P^{\antishriek}\coloneqq\C(sE,s^2R)\ .
	\]
	The \emph{Koszul dual operad} op $\P$ is the operad
	\[
	\P^{\shriek}\coloneqq(\susp^c\otimes\P^{\antishriek})^\vee.
	\]
\end{definition}

The Koszul dual cooperad of an operad is of greater theoretical importance than the Koszul dual operad. However, the latter has the advantage of being often easy to describe explicitly, as the following result explains.

\begin{proposition}\label{prop:formula for Koszul dual operad}
	Let $\P=\P(E,R)$ be a quadratic operad generated by an $\S$-module $E$ which is reduced and finite dimensional in every arity. Then the Koszul dual operad of $\P$ is quadratic with presentation
	\[
	\P^{\shriek} = \P(s^{-1}\susp^{-1}\otimes E^\vee,R^{\perp})\ ,
	\]
	where $R^\perp$ is the subspace of $\T(s^{-1}\susp^{-1}\otimes E^\vee)^{(2)}$ obtained by taking the subspace orthogonal to $s^2R$ in $\T(sE)^{(2)}$ and desuspending its elements in the obvious way. Moreover, we have
	\[
	(\P^{\shriek})^{\shriek} = \P\ .
	\]
\end{proposition}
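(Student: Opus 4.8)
The plan is to unravel the definitions $\P^{\antishriek}=\C(sE,s^2R)$ and $\P^{\shriek}=(\susp^c\otimes\P^{\antishriek})^\vee$, dualise one factor at a time to reach the presentation, and then deduce the biduality $(\P^{\shriek})^{\shriek}=\P$ by feeding that presentation back into the formula. The hypotheses on $E$ (reduced and finite dimensional in each arity) are used throughout to ensure that the tree module $\T(E)$, and hence $\T(sE)$, the cooperad $\P^{\antishriek}\subseteq\T^c(sE)$, and $\susp^c\otimes\P^{\antishriek}$, are all finite dimensional in each arity; on such objects linear duality is an anti-equivalence exchanging cooperads with operads and is compatible with all the constructions below. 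In particular, since $\susp^c$ is one dimensional in each arity, $(\susp^c\otimes\P^{\antishriek})^\vee\cong(\susp^c)^\vee\otimes(\P^{\antishriek})^\vee=\susp^{-1}\otimes(\P^{\antishriek})^\vee$, using the defining identity $(\susp^c)^\vee=\susp^{-1}$. So the task reduces to identifying $(\P^{\antishriek})^\vee=\C(sE,s^2R)^\vee$.

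For this I would first prove, as a separate lemma, that for a reduced, arity-wise finite dimensional graded $\S$-module $V$ and a sub-$\S$-module $W\subseteq\T^c(V)^{(2)}$ one has an isomorphism of operads $\C(V,W)^\vee\cong\P(V^\vee,W^\perp)$, where $W^\perp\subseteq\T(V^\vee)^{(2)}\cong(\T^c(V)^{(2)})^\vee$ is the annihilator of $W$ for the natural pairing. The proof: arity by arity $\T^c(V)(n)^\vee\cong\T(V^\vee)(n)$, the edge-cutting coproduct of the cofree cooperad dualising to the grafting composition of the free operad, so linear duality takes $\T^c(V)$ to $\T(V^\vee)$ and, being contravariant, exchanges sub-cooperads of $\T^c(V)$ with quotient operads of $\T(V^\vee)$; under this exchange the universal property cutting out $\C(V,W)$ — the terminal sub-cooperad of $\T^c(V)$ whose weight-two component lands in $W$ — dualises precisely to the universal property cutting out $\P(V^\vee,W^\perp)=\T(V^\vee)/(W^\perp)$ — the initial quotient operad of $\T(V^\vee)$ through which $W^\perp\hookrightarrow\T(V^\vee)$ composes to zero — since in weight two the operadic ideal $(W^\perp)$ equals $W^\perp$ and the pairing identifies $\T^c(V)^{(2)}/W$ with $(W^\perp)^\vee$. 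Applied with $V=sE$, $W=s^2R$ and using $(sE)^\vee\cong s^{-1}E^\vee$, this gives $(\P^{\antishriek})^\vee\cong\P(s^{-1}E^\vee,(s^2R)^\perp)$.

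It remains to absorb the surviving factor $\susp^{-1}$, i.e. to check that the operadic desuspension of a quadratic operad is again quadratic. Since $\susp^{-1}$ is one dimensional in each arity, there is a tree-by-tree isomorphism $\susp^{-1}\otimes\T(W_0)\cong\T(\susp^{-1}\otimes W_0)$: on the summand indexed by a tree $T$ one composes the factors $\susp^{-1}_{a_v}$ over the vertices of $T$ inside the operad $\susp^{-1}$, which by the rule $\susp^{-1}_n\circ_j\susp^{-1}_m=(-1)^{(j-1)(1-m)}\susp^{-1}_{n+m-1}$ amounts to a sign rescaling; this is an isomorphism of operads taking the ideal generated by $S_0\subseteq\T(W_0)^{(2)}$ to the one generated by the image $\widetilde{S_0}$ of $S_0$ in $\T(\susp^{-1}\otimes W_0)^{(2)}$, so $\susp^{-1}\otimes\P(W_0,S_0)\cong\P(\susp^{-1}\otimes W_0,\widetilde{S_0})$. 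Taking $W_0=s^{-1}E^\vee$, $S_0=(s^2R)^\perp$ and noting $\susp^{-1}\otimes s^{-1}E^\vee=s^{-1}\susp^{-1}\otimes E^\vee$, we obtain $\P^{\shriek}\cong\P(s^{-1}\susp^{-1}\otimes E^\vee,R^\perp)$, with $R^\perp$ exactly $(s^2R)^\perp\subseteq(\T(sE)^{(2)})^\vee=\T(s^{-1}E^\vee)^{(2)}$ transported — "desuspended in the obvious way" — into $\T(s^{-1}\susp^{-1}\otimes E^\vee)^{(2)}$; this is the asserted presentation.

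Finally, I would apply this presentation once more to $\P^{\shriek}$, whose generators $F\coloneqq s^{-1}\susp^{-1}\otimes E^\vee$ again form a reduced, arity-wise finite dimensional $\S$-module, obtaining $(\P^{\shriek})^{\shriek}\cong\P(s^{-1}\susp^{-1}\otimes F^\vee,R^{\perp\perp})$. As $\S$-modules $F^\vee\cong s\susp\otimes E$ (using $(s^{-1})^\vee=s$, $(\susp^{-1})^\vee\cong\susp$, and $(E^\vee)^\vee\cong E$), whence
\[
s^{-1}\susp^{-1}\otimes F^\vee\cong(s^{-1}\otimes s)\otimes(\susp^{-1}\otimes\susp)\otimes E\cong\End_{\k}\otimes\End_{\k}\otimes E\cong E,
\]
using $\susp^{-1}\otimes\susp\cong\End_{\k}$ from (\ref{eq:suspension desuspension}), that $s^{-1}\otimes s$ sits in degree zero, and that $\End_{\k}$ is the unit for the Hadamard tensor product of $\S$-modules; and $R^{\perp\perp}=R$ because double annihilators of subspaces of arity-wise finite dimensional spaces are trivial and the (de)suspension isomorphisms respect the pairings, so that "desuspend then take the orthogonal", performed twice, returns $R\subseteq\T(E)^{(2)}$ unchanged. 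Hence $(\P^{\shriek})^{\shriek}=\P(E,R)=\P$. The step I expect to be the real obstacle is the lemma of the second paragraph — that linear duality genuinely dualises the universal property of $\C(V,W)$ into that of $\P(V^\vee,W^\perp)$; everything else is either purely formal or the careful but routine sign bookkeeping that "desuspending in the obvious way" abbreviates.
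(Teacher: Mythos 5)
Your argument is correct, and it is essentially the standard proof: the paper itself offers no proof of this proposition beyond citing \cite[Prop. 7.2.1 and 7.2.2]{LodayVallette}, and your three steps --- the lemma $\C(V,W)^\vee\cong\P(V^\vee,W^\perp)$ via dualizing the universal property of the quadratic cooperad, the sign-twisting isomorphism $\susp^{-1}\otimes\T(W_0)\cong\T(\susp^{-1}\otimes W_0)$ absorbing the operadic desuspension, and the biduality via $\susp^{-1}\otimes\susp\cong\End_\k$ together with $R^{\perp\perp}=R$ --- are exactly the ingredients of that reference's argument. The finiteness hypotheses are used where you say they are, so there is nothing to add.
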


\begin{proof}
	This is \cite[Prop. 7.2.1 and 7.2.2]{LodayVallette}.
\end{proof}

Let $(E,R)$ be a quadratic data. Then both $\P(E,R)^{(1)}$ and $\C(E,R)^{(1)}$ are given by $E$, and thus we can define the map
\[
\kappa:\C(sE,s^2R)\xrightarrow{\mathrm{proj}}sE\xrightarrow{s^{-1}}E\xrightarrow{\mathrm{incl.}}\P(E,R)\ .
\]

\begin{lemma}
	The map $\kappa$ described above is a twisting morphism.
\end{lemma}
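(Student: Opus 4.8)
The plan is to verify the Maurer--Cartan equation $\partial(\kappa) + \kappa \star \kappa = 0$ directly. Since $E$ carries the trivial differential (it is a graded $\S$-module in the sense of the operadic quadratic data), and the differentials on $\C(sE, s^2R)$ and $\P(E,R)$ are induced from the trivial differential on generators, the term $\partial(\kappa)$ vanishes: $\kappa$ is a composite of the projection, a desuspension, and an inclusion, all of which commute with the (zero) differentials on the relevant weight-$1$ pieces. So everything reduces to checking $\kappa \star \kappa = 0$.

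The key observation is that $\kappa$ is concentrated in weight $1$ (arity governed by $E$), so $\kappa \star \kappa$ is concentrated in weight $2$, i.e. it is a map $\C(sE,s^2R) \to \P(E,R)$ landing in $\P(E,R)^{(2)} = \T(E)^{(2)}/R$ and supported on the weight-$2$ part $\C(sE,s^2R)^{(2)}$. First I would recall that $\C(sE,s^2R)^{(2)} = s^2 R \subseteq \T(sE)^{(2)} \cong s^2 \otimes \T(E)^{(2)}$, by the very definition of the Koszul dual cooperad as the quadratic cooperad on $(sE, s^2R)$. Then I would unwind the definition of the pre-Lie product $\star$ on the convolution operad $\hom(\C,\P)$ from \cref{subsect:operadic twisting morphisms}: $\kappa \star \kappa = \gamma_{(1)} \circ (\kappa \infcomp \kappa) \circ \Delta_{(1)}$. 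Applied to an element of $s^2 R$, the infinitesimal decomposition $\Delta_{(1)}$ of $\C(sE,s^2R)$ on the weight-$2$ part is essentially the inclusion $s^2 R \hookrightarrow \T(sE)^{(2)} \cong sE \infcomp sE$ followed by identifying this with $(\C^{(1)} \infcomp \C^{(1)})$; then $\kappa \infcomp \kappa$ applies $s^{-1}$ to each copy, landing in $E \infcomp E \cong \T(E)^{(2)}$; and finally $\gamma_{(1)}$ is the infinitesimal composition of $\P(E,R)$, which on $\T(E)^{(2)}$ is precisely the projection $\T(E)^{(2)} \twoheadrightarrow \T(E)^{(2)}/R = \P(E,R)^{(2)}$.

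Putting this chain together, $\kappa \star \kappa$ restricted to $s^2R$ is the composite $s^2 R \xrightarrow{\text{incl}} \T(sE)^{(2)} \xrightarrow{s^{-2}} \T(E)^{(2)} \xrightarrow{\text{proj}} \T(E)^{(2)}/R$, up to the signs coming from the operadic suspension $\susp_2$ built into $\kappa$ via the identification $\T^c(sE)^{(2)} \cong (s\otimes s) \otimes (\overline{\C}\infcomp\overline{\C})$ and from the Koszul sign rule. But this composite sends $s^2 R$ to $R/R = 0$: desuspending $s^2 R$ gives exactly $R$, which is killed by the projection onto $\T(E)^{(2)}/R$. On all other weight components $\kappa \star \kappa$ vanishes trivially for degree/weight reasons. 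Hence $\kappa \star \kappa = 0$, and combined with $\partial(\kappa) = 0$ this gives the Maurer--Cartan equation, so $\kappa$ is a twisting morphism.

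The main obstacle I anticipate is purely bookkeeping: carefully matching up the several suspension/desuspension isomorphisms ($sE$ in the cooperad, $s^2R$ for the relations, the operadic $\susp_2$ appearing in the identification of $\T^c(sE)^{(2)}$, and the desuspension $s^{-1}$ inside $\kappa$ itself) and confirming that all the Koszul signs conspire so that the desuspension $s^2R \to \T(E)^{(2)}$ really lands in $R$ on the nose, rather than in some sign-twisted copy of it — but since $R$ is a sub-$\S$-module, any nonzero scalar multiple of $R$ equals $R$, so even a sign error is harmless here. No subtle analytic or homotopical input is needed; the statement is essentially a repackaging of the definitions of $\P(E,R)$, $\C(sE,s^2R)$, and the convolution pre-Lie product. (Alternatively, one can cite \cite[Sect. 7.4]{LodayVallette} where exactly this $\kappa$ is shown to be a twisting morphism, but the direct verification above is short enough to include.)
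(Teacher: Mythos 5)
Your proof is correct, and it is exactly the standard argument: the paper itself simply cites \cite[Lemma 7.4.1]{LodayVallette} for this statement, and your direct verification (namely that $\partial(\kappa)=0$ for degree reasons and that $\kappa\star\kappa$ is supported on the weight-$2$ part $s^2R$, where it factors through the projection $\T(E)^{(2)}\twoheadrightarrow\T(E)^{(2)}/(R)$ and hence vanishes) is precisely the proof given in that reference. Your remark that a possible sign ambiguity in the desuspension is harmless because $R$ is a sub-$\S$-module closed under scalars is a correct and sensible way to dispose of the bookkeeping.
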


\begin{proof}
	This is \cite[Lemma 7.4.1]{LodayVallette}.
\end{proof}

\begin{notation}
	From now on, we reserve the Greek letter $\kappa$ as notation for the canonical twisting morphism defined above.
\end{notation}

\begin{definition}
	A quadratic operad $\P$ is \emph{Koszul}\index{Koszul!operad} if the canonical twisting morphism
	\[
	\kappa:\P^{\antishriek}\longrightarrow\P
	\]
	is Koszul.
\end{definition}

Consider the canonical inclusion
\[
i\coloneqq f_\kappa:\P^{\antishriek}\longrightarrow\Bar\P
\]
and the canonical projection
\[
p\coloneqq g_\kappa:\Cobar\P^{\antishriek}\longrightarrow\P
\]
given by applying \cref{thm:universal twisting morphisms} to $\kappa$. We have the following version of \cref{thm:fundamental thm of operadic twisting morphisms} for $\kappa$. Notice that we don't need a connected weight grading for it to hold. In particular, it also works without restrictions even if the quadratic data is not reduced.

\begin{theorem}
	Let $\P=\P(E,R)$ a quadratic operad. The following are equivalent.
	\begin{enumerate}
		\item The \emph{right Koszul complex} $\P^{\antishriek}\circ_\kappa\P$ is acyclic.
		\item The \emph{left Koszul complex} $\P\circ_\kappa\P^{\antishriek}$ is acyclic.
		\item The canonical inclusion $i:\P^{\antishriek}\to\Bar\P$ is a quasi-isomorphism.
		\item The canonical projection $p:\Cobar\P^{\antishriek}\to\P$ is a quasi-isomorphism.
	\end{enumerate}
\end{theorem}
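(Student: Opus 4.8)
The plan is to deduce this statement directly from \cref{thm:fundamental thm of operadic twisting morphisms}, the fundamental theorem of operadic twisting morphisms, by observing that although a quadratic data $(E,R)$ need not be reduced, the quadratic operad $\P=\P(E,R)$ and its Koszul dual cooperad $\P^{\antishriek}=\C(sE,s^2R)$ always carry a canonical connected weight grading, and that $\kappa$ is a twisting morphism homogeneous for it. First I would recall this grading: the tree module $\T(E)$ is graded by the number of vertices of a tree, i.e. by the number of generating operations used, and the operadic ideal $(R)$ is generated by $R\subseteq\T(E)^{(2)}$, which is homogeneous of weight $2$; since grafting of trees adds weights, the ideal $(R)$ is weight-homogeneous, so the quotient $\P=\T(E)/(R)$ inherits a weight grading $\P=\bigoplus_{w\ge0}\P^{(w)}$ with $\gamma_\P$ preserving total weight. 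Dually, $\P^{\antishriek}=\C(sE,s^2R)$ is weight-graded with its decomposition map preserving total weight. In both cases the weight-$0$ component is $I$, so $\P$ and $\P^{\antishriek}$ are connected weight graded in the sense required by \cref{thm:fundamental thm of operadic twisting morphisms}.

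Next I would check that $\kappa$ fits the hypotheses. By construction $\kappa$ factors as $\P^{\antishriek}\twoheadrightarrow sE=(\P^{\antishriek})^{(1)}\xrightarrow{s^{-1}}E=\P^{(1)}\hookrightarrow\P$, so it vanishes on $(\P^{\antishriek})^{(w)}$ for $w\ne1$ and is essentially the identity in weight $1$; in particular it is a morphism of weight-graded $\S$-modules of homological degree $-1$, and it is a twisting morphism by the lemma stating that $\kappa$ is a twisting morphism. Consequently the twisted differentials $d_\kappa$ on $\P^{\antishriek}\circ_\kappa\P$ and on $\P\circ_\kappa\P^{\antishriek}$ are homogeneous of total weight $0$: the extra term moves a weight-$1$ block out of the cooperad factor and into the operad factor, leaving the sum of the two weights unchanged. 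Hence each total-weight-$w$ summand is a bounded complex (the weight carried by the $\P^{\antishriek}$-factor of a summand of total weight $w$ ranges over $\{0,1,\ldots,w\}$), which is exactly what makes the spectral-sequence/comparison-lemma machinery underlying \cref{thm:fundamental thm of operadic twisting morphisms} converge. I would also note that $\Bar\P=\T^c(s\overline{\P})$ and $\Cobar\P^{\antishriek}=\T(s^{-1}\overline{\P^{\antishriek}})$ are connected weight graded, the weight of a tree being the sum of the weights of its vertex labels (each at least $1$), and that the morphisms $i=f_\kappa$ and $p=g_\kappa$ produced by \cref{thm:universal twisting morphisms} are weight-graded.

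With these observations in place, the statement is just the instance $\alpha=\kappa$ of \cref{thm:fundamental thm of operadic twisting morphisms}: its condition (1), acyclicity of the right twisted composite product $\P^{\antishriek}\circ_\kappa\P$, is the acyclicity of the right Koszul complex; condition (2) is the acyclicity of the left Koszul complex $\P\circ_\kappa\P^{\antishriek}$; condition (3), that $f_\kappa$ is a quasi-isomorphism, says exactly that $i\colon\P^{\antishriek}\to\Bar\P$ is a quasi-isomorphism; and condition (4), that $g_\kappa$ is a quasi-isomorphism, says that $p\colon\Cobar\P^{\antishriek}\to\P$ is a quasi-isomorphism. The one point to be careful about — and the reason no reducedness hypothesis on $(E,R)$ is needed — is precisely the content of the first two paragraphs: one must verify that the intrinsic weight grading by number of generators genuinely is a connected weight grading compatible with $\kappa$, with $i$, and with $p$, so that \cref{thm:fundamental thm of operadic twisting morphisms} applies verbatim. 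Once that is granted, there is nothing further to prove. (Alternatively, one could avoid citing the fundamental theorem and instead run the comparison lemma for twisted composite products twice — comparing $\P^{\antishriek}\circ_\kappa\P$ with the acyclic $\Bar\P\circ_\pi\P$ along $i\circ 1_\P$, and $\P\circ_\kappa\P^{\antishriek}$ with the acyclic $\Cobar\P^{\antishriek}\circ_\iota\P^{\antishriek}$ along $p\circ 1_{\P^{\antishriek}}$ — together with the bar--cobar resolution statements to link $i$ and $p$; this again rests on the same connected weight grading.)
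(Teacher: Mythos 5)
Your proof is correct and is essentially the paper's own argument: the paper simply cites \cite[Thm. 7.4.2]{LodayVallette}, and the proof given there is exactly your reduction to the fundamental theorem of operadic twisting morphisms via the canonical connected weight grading on $\P(E,R)$ and $\C(sE,s^2R)$ by number of generators, with respect to which $\kappa$ is homogeneous. Your closing remark also correctly identifies why no reducedness hypothesis is needed here, which is the point of the paper's comment preceding the statement.
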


\begin{proof}
	This is \cite[Thm. 7.4.2]{LodayVallette}.
\end{proof}

\begin{corollary}
	Suppose $\P$ is a Koszul operad. Then the operad $\Cobar\P^{\antishriek}$ is the minimal model of $\P$. It will often be denoted by $\P_\infty$.
\end{corollary}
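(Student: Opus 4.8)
The plan is to establish the two things contained in the assertion: that $\Cobar\P^{\antishriek}\to\P$ is a model whose source is a minimal operad, and that such a minimal model is unique, so that the definite article is justified. Uniqueness is immediate from the result \cite[Prop. 3.7]{dcv13} recalled above, so the substance lies in the first point, which I would split into producing the model map and checking minimality of its source.

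First I would produce the model map. Since $\P$ is Koszul, the canonical twisting morphism $\kappa\colon\P^{\antishriek}\to\P$ is a Koszul twisting morphism, so by the theorem characterizing Koszulness of a quadratic operad just proved, the canonical projection $p=g_\kappa\colon\Cobar\P^{\antishriek}\to\P$ is a quasi-isomorphism. I would then observe that $p$ is surjective: it is the operad morphism extending the $\S$-module map that is $s^{-1}$ followed by the inclusion on the weight-one cogenerators $(\P^{\antishriek})^{(1)}=sE$ and zero in higher weight, so its restriction to $s^{-1}(sE)\cong E\subseteq\Cobar\P^{\antishriek}$ is, up to sign, the generating inclusion $E\hookrightarrow\P$; hence the image of $p$ is a suboperad of $\P$ containing the generating $\S$-module $E$, and therefore all of $\P$. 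Thus $p$ is a model of $\P$ in the sense defined above.

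Next I would check that $\Cobar\P^{\antishriek}$ is a minimal operad, going through the three requirements. Quasi-freeness is built into the cobar construction: $\Cobar\P^{\antishriek}=\bigl(\T(s^{-1}\overline{\P^{\antishriek}}),d_1+d_2\bigr)$, and because $\P^{\antishriek}=\C(sE,s^2R)$ is quadratic its internal differential vanishes, so the generating object $M\coloneqq s^{-1}\overline{\P^{\antishriek}}$ is a genuine graded $\S$-module and $d_1=0$, leaving $d=d_2$. Decomposability is then immediate, since by definition $d_2$ is the derivation extending a map whose image lies in $\T(M)^{(2)}$, so $d_2(M)\subseteq\T(M)^{(\ge2)}$. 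For the weight-filtration condition I would use the connected weight grading of the quadratic cooperad, $\P^{\antishriek}=I\oplus\bigoplus_{w\ge1}(\P^{\antishriek})^{(w)}$ with $(\P^{\antishriek})^{(1)}=sE$ and $(\P^{\antishriek})^{(w)}\subseteq\T^c(sE)^{(w)}$, and set the $w$-th filtration layer of the generators to be $M_w\coloneqq s^{-1}(\P^{\antishriek})^{(w)}$, so that $M=\bigoplus_{w\ge1}M_w$. The key point is that the infinitesimal decomposition $\Delta_{(1)}$ preserves this weight grading, and — being obtained from $\Delta_\C$ by applying the counit on the appropriate factors — sends $\overline{\P^{\antishriek}}$ into $\overline{\P^{\antishriek}}\circ_{(1)}\overline{\P^{\antishriek}}$, hence $(\P^{\antishriek})^{(w)}$ into $\bigoplus_{p+q=w,\;p,q\ge1}(\P^{\antishriek})^{(p)}\circ_{(1)}(\P^{\antishriek})^{(q)}$. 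Since $d_2$ is built from $\Delta_{(1)}$, this gives $d_2(M_w)\subseteq\T\bigl(\bigoplus_{i=1}^{w-1}M_i\bigr)$ for every $w\ge1$, which is exactly condition (2) in the definition of a minimal operad (with $k=w-1$).

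Combining these, $\Cobar\P^{\antishriek}\to\P$ is a minimal model of $\P$, and uniqueness identifies it as \emph{the} minimal model, henceforth denoted $\P_\infty$. The only step that is more than bookkeeping is the last verification: one must pin down the correct weight grading on $\P^{\antishriek}$ and check that the infinitesimal decomposition strictly lowers the weight in each of its two outputs, which is precisely what makes the cobar differential compatible with the minimality filtration. Everything else is a direct reading-off of material already in place — the quasi-isomorphism from the characterization of Koszul operads, quasi-freeness and decomposability from the definition of $\Cobar$, surjectivity from the explicit form of $g_\kappa$, and uniqueness from \cite{dcv13}.
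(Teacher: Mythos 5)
Your proof is correct and follows exactly the route the paper leaves implicit: the quasi-isomorphism comes from the preceding characterization of Koszul operads applied to $\kappa$, minimality comes from the quadraticity of $\P^{\antishriek}$ (so $d_1=0$, $d=d_2$ is decomposable, and the weight grading on $\P^{\antishriek}$ supplies the required filtration since $\Delta_{(1)}$ strictly lowers weight in each factor), and uniqueness is \cite[Prop. 3.7]{dcv13}. The details you supply --- surjectivity of $g_\kappa$ via the generators $E$, and the weight bookkeeping --- are precisely the ones the paper omits, and they check out.
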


Therefore, we have a minimal model for any Koszul operad. Checking that an operad is Koszul looks like a difficult problem, but one has various methods of doing it which do not involve checking if the right or left Koszul complex are acyclic. This is the topic of \cite[Ch. 8]{LodayVallette}. It is outside the scope of the present work, and we will not mention this problematic again.

\subsection{Homotopy algebras and homotopy morphisms}\label{subsection:homotopy algebras and homotopy morphisms}

Given an arbitrary operad $\P$, often the category of $\P$-algebras does not have very good homotopical properties. However, whenever $\P$ is Koszul there is a notion of $\P$-algebra up to homotopy which is much better behaved. The material presented here is contained in \cite[Ch. 10]{LodayVallette}.

\medskip

For this section, we fix a Koszul operad $\P$.

\begin{definition}
	A \emph{homotopy $\P$-algebra}\index{Homotopy!$\P$-algebras}\index{$\P_\infty$-algebra} is an algebra over the operad $\P_\infty = \Cobar\P^{\antishriek}$. We also use the name \emph{$\P_\infty$-algebras} for homotopy $\P$-algebras.
\end{definition}

Notice that every $\P$ algebra is also a $\P_\infty$-algebra via restriction of structure along the projection $\P_\infty\to\P$. The following point of view on $\P_\infty$-algebras is often useful.

\begin{proposition}
	A structure of $\P_\infty$-algebra on a chain complex $A$ is equivalent to a twisting morphism $\varphi_A\in\Tw(\P^{\antishriek},\End_A)$.
\end{proposition}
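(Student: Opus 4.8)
The plan is to unwind the definitions and use the adjunctions already established, specifically the Rosetta stone of \cref{thm:Rosetta stone operads} applied to the cooperad $\P^{\antishriek}$ and the operad $\End_A$. First I would recall that a $\P_\infty$-algebra structure on $A$ is, by definition, a morphism of operads $\rho_A \colon \P_\infty \to \End_A$, where $\P_\infty = \Cobar\P^{\antishriek}$; this is just \cref{prop:P-alg is map P->End} applied to the operad $\P_\infty$. So the content to prove is that such morphisms of operads are in natural bijection with twisting morphisms $\P^{\antishriek}\to\End_A$.

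The key step is then immediate from \cref{thm:Rosetta stone operads}: taking $\C = \P^{\antishriek}$ (a coaugmented cooperad, since it is a reduced, hence coaugmented, quadratic cooperad) and $\P = \End_A$ (an augmented operad, with augmentation coming from the fact that $\End_A$ is reduced whenever $A$ is non-trivial, or more carefully one uses that $\End_A$ is augmented in the relevant sense), we have natural isomorphisms
\[
\hom_\op(\Cobar\P^{\antishriek},\End_A)\cong\Tw(\P^{\antishriek},\End_A)\cong\hom_\coop(\P^{\antishriek},\Bar\End_A)\ .
\]
Composing the first bijection with the identification of the left-hand side with the set of $\P_\infty$-algebra structures on $A$ gives exactly the claimed equivalence, and one checks that it is the identity on the underlying chain complex $A$, so that it really is an equivalence of structures and not merely a bijection of sets.

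The only genuinely verifiable point — and the one I would expect to be the main (minor) obstacle — is checking that the differential of $A$ as part of the $\P_\infty$-algebra structure matches up correctly with the twisting morphism data: the derivation part of the differential on $\Cobar\P^{\antishriek}$ corresponds under the adjunction to the quadratic part of the Maurer--Cartan equation $\partial(\varphi_A)+\varphi_A\star\varphi_A=0$, while the internal differential $d_1$ corresponds to $\partial(\varphi_A)$. This is a bookkeeping exercise tracing through the explicit description of the cobar differential $d = d_1 + d_2$ from \cref{subsection:operadic bar and cobar} and the definition of the pre-Lie product $\star$ on the convolution operad $\hom(\P^{\antishriek},\End_A)$. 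I would spell out that the operad map $\rho_A$ is freely determined by its restriction to generators $s^{-1}\overline{\P^{\antishriek}}$, that this restriction (up to the desuspension $s$) is precisely a degree $-1$ map $\varphi_A\colon\P^{\antishriek}\to\End_A$, and that $\rho_A$ being a \emph{chain map} of operads is equivalent to $\varphi_A$ satisfying the Maurer--Cartan equation, i.e. to $\varphi_A\in\Tw(\P^{\antishriek},\End_A)$. Naturality in $A$ is then clear, since every construction involved is functorial.
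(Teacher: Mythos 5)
Your proof is correct and follows exactly the paper's own route: the paper deduces the statement as an immediate consequence of \cref{prop:P-alg is map P->End} together with \cref{thm:Rosetta stone operads}, which is precisely the combination you use. The extra bookkeeping you describe (matching the cobar differential against the Maurer--Cartan equation) is already packaged inside \cref{thm:Rosetta stone operads}, so nothing further is needed.
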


\begin{proof}
	This is an immediate consequence of \cref{prop:P-alg is map P->End,thm:Rosetta stone operads}.
\end{proof}

\begin{proposition}
	A $\P_\infty$-algebra $A$ is a $\P$-algebra if, and only if the twisting morphism $\varphi_A$ is concentrated in weight $1$.
\end{proposition}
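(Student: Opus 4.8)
The plan is to read the statement off the weight gradings on $\P^{\antishriek}$ and on $\P_\infty=\Cobar\P^{\antishriek}$, together with the explicit form of the projection $p=g_\kappa\colon\P_\infty\to\P$. Recall that $\P^{\antishriek}=\C(sE,s^2R)$ inherits from $\T^c(sE)$ the weight grading counting the number of generators, so that $(\P^{\antishriek})^{(0)}=I$, $(\P^{\antishriek})^{(1)}=sE$ and $(\P^{\antishriek})^{(2)}=s^2R$, and moreover $\P^{\antishriek}$ has trivial internal differential since $E$ does. Accordingly $\P_\infty=(\T(s^{-1}\overline{\P^{\antishriek}}),d_2)$ carries the weight grading for which a tree monomial has weight the sum of the weights of its vertices, and $d_2$ preserves it because $\Delta_{(1)}$ decomposes a weight-$w$ cooperation into pieces whose weights sum to $w$. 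Under \cref{thm:Rosetta stone operads}, write $\Psi\colon\P_\infty\to\End_A$ for the operad morphism adjoint to $\varphi_A$; it is determined by its restriction $\rho:=\Psi|_{(\P_\infty)^{(1)}}\colon E\to\End_A$ and its values on the generators $s^{-1}(\P^{\antishriek})^{(w)}$ for $w\ge2$, which desuspend to the weight components $\varphi_A^{(w)}$. In this language, "$\varphi_A$ is concentrated in weight $1$" means precisely that $\Psi$ vanishes on every generator of weight $\ge2$.

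Next I would pin down the meaning of "$A$ is a $\P$-algebra": the $\P_\infty$-algebra structure on $A$ lies in the image of the restriction functor $p^*\colon\Palg\to\PooAlg$ along $p=g_\kappa$; since $p$ is surjective this is equivalent to asking that $\Psi$ factor (necessarily uniquely) through $p$. The computational heart is the description of $g_\kappa$ on generators given by \cref{thm:universal twisting morphisms} applied to $\kappa$: as $\kappa$ is concentrated in weight $1$, $g_\kappa$ restricts to the identification $s^{-1}(\P^{\antishriek})^{(1)}\cong E$ onto the generators $E\subseteq\P=\T(E)/(R)$ and kills all generators of weight $\ge2$. Quotienting $\T(s^{-1}\overline{\P^{\antishriek}})$ by the operadic ideal generated by the weight-$\ge2$ generators returns the free operad $\T(E)$, on which $g_\kappa$ is the canonical projection with kernel $(R)$; hence $\ker g_\kappa$ is the operadic ideal generated by the weight-$\ge2$ generators together with $R\subseteq\T(E)^{(2)}\subseteq\P_\infty$. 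Since $\Psi$ is an operad morphism, it factors through $g_\kappa$ if and only if it vanishes on this generating set, i.e. if and only if $\varphi_A^{(w)}=0$ for all $w\ge2$ \emph{and} $\Psi(R)=0$.

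It then remains to see that, once $\varphi_A$ is concentrated in weight $1$, the condition $\Psi(R)=0$ is automatic. Evaluating the Maurer--Cartan equation $\partial(\varphi_A)+\varphi_A\star\varphi_A=0$ on $(\P^{\antishriek})^{(2)}=s^2R$, the term $\partial(\varphi_A)$ vanishes there since $\P^{\antishriek}$ has no internal differential and $\varphi_A^{(2)}=0$, leaving $\bigl(\gamma_{(1)}(\varphi_A\infcomp\varphi_A)\Delta_{(1)}\bigr)\big|_{s^2R}=0$; as $\varphi_A$ is supported in weight $1$, only the $(1,1)$-part of $\Delta_{(1)}$ contributes, and unwinding the inclusion $s^2R\hookrightarrow(sE)\infcomp(sE)$ and the desuspension $\varphi_A^{(1)}\leftrightarrow\rho$ shows this identity is, up to an operadic suspension sign, exactly $\gamma_{(1)}^{\End_A}(\rho\infcomp\rho)$ evaluated on $R$, which is $\Psi(R)$ because $\Psi$ is an operad morphism with $\Psi|_E=\rho$. (On $(\P^{\antishriek})^{(w)}$ with $w\ge3$ the Maurer--Cartan equation is automatically satisfied, again because $\varphi_A$ is supported in weight $1$.) Thus a weight-$1$-concentrated twisting morphism is the same datum, via the universal property $\hom_\op(\T(E),-)\cong\hom_\Smod(E,-)$ and \cref{prop:P-alg is map P->End}, as a $\P$-algebra structure on $A$, and chasing generators shows this $\P$-structure restricts along $p$ to the original $\Psi$. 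Conversely, if $A$ is a $\P$-algebra with structure map $\rho_A\colon\P\to\End_A$, then $\Psi=\rho_A\,g_\kappa$ kills every weight-$\ge2$ generator, so $\varphi_A$ is concentrated in weight $1$. The one genuinely delicate point is the sign bookkeeping — the suspensions $\susp_2$ in the bar differential versus $\Delta_{s^{-1}}$ in the cobar differential — when identifying the weight-$2$ Maurer--Cartan term with $\Psi|_R$; everything else is unwinding definitions.
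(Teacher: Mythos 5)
Your argument is correct, and it is essentially the standard proof of \cite[Prop.\ 10.1.4]{LodayVallette}, which the thesis cites without reproducing: the equivalence reduces to the observation that $\Psi=g_{\varphi_A}$ factors through $g_\kappa$ iff it kills the weight-$\ge2$ generators and $R$, and that the weight-$2$ component of the Maurer--Cartan equation is exactly the relation $\Psi(R)=0$ once $\varphi_A$ is concentrated in weight $1$. The reconstruction, including the check that the Maurer--Cartan equation is automatic in weights $\ge3$ and the converse via $\varphi_A=\rho_A\kappa$, is sound.
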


\begin{proof}
	This is \cite[Prop. 10.1.4]{LodayVallette}.
\end{proof}

\begin{proposition}\label{prop:Poo-alg is coderivation of Pi-cog}
	Let $A$ be a graded vector space. A structure of $\P_\infty$-algebra on $A$ is equivalent to a square-zero coderivation on the (non-differential) graded cofree $\P^{\antishriek}$-coalgebra $\P^{\antishriek}(A)$.
\end{proposition}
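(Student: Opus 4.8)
\emph{Proof strategy.} The plan is to bootstrap from the preceding proposition, which says that a $\P_\infty$-algebra structure on $A$ is the same data as a twisting morphism $\varphi\in\Tw(\P^{\antishriek},\End_A)$, i.e.\ a degree $-1$ morphism of $\S$-modules $\varphi\colon\P^{\antishriek}\to\End_A$ with $\partial(\varphi)+\varphi\star\varphi=0$. Since $A$ is a graded vector space it carries no differential, so $\End_A$ has zero differential; and $\P^{\antishriek}=\C(sE,s^2R)$ is quadratic, hence also has zero differential. Therefore $\partial(\varphi)=0$ and the Maurer--Cartan equation reduces to $\varphi\star\varphi=0$. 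It thus suffices to exhibit a bijection between degree $-1$ morphisms of $\S$-modules $\varphi\colon\P^{\antishriek}\to\End_A$ and degree $-1$ coderivations $D$ of the cofree conilpotent $\P^{\antishriek}$-coalgebra $\P^{\antishriek}(A)$, under which $\varphi\star\varphi=0$ translates into $D^2=0$.

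\emph{Setting up the bijection.} Working arity by arity, the tensor--hom adjunction together with the characteristic-zero identification of invariants with coinvariants (\cref{subsect:invariants and coinvariants}) gives a natural isomorphism of graded vector spaces
\[
\hom_{\Smod}\bigl(\P^{\antishriek},\End_A\bigr)\;\cong\;\hom\bigl(\P^{\antishriek}(A),A\bigr),
\]
so a degree $-1$ map $\varphi$ corresponds to a degree $-1$ map $\ell_\varphi\colon\P^{\antishriek}(A)\to A$. By the universal property of the cofree conilpotent $\C$-coalgebra in its coderivation form --- a coderivation of $\C(V)$ is freely and uniquely determined by its corestriction to the cogenerators $\C(V)\to V$, dually to the statement that a derivation of a free algebra is determined by its values on generators, see \cite[Sect.~6.3]{LodayVallette} --- the map $\ell_\varphi$ extends to a unique coderivation $D_\varphi$ of $\P^{\antishriek}(A)$ with $\proj_A\,D_\varphi=\ell_\varphi$, where $\proj_A\colon\P^{\antishriek}(A)\to A$ is the canonical projection coming from \cref{prop:cofree conil coalgebra}. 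The assignment $\varphi\mapsto D_\varphi$ is the desired bijection; explicitly $D_\varphi$ is obtained from the decomposition map of $\P^{\antishriek}(A)$, applying $\ell_\varphi$ to one two-level decomposition and grafting the result back.

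\emph{Translating the two conditions.} Since $D_\varphi$ has odd degree, $D_\varphi^2=\tfrac12[D_\varphi,D_\varphi]$ is again a coderivation of $\P^{\antishriek}(A)$, so by the uniqueness just invoked it vanishes if and only if $\proj_A\,D_\varphi^2=0$. The crux is the identification $\proj_A\,D_\varphi^2=\ell_{\varphi\star\varphi}$, that is, that under the above bijection the corestriction of $D_\varphi^2$ corresponds exactly to
\[
\varphi\star\varphi=\Bigl(\P^{\antishriek}\xrightarrow{\ \Delta_{(1)}\ }\P^{\antishriek}\infcomp\P^{\antishriek}\xrightarrow{\ \varphi\infcomp\varphi\ }\End_A\infcomp\End_A\xrightarrow{\ \gamma_{(1)}\ }\End_A\Bigr).
\]
This is proved by unwinding the explicit formula for $D_\varphi$: composing $D_\varphi$ with itself applies $\ell_\varphi$ twice to nested two-level decompositions, and projecting to the cogenerators $A$ retains precisely one such decomposition followed by one grafting --- which is exactly the content of $\Delta_{(1)}$ followed by $\gamma_{(1)}$ --- while the Koszul signs match because the signs built into the (co)operadic suspensions defining $\P^{\antishriek}$ are the very ones entering the convolution pre-Lie product $\star$. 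Granting this, $D_\varphi^2=0\iff\varphi\star\varphi=0\iff\varphi\in\Tw(\P^{\antishriek},\End_A)\iff(A,\varphi)$ is a $\P_\infty$-algebra, which proves the claim.

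\emph{Expected obstacle.} Everything except the last identity is a formal consequence of adjunctions and the universal property of the cofree coalgebra; the sign-careful verification that $\proj_A\,D_\varphi^2$ corresponds to $\varphi\star\varphi$ is where the combinatorics of the cofree $\P^{\antishriek}$-coalgebra, the infinitesimal decomposition $\Delta_{(1)}$, and the Koszul sign rule must be reconciled, and this is the only genuinely technical point. It is the content of the corresponding statement in \cite[Ch.~10]{LodayVallette}, which can either be cited or reproduced.
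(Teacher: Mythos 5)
Your proof is correct and is essentially the argument behind the reference the paper invokes: the paper's own proof is just the citation \cite[Prop. 10.1.11]{LodayVallette}, and what you write is the standard derivation given there (degree $-1$ maps $\P^{\antishriek}\to\End_A$ correspond to coderivations of the cofree conilpotent coalgebra via its universal property, and the Maurer--Cartan equation matches the square-zero condition through $\proj_A D_\varphi^2=\ell_{\varphi\star\varphi}$). The only bookkeeping point to watch is that, since $A$ carries no differential, the coderivation's component on $I(A)=A$ must be allowed to supply $d_A$, so your $\varphi$ should range over \emph{all} degree $-1$ morphisms of $\S$-modules rather than only those vanishing on the coaugmentation as in the usual convention for $\Tw(\P^{\antishriek},\End_A)$ --- which is consistent with how you actually set up the bijection.
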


\begin{proof}
	This is \cite[Prop. 10.1.11]{LodayVallette}
\end{proof}

One can of course consider the category of $\P_\infty$-algebras with the morphisms of $\P_\infty$-algebras between them. However, there is a notion of morphism of $\P_\infty$-algebras "relaxed up to homotopy" which has better homotopical properties.

\begin{definition}
	Let $A$ and $A'$ be two $\P_\infty$-algebras. An \emph{$\infty$-morphism}\index{$\infty$-morphism} $\Psi$ of $\P_\infty$-algebras from $A$ to $A'$, denoted by $\Psi:A\rightsquigarrow B$, is a morphism of $\P^{\antishriek}$-coalgebras
	\[
	\Psi:\P^{\antishriek}(A)\longrightarrow\P^{\antishriek}(A')
	\]
	between the $\P^{\antishriek}$-coalgebras associated to $A$ and $A'$ through \cref{prop:Poo-alg is coderivation of Pi-cog}. The composition of $\infty$-morphisms is induced by the usual composition of morphisms of $\P^{\antishriek}$-coalgebras. The category of $\P_\infty$-algebras with $\infty$-morphisms as morphisms is denoted by $\ooPooAlg$.
\end{definition}

Since an $\infty$-morphism $\Psi$ of $\P_\infty$-algebras is a morphism between cofree coalgebras, it is completely determined by its projection
\[
\P^{\antishriek}(A)\longrightarrow\P^{\antishriek}(A')\xrightarrow{\mathrm{proj.}}A',
\]
which we will again denote by $\Psi$, abusing notation. Therefore, such an $\infty$-morphism is equivalent to a collection of maps
\[
\psi_n:\P^{\antishriek}(n)\otimes_{\S_n}A^{\otimes n}\longrightarrow A'
\]
satisfying certain relations.

\medskip

From now on, we suppose that the quadratic data $(E,R)$ describing $\P$ is such that $E(0)=E(1)=0$. Then the cooperad $\P^{\antishriek}$ is reduced, and $\psi_1$ is a chain map
\[
\psi_1:A\longrightarrow A'.
\]
There is an analogous point of view for general Koszul operads treated in \cite[p.373]{LodayVallette}, where the map $\psi_1$ above corresponds to $\psi_{(0)}$, but we will not use it.

\medskip

Notice that every morphism of $\P_\infty$-algebras $\psi:A\to A'$ is an $\infty$-morphism by $\Psi\coloneqq\P^{\antishriek}(\psi)$. For the converse direction, we have the following.

\begin{proposition}
	An $\infty$-morphism $\Psi:A\rightsquigarrow A'$ of $\P_\infty$-algebras is a strict morphism of $\P_\infty$-algebras if, and only if $\psi_n=0$ for all $n\ge2$.
\end{proposition}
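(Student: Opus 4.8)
The plan is to work entirely at the level of the cofree $\P^{\antishriek}$-coalgebras $\P^{\antishriek}(A)$ and $\P^{\antishriek}(A')$. Recall that an $\infty$-morphism $\Psi\colon A\rightsquigarrow A'$ is a morphism of the associated $\P^{\antishriek}$-coalgebras commuting with the codifferentials $D_A$, $D_{A'}$ that encode the two $\P_\infty$-structures (via \cref{prop:Poo-alg is coderivation of Pi-cog}), that such a morphism is determined by its components $\psi_n$, and that $\psi_n$ is the restriction of $\proj_{A'}\circ\Psi$ to the summand $\P^{\antishriek}(n)\otimes_{\S_n}A^{\otimes n}$. The two external inputs I will use are the universal property of the cofree conilpotent coalgebra (\cref{prop:cofree conil coalgebra}) and the naturality of the canonical projections $\proj_V\colon\P^{\antishriek}(V)\to V$, which together form the counit of the forgetful--cofree adjunction.

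First I would dispatch the ``only if'' direction. If $\Psi$ is strict then by definition $\Psi=\P^{\antishriek}(\psi)$ for a morphism $\psi$ of $\P_\infty$-algebras; naturality of the projection gives $\proj_{A'}\circ\P^{\antishriek}(\psi)=\psi\circ\proj_A$, and since $\proj_A$ annihilates every summand of arity $\geq 2$ we obtain $\psi_1=\psi$ and $\psi_n=0$ for all $n\geq 2$ at once.

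For the ``if'' direction, assume $\psi_n=0$ for all $n\geq 2$. The morphism of graded $\P^{\antishriek}$-coalgebras $\P^{\antishriek}(\psi_1)\colon\P^{\antishriek}(A)\to\P^{\antishriek}(A')$ has, by the same naturality argument, exactly the same components as $\Psi$ --- namely $\psi_1$ in arity $1$ and $0$ in all higher arities --- so $\Psi=\P^{\antishriek}(\psi_1)$. It then remains to verify that $\psi_1$ is a morphism of $\P_\infty$-algebras. That $\psi_1$ is a chain map is automatic for any $\infty$-morphism (recalled just before the statement), so the only thing left is compatibility with the structure operations. I would obtain this by composing the codifferential compatibility $D_{A'}\Psi=\Psi D_A$ with $\proj_{A'}$ and restricting to the summand $\P^{\antishriek}(n)\otimes_{\S_n}A^{\otimes n}$ with $n\geq 2$: using that $\proj_{A'}\circ D_{A'}$ equals $d_{A'}\circ\proj_{A'}$ plus the structure map $\P^{\antishriek}(A')\to A'$ encoding the $\P_\infty$-structure of $A'$, the analogous identity on the source, and the equality $\Psi=\P^{\antishriek}(\psi_1)$ just established, one reads off precisely that $\psi_1$ intertwines the $n$-ary structure operations of $A$ and of $A'$. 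Hence $\Psi=\P^{\antishriek}(\psi_1)$ is strict.

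The naturality statements, the ``determined by its components'' step, and the identification $\Psi=\P^{\antishriek}(\psi_1)$ are formal. The step I expect to require the most care --- hence the main, if minor, obstacle --- is the last extraction: one has to decompose $D_A$, $D_{A'}$ and the composites $\proj\circ D$ into their arity-graded pieces, with the correct Koszul signs, so that projecting $D_{A'}\Psi=\Psi D_A$ genuinely yields the intertwining relation for the operations rather than a weaker consequence. Everything else is bookkeeping.
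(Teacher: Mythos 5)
Your proof is correct. The paper itself does not spell out an argument here --- it simply cites \cite[Prop.~10.2.5]{LodayVallette} --- and your two directions (naturality of the counit $\proj$ for ``only if''; the universal property of the cofree conilpotent coalgebra to get $\Psi=\P^{\antishriek}(\psi_1)$, followed by projecting $D_{A'}\Psi=\Psi D_A$ onto $A'$ arity by arity for ``if'') are exactly the standard argument given in that reference, with the one genuinely delicate step (the arity-graded decomposition of $\proj\circ D$) correctly identified.
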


\begin{proof}
	This is \cite[Prop. 10.2.5]{LodayVallette}.
\end{proof}

Finally, we introduce the important notion of $\infty$-quasi-isomorphism.

\begin{definition}
	An $\infty$-morphism of $\P_\infty$-algebras $\Psi:A\rightsquigarrow A'$ is an \emph{$\infty$-quasi-isomorphism}\index{$\infty$-quasi-isomorphism} if $\psi_1:A\to A'$ is a quasi-isomorphism.
\end{definition}

The notions of $\infty$-morphisms in general and $\infty$-quasi-isomorphisms in particular will be studied some more later, in \cref{sect:homotopy theory of homotopy algebras}, as well as in \cref{chapter:oo-morphisms relative to a twisting morphism}.

\subsection{The homotopy transfer theorem}\label{subsect:HTT}

The homotopy transfer theorem is an important result on the homotopical behavior of algebras over (cofibrant) operads. It tells us that if we have a homotopy retraction between two chain complexes --- which in particular implies being homotopically the same --- and an algebra structure over one of them, then we can produce in a very natural way an algebra structure on the other one which has the same homotopical information.

\medskip

The first notions we need is those of homotopy retractions and contractions of chain complexes.

\begin{definition}
	Let $V,W$ be two chain complexes. A \emph{homotopy retraction}\index{Homotopy!retraction} of $V$ into $W$ is three maps
	\begin{center}
		\begin{tikzpicture}
			\node (a) at (0,0){$V$};
			\node (b) at (2,0){$W$};
			
			\draw[->] (a)++(.3,.1)--node[above]{\mbox{\tiny{$p$}}}+(1.4,0);
			\draw[<-,yshift=-1mm] (a)++(.3,-.1)--node[below]{\mbox{\tiny{$i$}}}+(1.4,0);
			\draw[->] (a) to [out=-150,in=150,looseness=4] node[left]{\mbox{\tiny{$h$}}} (a);
		\end{tikzpicture}
	\end{center}
	such that $pi=1_W$ and $1-ip=d_Vh+hd_V$. It is a \emph{contraction}\index{Contraction} if the \emph{side conditions}\index{Side conditions}
	\[
	h^2=0\ ,\qquad ph=0\ ,\qquad\text{and}\qquad hi=0
	\]
	are satisfied.
\end{definition}

\begin{remark}
	It is possible to construct a sensible category whose objects are the homotopy retractions, cf. \cref{subsect:complete contractions and htt}.
\end{remark}

Let $\P$ be a Koszul operad, let $A$ be a $\P_\infty$-algebra, and suppose that we have a contraction of chain complexes
\begin{center}
	\begin{tikzpicture}
	\node (a) at (0,0){$A$};
	\node (b) at (2,0){$B$};
	
	\draw[->] (a)++(.3,.1)--node[above]{\mbox{\tiny{$p$}}}+(1.4,0);
	\draw[<-,yshift=-1mm] (a)++(.3,-.1)--node[below]{\mbox{\tiny{$i$}}}+(1.4,0);
	\draw[->] (a) to [out=-150,in=150,looseness=4] node[left]{\mbox{\tiny{$h$}}} (a);
	\end{tikzpicture}
\end{center}
The \emph{transfer problem}\index{Transfer problem} asks under what conditions it is possible to produce a $\P_\infty$-algebra structure on $B$ such that $i$ extends to an $\infty$-morphism of $\P_\infty$-algebras. It turns out that this is always true. This result is known as the \emph{homotopy transfer theorem}, and has been developed throughout the years by many authors. It has been known for a long time for $\A_\infty$-algebras --- see \cite[Thm. 1]{kad80}, and e.g. \cite[Sect. 6.4]{ks00} for an explicit formula in terms of trees --- and $\L_\infty$-algebras. It was proven for $\P_\infty$-algebras --- and more generally for algebras over $\Cobar\C$ for $\C$ a reduced cooperad\footnote{This slightly more general case will be explained later, in \cref{subsect:generalized HTT}.} --- in \cite[Thm. 1.5]{ber14transfer} with explicit formul{\ae}, but the existence part was already known by \cite{bm03} and \cite{fre09}. Another approach using pre-Lie deformation theory was recently given in \cite[Sect. 8]{dsv16}. For a more extensive review of the literature on this result, we refer the reader to the introduction of \cite[Sect. 10.3]{LodayVallette} and to the survey \cite{sta10}, which also provides a nice historical perspective, as well as links with theoretical physics. The formul{\ae} we will present here are found in \cite[Sect. 10.3]{LodayVallette}.

\medskip

The main tool we need to give explicit formul{\ae} for the transfered structure and the induced $\infty$-morphisms is the \emph{van der Laan morphism}\index{Van der Laan morphism} \cite{vdL03} associated to the contraction. It is a morphism of cooperads
\[
\vdl_B:\Bar(\End_A)\longrightarrow\Bar(\End_B)
\]
defined as follows. An element of $\Bar(\End_A)=\T^c(s\End_A)$ is given by a rooted tree $\tau\in\RT$ with vertices labeled by elements of $s\End_A$. Using the notations of \cref{chapter:appendix on trees}, we write $\tau(f)$ for such a tree, where $f:V\to s\End_A$ is a function from the set of vertices of $\tau$ to $s\End_A$. Given such a tree $\tau(f)$ with $\tau\neq\emptyset$, we denote by $\tau^h(f)$ the element of $\End_A$ defined as follows. If $\tau=c_n$ is the $n$-corolla, then
\[
c_n(f)\coloneqq f(*)\ .
\]
Else, we write $\tau = c_k\circ(\tau_1,\ldots,\tau_k)$, where $\tau_i\in\RT_{n_i}$ are possibly empty, and define
\[
\tau^h(f)\coloneqq \gamma_{\End_A}\big(f(*)\circ(hs^{-1}\tau_1^h(f|_{V_1}),\ldots,hs^{-1}\tau_k^h(f|_{V_k})\big)\ ,
\]
where $*$ is the unique vertex of $c_k$, $V_i$ is the set of vertices of $\tau_i$, and where we formally set $\emptyset^h = h^{-1}$. Then we define
\[
\vdl_B(\tau(f))\coloneqq s(p\tau^h(f)i^{\otimes n})
\]
to obtain a map
\[
\vdl_B:\Bar(\End_A)\longrightarrow s\End_B\ .
\]
The drawing in \cite[p. 378]{LodayVallette} might prove illuminating to the reader confused by the exposition above. By universal property of the bar construction, this map extends to give a morphism
\[
\Bar(\End_A)\longrightarrow\Bar(\End_B)
\]
of cooperads in graded vector spaces, which we denote again by $\vdl_B$, abusing notation.

\begin{theorem}[{\cite[Thm. 5.2]{vdL03}}]
	The map
	\[
	\vdl_B:\Bar(\End_A)\longrightarrow\Bar(\End_B)
	\]
	described above is a morphism of cooperads.
\end{theorem}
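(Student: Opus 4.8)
The plan is to first pin down what the statement actually asks. By the universal property of the cofree cooperad $\T^c(s\End_B)$, the morphism of graded $\mathbb S$-modules $\vdl_B\colon\Bar(\End_A)\to s\End_B$ already extends uniquely to a morphism of cooperads in graded vector spaces, as noted just above the theorem; so the only content to be proved is that this extension commutes with the bar differentials $d=d_1+d_2$ on $\Bar(\End_A)$ and on $\Bar(\End_B)$.

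The first step is to reduce the differential compatibility to the cogenerators. Both $d_1$ and $d_2$ are coderivations of the respective bar constructions, and $\vdl_B$ is a morphism of graded cooperads; hence, writing $d^A,d^B$ for the two bar differentials, both composites $\vdl_B\circ d^A$ and $d^B\circ\vdl_B$ are coderivations relative to $\vdl_B$, i.e. $\vdl_B$-coderivations $\Bar(\End_A)\to\Bar(\End_B)$ (the verification is the same two-line computation as for the functoriality of the bar construction, using that $\vdl_B$ intertwines the decompositions). By the coderivation version of the cofreeness of $\T^c(s\End_B)$, such a map is determined by its composite with the projection $\pr\colon\Bar(\End_B)\to s\End_B$, so it suffices to prove the single identity $\pr\circ\vdl_B\circ d^A=\pr\circ d^B\circ\vdl_B$ of maps $\Bar(\End_A)\to s\End_B$.

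Next I would evaluate both sides on a tree monomial $\tau(f)\in\T^c(s\End_A)$, $\tau\in\RT$, and match the terms, organizing the argument as an induction on the number of vertices of $\tau$ with the corolla as base case and the splitting $\tau=c_k\circ(\tau_1,\dots,\tau_k)$ driving the inductive step. Write $\psi(t)\coloneqq s\,(p\,t^h\,i^{\otimes n})$ for the defining map, so that $\pr\circ\vdl_B$ applied to any tree monomial is $\psi$. The left-hand side, via $d^A=d_1^A+d_2^A$, unwinds to $s\,p$ applied to the sum over the vertices of $\tau$ of ``differentiate the label'', plus the sum over the internal edges $e$ of ``contract $e$'' (equivalently: replace by $1_A$ the copy of the homotopy $h$ sitting at $e$ inside $\tau^h$), all capped with $i^{\otimes n}$. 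On the right, $\pr\circ d_1^B$ isolates the one-vertex part of $\vdl_B(\tau(f))$, namely $\psi(\tau(f))$, and differentiates it; since $i$ and $p$ are chain maps this equals $s\,p\,(d_{\End_A}\tau^h(f))\,i^{\otimes n}$, and Leibniz together with the homotopy identity $d_Ah+hd_A=1_A-ip$ turns each differentiated copy of $h$ at an edge $e$ into ``replace $h$ at $e$ by $1_A$'' minus ``replace $h$ at $e$ by $ip$''. Meanwhile $\pr\circ d_2^B$ extracts the two-vertex part of $\vdl_B(\tau(f))$ — the sum over edges $e$ of the composite $\psi(\tau_{\le e})\circ_{j(e)}\psi(\tau_{>e})$ of the two subtrees obtained by cutting $\tau$ along $e$ — and composes it in $\End_B$, producing exactly $s\,p$ applied to $\sum_e(\text{replace }h\text{ at }e\text{ by }ip)$, capped with $i^{\otimes n}$. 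The two occurrences of $\sum_e(\text{replace }h\text{ at }e\text{ by }ip)$ cancel, the vertex terms match verbatim, and the ``replace $h$ by $1_A$'' edge terms on the right are precisely the $d_2^A$ contribution on the left.

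The genuinely delicate part — as always in this circle of ideas — is the signs. One must track the Koszul signs produced by commuting the degree-one suspension $s$ and the degree-one homotopy $h$ past the vertex labels, together with the operadic-suspension signs built into $d_2$ on both bar constructions via $\susp_n\circ_j\susp_m=(-1)^{(j-1)(1-m)}\susp_{n+m-1}$, and check that they conspire so that the two copies of the ``$ip$ at $e$'' sums genuinely cancel and the surviving terms carry matching signs. This bookkeeping is the one carried out, with an illuminating picture, in \cite[Sect.~10.3]{LodayVallette} following \cite{vdL03}; note that only the homotopy relation and the chain-map property of $i$ and $p$ enter, the side conditions $h^2=ph=hi=0$ playing no role in this particular statement.
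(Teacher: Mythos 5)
Your argument is correct and is essentially the proof given in the sources this thesis cites for the statement (\cite{vdL03} and \cite[Prop.~10.3.2]{LodayVallette}): reduce to the cogenerators of the cofree cooperad $\T^c(s\End_B)$ using that both $\vdl_B\, d^A$ and $d^B\,\vdl_B$ are coderivations along $\vdl_B$, then match terms on tree monomials, where the homotopy relation $1-ip=d_Ah+hd_A$ produces exactly the cancellation between the $d_2^B$ edge-splitting terms and the $ip$-terms coming from differentiating the copies of $h$, leaving the $d_1^A$ vertex terms and the $d_2^A$ edge-contraction terms. The only piece you do not carry out is the sign bookkeeping, which you explicitly defer to \emph{op.\ cit.}; since the thesis itself states the theorem without proof and points to those same references, there is nothing further to compare.
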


A proof of this result can also be found in \cite[Prop. 10.3.2]{LodayVallette}. The transfered structure on $B$ can now be expressed as follows. The $\P_\infty$-algebra structure on $A$ is given by a twisting morphism $\varphi_A\in\Tw(\P^{\antishriek},\End_A)$. This is equivalent to a morphism of cooperads
\[
f_A:\P^{\antishriek}\longrightarrow\Bar(\End_A)
\]
by \cref{thm:Rosetta stone operads}. We compose this with $\vdl_B$ to get a morphism of cooperads
\[
\vdl_Bf_A:\P^{\antishriek}\longrightarrow\Bar(\End_B)\ ,
\]
which again is equivalent to a twisting morphism $\varphi_B\in\Tw(\P^{\antishriek},\End_B)$ defining a $\P_\infty$-algebra structure on $B$. Of course, this is nothing else than $\varphi_B = \vdl_Bf_A$. Explicitly, the structure is given by
\[
\varphi_B = \left(\P^{\antishriek}\xrightarrow{\Delta_{\P^{\antishriek}}^\mathrm{monadic}}\T^c\big(\overline{\P^{\antishriek}}\big)\xrightarrow{\T^c(s\varphi_A)}\Bar(\End_A)\xrightarrow{\vdl_B}\End_B\right)\ .
\]
This explicit formulation for the transferred structure first appeared in \cite{gctv12}.

\medskip

Next, one defines a map
\[
i_\infty:\P^{\antishriek}(B)\longrightarrow A
\]
by
\small
\[
i_\infty\coloneqq\left(\P^{\antishriek}\circ B\xrightarrow{\Delta_{\P^{\antishriek}}^\mathrm{monadic}\circ1_B}\T^c\big(\overline{\P^{\antishriek}}\big)\circ B\xrightarrow{\T^c(s\varphi_A)\circ1_B}\Bar(\End_A)\circ B\xrightarrow{\vdl_B^i\circ1_B}\End^B_A\circ B\longrightarrow A\right),
\]
\normalsize
where $\End^B_A$ is the $\S$-module given by $\End^B_A(n)\coloneqq\hom_\chain(B^{\otimes n},A)$, the map
\[
\vdl_B^i:\Bar(\End_A)\longrightarrow\End^B_A
\]
is given on an element of $\Bar(\End_A)(n)$ by
\[
\vdl_B^i(\tau(f))\coloneqq h\tau^h(f)i^{\otimes n},
\]
and where the last arrow is the obvious evaluation map.

\begin{theorem}[Homotopy transfer theorem]\index{Homotopy!transfer theorem}
	The map $i_\infty$ defines an $\infty$-quasi-isomorphism from $B$ with the $\P_\infty$-algebra structure defined above to $A$.
\end{theorem}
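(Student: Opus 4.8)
The plan is to split the statement into two parts: first, that $i_\infty$ genuinely defines an $\infty$-morphism of $\P_\infty$-algebras from $(B,\varphi_B)$ to $(A,\varphi_A)$; and second, that its linear component is a quasi-isomorphism, so that $i_\infty$ is an $\infty$-quasi-isomorphism by definition. The $\P_\infty$-structure $\varphi_B=\vdl_Bf_A$ on $B$ is already available, since $\vdl_B$ is a morphism of cooperads and hence $\vdl_Bf_A\colon\P^{\antishriek}\to\Bar(\End_B)$ corresponds to a bona fide twisting morphism $\varphi_B\in\Tw(\P^{\antishriek},\End_B)$.

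The second, easier point first: unwinding the formula for $i_\infty$ on the arity-one summand $I\circ B\cong B$ of $\P^{\antishriek}(B)$, where the monadic decomposition $\Delta^{\mathrm{mon}}_{\P^{\antishriek}}$ contributes only the trivial tree, one reads off that the linear component is $(i_\infty)_1=i$. From $pi=1_B$ and $1_A-ip=d_Ah+hd_A$ the map $i$ is a homotopy equivalence of chain complexes, in particular a quasi-isomorphism; so once $i_\infty$ is known to be an $\infty$-morphism, it is automatically an $\infty$-quasi-isomorphism.

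The real content is the first claim. Since $\P^{\antishriek}(A)$ is cofree as a $\P^{\antishriek}$-coalgebra, an $\infty$-morphism $B\rightsquigarrow A$ is the same datum as a map $g\colon\P^{\antishriek}(B)\to A$ whose cofree coextension $\bar g\colon\P^{\antishriek}(B)\to\P^{\antishriek}(A)$ commutes with the codifferentials $D_{\varphi_B}$ and $D_{\varphi_A}$ encoding the two structures; projecting onto $A$ and using that a coderivation into a cofree coalgebra is determined by its corestriction, this reduces to the single equation $\pr_A D_{\varphi_A}\bar g = g D_{\varphi_B}$ on $\P^{\antishriek}(B)$. I would take $g=i_\infty$ and verify this identity in the explicit tree model: via $\Delta^{\mathrm{mon}}_{\P^{\antishriek}}$ and $\T^c(s\varphi_A)$, each side becomes a sum over rooted trees whose vertices are labelled by components of $\varphi_A$, whose leaves are fed through $i$, and whose internal edges carry $h$ — the left side additionally applying $\varphi_A$ (or $d_A$) at the root, the right side applying $p$ at the edge leaving a $\varphi_B$-vertex together with $h$ (or $d_A$) at the root. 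The structural input that organizes the matching is the theorem of van der Laan that $\vdl_B\colon\Bar(\End_A)\to\Bar(\End_B)$ is a morphism of cooperads, which encodes the compatibility of grafting-with-$h$ with decomposition of trees; the remaining identifications are forced by the side conditions $h^2=0$, $ph=0$, $hi=0$ and the homotopy relation $1_A-ip=d_Ah+hd_A$, the last of which is precisely what lets one either cut an internal edge or merge two adjacent sub-trees.

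The main obstacle I anticipate is bookkeeping rather than conceptual: one must show that every tree appearing on one side of the equation is cancelled or produced by exactly one tree on the other, while tracking the Koszul signs coming from the operadic (de)suspension $\susp$ inside $\Bar$, from the differential passing over labels, and from the shuffle permutations of the inputs. A clean way to contain this, which I would follow, is to first package $\vdl_B$ together with its root-truncated variant $\vdl^i_B\colon\Bar(\End_A)\to\End^B_A$ as a single coherent datum — a cooperad morphism equipped with a compatible homotopy — so that the $\infty$-morphism equation for $i_\infty$ follows from this one structural lemma applied levelwise through $\T^c(s\varphi_A)\circ 1_B$, instead of being re-derived arity by arity.
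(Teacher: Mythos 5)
Your outline is correct and follows essentially the same route as the proof the paper defers to (it only cites \cite[Thm.~10.3.6]{LodayVallette}): reduce the $\infty$-morphism condition to its corestriction onto the cogenerators $A$, verify that single equation by the tree calculus organized around the cooperad morphism $\vdl_B$ and its root-truncated variant $\vdl_B^i$ together with the side conditions and $1_A-ip=d_Ah+hd_A$, and read off from the trivial tree that the linear component is $i$, whence the $\infty$-quasi-isomorphism property. Since the paper gives no proof beyond the citation, there is nothing further to compare.
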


\begin{proof}
	E.g. \cite[Thm. 10.3.6]{LodayVallette}.
\end{proof}

The following fact is also important and will prove useful later on.

\begin{proposition}
	The map $p:A\to B$ can also be extended to an $\infty$-quasi isomorphism $p_\infty:A\rightsquigarrow B$. Moreover, it can be take such that the composite
	\[
	p_\infty i_\infty = 1_B
	\]
	is the identity on $B$.
\end{proposition}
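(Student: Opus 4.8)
The plan is to run the same van der Laan machinery that produced $i_\infty$, but reading the trees from the root downwards, so as to obtain a map extending $p$, and then to read the relation $p_\infty i_\infty = 1_B$ off the side conditions of the contraction. First I would construct $p_\infty$ explicitly. In analogy with $\vdl_B^i$, define $\vdl_B^p\colon\Bar(\End_A)\to\End_B^A$, where $\End_B^A(n)\coloneqq\hom_\chain(A^{\otimes n},B)$, by placing $p$ at the root, $h$ on every internal edge (as in $\tau^h$), and $h$ on every leaf. Then set $p_\infty$ to be $p$ on the summand $I\circ A=A$ of $\P^{\antishriek}\circ A$ and, on the complementary summand, the composite
\small
\[
\overline{\P^{\antishriek}}\circ A\xrightarrow{\Delta_{\P^{\antishriek}}^{\mathrm{monadic}}\circ1_A}\T^c\big(\overline{\P^{\antishriek}}\big)\circ A\xrightarrow{\T^c(s\varphi_A)\circ1_A}\Bar(\End_A)\circ A\xrightarrow{\vdl_B^p\circ1_A}\End_B^A\circ A\longrightarrow B\ .
\]
\normalsize
By cofreeness of $\P^{\antishriek}(B)$ this corestricts to a unique morphism of graded $\P^{\antishriek}$-coalgebras $P_\infty\colon\P^{\antishriek}(A)\to\P^{\antishriek}(B)$.

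Next I would verify that $P_\infty$ commutes with the codifferentials encoding the $\P_\infty$-structures on $A$ and $B$, i.e. that $p_\infty$ is an $\infty$-morphism $A\rightsquigarrow B$. Since $P_\infty$ takes values in a cofree coalgebra, the defect $P_\infty d_{\P^{\antishriek}(A)}-d_{\P^{\antishriek}(B)}P_\infty$ is a coderivation along $P_\infty$, hence vanishes as soon as its corestriction to $B$ does; this reduces the claim to a tree-level identity built from $\varphi_A$, $\varphi_B=\vdl_Bf_A$, the maps $i,p,h$, and the contraction relation $1_A-ip=d_Ah+hd_A$, which is checked by exactly the computation that establishes the homotopy transfer theorem for $i_\infty$ (the key input being again that the untwisted van der Laan morphism $\vdl_B\colon\Bar(\End_A)\to\Bar(\End_B)$ is a morphism of cooperads). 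Since the linear component of $p_\infty$ is $p$, and $p$ is a quasi-isomorphism — one has $pi=1_B$ and $ip$ is chain homotopic to $1_A$ — the $\infty$-morphism $p_\infty$ is an $\infty$-quasi-isomorphism.

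It remains to check $p_\infty i_\infty=1_B$. The $n$-th structure map of this composite $\infty$-morphism is a sum, with coefficients coming from the cooperad structure of $\P^{\antishriek}$, of linear maps $B^{\otimes n}\to B$ indexed by trees obtained by grafting an $i_\infty$-tree onto each leaf of a $p_\infty$-tree, where either tree may be trivial (reduced to its linear part). If the $p_\infty$-tree is nontrivial, each of its leaves carries an $h$ coming from the leaf-decoration of $\vdl_B^p$, while the $i_\infty$-tree grafted there contributes at its root either another $h$ (the outer $h$ in $\vdl_B^i(\tau(f))=h\tau^h(f)i^{\otimes n}$, when that tree is nontrivial) or the map $i$ (when it is trivial); in both cases the term carries the factor $h^2=0$ or $hi=0$, hence vanishes. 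If the $p_\infty$-tree is trivial, equal to the linear part $p$, while the grafted $i_\infty$-tree is nontrivial, then $p$ is applied immediately after the root $h$ of that tree and the term carries the factor $ph=0$. The only surviving contribution is the one in which both trees are trivial, namely $pi=1_B$ in arity $1$, with all higher arities zero; that is, $p_\infty i_\infty$ is the identity $\infty$-morphism of $B$.

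The genuinely delicate point is the middle step — verifying that $P_\infty$ is a chain map, equivalently getting the decoration of the $p_\infty$-trees exactly right so that $p_\infty$ is truly an $\infty$-morphism; but this is governed by precisely the same bookkeeping (signs included, via the standard Koszul conventions) as the homotopy transfer theorem already proved for $i_\infty$. Alternatively, the whole package $(\varphi_B,i_\infty,p_\infty,h_\infty)$ can be produced at once by applying the homological perturbation lemma to the cofree coalgebra $\P^{\antishriek}(A)$ equipped with the coderivation $d_{\P^{\antishriek}(A)}$ and with the contraction induced on cofree coalgebras by $(i,p,h)$: the perturbation lemma returns a bona fide contraction of complexes whose structure maps are morphisms of $\P^{\antishriek}$-coalgebras, and a contraction has the relation $p_\infty i_\infty=1_B$ built into it.
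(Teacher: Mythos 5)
There is a genuine gap, and it sits exactly where you flag the ``delicate point'': your explicit formula for $p_\infty$ is wrong, so the middle step cannot be carried out as described. Decorating a tree with $p$ at the root, $h$ on internal edges \emph{and} $h$ on every leaf does not even produce maps of the correct degree: in the $\A_\infty$ case the arity-two component would be $p\,m_2^A(h\otimes h)$, of degree $2$, whereas $\psi_2$ must have degree $1$ (compare $i_2 = h\,m_2^A(i\otimes i)$, which does have degree $1$). One can also see the failure directly from the relation $\partial(p_2) = m_2^B(p\otimes p) - p\,m_2^A = p\,m_2^A(ip\otimes ip - 1\otimes 1)$: writing $ip\otimes ip - 1\otimes 1 = -\partial(h)\otimes ip - 1\otimes\partial(h)$ shows that a correct choice is $p_2 = -p\,m_2^A(h\otimes ip) - p\,m_2^A(1_A\otimes h)$, with exactly \emph{one} $h$ among the leaves of each summand and $ip$ or $1_A$ at the others. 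The apparent mirror symmetry between $i_\infty$ and $p_\infty$ is a known trap: the two formul{\ae} are genuinely asymmetric, and the correct $p_\infty$ (this is \cite[Prop.~10.3.9]{LodayVallette}) is a sum over \emph{levelled} decorations of this kind, not the naive reflection of $\vdl_B^i$. Since your verification of $p_\infty i_\infty = 1_B$ leans on ``each leaf of a nontrivial $p_\infty$-tree carries an $h$'', that argument collapses with the formula; with the correct decorations the leaf labels include $ip$ and $1_A$, for which $ip\circ i = i$ and $1_A\circ h = h$ do \emph{not} vanish, so the cancellation has to be organised differently.

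Your closing remark does contain a correct proof in embryo: running the homological perturbation lemma on the cofree coalgebra $\P^{\antishriek}(A)$ with the contraction induced by $(i,p,h)$ produces $\varphi_B$, $i_\infty$ and $p_\infty$ simultaneously as coalgebra morphisms, with $p_\infty i_\infty = 1$ inherited from $pi=1_B$; the HPL formula $p' = \sum_k p(\delta h)^k$ is precisely what generates the one-$h$-per-level terms above. This is essentially the route of \cite{ber14transfer}, and is consistent with the present text, which does not reprove the proposition but refers to \cite[Prop.~10.3.9]{LodayVallette}, \cite{ber14transfer} and \cite[Thm.~5]{dsv16}. To repair your write-up you should either develop that last paragraph into the actual proof, or replace $\vdl_B^p$ by the correct levelled-tree operator before attempting the chain-map and composition checks.
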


An explicit formula for $p_\infty$, together with a proof of this result, can be found in \cite[Prop. 10.3.9]{LodayVallette}, \cite{ber14transfer}, and \cite[Thm. 5]{dsv16}.

\begin{remark}
	As remarked in \cite{ber14transfer}, the extension of $p$ to an $\infty$-morphism is not unique. In the present work, when talking about the extension $p_\infty$ we will always mean the $\infty$-morphism obtained using the formul{\ae} of \cite[Prop. 10.3.9]{LodayVallette}.
\end{remark}

We will now give some examples of algebras up to homotopy and their homotopy transfer theorems.

\subsection{Homotopy associative algebras}\label{subsect:A-infinity algebras}

Our first example is that of homotopy associative algebras, i.e. $\A_\infty$-algebras\index{$\A_\infty$-algebras}\index{Homotopy!associative algebras}. This kind of algebra has a long history in the literature. For example, they appear naturally when one considers the Massey products of topological spaces, cf. \cref{subsect:Massey products}. For this subsection, we work in the non-symmetric setting, so that by associative algebra we mean an algebra over the operad $\as$, cf. \cref{subsection:three graces}.

\medskip

A straightforward computation using \cref{prop:formula for Koszul dual operad} shows that $\as^{\shriek} = \as$. Then, we have
\[
\as^{\antishriek}\cong(\susp^{-1})^c\otimes(\P^{\shriek})^\vee = (\susp^{-1})^c\otimes\as^\vee.
\]
This cooperad is spanned in arity $n\ge2$ by $\susp^{-1}_n\mu_n^\vee$. The operad $\as$ is Koszul, see e.g. \cite[Thm. 9.15]{LodayVallette}, and thus, the operad $\A_\infty\coloneqq\Cobar\as^{\antishriek}$ is a minimal model for $\as$. It is freely generated by the operations
\[
m_n\coloneqq s^{-1}\susp^{-1}_n\mu_n^\vee\ .
\]
The only thing left to determine in order to understand $\A_\infty$-algebras is the differential of the operad $\A_\infty$. As defined in \cref{subsection:operadic bar and cobar}, this is given on $m_n$ by
\[
d_{\A_\infty}(m_n) = d_1(m_n) + d_2(m_n)\ ,
\]
where $d_1 = 0$ since the differential on $\as$ is trivial, and where $d_2(m_n)$ is obtained by
\begin{align*}
	m_n\longmapsto&\ -(s^{-1}\otimes s^{-1})\otimes\sum_{\substack{n_1+n_2 = n+1\\1\le j\le n_1}}(-1)^{(j-1)(1-n_2) + (n_2-1)(1-n_1)}\susp^{-1}_{n_1}\mu_{n_1}^\vee\otimes_j\susp^{-1}_{n_2}\mu_{n_2}^\vee\\
	& = \sum_{\substack{n_1+n_2 = n+1\\1\le j\le n_1}}(-1)^{(1-n_2)(j+n_1) + (1-n_1)}s^{-1}\susp^{-1}_{n_1}\mu_{n_1}^\vee\otimes_js^{-1}\susp^{-1}_{n_2}\mu_{n_2}^\vee\\
	& = \sum_{\substack{n_1+n_2 = n+1\\1\le j\le n_1}}(-1)^{n_2(n_1-j) + j + 1}m_{n_1}\otimes_jm_{n_2}\ .
\end{align*}
where the signs come from the Koszul sign rule\footnote{In particular, to compute the sign associated to the decomposition of $\susp^{-1}_n$, one knows that the part with underlying tree $c_{n_1}\circ_jc_{n_2}$ is of the form $(-1)^\epsilon\susp_{n_1}^{-1}\otimes_j\susp_{n_2}^{-1}$. To find $\epsilon$, one computes on one side $\langle\Delta_{(1)}(\susp_n^{-1}),\susp_{n_1}\otimes_j\susp_{n_2}\rangle = (-1)^\epsilon\langle\susp_{n_1}^{-1}\otimes_j\susp_{n_2}^{-1},\susp_{n_1}\otimes_j\susp_{n_2}\rangle = (-1)^{\epsilon+(n_2-1)(1-n_1)}$, and on the other side $\langle\Delta_{(1)}(\susp_n^{-1}),\susp_{n_1}\otimes_j\susp_{n_2}\rangle = \langle\susp_n^{-1},(-1)^{(j-1)(1-n_2)}\susp_n\rangle = (-1)^{(j-1)(1-n_2)}$.}. Summarizing, an $\A_\infty$-algebra is defined as follows.

\begin{definition}
	An $\A_\infty$-algebra $A$ is a chain complex together with operations
	\[
	m_n:A^{\otimes n}\longrightarrow A\qquad\text{for }n\ge2
	\]
	satisfying
	\[
	\partial(m_n) = \sum_{\substack{n_1+n_2 = n+1\\1\le j\le n_1}}(-1)^{n_2(n_1-j) + j + 1}m_{n_1}\otimes_jm_{n_2}
	\]
	for all $n\ge2$.
\end{definition}

If we write explicitly what that means for $n=2,3$, we get that
\[
\partial(m_2) = 0\ ,
\]
i.e. $d_Am_2(x,y) = m_2(d_Ax,y) + (-1)^xm_2(x,d_Ay)$, and
\[
\partial(m_3) = m_2\otimes_1m_2 - m_2\otimes_2m_2\ ,
\]
which tells us that the binary operation $m_2$ is not associative, but it is up to a homotopy given by the ternary operation $m_3$. The relations for $n\ge4$ are higher compatibilities between the operations.

\medskip

If $A$ and $B$ are $\A_\infty$-algebras, an $\infty$-morphism $\Psi:A\rightsquigarrow B$ between them\index{$\infty$-morphism!of $\A_\infty$-algebras} is a collection
\[
\psi_n:A^{\otimes n}\longrightarrow B
\]
of linear maps of degrees $|\psi_n| = n-1$ which put together form a coherent morphism of $\as^{\antishriek}$-coalgebras
\[
\Psi:\as^{\antishriek}(A)\longrightarrow\as^{\antishriek}(B)\ ,
\]
where the differential of $\as^{\antishriek}(A)$ and $\as^{\antishriek}(B)$ is the one given by \cref{prop:Poo-alg is coderivation of Pi-cog}. Explicitly, let $n\ge2$ and $a_1,\ldots,a_n\in A$, and for brevity write $a\coloneqq a_1\otimes\cdots\otimes a_n$. Then
\[
\Psi(\susp_n^{-1}\mu_n^\vee\otimes a) = \sum_{\substack{k\ge2\\m_1+\cdots+m_k=n}}\pm\,\susp_k^{-1}\mu_k^\vee\otimes \psi_{m_1}(a_1,\ldots,a_{m_1})\otimes\cdots\otimes\psi_{m_k}(a_{m_k+1},\ldots,a_n)\ ,
\]
where the sign comes both from the decomposition of $\susp_n^{-1}$, and from a Koszul sign coming from the fact that we have to switch the maps $\psi_{m_i}$, which have degree $1-m_i$, and some of the elements of $A$. The differential of $\as^{\antishriek}(A)$ is explicitly given by
\begin{align*}
	d_{\as^{\antishriek}(A)}&(\susp_n^{-1}\mu_n^\vee\otimes a) =\\ =\ &(-1)^{n-1}\susp_n^{-1}\mu_n^\vee\otimes d_{A^{\otimes n}}(a) +\\
	& + \sum_{\substack{n_1+n_2 = n+1\\1\le j\le n_1}}\pm\,\susp_{n_1}^{-1}\mu_{n_1}^\vee\otimes a_1\otimes\cdots a_{j-1}\otimes m_{n_2}^A(a_j,\ldots,a_{j+n_2})\otimes a_{j+n_2+1}\otimes\cdots\otimes a_n\ ,
\end{align*}
where again the sign comes both from the decomposition of $\susp_n^{-1}$, and from the Koszul rule. Imposing
\[
d_{\as^{\antishriek}(B)}\Psi = \Psi d_{\as^{\antishriek}(A)}
\]
and projecting on $A$, we obtain the relations
\[
\partial(\psi_n) + \sum_{\substack{k\ge2\\m_1+\cdots+m_k=n}}\pm\,m_k^B\circ(\psi_{m_1},\ldots,\psi_{m_k}) = \sum_{\substack{n_1+n_2 = n+1\\1\le j\le n_1}}\pm\,\psi_{n_1}\circ_jm_{n_2}^A\ .
\]
The homotopy transfer theorem\index{Homotopy!transfer theorem!for $\A_\infty$-algebras} is given as follows. Let
\begin{center}
	\begin{tikzpicture}
	\node (a) at (0,0){$A$};
	\node (b) at (2,0){$B$};
	
	\draw[->] (a)++(.3,.1)--node[above]{\mbox{\tiny{$p$}}}+(1.4,0);
	\draw[<-,yshift=-1mm] (a)++(.3,-.1)--node[below]{\mbox{\tiny{$i$}}}+(1.4,0);
	\draw[->] (a) to [out=-150,in=150,looseness=4] node[left]{\mbox{\tiny{$h$}}} (a);
	\end{tikzpicture}
\end{center}
be a contraction, and suppose that $A$ is an $\A_\infty$-algebra with operations $m_n^A$ for $n\ge2$. The monadic decomposition map in $\as^\vee$ is given by
\[
\Delta^\mathrm{mon}_{\as^\vee}(\mu_n^\vee) = \sum_{t\in\PT_n}t(v\mapsto\mu_{|v|}^\vee)\ ,
\]
and thus the transfered structure on $B$ is given by
\[
m_n^B = \sum_{t\in\PT_n}\pm\,pt^h(v\mapsto m_{|v|}^A)i^{\otimes n},
\]
where the signs come from the monadic decomposition of $\susp_n^{-1}$.

\subsection{Homotopy commutative algebras}\label{subsect:homotopy commutative algebras}

Another type of algebra up to homotopy which we will encounter later, e.g. in \cref{sect:cosimplicial model for Lie algebras}, are commutative algebras up to homotopy, often called $\C_\infty$-algebras\index{$\C_\infty$-algebras}\index{Homotopy!commutative algebras}. These also appeared in the literature long before the theory of Koszul duality for operads, see \cite{kad88}.

\medskip

An easy computation shows that we have
\[
\com^{\shriek} = \lie\quad\text{and}\quad\lie^{\shriek}=\com\ ,
\]
see e.g. \cite[Sect. 7.6.4]{LodayVallette}, while the methods of \cite[Ch. 8]{LodayVallette} show that both operads $\com$ and $\lie$ are Koszul. Therefore, its minimal model is given by
\[
\C_\infty\coloneqq\Cobar\com^{\antishriek}\ ,
\]
where
\[
\com^{\antishriek}\cong(\susp^{-1})^c\otimes\lie^\vee.
\]
However, the operad $\lie$ is complicated and difficult to treat, due to the Jacobi relation. Thus, one has to go another way in order to understand $\C_\infty$-algebras a bit better.

\medskip

Let $A$ be a chain complex, and let $p,q\ge1$. A \emph{non-trivial $(p,q)$-shuffle} of elements of $A$ is an element of $A^{\otimes(p+q)}$ of the form
\[
\sum_{\sigma\in\sh(p,q)}(-1)^\epsilon a_{\sigma(1)}\otimes\cdots\otimes a_{\sigma(p+q)}
\]
for $a_1,\ldots,a_{p+q}\in A$, where $\epsilon$ is the Koszul sign. In other words, it is the sum over all the ways of shuffling $a_1\otimes\cdots\otimes a_p$ and $a_{p+1}\otimes\cdots\otimes a_{p+q}$.

\begin{proposition}[{\cite[Prop. 13.1.6]{LodayVallette}}]
	A $\C_\infty$-algebra\index{$\C_\infty$-algebras} is an $\A_\infty$-algebra such that each one of the generating operations $m_n$ vanishes on all non-trivial $(p,q)$-shuffle of elements of $A$, for $p+q=n$.
\end{proposition}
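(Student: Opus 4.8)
The plan is to realize the forgetful functor from $\C_\infty$-algebras to $\A_\infty$-algebras as restriction of structure along a morphism of operads obtained by Koszul-dualizing $u\colon\ass\to\com$, and then to cut out its essential image using the classical description of Lie elements inside the free associative algebra. Since the statement concerns the symmetric operad $\com$, I work throughout with the symmetric operad $\Cobar\ass^\antishriek$, whose algebras are exactly the $\A_\infty$-algebras of \cref{subsect:A-infinity algebras} via the ns/symmetric correspondence. The morphism $u\colon\ass\to\com$, sending $a_e,a_{(12)}\mapsto\mu$, comes from a morphism of operadic quadratic data, so Koszul duality yields a morphism of cooperads $u^\antishriek\colon\ass^\antishriek\to\com^\antishriek$, and applying $\Cobar$ gives a morphism of operads $\Cobar(u^\antishriek)\colon\Cobar\ass^\antishriek\to\Cobar\com^\antishriek=\C_\infty$. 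Restriction of structure along it turns every $\C_\infty$-algebra into an $\A_\infty$-algebra: recalling that a $\C_\infty$-structure on $A$ is a twisting morphism $\varphi\in\Tw(\com^\antishriek,\End_A)$, the underlying $\A_\infty$-structure is $\psi\coloneqq\varphi\circ u^\antishriek\in\Tw(\ass^\antishriek,\End_A)$.

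Next I would check that $u^\antishriek$ is degreewise surjective and identify its kernel. Using the identifications $\ass^\antishriek\cong(\susp^{-1})^c\otimes\ass^\vee$ and $\com^\antishriek\cong(\susp^{-1})^c\otimes\lie^\vee$ together with the functoriality of $(-)^\shriek$, the arity-$n$ component of $u^\antishriek$ is $\mathrm{id}_{(\susp^{-1})^c(n)}$ tensored with the linear dual of $u^\shriek\colon\lie(n)\hookrightarrow\ass(n)$, which is (up to a nonzero scalar) the antisymmetrization inclusion; hence $u^\antishriek$ is onto and $(\ker u^\antishriek)(n)\cong(\susp^{-1})^c(n)\otimes\lie(n)^\perp$, where $\lie(n)^\perp\subseteq\ass(n)^\vee$ is the annihilator of the Lie elements. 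Because $u^\antishriek$ is an epimorphism of cooperads, a degree $-1$ equivariant map $\psi\colon\ass^\antishriek\to\End_A$ factors (uniquely) as $\varphi\circ u^\antishriek$ if and only if $\psi$ vanishes on $\ker u^\antishriek$; and in that case $\varphi$ automatically satisfies the Maurer--Cartan equation, since precomposing $\partial\varphi+\varphi\star\varphi$ with the epimorphism $u^\antishriek$ and using that $u^\antishriek$ intertwines the infinitesimal decomposition maps gives $\partial\psi+\psi\star\psi=0$. Thus $\C_\infty$-structures on $A$ correspond bijectively to $\A_\infty$-structures on $A$ whose associated twisting morphism kills $\ker u^\antishriek$.

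It then remains to rewrite the condition ``$\psi$ vanishes on $\ker u^\antishriek$'' as the shuffle condition on the $m_n$. Identifying $\ass(n)=\k[\S_n]$ with the multilinear component of the free associative algebra, a classical theorem of Ree identifies $\lie(n)^\perp$ with the span of the proper shuffles $\sh(p,q)$, $p+q=n$, $p,q\ge1$. Unwinding the dictionary between symmetric $\ass_\infty$-structures and the operations $m_n$ of \cref{subsect:A-infinity algebras}, evaluating $\psi$ on the image of such a shuffle at $a_1\otimes\cdots\otimes a_n$ produces exactly $m_n$ applied to a non-trivial $(p,q)$-shuffle of the $a_i$, with the Koszul signs as in \cref{subsect:A-infinity algebras}. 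Hence the factorization through $u^\antishriek$ exists if and only if every $m_n$ vanishes on all non-trivial shuffles of elements of $A$, which is the claim.

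I expect the main obstacle to be bookkeeping rather than anything conceptual: carrying the operadic suspension factors $\susp^c$ and $(\susp^{-1})^c$ through the identification of $u^\antishriek$, its kernel, and the relevant pairing, and carefully matching the symmetric twisting-morphism picture with the explicit operations $m_n$ so that ``a shuffle of permutations'' really corresponds to ``$m_n$ of a shuffle of elements'' with consistent signs. Ree's theorem itself is classical and can simply be cited.
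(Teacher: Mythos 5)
Your argument is correct and is essentially the standard proof: the paper itself only cites \cite[Prop. 13.1.6]{LodayVallette}, whose argument likewise rests on the epimorphism of cooperads $\ass^{\antishriek}\twoheadrightarrow\com^{\antishriek}$ dual to the (injective) antisymmetrization $a\colon\lie\to\ass$, together with Ree's identification of $\lie(n)^\perp\subseteq\k[\S_n]$ with the span of non-trivial shuffles. The only inputs you defer to citation (injectivity of $\lie(n)\hookrightarrow\ass(n)$ and Ree's theorem) are classical, and the remaining sign bookkeeping through the suspension factors is routine, so there is no gap.
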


The $\infty$-morphisms of $\C_\infty$-algebras are similarly characterized.

\begin{proposition}
	Let $A,B$ be two $\C_\infty$-algebras. An $\infty$-morphism $\Psi:A\rightsquigarrow B$ of $\C_\infty$-algebras\index{$\infty$-morphism!of $\C_\infty$-algebras} is an $\infty$-morphism of $\A_\infty$-algebras such that each of its components
	\[
	\psi_n:A^{\otimes n}\longrightarrow B
	\]
	vanishes over all non-trivial $(p,q)$-shuffle of elements of $A$, for $p+q=n$.
\end{proposition}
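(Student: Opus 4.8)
The plan is to unwind the definition of an $\infty$-morphism of $\C_\infty$-algebras and match it term by term with that of an $\infty$-morphism of $\A_\infty$-algebras, the dictionary between the two being the surjection of Koszul dual cooperads induced by the morphism $u\colon\ass\to\com$.

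First I recall the relevant structure. Since $\com$ is Koszul (\cref{subsect:homotopy commutative algebras}), a $\C_\infty$-algebra is an algebra over $\Cobar\com^{\antishriek}$, and by definition an $\infty$-morphism $\Psi\colon A\rightsquigarrow B$ of $\C_\infty$-algebras is a morphism of $\com^{\antishriek}$-coalgebras $\com^{\antishriek}(A)\to\com^{\antishriek}(B)$; as explained after \cref{prop:Poo-alg is coderivation of Pi-cog}, it is completely determined by its corestriction to the cogenerators, i.e.\ by a collection of maps $\psi_n\colon\com^{\antishriek}(n)\otimes_{\S_n}A^{\otimes n}\to B$ satisfying the $\P_\infty$-$\infty$-morphism relations for $\P=\com$. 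Now the morphism of quadratic data underlying $u$ induces a morphism of cooperads $v\coloneqq u^{\antishriek}\colon\ass^{\antishriek}\to\com^{\antishriek}$, compatible with the canonical twisting morphisms in the sense that $\kappa_\com v=u\kappa_\ass$; under the identifications $\ass^{\antishriek}\cong(\susp^{-1})^c\otimes\ass^\vee$ and $\com^{\antishriek}\cong(\susp^{-1})^c\otimes\lie^\vee$ of \cref{subsect:homotopy commutative algebras}, $v$ is induced by the transpose of the antisymmetrization $a\colon\lie\hookrightarrow\ass$, so $v(n)$ is surjective with kernel corresponding to the annihilator $\lie(n)^\perp\subseteq\ass(n)^\vee$. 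By the classical description of the Lie elements inside $\k[\S_n]$ (Ree's theorem) --- which is precisely the combinatorial input behind \cite[Prop.~13.1.6]{LodayVallette} --- the space $\lie(n)^\perp$ is spanned by the shuffles. Applying $-\otimes_{\S_n}A^{\otimes n}$ to $v(n)$, and absorbing the operadic suspension into the Koszul signs, therefore identifies $\com^{\antishriek}(n)\otimes_{\S_n}A^{\otimes n}$ with the quotient of $\ass^{\antishriek}(n)\otimes_{\S_n}A^{\otimes n}\cong A^{\otimes n}$ by the subspace spanned by the non-trivial $(p,q)$-shuffles of elements of $A$, $p+q=n$. Consequently the family $(\psi_n)$ above is the same datum as a family of maps $\psi_n\colon A^{\otimes n}\to B$ of degree $n-1$, each vanishing on all non-trivial shuffles of elements of $A$ --- exactly the condition in the statement.

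It then remains to check that, under this identification, the defining relations for the $\psi_n$ become the $\A_\infty$-$\infty$-morphism relations
\[
\partial(\psi_n) + \sum_{\substack{k\ge2\\m_1+\cdots+m_k=n}}\pm\,m_k^B\circ(\psi_{m_1},\ldots,\psi_{m_k}) = \sum_{\substack{n_1+n_2 = n+1\\1\le j\le n_1}}\pm\,\psi_{n_1}\circ_jm_{n_2}^A
\]
of \cref{subsect:A-infinity algebras}, between the underlying $\A_\infty$-algebras of $A$ and $B$. Since $v$ is a morphism of cooperads compatible with the canonical twisting morphisms, the infinitesimal decomposition of $\com^{\antishriek}$ and the twisted differential on $\com^{\antishriek}(A)$ are the images under $v$ of those of $\ass^{\antishriek}$ and $\ass^{\antishriek}(A)$; so the $\com$-version of the relations is the pull-back of the $\ass$-version along the surjection $v$, and --- the pull-back being injective on maps that kill non-trivial shuffles --- it agrees with the $\ass$-version evaluated on such maps, which is exactly the displayed system (the operations $m_n$ of the underlying $\A_\infty$-algebras kill non-trivial shuffles by \cite[Prop.~13.1.6]{LodayVallette}, and shuffle-vanishing is preserved by the cooperadic composition since $v$ is a cooperad map). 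Reading the identifications in the other direction shows, conversely, that any $\infty$-morphism of the underlying $\A_\infty$-algebras whose components kill non-trivial shuffles descends to a morphism of $\com^{\antishriek}$-coalgebras. This gives the proposition.

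The only step carrying genuine content is the identification $\com^{\antishriek}(n)\otimes_{\S_n}A^{\otimes n}\cong A^{\otimes n}/(\text{non-trivial shuffles})$ together with the matching of the two systems of relations: both come down to the combinatorics of Lie elements versus shuffles and to bookkeeping the $(\susp^{-1})^c$-signs, and both are essentially already contained in the proof of \cite[Prop.~13.1.6]{LodayVallette}, so the new input is minimal.
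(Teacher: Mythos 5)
Your argument is correct, and it is essentially the intended one: the paper states this proposition without proof (deferring, like the preceding proposition, to the treatment in \cite[Sect.~13.1]{LodayVallette}), and the standard justification there is exactly your reduction — the cooperad surjection $u^{\antishriek}\colon\ass^{\antishriek}\to\com^{\antishriek}$ whose kernel is $\lie^\perp$, identified with the span of the non-trivial shuffles by Ree's theorem, together with the compatibility of this surjection with the bar-construction (co)differentials. The only steps you leave implicit, namely that $\ker(u^{\antishriek})$ is a coideal so that both sides of the $\A_\infty$-relations descend to the quotient by the shuffles, and the sign bookkeeping for $(\susp^{-1})^c$, are routine and consistent with the level of detail used elsewhere in the text.
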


One may also wonder if the homotopy transfer theorem for $\C_\infty$-algebra structures can be recovered from the one for $\A_\infty$-algebras. The answer is positive.

\begin{theorem}[{\cite[Thm. 12]{cg08}}]
	Suppose we obtain an $\A_\infty$-algebra structure on a chain complex $B$ by homotopy transfer from an $\A_\infty$-algebra $A$. If $A$ were a $\C_\infty$-algebra, then so is $B$ with the transfered $\A_\infty$-algebra structure, and this structure corresponds with the one obtained by homotopy transfer for $\C_\infty$-algebras.\index{Homotopy!transfer theorem!for $\C_\infty$-algebras}
\end{theorem}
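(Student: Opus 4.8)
The statement to prove is the compatibility between the homotopy transfer theorem for $\A_\infty$-algebras and the one for $\C_\infty$-algebras, namely: if $B$ receives an $\A_\infty$-structure by homotopy transfer from a $\C_\infty$-algebra $A$ along a contraction, then the transferred structure is actually a $\C_\infty$-structure, and it coincides with the one produced by the $\C_\infty$ homotopy transfer theorem. The plan is to exploit the operadic description of both transfer theorems via the van der Laan morphism, reducing everything to a statement about morphisms of cooperads. Recall from \cref{subsect:HTT} that the transferred $\A_\infty$-structure on $B$ is encoded by the composite of cooperad morphisms $\vdl_B f_A : \as^{\antishriek}\to\Bar(\End_A)\to\Bar(\End_B)$, where $f_A$ corresponds to the $\A_\infty$-structure $\varphi_A$ on $A$; the transferred $\C_\infty$-structure is encoded by the analogous composite $\vdl_B f_A' : \com^{\antishriek}\to\Bar(\End_A)\to\Bar(\End_B)$, where $f_A'$ corresponds to the $\C_\infty$-structure on $A$.

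First I would set up the key diagram. The morphism of operads $u : \ass \to \com$ (equivalently, on the non-symmetric shadow, the relation between $\as$ and $\com$) induces, by functoriality of Koszul duality and the (co)bar construction, a morphism of cooperads $u^{\antishriek} : \com^{\antishriek}\to\as^{\antishriek}$ such that restriction of structure along $\Cobar(u^{\antishriek}) : \A_\infty\to\C_\infty$ — no, the other way — such that a $\C_\infty$-algebra is precisely an $\A_\infty$-algebra whose structure morphism $\P^{\antishriek}\to\Bar(\End_A)$ factors through $\com^{\antishriek}$ via $u^{\antishriek}$. More precisely, by the \cref{prop:formula for Koszul dual operad}-style description and the Proposition of Loday--Vallette quoted above (\cite[Prop. 13.1.6]{LodayVallette}), the $\C_\infty$-structure on $A$ is the datum making the square
\[
\begin{array}{ccc}
\com^{\antishriek} & \xrightarrow{\ f_A'\ } & \Bar(\End_A)\\
\big\downarrow{\scriptstyle u^{\antishriek}} & & \big\| \\
\as^{\antishriek} & \xrightarrow{\ f_A\ } & \Bar(\End_A)
\end{array}
\]
commute, where $f_A$ is the underlying $\A_\infty$-structure. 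Then the transferred $\A_\infty$-structure $\vdl_B f_A$ on $B$ satisfies $\vdl_B f_A \circ u^{\antishriek} = \vdl_B \circ (f_A \circ u^{\antishriek}) = \vdl_B f_A'$, which is exactly the transferred $\C_\infty$-structure on $B$ (composed into $\Bar(\End_B)$ and then projected to $\End_B$ to read off the operations). This simultaneously shows that the transferred $\A_\infty$-structure factors through $u^{\antishriek}$, hence is a $\C_\infty$-structure, and that it agrees with the $\C_\infty$-transferred one. The analogous argument with $i_\infty$ and $p_\infty$, using the explicit formulae built from $\vdl_B^i$ and the monadic decomposition map, shows that the transferred $\infty$-quasi-isomorphisms agree as well.

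The main obstacle, and the step that needs genuine care rather than being purely formal, is establishing that the van der Laan morphism $\vdl_B$ is natural in the appropriate sense — specifically that $\vdl_B$ is defined uniformly from the contraction data on $A$ and $B$ and does not "see" the algebraic structure, so that precomposition with any cooperad morphism $g : \D \to \Bar(\End_A)$ commutes with applying $\vdl_B$ in the trivial way $\vdl_B \circ g$. This is essentially immediate from the construction ($\vdl_B$ is built tree-by-tree from $p$, $i$, $h$ and the operadic composition of $\End_A$, with no reference to $\P$ or $\D$), but one should check it explicitly and also verify that the factorization $f_A = f_A' \circ u^{\antishriek}$ at the level of cooperads into $\Bar(\End_A)$ is compatible with the identification of $\C_\infty$-algebras as shuffle-vanishing $\A_\infty$-algebras — i.e., that the image of $u^{\antishriek} : \com^{\antishriek}\to\as^{\antishriek}$ is exactly the sub-cooperad dual to the shuffle relations. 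Once that compatibility is in hand, the rest is a diagram chase combined with the explicit formula $\varphi_B = \vdl_B \circ \T^c(s\varphi_A)\circ\Delta^{\mathrm{mon}}_{\P^{\antishriek}}$ quoted in \cref{subsect:HTT}, and the corresponding formulae for $i_\infty$ and $p_\infty$. I would not grind through the shuffle bookkeeping explicitly; I would instead cite the identification of $\com^{\antishriek}$ inside $\as^{\antishriek}$ from \cite[Sect. 13.1]{LodayVallette} and let functoriality of the bar construction and of the van der Laan morphism do the work.
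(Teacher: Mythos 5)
Your overall strategy is sound, and it is in fact the one this thesis itself uses for the general statement: the paper gives no proof of this particular theorem (it is quoted from \cite{cg08}), but \cref{prop:HTT and pullbacks} establishes precisely the general form of your argument --- the transferred structure $\vdl_B f_A$ commutes with restriction along any morphism of cooperads, because $\vdl_B$ is built only from $p$, $i$, $h$ and the composition of $\End_A$, so post-composition with $\vdl_B$ and pre-composition with a cooperad morphism commute by associativity. Specializing that proposition to the morphism relating $\ass^{\antishriek}$ and $\com^{\antishriek}$, together with the identification of $\C_\infty$-algebras as shuffle-vanishing $\A_\infty$-algebras, gives the theorem; your deferral of the shuffle bookkeeping to the literature is reasonable.

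There is, however, a concrete error in your formalization: the comparison morphism points the wrong way. The morphism $u\colon\ass\to\com$ (equivalently, the dual of the antisymmetrization embedding $a\colon\lie\to\ass$) induces a \emph{surjection} $u^{\antishriek}\colon\ass^{\antishriek}\twoheadrightarrow\com^{\antishriek}$, not a morphism $\com^{\antishriek}\to\as^{\antishriek}$; indeed $\com^{\antishriek}(n)\cong(\susp^{-1})^c(n)\otimes\lie^\vee(n)$ has dimension $(n-1)!$ while $\as^{\antishriek}(n)$ is one-dimensional, and no natural cooperad morphism exists in the direction you draw. Taken literally, your square would assert that every $\A_\infty$-algebra restricts to a $\C_\infty$-algebra, which is false. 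The correct factorization is $f_A=f_A'\circ u^{\antishriek}$: the underlying $\A_\infty$-structure of a $\C_\infty$-algebra is the restriction of the $\C_\infty$-structure along $u^{\antishriek}$, and an $\A_\infty$-structure is $\C_\infty$ exactly when it kills $\ker u^{\antishriek}$, which Ree's theorem identifies with the span of the non-trivial shuffles. With the arrow righted your computation becomes $\vdl_B f_A=\vdl_B(f_A'u^{\antishriek})=(\vdl_B f_A')u^{\antishriek}$, which indeed yields both conclusions at once. You should also address the symmetric versus non-symmetric bookkeeping: $\as^{\antishriek}$ is a non-symmetric cooperad while $\com^{\antishriek}$ is symmetric, so the comparison must pass through $\ass^{\antishriek}\cong\as^{\antishriek}\otimes\k[\S]$ before $u^{\antishriek}$ even makes sense, and the van der Laan morphism used must be the symmetric one.
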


\subsection{Homotopy algebras over the dual numbers and spectral sequences}

One can recover the spectral sequence associated to a bicomplex via an application of the homotopy transfer theorem. This example is extracted from \cite[Sect. 1]{dsv15}.

\medskip

The operad of \emph{dual numbers}\index{Dual numbers} --- which we will denote by $\D$ in this section, but which will not make any other appearances in the rest of the present work --- is the quadratic operad
\[
\D\coloneqq\P(\k\Delta,\Delta\circ\Delta)\ ,
\]
where $\Delta$ is an arity $1$ element of degree $1$. An algebra over $\D$ is a chain complex $A$ together with an operation
\[
\Delta:A\longrightarrow A
\]
such that $\Delta^2 = 0$ and $d\Delta + \Delta d = 0$. In other words, a $\D$-algebra is nothing else than a bicomplex.

\medskip

It can be shown that the operad $\D$ is Koszul. Thanks to \cref{prop:formula for Koszul dual operad}, one sees that its Koszul dual operad is
\[
\D^{\shriek} = \P(s^{-1}\Delta^\vee)\ ,
\]
with no relations, and thus its Koszul dual cooperad is concentrated in arity $1$, where it is given by
\[
\D^{\antishriek}(1) = \bigoplus_{n\ge1}\k\delta_n
\]
with $\delta_n$ corresponding to the dual of $(s^{-1}\Delta^\vee)^n$. We have $|\delta_n| = 2n$ and
\[
\Delta_{(1)}(\delta_n) = \sum_{n_1+n_2 = n}\delta_{n_1}\circ\delta_{n_2}\ .
\]
It follows that the minimal model for $\D$ is the operad $\D_\infty$ freely generated by $\Delta_1,\Delta_2,\ldots$, where $\Delta_n$ corresponds to $s^{-1}\delta_n$ in $\Cobar\D^{\antishriek}$ and has degree $2n-1$. These operators satisfy
\[
d\Delta_n + \Delta_nd = -\sum_{n_1+n_2 = n}\Delta_{n_1}\circ\Delta_{n_2}\ ,
\]
or, writing $\Delta_0\coloneqq d$,
\[
\sum_{n_1+n_2 = n}\Delta_{n_1}\circ\Delta_{n_2} = 0\ .
\]
A $\D_\infty$-algebra is also known as a \emph{multicomplex}\index{Multicomplex}.

\medskip

Let $A$ be a bicomplex. Since we work over a field, one can always choose a contraction
\begin{center}
	\begin{tikzpicture}
	\node (a) at (0,0){$A$};
	\node (b) at (2.3,0){$H(A)$};
	
	\draw[->] (a)++(.3,.1)--node[above]{\mbox{\tiny{$p$}}}+(1.4,0);
	\draw[<-,yshift=-1mm] (a)++(.3,-.1)--node[below]{\mbox{\tiny{$i$}}}+(1.4,0);
	\draw[->] (a) to [out=-150,in=150,looseness=4] node[left]{\mbox{\tiny{$h$}}} (a);
	\end{tikzpicture}
\end{center}
from $A$ to its homology $H(A)$. The homotopy transfer theorem endows $H(A)$ with a multicomplex structure, where
\[
\Delta_n\coloneqq p\underbrace{\Delta h\Delta h\cdots h\Delta}_{n\text{ copies of }\Delta}i\ .
\]
The operator $\Delta_n$ is essentially the differential of the $n$th page of the spectral sequence associated to the bicomplex $A$.

\subsection{Homotopy Lie algebras}\label{subsect:homotopy Lie algebras}

This last example is the most important for the original results of this work. Homotopy Lie algebras --- also known as strong homotopy Lie algebras, or $\L_\infty$-algebras\index{Homotopy!Lie algebras}\index{$\L_\infty$-algebras} --- have a long history in the literature, where they appeared in a multitude of subjects. For example, they arise naturally in Kontsevich's proof of deformation quantization of Poisson manifolds \cite{kon03}, in string field theory \cite{zwi93}, in derived deformation theory \cite{pri10}, \cite{ss12} and others, as algebras of symmetries for conformal field theories \cite{bft17}, in symplectic topology \cite{Kontsevich}, \cite{Seidel}, \cite{FukayaOhOhtaOno} and others, as rational models for mapping spaces \cite{ber15}, and in many other places.

\medskip

As already stated before, we have
\[
\lie^{\shriek} = \com\ ,
\]
and the usual methods prove that $\lie$ is Koszul. Therefore, its minimal model is given by
\[
\L_\infty\coloneqq\Cobar\lie^{\antishriek}\ ,
\]
and
\[
\lie^{\antishriek}\cong(\susp^{-1})^c\otimes\com^\vee.
\]
Since $\com$ is $1$-dimensional in every arity, $\L_\infty$ is freely generated by the operations
\[
\ell_n\coloneqq s^{-1}\susp^{-1}_n\mu_n^\vee\in\L_\infty(n),\qquad n\ge2\ ,
\]
which have degree $|\ell_n| = n-2$. The only thing left to study is the differential, which is given by
\[
d_{\L_\infty}(\ell_n) = d_1(\ell_n) + d_2(\ell_n)\ ,
\]
where $d_1=0$ since the differential of $\com$ is trivial. The $d_2(\ell_n)$ is given by
\begin{align*}
	\ell_n\mapsto&\ -(s^{-1}\otimes s^{-1})\otimes\sum_{\substack{n_1+n_2 = n+1\\\sigma\in\sh(n_1,n_2-1)}}(-1)^{(n_2-1)(1-n_1) + \sigma}(\susp_{n_1}^{-1}\mu_{n_1}^\vee\otimes_1\susp_{n_2}^{-1}\mu_{n_2}^\vee)^\sigma\\
	&=\sum_{\substack{n_1+n_2 = n+1\\\sigma\in\sh(n_1,n_2-1)}}(-1)^{n_2(n_1-1)+\sigma+1}(\ell_{n_1}\otimes_1\ell_{n_2})^\sigma.
\end{align*}

In summary, an $\L_\infty$-algebra is defined as follows.

\begin{definition}
	An $\L_\infty$-algebra $\g$ is a chain complex together with graded antisymmetric operations (also called brackets)
	\[
	\ell_n:\g^{\otimes n}\longrightarrow\g\qquad\text{for }n\ge2
	\]
	satisfying
	\[
	\partial(\ell_n) = \sum_{\substack{n_1+n_2 = n+1\\\sigma\in\sh(n_1-1,n_2)}}(-1)^{n_2(n_1-1)+\sigma +1}(\ell_{n_1}\otimes_1\ell_{n_2})^\sigma
	\]
	for all $n\ge2$.
\end{definition}

Proceeding as we did in \cref{subsect:A-infinity algebras}, we obtain that an $\infty$-morphism of $\L_\infty$-algebras\index{$\infty$-morphism!of $\L_\infty$-algebras} $\Psi:\g\rightsquigarrow\h$ is a collection of linear maps
\[
\psi_n:\g^{\otimes n}\longrightarrow\h
\]
of degree $|\psi_n| = n-1$ that are antisymmetric and satisfy
\[
\partial(\psi_n) + \sum_{\substack{k\ge2\\m_1+\cdots+m_k=n\\\sigma\in\sh(m_1,\ldots,m_k)}}\pm\,\ell_k^B\circ(\psi_{m_1},\ldots,\psi_{m_k})^\sigma = \sum_{\substack{n_1+n_2 = n+1\\\theta\in\sh(n_1,n_2-1)}}\pm\,(\psi_{n_1}\circ_1\ell_{n_2}^A)^\theta.
\]
Finally, for the homotopy transfer theorem\index{Homotopy!transfer theorem!for $\L_\infty$-algebras}, if
\begin{center}
	\begin{tikzpicture}
	\node (a) at (0,0){$\g$};
	\node (b) at (2,0){$\h$};
	
	\draw[->] (a)++(.3,.1)--node[above]{\mbox{\tiny{$p$}}}+(1.4,0);
	\draw[<-,yshift=-1mm] (a)++(.3,-.1)--node[below]{\mbox{\tiny{$i$}}}+(1.4,0);
	\draw[->] (a) to [out=-150,in=150,looseness=4] node[left]{\mbox{\tiny{$h$}}} (a);
	\end{tikzpicture}
\end{center}
is a contraction and $\g$ is an $\L_\infty$-algebra, then the transferred $\L_\infty$-algebra structure on $\h$ is given by
\[
\ell_n^\h = \sum_{\tau\in\RT_n}\pm\,p\tau^h(v\mapsto\ell_{|v|}^\g)i^{\otimes n}.
\]
As usual, the signs come from the decomposition map of $(\susp^{-1})^c$.

\subsection{Operadic Massey products}\label{subsect:Massey products}

Given a chain complex $V$, one can always obtain a (non-canonical) contraction
\begin{center}
	\begin{tikzpicture}
	\node (a) at (0,0){$V$};
	\node (b) at (2.3,-.05){$H(V)$};
	
	\draw[->] (a)++(.3,.1)--node[above]{\mbox{\tiny{$p$}}}+(1.4,0);
	\draw[<-,yshift=-1mm] (a)++(.3,-.1)--node[below]{\mbox{\tiny{$i$}}}+(1.4,0);
	\draw[->] (a) to [out=-150,in=150,looseness=4] node[left]{\mbox{\tiny{$h$}}} (a);
	\end{tikzpicture}
\end{center}
from $V$ to its homology $H(V)$. It can be done as follows. For each $n\in\mathbb{Z}$, choose a complement of the subspace of cycles $\mathcal{Z}_n(V)$ and notice that it is isomorphic to the boundaries $\mathcal{B}_{n-1}(V)$ via the differential. Thus,
\[
V_n\cong\mathcal{Z}_n(V)\oplus\mathcal{B}_{n-1}(V)\ .
\]
Now choose a complement of $\mathcal{B}_n(V)$ in $\mathcal{Z}_n(V)$ and notice that it is isomorphic to $H_n(V)$. Therefore, we have
\[
V_n\cong\mathcal{B}_n(V)\oplus H_n(V)\oplus\mathcal{B}_{n-1}(V)\ .
\]
Define $i$ by sending $H(V)$ to the chosen copy of $H(V)$ in $V$, $p$ by projecting onto the copy of $H(V)$ in $V$, and $h$ on $V_n$ by first projecting onto $\mathcal{B}_n(V)$ and then identifying it with the copy of $\mathcal{B}_n(V)$ contained in $V_{n+1}$.

\begin{lemma}
	The three maps described above form a contraction from $V$ to $H(V)$.
\end{lemma}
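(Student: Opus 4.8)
The plan is to verify directly that the three maps $i$, $p$, $h$ constructed above satisfy the defining equations of a contraction, namely $pi = 1_{H(V)}$, $1_V - ip = d_V h + h d_V$, and the three side conditions $h^2 = 0$, $ph = 0$, $hi = 0$. The key is to work degreewise using the chosen direct sum decomposition
\[
V_n \cong \mathcal{B}_n(V) \oplus H_n(V) \oplus \mathcal{B}_{n-1}(V)\ ,
\]
where the three summands are (a copy of) the boundaries in degree $n$, (a copy of) the homology in degree $n$, and the complement of the cycles in degree $n$ which the differential identifies with the boundaries in degree $n-1$. Relative to this decomposition, the differential $d_V\colon V_n \to V_{n-1}$ is zero on $\mathcal{B}_n(V)$ and on $H_n(V)$, and is the chosen isomorphism from the third summand $\mathcal{B}_{n-1}(V)$ onto the first summand of $V_{n-1}$.

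First I would record the block form of each map. The inclusion $i$ sends $H_n(V)$ isomorphically onto the middle summand of $V_n$ and is zero elsewhere; the projection $p$ projects $V_n$ onto that middle summand; and $h\colon V_n \to V_{n+1}$ is zero on $\mathcal{B}_n(V)$ and on $H_n(V)$, and on the third summand $\mathcal{B}_{n-1}(V)$ it is the composite of the isomorphism $\mathcal{B}_{n-1}(V)\xrightarrow{\cong}\mathcal{B}_n(V)\subseteq V_{n}$... — more precisely $h$ first projects $V_n$ onto $\mathcal{B}_n(V)$ and then uses the identification of $\mathcal{B}_n(V)$ with the copy of $\mathcal{B}_n(V)$ sitting inside $V_{n+1}$. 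From these block descriptions, $pi = 1_{H(V)}$ is immediate since $p$ restricted to the middle summand is the identity, and $ip$ is exactly the projection of $V_n$ onto its middle summand.

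Next I would compute $d_V h + h d_V$ on each of the three summands of $V_n$ separately. On $H_n(V)$ both terms vanish, so the operator is zero there, matching $1_V - ip$ which is also zero on the middle summand. On $\mathcal{B}_n(V)$, $h$ vanishes, while $hd_V$ takes $\mathcal{B}_n(V)$ into $V_{n-1}$ by $d_V$ — but $d_V$ is zero on $\mathcal{B}_n(V)$ (boundaries are cycles), so this contributes nothing; hence I would instead note $d_V h$ vanishes and $h d_V$ vanishes, so one must check that $1_V - ip$ also vanishes on $\mathcal{B}_n(V)$ — wait, it does not, so the correct bookkeeping is that on $\mathcal{B}_n(V)$ we have $hd_V = 0$ and $d_V h$ computed via $h$ sending $\mathcal{B}_n(V)\to 0$, giving $0$; but $1 - ip$ is the identity on $\mathcal{B}_n(V)$. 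The resolution is that $h$ does \emph{not} vanish on $\mathcal{B}_n(V)$: rather it is on $\mathcal{B}_{n-1}(V)$, the third summand, that $h$ is the nontrivial identification, and on $\mathcal{B}_n(V)$ (the first summand) $h$ is zero. So I would carefully track: on the first summand $\mathcal{B}_n(V)$, $d_V = 0$ and $h = 0$... this forces me to instead observe that $1 - ip$ restricted there should equal $d_V h + h d_V$, and both sides are computed honestly. The genuinely careful computation is: for $x$ in the first summand, $h d_V x = 0$ and $d_V h x = 0$, so the right side is $0$, while $(1-ip)x = x \neq 0$ — which would be a contradiction unless $h$ is defined so that its image catches $\mathcal{B}_n(V)$. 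The standard fix, which I would implement, is that $h$ on the third summand $\mathcal{B}_{n-1}(V)$ of $V_n$ lands in the first summand $\mathcal{B}_{n-1}(V)$ of $V_{n+1}$... — and then $d_V$ applied to that is the original element back, so $d_V h = \mathrm{id}$ on the third summand, and on the first summand one gets $h d_V = \mathrm{id}$ because $d_V$ carries $\mathcal{B}_n(V)$... here I must be honest that $d_V|_{\mathcal B_n(V)} = 0$. The clean statement, which I will use, is: $d_V\colon V_{n+1}\to V_n$ restricts to an isomorphism from the third summand of $V_{n+1}$ onto the first summand of $V_n$, and is zero on the other two summands of $V_{n+1}$; then on the first summand of $V_n$, $h$ embeds it (via $d_V^{-1}$ on the relevant blocks) into the third summand of $V_{n+1}$, so $d_V h = \mathrm{id}$ on the first summand; on the third summand of $V_n$, $d_V$ sends it isomorphically to the first summand of $V_{n-1}$, which $h$ then sends to the third summand of $V_n$ by $d_V^{-1}$, giving $h d_V = \mathrm{id}$ on the third summand; and on the middle summand everything is zero. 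Summing, $d_V h + h d_V$ is the identity on the first and third summands and zero on the middle, which is precisely $1_V - ip$.

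Finally I would dispatch the side conditions: $h^2 = 0$ because the image of $h$ lies in the third summand while $h$ vanishes on the third summand (it is nonzero only on the first summand); $ph = 0$ because the image of $h$ lies in the third summand while $p$ projects onto the middle summand; and $hi = 0$ because the image of $i$ is the middle summand, on which $h$ vanishes. I do not anticipate any real obstacle here: the whole proof is a degreewise linear-algebra verification once the block decomposition is set up, and the only subtlety — which I flagged above — is being scrupulous about which copy of the boundaries each map uses, so that the homotopy relation comes out exactly right rather than off by a sign or a summand.
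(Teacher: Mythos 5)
Your proposal is correct and is exactly the verification the paper summarizes as ``by inspection'': once you settle on the right block form of $h$ --- nonzero only on the first summand $\mathcal{B}_n(V)$ of $V_n$, landing via $d_V^{-1}$ in the third summand of $V_{n+1}$ --- the identities $pi=1$, $d_Vh+hd_V=1-ip$ and the three side conditions all follow summand by summand as you compute. The only remark is that your final, corrected description of $h$ is the one matching the paper's definition (``project onto $\mathcal{B}_n(V)$, then identify with the copy of $\mathcal{B}_n(V)$ inside $V_{n+1}$''), so the earlier misstatements in your write-up should simply be deleted.
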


\begin{proof}
	By inspection.
\end{proof}

Let $A$ be an $\A_\infty$-algebra. For example, one can take the the singular cochain complex\footnote{The theory works exactly the same if we exchange chain and cochain complexes, of course.} of a topological space $X$ with coefficients in the field $\k$ together with the cup product --- an associative algebra. Then the choice of a contraction as above gives an $\A_\infty$-algebra structure on the homology $H(A)$ of $A$.

\medskip

There is a well known classical construction of higher products --- the Massey products\index{Massey products} --- on the homology of an $\A_\infty$-algebra due to Massey \cite{mas58} and May \cite{may69}. As one might expect, the induced $\A_\infty$-algebra structures on $H(A)$ are strictly related to the Massey product. The exact relationship has been studied in \cite{mf17}.

\section{Bar and cobar construction for (co)algebras}

Given an operadic twisting morphism $\alpha:\C\to\P$, it is possible to define a bar-cobar adjunction relating the category of conilpotent $\C$-coalgebras and the category of $\P$-algebras. This helps for example to give a cleaner definition of the notion of $\infty$-morphisms, and will be the base for a generalization of the notion which will be exposed in \cref{chapter:oo-morphisms relative to a twisting morphism}. One also has a notion of twisting morphism relative to $\alpha$ giving a result analogous to \cref{thm:Rosetta stone operads} for (co)algebras. The material presented here comes from \cite[Sect. 11.1--3]{LodayVallette}.

\subsection{Bar and cobar construction relative to a twisting morphism}\label{subsection:relative bar and cobar}

We begin with the definition of the bar and cobar construction relative to a twisting morphism. For the rest of this section, fix a twisting morphism $\alpha:\C\to\P$.

\medskip

Let $A$ be a $\P$ algebra, then we define a conilpotent $\C$-coalgebra by
\[
\Bar_\alpha A\coloneqq(\C(A),d_{\Bar_\alpha A}\coloneqq d_1+d_2)\ ,
\]
where $d_1\coloneqq d_\C\circ1_A + 1_\C\circ'd_A$, and $d_2$ is the unique coderivation extending the composite
\[
\C(A)\xrightarrow{\alpha\circ1_A}\P(A)\xrightarrow{\gamma_A}A\ .
\]
That is to say, the full expression for $d_2$ is given by the composite
\[
\C(A)\xrightarrow{\Delta_{(1)}\circ1_A}(\C\circ_{(1)}\C)(A)\xrightarrow{(1_\C\circ_{(1)}\alpha)\circ1_A}(\C\circ_{(1)}\P)(A)\cong\C\circ(A;\P(A))\xrightarrow{1_\C\circ(1_A;\gamma_A)}\C(A)\ .
\]
The coderivation $d_{\Bar_\alpha A}$ squares to zero by \cite[Lemma 11.2.1]{LodayVallette}. Given a morphism $f:A\to A'$ of $\P$-algebras, we obtain a morphism of conilpotent $\C$-coalgebras by
\[
\Bar_\alpha f\coloneqq\C(f):\Bar_\alpha A\longrightarrow\Bar_\alpha A'\ ,
\]
and this assignment is functorial.

\begin{definition}
	The functor
	\[
	\Bar_\alpha:\Palg\longrightarrow\cCcog
	\]
	defined above is called the \emph{bar construction (relative to $\alpha$)}\index{Bar construction!relative to $\alpha$}.
\end{definition}

Dually, let $C$ be a conilpotent $\C$-coalgebra. We define a $\P$-algebra by
\[
\Cobar_\alpha C\coloneqq(\P(C),d_{\Cobar_\alpha C}\coloneqq d_1+d_2)\ ,
\]
where $d_1\coloneqq d_\P\circ1_D + 1_\P\circ'd_D$ and $-d_2$ is the unique derivation extending the composite
\[
C\xrightarrow{\Delta_C}\C(C)\xrightarrow{\alpha\circ1_C}\P(C)\ .
\]
Similarly to the previous case, the full expression for $-d_2$ is given by the composite
\begin{align*}
\P(C)&\xrightarrow{1_\P\circ'\Delta_C}\P\circ(C;\C(C))\\
&\xrightarrow{1_\P\circ(1_C;\alpha\circ1_C)}\P\circ(C;\P(C))\cong(\P\circ_{(1)}\P)(C)\xrightarrow{\gamma_{(1)}\circ1_C}\P(C)\ .
\end{align*}
Once again, the derivation $d_{\Cobar_\alpha C}$ squares to zero, see \cite[Lemma 11.2.4]{LodayVallette}. Given a morphism $g:C'\to C$ of $\C$-coalgebras, we obtain a morphism of $\P$-algebras by
\[
\Cobar_\alpha g\coloneqq\P(g):\Cobar_\alpha C'\longrightarrow\Cobar_\alpha C\ ,
\]
and this assignment is functorial.

\begin{definition}
	The functor
	\[
	\Cobar_\alpha:\cCcog\longrightarrow\Palg
	\]
	defined above is called the \emph{cobar construction (relative to $\alpha$)}\index{Cobar construction!relative to $\alpha$}.
\end{definition}

The relative bar construction gives us a cleaner way to define $\P_\infty$-algebras for $\P$ a Koszul operad and $\infty$-morphisms of $\P_\infty$-algebras.

\begin{proposition}[{\cite[Prop. 11.4.1]{LodayVallette}}]\label{prop:extended bar to oo-morphisms}
	Let $\P$ be a Koszul operad. The bar construction $\Bar_\iota$ associated to the twisting morphism $\iota:\P^{\antishriek}\to\Cobar\P^{\antishriek}$ extends to an isomorphism of categories
	\[
	\Bar_\iota:\ooPooAlg\longrightarrow\mathsf{quasi\text{-}free}\ \P^{\antishriek}\text{-}\mathsf{cog}
	\]
	from the category of $\P_\infty$-algebras with their $\infty$-morphisms to the full subcategory of the conilpotent $\P^{\antishriek}$-coalgebras given by the quasi-free coalgebras, i.e. the ones whose underlying $\P^{\antishriek}$-coalgebra in graded vector spaces is of the form $\P^{\antishriek}(V)$.
\end{proposition}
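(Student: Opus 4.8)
The plan is to exhibit the functor $\Bar_\iota$ on $\infty$-morphisms and then build an inverse, rather than to prove bijectivity of hom-sets directly. First I would recall that by \cref{prop:Poo-alg is coderivation of Pi-cog}, a $\P_\infty$-algebra structure on a graded vector space $V$ is the same data as a square-zero coderivation on the cofree graded $\P^{\antishriek}$-coalgebra $\P^{\antishriek}(V)$; one checks that this coderivation coincides with the differential $d_{\Bar_\iota A}$ produced by the relative bar construction applied to $\iota:\P^{\antishriek}\to\Cobar\P^{\antishriek}=\P_\infty$, using that $\iota$ is the canonical map $\P^{\antishriek}\xrightarrow{s^{-1}}s^{-1}\P^{\antishriek}\hookrightarrow\Cobar\P^{\antishriek}$, so $d_2$ of $\Bar_\iota A$ unwinds to exactly the operadic composition of the generators against the structure map $\varphi_A$. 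This shows $\Bar_\iota$ sends a $\P_\infty$-algebra $A$ (with underlying space $V$) to the quasi-free $\P^{\antishriek}$-coalgebra $(\P^{\antishriek}(V), d_{\Bar_\iota A})$, so the functor does land in the claimed full subcategory, and every quasi-free coalgebra arises this way (its coderivation differential being of the required square-zero form).

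Next I would address morphisms. On objects the assignment $A\mapsto\Bar_\iota A$ is already a bijection onto quasi-free $\P^{\antishriek}$-coalgebras by the previous paragraph, since the underlying graded coalgebra $\P^{\antishriek}(V)$ is cofree and the differential is an arbitrary square-zero coderivation. For morphisms, I would invoke the \emph{definition} of $\infty$-morphism given just before the proposition: an $\infty$-morphism $\Psi:A\rightsquigarrow A'$ \emph{is} a morphism of $\P^{\antishriek}$-coalgebras $\P^{\antishriek}(A)\to\P^{\antishriek}(A')$ between precisely these quasi-free coalgebras, and composition of $\infty$-morphisms is defined to be composition of the underlying coalgebra morphisms. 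Hence $\Bar_\iota$ is the identity on morphisms under this identification, so it is fully faithful onto the quasi-free coalgebras essentially by fiat. The only genuine content is therefore the compatibility statement: that for a strict morphism $f:A\to A'$ of $\P_\infty$-algebras the coalgebra morphism $\Bar_\iota f = \P^{\antishriek}(f)$ is a chain map with respect to the coderivation differentials $d_{\Bar_\iota A}, d_{\Bar_\iota A'}$, which is immediate since $f$ commutes with the structure maps $\varphi_A,\varphi_{A'}$, hence functoriality of the relative bar construction (already recorded in \cref{subsection:relative bar and cobar}) applies. Conversely, one notes $\Psi$ is strict iff it is of the form $\P^{\antishriek}(f)$ for a chain map $f$, which is exactly the content of the proposition characterizing strict morphisms as those with $\psi_n=0$ for $n\ge 2$.

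Putting this together: $\Bar_\iota$ restricted to strict morphisms is the functor $\PooAlg\to\Ccog$ with image in quasi-free coalgebras; on objects it is a bijection; and declaring $\infty$-morphisms to be coalgebra morphisms between the images makes $\Bar_\iota:\ooPooAlg\to\mathsf{quasi\text{-}free}\ \P^{\antishriek}\text{-}\mathsf{cog}$ an isomorphism of categories, with inverse sending a quasi-free coalgebra $(\P^{\antishriek}(V),D)$ to the $\P_\infty$-algebra on $V$ determined by the coderivation $D$ via \cref{prop:Poo-alg is coderivation of Pi-cog}, and sending a coalgebra morphism to the corresponding $\infty$-morphism. The main obstacle — really the only place where something must be verified rather than cited — is checking that the differential $d_{\Bar_\iota A}$ coming from the relative bar construction agrees, under the cofree-coalgebra identification, with the square-zero coderivation encoding the $\P_\infty$-structure of \cref{prop:Poo-alg is coderivation of Pi-cog}; this is a sign-and-bookkeeping comparison using the explicit form of $\iota$ and the formula for $d_2$ in $\Bar_\alpha$, and once it is done the Rosetta-stone correspondence $\Tw(\P^{\antishriek},\End_A)\cong\hom_{\coop}(\P^{\antishriek},\Bar\End_A)$ together with \cref{prop:P-alg is map P->End} makes everything else formal. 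I would therefore refer to \cite[Prop. 11.4.1]{LodayVallette} for the sign verification and spend the proof text making the identifications above explicit.
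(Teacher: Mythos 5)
Your argument is correct and is essentially the standard one: the paper itself states this result with only a citation to \cite[Prop.~11.4.1]{LodayVallette} and gives no proof, and your proposal reproduces the argument of that reference — identify $\Bar_\iota A$ with the quasi-free coalgebra $(\P^{\antishriek}(A),d_{\varphi_A})$ of \cref{prop:Poo-alg is coderivation of Pi-cog}, observe that the definition of $\infty$-morphism makes the extension tautological on morphisms, and check compatibility with strict morphisms. The one place you rightly flag as needing actual verification (that $d_2$ of the relative bar construction for $\iota$ unwinds to the coderivation encoding $\varphi_A$) is indeed the only non-formal step, and deferring its sign bookkeeping to the cited reference is consistent with how the paper itself treats this statement.
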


In other words, if $A$ is a $\P_\infty$-algebra, then $\Bar_\iota A$ is the $\P^{\antishriek}$-coalgebra given by \cref{prop:Poo-alg is coderivation of Pi-cog}, and an $\infty$-morphism\index{$\infty$-morphism} of $\P_\infty$-algebras is a morphism of coalgebras between the bar constructions.

\subsection{Relative bar and cobar constructions and quasi-isomorphisms}

Quasi-isomorphisms behave really well with respect to the bar construction.

\begin{proposition}[{\cite[Prop. 11.2.3]{LodayVallette}}]\label{prop:bar of qi is qi}
	Let $\alpha:\C\to\P$ be a twisting morphism, and let $f:A\to A'$ be a quasi-isomorphism of $\P$-algebras. Then
	\[
	\Bar_\alpha f:\Bar_\alpha A\longrightarrow\Bar_\alpha A'
	\]
	is a quasi-isomorphism.
\end{proposition}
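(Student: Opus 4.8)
The plan is to exploit the fact that the relative bar construction $\Bar_\alpha A$ is a \emph{quasi-free} $\C$-coalgebra on the underlying chain complex, so that it carries a canonical filtration coming from the coradical (or equivalently, arity/weight) filtration of $\C$, and then run a standard spectral sequence comparison argument. First I would set up the filtration: writing $\Bar_\alpha A = (\C(A), d_1+d_2)$ with $d_1 = d_\C\circ 1_A + 1_\C\circ' d_A$ and $d_2$ the coderivation built from $\alpha\circ 1_A$ followed by $\gamma_A$, I would filter $\C(A)$ by $F_p := \bigoplus_{n\le p}\bigl(\C(n)\otimes A^{\otimes n}\bigr)^{\S_n}$ (the number of inputs). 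The key point is that $d_1$ preserves this filtration while $d_2$, which involves the infinitesimal decomposition $\Delta_{(1)}$ of $\C$ followed by $\gamma_A$ (reducing the number of $A$-slots by one on the relevant factor, but actually reducing arity), \emph{strictly decreases} $p$. Hence on the associated graded, only $d_1$ survives, and $d_1$ is nothing but $d_\C\circ 1_A + 1_\C\circ' d_A$ acting on $\C(A)$.

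Next I would identify the $E^0$-page. Since $\C$ has trivial internal differential in many cases of interest—but in general $d_\C$ need not be trivial—one has to be slightly more careful. The cleanest route is: the morphism $\Bar_\alpha f = \C(f)$ is a strict morphism of filtered complexes (it clearly respects the filtration above, as $f$ is degree-preserving and $\C(f)$ acts slot-wise), so it induces a morphism of spectral sequences. On the associated graded, $\gr\Bar_\alpha f$ is just $\C(f)$ equipped with the differential $d_\C\circ 1_A + 1_\C\circ' d_A$, i.e. the natural differential on the Schur-functor composite $\C\circ (A, d_A)$. By the operadic Künneth formula (\cref{thm:operadic Kunneth}, valid in characteristic $0$), the homology of $\C(A)$ with this differential is $H_\bullet(\C)\circ H_\bullet(A) = \C\circ H_\bullet(A)$ (as $\C$ is a graded $\S$-module with, in the relevant setting, trivial differential; in general one gets $H_\bullet(\C)\circ H_\bullet(A)$). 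Since $f$ is a quasi-isomorphism, $H_\bullet(f):H_\bullet(A)\to H_\bullet(A')$ is an isomorphism, and functoriality of $\circ$ (morphisms of $\S$-modules induce natural transformations of Schur functors) shows that $H_\bullet(\C(f))$ is an isomorphism. Thus $\gr\Bar_\alpha f$ is a quasi-isomorphism, i.e. an isomorphism on the $E^1$-page.

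Finally I would invoke a convergence/comparison lemma for spectral sequences: the filtration $F_\bullet$ on $\C(A)$ is exhaustive and bounded below (it starts at $F_0 = \C(0)\otimes A^{\otimes 0}$, which vanishes since $\C$ is reduced, so effectively at $F_1 = \k\cdot A \cong A$) and, in each fixed homological degree, one needs it to be suitably bounded or at least that the spectral sequence converges—this holds because in each total degree only finitely many filtration pieces contribute once one accounts for the internal grading, or one appeals to the conilpotence of the coalgebra exactly as in the proof of \cite[Prop. 11.2.3]{LodayVallette}. Since $\Bar_\alpha f$ induces an isomorphism on $E^1$ and both spectral sequences converge to the homology of $\Bar_\alpha A$ and $\Bar_\alpha A'$ respectively, the classical comparison theorem for spectral sequences forces $\Bar_\alpha f$ to be a quasi-isomorphism, completing the proof.

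The main obstacle I anticipate is the convergence bookkeeping: the filtration is not bounded above, so one must argue carefully that the spectral sequence converges (e.g. that in each degree $n$ the filtration on $H_n$ is exhaustive and Hausdorff), which is where conilpotence of the $\C$-coalgebras and the grading conventions enter. A secondary subtlety is keeping track of whether $d_\C$ is trivial: if $\C$ arises as a Koszul dual it is, but in full generality one should phrase $E^1$ via the operadic Künneth formula applied to $H_\bullet(\C)\circ H_\bullet(A)$, and this is harmless since we only need an \emph{isomorphism} induced by $H_\bullet(f)$, not an identification of the homology itself.
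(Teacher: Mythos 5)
Your argument is correct and is essentially the proof of \cite[Prop.~11.2.3]{LodayVallette} that the paper cites: filter $\C(A)$ by the number of $A$-inputs, observe that $d_2$ strictly lowers this filtration (which uses that $\alpha$ vanishes in arity $1$ since the (co)operads are reduced), identify $E^1$ via the operadic K\"unneth formula, and conclude by the classical comparison theorem. Your worry about convergence is unnecessary: an exhaustive, bounded-below increasing filtration already guarantees convergence, exactly as in the analogous spectral-sequence argument the paper runs for \cref{prop:rectification is oo-qi to original}.
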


They behave a bit worse with respect to the cobar construction.

\begin{proposition}[{\cite[Prop. 11.2.6]{LodayVallette}}]\label{prop:cobar construction and qis}
	Let $\alpha:\C\to\P$ be a twisting morphism, and let $g:C'\to C$ be a quasi-isomorphism between connected conilpotent $\C$-coalgebras, i.e. $\C$-coalgebras that are $0$ in degrees smaller or equal to $0$. Then
	\[
	\Cobar_\alpha g\coloneqq\P(g):\Cobar_\alpha C'\longrightarrow\Cobar_\alpha C
	\]
	is a quasi-isomorphism.
\end{proposition}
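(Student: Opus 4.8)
The plan is to filter the two cobar complexes so that the twisting part $d_2$ of the differential vanishes on the associated graded, reducing the comparison to a statement about Schur functors, and then to transport the conclusion back through a spectral sequence. The connectedness hypothesis on the coalgebras will be used only to guarantee convergence of this spectral sequence.

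First, recall that $\Cobar_\alpha g$ equals $\P(g) = \bigoplus_{n\ge 1} 1_{\P(n)}\otimes_{\S_n} g^{\otimes n}$ on underlying graded vector spaces, and that the differential of $\Cobar_\alpha C$ is $d_1 + d_2$ with $d_1 = d_\P\circ 1_C + 1_\P\circ' d_C$ and $d_2$ built from $\Delta_C$, $\alpha\circ 1_C$ and $\gamma_{(1)}$ as in the excerpt. I would put on $\P(C) = \bigoplus_{n\ge 1}\P(n)\otimes_{\S_n} C^{\otimes n}$ the decreasing filtration $F^p \coloneqq \bigoplus_{n\ge p}\P(n)\otimes_{\S_n} C^{\otimes n}$ by number of inputs from $C$. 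Since $d_1$ keeps the number of inputs fixed, $d_1(F^p)\subseteq F^p$; since $d_2$ replaces one input $c\in C$ by the inputs appearing in $\Delta_C(c)$, and since its $\C(0)$-component vanishes ($\C$ is reduced) and its $\C(1)$-component vanishes too (as $\alpha$ has degree $-1$ while $\C(1) = \k\,\id$ maps into $\P(1) = \k$, concentrated in degree $0$), only the $\C(k)$-components with $k\ge 2$ survive, so $d_2(F^p)\subseteq F^{p+1}$. Hence $d_1+d_2$ is a filtered differential, the map $\Cobar_\alpha g = \P(g)$ is filtered, and on the associated graded the induced differential is exactly $d_1$, so $E^0_p = \bigl(\P(p)\otimes_{\S_p} C^{\otimes p},\, d_\P\otimes 1 + 1\otimes d_{C^{\otimes p}}\bigr)$ is the arity-$p$ piece of the Schur construction on the chain complex $C$.

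Next I would pass to homology on $E^0$. Over a field of characteristic $0$, the Künneth theorem gives $H_\bullet(C^{\otimes p})\cong H_\bullet(C)^{\otimes p}$ and $H_\bullet(g^{\otimes p}) = H_\bullet(g)^{\otimes p}$, while taking $\S_p$-coinvariants is exact, being a direct summand of the forgetful functor through the symmetrizing idempotent $\tfrac1{p!}\sum_{\sigma\in\S_p}\sigma$, cf. \cref{subsect:invariants and coinvariants}. Therefore
\[
E^1_p(\Cobar_\alpha C)\;\cong\;H_\bullet(\P(p))\otimes_{\S_p} H_\bullet(C)^{\otimes p}
\]
naturally in $C$, and $\Cobar_\alpha g$ induces on $E^1$ the map $1\otimes_{\S_p} H_\bullet(g)^{\otimes p}$, an isomorphism because $g$ is a quasi-isomorphism.

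Finally I would deal with convergence. The filtration $F^\bullet$ is exhaustive ($F^1 = \Cobar_\alpha C$, as $\P$ is reduced) and separated ($\bigcap_p F^p = 0$). Here the hypothesis that $C'$ and $C$ be connected — concentrated in degrees $\ge 1$ — is exactly what is needed: it forces $C^{\otimes n}$ to live in degrees $\ge n$, so in each homological degree only finitely many $n$ contribute to $\P(C)$, the filtration is finite degreewise, and both spectral sequences converge, to $H_\bullet(\Cobar_\alpha C')$ and $H_\bullet(\Cobar_\alpha C)$. Since $\Cobar_\alpha g$ is an isomorphism on $E^1$, the comparison theorem for spectral sequences then gives that it is a quasi-isomorphism. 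I expect the main obstacle to be precisely this bookkeeping around the filtration: checking that $d_2$ \emph{strictly} raises it, and that connectedness delivers the degreewise finiteness that makes the two spectral sequences converge — this being the only place the hypothesis is used, and also the place where the statement breaks for a general conilpotent $C$.
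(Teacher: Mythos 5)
Your proof is correct and is essentially the argument of the cited \cite[Prop.~11.2.6]{LodayVallette}, which this thesis itself replays in the proof of \cref{prop:rectification is oo-qi to original}: filter $\P(C)$ by the number of $C$-inputs, check that $d_2$ strictly raises the filtration because $\alpha$ vanishes on $\C(0)\oplus\C(1)$ for degree reasons, identify $E^1$ via the operadic K\"unneth formula (\cref{thm:operadic Kunneth}) together with exactness of $\S_n$-coinvariants in characteristic $0$, and conclude by convergence, which is exactly where connectedness of $C$ and $C'$ enters. The only tacit point in your convergence step is that degreewise boundedness of the filtration also uses that the degrees in which the $\P(n)$ are non-zero are uniformly bounded below, which holds in all the situations considered here.
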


There are quasi-isomorphisms between conilpotent $\C$-coalgebras that are not sent to quasi-iso\-morphisms by the cobar construction $\Cobar_\alpha$, see e.g. \cite[Prop. 2.4.3]{LodayVallette}.

\subsection{Relative twisting morphisms}

Let $C$ be a conilpotent $\C$-coalgebra, and let $A$ be a $\P$-algebra. We consider the operator $\star_\alpha$ of degree $-1$ acting on the chain complex $\hom(C,A)$ given by the composite
\[
\star_\alpha(\varphi)\coloneqq\left(C\xrightarrow{\Delta_C}\C\circ C\xrightarrow{\alpha\circ\varphi}\P\circ A\xrightarrow{\gamma_A}A\right)
\]
for $\varphi\in\hom(C,A)$.

\begin{definition}\label{def:relative twisting morphisms}
	A \emph{twisting morphism relative to $\alpha$}\index{Twisting morphism!relative to $\alpha$} is an element $\varphi\in\hom(C,A)$ of degree $0$ satisfying the Maurer--Cartan equation
	\begin{equation}\label{equation:MC for relative twisting morphisms}
		\partial(\varphi) + \star_\alpha(\varphi) = 0\ .
	\end{equation}
	We denote the set of all such relative twisting morphisms by $\Tw_\alpha(C,A)$.
\end{definition}

\subsection{Bar-cobar adjunction for (co)algebras}

Similarly to the operadic case, the bar and cobar functors form an adjoint pair, cf. \cref{thm:Rosetta stone operads}.

\begin{theorem}\label{thm:Rosetta stone algebras}
	Let $\alpha:\C\to\P$ be an operadic twisting morphism, let $C$ be a conilpotent $\C$-coalgebra, and let $A$ be a $\P$-algebra. There are bijections
	\[
	\hom_{\Palg}(\Cobar_\alpha C,A)\cong\Tw_\alpha(C,A)\cong\hom_{\Ccog}(C,\Bar_\alpha A)\ ,
	\]
	natural both in $C$ and $A$. In particular, $\Cobar_\alpha$ and $\Bar_\alpha$ form an adjoint pair.
\end{theorem}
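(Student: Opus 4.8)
The plan is to exhibit, for a fixed operadic twisting morphism $\alpha \colon \C \to \P$ and for fixed $C \in \cCcog$ and $A \in \Palg$, a natural bijection between $\Tw_\alpha(C,A)$ and each of the two hom-sets, and to check that the transport of morphisms on both sides is given by functoriality of $\Cobar_\alpha$ and $\Bar_\alpha$ respectively, so that the identities $\Tw_\alpha(C,A) \cong \hom_{\Ccog}(C, \Bar_\alpha A)$ and $\Tw_\alpha(C,A) \cong \hom_{\Palg}(\Cobar_\alpha C, A)$ upgrade to the adjunction $\Cobar_\alpha \dashv \Bar_\alpha$. I will treat the right-hand bijection first. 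A morphism of $\C$-coalgebras $\Phi \colon C \to \Bar_\alpha A = (\C(A), d_1 + d_2)$ is, by the universal property of the cofree conilpotent $\C$-coalgebra (\cref{prop:cofree conil coalgebra}), uniquely determined by its corestriction $\varphi \coloneqq \pr_A \circ \Phi \colon C \to A$, a map of the underlying graded vector spaces. So set-theoretically $\hom_{\Ccog^\#}(C, \Bar_\alpha A) \cong \hom_{\gr}(C, A) = \hom(C,A)_0$ as graded objects; the content is to show that $\Phi$ commutes with the differentials (i.e. is a chain map, hence an honest morphism in $\cCcog$) if and only if $\varphi$ satisfies the Maurer--Cartan equation \eqref{equation:MC for relative twisting morphisms}. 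For this I would compute $\pr_A \circ (d_{\Bar_\alpha A}\Phi - \Phi d_C)$: the $d_1$-part contributes $d_A \varphi - (-1)^{|\varphi|}\varphi d_C = \partial(\varphi)$ after unwinding $d_1 = d_\C \circ 1_A + 1_\C \circ' d_A$ and using that $\varepsilon_\C$ kills $\overline{\C}$, while the $d_2$-part, whose full expression factors through $\Delta_{(1)} \circ 1_A$, the map $(1_\C \circ_{(1)} \alpha)\circ 1_A$, and $\gamma_A$, contributes exactly $\star_\alpha(\varphi)$ after precomposing with $\Phi$ and projecting. Because $\Bar_\alpha A$ is cofree, a coderivation is determined by its corestriction, so vanishing of the projection of $d_{\Bar_\alpha A}\Phi - \Phi d_C$ is equivalent to the vanishing of the whole thing; this gives the bijection $\hom_{\Ccog}(C, \Bar_\alpha A) \cong \Tw_\alpha(C,A)$.

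The left-hand bijection is dual. A morphism of $\P$-algebras $\Psi \colon \Cobar_\alpha C = (\P(C), d_1 + d_2) \to A$ is, by the universal property of the free $\P$-algebra (\cref{prop:free algebra}), uniquely determined by its restriction $\varphi \coloneqq \Psi \circ \iota_C \colon C \to A$ along the canonical inclusion $C \to \P(C)$, again a map of underlying graded vector spaces. The remaining point is that $\Psi$ is a chain map iff $\varphi \in \Tw_\alpha(C,A)$. Here I would use that a derivation out of a free $\P$-algebra is determined by its restriction to generators, so $\Psi(d_1 + d_2) = d_A \Psi$ is equivalent to its restriction along $\iota_C$ vanishing; restricting $d_1$ to $C$ gives $d_C$ (up to the sign conventions), restricting $-d_2$ to $C$ gives $(\alpha \circ 1_C)\Delta_C$, and composing everything with $\Psi$ and tracking that $\Psi$ restricted to $\P(C)$ is built from $\gamma_A$ and $\varphi$ yields precisely $\partial(\varphi) + \star_\alpha(\varphi) = 0$. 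Finally, naturality: given $f \colon A \to A'$ in $\Palg$ and $g \colon C' \to C$ in $\cCcog$, one checks that the bijections intertwine postcomposition by $f$ (resp. precomposition by $g$) on the hom-sets with the evident operations $\varphi \mapsto f\varphi$ and $\varphi \mapsto \varphi g$ on $\Tw_\alpha$, and that these in turn correspond to $\Bar_\alpha f$ and $\Cobar_\alpha g$; this is immediate from the definitions of $\Bar_\alpha f = \C(f)$ and $\Cobar_\alpha g = \P(g)$ together with the universal properties used above. Combining the two bijections and their naturality yields $\hom_{\Palg}(\Cobar_\alpha C, A) \cong \hom_{\Ccog}(C, \Bar_\alpha A)$ natural in both variables, i.e. the adjunction.

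The main obstacle, as usual in this circle of ideas, is purely bookkeeping: correctly matching the three pieces of $d_{\Bar_\alpha A}$ and $d_{\Cobar_\alpha C}$ (the internal differential $d_\C$ or $d_\P$, the internal differential $d_A$ or $d_C$, and the $\alpha$-twist $d_2$) against the three terms of $\partial(\varphi) + \star_\alpha(\varphi)$, keeping the Koszul signs consistent with the suspension conventions fixed in \cref{subsection:intro chain complexes} and with the formulas for $d_1$, $d_2$ recorded in \cref{subsection:relative bar and cobar}. Since \cite[Lemma 11.2.1]{LodayVallette} and \cite[Lemma 11.2.4]{LodayVallette} already guarantee $d_{\Bar_\alpha A}^2 = 0$ and $d_{\Cobar_\alpha C}^2 = 0$, no extra integrability check is needed; the sign gymnastics is the only delicate part and I would organize it by working throughout with the corestriction/restriction description so that at no point do I have to differentiate a non-free (co)algebra map directly. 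Everything else is a direct consequence of the two universal properties and the definition of $\star_\alpha$.
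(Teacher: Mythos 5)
Your proposal is correct and follows essentially the same route as the paper: reduce morphisms out of $\Cobar_\alpha C$ (resp.\ into $\Bar_\alpha A$) to their (co)restriction via the free/cofree universal properties, then identify compatibility with the differentials with the Maurer--Cartan equation $\partial(\varphi)+\star_\alpha(\varphi)=0$. The paper only writes out the algebra-side computation and declares the coalgebra side dual, whereas you spell out both and the derivation-type argument justifying that the restricted relation implies the full one, but this is a difference of exposition rather than of method.
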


\begin{proof}
	We prove the first bijection, the second one being dual. A morphism of $\P$-algebras $f:\Cobar_\alpha C\to A$ is in particular a morphism of algebras in graded vector spaces from $\P(C)\to A$, and is therefore completely determined by its restriction $\varphi\coloneqq f|_C$ to $C$. In the other direction, given $\varphi$ we recover $f$ as
	\[
	f = \gamma_A(1_\P\circ\varphi)\ .
	\]
	We only need to show that $f$ commuting with the differentials is equivalent to $\varphi$ satisfying the Maurer--Cartan equation (\ref{equation:MC for relative twisting morphisms}). The restriction to $C$ of the relation
	\[
	d_Af = fd_{\Cobar_\alpha C}
	\]
	gives
	\begin{align*}
		d_A\varphi =&\ -\varphi d_C - f(\alpha\circ1_C)\Delta_C\\
		=&\ -\varphi d_C - \gamma_A(1_\P\circ\varphi)(\alpha\circ1_C)\Delta_C\\
		=&\ -\varphi d_C - \gamma_A(\alpha\circ\varphi)\Delta_C\\
		=&\ -\varphi d_C - \star_\alpha(\varphi)\ .
	\end{align*}
	Therefore, if $f$ commutes with the differentials, then $\varphi$ satisfies the Maurer--Cartan equation. The other direction also follows from this computation and from the fact that the restriction to $C$ determines the whole morphism.
\end{proof}

\subsection{Bar-cobar resolutions of (co)algebras}

Now we would like to use the bar-cobar adjunction for (co)algebras to give (functorial) resolutions of the same, as we did for operads in \cref{sect:bar-cobar resolution of operads}. This is indeed possible, but we have to further require that $\alpha$ is Koszul.

\begin{theorem}\label{thm:bar-cobar resolution for algebras}
	Let $\alpha:\C\to\P$ be a twisting morphism. The following are equivalent.
	\begin{enumerate}
		\item $\alpha$ is a Koszul morphism.
		\item The counit
		\[
		\epsilon_A:\Cobar_\alpha\Bar_\alpha A\longrightarrow A
		\]
		of the bar-cobar adjunction is a quasi-isomorphism for any $\P$-algebra $A$.
	\end{enumerate}
\end{theorem}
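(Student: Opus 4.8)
The plan is to recognise the relative bar--cobar complex as a ``twisted composite product with coefficients'' and then to run the usual comparison argument. Unwinding \cref{subsection:relative bar and cobar}, the underlying graded object of $\Cobar_\alpha\Bar_\alpha A$ is $\P(\C(A))=(\P\circ\C)(A)$, and its differential is the sum of the internal differentials, the derivation built from $\Delta_\C$ and $\alpha$ (the ``left'' twist, i.e. $d^\ell_\alpha$ of $\P\circ_\alpha\C$ applied to $A$), and the derivation built from $\alpha$ and the structure map $\gamma_A$ (a ``right'' twist using $\gamma_A$ in place of $\gamma_\P$). In particular, when $A=\P(V)$ is free one verifies, by a straightforward comparison of differentials, that there is an isomorphism of chain complexes $\Cobar_\alpha\Bar_\alpha\P(V)\cong(\P\circ_\alpha\C\circ_\alpha\P)(V)$, where $\P\circ_\alpha\C\circ_\alpha\P$ is the $\S$-module $\P\circ\C\circ\P$ with internal differential plus $d^\ell_\alpha$ on the left pair and $d^r_\alpha$ on the right pair, and under this identification $\epsilon_{\P(V)}$ is the Schur functor, evaluated at $V$, of $\P\circ\C\circ\P\xrightarrow{1_\P\circ\varepsilon_\C\circ1_\P}\P\circ\P\xrightarrow{\gamma_\P}\P$. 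Recall also that, by \cref{thm:fundamental thm of operadic twisting morphisms}, ``$\alpha$ is Koszul'' is equivalent to the acyclicity of $\C\circ_\alpha\P$, equivalently of $\P\circ_\alpha\C$ (the inclusion of $I$ being a quasi-isomorphism of $\S$-modules).

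For $(1)\Rightarrow(2)$ I would first dispose of free algebras. Assuming $\alpha$ is Koszul, filter $(\P\circ_\alpha\C\circ_\alpha\P)(V)$ by the weight of the outermost $\P$: the internal differential and $d^r_\alpha$ preserve the filtration, while $d^\ell_\alpha$ strictly raises it, so the first page of the associated spectral sequence is the homology for $d_{\mathrm{int}}+d^r_\alpha$, which by the operadic Künneth formula (\cref{thm:operadic Kunneth}; characteristic zero, everything defined over a field) equals $\P\bigl(H_\bullet(\C\circ_\alpha\P)\bigr)(V)=\P(I)(V)=\P(H_\bullet V)=H_\bullet\P(V)$. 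All higher differentials vanish for weight reasons, and since the filtration is bounded below and exhaustive --- here the connected weight grading is what guarantees convergence in each fixed arity and degree --- the spectral sequence degenerates and identifies $H_\bullet(\epsilon_{\P(V)})$ with this isomorphism. To reach an arbitrary $\P$-algebra $A$, I would resolve $A$ by a simplicial $\P$-algebra that is free in each simplicial degree and whose realization is quasi-isomorphic to $A$ (for instance the monadic bar resolution $n\mapsto\P^{\circ(n+1)}(\forget{A})$); since $\Bar_\alpha=\C(-)$ and $\Cobar_\alpha=\P(-)$ are Schur-type functors they commute with the reflexive coequalizers and filtered colimits defining that realization, so combining the free case at each simplicial level with \cref{prop:bar of qi is qi} and the spectral sequence of the simplicial-degree filtration yields that $\epsilon_A$ is a quasi-isomorphism.

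For $(2)\Rightarrow(1)$ I would specialize the hypothesis to $A=\P(\k x_d)$ for an arbitrary single generator $x_d$, so that $\epsilon_{\P(\k x_d)}$ is a quasi-isomorphism for every $d$; since in characteristic zero Schur functors detect quasi-isomorphisms of $\S$-modules, this is equivalent to saying that the augmentation $\P\circ_\alpha\C\circ_\alpha\P\to\P$ above is a quasi-isomorphism of $\S$-modules. One then induces on the weight $w$: the weight-$w$ part of $\P\circ_\alpha\C\circ_\alpha\P$ only involves the weight-$\le w$ parts of $\P\circ_\alpha\C$, all of which below $w$ are acyclic by inductive hypothesis, so comparing homology in weight $w$ forces the weight-$w$ part of $\P\circ_\alpha\C$ to be acyclic too. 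Hence $\P\circ_\alpha\C$ is acyclic and $\alpha$ is Koszul by \cref{thm:fundamental thm of operadic twisting morphisms}.

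The main obstacle is the passage from free to general algebras in $(1)\Rightarrow(2)$: carried out cleanly it amounts to a ``comparison lemma with coefficients in a $\P$-algebra'' --- the statement that the augmented relative bar--cobar complex of $A$ is acyclic as soon as $\P\circ_\alpha\C$ is --- and proving it requires choosing the correct filtration and being careful about convergence of the resulting spectral sequence, which is exactly where the connectivity/connected-weight-graded hypotheses (the same ones behind the operadic comparison lemma) enter. A variant that avoids the simplicial resolution, available when $\C$ is connected, is to factor $\alpha$ through the universal twisting morphism $\pi\colon\Bar\P\to\P$, note that $f_\alpha\colon\C\to\Bar\P$ is a quasi-isomorphism because $\alpha$ is Koszul, and use functoriality of $\Bar_{(-)}$ and $\Cobar_{(-)}$ together with \cref{prop:bar of qi is qi} to reduce the claim to the case of $\pi$, which in turn follows from the acyclicity of $\P\circ_\pi\Bar\P$ recorded in \cref{lemma:universal twisting morphisms are Koszul}.
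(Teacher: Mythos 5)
Your reduction of the free case to the two-sided twisted composite product $(\P\circ_\alpha\C\circ_\alpha\P)(V)$ and the spectral sequence collapsing it onto $H_\bullet(\P(V))$ are sound, and your converse (free algebras plus the fact that Schur functors detect quasi-isomorphisms of $\S$-modules in characteristic $0$) can be made to work. But the step you yourself flag as the main obstacle --- passing from free algebras to an arbitrary $\P$-algebra $A$ --- is a genuine gap, and neither of your two routes closes it. Route one: the realization of the simplicial bar resolution $\P^{\bullet+1}(\forget{A})$ that is quasi-isomorphic to $A$ is a totalization (a homotopy colimit), not a reflexive coequalizer or filtered colimit of chain complexes; Schur functors preserve the latter but not the former, since $\P(\,|V_\bullet|\,)$ and $|\P(V_\bullet)|$ involve tensor powers that only agree up to Eilenberg--Zilber quasi-isomorphism, and checking compatibility with the three twisted differentials is precisely the content being avoided. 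Route two is circular: the reduction along $f_\alpha$ needs $\Cobar_\pi$ to turn the quasi-isomorphism $\Bar_\alpha A\to\Bar_\pi A$ of conilpotent coalgebras into a quasi-isomorphism, which fails for general quasi-isomorphisms of coalgebras (cf.\ \cref{prop:cobar construction and qis}; one would have to upgrade it to a filtered quasi-isomorphism), and even granting that, the acyclicity of $\P\circ_\pi\Bar\P$ from \cref{lemma:universal twisting morphisms are Koszul} is an $\S$-module statement that only settles the free case for $\pi$ --- extracting the statement for a non-free $A$ is exactly the problem you started with.

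The proof the paper defers to, \cite[Thm.~11.3.3]{LodayVallette}, avoids the free/general dichotomy altogether, and the comparison is instructive. Filter $(\P\circ\C)(A)$ by the number of tensor factors of $A$. Since the (co)operads are reduced and $\alpha$ vanishes in arity $1$, the only component of the differential involving $\gamma_A$ strictly lowers this number, while the internal differentials and the cobar component $d_\alpha^\ell\circ 1_A$ preserve it. Hence the associated graded complex is $(\P\circ_\alpha\C)(A)$ with $A$ carrying only its internal differential, and \cref{thm:operadic Kunneth} gives a first page equal to $H_\bullet(\P\circ_\alpha\C)\circ H_\bullet(A)=H_\bullet(A)$ when $\alpha$ is Koszul; the filtration is exhaustive and bounded below, and the edge morphism is $H_\bullet(\epsilon_A)$. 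For the converse one evaluates on a $\P$-algebra with trivial structure map $\gamma_A=0$, for which $\Cobar_\alpha\Bar_\alpha A$ is literally $(\P\circ_\alpha\C)(A)$, so Koszulness follows from the K\"unneth formula with no weight induction. One filtration, arbitrary $A$, no simplicial resolution: I would rework your argument along these lines.
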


\begin{remark}
	In particular, the bar-cobar adjunction provides a cofibrant resolution for $\P$-algebras, cf. \cref{subsection:Hinich model structure on algebras}.
\end{remark}

\begin{proof}
	This is \cite[Thm. 11.3.3]{LodayVallette}.
\end{proof}

\begin{remark}
	This is true as stated because we supposed that all of our (co)operads are reduced, and thus are canonically connected weight graded by the arity. In more generality, one has to assume that $\C$ and $\P$ are connected weight graded, and that $\alpha$ preserves this additional grading.
\end{remark}

A similar statement is true for coalgebras.

\begin{theorem}\label{thm:alpha Koszul iff unit is qi}
	Let $\alpha:\C\to\P$ be a twisting morphism. The following are equivalent.
	\begin{enumerate}
		\item\label{pt:1} $\alpha$ is a Koszul morphism.
		\item\label{pt:2} The unit
		\[
		\eta_C:C\longrightarrow\Bar_\alpha\Cobar_\alpha C
		\]
		is a quasi-isomorphism for any conilpotent $\C$-coalgebra $C$.
	\end{enumerate}
\end{theorem}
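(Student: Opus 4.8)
The plan is to treat this as the coalgebra-side mirror of \cref{thm:bar-cobar resolution for algebras} and dualize the proof of that theorem; the heart of the matter is an identification of $\Bar_\alpha\Cobar_\alpha C$ with a complex built out of the right twisted composite product $\C\circ_\alpha\P$. First I would unwind the definitions. As a graded $\C$-coalgebra, $\Bar_\alpha\Cobar_\alpha C$ has underlying object $\C(\P(C))\cong(\C\circ\P)(C)$, the Schur functor of $\C\circ\P$ applied to the chain complex $C$. Its differential is a sum of two pieces: the twisted differential $d_\alpha$ of $\C\circ_\alpha\P$ acting on the ``$\C\circ\P$-part'' — this is exactly the $d_2$ of the bar construction, in which $\gamma_A$ is the free-$\P$-algebra structure $\gamma_\P\circ1_C$ of $\Cobar_\alpha C$ — together with a ``cobar-type'' twist, built from $\Delta_C$ and $\alpha$, which strictly increases the number of tensor factors of $C$ that occur. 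This dictates the filtration to use: filter $\Bar_\alpha\Cobar_\alpha C$ by the number of $C$-factors (equivalently, by the total arity of the $\P$-layer).

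For $(1)\Rightarrow(2)$: on the associated graded the cobar twist vanishes, so $\mathrm{gr}\,\Bar_\alpha\Cobar_\alpha C$ is precisely the Schur functor of $\C\circ_\alpha\P$ applied to $(C,d_C)$, and the operadic Künneth formula (\cref{thm:operadic Kunneth}) gives $H_\bullet(\mathrm{gr}\,\Bar_\alpha\Cobar_\alpha C)\cong H_\bullet(\C\circ_\alpha\P)(H_\bullet C)$. If $\alpha$ is Koszul, then by \cref{thm:fundamental thm of operadic twisting morphisms} the right twisted composite product $\C\circ_\alpha\P$ is acyclic (equivalently $H_\bullet(\C\circ_\alpha\P)\cong I$), so this homology collapses to $I(H_\bullet C)=H_\bullet C$, concentrated in the lowest filtration layer, where $\mathrm{gr}(\eta_C)$ is just the identity of $C$. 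Hence $\mathrm{gr}(\eta_C)$ is a quasi-isomorphism; since all reduced (co)operads are canonically connected weight graded by arity, the filtration is complete and exhaustive and degreewise finite, so the associated spectral sequence converges and $\eta_C$ itself is a quasi-isomorphism.

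For $(2)\Rightarrow(1)$ I would run the same spectral sequence in reverse against well-chosen test coalgebras. The higher filtration layers of $\Bar_\alpha\Cobar_\alpha C$ assemble the complexes $(\C\circ_\alpha\P)(n)\otimes_{\S_n}C^{\otimes n}$ for $n\ge 2$, and the goal is to force ``$\eta_C$ is always a quasi-isomorphism'' to imply that each $(\C\circ_\alpha\P)(n)$, $n\ge2$, is acyclic, which by \cref{thm:fundamental thm of operadic twisting morphisms} is precisely Koszulness of $\alpha$. Taking $C$ to be the (trivial, or cofree) coalgebra on a chain complex concentrated in a single, sufficiently negative degree makes the internal degrees of the various summands disjoint enough that the spectral sequence of $\mathrm{cone}(\eta_C)$ degenerates in the relevant range, so acyclicity of $\eta_C$ can be read off layer by layer. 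An alternative route, parallel to \cref{thm:bar-cobar resolution for algebras}, is to set $C=\Bar_\alpha A$, deduce from the triangle identity $\Bar_\alpha(\epsilon_A)\circ\eta_{\Bar_\alpha A}=\mathrm{id}$ and \cref{thm:Rosetta stone algebras} that $\Bar_\alpha(\epsilon_A)$ is a quasi-isomorphism, and then argue that $\Bar_\alpha$ reflects the quasi-isomorphisms in question, so that $\epsilon_A$ is a quasi-isomorphism and one may invoke \cref{thm:bar-cobar resolution for algebras}.

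The hard part will be this $(2)\Rightarrow(1)$ direction: unlike $(1)\Rightarrow(2)$, which is a clean convergent spectral-sequence computation, here acyclicity of a total complex does not descend to its associated graded for free, so one genuinely needs either a careful enough choice of test coalgebras to force degeneration, or the non-formal fact that $\Bar_\alpha$ reflects the relevant quasi-isomorphisms (i.e.\ that the $E^1$-page is detected by $E^\infty$ in this situation). Everything else — the filtered description of $\Bar_\alpha\Cobar_\alpha C$, the behaviour of $\eta_C$ on the associated graded, and the convergence bookkeeping via the connected weight grading — is the faithful dual of arguments already carried out for the algebra case in \cite{LodayVallette}.
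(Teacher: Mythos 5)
The paper does not actually prove this statement: it is dispatched with the single line ``This is \cite[Thm.~11.3.4]{LodayVallette}'', so there is no in-house argument to compare against. Your proposal is, in substance, a correct reconstruction of that cited proof (the dual of the argument for \cref{thm:bar-cobar resolution for algebras}), and your key identification --- that filtering $\Bar_\alpha\Cobar_\alpha C=(\C\circ\P)(C)$ by the number of $C$-factors kills exactly the cobar-type twist and leaves the Schur functor of $\C\circ_\alpha\P$ applied to $C$ as associated graded --- is the right one; the paper uses the very same filtration in its proof of \cref{prop:rectification is oo-qi to original}. Two refinements. For $(2)\Rightarrow(1)$ your first route can be made entirely degeneration-free: if $C=V$ is a \emph{trivial} conilpotent $\C$-coalgebra (zero reduced coproduct and zero differential), the cobar twist of $\Cobar_\alpha V$ vanishes identically, so $\Bar_\alpha\Cobar_\alpha V$ \emph{is} $(\C\circ_\alpha\P)(V)$ on the nose, with a differential preserving the number of $V$-factors; hypothesis $(2)$ then says that $H_\bullet\bigl((\C\circ_\alpha\P)(n)\otimes_{\S_n}V^{\otimes n}\bigr)=0$ for all $n\neq1$, and taking $V=\k^N$ with $N\ge n$ (so that $V^{\otimes n}$ contains the regular representation of $\S_n$ as a direct summand, in characteristic $0$) forces each $(\C\circ_\alpha\P)(n)$, $n\ge2$, to be acyclic, i.e.\ $\alpha$ Koszul by \cref{thm:fundamental thm of operadic twisting morphisms}. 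I would drop your ``alternative route'': the claim that $\Bar_\alpha$ reflects quasi-isomorphisms is essentially as hard as the statement you are proving and is not available off the shelf, whereas the test-coalgebra argument closes the loop cleanly. Finally, in $(1)\Rightarrow(2)$ the convergence hypothesis you want is that the filtration is increasing, exhaustive and bounded below (not ``complete''); with that wording the classical convergence theorem applies and the argument is complete.
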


\begin{remark}
	A refinement version of the implication $(\ref{pt:1})\implies(\ref{pt:2})$ has been given in \cite[Thm. 2.6(2)]{val14}\footnote{It is stated there only for the twisting morphism $\kappa$ given by Koszul duality, but the result holds in general for twisting morphisms between connected weight graded (co)operads. In particular, it always hold in our setting.}. We will present it in more details in \cref{subsection:Vallette model structure on coalgebras}, see \cref{corollary:unit of bar-cobar is a we in the Vallette model structure}.
\end{remark}

\begin{proof}
	This is \cite[Thm. 11.3.4]{LodayVallette}.
\end{proof}

\section{Homotopy theory of homotopy algebras}\label{sect:homotopy theory of homotopy algebras}

In this section, we give some results on the homotopy theory of $\P_\infty$-algebras, for $\P$ a Koszul operad. The homotopy theory of homotopy algebras, and more generally of (co)algebras related by an operadic Koszul morphisms will be studied again in more detail in \cref{subsection:Vallette model structure on coalgebras} and \cref{sect:homotopy theory of oo-morphisms of coalgebras}. The material presented here is extracted from \cite[Sect. 11.4]{LodayVallette}. For this section, fix a Koszul operad $\P$, and as usual let $\kappa:\P^{\antishriek}\to\P$ be the Koszul morphism given by Koszul duality, and let $\iota:\P^{\antishriek}\to\P_\infty$ be the canonical twisting morphism, which is also Koszul.

\subsection{Rectification of \texorpdfstring{$\P_\infty$}{Poo}-algebras}\label{subsection:rectification for Poo-algebras}

Every $\P$-algebra is in particular a $\P_\infty$-algebra. This can be done as follows. Let $g_\kappa:\P_\infty\to\P$ be the morphism of operads given by \cref{thm:universal twisting morphisms} applied to $\alpha=\kappa$. Notice that it is a quasi-isomorphism by \cref{thm:fundamental thm of operadic twisting morphisms}. Then the restriction of structure $g_\kappa^*$ gives the desired functor
\[
g_\kappa^*:\Palg\longrightarrow\PooAlg\ .
\]
Denote by
\[
i:\Palg\longrightarrow\ooPooAlg
\]
the composite of $g_\kappa^*$ with the inclusion of $\PooAlg$ into $\ooPooAlg$. On the other hand, we have the functor
\[
\Cobar_\kappa\Bar_\iota:\ooPooAlg\longrightarrow\Palg\ ,
\]
where $\Bar_\iota$ is the extension of the bar construction of \cref{prop:extended bar to oo-morphisms}.

\begin{proposition}
	The functors described above form an adjoint pair
	\[
	\adjunction{\Cobar_\kappa\Bar_\iota}{\ooPooAlg}{\Palg}{i}\ .
	\]
\end{proposition}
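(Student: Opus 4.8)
The plan is to obtain the adjunction by splicing together the bar--cobar adjunction of \cref{thm:Rosetta stone algebras} with the identification of $\infty$-morphisms as coalgebra morphisms from \cref{prop:extended bar to oo-morphisms}. The one nonformal ingredient is the functorial identity $\Bar_\iota \circ g_\kappa^* = \Bar_\kappa$ as functors $\Palg \to \Ccog$, with $\C = \P^{\antishriek}$. To establish it, note that for a $\P$-algebra $B$ both $\Bar_\iota(g_\kappa^* B)$ and $\Bar_\kappa B$ have underlying graded cofree coalgebra $\P^{\antishriek}(B)$; their internal-differential summands $d_1 = d_{\P^{\antishriek}}\circ 1_B + 1_{\P^{\antishriek}}\circ' d_B$ coincide (the restriction of structure $g_\kappa^*$ does not alter the differential on $B$), and the twisting summands $d_2$ are the coderivations extending, respectively, $\gamma_B\circ(\kappa\circ 1_B)$ and $\gamma_{g_\kappa^* B}\circ(\iota\circ 1_B) = \gamma_B\circ(g_\kappa\circ 1_B)\circ(\iota\circ 1_B) = \gamma_B\circ\big((g_\kappa\iota)\circ 1_B\big)$; these agree because $g_\kappa\iota = \kappa$ by \cref{thm:universal twisting morphisms}. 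Functoriality is immediate since both $\Bar_\iota\circ g_\kappa^*$ and $\Bar_\kappa$ act by $\P^{\antishriek}(-)$ on the underlying chain maps.

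With this in hand I would assemble, for a $\P_\infty$-algebra $A$ (an object of $\ooPooAlg$) and a $\P$-algebra $B$, the chain of bijections
\begin{align*}
	\hom_{\Palg}(\Cobar_\kappa\Bar_\iota A,\, B)
	&\cong \hom_{\Ccog}(\Bar_\iota A,\, \Bar_\kappa B) \\
	&= \hom_{\Ccog}(\Bar_\iota A,\, \Bar_\iota(g_\kappa^* B)) \\
	&\cong \hom_{\ooPooAlg}(A,\, g_\kappa^* B) \\
	&= \hom_{\ooPooAlg}(A,\, iB)\ .
\end{align*}
The first bijection is \cref{thm:Rosetta stone algebras} applied to the twisting morphism $\kappa\colon\P^{\antishriek}\to\P$ with $C = \Bar_\iota A$ (a conilpotent $\P^{\antishriek}$-coalgebra); the equality is the identity just discussed; and the second bijection holds because $\mathsf{quasi\text{-}free}\ \P^{\antishriek}\text{-}\mathsf{cog}$ is a \emph{full} subcategory of $\Ccog$ containing both $\Bar_\iota A$ and $\Bar_\iota(g_\kappa^* B) = \Bar_\kappa B$ (the latter because its underlying graded coalgebra is cofree on the graded vector space underlying $B$), while $\Bar_\iota$ restricts to an isomorphism of categories onto that subcategory by \cref{prop:extended bar to oo-morphisms}. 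The final equality is the definition $i = \big(\PooAlg\hookrightarrow\ooPooAlg\big)\circ g_\kappa^*$ on objects.

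Finally I would check naturality in $A$ and in $B$: \cref{thm:Rosetta stone algebras} provides natural bijections, the middle step is an equality of functors, and the last bijection is induced by an isomorphism of categories, so the composite is natural in both variables --- which is precisely the assertion that $\Cobar_\kappa\Bar_\iota$ is left adjoint to $i$. The only step demanding genuine verification is the identity $\Bar_\iota\circ g_\kappa^* = \Bar_\kappa$; once the explicit coderivation defining the relative bar construction is unwound it collapses to the factorization $\kappa = g_\kappa\iota$, and the rest is formal adjunction bookkeeping.
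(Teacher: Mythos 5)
Your argument is correct and is essentially the standard one behind the cited \cite[Prop.~11.4.3]{LodayVallette}: the identity $\Bar_\iota\circ g_\kappa^* = \Bar_\kappa$ (which is exactly \cref{lemma:equality of oo-morphisms} applied to $g_\kappa$ and $\iota$, using $g_\kappa\iota=\kappa$), combined with the relative bar--cobar adjunction of \cref{thm:Rosetta stone algebras} and the full embedding of \cref{prop:extended bar to oo-morphisms}. Nothing is missing; the paper simply outsources this splicing to the reference rather than writing it out.
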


\begin{proof}
	This is \cite[Prop. 11.4.3]{LodayVallette}.
\end{proof}

\begin{theorem}
	Let $A$ be a $\P_\infty$-algebra, then there is an $\infty$-quasi-isomorphism
	\[
	A\stackrel{\sim}{\rightsquigarrow}\Cobar_\kappa\Bar_\iota A\ ,
	\]
	natural in $A$. Moreover, the $\P$-algebra $\Cobar_\kappa\Bar_\iota A$ is unique, up to isomorphism, with respect to the universal property that any $\infty$-morphism with $A$ as domain and a $\P$-algebra as target factors into the $\infty$-quasi-isomorphism above followed by a strict morphism of $\P$-algebras.
\end{theorem}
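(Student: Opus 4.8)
The plan is to recognise $\nu_A\colon A\rightsquigarrow\Cobar_\kappa\Bar_\iota A$ as the unit of the adjunction $\Cobar_\kappa\Bar_\iota\dashv i$ of the previous proposition, and to read off all three assertions — the existence of the $\infty$-quasi-isomorphism, its naturality, and the universal property — from the formal properties of that adjunction, the only genuinely analytic input being that $\kappa$ is Koszul.

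First I would make the adjunction explicit. From $g_\kappa\,\iota=\kappa$ (\cref{thm:universal twisting morphisms}) one checks directly from the definition of the relative bar construction that $\Bar_\iota\circ i=\Bar_\kappa$ as functors $\Palg\to\cCcog$ — both send a $\P$-algebra $B$ to $\P^{\antishriek}(B)$ with the coderivation extending $\P^{\antishriek}(B)\xrightarrow{\kappa\circ1_B}\P(B)\xrightarrow{\gamma_B}B$. Combining this with the fact that $\Bar_\iota$ is fully faithful on $\ooPooAlg$ (\cref{prop:extended bar to oo-morphisms}) and with the bar--cobar adjunction $\Cobar_\kappa\dashv\Bar_\kappa$ for coalgebras (\cref{thm:Rosetta stone algebras}), one obtains, naturally in $A\in\ooPooAlg$ and $B\in\Palg$,
\[
\hom_{\Palg}(\Cobar_\kappa\Bar_\iota A,B)\cong\hom_{\cCcog}(\Bar_\iota A,\Bar_\kappa B)\cong\hom_{\ooPooAlg}(A,iB)\,.
\]
The unit $\nu_A$ is the image of $\id_{\Cobar_\kappa\Bar_\iota A}$ under this composite, which — tracing through — corresponds under $\Bar_\iota$ to the unit $\eta_{\Bar_\iota A}\colon\Bar_\iota A\to\Bar_\kappa\Cobar_\kappa\Bar_\iota A$ of $\Cobar_\kappa\dashv\Bar_\kappa$; in particular the linear component $(\nu_A)_1\colon A\to\Cobar_\kappa\Bar_\iota A$ is the canonical ``double inclusion'' of $A$ into $\P\bigl(\P^{\antishriek}(A)\bigr)$.

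Next, the $\infty$-quasi-isomorphism claim, for which it suffices to show that $(\nu_A)_1$ is a quasi-isomorphism of chain complexes. I would filter the underlying graded space $\Cobar_\kappa\Bar_\iota A=\P\bigl(\P^{\antishriek}(A)\bigr)$ by the total weight $\mathrm{wt}_{\P}+\mathrm{wt}_{\P^{\antishriek}}$. The differential has three weight-homogeneous contributions: the part induced by $d_A$, which preserves both weights; the $\kappa$-twist of the cobar construction, which raises $\mathrm{wt}_{\P}$ by one and lowers $\mathrm{wt}_{\P^{\antishriek}}$ by one (since $\kappa$ is concentrated in weight one on both sides), hence preserves the total weight; and the part encoding the $\P_\infty$-structure of $A$, which preserves $\mathrm{wt}_{\P}$ and strictly lowers $\mathrm{wt}_{\P^{\antishriek}}$, hence strictly lowers the total weight. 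On the associated graded only the first two survive; taking homology first in the $d_A$-direction via the operadic Künneth formula (\cref{thm:operadic Kunneth}) and then in the $\kappa$-direction, where one recognises the weight-graded pieces of the twisted composite product $\bigl(\P\circ_\kappa\P^{\antishriek}\bigr)(H_\bullet A)$, the Koszulity of $\P$ (which makes $\P\circ_\kappa\P^{\antishriek}$ acyclic) forces the homology of the associated graded to be $H_\bullet(A)$ concentrated in total weight zero. Since the filtration is exhaustive and bounded below, $(\nu_A)_1$, which is precisely the inclusion of the total-weight-zero summand, is a quasi-isomorphism; thus $\nu_A$ is an $\infty$-quasi-isomorphism, and naturality in $A$ with respect to $\infty$-morphisms is automatic, $\nu$ being the unit of an adjunction.

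Finally, for the universal property: reading the displayed bijection for an arbitrary $\P$-algebra $B$ says exactly that composing with $\nu_A$ identifies strict morphisms $\Cobar_\kappa\Bar_\iota A\to B$ with $\infty$-morphisms $A\rightsquigarrow B$; equivalently, the pair $\bigl(\Cobar_\kappa\Bar_\iota A,\nu_A\bigr)$ is initial among pairs $(B,\psi)$ consisting of a $\P$-algebra and an $\infty$-morphism $A\rightsquigarrow B$. Hence every such $\psi$ factors uniquely as $\nu_A$ followed by a strict morphism of $\P$-algebras, and — initial objects being unique up to a unique isomorphism — $\Cobar_\kappa\Bar_\iota A$ is determined up to isomorphism by this property. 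I expect the one delicate point to be the quasi-isomorphism step: making the weight bookkeeping and the associated signs precise, and checking convergence of the total-weight spectral sequence when $A$ is unbounded (which is harmless here because the inner spectral sequence has finite length and the outer one is concentrated in a single filtration degree).
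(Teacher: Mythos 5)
Your argument is correct and is essentially the proof of the cited reference: the paper itself offers no proof beyond pointing to \cite{LodayVallette} (Thm.~11.4.4 and Prop.~11.4.5--11.4.6), and what you write — the identification $\Bar_\iota\circ i=\Bar_\kappa$ giving the adjunction $\Cobar_\kappa\Bar_\iota\dashv i$, the unit read off through the bar--cobar adjunction, the filtration whose associated graded is $(\P\circ_\kappa\P^{\antishriek})(A)$ so that Koszulity plus the operadic K\"unneth formula yield the quasi-isomorphism, and the universal property as initiality — is exactly that argument. The only cosmetic remark is that your ``total weight'' filtration coincides with the filtration by the number of $A$-factors in $(\P\circ\P^{\antishriek})(A)$ (total weight of an arity-$n$ tree being $n-1$), which is why it is automatically exhaustive and bounded below independently of any boundedness of $A$, settling the convergence worry you flag at the end.
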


\begin{proof}
	This result is \cite[Thm. 11.4.4, and Prop. 11.4.5 and 11.4.6]{LodayVallette}.
\end{proof}

The functor $\Cobar_\kappa\Bar_\iota$ is called the \emph{rectification functor}\index{Rectification functor!for $\P_\infty$-algebras} for $\P_\infty$-algebras. In fact, it gives an equivalence between the homotopy categories of $\ooPooAlg$ and $\Palg$, where in $\ooPooAlg$ the weak equivalences are the $\infty$-quasi-isomorphism. See \cite[Sect. 11.4]{LodayVallette} for details.

\subsection{Relation between quasi-isomorphisms and \texorpdfstring{$\infty$}{oo}-quasi-isomorphisms}

Quasi-isomorphisms and $\infty$-quasi-isomorphisms are closely related. The following result is of fundamental importance, for example in interpreting certain classical definitions in rational homotopy theory, cf. \cref{chapter:RHT}.

\begin{theorem}\label{thm:oo-qi of algebras is zig-zag of qi}
	Let $A$ and $B$ be two $\P_\infty$-algebras. The following are equivalent.
	\begin{enumerate}
		\item\label{pt:1qi} There is a zig-zag of quasi-isomorphisms of $\P_\infty$-algebras
		\[
		A\stackrel{\sim}{\longleftarrow}\bullet\stackrel{\sim}{\longrightarrow}\bullet\stackrel{\sim}{\longleftarrow}\bullet\cdots\bullet\stackrel{\sim}{\longrightarrow}B\ .
		\]
		\item There are two quasi-isomorphisms of $\P_\infty$-algebras
		\[
		A\stackrel{\sim}{\longleftarrow}\bullet\stackrel{\sim}{\longrightarrow}B\ .
		\]
		\item\label{pt:3qi} There exists an $\infty$-quasi-isomorphism of $\P_\infty$-algebras
		\[
		A\stackrel{\sim}{\rightsquigarrow}B\ .
		\]
	\end{enumerate}
\end{theorem}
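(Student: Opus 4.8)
The plan is to verify the cycle $(2)\Rightarrow(1)\Rightarrow(3)\Rightarrow(2)$. The implication $(2)\Rightarrow(1)$ needs nothing, a span being a zig-zag of length two. For the other two I will repeatedly use that a strict quasi-isomorphism of $\P_\infty$-algebras is in particular an $\infty$-quasi-isomorphism (set $\psi_1$ equal to the map and all higher components to zero), that $\infty$-morphisms compose, and that a composite of $\infty$-quasi-isomorphisms is again one, since the linear components compose and a composite of quasi-isomorphisms of chain complexes is a quasi-isomorphism.

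For $(1)\Rightarrow(3)$ I would first recall that $\infty$-quasi-isomorphisms of $\P_\infty$-algebras are invertible: every $\infty$-quasi-isomorphism $\Psi\colon A\stackrel{\sim}{\rightsquigarrow}B$ admits an $\infty$-quasi-isomorphism $B\stackrel{\sim}{\rightsquigarrow}A$ (over a field this is a standard consequence of the homotopy transfer theorem). Granting this, take the zig-zag of $(1)$, read every edge as an $\infty$-quasi-isomorphism, invert each backward-pointing edge, and compose the whole chain to produce a single $\infty$-quasi-isomorphism $A\stackrel{\sim}{\rightsquigarrow}B$.

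The content is in $(3)\Rightarrow(2)$. Let $\Psi\colon A\stackrel{\sim}{\rightsquigarrow}B$. By \cref{prop:extended bar to oo-morphisms} the extended bar construction turns $\Psi$ into a \emph{strict} morphism of conilpotent $\P^{\antishriek}$-coalgebras $\Bar_\iota\Psi\colon\Bar_\iota A\to\Bar_\iota B$, and I claim it is a quasi-isomorphism. Filter $\Bar_\iota A=(\P^{\antishriek}(A),d_1+d_2)$ by the arity of $\P^{\antishriek}$; the filtration is increasing, bounded below and exhaustive ($\P^{\antishriek}$ being reduced), it is preserved by the differential since $d_1$ preserves the arity (the internal differential of $\P^{\antishriek}$ vanishes) while $d_2$ strictly lowers it, and on the associated graded only the differential induced by $d_A$ survives, so by the operadic Künneth formula \cref{thm:operadic Kunneth} its homology is $\P^{\antishriek}(n)\otimes_{\S_n}H(A)^{\otimes n}$ in arity $n$. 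Since the components of $\Psi$ beyond $\psi_1$ strictly lower the arity, the associated graded of $\Bar_\iota\Psi$ is $\bigoplus_n\P^{\antishriek}(n)\otimes_{\S_n}\psi_1^{\otimes n}$, a quasi-isomorphism because $\psi_1$ is; a filtered comparison (run as an induction over the filtration steps, since the filtration need not be locally finite in a fixed degree) then shows $\Bar_\iota\Psi$ is a quasi-isomorphism. Applying the cobar construction gives a strict morphism of $\P_\infty$-algebras $\Cobar_\iota\Bar_\iota\Psi\colon\Cobar_\iota\Bar_\iota A\to\Cobar_\iota\Bar_\iota B$, which is again a quasi-isomorphism (see the last paragraph); and because $\iota$ is a Koszul morphism (\cref{lemma:universal twisting morphisms are Koszul}) the counits $\epsilon_A\colon\Cobar_\iota\Bar_\iota A\xrightarrow{\ \sim\ }A$ and $\epsilon_B\colon\Cobar_\iota\Bar_\iota B\xrightarrow{\ \sim\ }B$ of the relative bar-cobar adjunction are strict quasi-isomorphisms by \cref{thm:bar-cobar resolution for algebras}. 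Therefore $\epsilon_B\circ\Cobar_\iota\Bar_\iota\Psi$ is a strict quasi-isomorphism, and
\[
A\ \xleftarrow{\ \epsilon_A\ }\ \Cobar_\iota\Bar_\iota A\ \xrightarrow{\ \epsilon_B\circ\Cobar_\iota\Bar_\iota\Psi\ }\ B
\]
is the desired span of strict quasi-isomorphisms of $\P_\infty$-algebras.

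The main obstacle is the step left open above: showing that $\Cobar_\iota$ sends the quasi-isomorphism $\Bar_\iota\Psi$ to a quasi-isomorphism. This does \emph{not} follow from \cref{prop:cobar construction and qis}, whose connectivity hypothesis fails for $\Bar_\iota A=\P^{\antishriek}(A)$ in general; one has to run an additional filtration of $\Cobar_\iota\Bar_\iota A=\P_\infty(\P^{\antishriek}(A))$ — e.g. by the total number of $A$-slots, which is preserved by all the pieces of the differential except the internal part of $\Bar_\iota$ that strictly lowers it — and identify the associated graded with the cobar construction of the cofree coalgebra on $A$, reducing to the case where the coalgebra carries only the internal differential. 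Alternatively one can bypass this entirely by working in the model category of $\P_\infty$-algebras: $\Cobar_\iota\Bar_\iota A$ is quasi-free, hence cofibrant, so the $\infty$-quasi-isomorphism $\Psi\circ\epsilon_A\colon\Cobar_\iota\Bar_\iota A\stackrel{\sim}{\rightsquigarrow}B$ can be rectified to a strict quasi-isomorphism out of $\Cobar_\iota\Bar_\iota A$ by a Whitehead-type argument, which yields the right-hand leg of the span directly. The remaining verifications — the explicit filtrations, the associated-graded computations, and the invertibility of $\infty$-quasi-isomorphisms — are routine.
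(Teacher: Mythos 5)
Your argument is correct in substance and follows the same overall route as the proof the paper defers to (\cite[Thm.~11.4.9]{LodayVallette} plus a rectification through the relative bar--cobar adjunction); since the paper only gives a citation, you are supplying the details. The one genuine divergence is in $(\ref{pt:1qi})\Rightarrow(\ref{pt:3qi})$: the paper handles the backward arrows of the zig-zag by a Whitehead-type argument in the model category (every $\P_\infty$-algebra is fibrant, $\Cobar_\iota\Bar_\iota A$ is cofibrant, so a quasi-isomorphism between bifibrant objects admits a homotopy inverse), mirroring the proof of \cref{prop:alpha-we of coalgebras is zig-zag of we}, whereas you invert each backward arrow as an $\infty$-quasi-isomorphism via the homotopy transfer theorem. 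Your route is legitimate, but be aware that the paper states the existence of inverse $\infty$-quasi-isomorphisms as a \emph{corollary} of the very theorem you are proving; to avoid the appearance of circularity you must justify it independently, which you can do exactly as you indicate (transfer to the homologies and invert the resulting $\infty$-isomorphism; this is \cite[Thm.~10.4.4]{LodayVallette} and uses nothing from the present statement).

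On $(\ref{pt:3qi})\Rightarrow(2)$, your two filtration arguments are sound, and you are right that \cref{prop:cobar construction and qis} cannot be invoked. Note, however, that what your second filtration establishes is precisely \cref{thm:oo-alpha-qi of algebras iff alpha-we of algebras} for $\alpha=\iota$, and the paper's machinery gives it more cheaply: $\Bar_\iota\Psi$ is a cofiltered quasi-isomorphism for the coradical filtrations (its associated graded in weight $n$ is $\P^{\antishriek}(n)\otimes_{\S_n}\psi_1^{\otimes n}$), so $\Cobar_\iota\Bar_\iota\Psi$ is a quasi-isomorphism by \cref{lemma:OmegaOfFQIisQI}. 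By contrast, your ``alternative bypass'' does not work as stated: there is nothing to rectify, because under the bar--cobar adjunction the $\infty$-morphism $\Psi$ \emph{already} corresponds to the strict morphism $\epsilon_B\circ\Cobar_\iota\Bar_\iota\Psi\colon\Cobar_\iota\Bar_\iota A\to B$, and a Whitehead argument presupposes knowing that this map (equivalently $\Cobar_\iota\Bar_\iota\Psi$) is a quasi-isomorphism --- which is the very point at issue. Drop the bypass and keep the filtration (or the citation).
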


\begin{proof}
	The case where $A$ and $B$ are $\P$-algebras is proven in \cite[Thm. 11.4.9]{LodayVallette}. The more general case of $\P_\infty$-algebras is done similarly, but in order to prove $(\ref{pt:1qi})\implies(\ref{pt:3qi})$ one has to use a model categorical argument, cf. the case for coalgebras of \cref{prop:alpha-we of coalgebras is zig-zag of we}.
\end{proof}

An immediate consequence of this result is the following one.

\begin{corollary}
	Let $A$ and $B$ be two $\P_\infty$-algebras. If there is an $\infty$-quasi-isomorphism
	\[
	A\stackrel{\sim}{\rightsquigarrow}B\ ,
	\]
	then there exists an $\infty$-quasi-isomorphism in the other direction
	\[
	B\stackrel{\sim}{\rightsquigarrow}A\ .
	\]
\end{corollary}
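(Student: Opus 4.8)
The plan is to deduce this directly from \cref{thm:oo-qi of algebras is zig-zag of qi}, exploiting the fact that the notion of ``being connected by a zig-zag of quasi-isomorphisms'' is manifestly symmetric. First I would invoke the implication $(\ref{pt:3qi})\implies(\ref{pt:1qi})$ of that theorem: since we are given an $\infty$-quasi-isomorphism $A\stackrel{\sim}{\rightsquigarrow}B$, there exists a zig-zag of quasi-isomorphisms of $\P_\infty$-algebras
\[
A\stackrel{\sim}{\longleftarrow}\bullet\stackrel{\sim}{\longrightarrow}\bullet\stackrel{\sim}{\longleftarrow}\bullet\cdots\bullet\stackrel{\sim}{\longrightarrow}B\ .
\]

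Next I would simply read this zig-zag from right to left. Reversing the order of the intermediate objects and swapping the roles of the leftward- and rightward-pointing arrows produces a zig-zag of quasi-isomorphisms of $\P_\infty$-algebras
\[
B\stackrel{\sim}{\longleftarrow}\bullet\cdots\bullet\stackrel{\sim}{\longrightarrow}\bullet\stackrel{\sim}{\longleftarrow}\bullet\stackrel{\sim}{\longrightarrow}A\ ,
\]
which is again of the form appearing in condition $(\ref{pt:1qi})$ of \cref{thm:oo-qi of algebras is zig-zag of qi} (the first and last arrows may point either way, but one can always prepend or append an identity quasi-isomorphism to put it in the exact shape of that statement if desired).

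Finally I would apply the converse implication $(\ref{pt:1qi})\implies(\ref{pt:3qi})$ of the same theorem to this reversed zig-zag, obtaining an $\infty$-quasi-isomorphism $B\stackrel{\sim}{\rightsquigarrow}A$, which is what we wanted. There is no real obstacle here: the entire content is packaged in \cref{thm:oo-qi of algebras is zig-zag of qi}, and the only thing one uses beyond it is the trivial observation that a zig-zag can be traversed in the opposite direction. The statement is therefore a formal corollary, as announced.
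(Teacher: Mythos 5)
Your proposal is correct and is exactly the argument the paper has in mind: the corollary is stated there as an immediate consequence of \cref{thm:oo-qi of algebras is zig-zag of qi}, obtained by converting the $\infty$-quasi-isomorphism to a zig-zag, reversing it, and converting back. Nothing to add.
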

	
	\chapter{Model categories}\label{ch:model categories}

Model categories were introduced by Quillen in \cite{QuillenMC} in order to do ``non-linear homological algebra'', also known as homotopical algebra. They give a generalized framework in which to study homotopy theory in some category, which consists into taking a category and formally inverting some morphisms that one would like to consider as equivalences. Two motivating examples are:
\begin{enumerate}
	\item One considers the category of topological spaces and wants to study their homotopy groups. Therefore, one wants to formally invert continuous maps that induce isomorphisms on all homotopy groups, so that the isomorphisms classes of the new category correspond to the existing homotopy types of topological spaces.
	\item One considers the category of chain complexes and wants to study their homology. Therefore, one wants to invert quasi-isomorphisms, i.e. the chain maps inducing isomorphisms in homology. Trying to do this, one essentially recovers classical homological algebra.
\end{enumerate}
In this chapter, we will give a fast introduction to model categories and the concepts surrounding them, and then give examples, some for motivation, and some because we will need them later on. Our main references are the books \cite{Hovey}, and \cite[Ch. II]{GoerssJardine}, but the reader should be aware that there are many good items on this subject in the literature, such as the already mentioned seminal work \cite{QuillenMC}, and \cite{ds95}. One should also mention that model categories have been somewhat superseded by $\infty$-categories\footnote{More precisely, $(\infty,1)$-categories, of which one model are quasicategories.} in the recent years, although model categories remain an important tool for the homotopy theorists. The topic is outside the scope of this work, but the interested reader should have no problems finding references on the subject, e.g. starting with \cite{Lurie}.

\section{Model categories}

We begin by giving the basic definitions of model categories and explaining the notion of a homotopy between morphisms in a model category.

\subsection{Definitions}

Without further ado, we give the definition of a model category.

\begin{definition}
	A \emph{model category}\index{Model category}\index{Fibrations}\index{Cofibrations}\index{Weak equivalences}\footnote{Sometimes, this is called a \emph{closed} model category.} is a category $\cat$, together with three classes of arrows $W,F$ and $C$ --- called respectively \emph{weak equivalences}\index{Weak equivalences}, \emph{fibrations}\index{Fibrations}, and \emph{cofibrations}\index{Cofibrations} --- satisfying the following properties.
	\begin{enumerate}[label = \textbf{M\arabic*}]
		\item The category $\cat$ has all finite limits and colimits.
		\item\label{axiom:2-out-of-3} Given two arrows $f,g$ in $\cat$ such that the target of $f$ is the domain of $g$, if any two of $f,g$ and $gf$ are weak equivalences, then so is the third. We say that $W$ satisfies the \emph{$2$-out-of-$3$ property}\index{$2$-out-of-$3$ property}.
		\item The classes of weak equivalences, fibrations, and cofibrations are all closed under retracts.
		\item\label{axiom:lifting properties} Suppose we are given the following solid arrow diagram
		\begin{center}
			\begin{tikzpicture}
				\node (a) at (0,0){};
				\node (b) at (2,0){};
				\node (c) at (0,-2){};
				\node (d) at (2,-2){};
				
				\draw[->] (a) to (b);
				\draw[>->] (a) to node[left]{$i$} (c);
				\draw[->>] (b) to node[right]{$p$} (d);
				\draw[->] (c) to (d);
				
				\draw[->,dashed] (c) to (b);
			\end{tikzpicture}
		\end{center}
		where $i$ is a cofibration, and $p$ is a fibration. If either one of $i$ or $p$ is also a weak equivalence, then there exist a diagonal filler (the dashed arrow in the diagram).
		\item\label{axiom:factorization} Every arrow $f$ in $\cat$ can be factored both as $f = pi$ with $i$ a cofibration which is also a weak equivalence and $p$ a fibration, and as $f=qj$ with $j$ a cofibration, and $q$ a fibration that is also a weak equivalence.
	\end{enumerate}
	We often abuse of notation and just talk of the model category $\cat$ when no confusion is possible about the three classes of maps.
\end{definition}

\begin{remark}
	There are many slight variations on the definition of a model category in the literature. The definition given above is the one of \cite{QuillenMC}, while for example \cite{Hovey} requires that all limits and colimits exist in $\cat$, and  moreover requires that the factorizations of (\ref{axiom:factorization}) be functorial. As a rule of thumb, essentially all results that can be proven with one version of the definition of a model category hold for all other sensible versions of the definition, with at most minor changes if needed.
\end{remark}

We will often emphasize the fact that an arrow is a cofibration by adding a tail to it, that it is a fibration by giving it a double head, and that it is a weak equivalence by writing a $\sim$ next to it.

\medskip

It is usual to call \emph{trivial fibrations}\index{Trivial!fibrations}, respectively \emph{trivial cofibrations}\index{Trivial!cofibrations}, the fibration, resp. cofibrations, that are also weak equivalences. One often rephrases the axiom (\ref{axiom:lifting properties}) by saying that trivial cofibrations have the \emph{left lifting property}\index{Left!lifting property} with respect to fibrations, or dually, that fibrations have the \emph{right lifting property}\index{Right!lifting property} with respect to trivial cofibrations, and the same for the relation between cofibrations and trivial fibrations.

\medskip

Given a model category, one can characterize the fibrations and cofibrations.

\begin{lemma}
	Let $\cat$ be a model category.
	\begin{enumerate}
		\item A morphism in $\cat$ is a fibration if, and only if it has the right lifting property with respect to all trivial cofibrations.
		\item A morphism in $\cat$ is a trivial fibration if, and only if it has the right lifting property with respect to all cofibrations.
		\item A morphism in $\cat$ is a cofibration if, and only if it has the left lifting property with respect to all trivial fibrations.
		\item A morphism in $\cat$ is a trivial cofibration if, and only if it has the left lifting property with respect to all fibrations.
	\end{enumerate}
\end{lemma}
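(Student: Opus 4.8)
The plan is to deduce all four statements from the \emph{retract argument}, observing first that the ``only if'' direction of each is immediate: if $p$ is a fibration (resp. trivial fibration) then axiom \ref{axiom:lifting properties} says exactly that $p$ has the right lifting property against every trivial cofibration (resp. every cofibration), and dually for (3) and (4). So the work is entirely in the converse implications, and since (3) and (4) are the formal duals of (2) and (1), it suffices to treat (1) and (2).

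For (1), suppose $p\colon X\to Y$ has the right lifting property with respect to all trivial cofibrations. Using the first factorization of axiom \ref{axiom:factorization}, I would write $p = qj$ with $j\colon X\to Z$ a trivial cofibration and $q\colon Z\to Y$ a fibration. The square with top edge $\id_X$, left edge $j$, right edge $p$ and bottom edge $q$ commutes (this is precisely $p = qj$), and since $j$ is a trivial cofibration the hypothesis produces a diagonal filler $r\colon Z\to X$ with $rj = \id_X$ and $pr = q$. These two identities say exactly that $p$ is a retract of $q$ in the arrow category: the retract diagram has rows $X\xrightarrow{j}Z\xrightarrow{r}X$ and $Y\xrightarrow{\id}Y\xrightarrow{\id}Y$ with vertical maps $p,q,p$. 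Since fibrations are closed under retracts (axiom \textbf{M3}), $p$ is a fibration. For (2) one runs the same argument with ``cofibration'' in place of ``trivial cofibration'' and the \emph{second} factorization $p = qj$, where now $j$ is a cofibration and $q$ a trivial fibration; the lift exists because $p$ has the right lifting property against the cofibration $j$, and applying closure under retracts \emph{both} to the class of fibrations and to the class of weak equivalences shows that $p$, being a retract of the trivial fibration $q$, is itself a trivial fibration.

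Statements (3) and (4) are obtained by reversing all arrows: given $i$ with the left lifting property against all trivial fibrations (resp. all fibrations), factor $i = qj$ using the second (resp. first) factorization axiom, lift $i$ against the trivial fibration (resp. fibration) $q$ to exhibit $i$ as a retract of the cofibration (resp. trivial cofibration) $j$, and conclude via axiom \textbf{M3}. The argument is purely formal and uses no constructions beyond the model category axioms; the only thing that requires a moment's attention is bookkeeping, namely checking in each of the four cases that the square one writes down has the correct type of map on each side so that either axiom \ref{axiom:lifting properties} or the standing hypothesis supplies the lift, and recording the triangle identities $rj=\id$, $pr=q$ (resp. their duals) accurately so that the resulting retract diagram genuinely commutes.
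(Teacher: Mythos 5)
Your proof is correct: it is the standard retract argument, which is exactly the proof the paper implicitly relies on (the lemma is stated there without proof, being a classical fact found e.g. in Hovey or Dwyer--Spinka). The bookkeeping in your retract diagrams is accurate, and the reduction of (3)--(4) to (1)--(2) by duality is legitimate given the paper's duality proposition for opposite model categories.
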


\begin{corollary}
	Let $\cat$ be a model category.
	\begin{enumerate}
		\item Cofibrations and trivial cofibrations are closed under pushouts and compositions, and all isomorphisms are cofibrations.
		\item Fibrations and trivial fibrations are closed under pullbacks and compositions, and all isomorphisms are fibrations.
	\end{enumerate}
\end{corollary}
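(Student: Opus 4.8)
The plan is to deduce both statements from the characterizations of (trivial) (co)fibrations via lifting properties established in the preceding lemma, together with the elementary fact that a class of morphisms defined by a left lifting property against a fixed class of morphisms is automatically closed under composition and pushout and contains every isomorphism (and, dually, a class defined by a right lifting property is closed under composition and pullback and contains every isomorphism).

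First I would isolate the abstract statement. Let $\mathcal{R}$ be any class of morphisms of $\cat$ and let $\mathcal{L}$ be the class of all morphisms having the left lifting property with respect to every member of $\mathcal{R}$. For closure under composition, given $f\colon A\to B$ and $g\colon B\to C$ in $\mathcal{L}$ and a lifting problem of $gf$ against some $p\in\mathcal{R}$, one first solves the lifting problem of $f$ against $p$ (with top datum the given map out of $A$ and bottom datum the given map restricted along $f$), obtaining a diagonal out of $B$; feeding this as the new top datum into the lifting problem of $g$ against $p$ produces a diagonal out of $C$ solving the original problem. For closure under pushout, suppose $f\in\mathcal{L}$ and $f'\colon A'\to B'$ is the pushout of $f\colon A\to B$ along a map $A\to A'$; a lifting problem of $f'$ against $p\in\mathcal{R}$ restricts, by precomposition with $A\to A'$ and $B\to B'$, to a lifting problem of $f$ against $p$, whose solution together with the given map $A'\to\bullet$ induces, by the universal property of the pushout, a diagonal $B'\to\bullet$, and one checks that both triangles commute again by the pushout universal property. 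Finally every isomorphism $u$ lies in $\mathcal{L}$: given a square with $u$ on the left and any $p$ on the right, the composite of $u^{-1}$ with the given map out of the source is a diagonal filler.

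The corollary then follows at once. For item (1), the preceding lemma identifies the cofibrations with the morphisms having the left lifting property against all trivial fibrations, and the trivial cofibrations with those having the left lifting property against all fibrations; applying the abstract statement to these two choices of $\mathcal{R}$ gives closure under composition and pushout and that all isomorphisms are cofibrations (and trivial cofibrations, although only the former is asserted). Item (2) is the formal dual: fibrations are the morphisms with the right lifting property against all trivial cofibrations and trivial fibrations those with the right lifting property against all cofibrations, so one invokes the arrow-reversed version of the abstract statement, proved by the same diagram chases with pullbacks in place of pushouts.

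There is essentially no hard part here; the only points requiring a little care are that the pushouts and pullbacks involved exist, which is guaranteed by the axiom requiring all finite limits and colimits, and that in the pushout (resp. pullback) case one genuinely uses the universal property, not merely the commuting square, to verify that both triangles of the induced lift commute. Accordingly I would present the composition and isomorphism cases in a sentence each, give one short diagram chase for the pushout case, and then invoke duality for item (2).
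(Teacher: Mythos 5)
Your proposal is correct and follows exactly the route the paper intends: the corollary is stated immediately after the lemma characterizing (trivial) (co)fibrations by lifting properties, and is meant to follow from the standard fact that classes defined by a left (resp.\ right) lifting property are closed under pushout (resp.\ pullback), composition, and contain all isomorphisms. The paper omits the proof entirely, so your write-up supplies precisely the expected argument.
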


One can also say something about weak equivalences.

\begin{lemma}
	Let $\cat$ be a model category. All isomorphisms are weak equivalences.
\end{lemma}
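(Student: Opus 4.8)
The statement to prove is: in a model category $\cat$, all isomorphisms are weak equivalences.

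\bigskip

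The plan is to deduce this from the axioms together with the characterizations already established. The cleanest route is via the lifting-property characterization of weak equivalences combined with the \emph{2-out-of-3} property \ref{axiom:2-out-of-3} and the factorization axiom \ref{axiom:factorization}.

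First I would recall from the preceding \cref{corollary} (or directly from the lifting lemma) that every isomorphism is both a fibration and a cofibration: an isomorphism has both lifting properties against everything, since given any square with an isomorphism on one side one simply composes with its inverse to produce the diagonal filler. So it remains to see that an isomorphism is a weak equivalence. Let $f\colon X\to Y$ be an isomorphism. By the factorization axiom \ref{axiom:factorization}, write the identity $1_X\colon X\to X$ as $1_X = p\,i$ where $i\colon X\to Z$ is a cofibration and $p\colon Z\to X$ is a trivial fibration; in particular $p$ is a weak equivalence. Now consider the solid square with $i$ on the left, $p$ on the right, $1_X$ along the top (from $X$ to $Z$ via\ldots) — more precisely: since $p\,i = 1_X$, the square with top arrow $i\colon X\to Z$, left arrow $1_X\colon X\to X$, bottom arrow $1_X\colon X\to X$, right arrow $p\colon Z\to X$ commutes, but that is not yet the lifting situation I want. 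Instead, the standard ``retract argument'' does it: the square
\[
\begin{tikzpicture}[baseline=(current bounding box.center)]
	\node (a) at (0,0){$X$};
	\node (b) at (2.5,0){$Z$};
	\node (c) at (0,-1.8){$X$};
	\node (d) at (2.5,-1.8){$X$};
	\draw[->] (a) to node[above]{$i$} (b);
	\draw[->] (a) to node[left]{$1_X$} (c);
	\draw[->] (b) to node[right]{$p$} (d);
	\draw[->] (c) to node[below]{$1_X$} (d);
\end{tikzpicture}
\]
commutes, and since $i$ is a trivial cofibration (it is a cofibration, and by \ref{axiom:2-out-of-3} applied to $1_X = p\,i$ with $1_X$ and $p$ weak equivalences, $i$ is a weak equivalence) while the right-hand map is a fibration, axiom \ref{axiom:lifting properties} yields a lift $r\colon X\to Z$ with $r\,1_X = i$ and $p\,r = 1_X$, exhibiting $X$ as a retract of $Z$ with retraction data making $1_X$ a retract of $p$. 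Then the retract-closure axiom forces $1_X$ to be a weak equivalence, since $p$ is.

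Actually the argument is shorter still: it suffices to show $1_X$ is a weak equivalence for every object $X$, and then apply \ref{axiom:2-out-of-3} to the triangle $f = 1_Y\circ f$ — wait, that presupposes $f$ is a weak equivalence. The correct conclusion is: once $1_X$ is known to be a weak equivalence for all $X$, and given an isomorphism $f\colon X\to Y$ with inverse $f^{-1}$, apply \ref{axiom:2-out-of-3} to the composable pair $f, f^{-1}$: we have $f^{-1} f = 1_X$ is a weak equivalence, and similarly $f f^{-1} = 1_Y$ is a weak equivalence; from $f^{-1}f = 1_X \in W$ and $ff^{-1} = 1_Y \in W$ one concludes $f\in W$. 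Indeed, in the triangle with legs $f$ and $f^{-1}$ and composite $1_X$, two of $\{f, f^{-1}, 1_X\}$ being weak equivalences (namely $1_X$ and, from the other triangle, $f^{-1}$) forces the third, $f$, to be one too. So the whole proof reduces to the single lemma: \emph{$1_X$ is a weak equivalence}, which is exactly the retract argument above.

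The main obstacle — really the only subtlety — is that one must not assume isomorphisms are weak equivalences while proving it; in particular one cannot just say ``$1_X$ is obviously a weak equivalence.'' The retract argument via \ref{axiom:factorization}, \ref{axiom:lifting properties}, and retract-closure is the standard device that circumvents this, and I would make sure to present it carefully. Everything else is a routine application of \ref{axiom:2-out-of-3}.
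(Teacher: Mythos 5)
Your overall strategy (exhibit the map as a retract of a trivial fibration produced by the factorization axiom, then invoke retract-closure) is the right one, but as written the argument contains two genuine logical errors. First, in proving that $1_X$ is a weak equivalence you justify that $i$ is a \emph{trivial} cofibration by applying the 2-out-of-3 property to $1_X = p\,i$ ``with $1_X$ and $p$ weak equivalences'' --- but $1_X$ being a weak equivalence is precisely what you are trying to prove, so this is circular. Fortunately this step is also unnecessary: the lift you extract satisfies $r = r\circ 1_X = i$, so it is just $i$ itself, and the identity $p\,i = 1_X$ already exhibits $1_X$ as a retract of $p$ in the arrow category via the morphisms $(i,1_X)\colon 1_X\to p$ and $(p,1_X)\colon p\to 1_X$. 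Deleting the lifting step entirely repairs this part of the proof.

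The second error is the serious one and is not repairable by the same reasoning. Knowing that $1_X$ and $1_Y$ are weak equivalences does \emph{not} let you conclude via 2-out-of-3 that an isomorphism $f\colon X\to Y$ is one: from the triangle $f^{-1}f = 1_X$ you know only one of the three maps lies in $W$, and your claim that $f^{-1}\in W$ follows ``from the other triangle'' $ff^{-1}=1_Y$ is unjustified for exactly the same reason --- there too you know only that the composite $1_Y$ is in $W$. (This implication genuinely requires the stronger 2-out-of-6 property, which is not among the axioms.) The fix is to run your retract argument directly on $f$ rather than on identities: factor $f = p\,i$ with $i$ a cofibration and $p$ a trivial fibration; then $(i,1_Y)\colon f\to p$ and $(f^{-1}p,\,1_Y)\colon p\to f$ compose to the identity of $f$, since $f^{-1}p\,i = f^{-1}f = 1_X$, so $f$ is a retract of $p$ and hence a weak equivalence. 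Alternatively, and even more quickly given the lemma stated just before this one in the text: an isomorphism has the right lifting property against every cofibration (compose the bottom map with $f^{-1}$ to produce the filler), hence is a trivial fibration, hence a weak equivalence.
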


There are two sets of objects that play special roles in model categories.

\begin{definition}
	Let $\cat$ be a model category.
	\begin{enumerate}
		\item An object $X\in\cat$ is \emph{cofibrant}\index{Cofibrant!object} if the unique map
		\[
		\emptyset\longrightarrow X
		\]
		from the initial object to $X$ is a cofibration.
		\item An object $X\in\cat$ is \emph{fibrant}\index{Fibrant!object} if the unique map
		\[
		X\longrightarrow *
		\]
		from $X$ to the final object is a fibration.
	\end{enumerate}
	We call an object \emph{bifibrant}\index{Bifibrant object} if it is both fibrant and cofibrant.
\end{definition}

Notice that using (\ref{axiom:factorization}), for any object $X\in\cat$ one can find another object $\widetilde{X}$ which is cofibrant and weakly equivalent to $X$, or fibrant and weakly equivalent to $X$. These new objects are called \emph{cofibrant}, resp. \emph{fibrant}, \emph{replacements}\index{Cofibrant!replacement}\index{Fibrant!replacement} of $X$.

\medskip

A very useful result about model categories is Ken Brown's lemma.

\begin{lemma}[Ken Brown's lemma]\index{Ken Brown's lemma}\label{lemma:Ken Brown}
	Let $\cat$ be a model category, and let $\cat'$ be a category with a class of weak equivalences which satisfies (\ref{axiom:2-out-of-3}). Let $F:\cat\to\cat'$ be a functor.
	\begin{enumerate}
		\item If $F$ takes trivial cofibrations between cofibrant objects to weak equivalences, then $F$ takes all weak equivalences between cofibrant objects to weak equivalences.
		\item Dually, if $F$ takes trivial fibrations between fibrant objects to weak equivalences, then $F$ takes all weak equivalences between fibrant objects to weak equivalences.
	\end{enumerate}
\end{lemma}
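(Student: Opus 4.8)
The plan is to reduce Ken Brown's lemma to a standard factorization-and-lifting argument, proving part (1) (the statement for cofibrant objects) and noting that part (2) follows by the dual argument in $\cat^{\mathrm{op}}$. So let $f\colon X\to Y$ be a weak equivalence between cofibrant objects, and suppose $F$ sends trivial cofibrations between cofibrant objects to weak equivalences. The key trick is to replace $f$ by a cofibration without leaving the class of cofibrant objects: form the coproduct $X\sqcup Y$ (which exists by \ref{axiom:factorization}'s ambient completeness, and is cofibrant since cofibrations are closed under pushouts, so both coprojections are cofibrations). The map $(f,\id_Y)\colon X\sqcup Y\to Y$ can be factored, using \ref{axiom:factorization}, as $X\sqcup Y \stackrel{j}{\rightarrowtail} Z \stackrel{q}{\twoheadrightarrow} Y$ with $j$ a cofibration and $q$ a trivial fibration.

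The next step is bookkeeping on the two coprojections. Write $i_X\colon X\to X\sqcup Y$ and $i_Y\colon Y\to X\sqcup Y$ for the structure maps, and set $u\coloneqq j\,i_X\colon X\to Z$ and $v\coloneqq j\,i_Y\colon Y\to Z$. Then $u$ and $v$ are both cofibrations (composites of cofibrations), so $Z$ is cofibrant and $X,Y,Z$ are all cofibrant. Moreover $q\,u = f$ and $q\,v = \id_Y$. Since $q$ is a weak equivalence and $q\,v=\id_Y$ is a weak equivalence, the $2$-out-of-$3$ property (\ref{axiom:2-out-of-3}) shows $v$ is a weak equivalence; being also a cofibration, $v$ is a trivial cofibration between cofibrant objects, so $F(v)$ is a weak equivalence by hypothesis. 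Likewise $q\,u=f$ is a weak equivalence and $q$ is a weak equivalence, so by $2$-out-of-$3$ again $u$ is a weak equivalence; being a cofibration between cofibrant objects it too is a trivial cofibration, hence $F(u)$ is a weak equivalence.

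Finally, apply $F$ to the relations. From $q\,u=f$ and $q\,v=\id_Y$ we get $F(q)F(u)=F(f)$ and $F(q)F(v)=F(\id_Y)=\id_{F(Y)}$. The second equation shows $F(q)$ has a section, and since $F(v)$ is a weak equivalence in $\cat'$, the $2$-out-of-$3$ property in $\cat'$ (which holds by assumption on $\cat'$) applied to $F(q)F(v)=\id_{F(Y)}$ forces $F(q)$ to be a weak equivalence. Then $F(f)=F(q)F(u)$ is a composite of two weak equivalences in $\cat'$, hence a weak equivalence, again by $2$-out-of-$3$. This proves (1). For (2), one runs the same argument in the opposite category: $\cat^{\mathrm{op}}$ is a model category with fibrations and cofibrations interchanged, fibrant objects of $\cat$ become cofibrant objects of $\cat^{\mathrm{op}}$, and $F\colon\cat\to\cat'$ becomes a functor $\cat^{\mathrm{op}}\to(\cat')^{\mathrm{op}}$ satisfying the hypothesis of (1), so the conclusion of (1) in $\cat^{\mathrm{op}}$ is exactly the conclusion of (2).

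I do not anticipate a genuine obstacle here; the only mildly delicate point is making sure the factorization is taken of the \emph{codiagonal-type} map $(f,\id_Y)$ rather than of $f$ itself — this is precisely what guarantees that the retraction $q$ admits the section $v$ and that both $u,v$ land among cofibrant objects, which is what lets the hypothesis on $F$ be applied. Everything else is a direct application of \ref{axiom:2-out-of-3}, \ref{axiom:factorization}, and the stability of cofibrations under pushout recorded in the corollary above.
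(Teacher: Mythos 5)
Your proof is correct and is the standard argument for Ken Brown's lemma (it is essentially the proof of Lemma 1.1.12 in Hovey's book, which is the paper's stated reference for this chapter); the paper itself states the lemma without proof, so there is nothing different to compare against. One cosmetic remark: the coproduct $X\sqcup Y$ exists by the axiom requiring finite limits and colimits, not by (\ref{axiom:factorization}); everything else — the factorization of $(f,\id_Y)$, the identification of $u$ and $v$ as trivial cofibrations between cofibrant objects via (\ref{axiom:2-out-of-3}), and the final two applications of $2$-out-of-$3$ in $\cat'$ — is exactly right.
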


\subsection{Duality}

Fibrations and cofibrations of a model category play dual roles, in a way which is made precise by the following result.

\begin{proposition}
	Suppose that $\cat$ is a model category. Then the opposite category $\cat^\mathrm{op}$ is also a model category with the same weak equivalences, the fibrations of $\cat$ as cofibrations, and the cofibrations of $\cat$ as fibrations.
\end{proposition}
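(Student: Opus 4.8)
The plan is to verify each of the model-category axioms for $\cat^{\mathrm{op}}$ by transporting it, through the arrow-reversing duality, to the corresponding axiom that already holds in $\cat$. First I would invoke the standard categorical fact that $\cat^{\mathrm{op}}$ has all finite limits and colimits whenever $\cat$ does, since a finite limit in $\cat^{\mathrm{op}}$ is exactly a finite colimit in $\cat$ and vice versa; this handles axiom \textbf{M1}. The $2$-out-of-$3$ axiom is immediate: a composite $gf$ in $\cat^{\mathrm{op}}$ is the composite $fg$ in $\cat$, and since the weak equivalences of $\cat^{\mathrm{op}}$ are by definition the same class $W$, the condition is symmetric under this relabeling. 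For the retract axiom I would note that reversing every arrow turns a retract diagram in $\cat^{\mathrm{op}}$ into one in $\cat$ and conversely, so that $W$, the cofibrations of $\cat^{\mathrm{op}}$ (which are the fibrations of $\cat$), and the fibrations of $\cat^{\mathrm{op}}$ (the cofibrations of $\cat$) inherit closure under retracts from the corresponding classes in $\cat$.

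The one step that calls for a moment's care is the lifting axiom. A lifting problem in $\cat^{\mathrm{op}}$ is a commuting square whose left edge $i$ is a cofibration of $\cat^{\mathrm{op}}$, hence a fibration of $\cat$, and whose right edge $p$ is a fibration of $\cat^{\mathrm{op}}$, hence a cofibration of $\cat$, with at least one of $i,p$ a weak equivalence. Reversing all arrows produces a commuting square in $\cat$ in which the roles of the two vertical edges are interchanged: the left edge becomes the cofibration $p^{\mathrm{op}}$ and the right edge the fibration $i^{\mathrm{op}}$, still with one of them a weak equivalence. The lifting axiom in $\cat$ then yields a diagonal filler, and reversing arrows once more returns precisely the filler required in $\cat^{\mathrm{op}}$. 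Finally, the factorization axiom for $\cat^{\mathrm{op}}$ follows from that for $\cat$: reversing the arrows of a factorization $f = pi$ in $\cat$, with $i$ a cofibration (resp.\ trivial cofibration) and $p$ a trivial fibration (resp.\ fibration), gives a factorization of $f^{\mathrm{op}}$ in $\cat^{\mathrm{op}}$ with the two conditions exchanged; since the axiom demands both types of factorization, the two halves of it simply trade places.

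The main obstacle here is not a deep one: it is purely the bookkeeping in the lifting axiom, namely keeping straight which vertical edge of the reversed square is to carry the left-lifting property and which the right-lifting property, together with the parallel care needed to match up the two factorizations. Once the self-duality of the defining data is in place --- same weak equivalences, cofibrations and fibrations swapped, limits and colimits swapped, composition reversed --- every axiom is a formal consequence of its counterpart in $\cat$, and I would organize the write-up simply as a clause-by-clause check along these lines.
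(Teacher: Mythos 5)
Your proof is correct and is precisely the standard duality argument; the paper states this proposition without proof, treating it as a routine fact, and your clause-by-clause check (same weak equivalences, fibrations and cofibrations swapped, limits and colimits swapped, lifting squares and factorizations dualized) is exactly the argument one would supply.
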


Therefore, it is usually only necessary to prove a result for fibrations in order to have a dual result for cofibrations, and \emph{vice versa}.

\subsection{Path objects, cylinder objects, and homotopies}

Let $\cat$ category with finite limits and colimits. If $X\in\cat$ is an object, then the identity induces two canonical maps. The \emph{fold map}\index{Fold map}
\[
\nabla:X\sqcup X\longrightarrow X\ ,
\]
and the \emph{diagonal map}\index{Diagonal map}
\[
\Delta:X\longrightarrow X\times X\ .
\]
Using these maps, one can define two notions of homotopy between morphisms in a model category.

\begin{definition}
	Let $\cat$ be a model category, let $X,Y\in\cat$ be two objects, and let $f,g:X\to Y$ be two morphisms.
	\begin{enumerate}
		\item A \emph{cylinder object}\index{Cylinder object} for $X$ is a commutative diagram
		\begin{center}
			\begin{tikzpicture}
				\node (a) at (0,0){$X\sqcup X$};
				\node (b) at (3,0){$\Cyl(X)$};
				\node (c) at (6,0){$X$};
				
				\draw[>->] (a) to node[below]{$i$} (b);
				\draw[->] (b) to node[below]{$w$} node[label={[label distance=-.2cm]90:$\sim$}]{} (c);
				\draw[->] (a) to[out=20,in=160] node[above]{$\nabla$} (c);
			\end{tikzpicture}
		\end{center}
		with $i$ a cofibration, and $w$ a weak equivalence. We often abuse of notation and speak of the cylinder object $\Cyl(X)$.
		\item The two morphisms $f$ and $g$ are \emph{left homotopic}\index{Left!homotopy} if there exists a cylinder object $\Cyl(X)$ for $X$ and a morphism $H:\Cyl(X)\to Y$, called a \emph{left homotopy} between $f$ and $g$, such that
		\begin{center}
			\begin{tikzpicture}
			\node (a) at (0,0){$X\sqcup X$};
			\node (b) at (3,0){$\Cyl(X)$};
			\node (c) at (6,0){$Y$};
			
			\draw[>->] (a) to node[below]{$i$} (b);
			\draw[->] (b) to node[below]{$H$} (c);
			\draw[->] (a) to[out=20,in=160] node[above]{$f\sqcup g$} (c);
			\end{tikzpicture}
		\end{center}
		We write $f\sim_\ell g$.
		\item A \emph{path object}\index{Path object} for $Y$ is a commutative diagram
		\begin{center}
			\begin{tikzpicture}
			\node (a) at (0,0){$Y$};
			\node (b) at (3,0){$\Path(Y)$};
			\node (c) at (6,0){$Y\times Y$};
			
			\draw[->] (a) to node[below]{$w$} node[label={[label distance=-.2cm]90:$\sim$}]{} (b);
			\draw[->>] (b) to node[below]{$p$} (c);
			\draw[->] (a) to[out=20,in=160] node[above]{$\Delta$} (c);
			\end{tikzpicture}
		\end{center}
		with $p$ a fibration, and $w$ a weak equivalence. We often abuse of notation and speak of the path object $\Path(X)$.
		\item The two morphisms $f$ and $g$ are \emph{right homotopic}\index{Right!homotopy} if there exists a path object $\Path(Y)$ for $Y$ and a morphism $H:X\to\Path(Y)$, called a \emph{right homotopy} between $f$ and $g$, such that
		\begin{center}
			\begin{tikzpicture}
			\node (a) at (0,0){$X$};
			\node (b) at (3,0){$\Path(Y)$};
			\node (c) at (6,0){$Y\times Y$};
			
			\draw[>->] (a) to node[below]{$H$} (b);
			\draw[->] (b) to node[below]{$p$} (c);
			\draw[->] (a) to[out=20,in=160] node[above]{$f\times g$} (c);
			\end{tikzpicture}
		\end{center}
		We write $f\sim_r g$.
	\end{enumerate}
\end{definition}

\begin{remark}\label{remark:cylinders and paths in topological spaces}
	The notions of cylinder and path objects are inspired by topology. Indeed, there is a model structure on the category of topological spaces for which a choice of cylinder and path objects for a space $X$ are the usual ones, namely $X\times I$ gives a cylinder object, and $X^I$ a path object, cf. \cref{subsect:model structure on topological spaces}. This way, one recovers the usual notions of homotopy between continuous maps.
\end{remark}

The following result tells us that the notions of left and right homotopy given above are well defined and independent from the choice of cylinder, resp. path object.

\begin{lemma}
	Let $\cat$ be a model category, and let $X\in\cat$ be an object.
	\begin{enumerate}
		\item A cylinder object for $X$ always exists. Any two cylinder objects for $X$ are weakly equivalent. If two morphisms are left homotopic with respect to a cylinder object, then they are for all cylinder objects.
		\item A path object for $X$ always exists. Any two path objects for $X$ are weakly equivalent. If two morphisms are right homotopic for a path object, then they are for all path objects.
	\end{enumerate}
\end{lemma}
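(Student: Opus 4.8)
The plan is to prove the two parts of the lemma in parallel via duality, so that it suffices to treat cylinder objects and left homotopies: the path object and right homotopy statements follow by applying the result to the opposite model category $\cat^{\mathrm{op}}$, using that passing to $\cat^{\mathrm{op}}$ fixes the weak equivalences while interchanging fibrations with cofibrations, and correspondingly interchanges $\Cyl$ with $\Path$, $\sqcup$ with $\times$, and $\nabla$ with $\Delta$. For existence, I would apply the factorization axiom (\ref{axiom:factorization}) to the fold map $\nabla\colon X\sqcup X\to X$, factoring it as $X\sqcup X\xrightarrow{i}\Cyl(X)\xrightarrow{w}X$ with $i$ a cofibration and $w$ a trivial fibration; in particular $w$ is a weak equivalence, so this is a cylinder object, and moreover $w$ can always be taken to be a fibration --- call such a cylinder object \emph{good}.

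For the claim that any two cylinder objects are weakly equivalent, note that if $(i,w)$ and $(i',w')$ present cylinder objects $\Cyl(X)$ and $\Cyl'(X)$, then $\Cyl(X)\xrightarrow{w}X\xleftarrow{w'}\Cyl'(X)$ is a zig-zag of weak equivalences, which already gives the assertion; no further argument is needed here.

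The real content is the last statement, independence of the left-homotopy relation from the chosen cylinder, and this is where I expect the main difficulty. The strategy is the usual two-step reduction to good cylinder objects. First, one shows that a left homotopy $H\colon\Cyl(X)\to Y$ through an arbitrary cylinder object can be replaced by one through a good cylinder object: factor the structure map $w\colon\Cyl(X)\to X$ as $\Cyl(X)\xrightarrow{j}\Cyl''(X)\xrightarrow{q}X$ with $j$ a trivial cofibration and $q$ a fibration, observe that $ji$ is a cofibration and, by the $2$-out-of-$3$ property (\ref{axiom:2-out-of-3}), that $q$ is a weak equivalence, so $\Cyl''(X)$ is good, and then transport $H$ along $j$ using the lifting axiom (\ref{axiom:lifting properties}) to a homotopy $H''\colon\Cyl''(X)\to Y$. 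Second, given two good cylinder objects each carrying a left homotopy between $f$ and $g$, one compares them by forming the pushout $\Cyl(X)\sqcup_{X\sqcup X}\Cyl'(X)$, factoring its structure map to $X$, and repeatedly invoking (\ref{axiom:lifting properties}) against the (trivial) cofibrations and fibrations produced along the way to assemble a single homotopy restricting to the two given ones; concatenating homotopies through good cylinder objects then shows that homotopy through one cylinder object forces homotopy through any other.

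The step I expect to be genuinely delicate is this last comparison: making the required liftings exist forces one to keep precise track of which of the maps $i$, $j$, $q$ and their composites and pushouts are (trivial) cofibrations versus (trivial) fibrations, and to arrange the commutative squares so that exactly one leg is a weak equivalence. Everything else --- existence, goodness, and the weak-equivalence comparison --- is a straightforward application of the factorization and $2$-out-of-$3$ axioms, and the dual statements for $\Path$ and $\sim_r$ require no separate work beyond the passage to $\cat^{\mathrm{op}}$.
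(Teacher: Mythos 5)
Your existence argument and the reduction of the path-object statements to the cylinder-object statements by duality match the paper, and your zig-zag $\Cyl(X)\xrightarrow{w}X\xleftarrow{w'}\Cyl(X)'$ is a legitimate (indeed shorter) justification of ``any two cylinder objects are weakly equivalent''. The paper instead produces a direct comparison map $\Cyl(X)'\to\Cyl(X)$ by lifting the cofibration $i'$ against the trivial fibration $w$ of the cylinder produced by (\ref{axiom:factorization}), and checks that it is a weak equivalence by (\ref{axiom:2-out-of-3}). That comparison map is what makes the last assertion ``straightforward'' in the paper's sense: precomposing a homotopy $H\colon\Cyl(X)\to Y$ with it yields a homotopy through $\Cyl(X)'$, so no extension of $H$ along anything is ever required in that direction.

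The gap in your argument is the first reduction step. To ``transport $H$ along $j$'' you need a lift in a square whose left leg is the trivial cofibration $j\colon\Cyl(X)\to\Cyl''(X)$ and whose right leg is a map out of $Y$; the only available such map is $Y\to *$, so (\ref{axiom:lifting properties}) applies only when $Y$ is fibrant, which is not among the hypotheses. The same issue resurfaces in your second step: assembling a single homotopy on $\Cyl(X)\sqcup_{X\sqcup X}\Cyl'(X)$ and then ``concatenating'' amounts to invoking transitivity of the left homotopy relation, which in general requires $X$ to be cofibrant (so that the two inclusions $X\to\Cyl(X)$ are trivial cofibrations). As written, your plan therefore establishes the independence statement only under a cofibrancy or fibrancy assumption. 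To be fair, the paper's own proof dismisses exactly this point with ``the last statement follows in a straightforward manner'', and its lifting argument only yields the easy direction (homotopic through the very good cylinder implies homotopic through every cylinder); the converse direction, which is precisely what your delicate step is trying to supply, is where the extra hypotheses or extra work genuinely enter.
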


\begin{proof}
	We will only prove the first result, the second one being dual. Existence of a cylinder object is guaranteed by (\ref{axiom:factorization}). Applying it to the fold map, we factor it into
	\begin{center}
		\begin{tikzpicture}
		\node (a) at (0,0){$X\sqcup X$};
		\node (b) at (3,0){$\Cyl(X)$};
		\node (c) at (6,0){$X$};
		
		\draw[>->] (a) to node[below]{$i$} (b);
		\draw[->>] (b) to node[below]{$w$} node[sloped,yshift=.1cm]{$\sim$} (c);
		\draw[->] (a) to[out=20,in=160] node[above]{$\nabla$} (c);
		\end{tikzpicture}
	\end{center}
	with $w$ being not only a weak equivalence, but a trivial fibration. Suppose we are given a second cylinder object
	\begin{center}
		\begin{tikzpicture}
		\node (a) at (0,0){$X\sqcup X$};
		\node (b) at (3,0){$\Cyl(X)'$};
		\node (c) at (6,0){$X$};
		
		\draw[>->] (a) to node[below]{$i'$} (b);
		\draw[->] (b) to node[below]{$w'$} node[sloped,yshift=.1cm]{$\sim$} (c);
		\draw[->] (a) to[out=20,in=160] node[above]{$\nabla$} (c);
		\end{tikzpicture}
	\end{center}
	Then we have the following commutative diagram.
	\begin{center}
		\begin{tikzpicture}
			\node (a) at (0,0){$X\sqcup X$};
			\node (b) at (3,0){$\Cyl(X)$};
			\node (c) at (0,-2.){$\Cyl(X)'$};
			\node (d) at (3,-2){$X$};
			
			\draw[>->] (a) to node[above]{$i$} (b);
			\draw[->>] (b) to node[left]{$w$} node[sloped,yshift=.1cm]{$\sim$} (d);
			\draw[>->] (a) to node[left]{$i'$} (c);
			\draw[->] (c) to node[below]{$w'$} node[sloped,yshift=.1cm]{$\sim$} (d);
			
			\draw[->,dashed] (c) to node[sloped,yshift=.1cm]{$\sim$} (b);
		\end{tikzpicture}
	\end{center}
	The dashed arrow exists by (\ref{axiom:lifting properties}), and it is a weak equivalence by (\ref{axiom:2-out-of-3}). So any two cylinder objects are weakly equivalent. The last statement follows in a straightforward manner.
\end{proof}

If the domain of the arrows we consider is cofibrant, and if the target is fibrant, then the homotopy relations are very well behaved.

\begin{proposition}
	Let $\cat$ be a model category, and let $X,Y\in\cat$ be two objects.
	\begin{enumerate}
		\item If $X$ is cofibrant, then being left homotopic is an equivalence relation on $\hom_\cat(X,Y)$.
		\item If $Y$ is fibrant, then being right homotopic is an equivalence relation on $\hom_\cat(X,Y)$.
		\item If $X$ is cofibrant, and $Y$ is fibrant, then two maps $X\to Y$ in $\cat$ are left homotopic if, and only if they are right homotopic. In this case, we simply say that the two maps are \emph{homotopic}.
	\end{enumerate}
\end{proposition}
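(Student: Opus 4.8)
My plan is to follow the classical Quillen argument, whose backbone is a single observation: \emph{if $X$ is cofibrant, then for every cylinder object $\Cyl(X)$ the two structure maps $i_0,i_1\colon X\to\Cyl(X)$ are trivial cofibrations}. The reason is that the inclusion $X\to X\sqcup X$ of either factor is the pushout of $\emptyset\to X$ along $\emptyset\to X$, hence a cofibration since $X$ is cofibrant; composing with the cofibration $X\sqcup X\to\Cyl(X)$ shows $i_0,i_1$ are cofibrations, and they are weak equivalences by the two-out-of-three property since $w\,i_k=\mathrm{id}_X$. Dually, if $Y$ is fibrant then the two projections $p_0,p_1\colon\Path(Y)\to Y$ of any path object are trivial fibrations (using also that $Y\times Y\to Y$ is a fibration when $Y$ is fibrant). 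I will also use freely the preceding lemma, which lets me replace a given cylinder or path object by any other one of my choosing.

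For part (1) I would treat reflexivity, symmetry and transitivity in turn. Reflexivity: for any cylinder object, $f\,w\colon\Cyl(X)\to X\to Y$ is a left homotopy from $f$ to $f$. Symmetry: if $H\colon\Cyl(X)\to Y$ witnesses $f\sim_\ell g$, regard the same object as a cylinder with $i_0$ and $i_1$ interchanged (the new pairing $(i_1,i_0)$ is still a cofibration, being $(i_0,i_1)$ precomposed with the swap isomorphism of $X\sqcup X$); then $H$ witnesses $g\sim_\ell f$. Transitivity is the step where cofibrancy is genuinely consumed: given $H\colon C\to Y$ from $f$ to $g$ and $H'\colon C'\to Y$ from $g$ to $h$, I would form the pushout $P\coloneqq C\sqcup_X C'$ gluing the end $i_1\colon X\to C$ to the end $i_0'\colon X\to C'$. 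By the observation above these two maps are trivial cofibrations, so the canonical maps $C\to P$ and $C'\to P$ are trivial cofibrations; hence the induced $w''\colon P\to X$ is a weak equivalence by two-out-of-three, and a short diagram chase (exhibiting $P$ as the pushout of the cofibration $(i_0,i_1)\colon X\sqcup X\to C$ along $\mathrm{id}_X\sqcup i_0'\colon X\sqcup X\to X\sqcup C'$) shows that the resulting structure map $X\sqcup X\to P$ is a cofibration, so $P$ is a cylinder object. Finally $H$ and $H'$ agree on $X$ (both restricting to $g$), so they glue to a left homotopy from $f$ to $h$. Part (2) follows by applying part (1) in the opposite model category.

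For part (3) I would run two lifting arguments. If $X$ is cofibrant, $Y$ is fibrant and $f\sim_\ell g$, choose a left homotopy $H\colon\Cyl(X)\to Y$ and any path object with maps $s\colon Y\to\Path(Y)$ and $(p_0,p_1)\colon\Path(Y)\to Y\times Y$. The square with top $sf$, left $i_0\colon X\to\Cyl(X)$, right $(p_0,p_1)$ and bottom $(f\,w,H)\colon\Cyl(X)\to Y\times Y$ commutes; since $i_0$ is a trivial cofibration and $(p_0,p_1)$ a fibration, a diagonal filler $\ell\colon\Cyl(X)\to\Path(Y)$ exists, and $K\coloneqq\ell\,i_1$ satisfies $p_0K=f$ and $p_1K=g$, so $f\sim_r g$. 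Conversely, given a right homotopy $K\colon X\to\Path(Y)$ I choose the path object so that $p_0,p_1$ are trivial fibrations (possible since $Y$ is fibrant) and any cylinder object; lifting $(K,sg)\colon X\sqcup X\to\Path(Y)$ against the trivial fibration $p_1$ along the cofibration $(i_0,i_1)\colon X\sqcup X\to\Cyl(X)$ with bottom map $g\,w\colon\Cyl(X)\to Y$ gives $L'\colon\Cyl(X)\to\Path(Y)$, and $L\coloneqq p_0L'$ is a left homotopy from $f$ to $g$.

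The one delicate point, and the place to concentrate care, is the transitivity step of part (1): checking that the glued object $P$ really is a cylinder object, which is exactly where the hypothesis that $X$ is cofibrant enters, via the observation that the ends of any cylinder on a cofibrant object are trivial cofibrations. Everything else is bookkeeping: drawing the correct lifting squares and quoting the lifting and factorization axioms together with the two-out-of-three property.
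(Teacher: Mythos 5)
The paper does not actually supply a proof of this proposition --- it is quoted as a standard fact of model category theory --- so there is nothing to compare against; judged on its own, your argument is the classical Quillen/Hovey one and it is correct. The key observation (ends of a cylinder on a cofibrant object are trivial cofibrations, dually for path objects on fibrant objects), the gluing of cylinders for transitivity, and the two lifting squares in part (3) are all set up properly, and each square you write down does commute. The only place I would ask for one more line is the transitivity step: the pushout you describe shows that $X\sqcup C'\to P$ is a cofibration, whereas the structure map of the candidate cylinder is $X\sqcup X\to P$; you still need to precompose with $\mathrm{id}_X\sqcup i_1'\colon X\sqcup X\to X\sqcup C'$ and invoke the fact that a coproduct of cofibrations is a cofibration (which follows from the lifting characterization). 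With that sentence added, the proof is complete.
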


One can now give an analogue of the Whitehead theorem in the general context of model categories.

\begin{definition}
	Let $\cat$ be a model category, and let $X,Y\in\cat$ be two bifibrant objects. We say that $f:X\to Y$ is a \emph{homotopy equivalence}\index{Homotopy!equivalence} if there exists a morphism $g:Y\to X$ such that $fg\sim1_Y$ and $gf\sim1_X$.
\end{definition}

\begin{theorem}[Whitehead]\index{Whitehead theorem}
	Let $\cat$ be a model category, and let $X,Y\in\cat$ be two bifibrant objects. If $f:X\to Y$ is a weak equivalence, then it is a homotopy equivalence.
\end{theorem}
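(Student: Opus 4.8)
The plan is to reduce the statement to two special cases by means of the factorization axiom, solve each by a lifting argument, and then recombine.

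First I would apply \ref{axiom:factorization} to factor $f$ as $f = pi$, where $i\colon X\to Z$ is a trivial cofibration and $p\colon Z\to Y$ is a fibration. Since $f$ and $i$ are weak equivalences, \ref{axiom:2-out-of-3} forces $p$ to be a weak equivalence, hence a trivial fibration. Moreover $Z$ is bifibrant: the composite $\emptyset\to X\to Z$ is a composition of cofibrations because $X$ is cofibrant, and the composite $Z\to Y\to *$ is a composition of fibrations because $Y$ is fibrant. It therefore suffices to prove that every trivial fibration and every trivial cofibration between bifibrant objects is a homotopy equivalence, and then to note that homotopy equivalences between bifibrant objects compose.

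For a trivial fibration $p\colon Z\to Y$ between bifibrant objects: since $Y$ is cofibrant, the map $\emptyset\to Y$ is a cofibration, so lifting it against $p$ (over $1_Y\colon Y\to Y$) produces a section $s\colon Y\to Z$ with $ps = 1_Y$. To see $sp\sim 1_Z$, choose a cylinder object $Z\sqcup Z\xrightarrow{\jmath}\Cyl(Z)\xrightarrow{w}Z$ and consider the square whose top edge is $(sp)\sqcup 1_Z\colon Z\sqcup Z\to Z$, left edge $\jmath$, right edge $p$, and bottom edge $pw\colon\Cyl(Z)\to Y$; it commutes because $w\jmath=\nabla$ gives $pw\jmath = p\nabla$, while $p\circ\bigl((sp)\sqcup 1_Z\bigr) = p\sqcup p = p\nabla$. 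As $\jmath$ is a cofibration and $p$ a trivial fibration, \ref{axiom:lifting properties} yields a filler $H\colon\Cyl(Z)\to Z$, which is a left homotopy $sp\sim_\ell 1_Z$; since $Z$ is bifibrant, left and right homotopy coincide, so $sp\sim 1_Z$. Thus $p$ is a homotopy equivalence with homotopy inverse $s$. The dual argument — carried out in $\cat^{\mathrm{op}}$, which is again a model category with cofibrations and fibrations swapped, or repeated directly using a path object — shows that a trivial cofibration $i\colon X\to Z$ between bifibrant objects admits a retraction $r$ with $ri=1_X$ and $ir\sim 1_Z$, hence is a homotopy equivalence.

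Finally, $f = pi$ with $i$ and $p$ homotopy equivalences between bifibrant objects, so $rs$ is a homotopy inverse of $f$: using that $ir\sim 1_Z$ and that the homotopy relation is compatible with pre- and post-composition (between (co)fibrant objects, via the equivalence-relation properties recalled above), one gets $f(rs) = p(ir)s\sim p s = 1_Y$ and $(rs)f = r(sp)i\sim ri = 1_X$. Hence $f$ is a homotopy equivalence. I expect the only delicate point to be this bookkeeping around homotopies — verifying that $Z$ is bifibrant so that left and right homotopy agree there, and that homotopy equivalences are closed under composition — everything else being a direct application of the lifting and factorization axioms.
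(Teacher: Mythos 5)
Your proof is correct. The paper states this theorem without proof (it is recalled from the standard literature on model categories), and your argument --- factoring $f$ as a trivial cofibration followed by a trivial fibration, checking the intermediate object is bifibrant, producing a section/retraction by lifting, and obtaining the homotopies $sp\sim 1$ and $ir\sim 1$ by lifting against a cylinder (resp.\ dually a path object) --- is exactly the standard one found in the references the paper cites, with all the delicate points (bifibrancy of $Z$, agreement of left and right homotopy, compatibility of the homotopy relation with composition) correctly identified and handled.
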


\section{Homotopy categories}

As explained at the beginning of the chapter, one wants to formally invert some class of maps to obtain the ``homotopy category'' of the original category. We explain the naive way to do that and the problems in which one might incur, and then expose how one can do the desired process if one wants to invert the class of weak equivalences in a model category.

\subsection{Localization at a class of maps}

Suppose that $\cat$ is a category, and that $W$ is a class of morphisms in $\cat$ that one wishes to formally invert.

\begin{definition}
	The \emph{localization of $\cat$ at $W$}\index{Localization of a category}, if it exists, is a category $\cat[W^{-1}]$ together with a functor $F:\cat\to\cat[W^{-1}]$ satisfying the following universal property. If $\cat'$ is another category, and $G:\cat\to\cat'$ is a functor, then $G$ factors through $F$ if, and only if every morphism in $W$ is sent into an isomorphism by $G$.
\end{definition}

\begin{remark}
	This is very closely related with localizations in rings and modules over rings.
\end{remark}

One can try to construct the localized category $\cat[W^{-1}]$ by taking the category with the same objects as $\cat$, and as morphisms the words formed by composable morphisms of $\cat$ and formal inverses of morphisms in $W$, and then identifying the word $fg$ with the composite of $f$ and $g$ whenever $f,g$ are in $\cat$, the words $ww^{-1}$ and $w^{-1}w$ with the identity whenever $w\in W$ and $w^{-1}$ is its formal inverse, and the letter $w^{-1}$ with the inverse of $w$ whenever $w\in W$ is invertible. The problem with this process is that it adds a potentially huge amount of morphisms to $\cat$. So many, in fact, that one easily incurs in set theoretical issues that make it so that the structure $\cat[W^{-1}]$ defined this way is no longer a category.

\subsection{The homotopy category of a model category}

Now suppose that $\cat$ is a model category.

\begin{definition}
	The localization of $\cat$ at the class $W$ of weak equivalences is called the \emph{homotopy category}\index{Homotopy!category} of $\cat$, and it is denoted by
	\[
	\ho{\cat}\coloneqq\cat[W^{-1}]\ .
	\]
\end{definition}

It is an important result of \cite{QuillenMC} that this gives a well-defined category, which furthermore admits a much smaller, equivalent description as the quotient of the subcategory of bifibrant objects of $\cat$ by the homotopy relation.

\begin{theorem}\label{thm:homotopy category as quotient of category of bifibrant objects}
	Let $\cat$ be a model category. The homotopy category $\ho{\cat}$ of $\cat$ is equivalent to the category whose objects are the bifibrant objects of $\cat$, and whose morphisms between two such objects $X, Y\in\cat$ are given by the quotient of $\hom_\cat(X,Y)$ by the homotopy equivalence relation. In particular, the homotopy category of a model category is a well-defined category.
\end{theorem}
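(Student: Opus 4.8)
My strategy is to build the candidate category directly and show it is a localization, so that the statement follows from uniqueness of localizations. Write $\cat_{\mathrm{cf}}$ for the full subcategory of bifibrant objects and let $\pi(\cat_{\mathrm{cf}})$ be the category with the same objects and with $\hom_{\pi(\cat_{\mathrm{cf}})}(X,Y) := \hom_\cat(X,Y)/{\sim}$, where $\sim$ is the (left = right) homotopy relation. I would then produce a functor $\gamma\colon\cat\to\pi(\cat_{\mathrm{cf}})$ by bifibrant replacement and prove that $(\pi(\cat_{\mathrm{cf}}),\gamma)$ satisfies the universal property of $\cat[W^{-1}]$. Since a localization is unique up to equivalence when it exists, this at once shows that $\ho{\cat}$ is a genuine, locally small category and that $\ho{\cat}\simeq\pi(\cat_{\mathrm{cf}})$.

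First I would check that $\pi(\cat_{\mathrm{cf}})$ is a well-defined category. By the proposition preceding the statement, on bifibrant objects left and right homotopy coincide and each is an equivalence relation, so it only remains to see that $\sim$ is a two-sided congruence: for $g\colon Y\to Z$, composing a left homotopy $H\colon\Cyl(X)\to Y$ with $g$ gives a left homotopy $gf\sim gf'$; for $h\colon W\to X$, composing a right homotopy $K\colon X\to\Path(Y)$ with $h$ gives a right homotopy $fh\sim f'h$. Next I would set up $\gamma$. For each $X$ choose, using (\ref{axiom:factorization}), a trivial fibration $p_X\colon QX\xrightarrow{\sim}X$ with $QX$ cofibrant and a trivial cofibration $j_X\colon QX\xrightarrow{\sim}\widetilde X$ with $\widetilde X$ fibrant, taking these factorizations to be identities whenever possible (so $\widetilde X=X$ for bifibrant $X$); then $\widetilde X$ is bifibrant. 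Given $f\colon X\to Y$, lifting $f p_X$ against the trivial fibration $p_Y$ (valid since $\emptyset\to QX$ is a cofibration) produces $Qf\colon QX\to QY$, unique up to homotopy by (\ref{axiom:lifting properties}) and the earlier uniqueness lemma; extending $j_Y(Qf)$ along the trivial cofibration $j_X$ (valid since $\widetilde Y$ is fibrant) produces $\widetilde f\colon\widetilde X\to\widetilde Y$, again unique up to homotopy. Hence $f\mapsto[\widetilde f]$ is well defined into $\pi(\cat_{\mathrm{cf}})$, and the same up-to-homotopy uniqueness gives $[\widetilde{gf}]=[\widetilde g][\widetilde f]$ and $[\widetilde{1_X}]=1$; so $\gamma$ is a functor, and its restriction to $\cat_{\mathrm{cf}}$ is the quotient functor, in particular full and surjective on objects.

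Then I would verify the two halves of the universal property. If $f$ is a weak equivalence, then by (\ref{axiom:2-out-of-3}) applied along the zig-zags relating $X$ to $\widetilde X$ and $Y$ to $\widetilde Y$, together with the commuting squares $p_Y(Qf)=f p_X$ and $j_Y(Qf)=(\widetilde f)j_X$, the morphism $\widetilde f$ is a weak equivalence between bifibrant objects; by the Whitehead theorem it is a homotopy equivalence, hence an isomorphism in $\pi(\cat_{\mathrm{cf}})$, so $\gamma$ inverts $W$. Conversely, let $G\colon\cat\to\cat'$ invert $W$. If $f\sim g$ in $\cat_{\mathrm{cf}}$, choose a cylinder $X\sqcup X\xrightarrow{i}\Cyl(X)\xrightarrow{w}X$ and a left homotopy $H$; the two structure maps $i_0,i_1\colon X\to\Cyl(X)$ both split the isomorphism $Gw$, so $Gi_0=Gi_1$ and therefore $Gf=G(Hi_0)=G(Hi_1)=Gg$. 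Thus $G|_{\cat_{\mathrm{cf}}}$ factors uniquely through $\bar G\colon\pi(\cat_{\mathrm{cf}})\to\cat'$, and for arbitrary $X$ the zig-zag $X\xleftarrow{\sim}QX\xrightarrow{\sim}\widetilde X$ yields under $G$ a canonical isomorphism $\bar G\gamma(X)\cong G(X)$ natural in $X$, so $\bar G\gamma\cong G$; uniqueness of $\bar G$ up to natural isomorphism is forced by the fullness and essential surjectivity of $\gamma|_{\cat_{\mathrm{cf}}}$.

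\textbf{Main obstacle.} The only substantial work is the second step: defining the bifibrant replacement on morphisms and checking it is well defined and functorial. This rests entirely on the fact that lifts against trivial (co)fibrations are unique up to left/right homotopy when the source is cofibrant, respectively the target is fibrant --- a bookkeeping exercise with cylinder and path objects --- but it is the technical heart of the proof. The verification that a $W$-inverting functor kills homotopy, and the naturality of the comparison isomorphisms, are then straightforward, and the Whitehead theorem is already available.
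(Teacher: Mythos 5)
Your proposal is correct in substance and follows the standard Quillen--Dwyer--Spalinski--Hovey argument; the paper itself gives no proof of this statement and simply defers to \cite{QuillenMC} (and \cite{Hovey}), which is exactly the argument you reconstruct: form the quotient $\pi(\cat_{\mathrm{cf}})$, check the homotopy relation is a two-sided congruence, build bifibrant replacement, invert weak equivalences via Whitehead, and verify the universal property. The one lemma you invoke without proof --- uniqueness up to homotopy of lifts against trivial (co)fibrations with cofibrant source, resp.\ fibrant target --- is indeed the technical heart, and you correctly identify it as such; it is not stated anywhere in the paper, so a complete write-up would have to supply it.

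There is one genuine crack in the logic as written. Your functor $\gamma\colon\cat\to\pi(\cat_{\mathrm{cf}})$ changes objects ($X\mapsto\widetilde X$), so for a $W$-inverting $G$ you only obtain $\bar G\gamma\cong G$, a factorization up to natural isomorphism, whereas the paper's definition of the localization (and the uniqueness-up-to-equivalence you then invoke) is the strict one: $G=\bar G\circ F$ on the nose. As stated, $(\pi(\cat_{\mathrm{cf}}),\gamma)$ therefore does not literally satisfy the universal property of $\cat[W^{-1}]$. The standard repair is cheap: define an auxiliary category with the \emph{same objects as} $\cat$ and $\hom(X,Y):=\hom_{\cat}(\widetilde X,\widetilde Y)/{\sim}$, so that the canonical functor is the identity on objects and the strict universal property holds verbatim; this category is then equivalent to $\pi(\cat_{\mathrm{cf}})$ because every object is isomorphic in it to a bifibrant one. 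With that adjustment (or with an explicit argument that the up-to-isomorphism universal property still pins the category down up to equivalence), your proof goes through and also delivers the local smallness claim in the last sentence of the theorem.
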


\begin{remark}
	Because of this result, we will usually implicitly abuse of notation and write $\ho{\cat}$ and talk of the homotopy category of a model category when in fact meaning the smaller presentation presented above.
\end{remark}

As a consequence, we have the following result.

\begin{lemma}\label{lemma:we induce bijections of hom sets}
	Suppose $X_1,X_2,Y\in\cat$ are bifibrant, and let $\phi:X_1\to X_2$ be a weak equivalence. Then the maps
	\[
	\phi^*:\hom_{\ho{\cat}}(X_2,Y)\longrightarrow\hom_{\ho{\cat}}(X_1,Y)\quad\text{and}\quad\phi_*:\hom_{\ho{\cat}}(Y,X_1)\longrightarrow\hom_{\ho{\cat}}(Y,X_2)
	\]
	are bijections.
\end{lemma}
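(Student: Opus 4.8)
The plan is to deduce this directly from the Whitehead theorem together with the explicit description of the homotopy category in \cref{thm:homotopy category as quotient of category of bifibrant objects}. Since $X_1$ and $X_2$ are bifibrant and $\phi$ is a weak equivalence, the Whitehead theorem provides a morphism $\psi\colon X_2\to X_1$ in $\cat$ together with homotopies $\phi\psi\sim 1_{X_2}$ and $\psi\phi\sim 1_{X_1}$; recall that between bifibrant objects left homotopy, right homotopy and homotopy all coincide and define an equivalence relation on the relevant hom-sets, so these statements make sense.

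Next I would invoke \cref{thm:homotopy category as quotient of category of bifibrant objects}: for bifibrant objects $A,B$ the set $\hom_{\ho{\cat}}(A,B)$ is the quotient $\hom_\cat(A,B)/\!\sim$, with composition induced from $\cat$. In particular the canonical functor $F\colon\cat\to\ho{\cat}$ identifies homotopic maps, so $F(\phi)F(\psi)=F(\phi\psi)=F(1_{X_2})=\id_{X_2}$ and, symmetrically, $F(\psi)F(\phi)=\id_{X_1}$ in $\ho{\cat}$. Hence $F(\phi)$ is an isomorphism in $\ho{\cat}$ with inverse $F(\psi)$.

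It then only remains to note that $\phi^*$ is precomposition with the isomorphism $F(\phi)$ in the category $\ho{\cat}$ while $\phi_*$ is postcomposition with it; both therefore admit two-sided inverses, namely $\psi^*$ and $\psi_*$, and so are bijections. There is essentially no obstacle here: the single point requiring care is that the hom-sets appearing in the statement must be read in the bifibrant-quotient presentation of $\ho{\cat}$, which is exactly what is licensed by the remark following \cref{thm:homotopy category as quotient of category of bifibrant objects}. Alternatively, one could bypass Whitehead and simply observe that $F$ inverts all weak equivalences by the defining universal property of the localization, so that $F(\phi)$ is an isomorphism; the argument through Whitehead is preferred here only because the statement is phrased entirely in terms of bifibrant objects.
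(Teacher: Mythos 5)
Your deduction is formally consistent, but it runs head-on into the circularity that the paper's own proof is at pains to point out. The proof given in the text for this lemma consists precisely of the warning that the statement ``looks trivial'' given \cref{thm:homotopy category as quotient of category of bifibrant objects}, ``but it is in fact a step of the proof of that result'', followed by a citation of \cite[Prop.~1.2.5(iv)]{Hovey}. In the standard development (Quillen, Hovey), the fact that a weak equivalence $\phi$ between bifibrant objects induces bijections on \emph{homotopy classes} of maps $[X_2,Y]\to[X_1,Y]$ and $[Y,X_1]\to[Y,X_2]$ is established first, by a direct argument (factor $\phi$ as a trivial cofibration followed by a trivial fibration and handle each case by a lifting argument, or equivalently via Ken Brown's lemma), and only then is this used to prove that the quotient of the bifibrant subcategory by the homotopy relation computes the localization $\cat[W^{-1}]$. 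So reading the hom-sets of the lemma through the bifibrant-quotient presentation, as you do, and then appealing to Whitehead together with that presentation, assumes the very comparison theorem whose proof needs the lemma.

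Your closing alternative --- that $F(\phi)$ is invertible in $\cat[W^{-1}]$ by the universal property of the localization, so that pre- and post-composition with it are bijections --- is correct but only proves the statement for the abstract localization hom-sets; it says nothing about homotopy classes of genuine maps, which is where the content of the lemma lies and is how it is used later in the text. In short, there is no internal logical error in what you wrote, but as a proof it is either circular (if $\hom_{\ho{\cat}}$ is read as homotopy classes) or contentless (if it is read as the abstract localization). The honest route is the direct argument of \cite[Prop.~1.2.5(iv)]{Hovey}.
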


\begin{proof}
	This looks trivial with the statement to \cref{thm:homotopy category as quotient of category of bifibrant objects}, but it is in fact a step of the proof of that result. See \cite[Prop. 1.2.5(iv)]{Hovey} for a proof.
\end{proof}

\subsection{Quillen functors and Quillen equivalences}

One is now interested in understanding under what conditions a functor between two model categories induces a functor on the homotopy categories, and in particular when two categories have equivalent homotopy categories\footnote{This can happen even when the two model categories are not equivalent themselves.}. The correct notions giving an answer to these questions are \emph{Quillen functors}\index{Quillen!functor}, \emph{Quillen adjunctions}\index{Quillen!adjunction}, and \emph{Quillen equivalences}\index{Quillen!equivalence}. We will not use these concepts a lot, and so we refer the reader to \cite[Sect. 1.3]{Hovey} for details.

\section{Examples}\label{sect:examples model categories}

Examples of model categories abound. In this section, we give a few ones. Some of them are standard and we write them down for completeness, while others are less well known. The model structures on chain complexes, operads, cooperads, algebras over operads, and coalgebras over cooperads will be of interest in the rest of the present work.

\subsection{Topological spaces}\label{subsect:model structure on topological spaces}

The first example we give, which is the one on which model categories themselves are modeled upon, is the category of topological spaces.

\begin{theorem}[Quillen]
	The following three classes of continuous maps make $\Top$ into a model category.\index{Model category!of topological spaces}
	\begin{enumerate}
		\item A continuous map $f:X\to Y$ is a \emph{weak equivalence}\index{Weak equivalences!of topological spaces} if all of the induced maps on the homotopy groups
		\[
		\pi_0(f):\pi_0(X)\longrightarrow\pi_0(Y)\qquad\text{and}\qquad\pi_n(f,x):\pi_n(X,x)\longrightarrow\pi_n(Y,f(x))
		\]
		are isomorphisms, for all $x\in X$ and all $n\ge1$.
		\item A continuous map is a \emph{fibration}\index{Fibrations!of topological spaces}\index{Serre fibrations}\footnote{Also known as a \emph{Serre fibration} in the literature.} if it has the right lifting property with respect to all the inclusions $D^n\to D^n\times I$ of the $n$-disk into the product of the $n$-disk with an interval given by sending $x\in D^n$ to $(x,0)\in D^n\times I$.
		\item A continuous map is a \emph{cofibration}\index{Cofibrations!of topological spaces} if it has the left lifting property with respect to all trivial fibrations.
	\end{enumerate}
\end{theorem}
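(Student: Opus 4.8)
The plan is to carry out the standard argument via the small object argument, as in \cite[Sect.~2.4]{Hovey} or \cite[Sect.~8]{ds95}. The formal axioms are cheap: $\Top$ is complete and cocomplete, and both the $2$-out-of-$3$ property (\ref{axiom:2-out-of-3}) and closure under retracts for weak equivalences are immediate from the functoriality of $\pi_0$ and $\pi_n(-,x)$ together with the corresponding properties of isomorphisms of groups and of sets; closure under retracts of fibrations and of cofibrations is automatic, since these classes are defined by lifting properties. So the real work is concentrated in the factorization axiom (\ref{axiom:factorization}) and the lifting axiom (\ref{axiom:lifting properties}).

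To produce the factorizations I would introduce the sets of maps
\[
\mathcal{I}\coloneqq\{S^{n-1}\hookrightarrow D^n\mid n\ge0\}\qquad\text{and}\qquad\mathcal{J}\coloneqq\{D^n\hookrightarrow D^n\times I\mid n\ge0\}\ ,
\]
where the maps in $\mathcal{J}$ are exactly those appearing in the definition of a fibration. Since spheres and disks are compact, hence small relative to the closed inclusions that occur, the small object argument applies to $\mathcal{I}$ and to $\mathcal{J}$ and yields, for any continuous $f$, functorial factorizations $f = p\,i$ with $i$ a relative $\mathcal{I}$-cell complex and $p$ having the right lifting property against $\mathcal{I}$, and $f = q\,j$ with $j$ a relative $\mathcal{J}$-cell complex and $q$ having the right lifting property against $\mathcal{J}$ --- i.e.\ $q$ a fibration. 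The key identifications are then: a map has the right lifting property against $\mathcal{I}$ if and only if it is a fibration which is also a weak equivalence (a \emph{trivial fibration}); each $j\in\mathcal{J}$, and hence every relative $\mathcal{J}$-cell complex, has the left lifting property against all fibrations; each $i\in\mathcal{I}$, and hence every relative $\mathcal{I}$-cell complex, has the left lifting property against all trivial fibrations, so is in particular a cofibration; and every relative $\mathcal{J}$-cell complex is a weak equivalence. Granting these, the second small-object factorization reads $f = q\,j$ with $j$ a cofibration and $q$ a trivial fibration, and the first reads $f = p\,i$ with $i$ a trivial cofibration and $p$ a fibration, which is (\ref{axiom:factorization}).

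The lifting axiom (\ref{axiom:lifting properties}) then follows by the usual retract argument. The half concerning cofibrations and trivial fibrations is the very definition of cofibration. For the other half, let $i$ be a trivial cofibration and factor it as above as $i = q\,i'$ with $i'$ a relative $\mathcal{J}$-cell complex and $q$ a fibration; by $2$-out-of-$3$, $q$ is a weak equivalence, hence a trivial fibration, so $i$ (being a cofibration) lifts against $q$, which exhibits $i$ as a retract of $i'$ in the arrow category. Since $i'$ has the left lifting property against all fibrations and this property is closed under retracts, so does $i$, which is what is needed. The identification of the cofibrations with the retracts of relative $\mathcal{I}$-cell complexes, and the mutual consistency of the three classes, is then routine.

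The main obstacle is the two genuinely topological inputs hidden above: first, that a continuous map with the right lifting property against $\mathcal{I}$ is a Serre fibration inducing isomorphisms on all homotopy groups; and second, that a relative $\mathcal{J}$-cell complex is a weak equivalence. For the first I would show directly, by solving the relevant lifting problems against $S^{n-1}\hookrightarrow D^n$, that such a $p$ is surjective and injective on every $\pi_n$ and on $\pi_0$, deduce from the long exact sequence of the fibration that its fibres are weakly contractible, and then prove by obstruction theory --- building a lift cell by cell over a CW-approximation and invoking Whitehead's theorem --- that a Serre fibration with weakly contractible fibres has the right lifting property against all of $\mathcal{I}$. For the second, the point is that each generating map $D^n\hookrightarrow D^n\times I$ is the inclusion of a strong deformation retract which is a closed cofibration, so its pushouts along arbitrary maps are homotopy pushouts and remain weak equivalences by a gluing-lemma argument, and weak equivalences of this shape are preserved under transfinite composition. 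This is where the substance of the homotopy theory of CW-complexes is actually used.
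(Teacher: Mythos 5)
The paper states this theorem without proof, as a classical recollection deferring to Quillen and to \cite[Sect.~2.4]{Hovey}, and your proposal is exactly the standard small object argument found there: generating (trivial) cofibrations $\mathcal{I}$ and $\mathcal{J}$, the identification of $\mathcal{I}$-injectives with trivial fibrations, the fact that relative $\mathcal{J}$-cell complexes are trivial cofibrations, and the retract argument for the lifting axiom. The outline is correct, including the two genuinely topological inputs you isolate at the end, so there is nothing to object to.
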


As already mentioned in \cref{remark:cylinders and paths in topological spaces}, a cylinder object for a topological space $X$ is given by $X\times I$, and a path object is given by $X^I$.

\subsection{Simplicial sets}

Simplicial sets are certain combinatorial objects whose homotopy theory is equivalent to the one of topological spaces. Namely, the category $\ssets$ of simplicial sets admits a model structure\footnote{In fact, it admits more than one. Here, we mean the classical --- or Quillen --- model structure, as opposed to the Joyal model structure, which is very important in the world of $\infty$-categories.} and a Quillen equivalence with the model category of topological spaces. This will be discussed in more detail in \cref{ch:simplicial homotopy theory}.

\subsection{Chain complexes}

Since model categories give us ``non-linear homological algebra\footnote{See the very beginning of the introduction to \cite{QuillenMC}.}'', one would like to recover the usual homological algebra when treating chain complexes. This is indeed possible, by considering the following model structure on $\chain$.

\begin{theorem}
	The following three classes of chain maps make the category $\chain$ of chain complexes\footnote{Chain complexes do not have any boundedness assumption throughout this work, unless explicitly stated.} into a model category.\index{Model category!of chain complexes}
	\begin{enumerate}
		\item A chain map $f:V\to W$ is a \emph{weak equivalence}\index{Weak equivalences!of chain complexes} if it is a quasi-isomorphism\index{Quasi-isomorphism}, i.e. if all the induced maps
		\[
		H_n(f):H_n(V)\to H_n(W)
		\]
		are isomorphisms, for all $n\in\mathbb{Z}$.
		\item A chain map is a \emph{fibration}\index{Fibrations!of chain complexes} if it is surjective in every degree.
		\item A chain map is a \emph{cofibration}\index{Cofibrations!of chain complexes} if it has the left lifting property with respect to all trivial fibrations.
	\end{enumerate}
	In particular, all chain complexes are fibrant.\index{Fibrant!chain complex}
\end{theorem}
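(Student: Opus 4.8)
The plan is to verify axioms M1--M5 directly, exploiting that we work over a field. The key structural fact I would establish first is a clean description of the three classes: weak equivalences are the quasi-isomorphisms by definition, fibrations are the degreewise surjections by definition, and the cofibrations turn out to be exactly the degreewise injections. Granting this, M1 is immediate (limits and colimits in $\chain$ are computed degreewise, inherited from $\vect$), M2 follows from the fact that each $H_n$ is a functor and isomorphisms in $\vect$ satisfy the $2$-out-of-$3$ property, and M3 follows because $H_n$ carries retract diagrams to retract diagrams (so weak equivalences are closed under retracts), a retract of a surjection of vector spaces is a surjection (fibrations), and any class of maps defined by a left lifting property is automatically closed under retracts (cofibrations). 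The last assertion of the theorem is also immediate: the terminal object of $\chain$ is the zero complex $0$, and for any $V$ the unique map $V \to 0$ is degreewise surjective, so every chain complex is fibrant.

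The heart of the proof is the factorization axiom M5 together with the identification of cofibrations. For M5 I would use the mapping cylinder and mapping path-space constructions relative to the chain complex $I$ of cellular chains on the interval (two generators in degree $0$, one in degree $1$ whose boundary is their difference). Given a chain map $f \colon V \to W$, the mapping cylinder $\mathrm{Cyl}(f)$ sits in a factorization $V \hookrightarrow \mathrm{Cyl}(f) \twoheadrightarrow W$ in which the first map is degreewise a summand inclusion and the second is a degreewise surjective deformation retraction, hence a surjective quasi-isomorphism; dually the mapping path space $\mathrm{Path}(f)$ gives $V \hookrightarrow \mathrm{Path}(f) \twoheadrightarrow W$ with the first map a degreewise-injective quasi-isomorphism and the second a degreewise surjection. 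Once cofibrations are shown to be exactly the degreewise monomorphisms, these are precisely the two factorizations required by M5. (Alternatively one could invoke the small object argument with generating cofibrations $\{S^{n-1}\hookrightarrow D^n\}_{n}$ and generating trivial cofibrations $\{0 \hookrightarrow D^n\}_{n}$, where $D^n$ is the disk complex $\k \xrightarrow{\id} \k$ concentrated in degrees $n$ and $n-1$; I prefer the explicit constructions since over a field they need no smallness bookkeeping.)

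For the identification of cofibrations and for the lifting axiom M4 I would argue as follows. One direction of M4, that cofibrations lift against trivial fibrations, holds by the very definition of cofibration; what remains is that trivial cofibrations lift against fibrations, and that cofibrations coincide with degreewise monomorphisms. First I would show that every degreewise monomorphism $i\colon A \to B$ has the left lifting property against every trivial fibration $p\colon X \twoheadrightarrow Y$: since $p$ is a surjective quasi-isomorphism its kernel $N$ is acyclic, hence contractible (an acyclic complex of vector spaces splits as a direct sum of disk complexes), with a contracting homotopy $s$; writing $B = A \oplus C$ degreewise (possible over a field), so that $d_B(a,c) = (d_A a + \phi(c), d_C c)$ for a structure map $\phi$, one extends a given $f\colon A\to X$ over $B$ by first choosing a degreewise-linear section of $p$ to lift the relevant data, then correcting the failure to be a chain map by applying $s$ to the (well-defined, $N$-valued) error term; a short computation using $d_B^2 = 0$ shows this correction is exact. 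This proves degreewise monos are cofibrations; conversely, applying the mapping-path factorization to an arbitrary cofibration $i$ writes $i = p j$ with $j$ a mono and $p$ a trivial fibration, and the retract argument (using that $i$ lifts against $p$ by definition) exhibits $i$ as a retract of $j$, whence $i$ is a mono. Finally, for a trivial cofibration $i \colon A \to B$ against a fibration $p$: factor $i = p' j$ via the mapping cylinder, so that $j\colon A \to A \oplus K$ is the inclusion of a summand with $K$ contractible and $p'$ a fibration; since $i$ and $j$ are quasi-isomorphisms, $p'$ is a trivial fibration, so the retract argument again exhibits $i$ as a retract of $j$, and it suffices to lift $j$ against $p$ — but, writing $K$ as a sum of disk complexes and noting that a chain map out of a disk is determined by the image of its top generator with no constraint, such lifts exist because $p$ is degreewise surjective.

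The step I expect to be the main obstacle is the explicit chain-level lifting argument: checking that the homotopy-corrected extension $B \to X$ is genuinely a chain map requires a careful sign-tracked computation involving the structure map $\phi$ and the contracting homotopy $s$, and the Koszul signs are easy to mishandle. All the real content of the model structure is concentrated there; once cofibrations have been identified with degreewise monomorphisms, the remaining verifications are formal.
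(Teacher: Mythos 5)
Your proposal is correct, but it cannot be compared line-by-line with the paper, because the paper offers no proof of this theorem at all: it states the model structure and defers to Hovey, \emph{Model categories}, Section 2.3, whose argument runs the small object argument on the generating sets $\{S(n-1)\hookrightarrow D(n)\}$ and $\{0\to D(n)\}$ --- precisely the alternative you mention and set aside. Your route is genuinely different and, over a field, more economical: the identification of cofibrations with degreewise monomorphisms (monos lift against trivial fibrations because the kernel of a surjective quasi-isomorphism is acyclic, hence contractible, hence a relatively injective direct sum of disks; conversely the mapping-cylinder factorization plus the retract argument shows every cofibration is a mono) concentrates all the content in one lemma, makes M4 and M5 formal consequences of the cylinder and path-space constructions, and in fact subsumes the paper's separately proved Proposition that every chain complex is cofibrant (your lemma applied to $0\to V$ replaces the paper's decomposition of $V$ into copies of $D(n)$ and $S(n)$ and the closure of cofibrant objects under colimits). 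Two small points you should make explicit when writing this up. First, the front inclusion $A\to\Cyl(i)$ is only a \emph{degreewise-split} monomorphism, not a direct-summand inclusion of chain complexes on the nose; to write it as $A\to A\oplus K$ with $K$ contractible you must first observe that its cokernel is acyclic (by the long exact sequence and $2$-out-of-$3$), hence contractible over a field, and that a degreewise-split extension with contractible cokernel splits as chain complexes. Second, the surjectivity of $\Path(f)\to W$ should be checked on the explicit three-term description of $\hom(I,W)$; it holds because evaluation at the free endpoint is split surjective in each degree. With those two remarks added, the blueprint compiles into a complete and self-contained proof.
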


Since we are working over a field, we have even more.

\begin{proposition}
	All chain complexes are cofibrant.\index{Cofibrant!chain complex}
\end{proposition}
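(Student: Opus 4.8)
The plan is to show that for any chain complex $V$ over a field $\k$, the unique map $\emptyset \to V$ from the initial object (the zero complex) is a cofibration, i.e. that it has the left lifting property with respect to every trivial fibration. So suppose we are given a trivial fibration $p \colon W \to W'$, meaning $p$ is a surjective quasi-isomorphism, together with a chain map $g \colon V \to W'$; we must produce a lift $\widetilde{g} \colon V \to W$ with $p\widetilde{g} = g$. (There is no constraint coming from the source $\emptyset$, since the square's left-hand map is the inclusion of the zero complex, so a ``lift'' is just any chain map $V \to W$ over $g$.)

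First I would analyze the kernel $K \coloneqq \ker(p)$. Since $p$ is surjective we have a short exact sequence $0 \to K \to W \xrightarrow{p} W' \to 0$ of chain complexes, and since $p$ is a quasi-isomorphism, the long exact sequence in homology forces $H_\bullet(K) = 0$, i.e. $K$ is acyclic. Because we work over a field, $K$ is a complex of vector spaces, and an acyclic complex of vector spaces is contractible: one chooses in each degree a complement of the cycles, which the differential maps isomorphically onto the boundaries in the degree below (exactly the construction recalled in \cref{subsect:Massey products}), and from this one builds a contracting homotopy $h_K$ with $d_K h_K + h_K d_K = 1_K$. Likewise the whole sequence splits degreewise as graded vector spaces, so we may fix a graded (not necessarily chain) splitting $\sigma \colon W' \to W$ of $p$, i.e. a degree-zero linear map with $p\sigma = 1_{W'}$.

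Next I would correct $\sigma g$ into an honest chain map. Set $\varphi \coloneqq \sigma g \colon V \to W$; then $p\varphi = g$, but $\varphi$ need not commute with differentials. Consider the defect $\delta \coloneqq d_W \varphi - \varphi d_V \colon V \to W$. Applying $p$ and using that $g$ is a chain map gives $p\delta = d_{W'} g - g d_{V} = 0$, so $\delta$ factors through $K$; moreover a short computation shows $\delta$ is a chain map $V \to K$ of degree $-1$ (equivalently $d_K \delta + \delta d_V = 0$). Now define
\[
\widetilde{g} \coloneqq \varphi - h_K \delta \colon V \longrightarrow W.
\]
Since $h_K \delta$ has image in $K = \ker p$, we still have $p\widetilde{g} = p\varphi = g$. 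It remains to check $\widetilde{g}$ is a chain map: using $d_K h_K + h_K d_K = 1_K$ on $K$ together with $d_K\delta = -\delta d_V$, one computes $d_W \widetilde{g} - \widetilde{g} d_V = \delta - (d_K h_K \delta + h_K d_K \delta) = \delta - (d_K h_K - h_K \delta d_V \circ \delta^{-1}\cdots)$ — more cleanly: $d_W\widetilde g - \widetilde g d_V = \delta - d_K h_K \delta - h_K \delta d_V = \delta - d_K h_K\delta + h_K d_K \delta = \delta - (d_Kh_K + h_Kd_K)\delta = \delta - \delta = 0$. Hence $\widetilde{g}$ is the desired lift, the left lifting property holds, $\emptyset \to V$ is a cofibration, and $V$ is cofibrant.

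The only genuinely non-formal input is the contractibility of the acyclic kernel $K$, which is exactly where the hypothesis $\mathrm{char}\,\k$ arbitrary but $\k$ a \emph{field} is used: over a field every subspace has a complement, so acyclic $\Rightarrow$ contractible and the degreewise splittings $\sigma$, $h_K$ exist. Over a general ring this fails (projectivity/injectivity issues intervene and not every complex is cofibrant), which is why the statement is phrased as a strengthening available ``since we are working over a field.'' Everything else is a routine homotopy-transfer-style manipulation, so I do not expect any real obstacle; the one point to be careful about is tracking the Koszul signs in the identity $d_W \varphi - \varphi d_V$ being a chain map into $K$, but these are exactly as in \cref{subsection:intro chain complexes}.
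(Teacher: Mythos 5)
Your proof is correct, but it takes a genuinely different route from the one in the text. The paper's argument is cellular: it writes an arbitrary chain complex over a field as a colimit of copies of the elementary complexes $D(n)=(\k x_n\oplus\k y_{n-1},\,dx_n=y_{n-1})$ and $S(n)=\k z_n$, invokes the closure of cofibrant objects under colimits, and then verifies the lifting property for $D(n)$ and $S(n)$ by hand (for $S(n)$ using that a trivial fibration is bijective on homology, for $D(n)$ using only surjectivity). You instead prove the left lifting property of $\emptyset\to V$ against an arbitrary trivial fibration $p\colon W\to W'$ directly, for every $V$ at once: you observe that $K=\ker p$ is acyclic, hence contractible over a field, choose a degreewise linear splitting $\sigma$ of $p$, and correct $\varphi=\sigma g$ by the term $h_K\delta$, where $\delta=d_W\varphi-\varphi d_V$ is the defect and $h_K$ a contracting homotopy of $K$. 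Both arguments use the field hypothesis in the same essential way (existence of complements), and both are complete. What your approach buys is uniformity --- no decomposition of $V$ and no appeal to the stability of cofibrations under colimits --- and it makes transparent exactly where acyclicity of the kernel, as opposed to mere surjectivity, is needed; what the paper's approach buys is a set of explicit generating (trivial) cofibrations, which is the standard packaging for this model structure. One cosmetic remark: the displayed computation of $d_W\widetilde g-\widetilde g\,d_V$ contains a garbled intermediate expression and two compensating sign slips ($+h_K\delta d_V$ versus $-h_K\delta d_V$, and then $+h_Kd_K\delta$ versus $-h_Kd_K\delta$); the clean version is $d_W\widetilde g-\widetilde g\,d_V=\delta-d_Kh_K\delta+h_K\delta d_V=\delta-d_Kh_K\delta-h_Kd_K\delta=\delta-\delta=0$, using $\delta d_V=-d_K\delta$. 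This does not affect the validity of the argument.
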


\begin{proof}
	For $n\in\mathbb{Z}$, denote by
	\[
	D(n)\coloneqq\k x_n\oplus\k y_{n-1}\qquad\text{and}\qquad S(n)\coloneqq\k z_n
	\]
	the chain complexes with $|x_n| = |z_n| = n$, $|y_n| = n$, and $dx_n = y_{n-1}$. Since $\k$ is a field, every chain complex can be written as a colimit of copies of $D(n)$ and $S(n)$, with varying $n$. Since the colimit of cofibrant objects is cofibrant, it is enough to show that $D(n)$ and $S(n)$ are cofibrant. Fix any trivial fibration $f:V\to W$, i.e. a surjective chain map which induces an isomorphism in homology. Given
	\begin{center}
		\begin{tikzpicture}
			\node (a) at (0,0){$S(n)$};
			\node (b) at (2.5,0){$W$};
			\node (c) at (2.5,2){$V$};
			
			\draw[->] (a) to node[below]{$g$} (b);
			\draw[->>] (c) to  node[sloped,yshift=.1cm]{$\sim$} node[left]{$f$} (b);
			\draw[->,dashed] (a) to node[above left]{$h$} (c);
		\end{tikzpicture}
	\end{center}
	we have the existence of the dashed lift $h$. Indeed, $g(z_n)$ represents a class in the homology of $W$. Since $f$ is bijective in homology, there exists a closed element $v\in V$ such that $f(v) = g(z_n) + dw$, for some $w\in W_{n+1}$. Let $\widetilde{w}\in V_{n+1}$ be any preimage of $w$ under $f$, whose existence is guaranteed by the fact that $f$ is surjective. Then $h(z_n)\coloneqq v-d\widetilde{w}$ is a well-defined chain map lifting $g$.
	
	\medskip
	
	Similarly, given
	\begin{center}
		\begin{tikzpicture}
		\node (a) at (0,0){$D(n)$};
		\node (b) at (2.5,0){$W$};
		\node (c) at (2.5,2){$V$};
		
		\draw[->] (a) to node[below]{$g$} (b);
		\draw[->>] (c) to  node[sloped,yshift=.1cm]{$\sim$} node[left]{$f$} (b);
		\draw[->,dashed] (a) to node[above left]{$h$} (c);
		\end{tikzpicture}
	\end{center}
	the dashed lift $h$ exists. To see this, simply take any preimage $v$ of $g(x_n)$ under $f$, and set $h(x_n)\coloneqq v$, and $h(y_n)\coloneqq dv$.
\end{proof}

For details about this model structure, as well as references to the original literature, we invite the reader to take a look at \cite[Sect. 2.3]{Hovey}.

\subsection{Operads}

In his article \cite[Sect. 6]{hin97homological}, Hinich introduced a model structure on the category of operads in chain complexes. It is given as follows.

\begin{theorem}[{\cite[Thm. 6.1.1]{hin97homological}}]
	There is a closed model structure on the category $\op$ of operads\index{Model category!of operads}\index{Hinich model structure!on operads} with
	\begin{itemize}
		\item the arity-wise quasi-isomorphisms as weak equivalences\index{Weak equivalences!of operads}, and
		\item the arity-wise surjections as fibrations\index{Fibrations!of operads}.
	\end{itemize}
	The cofibrations\index{Cofibrations!of operads} are the morphisms that have the left lifting property with respect to the class of trivial fibrations. In particular, all operads are fibrant\index{Fibrant!operads} in this model structure.
\end{theorem}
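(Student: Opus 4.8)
\medskip

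The plan is to obtain this model structure by transfer along the free--forgetful adjunction between $\S$-modules and operads. First I would equip $\Smod$ with the model structure in which a morphism is a weak equivalence, resp. a fibration, exactly when it is an arity-wise quasi-isomorphism, resp. an arity-wise surjection: since $\k$ has characteristic $0$ each group algebra $\k[\S_n]$ is semisimple, so $\k[\S_n]$-modules carry a cofibrantly generated model structure of this kind, and taking the product over $n\ge0$ makes $\Smod$ cofibrantly generated, with generating cofibrations the maps $\k[\S_m]\otimes S(n-1)\to\k[\S_m]\otimes D(n)$ concentrated in arity $m$ and generating acyclic cofibrations the maps $0\to\k[\S_m]\otimes D(n)$, where $D(n)$ and $S(n)$ are the chain complexes of the previous subsection. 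The free operad functor $\T$ is left adjoint to the forgetful functor $\op\to\Smod$; moreover $\op$ is complete (the forgetful functor is a right adjoint, hence creates limits) and cocomplete (it is the category of algebras over the finitary monad $\T$ on $\Smod$), and $\T$ commutes with filtered colimits. This puts us in position to invoke Kan's transfer theorem.

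To apply it, one declares a morphism of operads to be a weak equivalence, resp. a fibration, precisely when its underlying morphism of $\S$-modules is one, and checks the two standard hypotheses: a smallness condition, which holds because all objects of $\Smod$ are small ($\Smod$ being locally presentable) and $\T$ preserves smallness; and the \emph{acyclicity condition}, namely that every transfinite composition of pushouts of morphisms of the form $\T(j)$, with $j$ a generating acyclic cofibration of $\Smod$, is a weak equivalence. Since arity-wise quasi-isomorphisms are stable under filtered colimits of $\S$-modules, it is enough to treat a single such pushout.

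The main obstacle is exactly this acyclicity condition. As the generating acyclic cofibrations of $\Smod$ have source $0$ and $\T(0)=I$ is the initial operad, a pushout of $\T(j)$ along a map $I\to\P$ is just the coproduct $\P\amalg\T(M)$ in operads, with $M=\k[\S_m]\otimes D(n)$ a contractible $\S$-module, and one must show that the canonical map $\P\to\P\amalg\T(M)$ is a quasi-isomorphism. Here I would use the explicit description of $\P\amalg\T(M)$ as a direct sum, indexed by (reduced) trees, of tensor products of copies of $\P$ and of $M$ distributed along the vertices, filtered by the number of $M$-labelled vertices; the $0$-th layer of the filtration is $\P$ itself. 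On the associated graded, the layers of positive filtration carry a contracting homotopy assembled from the contracting homotopy of $D(n)$, and the point at which characteristic $0$ is essential is that the coinvariant functors $(-)_{\S_k}$ occurring in the composite product are exact, so this homotopy survives passage to coinvariants and the positive-filtration part of the associated graded is acyclic. It follows that $\P\to\P\amalg\T(M)$ is a quasi-isomorphism, and the acyclicity condition holds.

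Granting this, Kan's transfer theorem produces a cofibrantly generated model structure on $\op$ whose generating cofibrations, resp. acyclic cofibrations, are the images under $\T$ of those of $\Smod$; the two-out-of-three property and the closure of all three classes under retracts are part of the output. In it the weak equivalences and fibrations are the arity-wise quasi-isomorphisms and surjections as claimed, and the cofibrations are characterized by the left lifting property with respect to the acyclic fibrations (equivalently, the acyclic fibrations are the morphisms with the right lifting property against all cofibrations). Finally, every operad is fibrant: the forgetful functor, being a right adjoint, carries the terminal operad to the terminal $\S$-module $0$, and every morphism of $\S$-modules with target $0$ is arity-wise surjective, so every morphism $\P\to *$ in $\op$ is a fibration.
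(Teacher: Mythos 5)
The paper states this result purely as a recollection, with no proof of its own beyond the citation to Hinich, and your transfer argument is precisely the route taken in the cited source: one transfers the projective model structure on $\Smod$ along the free--forgetful adjunction $\T\dashv U$, with the entire weight of the proof resting on the acyclicity condition, i.e.\ on showing that $\P\to\P\amalg\T(M)$ is a quasi-isomorphism when $M$ is an acyclic $\S$-module. Your treatment of that step --- the tree-indexed decomposition of the coproduct, the filtration by the number of $M$-labelled vertices, and the exactness of $\S_n$-coinvariants in characteristic $0$ as the one essential use of the paper's standing hypothesis on $\k$ --- is correct and complete.
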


We call this model structure the \emph{Hinich model structure} on operads.

\begin{proposition}[{\cite[Prop. 38\footnote{This is Proposition 95 in the arXiv version of the article.}]{mv09}}]
	The cofibrant\index{Cofibrant!operads} objects in the category of operads are the retracts of quasi-free operads\footnote{Recall that a quasi-free operad is an operad of the form $\T(M)$ for some $\S$-module $M$, endowed with some differential.} whose generating $\S$-module $M$ admits an exhaustive filtration
	\[
	M_0\coloneqq\{0\}\subseteq\F^1M\subseteq\F^2M\subseteq\cdots\subseteq M = \colim_n\F^nM
	\]
	such that $d(\F^iM)\subseteq\T(\F^{i-1}M)$ and such that the inclusions $\F^iM\xhookrightarrow{}\F^{i+1}M$ are split monomorphisms whose cokernel is isomorphic to a free $\S$-module.
\end{proposition}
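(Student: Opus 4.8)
The plan is to exploit the fact that the Hinich model structure on $\op$ is cofibrantly generated: it is obtained by right induction along the free operad functor $\T\colon\Smod\to\op$ from the arity-wise model structure on $\Smod$, and accordingly a set of generating cofibrations is obtained by applying $\T$ to the morphisms of $\S$-modules which in a single arity $n$ are $\k[\S_n]\otimes(S(m-1)\hookrightarrow D(m))$ (and zero in all other arities), with $S(m)$ and $D(m)$ as in the proof above that all chain complexes are cofibrant. By the small object argument, for any cofibrant operad $\Q$ the map $I\to\Q$ out of the initial operad $I=\T(0)$ factors as $I\to\Q^{\mathrm{cell}}\to\Q$, where $\Q^{\mathrm{cell}}$ is built from $I$ by a (possibly transfinite) composition of pushouts of coproducts of generating cofibrations and $\Q^{\mathrm{cell}}\to\Q$ is a trivial fibration; since $I\to\Q$ is a cofibration it lifts against $\Q^{\mathrm{cell}}\to\Q$, exhibiting $\Q$ as a retract of $\Q^{\mathrm{cell}}$. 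Hence it suffices to prove: \textbf{(i)} every such cellular operad is quasi-free on an $\S$-module carrying a filtration of the stated form; and \textbf{(ii)} conversely, every quasi-free operad equipped with such a filtration is cofibrant --- whence so are its retracts, since cofibrant objects are closed under retracts.

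For \textbf{(i)}, the key point is a single pushout computation. If $\Q'=(\T(M'),d')$ is quasi-free and one attaches a coproduct of cells, the resulting pushout in $\op$ is again quasi-free, of the form $(\T(M'\oplus E),d)$, where $E$ is the free $\S$-module generated by the new cells, $d$ restricts to $d'$ on $\T(M')$, and on $E$ it is the internal acyclic differential together with the adjoint of the attaching map, which takes values in $\Q'=\T(M')$. One verifies this by filtering the candidate operad $\T(M'\oplus E)$ by the number of vertices labelled in $E$ and checking the universal property against the tree description of free operads together with the presentation of pushouts of algebras over an operad as reflexive coequalizers. Iterating along the cell structure of $\Q^{\mathrm{cell}}$, one lets $\F^iM$ be the sub-$\S$-module spanned by the generators introduced up to stage $i$; the inclusions $\F^iM\hookrightarrow\F^{i+1}M$ are direct-sum inclusions of graded $\S$-modules, hence split monomorphisms, with cokernel the free $\S$-module $E_i$ of the layer of cells attached at stage $i+1$; and since the attaching maps at stage $i+1$ land in $\Q^{\mathrm{cell}}$ assembled from the earlier stages, i.e. in $\T(\F^iM)$, one obtains $d(\F^{i+1}M)\subseteq\T(\F^iM)$. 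With the $S(m-1)\hookrightarrow D(m)$-cells new generators are adjoined one at a time, so this filtration condition is immediate; one must simply take a little care with the bookkeeping at limit ordinals.

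For \textbf{(ii)}, let $\Q=(\T(M),d)$ with $M=\colim_i\F^iM$, each $\F^iM\hookrightarrow\F^{i+1}M$ a split monomorphism with free cokernel $E_i$, and $d(\F^iM)\subseteq\T(\F^{i-1}M)$. We show $I\to\Q$ has the left lifting property against any trivial fibration $p\colon X\to Y$ of operads, i.e. any arity-wise surjective quasi-isomorphism. By the free--forgetful adjunction, a lift is the same datum as a morphism of $\S$-modules $M\to X$ compatible with the operad differentials and lifting the prescribed map to $Y$; we build it by induction along the filtration, the base case being the unique map out of $I$. Given the map on $\F^iM$ and the induced operad morphism $\T(\F^iM)\to X$, extending it over $\F^{i+1}M=\F^iM\oplus E_i$ amounts --- because $E_i$ is free, so values may be prescribed freely on a set of $\S$-module generators --- to choosing, for each such generator $e$, an element of $X$ of the correct degree whose differential is the already-determined image of $de\in\T(\F^iM)$ and which maps to the specified value in $Y$. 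Such an element exists precisely because $p$ is a surjective quasi-isomorphism: this is exactly the lifting computation in the proof above that all chain complexes are cofibrant, run generator by generator. Passing to the colimit over $i$ produces the desired lift $\T(M)\to X$, so $\Q$ is cofibrant.

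The step I expect to be the main obstacle is the pushout computation in \textbf{(i)}. Unlike (trivial) fibrations and weak equivalences, pushouts in $\op$ are not computed arity by arity; they must be analyzed through the free operad on a coproduct of $\S$-modules and through reflexive coequalizers, so the claim that attaching a cell to a quasi-free operad merely adjoins the new generators while recording the attaching map in the differential is the genuinely technical ingredient. By contrast, the transfinite bookkeeping (the behaviour of the filtration and the cofinality of the cell stages at limit ordinals) is routine, and the lifting argument in \textbf{(ii)} reduces cell by cell to the chain-level statement already established in the excerpt.
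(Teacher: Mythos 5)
The paper does not prove this proposition; it is quoted directly from Merkulov--Vallette with a citation and no argument. Your proposal reproduces the standard proof from that source --- cofibrant generation of the Hinich model structure transferred along $\T$, the small object argument exhibiting a cofibrant operad as a retract of a cellular one, the pushout computation showing that attaching a layer of cells to a quasi-free operad adjoins free generators whose differentials land in the previous stage, and the filtration-wise lifting against surjective quasi-isomorphisms for the converse --- and it is correct in outline, with the genuinely technical point (the pushout of a cell attachment in $\op$ not being computed arity-wise) correctly identified.
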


The following corollary motivates various constructions seen in \cref{ch:operads}.

\begin{corollary}\label{cor:cobar-bar is cofibrant resolution}
	Let $\P$ be an operad. The bar-cobar resolution\index{Bar-cobar resolution!of operads} $\Cobar\Bar\P$ is a functorial cofibrant resolution of $\P$. Moreover, if $\P$ is Koszul, then the minimal resolution $\P_\infty$ of $\P$ also is a cofibrant resolution of $\P$.
\end{corollary}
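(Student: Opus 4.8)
The plan is to read both statements off from two facts already established: the bar-cobar resolution theorem of \cref{sect:bar-cobar resolution of operads}, and the characterisation of cofibrant operads of \cite{mv09} (cofibrant $=$ retract of a suitably filtered quasi-free operad). So I would argue in three steps — produce the weak equivalence, upgrade it to a trivial fibration by checking surjectivity, and verify that the source is cofibrant. For the first part: the counit $\varepsilon\colon\Cobar\Bar\P\to\P$ is a quasi-isomorphism of operads by the bar-cobar resolution theorem, and it is surjective in each arity. Indeed $\varepsilon$ is a morphism of operads, hence determined by its restriction to the generating $\S$-module $s^{-1}\overline{\Bar\P}$; on the weight-one summand $s^{-1}(s\overline{\P})$ this restriction is, after the identification $s^{-1}s\cong\id$, the inclusion $\overline{\P}\hookrightarrow\P$, and since $\overline{\P}$ together with the operadic unit generates $\P$ the suboperad generated by the image of $\varepsilon$ is all of $\P$. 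Thus $\varepsilon$ is a trivial fibration in the Hinich model structure, and functoriality of the assignment $\P\mapsto(\Cobar\Bar\P\to\P)$ is immediate from functoriality of $\Bar$ and $\Cobar$. It then only remains to see that $\Cobar\Bar\P$ is cofibrant.

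For cofibrancy I would first note that $\Bar\P$ is itself a reduced cooperad: as $\P$ is reduced, $\overline{\P}$ is concentrated in arities $\ge 2$, so a tree in $\T^c(s\overline{\P})$ with $k$ vertices has arity at least $k+1$, whence $(\Bar\P)(0)=0$ and $(\Bar\P)(1)=\k$. Then $\Cobar\Bar\P$ is cofibrant by the general fact, recalled in the remark following the bar-cobar resolution theorem, that the cobar construction of a reduced (more generally, connected weight graded) cooperad is cofibrant. Spelling this out, I would filter the generating $\S$-module $M\coloneqq s^{-1}\overline{\Bar\P}$ by the weight it inherits from the connected weight grading of $\Bar\P$, check that the quadratic part of the cobar differential strictly lowers this filtration on generators while the linear part is the internal differential of $\Bar\P$, and conclude via the characterisation of cofibrant operads that $\Cobar\Bar\P=(\T(M),d)$ is a retract of a quasi-free operad of the required form, the successive cokernels of the filtration being retracts of free $\S$-modules because we are in characteristic $0$. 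This gives the first assertion.

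For the minimal resolution, assume $\P$ is Koszul. Then $\P_\infty=\Cobar\P^{\antishriek}$ is the minimal model of $\P$ by the Koszul duality theory of \cref{subsect:Koszul duality}; in particular $\P_\infty\to\P$ is a surjective quasi-isomorphism, i.e. a trivial fibration. Since $\P_\infty$ is a minimal operad it is quasi-free, $\P_\infty=(\T(E),d)$, with decomposable differential and a weight decomposition $E=\bigoplus_{k\ge1}E^{(k)}$ satisfying $d(E^{(k+1)})\subseteq\T(\bigoplus_{i=1}^{k}E^{(i)})$. Taking $\F^{n}E\coloneqq\bigoplus_{i=1}^{n}E^{(i)}$ yields an exhaustive filtration of $E$ by direct summands — hence split monomorphisms — with $d(\F^{n}E)\subseteq\T(\F^{n-1}E)$ and with successive cokernels $E^{(n+1)}$ that are retracts of free $\S$-modules in characteristic $0$; by the characterisation of cofibrant operads, $\P_\infty$ is cofibrant, so together with the trivial fibration $\P_\infty\to\P$ it is a cofibrant resolution of $\P$.

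The step I expect to be the real obstacle is the cofibrancy of $\Cobar\Bar\P$: because the internal differential of $\Bar\P$ does not decrease the obvious weight filtration, one cannot present $\Cobar\Bar\P$ as a minimal operad the way one does for $\P_\infty$, and one must instead go through the more flexible ``retract of a cellular quasi-free operad'' description, being careful with the characteristic-zero identification of projective $\S$-modules with retracts of free ones. Everything else is bookkeeping, and in the write-up I would most likely simply cite the remark following the bar-cobar resolution theorem for this point rather than redo its proof.
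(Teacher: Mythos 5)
Your argument is correct and is exactly the derivation the paper intends: the corollary is stated there without proof, as an immediate consequence of the counit $\Cobar\Bar\P\to\P$ being a surjective quasi-isomorphism and of the quasi-free-plus-filtration criterion for cofibrancy, which is precisely your three steps (and your treatment of $\P_\infty$ via the minimality filtration is the standard one). On the one point you flag as the real obstacle: the naive weight filtration on $M=s^{-1}\overline{\Bar\P}$ does indeed fail the condition $d(\F^iM)\subseteq\T(\F^{i-1}M)$, because the summand of the differential induced by $d_\P$ preserves the weight; the standard repair, available since $\k$ has characteristic $0$, is to refine the filtration by inserting between weights $w-1$ and $w$ the subspace of weight-$w$ generators annihilated by this weight-preserving part (every chain complex over a field splits), after which the total differential strictly lowers the refined filtration, the inclusions are still split, and the successive quotients remain projective, hence retracts of free $\S$-modules, which is all the retract formulation of the cofibrancy criterion requires.
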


\subsection{Cooperads}

There are various model structures one can put on the category of cooperads. A good example, in the same spirit as the Vallette model structure that we will see in \cref{subsection:Vallette model structure on coalgebras}, can be found in \cite[Sect. 3]{leg17}.

\subsection{Algebras over operads}\label{subsection:Hinich model structure on algebras}

A model structure on algebras over an operad was also given by Hinich in \cite[Sect. 4]{hin97homological}. Notice that, since our base field $\k$ has characteristic $0$, every operad is $\S$-split. Fix an operad $\P$.

\begin{theorem}[{\cite[Thm. 4.1.1]{hin97homological}}]
	There is a closed model structure on the category $\Palg$ of $\P$-algebras\index{Model category!of $\P$-algebras}\index{Hinich model structure!on $\P$-algebras} with
	\begin{itemize}
		\item the quasi-isomorphisms of $\P$-algebras as weak equivalences\index{Weak equivalences!of $\P$-algebras}, and
		\item the surjective morphisms of $\P$-algebras as fibrations\index{Fibrations!of $\P$-algebras}.
	\end{itemize}
	The cofibrations\index{Cofibrations!of $\P$-algebras} are the morphisms that have the left lifting property with respect to the class of trivial fibrations. In particular, all $\P$-algebras are fibrant\index{Fibrant!$\P$-algebras} in this model structure.
\end{theorem}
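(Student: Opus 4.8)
The plan is to obtain this model structure by transferring the projective model structure on $\chain$ along the free--forgetful adjunction $\P(-)\colon\chain\rightleftarrows\Palg\colon\forget{(-)}$, using the standard recognition theorem for cofibrantly generated model structures (Kan's lifting theorem). First I would recall that $\chain$ is cofibrantly generated: the generating cofibrations are the inclusions $S(n-1)\hookrightarrow D(n)$ and the generating trivial cofibrations are the maps $0\to D(n)$, where $D(n)=\k x_n\oplus\k y_{n-1}$ with $dx_n=y_{n-1}$ and $S(n)=\k z_n$, as in the proof that all chain complexes are cofibrant. One then \emph{defines} a morphism of $\P$-algebras to be a weak equivalence, resp.\ a fibration, exactly when its underlying chain map is one, and defines cofibrations by the left lifting property against trivial fibrations, as in the statement. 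It then suffices to check the hypotheses of the transfer theorem.

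Completeness and cocompleteness of $\Palg$ is the theorem of Getzler--Jones recalled earlier in this chapter. The forgetful functor $\forget{(-)}$ preserves (indeed creates) filtered colimits, since the endofunctor $\P(-)$ of $\chain$ is built from finite tensor powers, direct sums and quotients by finite groups, all of which commute with filtered colimits; hence the small object argument applies to the sets of maps $\P(S(n-1)\hookrightarrow D(n))$ and $\P(0\to D(n))$, producing the two functorial factorizations required by \cref{axiom:factorization}. The retract axiom and the two-out-of-three axiom \cref{axiom:2-out-of-3} are immediate because weak equivalences are detected on underlying chain complexes. One half of the lifting axiom \cref{axiom:lifting properties} holds by the very definition of cofibrations; the other half reduces, via the usual retract argument, to showing that every relative $\P(0\to D(n))$-cell complex is a weak equivalence.

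The hard part will be exactly this acyclicity condition: that a pushout in $\Palg$ of a free map $\P(0\to D(m))$ along an arbitrary $\P(D(m))\to A$ has underlying chain map a quasi-isomorphism, and that transfinite composites of such maps remain quasi-isomorphisms. The essential input is that $\k$ has characteristic $0$, so every operad is $\S$-split: one filters the pushout $A'=A\sqcup_{\P(D(m))}\P(0)$ by the number of newly adjoined generators and identifies the associated graded with summands of the shape $\bigl(\P(k)\otimes A^{\otimes(k-r)}\otimes D(m)^{\otimes r}\bigr)_{\S_r}$, and since $D(m)$ is contractible and taking $\S_r$-coinvariants is exact in characteristic $0$, every layer with $r\ge 1$ is acyclic, whence $A\to A'$ is a quasi-isomorphism; transfinite composition is handled by the same filtration together with the fact that homology commutes with filtered colimits. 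Finally, the terminal object of $\Palg$ is the zero algebra and every morphism $A\to 0$ is trivially surjective, so every $\P$-algebra is fibrant. For the detailed verification of the acyclicity step, I would refer to \cite[Sect.\ 4]{hin97homological}.
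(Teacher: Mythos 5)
Your proposal is correct and follows essentially the same route as the cited source: the paper gives no proof of its own and simply refers to Hinich, whose argument is exactly the transfer of the projective model structure on $\chain$ along the free--forgetful adjunction, with the acyclicity of cell attachments established by the characteristic-zero ($\S$-split) filtration argument you describe. One small slip worth fixing: since $\P(0)$ is the initial $\P$-algebra, the pushout of $\P(0\to D(m))$ is simply the coproduct $A\sqcup\P(D(m))$ (there is no choice of attaching map, and the correct notation is $A\sqcup_{\P(0)}\P(D(m))$ rather than $A\sqcup_{\P(D(m))}\P(0)$), but this does not affect the filtration argument or its conclusion.
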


We call this model structure the \emph{Hinich model structure} on $\P$-algebras.

\begin{lemma}
	Let $\alpha:\C\to\P$ be a Koszul twisting morphism. Then every $\P$-algebra of the form $\Cobar_\alpha C$, where $C$ is a conilpotent $\C$-coalgebra, is cofibrant.
\end{lemma}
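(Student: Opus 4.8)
The plan is to prove the slightly stronger fact that the unique morphism from the initial $\P$-algebra $0$ to $\Cobar_\alpha C$ has the left lifting property with respect to every trivial fibration, by constructing the required lifts inductively along the coradical filtration of $C$. (Koszulness of $\alpha$ will play no role in this argument.)

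First I would record the key structural input. Since $\C$ and $\P$ are reduced, $\C(1)=\P(1)=\k$ are concentrated in degree $0$, so the degree $-1$ map $\alpha$ vanishes in arity $1$; combined with \cref{lemma:property of coradical filtration} this shows that the twisting part $d_2$ of the differential of $\Cobar_\alpha C=(\P(C),d_1+d_2)$ strictly lowers the coradical filtration, $d_2(\F^n_\C C)\subseteq\P(\F^{n-1}_\C C)$, while $d_1$ preserves each $\F^n_\C C$ because $\Delta_C$ is a chain map. Hence the free sub-$\P$-algebras $A_n:=\P(\F^n_\C C)\subseteq\P(C)$ are sub-dg-$\P$-algebras, with $A_0=\P(0)=0$, inclusions $A_{n-1}\hookrightarrow A_n$, and $\Cobar_\alpha C=\colim_n A_n$ in $\Palg$ (the Schur functor and the forgetful functor $\Palg\to\chain$ both preserve filtered colimits). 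Over the ground field we may also choose graded complements $\F^n_\C C=\F^{n-1}_\C C\oplus W_n$, so that the underlying graded $\P$-algebra of $A_n$ is the coproduct $A_{n-1}\sqcup\P(W_n)$.

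Now fix a trivial fibration $p\colon A\twoheadrightarrow B$ in $\Palg$, that is a surjective quasi-isomorphism, so that $K:=\ker p$ is an acyclic chain complex, and fix $f\colon\Cobar_\alpha C\to B$. I would build a lift $\widetilde f\colon\Cobar_\alpha C\to A$ with $p\widetilde f=f$ by defining it on the $A_n$ by induction, the case $n=0$ being trivial. Given $\widetilde f$ on $A_{n-1}$, extending it over $A_n=A_{n-1}\sqcup\P(W_n)$ amounts to choosing a degree $0$ linear map $\widetilde f|_{W_n}\colon W_n\to A$, and since $d_A\widetilde f-\widetilde f d$ is an $\widetilde f$-derivation, compatibility with differentials need only be checked on $W_n$. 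Writing $dw=\delta w+\rho(w)$ for $w\in W_n$, where $\delta w\in W_n$ is the $W_n$-component of $d_C w$ (so $\delta^2=0$) and $\rho(w)\in A_{n-1}$, the lift is required to solve $d_A\widetilde f(w)=\widetilde f(\delta w)+\zeta(w)$ and $p\widetilde f(w)=f(w)$, where $\zeta(w):=\widetilde f(\rho(w))$ is already determined. Decomposing the chain complex $(W_n,\delta)$ over the field into one-dimensional complexes and two-term acyclic complexes and solving summand by summand (treating the $\delta$-closed generators first), one uses $d^2=0$ on $\Cobar_\alpha C$ — in the guise $d\rho(w)=-d(\delta w)=-\rho(\delta w)$ — to see that the relevant right-hand sides are $d_A$-closed; then a diagram chase produces $\widetilde f(w)$: choose a preimage under the surjection $p$, and correct the resulting error, which lies in $K$ and is closed, by a $d_A$-primitive in the acyclic complex $K$.

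Passing to the colimit over $n$ yields the lift, so $\Cobar_\alpha C$ is cofibrant. The only genuinely delicate point is the bookkeeping in the inductive step: one must keep track of which summands of $dw$ and of $\zeta(w)$ already lie in $A_{n-1}$ and confirm the closedness needed to invoke acyclicity of $K$, which is precisely where $\delta^2=0$ and the relation $d\rho(w)=-\rho(\delta w)$ enter. I would also remark that, once the later machinery is available, there is a one-line alternative: since $\alpha$ is Koszul, $\Cobar_\alpha$ is the left adjoint of a Quillen adjunction for the Vallette model structure on $\Ccog$ (\cref{subsection:Vallette model structure on coalgebras}), in which every coalgebra is cofibrant, and $\Cobar_\alpha$ preserves the initial object, so $\emptyset=\Cobar_\alpha(\emptyset)\to\Cobar_\alpha C$ is a cofibration.
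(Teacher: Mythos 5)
Your argument is correct, but it is not the route the paper takes: the paper's entire proof is the one-line alternative you relegate to a closing remark, namely that by \cref{thm:Vallette model structure} every conilpotent $\C$-coalgebra is cofibrant in the Vallette model structure, $\Cobar_\alpha$ is a left Quillen functor for that structure, and it sends the initial coalgebra to the initial $\P$-algebra, so $\Cobar_\alpha C$ is the image of a cofibrant object under a left Quillen functor. Your main argument --- filtering $\P(C)$ by the free subalgebras $\P(\F^n_\C C)$, observing that the twisting part of the differential strictly drops the coradical filtration while the internal part preserves it, and then building lifts against trivial fibrations generator by generator after splitting $(W_n,\delta)$ into one-dimensional and two-term acyclic complexes --- is a correct, self-contained proof; it is essentially the standard verification that a quasi-free algebra with a suitably filtered (``triangulated'') differential is cofibrant in the Hinich model structure, adapted to the wrinkle that the internal differential on generators need not lower the filtration, which your splitting of $(W_n,\delta)$ over the ground field handles correctly. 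What your route buys is independence from the existence of the Vallette model structure (itself a nontrivial theorem), an explicit construction of the lifts, and a transparent demonstration that Koszulness of $\alpha$ is irrelevant; what the paper's route buys is brevity, at the cost of invoking \cref{thm:Vallette model structure} as a black box --- which, as you note, also does not actually require $\alpha$ to be Koszul in the relevant point, so the hypothesis is superfluous either way.
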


\begin{proof}
	This follows immediately from \cref{thm:Vallette model structure}.
\end{proof}

\subsection{The Vallette model structure for coalgebras over a cooperad}\label{subsection:Vallette model structure on coalgebras}

Let $\alpha:\C\to\P$ be an operadic twisting morphism. The Hinich model structure on $\P$-algebra can be transfered along the relative cobar functor $\Cobar_\alpha$ to give a closed model structure on $\C$-coalgebras. This was done by Vallette \cite{val14} in the case where $\P$ is a Koszul operad and $\alpha=\kappa$ is the twisting morphism given by Koszul duality, and then generalized by Drummond-Cole--Hirsch \cite{dch16} and Le Grignou \cite{leg16}.

\medskip

The main results are the following ones.

\begin{theorem}\label{thm:Vallette model structure}
	Let $\alpha:\C\to\P$ be an operadic twisting morphism.
	\begin{enumerate}
		\item\label{pt:all coalgebras are cofibrant} \emph{\cite[Thm. 2.1(1)]{val14} and \cite[Thm. 10 and Prop. 26]{leg16}} There is a closed model structure on the category $\cCcog$ of conilpotent $\C$-coalgebras\index{Model category!of conilpotent $\C$-coalgebras}\index{Vallette model structure} with
		\begin{itemize}
			\item the morphisms $g:C\to D$ of $\C$-coalgebras such that $\Cobar_\alpha g$ is a quasi-isomorphism as weak equivalences\index{Weak equivalences!of conilpotent $\C$-coalgebras}, and
			\item the injective morphisms of $\C$-coalgebras as cofibrations\index{Cofibrations!of conilpotent $\C$-coalgebras}.
		\end{itemize}
		The fibrations are the morphisms that have the right lifting property with respect to the class of trivial cofibrations. In particular, all $\C$-coalgebras are cofibrant in this model structure.\index{Cofibrant!conilpotent $\C$-coalgebras}
		\item\label{pt:Vms2} \emph{\cite[Prop. 32]{leg16}} If $\alpha'=f\alpha$ with $f:\P\to\P'$ a quasi-isomorphism of operads, then the model structure induced by $\alpha'$ coincides with the model structure induced by $\alpha$.
		\item\label{pt:Vms3} \emph{\cite[Thm. 2.1(2)]{val14} and \cite[Thm. 14]{leg16}} Let $\iota:\C\to\Cobar\C$ be the canonical twisting morphism. If $\alpha$ is Koszul, then the fibrant objects\index{Fibrant!conilpotent $\C$-coalgebras} are exactly the conilpotent $\C$-coalgebras isomorphic to a $\C$-coalgebra of the form $\Bar_\iota A$ for some $\Cobar\C$-algebra $A$.
		\item\label{pt:Vms4} Every $\C$-coalgebra of the form $\Bar_\alpha A$ for $A$ any $\P$-algebra is fibrant.
		\item \emph{\cite[Thm. 2.1(3)]{val14} and \cite[Thm. 13]{leg16}} The bar-cobar adjunction relative to $\iota:\C\to\Cobar\C$
		\[
		\adjunction{\Cobar_\iota}{\cCcog}{\Cobar\C\text{-}\mathsf{alg}}{\Bar_\iota}
		\]
		is a Quillen equivalence.
	\end{enumerate}
\end{theorem}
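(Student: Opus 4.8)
The plan is to treat the five assertions as a single package: prove the existence statement (\ref{pt:all coalgebras are cofibrant}) by a transfer of model structure along the bar--cobar adjunction, and then obtain (\ref{pt:Vms2}), (\ref{pt:Vms4}), (\ref{pt:Vms3}) and the Quillen equivalence as essentially formal consequences, following \cite{val14} and \cite{leg16}.

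For (\ref{pt:all coalgebras are cofibrant}) I would transfer the Hinich model structure of \cref{subsection:Hinich model structure on algebras} along the adjunction $\Cobar_\alpha\dashv\Bar_\alpha$ of \cref{thm:Rosetta stone algebras}, with the twist that one \emph{prescribes} the cofibrations (all monomorphisms of $\C$-coalgebras) and the weak equivalences (the maps $g$ with $\Cobar_\alpha g$ a quasi-isomorphism), and then \emph{defines} fibrations by the right lifting property. The first axiom is the completeness and cocompleteness of $\cCcog$ recorded above \cite{gj94}; (\ref{axiom:2-out-of-3}) is immediate since $\Cobar_\alpha$ is a functor and quasi-isomorphisms satisfy $2$-out-of-$3$; closure under retracts is formal. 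The substance is in (\ref{axiom:lifting properties})--(\ref{axiom:factorization}). The key lemma I would establish first, purely on the algebra side, is that $\Cobar_\alpha$ sends a monomorphism $C\hookrightarrow D$ to a cofibration of $\P$-algebras: choosing a graded complement $V$ of $C$ in $D$ and filtering $\Cobar_\alpha(C\hookrightarrow D)$ by the number of tensor factors landing in $V$ (compatibly with $d_\alpha$, as in the comparison lemma for twisted composite products), one realizes it as a transfinite composite of pushouts of free $\P$-algebra maps. Combined with the definition of the weak equivalences this makes $\Cobar_\alpha$ a left Quillen functor that moreover carries trivial cofibrations to trivial cofibrations, and the factorizations on the coalgebra side are then produced by applying $\Bar_\alpha$ to the functorial factorizations in $\Palg$, using the bar--cobar composite as a functorial fibrant replacement and path objects built from those of $\Palg$. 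Since the map from the initial object into any $\C$-coalgebra is a monomorphism, all objects are cofibrant.

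Statement (\ref{pt:Vms2}) I would get by comparing $\Cobar_\alpha C$ and $\Cobar_{\alpha'}C$: the quasi-isomorphism $f\colon\P\to\P'$ induces a natural morphism of $\P$-algebras $\Cobar_\alpha C\to\Cobar_{\alpha'}C$ which is a quasi-isomorphism for every $C$, by a filtration/spectral-sequence argument along the coradical filtration of $C$ exactly as for twisted composite products; hence $\alpha$ and $\alpha'$ determine the same weak equivalences and (by fiat) the same cofibrations, so the same model structure. For (\ref{pt:Vms4}): $\Cobar_\alpha$ being left Quillen, $\Bar_\alpha$ is right Quillen and preserves fibrant objects, and every $\P$-algebra is fibrant, so every $\Bar_\alpha A$ is fibrant. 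For (\ref{pt:Vms3}), one inclusion is (\ref{pt:Vms4}) applied to the Koszul twisting morphism $\iota\colon\C\to\Cobar\C$; for the converse, given a fibrant $C$ I would use that $\iota$ is Koszul to conclude that the unit $\eta_C\colon C\to\Bar_\iota\Cobar_\iota C$ is an injective weak equivalence (this refines \cref{thm:alpha Koszul iff unit is qi}; cf. \cref{corollary:unit of bar-cobar is a we in the Vallette model structure}), hence a trivial cofibration, so lifting $\eta_C$ against $C\to *$ exhibits $C$ as a retract of $\Bar_\iota\Cobar_\iota C$; finally a retract of $\Bar_\iota A$ is again of the form $\Bar_\iota A'$, since an idempotent of $\Bar_\iota A$ restricts to an idempotent of the $\Cobar\C$-algebra $A$ whose image $A'$ satisfies $\Bar_\iota A'\cong$ the corresponding retract (images are preserved by $\C(-)$ over a field).

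The last item is then formal: $\Cobar_\iota\dashv\Bar_\iota$ is a Quillen adjunction by the above, and for a $\C$-coalgebra $C$ (automatically cofibrant) and a $\Cobar\C$-algebra $A$ (automatically fibrant) a morphism $f^\flat\colon\Cobar_\iota C\to A$ equals $\epsilon_A\circ\Cobar_\iota(f^\sharp)$, where $f^\sharp\colon C\to\Bar_\iota A$ is the adjunct and $\epsilon_A$ is a quasi-isomorphism because $\iota$ is Koszul (\cref{thm:bar-cobar resolution for algebras}); by $2$-out-of-$3$, $f^\flat$ is a weak equivalence if and only if $\Cobar_\iota(f^\sharp)$ is a quasi-isomorphism, i.e. if and only if $f^\sharp$ is a weak equivalence in $\cCcog$, which is the defining condition for a Quillen equivalence. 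The step I expect to be the main obstacle is (\ref{axiom:lifting properties})--(\ref{axiom:factorization}) in part (\ref{pt:all coalgebras are cofibrant}): since the cofibrations are \emph{all} monomorphisms rather than a visibly set-generated class, the off-the-shelf transfer theorem does not apply, and one must build the factorizations and the path objects on fibrant coalgebras by hand out of the bar--cobar constructions — this is the technical heart of \cite{val14} and \cite{leg16}.
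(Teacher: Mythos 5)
Your proposal is sound, but note that the paper treats this theorem almost entirely as a citation: points (\ref{pt:all coalgebras are cofibrant}), (\ref{pt:Vms2}), (\ref{pt:Vms3}) and the Quillen equivalence are attributed directly to \cite{val14} and \cite{leg16}, and the only assertion the paper actually proves is (\ref{pt:Vms4}). There your route genuinely diverges. You deduce (\ref{pt:Vms4}) from the fact that $\Cobar_\alpha$ is left Quillen for an arbitrary twisting morphism $\alpha$ (it creates the weak equivalences and sends monomorphisms to cofibrations of $\P$-algebras), so that $\Bar_\alpha$ is right Quillen and carries the everywhere-fibrant $\P$-algebras to fibrant coalgebras. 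The paper instead factors $\alpha=g_\alpha\iota$ through the universal twisting morphism $\iota$ (\cref{thm:universal twisting morphisms}), invokes the identity $\Bar_\alpha A=\Bar_\iota(g_\alpha^*A)$ of \cref{lemma:equality of oo-morphisms}, and concludes by point (\ref{pt:Vms3}). Your argument is more economical, needing neither the characterization of fibrant objects nor the Koszulity of $\iota$; the paper's has the advantage of exhibiting $\Bar_\alpha A$ explicitly in the normal form $\Bar_\iota(-)$ required by (\ref{pt:Vms3}). Both are correct. The one place where your sketch is thinner than the sources require is the converse inclusion in (\ref{pt:Vms3}): showing that a retract of $\Bar_\iota A$ is again quasi-free takes more than ``images are preserved by $\C(-)$'' --- the image of an idempotent endomorphism of $\Bar_\iota A$ is not literally of the form $\C(A')$ as a subspace, and constructing the isomorphism is the technical content of \cite[Thm. 2.1(2)]{val14}. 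Since you, like the paper, ultimately lean on that reference, this is a presentational rather than a mathematical gap.
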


\begin{proof}
	The only point which might not be immediately clear from the references given above is the point (\ref{pt:Vms4}). By \cref{thm:fundamental thm of operadic twisting morphisms}, the twisting morphism $\alpha$ splits into $\alpha = g_\alpha\iota$. Then one has
	\[
	\Bar_\alpha A = \Bar_\iota(g_\alpha^*A)
	\]
	by \cref{lemma:equality of oo-morphisms}, and we conclude by point (\ref{pt:Vms3}).
\end{proof}

We call this model structure the \emph{Vallette model structure} on conilpotent $\C$-coalgebras. When $\alpha$ is Koszul\footnote{In which case, by \cref{thm:fundamental thm of operadic twisting morphisms} and \cref{thm:Vallette model structure}(\ref{pt:Vms2}), reduces to studying the model structure induced by the Koszul morphism $\iota:\C\to\Cobar\C$.}, we can also completely characterize the class of weak equivalences. All the accessory results we need were proven\footnote{For $\P$ a Koszul morphism and $\alpha=\kappa:\P^{\antishriek}\to\P_\infty$, but the proofs readily generalize to the slightly more general case we desire.} in \cite{val14}, but the conclusion is original work.

\begin{definition} \label{def:cofiltered coalgebras}
	Let $C$ be a conilpotent $\C$-coalgebra. A \emph{cofiltration}\index{Cofiltration on a $\C$-coalgebras} on $C$ is a sequence of sub-chain complexes
	$$0=\F^0C\subseteq \F^1C\subseteq \F^2C\subseteq\ldots\subseteq C$$
	satisfying \cite[Prop. 2.2]{val14}. Namely, we require that
	\begin{enumerate}
		\item it respects the coproduct, that is
		\[
		\overline{\Delta}_C(\F^nC)\subseteq\bigoplus_{\substack{k\ge1\\n_1+\cdot+n_k = n}}\left(\C(k)\otimes \F^{n_1}C\otimes\cdots\otimes \F^{n_k}C\right)^{\S_k},
		\]
		where $\overline{\Delta}_C(c)\coloneqq\Delta_C(c)-c$, and
		\item it is preserved by the differential: $d_C(\F^nC)\subseteq \F^nC$.
		\setcounter{counter}{\value{enumi}}
	\end{enumerate}
	Moreover, if
	\begin{enumerate}
		\setcounter{enumi}{\value{counter}}
		\item the cofiltration is exhaustive: $\colim_n \F^nC\cong C$,
	\end{enumerate}
	then we say that $C$ is a cocomplete coalgebra.
\end{definition}

\begin{remark}
	This is also called a filtered coalgebra in the literature. To attempt to have more clarity, we decided to call filtrations the descending filtrations and cofiltrations the ascending filtrations.
\end{remark}

\begin{example}
	A cofiltration that exists for any conilpotent $\P^{\antishriek}$-coalgebra $C$ is the coradical filtration introduced in \cref{subsection:(co)algebras over (co)operads}.
\end{example}

\begin{definition}
	Let $C_1,C_2$ be two conilpotent $\C$-coalgebras, and let $\phi:C_1\to C_2$ be a morphism of $\C$-coalgebras.
	\begin{enumerate}
		\item Let $\F^\bullet C_1$ and $\F^\bullet C_2$ be filtrations. The morphism $\phi$ is \emph{cofiltered}\index{Cofiltered!morphism of coalgebras} with respect to the cofiltrations if for each $n\ge0$ we have $\phi(\F^nC_1)\subseteq \F^nC_2$.
		\item The morphism $\phi$ is a \emph{cofiltered quasi-isomorphism}\index{Cofiltered!quasi-isomorphism of co\-al\-ge\-bras} if it is a quasi-isomorphism and if there are cocomplete cofiltrations $\F^\bullet C_1$ and $\F^\bullet C_2$ such that $\phi$ is cofiltered with respect to the cofiltrations, and such that for each $n\ge0$, the morphism $\phi$ induces a quasi-isomorphism
		\[
		\F^nC_1/\F^{n-1}C_1\longrightarrow \F^nC_2/\F^{n-1}C_2\ .
		\]
	\end{enumerate}
\end{definition}

\begin{remark}
	Notice that the coradical filtration is final, in the sense that if we put the coradical filtration on $C_1$ and any cofiltration on $C_2$ then any morphism of $\C$-coalgebras will be cofiltered.
\end{remark}

The following result was proven in \cite{rn18}, even though all the ingredient of the proof were already present in \cite{val14}.

\begin{theorem}[{\cite[Thm. 4.9]{rn18}}] \label{thm:characterization of we in the Vallette model structure}
	The class $W$ of weak equivalence\index{Weak equivalences!of conilpotent $\C$-coalgebras} is the smallest class of arrows of $\cCcog$ containing all cofiltered quasi-isomorphisms and which is closed under the $2$-out-of-$3$ property.
\end{theorem}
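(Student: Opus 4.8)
The plan is to prove the two inclusions separately. Call $W$ the class of weak equivalences in the Vallette model structure on $\cCcog$, and call $W'$ the smallest class containing all cofiltered quasi-isomorphisms and closed under the $2$-out-of-$3$ property. Since $W$ satisfies $2$-out-of-$3$ by the model category axiom \ref{axiom:2-out-of-3}, the inclusion $W'\subseteq W$ will follow once I show that every cofiltered quasi-isomorphism lies in $W$. For the reverse inclusion $W\subseteq W'$, I will use a functorial factorization through bar-cobar and reduce an arbitrary weak equivalence to cofiltered quasi-isomorphisms via zig-zags that $W'$ can absorb by $2$-out-of-$3$.

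First I would treat $W'\subseteq W$. Let $\phi\colon C_1\to C_2$ be a cofiltered quasi-isomorphism, with cocomplete cofiltrations $\F^\bullet C_1$, $\F^\bullet C_2$ as in the definition, so that $\phi$ induces quasi-isomorphisms on all the associated graded pieces $\F^nC_i/\F^{n-1}C_i$. The point is that $\Cobar_\alpha\phi=\P(\phi)$ is then a quasi-isomorphism: the cofiltrations induce filtrations on $\Cobar_\alpha C_i=(\P(C_i),d)$ (coming from $\P$ applied to the filtered pieces, together with the weight/arity grading on $\P$), these filtrations are exhaustive and bounded below, and on the associated graded the differential reduces to the internal differentials, so $\phi$ induces a quasi-isomorphism on each graded piece by the operadic Künneth formula \ref{thm:operadic Kunneth} together with the hypothesis on the graded pieces of $\phi$. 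A standard convergent spectral sequence comparison argument then gives that $\Cobar_\alpha\phi$ is a quasi-isomorphism, i.e. $\phi\in W$. This is exactly the content of the relevant statements in \cite{val14} (proven there for $\kappa$, but the argument only uses that $\alpha$ is an operadic twisting morphism between connected weight graded cooperads and operads, which holds here since all our (co)operads are reduced). Since $W$ is closed under $2$-out-of-$3$, it contains $W'$.

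Next, $W\subseteq W'$. The key tool is \cref{thm:alpha Koszul iff unit is qi} (and the refinement mentioned in the remark after it, \cite[Thm.~2.6(2)]{val14}): when $\alpha$ is Koszul, for any conilpotent $\C$-coalgebra $C$ the unit $\eta_C\colon C\to\Bar_\alpha\Cobar_\alpha C$ is not merely a quasi-isomorphism but in fact a cofiltered quasi-isomorphism, with respect to the coradical filtration on $C$ and the induced cofiltration on $\Bar_\alpha\Cobar_\alpha C$. So $\eta_C\in W'$ for every $C$. Now let $g\colon C\to D$ be any weak equivalence, i.e. $\Cobar_\alpha g$ is a quasi-isomorphism of $\P$-algebras. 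Consider the naturality square relating $g$ to $\Bar_\alpha\Cobar_\alpha g$ via the units $\eta_C,\eta_D$. By \cref{prop:bar of qi is qi}, $\Bar_\alpha(\Cobar_\alpha g)$ is a quasi-isomorphism; moreover it is again a cofiltered quasi-isomorphism with respect to the natural cofiltrations on the bar-cobar constructions (the bar-cobar construction of a morphism is automatically cofiltered for the arity/weight-induced cofiltration, and the induced map on graded pieces is a quasi-isomorphism because $\Cobar_\alpha g$ is a quasi-isomorphism and the graded pieces are built by applying $\C$ and $\P$ to quasi-isomorphic complexes — again Künneth \ref{thm:operadic Kunneth}). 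Hence $\eta_C$, $\eta_D$, and $\Bar_\alpha\Cobar_\alpha g$ all lie in $W'$; from the commuting square $\Bar_\alpha\Cobar_\alpha g\circ\eta_C=\eta_D\circ g$ and two applications of $2$-out-of-$3$ within $W'$, we conclude $g\in W'$. This proves $W\subseteq W'$ and hence equality.

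The main obstacle I anticipate is the bookkeeping needed to verify that the various maps arising from bar-cobar really are \emph{cofiltered} quasi-isomorphisms in the precise sense of \cref{def:cofiltered coalgebras} — i.e. pinning down the correct cocomplete cofiltrations on $\Bar_\alpha\Cobar_\alpha C$ (built from the coradical filtration of $C$ together with the weight grading of $\P$ and the coradical-type filtration of $\Bar_\alpha$) and checking that the induced maps on associated graded pieces are quasi-isomorphisms. All the ingredients are present in \cite{val14} (this is why the thesis attributes only minor originality to the result), so the work is essentially to assemble the filtration arguments carefully and invoke the Künneth formula and the convergence of the associated spectral sequences; no genuinely new idea is required beyond organizing \cite{val14}'s lemmas around the two-sided inclusion via $2$-out-of-$3$.
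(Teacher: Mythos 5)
Your proposal is correct and follows essentially the same route as the paper: the inclusion of cofiltered quasi-isomorphisms into $W$ via the cobar construction and a spectral sequence (the paper's \cref{lemma:OmegaOfFQIisQI}), and the reverse inclusion via the naturality square for the bar-cobar unit, using that the unit maps and $\Bar_\alpha\Cobar_\alpha g$ are cofiltered quasi-isomorphisms (\cref{lemma:counitIsFQI} and \cref{lemma:BarOfQIisFQI}) together with a double application of $2$-out-of-$3$. The only cosmetic difference is that the paper first reduces to the universal Koszul morphism $\iota:\C\to\Cobar\C$ and states its lemmas for $\iota$, whereas you work directly with a general Koszul $\alpha$; this changes nothing of substance.
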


The proof of this theorem is similar to what found in \cite[Sect. 9.3]{pos11}. Before going on, we need a couple of preliminary results, all of which come from \cite{val14}.

\begin{lemma} \label{lemma:OmegaOfFQIisQI}
	Let $f$ be a cofiltered quasi-isomorphism of conilpotent $\C$-coalgebras. Then the morphism of $\Cobar\C$-algebras $\Omega_\iota f$ is a quasi-isomorphism.
\end{lemma}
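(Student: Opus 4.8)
The plan is to exploit the filtration-preserving structure of $f$ together with the explicit form of the cobar construction $\Omega_\iota C = (\P(C), d_1 + d_2)$, where here $\P = \Cobar\C$ and $\iota:\C\to\Cobar\C$ is the canonical twisting morphism. First I would use the cocomplete cofiltrations $\F^\bullet C_1$ and $\F^\bullet C_2$ witnessing that $f$ is a cofiltered quasi-isomorphism. These induce cofiltrations on the underlying $\S$-modules and, since $\Omega_\iota$ is built from the Schur functor $\P(-)$ together with the twisting differential, a compatible filtration on $\Omega_\iota C_i$ given by the total weight coming from both the cooperad's arity grading and the coalgebra cofiltration degree; concretely one sets
\[
\F^n\Omega_\iota C \coloneqq \sum_{\substack{p\ge 0\\ n_1+\cdots+n_p \le n}} \P(p)\otimes_{\S_p}\left(\F^{n_1}C\otimes\cdots\otimes\F^{n_p}C\right),
\]
and checks using \cref{lemma:property of coradical filtration} (or rather the defining property of a cofiltration, \cref{def:cofiltered coalgebras}) that the differential $d_1 + d_2$ preserves this, because $d_2$ is obtained from $\Delta_C$ which respects the cofiltration.

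The key step is then to pass to the associated graded complexes. On the associated graded, the twisting part $d_2$ of the differential strictly lowers (or at least does not raise in a suitable refined sense) the cofiltration degree, so that $\mathrm{gr}\,\Omega_\iota C_i$ has differential induced purely by $d_1$, i.e. by $d_\P\circ 1_C + 1_\P\circ' d_C$ together with the internal differential of the cooperad; up to the Operadic Künneth formula (\cref{thm:operadic Kunneth}), its homology is $\P \circ H_\bullet(\mathrm{gr}\,C_i)$ — more precisely one gets $\bigoplus_p \P(p)\otimes_{\S_p}(H_\bullet(\F^\bullet C_i/\F^{\bullet-1}C_i))^{\otimes p}$, using that $\mathrm{char}\,\k = 0$ to identify invariants and coinvariants. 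Since $f$ induces quasi-isomorphisms $\F^nC_1/\F^{n-1}C_1 \to \F^nC_2/\F^{n-1}C_2$ for every $n$, the induced map on $H_\bullet(\mathrm{gr}\,C_i)$ is an isomorphism, and hence by Künneth $\mathrm{gr}\,\Omega_\iota f$ is a quasi-isomorphism on associated graded pieces.

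Finally I would run a spectral sequence / exhaustive filtration comparison argument: the cofiltrations are exhaustive, so the filtrations on $\Omega_\iota C_i$ are exhaustive and bounded below (they start at $\F^0 = \P(0) = 0$ in the reduced setting, or more carefully $\F^0\Omega_\iota C = \k$), hence the associated spectral sequences converge, and a quasi-isomorphism on the $E_1$-pages (equivalently on associated gradeds) forces a quasi-isomorphism on the abutments. This is exactly the kind of argument used in \cite[Sect. 6.7]{LodayVallette} and \cite[Sect. 2]{fre04}, so I would cite those for the convergence bookkeeping rather than redo it. The main obstacle I anticipate is precisely the convergence/boundedness hypothesis for the spectral sequence: one must check that the induced filtration on $\Omega_\iota C_i$ is both exhaustive and appropriately bounded (e.g. bounded below in each homological degree), which requires using conilpotence of $C_i$ together with the fact that the cofiltration is cocomplete — without this the comparison of $E_1$-pages does not propagate to the homology of the total complexes. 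Once that bookkeeping is in place, the rest is the routine Künneth computation sketched above.
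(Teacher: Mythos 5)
Your overall strategy --- filter $\Omega_\iota C$ by the cofiltration of $C$, kill the twisting differential on the associated graded, apply the operadic K\"unneth formula, and conclude by convergence --- is exactly the strategy of the proof the paper points to (the paper itself only cites \cite[Prop.~2.3]{val14} rather than reproducing the argument). However, the key step is carried out with the wrong filtration, and as written the argument is circular. With your filtration by total cofiltration degree $n_1+\cdots+n_p$, the twisting part $d_2$ of the cobar differential does \emph{not} strictly lower the filtration: by the defining property of a cofiltration (\cref{def:cofiltered coalgebras}), $\overline{\Delta}_C(\F^nC)$ lands in sums of terms $\C(k)\otimes\F^{n_1}C\otimes\cdots\otimes\F^{n_k}C$ with $n_1+\cdots+n_k=n$, so $d_2$ \emph{preserves} the total degree exactly rather than dropping it. Consequently the $E^0$-page carries the differential $d_1$ \emph{plus} the associated-graded coproduct; it is the cobar construction of the associated graded coalgebra $\mathrm{gr}\,C$, and its homology is not computed by the K\"unneth formula you invoke. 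Knowing that $\mathrm{gr}\,f$ is a quasi-isomorphism then tells you nothing about the $E^1$-page without already knowing the lemma for $\mathrm{gr}\,f$ --- which is the circularity. Your hedge ``strictly lowers (or at least does not raise)'' is precisely where the proof breaks: ``does not raise'' is not enough.

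The fix is a reindexing. Filter instead by
\[
\F_n\Omega_\iota C\coloneqq\sum_{\substack{p\ge1\\ n_1+\cdots+n_p-p\le n}}\P(p)\otimes_{\S_p}\bigl(\F^{n_1}C\otimes\cdots\otimes\F^{n_p}C\bigr)\ ,\qquad \P=\Cobar\C\ .
\]
Since $\F^0C=0$ forces each $n_i\ge1$, and the reduced coproduct only produces terms with $k\ge2$ factors of total degree equal to that of the input, $d_2$ now strictly lowers this degree by at least one while $d_1$ preserves it; the filtration is still exhaustive (by cocompleteness of the cofiltration) and bounded below by $0$. With this correction the $E^0$-differential really is $d_1$, the operadic K\"unneth formula applies (the $E^1$-page is $H_\bullet(\P)\circ H_\bullet(\mathrm{gr}\,C)$, not $\P\circ H_\bullet(\mathrm{gr}\,C)$ --- harmless for the argument, but $\Cobar\C$ is not its own homology), the hypothesis that $f$ is a quasi-isomorphism on each $\F^nC/\F^{n-1}C$ gives an isomorphism of $E^1$-pages, and the classical convergence theorem finishes the proof. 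In short, the obstacle is not the convergence bookkeeping you flagged at the end, but the choice of filtration one paragraph earlier.
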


\begin{proof}
	This follows from the proof of \cite[Prop. 2.3]{val14}.
\end{proof}

\begin{lemma} \label{lemma:BarOfQIisFQI}
	Let $f$ be a quasi-isomorphism of $\Cobar\C$-algebras. Then $\Bar_\iota f$ is a cofiltered quasi-iso\-mor\-phism of conilpotent $\C$-coalgebras.
\end{lemma}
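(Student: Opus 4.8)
The plan is to equip both $\Bar_\iota A$ and $\Bar_\iota B$ with their coradical filtrations, to recognize that this filtration coincides with the arity filtration of the underlying cofree graded coalgebra, and then to verify directly that $\Bar_\iota f$ meets the two requirements in the definition of a cofiltered quasi-isomorphism. First I would recall that $\Bar_\iota A = (\C(A), d_1+d_2)$ has underlying graded $\C$-coalgebra the cofree one $\C(A)=\bigoplus_{n\ge0}(\C(n)\otimes A^{\otimes n})^{\S_n}$, and that for a cofree conilpotent coalgebra the coradical filtration of \cref{subsection:(co)algebras over (co)operads} is exactly the arity filtration
\[
\F^p\Bar_\iota A = \bigoplus_{n\le p}(\C(n)\otimes A^{\otimes n})^{\S_n},
\]
since $\Delta_{\C(A)}=\Delta_\C\circ 1_A$ sends $\C(m)\otimes A^{\otimes m}$ into the components of arity $\le m$, with a generically nonzero component of arity exactly $m$. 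Because $\C$ is reduced, $\C(0)=0$, so $\F^0=0$, and the filtration is exhaustive since $\C(A)$ is a direct sum; hence it is a cocomplete cofiltration in the sense of \cref{def:cofiltered coalgebras}. It is preserved by $d_1=d_\C\circ 1_A+1_\C\circ' d_A$, which preserves arity; moreover $d_2$ is \emph{strictly} filtration-lowering, $d_2(\F^p)\subseteq\F^{p-1}$, because $\iota$ vanishes on the arity-$1$ part of $\C$, so in the infinitesimal-decomposition formula for $d_2$ the factor fed to $\gamma_A$ always has arity $\ge2$.

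Next I would observe that $\Bar_\iota f=\C(f)$ restricts on each summand to $\C(n)\otimes f^{\otimes n}$, hence is cofiltered for these coradical filtrations, and compute the induced map on associated graded. Since $d_2$ strictly lowers the filtration, the differential on $\gr^p=\F^p/\F^{p-1}$ is induced by $d_1$ alone, so the associated graded of $\Bar_\iota A$ is $\bigoplus_p(\C(p)\otimes A^{\otimes p})^{\S_p}$ with the differential coming from $d_\C$ and $d_A$, and $\Bar_\iota f$ acts on the $p$-th piece as $(\C(p)\otimes f^{\otimes p})^{\S_p}$. By the Künneth formula over the characteristic-zero field $\k$, $\C(p)\otimes f^{\otimes p}$ is a quasi-isomorphism, and taking $\S_p$-invariants preserves quasi-isomorphisms in characteristic $0$ (the averaging idempotent is a chain map, so $(-)^{\S_p}$ is a chain-homotopy retract). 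Thus $\Bar_\iota f$ induces a quasi-isomorphism on each $\F^p/\F^{p-1}$.

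Finally, $\Bar_\iota f$ is already a quasi-isomorphism by \cref{prop:bar of qi is qi}, so having exhibited cocomplete cofiltrations with respect to which it is cofiltered and induces quasi-isomorphisms on associated graded, it is a cofiltered quasi-isomorphism by definition, completing the proof. I do not expect a genuine obstacle here: the only delicate points are the identification of the coradical filtration with the arity filtration of $\C(A)$ and the exactness of $\S_p$-invariants, both standard over a field of characteristic $0$, together with the observation $\iota|_{\C(1)}=0$ that makes $d_2$ strictly filtration-decreasing.
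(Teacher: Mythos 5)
Your argument is correct. The paper itself offers no proof here — it simply cites \cite[Prop. 2.4]{val14} — and your reconstruction is essentially the argument given there: filter $\Bar_\iota A=\C(A)$ by arity (which, as you note, is the coradical filtration of the underlying cofree coalgebra), observe that $d_1$ preserves arity while $d_2$ strictly lowers it because $\iota$ vanishes on $\C(1)$, so the associated graded of $\Bar_\iota f$ in arity $p$ is $(\C(p)\otimes f^{\otimes p})^{\S_p}$, a quasi-isomorphism by K\"unneth and exactness of $\S_p$-invariants in characteristic $0$; combined with \cref{prop:bar of qi is qi} this gives exactly the definition of a cofiltered quasi-isomorphism.
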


\begin{proof}
	This is \cite[Prop. 2.4]{val14}.
\end{proof}

\begin{lemma} \label{lemma:counitIsFQI}
	Let $C$ be a conilpotent $\C$-coalgebra. Then the unit map
	$$\eta_C:\Bar_\iota\Cobar_\iota C\longrightarrow C$$
	is a cofiltered quasi-isomorphism.
\end{lemma}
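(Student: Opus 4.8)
The plan is to peel off the easy part --- that $\eta_C$ is a quasi-isomorphism --- and then to promote it to a cofiltered quasi-isomorphism. The universal twisting morphism $\iota\colon\C\to\Cobar\C$ is Koszul by \cref{lemma:universal twisting morphisms are Koszul}, so \cref{thm:alpha Koszul iff unit is qi}, applied with $\alpha=\iota$, already gives that $\eta_C$ is a quasi-isomorphism. The remaining content, namely that one can exhibit cocomplete cofiltrations on $C$ and on $B:=\Bar_\iota\Cobar_\iota C$ witnessing this, is the refinement recorded in \cite[Thm.\ 2.6(2)]{val14}: it is stated there for the Koszul twisting morphism $\kappa$ of Koszul duality, but the proof uses only that $\iota$ is a Koszul twisting morphism between connected weight graded (co)operads, which is our situation. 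I would reproduce the argument as follows.

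On the source one takes the coradical filtration $\F^\bullet_\C C$; it is a cofiltration by \cref{lemma:property of coradical filtration} and is cocomplete precisely because $C$ is conilpotent. One must then put a cocomplete cofiltration on $B$ with respect to which $\eta_C$ is cofiltered and induces a quasi-isomorphism on each associated graded piece. The naive choice --- the coradical filtration of $B$ itself --- does not work: writing the underlying graded object of $B$ as $\C\circ\Cobar\C\circ C$, its $n$-th coradical graded piece is $\bigl(\C(n)\otimes(\Cobar_\iota C)^{\otimes n}\bigr)^{\S_n}$, whose homology is in general much larger than that of $\F^n_\C C/\F^{n-1}_\C C$ (only the full spectral sequence of that filtration converges to $H(C)$). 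Instead one uses a finer cofiltration on $B$, built from the coradical degrees carried by the $C$-letters together with the weight gradings of $\C$ and $\Cobar\C$; since $\eta_C$ sends $c$ to the element of $B$ carrying the single letter $c$, it is cofiltered for this filtration. With respect to it the part of the differential of $B$ coming from the $\Delta_C$-twisting term of $\Cobar_\iota$ becomes strictly filtration-decreasing, so that on the associated graded one is left only with the internal differential of $C$ and with the twisted differential of the left twisted composite product $\C\circ_\iota\Cobar\C$; because $\C\circ_\iota\Cobar\C$ is acyclic (\cref{lemma:universal twisting morphisms are Koszul}), the operadic Künneth formula (\cref{thm:operadic Kunneth}) collapses the associated graded onto the $C$-factor, and chasing $\eta_C$ through this identification shows that it induces a quasi-isomorphism $\F^n_\C C/\F^{n-1}_\C C\to\F^nB/\F^{n-1}B$ for every $n$. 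All filtrations in sight are exhaustive and bounded below in each internal degree, so the relevant spectral sequences converge.

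The main obstacle is exactly this last step: one has to single out the correct finer cofiltration on $\Bar_\iota\Cobar_\iota C$, verify that it is genuinely a cofiltration in the sense of \cref{def:cofiltered coalgebras} (compatible with the coproduct and the differential of $B$), check that $\eta_C$ is cofiltered for it, and then identify the successive pages of the resulting spectral sequence. This bookkeeping is carried out in \cite{val14}, so in the write-up I would either reproduce that computation or simply invoke \cite[Thm.\ 2.6(2)]{val14}.
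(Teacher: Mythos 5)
Your proposal is correct and matches the paper's approach: the paper's entire proof is the single sentence that the statement ``is a consequence of the proof of \cite[Thm. 2.6]{val14}'', which is exactly the reduction you make (your additional sketch of the coradical filtration on $C$, the finer cofiltration on $\Bar_\iota\Cobar_\iota C$, the acyclicity of $\C\circ_\iota\Cobar\C$ and the operadic K\"unneth collapse is a faithful outline of Vallette's argument). Nothing further is needed.
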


\begin{proof}
	This is a consequence of the proof of \cite[Thm. 2.6]{val14}.
\end{proof}

\begin{proof}[Proof of \cref{thm:characterization of we in the Vallette model structure}]
	We denote by $\fqi$ the smallest class of arrows in $\cCcog$ which contains all cofiltered quasi-isomorphisms and which is closed under the $2$-out-of-$3$ property.
	
	\medskip
	
	\cref{lemma:OmegaOfFQIisQI} implies that that $\fqi\subseteq W$. To prove the other inclusion, let
	$$f:C\longrightarrow D$$
	be a morphism of conilpotent $\C$-coalgebras such that $\Omega_\iota f$ is a quasi-isomorphism, that is to say $f\in W$. We consider the diagram
	\begin{center}
		\begin{tikzpicture}
		\node (a) at (0,0){$C$};
		\node (b) at (3,0){$D$};
		\node (c) at (0,2){$\Bar_\iota\Cobar_\iota C$};
		\node (d) at (3,2){$\Bar_\iota\Cobar_\iota D$};
		
		\draw[->] (a)-- node[above]{$f$} (b);
		\draw[->] (c)-- node[above]{$\Bar_\iota\Cobar_\iota f$} (d);
		\draw[->] (c)-- node[left]{$\eta_C$} (a);
		\draw[->] (d)-- node[right]{$\eta_D$} (b);
		\end{tikzpicture}
	\end{center}
	where by \cref{lemma:BarOfQIisFQI} the arrow $\Bar_\iota\Cobar_\iota f$ is a filtered quasi-isomorphism, and by \cref{lemma:counitIsFQI} both vertical arrows are also cofiltered quasi-isomorphisms. A double application of the $2$-out-of-$3$ property proves that $f\in\fqi$, concluding the proof.
\end{proof}

The following result is a refinement of one direction of \cref{thm:alpha Koszul iff unit is qi}, and provides the "good" dual version to \cref{thm:bar-cobar resolution for algebras}.

\begin{corollary}\label{corollary:unit of bar-cobar is a we in the Vallette model structure}
	Let $\alpha:\C\to\P$ be a Koszul morphism. Then the unit
	\[
	\eta_C:C\longrightarrow\Bar_\alpha\Cobar_\alpha C
	\]
	of the bar-cobar adjunction relative to $\alpha$ is a weak equivalence in the Vallette model structure, for any conilpotent $\C$-coalgebra $C$. In particular, the bar-cobar adjunction provides a fibrant resolution for conilpotent $\C$-coalgebras.\index{Bar-cobar resolution!of coalgebras}
\end{corollary}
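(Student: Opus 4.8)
The plan is to reduce the statement to the counit part of \cref{thm:bar-cobar resolution for algebras} by means of the triangle identities for the bar--cobar adjunction relative to $\alpha$. Recall that, by the definition of the Vallette model structure (\cref{thm:Vallette model structure}), a morphism $g$ of conilpotent $\C$-coalgebras is a weak equivalence if and only if $\Cobar_\alpha g$ is a quasi-isomorphism of $\P$-algebras; so the whole task is to show that $\Cobar_\alpha(\eta_C)$ is a quasi-isomorphism.

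First I would apply $\Cobar_\alpha$ to the unit $\eta_C\colon C\to\Bar_\alpha\Cobar_\alpha C$ and postcompose with the counit $\epsilon$ of the adjunction $\Cobar_\alpha\dashv\Bar_\alpha$ of \cref{thm:Rosetta stone algebras} evaluated at the $\P$-algebra $\Cobar_\alpha C$. The triangle identity then reads
\[
\epsilon_{\Cobar_\alpha C}\circ\Cobar_\alpha(\eta_C)=\id_{\Cobar_\alpha C}\ .
\]
Since $\alpha$ is Koszul by hypothesis, \cref{thm:bar-cobar resolution for algebras} applied to $A=\Cobar_\alpha C$ tells us that $\epsilon_{\Cobar_\alpha C}$ is a quasi-isomorphism; the composite is the identity, hence a quasi-isomorphism. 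By the $2$-out-of-$3$ property for quasi-isomorphisms of chain complexes, $\Cobar_\alpha(\eta_C)$ is a quasi-isomorphism, so $\eta_C$ is a weak equivalence in the Vallette model structure. For the final assertion, I would observe that $\Bar_\alpha\Cobar_\alpha C$ has the form $\Bar_\alpha A$ with $A=\Cobar_\alpha C$ a $\P$-algebra, hence is fibrant by \cref{thm:Vallette model structure}(\ref{pt:Vms4}); together with the fact that every conilpotent $\C$-coalgebra is cofibrant (\cref{thm:Vallette model structure}(\ref{pt:all coalgebras are cofibrant})), and the naturality of $\eta$ in $C$, this exhibits $\eta_C$ as a functorial fibrant replacement of $C$.

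There is no serious obstacle in this argument; the only point deserving care is that being a weak equivalence in the Vallette model structure is a priori a strictly stronger condition than being a quasi-isomorphism — which is exactly why \cref{thm:alpha Koszul iff unit is qi} by itself does not yield the corollary — and the triangle identity above is precisely what upgrades \cref{thm:bar-cobar resolution for algebras} to the form we need. As an alternative route, one could instead invoke the characterization of weak equivalences in \cref{thm:characterization of we in the Vallette model structure}, showing that $\eta_C$ lies in the saturation of the class of cofiltered quasi-isomorphisms by combining \cref{lemma:counitIsFQI} and \cref{lemma:BarOfQIisFQI}; but the adjunction argument is shorter, applies uniformly to any Koszul $\alpha$, and avoids the connected-weight-graded bookkeeping.
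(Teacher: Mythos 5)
Your argument is correct, and it is genuinely different from (and more economical than) the one in the paper. You reduce everything to the definition of weak equivalences in the Vallette model structure and then use the triangle identity $\epsilon_{\Cobar_\alpha C}\circ\Cobar_\alpha(\eta_C)=\id_{\Cobar_\alpha C}$ together with the fact that $\epsilon_A$ is a quasi-isomorphism for every $\P$-algebra $A$ when $\alpha$ is Koszul (\cref{thm:bar-cobar resolution for algebras}); the two-out-of-three property for quasi-isomorphisms then forces $\Cobar_\alpha(\eta_C)$ to be one. This is a purely formal deduction from the counit statement, and it applies to any Koszul $\alpha$ with no extra input. The paper instead proves the stronger assertion that $\eta_C$ is a \emph{cofiltered quasi-isomorphism} — by inspecting the proof of Vallette's Theorem 2.6 — and then invokes \cref{thm:characterization of we in the Vallette model structure}; what that buys is the more refined information that $\eta_C$ lies in the generating class of weak equivalences itself, not merely in its two-out-of-three saturation, which is the content the paper wants to record (it is the ``refinement'' alluded to in the remark after \cref{thm:alpha Koszul iff unit is qi}). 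Your sketched alternative via \cref{lemma:counitIsFQI} and \cref{lemma:BarOfQIisFQI} is essentially the paper's route, though note that those lemmas are stated for $\iota:\C\to\Cobar\C$, so for general Koszul $\alpha$ one would still need to transfer along the splitting $\alpha=g_\alpha\iota$ using \cref{lemma:equality of oo-morphisms}. Your treatment of the fibrancy claim via \cref{thm:Vallette model structure}(\ref{pt:Vms4}) is exactly right.
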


\begin{proof}
	The proof of \cite[Thm. 2.6]{val14} works in this case and shows that the unit $\eta_C$ is a cofiltered quasi-isomorphism. \cref{thm:characterization of we in the Vallette model structure} concludes the proof.
\end{proof}

\subsection{Algebras with \texorpdfstring{$\infty$}{infinity}-morphisms}\label{subsect:homotopy theory of algebras with oo-moprhisms}

Let $\P$ be a Koszul operad, and let $\P_\infty$ be its minimal model. The category of $\P_\infty$-algebra and their $\infty$-morphisms can be identified as the full subcategory of the category of conilpotent $\P^{\antishriek}$-coalgebras given by quasi-free $\P^{\antishriek}$-coalgebras via the bar functor $\Bar_\iota$, cf. \cref{prop:extended bar to oo-morphisms}. This way, one immediately sees that the category of $\P_\infty$-algebra and their $\infty$-morphisms cannot be a model category: it is not complete since for example products of quasi-free $\P^{\antishriek}$-coalgebras are not necessarily quasi-free, cf. \cref{prop:products of coalgebras}. However, one can use the Vallette model structure on coalgebras to speak of the homotopy theory of $\infty$-morphisms of $\P_\infty$-algebras. One simply defines two $\infty$-morphisms to be homotopic if they are when seen as morphisms of $\P^{\antishriek}$-coalgebras. More details of this theory are given in \cite[Sect. 3]{val14}.

\section{Framings}\label{sect:framings}

The idea of framings is to give a simplicial or cosimplicial resolution of an object in a category, i.e. a ``nice'' simplicial, resp. cosimplicial, object in that category whose $0$-simplices are given by the original object we wanted to study. We present here the theory of simplicial framings, as it is what we will need later. Cosimplicial framings are the dual concept. We assume some knowledge about simplicial sets, see also \cref{ch:simplicial homotopy theory}. The material presented here is extracted from \cite[Ch. 5]{Hovey}.

\subsection{The model structures on simplicial objects}

Let $\cat$ be a category. Recall that a simplicial object is a functor
\[
X_\bullet:\Delta^\mathrm{op}\longrightarrow\cat
\]
from the ordinal number category to $\cat$. Simplicial objects in $\cat$ form a category $\mathsf{s}\cat$, whose morphisms are given by the natural transformations.

\begin{definition}
	Let $\cat$ be a category with all finite limits and colimits. Let $X_\bullet\in\mathsf{s}\cat$ be a simplicial object in $\cat$, and let $n\ge0$ be an integer.
	\begin{enumerate}
		\item The \emph{$n$th latching object}\index{Latching object} $L_nX_\bullet\in\cat$ of $X_\bullet$ is the union of all the degenerate $n$-cells of $X_\bullet$. In particular, $L_0X_\bullet$ is the initial object, and $L_1X_\bullet = X_0$.
		\item The \emph{$n$th matching object}\index{Matching object} $M_nX_\bullet\in\cat$ of $X_\bullet$ is $M_nX_\bullet\coloneqq K_\bullet^{\partial\Delta[n]}$, i.e. the limit of $X_\bullet$ over the diagram of all simplices of $\partial\Delta[n]$. In particular, $M_0X_\bullet$ is the terminal object, and $M_1X_\bullet = X_0\times X_0$.
	\end{enumerate}
	For each $n\ge0$, we have natural maps $L_nX_\bullet\to X_n\to M_nX_\bullet$. Moreover, any morphism of simplicial objects induces natural morphisms between the latching and matching objects.
\end{definition}

One can use latching and matching objects to define a model structure on the category of simplicial objects in a model category.

\begin{theorem}
	Let $\cat$ be a model category. The following classes of maps define a model structure --- the \emph{Reedy model structure}\index{Reedy model structure}\index{Model category!of simplicial objects} --- on the category $\mathsf{s}\cat$ of simplicial objects in $\cat$.
	\begin{itemize}
		\item The weak equivalences\index{Weak equivalences!of simplicial objects} are the level-wise weak equivalences. That is to say that a morphism $f_\bullet:X_\bullet\to Y_\bullet$ is a weak equivalence if $f_n:X_n\to Y_n$ is a weak equivalence in $\cat$ for every $n\ge0$.
		\item The (trivial) cofibrations\index{Cofibrations!of simplicial objects} are the morphisms $f_\bullet:X_\bullet\to Y_\bullet$ such that the induced morphisms
		\[
		X_i\sqcup_{L_iX_\bullet}L_iY_\bullet\longrightarrow Y_i
		\]
		are (trivial) cofibrations for all $n\ge0$.
		\item The (trivial) fibrations\index{Fibrations!of simplicial objects} are the morphisms $f_\bullet:X_\bullet\to Y_\bullet$ such that the induced morphisms
		\[
		X_i\longrightarrow Y_i\times_{M_iY_\bullet}M_iX_\bullet
		\]
		are (trivial) fibrations for all $n\ge0$.
	\end{itemize}
\end{theorem}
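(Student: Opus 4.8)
The plan is to recognize this as the \emph{Reedy model structure} and to obtain it as an instance of the general existence theorem for Reedy model structures on diagram categories, following \cite[Ch. 5]{Hovey}. The first step is to observe that $\Delta$ is a Reedy category: one puts $\deg[n]\coloneqq n$, lets $\Delta_+$ be the wide subcategory of injective order-preserving maps (these raise degree), $\Delta_-$ the wide subcategory of surjective ones (these lower degree), and invokes the classical fact that every map in $\Delta$ factors uniquely as a surjection followed by an injection. It then follows formally that $\Delta^{\mathrm{op}}$ is again a Reedy category, with the roles of $\Delta_+$ and $\Delta_-$ interchanged; under this identification the degree-raising maps of $\Delta^{\mathrm{op}}$ are the degeneracy operators and the degree-lowering maps the face operators, so that for a simplicial object $X_\bullet$ the latching object $L_nX_\bullet$ is exactly the object of degenerate $n$-cells and the matching object $M_nX_\bullet$ the limit over the cofaces, as described in the statement. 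With this in place, the axioms requiring finite limits and colimits and the $2$-out-of-$3$ property are immediate, since limits, colimits and weak equivalences in $\mathsf{s}\cat$ are all computed levelwise, and the finite limits and colimits defining $L_n$ and $M_n$ exist because $\cat$ has finite limits and colimits.

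The substance is in the factorization and lifting axioms, both of which I would establish by induction on the Reedy degree. For factorizations, given $f_\bullet\colon X_\bullet\to Y_\bullet$ I would build the intermediate object $Z_\bullet$ one degree at a time: assuming $X_\bullet\to Z_\bullet\to Y_\bullet$ has been defined over all degrees $<n$, the data constructed so far already determine the relative latching object $X_n\sqcup_{L_nX_\bullet}L_nZ_\bullet$ and the relative matching object $M_nZ_\bullet\times_{M_nY_\bullet}Y_n$, together with a canonical map between them; one then factors this map in $\cat$ as a trivial cofibration followed by a fibration (resp. a cofibration followed by a trivial fibration) to define $Z_n$ and the maps $X_n\to Z_n\to Y_n$. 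The universal properties of the latching and matching objects supply the remaining face and degeneracy maps, and by construction the relative latching map of $X_\bullet\to Z_\bullet$ at $n$ is the first factor of the chosen factorization and the relative matching map of $Z_\bullet\to Y_\bullet$ at $n$ is the second factor, so $Z_\bullet$ has the required Reedy properties at each level.

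For the lifting axiom I would proceed similarly: given a commuting square whose left edge $i_\bullet$ is a Reedy cofibration and whose right edge $p_\bullet$ is a Reedy fibration, with at least one also a levelwise weak equivalence, I would construct a diagonal filler by induction on $n$, where at stage $n$ the problem of extending the lift to degree $n$ compatibly with the previous degrees reduces to a single lifting problem in $\cat$ pitting the relative latching map of $i_\bullet$ at $n$ against the relative matching map of $p_\bullet$ at $n$; the former is a cofibration (trivial if $i_\bullet$ is a weak equivalence) and the latter a fibration (trivial if $p_\bullet$ is), so the lift exists by the lifting axiom in $\cat$. From the fact that Reedy cofibrations (resp. trivial cofibrations) lift against Reedy trivial fibrations (resp. fibrations) one then deduces, by the standard retract argument applied to the factorizations just built, that the Reedy cofibrations are precisely the maps with the left lifting property against all Reedy trivial fibrations; the retract axiom follows, since any such lifting class is automatically closed under retracts, and the dual statement handles fibrations, with weak equivalences being closed under retracts levelwise.

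The step I expect to be the main obstacle is twofold. First, at each stage of the inductive constructions one must check that the degreewise data genuinely assemble into a simplicial object, i.e. that all the simplicial identities hold; this is exactly where the Reedy structure on $\Delta^{\mathrm{op}}$ — the unique surjection--injection factorization of maps and the combinatorics of the latching and matching categories it forces — does the real work, and it is the most bookkeeping-intensive part. Second, the reduction in the lifting step tacitly relies on the non-formal lemma that a Reedy cofibration is a levelwise weak equivalence if and only if all of its relative latching maps are trivial cofibrations in $\cat$ (and dually for fibrations); this equivalence must itself be proved by an induction on degree interleaved with the lifting argument, and getting this interdependence right is the delicate point of the proof.
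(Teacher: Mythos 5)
The paper states this theorem without proof, as part of a recollection explicitly extracted from \cite[Ch.~5]{Hovey}, and your proposal correctly reconstructs the standard argument from that reference: exhibiting $\Delta^{\mathrm{op}}$ as a Reedy category and building the factorizations and lifts by induction on degree via the relative latching and matching maps. Your identification of the delicate point --- the interleaved induction showing that a Reedy cofibration (resp.\ fibration) is a levelwise weak equivalence precisely when all its relative latching (resp.\ matching) maps are trivial --- is exactly where the real content lies, so the outline is sound and matches the intended proof.
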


\subsection{Simplicial framings}

Let $\cat$ be a model category. Suppose we are given an object $x\in\cat$, which we would like to see as $X_0$ for a simplicial object $X_\bullet\in\mathsf{s}\cat$. There are two natural choices for the whole object $X_\bullet$.
\begin{enumerate}
	\item One defines $\ell_\bullet x\in\mathsf{s}\cat$ by
	\[
	\ell_nx\coloneqq x
	\]
	for all $n$, with the identity map for all boundary and degeneracy maps.
	\item One defines $r_\bullet x\in\mathsf{s}\cat$ by
	\[
	r_nx\coloneqq x\times\cdots\times x\ ,
	\]
	the product of $n+1$ copies of $x$. The face maps are given by the projections forgetting one of the factors, and the degeneracy maps are induced by the diagonal map $x\to x\times x$.
\end{enumerate}

\begin{definition}
	A \emph{simplicial frame}\index{Simplicial!frame} $X_\bullet$ for $x\in\cat$ is a factorization
	\begin{center}
		\begin{tikzpicture}
			\node (a) at (0,0){$\ell_\bullet x$};
			\node (b) at (2,0){$X_\bullet$};
			\node (c) at (4,0){$r_\bullet x$};
			
			\draw[->] (a) to node[sloped,yshift=.1cm]{$\sim$} (b);
			\draw[->>] (b) to (c);
		\end{tikzpicture}
	\end{center}
	of the natural map $\ell_\bullet x\to r_\bullet x$ into a weak equivalence followed by a fibration.
\end{definition}

Notice that simplicial frames always exist by (\ref{axiom:factorization}). We will need the following result in \cref{ch:model structure for GM}.

\begin{proposition}\label{prop:framings}
	Suppose $x$ is a fibrant object in $\cat$ and let $X_\bullet$ be a simplicial frame on $x$. Then the functor
	\[
	\hom_\cat(-,X_\bullet):\cat^{op}\longrightarrow\ssets
	\]
	preserves fibrations, trivial fibrations, and weak equivalences between fibrant objects.
\end{proposition}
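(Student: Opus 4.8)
\textbf{Proof plan for \cref{prop:framings}.}

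The plan is to exploit the standard adjunction-style interplay between the Reedy model structure on $\mathsf{s}\cat$, the pointwise fibration/weak-equivalence structure that a simplicial frame enjoys, and the description of $\hom_\cat(-,X_\bullet)$ as a two-variable mapping construction. First I would recall the basic facts about simplicial frames from \cite[Ch. 5]{Hovey}: if $X_\bullet$ is a simplicial frame on a fibrant object $x$, then $X_\bullet$ is Reedy fibrant and each $X_n$ is weakly equivalent to $x$, hence fibrant; moreover the cosimplicial-simplicial hom bifunctor $(\cat^{op}\times\mathsf{s}\cat)\to\ssets$, $(y,X_\bullet)\mapsto\hom_\cat(y,X_\bullet)$, is a Quillen bifunctor when $\cat^{op}$ carries its model structure, $\mathsf{s}\cat$ the Reedy structure, and $\ssets$ the Kan--Quillen structure. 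This is essentially the content of \cite[Prop. 5.4.1 and the surrounding discussion]{Hovey}, phrased there in terms of framings; the key input is that for a Reedy-fibrant $X_\bullet$ the pushout-product / pullback-hom axiom holds.

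Granting that, the proof is a two-step unwinding. Step one: fibrations and trivial fibrations. Given a (trivial) fibration $p:y\to y'$ in $\cat$, i.e. a (trivial) cofibration $y'\to y$ in $\cat^{op}$, the pullback-hom of $p$ against the map $\emptyset\to X_\bullet$ (which, since $X_\bullet$ is Reedy fibrant and $\emptyset$ is trivially Reedy fibrant, is a Reedy fibration) is a (trivial) Kan fibration by the Quillen bifunctor property. But that pullback-hom is exactly the map $\hom_\cat(y,X_\bullet)\to\hom_\cat(y',X_\bullet)$, since the other leg collapses. Hence $\hom_\cat(-,X_\bullet)$ sends fibrations to Kan fibrations and trivial fibrations to trivial Kan fibrations. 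Step two: weak equivalences between fibrant objects. Let $w:y\to y'$ be a weak equivalence between fibrant objects of $\cat$. By Ken Brown's lemma (\cref{lemma:Ken Brown}) applied to the functor $\hom_\cat(-,X_\bullet):\cat^{op}\to\ssets$ — whose source $\cat^{op}$ has the fibrant objects of $\cat$ as its cofibrant objects, and which by step one sends trivial cofibrations of $\cat^{op}$ between such objects (that is, trivial fibrations of $\cat$ between fibrant objects) to weak equivalences of simplicial sets — we conclude that $\hom_\cat(-,X_\bullet)$ sends all weak equivalences between fibrant objects to weak equivalences of simplicial sets, as desired.

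The main obstacle is the verification of the Quillen bifunctor property for the hom construction against a Reedy-fibrant simplicial frame — equivalently, checking that if $i\colon A\to B$ is a cofibration in $\cat$ and $X_\bullet\to Y_\bullet$ is a Reedy fibration, then the induced map $\hom_\cat(B,X_\bullet)\to\hom_\cat(A,X_\bullet)\times_{\hom_\cat(A,Y_\bullet)}\hom_\cat(B,Y_\bullet)$ is a Kan fibration, trivial if either $i$ or the Reedy fibration is. This is a matching-object induction: one filters the simplicial direction by skeleta and at each stage reduces to a lifting problem against the relative matching map $X_n\to Y_n\times_{M_nY_\bullet}M_nX_\bullet$, which is a fibration in $\cat$ by definition of Reedy fibration, and then uses the lifting axiom \ref{axiom:lifting properties} in $\cat$ together with the cofibration $i$. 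I would either carry out this induction explicitly or, more economically, simply cite \cite[Prop. 5.4.1]{Hovey} (and its proof via \cite[Lemma 5.4.2 etc.]{Hovey}), since the statement we need is precisely the framing half of that machinery; the only genuinely new content here is the packaging via Ken Brown's lemma to get weak equivalences between fibrant objects, which is routine once the fibration statements are in hand.
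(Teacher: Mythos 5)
Your proposal is correct and follows essentially the same route as the paper: the preservation of (trivial) fibrations is delegated to Hovey's framing machinery (the paper cites \cite[Cor. 5.4.4(2)]{Hovey} where you cite the underlying Quillen-bifunctor statement), and the weak equivalences between fibrant objects are then handled by Ken Brown's lemma exactly as in \cref{prop:framings}. The extra sketch of the matching-object induction is fine but not needed beyond the citation.
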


\begin{proof}
	The fact that the functor preserves fibrations and trivial fibrations is proven in \cite[Cor. 5.4.4(2)]{Hovey}. The fact that it preserves weak equivalences between fibrant objects then follows by Ken Brown's lemma, \cref{lemma:Ken Brown}.
\end{proof}
	
	\chapter{Simplicial homotopy theory}\label{ch:simplicial homotopy theory}

Simplicial sets are combinatorial objects that can be used to study the homotopy theory of topological spaces. In fact, they carry a model structure with which they are Quillen equivalent to topological spaces. Therefore, in modern algebraic topology they are often used as models for spaces instead of topological spaces themselves.

\medskip

In this chapter, we begin by giving a rapid definition of the category of simplicial sets. Then we present the model structure on simplicial sets and the Quillen equivalence between simplicial sets and topological spaces. Finally, we give a rapid overview of the Dold--Kan correspondence.

\medskip

The material of this chapter is extracted from \cite{GoerssJardine}, and in particular Chapters I and III in \emph{op. cit.}

\section{Simplicial sets}

We begin by giving an introduction to simplicial sets as the category of presheaves of sets, i.e. the category of contravariant functors from the ordinal number category $\Delta$ to the category $\sets$ of sets.

\subsection{The ordinal number category}

The most basic object encoding all of the combinatorial information of simplicial sets is the following category.

\begin{definition}
	The \emph{ordinal number category}\index{Ordinal number category} $\Delta$ is the category whose objects are the ordered sets
	\[
	[n]\coloneqq\{0<1<\cdots<n\}\ ,
	\]
	for $n\in\mathbb{N}$, and whose morphisms are the order preserving\footnote{Not strictly, meaning that if $i<j$, then we only require that such a map $\phi$ satisfies $\phi(i)\le\phi(j)$.} maps.
\end{definition}

There are two classes of special morphisms in the ordinal number category.

\begin{definition}
	Let $n\ge1$. The \emph{$i$th coface map}\index{Coface maps}, $0\le i\le n$, is the unique morphism
	\[
	d^i:[n-1]\longrightarrow[n]
	\]
	which is injective and skips $i$. The \emph{$j$th codegeneracy map}\index{Codegeneracy maps}, $0\le j\le n$, is the unique morphism
	\[
	s^j:[n]\longrightarrow[n+1]
	\]
	which is surjective and takes the value $j$ twice.
\end{definition}

The cofaces and codegeneracies generate all morphisms in $\Delta$, in the sense that any morphism in $\Delta$ can be written in terms of compositions of cofaces and codegeneracies. They satisfy certain relations between them, commonly called the \emph{cosimplicial identities}, see \cite[p. 4]{GoerssJardine}.

\subsection{Simplicial objects in a category and simplicial sets}

Functors from the ordinal number category $\Delta$ and its opposite category to any category have a nice combinatorial structure.

\begin{definition}
	Let $\cat$ be a category. A \emph{simplicial object}\index{Simplicial!objects} in $\cat$ is a functor
	\[
	\Delta^\mathrm{op}\longrightarrow\cat\ .
	\]
	The category $\mathsf{s}\cat$ of simplicial objects in $\cat$ is the category whose objects are simplicial objects in $\cat$, and whose morphisms are the natural transformations between simplicial objects.
\end{definition}

\begin{definition}
	Dually, a \emph{cosimplicial object}\index{Cosimplicial objects} in $\cat$ is a functor
	\[
	\Delta\longrightarrow\cat\ .
	\]
\end{definition}

The case of interest to us for the moment is when we take $\cat = \sets$.

\begin{definition}
	The category $\ssets$ of \emph{simplicial sets}\index{Simplicial!sets} is the category of simplicial objects in the category of sets.
\end{definition}

Let $K_\bullet\in\ssets$ be a simplicial set. We denote by
\[
K_n\coloneqq K_\bullet([n])
\]
the \emph{set of $n$-simplices} of $K_\bullet$. The cofaces and codegeneracies of the category $\Delta$ naturally induce maps between the various sets of simplices of a simplicial set, called the \emph{face maps}\index{Face maps} $d_i$ and \emph{degeneracy maps}\index{Degeneracy maps} $s_j$ respectively. A simplex in $K_n$ is \emph{degenerate} if it is in the image of a degeneracy map.

\subsection{Limits and colimits}

Limits and colimits are well behaved in the category of simplicial sets.

\begin{proposition}
	The category $\ssets$ of simplicial sets is complete and cocomplete, and limits and colimits are taken level-wise.
\end{proposition}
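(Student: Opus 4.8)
The plan is to exploit the fact that $\ssets$ is a functor category, namely $\ssets = \mathsf{Fun}(\Delta^{\mathrm{op}},\sets)$, and to invoke the general principle that (co)limits in a functor category are computed pointwise whenever the target category admits them. Since $\sets$ is complete and cocomplete, this will establish both halves of the statement simultaneously, together with the level-wise description.

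First I would recall that $\sets$ has all small limits and colimits — limits as subsets of products cut out by equations, colimits as quotients of disjoint unions — so nothing needs to be proved on that side. Then, given a small diagram $F\colon\mathcal{J}\to\ssets$, I would define the candidate limit level-wise by setting $(\lim F)_n \coloneqq \lim_{j\in\mathcal{J}}F(j)_n$, the limit being taken in $\sets$. For each morphism $\theta\colon[m]\to[n]$ in $\Delta$, the maps $F(j)(\theta)\colon F(j)_n\to F(j)_m$ constitute a morphism of $\mathcal{J}$-diagrams (because each $F(j)$ is a functor on $\Delta^{\mathrm{op}}$ and each arrow of $\mathcal{J}$ is sent by $F$ to a natural transformation), so they induce a unique map $(\lim F)_n\to(\lim F)_m$; the functoriality identities for these induced maps, hence the simplicial identities, follow from uniqueness in the universal property. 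This yields a simplicial set $\lim F$ equipped with a cone to $F$, and a cone from an arbitrary simplicial set $K_\bullet$ to $F$ produces, level by level, a unique map $K_n\to(\lim F)_n$; these maps commute with faces and degeneracies, once more by the uniqueness clause of the universal property in $\sets$. Therefore $\lim F$ is a limit of $F$ in $\ssets$, computed level-wise. The construction of colimits is strictly dual: replace $\lim$ by $\colim$ throughout and use cocompleteness of $\sets$.

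The hard part will be essentially nonexistent: the whole argument is routine bookkeeping, and the only point deserving a word of care is checking that the face and degeneracy operators of the pointwise (co)limit are well defined and obey the simplicial identities, which is automatic from the universal property applied arrow by arrow in $\Delta$. One could alternatively dispatch the proposition in a single sentence by citing the standard fact that presheaf categories $\mathsf{Fun}(\mathcal{C}^{\mathrm{op}},\sets)$ are complete and cocomplete with (co)limits computed objectwise, but I would rather spell out the level-wise description explicitly, since that is precisely the form in which the result is used in the sequel.
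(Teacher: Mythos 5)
Your argument is correct and is the standard one: $\ssets$ is the presheaf category $\mathsf{Fun}(\Delta^{\mathrm{op}},\sets)$, and (co)limits in such a functor category are computed objectwise because $\sets$ is complete and cocomplete. The paper states this proposition without proof, and your level-wise construction with the universal-property check for the face and degeneracy maps is exactly the argument that is being taken for granted.
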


In other words, if $I$ is an index category, and $L:I\to\ssets$ is a functor, then
\[
\left(\lim_{i\in I}L(i)\right)_n = \lim_{i\in I}L(i)_n\ ,
\]
and similarly for colimits. In particular, the initial object in the category of simplicial sets is the empty simplicial set $\emptyset$, and the final object is the point $*\coloneqq\Delta[0]$.

\subsection{The Yoneda embedding, boundaries and horns}

The Yoneda embedding embeds $\Delta$ as a subcategory of $\ssets$.

\begin{definition}
	We define a functor
	\[
	\Delta[-]:\Delta\longrightarrow\ssets
	\]
	by
	\[
	\Delta[n]\coloneqq\hom_\Delta(-,[n])\ .
	\]
\end{definition}

The simplicial sets $\Delta[n]$ are the ``building blocks" of all simplicial sets.

\begin{lemma}
	Let $K_\bullet$ be a simplicial set. Then
	\[
	K_n\cong\hom_{\ssets}(\Delta[n],K_\bullet)\ .
	\]
\end{lemma}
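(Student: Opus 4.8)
The plan is to recognize this as an instance of the Yoneda lemma and give the direct verification. Unwinding the definitions, a morphism of simplicial sets $\eta\colon\Delta[n]\to K_\bullet$ is exactly a natural transformation, that is, a family of maps $\eta_{[m]}\colon\hom_\Delta([m],[n])\to K_m$ such that for every $\psi\colon[m']\to[m]$ in $\Delta$ one has $K(\psi)\,\eta_{[m]} = \eta_{[m']}\,(\phi\mapsto\phi\psi)$, using that $\Delta[n]$ sends $\psi$ to precomposition by $\psi$ and that $K_\bullet$ is a contravariant functor on $\Delta$.

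First I would define a map $\Phi\colon\hom_{\ssets}(\Delta[n],K_\bullet)\to K_n$ by evaluating at the identity, $\Phi(\eta)\coloneqq\eta_{[n]}(\id_{[n]})$. In the other direction I would define $\Psi\colon K_n\to\hom_{\ssets}(\Delta[n],K_\bullet)$ by sending $x\in K_n$ to the family $\Psi(x)_{[m]}(\phi)\coloneqq K(\phi)(x)$ for $\phi\in\hom_\Delta([m],[n])$; here $K(\phi)\colon K_n\to K_m$ since $K_\bullet$ is contravariant, so this lands in $K_m$ as it should. One checks $\Psi(x)$ is natural because, for $\psi\colon[m']\to[m]$, both composites in the naturality square carry $\phi$ to $K(\phi\psi)(x) = K(\psi)(K(\phi)(x))$ by functoriality of $K$.

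Then I would check the two maps are mutually inverse. For $\Phi\Psi = \id_{K_n}$: $\Phi(\Psi(x)) = \Psi(x)_{[n]}(\id_{[n]}) = K(\id_{[n]})(x) = x$. For $\Psi\Phi = \id$: given $\eta$ and writing $x = \eta_{[n]}(\id_{[n]})$, I must show $\Psi(x) = \eta$; evaluating at $[m]$ on $\phi\colon[m]\to[n]$ and applying naturality of $\eta$ to $\psi = \phi$ gives $\eta_{[m]}(\phi) = \eta_{[m]}(\id_{[n]}\,\phi) = K(\phi)(\eta_{[n]}(\id_{[n]})) = K(\phi)(x) = \Psi(x)_{[m]}(\phi)$, as required. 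Hence $\Phi$ and $\Psi$ are inverse bijections, which is the claimed isomorphism.

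I do not expect any real obstacle here; the only point demanding care is keeping track of the variances — $\Delta[n]$ and $K_\bullet$ are contravariant functors on $\Delta$, so the induced maps are precompositions, and this must be used in the correct direction in the naturality argument. If desired, I would also remark that the bijection is natural in $K_\bullet$, and in $[n]$, which is immediate from the construction, so that $\Delta[-]$ is genuinely a full and faithful embedding (the Yoneda embedding) of $\Delta$ into $\ssets$.
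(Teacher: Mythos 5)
Your proof is correct and is exactly the argument the paper invokes: the paper simply cites the Yoneda lemma, while you unwind it explicitly via evaluation at $\id_{[n]}$ and its inverse $x\mapsto(\phi\mapsto K(\phi)(x))$. The variance bookkeeping and the two inverse checks are all in order.
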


\begin{proof}
	This follows immediately from the Yoneda lemma, e.g. \cite[p. 61]{MacLane}.
\end{proof}

There are some other very important simplicial sets that can be built from the $\Delta[n]$.

\begin{definition}
	The \emph{boundary}\index{Boundary of a simplicial set} $\partial\Delta[n]$ of the simplicial set $\Delta[n]$ is the simplicial set obtained by gluing all of the non-degenerate $(n-1)$-simplices of $\Delta[n]$ along of their faces, whenever the faces of two such $(n-1)$-simplices coincide. In other words, it is given as the coequalizer
	\[
	\bigsqcup_{[n-2]\to[n]\text{ inj.}}\Delta[n-2]\rightrightarrows\bigsqcup_{[n-1]\to[n]\text{ inj.}}\Delta[n-1]\longrightarrow\partial\Delta[n]\ .
	\]
\end{definition}

\begin{definition}
	The \emph{$i$th horn}\index{Horns of a simplicial set} $\Lambda^i[n]$ of $\Delta[n]$ is the simplicial set defined the same way as $\partial\Delta[n]$ but without the $(n-1)$-simplex opposite to the $i$th $0$-simplex of $\Delta[n]$. In other words, it is given as the coequalizer
	\[
	\bigsqcup_{\substack{[n-2]\to[n]\text{ inj.}\\i\text{ in the image}}}\Delta[n-2]\rightrightarrows\bigsqcup_{\substack{[n-1]\to[n]\text{ inj.}\\i\text{ in the image}}}\Delta[n-1]\longrightarrow\Lambda^i[n]\ .
	\]
\end{definition}

\section{The model structure on simplicial sets}

In this section, we present the model structure on simplicial sets. The main theorem (which we will not prove here) is the following.

\begin{theorem}
	The classes of cofibrations, fibrations, and weak equivalences that will be defined in \cref{def:cofibrations of ssets,def:fibrations of ssets,def:we of ssets} define a closed model structure on the category $\ssets$ of simplicial sets.\index{Model category!of simplicial sets}
\end{theorem}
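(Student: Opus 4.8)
The plan is to follow Quillen's original argument in the form presented in \cite[Ch. I]{GoerssJardine}. The cofibrations will be the monomorphisms, the fibrations the Kan fibrations (maps with the right lifting property against all horn inclusions $\Lambda^i[n]\hookrightarrow\Delta[n]$), and the weak equivalences the maps whose geometric realization is a weak homotopy equivalence of topological spaces. With these definitions the \emph{formal} part of the verification is routine: $\ssets$ is complete and cocomplete, with (co)limits computed levelwise; the $2$-out-of-$3$ property for weak equivalences is inherited from weak homotopy equivalences in $\Top$; and monomorphisms, Kan fibrations, and weak equivalences are each closed under retracts. One half of the lifting axiom — that cofibrations lift against trivial fibrations — will follow once the trivial fibrations are identified combinatorially, and one of the two factorizations will come from the small object argument applied to the set $\{\partial\Delta[n]\hookrightarrow\Delta[n]\}_{n\ge0}$ of generating cofibrations.

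The first substantial step is the combinatorial characterization of trivial fibrations: a map $p\colon X_\bullet\to Y_\bullet$ has the right lifting property against every boundary inclusion $\partial\Delta[n]\hookrightarrow\Delta[n]$ if and only if $p$ is simultaneously a Kan fibration and a weak equivalence. The "only if" direction is elementary, since horn inclusions are built from boundary inclusions and such a $p$ then has surjective, "contractible" fibres; the "if" direction is the delicate one and is most cleanly proved after the theory of minimal fibrations is in place. Granting this characterization, the small object argument against $\{\Lambda^i[n]\hookrightarrow\Delta[n]\}$ produces the (trivial cofibration, fibration) factorization, and against $\{\partial\Delta[n]\hookrightarrow\Delta[n]\}$ the (cofibration, trivial fibration) factorization, establishing the factorization axiom.

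The heart of the proof, and the main obstacle, is the remaining lifting statement: every trivial cofibration has the left lifting property against every fibration. The standard route is: (i) introduce \emph{anodyne extensions} as the saturation of the horn inclusions under pushouts, transfinite compositions and retracts, so that by construction anodyne maps lift against all Kan fibrations; (ii) prove that the geometric realization of a Kan fibration is a Serre fibration — Quillen's theorem, whose proof passes through \emph{minimal fibrations} and the fact that a minimal fibration is, in a suitable sense, a fibre bundle; (iii) deduce from (ii) that a Kan fibration which is a weak equivalence is a trivial fibration in the combinatorial sense above; and (iv) conclude, via a retract argument using the factorization just obtained, that every trivial cofibration is a retract of an anodyne extension, hence lifts against all fibrations. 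Steps (ii) and (iii) are where all the genuine work resides; everything else is formal manipulation.

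Finally, one verifies that geometric realization and the singular complex functor form a Quillen equivalence between $\ssets$ and $\Top$; this both closes the loop on the claim that realization sends weak equivalences to weak homotopy equivalences and records the comparison with the model structure on $\Top$ recalled in \cref{subsect:model structure on topological spaces}. I expect the minimal fibration machinery underlying step (ii) to be by far the most technical ingredient, which is presumably why the statement is quoted here without proof.
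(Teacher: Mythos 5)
The paper explicitly declines to prove this theorem (``which we will not prove here''), so there is no in-paper argument to compare against; your outline is the standard Quillen/Goerss--Jardine proof and is correct as a sketch, with the genuine content correctly located in the minimal-fibration theory and the realization of Kan fibrations as Serre fibrations. The only discrepancy worth noting is that the paper's \cref{def:fibrations of ssets} restricts the horn inclusions to $n\ge2$ (almost certainly a typo for $n\ge1$), whereas your argument uses the standard generating set.
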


\subsection{Cofibrations}

Cofibrations are easy.

\begin{definition}\label{def:cofibrations of ssets}
	A morphism $f:K_\bullet\to L_\bullet$ of simplicial sets is a \emph{cofibration}\index{Cofibrations!of simplicial sets} if it is level-wise injective, i.e. if all of the maps of sets $f_n:K_n\to L_n$ are injective.
\end{definition}

In particular, all simplicial sets are cofibrant.

\subsection{Fibrations and Kan complexes}

Fibrations are defined by right lifting property with respect to a certain set of generating cofibrations, namely the inclusions of horns.

\begin{definition}\label{def:fibrations of ssets}
	A morphism $f:K\to L$ of simplicial sets is a \emph{fibration}\index{Fibrations!of simplicial sets} if it satisfies the right lifting property with respect to all the natural inclusions $\Lambda^i[n]\xhookrightarrow{}\Delta[n]$ for $0\le i\le n$ and all $n\ge2$.
\end{definition}

\begin{definition}
	Fibrant simplicial sets are called \emph{Kan complexes}.\index{Kan complexes}
\end{definition}

Many naturally arising simplicial sets are Kan. Examples are the nerve of a group, the underlying simplicial set of a simplicial abelian group, and the singular complex of a topological space --- which we will see in more detail later. This last fact motivates the heuristic that Kan complex correspond to ``spaces".

\subsection{Realization and weak equivalences}

Let $\cghaus$ be the category of compactly generated Hausdorff\footnote{Also known as $T_2$.} topological spaces. Homotopically speaking, the category $\cghaus$ is essentially the same as the whole category $\Top$ of topological spaces, since CW-complexes are in $\cghaus$. However, notice that for example products in $\cghaus$ are not the same as products in $\Top$.

\begin{definition}
	For $n\ge0$, the \emph{geometric $n$-simplex}\index{Geometric!simplices} is the topological space
	\[
	\Delta^n\coloneqq\left\{(x_0,\ldots,x_n)\ \vline\ \sum_{i=0}^nx_i = 1\text{ and }x_i\ge0\ \forall0\le i\le n\right\}\subset\mathbb{R}^{n+1}\ .
	\]
\end{definition}

Notice that $\Delta^n\in\cghaus$. The topological space $\Delta^n$ is the geometric analogue of the simplicial set $\Delta[n]$, and since all simplicial sets can be constructed by gluing spaces of the form $\Delta[n]$ together, we try to recover topological spaces by gluing geometric simplices together.

\begin{definition}
	The \emph{geometric realization}\index{Geometric!realization} is the functor
	\[
	|-|:\ssets\longrightarrow\cghaus
	\]
	given by sending a simplicial set $K\in\ssets$ to the topological space
	\[
	|K|\coloneqq\colim_{\Delta[n]\to K}\Delta^n.
	\]
\end{definition}

In other words, given $K\in\ssets$ we take one copy of $\Delta^n$ for each $n$-simplex in $K$, and then glue all those geometric simplices together according to the face maps in $K$.

\begin{definition}\label{def:we of ssets}
	A morphism $f:K\to L$ of simplicial sets is a \emph{weak equivalence}\index{Weak equivalences!of simplicial sets} if its geometric realization
	\[
	|f|:|K|\longrightarrow|L|
	\]
	is a weak equivalence of simplicial sets, i.e. if it induces an isomorphism on all homotopy groups.
\end{definition}

\subsection{Quillen equivalence between simplicial sets and topological spaces}

The geometric realization functor has a right adjoint, the \emph{singular set} functor\index{Singular set functor}
\[
S_\bullet:\cghaus\longrightarrow\ssets\ ,
\]
which is defined as
\[
S_\bullet(X)\coloneqq\hom_\Top(\Delta^\bullet,X)\ ,
\]
the simplicial set of singular simplices of a topological space $X\in\cghaus$.

\begin{proposition}
	Let $K\in\ssets$ and $X\in\cghaus$. There is a natural isomorphism
	\[
	\hom_\Top(|K|,X)\cong\hom_\ssets(K,S_\bullet(X))\ .
	\]
\end{proposition}

There is more. This adjunction induces an equivalence of categories between the homotopy categories of simplicial sets and compactly generated, Hausdorff space, showing that their homotopy theory is ``the same''.

\begin{theorem}
	The adjunction given by the geometric realization and the singular set functor is a Quillen equivalence.
\end{theorem}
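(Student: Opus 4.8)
The plan is to verify that $(|-|,S_\bullet)$ is a Quillen adjunction and then upgrade it to a Quillen equivalence via a standard criterion whose only substantial hypothesis is that the counit is a weak equivalence. For the Quillen adjunction part, the adjunction itself having already been recorded above, it suffices to check that $|-|$ is a left Quillen functor, i.e. that it preserves cofibrations and trivial cofibrations. The cofibrations of $\ssets$ form the smallest class of maps containing the boundary inclusions $\partial\Delta[n]\hookrightarrow\Delta[n]$, $n\ge0$, and closed under pushouts, transfinite compositions and retracts. Being a left adjoint, $|-|$ commutes with all colimits, and it sends $\partial\Delta[n]\hookrightarrow\Delta[n]$ to a map homeomorphic to the generating cofibration $S^{n-1}\hookrightarrow D^n$ of $\cghaus$; hence $|-|$ preserves cofibrations. (Equivalently: the realization of a monomorphism of simplicial sets is the inclusion of a CW-subcomplex, hence has the left lifting property with respect to trivial fibrations.) On the other hand $|-|$ preserves weak equivalences by the very definition of weak equivalence in $\ssets$. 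Since a trivial cofibration is a map that is simultaneously a cofibration and a weak equivalence, $|-|$ preserves trivial cofibrations as well, and $(|-|,S_\bullet)$ is a Quillen adjunction.

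To see that it is in fact a Quillen equivalence I would invoke \cite[Cor. 1.3.16]{Hovey}: a Quillen adjunction $(F,G)$ is a Quillen equivalence as soon as $F$ reflects weak equivalences between cofibrant objects and, for every fibrant object $Y$, the map $F(QGY)\to Y$ obtained by applying $F$ to a cofibrant replacement of $GY$ and composing with the counit is a weak equivalence. In our situation $F=|-|$ reflects weak equivalences everywhere, again by the definition of weak equivalence in $\ssets$; every simplicial set is cofibrant, so one may take $QGY=S_\bullet Y$; and every object of $\cghaus$ is fibrant, since $X\to *$ is always a Serre fibration. Thus the criterion reduces to the single assertion that the counit
\[
\epsilon_X\colon|S_\bullet X|\longrightarrow X
\]
is a weak equivalence for every $X\in\cghaus$. (Once this is known, the triangle identity $\epsilon_{|K|}\circ|\eta_K|=\id_{|K|}$ and the $2$-out-of-$3$ property also force the unit $\eta_K\colon K\to S_\bullet|K|$ to be a weak equivalence, though this is not needed for the argument.)

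It remains to prove that $\epsilon_X$ induces isomorphisms on all homotopy groups, and this is where the real work is; I would cite \cite[Ch. I]{GoerssJardine} for it. The chain of identifications is as follows: the singular complex $S_\bullet X$ is a Kan complex, so it carries combinatorially defined simplicial homotopy groups; geometric realization preserves finite limits — this is precisely where one must restrict to compactly generated Hausdorff spaces — and therefore carries Kan fibrations to Serre fibrations, which yields a natural isomorphism between the simplicial homotopy groups of any Kan complex $K$ and $\pi_\bullet|K|$; and a direct simplicial-approximation argument identifies the simplicial homotopy groups of $S_\bullet X$ with $\pi_\bullet X$. One then checks that the composite isomorphism $\pi_\bullet|S_\bullet X|\cong\pi_\bullet X$ is the one induced by $\epsilon_X$, so that $\epsilon_X$ is a weak equivalence and the proof is complete. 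The main obstacle is exactly this last step: it repackages the classical fact that the singular complex of a space has its correct weak homotopy type, whose proof combines the product-preservation property of realization over $\cghaus$ with a subdivision comparison between combinatorial and topological homotopy groups.
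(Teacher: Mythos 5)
Your proposal is correct and follows essentially the same route as the reference the paper defers to (\cite[Thm. 3.6.7]{Hovey}): one checks that $|-|$ is left Quillen using the generating cofibrations and the definition of weak equivalences in $\ssets$, reduces the Quillen-equivalence condition via \cite[Cor. 1.3.16]{Hovey} to the counit $|S_\bullet X|\to X$ being a weak equivalence (using that all simplicial sets are cofibrant, all spaces are fibrant, and $|-|$ reflects weak equivalences), and then invokes the classical theorem that the singular complex has the correct weak homotopy type. You correctly isolate that last step --- which rests on realization preserving finite limits over $\cghaus$ and the comparison of simplicial and topological homotopy groups --- as the genuine content, and it is appropriate to cite it rather than reprove it here.
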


\begin{proof}
	See e.g. \cite[Thm. 3.6.7]{Hovey}.
\end{proof}

This motivates the fact that one can usually replace topological spaces by simplicial sets when doing homotopy theory, in order to work with objects that have a nicer combinatorial behavior.

\subsection{Homotopy groups of simplicial sets}\index{Homotopy!groups of simplicial sets}

Let $K$ be a Kan complex, i.e. a fibrant simplicial set.  Define $\pi_0(K)$ to be the set of homotopy classes of vertices of $K$. Let $x\in K_0$ be a vertex of $K$. For $n\ge1$, we define $\pi_n(K,x)$ to be the set of homotopy classes rel $\partial\Delta[n]$ of maps $\alpha:\Delta[n]\to K$ fitting into the diagram
\begin{center}
	\begin{tikzpicture}
		\node (a) at (0,0){$\partial\Delta[n]$};
		\node (b) at (2.5,0){$\Delta[0]$};
		\node (c) at (0,-2){$\Delta[n]$};
		\node (d) at (2.5,-2){$K$};
		
		\draw[->] (a) to (b);
		\draw[right hook->] (a) to (c);
		\draw[->] (b) to node[right]{$x$} (d);
		\draw[->] (c) to node[below]{$\alpha$} (d);
	\end{tikzpicture}
\end{center}
In other words, $\alpha$ must map the boundary $\partial\Delta[n]$ of $\Delta[n]$ to the basepoint $x$. The sets $\pi_n(K,x)$, $n\ge1$, are endowed with a group structure as follows. Let $\alpha,\beta:\Delta[n]\to K$ be as above. Then one constructs a morphism of simplicial sets
\[
(\alpha,\beta):\Lambda^n[n+1]\longrightarrow K
\]
by setting it to be constant with value $x$ on all faces, except the $(n-1)$th face, where it has value $\alpha$, and the $(n+1)$th, where it has value $\beta$. Since $K$ is Kan, we have a lift
\begin{center}
	\begin{tikzpicture}
		\node (a) at (0,0){$\Lambda^n[n+1]$};
		\node (b) at (3,0){$K$};
		\node (c) at (0,-2){$\Delta[n+1]$};
		
		\draw[->] (a) to node[above]{$(\alpha,\beta)$} (b);
		\draw[right hook->] (a) to (c);
		\draw[->,dashed] (c) to node[below right]{$\omega$} (b);
	\end{tikzpicture}
\end{center}
It is straightforward to check that $d_n\omega$, the ``missing'' $n$-face of $\Lambda^n[n+1]$ recovered by filling, is constant on its boundary with value $x$, and thus defines an element of $\pi_n(K,x)$, which we will denote by $\alpha*\beta$. The following lemma tells us that $\alpha*\beta$ is well defined as an element of $\pi_n(K,x)$.

\begin{lemma}
	The homotopy class of $\alpha*\beta$ is independent of the choice of the lift $\omega$.
\end{lemma}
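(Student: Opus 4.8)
The plan is to show that if $\omega, \omega' : \Delta[n+1] \to K$ are two lifts of the same map $(\alpha,\beta): \Lambda^n[n+1] \to K$, then $d_n\omega$ and $d_n\omega'$ are homotopic rel $\partial\Delta[n]$. The standard approach is to build an explicit simplicial homotopy between $\omega$ and $\omega'$ relative to the horn, and then extract from it a homotopy between the two missing faces. First I would use the fact that $\omega$ and $\omega'$ agree on $\Lambda^n[n+1]$; this means they glue to give a map defined on the subcomplex $\Delta[n+1] \times \partial\Delta[1] \cup \Lambda^n[n+1] \times \Delta[1]$ of $\Delta[n+1] \times \Delta[1]$, sending the two ends to $\omega$ and $\omega'$ and being "constant in the $\Delta[1]$-direction" on $\Lambda^n[n+1]$ (i.e. the composite $\Lambda^n[n+1] \times \Delta[1] \to \Lambda^n[n+1] \xrightarrow{(\alpha,\beta)} K$).

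The key step is then to observe that the inclusion
\[
\Delta[n+1] \times \partial\Delta[1] \cup \Lambda^n[n+1] \times \Delta[1] \hookrightarrow \Delta[n+1] \times \Delta[1]
\]
is an anodyne extension (a trivial cofibration in the simplicial model structure), so that since $K$ is a Kan complex the map defined on the subcomplex extends to a map $H : \Delta[n+1] \times \Delta[1] \to K$. This is the prism argument: one filters $\Delta[n+1] \times \Delta[1]$ by attaching non-degenerate $(n+2)$-simplices one at a time, each attachment being the filling of a horn, using the hypothesis that we started from the $\Lambda^n$-horn inclusion. I would cite the standard reference for this (e.g. \cite[Ch. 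I]{GoerssJardine}) rather than redo the combinatorics of the prism decomposition, which is routine but lengthy.

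Having obtained $H$, I restrict it to the $n$-th face: $d_n H : \Delta[n] \times \Delta[1] \to K$ (more precisely, $H \circ (d^n \times \id_{\Delta[1]})$). By construction its two ends are $d_n\omega$ and $d_n\omega'$, and because $H$ restricted to $\Lambda^n[n+1] \times \Delta[1]$ factors through $(\alpha,\beta)$, which is constant with value $x$ on every face of $\Lambda^n[n+1]$ except the $(n-1)$-st and $(n+1)$-st, one checks that $d_n H$ is constant with value $x$ on $\partial\Delta[n] \times \Delta[1]$. Hence $d_n H$ is a homotopy rel $\partial\Delta[n]$ from $d_n\omega$ to $d_n\omega'$, which is exactly what it means for $\alpha * \beta$ to be well defined in $\pi_n(K,x)$.

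The main obstacle is the anodyne-extension claim: proving that the pushout-product inclusion $(\Lambda^n[n+1] \hookrightarrow \Delta[n+1]) \,\square\, (\partial\Delta[1] \hookrightarrow \Delta[1])$ is in the saturated class generated by horn inclusions. This is precisely the pushout-product axiom for the Kan model structure restricted to these generators, and while it is entirely standard, a self-contained proof requires the explicit shuffle decomposition of the prism $\Delta[n+1] \times \Delta[1]$ together with a careful bookkeeping of which horns get filled at each stage. I would not grind through this; instead I would invoke it as a known lemma about anodyne maps, since the point of the present lemma is only the bookkeeping that the resulting homotopy is rel $\partial\Delta[n]$, which follows directly once the extension $H$ exists.
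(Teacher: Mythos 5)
Your argument is correct and is exactly the standard prism argument: the paper itself omits the proof of this lemma (it is part of the recollection of simplicial homotopy theory, deferred to \cite[Ch.~I]{GoerssJardine}), and what you describe — extending $(\omega,\omega')$ over $\Delta[n+1]\times\Delta[1]$ via the anodyne pushout-product inclusion and restricting to the $n$-th face — is precisely the argument given there. Your bookkeeping that $d_nH$ is constant at $x$ on $\partial\Delta[n]\times\Delta[1]$ (because the boundary of the $n$-th face lies in the other faces of $\Lambda^n[n+1]$, where $(\alpha,\beta)$ restricts to $\alpha$, $\beta$, or the constant $x$, all of which are constant on their boundaries) is the only point needing care, and you have it right.
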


We also define $e\in\pi_n(K,x)$ to be the constant map with value $x$.

\begin{theorem}
	Taking $*$ as multiplication, and $e$ as identity element, the $\pi_n(K,x)$ are groups for all $n\ge1$, and $\pi_n(K,x)$ are abelian for every $n\ge2$. We call $\pi_n(K,x)$ the \emph{$n$th homotopy group of $K$} with basepoint $x$.
\end{theorem}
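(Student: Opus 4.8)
The plan is to work directly with the combinatorial description of $\pi_n(K,x)$ together with the horn-filling property of the Kan complex $K$, invoking the Quillen equivalence of the previous section only as an optional shortcut for the commutativity statement. First I would record the concrete form of the objects: an element of $\pi_n(K,x)$, for $n\ge1$, is the class rel $\partial\Delta[n]$ of an $n$-simplex $\alpha\in K_n$ all of whose faces equal the totally degenerate simplex $s_0^{n-1}x$, and this relation is an equivalence relation precisely because $K$ is Kan; the distinguished element is $e=s_0^nx$, and the product $\alpha*\beta$, together with its independence of the chosen lift, is the content of the lemma just stated. Throughout I would use the (co)simplicial identities relating the $d_i$ and $s_j$ without further comment.

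For the unit laws I would exhibit explicit fillers. The simplex $s_{n-1}\alpha\in K_{n+1}$ satisfies $d_{n-1}(s_{n-1}\alpha)=d_n(s_{n-1}\alpha)=\alpha$, $d_{n+1}(s_{n-1}\alpha)=s_{n-1}s_0^{n-1}x=e$, and $d_i(s_{n-1}\alpha)$ is degenerate for $i<n-1$, so it fills the horn defining $\alpha*e$ and gives $\alpha*e=d_n(s_{n-1}\alpha)=\alpha$; symmetrically $s_n\alpha$ fills the horn defining $e*\alpha$ and gives $e*\alpha=\alpha$. For inverses, given $\alpha$ I would fill the horn $\Lambda^{n+1}[n+1]\to K$ prescribed by $d_{n-1}=\alpha$, $d_n=e$ and degenerate faces elsewhere to obtain $\omega\in K_{n+1}$, and set $\beta:=d_{n+1}\omega$; a short check with the simplicial identities shows every face of $\beta$ is degenerate, so $\beta\in\pi_n(K,x)$, while $\omega$ itself is a filler of the horn computing $\alpha*\beta$, whence $\alpha*\beta=d_n\omega=e$. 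Since a right inverse in an associative unital magma is automatically a two-sided inverse, the group structure follows once associativity is in hand.

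Associativity is the main step, and the place I expect the real work. Given representatives $\alpha,\beta,\gamma$, I would choose fillers $\omega_{\alpha\beta}$ and $\omega_{\beta\gamma}$ realizing $\alpha*\beta=:\delta$ and $\beta*\gamma=:\varepsilon$, together with a further filler realizing $\delta*\gamma$; padded with suitable degenerate $(n+1)$-simplices, these are to be assembled as the prescribed faces of a horn $\Lambda^k[n+2]\to K$ for an appropriate $k$, the compatibility of the prescribed faces on their common $n$-dimensional overlaps being exactly a bookkeeping exercise with the simplicial identities. A filler $\Omega\in K_{n+2}$ of this horn then has, in its remaining face, an $(n+1)$-simplex that is a filler of the horn whose $n$-th face is $\alpha*\varepsilon$, exhibiting $(\alpha*\beta)*\gamma$ and $\alpha*(\beta*\gamma)$ as the same class in $\pi_n(K,x)$. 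I regard choosing which face of $\Delta[n+2]$ to omit and arranging the prescribed faces so that both triple products genuinely appear among the data — and then checking all the face relations — as the hard part of the argument.

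Finally, for commutativity when $n\ge2$ I would run an Eckmann--Hilton argument. Since $n\ge2$ the simplex $\Delta[n+1]$ has enough faces to define a second binary operation $*'$ on $\pi_n(K,x)$ by filling a horn placed along a different pair of faces and reading off a different face of the filler; degenerate-simplex computations as above show that $e$ is a two-sided unit for $*'$ as well and that the two operations satisfy the interchange law $(a*b)*'(c*d)=(a*'c)*(b*'d)$, after which the standard Eckmann--Hilton deduction forces $*$ and $*'$ to coincide and to be commutative. Alternatively, one may transport the whole structure along the isomorphism $\pi_n(K,x)\cong\pi_n(|K|,x)$ afforded by the Quillen equivalence of the previous section, after checking that $*$ corresponds under geometric realization to the usual concatenation of spheroids; commutativity for $n\ge2$ is then the classical topological fact.
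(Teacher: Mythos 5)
The paper itself gives no proof of this statement --- it is recalled from the standard references (Goerss--Jardine) as part of the prerequisites --- so there is no in-text argument to measure yours against; I can only assess your proposal on its own terms. The strategy you describe is the standard combinatorial one, and the computations you actually carry out are correct: $s_{n-1}\alpha$ and $s_n\alpha$ do fill the horns computing $\alpha*e$ and $e*\alpha$ (the simplicial identities $d_{n-1}s_{n-1}=d_ns_{n-1}=\mathrm{id}$, $d_{n+1}s_{n-1}=s_{n-1}d_n$, etc.\ give exactly the faces you list), and your construction of a right inverse by filling the outer horn $\Lambda^{n+1}[n+1]$ with $d_{n-1}=\alpha$, $d_n=e$ and constant faces elsewhere is sound, as is the reduction from right inverses to two-sided inverses once associativity is available.

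That said, the proposal stops short of a proof at precisely the points where the content lies. First, well-definedness: the lemma you invoke only gives independence of the chosen lift $\omega$; you still need that the class of $\alpha*\beta$ depends only on the classes of $\alpha$ and $\beta$ rel $\partial\Delta[n]$, which is a separate $(n+2)$-dimensional horn-filling argument and is not addressed anywhere in your write-up. Second, associativity and the interchange law for your Eckmann--Hilton argument are described only as "assemble the fillers as faces of a horn $\Lambda^k[n+2]$ and check compatibility"; you correctly identify this as the hard part, but identifying it is not the same as doing it --- the choice of which face to omit and the verification that the prescribed $(n+1)$-faces agree on their $n$-dimensional overlaps is exactly what makes the argument work, and an error there (e.g.\ prescribing faces that cannot coexist on a single $(n+2)$-simplex) would sink the proof. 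Your fallback of transporting the structure along $\pi_n(K,x)\cong\pi_n(|K|,x)$ is mathematically legitimate but leans on a comparison theorem that the paper only states after this one, and it still requires checking that $*$ realizes to concatenation of spheroids. In short: right road map, correct unit and inverse computations, but the central verifications (well-definedness on classes, associativity, interchange) remain to be written out.
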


We have the following very useful fact.

\begin{theorem}
	There is a canonical isomorphism
	\[
	\pi_0(K)\cong\pi_0(|K|)\ ,\qquad\text{and}\qquad\pi_n(K,x)\cong\pi_n(|K|,x)
	\]
	for all $n\ge1$. In other words, the homotopy groups of a Kan complex are the same as the homotopy groups of its geometric realization.
\end{theorem}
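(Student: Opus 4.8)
The plan is to compare the two notions of homotopy group by passing through the singular complex of $|K|$, using the unit of the adjunction $|-|\dashv S_\bullet$. Write $\eta_K\colon K\to S_\bullet|K|$ for the unit; concretely it sends an $n$-simplex $\sigma\in K_n$, viewed as a map $\sigma\colon\Delta[n]\to K$, to the singular simplex $\Delta^n\cong|\Delta[n]|\xrightarrow{|\sigma|}|K|$. Both $K$ (by hypothesis) and $S_\bullet|K|$ (since the singular complex of any space is a Kan complex) are fibrant, and since every simplicial set is cofibrant they are bifibrant objects of $\ssets$.

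First I would show that $\eta_K$ is a weak equivalence of simplicial sets. Since the geometric realization and singular set functors form a Quillen equivalence, for every cofibrant $K$ and every fibrant $Y\in\cghaus$ a map $|K|\to Y$ is a weak equivalence if and only if its adjunct $K\to S_\bullet Y$ is; applying this to the identity of $|K|$ (a weak equivalence, being an isomorphism, and $|K|$ being fibrant) shows $\eta_K$ is a weak equivalence. By the Whitehead theorem applied to the bifibrant objects $K$ and $S_\bullet|K|$, the map $\eta_K$ is then a homotopy equivalence in $\ssets$, which (since $K\times\Delta[1]$ serves as a cylinder object for the cofibrant $K$) is a simplicial homotopy equivalence. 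A simplicial homotopy $K\times\Delta[1]\to L$ passes directly to the combinatorial homotopy groups, so $\eta_K$ induces a bijection $\pi_0(K)\xrightarrow{\ \cong\ }\pi_0(S_\bullet|K|)$ and isomorphisms $\pi_n(K,x)\xrightarrow{\ \cong\ }\pi_n(S_\bullet|K|,\eta_K(x))$ for all $n\ge1$.

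Next I would identify the combinatorial homotopy groups of a singular complex with the classical ones: for any $Y\in\cghaus$ and $y\in Y$ there is a natural isomorphism $\pi_n(S_\bullet Y,y)\cong\pi_n(Y,y)$, and $\pi_0(S_\bullet Y)\cong\pi_0(Y)$. By the Yoneda lemma a simplicial map $\Delta[n]\to S_\bullet Y$ is the same datum as a continuous map $\Delta^n\to Y$, and the boundary condition defining the elements of $\pi_n$ turns it into a pointed map $(\Delta^n/\partial\Delta^n,\ast)\cong(S^n,\ast)\to(Y,y)$; since $|-|$ preserves finite products in $\cghaus$ one has $|\Delta[n]\times\Delta[1]|\cong\Delta^n\times[0,1]$, so simplicial homotopies rel $\partial\Delta[n]$ correspond precisely to pointed homotopies. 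This yields a natural bijection of sets, and one then checks that the operation $\alpha\ast\beta$ defined above by filling the horn $\Lambda^n[n+1]\hookrightarrow\Delta[n+1]$ corresponds to the usual pinch-map concatenation on $\pi_n(Y,y)$: this requires comparing the realization of the chosen Kan filler with an explicit geometric filler of $|\Lambda^n[n+1]|\hookrightarrow\Delta^{n+1}$ and noting the two agree up to the homotopies already accounted for.

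Finally I would compose the two isomorphisms. Taking $Y=|K|$ and noting that the bijection on $0$-simplices sends the vertex $\eta_K(x)$ to the point $|x|\in|K|$, the chain
\[
\pi_n(K,x)\xrightarrow{\ \cong\ }\pi_n(S_\bullet|K|,\eta_K(x))\xrightarrow{\ \cong\ }\pi_n(|K|,|x|)
\]
gives the desired isomorphism for $n\ge1$, and similarly $\pi_0(K)\cong\pi_0(S_\bullet|K|)\cong\pi_0(|K|)$; by naturality of the unit this composite agrees with the evident assignment $[\alpha]\mapsto[\,|\alpha|\,]$, hence is canonical. I expect the main obstacle to be the verification at the end of the third paragraph — matching the non-canonical, horn-filling definition of the group structure on $\pi_n(S_\bullet Y,y)$ with the classical concatenation operation on $\pi_n(Y,y)$. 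The remaining ingredients are either formal consequences of the Quillen equivalence and the Whitehead theorem already available in the text, or routine adjunction arguments.
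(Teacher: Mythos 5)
The paper does not actually prove this statement: it is quoted as a known fact in the recollections on simplicial homotopy theory, with the chapter deferring to \cite{GoerssJardine} (where it appears as Proposition I.11.1, proved via minimal fibrations and a comparison of fibration sequences). So there is no in-text proof to match yours against; what can be assessed is whether your derivation is sound given the results the thesis does record.

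Your outline is correct and is the standard ``formal'' route: the unit $\eta_K\colon K\to S_\bullet|K|$ is a weak equivalence between bifibrant objects by the Quillen-equivalence criterion, Whitehead upgrades it to a simplicial homotopy equivalence, and the remaining content is the identification $\pi_n(S_\bullet Y,y)\cong\pi_n(Y,y)$, whose only delicate point --- reconciling the horn-filling multiplication with pinch-map concatenation --- you have correctly isolated. Two caveats are worth flagging. First, in the standard developments (Goerss--Jardine, Hovey) the comparison of combinatorial and topological homotopy groups is an ingredient in, or at least proved alongside, the theorem that $|-|\dashv S_\bullet$ is a Quillen equivalence; so as a self-contained proof your argument risks circularity, and it is only legitimate here because the thesis takes the Quillen equivalence as a black box. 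Second, the homotopy inverse of $\eta_K$ produced by Whitehead is not based, so to conclude that $\pi_n(K,x)\to\pi_n(S_\bullet|K|,\eta_K(x))$ is an isomorphism you need the (standard, but not free) fact that a free homotopy between maps of Kan complexes identifies the induced maps on $\pi_n$ up to the action of a path on the basepoint; this should be said explicitly. With those two points acknowledged, the proposal is a valid proof strategy, and the genuine labour sits exactly where you predicted, in the third paragraph.
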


\subsection{The long exact sequence associated to a fibration}\label{subsect:LES of homotopy groups}

It is possible to associate a long exact sequence of homotopy groups to a fibration of simplicial sets.

\begin{definition}
	Let $p:K\to L$ be a fibration of simplicial sets. The \emph{fibre}\index{Fibre} of $p$ over a point $x\in L_0$ is $i:F\to K$ defined by the pullback
	\begin{center}
		\begin{tikzpicture}
			\node (a) at (0,0){$F$};
			\node (b) at (2.5,0){$K$};
			\node (c) at (0,-2){$\Delta[0]$};
			\node (d) at (2.5,-2){$L$};
			
			\draw[->] (a) to node[above]{$i$} (b);
			\draw[->>] (b) to node[right]{$p$} (d);
			\draw[->] (a) to (c);
			\draw[->] (c) to node[above]{$x$} (d);
		\end{tikzpicture}
	\end{center}
\end{definition}

Suppose that $p:K\to L$ is a fibration with fibre $i:F\to K$ over $x\in L_0$. Fix a point $y\in F_0$, and by abuse of notation denote $i(y)\in K_0$ also by $y$. Let $\alpha:\Delta[n]\to L$ represent a homotopy class $[\alpha]\in\pi_n(L,x)$, and define an $n$-horn $\Lambda^0[n]\to L$ by sending everything to $y$. We have a lift
\begin{center}
	\begin{tikzpicture}
	\node (a) at (0,0){$\Lambda^0[n]$};
	\node (b) at (2.5,0){$K$};
	\node (c) at (0,-2){$\Delta[n]$};
	\node (d) at (2.5,-2){$L$};
	
	\draw[->] (a) to (b);
	\draw[->>] (b) to node[right]{$p$} (d);
	\draw[>->] (a) to node[sloped, yshift=.1cm]{$\sim$} (c);
	\draw[->] (c) to node[above]{$\alpha$} (d);
	\draw[->,dashed] (c) to node[above left]{$\theta$} (b);
	\end{tikzpicture}
\end{center}
and one can check that the element $\partial(\alpha)\coloneqq[d_0\theta]\in\pi_{n-1}(F,y)$ does not depend on the choice of the particular lift. Thus, we obtain a map
\[
\partial:\pi_n(L,x=f(y))\longrightarrow\pi_{n-1}(F,y)\ ,
\]
called the \emph{boundary map}.

\begin{theorem}\label{thm:LES of a fibration}\index{Long exact sequence of a short exact sequence}
	For all $n\ge1$, the maps
	\[
	\partial:\pi_n(L,x)\longrightarrow\pi_{n-1}(F,y)
	\]
	are group homomorphisms. They fit in a long sequence
	\begin{align*}
		\cdots\xrightarrow{\partial}\pi_n(F,y)\xrightarrow{i}\pi_n(K,y)&\xrightarrow{p}\pi_n(L,x)\xrightarrow{\partial}\pi_{n-1}(F,y)\xrightarrow{i}\cdots\\
		\cdots&\xrightarrow{p}\pi_1(L,x)\xrightarrow{\partial}\pi_0(F,y)\xrightarrow{i}\pi_0(K,y)\xrightarrow{p}\pi_0(L,x)
	\end{align*}
	which is exact in the sense that kernel equals image everywhere. Furthermore, $\pi_1(L,x)$ acts on $\pi_0(F)$, and two elements of $\pi_0(F)$ have the same image under $i$ if, and only if they are in the same orbit of the $\pi_1(L,x)$-action.
\end{theorem}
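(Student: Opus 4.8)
The plan is to argue combinatorially inside $\ssets$, mimicking the classical topological proof of the long exact sequence of a fibration. As a preliminary I would record the hypotheses implicit in the statement: $L$ is a Kan complex (so that $\pi_n(L,x)$ makes sense), hence $K$ is Kan because $p$ is a fibration, and the fibre $F$ is Kan because it is the pullback of $p$ along $\Delta[0]\to L$, so that $F\to\Delta[0]$ is itself a fibration. With this in place, everything reduces to manipulations of horns and their fillers, all of which exist by the lifting axioms.

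First I would check that $\partial$ is well defined. Given a representative $\alpha\colon\Delta[n]\to L$ of a class in $\pi_n(L,x)$, constant at $x$ on $\partial\Delta[n]$, the horn $\Lambda^0[n]\to K$ that is constant at $y$ admits a filler $\theta\colon\Delta[n]\to K$ with $p\theta=\alpha$; then $p\,d_0\theta$ is the constant map $x$, so $d_0\theta$ factors through $F$, and its faces $d_j(d_0\theta)=d_0(d_{j+1}\theta)$ come from faces of $\theta$ lying in $\Lambda^0[n]$, hence are constant at $y$, so $d_0\theta$ represents a class in $\pi_{n-1}(F,y)$. Independence of the filler, and of the choice of representative $\alpha$, follows from one more application of the lifting property of $p$ against an inclusion $\Lambda^k[n+1]\hookrightarrow\Delta[n+1]$ built out of a homotopy between the two competing fillers.

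Next I would show that $\partial$ is a homomorphism — the most computational step. Given representatives $\alpha,\beta$, one forms the horn $(\alpha,\beta)\colon\Lambda^n[n+1]\to L$ used to define the product $\alpha*\beta$, lifts it to $K$ by filling its two non-constant faces as above, fills the whole horn to obtain an $(n+1)$-simplex, and then reads off that its $0$-face computes $\partial[\alpha]*\partial[\beta]$ while the recovered missing face computes $\partial([\alpha]*[\beta])$. After that I would prove exactness at each of the three types of term. At $\pi_n(K,y)$: the inclusion $\operatorname{im} i\subseteq\ker p$ is immediate since $pi$ is constant, and conversely a based nullhomotopy of $p\beta$ lifts through $p$ to a homotopy of $\beta$ into $F$. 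At $\pi_n(L,x)$: if $[\alpha]=p_*[\beta]$ one takes $\theta=\beta$ so that $d_0\theta$ is constant and $\partial[\alpha]=e$; conversely a based nullhomotopy in $F$ of $d_0\theta$ lets one replace $\theta$ by a filler $\theta'$ with $d_0\theta'$ constant, which then represents a class mapping to $[\alpha]$. At $\pi_{n-1}(F,y)$: the filler $\theta$ itself exhibits $i_*\partial[\alpha]$ as nullhomotopic in $K$ (its other faces being constant), while conversely a nullhomotopy in $K$ of $i_*[\gamma]$ produces a simplex $\theta\colon\Delta[n]\to K$ with one face $\gamma$ and the rest constant, so that $p\theta$ is an $\alpha$ with $\partial[\alpha]=[\gamma]$.

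Finally I would handle the low-degree tail and the $\pi_1$-action with the same tools, now reading $\pi_0$ as a pointed set based at the class of the chosen vertex and interpreting exactness there as equality of kernel and image of pointed sets; the action of $[\alpha]\in\pi_1(L,x)$ on $\pi_0(F)$ is obtained by lifting a $1$-simplex representing $\alpha$ starting from a chosen lift over a point of $F$ and taking the other endpoint, and the orbit description follows by pushing a connecting path in $K$ down to $L$. I expect the only real obstacle to be organizational: tracking faces, degeneracies and iterated horn-fillings through the homomorphism and exactness arguments; there is no conceptual difficulty. As an alternative route, one could instead deduce the statement from the topological long exact sequence, using Quillen's theorem that $|p|$ is a Serre fibration, the fact that geometric realization preserves finite limits (so that $|F|$ is the fibre of $|p|$), and the natural isomorphisms $\pi_\bullet(-)\cong\pi_\bullet(|-|)$ for Kan complexes.
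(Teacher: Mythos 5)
Your sketch is correct and follows exactly the combinatorial horn-filling argument that the paper sets up in \cref{subsect:LES of homotopy groups} and then cites from \cite{GoerssJardine} rather than proving in full: the construction of $\partial$ by lifting the constant horn $\Lambda^0[n]\to K$ against $\alpha$, the check that $d_0\theta$ lands in $F$ with constant boundary via $d_jd_0=d_0d_{j+1}$, and the exactness arguments at each of the three types of term are all the standard ones, and your fallback via geometric realization is also a legitimate route. The only caveat worth recording is that for $n=1$ the target $\pi_0(F,y)$ is merely a pointed set, so ``group homomorphism'' must there be read as ``map of pointed sets'' and exactness as equality of kernel and image of pointed sets, but that imprecision is already present in the statement itself and you handle it correctly in your last paragraph.
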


\section{The Dold--Kan correspondence}

The celebrated Dold--Kan correspondence gives an equivalence of categories between simplicial abelian groups and non-negatively graded chain complexes. In this section, we give a rapid overview of this very important result.

\subsection{The normalized chain complex and the Moore complex of a simplicial abelian group}

We start by reminding an important fact about simplicial groups, due to \cite[Thm. 3]{moo54}.

\begin{theorem}[Moore]
	Let $G$ be a simplicial group. Then the underlying simplicial set of $G$ is a Kan complex.
\end{theorem}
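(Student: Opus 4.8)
The plan is to prove Moore's theorem directly by verifying the Kan extension condition: given a horn $\Lambda^k[n] \to G$, i.e.\ $n$ compatible $(n-1)$-simplices $g_0, \ldots, \widehat{g_k}, \ldots, g_n \in G_{n-1}$ satisfying $d_i g_j = d_{j-1} g_i$ for $i < j$ (both indices avoiding $k$), one must produce an $n$-simplex $g \in G_n$ with $d_i g = g_i$ for all $i \neq k$. The crucial point, which makes the argument work for groups but not for general simplicial sets, is that one can build such a filler by successive approximation using the group structure: the set of "errors" can be corrected one face at a time by multiplying by suitable degeneracies.

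First I would set up the inductive scheme. For a fixed missing index $k$, I would construct elements $w_r \in G_n$ for $r = -1, 0, 1, \ldots, n$ (skipping $r=k$) such that $w_r$ already has the correct faces $d_i w_r = g_i$ for all $i \le r$ with $i \neq k$. The base case $w_{-1} = e$ (the identity, or more precisely the appropriate degenerate simplex $s_0^{n} $ of the basepoint, but working in a group we may just take the neutral element) is trivial since there is no condition. For the inductive step, suppose $w_{r-1}$ is built; set $r' = r$ if $r < k$ and consider the element $g_r \cdot (d_r w_{r-1})^{-1} \in G_{n-1}$. By the compatibility relations among the $g_i$ and the already-achieved faces of $w_{r-1}$, this "defect" has trivial $d_i$ for $i < r$ (with $i \neq k$), so applying the degeneracy $s_{r}$ (or the appropriate $s_j$) produces an element of $G_n$ whose low faces are degenerate-trivial; multiplying $w_{r-1}$ by this element on the correct side yields $w_r$ with $d_i w_r = g_i$ for $i \le r$, $i \neq k$, while not disturbing the faces already fixed. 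One then sets $g = w_n$ (or $w_{n-1}$, depending on whether $k = n$), which is the desired filler.

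The main obstacle is bookkeeping: getting the simplicial identities $d_i s_j = s_{j-1} d_i$ (for $i < j$), $d_i s_j = \mathrm{id}$ (for $i = j, j+1$), $d_i s_j = s_j d_{i-1}$ (for $i > j+1$) applied in exactly the right order, and handling the two boundary cases $k = 0$ and $k = n$ separately (for $k = 0$ one typically runs the induction downward from $r = n$ instead of upward, using $s_{r-1}$ in place of $s_r$). One must check carefully at each step that multiplying by the new degenerate correction term does not destroy the faces already arranged --- this is where one uses that $d_i$ of a degeneracy $s_r(\text{something})$ with $i$ small is itself degenerate and, crucially, equals the identity after the correction is set up so the defect vanishes. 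I would organize this as a clean lemma: "if $x \in G_{n-1}$ satisfies $d_i x = e$ for $i < r$, then $s_r x \in G_n$ satisfies $d_i(s_r x) = e$ for $i \le r$ and $d_{r+1}(s_r x) = x$," and then the whole filler construction is a short induction invoking this lemma $n$ times. Modulo the index juggling, no deep idea beyond the group structure is needed; the proof is essentially the classical one found in Goerss--Jardine, Chapter I.
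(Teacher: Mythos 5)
The paper does not actually prove this statement: it is recalled as a classical fact with a citation to Moore's original article, so there is no in-text proof to compare against. Your strategy is the standard one (Moore's argument, as in Goerss--Jardine, Lemma I.3.4): fill the horn by multiplying degeneracies of the successive ``defects'', using the group structure to correct one face at a time.

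However, the inductive scheme as you organize it --- a single upward pass $r=-1,0,\dots,n$ skipping $k$, correcting face $r$ by multiplying with $s_r$ of the defect --- does not work for the faces above $k$. Two concrete problems. First, $s_n\colon G_{n-1}\to G_n$ does not exist (the degeneracies into $G_n$ are $s_0,\dots,s_{n-1}$), so the top face cannot be corrected this way; you are forced to use $s_{n-1}$, and since $d_{n-1}s_{n-1}=\mathrm{id}$ this correction destroys the face $d_{n-1}$ you have just arranged. Second, and relatedly, your helper lemma is off by one: for $x\in G_{n-1}$ with $d_ix=e$ for $i<r$ one gets $d_i(s_rx)=e$ only for $i<r$, while $d_r(s_rx)=d_{r+1}(s_rx)=x$; so $s_r(\text{defect})$ is \emph{not} trivial on face $r$ --- that is precisely how it installs the missing face --- and the claim ``$d_i(s_rx)=e$ for $i\le r$'' is false at $i=r$. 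The standard repair is not specific to the boundary cases $k=0,n$: for every $k$ one treats the faces $0,\dots,k-1$ in an upward pass using $s_i$ (which only disturbs face $i$, not yet fixed), and the faces $n,n-1,\dots,k+1$ in a separate \emph{downward} pass using $s_{i-1}$ (which disturbs faces $i-1$ and $i$; face $i$ is the one being installed and face $i-1$ is not yet required since one is descending towards $k+1$). The filler is then the result of the downward pass at index $k+1$. With this two-pass organization and the corrected face computations, your argument goes through and is the classical proof.
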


For the rest this section, $A$ will always denote a simplicial abelian group.

\begin{definition}
	The \emph{normalized chain complex}\index{Normalized chain complex} $NA$ of a simplicial abelian group $A$ is the non-negatively graded chain complex which is given in degree $n\ge0$ by
	\[
	NA_n\coloneqq\bigcap_{i=0}^{n-1}\ker(d_i:A_n\to A_{n-1})\subseteq A_n\ ,
	\]
	and whose differential is given by $d_n$.
\end{definition}

\begin{remark}
	For this section only, ``chain complex'' means chain complex of $\mathbb{Z}$-modules. Later, we will apply these constructions to simplicial $\k$-vector spaces instead of just simplicial abelian groups, thus obtaining non-negatively graded $\k$-chain complexes.
\end{remark}

It is straightforward to check that $NA$ is a chain complex, i.e. that its differential squares to zero. This assignment is functorial, giving
\[
N:\mathsf{sAb}\longrightarrow\chain_+
\]
from simplicial abelian groups to non-negatively graded chain complexes.

\medskip

Another closely related construction is the Moore chain complex.

\begin{definition}
	The \emph{Moore complex}\index{Moore complex} $\mathcal{M}(A)$ of a simplicial abelian group $A$ is the non-negatively graded chain complex given in degree $n\ge0$ by
	\[
	\mathcal{M}(A)_n\coloneqq A_n\ ,
	\]
	and whose differential is given by $\sum_{i=0}^n(-1)^id_i$.
\end{definition}

The relation between these constructions is given by the following result.

\begin{theorem}
	The natural inclusion
	\[
	i:NA\longrightarrow\mathcal{M}(A)
	\]
	is a chain homotopy equivalence.
\end{theorem}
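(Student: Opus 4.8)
The claim is that the inclusion $i\colon NA\to\moore(A)$ of the normalized chain complex into the Moore complex is a chain homotopy equivalence. Let me think about how to prove this.

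The standard approach is to use the degenerate subcomplex. Let $DA_n \subseteq A_n$ be the subgroup generated by all degenerate simplices, i.e. $DA_n = \sum_{j=0}^{n-1} s_j(A_{n-1})$. This is a subcomplex of the Moore complex $\moore(A)$. The key facts are: (1) $\moore(A) = NA \oplus DA$ as graded abelian groups (a classical computation with the simplicial identities), and (2) $DA$ is an acyclic complex — in fact contractible. Given these, the projection $\moore(A) \to NA$ along $DA$ together with the inclusion gives the homotopy equivalence.

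So here is the plan. First I would introduce $DA$ and verify it is a subcomplex of $\moore(A)$ using the simplicial identities $d_i s_j = \ldots$. Second, I would prove the direct sum decomposition $\moore(A)_n = NA_n \oplus DA_n$ in each degree; this is the main computational step and proceeds by an inductive argument filtering $DA_n$ by the subgroups $D^k A_n = s_0(A_{n-1}) + \cdots + s_{k}(A_{n-1})$ and showing at each stage one can correct an element to kill its lower face while staying in the relevant piece — this is the classical lemma (e.g. as in Goerss–Jardine, Weibel, or May). Third, I would show $DA$ is contractible, or at least acyclic; one clean way is to exhibit an explicit contracting homotopy, or to observe that the decomposition is compatible with the differential so that $NA$ is a deformation retract of $\moore(A)$ with $DA$ the acyclic complement. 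Finally, assembling: the inclusion $i$ and the projection $p\colon \moore(A)\to NA$ (with kernel $DA$) satisfy $pi = 1_{NA}$ strictly, and $1_{\moore(A)} - ip$ is the projection onto the acyclic $DA$, hence chain homotopic to zero via the contraction of $DA$; this exhibits $i$ as a chain homotopy equivalence with homotopy inverse $p$.

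The main obstacle is step two, the direct sum decomposition $\moore(A)_n = NA_n \oplus DA_n$: establishing that the normalized and degenerate parts span everything and intersect trivially requires a careful induction using the simplicial identities, and getting the bookkeeping right (which degeneracies have been "cleared" at each stage) is the delicate part. Everything else — that $DA$ is a subcomplex, that $DA$ is acyclic, and the final assembly — is comparatively routine once the decomposition is in hand. Since this is a recollections chapter, I would most likely just cite \cite{GoerssJardine} (Chapter III) for the decomposition lemma and the acyclicity of $DA$, and spend the written proof only on the assembly: namely that $i$ and the projection $p$ along $DA$ are mutually inverse up to chain homotopy, with the homotopy coming from the contraction of the degenerate subcomplex.
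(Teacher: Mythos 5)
Your proposal is correct and is the classical argument: the paper itself states this theorem without proof in its recollections on the Dold--Kan correspondence, deferring to \cite{GoerssJardine}, and that reference proves it exactly as you outline --- via the decomposition $\mathcal{M}(A)=NA\oplus DA$ with $DA$ the degenerate subcomplex, the acyclicity (indeed contractibility) of $DA$, and the resulting identification of the projection along $DA$ as a homotopy inverse to $i$. Since there is no proof in the paper to compare against, the only remark worth making is that your plan matches the cited source, and your identification of the direct sum decomposition as the one genuinely delicate step is accurate.
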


Therefore, the normalized chain complex and the Moore complex can be used interchangeably when doing homotopy theory. Moreover, these constructions allow one to readily compute the homotopy groups of $A$.

\begin{theorem}\label{thm:homotopy of simplicial vector space is homology of Moore complex}
	Let $A$ be a simplicial abelian group. There are natural isomorphisms of groups
	\[
	\pi_0(A)\cong H_0(\mathcal{M}(A))\ ,\qquad\text{and}\qquad\pi_n(A,0)\cong H_n(\mathcal{M}(A))
	\]
	for all $n\ge1$.
\end{theorem}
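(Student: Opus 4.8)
The plan is to route the computation through the normalized chain complex $NA$ and then invoke the combinatorial description of the homotopy groups of the Kan complex underlying $A$. First I would use the chain homotopy equivalence $i\colon NA\to\moore(A)$, which gives $H_n(NA)\cong H_n(\moore(A))$ for every $n\ge0$; this reduces the theorem to producing natural isomorphisms $\pi_0(A)\cong H_0(\moore(A))$ and $\pi_n(A,0)\cong H_n(NA)$ for $n\ge1$. The case $n=0$ is essentially formal: two vertices $x,y\in A_0$ represent the same path component exactly when there is $w\in A_1$ with $d_1w=x$ and $d_0w=y$, so in the abelian group $A_0$ this reads $x-y\in\operatorname{im}(d_0-d_1)=\operatorname{im}\big(\partial_1\colon\moore(A)_1\to\moore(A)_0\big)$, since the Moore differential in degree $1$ is $d_0-d_1$; hence $\pi_0(A)=H_0(\moore(A))$. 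All identifications appearing below are manifestly natural in $A$.

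For $n\ge1$, I would start from Moore's theorem, which says the underlying simplicial set of $A$ is a Kan complex, so that $\pi_n(A,0)$ is the set of homotopy classes rel $\partial\Delta[n]$ of maps $\alpha\colon\Delta[n]\to A$ that collapse $\partial\Delta[n]$ to $0$. By the Yoneda lemma such an $\alpha$ is the same datum as the $n$-simplex $a\in A_n$ it picks out, and the condition on the boundary translates, one face inclusion at a time, into $d_0a=\dots=d_na=0$, i.e. $a\in Z_n$, the degree-$n$ cycles of $NA$. A short computation with the simplicial identities shows that the $\Lambda^n[n+1]$-horn used to define the product on $\pi_n(A,0)$ can be filled by a sum of degeneracies of the two representing simplices, so that the group operation corresponds to addition in $A_n$; this yields a surjective group homomorphism $Z_n\twoheadrightarrow\pi_n(A,0)$ out of the abelian group $Z_n$.

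The main obstacle is computing the kernel, namely showing that $a\in Z_n$ represents the trivial class if and only if $a\in B_n=d_{n+1}(NA_{n+1})$. One direction is easy: a chain $b\in NA_{n+1}$ with $d_{n+1}b=a$ has all its other faces equal to $0$, hence is itself a simplicial homotopy rel $\partial\Delta[n]$ from $\alpha$ to the constant map $0$. For the converse I would take a homotopy rel $\partial\Delta[n]$ between $\alpha$ and the constant map, put it in the standard normal form for a simplicial homotopy between two $n$-simplices with equal boundary, read off from it an $(n+1)$-simplex all of whose faces but one are forced to vanish by triviality of the boundary, and then correct this simplex by a suitable degenerate one so as simultaneously to kill the remaining unwanted face and to land in $NA_{n+1}$; the careful tracking of which faces are pinned down by the boundary conditions, together with the sign, is the delicate part. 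Combining the two directions gives $\pi_n(A,0)\cong Z_n/B_n=H_n(NA)\cong H_n(\moore(A))$, which with the $n=0$ case completes the proof.
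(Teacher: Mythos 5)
Your proof is correct, and it is the standard argument (the one found in Goerss--Jardine III.2 or May's book); note that the paper itself states this theorem without proof, as part of its recollections on the Dold--Kan correspondence, so there is no in-text proof to compare against. The two ``delicate'' steps you flag do work out exactly as you sketch: the horn $(\alpha,\beta)$ defining the product is filled by $\omega=s_{n-1}\alpha+s_n\beta$, whose faces one computes via the simplicial identities to get $\alpha*\beta=d_n\omega=\alpha+\beta$, so $Z_n\to\pi_n(A,0)$ is a homomorphism; and given a homotopy $\omega$ rel $\partial\Delta[n]$ with $d_i\omega=0$ for $i<n$, $d_n\omega=a$, $d_{n+1}\omega=0$, the corrected simplex $\omega'=s_na-\omega$ lies in $NA_{n+1}$ and satisfies $d_{n+1}\omega'=a$, which is precisely the ``correction by a degenerate simplex'' you describe. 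The only place worth a word more than you give it is the $n=0$ case: to see that $y-x\in\operatorname{im}(d_0-d_1)$ implies $x$ and $y$ are joined by an actual edge, one replaces the witnessing $w\in A_1$ by $w-s_0(d_1w)+s_0x$, which has $d_1=x$ and $d_0=y$; with that remark included the proof is complete.
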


\subsection{The Dold--Kan correspondence}\label{subsect:Dold-Kan correspondence}

This chapter would not be complete without at least mentioning the Dold--Kan correspondence.

\medskip

There is a functor
\[
\Gamma:\chain_+\longrightarrow\mathsf{sAb}
\]
defined by
\[
\Gamma_n(C)\coloneqq\bigoplus_{[n]\to[k]\text{ surj.}}C_k
\]
and endowed with certain natural simplicial structure maps.

\medskip

The following celebrated result is due to A. Dold and D. Kan, and is widely known as the Dold--Kan correspondence.

\begin{theorem}[Dold--Kan]\label{thm:Dold-Kan}\index{Dold--Kan correspondence}
	The functors
	\[
	N:\mathsf{sAb}\longrightarrow\chain_+\qquad\text{and}\qquad\Gamma:\chain_+\longrightarrow\mathsf{sAb}
	\]
	form an equivalence of categories.
\end{theorem}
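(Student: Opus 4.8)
The plan is to exhibit natural isomorphisms $\Gamma N\cong\mathrm{id}_{\mathsf{sAb}}$ and $N\Gamma\cong\mathrm{id}_{\chain_+}$; since a pair of functors admitting such natural isomorphisms is an equivalence, this suffices. The combinatorial input on which everything rests is the \emph{Dold--Kan decomposition}: for a simplicial abelian group $A$ and $n\ge 0$, the map
\[
\bigoplus_{\eta\colon[n]\twoheadrightarrow[k]}NA_k\longrightarrow A_n,\qquad(x_\eta)_\eta\longmapsto\sum_\eta\eta^*(x_\eta),
\]
where the sum runs over the surjections in $\Delta$ with source $[n]$ and $\eta^*\colon A_k\to A_n$ is the induced degeneracy operator, is an isomorphism of abelian groups, natural in $A$.

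First I would establish this decomposition. Let $D_nA\coloneqq\sum_{i}s_i(A_{n-1})\subseteq A_n$ be the subgroup of degenerate simplices. Using the simplicial identities one constructs an explicit idempotent endomorphism of $A_n$ with image $NA_n$ and kernel $D_nA$, giving a natural splitting $A_n=NA_n\oplus D_nA$; iterating this splitting on the degenerate part and using the fact that every degenerate simplex is uniquely of the form $\eta^*(y)$ for a non-degenerate $y$ and a unique surjection $\eta$ in $\Delta$, one organizes all the resulting pieces into the displayed direct sum. This bookkeeping — keeping track of the surjections in $\Delta$ and checking that the local idempotents assemble into a single natural splitting — is the step I expect to be the main obstacle; the rest is comparatively formal. (The chain homotopy equivalence $i\colon NA\to\mathcal{M}(A)$ and \cref{thm:homotopy of simplicial vector space is homology of Moore complex} recorded above are already consequences of this same decomposition, but are not needed for the present statement.)

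Granting the decomposition, the two natural isomorphisms follow quickly. Unwinding the definition $\Gamma_n(C)=\bigoplus_{[n]\twoheadrightarrow[k]}C_k$, the group $(\Gamma NA)_n$ is literally $\bigoplus_{\eta\colon[n]\twoheadrightarrow[k]}NA_k$, and the decomposition map above is an isomorphism $(\Gamma NA)_n\xrightarrow{\ \sim\ }A_n$; one then checks it is a map of simplicial abelian groups, which reduces to the recipe defining the structure maps of $\Gamma$ in terms of the canonical factorizations of morphisms of $\Delta$ into a surjection followed by an injection. In the other direction, the normalized subgroup $\bigcap_{i<n}\ker(d_i)$ of $\Gamma_n(C)=\bigoplus_{[n]\twoheadrightarrow[k]}C_k$ is exactly the summand indexed by $\mathrm{id}_{[n]}$: this is the case $A=\Gamma(C)$ of the identity $NA\cap D_nA=0$, once one observes that the degenerate part of $\Gamma(C)$ in degree $n$ is precisely the sum of the non-identity summands. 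Hence $N\Gamma(C)_n\cong C_n$, and the differential $d_n$ restricted to this summand is, up to the sign coming from the top-face factorization, the differential of $C$. Both families of isomorphisms are natural (in $A$ and in $C$ respectively), which yields the claimed equivalence of categories.
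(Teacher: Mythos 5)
The paper does not prove this theorem --- it is recalled as a classical result of Dold and Kan --- so there is no in-paper argument to compare against; your proposal is the standard textbook proof (as in Goerss--Jardine III.2 or Weibel \S 8.4), and its overall architecture is correct: the Dold--Kan decomposition $A_n\cong\bigoplus_{\eta\colon[n]\twoheadrightarrow[k]}NA_k$ is exactly the right key lemma, the identification $(\Gamma NA)_n=\bigoplus_\eta NA_k$ is literally the definition of $\Gamma$, and the identification of $N\Gamma(C)_n$ with the summand indexed by $\mathrm{id}_{[n]}$ is the correct way to get the other natural isomorphism. The one place where your sketch is not yet a proof is precisely the step you flag: to decompose the degenerate subgroup $D_nA=\sum_i s_i(A_{n-1})$ as $\bigoplus_{\eta\neq\mathrm{id}}NA_k$ you appeal to the fact that every degenerate \emph{simplex} is uniquely $\eta^*(y)$ with $y$ non-degenerate. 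That is the Eilenberg--Zilber lemma for simplicial \emph{sets}, and it does not directly give the abelian-group statement, since a general element of $D_nA$ is a sum of degeneracies and need not be a degeneracy of a single element. The standard repair is an induction on $n$: assuming the decomposition in degree $n-1$, surjectivity of $\bigoplus_\eta NA_k\to A_n$ follows from $A_n=NA_n+D_nA$ together with the induction hypothesis applied inside each $s_j(A_{n-1})$, and injectivity is proved by ordering the surjections $[n]\twoheadrightarrow[k]$ (say lexicographically by their sections) and using the face operators that split the corresponding degeneracies to peel off summands one at a time. With that induction supplied in place of the set-level uniqueness statement, your argument is complete; the remaining verifications (that the decomposition map is simplicial, and that $d_n$ restricts to the differential of $C$ on the identity summand) are routine unwindings of the structure maps of $\Gamma$, as you say.
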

	
	\chapter{Rational homotopy theory}\label{chapter:RHT}

Studying the homotopy type of topological spaces is a very difficult problem --- even for well-behaved ones: suffices to say that at the present time we don't know all the homotopy groups of spheres. The idea of rational homotopy theory is to simplify the problem by studying the \emph{rational} homotopy groups of (nice) spaces. If $X$ is a simply connected space, then all of its homotopy groups are abelian (since the fundamental group is trivial), so one considers
\[
\pi_n(X)\otimes_\mathbb{Z}\mathbb{Q}\quad\text{for}\quad n\ge2\ .
\]
These groups are much easier to compute and to study than the homotopy groups $\pi_n(X)$. They are closely related to the rational homology groups $H_n(X;\mathbb{Q})$ of the space, the relationship being stricter than the one between the usual homotopy groups and integer homology, and it is possible to give algebraic models for the spaces that completely encode their rational homotopy theory.

\medskip

In other words, rational homotopy theory is ``the study of the rational homotopy category, that is the category obtained from the category of $1$-connected pointed spaces by localizing with respect to the family of those maps which are isomorphisms modulo the class in the sense of Serre of torsion abelian groups" --- \cite[p. 205]{Quillen}.

\medskip

The author does not make any claims of being an expert on the domain of rational homotopy theory. This section is a naive introduction to the subject, based on the reference books \cite{Quillen} and \cite{FelixHalperinThomas}.

\section{Conventions and basic definitions}

For the rest of this chapter, the base field will be the field of rational numbers $\k=\mathbb{Q}$. All vector spaces, chain complexes, algebras, etc. will always be over this field.

\subsection{The rational homotopy category}

We place ourselves in the category $\Top_{*,1}$ of simply-connected, pointed topological spaces and continuous pointed maps\footnote{This category is denoted by $\Top_2$ in \cite{Quillen}, for ``spaces beginning in dimension $2$".}.

\begin{definition}
	Let $X\in\Top_{*,1}$. The \emph{rational homotopy}\index{Rational!homotopy} of $X$ is the graded $\mathbb{Q}$-vector space $\pi_\bullet(X)\otimes_{\mathbb{Z}}\mathbb{Q}$ starting in degree $2$. The \emph{rational homology}\index{Rational!homology} of $X$ is the graded $\mathbb{Q}$-vector space\footnote{Also starting in degree $2$ by the Hurewicz theorem.} $H_\bullet(X;\mathbb{Q})$.
\end{definition}

In this category, maps inducing isomorphisms in rational homotopy and maps inducing isomorphisms in rational homology are the same, as proven by Serre in \cite[Thm. 3]{ser53}.

\begin{theorem}[Serre]\label{thm:Serre rational equivalences}
	Let $f:X\to Y$ be a map in $\Top_{*,1}$. The following are equivalent.
	\begin{enumerate}
		\item The map $f$ induces an isomorphism
		\[
		\pi_\bullet(f)\otimes_{\mathbb{Z}}\mathbb{Q}:\pi_\bullet(X)\otimes_{\mathbb{Z}}\mathbb{Q}\longrightarrow\pi_\bullet(Y)\otimes_{\mathbb{Z}}\mathbb{Q}
		\]
		in rational homotopy.
		\item The map $f$ induces an isomorphism
		\[
		H_\bullet(f;\mathbb{Q}):H_\bullet(X;\mathbb{Q})\longrightarrow H_\bullet(Y;\mathbb{Q})
		\]
		in rational homology.
	\end{enumerate}
\end{theorem}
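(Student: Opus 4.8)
The plan is to prove the equivalence via the classical mod-$\mathcal{C}$ Hurewicz and Whitehead theorems applied to the Serre class $\mathcal{C}$ of torsion abelian groups. First I would recall that $\mathcal{C}$ is a Serre class: it is closed under subgroups, quotients, extensions, and (crucially in char.\ $0$) under the relevant tensor/Tor operations, so that the mod-$\mathcal{C}$ Hurewicz theorem is available. Since every $X \in \Top_{*,1}$ is simply connected, a map $f \colon X \to Y$ can be replaced up to homotopy by an inclusion, and one forms the homotopy fibre (or mapping cone pair) $(Y,X)$; the relative homotopy and homology groups $\pi_\bullet(Y,X)$ and $H_\bullet(Y,X)$ fit into the usual long exact sequences.

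Next I would observe that both conditions in the statement are equivalent to the single condition that the pair $(Y,X)$ is \emph{$\mathcal{C}$-acyclic} in the appropriate sense. Indeed, tensoring the long exact homotopy sequence of the pair with $\mathbb{Q}$ (which is exact, as $\mathbb{Q}$ is flat over $\mathbb{Z}$) shows that $\pi_\bullet(f)\otimes\mathbb{Q}$ is an isomorphism if and only if $\pi_\bullet(Y,X)\otimes_\mathbb{Z}\mathbb{Q} = 0$, i.e.\ $\pi_\bullet(Y,X) \in \mathcal{C}$ for all degrees. Likewise, the long exact homology sequence (with $\mathbb{Q}$-coefficients, or integrally combined with the universal coefficient theorem) shows that $H_\bullet(f;\mathbb{Q})$ is an isomorphism if and only if $H_\bullet(Y,X;\mathbb{Z}) \in \mathcal{C}$ for all degrees. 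So the whole statement reduces to: the relative homotopy groups of the pair are all torsion if and only if the relative integral homology groups are all torsion.

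This last reduction is exactly the content of the relative mod-$\mathcal{C}$ Hurewicz theorem: for a simply connected pair with $\pi_1$ acting trivially (automatic here), if $\pi_i(Y,X) \in \mathcal{C}$ for $i < n$ then the Hurewicz map $\pi_n(Y,X) \to H_n(Y,X)$ is a $\mathcal{C}$-isomorphism, and one bootstraps by induction on $n$ in both directions. I would spell out the two implications: assuming all $H_\bullet(Y,X) \in \mathcal{C}$, an induction using mod-$\mathcal{C}$ Hurewicz forces all $\pi_\bullet(Y,X) \in \mathcal{C}$; conversely, assuming all $\pi_\bullet(Y,X) \in \mathcal{C}$, the same theorem read in the other direction (together with the fact that $\mathcal{C}$ is closed under the operations appearing in the comparison) forces all $H_\bullet(Y,X) \in \mathcal{C}$. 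Combining with the two long exact sequence arguments of the previous paragraph yields the equivalence.

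The main obstacle is simply that this is Serre's theorem, so the ``proof'' is really a matter of citing and correctly assembling the mod-$\mathcal{C}$ machinery rather than discovering anything; the one genuine technical point to be careful about is the verification that $\mathcal{C}$ = torsion abelian groups is a \emph{Serre class} in the strong sense needed for the relative Hurewicz theorem (closure under the relevant $\mathrm{Tor}$ and tensor terms), and the mild book-keeping of passing between the fibre and cofibre descriptions of the pair $(Y,X)$ in the simply connected setting. Given that the excerpt attributes the result to Serre and only needs it as a black box for rational homotopy theory, I would expect the author's proof to be a short reference to \cite{ser53} together with a one-line indication of the mod-$\mathcal{C}$ Hurewicz argument, rather than a full development.
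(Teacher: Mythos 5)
The paper offers no proof of its own here: it simply attributes the result to Serre and cites \cite[Thm. 3]{ser53}, exactly as you anticipated. Your sketch of the mod-$\mathcal{C}$ Hurewicz argument for the Serre class of torsion abelian groups is the standard (indeed Serre's own) route, so it is consistent with the cited source and correctly assembled.
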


\begin{definition}
	A map $f$ in $\Top_{*,1}$ inducing an isomorphism in rational homotopy or, equivalently, in rational homology is called a \emph{rational equivalence}\index{Rational!equivalence}. We denote by $\rhcat$ --- the \emph{rational homotopy category}\index{Rational!homotopy category} --- the localization of $\Top_{*,1}$ at the rational equivalences.
\end{definition}

The category $\Top_{*,1}$ cannot be made into a model category for a trivial reason: it is not complete. However, if one takes the set of rational equivalences as ``weak equivalences" and define appropriate sets of ``fibrations" and ``cofibrations", then the resulting category behaves very similarly to a model category, see \cite[Thm. II.6.1(a)]{Quillen}. In particular, $\rhcat$ is a well defined category.

\medskip

One similarly defines a category $\ssets_{1}$ given by the full subcategory of $\ssets$ spanned by $2$-reduced simplicial sets, i.e. the simplicial sets that have a single $0$-simplex and a single $1$-simplex. This category admits a model structure where the weak equivalences are the rational equivalences, making it into a model category, see \cite[Thm. II.2.2]{Quillen}. The homotopy theory of this model category is then equivalent to the homotopy theory of $\Top_{*,1}$, with one of the equivalences being given by geometric realization of simplicial sets, see \cite[Thm. II.6.1(b)]{Quillen}.

\begin{remark}
	Because of this equivalence, we will often speak of ``spaces" without specifying if we mean topological spaces or simplicial sets.
\end{remark}

The category $\rhcat$ has the same objects as $\Top_{*,1}$, but morphisms behave a bit differently. For example, if two morphisms $f,g:X\to Y$ are homotopic in $\Top_{*,1}$, then they are the same morphism in $\rhcat$. However, it is not true that two maps $f,g:X\to Y$ in $\Top_{*,1}$ inducing the same maps in rational homotopy or rational homology are identified in $\rhcat$.

\subsection{Conventions on cocommutative coalgebras and Lie algebras}

We will mainly be dealing with algebras and coalgebras representing the singular chain complexes of topological spaces. Therefore, we consider the category $\chain_{\ge0}$ of chain complexes concentrated in degree greater or equal than $0$ instead of the category $\chain$.

\medskip

For Lie algebras and cocommutative coalgebras, we consider
\begin{itemize}
	\item the category $\dgl_{\ge1}$ of Lie algebras concentrated in degrees greater or equal than $1$, and
	\item the category $\cocom_{\ge2}$ of cocommutative coalgebras concentrated in degrees greater or equal than $2$.
\end{itemize}

\begin{remark}
	In \cite{Quillen}, one considers \emph{counital, coaugmented} cocommutative coalgebras, but directly looking at non-counital cocommutative coalgebras is equivalent.
\end{remark}

By the K\"unneth formula, the homology of an object in $\dgl_{\ge1}$, respectively $\cocom_{\ge2}$, is again in $\dgl_{\ge1}$, respectively $\cocom_{\ge2}$, albeit with trivial differential. A morphism in $\dgl_{\ge1}$ or $\cocom_{\ge2}$ is a \emph{weak equivalence} if it is a quasi-isomorphism. One can also define classes of fibrations and cofibrations, making both categories into model categories, cf. \cite[Thm. 5.1 and 5.2]{Quillen}.

\begin{remark}
	Notice that this is perfectly coherent with the model structures on algebras and coalgebras we looked at in \cref{ch:model categories}. The quasi-isomorphisms are exactly the weak equivalences in the category algebras over an operad, as seen in \cref{subsection:Hinich model structure on algebras}, and while they are not the same as the weak equivalences for coalgebras in general, cf. \cref{subsection:Vallette model structure on coalgebras}, they are indeed for coalgebras concentrated in degree $\ge2$ by \cref{prop:cobar construction and qis}.
\end{remark}

\subsection{Quillen's main theorem}\label{subsect:Quillen main thm}

Let $X\in\Top_{*,1}$, then the rational homotopy of $X$ can be made into a graded\footnote{By which we mean differential graded with trivial differential.} Lie algebra $\pi(X)\in\mathsf{grLie}_{\ge1}$ by
\[
\pi(X)_n\coloneqq\pi_{n+1}(X)\otimes_{\mathbb{Z}}\mathbb{Q}\ ,
\]
i.e. $\pi(X)\coloneqq s^{-1}\pi_\bullet(X)\otimes_{\mathbb{Z}}\mathbb{Q}$ as a chain complex, and using the Whitehead product to define the Lie bracket. This yields a functor
\[
\pi:\Top_{*,1}\longrightarrow\mathsf{grLie}_{\ge1}\ .
\]
By the definition of $\rhcat$, the functors $X\mapsto\pi(X)$ and $X\mapsto H_\bullet(X;\mathbb{Q})$ from $\Top_{*,1}$ to graded Lie algebras and graded cocommutative algebras extend uniquely to functors with $\rhcat$ as domain.

\medskip

The main theorem of \cite{Quillen}, Theorem I.1 in loc. cit., is the following.

\begin{theorem}[Quillen]
	There are equivalences of categories
	\[
	\rhcat\stackrel{\lambda}{\longrightarrow}\ho{\dgl_{\ge1}}\xrightarrow{s\Bar_\kappa}\ho{\cocom_{\ge2}}\ ,
	\]
	where the second functor is the suspension of the usual bar construction with respect to the twisting morphism
	\[
	\kappa:\susp^{-1}\otimes\com^\vee = \lie^{\antishriek}\longrightarrow\lie\ .
	\]
	Moreover, there are isomorphisms of functors
	\[
	\pi(X)\longrightarrow H_\bullet(\lambda(X))\,\quad\text{and}\quad H_\bullet(X)\longrightarrow H_\bullet(s\Bar_\kappa\lambda(X))
	\]
	from $\rhcat$ to graded Lie algebras and graded cocommutative coalgebras respectively.
\end{theorem}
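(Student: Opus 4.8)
The statement packages two essentially independent equivalences, and I would treat them separately. The first, $\rhcat \simeq \ho{\dgl_{\ge1}}$ together with the identification $\pi(X) \cong H_\bullet(\lambda(X))$, is the core of Quillen's original argument, and I would only recall it, referring to \cite{Quillen} for the details. It is obtained by composing a chain of equivalences of homotopy categories
\[
\ho{\ssets_1} \simeq \ho{\mathsf{sGrp}_0} \simeq \ho{\widehat{\mathbb{Q}}\text{-}\mathsf{Hopf}_0} \simeq \ho{\mathsf{sLie}_{\ge1}} \simeq \ho{\dgl_{\ge1}},
\]
passing from $2$-reduced simplicial sets to reduced simplicial groups (Kan's loop group $G$ and classifying space $\overline{W}$), then to reduced complete simplicial Hopf algebras (rational Malcev completion), then to reduced complete simplicial Lie algebras (primitive elements), and finally to $\dgl_{\ge1}$ via the normalization functor of the Dold--Kan correspondence (\cref{thm:Dold-Kan}); on the topological side one invokes the Quillen equivalence $\ho{\Top_{*,1}} \simeq \ho{\ssets_1}$. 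The functor $\lambda$ is the induced composite, and the natural isomorphism $\pi(X) \to H_\bullet(\lambda(X))$ is obtained by tracking $\pi_\bullet$ through each step, the Whitehead product on the left matching the Lie bracket on the right by construction of the loop group.

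\textbf{The algebraic equivalence.} The second equivalence $\ho{\dgl_{\ge1}} \simeq \ho{\cocom_{\ge2}}$ is where I would give a self-contained argument from the operadic machinery of \cref{ch:operads}. The functor $s\Bar_\kappa$ is the shifted bar construction relative to the twisting morphism $\kappa\colon \lie^{\antishriek} \to \lie$ coming from Koszul duality; since $\lie$ is Koszul, $\kappa$ is Koszul. Using $\lie^{\antishriek} \cong (\susp^{-1})^c \otimes \com^\vee$ together with the classical identification of shifted cofree conilpotent $\lie^{\antishriek}$-coalgebras with cofree cocommutative coalgebras — the underlying coalgebra of the Chevalley--Eilenberg construction — a degree count shows that if $A$ lies in degrees $\ge 1$ then $\Bar_\kappa A$ lies in degrees $\ge 1$, so $s\Bar_\kappa A$ lies in degrees $\ge 2$ and is a cocommutative coalgebra. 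This functor has the shifted cobar construction $\Cobar_\kappa$ as a left adjoint by \cref{thm:Rosetta stone algebras}. By \cref{thm:bar-cobar resolution for algebras} the counit $\Cobar_\kappa \Bar_\kappa A \to A$ is a quasi-isomorphism for every $A \in \dgl_{\ge1}$, and by \cref{thm:alpha Koszul iff unit is qi} the unit $C \to \Bar_\kappa \Cobar_\kappa C$ is a quasi-isomorphism for every conilpotent cocommutative coalgebra $C$; for $C \in \cocom_{\ge2}$ this is moreover a weak equivalence, because on $\cocom_{\ge2}$ the weak equivalences coincide with the quasi-isomorphisms (this is exactly where \cref{prop:cobar construction and qis} is used). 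Since $\Bar_\kappa$ preserves quasi-isomorphisms by \cref{prop:bar of qi is qi}, and the shift $s$ is an isomorphism of categories, the derived adjunction is an equivalence of homotopy categories.

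\textbf{The homology isomorphism and compatibility.} For the second natural transformation, $s\Bar_\kappa \lambda$ assigns to $X$ a cocommutative coalgebra model whose homology $H_\bullet(s\Bar_\kappa\lambda(X))$ I would identify with the Chevalley--Eilenberg homology of the Lie model $\lambda(X)$; running this through the chain of equivalences above, this computes the homology of the classifying space of the loop group with trivial rational coefficients, which is $H_\bullet(X;\mathbb{Q})$. Naturality in $X \in \rhcat$ is automatic since every functor in the chain is functorial, and \cref{thm:Serre rational equivalences} guarantees that rational homology detects the rational equivalences, so that the two equivalences are compatible with the localizations.

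\textbf{Main obstacle.} The genuinely difficult input is the first equivalence: nothing in it is operadic, and making the chain of homotopy equivalences precise requires the full apparatus of simplicial group theory, rational Malcev completion, and a delicate verification that the completion and primitive-element functors are homotopically well behaved on the $2$-reduced and nilpotent subcategories involved. I would not reprove this and would cite \cite{Quillen}; the only part amenable to a self-contained treatment here is the recognition of the second equivalence as an instance of operadic Koszul duality for the pair $(\lie,\com)$, which then follows from the bar--cobar formalism already developed.
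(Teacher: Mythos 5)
The paper does not prove this statement at all: it is presented as a recollection of \cite[Thm.\ I.1]{Quillen}, with the remark that a full description of $\lambda$ is out of scope, so there is no internal argument to compare yours against. Your decomposition is the right one. Deferring the chain $\ho{\ssets_1}\simeq\ho{\mathsf{sGrp}_0}\simeq\cdots\simeq\ho{\dgl_{\ge1}}$ to Quillen is exactly what the paper implicitly does, and your self-contained operadic treatment of the second equivalence is sound and is a genuine addition: the degree count showing $s\Bar_\kappa$ lands in $\cocom_{\ge2}$ is correct ($\lie^{\antishriek}(n)$ sits in degree $n-1$, so arity-$n$ terms of $\Bar_\kappa\g$ live in degrees $\ge 2n-1$), and the combination of \cref{thm:bar-cobar resolution for algebras}, \cref{thm:alpha Koszul iff unit is qi}, \cref{prop:bar of qi is qi} and \cref{prop:cobar construction and qis} does yield mutually inverse functors on the localizations at quasi-isomorphisms, since the paper's conventions take quasi-isomorphisms as the weak equivalences on both $\dgl_{\ge1}$ and $\cocom_{\ge2}$. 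Two small points to make explicit if you write this up: (i) you should check that the inverse functor $\Cobar_\kappa\circ s^{-1}$ stays inside $\dgl_{\ge1}$ (it does, since $\lie(n)$ is concentrated in degree $0$ and $(s^{-1}D)^{\otimes n}$ sits in degrees $\ge n$), and (ii) the connectedness hypothesis of \cref{prop:cobar construction and qis} is exactly what the bound $\ge 2$ before desuspension buys you, so the restriction to these subcategories is not cosmetic. What your approach buys is a proof of the algebraic half that reuses the bar--cobar machinery of \cref{ch:operads} instead of Quillen's original model-category argument; what it does not buy is any simplification of the genuinely hard topological half, which you correctly identify and leave to the citation.
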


The functor $\lambda:\rhcat\to\ho{\dgl_{\ge1}}$ comes from a sequence of adjunctions leading from $\Top_{*,1}$ to $\dgl_{\ge1}$, see \cite[p. 211]{Quillen}. A full description of this functor is out of the scope of the present work.

\begin{corollary}
	If $\g$ is a graded Lie algebra concentrated in degrees $\ge1$, then $\g\simeq\pi(X)$ for some $X\in\Top_{*,1}$. If $\C$ is a graded cocommutative coalgebra concentrated in degrees $\ge2$, then $C\simeq H_\bullet(X;\mathbb{Q})$ for some $X\in\Top_{*,1}$.
\end{corollary}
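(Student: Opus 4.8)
The plan is to read off both statements directly from Quillen's theorem above, using essential surjectivity of the two displayed equivalences of categories together with the elementary fact that a graded Lie algebra or graded cocommutative coalgebra (with trivial differential) is canonically isomorphic to its own homology.

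First I would dispose of the Lie algebra case. Regard $\g\in\mathsf{grLie}_{\ge1}$ as an object of $\dgl_{\ge1}$ with zero differential. The homology functor $H_\bullet$ sends quasi-isomorphisms of differential graded Lie algebras to isomorphisms, hence factors through a functor $\overline{H}_\bullet\colon\ho{\dgl_{\ge1}}\to\mathsf{grLie}_{\ge1}$, and similarly $\pi$ and $X\mapsto H_\bullet(X;\mathbb{Q})$ factor through $\rhcat$. Since $\lambda\colon\rhcat\to\ho{\dgl_{\ge1}}$ is an equivalence of categories it is essentially surjective, so there is $X\in\rhcat$ — which we identify with a space in $\Top_{*,1}$, these categories having the same objects — and an isomorphism $\lambda(X)\cong\g$ in $\ho{\dgl_{\ge1}}$. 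Applying $\overline{H}_\bullet$ gives $H_\bullet(\lambda(X))\cong H_\bullet(\g)=\g$, the last equality because $d_\g=0$. Composing with the natural isomorphism $\pi(X)\xrightarrow{\ \cong\ }H_\bullet(\lambda(X))$ supplied by Quillen's theorem yields $\pi(X)\cong\g$ as graded Lie algebras, which is the first assertion.

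The coalgebra case is formally identical. The composite $s\Bar_\kappa\circ\lambda\colon\rhcat\to\ho{\cocom_{\ge2}}$ is an equivalence of categories, being the composite of the two equivalences in Quillen's theorem, and again $H_\bullet$ descends to $\overline{H}_\bullet\colon\ho{\cocom_{\ge2}}\to\mathsf{grCocom}_{\ge2}$. Viewing $\C\in\mathsf{grCocom}_{\ge2}$ inside $\cocom_{\ge2}$ with trivial differential, essential surjectivity produces $X\in\Top_{*,1}$ with $s\Bar_\kappa\lambda(X)\cong\C$ in $\ho{\cocom_{\ge2}}$, hence $H_\bullet(s\Bar_\kappa\lambda(X))\cong H_\bullet(\C)=\C$. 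The natural isomorphism $H_\bullet(X;\mathbb{Q})\xrightarrow{\ \cong\ }H_\bullet(s\Bar_\kappa\lambda(X))$ then gives $H_\bullet(X;\mathbb{Q})\cong\C$, as claimed.

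There is essentially no obstacle; the statement is a formal consequence of the preceding theorem. The only points deserving a word of care are the interpretation of ``$\simeq$'' as an isomorphism of graded (co)algebras — which, since all differentials in sight are trivial, is the same as a quasi-isomorphism and hence detected by passing to homology — and the observation that one should transport $\g$ (resp.\ $\C$) back along the \emph{homology comparison} isomorphisms $\pi(X)\cong H_\bullet(\lambda(X))$ and $H_\bullet(X;\mathbb{Q})\cong H_\bullet(s\Bar_\kappa\lambda(X))$, rather than expecting $\lambda(X)$ itself to be $\g$ on the nose.
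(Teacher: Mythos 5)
Your argument is correct and is precisely the intended deduction: the paper states this corollary without proof as an immediate formal consequence of Quillen's theorem, and your use of essential surjectivity of $\lambda$ (resp. of $s\Bar_\kappa\lambda$) combined with the natural isomorphisms $\pi(X)\cong H_\bullet(\lambda(X))$ and $H_\bullet(X;\mathbb{Q})\cong H_\bullet(s\Bar_\kappa\lambda(X))$, plus the observation that a graded object with trivial differential equals its own homology, is exactly the argument being invoked. Your closing remarks on the meaning of $\simeq$ and on transporting along the comparison isomorphisms rather than expecting $\lambda(X)\cong\g$ on the nose are the right points of care.
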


This is a first hint to the fact that, in rational homotopy theory, one should be able to model spaces by Lie algebras and cocommutative coalgebras.

\subsection{Rationalization}

This part is based on \cite[Ch. 9]{FelixHalperinThomas}.

\begin{definition}
	An abelian group $G$ is called \emph{rational}\index{Rational!abelian group} if multiplication by $k$ is an automorphism for each $k\in\mathbb{Z}\backslash\{0\}$.
\end{definition}

A rational abelian group is therefore canonically a $\mathbb{Q}$-vector space. Notice that for any abelian group $G$, the abelian group $G\otimes_{\mathbb{Z}}\mathbb{Q}$ is rational. It is called the \emph{rationalization}\index{Rationalization!of an abelian group} of $G$. If $G$ was rational to start with, then
\[
G\otimes_{\mathbb{Z}}\mathbb{Q}\cong G
\]
canonically via the obvious map induced by $g\otimes1\mapsto g$.

\medskip

Something analogous can be done with spaces.

\begin{theorem}[{\cite[Thm. 9.3]{FelixHalperinThomas}}]
	Let $X\in\Top_{*,1}$. The following are equivalent.
	\begin{enumerate}
		\item $\pi_\bullet(X)$ is rational.
		\item $H_\bullet(X,\pt;\mathbb{Z})$ is rational.
	\end{enumerate}
\end{theorem}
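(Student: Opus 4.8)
The statement is a standard equivalence: a simply connected space $X$ has rational homotopy groups if and only if it has rational (reduced) integral homology, and I would prove it essentially along the lines of \cite[Thm.\ 9.3]{FelixHalperinThomas}. The plan is to use the relationship between $\pi_\bullet$ and $H_\bullet$ encoded by the Hurewicz and Whitehead theorems, together with the fact that ``being a $\mathbb{Q}$-vector space'' is a property that propagates along Serre spectral sequences. First I would recall that for $X$ simply connected there is the mod-$\mathcal{C}$ Hurewicz theorem: if $\mathcal{C}$ is the Serre class of abelian groups $A$ with $A \otimes_{\mathbb{Z}} \mathbb{Q} = A$ (i.e.\ the rational abelian groups, a Serre class closed under the relevant operations), then $X$ has all $\pi_n(X) \in \mathcal{C}$ if and only if $X$ has all $\widetilde{H}_n(X;\mathbb{Z}) \in \mathcal{C}$. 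This is precisely the content of the theorem once one identifies ``$\pi_\bullet(X)$ is rational'' with ``$\pi_n(X) \in \mathcal{C}$ for all $n$'' and ``$H_\bullet(X,\pt;\mathbb{Z})$ is rational'' with ``$\widetilde{H}_n(X;\mathbb{Z}) \in \mathcal{C}$ for all $n$''.

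The key steps, in order, would be the following. Step one: verify that the class $\mathcal{C}$ of rational abelian groups is a Serre class in the sense needed (closed under subgroups, quotients, extensions, and such that $\mathcal{C}$-triviality of homotopy or homology propagates); this is elementary since $- \otimes_{\mathbb{Z}} \mathbb{Q}$ is exact and a group $G$ lies in $\mathcal{C}$ iff $G$ is uniquely divisible. Step two: prove the implication $\pi_\bullet(X)$ rational $\implies \widetilde{H}_\bullet(X;\mathbb{Z})$ rational by induction up a Postnikov tower (or equivalently by killing homotopy groups one at a time): each stage is a fibration with fibre an Eilenberg--MacLane space $K(\pi_n(X), n)$, and one checks that $K(A,n)$ has rational reduced homology whenever $A$ is rational, then feeds this through the Serre spectral sequence, whose $E_2$-page is built out of $\mathcal{C}$-objects and hence whose abutment lies in $\mathcal{C}$. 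Step three: prove the converse $\widetilde{H}_\bullet(X;\mathbb{Z})$ rational $\implies \pi_\bullet(X)$ rational, again by induction, now running the Serre spectral sequence for the path-loop fibration or using the relative Hurewicz theorem to transfer $\mathcal{C}$-information from homology back to homotopy one degree at a time. Since $X$ is simply connected, there are no basepoint or $\pi_1$-action subtleties, and the inductions start cleanly from $\pi_2(X) \cong H_2(X;\mathbb{Z})$ by the ordinary Hurewicz theorem.

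The main obstacle — really the only nontrivial input — is the computation that the integral homology of an Eilenberg--MacLane space $K(A,n)$ with $A$ a rational abelian group is again rational in all positive degrees; equivalently, that $H_\bullet(K(\mathbb{Q},n);\mathbb{Z})$ is a $\mathbb{Q}$-vector space in each positive degree. This follows from the known rational homology of Eilenberg--MacLane spaces ($H_\bullet(K(\mathbb{Q},n);\mathbb{Q})$ is a free graded-commutative algebra on one generator in degree $n$ when $n$ is even, an exterior algebra when $n$ is odd) combined with the fact that $K(\mathbb{Q},n)$ can be built as a telescope of maps $K(\mathbb{Z},n) \to K(\mathbb{Z},n) \to \cdots$ inducing multiplication by $2, 3, \ldots$ on $\pi_n$, so that its integral homology is the corresponding colimit and hence uniquely divisible. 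Everything else is bookkeeping with the Serre spectral sequence and the closure properties of $\mathcal{C}$, which I would not spell out in full. Once this lemma is in hand, both implications of the theorem are immediate consequences of the inductive spectral-sequence argument sketched above.
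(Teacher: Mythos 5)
The paper does not prove this statement; it is recalled verbatim with a citation to F\'elix--Halperin--Thomas, so the only comparison available is with the standard argument, which is indeed the one you outline: Postnikov/Whitehead tower induction, Serre spectral sequence, and the computation that $K(A,n)$ has rational reduced integral homology when $A$ is rational. Your overall architecture and your key lemma (including the telescope argument for $K(\mathbb{Q},n)$) are correct.

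There is, however, one genuine flaw in your Step one: the class $\mathcal{C}$ of rational (i.e.\ uniquely divisible) abelian groups is \emph{not} a Serre class, because it is not closed under subgroups --- $\mathbb{Z}\subset\mathbb{Q}$ is the obvious counterexample --- so you cannot literally invoke the mod-$\mathcal{C}$ Hurewicz theorem for this $\mathcal{C}$, and your claim that the closure properties are ``elementary since $-\otimes_{\mathbb{Z}}\mathbb{Q}$ is exact'' does not establish them. The argument is salvageable, but the repair has to be made explicit: every group homomorphism between uniquely divisible abelian groups is automatically $\mathbb{Q}$-linear (divide by $n$ in the target), so kernels, images, cokernels, and extensions of rational groups \emph{by maps between rational groups} are again rational. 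This is exactly what is needed to run the Serre spectral sequence (each page $E_{r+1}$ is a subquotient of $E_r$ cut out by homomorphisms between rational groups, and the abutment is an iterated extension of the $E_\infty$ terms) and to carry the induction up the tower; alternatively, one can work throughout with $\mathbb{Q}$-coefficients and compare with integral homology via the universal coefficient theorem. With that substitution for the Serre-class formalism, both implications go through as you describe.
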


\begin{definition}
	A space $X\in\Top_{*,1}$ is \emph{rational}\index{Rational!space} if $\pi_\bullet(X)$ is rational, or equivalently if $H_\bullet(X,\pt;\mathbb{Z})$ is rational.
\end{definition}

Let $f:X\to Y$ be a morphism in $\Top_{*,1}$, and suppose that $Y$ is rational. Then the morphism $\pi_\bullet(f)$ of abelian groups canonically extends to a morphism
\[
\pi_\bullet(X)\otimes_{\mathbb{Z}}\mathbb{Q}\longrightarrow\pi_\bullet(Y)
\]
of $\mathbb{Q}$-vector spaces.

\begin{definition}
	A \emph{rationalization}\index{Rationalization!of a space} of a space $X\in\Top_{*,1}$ is a morphism $f:X\to X_\mathbb{Q}$ to a rational space $X_\mathbb{Q}$ such that the induced map
	\[
	\pi_\bullet(X)\otimes_{\mathbb{Z}}\mathbb{Q}\longrightarrow\pi_\bullet(X_\mathbb{Q})
	\]
	is an isomorphism.
\end{definition}

\begin{theorem}[{\cite[Thm. 9.7]{FelixHalperinThomas}}]\label{thm:rationalization exists}
	For every $X\in\Top_{*,1}$ there exists a relative CW complex $(X_\mathbb{Q},X)$ with no $0$-cells and no $1$-cells such that the inclusion
	\[
	X\longrightarrow X_\mathbb{Q}
	\]
	is a rationalization. This rationalization is unique up to homotopy equivalence $\mathrm{rel}\,X$.
\end{theorem}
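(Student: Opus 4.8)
The plan is to construct $X_\mathbb{Q}$ by a cellular procedure that rationalizes $X$ one homotopy group at a time, building on the basic case of spheres and Eilenberg--MacLane spaces, and then to deduce the uniqueness statement from a universal property of the rationalization map in the homotopy category. First I would treat the model case: for $n\ge2$ construct the rational sphere $S^n_\mathbb{Q}$ as the infinite mapping telescope of $S^n\xrightarrow{2}S^n\xrightarrow{3}S^n\xrightarrow{4}\cdots$. A direct computation with the long exact sequence of the telescope shows that $H_\bullet(S^n_\mathbb{Q};\mathbb{Z})$ is $\mathbb{Z}$ in degree $0$, $\mathbb{Q}$ in degree $n$, and $0$ otherwise; hence $S^n_\mathbb{Q}$ is rational and $S^n\hookrightarrow S^n_\mathbb{Q}$ is a rationalization, realized as a relative CW inclusion with cells only in dimensions $n$ and $n+1$. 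The same telescope construction applied to wedges of spheres gives rationalizations of wedges, with no cells in dimensions $\le n$, and applied to $K(\pi,m)$ (via a CW model built from such wedges) gives the rationalizations $K(\pi,m)\to K(\pi\otimes_\mathbb{Z}\mathbb{Q},m)$ that serve as the building blocks of the general construction.

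Next I would build $X_\mathbb{Q}$ by induction up the Postnikov tower $\cdots\to X_n\to X_{n-1}\to\cdots\to X_2=K(\pi_2(X),2)$ of $X$, with $k$-invariants $k_n\in H^{n+2}(X_{n-1};\pi_{n+1}(X))$. Assuming a rationalization $X_{n-1}\to(X_{n-1})_\mathbb{Q}$ has been constructed, I would define $(X_n)_\mathbb{Q}$ as the homotopy fibre of the composite $(X_{n-1})_\mathbb{Q}\xrightarrow{(k_n)_\mathbb{Q}}K(\pi_{n+1}(X)\otimes_\mathbb{Z}\mathbb{Q},n+2)$, where $(k_n)_\mathbb{Q}$ is the image of $k_n$ under the isomorphism $H^{n+2}(X_{n-1};\pi_{n+1}(X))\otimes\mathbb{Q}\cong H^{n+2}((X_{n-1})_\mathbb{Q};\pi_{n+1}(X)\otimes\mathbb{Q})$ induced by the rationalization $X_{n-1}\to(X_{n-1})_\mathbb{Q}$. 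Setting $X_\mathbb{Q}\coloneqq\operatorname{holim}_n(X_n)_\mathbb{Q}$ and tracking the compatible maps from the Postnikov tower of $X$, a five-lemma argument at each stage shows that $\pi_\bullet(X)\otimes_\mathbb{Z}\mathbb{Q}\to\pi_\bullet(X_\mathbb{Q})$ is an isomorphism and that $\pi_\bullet(X_\mathbb{Q})$ is rational, so $X\to X_\mathbb{Q}$ is a rationalization. Replacing it by a relative CW approximation and choosing all the EM-space models and homotopy fibres above to be $1$-reduced then yields the desired relative CW pair $(X_\mathbb{Q},X)$ with no $0$-cells and no $1$-cells.

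For uniqueness, I would first prove the universal property that $f\colon X\to X_\mathbb{Q}$ induces a bijection $f^*\colon[X_\mathbb{Q},Z]\to[X,Z]$ for every rational $Z\in\Top_{*,1}$; this follows by obstruction theory climbing the Postnikov tower of $Z$, since all obstruction and difference groups are $\mathbb{Q}$-vector spaces on which $f$ already induces isomorphisms. Applying this with $Z$ a second rationalization $X\to X_\mathbb{Q}'$, in both directions, produces maps $X_\mathbb{Q}\to X_\mathbb{Q}'$ and $X_\mathbb{Q}'\to X_\mathbb{Q}$ under $X$ whose composites are homotopic rel $X$ to the respective identities, by the uniqueness half of the universal property; hence the rationalization is unique up to homotopy equivalence rel $X$.

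The step I expect to be the main obstacle is not the homotopy-theoretic content but the bookkeeping needed to make the construction genuinely cellular and $1$-reduced rather than merely a statement in the homotopy category: one must choose compatible CW models for the infinite telescopes, the Eilenberg--MacLane spaces, and the homotopy fibres at every Postnikov stage, and verify that the inverse limit is again a CW complex containing $X$ as a subcomplex with the asserted cell dimensions, together with checking that passing to the limit over the tower introduces no phantom phenomena — which is exactly where simple connectivity and the degreewise finiteness of the Postnikov pieces are used.
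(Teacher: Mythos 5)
The paper does not prove this statement: it is quoted verbatim from \cite[Thm.\ 9.7]{FelixHalperinThomas} as background, so there is no internal proof to compare against. Measured against the proof in the cited reference, your argument is correct in outline but takes a genuinely different route. F\'elix--Halperin--Thomas also start from the telescope model $S^n_{\mathbb{Q}}$, but they then build $X_{\mathbb{Q}}$ \emph{cellularly}: they attach ``rational cells'' $(D^{n+1}_{\mathbb{Q}},S^n_{\mathbb{Q}})$ along maps into the stage already constructed, killing torsion and dividing homotopy classes step by step, so the relative CW structure with no $0$- or $1$-cells is produced by the construction itself rather than recovered afterwards. You instead climb the Postnikov tower, rationalize each $k$-invariant, take the homotopy limit, and only then invoke relative CW approximation. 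Your route is cleaner conceptually and makes the universal property (hence uniqueness) almost immediate, but it pushes all the cellular content into the final approximation step; the cellular route buys the relative CW statement for free and localizes the obstruction theory to a single lemma about extending maps over rational cells.

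Two points in your write-up deserve tightening. First, the asserted isomorphism $H^{n+2}(X_{n-1};\pi_{n+1}(X))\otimes\mathbb{Q}\cong H^{n+2}((X_{n-1})_{\mathbb{Q}};\pi_{n+1}(X)\otimes\mathbb{Q})$ is not what you need and is not true in general without finiteness hypotheses (the map $H^*(-;\pi)\otimes\mathbb{Q}\to H^*(-;\pi\otimes\mathbb{Q})$ need not be an isomorphism); what the construction actually requires is only the \emph{image} of $k_n$ under the change-of-coefficients map $H^{n+2}(X_{n-1};\pi_{n+1})\to H^{n+2}(X_{n-1};\pi_{n+1}\otimes\mathbb{Q})$ followed by the inverse of the restriction isomorphism $H^{n+2}((X_{n-1})_{\mathbb{Q}};V)\xrightarrow{\ \cong\ }H^{n+2}(X_{n-1};V)$ for $\mathbb{Q}$-vector-space coefficients $V$, which holds because $H_*((X_{n-1})_{\mathbb{Q}},X_{n-1};\mathbb{Z})$ is torsion and $V$ is torsion-free and injective. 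Second, for the final relative CW approximation, note that a map of simply connected pointed spaces is automatically $1$-connected, which is exactly what guarantees a relative CW model with cells only in dimensions $\ge 2$; you do not need (and do not have) $2$-connectivity of $X\to X_{\mathbb{Q}}$. With these adjustments the argument is complete.
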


In particular, the weak homotopy type of $X_\mathbb{Q}$ is the rational homotopy type of $X$.

\section{Sullivan's approach and the rational de Rham theorem}

Sullivan's \cite{sul77} approach to rational homotopy theory focuses on rational cohomology rather than rational homotopy or homology. This way, one can use commutative algebras as models for spaces, instead of cocommutative algebras or Lie algebras.

\medskip

In this section, we work with cochain complexes instead of chain complexes, as it is more conventional to work with differentials of degree $1$ on differential forms.

\medskip

Most of the material present in this section works over an arbitrary field $\k$ of characteristic $0$. In particular, PL differential forms and Dupont's contraction do not need the base field to be $\mathbb{Q}$. What we present here is extracted from \cite[Sect. 10]{FelixHalperinThomas} and \cite{dup76}.

\subsection{PL differential forms and rational cohomology}\label{subsect:PL differential forms}

Instead of focusing on rational homotopy or rational homology, one can look at rational cohomology. The fact that this approach gives the same information as the other ones follows from the following result.

\begin{proposition}
	A map $f:X\to Y$ in $\Top_{*,1}$ is a rational equivalence if, and only if it induces an isomorphism
	\[
	H^\bullet(f;\mathbb{Q}):H^\bullet(Y;\mathbb{Q})\longrightarrow H^\bullet(X;\mathbb{Q})
	\]
	in rational cohomology.
\end{proposition}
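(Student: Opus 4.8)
The statement is that a map $f\colon X\to Y$ in $\Top_{*,1}$ is a rational equivalence if and only if $H^\bullet(f;\mathbb{Q})$ is an isomorphism. By \cref{thm:Serre rational equivalences}, being a rational equivalence is equivalent to inducing an isomorphism on rational \emph{homology} $H_\bullet(-;\mathbb{Q})$, so it suffices to show that $H_\bullet(f;\mathbb{Q})$ is an isomorphism if and only if $H^\bullet(f;\mathbb{Q})$ is an isomorphism. The plan is to reduce everything to the universal coefficient theorem over the field $\mathbb{Q}$.

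First I would recall that since $X$ and $Y$ are simply connected, their rational homology groups $H_n(X;\mathbb{Q})$ and $H_n(Y;\mathbb{Q})$ are $\mathbb{Q}$-vector spaces (concentrated in degrees $\ge 2$ by Hurewicz, though this is not strictly needed). Next, the key algebraic input: because $\mathbb{Q}$ is a field, for any space $Z$ the singular cochain complex with rational coefficients satisfies $C^\bullet(Z;\mathbb{Q})\cong \hom_{\mathbb{Q}}(C_\bullet(Z;\mathbb{Q}),\mathbb{Q})$, and the universal coefficient theorem gives a natural isomorphism $H^n(Z;\mathbb{Q})\cong \hom_{\mathbb{Q}}(H_n(Z;\mathbb{Q}),\mathbb{Q})$ with no $\mathrm{Ext}$ term, since every $\mathbb{Q}$-vector space is free as a $\mathbb{Q}$-module. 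This naturality is exactly what I need: the map $f$ induces a commutative square relating $H^n(f;\mathbb{Q})$ to $\hom_{\mathbb{Q}}(H_n(f;\mathbb{Q}),\mathbb{Q})$.

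Then I would argue: if $H_\bullet(f;\mathbb{Q})$ is an isomorphism of graded $\mathbb{Q}$-vector spaces, applying the exact contravariant functor $\hom_{\mathbb{Q}}(-,\mathbb{Q})$ shows $H^\bullet(f;\mathbb{Q})$ is an isomorphism. Conversely, if $H^\bullet(f;\mathbb{Q})$ is an isomorphism, then $\hom_{\mathbb{Q}}(H_n(f;\mathbb{Q}),\mathbb{Q})$ is an isomorphism for every $n$; since dualization over a field reflects isomorphisms (a linear map $V\to W$ of vector spaces whose dual $W^\vee\to V^\vee$ is an isomorphism is itself an isomorphism — this is where one must be slightly careful with infinite-dimensional spaces, but for $\mathbb{Q}$-vector spaces the double-dual map is injective, which together with surjectivity/injectivity of the dual forces $H_n(f;\mathbb{Q})$ to be bijective), we conclude $H_n(f;\mathbb{Q})$ is an isomorphism for all $n$. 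Combining with \cref{thm:Serre rational equivalences} finishes both directions.

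The only genuinely delicate point — and the one I would spell out carefully — is the reflection-of-isomorphisms argument in the converse direction when the homology groups are infinite-dimensional: one shows injectivity of $H_n(f;\mathbb{Q})$ from surjectivity of its dual (via $H_n(X;\mathbb{Q})\hookrightarrow H_n(X;\mathbb{Q})^{\vee\vee}$ and naturality) and surjectivity of $H_n(f;\mathbb{Q})$ from injectivity of its dual (if the image were a proper subspace, a nonzero functional vanishing on it would be killed). Everything else is a formal consequence of exactness of $\hom_{\mathbb{Q}}(-,\mathbb{Q})$ and the universal coefficient theorem.
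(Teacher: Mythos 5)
Your proof is correct and follows the same route as the paper: reduce to rational homology via Serre's theorem and then use the natural isomorphism $H^n(Z;\mathbb{Q})\cong H_n(Z;\mathbb{Q})^\vee$ coming from the universal coefficient theorem over a field. The paper dismisses the dualization step as immediate, whereas you rightly flag and justify the one subtlety (that $\hom_{\mathbb{Q}}(-,\mathbb{Q})$ reflects isomorphisms even for infinite-dimensional spaces); this is a sound elaboration rather than a different argument.
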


\begin{proof}
	This is an immediate consequence of \cref{thm:Serre rational equivalences} and of the fact that
	\[
	H^n(X;\mathbb{Q})\cong H_n(X;\mathbb{Q})^\vee,
	\]
	since $\mathbb{Q}$ is a field.
\end{proof}

One now tries to approach cohomology in an algebraic way, using commutative algebras\footnote{One should notice that singular cochains $C^\bullet(X;\mathbb{Q})$ are not a commutative algebra, but only an $\mathbb{E}_\infty$-algebra. However, since the characteristic of the base field is $0$, the category of $\mathbb{E}_\infty$-algebras is equivalent to the category of commutative algebras, so that we can look for an equivalent strictly commutative model for the cochain algebra.}. We want to work combinatorially, so we will use simplicial sets rather than topological spaces. One defines a good commutative algebra of \emph{polynomial differential forms}\index{Polynomial!differential forms} --- more commonly, \emph{PL differential forms} --- on the basic simplices, before extending this construction to arbitrary simplicial sets.

\begin{definition}\label{def:Sullivan algebra}
	Let $n\ge0$. One defines a commutative algebra\index{Polynomial differential forms}
	\[
	\APL(\Delta[n])\coloneqq\frac{\k[t_0,\ldots,t_n,dt_0,\ldots,dt_n]}{\left(\sum_{i=0}^nt_i - 1,\sum_{i=0}^ndt_i\right)}\ ,
	\]
	where $|t_i| = 0$ and $|dt_i| = 1$, with differential $d(t_i)\coloneqq dt_i$. The collection of the $\APL(\Delta[n])$ forms a simplicial commutative algebra $\Omega_\bullet$ by
	\[
	\Omega_n\coloneqq\APL(\Delta[n])
	\]
	with face maps
	\[
	\partial_i:\APL(\Delta[n+1])\longrightarrow\APL(\Delta[n]),\qquad t_i\longmapsto\begin{cases}
	t_k&\text{if }k<i\ ,\\
	0&\text{if }k=i\ ,\\
	t_{k-1}&\text{if }k>i\ ,
	\end{cases}\qquad\text{for}\quad0\le i\le n\ ,
	\]
	and degeneracy maps
	\[
	s_j:\APL(\Delta[n])\longrightarrow\APL(\Delta[n+1]),\qquad t_i\longmapsto\begin{cases}
	t_k&\text{if }k<j\ ,\\
	t_k+t_{k+1}&\text{if }k=j\ ,\\
	t_{k+1}&\text{if }k>j\ ,
	\end{cases}\qquad\text{for}\quad0\le j\le n\ .
	\]
	We call the simplicial commutative algebra $\Omega_\bullet$ the \emph{Sullivan algebra}\index{Sullivan!algebra}\footnote{This terminology is not standard, and one should be careful not to confuse it with e.g. the notion of Sullivan algebra given in \cite{FelixHalperinThomas}, where it indicates instead a certain type of rational models for spaces, cf. \cref{def:Sullivan models}}.
\end{definition}

One now extends this assignment to a contravariant functor
\[
\APL:\ssets\longrightarrow\comalg
\]
from simplicial sets to commutative algebras by
\[
\APL(K)\coloneqq\lim_{\Delta[n]\to K}\APL(\Delta[n]).
\]
In other words, we take a copy of $\APL(\Delta[n])$ for every $n$-simplex of $K$, and then glue them together according to the face maps of $K$. Notice that this is very similar to the construction of the geometric realization functor. The action on morphisms is given by pullback of differential forms.

\medskip

We will not see how $\APL$ gives the same cohomology as the usual one.

\subsection{Dupont's contraction and the PL de Rham theorem}\index{Dupont's contraction}\label{subsect:Dupont contraction}

In \cite{dup76}, Dupont describes a contraction
\begin{center}
	\begin{tikzpicture}
	\node (a) at (0,0){$\Omega_\bullet$};
	\node (b) at (2,0){$C_\bullet$};
	
	\draw[->] (a)++(.3,.1)--node[above]{\mbox{\tiny{$p_\bullet$}}}+(1.4,0);
	\draw[<-,yshift=-1mm] (a)++(.3,-.1)--node[below]{\mbox{\tiny{$i_\bullet$}}}+(1.4,0);
	\draw[->] (a) to [out=-150,in=150,looseness=4] node[left]{\mbox{\tiny{$h_\bullet$}}} (a);
	\end{tikzpicture}
\end{center}
from the Sullivan algebra to a subcomplex $C_\bullet$. A consequence of the existence of such a contraction is the ``de Rham theorem" telling us that the cohomology of $\APL$ is the same as the usual cohomology. We will now describe in detail the elements appearing in this contraction, and state the theorem.

\medskip

Other good reference for Dupont's contraction are \cite{get09} and \cite{cg08}.

\medskip

Fix $n\ge0$, let $k\ge1$, and let $0\le i_0,\ldots,i_k\le n$ be pairwise different. One defines a differential form
\[
\omega_{i_0\ldots i_k}\coloneqq k!\sum_{j=0}^k(-1)^jt_{i_j}dt_{i_0}\cdots\widehat{dt_{i_j}}\cdots dt_{i_k}\in\Omega_n\ ,
\]
where the hat means that we omit the term. Then $C_n$ is defined as the span of all forms appearing this way, that is
\[
C_n\coloneqq\mathrm{span}_\k\{\omega_{i_0\ldots i_k}\mid k\ge1\text{ and }0\le i_0<i_1<\cdots<i_k\le n\}\ .
\]
The fact that $C_n\subseteq\Omega_n$ is a subcomplex is implied by the following lemma, while the fact that it gives rise to a simplicial cochain complex $C_\bullet$ is straightforward.

\begin{lemma}
	Let $k\ge1$, and let $0\le i_0,\ldots,i_k\le n$ be pairwise different. Then
	\[
	d(\omega_{i_0\ldots i_k}) = \sum_{i=0}^n\omega_{ii_0\ldots i_k}\ ,
	\]
	where $\omega_{ii_0\ldots i_k} = 0$ whenever $i=i_j$ for some $0\le j\le k$.
\end{lemma}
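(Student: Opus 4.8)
The plan is to show that \emph{both} sides of the asserted identity coincide with the single explicit top-degree form
\[
(k+1)!\; dt_{i_0}dt_{i_1}\cdots dt_{i_k}\ ,
\]
using nothing beyond the Leibniz rule for $d$ (with $d(dt_{i_\ell})=0$ and $d(t_{i_j})=dt_{i_j}$), graded-commutativity of the product of one-forms, and the two defining relations $\sum_{i=0}^n t_i=1$ and $\sum_{i=0}^n dt_i=0$ of $\APL(\Delta[n])$. Matching the two computations then finishes the proof.

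For the left-hand side I would apply $d$ directly to the definition
\[
\omega_{i_0\ldots i_k}=k!\sum_{j=0}^k(-1)^j t_{i_j}\,dt_{i_0}\cdots\widehat{dt_{i_j}}\cdots dt_{i_k}\ .
\]
Since each $dt$-product is $d$-closed and $d(t_{i_j})=dt_{i_j}$, differentiation produces $k!\sum_{j=0}^k(-1)^j dt_{i_j}\,dt_{i_0}\cdots\widehat{dt_{i_j}}\cdots dt_{i_k}$. Sliding $dt_{i_j}$ back into its slot moves it past the $j$ one-forms $dt_{i_0},\ldots,dt_{i_{j-1}}$, contributing $(-1)^j$, so every summand equals $dt_{i_0}\cdots dt_{i_k}$ and the sum collapses to $(k+1)\cdot k!\, dt_{i_0}\cdots dt_{i_k}=(k+1)!\, dt_{i_0}\cdots dt_{i_k}$.

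For the right-hand side I would expand $\omega_{i i_0\ldots i_k}$ from the definition applied to the $(k{+}2)$-tuple $(i,i_0,\ldots,i_k)$, splitting off the $\ell=0$ summand $(k+1)!\, t_i\, dt_{i_0}\cdots dt_{i_k}$ from the summands $(k+1)!\,(-1)^{j+1} t_{i_j}\, dt_i\, dt_{i_0}\cdots\widehat{dt_{i_j}}\cdots dt_{i_k}$, $0\le j\le k$. A short sign-check shows that this same formal expression vanishes whenever $i=i_j$ for some $j$ — the $\ell=0$ term cancels the $\ell=j{+}1$ term (after reordering the $dt$'s), and every remaining term contains $dt_i$ twice — which is precisely the stated convention; hence I may extend the sum $\sum_{i}\omega_{i i_0\ldots i_k}$ over all $i\in\{0,\ldots,n\}$ without changing its value. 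Pulling the sum inside, the coefficient of $dt_{i_0}\cdots dt_{i_k}$ becomes $\sum_{i=0}^n t_i=1$, while each coefficient of $dt_{i_0}\cdots\widehat{dt_{i_j}}\cdots dt_{i_k}$ acquires a factor $\sum_{i=0}^n dt_i=0$, leaving $\sum_{i=0}^n\omega_{i i_0\ldots i_k}=(k+1)!\, dt_{i_0}\cdots dt_{i_k}$, which agrees with the left-hand side.

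The argument is routine; the only genuine obstacle is disciplined sign bookkeeping, both when reordering one-forms after differentiating and when checking that the formula for $\omega_{i i_0\ldots i_k}$ is compatible with the convention $\omega_{i i_0\ldots i_k}=0$ for $i=i_j$. I would dispatch this once by always normalizing products of $dt$'s to the order $dt_{i_0},dt_{i_1},\ldots$ and tracking the parity of the permutation used.
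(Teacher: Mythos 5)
Your computation is correct, and in fact the paper states this lemma without proof (it is quoted from Dupont's work), so there is nothing to compare against; your argument — differentiating the defining formula termwise to get $(k+1)!\,dt_{i_0}\cdots dt_{i_k}$ on the left, checking that the formal expression for $\omega_{ii_0\ldots i_k}$ vanishes when $i\in\{i_0,\ldots,i_k\}$ so the sum may be extended over all $i$, and then collapsing the right-hand side via $\sum_i t_i=1$ and $\sum_i dt_i=0$ — is exactly the standard verification one would supply. The sign bookkeeping you describe (each $(-1)^j$ from the definition cancelling the $(-1)^j$ from sliding $dt_{i_j}$ back into place, and the $\ell=0$ term cancelling the $\ell=j+1$ term in the degenerate case) is right.
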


The easiest map of the contraction is the morphism $i_\bullet:C_\bullet\to\Omega_\bullet$. It is simply given by the inclusion of $C_\bullet$ into $\Omega_\bullet$.

\medskip

Next in line is the morphism $p_\bullet:\Omega_\bullet\to C_\bullet$, which is given by integration. Namely, if $\omega\in\Omega_n$ is a polynomial differential form, one defines
\[
p_n(\omega)\coloneqq\sum_{\substack{0\le i_0<\cdots<i_k\le n\\f:\{i_0,\ldots,i_k\}\xhookrightarrow{}[n]}}\left(\int_{\Delta[p]}f^*\omega\right)\omega_{i_0\ldots i_k}\ .
\]
Finally, the contracting homotopy $h_\bullet:\Omega_\bullet\to\Omega_\bullet$ is defined as follows. Let
\[
\varphi_i:[0,1]\times\Delta^n\longrightarrow\Delta^n
\]
be defined by
\[
\varphi_i(u,t_0,\ldots,t_n)\coloneqq((1-u)t_0,\ldots,(1-u)t_i + u,\ldots(1-u)t_n)\ .
\]
Geometrically, it is the map contracting the standard geometric $n$-simplex to its $i$th vertex. For $0\le i\le n$, define
\[
h_{(i)}:\Omega_n\longrightarrow\Omega_n
\]
as the map taking a form in $\Omega_n$, pulling it back by $\varphi_i$, and then integrating the resulting form along the fiber of $\varphi_i$. Then the map $h_n$ is defined by
\[
h_n\coloneqq\sum_{\substack{0\le k\le n-1\\0\le i_0<\cdots<i_k\le n}}\omega_{i_0\ldots i_k}h_{(i_k)}\cdots h_{(i_0)}\ .
\]

\begin{proposition}
	The maps defined above are all simplicial, and they form a contraction from $\Omega_\bullet$ to $C_\bullet$.
\end{proposition}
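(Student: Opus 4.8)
The plan is to follow Dupont's original argument \cite{dup76} (see also \cite{get09} and \cite{cg08}), verifying four things in turn: that $i_\bullet$, $p_\bullet$ and $h_\bullet$ are morphisms of simplicial cochain complexes; that $p_\bullet i_\bullet = 1_{C_\bullet}$; that $1 - i_\bullet p_\bullet = dh_\bullet + h_\bullet d$; and finally the side conditions $h_\bullet^2 = 0$, $p_\bullet h_\bullet = 0$, $h_\bullet i_\bullet = 0$.

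\textbf{Simpliciality.} The inclusion $i_\bullet$ is obviously simplicial. For $p_\bullet$ one uses that integration of PL forms over the standard simplices is natural with respect to the cosimplicial coface and codegeneracy maps of $\Delta^\bullet$ — pullback of differential forms commutes with fibre-wise integration and with the affine identifications used to define the face and degeneracy maps on $\Omega_\bullet$ — so $p_\bullet$ commutes with the $d_i$ and $s_j$. For $h_\bullet$ one checks that the contracting maps $\varphi_i\colon[0,1]\times\Delta^n\to\Delta^n$ intertwine the cosimplicial structure maps (the restriction of $\varphi_i$ to the $j$-th face is $\varphi_{i'}$ for the induced index, and similarly for degeneracies), so that the operators $h_{(i)}$, being built out of $\varphi_i^*$ and integration along the fibre, assemble into a simplicial operator; the combination $h_n = \sum \omega_{i_0\ldots i_k}h_{(i_k)}\cdots h_{(i_0)}$ then inherits simpliciality since the forms $\omega_{i_0\ldots i_k}$ transform in the expected way.

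\textbf{$p_\bullet i_\bullet = 1_{C_\bullet}$.} Evaluate $p_n$ on a basis element $\omega_{j_0\ldots j_l}\in C_n$. For an injection $f\colon\{i_0<\cdots<i_k\}\hookrightarrow[n]$, the pullback $f^*\omega_{j_0\ldots j_l}$ is either zero (if some $j_a$ is not in the image of $f$) or again an elementary form on $\Delta[k]$; since $\deg\omega_{j_0\ldots j_l}=l$, the integral $\int_{\Delta[k]}f^*\omega_{j_0\ldots j_l}$ vanishes for dimension reasons unless $k=l$, and when $k=l$ the factor $k!$ in the definition of the forms was chosen precisely so that $\int_{\Delta^k}\omega_{01\ldots k}=1$. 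Hence only the term with $\{i_0,\ldots,i_k\}=\{j_0,\ldots,j_l\}$ survives and $p_n(\omega_{j_0\ldots j_l})=\omega_{j_0\ldots j_l}$.

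\textbf{The homotopy identity.} This is the heart of the argument. First I would establish the elementary homotopy formula for a single fibre integration, $d\,h_{(i)}+h_{(i)}\,d = 1 - \rho_i$ on $\Omega_n$, where $\rho_i$ denotes pullback along the constant map $\Delta^n\to\Delta^n$ at the $i$-th vertex (so $\rho_i$ kills forms of positive degree and evaluates functions at $t_i=1$); this is just Stokes' theorem applied to the cylinder homotopy $\varphi_i$, whose two ends are the identity and the constant map to vertex $i$. Next I would record the explicit action of the $h_{(i)}$ on the elementary forms $\omega_{i_0\ldots i_k}$, together with the relation $d\,\omega_{i_0\ldots i_k}=\sum_i\omega_{i\,i_0\ldots i_k}$ already noted, and the anticommutation $h_{(i)}h_{(j)}=-h_{(j)}h_{(i)}$ with $h_{(i)}^2=0$. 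Substituting into $h_n=\sum\omega_{i_0\ldots i_k}h_{(i_k)}\cdots h_{(i_0)}$ and expanding $dh_n+h_n d$ with the Leibniz rule and the elementary formula, one finds that the terms produced by the successive $\rho$'s telescope against the terms where $d$ hits the coefficients $\omega_{i_0\ldots i_k}$, leaving exactly $1 - i_n p_n$. Keeping track of signs through this cancellation is the step I expect to be the main obstacle; everything else is bookkeeping.

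\textbf{Side conditions.} Finally, $h_n i_n = 0$ because, by the explicit formulas of the previous step, the composite $h_{(i_k)}\cdots h_{(i_0)}$ annihilates every $\omega_{j_0\ldots j_l}$; $h_n^2=0$ then follows since the image of $h_n$ is spanned by forms of the shape $\omega_{i_0\ldots i_k}\cdot(\,\cdot\,)$ on which the outer $h_{(j)}$'s vanish for the same reason; and $p_n h_n = 0$ is checked by a direct computation of the relevant integrals, which again reduces to the explicit description of $h_{(i)}$ on elementary forms (cf. \cite{dup76,get09}). One also remarks that $h_\bullet$ raises cohomological degree by $-1$, so ``contraction'' here means the evident cochain analogue of the notion from the homotopy transfer theorem.
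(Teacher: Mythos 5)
The paper does not actually prove this proposition --- it is recalled from Dupont \cite{dup76}, with the verification of the contraction identities and the side conditions deferred to \cite{dup76}, \cite{get09} and \cite{cg08} --- so there is no in-paper argument to compare against; your outline does follow the standard route of those references, and the treatment of simpliciality and of $p_\bullet i_\bullet = 1_{C_\bullet}$ is correct.

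There is, however, a concrete error in your justification of the side conditions. You claim that the composites $h_{(i_k)}\cdots h_{(i_0)}$ annihilate every elementary form $\omega_{j_0\ldots j_l}$, and you derive both $h_\bullet i_\bullet = 0$ and $h_\bullet^2 = 0$ from this. That claim is false: a direct computation from the definition of $h_{(i)}$ (pull back along $\varphi_i$, integrate the $du$-component over $[0,1]$) gives $h_{(i)}\omega_{j_0 j_1} = \delta_{i j_1}t_{j_0} - \delta_{i j_0}t_{j_1}$, so for instance $h_{(j_1)}\omega_{j_0 j_1} = t_{j_0}\neq 0$. The identity $h_n(\omega_{j_0 j_1})=0$ still holds, but only because the terms cancel \emph{after} summing over the outer index: $\sum_i \omega_i\, h_{(i)}(\omega_{j_0 j_1}) = t_{j_1}t_{j_0} - t_{j_0}t_{j_1} = 0$, while the higher terms vanish for degree reasons. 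The same issue infects your argument for $h_n^2=0$: the operators $h_{(j)}$ are neither derivations nor linear over multiplication by the forms $\omega_{i_0\ldots i_k}$, so "the outer $h_{(j)}$'s vanish on $\omega_{i_0\ldots i_k}\cdot(\,\cdot\,)$ for the same reason" is a non sequitur. The correct argument requires the explicit formula for $h_{(i)}\bigl(\omega_{i_0\ldots i_k}\,\alpha\bigr)$ together with the anticommutation of the $h_{(i)}$ and a cancellation across the sum defining $h_n$ (this is exactly the content of Getzler's computations in \cite{get09}). Combined with the fact that the central homotopy identity $1 - i_\bullet p_\bullet = dh_\bullet + h_\bullet d$ is only asserted to "telescope" without the signs being tracked, the two genuinely computational parts of the proof are the ones left unverified, and one of them is supported by a false intermediate claim.
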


Similarly to what done for $\APL$, one extends the assignment $\Delta[n]\mapsto C_n$ to a functor
\[
\CPL:\ssets\longrightarrow\cochain
\]
from simplicial sets to cochain complexes.

\medskip

To conclude, we have the following theorem.

\begin{theorem}
	Let $X\in\ssets$. There is a natural quasi-isomorphism of cochain complexes
	\[
	\APL(X)\xrightarrow{\simeq}\CPL(X)\ ,
	\]
	where the first map is induced by $p_\bullet$. In particular,
	\[
	H^\bullet(\APL(X))\cong H^\bullet(X;\k)\ .
	\]
\end{theorem}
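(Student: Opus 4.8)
The plan is to deduce the theorem from the existence of Dupont's contraction, which was established in the preceding proposition, together with the standard comparison between the cochain functor $\CPL$ and ordinary singular cohomology. First I would observe that the contraction
\[
\begin{tikzpicture}[baseline]
	\node (a) at (0,0){$\Omega_\bullet$};
	\node (b) at (2,0){$C_\bullet$};
	\draw[->] (a)++(.3,.1)--node[above]{\mbox{\tiny{$p_\bullet$}}}+(1.4,0);
	\draw[<-,yshift=-1mm] (a)++(.3,-.1)--node[below]{\mbox{\tiny{$i_\bullet$}}}+(1.4,0);
	\draw[->] (a) to [out=-150,in=150,looseness=4] node[left]{\mbox{\tiny{$h_\bullet$}}} (a);
\end{tikzpicture}
\]
is a contraction of \emph{simplicial} cochain complexes: in each simplicial degree $n$ we have $p_ni_n = 1_{C_n}$ and $1_{\Omega_n} - i_np_n = d h_n + h_n d$, and all three maps commute with the face and degeneracy maps. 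In particular $p_n$ is a quasi-isomorphism for every $n\ge0$.

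Next I would promote this to a statement about an arbitrary simplicial set $X$. By definition $\APL(X) = \lim_{\Delta[n]\to X}\APL(\Delta[n])$ and $\CPL(X) = \lim_{\Delta[n]\to X}C_n$, and the maps $p_\bullet$ assemble (by naturality, which is part of the cited proposition) into a natural transformation $\APL(X)\to\CPL(X)$. The key point is that this map is a quasi-isomorphism. The cleanest way to see this is to note that a level-wise contraction of simplicial cochain complexes induces, upon taking the associated total (conormalized) complex, a genuine deformation retract: the homotopies $h_n$ are compatible with the simplicial structure, so one gets a contracting homotopy on the normalized cochain complex of $X$ with coefficients in these simplicial objects, hence $\APL(X)\to\CPL(X)$ is a quasi-isomorphism — in fact a chain homotopy equivalence — for every $X$. (Equivalently, one can argue first on representables $X=\Delta[n]$, where both sides compute the cohomology of a point, then extend by the fact that both functors send colimits of simplicial sets to the corresponding limits and that weak equivalences are detected level-wise.) Naturality in $X$ is immediate since every map in sight was already natural.

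Finally I would invoke the classical identification $H^\bullet(\CPL(X))\cong H^\bullet(X;\k)$: the subcomplex $C_\bullet$ of Whitney elementary forms is precisely the simplicial cochain complex computing simplicial (hence singular) cohomology with $\k$-coefficients, with $\omega_{i_0\ldots i_k}$ dual to the nondegenerate $k$-simplex $(i_0<\cdots<i_k)$, and the differential formula $d(\omega_{i_0\ldots i_k}) = \sum_i\omega_{ii_0\ldots i_k}$ is exactly the simplicial coboundary. Combining this with the quasi-isomorphism of the previous paragraph gives $H^\bullet(\APL(X))\cong H^\bullet(X;\k)$, as claimed.

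The main obstacle I anticipate is the middle step: justifying that a level-wise contraction of the diagrams $\Delta[n]\mapsto\Omega_n$ and $\Delta[n]\mapsto C_n$ really does induce a quasi-isomorphism after passing to the (inverse) limit over the simplex category of $X$. This is not formal for arbitrary limits, and the honest argument goes through the conormalized cochain complex (or the Bousfield–Kan style description of $\APL(X)$ and $\CPL(X)$ as end/totalization constructions), where the simplicial compatibility of $i_\bullet,p_\bullet,h_\bullet$ is exactly what is needed to transport the contraction. Everything else — naturality, the combinatorial identification of $C_\bullet$ with simplicial cochains, and the computation on a point — is routine.
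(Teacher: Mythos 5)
Your plan is correct, and it is the standard route to the PL de Rham theorem; the paper itself states this result without proof, as a consequence of the preceding proposition on Dupont's contraction, so there is nothing to compare against beyond noting that your argument is the intended one. The only substantive comment is that the step you single out as the main obstacle is in fact formal, and no detour through conormalized complexes is needed: the data $(i_\bullet,p_\bullet,h_\bullet)$ consists of morphisms of \emph{simplicial} cochain complexes, i.e.\ natural transformations between the functors $\Delta[n]\mapsto\Omega_n$ and $\Delta[n]\mapsto C_n$ on $\Delta^{\mathrm{op}}$, and the contraction identities $p_\bullet i_\bullet=1$ and $1-i_\bullet p_\bullet=dh_\bullet+h_\bullet d$ are identities between such morphisms. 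Since $\APL(X)$ and $\CPL(X)$ are by definition the limits of these diagrams over the category of simplices of $X$, and the limit functor is additive (it preserves composition and addition of morphisms, which is all that is used — no exactness of inverse limits is required), the induced maps on $\APL(X)$ and $\CPL(X)$ satisfy the same identities verbatim. Hence $\APL(X)\to\CPL(X)$ is a deformation retract, in particular a natural quasi-isomorphism, for every $X$. Your final step is also fine: $C_n$ is precisely the normalized simplicial cochain complex of $\Delta[n]$ with $\omega_{i_0\ldots i_k}$ dual to the face $(i_0<\cdots<i_k)$ and with the simplicial coboundary, so $\CPL(X)$ is the normalized cochain complex of $X$ and computes $H^\bullet(X;\k)$. (Your parenthetical alternative — arguing on representables and extending — is the weaker of your two options, since level-wise detection of weak equivalences does not pass through arbitrary limits; but your primary argument does not need it.)
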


\section{Algebraic models for spaces}

Given a space $X$, one models its rational homotopy, homology and cohomology by the Lie algebra $\lambda(X)$, the cocommutative coalgebra $s\Bar_\kappa\lambda(X)$, and the commutative algebra $\APL(X)$ respectively. However, one could also consider other algebraic models for a space, by taking (co)algebras having the same properties as the previous three. Such objects are called \emph{rational models} for the space $X$. We review their definitions and some basic existence results in this section.

\medskip

The material of this section is extracted from \cite{FelixHalperinThomas} and \cite[Ch. 4]{Majewski}, with the exception of \cref{subsect:dualizing (co)commutative models}, which is original work.

\subsection{Commutative models}

We begin with commutative models. The idea is that $\APL(X)$ for a space $X$ is huge, and has a relatively complicated algebraic structure, so one tries to replace it by a smaller, simpler commutative algebra.

\begin{definition}
	Let $X\in\ssets_1$ be a $1$-reduced simplicial set. A \emph{commutative rational model}\index{Commutative!rational model} for $X$ is an augmented unital\footnote{As already mentioned previously, because of the augmentation hypothesis, one can equivalently work with non-unital commutative algebras.} commutative algebra $A$ linked to $\APL(X)$ by a zig-zag of quasi-isomorphisms of commutative algebras
	\[
	A\xleftarrow{\sim}\bullet\xrightarrow{\sim}\APL(X)\ .
	\]
	Equivalently, it is a commutative algebra $A$ together with an $\infty$-morphism\footnote{By which we mean an $\infty$-morphism or $\C_\infty$-algebras, cf. \cref{subsect:homotopy commutative algebras}.} from $A$ to $\APL(X)$, cf. \cref{thm:oo-qi of algebras is zig-zag of qi}.
\end{definition}

There is a particular kind of commutative models that are of special interest in rational homotopy, namely Sullivan models, and more specifically minimal Sullivan models.

\begin{definition}\label{def:Sullivan models}
	A \emph{Sullivan model}\index{Sullivan!model} for a space $X$ is a rational commutative model which is a quasi-free commutative algebra $(\com(V),d)$, where $V$ is a graded vector space, such that
	\[
	V = \bigcup_{k=0}^\infty V(k)\ ,
	\]
	where $V(0)\subseteq V(1)\subseteq\cdots$ is an increasing sequence of graded subspaces of $V$, and the differential satisfies
	\[
	d(V(0)) = 0\ ,\quad\text{and}\quad d(V(k))\subseteq\com(V(k-1))\quad\text{for}\quad k\ge1\ .
	\]
	A Sullivan model is \emph{minimal}\index{Minimal!Sullivan model} if moreover we have
	\[
	\mathrm{Im}(d)\subseteq\overline{\com(V)}\cdot\overline{\com(V)}\ .
	\]
\end{definition}

Minimal Sullivan models for spaces exist under some not too restrictive assumptions.

\begin{definition}
	A simply connected topological space $X\in\Top_{*,1}$ is \emph{of finite type} if $H_i(X;\mathbb{Q})$ is finite dimensional for all $i$.
\end{definition}

\begin{theorem}[{\cite[p. 146]{FelixHalperinThomas}}]\label{thm:existence of minimal Sullivan model}
	Let $X\in\Top_{*,1}$ be of finite type. Then $X$ admits a minimal Sullivan model
	\[
	M_X\coloneqq(\com(V),d)\xrightarrow{\sim}\APL(X)
	\]
	such that $V$ is concentrated in degree $\ge2$ and finite dimensional in every degree. It is unique up to (non-canonical) isomorphism.
\end{theorem}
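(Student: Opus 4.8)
This is a classical result, and the standard route is the obstruction-theoretic construction of a minimal Sullivan model together with a uniqueness argument. The plan is to build $M_X = (\com(V), d)$ by induction on the grading of $V$, starting from the cohomology of $\APL(X)$ and killing obstructions stage by stage, and then to prove uniqueness by a lifting argument in the model category of commutative algebras (using that Sullivan algebras are cofibrant and $\APL(X)$ is fibrant, so that a quasi-isomorphism between minimal Sullivan models is a homotopy equivalence, and a homotopy equivalence between minimal Sullivan algebras is an isomorphism by a filtration argument).

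First I would set up the inductive construction. Since $X$ is of finite type and simply connected, $H^\bullet(\APL(X)) \cong H^\bullet(X;\mathbb{Q})$ is finite-dimensional in each degree and concentrated in degrees $\ge 2$ (with $H^0 = \mathbb{Q}$, $H^1 = 0$ since $X$ is simply connected; more precisely since $X\in\Top_{*,1}$). Build $V(1)$ by choosing, in the lowest relevant degree, a basis of a complement to the decomposables, mapping it to cocycles of $\APL(X)$ representing the corresponding cohomology classes; set $d = 0$ on $V(1)$. Inductively, having constructed $\varphi_k\colon (\com(V(k)), d) \to \APL(X)$ inducing an isomorphism on cohomology up to degree $n$ and an injection in degree $n+1$, adjoin generators in degree $n+1$ to (a) hit the cokernel of $H^{n+1}(\varphi_k)$ — these are cocycles, $d = 0$ on them — and (b) kill the kernel of $H^{n+2}(\varphi_k)$ — for each such class choose a generator $x$ with $dx$ the corresponding cocycle in $\com(V(k))$ and a primitive in $\APL(X)$ for its image (which is a coboundary by construction), extending $\varphi_{k+1}$ accordingly. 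The finite-type hypothesis guarantees each $V(k)$ is finite-dimensional in every degree, and the minimality condition $\mathrm{Im}(d)\subseteq\overline{\com(V)}\cdot\overline{\com(V)}$ holds because every $dx$ we introduce is a decomposable cocycle (it represents $0$ in $H^{n+2}(\com(V(k)))$ and $\com(V(k))$ is $(n+2)$-reduced in an appropriate sense — one checks this degree count carefully). Taking $V = \bigcup_k V(k)$ and $\varphi = \colim \varphi_k$ gives the desired minimal Sullivan model, with $V$ concentrated in degrees $\ge 2$.

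Next I would prove uniqueness. Suppose $(\com(V), d) \to \APL(X)$ and $(\com(W), d') \to \APL(X)$ are two minimal Sullivan models. By \cref{thm:existence of minimal Sullivan model}'s construction both are quasi-free, hence cofibrant in the Hinich model structure on commutative algebras (\cref{subsection:Hinich model structure on algebras}); and $\APL(X)$ is fibrant since all algebras over an operad are fibrant. The lifting axiom then produces a quasi-isomorphism $\psi\colon (\com(V),d)\to(\com(W),d')$ over $\APL(X)$, up to homotopy. By the Whitehead theorem for model categories, $\psi$ is a homotopy equivalence between bifibrant objects. The final step is the purely algebraic fact that a quasi-isomorphism (or even a homotopy equivalence) between minimal Sullivan algebras is an isomorphism: this is proved by passing to the associated graded with respect to the word-length filtration, where minimality forces the induced map on the quadratic part to be an iso in each degree by a downward/upward induction on the generating degrees, and then lifting this back to an isomorphism of the algebras.

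The main obstacle will be the bookkeeping in the inductive construction — specifically, verifying at each stage that the newly-adjoined differentials are genuinely decomposable (so that minimality is preserved) and that the comparison map continues to be an isomorphism in the appropriate range of degrees. This requires careful degree counting using the simple-connectivity and finite-type hypotheses, and is the technical heart of the argument; the model-categorical uniqueness step, by contrast, is formal once one knows Sullivan algebras are cofibrant. A secondary subtlety is that the homotopy-equivalence-implies-isomorphism statement for minimal models is not automatic from abstract model category theory and genuinely uses minimality via the filtration argument sketched above.
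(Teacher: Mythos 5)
The paper does not prove this statement: it is quoted as a classical result with a citation to F\'elix--Halperin--Thomas, and your sketch is precisely the standard obstruction-theoretic construction and lifting-plus-minimality uniqueness argument found in that reference, so it is sound in outline. One small caution: in the uniqueness step, ``quasi-free'' alone does not imply cofibrant in the Hinich model structure --- what you actually need is the Sullivan filtration $V(0)\subseteq V(1)\subseteq\cdots$ with $d(V(k))\subseteq\com(V(k-1))$ from \cref{def:Sullivan models}, which is exactly the cell-attachment condition guaranteeing cofibrancy, so the argument goes through but should be phrased in those terms.
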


\subsection{Lie models}\label{subsect:Lie models for spaces}

A similar idea comes into play for Lie algebras. Quillen's theorem gives us a canonical candidate for a Lie algebra modeling a simply connected space $X\in\Top_{*,1}$, namely the Lie algebra $\lambda(X)\in\dgl_{\ge1}$.

\begin{definition}
	A Lie algebra $\g\in\dgl_{\ge1}$ is a \emph{Lie rational model}\index{Lie!rational model} for $X$ if there is a zig-zag of quasi-isomorphisms of Lie algebras
	\[
	\g\xleftarrow{\sim}\bullet\xrightarrow{\sim}\lambda(X)\ .
	\]
	Equivalently, it is a Lie algebra $\g$ together with an $\infty$-morphism from $\g$ to $\lambda(X)$.
\end{definition}

Lie models have many nice properties. We invite the interested reader to consult \cite[Part IV]{FelixHalperinThomas} for them.

\begin{remark}
	In \cite[p. 322]{FelixHalperinThomas}, Lie models are defined as those Lie algebras $\g\in\dgl_{\ge1}$ such that the commutative algebra $(s\Bar_\kappa(\g))^\vee$ is a commutative rational model. By \cref{thm:dualization of (co)commutative models}, this definition is equivalent to the one we gave in this section.
\end{remark}

A very important property of Lie models is the following result, due to Berglund. Here, $\MC_\bullet(\g)$ denotes the Maurer--Cartan space of the Lie algebra $\g$. It will be formally introduced in \cref{subsect:Maurer-Cartan space}.

\begin{theorem}[{\cite[Prop. 6.1]{ber15}}]\label{thm:g Lie model, then MC(g) we to X_Q}
	Let $X\in\Top_{*,1}$, and let $\g$ be a Lie model for $X$. Then
	\[
	\MC_\bullet(\g)\simeq X_\mathbb{Q} .
	\]
	In other words, the Maurer--Cartan space of $\g$ is rationally homotopic to $X$.
\end{theorem}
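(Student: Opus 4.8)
The plan is to avoid manipulating the Maurer--Cartan space directly and instead reinterpret it as a Sullivan realization, after which the statement becomes a combination of Quillen's theorem, the dualization of (co)commutative models, and Sullivan's theory.

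\emph{Step 1: reinterpret $\MC_\bullet$.} For a Lie algebra $\h\in\dgl_{\ge1}$ write $\CE^\bullet(\h)\coloneqq(s\Bar_\kappa\h)^\vee$ for its reduced Chevalley--Eilenberg cochain algebra, a commutative algebra (an honest commutative dg algebra when $\h$ is of finite type, and in general a cofiltered limit of such). The first step is to establish a natural isomorphism
\[
\MC_\bullet(\h)\;\cong\;\hom_{\comalg}\bigl(\CE^\bullet(\h),\Omega_\bullet\bigr)\;=\;\langle\CE^\bullet(\h)\rangle\ ,
\]
the Sullivan realization of $\CE^\bullet(\h)$. This follows levelwise from the fact that, for any commutative algebra $A$, a Maurer--Cartan element of $\h\,\widehat{\otimes}\,A$ is the same datum as a morphism of commutative algebras $\CE^\bullet(\h)\to A$: unwinding the cofree description of $\Bar_\kappa\h$ and dualizing, such a morphism is precisely a degree-$0$ element of $\h\,\widehat{\otimes}\,A$ whose compatibility with the differential is the Maurer--Cartan equation. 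In the language of the thesis this is the Rosetta stone \cref{thm:Rosetta stone algebras} for the Koszul twisting morphism $\kappa\colon\lie^{\antishriek}\to\lie$, read through $\Tw_\kappa(-,A)$. Taking $A=\APL(\Delta[n])$ and letting $n$ vary gives the displayed isomorphism of simplicial sets.

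\emph{Step 2: $\CE^\bullet$ of a Lie model is a commutative model.} Since $\g$ is a Lie model for $X$, there is a zig-zag of quasi-isomorphisms of Lie algebras linking $\g$ to $\lambda(X)$ (equivalently an $\infty$-quasi-isomorphism, \cref{thm:oo-qi of algebras is zig-zag of qi}). Both $\Bar_\kappa$ (by \cref{prop:bar of qi is qi}) and linear dualization (exact over a field) preserve quasi-isomorphisms, so applying $\CE^\bullet$ yields a zig-zag of quasi-isomorphisms of commutative algebras connecting $\CE^\bullet(\g)$ and $\CE^\bullet(\lambda(X))$. By Quillen's theorem (\cref{subsect:Quillen main thm}) the cocommutative coalgebra $s\Bar_\kappa\lambda(X)\in\cocom_{\ge2}$ is a model for $X$, hence by \cref{thm:dualization of (co)commutative models} its dual $\CE^\bullet(\lambda(X))$ is a commutative model for $X$; composing with the zig-zag above shows that $\CE^\bullet(\g)$ is a commutative model for $X$ as well.

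\emph{Step 3: conclude via Sullivan's theory, and the main obstacle.} The realization functor $\langle-\rangle=\hom_{\comalg}(-,\Omega_\bullet)$ is homotopy invariant on Sullivan (in particular cofibrant) commutative algebras, and the fundamental theorem of Sullivan's rational homotopy theory identifies $\langle M_X\rangle$ with $X_{\mathbb Q}$ for a minimal Sullivan model $M_X$ of $X$ (cf.\ \cref{thm:existence of minimal Sullivan model}). Lifting the zig-zag connecting $\CE^\bullet(\g)$ to $\APL(X)$ through Sullivan models gives $\langle\CE^\bullet(\g)\rangle\simeq\langle M_X\rangle\simeq X_{\mathbb Q}$, which together with Step 1 yields $\MC_\bullet(\g)\cong\langle\CE^\bullet(\g)\rangle\simeq X_{\mathbb Q}$, as desired. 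The conceptual skeleton is entirely formal; the real work is the finiteness bookkeeping. When $X$ (equivalently $\g$) is not of finite type, $\CE^\bullet(\g)$ is only a pro-(finite type) commutative algebra, so the corepresentability isomorphism of Step 1 and the dualization statement of Step 2 must be carried out in the (co)filtered/pro setting, checking that the relevant limits commute with $\hom_{\comalg}(-,\APL(\Delta[n]))$ and with passage to (co)homology. A cleaner but heavier alternative for the reduction is to invoke the Dolgushev--Rogers theorem (\cref{ch:Lie algebras}) to get $\MC_\bullet(\g)\simeq\MC_\bullet(\lambda(X))$ at once --- legitimate here because every object of $\dgl_{\ge1}$ is complete for its lower central series filtration, which terminates in each fixed degree --- and then run Steps 1 and 3 only for $\g=\lambda(X)$; but one still has to check that the connecting zig-zag is filtered with respect to compatible complete filtrations.
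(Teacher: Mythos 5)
The paper offers no proof of this statement: it is imported verbatim from Berglund \cite{ber15} with a citation, so there is nothing internal to compare against. Your argument is essentially a reconstruction of Berglund's own proof --- identify $\MC_\bullet(\g)$ with the Sullivan realization $\langle\CE^\bullet(\g)\rangle$ of the Chevalley--Eilenberg algebra, check that $\CE^\bullet(\g)$ is a commutative model using Quillen's theorem and \cref{thm:dualization of (co)commutative models}, and conclude by the spatial realization theorem --- and that outline is sound. (Note also that the reduction $\MC_\bullet(\g)\cong\MC(\g\otimes\Omega_\bullet)$ you use implicitly is justified: any $\g\in\dgl_{\ge1}$ is degree-wise nilpotent and complete for its lower central series, so \cref{prop:MC with deg-wise nilpotent filtration} applies.)

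Two points are genuinely unresolved rather than mere bookkeeping. First, the finiteness issue you flag in Step 1 is essential to the statement as the thesis phrases it, which imposes no finite-type hypothesis on $X$ or $\g$, whereas the corepresentability $\MC(\g\otimes A)\cong\hom_{\comalg}(\CE^\bullet(\g),A)$ uses $\hom((s\g)^\vee,A)\cong s\g\otimes A$ and therefore genuinely fails without finite type; either the theorem must be read with Berglund's finite-type hypotheses (which is how it is used elsewhere in the thesis, cf.\ \cref{thm:Berglund}), or the pro-object formulation you sketch has to be carried out in full, and the Dolgushev--Rogers detour does not help since $\lambda(X)$ itself need not be of finite type. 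Second, in Step 3 you pass from a quasi-isomorphism $M_X\to\CE^\bullet(\g)$ to a homotopy equivalence of realizations by invoking homotopy invariance of $\langle-\rangle$ ``on Sullivan (in particular cofibrant) commutative algebras''; but $\CE^\bullet(\g)$ is merely quasi-free on generators in degrees $\ge2$, and quasi-free does not formally imply Sullivan (the linear part of the differential obstructs the naive filtration by generator degree). You should either verify that $\CE^\bullet(\g)$ is Sullivan in this simply connected situation, or argue directly that $\langle-\rangle$ sends this particular quasi-isomorphism to a weak equivalence. Neither point is fatal --- both are handled in \cite{ber15} and \cite{FelixHalperinThomas} --- but as written they are asserted rather than proved.
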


\subsection{Cocommutative models}

Once again with the same ideas in mind, the definition of a cocommutative rational model for a space is as follows.

\begin{definition}
	Let $X\in\Top_{*,1}$ be a simply connected topological space. A cocommutative coalgebra $C\in\cocom_{\ge2}$ is a \emph{cocommutative rational model}\index{Cocommutative rational model} for $X$ if there is a zig-zag of weak equivalences (in the Vallette model structure on coalgebras)
	\[
	C\xrightarrow{\sim}\bullet\xleftarrow{\sim}s\Bar_\kappa\lambda(X)\ .
	\]
\end{definition}

We will see in \cref{prop:alpha-we of coalgebras is zig-zag of we} that this condition is also equivalent to the existence of an $\infty$-morphism of coalgebras from $C$ to $s\Bar_\kappa\lambda(X)$.

\subsection{Dualizing (co)commutative models}\label{subsect:dualizing (co)commutative models}

Commutative and cocommutative models are strictly related, as one would expect. We will now provide a proof of the fact that the dual of a cocommutative model is a commutative model, and \emph{vice versa} that the dual of a commutative model of finite type is a cocommutative model. This result is certainly well-known to experts and part of the folklore, but we haven't been able to find a proof in the literature.

\medskip

We begin by recalling the following theorem, due to Majewski, which relates commutative and Lie models.

\begin{theorem}[{\cite[Thm. 4.90]{Majewski}}]\label{thm:majewski}
	Let $X\in\ssets_{1}$ be a simply-connected space of finite $\mathbb{Q}$-type. Let
	\[
	\ell_X:M_X\stackrel{\sim}{\longrightarrow}A_{PL}^*(X)
	\]
	be a simply-connected commutative model of finite type for $X$, and let
	\[
	\mu_X:\g_X\stackrel{\sim}{\longrightarrow}\lambda(X)
	\]
	be a quasi-free Lie model for $X$. There exists a canonical homotopy class of quasi-isomorphisms of Lie algebras
	\[
	\alpha_X:\g_X\stackrel{\sim}{\longrightarrow}\Omega_\kappa(M_X^\vee)\ .
	\]
\end{theorem}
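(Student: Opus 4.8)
The plan is to pass to the homotopy category $\ho{\dgl_{\ge1}}$ and split the statement into a \emph{canonicity} part and an \emph{existence} part. For canonicity, I would first observe that both sides are bifibrant: every dg Lie algebra is fibrant in the Hinich model structure, $\g_X$ is cofibrant by hypothesis (it is quasi-free), and $\Cobar_\kappa(M_X^\vee)$ is again a quasi-free, hence cofibrant, dg Lie algebra (see the next paragraph). By \cref{thm:homotopy category as quotient of category of bifibrant objects}, morphisms in $\ho{\dgl_{\ge1}}$ between bifibrant objects are homotopy classes of dg Lie maps, and by the Whitehead theorem an isomorphism there is represented by a homotopy class of quasi-isomorphisms. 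Hence it suffices to produce a \emph{canonical} isomorphism $\g_X\cong\Cobar_\kappa(M_X^\vee)$ in $\ho{\dgl_{\ge1}}$; this will be obtained by factoring it through $\lambda(X)$, using the given $\mu_X$ on one side and the comparison constructed below on the other.

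Next I would make sense of $\Cobar_\kappa(M_X^\vee)$ and check its cofibrancy. Since $M_X$ is simply connected, augmented, and of finite type, its graded dual $C\coloneqq M_X^\vee$ is a cocommutative coalgebra concentrated in degrees $\ge2$; it is conilpotent because its coradical filtration is dual to the filtration of $M_X$ by powers of its augmentation ideal, and in each fixed degree only finitely many powers contribute by the connectivity and finite-type hypotheses. Thus $\Cobar_\kappa(C)=\big(\lie(s^{-1}\overline{C}),d_1+d_2\big)$ is a well-defined object of $\dgl_{\ge1}$, of the form $\Cobar_\alpha(-)$ for the Koszul twisting morphism $\kappa$, so it is quasi-free and cofibrant by the cofibrancy criterion recalled in \cref{subsection:Hinich model structure on algebras}. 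Together with the first paragraph, this settles bifibrancy of both sides.

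The existence (and canonicity) part then reduces to showing that $\Cobar_\kappa(C)$ is a Lie model for $X$, canonically. I would argue along the chain $\Cobar_\kappa(M_X^\vee)\simeq\Cobar_\kappa(\APL(X)^\vee)\simeq\lambda(X)$. The first equivalence comes from dualizing $\ell_X\colon M_X\xrightarrow{\sim}\APL(X)$: since $\APL(X)$ is a genuine commutative algebra and $\k$ is a field, the degreewise dual $\ell_X^\vee\colon\APL(X)^\vee\to M_X^\vee$ is a quasi-isomorphism of cocommutative coalgebras, hence a weak equivalence in the Vallette model structure (for coalgebras concentrated in degrees $\ge2$ the two notions coincide by \cref{prop:cobar construction and qis}), and $\Cobar_\kappa$ of such a weak equivalence is again a quasi-isomorphism of Lie algebras by the same proposition. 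The second equivalence is the Sullivan--Quillen comparison: compare $\APL(X)$ with the cochain $\mathbb{E}_\infty$-algebra $C^\bullet(X;\k)$, dualize to the chain $\C_\infty$-coalgebra $C_\bullet(X;\k)$ (rectifying to a strict cocommutative coalgebra, which is harmless in characteristic $0$), and invoke Quillen's theorem (\cref{subsect:Quillen main thm}) identifying $\Cobar_\kappa$ of the cocommutative chain coalgebra of $X$ with $\lambda(X)$ up to a zig-zag of quasi-isomorphisms.

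The hard part is this last step. Along the way one must handle $\mathbb{E}_\infty/\C_\infty$ structures instead of strict (co)commutative ones — dealt with by rectification — and, more importantly, one must verify that each identification in the chain is natural enough that the composite descends to a single well-defined homotopy class of quasi-isomorphisms $\g_X\rightarrow\Cobar_\kappa(M_X^\vee)$, independent of the auxiliary choices, rather than merely an abstract weak equivalence. This naturality, together with the uniqueness-of-models statements it rests on, is precisely the substance of Majewski's argument, and is what one should expect to be the bulk of the work.
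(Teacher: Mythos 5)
The paper does not actually prove this statement: it is quoted from Majewski's monograph and used as a black box in the proof of \cref{thm:dualization of (co)commutative models}, so there is no internal proof to compare against. Judged on its own terms, your sketch has a concrete failure point: the intermediate object $\Cobar_\kappa(\APL(X)^\vee)$ does not exist. The algebra $\APL(X)$ is a huge limit of polynomial-forms algebras and is not of finite type, so its degreewise linear dual is \emph{not} a cocommutative coalgebra --- the dual of the product lands in $(\APL(X)\otimes\APL(X))^\vee$, which strictly contains $\APL(X)^\vee\otimes\APL(X)^\vee$. This is exactly the finiteness obstruction the paper is careful about every time it dualizes (it is why \cref{thm:dualization of (co)commutative models} carries finite-type hypotheses, and why the present statement routes through the finite-type model $M_X$ at all). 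Your first ``equivalence'' therefore has no target, and it cannot be repaired by simply dualizing $\ell_X$; one must stay on the finite-type side throughout.

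The deeper issue is that the step you call ``the Sullivan--Quillen comparison'' is not an application of Quillen's theorem as recalled in \cref{subsect:Quillen main thm}: that theorem lives entirely on the Lie/cocommutative side and says nothing about $\APL$. The assertion that $\Cobar_\kappa$ applied to (a strictification of) the chains coalgebra of $X$ is naturally quasi-isomorphic to $\lambda(X)$ \emph{is} the comparison between the Sullivan and Quillen approaches to rational homotopy theory --- that is, it is the theorem to be proved, in lightly disguised form. Deferring it to ``the substance of Majewski's argument'' reduces the statement to itself. The bifibrancy and homotopy-category bookkeeping in your first two paragraphs is fine (quasi-freeness of $\Cobar_\kappa(M_X^\vee)$, conilpotency of $M_X^\vee$ via the finite-type and simple-connectivity hypotheses, Whitehead), but the actual content --- constructing a natural zig-zag between the $\lambda$ side and the $\APL$ side --- is untouched, and that is where essentially all of the work lies.
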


Using this, we can prove the result we wished.

\begin{theorem}\label{thm:dualization of (co)commutative models}
	Let $X$ be a simply-connected space of finite $\mathbb{Q}$-type.
	\begin{enumerate}
		\item \label{1} Let $A$ be a commutative model of finite type for $X$. Then its dual $A^\vee$ is a cocommutative model for $X$.
		\item \label{2} Dually, let $C$ be a cocommutative model for $X$. Then its linear dual $C^\vee$ is a commutative model for $X$.
	\end{enumerate}
	Notice that we do not have any finiteness assumption on our cocommutative models.
\end{theorem}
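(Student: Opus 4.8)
The plan is to run both statements through Majewski's theorem \cref{thm:majewski}, which compares commutative models with Lie models, together with the bar--cobar adjunction relative to the Koszul twisting morphism $\kappa\colon\lie^{\antishriek}\to\lie$, which compares Lie models with cocommutative models. Three elementary facts are used throughout: over the field $\mathbb{Q}$ linear duality preserves quasi-isomorphisms (since $H^\bullet(V^\vee)\cong H_\bullet(V)^\vee$); the linear dual of a conilpotent cocommutative coalgebra is always a commutative algebra; and the linear dual of a commutative algebra of finite type is a conilpotent cocommutative coalgebra. Finally we fix a minimal Sullivan model $M_X=(\com(V),d)$ of $X$, which exists by \cref{thm:existence of minimal Sullivan model}; as $X$ is of finite $\mathbb{Q}$-type, $V$ sits in degrees $\ge2$ and is finite dimensional in each degree, so $M_X$ is a $1$-reduced commutative model of finite type and $M_X^{\vee\vee}\cong M_X$ as commutative algebras.

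\emph{Proof of part~(\ref{1}).} First take $A=M_X$. By \cref{thm:majewski}, applied to $M_X$ and to a quasi-free Lie model $\mu_X\colon\g_X\xrightarrow{\sim}\lambda(X)$, there is a quasi-isomorphism of Lie algebras $\alpha_X\colon\g_X\xrightarrow{\sim}\Cobar_\kappa(M_X^\vee)$. Hence $\Cobar_\kappa(M_X^\vee)$ is a Lie model for $X$, being linked to $\lambda(X)$ by the span $\Cobar_\kappa(M_X^\vee)\xleftarrow{\;\sim\;}\g_X\xrightarrow{\;\sim\;}\lambda(X)$. Applying $s\Bar_\kappa$ to this span and using \cref{prop:bar of qi is qi}, we get quasi-isomorphisms of cocommutative coalgebras $s\Bar_\kappa\Cobar_\kappa(M_X^\vee)\xleftarrow{\;\sim\;}s\Bar_\kappa\g_X\xrightarrow{\;\sim\;}s\Bar_\kappa\lambda(X)$; since all three objects lie in $\cocom_{\ge2}$, these are weak equivalences in the Vallette model structure by \cref{prop:cobar construction and qis}. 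As $\kappa$ is Koszul, the unit $M_X^\vee\to s\Bar_\kappa\Cobar_\kappa(M_X^\vee)$ of the relative bar--cobar adjunction is a weak equivalence in the Vallette model structure by \cref{corollary:unit of bar-cobar is a we in the Vallette model structure}. Concatenating exhibits $M_X^\vee$ as a cocommutative model for $X$. For a general commutative model $A$ of finite type, $A$ and $M_X$ are both linked to $\APL(X)$ by zig-zags of quasi-isomorphisms of commutative algebras, which we may take through $1$-reduced commutative algebras as $X$ is $1$-connected; dualizing term by term yields a zig-zag of quasi-isomorphisms of conilpotent cocommutative coalgebras in $\cocom_{\ge2}$ between $A^\vee$ and $M_X^\vee$, and these are weak equivalences in the Vallette model structure by \cref{prop:cobar construction and qis}. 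Hence $A^\vee$ is a cocommutative model for $X$.

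\emph{Proof of part~(\ref{2}).} Let $C$ be an arbitrary cocommutative model for $X$, with no finiteness hypothesis. By part~(\ref{1}), $M_X^\vee$ is also a cocommutative model for $X$; composing the two defining spans through $s\Bar_\kappa\lambda(X)$ and using once more that in $\cocom_{\ge2}$ the Vallette weak equivalences are exactly the quasi-isomorphisms (\cref{prop:cobar construction and qis}), we obtain a zig-zag of quasi-isomorphisms of cocommutative coalgebras between $C$ and $M_X^\vee$. Dualizing term by term, the linear dual of each of these cocommutative coalgebras is a commutative algebra and each dualized map is a quasi-isomorphism, so we get a zig-zag of quasi-isomorphisms of commutative algebras between $C^\vee$ and $M_X^{\vee\vee}\cong M_X$. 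Since $M_X\xrightarrow{\sim}\APL(X)$ realizes $M_X$ as a commutative model, $C^\vee$ is linked to $\APL(X)$ by a zig-zag of quasi-isomorphisms of commutative algebras, hence is a commutative model for $X$. Note that only the finite type of $M_X$ --- automatic by \cref{thm:existence of minimal Sullivan model} --- was used, and no finiteness of $C$.

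The genuine content is just the combination of Majewski's theorem with the Koszul bar--cobar formalism of \cref{ch:operads}; the rest is bookkeeping, and the only place where some care is needed is the interplay of degree and finiteness hypotheses. One must keep the commutative algebras, hence their duals, connected so that plain quasi-isomorphisms count as Vallette weak equivalences through \cref{prop:cobar construction and qis}; one needs finite type exactly in order to dualize a commutative algebra back into a coalgebra, which is why no such hypothesis is imposed in part~(\ref{2}); and one must track the suspension $s$ relating $\lie^{\antishriek}$-coalgebras to honest cocommutative coalgebras so that $s\Bar_\kappa$ and $\Cobar_\kappa=\Omega_\kappa$ stay adjoint after the shift. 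None of this is a real obstacle.
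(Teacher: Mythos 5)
Your strategy is essentially the one the paper uses: reduce to the minimal Sullivan model $M_X$, feed it into Majewski's theorem to compare $\Cobar_\kappa(M_X^\vee)$ with a Lie model, and transport the result to coalgebras via the relative bar construction and \cref{corollary:unit of bar-cobar is a we in the Vallette model structure}; part~(\ref{2}) is then the same bookkeeping as in the paper. Your arrangement of part~(\ref{1}) is if anything slightly cleaner — you apply $s\Bar_\kappa$ directly to the span $\Cobar_\kappa(M_X^\vee)\leftarrow\g_X\rightarrow\lambda(X)$ and splice in the unit at $M_X^\vee$, where the paper instead applies $\Bar_\pi\Cobar_\pi$ to the whole dualized zig-zag — and your observation that quasi-isomorphisms in $\cocom_{\ge2}$ are Vallette weak equivalences via \cref{prop:cobar construction and qis} is exactly the justification the paper relies on.

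There is, however, one step that does not work as written: the reduction from a general finite-type model $A$ to $M_X$. You propose to link $A$ and $M_X$ by concatenating their zig-zags through $\APL(X)$ and then to ``dualize term by term.'' But $\APL(X)$ is very far from being of finite type, so $\APL(X)^\vee$ is not a conilpotent cocommutative coalgebra, and the same may be true of the other intermediate algebras in an arbitrary zig-zag; by your own list of ``elementary facts,'' dualizing a commutative algebra into a coalgebra requires finite type. You must first replace the zig-zag through $\APL(X)$ by one between $A$ and $M_X$ whose intermediates are simply-connected and of finite type before dualizing. This is exactly the point the paper is careful about: it invokes \cref{thm:oo-qi of algebras is zig-zag of qi} to produce $A\leftarrow\bullet\rightarrow M_X$ and observes that the intermediate can be taken to be $\Cobar_\iota\Bar_\iota M_X$, which is again simply-connected and of finite type (alternatively, since $M_X$ is cofibrant and every commutative algebra is fibrant, one gets a direct quasi-isomorphism $M_X\to A$ and no intermediate is needed). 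With that repair your argument goes through; without it the dualization step is not defined.
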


\begin{proof}
	We begin by proving (\ref{1}). Let $A$ be a simply-connected commutative model of finite type for $X$. Every simply-connected space $X$ of finite $\mathbb{Q}$-type admits a minimal commutative model $M_X$, which in particular is simply-connected and of finite type, see \cref{thm:existence of minimal Sullivan model}. It follows that we have a zig-zag of quasi-isomorphisms\footnote{In fact, since $M_X$ is cofibrant there is a direct quasi-isomorphism $M_X\to A$.}
	\[
	A\longleftarrow\bullet\longrightarrow M_X
	\]
	by \cref{thm:oo-qi of algebras is zig-zag of qi}. Inspecting the proof in loc. cit. we notice that we can take $\Cobar_\iota\Bar_\iota M_X$ as intermediate algebra, which is again simply-connected and of finite type. Dualizing linearly, we obtain a zig-zag of quasi-isomorphisms
	\begin{equation}\label{eq:zigzag}
	A^\vee\longrightarrow\bullet\longleftarrow M_X^\vee\ ,
	\end{equation}
	where all terms are well-defined coalgebras thanks to the fact that they are of finite type. Finally, we obtain a zig-zag
	\[
	A^\vee\longrightarrow\Bar_\pi\Cobar_\pi A^\vee\longrightarrow\Bar_\pi\Cobar_\pi(\bullet)\longleftarrow\Bar_\pi\Cobar_\pi M_X^\vee\longleftarrow\Bar_\pi\Cobar_\pi\Bar_\pi\lambda(X)\longrightarrow\Bar_\kappa\lambda(X)\ ,
	\]
	where the first arrow is the unit of the bar-cobar adjunction and is a weak equivalence by \cite[Thm. 2.6(2)]{val14}, the second and third arrows are obtained by the arrows of the zig-zag (\ref{eq:zigzag}) by applying $\Bar_\pi\Cobar_\pi$ and are also weak equivalences. The last two arrows are obtained as follows. The Lie algebra $\lambda(X)$ is a Lie model for $X$, and so is its bar-cobar resolution $\Cobar_\pi\Bar_\pi\lambda(X)$ thanks to the counit of the bar-cobar adjunction. Moreover, this last Lie algebra is quasi-free. Therefore, by \cref{thm:majewski} there is a quasi-isomorphism
	\[
	\alpha_X:\Cobar_\pi\Bar_\pi\lambda(X)\stackrel{\sim}{\longrightarrow}\Omega_\pi(M_X^\vee)\ .
	\]
	Applying the bar construction we obtain the desired zig-zag of weak equivalences
	\[
	\Bar_\pi\lambda(X)\stackrel{\sim}{\longleftarrow}\Bar_\pi \Cobar_\pi\Bar_\pi\lambda(X)\xrightarrow[\Bar_\pi\alpha_X]{\sim}\Bar_\pi\Cobar_\pi(M_X^\vee)\ ,
	\]
	concluding the first part of the proof.
	
	\medskip
	
	For point (\ref{2}), let $C$ be a cocommutative model for $X$. By point (\ref{1}), we have in particular that the dual $M_X^\vee$ of the minimal commutative model $M_X$ is a cocommutative model for $X$. Therefore, we have a zig-zag of weak equivalences
	\[
	C\longrightarrow\bullet\longleftarrow M_X^\vee\ ,
	\]
	which in particular are quasi-isomorphisms. Dualizing linearly, we obtain a zig-zag of quasi-isomor\-phisms
	\[
	C^\vee\longleftarrow\bullet\longrightarrow M_X^{\vee\vee}\cong M_X\ ,
	\]
	where the last isomorphism holds because $M_X$ is of finite type. Therefore, the commutative algebra $C^\vee$ is a commutative model for $X$.
\end{proof}
	
	\chapter{Maurer--Cartan spaces}\label{ch:Lie algebras}

Maurer--Cartan elements of Lie algebras, and more generally homotopy Lie algebras, appear naturally throughout the whole of mathematics. The reason of this fact comes from deformation theory, and we will try to explain it later, namely in \cref{chapter:deformation theory}. Since the study of the spaces of Maurer--Cartan elements will be one of the central topics of the present thesis, we dedicate this chapter to the definition of these objects and of all the notions surrounding them, as well as the statement and the study of some central theorems in this area.

\section[Lie algebras]{Lie algebras: the Deligne groupoid and the Goldman--Mill\-son theorem}

In this section, we introduce the Maurer--Cartan set of a Lie algebra and some of its properties. We look at the Baker--Campbell--Hausdorff formula, which makes it into a groupoid -- the Deligne groupoid. This leads us naturally to consider complete Lie algebras in the sense of \cref{sect:proper complete algebras}. Finally, we present the Goldman--Millson theorem, which is the precursor of the Dolgushev--Rogers theorem.

\subsection{The Maurer--Cartan set of a Lie algebra}

Let $\g$ be a Lie algebra\index{Lie!algebras}, that is a chain complex together with a bracket
\[
[-,-]:\g\otimes\g\longrightarrow\g
\]
of degree $0$ which
\begin{itemize}
	\item is antisymmetric, i.e.
	\[
	[x,y] = (-1)^{|x||y|+1}[y,x]\ ,
	\]
	and
	\item satisfies the Jacobi rule\index{Jacobi rule}, that is
	\[
	(-1)^{|x||z|}[x,[y,z]]
+ (-1)^{|x||y|}[y,[z,x]] + (-1)^{|y||z|}[z,[x,y]] = 0\ .
\]
\end{itemize}

\begin{definition}
	A \emph{Maurer--Cartan element}\index{Maurer--Cartan!elements!of a Lie algebra} of a Lie algebra $\g$ is an element $x\in\g_{-1}$ of degree $-1$ satisfying the \emph{Maurer--Cartan equation}\index{Maurer--Cartan!equation!in a Lie algebra}
	\begin{equation}\label{eq:MC for Lie algebras}
		dx + \frac{1}{2}[x,x] = 0\ .
	\end{equation}
	The set of all Maurer--Cartan elements of $\g$ will be denoted by $\MC(\g)$.
\end{definition}

\begin{example}
	In any Lie algebra $\g$, the element $0$ is always a Maurer--Cartan element.
\end{example}

Notice that whenever $\g$ is finite dimensional, then $\MC(\g)$ is an algebraic variety. More specifically, it is an intersection of quadrics in $\g_{-1}$.

\subsection{Gauges between Maurer--Cartan elements}

Let $\g$ be a Lie algebra, and let $\lambda\in\g_0$ be a degree $0$ element. One can consider the ``vector field" given by
\[
x\in\g\longmapsto d\lambda + [x,\lambda]\in\g\ .
\]
In the finite dimensional case, it makes sense to identify the target copy of $\g$ with $T_x\g$, and to take the flow of such a vector field, but in general we have to proceed formally. One considers the differential equation in $\g[[t]]\coloneqq\g\otimes\k[[t]]$ given by
\begin{equation}\label{eq:gauge Lie algebras}
\frac{d}{dt}x(t) = d\lambda + [x(t),\lambda]\ ,
\end{equation}
where $\lambda\in\g_0$ is seen as a constant element in $\g[[t]]$.

\begin{lemma}\label{lemma:formula for gauges in Lie algebra}
	Let $x_0\in\g$, then the unique solution of equation (\ref{eq:gauge Lie algebras}) with initial value $x(0) = x_0$ is given by
	\[
	x(t) = \frac{e^{t\ad_\lambda}-\id}{\ad_\lambda}(d\lambda) + e^{t\ad_\lambda}(x_0)\ ,
	\]
	where $\ad_\lambda(x)\coloneqq[x,\lambda]$, and the exponential has to be understood as a formal power series.
\end{lemma}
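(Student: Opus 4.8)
The plan is to verify directly that the proposed formula solves the differential equation \eqref{eq:gauge Lie algebras} with the correct initial condition, and then invoke the uniqueness result for formal differential equations (stated in the appendix on formal fixed-point and differential equations) to conclude it is \emph{the} solution. First I would record the power series expansions
\[
\frac{e^{t\ad_\lambda}-\id}{\ad_\lambda} = \sum_{k\ge0}\frac{t^{k+1}}{(k+1)!}\ad_\lambda^k\ ,\qquad e^{t\ad_\lambda} = \sum_{k\ge0}\frac{t^k}{k!}\ad_\lambda^k\ ,
\]
both understood as formal power series in $t$ with coefficients in $\End(\g)$; note that the first one makes sense even though $\ad_\lambda$ is not invertible, precisely because the numerator is divisible by $\ad_\lambda$ termwise. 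This shows $x(t)\in\g[[t]]$ is well defined.

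Next I would check the initial condition: at $t=0$ the first summand vanishes (every term carries a positive power of $t$) and $e^{0}=\id$, so $x(0)=x_0$, as required. Then I would differentiate termwise. Writing $f(t)\coloneqq\frac{e^{t\ad_\lambda}-\id}{\ad_\lambda}$ we have $\frac{d}{dt}f(t) = e^{t\ad_\lambda}$ and $\frac{d}{dt}e^{t\ad_\lambda} = \ad_\lambda e^{t\ad_\lambda} = e^{t\ad_\lambda}\ad_\lambda$, so
\[
\frac{d}{dt}x(t) = e^{t\ad_\lambda}(d\lambda) + e^{t\ad_\lambda}\ad_\lambda(x_0)\ .
\]
On the other hand the right-hand side of \eqref{eq:gauge Lie algebras} evaluated at $x(t)$ is $d\lambda + [x(t),\lambda] = d\lambda + \ad_\lambda(x(t)) = d\lambda + \ad_\lambda f(t)(d\lambda) + e^{t\ad_\lambda}\ad_\lambda(x_0)$. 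Since $\ad_\lambda f(t) = e^{t\ad_\lambda}-\id$, this equals $d\lambda + (e^{t\ad_\lambda}-\id)(d\lambda) + e^{t\ad_\lambda}\ad_\lambda(x_0) = e^{t\ad_\lambda}(d\lambda) + e^{t\ad_\lambda}\ad_\lambda(x_0)$, which matches $\frac{d}{dt}x(t)$. One small point to address is that $d\lambda$ and $\ad_\lambda$ commute with $\frac{d}{dt}$ as operators on $\g[[t]]$, which is immediate, and that all the series manipulations are legitimate because they are coefficientwise identities of formal power series.

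The only genuine subtlety — and the main thing the proof must not gloss over — is \emph{uniqueness}: equation \eqref{eq:gauge Lie algebras} is a formal differential equation of the type treated in the appendix, and one has to confirm its hypotheses are met so that the existence-and-uniqueness statement there applies, giving that the solution with prescribed initial value $x_0$ is unique in $\g[[t]]$. Granting that, the computation above shows the displayed $x(t)$ is a solution with $x(0)=x_0$, hence it is \emph{the} solution, which is exactly the assertion of the lemma.
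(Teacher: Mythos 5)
Your proposal is correct and follows essentially the same route as the paper: the paper's proof also verifies the initial condition and then formally differentiates the power series term by term to recover $d\lambda + [x(t),\lambda]$, with uniqueness left to the standard recursive argument for formal differential equations (which the paper treats in its appendix and also uses for an alternative proof via trees). Your extra care in spelling out why $\frac{e^{t\ad_\lambda}-\id}{\ad_\lambda}$ is well defined and in flagging the uniqueness hypothesis is a welcome but minor refinement, not a different method.
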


A conceptual way to derive the formula above is presented in \cite[Sect. 1]{dsv16}, cf. also the differential trick exposed in \cref{subsect:BCH}. Another way is provided by \cref{prop:formula for formal ODE}, cf. \cref{subsect:explicit formula for gauges}.

\begin{proof}
	Obviously, we have that $x(0) = x_0$. Differentiating formally, we have
	\begin{align*}
		\frac{d}{dt}x(t) =&\ \frac{d}{dt}\left(\sum_{n\ge1}\frac{t^n}{n!}\ad_\lambda^{n-1}(d\lambda) + \sum_{n\ge0}\frac{t^n}{n!}\ad_\lambda^n(x_0)\right)\\
		=&\ \sum_{n\ge1}\frac{t^{n-1}}{(n-1)!}\ad_\lambda^{n-1}(d\lambda) + \sum_{n\ge0}\frac{t^{n-1}}{(n-1)!}\ad_\lambda^n(x_0)\\
		=&\ d\lambda + [x(t),\lambda]\ ,
	\end{align*}
	where $\ad_\lambda^n\coloneqq\ad_\lambda\ad_\lambda^{n-1}$ for $n\ge1$, and $\ad_\lambda^0=\id$.
\end{proof}

We would like to evaluate this formula at a $t\in\k$, in order that the solution of (\ref{eq:MC for Lie algebras}) makes sense in $\g$, and not just formally in $\g[[t]]$. However, the formula contains infinite sums, which do not make sense in a mere chain complex. Therefore, one has to ask that the Lie algebra is proper complete\footnote{See \cref{appendix:filtered stuff} for the formal definitions. One can also drop the properness assumption and only consider those Maurer--Cartan elements and $\lambda$ that are in $(\F_1\g)_0$.}, so that $\lambda\in\F_1\g$ and the iterated brackets with $\lambda$ live in increasing degrees of the filtration, and the infinite sums make sense.

\begin{lemma}
	Suppose $\g$ is a proper complete Lie algebra, and let $x_0\in\MC(\g)$ be a Maurer--Cartan element. Then the evaluation of the solution to equation (\ref{eq:gauge Lie algebras}) always exists for any time $t\in\k$, and it remains in $\MC(\g)$.
\end{lemma}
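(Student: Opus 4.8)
The plan is to separate the statement into two independent claims: first, that the series of \cref{lemma:formula for gauges in Lie algebra} converges to a genuine element of $\g_{-1}$ once we plug in a numerical value $t\in\k$; and second, that this element again satisfies the Maurer--Cartan equation.

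\emph{Convergence.} Since $\g$ is proper complete, every element of $\g$ lies in filtration weight $\ge1$ and the filtration is complete; in particular $\lambda$ and $x_0$ lie in $\F_1\g$, and, $d$ being filtration-preserving, so does $d\lambda$. The operator $\ad_\lambda=[-,\lambda]$ strictly raises the filtration weight, so $\ad_\lambda^{n-1}(d\lambda)\in\F_n\g$ and $\ad_\lambda^n(x_0)\in\F_{n+1}\g$, both tending to $0$. Hence
\[
x(t)=\sum_{n\ge1}\frac{t^n}{n!}\,\ad_\lambda^{n-1}(d\lambda)+\sum_{n\ge0}\frac{t^n}{n!}\,\ad_\lambda^n(x_0)
\]
converges in $\g_{-1}$ for every $t\in\k$, and at the same time defines a well-defined element of $\g_{-1}[[t]]$ whose evaluation at each numerical value of $t$ equals the convergent sum just described. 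This already yields the existence part of the statement.

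\emph{Maurer--Cartan property.} Set $F(t)\coloneqq d x(t)+\tfrac12[x(t),x(t)]\in\g_{-2}[[t]]$. I would differentiate $F$ formally, substitute the defining relation $\dot x(t)=d\lambda+[x(t),\lambda]$, and then use the Leibniz rule $d[a,b]=[da,b]+(-1)^{|a|}[a,db]$, graded antisymmetry (so that the bracket of two odd-degree elements is symmetric), and the graded Jacobi identity. The outcome of this computation — whose only real content is the identity $[[x(t),\lambda],x(t)]=\tfrac12[[x(t),x(t)],\lambda]$ extracted from Jacobi — is
\[
\dot F(t)=[F(t),\lambda]=\ad_\lambda\bigl(F(t)\bigr)\ .
\]
This is a formal linear differential equation; since $x_0\in\MC(\g)$ we have $F(0)=0$, so by uniqueness of solutions to such equations (\cref{prop:formula for formal ODE}, cf. \cref{appendix:formal fixed-pt eq and diff eq}) we get $F(t)=0$ identically in $\g_{-2}[[t]]$. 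Evaluating at an arbitrary $t\in\k$ — legitimate because $d$ and the bracket are filtration-preserving and therefore commute with the convergent evaluation of all the series involved — gives $d x(t)+\tfrac12[x(t),x(t)]=0$, i.e. $x(t)\in\MC(\g)$.

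The main obstacle is the middle computation establishing $\dot F=\ad_\lambda F$: one must propagate the Koszul signs coming from graded antisymmetry and from the Jacobi identity correctly through several substitutions involving the degree $-1$ elements $x(t)$, $d\lambda$, and $[x(t),\lambda]$, and one should keep the genuinely formal identity $F\equiv0$ in $\g[[t]]$ cleanly separated from the completeness argument that makes evaluation at a numerical $t$ meaningful in the first place.
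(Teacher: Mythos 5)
Your proof is correct and follows the same strategy as the paper: the convergence argument via the filtration (with $\lambda$ and $x_0$ in $\F_1\g$ and $\ad_\lambda$ raising filtration weight) is exactly the one sketched in the text preceding the lemma, and the Maurer--Cartan part proceeds, as in the paper, by differentiating $F(t)=dx(t)+\tfrac12[x(t),x(t)]$ along the flow and invoking the Jacobi identity. If anything, your version is the more careful one: the paper's displayed computation ends abruptly with ``$=0$'', whereas the honest outcome of the substitution is $\dot F(t)=\ad_\lambda(F(t))$, and one still needs the initial condition $F(0)=0$ together with uniqueness of solutions of this formal linear ODE to conclude $F\equiv0$ --- precisely the step you make explicit.
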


\begin{proof}
	Existence is trivial. To see that the solution remains in the Maurer--Cartan set, we differentiate the Maurer--Cartan equation applied to the solution:
	\begin{align*}
		\frac{d}{dt}\left(dx(t) + \frac{1}{2}[x(t),x(t)]\right) =&\ d\dot{x}(t) + \frac{1}{2}([\dot{x}(t),x(t)] + [x(t),\dot{x}(t)])\\
		=&\ d(d\lambda + [x(t),\lambda]) + \frac{1}{2}([d\lambda + [x(t),\lambda],x(t)] + [x(t),d\lambda + [x(t),\lambda]])\\
		=&\ 0\ ,
	\end{align*}
	where one has to use the Jacobi rule to conclude.
\end{proof}

\begin{definition}
	Let $\g$ be a proper complete Lie algebra. Two Maurer--Cartan elements $x_0,x_1\in\MC(\g)$ of $\g$ are \emph{gauge equivalent}\index{Gauge equivalence!in a Lie algebra} if there exists $\lambda\in\g_0$ such that the solution $x(t)$ of equation (\ref*{eq:MC for Lie algebras}) with initial value $x(0)=x_0$ satisfies $x(1) = x_1$. We write $x_0\gaugeeq x_1$, and say that $\lambda$ is a \emph{gauge} between $x_0$ and $x_1$.
\end{definition}

\begin{remark}
	Since the gauge equation (\ref{eq:gauge Lie algebras}) is autonomous, any two Maurer--Cartan elements on a solution of the equation are gauge equivalent.
\end{remark}

By taking $\lambda=0$, one sees that for any $x\in\MC(\g)$ we have $x\gaugeeq x$, and if $\lambda$ is a gauge from $x_0$ to $x_1$, then $-\lambda$ is a gauge from $x_1$ to $x_0$. Next, we will show that the gauge relation is transitive, effectively making it into an equivalence relation.

\subsection{The Lawrence--Sullivan algebra}\label{subsect:LS algebra}

In \cite{ls10}, R. Lawrence and D. Sullivan defined a Lie algebra representing the interval.

\begin{definition}
	The Lawrence--Sullivan algebra\index{Lawrence--Sullivan algebra} is the free complete Lie algebra on elements $x_0,x_1$ of degree $-1$ and $\lambda$ of degree $0$ with
	\begin{align*}
		dx_i =&\ -\frac{1}{2}[x_i,x_i]\ ,\\
		d\lambda =&\ \sum_{n\ge0}\frac{B_n}{n!}\ad_\lambda^n(x_1-x_0) - \ad_\lambda(x_0)\ ,
	\end{align*}
	where $B_n$ is the $n$th Bernoulli number.
\end{definition}

This Lie algebra is the free complete Lie algebra over two Maurer--Cartan elements $x_0$ and $x_1$, representing the endpoints of the interval, and a gauge $\lambda$ from $x_0$ to $x_1$, which corresponds to the interval itself. It is easily derived from \cref{lemma:formula for gauges in Lie algebra}. Indeed, we want
\[
x_1 = \frac{e^{\ad_\lambda}-\id}{\ad_\lambda}(d\lambda) + e^{\ad_\lambda}(x_0) = \frac{e^{\ad_\lambda}-\id}{\ad_\lambda}(d\lambda + \ad_\lambda(x_0)) + x_0\ .
\]
Thus, we have
\[
d\lambda = \left(\frac{e^{\ad_\lambda}-\id}{\ad_\lambda}\right)^{-1}(x_1-x_0) - \ad_\lambda(x_0)\ ,
\]
and using the formal power series
\[
\frac{t}{e^t-1} = \sum_{n\ge0}\frac{B_n}{n!}t^n\ ,
\]
we find that we must have
\[
d\lambda = \sum_{n\ge0}\frac{B_n}{n!}\ad_\lambda^n(x_1-x_0) - \ad_\lambda(x_0)\ .
\]

\subsection{The Baker--Campbell--Hausdorff formula}\label{subsect:BCH}

Let $\g$ be a Lie algebra. The Baker--Campbell--Hausdorff formula\index{Baker--Campbell--Hausdorff formula}
\[
\BCH:\g_0\times\g_0\longrightarrow\g_0
\]
is a formula making $\g_0$ into a group, and the gauge action into a group action. It has a long history, going back to F. Schur \cite{sch90}, with the first explicit formula is attributed to E. Dynkin. An extensive reference for this formula is the book \cite{BonfiglioliFulci}.

\medskip

It can be approached with what is called the \emph{differential trick}, which we learned from \cite[Sect. 2]{dsv16}. We extend the Lie algebra $\g$ by an element $\delta$ of degree $-1$, defining
\[
\g^+\coloneqq\g\oplus\k\delta\ ,
\]
with the original differential and bracket on $\g$, and imposing
\[
d(\delta) = 0\ ,\qquad[\delta,\delta] = 0\ ,\qquad\text{and}\qquad[\delta,x] = dx
\]
for any $x\in\g$. Then we can look at the Maurer--Cartan elements $x\in\MC(\g)$ of $\g$ as
\[
\overline{x}\coloneqq\delta + x\in\g^+,
\]
with the Maurer--Cartan equation becoming just
\[
[\overline{x},\overline{x}] = 0\ .
\]
If we have $\lambda\in\g_0$, then the differential equation (\ref{eq:gauge Lie algebras}) defining the gauge action simply becomes
\[
\dot{\overline{x}}(t) = \ad_\lambda(\overline{x}(t))\ ,
\]
which is solved by
\[
\overline{x}(t) = e^{t\ad_\lambda}(\overline{x_0})\ .
\]
This immediately recovers \cref{lemma:formula for gauges in Lie algebra}. Moreover, we can derive the Baker--Campbell--Haus\-dorff formula from it. Let $\lambda,\mu\in\g_0$. We want an element $\BCH(\lambda,\mu)\in\g_0$ such that
\[
e^{\ad_{\BCH(\lambda,\mu)}}(\overline{x}) = e^{\ad_\lambda}(e^{\ad_\mu}(\overline{x}))\ .
\]
It is not obvious that such an element $\BCH(\lambda,\mu)$ exists. However, one can then proceed as in \cite[Sect. 5.3--6]{Hall} to obtain that
\[
\BCH(\lambda,\mu) = \lambda + \int_0^1g(e^{\ad_\lambda}e^{t\ad_\mu})(\mu)dt\ ,
\]
where
\[
g(z)\coloneqq\frac{\log z}{1-\frac{1}{z}}
\]
is a holomorphic function on the disk (and thus we consider its formal power series to perform the integration). If one performs the integration, one obtains for the first terms
\[
\BCH(\lambda,\mu) = \lambda + \mu + \frac{1}{2}[\lambda,\mu] + \frac{1}{12}\left([\lambda,[\lambda,\mu]] + [\mu,[\mu,\lambda]]\right) + \cdots
\]
With this definition for $\BCH$, we immediately have the following result.

\begin{proposition}
	Let $\g$ be a proper complete Lie algebra, and let $x_0,x_1,x_2\in\MC(\g)$ be three Maurer--Cartan elements. If $\lambda\in\g_0$ is a gauge from $x_0$ to $x_1$, and $\mu\in\g_0$ is a gauge from $x_1$ to $x_2$, then $\BCH(\lambda,\mu)$ is a gauge from $x_0$ to $x_2$. In particular, being gauge equivalent is an equivalence relation.
\end{proposition}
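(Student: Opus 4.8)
The plan is to leverage the differential trick introduced just above, which reduces the gauge action on Maurer--Cartan elements to the ordinary exponential of the adjoint action in the extended Lie algebra $\g^+$. The key observation is that, once we pass to $\g^+$ and replace each $x_i\in\MC(\g)$ by $\overline{x_i}=\delta+x_i$, a gauge $\lambda$ from $x_0$ to $x_1$ is nothing but the statement $e^{\ad_\lambda}(\overline{x_0})=\overline{x_1}$, and similarly $e^{\ad_\mu}(\overline{x_1})=\overline{x_2}$. Composing these two identities gives
\[
e^{\ad_\lambda}\!\left(e^{\ad_\mu}(\overline{x_0})\right)=\overline{x_2}\ ,
\]
and by the defining property of $\BCH(\lambda,\mu)$ the left-hand side equals $e^{\ad_{\BCH(\lambda,\mu)}}(\overline{x_0})$. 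Hence $\BCH(\lambda,\mu)$ realizes the gauge from $x_0$ to $x_2$, provided we check that this identity at the level of $\g^+$ genuinely corresponds to the solution of the gauge equation (\ref{eq:gauge Lie algebras}) at time $t=1$ with initial value $x_0$; but that correspondence is exactly what the differential trick established, together with \cref{lemma:formula for gauges in Lie algebra}.

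First I would spell out, in $\g^+$, that the solution of the gauge equation starting at $x_0$ with gauge $\lambda$ is $\overline{x}(t)=e^{t\ad_\lambda}(\overline{x_0})$, so that the endpoint condition $x(1)=x_1$ is equivalent to $e^{\ad_\lambda}(\overline{x_0})=\overline{x_1}$. This requires only that $\g$ be proper complete, so that all the exponential series converge in the filtration; $\lambda,\mu\in(\F_1\g)_0$ and the iterated brackets land in deeper and deeper filtration layers. Next I would invoke the existence of $\BCH(\lambda,\mu)\in\g_0$ satisfying $e^{\ad_{\BCH(\lambda,\mu)}}=e^{\ad_\lambda}e^{\ad_\mu}$ as operators on $\g^+$ — this is the content of the Baker--Campbell--Hausdorff discussion above, where the explicit integral formula $\BCH(\lambda,\mu)=\lambda+\int_0^1 g(e^{\ad_\lambda}e^{t\ad_\mu})(\mu)\,dt$ is given. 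Then I would simply chain the two endpoint identities and read off that $\BCH(\lambda,\mu)$ is a gauge from $x_0$ to $x_2$.

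Finally, to conclude that being gauge equivalent is an equivalence relation, I would assemble the three properties: reflexivity (take $\lambda=0$, already noted), symmetry ($-\lambda$ is a gauge from $x_1$ to $x_0$, already noted, and indeed $e^{-\ad_\lambda}$ inverts $e^{\ad_\lambda}$), and transitivity (the statement just proved). I would remark that well-definedness of $\BCH(\lambda,\mu)$ as an element of $\g_0$ — rather than merely as a formal operator identity — is guaranteed because $\g$ is proper complete, so the power series defining $\BCH$ converges in $\F_1\g$.

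The main obstacle, such as it is, is the passage between ``$e^{\ad_\lambda}(\overline{x_0})=\overline{x_1}$ in $\g^+$'' and ``$\lambda$ is a gauge from $x_0$ to $x_1$ in the sense of the definition,'' i.e. that the flow of the autonomous gauge vector field really is given by the exponential; but this is precisely \cref{lemma:formula for gauges in Lie algebra} combined with the differential trick (note that in $\g^+$ the inhomogeneous term $d\lambda$ gets absorbed into $\ad_\lambda(\overline{x})$ since $\ad_\lambda(\delta)=-\ad_\delta(\lambda)=-d\lambda$ up to sign — more precisely $[\delta,\lambda]=d\lambda$, so $\ad_\lambda(\overline{x})=[x,\lambda]+[\delta,\lambda]=[x,\lambda]+d\lambda$, matching the right-hand side of (\ref{eq:gauge Lie algebras}) up to the sign convention for $\ad$). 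Apart from bookkeeping of signs and convergence, no genuinely hard step remains.
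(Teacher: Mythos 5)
Your argument is the same as the paper's: the proposition is stated there as an immediate consequence of the differential trick and the defining operator identity for $\BCH$, and you have simply written that argument out. It is essentially correct, but there is one concrete slip in the composition step. From $e^{\ad_\lambda}(\overline{x_0})=\overline{x_1}$ and $e^{\ad_\mu}(\overline{x_1})=\overline{x_2}$ the composite you actually obtain is
\[
e^{\ad_\mu}\bigl(e^{\ad_\lambda}(\overline{x_0})\bigr)=\overline{x_2}\ ,
\]
i.e.\ the operator $e^{\ad_\mu}e^{\ad_\lambda}$ applied to $\overline{x_0}$ --- not $e^{\ad_\lambda}\bigl(e^{\ad_\mu}(\overline{x_0})\bigr)$ as in your display, which does not follow from the two hypotheses since the operators need not commute. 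The statement is nevertheless saved by the convention $\ad_\lambda(x)=[x,\lambda]$: this makes $\lambda\mapsto\ad_\lambda$ an \emph{anti}-morphism of Lie algebras, so that $e^{\ad_\mu}e^{\ad_\lambda}=e^{\ad_{\BCH(\lambda,\mu)}}$ with the usual series $\BCH(\lambda,\mu)=\lambda+\mu+\tfrac{1}{2}[\lambda,\mu]+\cdots$, consistent with the gauge action being a right action (``first $\lambda$, then $\mu$'' corresponds to $\BCH(\lambda,\mu)$). You should either compose in the correct order and invoke this anti-morphism property explicitly, or at least acknowledge that the order of arguments in $\BCH$ is tied to the chosen convention for $\ad$. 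The remaining points --- convergence of all series from proper completeness, the identification of the flow of the gauge equation with $e^{t\ad_\lambda}(\overline{x_0})$ in $\g^+$, and reflexivity and symmetry via $\lambda=0$ and $-\lambda$ --- are handled correctly and as in the paper.
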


\begin{definition}
	Let $\g$ be a Lie algebra. The \emph{Deligne groupoid}\index{Deligne groupoid} $\Del(\g)$ of $\g$ is the groupoid with the Maurer--Cartan elements $\MC(\g)$ as objects and the gauges as morphisms.
\end{definition}

In other words, the Deligne groupoid is the action groupoid associated to the gauge action of $\g_0$ --- which is made into a group by the Baker--Campbell--Hausdorff formula --- on the set of Maurer--Cartan elements of $\g$.

\begin{remark}
	The Deligne groupoid admits an extension to a $2$-groupoid. This was first done in a letter \cite{letterDeligneBreen} sent by P. Deligne to L. Breen in 1994. See \cite[Sect. 6]{yek12} or \cite[Sect. 3.3]{bgnt15} for a clean definition.
\end{remark}

An object of interest in deformation theory is the quotient of the set of Maurer--Cartan elements by the gauge equivalence relation.

\begin{definition}
	The \emph{moduli space of Maurer--Cartan elements}\index{Moduli space of Maurer--Cartan elements} $\MCbar(\g)$ of $\g$ is the quotient
	\[
	\MCbar(\g)\coloneqq\MC(\g)/\gaugeeq
	\]
	of the set of Maurer--Cartan elements of $\g$ by the gauge equivalence relation.
\end{definition}

\subsection{The Goldman--Millson theorem}

The Goldman--Millson theorem\footnote{Which W. M. Goldman and J. J. Millson attribute to Deligne and Schlessinger--Stasheff, see \cite[p.46]{gm88}.} \cite{gm88} tells us that, under some conditions, quasi-isomorphic Lie algebras give rise to equivalent Deligne groupoids.

\medskip

The original statement of the result is as follows.

\begin{theorem}[Goldman--Millson]\label{thm:Goldman-Millson}\index{Goldman--Millson theorem}
	Let $(A,\mathfrak{m})$ be an Artinian local $\k$-algebra with maximal ideal $\mathfrak{m}$. Let $\g$ and $\h$ be two Lie algebras, and let $\phi:\g\to\h$ be a morphism of Lie algebras such that
	\[
	H_i(\phi):H_i(\g)\longrightarrow H_i(\h)
	\]
	is an isomorphism for $i=0,1$, and is injective for $i=2$. Then the map
	\[
	\Del(\phi\otimes1):\Del(\g\otimes\mathfrak{m})\longrightarrow\Del(\h\otimes\mathfrak{m})
	\]
	is an equivalence of categories. In particular, it induces a bijection between the respective moduli spaces of Maurer--Cartan elements.
\end{theorem}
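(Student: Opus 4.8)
The plan is to show that $\Del(\phi\otimes1)$ is essentially surjective and fully faithful by dévissage along the Artinian algebra $A$. Since $A$ is Artinian local, its maximal ideal $\mathfrak m$ is nilpotent, so $\g\otimes\mathfrak m$ and $\h\otimes\mathfrak m$ are nilpotent, hence proper complete, Lie algebras: $\MC$, the gauge action and therefore $\Del$ are well defined, and every exponential series that occurs reduces to a finite sum. Choosing a minimal nonzero ideal $J\subseteq A$, which is one--dimensional and satisfies $\mathfrak m J=0$, yields a small extension $0\to J\to A\to\bar A\to 0$ with $\dim_\k\bar A<\dim_\k A$. I would induct on $\dim_\k A$: the case $\mathfrak m=0$ is trivial, since both groupoids are then the terminal groupoid and the functor between them is the unique one; the inductive step consists of passing from $\bar A$ to $A$ across this small extension.

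The heart of the argument is the classical obstruction theory attached to the small extension $0\to J\to\mathfrak m\to\bar{\mathfrak m}\to 0$, which I would develop once for an arbitrary differential graded Lie algebra $\g$ and then apply both to $\g$ and to $\h$. Given $\bar x\in\MC(\g\otimes\bar{\mathfrak m})$, pick any lift $\tilde x\in\g\otimes\mathfrak m$; then $d\tilde x+\tfrac12[\tilde x,\tilde x]$ lies in $\g\otimes J$, is a cycle, and its class $\mathrm{ob}_\g(\bar x)\in H_2(\g)\otimes J$ is independent of the lift and vanishes exactly when $\bar x$ is the image of some $x\in\MC(\g\otimes\mathfrak m)$. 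When nonempty, the set of such lifts modulo the gauge action of $\g_0\otimes J$ is a torsor under $H_1(\g)\otimes J$, and the gauges fixing a given lift and projecting to the identity are controlled by $H_0(\g)\otimes J$. All three assignments are natural in the Lie algebra, so $\mathrm{ob}_\h(\phi(\bar x))=(H_2(\phi)\otimes 1)\,\mathrm{ob}_\g(\bar x)$, and similarly for the torsor actions. I expect this to be the main technical obstacle: establishing well--definedness of $\mathrm{ob}_\g$ and the torsor structures requires a careful, sign--heavy manipulation of the Maurer--Cartan equation and of the gauge flow equation, though each individual verification is routine. (Here one matches the degree conventions to those of \cite{gm88}.)

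Granting these lemmas, the inductive step is a five--lemma style chase. For essential surjectivity: given $y'\in\MC(\h\otimes\mathfrak m)$ with projection $\bar y$, the inductive hypothesis together with fullness over $\bar A$ lets us replace $y'$ within its gauge class by one projecting to $\phi(\bar x)$ for some $\bar x\in\MC(\g\otimes\bar{\mathfrak m})$ (lifting the realizing gauge arbitrarily, which is always possible); then $\mathrm{ob}_\h(\phi(\bar x))=0$ because $y'$ is an honest Maurer--Cartan lift, so by naturality and injectivity of $H_2(\phi)$ we get $\mathrm{ob}_\g(\bar x)=0$, hence $\bar x$ lifts to $x'\in\MC(\g\otimes\mathfrak m)$; finally $\phi(x')$ and $y'$ lie in the same $H_1(\h)\otimes J$--torsor over $\phi(\bar x)$, and surjectivity of $H_1(\phi)$ lets us gauge--adjust $x'$ so that its image is gauge equivalent to $y'$. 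Fullness and faithfulness on morphisms follow the same pattern: a gauge between $\phi(x_0)$ and $\phi(x_1)$ projects to a gauge between the images over $\bar A$, which by induction is $\phi$ of a unique gauge; lifting that gauge and correcting the remaining ambiguity by elements of $H_1(\g)\otimes J$ (to pin down the target) and $H_0(\g)\otimes J$ (to pin down the gauge), and using that $H_0(\phi)$ and $H_1(\phi)$ are isomorphisms, produces the required gauge between $x_0$ and $x_1$ and shows it is unique. This yields the equivalence of categories, and the asserted bijection $\MCbar(\g\otimes\mathfrak m)\cong\MCbar(\h\otimes\mathfrak m)$ is then immediate as $\pi_0$ of an equivalence of groupoids. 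One could alternatively deduce the statement from the Dolgushev--Rogers theorem discussed below, but the dévissage above is the original and more self--contained route.
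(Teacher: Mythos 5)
Your proof is correct in outline and is exactly the classical ``Artinian induction'' that the paper itself attributes to Goldman--Millson and explicitly declines to reproduce (it only cites \cite{gm88} and remarks that the same ideas reappear in the proof of the Dolgushev--Rogers theorem). The dévissage across a small extension $0\to J\to A\to\bar A\to 0$, with the obstruction class, the torsor of lifts, and the stabilizers controlled by $H_2$, $H_1$, and $H_0$ respectively, is precisely the intended route, so there is nothing to add beyond the routine sign verifications you already flag.
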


The tensorization by the maximal ideal of an Artinian local $\k$-algebra corresponds to some kind of localization. This result has been generalized in various ways during the years, e.g. by Yekutieli \cite{yek12}, the most general version being given by the Dolgushev--Rogers theorem, which we will treat in detail in \cref{sect:Dolgushev-Rogers thm}. The proof is done by ``Artinian induction". We will not treat it here, but we will see similar ideas appear in the proof of the Dolgushev--Robers theorem, and then again in \cref{ch:representing the deformation oo-groupoid}.

\section{Homotopy Lie algebras: the deformation \texorpdfstring{$\infty$}{oo}-groupoid}

Since $\L_\infty$-algebras are a natural generalization of Lie algebras, it is natural to wonder what is the correct generalization of the theory treated above in this context. The answer was given by V. Hinich \cite{hin97descent} with the introduction of the deformation $\infty$-groupoid for Lie algebras\footnote{Called the \emph{contents} of a Lie algebra in \emph{op. cit}.}, which was generalized to $\L_\infty$-algebras and further studied by various authors, such as E. Getzler \cite{get09}, and V. A. Dolgushev and C. L. Rogers \cite{dr15}.

\subsection{Maurer--Cartan elements in a homotopy Lie algebras}\label{subsect:MC el of Loo algebras}

Recall from \cref{subsect:homotopy Lie algebras} that an $\L_\infty$-algebra\index{$\L_\infty$-algebras}\index{Homotopy!Lie algebras} $\g$ is a chain complex equipped with graded antisymmetric operations
\[
\ell_n:\g^{\otimes n}\longrightarrow\g
\]
of degree $n-2$, for all $n\ge2$, satisfying the relations
\begin{equation}\label{eq:rel Loo-algebra}
\sum_{\substack{n_1+n_2 = n+1\\\sigma\in\sh(n_1-1,n_2)}}(-1)^{n_2(n_1-1) + \sigma}(\ell_{n_1}\circ_1\ell_{n_2})^\sigma = 0
\end{equation}
for all $n\ge1$, where we use the short-hand notation $\ell_1\coloneqq d$.

\medskip

Let's explore these relations a bit. For $n=1$, equation (\ref{eq:rel Loo-algebra}) becomes
\[
\ell_1^2 = 0\ ,
\]
which is the same as to say that $d_\g$ squares to zero, i.e. that $\g$ is a chain complex. For $n = 2$, we have
\[
d\ell_2(x,y) = \ell_2(dx,y) + (-1)^{|x|}\ell_2(x,dy)\ ,
\]
i.e. $d$ is a derivation with respect to $\ell_2$. For $n=3$, we obtain
\begin{align*}
\partial(\ell_3)&(x,y,z) = -\ell_2(\ell_2(x,y),z) - (-1)^{|z|(|x|+|y|)}\ell_2(\ell_2(z,x),y) - (-1)^{|x|(|y|+|z|)}\ell_2(\ell_2(y,z),x)\\
=&\ (-1)^{|x||z|+1}\left((-1)^{|x||z|}\ell_2(\ell_2(x,y),z) + (-1)^{|z||y|}\ell_2(\ell_2(z,x),y) + (-1)^{|y||x|}\ell_2(\ell_2(y,z),x)\right)\ ,
\end{align*}
which tells us that the Jacobi rule is satisfied up to a homotopy given by the ternary bracket $\ell_3$. For $n\ge4$, we have higher compatibility relations between the brackets.

\begin{definition}
	A \emph{Maurer--Cartan element}\index{Maurer--Cartan!elements!of an $\L_\infty$-algebra} of a proper complete $\L_\infty$-algebra is an element $x\in\g_{-1}$ of degree $-1$ satisfying the Maurer--Cartan equation\index{Maurer--Cartan!equation!in an $\L_\infty$-algebra}
	\begin{equation}\label{eq:MC for Loo-algebras}
		dx + \sum_{n\ge2}\frac{1}{n!}\ell_n(x,\ldots,x) = 0\ .
	\end{equation}
	The set of all Maurer--Cartan elements of $\g$ will be denoted by $\MC(\g)$.
\end{definition}

\begin{remark}
	If $\g$ is a Lie algebra, then the Maurer--Cartan equation (\ref{eq:MC for Loo-algebras}) reduces to the Maurer--Cartan equation for Lie algebras (\ref{eq:MC for Lie algebras}).
\end{remark}

\begin{remark}
	While in the case of Lie algebras we only needed the algebras to be proper complete in order for the gauge relation to be well-defined, in the case of $\L_\infty$-algebras we already need this condition for the Maurer--Cartan equation to make sense.
\end{remark}

\subsection{The deformation \texorpdfstring{$\infty$}{oo}-groupoid}\label{subsect:Maurer-Cartan space}

One now would like a notion of gauge equivalence between Maurer--Cartan elements of an $\L_\infty$-algebra. As a matter of fact, one obtains a whole hierarchy of relations thanks to the following object defined by V. Hinich \cite[Def. 2.1.1]{hin97descent}.

\begin{definition}
	Let $\g$ be a proper complete $\L_\infty$-algebra. The \emph{deformation $\infty$-groupoid}\index{Deformation!$\infty$-groupoid} of $\g$ --- also known as its \emph{Maurer--Cartan space}\index{Maurer--Cartan!space} --- is the simplicial set
	\[
	\MC_\bullet(\g)\coloneqq\lim_n\MC\big(\g/\F_n\g\otimes\Omega_\bullet\big)\in\ssets\ ,
	\]
	where $\Omega_\bullet$ is the Sullivan algebra, cf. \cref{def:Sullivan algebra}, with inverted degrees (so that it is a chain complex instead of a cochain complex).
\end{definition}

\begin{remark}
	The name of the object $\MC_\bullet(\g)$ is not standard and one finds different nomenclatures depending on the author. Examples are \emph{Deligne--Hinich $\infty$-groupoid}, \emph{Deligne--Hinich--Getzler $\infty$-groupoid}, and \emph{contents of $\g$}. One should also be careful with the term $\infty$-groupoid, as it can find different meanings in the literature. We take the $\infty$-categorical approach and use it to mean Kan complex, but e.g. \cite{get09} uses a stronger definition.
\end{remark}

The elements of $\MC_1(\g)$ are the analogue of the gauges between Maurer--Cartan elements in this context. We will look at them in more detail in \cref{subsection:homotopies and gauges}.

\medskip

The simplicial set $\MC_\bullet(\g)$ has a lot of nice properties. It is extends to a functor
\[
\MC_\bullet:\infty\text{-}\widehat{\L}_\infty\text{-}\mathsf{alg}\longrightarrow\ssets
\]
from proper complete $\L_\infty$-algebras with filtered $\infty$-morphisms to simplicial sets. Its action on filtered $\infty$-morphisms is induced by the following fact.

\begin{lemma}
	If $\Phi:\g\rightsquigarrow\h$ is a filtered $\infty$-morphism between proper complete $\L_\infty$-algebras, and $x\in\MC(\g)$ is a Maurer--Cartan element of $\g$, then
	\[
	\MC(\Phi)(x)\coloneqq\sum_{n\ge1}\frac{1}{n!}\phi_n(x,\ldots,x)
	\]
	is a Maurer--Cartan element of $\h$. Moreover, if $\Phi:\g_1\to\g_2$ and $\Psi:\g_2\to\g_3$ are two such filtered $\infty$-morphisms, then
	\[
	\MC(\Psi)\MC(\Phi) = \MC(\Psi\Phi)\ .
	\]
\end{lemma}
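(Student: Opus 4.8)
The plan is to translate both claims into the language of the (completed) cofree cocommutative coalgebra, where Maurer--Cartan elements become group-like elements and $\infty$-morphisms become genuine coalgebra morphisms, so that everything reduces to the elementary facts that coalgebra morphisms preserve group-like elements and compose associatively.

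First I would set up the coalgebraic dictionary. By \cref{prop:Poo-alg is coderivation of Pi-cog,prop:extended bar to oo-morphisms} applied to $\P=\lie$ (so $\lie^{\antishriek}\cong(\susp^{-1})^c\otimes\com^\vee$), a proper complete $\L_\infty$-algebra $\g$ is the same as a square-zero coderivation $D_\g$ of degree $-1$ on the completed cofree conilpotent cocommutative coalgebra cogenerated by $s\g$, which in the present conventions is a completion $\widehat{\lie^{\antishriek}}(\g)=\prod_{n\ge1}\big(\lie^{\antishriek}(n)\otimes\g^{\otimes n}\big)^{\S_n}$ of $\lie^{\antishriek}(\g)$, coaugmented by $\k$ in weight $0$; the completion is exactly what proper completeness of $\g$ provides. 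A filtered $\infty$-morphism $\Phi:\g\rightsquigarrow\h$ is then a morphism of such coalgebras $F_\Phi$ intertwining $D_\g$ and $D_\h$ (filteredness is what makes $F_\Phi$ well defined between the completions), and its components $\phi_n$ are its Taylor coefficients, i.e. the corestrictions of $F_\Phi$ to cogenerators, up to desuspension signs. The key elementary observation is that for $x\in\g_{-1}$ the element
\[
e^{sx}\coloneqq\sum_{n\ge0}\frac{1}{n!}(sx)^{\odot n}
\]
is well defined in the completion (again by proper completeness), is group-like, and satisfies $x\in\MC(\g)$ if and only if $D_\g(e^{sx})=0$: a coderivation of a cofree coalgebra is determined by its corestriction to cogenerators, so $D_\g(e^{sx})=0$ iff its projection to $s\g$ vanishes, and that projection is, by the explicit description of $D_\g$ in \cref{subsect:homotopy Lie algebras}, precisely $\pm s\big(dx+\sum_{n\ge2}\tfrac1{n!}\ell_n(x,\ldots,x)\big)$, i.e. the left-hand side of the Maurer--Cartan equation \eqref{eq:MC for Loo-algebras} up to suspension.

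Next I would run the short argument. As a morphism of cocommutative coalgebras, $F_\Phi$ sends the group-like element $e^{sx}$ to a group-like element of the completed cofree coalgebra on $s\h$; in characteristic zero the group-like elements of such a coalgebra are exactly those of the form $e^{sy}$ with $y\in\h_{-1}$, the correspondence being $\exp$/$\log$, and taking $\log$ changes only terms of weight $\ge2$, so reading off the cogenerator (weight $1$) component yields $sy=\mathrm{pr}_{s\h}F_\Phi(e^{sx})=\sum_{n\ge1}\tfrac1{n!}(s\phi_n)(x^{\otimes n})$, that is $y=\MC(\Phi)(x)=\sum_{n\ge1}\tfrac1{n!}\phi_n(x,\ldots,x)$ after absorbing suspensions. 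Then
\[
D_\h\big(e^{s\,\MC(\Phi)(x)}\big)=D_\h F_\Phi(e^{sx})=F_\Phi D_\g(e^{sx})=0\ ,
\]
so $\MC(\Phi)(x)\in\MC(\h)$ by the characterization above. For functoriality, the composite $\infty$-morphism $\Psi\Phi$ corresponds by definition to the composite coalgebra morphism $F_\Psi F_\Phi$, hence $F_\Psi\big(e^{s\,\MC(\Phi)(x)}\big)=F_\Psi F_\Phi(e^{sx})$; computing the left side as $e^{s\,\MC(\Psi)(\MC(\Phi)(x))}$ and the right side as $e^{s\,\MC(\Psi\Phi)(x)}$ and comparing cogenerator components gives $\MC(\Psi)\MC(\Phi)=\MC(\Psi\Phi)$, while $\MC(\id)=\id$ is clear.

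The main obstacle is not the algebra but the bookkeeping of completions and convergence: one must check that $D_\g$ and $F_\Phi$ genuinely extend to the product completion, that $e^{sx}$ and $e^{s\,\MC(\Phi)(x)}$ lie in it, and that the infinite sum defining $\MC(\Phi)(x)$ converges in $\h$ --- all of which reduce to the facts that a filtered $\infty$-morphism raises filtration weight in its higher components and that Maurer--Cartan elements lie in $\F_1$, i.e. exactly the hypotheses under which $\MC(\g)$ was defined. I would therefore spend most of the write-up making \cref{prop:Poo-alg is coderivation of Pi-cog,prop:extended bar to oo-morphisms} precise in the complete setting, working at each finite stage $\g/\F_n\g$ (where the already-cited statements apply) and passing to the limit $\g=\lim_n\g/\F_n\g$ at the end. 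As an alternative one can bypass the coalgebra language entirely and verify both identities directly from the defining equations of an $\infty$-morphism of $\L_\infty$-algebras recalled in \cref{subsect:homotopy Lie algebras}, substituting $x$ in every slot so that the shuffles only contribute binomial multiplicities; this is elementary but longer, and I would prefer the coalgebraic route since it exhibits the structural reason for the statement.
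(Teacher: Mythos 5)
Your argument is correct, but it takes a genuinely different route from the paper. The paper's proof is exactly your ``alternative'': it simply expands the Maurer--Cartan equation for $\MC(\Phi)(x)$ and matches terms against the defining relations of the components $\phi_n$ recalled in \cref{subsect:homotopy Lie algebras}, and disposes of functoriality the same way. Your primary, coalgebraic route --- encoding $x$ as the group-like element $e^{sx}$ in the weight-completed cofree cocommutative coalgebra, noting that the structure coderivation kills $e^{sx}$ precisely when $x$ is Maurer--Cartan, and then using that coalgebra morphisms preserve group-likes and compose strictly --- is the more structural argument: it makes the compatibility with composition completely formal, whereas the direct computation has to be redone with shuffle bookkeeping. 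What it costs is the completion set-up you correctly identify as the main labour, plus one small logical repair: the step ``$D_\g(e^{sx})=0$ iff $\mathrm{pr}_{s\g}D_\g(e^{sx})=0$'' does not follow from the fact that a \emph{coderivation} is determined by its corestriction (that is a statement about the map, not about the vanishing of its value on one element). The correct justification is that $D_\g(e^{sx})$ satisfies $\Delta\big(D_\g(e^{sx})\big)=D_\g(e^{sx})\otimes e^{sx}+e^{sx}\otimes D_\g(e^{sx})$, and such ``primitives twisted by the group-like $e^{sx}$'' are exactly the elements $e^{sx}\cdot v$ with $v$ in the cogenerators, so they inject into $s\g$ under the weight-one projection. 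With that observation added, and the completion handled weight-by-weight as you propose, your proof is complete and yields the same statement as the paper's.
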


\begin{proof}
	This is just developing the Maurer--Cartan equation for $\MC(\Phi)(x)$ and using the explicit relations satisfied by the components of an $\infty$-morphism that we wrote down in \cref{subsect:homotopy Lie algebras} to recover the Maurer--Cartan equation for $x$. Proving that $\MC(-)$ respects compositions is also straightforward.
\end{proof}

We have the following result, motivating the name ``$\infty$-groupoid" and ``space".

\begin{theorem}[{\cite[Thm. 2]{rog16}}] \label{thm:filtered surjections induce fibrations of MC}
	Let $\g$ and $\h$ be proper complete $\L_\infty$-algebras, and let $\Phi:\g\rightsquigarrow\h$ be a filtered $\infty$-morphism that induces a surjection at every level of the filtrations. Then
	\[
	\MC_\bullet(\Phi):\MC_\bullet(\g)\longrightarrow\MC_\bullet(\h)
	\]
	is a fibration of simplicial sets. In particular, for any proper complete $\L_\infty$-algebra $\g$, the simplicial set $\MC_\bullet(\g)$ is a Kan complex.
\end{theorem}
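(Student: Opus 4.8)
The plan is to verify directly that $\MC_\bullet(\Phi)$ has the right lifting property against every horn inclusion $\Lambda^i[k]\hookrightarrow\Delta[k]$ ($k\ge1$, $0\le i\le k$), and then to read off the Kan property as the special case $\h=0$: the zero $\L_\infty$-algebra has $\MC_\bullet(0)=\Delta[0]$, the projection $\g\to 0$ is a filtered $\infty$-morphism that is trivially surjective at every filtration level, so the fibration statement forces $\MC_\bullet(\g)\to *$ to be a fibration, i.e.\ $\MC_\bullet(\g)$ to be a Kan complex. Throughout I would use that $\g$ is proper complete, so that $\MC_\bullet(\g)=\lim_n\MC\big(\g/\F_n\g\otimes\Omega_\bullet\big)$ with the tower indexed by the exhaustive, complete filtration, and all the infinite sums appearing in the Maurer--Cartan and twisting operations converge.

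First I would unpack a lifting square for $\MC_\bullet(\Phi)$. By the Yoneda lemma for simplicial sets, a map $\Delta[k]\to\MC_\bullet(\h)$ is a compatible system $(y_n)_n$ with $y_n\in\MC\big(\h/\F_n\h\otimes\Omega_k\big)$, $\Omega_k=\APL(\Delta[k])$, and a map $\Lambda^i[k]\to\MC_\bullet(\g)$ is a compatible system $(z_n)_n$ with $z_n\in\MC\big(\g/\F_n\g\otimes\APL(\Lambda^i[k])\big)$ (the horn being a finite colimit of simplices, $\APL(\Lambda^i[k])$ is the corresponding finite limit, and Maurer--Cartan of a nilpotent algebra commutes with it). The desired filler is a compatible system $(x_n)_n$, $x_n\in\MC\big(\g/\F_n\g\otimes\Omega_k\big)$, lifting $(z_n)$ and $(y_n)$. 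I would build it by induction up the tower: the bottom stage is trivial (the relevant quotient is $0$), and the step from $n$ to $n+1$ is a lifting problem through the strict surjection $q\colon\g/\F_{n+1}\g\twoheadrightarrow\g/\F_n\g$, whose kernel $\mathfrak a\coloneqq\F_n\g/\F_{n+1}\g$ is a \emph{square-zero} ideal for $n\ge1$ (a bracket of two elements of $\F_n$ lands in $\F_{2n}\subseteq\F_{n+1}$). Concretely: pick a set-theoretic lift $x_0\in(\g/\F_{n+1}\g)\otimes\Omega_k$ of $x_n$ restricting to $z_{n+1}$ on the horn and with the prescribed image in $(\h/\F_{n+1}\h)\otimes\Omega_k$ — possible by surjectivity of $q$, of $\Omega_k\to\APL(\Lambda^i[k])$, and of $\Phi$ at level $n+1$ — and look for $x_{n+1}=x_0+a$ with $a\in\mathfrak a\otimes\Omega_k$.

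Because $\mathfrak a$ is square-zero, the Maurer--Cartan operator is \emph{affine-linear} in $a$: writing $F$ for the left-hand side of the Maurer--Cartan equation,
\[
F(x_0+a)=F(x_0)+d_y a,\qquad d_y a\coloneqq da+\sum_{m\ge2}\tfrac{1}{(m-1)!}\,\ell_m(y,\dots,y,a),
\]
where $y$ is the fixed Maurer--Cartan element $q(x_0)$ of the quotient and $d_y^2=0$ since $y$ satisfies the Maurer--Cartan equation; moreover $F(x_0)$ lies in $\mathfrak a\otimes\Omega_k$, is $d_y$-closed (a Bianchi-type identity got by differentiating $F$ and using the $\L_\infty$-relations together with square-zeroness), and vanishes after restriction to the horn (where $x_0=z_{n+1}$ is Maurer--Cartan). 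Hence the step is solved once $-F(x_0)$, a $d_y$-cocycle of $\mathfrak a\otimes\Omega_k$ vanishing on the horn, is a $d_y$-coboundary of one, i.e.\ the relative twisted complex $\big(\mathfrak a\otimes\Omega_k,\ \mathfrak a\otimes\APL(\Lambda^i[k]);\ d_y\big)$ is acyclic in the relevant degree. Forgetting the twist, this is the acyclicity of $\big(\Omega_k,\APL(\Lambda^i[k])\big)$, which holds because $\Omega_\bullet$ is the Sullivan algebra of a point and $\Lambda^i[k]\hookrightarrow\Delta[k]$ is a trivial cofibration — and, crucially, it comes with an \emph{explicit} contracting homotopy built from Dupont's contraction of \cref{subsect:Dupont contraction}, adapted to the relative (horn) situation. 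The twisted case then follows by a standard perturbation argument: $d_y-d$ strictly raises the nilpotency filtration on $\mathfrak a$, so the perturbed homotopy is a convergent geometric correction of the Dupont one.

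The main obstacle I anticipate is exactly this last point — producing, for each horn $\Lambda^i[k]\hookrightarrow\Delta[k]$, a contracting homotopy for $\Omega(\Delta[k])\twoheadrightarrow\APL(\Lambda^i[k])$ relative to the horn (so that ``solve the linear equation with horn boundary condition'' is literal, not merely an existence statement) and checking that twisting by $y$ is a genuinely small perturbation of it; this is the technical heart, carried out via Dupont's / Getzler's explicit integral operators. Once it is in hand the rest is formal: the induction over the filtration assembles the individual horn fillers into a compatible system, the inverse limit produces the filler into $\MC_\bullet(\g)$, and the case $\h=0$ yields that $\MC_\bullet(\g)$ is a Kan complex.
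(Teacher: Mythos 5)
The paper does not actually prove this statement: it is quoted from Rogers \cite{rog16} (with precursors in Hinich, Getzler and Yalin), so there is no in-text proof to compare against. Your strategy is the standard obstruction-theoretic one behind all of those proofs: reduce to horn filling, climb the tower $\g/\F_n\g$, observe that each stage is a square-zero extension with kernel $\mathfrak a=\F_n\g/\F_{n+1}\g$, and solve an affine-linear equation in the acyclic relative complex of forms vanishing on the horn. Most of this is sound, and two of your worries are actually non-issues: since $\g$ is \emph{proper} ($\F_1\g=\g$), the twisted differential $d_y$ on $\mathfrak a\otimes\Omega_k$ equals the untwisted one (every term $\ell_m(y,\dots,y,a)$ with $m\ge2$ lands in $\F_{n+m-1}\subseteq\F_{n+1}$, hence dies in the quotient), so no perturbation lemma is needed; and mere existence of a contracting homotopy for the relative Sullivan forms of $(\Delta[k],\Lambda^i[k])$ suffices — you do not need Dupont's explicit operators.

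The genuine gap is the interaction of the correction term with the $\infty$-morphism $\Phi$, i.e.\ precisely the part that upgrades ``Kan complex'' to ``fibration''. After choosing $x_0$ with $q(x_0)=x_n$, horn restriction $z_{n+1}$, and $\MC(\Phi^{(n+1)})(x_0)=y_{n+1}$, you replace $x_0$ by $x_0+a$; but then $\MC(\Phi^{(n+1)})(x_0+a)=y_{n+1}+\phi_1(a)$ (the higher $\phi_m$'s contribute nothing modulo $\F_{n+1}\h$), so the filler you produce maps to the given simplex of $\MC_\bullet(\h)$ only if $a\in\ker(\phi_1)\otimes\Omega_k$. Your acyclicity statement is for all of $\mathfrak a\otimes\Omega_k$ relative to the horn, which is not enough. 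To close the argument you must (i) check that the obstruction cocycle $F(x_0)$ already lies in $\ker(\phi_1|_{\mathfrak a})\otimes\Omega_k$ — this follows from the $\infty$-morphism relations, which give $\phi_1(F_\g(x_0))\equiv F_\h(\MC(\Phi)(x_0))=0$ modulo $\F_{n+1}\h$ — and (ii) solve $da=-F(x_0)$ inside $\ker(\phi_1|_{\mathfrak a})\otimes(\text{relative forms})$, which is still acyclic by the K\"unneth formula over a field. Relatedly, the existence of the initial $x_0$ with prescribed image under the nonlinear map $\MC(\Phi^{(n+1)})$ is not a bare surjectivity statement: one first takes any common lift $x_0'$ of $x_n$ and $z_{n+1}$, notes that $\MC(\Phi)(x_0')-y_{n+1}$ lies in $\mathfrak a_\h\otimes\Omega_k$ and vanishes on the horn, and corrects $x_0'$ using surjectivity of $\phi_1$ on the associated graded. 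These are routine repairs, but as written your induction step only establishes the Kan property, not the fibration property.
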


This result was originally proven by Hinich \cite[Th. 2.2.3]{hin97descent} for strict surjections between nilpotent Lie algebras concentrated in positive degrees, and then successively generalized by E. Getzler \cite[Prop. 4.7]{get09} to strict surjections between nilpotent $\L_\infty$-algebras and by C. L. Rogers to the versions stated above. A precursor to the version of \cite{rog16} is \cite[Prop. 3.1]{yal16}.

\subsection{Homotopies and gauges}\label{subsection:homotopies and gauges}

There are two possible --- and equivalent, as we will see --- definitions of equivalence between Maurer--Cartan elements of an $\L_\infty$-algebra.

\begin{definition}
	Let $\g$ be a proper complete $\L_\infty$-algebra. Two Maurer--Cartan elements $x_0,x_1\in\MC(\g) = \MC_0(\g)$ are \emph{homotopy equivalent}\index{Homotopy!equivalence of Maurer--Cartan elements} if there is an element $\alpha\in\MC_1(\g)$ such that
	\[
	d_0(\alpha) = x_0,\ \quad\text{and}\quad d_1(\alpha) = x_1\ .
	\]
\end{definition}

Let's write down explicitly what it means for two Maurer--Cartan elements to be homotopy equivalent. For simplicity, assume that the $\L_\infty$-algebra $\g$ is nilpotent. The general case works similarly.

\medskip

Under the isomorphism $\Omega_1\cong\k[t,dt]$ of unital commutative algebras given by sending $t_1$ to $t$, a generic element of $(\g\otimes\Omega_1)_{-1}$ is given by
\[
\alpha = x(t) + \lambda(t)dt\ ,
\]
where $x(t)\in\g_{-1}[t]$, and $\lambda(t)\in\g_0[t]$. The Maurer--Cartan equation (\ref{eq:MC for Loo-algebras}) becomes
\begin{align*}
	0 =&\ d\alpha + \sum_{n\ge2}\frac{1}{n!}\ell_n(\alpha,\ldots,\alpha)\\
	=&\ d(x(t)+\lambda(t)dt) + \sum_{n\ge2}\left(\frac{1}{n!}\ell_n(x(t),\ldots,x(t)) + \frac{1}{(n-1)!}\ell_n(x(t),\ldots,x(t),\lambda(t))\right)\\
	=&\ d_\g x(t) + \sum_{n\ge2}\frac{1}{n!}\ell_n(x(t),\ldots,x(t)) +\\
	&\qquad + \left(-\frac{d}{dt}x(t) + d_\g\lambda(t) + \sum_{n\ge2}\frac{1}{(n-1)!}\ell_n(x(t),\ldots,x(t),\lambda(t))\right)dt\ .
\end{align*}
To do this computation, we used the following facts:
\begin{itemize}
	\item Since $|dt| = -1$, we have that $(dt)^2 = 0$. In particular, if the term $\lambda(t)dt$ appears twice in a bracket, then the bracket gives $0$.
	\item For $1\le i\le n$, we have
	\begin{align*}
		\ell_n(\underbrace{x(t),\ldots,x(t)}_{i},\lambda(t)dt,\underbrace{x(t),\ldots,x(t)}_{n-(i+1)}) =&\ \ell_n(x(t),\ldots,x(t),\lambda(t)dt)\\
		=&\ \ell_n(x(t),\ldots,x(t),\lambda(t))dt\ .
	\end{align*}
	\item Let $x\in\g_{-1}$ and let $k\ge0$. Then
	\[
	d(xt^k) = d_\g(x)t^k - kxt^{k-1}dt\ .
	\]
\end{itemize}
Therefore, $\alpha$ is a Maurer--Cartan element if, and only if we have:
\begin{enumerate}
	\item $x(t)\in\MC(\g)$ for all times $t\in\k$, and
	\item $x(t)$ satisfies the differential equation
	\[
	\frac{d}{dt}x(t) = d_\g\lambda(t) + \sum_{n\ge2}\frac{1}{(n-1)!}\ell_n(x(t),\ldots,x(t),\lambda(t))\ .
	\]
\end{enumerate}
The two boundaries are given by evaluation of $x(t)$ at $t=0$ and $t=1$, that is
\[
\partial_0(\alpha) = x(0)\quad\text{and}\quad\partial_1(\alpha) = x(1)\ .
\]

\begin{lemma}
	Being homotopy equivalent is an equivalence relation on the set of Maurer--Cartan elements of a proper complete $\L_\infty$-algebra.
\end{lemma}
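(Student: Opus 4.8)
The plan is to check the three defining properties of an equivalence relation on $\MC(\g) = \MC_0(\g)$ one at a time, handling reflexivity and symmetry by explicit constructions and deducing transitivity from the Kan property of $\MC_\bullet(\g)$.

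\textbf{Reflexivity.} Given $x\in\MC(\g)$, I would take the degenerate $1$-simplex $\alpha\coloneqq s_0(x)\in\MC_1(\g)$, which under the identification $\Omega_1\cong\k[t,dt]$ corresponds to the constant path $x(t)=x$ with $\lambda(t)=0$. The explicit description of the Maurer--Cartan elements of $\g\otimes\Omega_1$ recorded just above shows immediately that this is a Maurer--Cartan element (the differential equation reads $\tfrac{d}{dt}x(t)=0$ since $\lambda=0$, and $x(t)=x\in\MC(\g)$ for all $t$), and the simplicial identities $d_0s_0=d_1s_0=\id$ give $d_0(\alpha)=d_1(\alpha)=x$. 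Hence $x$ is homotopy equivalent to itself.

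\textbf{Symmetry.} The commutative differential graded algebra $\Omega_1$ carries the involutive automorphism $\tau$ exchanging $t_0$ and $t_1$ --- equivalently $t\mapsto 1-t$, $dt\mapsto -dt$ --- and $\tau$ interchanges the two coface maps $\partial_0,\partial_1\colon\Omega_1\to\Omega_0$. Since the $\L_\infty$-structure on $\g\otimes\Omega_1$ is induced from that of $\g$ by multiplication in $\Omega_1$, a morphism of commutative differential graded algebras induces a strict morphism of $\L_\infty$-algebras, so $1_\g\otimes\tau$ is an isomorphism of proper complete $\L_\infty$-algebras (each $\g/\F_n\g\otimes\Omega_1$). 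It therefore restricts to a bijection of Maurer--Cartan sets, and passing to the limit over $n$, to a bijection of $\MC_1(\g)$ that intertwines $d_0$ with $d_1$. Thus if $\alpha\in\MC_1(\g)$ witnesses that $x_0$ is homotopy equivalent to $x_1$ (that is, $d_0(\alpha)=x_0$, $d_1(\alpha)=x_1$), then $(1_\g\otimes\tau)_*\alpha$ witnesses that $x_1$ is homotopy equivalent to $x_0$.

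\textbf{Transitivity.} Here I would invoke \cref{thm:filtered surjections induce fibrations of MC}, which asserts that $\MC_\bullet(\g)$ is a Kan complex. Given Maurer--Cartan elements $x_0,x_1,x_2$ with $\alpha\in\MC_1(\g)$ connecting $x_0$ to $x_1$ and $\beta\in\MC_1(\g)$ connecting $x_1$ to $x_2$, the pair $(\alpha,\beta)$, together with the degenerate edges at $x_0,x_1,x_2$, assembles into an inner horn $\Lambda^1[2]\to\MC_\bullet(\g)$; filling it with a $2$-simplex $\sigma$ and taking the remaining face $d_1\sigma\in\MC_1(\g)$ yields a $1$-simplex connecting $x_0$ to $x_2$, so these two elements are homotopy equivalent. (One could instead write the concatenated path down explicitly by restricting a suitable element of $\MC(\g\otimes\APL(\Delta[2]))$ along the relevant faces, but the Kan-complex argument is shorter.)

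\textbf{Main obstacle.} Reflexivity and symmetry are essentially formal. The only genuinely non-trivial point is transitivity: it amounts to the existence of a composite homotopy, which is precisely the content of the Kan property of $\MC_\bullet(\g)$ --- and the proof of that property (due to Rogers, cited above) is the real input. If one insisted on an explicit concatenation instead, the bookkeeping of the reparametrization inside $\APL(\Delta[2])$ and the verification that it still lands in the completed Maurer--Cartan set at every filtration level would be where the actual work lies.
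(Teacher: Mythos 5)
Your proof is correct and rests on the same key input as the paper's: the paper simply observes that the lemma is an immediate consequence of $\MC_\bullet(\g)$ being a Kan complex (\cref{thm:filtered surjections induce fibrations of MC}), since in any Kan complex the relation of being joined by a $1$-simplex is an equivalence relation on vertices. Your explicit constructions for reflexivity (degenerate simplex) and symmetry (the involution $t\mapsto 1-t$ of $\Omega_1$) are valid and make those two steps independent of the Kan property, but the essential content --- transitivity via horn filling --- is exactly what the paper invokes.
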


\begin{proof}
	This is an immediate consequence of the fact that $\MC_\bullet(\g)$ is a Kan complex for any proper complete $\L_\infty$-algebra.
\end{proof}

By taking the quotient of the set of Maurer--Cartan elements by the homotopy equivalence relation, we obtain the zeroth homotopy group $\pi_0\MC_\bullet(\g)$ of the deformation $\infty$-groupoid of $\g$.

\medskip

By keeping $\lambda(t)$ constant, we get a second notion of equivalence of Maurer--Cartan elements.

\begin{definition}
	Let $\g$ be a proper complete $\L_\infty$-algebra. Two Maurer--Cartan elements $x_0,x_1\in\MC(\g)$ are \emph{gauge equivalent}\index{Gauge equivalence!in an $\L_\infty$-algebra} if there exists a $\lambda\in\g_0$ such that the solution $x(t)\in\g[[t]]$ of the formal differential equation
	\begin{equation}\label{eq:gauge Loo-algebras}
		\frac{d}{dt}x(t) = d\lambda + \sum_{n\ge2}\frac{1}{(n-1)!}\ell_n(x(t),\ldots,x(t),\lambda)
	\end{equation}
	with initial value $x(0) = x_0$ is such that $x(1) = x_1$. The element $\lambda\in\g_0$ is called a \emph{gauge} from $x_0$ to $x_1$.
\end{definition}

If $\lambda\in\g_0$ is a gauge from $x_0$ to $x_1$, then one immediately obtains a homotopy equivalence between the two Maurer--Cartan elements by taking
\[
\alpha\coloneqq x(t) + \lambda dt\ ,
\]
where $x(t)$ is the solution of the gauge equation (\ref{eq:gauge Loo-algebras}). Therefore, if two elements are gauge equivalent, then they are homotopy equivalent. The other direction is also known in the literature, see e.g. \cite[Prop. 9]{dp16}. In particular, gauge equivalence is an equivalence relation. We will give a new proof of this fact and explicit formul{\ae} to obtain a gauge equivalence from a homotopy equivalence in \cref{subsect:properties of MC(g x C.)}.

\subsection{An explicit formula for gauges}\label{subsect:explicit formula for gauges}

We can use \cref{prop:formula for formal ODE} to give an explicit formula for the action of a gauge $\lambda$ on a Maurer--Cartan element $x_0$. For each $n\ge1$, we fix
\[
f_{n,1}(y_1,\ldots,y_n)\coloneqq\frac{1}{n!}\ell_{n+1}(y_1,\ldots,y_n,\lambda)\ ,
\]
and $f_{0,1}(1)\coloneqq d\lambda$. We set $f_{n,k} = 0$ for all $k\ge2$. Then equation (\ref{eq:formal differential equation}) becomes equation (\ref{eq:gauge Loo-algebras}), and \cref{prop:formula for formal ODE} gives us what we wanted. A similar formula is already present in \cite[Prop. 5.7]{get09}.

\medskip

The combinatorics of planar trees is rather complicated, especially since we allow vertices of valence $0$ and $1$. Therefore, we do not know of any way to express the resulting formula in a simpler way than the formula already given. However, if we work with a Lie algebra instead of an $\L_\infty$-algebra, then things simplify a lot, and we recover the formula we gave in \cref{lemma:formula for gauges in Lie algebra}. This gives an alternative proof of that result.

\begin{proof}[Proof of \cref{lemma:formula for gauges in Lie algebra} (alternative version)]
	The only non-vanishing operators are $f_{0,1} = d\lambda$, and $f_{1,1} = \ad_\lambda$. Therefore, we only have to work with trees that are a composition of $0$-corollas and $1$-corollas, and which have the weight $1$ at each vertex. They all fall in the following two categories.
	\begin{enumerate}
		\item A linear composition of $1$-corollas. We denote such trees by $a_n$, with
		\[
		a_0\coloneqq\emptyset,\quad\text{and}\quad a_n\coloneqq c_1\circ a_{n-1}
		\]
		for $n\ge1$.
		\item A linear composition of $1$-corollas with a $0$-corolla instead of the free leaf. We denote such trees by $b_n$, with
		\[
		b_1\coloneqq c_0,\quad\text{and}\quad b_n\coloneqq c_1\circ b_{n-1}
		\]
		for $n\ge2$.
	\end{enumerate}
	One easily computes the coefficients of those trees to be
	\[
	F(a_n) = F(b_n) = n!\ ,
	\]
	and
	\[
	a_n(x_0) = t^n\ad_\lambda^n(x_0)\ ,\quad b_n(x_0) = t^n\ad_\lambda^{n-1}(d\lambda)\ .
	\]
	Thus, the solution of the gauge equation is given by
	\begin{align*}
		x(t) =&\ \sum_{n\ge0}\frac{1}{F(a_n)}a_n(x_0) + \sum_{n\ge1}\frac{1}{F(b_n)}b_n(x_0)\\
		=&\ \sum_{n\ge0}\frac{t^n}{n!}\ad_\lambda^n(x_0) + \sum_{n\ge1}\frac{t^n}{n!}\ad_\lambda^{n-1}(d\lambda)\\
		=&\ e^{t\ad_\lambda}(x_0) + \frac{e^{t\ad_\lambda}-\id}{\ad_\lambda}(d\lambda)\ ,
	\end{align*}
	concluding the proof.
\end{proof}

\subsection{Getzler's functor}

In his article \cite{get09}, E. Getzler introduced an object which is smaller, but homotopically equal to the Maurer--Cartan space of an $\L_\infty$-algebra.

\medskip

Recall Dupont's contraction
\begin{center}
	\begin{tikzpicture}
	\node (a) at (0,0){$\Omega_\bullet$};
	\node (b) at (2,0){$C_\bullet$};
	
	\draw[->] (a)++(.3,.1)--node[above]{\mbox{\tiny{$p_\bullet$}}}+(1.4,0);
	\draw[<-,yshift=-1mm] (a)++(.3,-.1)--node[below]{\mbox{\tiny{$i_\bullet$}}}+(1.4,0);
	\draw[->] (a) to [out=-150,in=150,looseness=4] node[left]{\mbox{\tiny{$h_\bullet$}}} (a);
	\end{tikzpicture}
\end{center}
from \cref{subsect:Dupont contraction}.

\begin{definition}
	\emph{Getzler's functor}\index{Getzler's functor} is the functor
	\[
	\gamma_\bullet:\widehat{\L_\infty}\text{-}\mathsf{alg}\longrightarrow\ssets
	\]
	given by sending a proper complete $\L_\infty$-algebra $\g$ to the simplicial set
	\[
	\gamma_\bullet(\g)\coloneqq\MC_\bullet(\g)\cap\ker(1_\g\otimes h_\bullet)\ ,
	\]
	where $h_\bullet$ comes from the Dupont contraction, cf. \cref{subsect:Dupont contraction}. The action on morphisms\footnote{Here, only strict morphisms are considered. An extension to an action of $\infty$-morphisms is possible using the results of \cref{sect:compatibility of convolution algebras with oo-morphisms} and \cref{sect:properties and comparison with Getzler}.} is given by
	\[
	\gamma_\bullet(\phi)(x) = \phi(x)
	\]
	for $\phi:\g\to\h$ a morphism of $\L_\infty$-algebras.
\end{definition}

\begin{theorem}[{\cite[Thm. 5.8]{get09}}]\label{thm:Getzler}
	Let $\g,\h$ be proper complete $\L_\infty$-algebras, and let $\phi:\g\to\h$ be a filtered morphism that induces surjections at every level of the filtration. Then the morphism
	\[
	\gamma_\bullet(\phi):\gamma_\bullet(\g)\longrightarrow\gamma_\bullet(\h)
	\]
	is a fibration of simplicial sets. In particular, the simplicial set $\gamma_\bullet(\g)$ is a Kan complex for any $\L_\infty$-algebra $\g$.
\end{theorem}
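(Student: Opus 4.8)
The plan is to deduce the statement from the corresponding fact for the Maurer--Cartan space functor, namely \cref{thm:filtered surjections induce fibrations of MC}. By definition $\gamma_\bullet(\g)$ is the simplicial subset of $\MC_\bullet(\g)$ cut out by the gauge condition $(1_\g\otimes h_\bullet)(-)=0$, which is genuinely a simplicial subset because the maps of Dupont's contraction are simplicial, cf.\ \cref{subsect:Dupont contraction}; moreover, since $\phi$ is strict, $\gamma_\bullet(\phi)$ is literally the restriction of $\MC_\bullet(\phi)$ to this subset. Write $\iota_\g\colon\gamma_\bullet(\g)\hookrightarrow\MC_\bullet(\g)$ for the inclusion. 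The heart of the argument will be the construction of a \emph{natural simplicial retraction}
\[
r_\g\colon\MC_\bullet(\g)\longrightarrow\gamma_\bullet(\g)\ ,\qquad r_\g\iota_\g=1_{\gamma_\bullet(\g)}\ ,\qquad \gamma_\bullet(\phi)\,r_\g=r_\h\,\MC_\bullet(\phi)\ ,
\]
defined for all proper complete $\L_\infty$-algebras and all strict filtered morphisms. Granting this, I would finish by a diagram chase: to check that $\gamma_\bullet(\phi)$ has the right lifting property against a horn inclusion $\Lambda^k[n]\hookrightarrow\Delta[n]$, compose a given lifting square with $\iota$ to obtain a square against $\MC_\bullet(\phi)$, solve it using \cref{thm:filtered surjections induce fibrations of MC}, and then apply $r_\g$ to the solution; the identities $r_\g\iota_\g=1$, $r_\h\iota_\h=1$ and the naturality square above show that $r_\g$ applied to the lift restricts to the original horn and lies over the original simplex of $\gamma_\bullet(\h)$. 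The ``in particular'' is then the special case $\h=0$, where $\gamma_\bullet(\h)=\ast$, so that $\gamma_\bullet(\g)\to\ast$ is a fibration.

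To build $r_\g$, I would send a Maurer--Cartan element $x\in\MC_n(\g)$ to the unique Maurer--Cartan element gauge-equivalent to $x$ that satisfies the gauge condition. Concretely, one looks for $\lambda\in(\g\,\widehat\otimes\,\Omega_n)_0$ so that the endpoint $x'$ of the flow \eqref{eq:gauge Loo-algebras} starting at $x$ satisfies $(1_\g\otimes h_n)(x')=0$; rewriting this using the contraction identity $1-i_np_n=dh_n+h_nd$ together with the side conditions $h_n^2=0$, $p_nh_n=0$, $h_ni_n=0$, and imposing that $\lambda$ lie in the image of $h_n$, turns the problem into a formal fixed-point equation whose nonlinear part raises the filtration degree. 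The existence and uniqueness results for such equations (\cref{appendix:formal fixed-pt eq and diff eq}), applied in the proper complete $\L_\infty$-algebra $\g$, then produce a unique $\lambda$, hence a unique $x'$; and if $x$ already satisfies the gauge condition, $\lambda=0$ is the solution, so $r_\g\iota_\g=1$. Because the fixed-point equation is assembled only from the $\L_\infty$-brackets, the operator $1_\g\otimes h_n$, and $x$ itself, and because the cofaces, codegeneracies of $\Omega_\bullet$ and any strict morphism $\phi$ commute with all of these, uniqueness of the solution forces $x\mapsto x'$ to commute with faces, degeneracies and with $\phi$; this yields the simpliciality of $r_\g$ as well as the naturality square, completing the reduction.

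The main obstacle is precisely the construction of $r_\g$: showing that every Maurer--Cartan element of $\g\,\widehat\otimes\,\Omega_n$ is gauge-equivalent to a \emph{unique} one in $\ker(1_\g\otimes h_n)$. Existence reduces to the convergence of a Newton-type iteration and is essentially immediate from proper completeness, but uniqueness is the delicate point, and it is where Dupont's explicit contraction -- and in particular the side conditions -- is genuinely needed rather than just some chain homotopy onto $C_\bullet$. A fallback, and the route taken by Getzler, would be to bypass the retraction and verify the horn-lifting property for $\gamma_\bullet(\phi)$ directly: filter by $\F_\bullet$, and on each associated graded quotient reduce to a linear lifting problem of chain complexes, solved using surjectivity of $\operatorname{gr}\phi$ and $h_\bullet$ as a contracting homotopy, with the nonlinear terms of the Maurer--Cartan equation handled inductively since they only involve lower filtration degrees. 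I expect the retraction approach to be cleaner provided the naturality bookkeeping for $r_\g$ can be pushed through, and would switch to the direct approach otherwise.
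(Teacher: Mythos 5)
Your reduction is sound, but be aware that the thesis offers no proof of \cref{thm:Getzler} at all: it is quoted from Getzler, whose actual argument is your ``fallback'' (a direct horn-filling induction over the lower central series, reducing to a linear Dold--Kan-type problem on the associated graded), supplemented by the limit argument of the following Remark to pass from nilpotent to proper complete algebras. Your primary route --- exhibiting $\gamma_\bullet(\phi)$ as a retract of $\MC_\bullet(\phi)$ and invoking \cref{thm:filtered surjections induce fibrations of MC} --- is therefore genuinely different from the source, and the diagram chase you describe is correct as stated. The one substantive caveat is that the whole weight of the proof is carried by the natural simplicial retraction $r_\g$, which you only sketch; your gauge-fixing construction does go through (the linearization $(1\otimes h_n)d(1\otimes h_n)=1\otimes h_n$ from the side conditions makes the equation for $\lambda\in\operatorname{im}(1\otimes h_n)$ a formal fixed-point equation in the sense of \cref{appendix:formal fixed-pt eq and diff eq}, properness of $\g$ makes the nonlinear terms raise the filtration, and uniqueness gives both $r_\g\iota_\g=1$ and the simpliciality/naturality), but the uniqueness statement you are re-proving is essentially Bandiera's formal Kuranishi theorem, \cref{thm:formal Kuranishi theorem}, in disguise. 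A shorter path to the same retraction is available from material already in the thesis: take $r_\g=\rect_\bullet=I_\bullet P_\bullet$, which is simplicial and natural in strict morphisms, lands in $\gamma_\bullet(\g)$ by \cref{lemma:comparisonGamma}, satisfies $P_\bullet I_\bullet=\id$ by \cref{lemma:PIis1}, and restricts to the identity on $\gamma_\bullet(\g)$ by \cref{thm:bandiera}; this is precisely how the parallel fibration statement for $\MC(\g\otimes C_\bullet)$ is proved in \cref{subsect:properties of MC(g x C.)}. What your approach buys over Getzler's is the clean formal reduction to the $\MC_\bullet$ case; what it costs is either the delicate gauge-fixing analysis or a dependence on the homotopy transfer theorem and the Kuranishi theorem, neither of which the direct induction needs. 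Two minor points: the final clause of the statement should read ``proper complete $\L_\infty$-algebra'' for the ``in particular'' to apply, and your whole argument tacitly uses that $\phi$ is strict (which is consistent with the definition of $\gamma_\bullet$ on morphisms, but worth saying when you invoke naturality of $r$).
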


\begin{remark}
	This was originally stated for surjections between nilpotent $\L_\infty$-algebras. The more general statement above follows from a straightforward limit argument.
\end{remark}

\begin{theorem}[{\cite[Thm. 5.9]{get09}}]\label{thm:getzler gamma is we to MC.}
	Let $\g$ be a proper complete $\L_\infty$-algebra. The inclusion
	\[
	\gamma_\bullet(\g)\xhookrightarrow{\quad}\MC_\bullet(\g)
	\]
	is a homotopy equivalence of simplicial sets.
\end{theorem}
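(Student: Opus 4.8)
The plan is to follow Getzler's original argument: reduce to nilpotent $\L_\infty$-algebras by a limit argument, and then treat the nilpotent case by induction on the nilpotency length via central extensions, the base case being Dupont's theorem in disguise. By definition $\MC_\bullet(\g)=\lim_n\MC_\bullet(\g/\F_n\g)$, and intersecting with $\ker(1\otimes h_\bullet)$ stagewise gives $\gamma_\bullet(\g)=\lim_n\gamma_\bullet(\g/\F_n\g)$, with each $\g/\F_n\g$ nilpotent. Each map $\g/\F_{n+1}\g\to\g/\F_n\g$ is a strict surjection of nilpotent $\L_\infty$-algebras, hence a filtered $\infty$-morphism surjective at every filtration level, so by \cref{thm:filtered surjections induce fibrations of MC} and \cref{thm:Getzler} these two towers consist of fibrations between Kan complexes; a levelwise weak equivalence between towers of fibrations induces a weak equivalence on inverse limits, so it suffices to prove the theorem for nilpotent $\g$. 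Finally, since $\MC_\bullet(\g)$ and $\gamma_\bullet(\g)$ are Kan complexes, hence bifibrant in $\ssets$, any weak equivalence between them is a homotopy equivalence by the Whitehead theorem, so it is enough to exhibit the inclusion as a weak equivalence.

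For the base case, suppose $\g$ is abelian, i.e. $\ell_n=0$ for all $n\ge2$; then $\g$ is just a chain complex $V$, one has $\MC_\bullet(V)=\cyc_{-1}(V\otimes\Omega_\bullet)$, and the side conditions of Dupont's contraction (\cref{subsect:Dupont contraction}) give $\gamma_\bullet(V)=\cyc_{-1}(V\otimes C_\bullet)$. Tensoring Dupont's contraction with $V$ produces a chain homotopy $1\otimes h_\bullet$ between the identity and $1\otimes i_\bullet p_\bullet$ of the simplicial chain complex $V\otimes\Omega_\bullet$; being a chain homotopy between morphisms of simplicial chain complexes, it corresponds under the Dold--Kan correspondence (\cref{thm:Dold-Kan}) to a simplicial homotopy, and the side conditions upgrade it to a strong deformation retraction of $V\otimes\Omega_\bullet$ onto $V\otimes C_\bullet$. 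Restricting to $(-1)$-cycles shows $\gamma_\bullet(V)\hookrightarrow\MC_\bullet(V)$ is a strong deformation retract, in particular a homotopy equivalence --- this is the simplicial shadow of the PL de Rham theorem recalled in \cref{subsect:Dupont contraction}.

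For the inductive step, let $\g$ be nilpotent with $\F_{N+1}\g=0$, $N\ge2$, and put $\mathfrak z:=\F_N\g$; since the brackets raise filtration weight, $\mathfrak z$ is a central abelian ideal and $\g/\mathfrak z$ is nilpotent with $\F_N(\g/\mathfrak z)=0$. The projection $\g\twoheadrightarrow\g/\mathfrak z$ induces, via \cref{thm:filtered surjections induce fibrations of MC} and \cref{thm:Getzler}, fibrations $\MC_\bullet(\g)\twoheadrightarrow\MC_\bullet(\g/\mathfrak z)$ and $\gamma_\bullet(\g)\twoheadrightarrow\gamma_\bullet(\g/\mathfrak z)$, and the inclusions form a map of fibration sequences covering $\gamma_\bullet(\g/\mathfrak z)\hookrightarrow\MC_\bullet(\g/\mathfrak z)$, which is a weak equivalence by induction. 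It then remains to identify the fibers: over a vertex $\mu$ of $\gamma_\bullet(\g/\mathfrak z)$ pick a lift $\tilde\mu\in\g_{-1}$ (automatically with $(1\otimes h_\bullet)\tilde\mu=0$, since $\Omega_0=C_0=\k$ and $h_0=0$); an MC element over $\mu$ has the form $\tilde\mu+\beta$ with $\beta\in\mathfrak z\otimes\Omega_\bullet$, and centrality collapses the $\tilde\mu$-twisted structure on $\mathfrak z$ back to the bare complex $(\mathfrak z,d)$, whence a short computation identifies the fiber of $\MC_\bullet(\g)\to\MC_\bullet(\g/\mathfrak z)$ over $\mu$ as a torsor under $\MC_\bullet(\mathfrak z)$ (when nonempty) and the corresponding $\gamma$-fiber as a torsor under $\gamma_\bullet(\mathfrak z)$. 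The base case makes $\gamma_\bullet(\mathfrak z)\hookrightarrow\MC_\bullet(\mathfrak z)$ a weak equivalence, hence so is the map on fibers over every vertex; comparing the long exact sequences of homotopy groups (\cref{thm:LES of a fibration}) and applying the five lemma --- with the usual care at $\pi_0$ and at the $\pi_1$-action of the base on $\pi_0$ of the fiber --- yields that $\gamma_\bullet(\g)\hookrightarrow\MC_\bullet(\g)$ is a weak equivalence, completing the induction.

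The main obstacle lies in the inductive step: identifying the homotopy fibers over arbitrary, possibly obstructed, basepoints through twisting, verifying that the $\tilde\mu$-twist of the central ideal $\mathfrak z$ is genuinely trivial (this is where the Koszul-sign bookkeeping sits), and pushing the five-lemma comparison all the way down through $\pi_0$ and the $\pi_1$-action. A secondary point requiring attention is the compatibility of Dupont's homotopy $h_\bullet$ with the simplicial structure, needed so that the base case furnishes an honest simplicial homotopy rather than merely a levelwise one.
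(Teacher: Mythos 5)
The paper itself does not prove this statement; it is recalled with a citation to Getzler, and your argument is essentially a reconstruction of Getzler's original proof (reduction to the nilpotent case by a tower argument, induction on the nilpotency length via central extensions, abelian base case from Dupont's contraction). That route is structurally sound: the tower reduction, the identification of the fibers of $\MC_\bullet(\g)\to\MC_\bullet(\g/\mathfrak z)$ and $\gamma_\bullet(\g)\to\gamma_\bullet(\g/\mathfrak z)$ as torsors under $\MC_\bullet(\mathfrak z)$ and $\gamma_\bullet(\mathfrak z)$ (centrality of $\F_N\g$ does kill all cross terms, and constants lie in $\ker(1\otimes h_\bullet)$), and the five-lemma comparison all go through. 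It is worth noting that the thesis later gives a genuinely different derivation of the same statement, with no induction on central extensions: \cref{thm:homotopy equivalence MC. and MC(g x C)} shows that $P_\bullet$ and $I_\bullet$ are inverse homotopy equivalences between $\MC_\bullet(\g)$ and $\MC(\g\otimes C_\bullet)$, \cref{lemma:comparisonGamma} shows $I_\bullet$ lands in $\gamma_\bullet(\g)$, and Bandiera's formal Kuranishi theorem (\cref{thm:bandiera}) identifies $\gamma_\bullet(\g)\cong\MC(\g\otimes C_\bullet)$ via $P_\bullet$; composing these exhibits the inclusion as a homotopy equivalence. That route trades the obstruction-theoretic induction for the homotopy transfer theorem plus a fixed-point argument, and has the advantage of producing an explicit homotopy inverse, namely $\rect_\bullet=I_\bullet P_\bullet$.

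The one genuine gap is in your abelian base case. You claim that $1\otimes h_\bullet$, ``being a chain homotopy between morphisms of simplicial chain complexes, corresponds under the Dold--Kan correspondence to a simplicial homotopy.'' This is a category error: $h_\bullet$ raises the \emph{internal} (form) degree of each $V\otimes\Omega_n$, whereas the Dold--Kan correspondence (\cref{thm:Dold-Kan}) relates the \emph{simplicial} direction to the Moore complex, whose differential is the alternating sum of face maps. An internal chain homotopy does not produce a simplicial homotopy, so the deformation retraction does not follow as stated. The correct argument --- carried out in the shifted setting in the proof of \cref{prop:abelian Loo-alg DR} --- is to pass to Moore complexes: one exhibits the short exact sequence of simplicial vector spaces
\[
0\longrightarrow d(1\otimes h_\bullet)(V\otimes\Omega_\bullet)_{0}\longrightarrow\cyc_{-1}(V\otimes\Omega_\bullet)\xrightarrow{1\otimes p_\bullet}\cyc_{-1}(V\otimes C_\bullet)\longrightarrow0
\]
and shows that the kernel is simplicially contractible. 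That last point is where the real content sits, and it uses an input your argument never invokes: the simplicial contractibility of $(\Omega_\bullet)_k$ for each internal degree $k$ (Getzler's Lemma 3.2), not merely the existence of the contraction with side conditions. Once the base case is repaired this way, the rest of your induction and the final appeal to the Whitehead theorem are fine.
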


In fact, the simplicial set $\gamma_\bullet(\g)$ has a property which is stronger than simply being a Kan complex.

\begin{definition}
	An $n$-simplex $x\in\gamma_n(\g)$ is \emph{thin}\index{Thin simplex in $\gamma_\bullet(\g)$} if
	\[
	\int_{\Delta[n]}x = 0\ ,
	\]
	where integration acts on the part of $x$ living in $\Omega_n$.
\end{definition}

\begin{theorem}[{\cite[Thm. 5.4]{get09}}]\label{thm:Getzler's notion of oo-group}
	Let $\g$ be a nilpotent $\L_\infty$-algebra. Then $\gamma_\bullet(\g)$ has the following properties.
	\begin{enumerate}
		\item Every degenerate simplex is thin.
		\item Every horn has a unique thin simplex filling it.
	\end{enumerate}
\end{theorem}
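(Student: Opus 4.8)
The plan is to reproduce Getzler's original argument, whose engine is a fixed-point construction built from Dupont's contraction, with termination guaranteed by nilpotency. Recall first that an $n$-simplex of $\gamma_\bullet(\g)$ is an element $\alpha\in(\g\otimes\Omega_n)_{-1}$ solving the Maurer--Cartan equation together with the gauge condition $(1_\g\otimes h_n)(\alpha)=0$, where $h_\bullet$ is the contracting homotopy of Dupont's contraction of \cref{subsect:Dupont contraction} (so $1-i_np_n=dh_n+h_nd$ on $\Omega_n$, compatibly with the simplicial structure); since $\g$ is nilpotent, every sum appearing below is finite, so there are no convergence issues coming from the algebra. Thinness of $\alpha$ means $\int_{\Delta[n]}\alpha=0$, the integral being applied to the $\Omega_n$-component.

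For part (1), I would deduce that every degenerate simplex is thin directly from the integration operator. If $x\in\gamma_n(\g)$ is degenerate, then its $\Omega_n$-component is pulled back along one of the collapse maps $\Delta^n\to\Delta^{n-1}$, hence has no top-degree monomial (each fibre of the collapse is one-dimensional and the pulled-back form carries no differential in that direction); therefore $\int_{\Delta[n]}x=0$. This is exactly the identity $\int_{\Delta[n]}\circ\, s_j=0$, a structural property of Dupont's integration map, and settles (1) with essentially no work.

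The substance is part (2). Fix a horn, i.e. compatible faces $\alpha_i\in\gamma_{n-1}(\g)$ for $i\ne k$. First I would set up the \emph{relative Dupont contraction}: the restriction map $\Omega_n\to\APL(\Lambda^k[n])$ is surjective and compatible with $h_n$, which yields a contracting homotopy $h_n^k$ on the kernel-like complement whose image consists of forms vanishing on $\Lambda^k[n]$ and killed by $h_n$, and whose only obstruction lives in top degree and is detected by $\int_{\Delta[n]}$ (a one-dimensional ambiguity). Granting this, a filler of the horn in $\gamma_n(\g)$ is a solution of the fixed-point equation
\[
\alpha \;=\; a \;-\; h_n^k\!\left(\textstyle\sum_{m\ge2}\tfrac1{m!}\,\ell_m(\alpha,\dots,\alpha)\right),
\]
where $a\in(\g\otimes\Omega_n)_{-1}$ is a fixed element restricting to the $\alpha_i$ on the horn, normalised by $h_na=0$ and $\int_{\Delta[n]}a=0$. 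Because each bracket $\ell_m$ with $m\ge2$ strictly raises the nilpotency degree, the iteration $\alpha^{(0)}=a$, $\alpha^{(j+1)}=a-h_n^k(\sum_{m\ge2}\tfrac1{m!}\ell_m(\alpha^{(j)},\dots,\alpha^{(j)}))$ stabilises after finitely many steps — this is exactly a tree-sum formula of the same shape as in the homotopy transfer theorem — and its limit $\alpha$ satisfies the Maurer--Cartan equation and the gauge condition $h_n\alpha=0$ and restricts to the given horn, so $\alpha\in\gamma_n(\g)$. Moreover $\alpha-a$ lies in the image of $h_n^k$, so $\int_{\Delta[n]}(\alpha-a)=0$, and since the $(n-1)$-dimensional horn faces do not contribute, $\int_{\Delta[n]}\alpha=\int_{\Delta[n]}a=0$; the filler is thin.

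For uniqueness I would subtract: if $\alpha,\alpha'$ are thin fillers of the same horn, then $\eta\coloneqq\alpha-\alpha'$ vanishes on the horn, satisfies $h_n\eta=0$ and $\int_{\Delta[n]}\eta=0$, and subtracting the two Maurer--Cartan equations gives a homogeneous linear equation $\eta=-h_n^k L(\eta)$ with $L$ built from $\alpha,\alpha'$ and again raising nilpotency degree; iterating forces $\eta=0$. The main obstacle I anticipate is the bookkeeping behind the relative contraction $h_n^k$: one must verify that Dupont's homotopy, restricted to a horn, still furnishes a contraction, that the "missing face" direction accounts for precisely the one-dimensional ambiguity that $\int_{\Delta[n]}$ rigidifies, and that all of this is compatible with the gauge condition already built into the definition of $\gamma_\bullet(\g)$; by contrast, the nonlinear steps are a routine nilpotent iteration. (The Kan property alone would follow from \cref{thm:getzler gamma is we to MC.} and \cref{thm:filtered surjections induce fibrations of MC}, but the sharper unique-thin-filler statement genuinely needs the explicit contraction.)
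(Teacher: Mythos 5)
The paper does not prove this statement at all --- it is recorded as a recollection with a bare citation to Getzler's \cite[Thm.~5.4]{get09} --- so the only meaningful comparison is with Getzler's original argument, which your reconstruction does follow in outline: part (1) via $\int_{\Delta[n]}\circ\,s_j=0$ (correct, since $\Omega_{n-1}$ has no forms of exterior degree $n$, and likewise the image of $h_n$ has degree at most $n-1$, which is also what makes your ``$\int_{\Delta[n]}\alpha=\int_{\Delta[n]}a$'' step work), and part (2) via a Kuranishi-type fixed-point equation whose convergence is guaranteed by nilpotency, which is the same engine as Getzler's induction on the lower central series.

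There is, however, one genuine gap beyond the bookkeeping you already flag. You assert that the limit of the iteration ``satisfies the Maurer--Cartan equation'', but the fixed-point equation $\alpha=a-h_n^k\bigl(\sum_{m\ge2}\tfrac1{m!}\ell_m(\alpha,\dots,\alpha)\bigr)$ is only a \emph{consequence} of the Maurer--Cartan equation together with the gauge and boundary conditions; it does not imply it. A solution of the fixed-point equation automatically satisfies $h_n\alpha=0$ (using $h_n^2=0$ and $h_na=0$) and has the right restriction and integral, but to recover $\mathrm{MC}$ one must show that the curvature $F(\alpha)=d\alpha+\sum_{m\ge2}\tfrac1{m!}\ell_m(\alpha,\dots,\alpha)$ itself satisfies a homogeneous equation of the form $F(\alpha)=-h_n^k\bigl(\text{(brackets of $F(\alpha)$ with $\alpha$)}\bigr)$, derived from the Bianchi identity $dF(\alpha)+\sum_m\tfrac1{(m-1)!}\ell_m(F(\alpha),\alpha,\dots,\alpha)=0$ and the vanishing of the ``harmonic'' part of $F(\alpha)$ (which uses that the faces $\alpha_i$ already solve $\mathrm{MC}$); only then does nilpotency force $F(\alpha)=0$. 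This is exactly the mechanism behind the formal Kuranishi theorem recalled in \cref{subsect:formal Kuranishi theorem}, and it is where the real work of part (2) sits --- together with the relative-contraction properties of $h_n^k$ that you correctly identify but defer. With those two points supplied, the uniqueness argument by subtraction goes through as you describe.
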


\begin{remark}
	In \cite{get09}, an $\infty$-groupoid is a Kan complex which moreover has a set of thin simplices which satisfies \cref{thm:Getzler's notion of oo-group}. We adopt a more $\infty$-categorical point of view and use the terms Kan complex and $\infty$-groupoid interchangeably.
\end{remark}

\section{Paradigm change: shifted homotopy Lie algebras}\label{sect:shifted Loo-alg}

The operadic suspension functor
\[
\susp\otimes-:\op\longrightarrow\op
\]
is an automorphism of categories. Moreover, for any chain complex $V$ there is a canonical isomorphism
\[
\susp\otimes\End_V\cong\End_{sV}\ ,
\]
which implies that if $\P$ is an operad, then a $\susp\otimes\P$-algebra is equivalent to a $\P$-algebra via a suspension, and \emph{vice versa}.

\medskip

Relying on these facts, we will often work with $\SLoo$-algebras --- shifted homotopy Lie algebras --- instead of $\L_\infty$-algebras in what follows. This gives the same exact theory, but has the great advantage to substantially reduce the amount of signs appearing.

\medskip

Here are some basic, useful facts. A $\SLoo$-algebra\index{Shifted homotopy Lie algebras}\index{$\SLoo$-algebras} is an algebra over\index{$\SLoo$}
\[
\SLoo\coloneqq\susp\otimes\L_\infty\cong\Cobar\com^\vee\ .
\]
The operations on it are generated by graded \emph{symmetric} arity $n$ brackets
\[
\ell_n\coloneqq s^{-1}\mu_n^\vee\ ,
\]
for all $n\ge2$, all of which have degree $-1$. They satisfy the relations
\begin{equation*}\label{eq:rel SLoo-algebra}
\sum_{\substack{n_1+n_2 = n+1\\\sigma\in\sh(n_1,n_2-1)}}(\ell_{n_2}\circ_1\ell_{n_1})^\sigma = 0\ .
\end{equation*}
A Maurer--Cartan element of a proper complete $\SLoo$-algebra $\g$ is a degree $0$ element $x\in\g_0$ satisfying the equation
\[
dx + \sum_{n\ge2}\frac{1}{n!}\ell_n(x,\ldots,x) = 0\ .
\]

\begin{remark}
	Shifted Lie algebras have already appeared in the literature. For example, the Whitehead product endows the rational homotopy groups $\pi_\bullet(X)\otimes\mathbb{Q}$ of a simply connected space $X$ with a shifted Lie algebra structure, c.f. the beginning of \cref{subsect:Quillen main thm}. Because of this, the term \emph{Whitehead algebras} has been used by some authors to indicate shifted Lie algebras.
\end{remark}

\section{The Dolgushev--Rogers theorem}\label{sect:Dolgushev-Rogers thm}

In this section, we will present the Dolgushev--Rogers theorem \cite[Thm. 2.2]{dr15} in some detail. The ideas of the proof will return in some applications, most notably in \cref{ch:representing the deformation oo-groupoid}. The proof we present here is very close to the original one, with some slight variations in the presentation, namely by putting more stress on the obstruction theoretical aspects of one step of the demonstration, see \cref{subsect:induction step DR}.

\medskip

The main theorem of this section is the following one.

\begin{theorem}[Dolgushev--Rogers]\label{thm:Dolgushev-Rogers}\index{Dolgushev--Rogers theorem}
	Let $\g,\h$ be two complete proper $\SLoo$-algebras, and let
	\[
	\Phi:\g\rightsquigarrow\h
	\]
	be a filtered quasi-isomorphism of $\SLoo$-algebras. Then the induced map
	\[
	\MC_\bullet(\Phi):\MC_\bullet(\g)\longrightarrow\MC_\bullet(\h)
	\]
	is a weak equivalence of simplicial sets.
\end{theorem}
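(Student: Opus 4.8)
The plan is to reduce the statement to a statement about level-wise (or more precisely, filtration-wise) towers and to apply obstruction-theoretic arguments combined with the results on fibrations of Maurer--Cartan spaces already recalled in the excerpt. First I would recall that both $\g$ and $\h$ are proper complete, so that we have
\[
\MC_\bullet(\g) = \lim_n\MC\bigl(\g/\F_n\g\otimes\Omega_\bullet\bigr),
\]
and similarly for $\h$; moreover the quotient maps $\g/\F_{n+1}\g\to\g/\F_n\g$ are strict filtered surjections, so by \cref{thm:filtered surjections induce fibrations of MC} (taking $\Phi$ to be a strict morphism there) the maps $\MC_\bullet(\g/\F_{n+1}\g)\to\MC_\bullet(\g/\F_n\g)$ are fibrations of simplicial sets. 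Thus $\MC_\bullet(\g)$ is the limit of a tower of fibrations of Kan complexes, and the same for $\h$, and $\MC_\bullet(\Phi)$ is a map of such towers. A standard fact about such limits is that if each map on the associated graded pieces induces a weak equivalence, then the limit map is a weak equivalence; so it suffices to reduce to the nilpotent case and there prove the statement for quasi-isomorphisms between nilpotent $\SLoo$-algebras.

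Next I would set up the nilpotent case by induction on the nilpotency length, using the canonical central extensions. Given a nilpotent $\SLoo$-algebra $\g$ with lower central filtration $\F_\bullet$, write $\g^{(n)}\coloneqq\g/\F_{n+1}\g$; there is a short exact sequence of $\SLoo$-algebras
\[
0\longrightarrow\F_n\g/\F_{n+1}\g\longrightarrow\g^{(n)}\longrightarrow\g^{(n-1)}\longrightarrow 0,
\]
where the kernel is an abelian $\SLoo$-algebra, i.e.\ just a chain complex, sitting centrally. Applying $\MC_\bullet$ gives a fibration $\MC_\bullet(\g^{(n)})\to\MC_\bullet(\g^{(n-1)})$ whose fibres are, by a direct computation of the Maurer--Cartan equation over an abelian algebra tensored with $\Omega_\bullet$, identified with the Dold--Kan realization of the (truncated, suitably shifted) chain complex $\F_n\g/\F_{n+1}\g$; concretely the fibre over a given Maurer--Cartan element is $\mathrm{MC}_\bullet$ of a twisted abelian algebra, which is an Eilenberg--MacLane type object with homotopy groups given by the homology of $\F_n\g/\F_{n+1}\g$ in the appropriate degrees. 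The filtered quasi-isomorphism $\Phi$ induces, on each associated graded piece, a quasi-isomorphism of chain complexes $\F_n\g/\F_{n+1}\g\to\F_n\h/\F_{n+1}\h$ (this is precisely what ``filtered quasi-isomorphism'' guarantees on $\psi_1$), hence a weak equivalence on the fibres. Then a fibrewise induction --- using that a map of fibrations which is a weak equivalence on the base and on all fibres is a weak equivalence on the total spaces, via the long exact sequences of homotopy groups of \cref{thm:LES of a fibration} and the five lemma --- proves the claim for each $\g^{(n)}$, and passing to the limit over $n$ (inverse limit of weak equivalences of towers of fibrations) finishes the nilpotent case.

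A subtlety I would address carefully is that $\Phi$ is an \emph{$\infty$-morphism}, not a strict one, so it does not literally respect the lower central filtration componentwise in the naive sense; one must use that a filtered $\infty$-morphism induces, via $\MC_\bullet(\Phi)$ and the formula $\MC(\Phi)(x)=\sum_{n\ge1}\tfrac1{n!}\phi_n(x,\dots,x)$, a well-defined map of the towers of fibrations, compatible up to the filtration. The cleanest way around this is to observe that a filtered $\infty$-morphism still induces strict morphisms on the associated graded $\SLoo$-algebras of the lower central series (the higher brackets $\phi_{\ge2}$ raise filtration degree, so they vanish on the associated graded), so the induced maps on graded pieces are genuine chain maps, and they are quasi-isomorphisms by hypothesis. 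This is exactly the point where the argument is most delicate and where I expect the main obstacle to lie: carefully matching up the filtration on the $\infty$-morphism with the tower of fibrations so that the five-lemma/long-exact-sequence bookkeeping goes through uniformly in $n$, and handling the base case $\h^{(0)} = \g^{(0)} = 0$ (whose Maurer--Cartan space is a point) so the induction can start. The remaining ingredients --- that $\MC_\bullet$ of an abelian algebra computes the right homotopy groups, and that inverse limits of weak equivalences between towers of fibrations of Kan complexes are weak equivalences --- are standard and can be cited or checked directly.
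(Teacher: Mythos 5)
Your proposal follows the same architecture as the proof in the text: reduce to the tower of nilpotent quotients $\g^{(n)}=\g/\F_n\g$, use that the projections induce fibrations of Maurer--Cartan spaces (\cref{thm:filtered surjections induce fibrations of MC}), compare successive stages via the short exact sequences with central abelian kernels and the long exact sequence of \cref{thm:LES of a fibration}, and pass to the limit of towers of fibrations between Kan complexes. The observations that a filtered $\infty$-morphism induces strict chain maps on the associated graded pieces, and that the fibres are Maurer--Cartan spaces of twisted abelian algebras handled by the Dold--Kan/Moore-complex computation, also match the text.

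The genuine gap is at the bottom of the long exact sequence. The sequence of a Kan fibration is exact only in a weak sense at the level of $\pi_0$: the last terms are pointed sets rather than groups, and exactness there only identifies $\pi_0$ of the total space with the quotients of the $\pi_0$ of the fibres by the $\pi_1$-actions, one component of the base at a time. Your appeal to ``a weak equivalence on the base and on all fibres gives a weak equivalence on total spaces'' therefore requires verifying the fibre hypothesis over \emph{every} vertex of the base, and in particular that the fibre of $\MC_\bullet(\g^{(n+1)})\to\MC_\bullet(\g^{(n)})$ over $x$ is empty exactly when the fibre over $\MC(\Phi)(x)$ is. This matching of the obstructions to lifting Maurer--Cartan elements along the central extensions --- equivalently, the bijectivity of $\pi_0(\MC_\bullet(\Phi))$ --- is the genuinely non-formal part of the theorem; in the text it is isolated as \cref{thm:bijection of pi0 (DR)}, whose proof is an independent technical argument (omitted there, following Dolgushev--Rogers), and it is then fed into the induction step together with a separate hand-made diagram chase at $\pi_1$, where the five lemma cannot be applied directly because the boundary terms are mere sets. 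Without supplying this $\pi_0$ input, the induction does not close.
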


This generalizes the Goldman--Millson in the following sense. Suppose that $\g,\h$ are any two Lie algebras, and let $(A,\mathfrak{m})$ be an Artinian local $\k$-algebra with maximal ideal $\mathfrak{m}$. Then $\g\otimes\mathfrak{m}$ has a natural filtration given by
\[
\F_n(\g\otimes\mathfrak{m})\coloneqq\g\otimes\mathfrak{m}^n,
\]
making it into a proper complete Lie algebra\footnote{Since $A$ is Artinian, there is an $N$ such that $\mathfrak{m}^N = 0$. Therefore, $\g\otimes\mathfrak{m}$ is in fact nilpotent, and the filtration only has finitely many levels.}. The same thing holds for $\h$. If $\phi:\g\to\h$ is a quasi-isomorphism of Lie algebras, then the induced morphism
\[
\phi\otimes1_\mathfrak{m}:\g\otimes\mathfrak{m}\longrightarrow\h\otimes\mathfrak{m}
\]
is a filtered (strict) quasi-isomorphism. By applying \cref{thm:Dolgushev-Rogers} to this situation and looking at the $0$th homotopy group of the spaces of Maurer--Cartan elements, we get the natural bijection
\[
\MCbar(\phi):\MCbar(\g)\longrightarrow\MCbar(\h)
\]
between the moduli spaces of Maurer--Cartan elements of $\g$ and $\h$.

\medskip

The strategy of the proof of \cref{thm:Dolgushev-Rogers} is the following. First one proves that the statement holds for abelian $\SLoo$-algebras with a certain trivial filtration and strict quasi-isomorphisms, and the statement holds in full generality at the level of the $0$th homotopy groups. The following step is to prove that we can always twist the $\SLoo$-algebras into play to set the basepoint of the higher homotopy groups at $0$. Then one uses these statements as the base case for an induction proving that the theorem holds in full generality for complete proper $\SLoo$-algebras whose filtration terminates at a finite filtration degree --- and which are therefore necessarily nilpotent. Finally, a limit argument concludes the proof.

\subsection{The case of abelian \texorpdfstring{$\SLoo$-}{shifted homotopy Lie }algebras}

The first step is to prove that the theorem holds in the case of abelian $\SLoo$-algebras, i.e. $\SLoo$-algebras in which all brackets are constantly zero. Let $\g$ be such an $\SLoo$-algebra, and equip it with the filtration
\begin{equation}\label{eq:trivial filtration on abelian Loo-algebras}
	\F_1\g\coloneqq\g\supseteq\F_2\g=\F_3\g=\ldots\coloneqq 0
\end{equation}
making it into a proper complete $\SLoo$-algebra. A Maurer--Cartan element is nothing else than a $0$-cycle:
\[
\MC_n(\g) = \cyc_0(\g\otimes\Omega_n)\ ,
\]
and in particular it follows that $\MC_\bullet(\g)$ is a simplicial vector space.

\begin{proposition}[{\cite[Prop. 2.4]{dr15}}]\label{prop:abelian Loo-alg DR}
	Let $\g,\h$ be two abelian $\SLoo$-algebras endowed with filtrations as in (\ref{eq:trivial filtration on abelian Loo-algebras}), and let $\phi:\g\to\h$ be a quasi-isomorphism. Then the induced map
	\[
	\MC_\bullet(\phi):\MC_\bullet(\g)\longrightarrow\MC_\bullet(\h)
	\]
	is a weak equivalence of simplicial sets.
\end{proposition}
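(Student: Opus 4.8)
The plan is to deduce the statement from the homotopy invariance of the functor $\MC_\bullet(-)$ on abelian $\SLoo$-algebras, which in turn rests on two facts: that $\MC_\bullet(-)$ carries chain homotopies to simplicial homotopies, and that over a field every quasi-isomorphism of chain complexes is a chain homotopy equivalence.

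First I would unwind the objects involved. Recall from the discussion preceding the statement that $\MC_n(\g)=\cyc_0(\g\otimes\Omega_n)$, that $\MC_\bullet(\g)$ and $\MC_\bullet(\h)$ are simplicial vector spaces, and note that for a strict morphism $f$ of abelian $\SLoo$-algebras --- which is nothing but a chain map --- one has $\MC_\bullet(f)=\cyc_0(f\otimes\Omega_\bullet)$. Since any simplicial homotopy equivalence is in particular a weak equivalence of simplicial sets, it suffices to prove that $\MC_\bullet(\phi)$ is a simplicial homotopy equivalence. Because we work over a field, all chain complexes are bifibrant, so by the Whitehead theorem the quasi-isomorphism $\phi\colon\g\to\h$ admits a chain homotopy inverse: a chain map $\psi\colon\h\to\g$ --- automatically a morphism of abelian $\SLoo$-algebras --- together with chain homotopies $\psi\phi\simeq 1_\g$ and $\phi\psi\simeq 1_\h$. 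Granting that $\MC_\bullet(-)$ sends chain-homotopic maps to simplicially homotopic maps and using its functoriality, we then get $\MC_\bullet(\psi)\MC_\bullet(\phi)\simeq 1$ and $\MC_\bullet(\phi)\MC_\bullet(\psi)\simeq 1$, so $\MC_\bullet(\phi)$ is a simplicial homotopy equivalence and the proposition follows.

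It remains to prove the key lemma: chain-homotopic maps $f,g\colon\g\to\h$ of chain complexes induce simplicially homotopic maps $\MC_\bullet(f),\MC_\bullet(g)$. A chain homotopy $s\colon\g\to\h$ (degree $+1$, with $f-g=d_\h s+s d_\g$) packages into a chain map
\[
H\colon\g\longrightarrow\h\otimes\Omega_1,\qquad H(x)=g(x)\otimes 1+\bigl(f(x)-g(x)\bigr)\otimes t\;\pm\;s(x)\otimes dt,
\]
where $\Omega_1=\APL(\Delta[1])$ carries the inverted degree and $t,dt$ are the usual generators; this reflects the fact that $\h\otimes\Omega_1$ is a path object for $\h$ in $\chain$, with fibration $(1_\h\otimes\partial_0,\,1_\h\otimes\partial_1)\colon\h\otimes\Omega_1\to\h\times\h$. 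I would then construct the simplicial homotopy $\MC_\bullet(\g)\times\Delta[1]\to\MC_\bullet(\h)$ by sending a pair $(\sigma,\alpha)$, with $\sigma\in\cyc_0(\g\otimes\Omega_n)$ and $\alpha\colon[n]\to[1]$ a simplex of $\Delta[1]$, to the image of $\sigma$ under the degree $0$ chain map
\[
\g\otimes\Omega_n\xrightarrow{\;H\otimes 1_{\Omega_n}\;}\h\otimes\Omega_1\otimes\Omega_n\xrightarrow{\;1_\h\otimes\alpha^{*}\otimes 1_{\Omega_n}\;}\h\otimes\Omega_n\otimes\Omega_n\xrightarrow{\;1_\h\otimes\,\mathrm{mult}\;}\h\otimes\Omega_n,
\]
where $\alpha^{*}\colon\Omega_1\to\Omega_n$ is the structure map of the simplicial commutative algebra $\Omega_\bullet$ attached to $\alpha$. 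One then checks that the result is again a $0$-cycle (every arrow is a degree $0$ chain map and $\sigma$ is closed), that the assignment is natural in $[n]$ (from $(\alpha\beta)^{*}=\beta^{*}\alpha^{*}$ and the simplicial structure of $\Omega_\bullet$), and that specializing $\alpha$ to the two degenerate vertices of $\Delta[1]$ --- where $\alpha^{*}$ factors through the evaluations $\partial_0,\partial_1\colon\Omega_1\to\k$, so that the composite collapses to $f\otimes 1_{\Omega_n}$ resp.\ $g\otimes 1_{\Omega_n}$ --- recovers exactly $\MC_\bullet(f)$ and $\MC_\bullet(g)$.

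The main obstacle is this last lemma. The construction is natural and short, but one has to be a little careful that the chosen path object $\h\otimes\Omega_1$ genuinely realizes the classical chain homotopy relation and that the two endpoint restrictions come out as $\MC_\bullet(f)$ and $\MC_\bullet(g)$ on the nose; the remaining checks (closedness under the differential, compatibility with faces and degeneracies) are routine diagram chases with the Koszul signs coming from $\Omega_\bullet$.
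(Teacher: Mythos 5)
Your proof is correct, but it takes a genuinely different route from the one in the thesis. The proof given there stays entirely on the level of Moore complexes: it uses Dupont's contraction to build a short exact sequence exhibiting $\moore(\cyc_0(\g\otimes\Omega_\bullet))\to\moore(\cyc_0(\g\otimes C_\bullet))$ as a quasi-isomorphism with contractible kernel, identifies $\cyc_0(\g\otimes C_\bullet)$ via the Dold--Kan correspondence with (the image under $\Gamma$ of) a truncation of $\g$, and concludes from the commutative square comparing $\g$ and $\h$. You instead exploit that, over a field, every chain complex is bifibrant, so the quasi-isomorphism $\phi$ admits a chain homotopy inverse; your key lemma --- that a chain homotopy $s$ packages into a path $H\colon\g\to\h\otimes\Omega_1$ and then, via the simplicial algebra structure of $\Omega_\bullet$, into a simplicial homotopy $\MC_\bullet(\g)\times\Delta[1]\to\MC_\bullet(\h)$ --- is sound, and the naturality and endpoint checks work exactly as you describe (the signs in $H$ are forced by $f-g=d_\h s+sd_\g$). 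Your argument is shorter and avoids both Dupont's contraction and Dold--Kan, at the cost of being strictly an abelian/field-of-characteristic-zero trick: it gives no identification of the homotopy groups of $\MC_\bullet(\g)$, whereas the computation $\moore(\cyc_0(\g\otimes C_\bullet))\cong(\cdots\to\g_{-1}\to\cyc_0(\g))$ in the paper's proof is reused later (it is essentially what identifies $\pi_n\MC_\bullet(\g)$ with the homology of $\g$ and underlies the reduction to $\g\otimes C_\bullet$ in the chapter on representing the deformation $\infty$-groupoid). Both proofs are valid for the statement at hand.
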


\begin{proof}
	By \cref{thm:homotopy of simplicial vector space is homology of Moore complex}, we know that the homotopy groups of a simplicial vector space $V_\bullet$ are given by the homology of the Moore complex:
	\[
	\pi_0(V_\bullet) = H_0(\moore(V_\bullet))\quad\text{and}\quad\pi_k(V_\bullet,0)=H_k(\moore(V_\bullet))
	\]
	for $k\ge1$. It follows that a map $f:V_\bullet\to W_\bullet$ of simplicial vector spaces is a weak equivalence if, and only if the induced map
	\[
	\moore(f):\moore(V_\bullet)\longrightarrow\moore(W_\bullet)
	\]
	is a quasi-isomorphism of chain complexes.
	
	\medskip
	
	Recall Dupont's contraction from \cref{subsect:Dupont contraction}
	\begin{center}
		\begin{tikzpicture}
		\node (a) at (0,0){$\Omega_\bullet$};
		\node (b) at (2,0){$C_\bullet$};
		
		\draw[->] (a)++(.3,.1)--node[above]{\mbox{\tiny{$p_\bullet$}}}+(1.4,0);
		\draw[<-,yshift=-1mm] (a)++(.3,-.1)--node[below]{\mbox{\tiny{$i_\bullet$}}}+(1.4,0);
		\draw[->] (a) to [out=-150,in=150,looseness=4] node[left]{\mbox{\tiny{$h_\bullet$}}} (a);
		\end{tikzpicture}
	\end{center}
	We extend it to a contraction
	\begin{center}
		\begin{tikzpicture}
		\node (a) at (0,0){$\g\otimes\Omega_\bullet$};
		\node (b) at (2.6,0){$\g\otimes C_\bullet$};
		
		\draw[->] (a)++(.6,.1)--node[above]{\mbox{\tiny{$1_\g\otimes p_\bullet$}}}+(1.4,0);
		\draw[<-,yshift=-1mm] (a)++(.6,-.1)--node[below]{\mbox{\tiny{$1_\g\otimes i_\bullet$}}}+(1.4,0);
		\draw[->] (a) to [out=-150,in=150,looseness=4] node[left]{\mbox{\tiny{$1_\g\otimes h_\bullet$}}} (a);
		\end{tikzpicture}
	\end{center}
	We claim that the following sequence is exact:
	\[
	0\longrightarrow d(1_\g\otimes h_\bullet)(\g\otimes\Omega_\bullet)_0\xhookrightarrow{\quad}\cyc_0(\g\otimes\Omega_\bullet)\xrightarrow{1_\g\otimes p_\bullet}\cyc_0(\g\otimes C_\bullet)\longrightarrow0\ ,
	\]
	where $d$ denotes the differential of $\g\otimes\Omega_1$. The only thing that is not immediately obvious is the fact that
	\begin{equation}\label{eq:equality giving SES}
		\ker(1_\g\otimes p_\bullet)|_{\cyc_0(\g\otimes\Omega_\bullet)} = d(1_\g\otimes h_\bullet)(\g\otimes\Omega_\bullet)_0\ .
	\end{equation}
	We have
	\begin{align*}
		\ker(1_\g\otimes p_\bullet)|_{\cyc_0(\g\otimes\Omega_\bullet)} =&\ \ker(1 - d(1_\g\otimes h_\bullet) - (1_\g\otimes h_\bullet)d)|_{\cyc_0(\g\otimes\Omega_\bullet)}\\
		=&\ \ker(1 - d(1_\g\otimes h_\bullet))|_{\cyc_0(\g\otimes\Omega_\bullet)}\\
		=&\ \left\{x\in\cyc_0(\g\otimes\Omega_\bullet)\mid d(1_\g\otimes h_\bullet)(x) = x\right\}\ .
	\end{align*}
	An easy algebraic manipulation using the fact that we are working with a contraction shows that $(d(1_\g\otimes h_\bullet))^2 = d(1_\g\otimes h_\bullet)$ on $\g\otimes\Omega_\bullet$. Together with the fact that $d(1_\g\otimes h_\bullet)$ takes image in $\cyc_0(\g\otimes\Omega_\bullet)$, this implies (\ref{eq:equality giving SES}).
	
	\medskip
	
	Now we show that the first space of the short exact sequence is acyclic. Recall \cite[Lemma 3.2]{get09}, which states that the simplicial set $(\Omega_\bullet)_k$ is contractible for all $k$. If follows that $(\g\otimes\Omega_\bullet)_k$ is also contractible for all $k$, and in particular for $k=0$. Since
	\[
	d(1_\g\otimes h_\bullet):(\g\otimes\Omega_\bullet)_0\longrightarrow d(1_\g\otimes h_\bullet)(\g\otimes\Omega_\bullet)_0
	\]
	is a retraction, it follows that $d(1_\g\otimes h_\bullet)(\g\otimes\Omega_\bullet)_0$ is contractible, and thus that the Moore complex
	\[
	\moore(d(1_\g\otimes h_\bullet)(\g\otimes\Omega_\bullet)_0)
	\]
	is acyclic. By applying the Moore complex to the short exact sequence defined above, we obtain that
	\[
	\moore(1_\g\otimes p_\bullet):\moore(\cyc_0(\g\otimes\Omega_\bullet))\longrightarrow\moore(\cyc_0(\g\otimes C_\bullet))
	\]
	is a quasi-isomorphism.
	
	\medskip
	
	To conclude, consider the commutative square
	\begin{center}
		\begin{tikzpicture}
			\node (a) at (0,0){$\moore(\cyc_0(\g\otimes\Omega_\bullet))$};
			\node (b) at (5.5,0){$\moore(\cyc_0(\g\otimes C_\bullet))$};
			\node (c) at (0,-2){$\moore(\cyc_0(\h\otimes\Omega_\bullet))$};
			\node (d) at (5.5,-2){$\moore(\cyc_0(\h\otimes C_\bullet))$};
			
			\draw[->] (a) to node[above]{$\moore(1_\g\otimes p_\bullet)$} (b);
			\draw[->] (a) to node[left]{$\moore(\MC_\bullet(\phi))$} (c);
			\draw[->] (b) to node[right]{$\moore(\phi\otimes1_{C_\bullet})$} (d);
			\draw[->] (c) to node[above]{$\moore(1_\h\otimes p_\bullet)$} (d);
		\end{tikzpicture}
	\end{center}
	We already know that the horizontal arrows are quasi-isomorphisms. The right vertical arrow is also a quasi-isomorphisms. Indeed, the simplicial vector space $\cyc_0(\g\otimes C_\bullet)$ is the result of applying the Dold--Kan functor $\Gamma$ of \cref{subsect:Dold-Kan correspondence} to the truncation
	\[
	\cdots\stackrel{d}{\longrightarrow}\g_{-2}\stackrel{d}{\longrightarrow}\g_{-1}\stackrel{d}{\longrightarrow}\cyc_0(\g)\ .
	\]
	By the Dold--Kan correspondence --- \cref{thm:Dold-Kan} --- we have natural isomorphisms
	\[
	\g\otimes C_\bullet\cong\moore(\cyc_0(\g\otimes C_\bullet))\ .
	\]
	Since $\phi$ is a quasi-isomorphism, it induces a quasi-isomorphism between the respective truncations of $\g$ and $\h$. It follows that $\moore(\phi\otimes1_{C_\bullet})$ is a quasi-isomorphism. Therefore, the morphism $\moore(\MC_\bullet(\phi))$ must also be a quasi-isomorphism, which implies that the morphism $\MC_\bullet(\phi)$ itself is a weak equivalence by \cref{thm:homotopy of simplicial vector space is homology of Moore complex}.
\end{proof}

\subsection{The theorem holds for the zeroth homotopy group}

One now proves that the statement holds for the $0$th homotopy group, without any further assumption on the algebras or on the morphism than the ones of the theorem.

\begin{theorem}[{\cite[Sect. 3]{dr15}}]\label{thm:bijection of pi0 (DR)}
	Let $\g,\h$ and $\Phi:\g\rightsquigarrow\h$ be as in the statement of \cref{thm:Dolgushev-Rogers}. The induced map
	\[
	\pi_0(\MC_\bullet(\Phi)):\pi_0(\MC_\bullet(\g))\longrightarrow\pi_0(\MC_\bullet(\h))
	\]
	is bijective.
\end{theorem}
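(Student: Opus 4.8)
The plan is to reduce the statement about $\pi_0$ to a bijection of \emph{moduli spaces} $\MCbar(\g) \cong \MCbar(\h)$, since $\pi_0(\MC_\bullet(\g))$ is precisely $\MC(\g)$ modulo the homotopy equivalence relation on $0$-simplices, which by \cref{subsection:homotopies and gauges} coincides with gauge equivalence. So the goal becomes: a filtered $\infty$-quasi-isomorphism $\Phi \colon \g \rightsquigarrow \h$ induces a bijection $\MCbar(\Phi) \colon \MCbar(\g) \to \MCbar(\h)$, where $\MCbar(\Phi)$ is induced by the map $x \mapsto \MC(\Phi)(x) = \sum_{n\ge1}\frac{1}{n!}\phi_n(x,\ldots,x)$ of \cref{subsect:Maurer-Cartan space}. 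The argument then proceeds by induction on the filtration, working first with nilpotent $\SLoo$-algebras (filtration terminating at a finite stage) and passing to the general case by a limit argument at the end, since $\MC_\bullet(\g) = \lim_n \MC_\bullet(\g/\F_n\g)$ and a filtered limit of bijections of sets is a bijection.

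For the nilpotent case I would set up the usual \textbf{obstruction-theoretic induction} on the filtration length. Write $\g^{(n)} \coloneqq \g/\F_{n+1}\g$ and similarly for $\h$, so that $\g^{(1)}$ is abelian with the trivial filtration of \eqref{eq:trivial filtration on abelian Loo-algebras}, and each $\g^{(n)} \to \g^{(n-1)}$ is a filtered surjection with abelian kernel $\F_n\g/\F_{n+1}\g$ sitting in the center. The base case $n=1$ is handled by \cref{prop:abelian Loo-alg DR}: for abelian algebras $\MC_\bullet$ is a simplicial vector space computing (shifted) homology, and $\pi_0(\MC_\bullet(\g^{(1)})) = H_0(\cyc_0(\g \otimes C_\bullet))$-type data, so a quasi-isomorphism — which $\phi_1$ is, by hypothesis — induces an isomorphism there; in particular a bijection on $\pi_0$. (One must first reduce a general $\infty$-quasi-isomorphism $\Phi$ to its linear term $\phi_1$ on the associated graded, which is legitimate because the higher components $\phi_n$ raise filtration weight.) For the inductive step, I would show \textbf{surjectivity} of $\MCbar(\Phi^{(n)})$ by lifting a Maurer--Cartan element of $\h^{(n)}$ stepwise: given $y \in \MC(\h^{(n)})$ and, by induction, an $x' \in \MC(\g^{(n-1)})$ with $\MC(\Phi^{(n-1)})(x')$ gauge equivalent to the image of $y$, the gauge can be absorbed using that gauges lift along filtered surjections, and then the obstruction to lifting $x'$ to a genuine Maurer--Cartan element of $\g^{(n)}$ mapping to $y$ lives in $H_{-1}$ (or $H_0$, depending on conventions) of the abelian kernel; since $\phi_1$ is a quasi-isomorphism this obstruction is matched on both sides and vanishes compatibly. \textbf{Injectivity} is dual: two elements of $\MC(\g^{(n)})$ whose images in $\MC(\h^{(n)})$ are gauge equivalent are, by induction, gauge equivalent in $\g^{(n-1)}$; the obstruction to promoting that gauge to one in $\g^{(n)}$ lies in $H_0$ (resp. $H_1$) of the kernel and again is controlled by $\phi_1$ being a quasi-isomorphism.

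The cleanest way to organize both the surjectivity and injectivity steps is to observe that each square
\begin{center}
\begin{tikzpicture}
\node (a) at (0,0){$\MC_\bullet(\g^{(n)})$};
\node (b) at (4,0){$\MC_\bullet(\h^{(n)})$};
\node (c) at (0,-1.8){$\MC_\bullet(\g^{(n-1)})$};
\node (d) at (4,-1.8){$\MC_\bullet(\h^{(n-1)})$};
\draw[->] (a) to node[above]{$\MC_\bullet(\Phi^{(n)})$} (b);
\draw[->] (a) to (c);
\draw[->] (b) to (d);
\draw[->] (c) to node[below]{$\MC_\bullet(\Phi^{(n-1)})$} (d);
\end{tikzpicture}
\end{center}
has \emph{fibrations} as its vertical maps by \cref{thm:filtered surjections induce fibrations of MC}, so one can compare the long exact sequences of homotopy groups (or at least the exact sequences in low degrees and the $\pi_1$-action on $\pi_0$) of the two vertical fibrations, whose fibers are the abelian Maurer--Cartan spaces $\MC_\bullet(\F_n\g/\F_{n+1}\g \otimes \Omega_\bullet)$ and likewise for $\h$; by the base case these fibers are compared by a weak equivalence, and a five-lemma-style diagram chase on $\pi_1 \to \pi_0(\text{fiber}) \to \pi_0(\text{total}) \to \pi_0(\text{base})$ yields the bijection on $\pi_0$ of the totals from the inductive hypothesis on the bases. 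I expect the \textbf{main obstacle} to be the careful bookkeeping in this diagram chase at the level of $\pi_0$ — $\pi_0$ is only a pointed set, not a group, so exactness must be used in its weak (kernel-equals-image, plus the $\pi_1$-orbit statement of \cref{thm:LES of a fibration}) form, and one has to check that the $\pi_1(\MC_\bullet(\h^{(n-1)}))$-action on $\pi_0$ of the fiber is compatible under $\Phi$ in order to conclude injectivity. The secondary technical point is justifying the passage from an arbitrary $\infty$-quasi-isomorphism to its linear component on associated graded pieces; this is routine given that the $\phi_n$ are filtered, but it needs to be stated precisely, and it is also what makes the whole statement genuinely about $\infty$-morphisms rather than strict ones.
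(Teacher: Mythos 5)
The paper does not actually prove this statement: it cites \cite[Sect. 3]{dr15} and explicitly omits the argument as ``technical'', so your proposal has to be judged as a reconstruction of the Dolgushev--Rogers argument rather than compared to a proof in the text. Your first paragraph of the induction (obstruction classes in the cohomology of the central abelian kernel $\F_n\g/\F_{n+1}\g$, compared across $\Phi$ via $\phi_1$ on the associated graded) is indeed the right skeleton and is essentially what happens in \cite{dr15}. The problem is that your ``cleanest way to organize'' paragraph does not work, and it is the part you lean on for the actual bookkeeping. The fibration sequence $\pi_1(B)\to\pi_0(F)\to\pi_0(E)\to\pi_0(B)$ has no term to the right of $\pi_0(B)$, so it carries no information about \emph{which} components of $\MC_\bullet(\g^{(n-1)})$ lift to $\MC_\bullet(\g^{(n)})$; surjectivity of $\pi_0(\MC_\bullet(\Phi^{(n)}))$ is therefore invisible to the five-lemma chase and must come entirely from the hands-on obstruction-class comparison. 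Worse, for injectivity the identification $\pi_0(E)\vert_{\text{over a component of }B}\cong\pi_0(F)/\pi_1(B)$ requires you to know not merely that the $\pi_1$-actions are ``compatible under $\Phi$'' but that every identification effected by $\pi_1(\MC_\bullet(\h^{(n-1)}))$ is realized by an element of $\pi_1(\MC_\bullet(\g^{(n-1)}))$ --- essentially surjectivity of $\pi_1(\Phi^{(n-1)})$ at \emph{every} basepoint. That is not contained in a $\pi_0$-only induction hypothesis; strengthening the hypothesis to include $\pi_1$ cascades upward to all homotopy groups and turns your argument into a proof of the full \cref{thm:Dolgushev-Rogers}, whose $\pi_0$ step is then again the missing ingredient. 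This is precisely why the $\pi_0$ statement is isolated as a separate theorem proved by different means.

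Two further points need repair. First, the naturality of the obstruction classes under an $\infty$-morphism (rather than a strict one) is asserted but is the genuinely technical computation: one must check that the curvature of a set-theoretic lift of $\MC(\Phi)(x)$ agrees, modulo coboundaries in the kernel, with $\phi_1$ applied to the curvature of a lift of $x$, using the full $\infty$-morphism relations and the fact that $\phi_n$ for $n\ge2$ raises filtration degree. Second, the concluding limit step is not ``a filtered limit of bijections is a bijection'': while $\MC(\g)=\lim_n\MC(\g^{(n)})$ as sets, $\pi_0$ does not commute with the inverse limit of the tower --- two Maurer--Cartan elements of $\g$ that are gauge equivalent at every finite stage need a \emph{compatible} system of gauges to be gauge equivalent in $\g$, which is a $\varprojlim^1$-type problem and requires an additional argument (in \cite{dr15} this is handled by explicit connecting-homotopy lemmas).
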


The proof of this result is technical and not so important for the rest of the present work, and will therefore be omitted.

\begin{remark}
	\cref{prop:rectification for deformation oo-groupoid} provides a clean alternative to the proof of \cite[Lemma B.2]{dr15}, a technical lemma of fundamental importance for the proof of \cref{thm:bijection of pi0 (DR)}, partially simplifying the proof of this result. Notice that the proof of \cref{prop:rectification for deformation oo-groupoid} is independent of \cref{thm:bijection of pi0 (DR)}.
\end{remark}

\subsection{Setting the basepoint to zero}

In order to avoid having to worry about the basepoints, one shows that these can always be set to be $0$. This step can probably be avoided, but it has the advantage of making the rest of the proof much cleaner.

\begin{lemma} \label{lemma:shift to zero DR}
	Let $\alpha\in\MC(\g)$, and let $\g^\alpha$ be the $\SLoo$-algebra obtained by twisting $\g$ by $\alpha$, that is the $\SLoo$-algebra with the same underlying graded vector space, but with differential
	\[
	d^\alpha(x) \coloneqq dx + \sum_{n\ge2}\frac{1}{(n-1)!}\ell_n(\alpha,\ldots,\alpha,x)
	\]
	and brackets
	\[
	\ell^\alpha(x_1,\ldots,x_m) \coloneqq \sum_{n\ge m}\frac{1}{(n-m)!}\ell_n(\alpha,\ldots,\alpha,x_1,\ldots,x_m)\ .
	\]
	Let
	\[
	\mathrm{Shift}_\alpha:\MC_\bullet(\g^\alpha)\longrightarrow\MC_\bullet(\g)
	\]
	be the isomorphism of simplicial sets\footnote{Notice that this assignment does not define a morphism of $\SLoo$-algebras. It only makes sense on the spaces of Maurer--Cartan elements.} given by
	$$\beta\in\g^\alpha\longmapsto\alpha+\beta\in\g\ .$$
	Then the following diagram commutes
	\begin{center}
		\begin{tikzpicture}
			\node (a) at (0,0){$\MC_\bullet(\g^\alpha)$};
			\node (b) at (4,0){$\MC_\bullet(\g)$};
			\node (c) at (0,-2){$\MC_\bullet(\h^\alpha)$};
			\node (d) at (4,-2){$\MC_\bullet(\h)$};
			
			\draw[->] (a) to node[above]{$\mathrm{Shift}_\alpha$} (b);
			\draw[->] (a) to node[left]{$\Phi^\alpha$} (c);
			\draw[->] (b) to node[right]{$\Phi$} (d);
			\draw[->] (c) to node[above]{$\mathrm{Shift}_{\MC(\Phi)(\alpha)}$} (d);
		\end{tikzpicture}
	\end{center}
	where
	$$\MC_\bullet(\Phi^\alpha)(\beta)\coloneqq \sum_{k\ge1}\phi^\alpha_k(\beta^{\otimes k})$$
	and
	$$\phi^\alpha_k(\beta_1\otimes\cdots\otimes\beta_k)\coloneqq\sum_{j\ge0}\frac{1}{j!}\phi_{k+j}(\alpha^{\otimes j}\otimes\beta_1\otimes\ldots\otimes\beta_k)$$
	is the twist of $\Phi$ by the Maurer--Cartan element $\alpha$. Here, we identified $\alpha\in\g$ with $\alpha\otimes1\in\g\otimes\Omega_\bullet$.
\end{lemma}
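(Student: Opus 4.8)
The statement is an essentially formal consequence of the definitions of twisting and of the action of $\infty$-morphisms on Maurer--Cartan elements, so the plan is to verify the commutativity of the square by a direct computation, checking separately that the two maps $\mathrm{Shift}_\alpha$ are well-defined isomorphisms of simplicial sets and that $\Phi^\alpha$ is an $\infty$-morphism of twisted $\SLoo$-algebras. First I would recall that $\g^\alpha$ is indeed a proper complete $\SLoo$-algebra: the relations for the twisted brackets $\ell^\alpha_n$ follow from the $\SLoo$-relations for $\g$ together with the Maurer--Cartan equation $dx + \sum_{n\ge2}\frac{1}{n!}\ell_n(x,\dots,x)=0$ satisfied by $\alpha$, exactly as in the classical case of twisting a (shifted) homotopy Lie algebra by a Maurer--Cartan element. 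The completeness of $\g$ guarantees that all the infinite sums defining $d^\alpha$, $\ell^\alpha_m$, and $\phi^\alpha_k$ converge, since plugging in copies of $\alpha\in\F_1\g$ moves one up the filtration.

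The second step is to check that $\mathrm{Shift}_\alpha\colon\MC_\bullet(\g^\alpha)\to\MC_\bullet(\g)$, $\beta\mapsto\alpha+\beta$, is a well-defined bijection on each simplicial level, and natural in the simplicial direction. Here one works level-wise: an element of $\MC_n(\g^\alpha)$ is a Maurer--Cartan element of $\g^\alpha\,\widehat\otimes\,\Omega_n$ (with $\alpha$ viewed as the constant element $\alpha\otimes 1$), and one expands the Maurer--Cartan equation for $\alpha+\beta$ in $\g\,\widehat\otimes\,\Omega_n$:
\[
d(\alpha+\beta) + \sum_{n\ge2}\frac{1}{n!}\ell_n(\alpha+\beta,\dots,\alpha+\beta)
= \Big(d\alpha + \sum_{n\ge2}\tfrac{1}{n!}\ell_n(\alpha,\dots,\alpha)\Big) + \Big(d^\alpha\beta + \sum_{m\ge2}\tfrac{1}{m!}\ell^\alpha_m(\beta,\dots,\beta)\Big),
\]
where the first parenthesis vanishes because $\alpha\in\MC(\g)$ and the second is precisely the Maurer--Cartan equation for $\beta$ in $\g^\alpha$; the grouping of terms by the number of $\alpha$-slots is what produces the twisted operations. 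Since the shift is affine with inverse $\gamma\mapsto\gamma-\alpha$, it is a bijection, and it commutes with the face and degeneracy maps of $\Omega_\bullet$ because $\alpha\otimes 1$ is sent to $\alpha\otimes 1$ by all of them. The same argument applied to $\h$ and the Maurer--Cartan element $\MC(\Phi)(\alpha)$ gives the bottom horizontal map.

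The final and main step is to verify $\mathrm{Shift}_{\MC(\Phi)(\alpha)}\circ\MC_\bullet(\Phi^\alpha) = \Phi\circ\mathrm{Shift}_\alpha$ on an arbitrary $n$-simplex $\beta\in\MC_n(\g^\alpha)$. Chasing the diagram, the right-then-top composite sends $\beta$ to
\[
\MC(\Phi)(\alpha) + \MC_\bullet(\Phi^\alpha)(\beta)
= \MC(\Phi)(\alpha) + \sum_{k\ge1}\frac{1}{k!}\,\phi^\alpha_k(\beta^{\otimes k})
= \MC(\Phi)(\alpha) + \sum_{k\ge1}\sum_{j\ge0}\frac{1}{k!\,j!}\,\phi_{k+j}(\alpha^{\otimes j}\otimes\beta^{\otimes k}),
\]
(being slightly careful about symmetrization factors $\frac{1}{k!}$ for the $\infty$-morphism $\Phi^\alpha$), while the top-then-right composite sends $\beta$ to $\MC(\Phi)(\alpha+\beta) = \sum_{N\ge1}\frac{1}{N!}\phi_N\big((\alpha+\beta)^{\otimes N}\big)$. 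Expanding $(\alpha+\beta)^{\otimes N}$ multinomially and reindexing $N = j+k$ shows the two expressions agree term by term, the $k=0$ part of the double sum reconstituting $\MC(\Phi)(\alpha)$. I expect the only genuine subtlety to be bookkeeping: keeping the symmetry/combinatorial coefficients $\tfrac{1}{k!\,j!}$ consistent between the definition of a twisted $\infty$-morphism and the multinomial expansion, and confirming the absence of Koszul signs — which is exactly why we have passed to the shifted setting $\SLoo$, where the brackets $\ell_n$ and the components $\phi_n$ are symmetric of degree $-1$, so in degree $0$ (where Maurer--Cartan elements live) all signs are trivial. Once the level-wise identity is established, compatibility with the simplicial structure is automatic since every map in the square is $\Omega_\bullet$-linear and fixes $\alpha\otimes 1$, completing the proof.
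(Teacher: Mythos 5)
Your proposal is correct and coincides with the paper's approach: the paper explicitly omits this proof as ``a straightforward unwinding of definitions,'' and your level-wise multinomial expansion of $\MC(\Phi)(\alpha+\beta)=\sum_{N\ge1}\frac{1}{N!}\phi_N\bigl((\alpha+\beta)^{\otimes N}\bigr)$, reindexed by $N=j+k$ so that the $k=0$ terms give $\MC(\Phi)(\alpha)$ and the $k\ge1$ terms give $\sum_{k\ge1}\frac{1}{k!}\phi_k^\alpha(\beta^{\otimes k})$, is exactly that unwinding. Your parenthetical caution about the factor $\frac{1}{k!}$ is well placed: the formula for $\MC_\bullet(\Phi^\alpha)(\beta)$ as printed in the statement omits it, but the convention you use (with $\frac{1}{k!}$, matching the paper's earlier definition of $\MC(\Phi)$) is the one for which the square actually commutes.
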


The proof is a straightforward unwinding of definitions, and will be omitted here. A conceptual approach to this kind of twisting procedures is given in \cite{dsv18}. Given $\SLoo$-algebras $\g,\h$ with basepoints $\alpha\in\MC(\g)$ and $\MC(\Phi)(\alpha)\in\MC(\h)$, then the shifts in the lemma above change the basepoints to $0\in\g$ and $0\in\h$.

\subsection{The induction step}\label{subsect:induction step DR}

We are now set to do the induction step. Let $n\ge2$, then we denote
\[
\g^{(n)}\coloneqq\g/\F_n\g\ ,
\]
and similarly for $\h$, and also
\[
\Phi^{(n)}:\g^{(n)}\rightsquigarrow\h^{(n)}
\]
the induced map, which is well defined because $\Phi$ is filtered.

\begin{proposition}\label{prop:induction step DR}
	For each $n\ge2$, the filtered quasi-isomorphism $\Phi$ induces a weak equivalence of simplicial sets
	\[
	\MC_\bullet(\Phi^{(n)}):\MC_\bullet(\g^{(n)})\longrightarrow\MC_\bullet(\h^{(n)})\ .
	\]
\end{proposition}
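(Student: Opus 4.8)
The plan is to prove this by induction on $n$, using the short exact sequences of filtration layers together with the already-established cases. The base case $n=2$ is immediate: here $\g^{(2)} = \g/\F_2\g$ and $\h^{(2)}$ are abelian $\SLoo$-algebras (all brackets land in $\F_2$, hence vanish in the quotient), equipped with the trivial filtration of \eqref{eq:trivial filtration on abelian Loo-algebras}, and $\Phi^{(2)}$ reduces to its linear term $\phi_1$, which is a quasi-isomorphism. So \cref{prop:abelian Loo-alg DR} applies directly and gives that $\MC_\bullet(\Phi^{(2)})$ is a weak equivalence.

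For the induction step, assume the statement holds for some $n \geq 2$, and consider $n+1$. By the step of \cref{lemma:shift to zero DR} we may assume the relevant basepoint is $0$, so that we only need to check that $\MC_\bullet(\Phi^{(n+1)})$ induces isomorphisms on all higher homotopy groups based at $0$ and a bijection on $\pi_0$ — the latter being provided by \cref{thm:bijection of pi0 (DR)}. Now I would exploit the short exact sequence of $\SLoo$-algebras
\[
0 \longrightarrow \F_n\g/\F_{n+1}\g \longrightarrow \g^{(n+1)} \longrightarrow \g^{(n)} \longrightarrow 0\ ,
\]
in which the kernel $\F_n\g/\F_{n+1}\g$ is an abelian $\SLoo$-algebra (all brackets involving it land in higher filtration degree). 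The projection $\g^{(n+1)} \to \g^{(n)}$ is a filtered surjection, so by \cref{thm:filtered surjections induce fibrations of MC} the induced map $\MC_\bullet(\g^{(n+1)}) \to \MC_\bullet(\g^{(n)})$ is a fibration of simplicial sets, and one identifies its fibre over $0$ with $\MC_\bullet(\F_n\g/\F_{n+1}\g)$ — this uses that twisting by $0$ is trivial, which is why the basepoint reduction was needed. The same holds for $\h$, and $\Phi^{(n+1)}$ induces a morphism of fibration sequences: on the base it restricts to $\MC_\bullet(\Phi^{(n)})$, on the fibre to $\MC_\bullet$ of the abelian quasi-isomorphism $\F_n\phi_1/\F_{n+1}\phi_1$.

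Then I would run the comparison of the two long exact sequences of homotopy groups (\cref{thm:LES of a fibration}) associated to these two fibrations. The map on the base is a weak equivalence by the inductive hypothesis; the map on the fibre is a weak equivalence by \cref{prop:abelian Loo-alg DR}; hence by the five lemma the map $\MC_\bullet(\Phi^{(n+1)})$ induces isomorphisms on $\pi_k$ for all $k \geq 1$ and at every basepoint (after shifting, every basepoint is of the form handled above). Combined with the bijection on $\pi_0$ from \cref{thm:bijection of pi0 (DR)}, this shows $\MC_\bullet(\Phi^{(n+1)})$ is a weak equivalence, completing the induction. The main obstacle I anticipate is the careful identification of the fibre of $\MC_\bullet(\g^{(n+1)}) \to \MC_\bullet(\g^{(n)})$ over the zero basepoint with $\MC_\bullet(\F_n\g/\F_{n+1}\g)$ compatibly with $\Phi$: one must check that the Maurer--Cartan elements of $\g^{(n+1)}\otimes\Omega_\bullet$ lying over $0 \in \MC_\bullet(\g^{(n)})$ are exactly the $\Omega_\bullet$-points of the (abelian) kernel, which is clean precisely because the kernel is a square-zero extension and the basepoint is $0$ — this is where \cref{lemma:shift to zero DR} earns its keep.
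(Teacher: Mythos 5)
Your overall strategy is exactly the paper's: base case via \cref{prop:abelian Loo-alg DR}, basepoint reduction via \cref{lemma:shift to zero DR}, $\pi_0$ via \cref{thm:bijection of pi0 (DR)}, and then a comparison of the two fibration sequences
\[
\MC_\bullet(\F_n\g/\F_{n+1}\g)\longrightarrow\MC_\bullet(\g^{(n+1)})\longrightarrow\MC_\bullet(\g^{(n)})
\]
coming from \cref{thm:filtered surjections induce fibrations of MC}, with the map on fibres a weak equivalence because the kernel is abelian and $\Phi$ restricts there to its strict linear part $\phi_1$. All of that is correct and matches the paper.

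The gap is in the sentence ``hence by the five lemma the map induces isomorphisms on $\pi_k$ for all $k\ge1$.'' This is fine for $k\ge2$, but it does not go through as stated for $k=1$. The five-term portion of the long exact sequence needed there is
\[
\pi_2\MC_\bullet(\g^{(n)})\to\pi_1\MC_\bullet\left(\tfrac{\F_n\g}{\F_{n+1}\g}\right)\to\pi_1\MC_\bullet(\g^{(n+1)})\to\pi_1\MC_\bullet(\g^{(n)})\to\pi_0\MC_\bullet\left(\tfrac{\F_n\g}{\F_{n+1}\g}\right),
\]
and the rightmost entry is only a pointed set, not a group; exactness at $\pi_1$ of the base is the weak pointed-set exactness of \cref{thm:LES of a fibration} (refined by the $\pi_1$-action on $\pi_0$), and the comparison map on that last entry is only a bijection of sets. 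The standard five lemma is not applicable in this situation, and the paper explicitly flags this: it replaces the appeal to the five lemma at $k=1$ by a hand-made obstruction-theoretic diagram chase, proving surjectivity and injectivity of $\pi_1(\MC_\bullet(\Phi^{(n+1)}))$ separately by chasing elements through both long sequences. Your argument needs that chase (or a correctly stated ``five lemma for exact sequences of groups and pointed sets'') spelled out; as written, the $k=1$ case is not justified.
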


\begin{proof}
	The statement is true for $n=2$ by \cref{prop:abelian Loo-alg DR}, because the $\SLoo$-algebras $\g^{(2)}$ and $\h^{(2)}$ are abelian. Now suppose that the statement is true up to some $n$. We will prove that then it also holds for $n+1$.
	
	\medskip
	
	Consider the commutative diagram
	\begin{center}
		\begin{tikzpicture}
			\node (a) at (0,2.5){$0$};
			\node (b) at (2.5,2.5){$\F_n\g/\F_{n+1}\g$};
			\node (c) at (5.5,2.5){$\g^{(n+1)}$};
			\node (d) at (8.5,2.5){$\g^{(n)}$};
			\node (e) at (11,2.5){$0$};
			
			\node (A) at (0,0){$0$};
			\node (B) at (2.5,0){$\F_n\h/\F_{n+1}\h$};
			\node (C) at (5.5,0){$\h^{(n+1)}$};
			\node (D) at (8.5,0){$\h^{(n)}$};
			\node (E) at (11,0){$0$};
			
			\draw[->] (a) to (b);
			\draw[->] (b) to (c);
			\draw[->] (c) to (d);
			\draw[->] (d) to (e);
			
			\draw[->] (A) to (B);
			\draw[->] (B) to (C);
			\draw[->] (C) to (D);
			\draw[->] (D) to (E);
			
			\draw[->,line join=round,decorate,decoration={zigzag,segment length=4,amplitude=.9,post=lineto,post length=2pt}] (b) to (B);
			\draw[->,line join=round,decorate,decoration={zigzag,segment length=4,amplitude=.9,post=lineto,post length=2pt}] (c) to node[right]{$\Phi^{(n+1)}$} (C);
			\draw[->,line join=round,decorate,decoration={zigzag,segment length=4,amplitude=.9,post=lineto,post length=2pt}] (d) to node[right]{$\Phi^{(n)}$} (D);
		\end{tikzpicture}
	\end{center}
	where the horizontal maps are the natural inclusions and projections, and the vertical ones are all induced by $\Phi$. Notice that the leftmost vertical arrow is in fact a strict morphism, induced by the linear part $\phi_1$ of $\Phi$. In particular, it is a quasi-isomorphism between abelian $\SLoo$-algebras, and thus induces a weak equivalence of simplicial sets between the respective Maurer--Cartan spaces by \cref{prop:abelian Loo-alg DR}. The $\infty$-morphism $\Phi^{(n)}$ also induces a weak equivalence between Maurer--Cartan spaces by induction hypothesis. Therefore, applying the Maurer--Cartan functor we have
	\begin{center}
		\begin{tikzpicture}
		\node (b) at (0,2.5){$\MC_\bullet(\F_n\g/\F_{n+1}\g)$};
		\node (c) at (4,2.5){$\MC_\bullet(\g^{(n+1)})$};
		\node (d) at (8,2.5){$\MC_\bullet(\g^{(n)})$};
		
		\node (B) at (0,0){$\MC_\bullet(\F_n\h/\F_{n+1}\h)$};
		\node (C) at (4,0){$\MC_\bullet(\h^{(n+1)})$};
		\node (D) at (8,0){$\MC_\bullet(\h^{(n)})$};
		
		\draw[->] (b) to (c);
		\draw[->] (c) to (d);
		
		\draw[->] (B) to (C);
		\draw[->] (C) to (D);
		
		\draw[->] (b) to node[sloped,below]{$\sim$} (B);
		\draw[->] (c) to node[right]{$\MC_\bullet(\Phi^{(n+1)})$} (C);
		\draw[->] (d) to node[sloped,below]{$\sim$} node[right]{$\MC_\bullet(\Phi^{(n)})$} (D);
		\end{tikzpicture}
	\end{center}
	Since the canonical projection $\g^{(n+1)}\to\g^{(n)}$ is surjective, the induced map on the Maurer--Cartan spaces is a fibration of simplicial sets by \cref{thm:filtered surjections induce fibrations of MC}, and the term on the left in the diagram above is easily checked to be the fibre above $0$. The same thing holds for $\h$. We pass to the long exact sequences of homotopy groups --- which is in fact exact everywhere except at $\pi_0$ --- and using the $5$-lemma conclude that $\MC_\bullet(\Phi^{(n+1)})$ induces bijections between the respective $k$th homotopy groups, for all $k\ge2$. The case $k=0$ was covered by \cref{thm:bijection of pi0 (DR)}. We are left to check the case $k=1$. The relevant part of the long sequence is
	\begin{center}
		\small
		\begin{tikzpicture}
			\node (a) at (0,2.3){$\pi_2\MC_\bullet(\g^{(n)})$};
			\node (b) at (3.1,2.5){$\pi_1\MC_\bullet\left(\frac{\F_n\g}{\F_{n+1}\g}\right)$};
			\node (c) at (6.2,2.3){$\pi_1\MC_\bullet(\g^{(n+1)})$};
			\node (d) at (9.3,2.3){$\pi_1\MC_\bullet(\g^{(n)})$};
			\node (e) at (12.4,2.3){$\pi_0\MC_\bullet\left(\frac{\F_n\g}{\F_{n+1}\g}\right)$};
			
			\node (A) at (0,0){$\pi_2\MC_\bullet(\h^{(n)})$};
			\node (B) at (3.1,0){$\pi_1\MC_\bullet\left(\frac{\F_n\h}{\F_{n+1}\h}\right)$};
			\node (C) at (6.2,0){$\pi_1\MC_\bullet(\h^{(n+1)})$};
			\node (D) at (9.3,0){$\pi_1\MC_\bullet(\h^{(n)})$};
			\node (E) at (12.4,0){$\pi_0\MC_\bullet\left(\frac{\F_n\h}{\F_{n+1}\h}\right)$};
			
			\draw[->] (a) to node[above]{$\partial$} (b);
			\draw[->] (b) to (c);
			\draw[->] (c) to (d);
			\draw[->] (d) to node[above]{$\partial$} (e);
			
			\draw[->] (A) to node[above]{$\partial$} (B);
			\draw[->] (B) to (C);
			\draw[->] (C) to (D);
			\draw[->] (D) to node[above]{$\partial$} (E);
			
			\draw[->] (a) to node[sloped, above]{$\cong$} (A);
			\draw[->] (b) to node[sloped, above]{$\cong$} (B);
			\draw[->] (c) to (C);
			\draw[->] (d) to node[sloped, above]{$\cong$} (D);
			\draw[->] (e) to node[sloped, above]{$\cong$} (E);
		\end{tikzpicture}
		\normalsize
	\end{center}
	We need to prove that the central map is an isomorphism, but we cannot directly use the $5$-lemma to do it as the rightmost terms are just sets, not groups, and the map between them is only a bijection of sets. The proof, however, is an obstruction theoretical argument similar to the proof of the $5$-lemma.
	
	\medskip
	
	\emph{The map is surjective.} Let $y\in\pi_1\MC_\bullet(\h^{(n+1)})$. Denote by $\overline{y}$ its image in $\pi_1\MC_\bullet(\h^{(n+1)})$. Since the relevant vertical map is an isomorphism, there exists an $\overline{x}\in\pi_1\MC_\bullet(\g^{(n)})$ mapping to $\overline{y}$. Moreover, since $\overline{y}$ maps to $0\in\pi_0\MC_\bullet\left(\frac{\F_n\h}{\F_{n+1}\h}\right)$, then $\overline{x}$ maps to $0\in\pi_0\MC_\bullet\left(\frac{\F_n\g}{\F_{n+1}\g}\right)$. By ``exactness" of the sequence, there exists an $x\in\pi_1\MC_\bullet(\g^{(n+1)})$ mapping to $\overline{x}$. Denote by $y'$ the image of $x$ in $\pi_1\MC_\bullet(\h^{(n+1)})$. It is not necessarily equal to $y$. However, we have that $y'y^{-1}$ maps to $0\in\pi_1\MC_\bullet(\h^{(n)})$, and thus there exists an element $z\in\pi_1\MC_\bullet\left(\frac{\F_n\g}{\F_{n+1}\g}\right)$ mapping to it by doing first the vertical arrow, and then the horizontal arrow. Denote by $x'\in\pi_1\MC_\bullet(\g^{(n+1)})$ the image of $z$ under the horizontal arrow. Then $(x')^{-1}x$ maps to $y$, proving that the middle vertical map is surjective.
	
	\medskip
	
	\emph{The map is injective.} Suppose $x\in\pi_1\MC_\bullet(\g^{(n+1)})$ maps to $0\in\pi_1\MC_\bullet(\h^{(n+1)})$. Then, since the map $\pi_1\MC_\bullet(\g^{(n)})\to\pi_1\MC_\bullet(\h^{(n)})$ is an isomorphism, it follows that $x$ maps to $0\in\pi_1\MC_\bullet(\g^{(n)})$, and thus that	there exists $y\in\pi_1\MC_\bullet\left(\frac{\F_n\g}{\F_{n+1}\g}\right)$ mapping to $x$. Let $\overline{y}\in\pi_1\MC_\bullet\left(\frac{\F_n\g}{\F_{n+1}\g}\right)$ be the image of $y$. Then $\overline{y}$ maps to $0\in\pi_1\MC_\bullet(\h^{(n+1)})$, and it follows that there exists $\overline{z}\in\pi_2\MC_\bullet(\h^{(n)})$ mapping to it. Let $z\in\pi_2\MC_\bullet(\g^{(n)})$ be the preimage of $\overline{z}$. Then $z$ maps to $y$, and by exactness of the long sequence, it follows that $x=0$, concluding the proof.
\end{proof}

\subsection{Conclusion of the proof}

The only step missing for the proof of \cref{thm:Dolgushev-Rogers} is a passage to the limit.

\medskip

\cref{prop:induction step DR}, together with all we have said before, shows that $\MC_\bullet(\Phi^{(n)})$ is a weak equivalence for all $n\ge2$. Therefore, we have the following commutative diagram:
\begin{center}
	\begin{tikzpicture}
		\node (a) at (0,0){$\vdots$};
		\node (b) at (3,0){$\vdots$};
		\node (c) at (0,-2){$\MC_\bullet(\g^{(4)})$};
		\node (d) at (3,-2){$\MC_\bullet(\h^{(4)})$};
		\node (e) at (0,-4){$\MC_\bullet(\g^{(3)})$};
		\node (f) at (3,-4){$\MC_\bullet(\h^{(3)})$};
		\node (g) at (0,-6){$\MC_\bullet(\g^{(2)})$};
		\node (h) at (3,-6){$\MC_\bullet(\h^{(2)})$};
		
		\draw[->] (c) to node[above]{$\sim$} (d);
		\draw[->] (e) to node[above]{$\sim$} (f);
		\draw[->] (g) to node[above]{$\sim$} (h);
		\draw[->] (a) to (c);
		\draw[->] (c) to (e);
		\draw[->] (e) to (g);
		\draw[->] (b) to (d);
		\draw[->] (d) to (f);
		\draw[->] (f) to (h);
	\end{tikzpicture}
\end{center}
where all objects are Kan complexes, all horizontal arrows are weak equivalences, and all vertical arrows are fibrations of simplicial sets by \cref{thm:filtered surjections induce fibrations of MC}. It follows that the collection of horizontal arrows defines a weak equivalence between fibrant objects in the model category of tower of simplicial sets, see \cite[Sect. VI.1]{GoerssJardine}. The functor from towers of simplicial sets to simplicial sets given by taking the limit is right adjoint to the constant tower functor, which trivially preserves cofibrations and weak equivalences. Thus, the constant tower functor is a left Quillen functor, and it follows that the limit functor is a right Quillen functor. In particular, it preserves weak equivalences between fibrant objects. Applying this to the diagram above proves that $\MC_\bullet(\Phi)$ is a weak equivalence.

\section{The formal Kuranishi theorem}\label{sect:formal Kuranishi}

In this section, we present some results of Bandiera \cite{Bandiera}, \cite{ban17}, which in particular give another construction of Getzler's $\infty$-groupoid $\gamma_\bullet(\g)$ for a $\SLoo$-algebra $\g$. In particular, the results presented here will give a very important link between the results of \cref{ch:representing the deformation oo-groupoid} and the theory developed in the present chapter.

\subsection{Complete contractions and the homotopy transfer theorem}\label{subsect:complete contractions and htt}

The first results of \cite{ban17} we present give a categorical point of view on the homotopy transfer theorem. They were originally stated for $\L_\infty$-algebras, but we give a slightly more general version, using the generalized homotopy transfer theorem, see \cref{thm:generalized HTT}.

\medskip

We place ourselves in the context of proper complete chain complexes and algebras, cf. \cref{appendix:filtered stuff}.

\begin{definition}
	A \emph{complete contraction}\index{Complete!contraction} is a contraction of chain complexes
	\begin{center}
		\begin{tikzpicture}
		\node (a) at (0,0){$V$};
		\node (b) at (2,0){$W$};
		
		\draw[->] (a)++(.3,.1)--node[above]{\mbox{\tiny{$p$}}}+(1.4,0);
		\draw[<-,yshift=-1mm] (a)++(.3,-.1)--node[below]{\mbox{\tiny{$i$}}}+(1.4,0);
		\draw[->] (a) to [out=-150,in=150,looseness=4] node[left]{\mbox{\tiny{$h$}}} (a);
		\end{tikzpicture}
	\end{center}
	such that $(V,\F_\bullet V)$ is a proper complete chain complex, and the chain maps $h$ and $ip$ are filtered.
\end{definition}

Given such a complete contraction, one defines the filtration
\[
\F_nW\coloneqq i^{-1}(\F_nV)
\]
on $W$.

\begin{lemma}
	The filtration $\F_\bullet W$ is unique with respect to the property that both $p$ and $i$ are filtered morphisms. The filtered chain complex $(W,\F_\bullet W)$ is proper complete.
\end{lemma}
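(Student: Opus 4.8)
The plan is to verify the two assertions in turn, both being essentially bookkeeping with filtrations. First I would establish uniqueness of $\F_\bullet W$. Suppose $\F_\bullet' W$ is any filtration on $W$ making both $p$ and $i$ filtered, i.e.\ $p(\F_n V)\subseteq\F_n' W$ and $i(\F_n' W)\subseteq\F_n V$. The second inclusion gives $\F_n' W\subseteq i^{-1}(\F_n V)=\F_n W$. For the reverse inclusion, let $w\in\F_n W$, so $i(w)\in\F_n V$. Since $pi=1_W$, we have $w=pi(w)\in p(\F_n V)\subseteq\F_n' W$. Hence $\F_n' W=\F_n W$, proving uniqueness. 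In particular this also shows $\F_n W=p(\F_n V)$, which will be convenient below.

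Next I would check that $(W,\F_\bullet W)$ is proper complete, meaning (cf.\ \cref{appendix:filtered stuff}) that the filtration is exhaustive, Hausdorff (i.e.\ $\bigcap_n\F_n W=0$, the ``properness''), and that $W$ is complete with respect to it, i.e.\ $W\cong\lim_n W/\F_n W$. Exhaustiveness: given $w\in W$, $i(w)\in V=\colim_n\F_n V$ since $(V,\F_\bullet V)$ is complete, hence proper complete hence exhaustively filtered, so $i(w)\in\F_n V$ for some $n$, giving $w=pi(w)\in\F_n W$. Properness: if $w\in\bigcap_n\F_n W$ then $i(w)\in\bigcap_n\F_n V=0$, and since $i$ is injective (as $pi=1_W$), $w=0$. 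Completeness: here I would use the description $\F_n W=p(\F_n V)$ together with the fact that $i$ splits the inclusion, so $W$ is a filtered retract of $V$; since $\lim$ is a limit it preserves retracts, and the retraction diagram $i,p$ is compatible with the filtrations by construction, so $W\cong\lim_n W/\F_n W$ because $V\cong\lim_n V/\F_n V$. Concretely, the induced maps $\bar p\colon V/\F_n V\to W/\F_n W$ and $\bar\imath\colon W/\F_n W\to V/\F_n V$ assemble to maps of towers with $\bar p\,\bar\imath=1$, and passing to limits exhibits $\lim_n W/\F_n W$ as a retract of $\lim_n V/\F_n V=V$ sitting inside $V$ exactly as $W$ does.

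The only mildly delicate point — and the one I expect to be the main obstacle — is the completeness claim: one must make sure the retract argument is carried out at the level of \emph{towers} of quotients and that the filtration quotients $V/\F_n V\to W/\F_n W$ really do split compatibly, rather than merely splitting $V\to W$ as chain complexes. This is where the hypothesis that $h$ and $ip$ are \emph{filtered} is used: it guarantees that the contraction data descends to each quotient $V/\F_n V$, so that $W/\F_n W$ is genuinely a retract of $V/\F_n V$ in $\chain$ for every $n$, and the limit argument then goes through verbatim. Everything else is a direct unwinding of the definition of proper complete chain complex from \cref{appendix:filtered stuff}.
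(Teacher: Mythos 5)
Your argument is essentially the paper's: uniqueness follows from $pi=1_W$ exactly as you do it, and completeness is obtained by observing that $i$ and $p$ descend to the quotients $W/\F_nW\to V/\F_nV\to W/\F_nW$ with $\bar p\,\bar\imath=1$ and then passing to the limit (the paper phrases this as $W=p(V)\cong p(\lim_n V/\F_nV)\cong\lim_n W/\F_nW$ rather than as a retract of towers, but it is the same mechanism).

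One point needs correcting: you have misread the word \emph{proper}. In this text (cf.\ \cref{appendix:filtered stuff}) the filtrations are \emph{descending}, $W=\F_0W\supseteq\F_1W\supseteq\cdots$, and ``proper'' means $\F_1W=W$; it does not mean Hausdorff, and ``exhaustive'' is vacuous here since $\F_0W=W$ by definition. Your Hausdorffness argument ($\bigcap_n\F_nW=0$ via injectivity of $i$) proves a true statement, but it is a \emph{consequence} of completeness in this framework, not the meaning of properness. What is actually required is the trivial observation $\F_1W=i^{-1}(\F_1V)=i^{-1}(V)=W$, using that $V$ is proper. A second, minor point: for this lemma only the hypothesis that $ip$ is filtered is needed (it gives that $p$ is filtered for $\F_nW=i^{-1}(\F_nV)$, whence the quotient maps exist); the filteredness of $h$ plays no role here.
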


\begin{proof}
	The fact that $i,p$ are both filtered (with respect to $\F_\bullet W$) is obvious. Conversely, let $\F_\bullet W$ be any filtration of $W$ such that $i,p$ are filtered. Then for every $x\in\F_nW$ we must have $i(x)\in\F_nV$. Conversely, if $x\in i^{-1}(\F_nV)$, then $i(x)\in\F_nV$, and thus
	\[
	x = pi(x)\in\F_nW\ .
	\]
	The first statement follows.
	
	\medskip
	
	To show that $W$ is complete, notice that since they are filtered, the maps $i$ and $p$ induce chain maps
	\[
	i:W/\F_nW\longrightarrow V/\F_nV\qquad\text{and}\qquad p:V/\F_nV\longrightarrow W/\F_nW
	\]
	at every level of the filtration, which still satisfy $pi=1_{W/\F_nW}$. In particular, $p$ is surjective at all levels of the filtration. Then
	\[
	W = p(V)\cong p\left(\lim_n V/\F_nV\right)\cong \lim_nW/\F_nW\ ,
	\]
	where in the last identification we used the fact that $p$ is filtered to switch it over the limit.
\end{proof}

\begin{definition}
	A \emph{morphism $\phi$ between complete contractions} is
	\begin{center}
		\begin{tikzpicture}
		\node (a) at (0,0){$V$};
		\node (b) at (2,0){$W$};
		
		\draw[->] (a)++(.3,.1)--node[above]{\mbox{\tiny{$p$}}}+(1.4,0);
		\draw[<-,yshift=-1mm] (a)++(.3,-.1)--node[below]{\mbox{\tiny{$i$}}}+(1.4,0);
		\draw[->] (a) to [out=-150,in=150,looseness=4] node[left]{\mbox{\tiny{$h$}}} (a);
		
		\node (A) at (0,-2){$V'$};
		\node (B) at (2,-2){$W'$};
		
		\draw[->] (A)++(.3,.1)--node[above]{\mbox{\tiny{$p'$}}}+(1.4,0);
		\draw[<-,yshift=-1mm] (A)++(.3,-.1)--node[below]{\mbox{\tiny{$i'$}}}+(1.4,0);
		\draw[->] (A) to [out=-150,in=150,looseness=4] node[left]{\mbox{\tiny{$h'$}}} (A);
		
		\draw[->] (a) to node[left]{$\phi$} (A);
		\end{tikzpicture}
	\end{center}
	such that $\phi:V\to V'$ is filtered, and such that $h'\phi=\phi h$. The composite of two such morphisms is simply given by composing the vertical maps of the diagram above. This notion of morphism makes complete contractions into a category, which we denote by $\cctr$.
\end{definition}

There are two obvious ``projection" functors
\[
\pr_{1,2}:\cctr\longrightarrow\cchain
\]
giving back $V$ and $W$ respectively. Their action on morphisms of complete contractions are given by
\[
\pr_1(\phi)\coloneqq\phi:V\longrightarrow V',\quad\text{and}\quad\pr_2(\phi)\coloneqq p'\phi i:W\to W'
\]
respectively. The fact that $\pr_2$ commutes with compositions follows from the fact that in a contraction we have $pi=1_W$.

\medskip

The homotopy transfer theorem is well behaved with respect to complete contractions.

\begin{theorem}
	Let $\C$ be a cooperad, and suppose we have a complete contraction
	\begin{center}
		\begin{tikzpicture}
		\node (a) at (0,0){$A$};
		\node (b) at (2,0){$B$};
		
		\draw[->] (a)++(.3,.1)--node[above]{\mbox{\tiny{$p$}}}+(1.4,0);
		\draw[<-,yshift=-1mm] (a)++(.3,-.1)--node[below]{\mbox{\tiny{$i$}}}+(1.4,0);
		\draw[->] (a) to [out=-150,in=150,looseness=4] node[left]{\mbox{\tiny{$h$}}} (a);
		\end{tikzpicture}
	\end{center}
	where $A$ is a proper complete $\Cobar\C$-algebra with respect to the given filtration on $A$. Then the transfered structure makes $B$ in a proper complete $\Cobar\C$-algebra, and the induced $\infty$-quasi-isomorphisms $i_\infty$ and $p_\infty$ are filtered $\infty$-quasi-isomorphisms. Moreover, if we have two such contractions and a morphism $\phi$ of complete contractions which is a strict morphism of $\Cobar\C$-algebras, then $\pr_2(\phi)$ also is a strict morphism of $\Cobar\C$-algebras.
\end{theorem}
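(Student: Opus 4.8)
The plan is to reduce everything to the explicit formulas for the transferred structure and the induced $\infty$-morphisms given in \cref{subsect:HTT}, and to check that each formula is ``filtered'' because it is built out of the filtered maps $h$ and $ip$. First I would recall that a $\Cobar\C$-algebra structure on $A$ is a twisting morphism $\varphi_A\in\Tw(\C,\End_A)$, that the transferred structure on $B$ is $\varphi_B = \vdl_B f_A$ with explicit formula
\[
\varphi_B = \left(\C\xrightarrow{\Delta_{\C}^\mathrm{mon}}\T^c(\overline{\C})\xrightarrow{\T^c(s\varphi_A)}\Bar(\End_A)\xrightarrow{\vdl_B}\End_B\right),
\]
and that $\vdl_B(\tau(f)) = s(p\tau^h(f)i^{\otimes n})$, where $\tau^h(f)$ is a composite of the operations $f(v)$ ``glued together'' by copies of $hs^{-1}$. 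The key observation is that each of the maps $h$, $i$, $p$, together with the operations of $A$ (which are filtered, since $A$ is a proper complete $\Cobar\C$-algebra), preserve the filtrations in the appropriate sense, so that any tree composite $\tau^h(f)$ lands in the correct filtration degree; hence the transferred brackets $\varphi_B$ are filtered, which is exactly the statement that $B$ is a proper complete $\Cobar\C$-algebra with respect to the filtration $\F_n B = i^{-1}(\F_n A)$. A small point to spell out is that the convergence of the (a priori infinite) sums defining the transferred structure is guaranteed precisely by this filtered behaviour together with completeness of $B$, exactly as in the discussion of complete contractions above.

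Next I would treat the induced $\infty$-quasi-isomorphisms $i_\infty$ and $p_\infty$. Their explicit formulas (the one for $i_\infty$ displayed in \cref{subsect:HTT}, and the analogous one for $p_\infty$ from \cite[Prop. 10.3.9]{LodayVallette}) are again sums over rooted trees of composites of the maps $h$, $i$, $p$ and the operations of $A$. The same filtered-ness argument shows that each component $(i_\infty)_n$ and $(p_\infty)_n$ is a filtered map, so $i_\infty$ and $p_\infty$ are filtered $\infty$-morphisms; since $(i_\infty)_1 = i$ and $(p_\infty)_1 = p$ are quasi-isomorphisms, they are filtered $\infty$-quasi-isomorphisms. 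Here I would be a little careful about what ``filtered $\infty$-morphism'' means component-wise --- an $\infty$-morphism $\Psi$ is filtered when each $\psi_n\colon \C(n)\otimes_{\S_n} B^{\otimes n}\to A$ (resp. to $B'$) respects the induced filtrations --- and then simply observe that this is inherited from the filtered-ness of the building blocks.

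Finally, for the last assertion, suppose $\phi$ is a morphism of complete contractions from $(A,B,i,p,h)$ to $(A',B',i',p',h')$ which is also a strict morphism of $\Cobar\C$-algebras $A\to A'$. I want to show $\pr_2(\phi) = p'\phi i\colon B\to B'$ is a strict morphism of $\Cobar\C$-algebras. The cleanest way is to use the van der Laan morphism's naturality: the condition $h'\phi = \phi h$ (together with $\phi$ being a chain map commuting with $ip$ in the appropriate sense, and $\phi$ strict) implies that $\phi$ intertwines the two van der Laan morphisms $\vdl_B$ and $\vdl_{B'}$, so that $\vdl_{B'}f_{A'} = (\text{conjugate of }\vdl_B f_A\text{ by }\phi)$; tree by tree one checks $\tau^{h'}(\phi\circ f) = \phi\,\tau^h(f)$ using $h'\phi=\phi h$ repeatedly, and then $p'\,\phi\,\tau^h(f)\,i^{\otimes n} = (p'\phi i)\,\tau^h(f)\,(p'\phi i)^{?}$ --- here one uses that on $B$ the relevant identities $pi=1_B$, $p'i'=1_{B'}$ and $i'\,\pr_2(\phi) = \phi i$ (which follows from $p'i'=1$ and strictness) let one rewrite the transferred structure map of $B'$ pulled back along $\pr_2(\phi)$ as the transferred structure map of $B$ pushed forward. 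Concretely, the transferred $\Cobar\C$-algebra structures are natural in the complete contraction, so $\pr_2$ is a functor to $\Cobar\C$-algebras with strict morphisms, which is exactly the claim. The main obstacle I anticipate is this last bookkeeping step: verifying tree-wise that $\phi$ genuinely intertwines the two transferred structures, i.e. that $\pr_2$ respects composition and sends strict maps to strict maps, requires carefully juggling the identities $pi = 1_B$, $h'\phi=\phi h$, and $i'p'\phi = \phi ip$ inside the nested tree composites $\tau^h(f)$ --- nothing deep, but the combinatorics of signs and of the $hs^{-1}$-insertions needs to be handled with some care.
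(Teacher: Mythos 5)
The paper does not actually prove this statement: it is imported from Bandiera's work \cite[Sect.\ 1]{ban17} (the surrounding text says as much), so there is no in-text proof to compare against. Your argument is the natural one and is essentially correct: by the van der Laan formul\ae{}, every transferred operation and every component of $i_\infty$ and $p_\infty$ is a (finite) sum over trees of composites of the vertex operations $\varphi_A(c)$ (filtered, since $A$ is a proper complete algebra), of $h$ and $ip$ (filtered by definition of a complete contraction), and of $i$ and $p$ (filtered by the very construction $\F_\bullet B = i^{-1}(\F_\bullet A)$); composites of filtered maps are filtered, which gives the first two assertions. Note that no completeness is needed for convergence of the transferred operations themselves: since $\C$ is reduced, the monadic decomposition of each element of $\C$ is a finite sum, so each $\varphi_B(c)$ is already a finite sum of trees --- completeness only matters later, for Maurer--Cartan-type sums.

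Two points in your sketch deserve to be made explicit. First, a filtered $\infty$-quasi-isomorphism requires $\phi_1$ to be a quasi-isomorphism on each $\F_n$, not merely globally; this holds here because $pi=1_B$ and $1_A - ip = dh+hd$ both restrict to $\F_nA$ (as $h$ and $ip$ are filtered), so $i$ and $p$ are mutually inverse quasi-isomorphisms level by level. Second, the bookkeeping you flag in the last step does close up: from $h'\phi=\phi h$ and $\phi$ a chain map one gets $i'p'\phi=\phi\, ip$, hence $(i')^{\otimes n}(p'\phi i)^{\otimes n}=(\phi\, i p i)^{\otimes n}=(\phi i)^{\otimes n}$ and $p'\phi\, ip = p'i'p'\phi = p'\phi$; combined with the tree-wise identity $\tau^{h'}(\varphi_{A'})\,\phi^{\otimes n}=\phi\,\tau^{h}(\varphi_A)$ (induction on $\tau$, using strictness of $\phi$ at each vertex and $h'\phi=\phi h$ at each inner edge and at the root insertion), this yields
\[
(p'\phi i)\, p\,\tau^{h}(\varphi_A)\, i^{\otimes n} \;=\; p'\,\tau^{h'}(\varphi_{A'})\,(i')^{\otimes n}\,(p'\phi i)^{\otimes n}\ ,
\]
which, summed over trees, is exactly the statement that $\pr_2(\phi)=p'\phi i$ is a strict morphism of the transferred $\Cobar\C$-algebras.
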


A couple further compatibility results can be found in \cite[Sect. 1]{ban17}. These imply the following result. Consider the filtered product $\widehat{\Cobar\C}\text{-}\mathsf{alg}\times_{\cchain}\cctr$ given by
\begin{center}
	\begin{tikzpicture}
		\node (a) at (0,0) {$\widehat{\Cobar\C}\text{-}\mathsf{alg}\times_{\cchain}\cctr$};
		\node (b) at (4,0) {$\cctr$};
		\node (c) at (0,-2.5) {$\widehat{\Cobar\C}\text{-}\mathsf{alg}$};
		\node (d) at (4,-2.5) {$\cchain$};
		
		\draw[->] (a) to (b);
		\draw[->] (a) to (c);
		\draw[->] (b) to node[right]{$\pr_1$} (d);
		\draw[->] (c) to node[above]{$(-)^\#$} (d);
	\end{tikzpicture}
\end{center}

\begin{proposition}
	The homotopy transfer theorem gives a functor
	\[
	\widehat{\Cobar\C}\text{-}\mathsf{alg}\times_{\cchain}\cctr\longrightarrow \widehat{\Cobar\C}\text{-}\mathsf{alg}\ .
	\]
\end{proposition}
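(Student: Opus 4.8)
The plan is to write the functor down on the nose and then reduce every verification to the homotopy transfer theorem for complete contractions proved just above. Recall that an object of $\widehat{\Cobar\C}\text{-}\mathsf{alg}\times_{\cchain}\cctr$ is a complete contraction, with structure maps $i,p,h$ between chain complexes $V$ and $W$, together with a proper complete $\Cobar\C$-algebra structure whose underlying proper complete chain complex is precisely the $V$ of the contraction; a morphism is a morphism $\phi$ of complete contractions (so $\phi\colon V\to V'$ is filtered and $h'\phi=\phi h$) which is at the same time a strict morphism of the $\Cobar\C$-algebras carried by the sources. I would define the functor $T$ on objects by equipping $W$ with the filtration $\F_n W\coloneqq i^{-1}(\F_n V)$ and with the $\Cobar\C$-algebra structure obtained by homotopy transfer along the complete contraction, and on morphisms by $T(\phi)\coloneqq\pr_2(\phi)=p'\phi i\colon W\to W'$.

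The first step is to check that $T$ takes values in $\widehat{\Cobar\C}\text{-}\mathsf{alg}$. On objects this is exactly the content of the theorem above: the transferred structure makes $W$ into a proper complete $\Cobar\C$-algebra for the filtration $\F_\bullet W$. On morphisms, $\pr_2(\phi)=p'\phi i$ is filtered because $\phi$, $i$ and $p'$ are, and it is a strict morphism of the transferred $\Cobar\C$-algebras by the ``Moreover'' clause of that theorem, applied to $\phi$, which — by the very definition of the fiber product — is a morphism of complete contractions that is a strict $\Cobar\C$-algebra map on sources. Hence $T(\phi)$ is a morphism in $\widehat{\Cobar\C}\text{-}\mathsf{alg}$.

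The remaining step is functoriality. On underlying chain complexes, $T$ is the composite of the canonical projection $\widehat{\Cobar\C}\text{-}\mathsf{alg}\times_{\cchain}\cctr\to\cctr$ with the functor $\pr_2\colon\cctr\to\cchain$ already established above when $\cctr$ was introduced (it sends $1_V$ to $pi=1_W$ and respects composition, using the identity $i'p'\phi=\phi\,ip$ — a consequence of $\phi$ being a chain map with $h'\phi=\phi h$ — followed by $pi=1_W$). Since a strict morphism of $\Cobar\C$-algebras is determined by its underlying chain map, and both $T(\psi)T(\phi)$ and $T(\psi\phi)$ are strict morphisms of the relevant transferred algebras sharing this underlying chain map, they coincide; similarly $T$ preserves identities, so $T$ is a functor. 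The bulk of the work here is carried by the theorem above, and ultimately by the generalized homotopy transfer theorem; the only genuinely new point, and the one I would be most careful about, is the compatibility of the two descriptions of $T$ on arrows — that the chain map produced by functoriality of $\pr_2$ really is the same map as the strict $\Cobar\C$-algebra morphism furnished by transfer. Once that identification is made there is no obstacle beyond routine bookkeeping.
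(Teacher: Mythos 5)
Your proof is correct and is exactly the verification the paper delegates to the compatibility results of Bandiera cited just above the statement: define the functor via homotopy transfer on objects and via $\pr_2(\phi)=p'\phi i$ on arrows, use the preceding theorem for well-definedness, and reduce functoriality to the identity $i'p'\phi=\phi\, ip$ (which follows from $h'\phi=\phi h$ and the contraction identities) together with $pi=1_W$. Your closing observation — that equality of the two composites need only be checked on underlying chain maps because strict morphisms are determined by them — correctly disposes of the one point requiring care.
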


\subsection{The formal Kuranishi theorem}\label{subsect:formal Kuranishi theorem}

We specialize now to $\SLoo$-algebras. Suppose we have a complete contraction
\begin{center}
	\begin{tikzpicture}
	\node (a) at (0,0){$\g$};
	\node (b) at (2,0){$\h$};
	
	\draw[->] (a)++(.3,.1)--node[above]{\mbox{\tiny{$p$}}}+(1.4,0);
	\draw[<-,yshift=-1mm] (a)++(.3,-.1)--node[below]{\mbox{\tiny{$i$}}}+(1.4,0);
	\draw[->] (a) to [out=-150,in=150,looseness=4] node[left]{\mbox{\tiny{$h$}}} (a);
	\end{tikzpicture}
\end{center}
with $\g$ a $\SLoo$-algebra, and consider the induced $\SLoo$-algebra structure on $\h$.

\begin{theorem}[{\cite[Thm. 1.13]{ban17}}]\label{thm:formal Kuranishi theorem}\index{Formal!Kuranishi theorem}
	The map of sets
	\[
	\MC(\g)\longrightarrow\MC(\h)\times h(\g_0)
	\]
	given by
	\[
	x\longmapsto(\MC(p_\infty)(x),h(x))
	\]
	is bijective.
\end{theorem}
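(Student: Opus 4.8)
The plan is to exhibit an explicit inverse to the proposed map, using the formal fixed-point / differential equation machinery from \cref{appendix:formal fixed-pt eq and diff eq} together with the explicit formul{\ae} for the transferred $\SLoo$-structure and for $p_\infty$ coming from the homotopy transfer theorem. Write $\psi\colon\MC(\g)\to\MC(\h)\times h(\g_0)$ for the map $x\mapsto(\MC(p_\infty)(x),h(x))$. Given a pair $(y,\nu)\in\MC(\h)\times h(\g_0)$ — so $y\in\MC(\h)$ and $\nu\in h(\g_0)\subseteq\g_0$, in particular $h\nu=0$, $p\nu=0$ by the side conditions — I want to produce a unique $x\in\MC(\g)$ with $\MC(p_\infty)(x)=y$ and $h(x)=\nu$. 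The idea is that such an $x$ must decompose as $x=i(y') + \nu + h(\text{correction})$ for the appropriate $y'$, and that the Maurer--Cartan condition forces a fixed-point equation on the $h$-exact part of $x$.

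First I would set up the fixed-point equation. Using the contraction identities $1-ip=dh+hd$ and the side conditions, decompose an arbitrary $x\in\g_0$ as $x = ip(x) + dh(x) + hd(x)$. For $x\in\MC(\g)$, $dx = -\sum_{n\ge2}\frac1{n!}\ell_n(x,\dots,x)$, so $hd(x) = -\sum_{n\ge2}\frac1{n!}h\ell_n(x,\dots,x)$, which is a (filtration-increasing, hence convergent in the proper complete setting) expression in $x$. Thus $x$ satisfies
\[
x = ip(x) + dh(x) - \sum_{n\ge2}\frac1{n!}h\ell_n(x,\dots,x)\ .
\]
Imposing $h(x)=\nu$ gives $dh(x)=d\nu$, and one checks that $p(x)$ is determined by $y$: indeed the formula for $p_\infty$ (\cite[Prop. 10.3.9]{LodayVallette}) has linear part $p$ and higher parts built from $h$, $i$, $p$ and the brackets $\ell_n$, all of which increase filtration except the linear term, so $\MC(p_\infty)(x) = p(x) + (\text{higher order in }x)$; hence $p(x)$ is recovered from $y$ and the lower-order data by another formal fixed-point argument, or more cleanly, one uses that $\MC(p_\infty)$ composed with $\MC(i_\infty)$ is the identity on $\MC(\h)$ to write the candidate $x$ directly as a function of $(y,\nu)$. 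Concretely, I would propose $x := \MC(i_\infty)(y) + (\text{a }\nu\text{-dependent correction})$ and solve for the correction via \cref{prop:formula for formal ODE} or the fixed-point existence/uniqueness result in \cref{appendix:formal fixed-pt eq and diff eq}, exploiting that all correction terms strictly increase the filtration degree so the iteration converges and is unique.

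Having constructed a candidate $x = x(y,\nu)$, the remaining verifications are: (i) $x\in\MC(\g)$, i.e. it satisfies the Maurer--Cartan equation in $\g$ — this should follow because $\MC(i_\infty)(y)$ is already Maurer--Cartan (as $i_\infty$ is an $\infty$-morphism and $y\in\MC(\h)$), and the $\nu$-correction is engineered precisely to stay on the Maurer--Cartan locus, which one checks by differentiating the MC equation along the correction as in the proof that gauges preserve $\MC(\g)$; (ii) $h(x)=\nu$ — immediate from the construction, using $h^2=0$; (iii) $\MC(p_\infty)(x)=y$ — follows from $p_\infty i_\infty = 1$ on the nose (the last Proposition of \cref{subsect:HTT}) together with the fact that $p_\infty$ kills the $h$-exact correction term since $ph=0$; and (iv) that $\psi\circ x(-,-)=\mathrm{id}$ and $x(\psi(-))=\mathrm{id}$, i.e. the two maps are mutually inverse, which is really just uniqueness in the fixed-point equation. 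The main obstacle I expect is item (i) combined with bookkeeping in (iv): making the $\nu$-correction explicit enough that one can actually verify it preserves the Maurer--Cartan equation, and checking that the decomposition $x=ip(x)+dh(x)+hd(x)$ together with the prescribed values of $p(x)$, $h(x)$ genuinely pins down $x$ uniquely — this is where properness and completeness of the filtration are essential, and where I would lean on the abstract existence-uniqueness statements in \cref{appendix:formal fixed-pt eq and diff eq} rather than re-deriving convergence by hand. The signs and combinatorial coefficients from the transferred $\SLoo$-brackets are routine but tedious, so I would organize the argument to quote the HTT formul{\ae} as black boxes wherever possible.
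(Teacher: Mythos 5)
First, a point of reference: the paper does not actually prove this statement. It records Bandiera's result and only indicates the strategy (``solving recursively certain fixed point equations in nilpotent $\SLoo$-algebras, and then concluding by passing to the limit''), deferring entirely to \cite[Thm. 1.13]{ban17} and \cite[Thm. 2.3.3]{Bandiera}. Your overall skeleton --- use $1-ip=dh+hd$ and the side conditions to rewrite a Maurer--Cartan element as $x = ip(x)+d\nu-\sum_{n\ge2}\tfrac{1}{n!}h\ell_n(x,\dots,x)$, then invoke \cref{thm:existence and uniqueness of solution for fixed point equations} in the proper complete setting --- is consistent with that strategy, and your items (ii) and (iv) (that $h(x)=\nu$ via $h^2=0$, and that uniqueness of the fixed point yields mutual inversion) are sound.

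The genuine gaps are in your steps (i) and (iii), and they are not bookkeeping. For (iii), you claim $\MC(p_\infty)(x)=y$ follows from $p_\infty i_\infty=1_\h$ ``together with the fact that $p_\infty$ kills the $h$-exact correction term since $ph=0$.'' But $\MC(p_\infty)$ is not linear: writing $x=\MC(i_\infty)(y)+c$, the expression $\MC(p_\infty)(x)=\sum_n\tfrac{1}{n!}(p_\infty)_n(x^{\otimes n})$ contains cross-terms $(p_\infty)_n\bigl(\MC(i_\infty)(y)^{\otimes k}\otimes c^{\otimes(n-k)}\bigr)$ in which the correction $c$ sits at the leaves of the trees, not at the root where $p$ acts, so $ph=0$ does not by itself make them vanish. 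For (i), the analogy with gauges is unavailable: there is no one-parameter family to differentiate along, so ``the correction is engineered to stay on the Maurer--Cartan locus'' is an assertion rather than an argument. The real content of the theorem lies exactly here: one must show that the unique solution $x$ of the fixed-point equation attached to a pair $(w,\nu)\in\h_0\times h(\g_0)$ satisfies the Maurer--Cartan equation in $\g$ if, and only if, its image under $\MC(p_\infty)$ satisfies it in $\h$ for the transferred structure, and this equivalence requires a genuine computation with the tree expansions of the transferred brackets and of $p_\infty$ (this is also why the correct formulation couples $p(x)$ to $y$ through a simultaneous fixed-point system, which you acknowledge but do not untangle). Until (i) and (iii) are carried out, the proposal constructs a candidate inverse but does not prove bijectivity.
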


The proof is done by considering and solving recursively certain fixed point equations in nilpotent $\SLoo$-algebras, and then concluding by passing to the limit. We will not do do it here, and we refer the reader to the original references \cite[Thm. 1.13]{ban17} and \cite[Thm. 2.3.3]{Bandiera}.

\begin{remark}
	The reason behind the name of this theorem is the fact that it generalizes a result by Kuranishi \cite{kur62}. See \cite[p. 3]{get18} for a nice exposition.
\end{remark}

We are mostly interested by the following consequence. Consider the contraction
\begin{center}
	\begin{tikzpicture}
	\node (a) at (0,0){$\g\otimes\Omega_\bullet$};
	\node (b) at (2.6,0){$\g\otimes C_\bullet$};
	
	\draw[->] (a)++(.6,.1)--node[above]{\mbox{\tiny{$1_\g\otimes p_\bullet$}}}+(1.4,0);
	\draw[<-,yshift=-1mm] (a)++(.6,-.1)--node[below]{\mbox{\tiny{$1_\g\otimes i_\bullet$}}}+(1.4,0);
	\draw[->] (a) to [out=-150,in=150,looseness=4] node[left]{\mbox{\tiny{$1_\g\otimes h_\bullet$}}} (a);
	\end{tikzpicture}
\end{center}
induced by Dupont's contraction. \cref{thm:formal Kuranishi theorem} implies the following.

\begin{corollary}
	The map
	\[
	\gamma_\bullet(\g)\longrightarrow\MC(\g\otimes C_\bullet)
	\]
	given by
	\[
	x\longmapsto\MC((1_\g\otimes p)_\infty)(x)
	\]
	is an isomorphism of simplicial sets.
\end{corollary}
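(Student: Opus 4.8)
The plan is to deduce this corollary from the formal Kuranishi theorem, \cref{thm:formal Kuranishi theorem}, applied levelwise and then assembled into an isomorphism of simplicial sets. First I would fix a simplicial degree $n\ge 0$ and specialize the formal Kuranishi theorem to the complete contraction
\[
\begin{tikzpicture}[baseline]
\node (a) at (0,0){$\g\otimes\Omega_n$};
\node (b) at (3,0){$\g\otimes C_n$};
\draw[->] (a)++(.7,.1)--node[above]{\mbox{\tiny{$1_\g\otimes p_n$}}}+(1.5,0);
\draw[<-,yshift=-1mm] (a)++(.7,-.1)--node[below]{\mbox{\tiny{$1_\g\otimes i_n$}}}+(1.5,0);
\draw[->] (a) to [out=-150,in=150,looseness=4] node[left]{\mbox{\tiny{$1_\g\otimes h_n$}}} (a);
\end{tikzpicture}
\]
obtained by tensoring Dupont's contraction with $\g$ (recall that this is a complete contraction by the discussion in \cref{subsect:complete contractions and htt}, since $1_\g\otimes h_\bullet$ and $1_\g\otimes i_\bullet p_\bullet$ are filtered). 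The transferred $\SLoo$-structure on $\g\otimes C_n$ is, by definition, exactly the one used to form $\MC(\g\otimes C_\bullet)$, so \cref{thm:formal Kuranishi theorem} gives a bijection of sets
\[
\MC(\g\otimes\Omega_n)\xrightarrow{\ \cong\ }\MC(\g\otimes C_n)\times (1_\g\otimes h_n)\big((\g\otimes\Omega_n)_0\big),\qquad x\longmapsto\big(\MC((1_\g\otimes p_n)_\infty)(x),(1_\g\otimes h_n)(x)\big).
\]

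Next I would observe that the fibre of the first projection over $0$ in the second factor is precisely the set of $x\in\MC(\g\otimes\Omega_n)$ with $(1_\g\otimes h_n)(x)=0$, i.e. $\MC_n(\g)\cap\ker(1_\g\otimes h_n)=\gamma_n(\g)$ by the definition of Getzler's functor. Since the displayed map is a bijection, its restriction to this fibre is a bijection onto $\MC(\g\otimes C_n)\times\{0\}\cong\MC(\g\otimes C_n)$, and under this identification the map is $x\mapsto\MC((1_\g\otimes p_n)_\infty)(x)$. This is exactly the claimed map in degree $n$, so we obtain a bijection $\gamma_n(\g)\xrightarrow{\cong}\MC(\g\otimes C_n)$ for every $n$.

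It then remains to check that these levelwise bijections are compatible with the simplicial structure maps, i.e. that the collection assembles into an isomorphism of simplicial sets. The face and degeneracy maps on both sides are induced by the cosimplicial structure maps of $\Omega_\bullet$ and $C_\bullet$; since Dupont's contraction is simplicial (the maps $p_\bullet, i_\bullet, h_\bullet$ commute with the cosimplicial structure, as recalled in \cref{subsect:Dupont contraction}), the induced $\infty$-quasi-isomorphisms $(1_\g\otimes p_n)_\infty$ are natural in $n$ by the naturality of the homotopy transfer formul\ae\ in the contraction data. Hence $\MC((1_\g\otimes p_\bullet)_\infty)$ is a morphism of simplicial sets, and being a levelwise bijection it is an isomorphism of simplicial sets. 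The one point that needs a little care — and is the main (though modest) obstacle — is verifying that the Kuranishi bijection restricts correctly to the $h$-kernel and that this restriction really is the transfer-induced map $\MC((1_\g\otimes p_n)_\infty)$ rather than merely some abstract bijection; this is unwound directly from the statement of \cref{thm:formal Kuranishi theorem}, whose first coordinate is by construction $\MC(p_\infty)$.
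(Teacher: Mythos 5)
Your proof is correct and follows essentially the same route as the paper: apply the formal Kuranishi theorem to the contraction of $\g\otimes\Omega_\bullet$ onto $\g\otimes C_\bullet$ induced by Dupont's contraction, and restrict the resulting bijection to $\gamma_\bullet(\g)=\MC_\bullet(\g)\cap\ker(1_\g\otimes h_\bullet)$, which is the fibre over $0$ in the second factor. The additional check of simplicial compatibility via the simpliciality of Dupont's contraction is a reasonable point to make explicit, though the paper leaves it implicit.
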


\begin{proof}
	The result is a direct consequence of applying \cref{thm:formal Kuranishi theorem} to the contraction above, and considering the restriction of the bijection to $\gamma_\bullet(\g) = \MC_\bullet(\g)\cap\ker(1_\g\otimes h_\bullet)$.
\end{proof}
	
	\chapter{Deformation theory}\label{chapter:deformation theory}

The goal of this chapter is to motivate the interest in the space of Maurer--Cartan elements of Lie and homotopy Lie algebras. The archetypal deformation problem goes like this. One is given some fixed object, say a vector space $V$, or a manifold $M$, and a structure on this object, such as an associative algebra structure on $V$, or a complex structure on $M$. Then the goal is to understand how one can ``deform'' the given structure in such a way as to obtain another structure of the same kind on the same base object, and to understand which deformations give us structures that are isomorphic to the original one, and which ones don't. This way, one can obtain information about the original structure.

\medskip

Lie algebras, their Maurer--Cartan elements, and gauge equivalences appear naturally in deformation theory, and provide a generalized framework to study deformation problems. In fact, it is a ``philosophical principle'' attributed to Deligne \cite{letterDeligneMillson} that, in characteristic zero, every deformation problem corresponds to the study of Maurer--Cartan elements and the gauges between them in a (possibly homotopy) Lie algebra. This principle has been formalized in recent years by Pridham \cite{pri10} and Lurie \cite{lur11} to give a theorem in the context of $\infty$-categories.

\medskip

Deformation theory is a deep and beautiful subject with a long history and plenty of applications. Its study goes back to the works of Grothendieck, Artin, Quillen and many others, up until the more recent works of Kontsevich in deformation quantization and mirror symmetry, and the already mentioned formalization of Deligne's principle by Pridham and Lurie.

\medskip

We will introduce deformation theory by the means of an algebraic example: deformations of associative algebra structures on a vector space. We will begin by talking about infinitesimal deformations, before passing to general deformations, introducing the deformation complex, and state the fundamental principle of deformation theory.  We will then conclude the section by presenting various other examples of deformation problems, both algebraic and geometrical, and the Lie algebras governing them.

\medskip

In this section, we work in cochain complexes. Some good introductory references for deformation theory are e.g. the notes of Kontsevich \cite{notesKontsevich} and \cite{sze99}.

\section{Deformation theory through an example}

In this section, we attempt to give an overview of deformation theory through the archetypal example of deformations of associative algebra structures on a vector space. We begin by explaining what an infinitesimal deformation is, before passing to more general deformations, and then presenting the differential graded Lie algebra governing this specific deformation problem. We conclude the section by presenting the fundamental principle of deformation theory, due to P. Deligne, which relates deformation problems in characteristic $0$ to Maurer--Cartan elements of (homotopy) Lie algebras.

\medskip

The exposition given here is heavily based on the introductory note to deformation theory \cite{sze99} by B. Szendr\H{o}i.

\subsection{Infinitesimal deformations}\label{subsect:infinitesimal deformations}

Let $A$ be an associative algebra, not differential graded. In other words, $A$ is a finite dimensional vector space together with a linear map
\[
m:A\otimes A\longrightarrow A
\]
satisfying the associativity condition
\[
m(m(a,b),c) = m(a,m(b,c))
\]
for all $a,b,c\in A$.

\medskip

One wants now to ``infinitesimally deform'' $m$ in order to obtain another associative algebra structure on $A$. In order to do this, one considers structures of the form $\widetilde{m}\coloneqq m+\epsilon f$ with $f\in\hom(A\otimes A,A)$ and $\epsilon$ a formal parameter satisfying $\epsilon^2 = 0$. This can be formalized by saying that we are looking at elements of
\[
\hom(A\otimes A,A)\otimes\k[\epsilon]/(\epsilon^2)\ ,
\]
which corresponds to look at the ``tangent space'' of our space of structures\footnote{From an algebro-geometrical point of view, $\k[\epsilon]/(\epsilon^2)$ models tangent vectors.}. The associativity condition now reads
\begin{align*}
	0 =&\ \widetilde{m}(\widetilde{m}(a,b),c) - \widetilde{m}(a,\widetilde{m}(b,c))\\
	=&\ m(m(a,b),c) - m(a,m(b,c)) + \epsilon\big(f(m(a,b),c) - f(a,m(b,c)) + m(f(a,b),c)-m(a,f(b,c)\big)\\
	=&\ \epsilon\big(f(m(a,b),c) - f(a,m(b,c)) + m(f(a,b),c)-m(a,f(b,c)\big)
\end{align*}
where in the second line we used the fact that $\epsilon^2=0$, and in the third line the fact that $m$ was associative to begin with. Thus, $\widetilde{m}$ being associative is equivalent to
\[
f(m(a,b),c) - f(a,m(b,c)) + m(f(a,b),c)-m(a,f(b,c) = 0\ .
\]
One also wants to understand when a deformed structure is isomorphic to the original one. In order to do this, one considers automorphisms of $A$ of the form $T\coloneqq1_A+\epsilon g$ for $g\in\hom(A,A)$. Notice that such a morphism is indeed invertible, with its inverse being given by $T^{-1} = 1_A-\epsilon g$. Then one looks at the pullback of the multiplication $m$ by $T$:
\begin{align*}
	Tm(T^{-1}x,T^{-1}y) =&\ m(x,y) + \epsilon\big(\underbrace{g(m(x,y)) - m(g(x),y) - m(x,g(y))}_{f_T\coloneqq}\big)\ .
\end{align*}
This tells us that a deformed multiplication $\widetilde{m} = m+\epsilon f$ is isomorphic to the original multiplication $m$ if, and only if
\[
f = g(m(x,y)) - m(g(x),y) - m(x,g(y))
\]
for some $g\in\hom(A,A)$.

\medskip

This can be wrapped up together nicely as the first levels of a homology theory. Indeed, one considers the Hochschild cochain complex\index{Hochschild complex}
\[
\Hoch(A,m)\coloneqq\Big(\hom(\k,A)\stackrel{d}{\longrightarrow}\hom(A,A)\stackrel{d}{\longrightarrow}\hom(A^{\otimes 2},A)\stackrel{d}{\longrightarrow}\hom(A^{\otimes 2},A)\stackrel{d}{\longrightarrow}\cdots\Big)\ ,
\]
where we see $\hom(A^{\otimes n},A)$ as living in degree $n-1$, with the differential being given by
\begin{align*}
	d(f)&(a_1,\ldots,a_{n+1}) =\\
	&= m(a_1,f(a_2,\ldots,a_{n+1})) + \sum_{k=1}^n(-1)^kf(a_1,\ldots,m(a_k,a_{k+1}),\ldots,a_{n+1}) +\\
	&\qquad+ (-1)^{n+1}m(f(a_1,\ldots,a_n),a_{n+1})\ .
\end{align*}
This complex was introduced by G. Hochschild in \cite{hoc45}. It is an easy exercise to check that this differential squares to zero. Then the infinitesimally deformed associative structures are exactly the cycles in $\Hoch_1(A,m)$, and two such structures are isomorphic if, and only if the respective cycles are cohomologous. Thus, we have
\[
\{\text{essentially distinct infinitesimal deformations of }m\}\cong H^1(\Hoch(A,m))\ .
\]
This is what normally happens in general when one tries to deform infinitesimally some kind of structure. One is led to some cochain complex where the cycles at some level correspond to the deformed structures, and which are cohomologous if, and only if the deformed structures are equivalent. This happens because working infinitesimally --- i.e. over the ring $\k[\epsilon]/(\epsilon^2)$ --- correspond to linearizing the higher structure hiding behind more general deformations: the structure of a Lie algebra, where the cycle equation will be replaced by the Maurer--Cartan equation. One can already see this happening when trying to understand deformations to a higher order, working over $\k[\epsilon]/(\epsilon^n)$, or over an artinian ring in general, and also with formal deformations, i.e. working over the ring of formal power series $\k[[t]]$.

\subsection{Deformations in general}\label{subsect:deformations in general}

Now one wants to understand deformations in general. A deformation of the original associative structure $m$ is just
\[
\widetilde{m}\coloneqq m+f
\]
with $f\in\hom(A\otimes A,A)$. The associativity condition now reads
\[
f(m(a,b),c) - f(a,m(b,c)) + m(f(a,b),c) - m(a,f(b,c)) + f(f(a,b),c) - f(a,f(b,c)) = 0\ ,
\]
where now the last two terms of the left-hand side --- which are quadratic in $f$ --- do not necessarily vanish. If one writes
\[
[f_1,f_2](a,b,c)\coloneqq f_1(f_2(a,b),c) - f_1(a,f_2(b,c)) + f_2(f_1(a,b),c) - f_2(a,f_1(b,c))\ ,
\]
then $[-,-]$ is graded antisymmetric (if one considers $f_1,f_2$ as elements of degree $1$) and satisfies the Jacobi rule. With this in mind, we can write the associativity condition as
\[
df +\frac{1}{2}[f,f] = 0\ ,
\]
where $d$ is the differential of the Hochschild complex defined in \cref{subsect:infinitesimal deformations}, which can also be written as $d\coloneqq[m,-]$. Thus, assuming that we can extend this bracket to define a Lie bracket on the whole Hochschild complex, deformations of $m$ correspond to Maurer--Cartan elements of $\Hoch_\bullet(A,m)$.

\medskip

To understand what deformations are equivalent, suppose that $A$ is finite dimensional. To any linear map $\hom(A,A)$ we associate an automorphism of $A$ by
\[
e^g = \sum_{n\ge0}\frac{1}{n!}g^n\in\hom(A,A)\ ,
\]
whose inverse is given by $e^{-g}$. We consider the path $t\mapsto e^{tg}$ and the algebra structure
\[
m_t(x,y)\coloneqq e^{tg}m(e^{-tg}x,e^{-tg}y)\ .
\]
This can be written as
\[
m_t = m + f_t\ ,
\]
for $f_t\coloneqq m_t-m\in\hom(A\otimes A,A)$. Then
\[
\frac{d}{dt}f_t(x,y) = gm(x,y) + m(g(x),y) + m(x,g(y)) + gf_t(x,y) + f_t(g(x),y) + f_t(x,g(y))\ .
\]
Defining
\[
[g,f](x,y)\coloneqq g(f(x,y)) + f(g(x),y) + f(x,g(y))
\]
for $f\in\hom(A\otimes A,A)$ and $g\in\hom(A,A)$, we can rewrite this as
\[
\frac{d}{dt}f_t = df_t + [g,f_t]\ ,
\]
which one recognizes as the differential equation saying that $g$ is a gauge from $m$ to the pullback $e^gm(e^{-g},e^{-g})$ of $m$ by $e^g$.

\subsection{The deformation complex}

Consider the following intrinsic version of the Hochschild complex, corresponding to the trivial algebra structure $m=0$ on $A$. The cochain complex is given by
\[
\Hoch_n(A)\coloneqq\hom(A^{\otimes(n+1)},A)
\]
for $n\ge-1$, with trivial differential\footnote{If $A$ is itself a cochain complex, then one takes the differential induced by the differential of $A$.}. One defines a Lie bracket on this complex by
\begin{align*}
	[f,g](a_1,\ldots,a_{n+m-1})\coloneqq&\ \sum_{i=1}^n(-1)^{im}f(a_1,\ldots,g(a_i,\ldots,a_{i+m-1}),\ldots,a_{n+m-1})+\\
	&+ \sum_{j=1}^n(-1)^{jn}g(a_1,\ldots,g(a_j,\ldots,a_{j+n-1}),\ldots,a_{n+m-1})
\end{align*}
for $f\in\hom(A^{\otimes n},A)$ and $g\in\hom(A^{\otimes m},A)$. This bracket was introduced by M. Gerstenhaber \cite{ger63}, and is therefore usually called the Gerstenhaber bracket\index{Gerstenhaber bracket}. Notice that this recovers the bracket given in \cref{subsect:deformations in general}. The Maurer--Cartan elements of $\Hoch(A)$ are the elements
\[
m\in\hom(A\otimes A,A) = \Hoch_1(A)
\]
such that the Maurer--Cartan equation
\[
0 = \frac{1}{2}[m,m] = \frac{1}{2}(m(m\otimes1_A) - m(1_A\otimes A))
\]
is satisfied, i.e. the associative algebra structures on $A$. Moreover, by what we have seen before, two such algebraic structures are isomorphic if, and only if the corresponding Maurer--Cartan elements are gauge equivalent. Therefore, we have
\[
\{\text{isomorphism classes of associative algebra structures on }A\}\cong\MCbar(\Hoch(A))\ .
\]
If one wants to look at deformations of a given associative algebra structure $m$, i.e. study the elements $f\in\hom(A\otimes A,A)$ such that $m+f$ is again an associative algebra structure on $A$, then one can do so by means of a \emph{twist}. Namely, one considers the Lie algebra $\Hoch(A,m)\coloneqq\Hoch(A)^m$ which has the same underlying graded vector space, the same Lie bracket, but differential
\[
d^m\coloneqq[m,-]\ .
\]
Since $m$ is a Maurer--Cartan element, this is again a Lie algebra, and $f$ is a Maurer--Cartan element in $\Hoch(A,m)$ if, and only if $m+f$ is a Maurer--Cartan element in $\Hoch(A)$. Cf. also \cref{lemma:shift to zero DR}.

\medskip

Finally, if one wants to recover infinitesimal deformations, it suffices to consider the Lie algebra
\[
\Hoch(A)\otimes\k[\epsilon]/(\epsilon^2)\ .
\]
If $m$ is a Maurer--Cartan element of $\Hoch(A)$, i.e. an associative algebra structure on $A$, then $m+\epsilon f$ is a Maurer--Cartan element of $\Hoch(A)\otimes\k[\epsilon]/(\epsilon^2)$ if, and only if it is an infinitesimal deformation of $m$.

\subsection{The fundamental principle of deformation theory}

One might think that the situation above is specific to the case of associative algebras, or maybe of some specific algebraic situations. This is not the case, and a plethora of examples have been found during the years, both in the algebraic setting, in geometrical problems, and even in situations strictly linked with physics. We will expose some of those in \cref{sect:other examples and applications}. In fact, this situation is so ubiquitous that it led P. Deligne \cite{letterDeligneMillson} to formulate the following heuristic principle, often referred to as the \emph{fundamental principle of deformation theory}:
\begin{center}
	\begin{minipage}{10cm}
		``When working over a field of characteristic zero, all deformation problems can be formulated as the study of the Maurer--Cartan elements in a differential graded Lie algebra.''
	\end{minipage}
\end{center}
More generally, one might want to consider homotopy Lie algebras instead of strict Lie algebras. To be more precise, the principle above means that for every deformation problem there exists a (homotopy) Lie algebra $\g$ such that one can establish the following dictionary:
\begin{center}
	\begin{tabular}{|c|c|}
		\hline
		\textbf{Deformation problem} & \textbf{Homotopy Lie algebra $\g$}\\
		\hline
		Structures of the desired type & Maurer--Cartan elements $\MC(\g)$\\
		\hline
		Equivalences between structures & Gauges\\
		\hline
		Infinitesimal deformations & Study of $\g\otimes\k[\epsilon]/(\epsilon^2)$\\
		\hline
		Formal deformations & Study of $\g\otimes\k[[t]]$\\
		\hline
	\end{tabular}
\end{center}
And so on. One can also identify the space of infinitesimal deformation of a fixed Maurer--Cartan element $\alpha$ with the (formal) tangent space of $\MC(\g^\alpha)$, and thus deduce e.g. that the dimension of the moduli space of the structures of the desired type at $\alpha$ is the same as the dimension of the cohomology $H^1(\g^\alpha)$.

\medskip

To conclude this section, we should mention that while the principle exposed above is just a heuristic, it has in fact been made into a formal result in the context of $\infty$-categories by Pridham \cite{pri10} and Lurie \cite{lur11}. There, a sensible $\infty$-category of deformation problems is defined, and it is shown that it is equivalent to the category of differential graded Lie algebras as an $\infty$-category through what is essentially the Maurer--Cartan functor.

\section{Other examples and applications}\label{sect:other examples and applications}

In order to demonstrate the importance of deformation theory in mathematics, we sketch various classical examples and applications.

\subsection{Lie algebras and commutative algebras}

If one is interested in deforming a fixed Lie algebra structure on a vector space $\g$, then proceeding as for associative algebras one is led to consider the Chevalley--Eilenberg complex\index{Chevalley--Eilenberg complex}
\[
\CE_n(\g)\coloneqq\hom(\Lambda^{n+1}A,A)
\]
for $n\ge-1$, where $\Lambda^nA$ denotes the $n$th exterior power of $A$, with differential given by
\begin{align*}
	d(f)(x_0,\ldots,x_{n})\coloneqq&\ \sum_{i<j}(-1)^{i+j}f([x_i,x_j],x_0,\ldots,\widehat{x}_i,\ldots,\widehat{x}_j,\ldots,x_n) +\\
	&+ \sum_{k=1}^{n+1}(-1)^{k+1}[x_k,f(x_0,\ldots,\widehat{x}_k,\ldots,x_n)]\ ,
\end{align*}
where the hat denotes omission. This cochain complex was introduced by C. Chevalley and S. Eilenberg in \cite{ce48}. One puts the following bracket on the Chevalley--Eilenberg complex. For $f\in\hom(\Lambda^nA,A)$ and $g\in\hom(\Lambda^mA,A)$, one defines
\begin{align*}
	[f,g](x_1,\ldots,x_{n+m-1})\coloneqq&\ \sum_{\sigma\in\sh(m,n-1)}(-1)^\sigma f(g(x_{\sigma(1)},\ldots,x_{\sigma(m)}),x_{\sigma(m+1)},\ldots,x_{\sigma(n+m-1)})\\
	&\ + \sum_{\sigma\in\sh(n,m-1)}(-1)^\sigma g(f(x_{\sigma(1)},\ldots,x_{\sigma(n)}),x_{\sigma(n+1)},\ldots,x_{\sigma(n+m-1)})\ .
\end{align*}
This bracket makes the Chevalley--Eilenberg complex into a differential graded Lie algebra, providing the deformation complex for Lie algebra structures on $\g$.

\medskip

For commutative algebra structures, the situation is similar. The cochain complex one considers is the Harrison complex, which was defined in \cite{har62} by D. K. Harrison. Its explicit description is more complicated, and we will not carry it out here.

\subsection{Algebras over an operad}

More generally, let $\P\coloneqq\Cobar\C$ be a cofibrant operad, where $\C$ is a reduced cooperad. Fix a chain complex $A$. Then the deformation complex of $\P$-algebra structures\index{Deformation!complex of $\P$-algebras} on $A$ is the Lie algebra
\[
\Def_\P(A)\coloneqq\hom(\C,\End_A)\ ,
\]
where the Lie bracket is given by
\[
[f,g]\coloneqq f\star g - g\star f\ ,
\]
with $\star$ the pre-Lie product described in \cref{subsect:operadic twisting morphisms}. One can in fact directly work in the pre-Lie setting to obtain powerful results, see \cite{dsv16}. The fact that this is the correct deformation complex is given by the following result, which follows from \cref{thm:Rosetta stone operads} and \cite[Thm. 3]{dsv16}.

\begin{theorem}
	The Maurer--Cartan elements of $\Def_\P(A)$ are in bijective correspondence with the possible $\P$-algebra structures on $A$. Moreover, two Maurer--Cartan elements are gauge equivalent if, and only if the respective $\P$-algebra structures are $\infty$-isotopic, i.e. if they are linked by an $\infty$-morphism whose first component is the identity.
\end{theorem}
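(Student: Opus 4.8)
The plan is to derive both assertions from the Rosetta Stone for operadic twisting morphisms, \cref{thm:Rosetta stone operads}, together with \cref{prop:P-alg is map P->End} for the first bijection, and from the pre-Lie deformation theory of \cite[Thm. 3]{dsv16} for the characterization of gauge equivalence. First I would unwind the definitions. An element $\alpha\in\Def_\P(A)=\hom(\C,\End_A)$ of degree $-1$ is, since $\C$ is reduced, supported up to the irrelevant arity-$1$ component on the coaugmentation coideal $\overline{\C}$, where the convolution product $\star$ of \cref{subsect:operadic twisting morphisms} lives. As $\alpha$ is odd we have $[\alpha,\alpha]=\alpha\star\alpha-(-1)^{|\alpha|^2}\alpha\star\alpha=2\,\alpha\star\alpha$, so the Maurer--Cartan equation $\partial(\alpha)+\tfrac12[\alpha,\alpha]=0$ reads $\partial(\alpha)+\alpha\star\alpha=0$, which is exactly the defining equation of an operadic twisting morphism $\alpha\colon\C\to\End_A$. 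Hence $\MC(\Def_\P(A))=\Tw(\C,\End_A)$. The only point of care here is that $\End_A$ is not a \emph{reduced} operad; but the pre-Lie and twisting-morphism machinery only involves the arity-$\ge 2$ part of the convolution operad, where everything is literally as in \cref{subsect:operadic twisting morphisms}.

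Next I would apply \cref{thm:Rosetta stone operads} to the operad $\End_A$ and the cooperad $\C$, obtaining natural bijections $\Tw(\C,\End_A)\cong\hom_\op(\Cobar\C,\End_A)=\hom_\op(\P,\End_A)$, and then \cref{prop:P-alg is map P->End}, which identifies $\hom_\op(\P,\End_A)$ with the set of $\P$-algebra structures on $A$. Composing these identifications yields the first claimed bijection; tracking it through, the twisting morphism attached to a structure $\gamma_A$ is the restriction to $s^{-1}\overline{\C}$ of the associated operad map $\Cobar\C\to\End_A$, i.e. it records the operations of the homotopy algebra.

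For the gauge statement I would first note that, $\C$ being reduced, the convolution Lie algebra $\Def_\P(A)$ is proper complete for the arity filtration: the bracket strictly raises arity on $\overline{\C}$, and $\hom(\C,\End_A)=\prod_n\hom(\C(n),\End_A(n))$ is a product. Thus the gauge action of $\exp\big(\Def_\P(A)_0\big)$ on $\MC(\Def_\P(A))$ is defined regardless of the dimension of $A$, via the explicit formulas of \cref{subsect:explicit formula for gauges}. A degree-$0$ element $\lambda$ of $\Def_\P(A)$ vanishes on $\C(1)=\k$, so under the identifications above, extended to $\infty$-morphisms via the bar construction as in \cref{prop:extended bar to oo-morphisms}, its exponential is an $\infty$-morphism of $\Cobar\C$-algebras with identity linear part, i.e. an $\infty$-isotopy; conversely every $\infty$-isotopy of $\P$-algebra structures on $A$ arises this way, and the gauge flow $\alpha\mapsto e^{\lambda}\cdot\alpha$ corresponds to pulling back the structure along this $\infty$-isotopy. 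This dictionary is precisely the content of \cite[Thm. 3]{dsv16}, which I would invoke, and it gives that $\alpha,\beta\in\MC(\Def_\P(A))$ are gauge equivalent if and only if the corresponding $\P$-algebra structures are $\infty$-isotopic.

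The main obstacle is this second half: the first bijection is essentially bookkeeping once \cref{thm:Rosetta stone operads} is available, whereas identifying the exponential group of degree-$0$ convolution elements with the group of $\infty$-isotopies, and matching the gauge flow with pullback of the $\P$-algebra structure, is the substantive step. I do not intend to reprove it here; were a self-contained argument wanted, the key computation would be to compute the image of $e^{\lambda}$ along the chain $\Tw(\C,\End_A)\cong\hom_\op(\Cobar\C,\End_A)$ and verify, by induction on arity, that it realizes the gauge, comparing the pre-Lie exponential formula with the explicit action of $\infty$-morphisms on operadic structures.
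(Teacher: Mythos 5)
Your argument follows exactly the route the paper intends, which offers no written proof beyond the remark that the statement ``follows from \cref{thm:Rosetta stone operads} and \cite[Thm. 3]{dsv16}'': you identify the Maurer--Cartan equation in the convolution Lie algebra with the twisting-morphism equation, use the Rosetta stone together with \cref{prop:P-alg is map P->End} to match $\Tw(\C,\End_A)$ with $\P$-algebra structures on $A$, and delegate the gauge/$\infty$-isotopy dictionary to \cite[Thm. 3]{dsv16}. This is correct and essentially the same approach as the paper's.
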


One can recover the Hochschild, Chevalley--Eilenberg, and Harrison complex as subcomplexes of $\Def_\P(A)$ if one takes $\P=\as$, $\lie$, and $\com$ respectively.

\subsection{Complex analytic structures on a manifold}

All the examples we have given until now are algebraic in nature, but deformation theory also works in geometrical situations. One such example is the deformation of complex analytic structures on manifolds, due to K. Kodaira and D. C. Spencer \cite{ks58i}, \cite{ks58ii}. The deformation complex in this case is given in degree $n\ge0$ by the cochain complex of holomorphic vector fields tensor $(0,n)$-forms.

\medskip

A first intuition on this problem goes back to Riemann, who in 1857 famously calculated that the ``number of independent parameters'' on which the deformations of a complex structure on a closed Riemann surface of genus $g$, i.e. the dimension of the moduli space of complex structures, is $3g-3$.

\subsection{Deformation quantization}

A last famous example is due to M. Kontsevich \cite{kon03}, and is closely related to mathematical physics, more precisely to the quantization procedure.

\medskip

The physical motivation is as follows. One encodes physics via a ``space of states'' given by a manifold $M$ and ``observables'', the smooth functions on $M$. Physics is then given by how the observables interact. If one wants to describe classical mechanics, then one must ask for a Poisson structure on $C^\infty(M)$. For quantum mechanics, one must give a star product on $C^\infty(M)$, which is a deformation of the usual pointwise product of functions as a formal power series in a parameter $\hbar$. One can recover a Poisson bracket in the first order of the expansion of a star product, which is interpreted as the procedure of getting back classical physics from quantum mechanics. The question is how to go the other way: given a Poisson structure on $M$, is it possible to find a star product recovering the Poisson bracket? In other words, is it always possible to quantize classical mechanics?

\medskip

To give some details, a star product on $C^\infty(M)$ is an associative, $\mathbb{R}[[\hbar]]$-linear product on the space $C^\infty(M)[[\hbar]]$ of formal power series of smooth functions in $\hbar$ of the form
\[
f\star g = fg + \sum_{n\ge1}B_n(f,g)\hbar^n,
\]
where the $B_n$ are bidifferential operators on $C^\infty(M)$. One obtains a Poisson bracket from a star product by
\[
\{f,g\}\coloneqq B_1(f,g)-B_1(g,f)\ .
\]
Two star product are said to be equivalent if they are related by a certain type of gauge relation. Kontsevich associated a deformation complex to star products, and to Poisson brackets, and then proved that the two complexes are linked by a zig-zag of $\infty$-quasi-isomorphisms. This implies that equivalence classes of Poisson structures are in bijection with equivalence classes of star products, answering the quantization question in the positive.
	
	\part{Results}
	
	\chapter{\texorpdfstring{$\infty$}{infinity}-morphisms relative to twisting morphisms}\label{chapter:oo-morphisms relative to a twisting morphism}

In \cref{subsection:homotopy algebras and homotopy morphisms}, a notion of $\infty$-morphisms between $\P_\infty$-algebras was introduced, where $\P$ is a Koszul operad. Namely, such an $\infty$-morphism is the same thing as a morphism of conilpotent $\P^{\antishriek}$-coalgebras between the bar constructions of the algebras relative to the canonical twisting morphism
\[
\iota:\P^{\antishriek}\longrightarrow\P_\infty\ ,
\]
cf. \cref{notation:iota and pi}. There is a natural, useful generalization of this notion. Namely, one can consider morphisms of coalgebras between the bar construction with respect to \emph{any} twisting morphism
\[
\alpha:\C\longrightarrow\P\ .
\]
Dually, this gives us a notion of $\infty$-morphism between conilpotent coalgebras by considering morphisms of algebras between their cobar constructions relative to a twisting morphism. Versions of this idea have already been used more or less explicitly in the literature e.g. in \cite{mar04}, \cite{ber14transfer}, and \cite{ber14koszul}, at least for algebras. In this section, we give a precise definition of this generalized notion of $\infty$-morphisms, and then study some of their properties. In particular, we look in detail at their homotopical behavior.

\medskip

We expect the theory presented here to work without great changes for morphisms between connected weight graded (co)operads. We give here the case of reduced (co)operads for ease of presentation, and because all the cases of more immediate interest (e.g. associative, commutative, and Lie algebras, as well as their up to homotopy counterparts) are included in this framework.

\medskip

Most of the material of this chapter is extracted from the article \cite{rnw17}.

\section{Basic definitions and rectifications}

In this section, we define the notion of $\infty$-morphisms of algebras and coalgebras relative to a twisting morphism, and study some of their basic properties.

\subsection{Basic definitions}

The definition of $\infty$-morphisms of algebras and coalgebras relative to an arbitrary twisting morphism is the following one.

\begin{definition}\label{def:infinity morphisms relative to a twisting morphism}
	Let $\C$ be a cooperad, let $\P$ be an operad, and let $\alpha:\C\to\P$ be a twisting morphism.
	\begin{enumerate}
		\item An \emph{$\infty$-morphism of $\P$-algebras relative to $\alpha$}\index{$\infty$-morphism!of algebras relative to $\alpha$}, or an \emph{$\infty_\alpha$-morphism of $\P$-algebras}\index{$\infty_\alpha$-morphism!of algebras}, between two $\P$-algebras $A$ and $A'$ is a morphism $\Psi$ of $\C$-coalgebras
		\[
		\Psi:\Bar_\alpha A\longrightarrow\Bar_\alpha A'\ .
		\]
		Composition of $\infty_\alpha$-morphisms of $\P$-algebras is given by the standard composition of morphisms of $\C$-coalgebras between the bar constructions. We denote the category of $\P$-algebras with $\infty_\alpha$-morphisms by $\infty_\alpha\text{-}\Palg$.
		\item An \emph{$\infty$-morphism of conilpotent $\C$-coalgebras relative to $\alpha$}\index{$\infty$-morphism!of coalgebras relative to $\alpha$}, or an \emph{$\infty_\alpha$-morphism of conilpotent $\C$-algebras}\index{$\infty_\alpha$-morphism!of coalgebras}, between two conilpotent $\C$-algebras $D'$ and $D$ is a morphism $\Phi$ of $\P$-algebras
		\[
		\Phi:\Cobar_\alpha D'\longrightarrow\Cobar_\alpha D\ .
		\]
		Composition of $\infty_\alpha$-morphisms of $\C$-coalgebras is given by the standard composition of morphisms of $\P$-algebras between the cobar constructions. We denote the category of conilpotent $\C$-coalgebras with $\infty_\alpha$-morphisms by $\infty_\alpha\text{-}\Ccog$.
	\end{enumerate}
\end{definition}

\begin{remark}
	If $\P$ is a Koszul operad and
	\[
	\iota:\P^{\antishriek}\longrightarrow\P_\infty
	\]
	is the canonical twisting morphism, then the notion of $\infty_\iota$-morphisms of $\P_\infty$-algebras coincides with the classical one, cf. \cref{prop:extended bar to oo-morphisms}.
\end{remark}

When confronted with such a definition, it is natural to wonder what happens to the notion of $\infty_\alpha$-morphism under changes in the twisting morphism $\alpha$. Here is a first result in that direction.

\begin{lemma}\label{lemma:equality of oo-morphisms}
	Let $\C',\C$ be two cooperads and let $\P$ be an operad. Let $\alpha\in\Tw(\C,\P)$, let $f:\C'\to\C$ be a morphism of cooperads, and let $D$ be a conilpotent $\C'$-coalgebra. Then
	\[
	\Cobar_\alpha(f_*C) = \Cobar_{f^*\alpha}C\ .
	\]
	In particular, $\infty_{f^*\alpha}$-morphisms between conilpotent $\C'$-coalgebras are the same as $\infty_\alpha$-morphisms between the same coalgebras seen as $\C$-coalgebras by pushforward of the structure along $f$.
	
	\medskip
	
	Dually, let $\C$ be a cooperad and let $\P,\P'$ be two operads. Let $\alpha\in\Tw(\C,\P)$, let $g:\P\to\P'$ be a morphism of operads, and let $A$ be a $\P'$-algebra. Then
	\[
	\Bar_\alpha(g^*A) = \Bar_{g_*\alpha}A\ .
	\]
	In particular, $\infty_{g_*\alpha}$-morphisms between $\P'$-algebras are the same as $\infty_\alpha$-morphisms between the same algebras seen as $\P$-algebras by pullback of the structure along $g$.
\end{lemma}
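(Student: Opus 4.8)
The plan is to unwind the definitions of the relative bar and cobar constructions recalled in \cref{subsection:relative bar and cobar} and to observe that the two sides of each displayed equality are not merely isomorphic but literally the same object. The two facts that make this work are that a derivation on a free $\P$-algebra $\P(V)$ is uniquely determined by its restriction $V \to \P(V)$, and, dually, that a coderivation on a cofree conilpotent $\C$-coalgebra $\C(V)$ is uniquely determined by its corestriction $\C(V) \to V$; so to compare the differentials of two such (co)bar constructions it will be enough to compare the generating maps.

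I would first treat the final (dual) statement, the one for $\infty_{g_*\alpha}$-morphisms. Both $\Bar_\alpha(g^* A)$ and $\Bar_{g_*\alpha} A$ have underlying graded $\C$-coalgebra $\C(A)$ --- since $g^* A$ and $A$ share the same underlying chain complex --- and their linear parts $d_1 = d_\C \circ 1_A + 1_\C \circ' d_A$ agree. Their twisting parts $d_2$ are the unique coderivations corestricting respectively to
\[
\C(A) \xrightarrow{\alpha \circ 1_A} \P(A) \xrightarrow{\gamma_{g^* A}} A
\qquad\text{and}\qquad
\C(A) \xrightarrow{(g_*\alpha) \circ 1_A} \P'(A) \xrightarrow{\gamma_A} A .
\]
By the very definition of restriction of structure one has $\gamma_{g^* A} = \gamma_A (g \circ 1_A)$, and by definition $g_*\alpha = g\alpha$; hence both composites equal $\C(A) \xrightarrow{\alpha \circ 1_A} \P(A) \xrightarrow{g \circ 1_A} \P'(A) \xrightarrow{\gamma_A} A$, so the two coderivations --- and therefore the two $\C$-coalgebras --- coincide. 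The first statement of the lemma is obtained by the dual argument: $\Cobar_\alpha(f_* D)$ and $\Cobar_{f^*\alpha} D$ both have underlying graded $\P$-algebra $\P(D)$ and the same $d_1$, while $-d_2$ is in each case the unique derivation extending a map $D \to \P(D)$; using that the $\C$-coalgebra structure of $f_* D$ is $(f \circ 1_D)\Delta_D$ and that $f^*\alpha = \alpha f$, both extending maps equal $D \xrightarrow{\Delta_D} \C'(D) \xrightarrow{f \circ 1_D} \C(D) \xrightarrow{\alpha \circ 1_D} \P(D)$.

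Finally, the statements about $\infty$-morphisms will be immediate consequences. By \cref{def:infinity morphisms relative to a twisting morphism}, an $\infty_{g_*\alpha}$-morphism between $\P'$-algebras $A$ and $A'$ is precisely a morphism of $\C$-coalgebras $\Bar_{g_*\alpha} A \to \Bar_{g_*\alpha} A'$; by the equality just established this is the same datum as a morphism of $\C$-coalgebras $\Bar_\alpha(g^* A) \to \Bar_\alpha(g^* A')$, that is, an $\infty_\alpha$-morphism between $g^* A$ and $g^* A'$, and composition agrees on the nose since on both sides it is computed in $\Ccog$. The dual bookkeeping gives the corresponding statement for $\infty_{f^*\alpha}$-morphisms of $\C'$-coalgebras. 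The only point asking for a little care is keeping track of the signs in the explicit formulas for $d_2$ (respectively $-d_2$); but since passing from $\alpha$ to $g_*\alpha$ (respectively $f^*\alpha$) alters the generating map only by post-composition with the degree-$0$ operad morphism $g$ (respectively pre-composition with the degree-$0$ cooperad morphism $f$), no signs are affected and there is no real obstacle.
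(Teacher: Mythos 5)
Your proof is correct and takes essentially the same route as the paper: both arguments observe that the underlying graded (co)free objects and the linear parts of the differentials coincide, and then reduce the comparison of the twisting parts to the equality of their generating maps, which follows from $\gamma_{g^*A}=\gamma_A(g\circ 1_A)$ and $g_*\alpha=g\alpha$ (dually $\Delta_{f_*D}=(f\circ 1_D)\Delta_D$ and $f^*\alpha=\alpha f$). The only cosmetic difference is that the paper writes out the full composite defining $d_2$ and manipulates it directly, while you invoke the uniqueness of the (co)derivation extending a given generator; these are the same argument.
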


\begin{proof}
	We only prove the first of the two facts, the proof of the second one being dual. As algebras over graded vector spaces, it is clear that we have
	\[
	\Omega_{f^*\alpha}C = \P(C) = \Omega_{\alpha}(f_*C)\ ,
	\]
	so that we only have to check that the differentials agree.	We have
	\[
	d_{\Omega_{f^*\alpha}C} = d_{\P(C)} + d^{f^*\alpha}_2
	\]
	with $d^{f^*\alpha}_2$ given by the composite
	\[
	\P(C)\xrightarrow{1_\P\circ'\Delta_C}\P\circ(C;\C'\circ C)\xrightarrow{1_\P\circ(1_C;f^*\alpha\circ1_C)}\P\circ(C;\P\circ C)\cong(\P\circ_{(1)}\P)(C)\xrightarrow{\gamma_{(1)}\circ1_C}\P(C)\ .
	\]
	The part $d_{\P(C)}$ is independent of the twisting morphism, and thus of no interest to us. For the other part, we notice that
	\begin{align*}
	(1_\P\circ(1_C;f^*\alpha\circ1_C))(1_\P\circ'\Delta_C) =&\ (1_\P\circ(1_C;\alpha f\circ1_C))(1_\P\circ'\Delta_C)\\
	=&\ (1_\P\circ(1_C;\alpha\circ1_C))(1_\P\circ'f\Delta_C)\\
	=&\ (1_\P\circ(1_C;\alpha_1\circ1_C))(1_\P\circ'\Delta_{f_*C})\ ,
	\end{align*}
	which implies the result.
\end{proof}

\subsection{Rectifications}

Suppose we have a commutative diagram
\begin{center}
	\begin{tikzpicture}
		\node (a) at (0,1.5){$\C'$};
		\node (b) at (0,0){$\C$};
		\node (c) at (2,0){$\P$};
		
		\draw[->] (a) -- node[left]{$f$} (b);
		\draw[->] (a) -- node[above right]{$f^*\alpha$} (c);
		\draw[->] (b) -- node[below]{$\alpha$} (c);
	\end{tikzpicture}
\end{center}
where $\alpha$ is a twisting morphism, and where $f$ is a morphism of cooperads. Then \cref{lemma:equality of oo-morphisms} tells us that whenever we are given two conilpotent $\C'$-coalgebras, the $\infty_{f^*\alpha}$-morphisms between them are exactly the same thing as the $\infty_\alpha$-morphisms between the same coalgebras seen as $\C$-coalgebras by pushing forward their structure along $f$. Suppose instead that we are given two conilpotent $\C$-coalgebras. Is it possible to go the other way around and understand the $\infty_\alpha$-morphisms between them in terms of $\infty_{f^*\alpha}$-morphisms? The dual question is asked for $\infty$-morphisms of algebras. The answer is not so immediate this time, and goes through what we call rectification functors, in analogy to \cref{subsection:rectification for Poo-algebras}.

\medskip

Let $\C$ and $\C'$ be two cooperads, let $\P$ be an operad, let $\alpha:\C\to\P$ be a twisting morphism, and let $f:\C'\to\C$ be a morphism of cooperads. We define the \emph{rectification functor}\index{Rectification functor!for $\C$-coalgebras}
\[
R_{\alpha,f}:\infty_\alpha\text{-}\C\text{-}\mathsf{cog}\longrightarrow\infty_\alpha\text{-}\C\text{-}\mathsf{cog}
\]
by
\[
R_{\alpha,f}(C)\coloneqq f_*\Bar_{f^*\alpha}\Cobar_\alpha C
\]
on conilpotent $\C$-coalgebras, and
\[
R_{\alpha,f}(\Phi)\coloneqq f_*\Bar_{f^*\alpha}\Phi
\]
on $\infty_\alpha$-morphisms of $\C$-coalgebras. The counit of the bar-cobar adjunction relative to $f^*\alpha$ induces a natural transformation
\[
E_C:\Cobar_\alpha R_{\alpha,f}(C)=\Cobar_{f^*\alpha}\Bar_{f^*\alpha}\Cobar_\alpha C\xrightarrow{\epsilon_{\Cobar_\alpha C}}\Cobar_\alpha C\ ,
\]
where $C$ is a conilpotent $\C$-coalgebra, and where the equality is given by \cref{lemma:equality of oo-morphisms}. Therefore, we have a natural $\infty_\alpha$-morphism of conilpotent $\C$-coalgebras
\[
E_C:R_{\alpha,f}(C)\rightsquigarrow C
\]
from the rectification functor to the identity functor.

\begin{remark}
		If $f$ is the identity of $\C$, then $E$ is a strict morphism, namely the counit of the bar-cobar adjunction.
\end{remark}

Dually, let $\C$ be a cooperad, let $\P,\P'$ be operads, let $\alpha:\C\to\P$ be a twisting morphism, and take $g:\P\to\P'$ a morphism of operads. We define the \emph{rectification functor}\index{Rectification functor!for $\P$-algebras}
\[
R^{g,\alpha}:\infty_\alpha\text{-}\Palg\longrightarrow\infty_\alpha\text{-}\Palg
\]
by
\[
R^{g,\alpha}(A)\coloneqq g^*\Cobar_{g_*\alpha}\Bar_\alpha A
\]
for a $\P$-algebra $A$, and
\[
R^{g,\alpha}(\Psi)\coloneqq g^*\Cobar_{g_*\alpha}\Psi
\]
on $\infty_\alpha$-morphisms. There is a natural transformation $N$ from $R^{g,\alpha}$ to the identity of $\infty_\alpha\text{-}\Palg$ given by
\[
N_A:\Bar_\alpha A\xrightarrow{\eta_{\Bar_\alpha A}}\Bar_{g_*\alpha}\Cobar_{g_*\alpha}\Bar_\alpha A = \Bar_\alpha R^{g,a}(A)\ ,
\]
where $\eta$ is the unit of the bar-cobar adjunction relative to $g_*\alpha$. Therefore, we have a natural $\infty_\alpha$-morphism of $\P$-algebras
\[
N_A:A\rightsquigarrow R^{g,\alpha}(A)
\]
from the identity to the rectification functor.

\begin{remark}
	If $g$ is the identity of $\P$, then $N$ is a strict morphism, namely the unit of the bar-cobar adjunction.
\end{remark}

Next, we will see that the rectification functors are homotopically well behaved.

\section{Homotopy theory of \texorpdfstring{$\infty$}{infinity}-morphisms of coalgebras}\label{sect:homotopy theory of oo-morphisms of coalgebras}

The homotopy theory for classical $\infty$-morphisms is quite well known, cf. \cref{sect:homotopy theory of homotopy algebras} and \cref{subsect:homotopy theory of algebras with oo-moprhisms}. Notice that the theory developed there passes without problems to $\infty_\alpha$-morphisms of algebras when $\alpha$ is a Koszul morphism\footnote{Here, as usual, we take Koszul morphisms between reduced (co)operads. We expect the theory presented in this chapter to work more generally for Koszul morphisms between connected weight graded (co)operads.}. In this section, we will develop the analogous results for coalgebras. For the rest of this section, we fix a Koszul morphism $\alpha:\C\to\P$.

\medskip

We begin with the following definition.

\begin{definition}
	Let $\C$ be a cooperad, let $\P$ be an operad, and let $\alpha:\C\to\P$ be a twisting morphism.
	\begin{enumerate}
		\item An $\infty_\alpha$-morphism of $\P$-algebras $\Psi:A\rightsquigarrow A'$ is an \emph{$\alpha$-weak equivalence} if the morphism
		\[
		\Psi:\Bar_\alpha A\longrightarrow \Bar_\alpha A'
		\]
		is a weak equivalence of coalgebras in the category of conilpotent $\C$-coalgebras with the Vallette model structure \cite{val14}, i.e. if
		\[
		\Cobar_\alpha\Psi:\Cobar_\alpha\Bar_\alpha A\longrightarrow\Cobar_\alpha\Bar_\alpha A'
		\]
		is a quasi-isomorphism.
		\item An $\infty_\alpha$-morphism of $\P$-algebras $\Psi:A\rightsquigarrow A'$ is an \emph{$\infty_\alpha$-quasi-isomorphism}\index{$\infty$-quasi-isomorphism!of algebras} if the chain map
		\[
		\psi_1:A\longrightarrow A'
		\]
		is a quasi-isomorphism.
		\item An $\infty_\alpha$-morphism of $\C$-coalgebras $\Phi:C'\rightsquigarrow C$ is an \emph{$\alpha$-weak equivalence}\index{$\alpha$-weak equivalences of coalgebras} if the morphism
		\[
		\Phi:\Cobar_\alpha C'\longrightarrow\Cobar_\alpha C
		\]
		is a quasi-isomorphism, i.e. if it is a weak equivalence in the classical Hinich model structure on the category of $\P$-algebras.
		\item An $\infty_\alpha$-morphism of $\C$-coalgebras $\Phi:C'\rightsquigarrow C$ is an \emph{$\infty_\alpha$-quasi-isomorphism}\index{$\infty$-quasi-isomorphism!of coalgebras} if the chain map
		\[
		\phi_1:C'\longrightarrow C
		\]
		is a quasi-isomorphism.
	\end{enumerate}
\end{definition}

We will now try to understand how these four notions are related to each other. We begin with a classical fact. It was originally stated for $\P$ a Koszul operad and the classical $\infty$-morphisms of homotopy $\P$-algebras, but the proof readily generalizes to our setting. See also \cite[Prop. 32]{leg16}.

\begin{theorem}[{\cite[Prop. 11.4.7]{LodayVallette}}]\label{thm:oo-alpha-qi of algebras iff alpha-we of algebras}
	An $\infty_\alpha$-morphism of $\P$-algebras is an $\alpha$-weak equivalence if, and only if it is an $\infty_\alpha$-quasi-isomorphism.
\end{theorem}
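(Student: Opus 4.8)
The plan is to reduce the statement to the already-available comparison results for twisted composite products and bar--cobar constructions, exploiting that $\alpha$ is Koszul so that the counit $\epsilon_A \colon \Cobar_\alpha\Bar_\alpha A \to A$ is a quasi-isomorphism for every $\P$-algebra $A$ (\cref{thm:bar-cobar resolution for algebras}). Let $\Psi\colon A\rightsquigarrow A'$ be an $\infty_\alpha$-morphism of $\P$-algebras, i.e. a morphism of conilpotent $\C$-coalgebras $\Psi\colon\Bar_\alpha A\to\Bar_\alpha A'$, and recall that its first component $\psi_1\colon A\to A'$ is recovered by restricting $\Psi$ along the inclusion of cogenerators and projecting; concretely $\psi_1$ is the composite $A \hookrightarrow \C(A) = \Bar_\alpha A \xrightarrow{\Psi} \Bar_\alpha A' = \C(A') \twoheadrightarrow A'$, and it is a chain map. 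Both directions of the equivalence will follow once we compare the square relating $\epsilon_A$, $\epsilon_{A'}$, $\Cobar_\alpha\Psi$ and $\psi_1$.

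First I would establish the commuting square
\begin{center}
\begin{tikzpicture}
	\node (a) at (0,1.5){$\Cobar_\alpha\Bar_\alpha A$};
	\node (b) at (4,1.5){$\Cobar_\alpha\Bar_\alpha A'$};
	\node (c) at (0,0){$A$};
	\node (d) at (4,0){$A'$};

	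\draw[->] (a) -- node[above]{$\Cobar_\alpha\Psi$} (b);
	\draw[->] (a) -- node[left]{$\epsilon_A$} (c);
	\draw[->] (b) -- node[right]{$\epsilon_{A'}$} (d);
	\draw[->] (c) -- node[below]{$\psi_1$} (d);
\end{tikzpicture}
\end{center}
This is the key computational step: one unwinds the definition of the counit $\epsilon_A = \gamma_A(1_\P\circ(\text{something}))$ from the proof of \cref{thm:Rosetta stone algebras} and checks that precomposing $\epsilon_{A'}$ with $\Cobar_\alpha\Psi = \P(\Psi)$ equals $\psi_1$ precomposed with $\epsilon_A$, using that $\Psi$ is a morphism of $\C$-coalgebras and hence compatible with the decomposition maps. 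I expect this to be a routine but slightly fiddly verification of the kind the book \cite{LodayVallette} leaves to the reader; it is essentially the naturality of the bar--cobar counit with respect to $\infty_\alpha$-morphisms, phrased at the level of the underlying chain maps.

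Once the square commutes, the rest is formal. Since $\alpha$ is Koszul, \cref{thm:bar-cobar resolution for algebras} tells us $\epsilon_A$ and $\epsilon_{A'}$ are quasi-isomorphisms. If $\Psi$ is an $\alpha$-weak equivalence, then by definition $\Cobar_\alpha\Psi$ is a quasi-isomorphism, and the two-out-of-three property applied to the square forces $\psi_1$ to be a quasi-isomorphism, i.e. $\Psi$ is an $\infty_\alpha$-quasi-isomorphism. Conversely, if $\psi_1$ is a quasi-isomorphism, the same square and two-out-of-three give that $\Cobar_\alpha\Psi$ is a quasi-isomorphism; but $\Cobar_\alpha\Psi$ being a quasi-isomorphism is precisely what it means for $\Psi$ to be an $\alpha$-weak equivalence (equivalently, $\Cobar_\alpha\Bar_\alpha\Psi$ is a quasi-isomorphism, which follows since $\Cobar_\alpha$ applied to the weak equivalence $\Psi$ of $\C$-coalgebras in the Vallette model structure is a quasi-isomorphism by definition of that model structure). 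The main obstacle, as indicated, is purely the bookkeeping needed to verify the commuting square; no conceptual difficulty arises beyond that, and everything else is an application of results already assembled in the excerpt.
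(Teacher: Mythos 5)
Your overall strategy — sandwiching the map between bar–cobar resolutions and applying two-out-of-three — is the right one, but the square you propose does not commute, and this is not a bookkeeping issue that a more careful verification would fix. Unwinding the counit as $\epsilon_A=\gamma_A(1_\P\circ\proj_A)$ (from the proof of \cref{thm:Rosetta stone algebras}), the composite $\epsilon_{A'}\circ\Cobar_\alpha\Psi=\gamma_{A'}\bigl(1_\P\circ(\proj_{A'}\Psi)\bigr)$ involves the \emph{full} corestriction $\proj_{A'}\Psi=\sum_{n\ge1}\psi_n$, whereas $\psi_1\circ\epsilon_A$ only sees $\psi_1\circ\proj_A$ and kills everything of coradical weight at least $2$. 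Concretely, on $\id_\P\otimes(c\otimes a_1\otimes a_2)$ with $c\in\C(2)$ the first composite gives $\psi_2(c\otimes a_1\otimes a_2)$ while the second gives $0$, so the square commutes only when $\Psi$ is strict. (Relatedly, the counit is simply not natural with respect to $\infty_\alpha$-morphisms, and $\psi_1\circ\epsilon_A$ is not even a morphism of $\P$-algebras.)

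The repair is to run the identical argument with the unit-side inclusions instead of the counits. The map $i_A\colon A\hookrightarrow\C(A)\hookrightarrow\P(\C(A))=\Cobar_\alpha\Bar_\alpha A$ is a chain map (both twisting parts of the differential vanish on the weight-one part, since $\alpha$ vanishes on $\C(1)$ for reduced cooperads) and satisfies $\epsilon_A i_A=1_A$, hence is a quasi-isomorphism whenever $\epsilon_A$ is, i.e. whenever $\alpha$ is Koszul, which is assumed throughout this section. Since $\Psi$ is a morphism of coalgebras it preserves the coradical filtration, so $\Psi|_A$ lands in $A'\subseteq\C(A')$ and equals $\psi_1$; therefore $\Cobar_\alpha\Psi\circ i_A=i_{A'}\circ\psi_1$ holds on the nose, and two-out-of-three in this commuting square gives both implications at once. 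For what it is worth, the paper does not prove this statement but cites \cite[Prop.~11.4.7]{LodayVallette}, where the argument is a spectral-sequence comparison along the coradical filtration; the corrected square replaces that by a shorter formal argument, at the price of invoking Koszulity of $\alpha$ through \cref{thm:bar-cobar resolution for algebras}, which your write-up already does.
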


Thanks to this result, we see that the rectification functors for algebras are naturally $\infty_\alpha$-quasi-isomorphic to the identity functor.

\begin{lemma}
	Let $\alpha:\C\to\P$ be a Koszul morphism, and let $g:\P\to\P'$ be a quasi-isomorphism of operads. Then the natural $\infty_\alpha$-morphism
	\[
	N_A:A\rightsquigarrow R^{g,\alpha}(A)
	\]
	is an $\infty_\alpha$-quasi-isomorphism for any $\P$-algebra $A$.
\end{lemma}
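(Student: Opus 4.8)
The claim is that, for a Koszul morphism $\alpha:\C\to\P$ and a quasi-isomorphism of operads $g:\P\to\P'$, the natural $\infty_\alpha$-morphism $N_A:A\rightsquigarrow R^{g,\alpha}(A)$ is an $\infty_\alpha$-quasi-isomorphism. By \cref{thm:oo-alpha-qi of algebras iff alpha-we of algebras}, it suffices to prove that $N_A$ is an $\alpha$-weak equivalence, i.e.\ that the underlying morphism of $\C$-coalgebras $N_A:\Bar_\alpha A\to\Bar_\alpha R^{g,\alpha}(A)$ becomes a quasi-isomorphism after applying $\Cobar_\alpha$. The plan is to unwind the definitions so that the statement reduces to facts about the (relative) bar-cobar adjunctions for $\alpha$ and for $g_*\alpha$, together with the comparison lemma for twisted composite products.

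First I would recall the definition: $R^{g,\alpha}(A) = g^*\Cobar_{g_*\alpha}\Bar_\alpha A$, and $N_A$ is the composite $\Bar_\alpha A \xrightarrow{\eta_{\Bar_\alpha A}} \Bar_{g_*\alpha}\Cobar_{g_*\alpha}\Bar_\alpha A = \Bar_\alpha R^{g,\alpha}(A)$, where the equality uses \cref{lemma:equality of oo-morphisms}. So $N_A$ is precisely the unit of the bar-cobar adjunction relative to $g_*\alpha$, evaluated at the $\P'$-algebra $\Bar_\alpha A$ viewed after pushforward --- more precisely, $N_A = \eta_{\Bar_\alpha A}$ for the adjunction $\Cobar_{g_*\alpha}\dashv\Bar_{g_*\alpha}$. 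The key point is that $g_*\alpha$ is again a Koszul morphism: since $g$ is a quasi-isomorphism of operads, the comparison lemma for twisted composite products applied to the square relating $\C\circ_\alpha\P$ and $\C\circ_{g_*\alpha}\P'$ shows acyclicity of $\C\circ_\alpha\P$ is equivalent to acyclicity of $\C\circ_{g_*\alpha}\P'$, hence $\alpha$ Koszul implies $g_*\alpha$ Koszul. (This uses that our (co)operads are reduced, hence canonically connected weight graded, so the comparison lemma applies without extra hypotheses.)

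Now that $g_*\alpha$ is Koszul, I would invoke \cref{corollary:unit of bar-cobar is a we in the Vallette model structure}: for a Koszul morphism the unit $\eta_C:C\to\Bar_{g_*\alpha}\Cobar_{g_*\alpha}C$ of the relative bar-cobar adjunction is a weak equivalence in the Vallette model structure on conilpotent $\C$-coalgebras, for any conilpotent $\C$-coalgebra $C$. Taking $C = \Bar_\alpha A$, this says exactly that $N_A:\Bar_\alpha A\to\Bar_\alpha R^{g,\alpha}(A)$ is a weak equivalence of $\C$-coalgebras, i.e.\ $N_A$ is an $\alpha$-weak equivalence of $\P$-algebras. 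Applying \cref{thm:oo-alpha-qi of algebras iff alpha-we of algebras} once more converts this into the desired conclusion that $N_A$ is an $\infty_\alpha$-quasi-isomorphism.

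The only genuinely non-formal step is the passage ``$\alpha$ Koszul $\Rightarrow$ $g_*\alpha$ Koszul''; I expect this to be the main obstacle, though it is quite short. One must set up the commutative square of (co)operad morphisms $(\mathrm{id}_\C, g):(\C,\alpha)\to(\C,g_*\alpha)$, observe that $\mathrm{id}_\C\circ g:\C\circ_\alpha\P\to\C\circ_{g_*\alpha}\P'$ is a chain map by functoriality of twisted composite products, and that $\mathrm{id}_\C\circ g$ is a quasi-isomorphism by the operadic Künneth formula since $g$ is one; then the comparison lemma gives that $\C\circ_{g_*\alpha}\P'$ is acyclic, which is one of the equivalent characterizations of $g_*\alpha$ being Koszul in \cref{thm:fundamental thm of operadic twisting morphisms}. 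Everything else is bookkeeping with the adjunctions and the two cited theorems.
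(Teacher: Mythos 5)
Your proposal is correct and follows essentially the same route as the paper: identify $N_A$ with the unit of the bar-cobar adjunction relative to $g_*\alpha$, deduce that $g_*\alpha$ is Koszul from the Koszulness of $\alpha$ and the quasi-isomorphism $g$, conclude that the unit is a weak equivalence, and convert via the equivalence between $\alpha$-weak equivalences and $\infty_\alpha$-quasi-isomorphisms. Your extra detail on the comparison-lemma argument for ``$\alpha$ Koszul $\Rightarrow g_*\alpha$ Koszul'' is exactly the reasoning the paper compresses into its citation of the fundamental theorem of operadic twisting morphisms.
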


\begin{proof}
	Since $\alpha$ is Koszul and $g$ is a quasi-isomorphism, it follows that $g_*\alpha$ is also Koszul by \cref{thm:fundamental thm of operadic twisting morphisms}. This is equivalent to $\eta_{\Bar_\alpha A}$ being a quasi-isomorphism by \cref{thm:alpha Koszul iff unit is qi}. Therefore,
	\[
	N_A:\Bar_\alpha A\xrightarrow{\eta_{\Bar_\alpha A}}\Bar_{g_*\alpha}\Cobar_{g_*\alpha}\Bar_\alpha A = \Bar_\alpha R^{g,a}(A)\ ,
	\]
	is an $\alpha$-weak equivalence of $\P$-algebras, and thus an $\infty_\alpha$quasi-isomorphism by \cref{thm:oo-alpha-qi of algebras iff alpha-we of algebras}.
\end{proof}

For coalgebras, we can proceed similarly to prove a slightly weaker statement.

\begin{proposition}\label{prop:rectification is oo-qi to original}
	Let $\alpha:\C\to\P$ be a Koszul morphism, and let $f:\C'\to\C$ be a quasi-isomorphism of cooperads. The $\alpha$-weak equivalence
	\[
	E_C:R_{\alpha,f}(C)\rightsquigarrow C
	\]
	is an $\infty_\alpha$-quasi-isomorphism for any conilpotent $\C$-coalgebra $C$.
\end{proposition}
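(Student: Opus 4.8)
The statement to prove is that for a Koszul morphism $\alpha : \C \to \P$ and a quasi-isomorphism of cooperads $f : \C' \to \C$, the $\alpha$-weak equivalence $E_C : R_{\alpha,f}(C) \rightsquigarrow C$ is an $\infty_\alpha$-quasi-isomorphism, i.e.\ its first component $(E_C)_1 : f_*\Bar_{f^*\alpha}\Cobar_\alpha C \to C$ is a quasi-isomorphism of chain complexes. The natural strategy, mirroring the proof of the preceding lemma for algebras, is to factor $E_C$ through the unit-counit machinery of the two bar-cobar adjunctions (relative to $\alpha$ and relative to $f^*\alpha$) and invoke the results already recalled: \cref{thm:alpha Koszul iff unit is qi} (Koszulness $\iff$ unit is a quasi-isomorphism), \cref{thm:bar-cobar resolution for algebras} (Koszulness $\iff$ counit is a quasi-isomorphism), and \cref{thm:fundamental thm of operadic twisting morphisms} together with the Comparison lemma for twisted composite products to propagate Koszulness along $f$.

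First I would observe that, since $\alpha$ is Koszul and $f$ is a quasi-isomorphism of cooperads, the twisting morphism $f^*\alpha : \C' \to \P$ is again Koszul: indeed $\alpha = g_\alpha\iota$ and $f^*\alpha$ fits into a commuting square with $\alpha$ via $f$ and $1_\P$, so one applies the functoriality of the twisted composite products together with the Comparison lemma --- the right Koszul complex $\C'\circ_{f^*\alpha}\P$ is quasi-isomorphic to $\C\circ_\alpha\P$, which is acyclic. Next, recall that by construction $E_C = \epsilon_{\Cobar_\alpha C}$ is (the $\infty_\alpha$-morphism corresponding to) the counit of the bar-cobar adjunction relative to $f^*\alpha$ evaluated at the $\P$-algebra $\Cobar_\alpha C$. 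By \cref{thm:bar-cobar resolution for algebras}, since $f^*\alpha$ is Koszul, this counit $\epsilon_{\Cobar_\alpha C} : \Cobar_{f^*\alpha}\Bar_{f^*\alpha}\Cobar_\alpha C \to \Cobar_\alpha C$ is a quasi-isomorphism of $\P$-algebras. That is precisely the statement that $E_C$, viewed as a morphism of $\P$-algebras between cobar constructions, is a quasi-isomorphism --- hence $E_C$ is an $\alpha$-weak equivalence of $\C$-coalgebras by definition. Finally, applying the already-established \cref{thm:oo-alpha-qi of algebras iff alpha-we of algebras}? No --- that is the algebra version; for coalgebras I would instead need the analogous equivalence ``$\alpha$-weak equivalence of coalgebras $\iff$ $\infty_\alpha$-quasi-isomorphism of coalgebras'', but that equivalence is false in general (it is exactly the subtlety flagged right after \cref{prop:cobar construction and qis} and is why the proposition is only ``slightly weaker'').

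So the real argument must be more careful, and here is where the main obstacle lies: one cannot simply deduce that $(E_C)_1$ is a quasi-isomorphism from the fact that $\Cobar_\alpha E_C$ is, because the cobar construction does not reflect quasi-isomorphisms. Instead I would argue directly on the underlying chain complexes. The plan is to filter: the $\C$-coalgebra $R_{\alpha,f}(C) = f_*\Bar_{f^*\alpha}\Cobar_\alpha C$ has underlying object $\C'(\P(C))$ --- wait, more precisely $\Bar_{f^*\alpha}\Cobar_\alpha C$ has underlying $\C'$-coalgebra $\C'(\P(C))$ --- equipped with a differential $d_1 + d_2$ where $d_1$ comes from the internal differentials and $d_2$ from $f^*\alpha$, and the map $(E_C)_1$ is the projection $\C'(\P(C)) \to C$ onto the "$\C'(1)\otimes\P(1)\otimes C$" summand (up to identifications). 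One equips both sides with the filtration by the number of $\P$-generators, or better the coradical-type filtration, so that the associated graded of $(E_C)_1$ on each page is governed by the acyclicity of the relevant twisted composite products $\P\circ_{f^*\alpha}\C'$ and $\C'\circ_{f^*\alpha}\P$ tensored with $C$; since $f^*\alpha$ is Koszul these are acyclic, and a standard spectral sequence / exhaustive filtration argument (as in the proof of \cref{thm:bar-cobar resolution for algebras} in \cite{LodayVallette}, or the comparison lemma applied level-wise) shows $(E_C)_1$ is a quasi-isomorphism. The hard part, concretely, is setting up the filtration so that it is exhaustive and bounded-below on each piece --- this is where conilpotence of $C$ and reducedness of the (co)operads are used --- and checking convergence of the associated spectral sequence; once that is in place the Künneth formula plus the acyclicity of the $f^*\alpha$-twisted Koszul complexes finishes the computation on $E^1$, and everything collapses to give the quasi-isomorphism.
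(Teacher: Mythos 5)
Your proposal is correct and takes essentially the same route as the paper: after rightly discarding the attempt to deduce the result from the counit of the bar--cobar adjunction being a quasi-isomorphism, you run a spectral sequence argument on $\C'(\P(C))$ whose first page is computed via the operadic K\"unneth formula and the acyclicity of the Koszul complex of $f^*\alpha$ (which is Koszul since $\alpha$ is and $f$ is a quasi-isomorphism). The one detail to adjust is the filtration: the paper filters by the number of copies of $C$ rather than by the number of $\P$-generators, which is precisely what makes the associated graded differential that of $(\C'\circ_{f^*\alpha}\P)(C)$ with the internal differential of $C$.
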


\begin{proof}
	Denote $\alpha'\coloneqq\alpha f$. We have to prove that the first component
	\[
	e_1:R_{\alpha,f}(C)\longrightarrow C
	\]
	of $E_C$ is a quasi-isomorphism. We will do this by a spectral sequence argument analogous to the one of \cite[Thm. 11.3.3 and 11.4.4]{LodayVallette}. We start by noticing that
	\[
	e_1:(\C'\circ\P)(C)\longrightarrow C
	\]
	is given by the projection onto $C$. We filter the left-hand side by the number of times that $C$ appears, i.e. by
	\[
	F_p\coloneqq\bigoplus_{k\le p}(\C'\circ\P)(k)\otimes_{\S_k}C^{\otimes k}\ .
	\]
	This filtration is increasing, bounded below and exhaustive. The page $E^0$ of the associated spectral sequence equals $(\C'\circ_{\alpha'}\P)(C)$, since the only parts of the differential that preserve the weight (that is, the arity) are the internal differential of $C$ and the part coming from the twisting morphism $\alpha'$. The page $E^1$ of the spectral sequence is
	\[
	H_\bullet((\C'\circ_{\alpha'}\P)(C))\cong H_\bullet(\C'\circ_{\alpha'}\P)\circ H_\bullet(C)\cong H_\bullet(C)
	\]
	by the operadic Künneth formula, \cref{thm:operadic Kunneth}, and the fact that $\alpha'$ is a Koszul morphism. On the other side, we filter $C$ by $F_pC=C$ for $p\ge0$ and $F_pC=0$ otherwise. This filtration is also increasing, bounded below and exhaustive. The map $e_1$ is a map of spectral sequences, and so the induced map at the page $E^1$ is $H_\bullet(e_1)$, which induces an isomorphism. Therefore, the chain map $e_1$ is a quasi-isomorphism.
\end{proof}

Notice that if $f=1_\C$, then the rectification becomes the functor $\Bar_\alpha\Omega_\alpha$, and the natural $\infty_\alpha$-morphism $E_C$ is given by the counit $\varepsilon$ of the bar-cobar adjunction (seen as an $\infty_\alpha$-morphism). As a consequence of this result, we have the following.

\begin{theorem}\label{thm: alpha-we are oo-alpha-qi}
	If an $\infty_\alpha$-morphism of $\C$-coalgebras is an $\alpha$-weak equivalence, then it is an $\infty_\alpha$-quasi-isomor\-phism.
\end{theorem}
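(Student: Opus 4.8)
The plan is to reduce the statement to \cref{prop:rectification is oo-qi to original} by using the counit $\varepsilon$ of the bar-cobar adjunction relative to $\alpha$ (which is the case $f = 1_\C$ of the rectification functor) together with the naturality of $\varepsilon$, mimicking the proof of \cite[Thm.~11.4.9]{LodayVallette} and \cite[Prop.~11.4.7]{LodayVallette} on the coalgebra side.

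First I would fix an $\infty_\alpha$-morphism of $\C$-coalgebras $\Phi : C' \rightsquigarrow C$ that is an $\alpha$-weak equivalence, meaning by definition that $\Phi : \Cobar_\alpha C' \to \Cobar_\alpha C$ is a quasi-isomorphism of $\P$-algebras. Recall that by \cref{thm:Rosetta stone algebras} an $\infty_\alpha$-morphism $\Phi$ is the same datum as a morphism of $\P$-algebras $\Cobar_\alpha C' \to \Cobar_\alpha C$, and that applying $\Bar_\alpha$ and using the counit gives a commuting square in the category of $\C$-coalgebras with strict morphisms,
\begin{center}
	\begin{tikzpicture}
		\node (a) at (0,1.5){$\Bar_\alpha\Cobar_\alpha C'$};
		\node (b) at (4,1.5){$\Bar_\alpha\Cobar_\alpha C$};
		\node (c) at (0,0){$C'$};
		\node (d) at (4,0){$C$};

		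\draw[->] (a) -- node[above]{$\Bar_\alpha\Phi$} (b);
		\draw[->] (a) -- node[left]{$\varepsilon_{C'}$} (c);
		\draw[->] (b) -- node[right]{$\varepsilon_C$} (d);
		\draw[->,squiggly] (c) -- node[below]{$\Phi$} (d);
	\end{tikzpicture}
\end{center}
where the bottom arrow is the original $\infty_\alpha$-morphism and the two vertical maps are the counits, which by \cref{prop:rectification is oo-qi to original} (with $f = 1_\C$, so that $R_{\alpha,1_\C} = \Bar_\alpha\Cobar_\alpha$ and $E = \varepsilon$) are $\infty_\alpha$-quasi-isomorphisms. Since $\Phi$ is by hypothesis an $\alpha$-weak equivalence, $\Bar_\alpha\Phi$ is a quasi-isomorphism of the underlying chain complexes by \cref{prop:bar of qi is qi}; in particular its first component $(\Bar_\alpha\Phi)_1 : \Bar_\alpha\Cobar_\alpha C' \to \Bar_\alpha\Cobar_\alpha C$ is a quasi-isomorphism. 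Taking first components of the square above, the vertical maps are quasi-isomorphisms and the top map is a quasi-isomorphism, so by the two-out-of-three property in chain complexes the bottom map $\phi_1 : C' \to C$ is a quasi-isomorphism, which is exactly the assertion that $\Phi$ is an $\infty_\alpha$-quasi-isomorphism.

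The one point that requires care — and which I expect to be the main obstacle — is the bookkeeping of what "first component" means for the composite $\varepsilon_C \circ \Bar_\alpha\Phi$ and the verification that this square of $\infty_\alpha$-morphisms, when read at the level of first Taylor coefficients, really is a commuting square of honest chain maps whose maps are the ones claimed (in particular that the first component of $\Bar_\alpha\Phi$ agrees up to the counit with $\phi_1$). This is a direct but slightly tedious unraveling of the cofree coalgebra structure of $\Bar_\alpha(-) = \C(-)$ and of the formula for $\Phi$ in terms of its components $\phi_n$, together with \cref{lemma:equality of oo-morphisms} to identify $\Cobar_\alpha\Bar_\alpha\Cobar_\alpha C$ correctly; once this identification is in place the argument is purely formal. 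Alternatively, one may avoid some of this by invoking \cref{prop:alpha-we of coalgebras is zig-zag of we} to replace the $\infty_\alpha$-morphism by a zig-zag of strict weak equivalences and applying \cref{prop:rectification is oo-qi to original} termwise, but the square argument above is more direct and self-contained.
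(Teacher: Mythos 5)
Your proposal is correct and follows essentially the same route as the paper's own proof: the same commutative square with the counits $\varepsilon_{C'}$, $\varepsilon_C$ as vertical arrows, the same appeal to \cref{prop:rectification is oo-qi to original} and \cref{prop:bar of qi is qi}, and the same restriction to first components followed by two-out-of-three for quasi-isomorphisms. The paper treats the ``first component'' bookkeeping you flag as immediate and does not elaborate on it.
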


Notice that the inverse implication is not true: it is known that there are (strict) quasi-isomor\-phisms of $\C$-coalgebras that are not sent to quasi-isomorphisms under the cobar construction, cf. \cite[Prop. 2.4.3]{LodayVallette}.

\begin{proof}
	The proof is similar to the one of \cite[Prop. 11.4.7]{LodayVallette}. Suppose $\Phi:C'\rightsquigarrow C$ is an $\alpha$-weak equivalence of $\C$-coalgebras. We have the commutative diagram
	\begin{center}
		\begin{tikzpicture}
		\node (a) at (0,1.5){$\Bar_\alpha\Omega_\alpha C'$};
		\node (b) at (3,1.5){$\Bar_\alpha\Omega_\alpha C$};
		\node (c) at (0,0){$C'$};
		\node (d) at (3,0){$C$};
		
		\draw[->] (a) -- node[above]{$\Bar_\alpha\Phi$}node[below]{$\sim$} (b);
		\draw[->,line join=round,decorate,decoration={zigzag,segment length=4,amplitude=.9,post=lineto,post length=2pt}] (a) -- node[left]{$\varepsilon_{C'}$} (c);
		\draw[->,line join=round,decorate,decoration={zigzag,segment length=4,amplitude=.9,post=lineto,post length=2pt}] (b) -- node[right]{$\varepsilon_{C}$} (d);
		\draw[->,line join=round,decorate,decoration={zigzag,segment length=4,amplitude=.9,post=lineto,post length=2pt}] (c) -- node[above]{$\Phi$} (d);
		\end{tikzpicture}
	\end{center}
	where the vertical arrows are $\infty_\alpha$-quasi-isomorphisms by \cref{prop:rectification is oo-qi to original} and the top arrow is a quasi-isomorphism by \cref{prop:bar of qi is qi}. Restriction to the first component gives us the diagram
	\begin{center}
		\begin{tikzpicture}
		\node (a) at (0,1.5){$\Bar_\alpha\Omega_\alpha C'$};
		\node (b) at (3,1.5){$\Bar_\alpha\Omega_\alpha C$};
		\node (c) at (0,0){$C'$};
		\node (d) at (3,0){$C$};
		
		\draw[->] (a) -- node[above]{$\Bar_\alpha\Phi$}node[below]{$\sim$} (b);
		\draw[->] (a) -- node[left]{$\varepsilon_{C'}$} node[above,sloped]{$\sim$} (c);
		\draw[->] (b) -- node[right]{$\varepsilon_{C}$} node[below,sloped]{$\sim$} (d);
		\draw[->] (c) -- node[above]{$\Phi$} (d);
		\end{tikzpicture}
	\end{center}
	from which the statement follows.
\end{proof}

Finally, we can show that $\alpha$-weak equivalences of coalgebras are equivalent to zig-zags of weak equivalences of coalgebras, which is analogous to \cref{thm:oo-qi of algebras is zig-zag of qi}.

\begin{theorem}\label{prop:alpha-we of coalgebras is zig-zag of we}
	Let $C$ and $D$ be two $\C$-coalgebras. The following are equivalent.
	\begin{enumerate}
		\item \label{pt:1 alpha we} There is a zig-zag of weak equivalences
		\[
		C\longrightarrow\bullet\longleftarrow\bullet\longrightarrow\cdots\longleftarrow D
		\]
		of $\C$-coalgebras in the Vallette model structure.
		\item \label{pt:2 alpha we} There are two weak equivalences of $\C$-coalgebras forming a zig-zag
		\[
		C\longrightarrow\bullet\longleftarrow D\ .
		\]
		\item \label{pt:3 alpha we} There is an $\alpha$-weak equivalence
		\[
		C\rightsquigarrow D\ .
		\]
	\end{enumerate}
\end{theorem}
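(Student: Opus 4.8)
The plan is to prove the cycle of implications $(\ref{pt:1 alpha we})\implies(\ref{pt:2 alpha we})\implies(\ref{pt:3 alpha we})\implies(\ref{pt:1 alpha we})$, modeled on the classical argument of \cref{thm:oo-qi of algebras is zig-zag of qi} but carried out on the coalgebra side via the model-categorical machinery of \cref{thm:Vallette model structure}. The implication $(\ref{pt:2 alpha we})\implies(\ref{pt:1 alpha we})$ is trivial. The implication $(\ref{pt:1 alpha we})\implies(\ref{pt:2 alpha we})$ is where model category theory enters: by \cref{thm:Vallette model structure}(\ref{pt:all coalgebras are cofibrant}), every conilpotent $\C$-coalgebra is cofibrant in the Vallette model structure, and by \cref{corollary:unit of bar-cobar is a we in the Vallette model structure} the functor $\Bar_\alpha\Cobar_\alpha$ provides a fibrant resolution. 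Thus, given a zig-zag of weak equivalences connecting $C$ and $D$, I would replace it by a zig-zag between bifibrant objects (applying $\Bar_\alpha\Cobar_\alpha$ termwise, using that weak equivalences between cofibrant objects are preserved by this functor — which follows from Ken Brown's lemma, \cref{lemma:Ken Brown}, since it sends trivial cofibrations between cofibrant objects to weak equivalences), and then invoke \cref{thm:homotopy category as quotient of category of bifibrant objects} together with \cref{lemma:we induce bijections of hom sets}: a zig-zag of weak equivalences between bifibrant objects can be collapsed to a single roof, because in the homotopy category the map $C\to D$ it represents is an isomorphism, and one can lift an inverse to an honest map in the model category. This gives the roof $C\xrightarrow{\sim}\bullet\xleftarrow{\sim}D$ (or its reverse), and since all objects here are bifibrant, a roof $C\xleftarrow{\sim}\bullet\xrightarrow{\sim}D$ can always be obtained by one more factorization if needed.

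For $(\ref{pt:2 alpha we})\implies(\ref{pt:3 alpha we})$, suppose $C\xrightarrow{\sim}E\xleftarrow{\sim}D$ is a roof of weak equivalences in the Vallette model structure. By definition of the weak equivalences in \cref{thm:Vallette model structure}(\ref{pt:all coalgebras are cofibrant}), applying $\Cobar_\alpha$ yields quasi-isomorphisms $\Cobar_\alpha C\xrightarrow{\sim}\Cobar_\alpha E\xleftarrow{\sim}\Cobar_\alpha D$ of $\P$-algebras; inverting the second one up to $\infty$-quasi-isomorphism via \cref{thm:oo-qi of algebras is zig-zag of qi} (which says a zig-zag of quasi-isomorphisms of $\P$-algebras yields an $\infty$-quasi-isomorphism, hence in particular a genuine morphism of $\P$-algebras after rectification), or more directly by using that quasi-isomorphisms of $\P$-algebras between bifibrant objects in the Hinich model structure admit homotopy inverses, I obtain a quasi-isomorphism of $\P$-algebras $\Cobar_\alpha C\to\Cobar_\alpha D$. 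But a quasi-isomorphism of $\P$-algebras $\Cobar_\alpha C\to\Cobar_\alpha D$ is, by \cref{def:infinity morphisms relative to a twisting morphism}, exactly an $\infty_\alpha$-morphism $C\rightsquigarrow D$, and being a quasi-isomorphism of $\P$-algebras it is an $\alpha$-weak equivalence by definition. The cleanest route here may actually be to directly compose: from $C\xrightarrow{\sim}E$ we get the $\alpha$-weak equivalence (it is even a strict morphism inducing a quasi-isomorphism on cobar), and from $D\xrightarrow{\sim}E$ together with the fact that $\Cobar_\alpha D, \Cobar_\alpha E$ are bifibrant $\P$-algebras we get a homotopy-inverse quasi-isomorphism $\Cobar_\alpha E\to\Cobar_\alpha D$; composing gives the desired $\alpha$-weak equivalence $C\rightsquigarrow D$.

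Finally, $(\ref{pt:3 alpha we})\implies(\ref{pt:1 alpha we})$ is the implication that uses the rectification machinery just developed. Suppose $\Phi:C\rightsquigarrow D$ is an $\alpha$-weak equivalence, i.e.\ a morphism of $\P$-algebras $\Phi:\Cobar_\alpha C\to\Cobar_\alpha D$ that is a quasi-isomorphism. I would form the commutative square
\begin{center}
	\begin{tikzpicture}
	\node (a) at (0,1.5){$\Bar_\alpha\Cobar_\alpha C$};
	\node (b) at (4,1.5){$\Bar_\alpha\Cobar_\alpha D$};
	\node (c) at (0,0){$C$};
	\node (d) at (4,0){$D$};

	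\draw[->] (a) -- node[above]{$\Bar_\alpha\Phi$} (b);
	\draw[->] (a) -- node[left]{$\varepsilon_C$} (c);
	\draw[->] (b) -- node[right]{$\varepsilon_D$} (d);
	\draw[->,line join=round,decorate,decoration={zigzag,segment length=4,amplitude=.9,post=lineto,post length=2pt}] (c) -- node[above]{$\Phi$} (d);
	\end{tikzpicture}
\end{center}
where $\varepsilon_C, \varepsilon_D$ are the counits of the bar-cobar adjunction relative to $\alpha$. The top horizontal arrow $\Bar_\alpha\Phi$ is a quasi-isomorphism of $\C$-coalgebras by \cref{prop:bar of qi is qi}, hence a weak equivalence in the Vallette model structure (strict quasi-isomorphisms are weak equivalences, since $\Cobar_\alpha$ preserves them by the comparison lemma, or directly since $\Bar_\alpha$ of a quasi-isomorphism composed with $\Cobar_\alpha$ is a quasi-isomorphism). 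The vertical arrows $\varepsilon_C$ and $\varepsilon_D$ are weak equivalences in the Vallette model structure by \cref{corollary:unit of bar-cobar is a we in the Vallette model structure} — here one uses that, dualizing, the counit $\varepsilon_C: \Bar_\alpha\Cobar_\alpha C \to C$ is a weak equivalence because $\alpha$ is Koszul and $\Cobar_\alpha C$ is cofibrant, so $\varepsilon_C$ is the image under $\Bar_\alpha$ of the identity-up-to-homotopy, making $\Cobar_\alpha\varepsilon_C$ a quasi-isomorphism. This produces the zig-zag of weak equivalences
\[
C\xleftarrow{\ \sim\ }\Bar_\alpha\Cobar_\alpha C\xrightarrow{\ \sim\ }\Bar_\alpha\Cobar_\alpha D\xrightarrow{\ \sim\ }D
\]
in the Vallette model structure, which is $(\ref{pt:1 alpha we})$.

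The main obstacle I anticipate is making precise the claim that $\varepsilon_C:\Bar_\alpha\Cobar_\alpha C\to C$ is a weak equivalence in the Vallette model structure; this is the dual of the Koszul criterion and is exactly the content of \cref{corollary:unit of bar-cobar is a we in the Vallette model structure}, but one must be careful that that corollary is stated for the counit of the adjunction in the direction $\eta_C: C\to\Bar_\alpha\Cobar_\alpha C$ versus the direction needed here — I would need to check the orientation of the adjunction unit/counit matches, and if the wrong one is available, use \cref{thm:alpha Koszul iff unit is qi} to get that $\Cobar_\alpha\varepsilon_C$ is a quasi-isomorphism, which by definition of the Vallette weak equivalences is all that is required. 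A secondary subtlety is that the collapse of an arbitrary zig-zag to a roof in $(\ref{pt:1 alpha we})\implies(\ref{pt:2 alpha we})$ requires all objects to be bifibrant, so one genuinely needs the fibrant replacement step via $\Bar_\alpha\Cobar_\alpha$ before applying the homotopy-category argument; the cofibrancy half is free by \cref{thm:Vallette model structure}(\ref{pt:all coalgebras are cofibrant}).
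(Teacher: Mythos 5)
Your argument is correct in substance but runs the cycle in the opposite direction from the paper: you prove $(\ref{pt:1 alpha we})\Rightarrow(\ref{pt:2 alpha we})\Rightarrow(\ref{pt:3 alpha we})\Rightarrow(\ref{pt:1 alpha we})$, whereas the paper proves $(\ref{pt:2 alpha we})\Rightarrow(\ref{pt:1 alpha we})$ trivially, then $(\ref{pt:3 alpha we})\Rightarrow(\ref{pt:2 alpha we})$ via the units $\eta_C$, $\eta_D$ and \cref{corollary:unit of bar-cobar is a we in the Vallette model structure}, and finally $(\ref{pt:1 alpha we})\Rightarrow(\ref{pt:3 alpha we})$ by inverting each backwards weak equivalence one leg at a time, using that $\Cobar_\alpha$ lands in bifibrant $\P$-algebras so that homotopy inverses exist. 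The paper's choice of cycle lets it avoid your fibrant-replacement-plus-homotopy-category collapse in $(\ref{pt:1 alpha we})\Rightarrow(\ref{pt:2 alpha we})$ entirely; your collapse argument is valid but is the most delicate step of your version and is not needed if one takes the other orientation. Your $(\ref{pt:2 alpha we})\Rightarrow(\ref{pt:3 alpha we})$ and the paper's $(\ref{pt:1 alpha we})\Rightarrow(\ref{pt:3 alpha we})$ are essentially the same Whitehead-type argument, and your $(\ref{pt:3 alpha we})\Rightarrow(\ref{pt:1 alpha we})$, once the arrows $\varepsilon_C$ are replaced by the units $\eta_C\colon C\to\Bar_\alpha\Cobar_\alpha C$ (a direction issue you correctly flag yourself --- the natural map of the adjunction on the coalgebra side is the unit, and it points \emph{into} the bar-cobar resolution), is exactly the paper's zig-zag $C\to\Bar_\alpha\Cobar_\alpha C\to\Bar_\alpha\Cobar_\alpha D\leftarrow D$.

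One genuine error to repair: in $(\ref{pt:3 alpha we})\Rightarrow(\ref{pt:1 alpha we})$ you justify that $\Bar_\alpha\Phi$ is a Vallette weak equivalence by asserting that ``strict quasi-isomorphisms are weak equivalences, since $\Cobar_\alpha$ preserves them.'' This is false in general: the paper explicitly notes (after \cref{prop:cobar construction and qis}) that there exist quasi-isomorphisms of conilpotent $\C$-coalgebras not sent to quasi-isomorphisms by $\Cobar_\alpha$. The correct justification is the one you half-offer as an alternative: $\Bar_\alpha\Phi$ is the bar construction of a quasi-isomorphism of $\P$-algebras, hence a (co)filtered quasi-isomorphism by \cref{lemma:BarOfQIisFQI}, hence a weak equivalence by \cref{thm:characterization of we in the Vallette model structure}; equivalently, $\Cobar_\alpha\Bar_\alpha\Phi$ is a quasi-isomorphism by naturality of the counit $\epsilon$ and two-out-of-three. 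With that substitution your proof goes through.
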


\begin{proof}
	The fact that (\ref{pt:2 alpha we}) implies (\ref{pt:1 alpha we}) is obvious.
	
	\medskip
	
	We prove that (\ref{pt:3 alpha we}) implies (\ref{pt:2 alpha we}). Suppose we have an $\alpha$-weak equivalence
	\[
	\Phi:C\rightsquigarrow D\ .
	\]
	Then $\Bar_\alpha\Phi$ is a weak equivalence, and thus we have the zig-zag
	\[
	C\xrightarrow{\eta_C}\Bar_\alpha\Omega_\alpha C\xrightarrow{\Bar_\alpha\Phi}\Bar_\alpha\Omega_\alpha D\xleftarrow{\eta_D}D\ ,
	\]
	where the units of the bar-cobar adjunction are weak equivalences by  \cref{corollary:unit of bar-cobar is a we in the Vallette model structure}.
	
	\medskip
	
	Finally, we show that (\ref{pt:1 alpha we}) implies (\ref{pt:3 alpha we}). Every weak equivalence of coalgebras is in particular an $\alpha$-weak equivalence. Therefore, it is enough to prove that whenever we have a weak equivalence
	\[
	C\stackrel{\phi}{\longleftarrow}D
	\]
	of coalgebras, then we have an $\alpha$-weak equivalence going the other way round. Since $\phi$ is a weak equivalence, we have that
	\[
	\Omega_\alpha C\xleftarrow{\Omega_\alpha\phi}\Omega_\alpha D
	\]
	is a quasi-isomorphism. Moreover, every $\P$-algebra is fibrant and $\Omega_\alpha$ lands in the cofibrant $\P$-algebras by \cite[Thm. 2.9(1)]{val14}. Therefore, we can apply \cite[Lemma 4.24]{ds95} to obtain a homotopy inverse
	\[
	\Omega_\alpha C\stackrel{\Psi}{\longrightarrow}\Omega_\alpha D\ ,
	\]
	which is again a quasi-isomorphism, and thus defines an $\alpha$-weak equivalence from $C$ to $D$, as we desired.
\end{proof}
	
	\chapter{Convolution homotopy Lie algebras}\label{chapter:convolution homotopy algebras}

Independently in \cite{w16} and \cite{rn17tensor}, and then jointly in \cite{rnw17} and \cite{rnw18}, the author and Felix Wierstra showed that operadic twisting morphisms from a cooperad $\C$ to an operad $\P$ are equivalent to morphisms from the operad $\L_\infty$ to the convolution operad $\hom(\C,\P)$. It follows that the chain complex of linear maps from a conilpotent $\C$-coalgebra to a $\P$-algebra is an $\L_\infty$-algebra in a functorial way, by restriction of the structure. The $\L_\infty$-algebra structures obtained this way are really well behaved with respect to the tools of homotopical operadic algebra: the homotopy transfer theorem and $\infty$-morphisms.

\medskip

The results presented here were developed and generalized progressively during the last few years. We present only the current state of the art in this chapter. In the whole chapter, we work with shifted $\L_\infty$-algebras. This greatly reduces the signs appearing in the proofs, improving readability. One can pass to usual $\L_\infty$-algebras simply by desuspending everything, see \cref{sect:shifted Loo-alg}.

\section{Convolution homotopy Lie algebras}

We begin by showing how operadic twisting morphisms are equivalent to morphisms from $\SLoo$ to the convolution operad. We proceed by studying the Maurer--Cartan elements of the $\SLoo$-algebras obtained that way, and take a look at what happens in other settings, such as non-symmetric operads.

\subsection{Definitions}

We begin with the following remark.

\begin{lemma}
	Let $\P$ be an operad. Then we have
	\[
	\hom(\com^\vee,\P)\cong\P
	\]
	as operads.
\end{lemma}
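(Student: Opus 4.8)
The plan is to exhibit the isomorphism $\hom(\com^\vee,\P)\cong\P$ arity by arity and then check that it is compatible with the operad structures. First I would recall that $\com$ is one-dimensional in each arity $n\ge 1$, spanned by $\mu_n$, with trivial $\S_n$-action, so its linear dual $\com^\vee$ is also one-dimensional in each arity, spanned by $\mu_n^\vee$, again with trivial action and concentrated in degree $0$. Consequently, by the definition of the inner hom of $\S$-modules, $\hom(\com^\vee,\P)(n)=\hom(\com^\vee(n),\P(n))=\hom(\k\mu_n^\vee,\P(n))\cong\P(n)$ as chain complexes, the isomorphism sending a map $f$ to $f(\mu_n^\vee)$. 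I would also verify the isomorphism is $\S_n$-equivariant: since the action on $\com^\vee(n)$ is trivial, the formula $(f^\sigma)(x)=f(x^{\sigma^{-1}})^\sigma$ from the definition of the inner hom reduces to $(f^\sigma)(\mu_n^\vee)=f(\mu_n^\vee)^\sigma$, which is exactly the given right $\S_n$-action on $\P(n)$.

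Next I would check compatibility with the operadic composition. Using the explicit description of the composition map on the convolution operad $\hom(\C,\P)$ given just before \cref{thm:hom of algebras is algebras over hom} (with $\C=\com^\vee$), I would compute $\gamma_{\hom(\com^\vee,\P)}(f;g_1,\ldots,g_k)$ evaluated on $\mu_n^\vee$, where $n=n_1+\cdots+n_k$. The key input is that the cooperad structure of $\com^\vee$ is dual to the operad structure of $\com$: since all compositions of the $\mu$'s agree in $\com$, the decomposition map $\Delta_{\com^\vee}$ sends $\mu_n^\vee$ to a sum over all the relevant $2$-level trees of tensors $\mu_k^\vee\otimes\mu_{n_1}^\vee\otimes\cdots\otimes\mu_{n_k}^\vee$ together with shuffle permutations, each with coefficient $1$. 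Tracing through the composite that defines $\gamma_{\hom(\com^\vee,\P)}$ — project onto the $\mu_k^\vee\otimes\mu_{n_1}^\vee\otimes\cdots\otimes\mu_{n_k}^\vee$ component, apply $f\otimes g_1\otimes\cdots\otimes g_k$, reassemble via $\gamma_\P$ — and using the identification $f\leftrightarrow f(\mu_n^\vee)$, one finds that this is precisely $\gamma_\P$ applied to $f(\mu_n^\vee)\otimes g_1(\mu_{n_1}^\vee)\otimes\cdots\otimes g_k(\mu_{n_k}^\vee)$ with the appropriate shuffles, i.e. the composition map of $\P$. Finally I would note the units match: the unit $\eta_{\hom(\com^\vee,\P)}$ is the composite $\com^\vee\xrightarrow{\varepsilon_{\com^\vee}}I\xrightarrow{\eta_\P}\P$, and under the identification this sends the arity-$1$ generator to the unit of $\P$.

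The main obstacle I expect is bookkeeping the shuffle permutations and making sure that the projection appearing in the definition of the convolution composition (which a priori can produce permuted arguments, encoded by $(n_1,\ldots,n_k)$-shuffles) matches exactly the shuffles that show up in the decomposition of $\mu_n^\vee$ in $\com^\vee$ — this is a combinatorial coherence check rather than a conceptual one, and in characteristic $0$ with the invariants/coinvariants identification of \cref{subsect:invariants and coinvariants} it goes through without signs (all the $\mu$'s carry the trivial representation). Since the statement is flagged as a lemma whose proof the authors presumably leave largely to the reader, I would keep the write-up brief: establish the arity-wise equivariant isomorphism, then remark that compatibility with composition and unit follows by unwinding the definitions of the convolution operad structure and the (co)operad structures of $\com$ and $\com^\vee$, invoking that $\com$ is one-dimensional in each arity.
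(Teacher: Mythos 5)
Your proposal is correct and follows essentially the same route as the paper: the arity-wise identification $\phi\mapsto\phi(\mu_n^\vee)$, followed by evaluating the convolution composition on $\mu_n^\vee$ and using that the decomposition of $\mu_n^\vee$ in $\com^\vee$ is the sum over shuffled $2$-level trees with coefficient $1$, so that only the matching shuffle term survives. The extra checks of $\S_n$-equivariance and of the unit are fine (the paper leaves them to the reader) and nothing in your argument needs repair.
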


\begin{proof}
	The statement is straightforwardly true at the level of $\S$-modules, the isomorphism being given by
	\[
	\phi\in\hom(\com^\vee,\P)(n)\longmapsto\phi(\mu_n^\vee)\in\P(n)\ .
	\]
	We only have to prove that the composition maps coincide. Let $\phi\in\hom(\com^\vee,\P)(k)$, and $\psi_i\in\hom(\com^\vee,\P)(n_i)$ for all $1\le i\le k$. Fix $\theta\in\sh(n_1,\ldots,n_k)$. Then we have
	\begin{align*}
		\gamma_{\hom(\com^\vee,\P)}&(\phi\circ(\psi_1,\ldots,\psi_k)^\theta)(\mu_n^\vee) =\\
		&= \sum_{\sigma\in\sh(n_1,\ldots,n_k)}\gamma_\P(\phi\circ(\psi_1,\ldots,\psi_k)^\theta)(\mu_k^\vee\circ(\mu_{n_1}^\vee,\ldots,\mu_{n_k}^\vee)^\sigma)\\
		&= \gamma_\P(\phi(\mu_k^\vee)\circ(\psi_1(\mu_{n_1}^\vee),\ldots,\psi_k(\mu_{n_k}^\vee))^\theta)\ ,
	\end{align*}
	concluding the proof.
\end{proof}

Thanks to this, we can give a clean proof of the following theorem, on which relies all of the theory of convolution $\L_\infty$-algebras.

\begin{theorem}[{\cite[Sect. 7]{w16} and \cite[Thm. 3.1]{rn17tensor}}]\label{thm:convolution Loo-algebras}
	Let $\C$ be a cooperad, and let $\P$ be an operad. There is a bijection
	\[
	\Tw(\C,\P)\cong\hom_\op(\SLoo,\hom(\C,\P))\ ,
	\]
	given by sending $\alpha\in\Tw(\C,\P)$ to the morphism $\manin_\alpha$ sending $\ell_n\coloneqq s^{-1}\mu_n^\vee\in\SLoo(n)$ to
	\[
	\manin_\alpha(s^{-1}\mu_n^\vee) = \alpha(n)\in\hom(\C(n),\P(n))\ .
	\]
	It is natural in both $\C$ and $\P$ in the following sense. Let $\alpha\in\Tw(\C,\P)$, then for any morphism $f:\C'\to\C$ of cooperads, we have
	\[
	\manin_{f^*\alpha} = (f^*)\manin_\alpha\ ,
	\]
	and for any morphism $g:\P\to\P'$ of operads, we have
	\[
	\manin_{g_*\alpha} = (g_*)\manin_\alpha\ .
	\]
\end{theorem}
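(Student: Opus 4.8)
The strategy is to reduce everything to the identification $\hom(\com^\vee,\P)\cong\P$ of the preceding lemma, together with the Rosetta-stone-type adjunction for operadic twisting morphisms (\cref{thm:Rosetta stone operads}) and the universal twisting morphism $\iota:\com^\vee\to\Cobar\com^\vee=\SLoo$. First I would recall that $\SLoo=\Cobar\com^\vee$, so that by \cref{thm:Rosetta stone operads} applied to the coaugmented cooperad $\com^\vee$ and the augmented operad $\hom(\C,\P)$ we have natural isomorphisms
\[
\hom_\op(\SLoo,\hom(\C,\P))=\hom_\op(\Cobar\com^\vee,\hom(\C,\P))\cong\Tw(\com^\vee,\hom(\C,\P))\ .
\]
So it remains to produce a natural bijection $\Tw(\com^\vee,\hom(\C,\P))\cong\Tw(\C,\P)$. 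A degree $-1$ morphism of $\S$-modules $\beta:\com^\vee\to\hom(\C,\P)$ is, by adjunction between the inner hom and the Hadamard tensor product of $\S$-modules, the same datum as a degree $-1$ morphism of $\S$-modules $\C\to\hom(\com^\vee,\P)\cong\P$; I would check (by unwinding the pre-Lie products on the two convolution operads, using the formula $f\star g=\gamma_{(1)}(f\circ'g)\Delta_{(1)}$) that the Maurer--Cartan equation for $\beta$ in the convolution operad $\hom(\com^\vee,\hom(\C,\P))$ corresponds exactly to the Maurer--Cartan equation for the associated map $\alpha:\C\to\P$ in $\hom(\C,\P)$. Tracking the adjunction isomorphisms through the construction of the cobar construction $\Cobar\com^\vee$ then shows that the resulting morphism $\SLoo\to\hom(\C,\P)$ sends $\ell_n=s^{-1}\mu_n^\vee$ to $\alpha(n)$, since $\iota:\com^\vee\to\Cobar\com^\vee$ is given by $\mu_n^\vee\mapsto s^{-1}\mu_n^\vee$ and the map $g_\alpha:\Cobar\C\to\P$ attached to a twisting morphism $\alpha$ by \cref{thm:universal twisting morphisms} satisfies $g_\alpha\iota=\alpha$.

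For the naturality statements, I would argue directly on the level of convolution operads. Given $f:\C'\to\C$, precomposition induces a morphism of operads $f^*:\hom(\C,\P)\to\hom(\C',\P)$ (indeed $\hom(-,\P)$ is a contravariant functor from cooperads to operads, as $\hom(\C,\P)=\bigl(\C^\vee\text{-type dualization}\bigr)$ composed with $-\otimes\P$ is functorial); composing $\manin_\alpha:\SLoo\to\hom(\C,\P)$ with $f^*$ gives a morphism $\SLoo\to\hom(\C',\P)$ which on $\ell_n$ takes the value $f^*(\alpha(n))=\alpha(n)f(n)=(f^*\alpha)(n)$, hence equals $\manin_{f^*\alpha}$ by the uniqueness half of the bijection. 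The argument for $g:\P\to\P'$ is entirely dual: postcomposition gives a (covariant) operad morphism $g_*:\hom(\C,\P)\to\hom(\C,\P')$, and $(g_*)\manin_\alpha$ sends $\ell_n$ to $g(n)\alpha(n)=(g_*\alpha)(n)$, so it coincides with $\manin_{g_*\alpha}$.

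The main obstacle I anticipate is purely bookkeeping rather than conceptual: one must verify carefully that the adjunction isomorphism $\hom_\Smod(\com^\vee,\hom(\C,\P))\cong\hom_\Smod(\C,\hom(\com^\vee,\P))$ is compatible with the respective pre-Lie (equivalently, Maurer--Cartan) structures, and that this compatibility survives the passage through $\Cobar$ in \cref{thm:Rosetta stone operads}. This requires expanding the infinitesimal decomposition maps $\Delta_{(1)}$ of $\com^\vee$ and of $\C$, the infinitesimal composition map $\gamma_{(1)}$ of $\hom(\C,\P)$, and matching them with the composition structure of $\hom(\com^\vee,\hom(\C,\P))$ — a computation whose signs are tamed precisely by working with the shifted operad $\SLoo$ (so that all brackets $\ell_n$ sit in degree $-1$ and the generators $\mu_n^\vee$ carry the trivial representation). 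Once these identifications are pinned down, both the bijection and the two naturality formulas follow formally.
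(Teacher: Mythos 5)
Your proof is correct and follows essentially the same route as the paper: identify $\hom_\op(\SLoo,\hom(\C,\P))$ with $\Tw(\com^\vee,\hom(\C,\P))$ via the operadic bar--cobar adjunction, and then with $\Tw(\C,\P)$ via the identification of $\hom(\com^\vee,-)$ with the identity. The only remark worth making is that the compatibility of Maurer--Cartan equations you flag as the ``main obstacle'' is precisely the content of the lemma immediately preceding the theorem, which asserts that $\hom(\com^\vee,\Q)\cong\Q$ as \emph{operads}; since the pre-Lie algebra attached to an operad is functorial in the operad, this isomorphism transports Maurer--Cartan elements for free, so no further unwinding of $\Delta_{(1)}$ and $\gamma_{(1)}$ is required.
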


\begin{proof}
	The bijection is given by
	\begin{align*}
		\Tw(\C,\P) =&\ \MC(\hom(\C,\P))\\
		\cong&\ \MC(\hom(\com^\vee,\hom(\C,\P)))\\
		=&\ \Tw(\com^\vee,\hom(\C,\P))\\
		\cong&\ \hom_\op(\Cobar\com^\vee,\hom(\C,\P))\\
		=&\ \hom_\op(\SLoo,\hom(\C,\P))\ ,
	\end{align*}
	where in the first line, $\hom(\C,\P)$ denotes the Lie algebra associated to the convolution operad. The other properties are straightforward to check, and left as an exercise to the reader.
\end{proof}

\begin{remark}
	If one wants to work with $\L_\infty$-algebras, instead of shifted ones, then one needs to desuspend everything, obtaining a bijection
	\[
	\Tw(\C,\P)\cong\hom_\op(\L_\infty,\susp^{-1}\otimes\hom(\C,\P))\ .
	\]
\end{remark}

\begin{remark}
	Some special cases of this result and ideas hinting to it have appeared in the existing literature, for example already in Ginzburg--Kapranov \cite[Prop. 3.2.18]{gk94}, and then more recently in \cite[Appendix C]{bl15}, \cite{dhr15}, \cite{dp16}.
\end{remark}

\begin{remark}\label{rem:original case convolution Loo-algebras}
	Given a morphism $\Psi:\Q\to\P$ of operads, then one gets a twisting morphism
	\[
	\psi\coloneqq\left(\Bar\Q\stackrel{\pi}{\longrightarrow}\Q\stackrel{\Psi}{\longrightarrow}\P\right),
	\]
	and thus a morphism of operads
	\[
	\manin_\psi:\SLoo\longrightarrow\hom(\Bar\Q,\P)\ .
	\]
	This is the special case originally treated in \cite{rn17tensor}.
\end{remark}

Suppose now that we have a cooperad $\C$, and operad $\P$, and a twisting morphism $\alpha\in\Tw(\C,\P)$. Then, given a conilpotent $\C$-coalgebra $C$ and a $\P$-algebra $A$, we know by \cref{thm:hom of algebras is algebras over hom} that $\hom(C,A)$ is a $\hom(\C,\P)$-algebra. We can therefore apply restriction of structure by $\manin_\alpha$ to obtain a $\SLoo$-algebra structure on $\hom(C,A)$. We denote the $\SLoo$-algebra obtained this way by $\homa(C,A)$.

\begin{definition}
	The algebra $\homa(C,A)$ is called the \emph{convolution $\SLoo$-algebra}\index{Convolution!$\L_\infty$-algebra} of $C$ and $A$.
\end{definition}

Given a morphism of $\C$-coalgebras, resp. of $\P$-algebras, then we get a morphism of $\SLoo$-algebras by pullback, resp. pushforward. Therefore, the assignment $\homa$ defines a bifunctor
\begin{equation}\label{eq:bifunctor}
\homa:\Ccog^\mathrm{op}\times\Palg\longrightarrow\SLoo\text{-}\mathsf{alg}\ .
\end{equation}
The rest of this chapter will be dedicated to the study of some of the properties of this bifunctor. Here is a first, straightforward fact. It is a direct consequence of the functoriality of $\manin$.

\begin{lemma}\label{lemma:naturality of convolution algebras wrt compositions}
	Let $\alpha\in\Tw(\C,\P)$ be a twisting morphism, and suppose $f:\C'\to\C$ is a morphism of cooperads, and that $g:\P\to\P'$ is a morphism of operads.
	\begin{enumerate}
		\item\label{pt:1 naturality convolution} Let $A$ be a $\P$-algebra, let $D$ be a conilpotent $\C'$-coalgebra. We have
		\[
		\hom^{f^*\alpha}(D,A) = \homa(f_*D,A)
		\]
		as $\SLoo$-algebras.
		\item Dually, let $D$ be a $\C$-coalgebra, let $A$ be a $\P'$-algebra. We have
		\[
		\hom^{g_*\alpha}(D,A) = \homa(D,g^*A)
		\]
		as $\SLoo$-algebras.
	\end{enumerate}
\end{lemma}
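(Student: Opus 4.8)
The plan is to deduce both identities from the naturality of $\manin$ established in \cref{thm:convolution Loo-algebras}, together with the fact that the convolution-algebra construction of \cref{thm:hom of algebras is algebras over hom} is itself natural in the (co)operad. First I would note that in each case both sides have the same underlying chain complex: the pushforward $f_*D$ of a $\C'$-coalgebra has the same underlying chain complex as $D$ (only the structure map changes, by postcomposing $\Delta_D$ with $f$), and likewise $g^*A$ has the same underlying chain complex as $A$; hence $\hom(f_*D,A)=\hom(D,A)$ and $\hom(D,g^*A)=\hom(D,A)$ as chain complexes, and it remains only to match the $\SLoo$-brackets.

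For part~(\ref{pt:1 naturality convolution}), let $f^*\colon\hom(\C,\P)\to\hom(\C',\P)$ denote the morphism of operads induced by precomposition with $f$ (the map appearing in \cref{thm:convolution Loo-algebras}). The key intermediate claim is the operadic identity
\[
(f^*)^*\,\hom(D,A)\;=\;\hom(f_*D,A)
\]
of $\hom(\C,\P)$-algebras, where on the left $\hom(D,A)$ carries its $\hom(\C',\P)$-algebra structure from \cref{thm:hom of algebras is algebras over hom} and $(f^*)^*$ denotes restriction of structure along $f^*$. This is checked directly from the explicit composition map on convolution algebras: evaluating $\phi\in\hom(\C,\P)(n)$ on the left-hand side one first forms $f^*\phi=\phi f(n)$ and then applies $\Delta_D^n$, and since $(f(n)\otimes 1_{D^{\otimes n}})\Delta_D^n=\Delta_{f_*D}^n$ by definition of the $\C$-coalgebra structure on $f_*D$, the two composites coincide. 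Granting this, the result follows by chasing restrictions of structure:
\[
\hom^{f^*\alpha}(D,A)=(\manin_{f^*\alpha})^*\hom(D,A)=(\manin_\alpha)^*(f^*)^*\hom(D,A)=(\manin_\alpha)^*\hom(f_*D,A)=\homa(f_*D,A),
\]
using the definition of the convolution $\SLoo$-algebra, functoriality of restriction of structure, the intermediate claim, and the naturality $\manin_{f^*\alpha}=(f^*)\manin_\alpha$ of \cref{thm:convolution Loo-algebras}.

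The second statement is proved dually: with $g_*\colon\hom(\C,\P)\to\hom(\C,\P')$ the morphism of operads induced by postcomposition with $g$, one checks the operadic identity $(g_*)^*\hom(D,A)=\hom(D,g^*A)$ from the explicit composition map (using that the $\P$-algebra structure on $g^*A$ is $\rho_{g^*A}=\rho_A g$, equivalently $\gamma_{g^*A}=\gamma_A(g\circ 1_A)$), and then runs the same chain of equalities with $\manin_{g_*\alpha}=(g_*)\manin_\alpha$ in place of $\manin_{f^*\alpha}=(f^*)\manin_\alpha$. The only non-formal step in the whole argument is the verification of the two operadic naturality identities $(f^*)^*\hom(D,A)=\hom(f_*D,A)$ and $(g_*)^*\hom(D,A)=\hom(D,g^*A)$; these are short unwindings of the formula for $\gamma_{\hom(C,A)}$ together with the Koszul sign rule, and I expect no difficulty beyond bookkeeping, since neither $f_*$, $g^*$ nor the induced operad maps $f^*$, $g_*$ introduce any degree shift, so all signs match automatically.
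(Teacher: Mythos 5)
Your proof is correct and follows essentially the same route as the paper: both arguments reduce everything to the naturality $\manin_{f^*\alpha}=(f^*)\manin_\alpha$ from \cref{thm:convolution Loo-algebras} together with the observation that restricting the convolution-algebra structure of $\hom(D,A)$ along $f^*$ (resp.\ $g_*$) yields exactly the convolution algebra of $f_*D$ with $A$ (resp.\ of $D$ with $g^*A$), the latter being a direct unwinding of the formula for $\gamma_{\hom(C,A)}$. The only difference is presentational: you isolate that unwinding as an explicit intermediate claim, whereas the paper folds it into one line of the displayed chain of equalities.
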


\begin{proof}
	We only prove (\ref{pt:1 naturality convolution}), the other case being dual. Let $f_1,\ldots,f_n\in\hom(D,A)$, and denote by $F\coloneqq f_1\otimes\cdots\otimes f_n$. We have
	\begin{align*}
		\gamma_{\hom^{f^*\alpha}(D,A)}(s^{-1}\mu_n^\vee\otimes_{\S_n}F) =&\ \gamma_{\hom(D,A)}(\manin_{f^*\alpha}(s^{-1}\mu_n^\vee)\otimes F)\\
		=&\ \gamma_{\hom(D,A)}((f^*)\manin_\alpha(s^{-1}\mu_n^\vee)\otimes F)\\
		=&\ \gamma_{\hom(f_*D,A)}(\manin_\alpha(s^{-1}\mu_n^\vee)\otimes F)\\
		=&\ \gamma_{\homa(f_*D,A)}(s^{-1}\mu_n^\vee\otimes_{\S_n}F)\ ,
	\end{align*}
	where $\hom(D,A)$ is seen as a $\hom(\C',\P)$-algebra and similarly for $\hom(f_*D,A)$.
\end{proof}

\subsection{Maurer--Cartan elements of convolution \texorpdfstring{$\L_\infty$}{Loo}-algebras}

Whenever one has a $\SLoo$-algebra, it is a very natural question to ask what the Maurer--Cartan elements are. We answer this question for convolution homotopy Lie algebras.

\medskip

Fix a cooperad $\C$, an operad $\P$, and a twisting morphism $\alpha\in\Tw(\C,\P)$.

\begin{lemma}\label{lemma:compatibility filtrations first case}
	Let $C$ be a conilpotent $\C$-coalgebra, and let $A$ be a $\P$-algebra. Then the ascending filtration
	\[
	\F_n\homa(C,A)\coloneqq \left\{f\in\homa(C,A)\mid \F^n_\C C\subseteq\ker(f)\right\}
	\]
	makes $\homa(C,A)$ into a proper complete $\SLoo$-algebra.
\end{lemma}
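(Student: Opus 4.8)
The plan is to verify directly the two defining properties of a proper complete filtered $\SLoo$-algebra (as recalled in \cref{appendix:filtered stuff}): first, that $\F_\bullet\homa(C,A)$ is an exhaustive, complete, decreasing-in-the-kernel filtration compatible with the differential and the brackets; and second, that it is \emph{proper} in the appropriate sense. The key observation is that the coradical filtration $\F^\bullet_\C C$ on the conilpotent $\C$-coalgebra $C$ is exhaustive, so $C = \colim_n\F^n_\C C$, and dually the filtration $\F_n\homa(C,A)$ by maps vanishing on $\F^n_\C C$ is \emph{complete}: an element of $\hom(C,A)$ is determined by its restrictions to each $\F^n_\C C$, so $\homa(C,A)\cong\lim_n\homa(C,A)/\F_n\homa(C,A)$, and moreover $\bigcap_n\F_n\homa(C,A) = 0$ because any $f$ vanishing on every $\F^n_\C C$ vanishes on their colimit $C$.

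\textbf{Compatibility with the structure maps.} First I would check $d(\F_n)\subseteq\F_n$: if $f$ vanishes on $\F^n_\C C$, then since $\F^n_\C C$ is a sub-chain-complex of $C$ (it is preserved by $d_C$, as recorded in the coradical filtration), the differential $\partial(f) = d_A f - (-1)^{|f|}f d_C$ again vanishes on $\F^n_\C C$. Next, for the brackets: I would show that $\ell_k$ maps $\F_{n_1}\otimes\cdots\otimes\F_{n_k}$ into $\F_{n_1+\cdots+n_k}$. By the explicit formula for $\gamma_{\homa(C,A)}$ obtained via restriction of structure along $\manin_\alpha$, the bracket $\ell_k(f_1,\ldots,f_k)$ is the composite
\[
C\xrightarrow{\Delta_C^k}\big(\C(k)\otimes C^{\otimes k}\big)^{\S_k}\hookrightarrow\C(k)\otimes C^{\otimes k}\xrightarrow{\sum_\sigma\pm\,\alpha(k)\otimes f_{\sigma(1)}\otimes\cdots\otimes f_{\sigma(k)}}\P(k)\otimes A^{\otimes k}\to\P(k)\otimes_{\S_k}A^{\otimes k}\xrightarrow{\gamma_A}A\ .
\]
Now I would invoke \cref{lemma:property of coradical filtration}: applied to $x\in\F^{n}_\C C$ with $n = n_1+\cdots+n_k$, it gives
\[
\Delta_C^k(x)\in\bigoplus_{m_1+\cdots+m_k\le n}\big(\C(k)\otimes\F^{m_1}_\C C\otimes\cdots\otimes\F^{m_k}_\C C\big)^{\S_k}\ ,
\]
and in any such summand with $m_1+\cdots+m_k\le n = n_1+\cdots+n_k$ there must be at least one index $i$ with $m_i\le n_i$, so the corresponding factor $f_{\sigma(i)}\in\F_{n_i}$ kills $\F^{m_i}_\C C\subseteq\F^{n_i}_\C C$; hence every term of $\ell_k(f_1,\ldots,f_k)(x)$ vanishes. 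This shows $\ell_k(\F_{n_1},\ldots,\F_{n_k})\subseteq\F_{n_1+\cdots+n_k}$, which is exactly the required compatibility of the filtration with the brackets.

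\textbf{Properness and conclusion.} Since the filtration starts at $\F_1\homa(C,A) = \{f : \F^1_\C C\subseteq\ker f\}$ and $\F^0_\C C = 0$ while $\F^1_\C C$ is in general nonzero, one has $\F_0\homa(C,A) = \homa(C,A) = \F_1\homa(C,A)$ only when $\F^1_\C C = 0$; in general the proper filtration is the one indexed so that the whole algebra sits in filtration degree $0$ and $\F_1$ is a genuine subalgebra — I would present this carefully, matching the indexing conventions of \cref{appendix:filtered stuff}. Properness then amounts to the observation that $\ell_k$ for $k\ge 2$ raises filtration degree strictly (by the bound above, since $n_1+\cdots+n_k\ge k\ge 2 > 1$ whenever all $n_i\ge 1$), so iterated brackets land in arbitrarily deep filtration layers; combined with completeness $\homa(C,A) = \lim_n\homa(C,A)/\F_n\homa(C,A)$ and $\bigcap_n\F_n = 0$ established above, this is precisely the definition of a proper complete $\SLoo$-algebra. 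The main obstacle I anticipate is purely bookkeeping: getting the filtration indexing to agree exactly with the conventions of \cref{appendix:filtered stuff} (in particular the off-by-one between "vanishes on $\F^n_\C C$" and the degree assigned to such an element) and confirming that the completeness of $\homa(C,A)$ genuinely follows from exhaustiveness of the coradical filtration on $C$ together with the defining limit/colimit adjunction $\hom(\colim_n\F^n_\C C, A)\cong\lim_n\hom(\F^n_\C C,A)$; no deep ideas are needed beyond \cref{lemma:property of coradical filtration} and the explicit bracket formula.
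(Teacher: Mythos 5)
Your proof is correct and follows essentially the same route as the paper's: differential compatibility from $d_C(\F^n_\C C)\subseteq\F^n_\C C$, bracket compatibility from \cref{lemma:property of coradical filtration} applied to $\Delta_C^k$, and completeness from $\homa(C,A)/\F_n\homa(C,A)\cong\homa(\F^n_\C C,A)$ together with $\hom(\colim_n\F^n_\C C,A)\cong\lim_n\hom(\F^n_\C C,A)$. Your closing discussion of properness and the off-by-one in the indexing is actually more scrupulous than the paper's proof, which does not address the condition $\F_1=V$ at all; the only blemish is the harmless index slip $f_{\sigma(i)}\in\F_{n_i}$ where $\F_{n_{\sigma(i)}}$ is meant.
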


\begin{proof}
	Let $f\in\F_n\homa(C,A)$. Let $x\in\F^n_\C C$, then
	\[
	\partial(f)(x) = d_Af(x) - (-1)^ff(d_Cx) = 0
	\]
	since $d_C(\F^n_\C C)\subseteq\F^n_\C C$.
	
	\medskip
	
	Let $f_i\in\F_{n_i}\homa(C,A)$ for $i=1,\ldots,k$, and let $x\in\F^n_\C C$, where $n\coloneqq n_1+\cdots+n_k$. Then
	\[
	\ell_n(f_1,\ldots,f_k)(x) = \gamma_A(\alpha\otimes F)\Delta_C(x) = 0\ ,
	\]
	since
	\[
	\Delta_C(\F^n_\C C)\subseteq\bigoplus_{\substack{k\ge0\\n_1'+\cdots+n_k' = n}}\C(k)\otimes \F^{n_1'}_\C C\otimes\cdots\otimes\F^{n_k'}_\C C\ .
	\]
	Finally, there is a natural identification
	\[
	\frac{\homa(C,A)}{\F_n\homa(C,A)}\stackrel{\cong}{\longrightarrow}\homa(\F^n_\C C,A)\ ,
	\]
	given by taking any representative of an equivalence class of morphisms on the right hand side, and restricting it to $\F^n_\C C$. It is injective, because if a morphism from $C$ to $A$ restricts to zero on $\F^n_\C C$, then it is in $\F_n\homa(C,A)$, and thus it is zero in the quotient. To see that it is surjective, choose a complement $V$ of $\F^n_\C C$ in $C$ --- as graded vector spaces, the differential plays no role here. An inverse to the map described above is then given by sending a linear map $f:\F^n_\C C\to A$ to its extension by $0$ on $V$, and then taking the equivalence class in the quotient. Notice that this is independent of the choice of $V$. It is straightforward to check that this isomorphism holds at the level of $\SLoo$-algebras, and not only as chain complexes. Therefore, we have
	\begin{align*}
		\lim_n\frac{\homa(C,A)}{\F_n\homa(C,A)}\cong&\ \lim_n\homa(\F^n_\C C,A)\\
		\cong&\ \homa(\colim_n\F^n_\C C,A)\\
		\cong&\ \homa(C,A)\ ,
	\end{align*}
	concluding the proof.
\end{proof}

Therefore, it makes sense to speak about Maurer--Cartan elements in $\homa(C,A)$.

\begin{theorem}[{\cite[Thm. 7.1]{w16} and \cite[Thm. 6.3]{rn17tensor}}]\label{thm:MC-el of convolution Loo-algebras}
	Let $C$ be a conilpotent $\C$-coalgebra, and let $A$ be a $\P$-algebra. We have
	\[
	\MC(\homa(C,A)) = \Tw_\alpha(C,A)\ ,
	\]
	where the set $\Tw_\alpha(C,A)$ of twisting morphisms relative to $\alpha$ was given in \cref{def:relative twisting morphisms}.
\end{theorem}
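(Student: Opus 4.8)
The plan is to unwind both sides of the claimed equality and check that they describe the same subset of $\hom(C,A)_0$. First I would recall the two definitions in play. By \cref{thm:convolution Loo-algebras} and the construction of $\homa(C,A)$ via restriction of structure along $\manin_\alpha$, the bracket $\ell_n$ on $\homa(C,A)$ acts on $f_1,\ldots,f_n\in\hom(C,A)$ as the composite
\[
C\xrightarrow{\Delta_C^n}\left(\C(n)\otimes C^{\otimes n}\right)^{\S_n}\hookrightarrow\C(n)\otimes C^{\otimes n}\xrightarrow{\sum_\sigma\pm\,\alpha(n)\otimes f_{\sigma(1)}\otimes\cdots\otimes f_{\sigma(n)}}\P(n)\otimes A^{\otimes n}\to A\ ,
\]
so that $\frac1{n!}\ell_n(f,\ldots,f)$ equals $\gamma_A(\alpha\circ f^{\otimes n})\Delta_C^{(n)}$ on symmetric tensors, matching the $n$-ary component of the operator $\star_\alpha$ from \cref{subsect:relative twisting morphisms}. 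On the other side, $\star_\alpha(\varphi) = \gamma_A(\alpha\circ\varphi)\Delta_C$ where $\Delta_C$ decomposes $x$ into all its $\C$-components, i.e. $\star_\alpha(\varphi) = \sum_{n\ge1}\gamma_A(\alpha\circ\varphi^{\otimes n})\Delta_C^{(n)}$.

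Next I would assemble these observations: for $\varphi\in\hom(C,A)_0$ the Maurer--Cartan equation in $\homa(C,A)$ reads $\partial(\varphi) + \sum_{n\ge2}\frac1{n!}\ell_n(\varphi,\ldots,\varphi) = 0$, and by the computation above this is precisely $\partial(\varphi) + \star_\alpha(\varphi) = 0$ once one notes that the $n=1$ term of $\star_\alpha$ is exactly the contribution $\gamma_A(\alpha\circ\varphi)(\id\circ\varphi$-part$)$; in fact since $\C$ is reduced and coaugmented, $\Delta_C$ splits as the identity component $\id\otimes x$ plus the reduced part, so one must be slightly careful about whether the arity-$1$ piece of $\star_\alpha$ is absorbed into $\partial(\varphi)$ or appears separately — this is the bookkeeping point I would pin down. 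Comparing with \cref{equation:MC for relative twisting morphisms}, which defines $\Tw_\alpha(C,A)$ as exactly the degree-$0$ solutions of $\partial(\varphi)+\star_\alpha(\varphi)=0$, yields $\MC(\homa(C,A)) = \Tw_\alpha(C,A)$.

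Before invoking the Maurer--Cartan equation one must know it makes sense, i.e. that the infinite sum $\sum_{n\ge2}\frac1{n!}\ell_n(\varphi,\ldots,\varphi)$ converges; this is supplied by \cref{lemma:compatibility filtrations first case}, which shows $\homa(C,A)$ is proper complete, together with the conilpotency of $C$ (every element lies in some $\F^n_\C C$, so only finitely many $\Delta_C^{(k)}$ are nonzero on it). I would state this at the outset so the equation is well-posed on both sides. The main obstacle I anticipate is purely a matter of sign and degree bookkeeping: verifying that the Koszul signs appearing in the symmetrized bracket formula for $\ell_n$ collapse correctly against the signs implicit in $\star_\alpha$, and that the degree shift (working with $\SLoo$ rather than $\L_\infty$, so that Maurer--Cartan elements sit in degree $0$ rather than $-1$) is consistent with the degree-$0$ convention in \cref{def:relative twisting morphisms}. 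None of this is conceptually hard given \cref{thm:convolution Loo-algebras} and \cref{thm:hom of algebras is algebras over hom}; it is the kind of check the paper elsewhere leaves "to the reader," and I would present it as a short verification rather than a long calculation.
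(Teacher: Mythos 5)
Your proof follows the paper's argument exactly: identify $\frac{1}{n!}\ell_n(\varphi,\ldots,\varphi)$ with the arity-$n$ component $\star_\alpha^{(n)}(\varphi)$ of the operator $\star_\alpha$, the factorial cancelling against the sum over permutations built into the convolution $\hom(\C,\P)$-algebra structure, and then match the two Maurer--Cartan equations term by term (with well-posedness supplied by conilpotency/completeness, as you note). The one bookkeeping point you leave open resolves itself immediately: since $\C$ and $\P$ are reduced, $\C(1)$ and $\P(1)$ are one-dimensional and concentrated in degree $0$, so the degree $-1$ map $\alpha$ vanishes in arity $1$; hence $\star_\alpha$ has no arity-$1$ contribution at all, and the sum genuinely starts at $n=2$, matching the Maurer--Cartan equation on the nose.
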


\begin{proof}
	Let $\varphi\in\hom(C,A)$. Denote by $\star_\alpha^{(n)}(\varphi)$ the part of $\star_\alpha(\varphi)$ passing through $\C(n)\otimes C^{\otimes n}$. With this notation, we have
	\begin{align*}
		\frac{1}{n!}\ell_n(\varphi,\ldots,\varphi) =&\ \frac{1}{n!}\gamma_{\hom(C,A)}(\manin_\alpha(\ell_n)\otimes\varphi^{\otimes n})\\
		=&\ \gamma_A(\manin_\alpha(\ell_n)\circ\varphi)\Delta_C^n\\
		=&\ \gamma_A(\alpha\circ\varphi)\Delta_C^n\\
		=&\ \star_\alpha^{(n)}(\varphi)\ ,
	\end{align*}
	where the factor $\frac{1}{n!}$ is eliminated by the fact that we sum over all permutations when applying the $\hom(\C,\P)$-algebra structure $\gamma_{\hom(C,A)}$, cf. \cref{thm:hom of algebras is algebras over hom}. Since
	\[
	\star_\alpha(\varphi) = \sum_{n\ge2}\star_\alpha^{(n)}(\varphi)\ ,
	\]
	we have that
	\[
	\partial(\varphi) + \star_\alpha(\varphi) = \partial(\varphi) + \sum_{n\ge2}\frac{1}{n!}\ell_n(\varphi,\ldots,\varphi)\ .
	\]
	Thus, the two Maurer--Cartan equations coincide\footnote{Motivating \emph{a posteriori} the name ``Maurer--Cartan equation" for the equation $\partial(\varphi) + \star_\alpha(\varphi) = 0$.}, concluding the proof.
\end{proof}

\begin{remark}
	The result above was already known in the case where $\P$ is a Koszul operad and $\alpha$ is given by $\kappa:\P^{\antishriek}\to\P$, the twisting morphism provided by Koszul duality. In this case, $\hom^\kappa(C,A)$ is a Lie algebra, since $\kappa$ is non-zero only on the binary part of $\P^{\antishriek}$, cf. its definition in \cref{subsect:Koszul duality}. See \cite[Sect. 11.1.2]{LodayVallette}.
\end{remark}

\begin{corollary}
	Let $C$ be a conilpotent $\C$-coalgebra, and let $A$ be a $\P$-algebra.
	\begin{enumerate}
		\item We have a natural isomorphism
		\[
		\MC(\homa(C,A))\cong\hom_{\Palg}(\Cobar_\alpha C,A)\ .
		\]
		In particular, if $A = \Cobar_\alpha C'$ for some conilpotent $\C$-coalgebra $C'$, then
		\[
		\MC(\homa(C,A))\cong\hom_{\infty_\alpha\text{-}\Ccog}(C,C')\ .
		\]
		\item We have a natural isomorphism
		\[
		\MC(\homa(C,A))\cong\hom_{\Ccog}(C,\Bar_\alpha A)\ .
		\]
		In particular, if $C = \Bar_\alpha A'$ for some $\P$-algebra $A'$, then
		\[
		\MC(\homa(C,A))\cong\hom_{\infty_\alpha\text{-}\Palg}(A',A)\ .
		\]
	\end{enumerate}
\end{corollary}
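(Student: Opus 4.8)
The plan is to deduce both statements from \cref{thm:MC-el of convolution Loo-algebras}, which identifies $\MC(\homa(C,A))$ with $\Tw_\alpha(C,A)$, combined with the bar-cobar adjunction for (co)algebras, \cref{thm:Rosetta stone algebras}, which gives natural bijections
\[
\hom_{\Palg}(\Cobar_\alpha C,A)\cong\Tw_\alpha(C,A)\cong\hom_{\Ccog}(C,\Bar_\alpha A)\ .
\]
Stringing these together already yields the first sentence of each of the two items, with naturality in $C$ and $A$ inherited from the naturality of \cref{thm:MC-el of convolution Loo-algebras} (in both $\C$ and $\P$, hence in $C$ and $A$) and the naturality asserted in \cref{thm:Rosetta stone algebras}. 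So the bulk of the argument is genuinely just composing known natural bijections; there is essentially no computation to grind through.

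For the ``in particular'' clauses, I would specialize. In item (1), taking $A=\Cobar_\alpha C'$ for a conilpotent $\C$-coalgebra $C'$, the first bijection reads $\MC(\homa(C,\Cobar_\alpha C'))\cong\hom_{\Palg}(\Cobar_\alpha C,\Cobar_\alpha C')$, and by \cref{def:infinity morphisms relative to a twisting morphism}(2) the right-hand side is exactly $\hom_{\infty_\alpha\text{-}\Ccog}(C,C')$ by definition — an $\infty_\alpha$-morphism of $\C$-coalgebras from $C$ to $C'$ \emph{is} a morphism of $\P$-algebras $\Cobar_\alpha C\to\Cobar_\alpha C'$. Dually, in item (2), taking $C=\Bar_\alpha A'$ for a $\P$-algebra $A'$, the second bijection reads $\MC(\homa(\Bar_\alpha A',A))\cong\hom_{\Ccog}(\Bar_\alpha A',\Bar_\alpha A)$, and by \cref{def:infinity morphisms relative to a twisting morphism}(1) the right-hand side is $\hom_{\infty_\alpha\text{-}\Palg}(A',A)$, again by the very definition of $\infty_\alpha$-morphism of $\P$-algebras. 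So these clauses require only unwinding definitions once the main bijections are in place.

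Strictly speaking one should double-check that the convolution $\SLoo$-algebra $\homa(\Bar_\alpha A',A)$ (resp. $\homa(C,\Cobar_\alpha C')$) is proper complete so that ``$\MC$ of it'' makes sense — but this is exactly \cref{lemma:compatibility filtrations first case}, which applies to $\homa(C,A)$ for any conilpotent $\C$-coalgebra $C$ and any $\P$-algebra $A$, hence in particular to these instances. I do not anticipate any real obstacle here: the statement is a formal corollary, and the only thing to be careful about is that the identifications are the \emph{same} natural transformation on both sides, i.e. that the bijection of \cref{thm:MC-el of convolution Loo-algebras} and the adjunction isomorphisms of \cref{thm:Rosetta stone algebras} are compatible. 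Since the proof of \cref{thm:MC-el of convolution Loo-algebras} matches the two Maurer--Cartan equations term by term, and the proof of \cref{thm:Rosetta stone algebras} recovers a morphism of $\P$-algebras from a relative twisting morphism $\varphi$ via $f=\gamma_A(1_\P\circ\varphi)$ (and dually for coalgebras), these are manifestly the same identification; I would simply remark this and be done. The ``hard part,'' such as it is, is purely bookkeeping: making sure the naturality squares commute, which follows immediately from the naturality statements already proved.
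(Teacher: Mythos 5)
Your argument is exactly the paper's: the corollary is deduced by composing the identification $\MC(\homa(C,A))=\Tw_\alpha(C,A)$ from \cref{thm:MC-el of convolution Loo-algebras} with the bar--cobar adjunction bijections of \cref{thm:Rosetta stone algebras}, the ``in particular'' clauses being immediate from \cref{def:infinity morphisms relative to a twisting morphism}. Your additional remarks on completeness and compatibility of the identifications are correct but not needed beyond what the cited results already provide.
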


\begin{proof}
	This is a direct consequence of \cref{thm:MC-el of convolution Loo-algebras} and \cref{thm:Rosetta stone algebras}.
\end{proof}

\subsection{Other settings: tensor products, non-symmetric (co)operads}\label{subsect:other settings}

There are some other contexts where it is natural to try to apply the theory developed above.

\subsubsection*{Tensor products}

First of all, if $\C$ is a cooperad which is finite dimensional in every arity, then for any operad $\P$ we have
\[
\hom(\C,\P)\cong\C^\vee\otimes\P
\]
with the obvious isomorphism, so that all results pass to tensor products. In order to have everything to pass through without having to do the proofs all over again, one should then only consider dualizable $\C^\vee$-algebras. However, if one is willing to do the effort, then one can prove that the result.

\begin{theorem}\label{thm:convolution Loo tensor products}
	Let $\C$ be a cooperad which is finite dimensional in every arity, and let $\P$ be an operad. There is a bijection
	\[
	\Tw(\C,\P)\cong\hom_\op(\SLoo,\P\otimes\C^\vee)\ ,
	\]
	given by sending $\alpha\in\Tw(\C,\P)$ to the morphism $\manin_\alpha$ sending $s^{-1}\mu_n^\vee\in\SLoo(n)$ to
	\[
	\manintensor_\alpha(s^{-1}\mu_n^\vee) = \sum_{i}\alpha(c_i)\otimes c_i^\vee\ ,
	\]
	where $\{c_i\}_i$ is a basis of $\C(n)$, and where $\{c_i^\vee\}_i$ is the dual basis. It is natural in both $\C$ and $\P$ in the following sense. Let $\alpha\in\Tw(\C,\P)$, then for any morphism $f:\C'\to\C$ of cooperads, we have
	\[
	\manintensor_{f^*\alpha} = (1\otimes f^\vee)\manintensor_\alpha\ ,
	\]
	and for any morphism $g:\P\to\P'$ of operads, we have
	\[
	\manintensor_{g_*\alpha} = (g\otimes1)\manintensor_\alpha\ .
	\]
\end{theorem}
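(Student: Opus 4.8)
The strategy is to reduce \cref{thm:convolution Loo tensor products} to \cref{thm:convolution Loo-algebras} via the canonical identification of $\S$-modules
\[
\hom(\C,\P)\cong\P\otimes\C^\vee
\]
that is available precisely because $\C$ is finite-dimensional in every arity. First I would record that this is an isomorphism of operads, not merely of $\S$-modules: the convolution operad structure on $\hom(\C,\P)$ was defined in \cref{subsect:operadic twisting morphisms} using $\Delta_\C$, $\gamma_\P$, and the shuffle projections, and under the finiteness hypothesis these transport along the duality pairing to exactly the Hadamard-tensor operad structure on $\P\otimes\C^\vee$ (where $\C(n)^\vee$ is made into a right $\S_n$-module as in the dualization discussion following \cref{notation:iota and pi}). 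This is a routine unwinding of the two operad structures through the pairing $\langle -,-\rangle:\C(n)\otimes\C(n)^\vee\to\k$, so I would state it as a lemma and leave the bookkeeping to the reader, as is done for the analogous claims earlier in the chapter.

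Granting that identification, the bijection $\Tw(\C,\P)\cong\hom_\op(\SLoo,\P\otimes\C^\vee)$ is just \cref{thm:convolution Loo-algebras} read through the isomorphism $\hom(\C,\P)\cong\P\otimes\C^\vee$. So the only real content is to check that the explicit formula for $\manintensor_\alpha$ in the statement is the image of $\manin_\alpha$ under this isomorphism. Here I would compute directly: $\manin_\alpha(s^{-1}\mu_n^\vee)=\alpha(n)\in\hom(\C(n),\P(n))$, and under $\hom(\C(n),\P(n))\cong\P(n)\otimes\C(n)^\vee$ a linear map $\phi$ corresponds to $\sum_i\phi(c_i)\otimes c_i^\vee$ for a basis $\{c_i\}$ of $\C(n)$ with dual basis $\{c_i^\vee\}$; applying this to $\phi=\alpha(n)$ gives exactly $\sum_i\alpha(c_i)\otimes c_i^\vee$. (One should note this is basis-independent, which is immediate from the invariant description of the iso.)

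Finally, the two naturality statements: these follow from the corresponding naturality assertions in \cref{thm:convolution Loo-algebras}, namely $\manin_{f^*\alpha}=(f^*)\manin_\alpha$ and $\manin_{g_*\alpha}=(g_*)\manin_\alpha$, once I check that under the isomorphism $\hom(\C,\P)\cong\P\otimes\C^\vee$ the operad map $f^*:\hom(\C,\P)\to\hom(\C',\P)$ becomes $1_\P\otimes f^\vee$ and $g_*:\hom(\C,\P)\to\hom(\C,\P')$ becomes $g\otimes 1_{\C^\vee}$. Both are immediate from the definitions of $f^*$ and $g_*$ (precompose by $f$, postcompose by $g$) and the contravariance of linear duality. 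I expect the main obstacle to be purely notational: keeping the $\S_n$-actions straight under dualization --- in particular remembering the twist $c^\sigma\coloneqq\sigma\cdot c$ on $\C(n)^\vee$ --- so that the identification $\hom(\C,\P)\cong\P\otimes\C^\vee$ really is $\S$-equivariant arity-wise and compatible with the composition maps; once that is pinned down, everything else is a transcription of the already-proved symmetric case.
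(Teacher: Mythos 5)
Your proposal is correct and follows essentially the same route as the paper, which in fact omits a detailed proof and simply observes that under the finiteness hypothesis the identification $\hom(\C,\P)\cong\P\otimes\C^\vee$ is an isomorphism of operads, so that \cref{thm:convolution Loo-algebras} transports directly; your explicit check of the formula $\sum_i\alpha(c_i)\otimes c_i^\vee$ and the translation of $f^*$ and $g_*$ into $1\otimes f^\vee$ and $g\otimes 1$ fills in exactly the bookkeeping the paper leaves to the reader.
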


Given a $\P$-algebra $A$ and a $\C^\vee$-algebra $D$, then one obtains an $\SLoo$-algebra $A\otimes^\alpha D$ by pulling back the natural $\P\otimes\C^\vee$-algebra structure of $A\otimes D$ by $\manintensor_\alpha$. This assignment is compatible with morphisms of algebras in both slots, and thus defines a bifunctor
\[
-\otimes^\alpha-:\Palg\times\C^\vee\text{-}\mathsf{alg}\longrightarrow\SLoo\text{-}\mathsf{alg}\ .
\]
This is compatible with $\homa(-,-)$, in the sense that if $A$ is a $\P$-algebra and $C$ is a $\C$-coalgebra, then the natural morphism
\[
A\otimes C^\vee\longrightarrow\hom(C,A)
\]
is a morphism of $\SLoo$-algebras
\[
A\otimes^\alpha C^\vee\longrightarrow\homa(C,A)\ ,
\]
which is an isomorphism if $C$ is dualizable.

\begin{theorem}\label{thm:equality MC and twisting in tensor case}
	Let $D$ be a dualizable $\C^\vee$-algebra, and let $A$ be a $\P$-algebra. We have
	\[
	\MC(A\otimes^\alpha D)\cong\Tw_\alpha(D^\vee,A)\ .
	\]
\end{theorem}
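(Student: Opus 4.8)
The plan is to reduce \cref{thm:equality MC and twisting in tensor case} to the already-established \cref{thm:MC-el of convolution Loo-algebras} by exhibiting an isomorphism of $\SLoo$-algebras between $A\otimes^\alpha D$ and $\homa(D^\vee,A)$ when $D$ is a dualizable $\C^\vee$-algebra. First I would observe that, since $D$ is finite-dimensional in each relevant degree and hence dualizable, its linear dual $D^\vee$ carries a canonical conilpotent $\C$-coalgebra structure: dualizing the $\C^\vee$-algebra structure map $\C^\vee(D)\to D$ and using the finite-dimensionality to identify $(\C^\vee(D))^\vee$ with $\widehat{\C}(D^\vee)$ gives a decomposition map $D^\vee\to\widehat{\C}(D^\vee)$ satisfying the coassociativity and counit axioms, and conilpotence follows from the finite-type hypothesis. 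This is the place where the dualizability hypothesis is genuinely used, so it is worth spelling out carefully, though it is essentially a routine dualization argument.

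Next I would invoke the discussion immediately preceding the theorem in \cref{subsect:other settings}: the natural map $A\otimes C^\vee\to\hom(C,A)$, which for a dualizable $C$ is an isomorphism, is a morphism of $\SLoo$-algebras $A\otimes^\alpha C^\vee\to\homa(C,A)$ when both sides carry the structures obtained by restriction along $\manintensor_\alpha$ and $\manin_\alpha$ respectively. Applying this with $C\coloneqq D^\vee$ (so that $C^\vee\cong D$ by double dualization, again using dualizability) yields an isomorphism of $\SLoo$-algebras
\[
A\otimes^\alpha D\;\cong\;A\otimes^\alpha (D^\vee)^\vee\;\stackrel{\cong}{\longrightarrow}\;\homa(D^\vee,A)\ .
\]
The compatibility of $\manintensor_\alpha$ and $\manin_\alpha$ under the identification $\hom(\C,\P)\cong\P\otimes\C^\vee$ is precisely the naturality statements recorded in \cref{thm:convolution Loo tensor products} and \cref{thm:convolution Loo-algebras}, so this step reduces to checking that the evaluation isomorphism intertwines the two $\hom(\C,\P)$-algebra (resp.\ $\P\otimes\C^\vee$-algebra) structures, which is a direct inspection of the structure maps.

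Finally, I would apply $\MC(-)$ to this isomorphism of $\SLoo$-algebras. Both sides are proper complete by \cref{lemma:compatibility filtrations first case} applied to $\homa(D^\vee,A)$ (the filtration by kernels of the coradical filtration of $D^\vee$), transported along the isomorphism, so the Maurer--Cartan functor is defined on both. An isomorphism of (proper complete) $\SLoo$-algebras induces a bijection of Maurer--Cartan sets, hence
\[
\MC(A\otimes^\alpha D)\;\cong\;\MC(\homa(D^\vee,A))\;=\;\Tw_\alpha(D^\vee,A)\ ,
\]
where the last equality is \cref{thm:MC-el of convolution Loo-algebras} with $C\coloneqq D^\vee$. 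I expect the main obstacle to be the first step: verifying cleanly that dualizing a $\C^\vee$-algebra of finite type produces a genuine \emph{conilpotent} $\C$-coalgebra — in particular that the infinite product $\widehat{\C}(D^\vee)$ is the correct target and that the image of $\Delta_{D^\vee}$ lands in the appropriate sub(co)object — together with keeping the finite-dimensionality bookkeeping consistent so that the various evaluation and double-dualization isomorphisms are simultaneously available. Everything downstream is then a formal consequence of results already in the text.
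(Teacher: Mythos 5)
Your proof is correct and follows exactly the route the paper intends: the theorem is stated without an explicit proof, but the discussion immediately preceding it already records that the natural map $A\otimes^\alpha C^\vee\to\homa(C,A)$ is an isomorphism of $\SLoo$-algebras for dualizable $C$, so taking $C=D^\vee$, identifying $(D^\vee)^\vee\cong D$, and applying \cref{thm:MC-el of convolution Loo-algebras} is precisely the intended argument. Your concern about verifying that $D^\vee$ is genuinely a \emph{conilpotent} $\C$-coalgebra is well placed, but it is implicitly absorbed into the paper's (deliberately loose) notion of ``dualizable,'' so no further work is expected there.
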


\subsubsection*{Non-symmetric (co)operads}

If we consider non-symmetric (co)operads, then everything goes through with exactly the same proofs\footnote{Removing all symmetric group actions, of course.} simply by changing the operad $\L_\infty$ with the ns operad $\A_\infty$. Moreover, since we don't need to identify invariants and coinvariants, we can work over any field, without restrictions on the characteristic.

\begin{theorem}
	Let $\C$ be an ns cooperad, and let $\P$ be an ns operad. There is a bijection
	\[
	\Tw(\C,\P)\cong\hom_\op(\SAoo,\hom(\C,\P))\ ,
	\]
	given by sending $\alpha\in\Tw(\C,\P)$ to the morphism $\manin_\alpha$ sending $s^{-1}\mu_n^\vee\in\SAoo(n)$ to
	\[
	\manin_\alpha(s^{-1}\mu_n^\vee) = \alpha(n)\in\hom(\C(n),\P(n))\ .
	\]
	It is natural in both $\C$ and $\P$ in the following sense. Let $\alpha\in\Tw(\C,\P)$, then for any morphism $f:\C'\to\C$ of cooperads, we have
	\[
	\manin_{f^*\alpha} = (f^*)\manin_\alpha\ ,
	\]
	and for any morphism $g:\P\to\P'$ of operads, we have
	\[
	\manin_{g_*\alpha} = (g_*)\manin_\alpha\ .
	\]
\end{theorem}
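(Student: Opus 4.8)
\textbf{Proof plan for the non-symmetric version of \cref{thm:convolution Loo-algebras}.}

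The plan is to mimic the proof of \cref{thm:convolution Loo-algebras} verbatim, simply replacing every symmetric object by its non-symmetric counterpart and deleting all symmetric group actions, checking at each step that nothing used the characteristic-zero hypothesis or the presence of $\S$-actions. First I would record the ns analogue of the preliminary lemma $\hom(\com^\vee,\P)\cong\P$: here the role of $\com^\vee$ is played by the ns cooperad $\as^\vee$ (the ns version of $\cocom$, which is one-dimensional in each arity, spanned by $\mu_n^\vee$), and the isomorphism $\hom(\as^\vee,\P)\cong\P$ is given by $\phi\mapsto\phi(\mu_n^\vee)$ exactly as before. The verification that composition maps agree is the same computation as in the symmetric case but with the shuffle sums removed, so it goes through over any field.

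Next I would assemble the chain of bijections. Using $\SAoo\cong\Cobar\as^\vee$ (the ns analogue of $\SLoo\cong\Cobar\com^\vee$, cf. \cref{subsect:A-infinity algebras}), together with the ns bar-cobar adjunction \cref{thm:Rosetta stone operads} and the ns convolution operad structure on $\hom(\C,\P)$ (which exists for ns (co)operads by the same inspection as in \cref{subsect:other settings} / \cite{bm03}), one gets
\begin{align*}
	\Tw(\C,\P) =&\ \MC(\hom(\C,\P))\\
	\cong&\ \MC(\hom(\as^\vee,\hom(\C,\P)))\\
	=&\ \Tw(\as^\vee,\hom(\C,\P))\\
	\cong&\ \hom_\op(\Cobar\as^\vee,\hom(\C,\P))\\
	=&\ \hom_\op(\SAoo,\hom(\C,\P))\ ,
\end{align*}
where in the first line $\hom(\C,\P)$ denotes the (ns pre-)Lie algebra associated to the ns convolution operad, and the second line is the ns preliminary lemma applied arity-wise. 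Tracking an element $\alpha\in\Tw(\C,\P)$ through this chain shows that the resulting operad morphism sends $s^{-1}\mu_n^\vee\in\SAoo(n)$ to $\alpha(n)\in\hom(\C(n),\P(n))$, as claimed; I would denote it $\manin_\alpha$.

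Finally I would check the two naturality statements. Given a morphism of ns cooperads $f:\C'\to\C$, precomposition induces an operad morphism $f^*:\hom(\C,\P)\to\hom(\C',\P)$, and one verifies on generators that $f^*\manin_\alpha = \manin_{f^*\alpha}$, since both send $s^{-1}\mu_n^\vee$ to $\alpha(n)f(n)$; dually for a morphism of ns operads $g:\P\to\P'$ one gets $g_*\manin_\alpha = \manin_{g_*\alpha}$. I do not expect any genuine obstacle here: the only substantive point is the explicit bookkeeping in the chain of bijections, which is identical to the symmetric case, and the only thing to be careful about is that the symmetric-to-coinvariants identification of \cref{subsect:invariants and coinvariants} is never invoked (it is not, since there are no group actions), so the statement holds over an arbitrary field as asserted.
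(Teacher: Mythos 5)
Your proposal is correct and is exactly the route the paper takes: the paper's ``proof'' of the ns statement is precisely the instruction to rerun the symmetric argument with $\com^\vee$ replaced by $\as^\vee$ and all $\S_n$-actions deleted, so that $\SAoo\cong\Cobar\as^\vee$ plays the role of $\SLoo\cong\Cobar\com^\vee$ in the chain of bijections through the bar--cobar adjunction. Your added remarks — that the shuffle sums disappear from the verification of $\hom(\as^\vee,\P)\cong\P$ and that the invariants/coinvariants identification is never invoked, whence the result holds in arbitrary characteristic — are exactly the points the paper flags when justifying the ns version.
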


\begin{theorem}
	Let $C$ be a conilpotent $\C$-coalgebra, and let $A$ be a $\P$-algebra. We have
	\[
	\MC(\homa(C,A)) = \Tw_\alpha(C,A)\ .
	\]
\end{theorem}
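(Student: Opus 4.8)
The plan is to reduce the non-symmetric statement to the non-symmetric analogue of \cref{thm:MC-el of convolution Loo-algebras}, which was proved above in the symmetric setting, and to verify that the proof given there survives the removal of all symmetric group actions. Recall that in the non-symmetric world the composite product of arity-graded chain complexes is
\[
(M\circ N)(k) = \bigoplus_{\substack{i\ge0\\j_1+\cdots+j_i=k}}M(i)\otimes N(j_1)\otimes\cdots\otimes N(j_i)\ ,
\]
so that a $\C$-coalgebra $C$ has a decomposition map $\Delta_C\colon C\to\widehat{\C}(C)=\prod_{n\ge0}\C(n)\otimes C^{\otimes n}$ with no invariants involved, and the convolution operad $\hom(\C,\P)$ and the convolution algebra $\hom(C,A)$ are defined exactly as before but without the $\k[\S_n]$-factors and without summing over $\S_n$ when applying the action map. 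The non-symmetric twisting morphisms $\alpha\colon\C\to\P$, the pre-Lie product $\star$, the operators $\star_\alpha$ and the relative twisting morphisms $\Tw_\alpha(C,A)$ all have verbatim non-symmetric versions.

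First I would fix the completeness statement: the proof of \cref{lemma:compatibility filtrations first case} carries over word for word, using the coradical filtration $\F^\bullet_\C C$ of a conilpotent ns $\C$-coalgebra (which exists exactly as in the symmetric case) to define the ascending filtration $\F_n\hom^\alpha(C,A)$ of maps vanishing on $\F^n_\C C$, with the identification $\hom^\alpha(C,A)/\F_n\hom^\alpha(C,A)\cong\hom^\alpha(\F^n_\C C,A)$ obtained by restriction. This makes $\hom^\alpha(C,A)$ a proper complete $\SAoo$-algebra, so the Maurer--Cartan equation makes sense. Then I would run the computation in the proof of \cref{thm:MC-el of convolution Loo-algebras}: for $\varphi\in\hom(C,A)$ of degree $0$, writing $\star_\alpha^{(n)}(\varphi)$ for the part of $\star_\alpha(\varphi)$ factoring through $\C(n)\otimes C^{\otimes n}$, one gets
\[
\frac{1}{n!}\ell_n(\varphi,\ldots,\varphi) = \gamma_A\bigl(\manin_\alpha(\ell_n)\circ\varphi\bigr)\Delta_C^n = \gamma_A(\alpha\circ\varphi)\Delta_C^n = \star_\alpha^{(n)}(\varphi)\ ,
\]
where in the ns setting the factor $1/n!$ is absent on the left precisely because the action map $\gamma_{\hom(C,A)}$ does not sum over $\S_n$ — so the combinatorial bookkeeping is if anything simpler than in the symmetric case. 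Summing over $n\ge2$ and adding the differential term gives $\partial(\varphi)+\sum_{n\ge2}\frac{1}{n!}\ell_n(\varphi,\ldots,\varphi)=\partial(\varphi)+\star_\alpha(\varphi)$, so the two Maurer--Cartan equations coincide and $\MC(\hom^\alpha(C,A))=\Tw_\alpha(C,A)$.

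There is essentially no hard obstacle here; the point of the statement is that nothing in the proof used the identification of invariants with coinvariants, so it is valid over an arbitrary field. The one thing to double-check carefully is that the ns analogue of \cref{thm:convolution Loo-algebras} — giving the morphism of operads $\manin_\alpha\colon\SAoo\to\hom(\C,\P)$ underlying the restriction-of-structure construction of $\hom^\alpha(C,A)$ — is in place, which it is (it is stated just above, and its proof only uses $\hom(\mathrm{As}^\vee,\P)\cong\P$ in the ns setting together with the ns Rosetta stone $\hom_{\op}(\Cobar\mathrm{As}^\vee,-)\cong\Tw(\mathrm{As}^\vee,-)\cong\MC(-)$, none of which involve symmetric groups). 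With that and the ns version of \cref{thm:hom of algebras is algebras over hom} (which likewise has the same proof), the convolution $\SAoo$-algebra $\hom^\alpha(C,A)$ is well-defined and the argument above applies, completing the proof.
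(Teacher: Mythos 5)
Your proposal is correct and matches the paper's approach exactly: the paper offers no separate argument for the non-symmetric case, stating only that everything goes through with the same proofs once the symmetric group actions are removed, which is precisely the verification you carry out. One cosmetic point: your displayed identity still carries the symmetric-case factor $\frac{1}{n!}$ on the left (and again in the final sum), whereas in the ns setting the correct statements are $m_n(\varphi,\ldots,\varphi)=\star_\alpha^{(n)}(\varphi)$ and $\partial(\varphi)+\sum_{n\ge2}m_n(\varphi,\ldots,\varphi)=\partial(\varphi)+\star_\alpha(\varphi)$, consistent with the non-symmetric Maurer--Cartan equation as the paper defines it and with your own prose explaining that the factorial disappears because $\gamma_{\hom(C,A)}$ no longer sums over $\S_n$.
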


Here, the Maurer--Cartan elements of an $\SAoo$-algebra $A$ are the elements $x\in A_0$ of degree $0$ satisfying the non-symmetric Maurer--Cartan equation
\[
dx + \sum_{n\ge2}m_n(x,\ldots,x) = 0\ .
\]
The results of the rest of the present chapter all have versions for tensor products and non-symmetric operads. We will only mention those that we will need in the rest of this work.

\subsection{The binary quadratic case and Manin products}

Here, we restrict to the binary quadratic case, and study convolution tensor products in this context. We start by recalling the notion of what we call the Manin morphisms, which are morphisms arising from maps between operads via the adjunction between the black and white Manin products. We go on to prove that convolution tensor products directly generalize Manin morphisms.

\medskip

In the category of operads given by binary quadratic data and morphisms induced by morphisms of quadratic data, one can define two operations, called the \emph{white} and \emph{black Manin products}\index{Manin!products} and denoted by $\wmanin$ and $\bmanin$ respectively, both taking two binary quadratic operads and giving back another one. These objects first appeared in the context of algebras in \cite{man87} and \cite{man89}, and then in \cite{gk94} in relation to operads. For a more conceptual treatment, see \cite{val08} or \cite[Sect. 8.8]{LodayVallette}.

\begin{proposition}
	Fix a binary quadratic operad $\Q$. Then there is a natural isomorphism
	\[
	\hom_{\mathsf{bin.\ quad.\ op.}}(\R\bmanin\Q,\P)\cong\hom_{\mathsf{bin.\ quad.\ op.}}(\R,\P\wmanin\Q^{\shriek})\ .
	\]
	That is to say, the functors $-\bmanin\Q$ and $-\wmanin\Q^{\shriek}$ are adjoint. Moreover, the operad $\lie$ is a unit for the black product.
\end{proposition}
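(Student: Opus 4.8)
The plan is to reduce the statement to elementary linear algebra on the spaces of generators and relations, using the presentation of binary quadratic (co)operads from \cref{subsect:Koszul duality}. The starting point is the observation, immediate from the universal property $\P(E,R)=\T(E)/(R)$, that a morphism of binary quadratic operads $\P(E,R)\to\P(E',R')$ is the same datum as a morphism of arity-$2$ $\S$-modules $f\colon E\to E'$ whose induced map $\T(f)$ on weight-two trees satisfies $\T(f)(R)\subseteq R'$. The first step is therefore to record the explicit quadratic data of the three operads in play: the white product $\P\wmanin\Q^{\shriek}$ has generating $\S$-module $E_{\P}\otimes F^{\shriek}$ and relations obtained by \emph{intersecting} the two subspaces $R_{\P}\otimes\T(F^{\shriek})^{(2)}$ and $\T(E_{\P})^{(2)}\otimes S^{\perp}$ inside $\T(E_{\P}\otimes F^{\shriek})^{(2)}\hookrightarrow\T(E_{\P})^{(2)}\otimes\T(F^{\shriek})^{(2)}$; the black product $\R\bmanin\Q$ has generators obtained from $E_{\R}\otimes F$ by the desuspension twist dual to the one in $\wmanin$, and relations obtained, dually, by \emph{summing} the two corresponding subspaces; and the Koszul dual $\Q^{\shriek}$ has generating $\S$-module $s^{-1}\susp^{-1}\otimes F^{\vee}$ and relations $S^{\perp}$ by \cref{prop:formula for Koszul dual operad}.

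The second step is to establish the adjunction by a direct chase. Since $\P$ is generated by a reduced, arity-wise finite-dimensional $\S$-module, the module $F$ is finite dimensional in arity $2$, so the tensor–hom adjunction in that arity gives a natural bijection between the arity-$2$ $\S$-module maps out of the generators of $\R\bmanin\Q$ and those out of $E_{\R}$ with target the generators of $\P\wmanin\Q^{\shriek}$, once the operadic suspensions are matched using the isomorphism $\susp^{-1}\otimes\susp\cong\End_{\k}$. The heart of the argument is then to check that, under this bijection $f\leftrightarrow\widetilde f$, the condition $\T(f)(R_{\R\bmanin\Q})\subseteq R_{\P}$ is equivalent to $\T(\widetilde f)(R_{\R})\subseteq R_{\P\wmanin\Q^{\shriek}}$; this comes down to the de Morgan law for orthogonal complements — the orthogonal of a sum of subspaces is the intersection of the orthogonals — together with the fact that "$\T(f)$ annihilates a sum" unravels, after dualizing, to "$\T(\widetilde f)$ lands in an intersection." I expect this bookkeeping — keeping straight the identification of weight-two trees under Hadamard tensor products and the various suspension twists built into the definitions of $\bmanin$, $\wmanin$ and $(-)^{\shriek}$ — to be the main obstacle; the rest is formal, and naturality in $\R$, $\P$ (and $\Q$) follows because each side is functorial in the generators and relations.

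Finally, for the unit statement I would first verify directly that $\com$ is a unit for the white product: the generators of $\com\wmanin\Q$ are $E_{\com}\otimes F\cong F$ as arity-$2$ $\S$-modules, and since $R_{\com}$ has codimension one in the weight-two part $\T(E_{\com})^{(2)}$, the intersection defining the relations of $\com\wmanin\Q$ collapses onto $S$, yielding a canonical isomorphism $\com\wmanin\Q\cong\Q$ of binary quadratic operads. Combining this with the self-duality $(\P^{\shriek})^{\shriek}=\P$ of \cref{prop:formula for Koszul dual operad} and the compatibility $(\P\bmanin\Q)^{\shriek}=\P^{\shriek}\wmanin\Q^{\shriek}$ (which is exactly the orthogonality bookkeeping of the previous paragraph specialized to Koszul duals), and using $\lie^{\shriek}=\com$, we obtain $(\lie\bmanin\Q)^{\shriek}=\lie^{\shriek}\wmanin\Q^{\shriek}=\com\wmanin\Q^{\shriek}\cong\Q^{\shriek}$; applying $(-)^{\shriek}$ once more gives $\lie\bmanin\Q\cong\Q$, which completes the proof.
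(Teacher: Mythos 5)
Your overall route --- identifying morphisms of binary quadratic operads with maps of generating $\S$-modules that send relations into relations, converting the condition on generators by tensor--hom in arity $2$, and converting the condition on relations by the duality between sums and intersections under orthogonal complement --- is the standard proof of this statement. The surrounding text states the proposition without proof and defers to \cite{val08} and \cite[Sect.~8.8]{LodayVallette}, where precisely this argument is carried out; likewise, your derivation of the unit statement from the white-unitality of $\com$, the identity $(\R\bmanin\Q)^{\shriek}=\R^{\shriek}\wmanin\Q^{\shriek}$ and $\lie^{\shriek}=\com$ is the standard one.

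There is, however, a concrete error in your setup: you have interchanged the defining relations of the two products. The white product carries the \emph{sum} $\Phi^{-1}\bigl(R_{\P}\otimes\T(F^{\shriek})^{(2)}+\T(E_{\P})^{(2)}\otimes S^{\perp}\bigr)$, and the black product carries (the preimage of) the \emph{intersection} $R_{\R}\otimes S=(R_{\R}\otimes\T(F)^{(2)})\cap(\T(E_{\R})^{(2)}\otimes S)$, not the other way around. This is forced by the remark, made just after the proposition, that $\P\wmanin\Q^{\shriek}$ is the best binary quadratic approximation of the Hadamard product admitting a canonical map to $\P\otimes\Q^{\shriek}$: its relations must therefore be the entire weight-two kernel of $\T(E_{\P}\otimes F^{\shriek})^{(2)}\to(\T(E_{\P})^{(2)}/R_{\P})\otimes(\T(F^{\shriek})^{(2)}/S^{\perp})$, which is the sum. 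With your convention both halves of the argument break as written: the orthogonality chase produces an adjunction between the wrong pair of functors, and the unit computation fails, since the relations of $\com\wmanin\Q$ would be $\Phi^{-1}(R_{\com}\otimes S)$, in general a proper subspace of $S$, making $\com\wmanin\Q$ strictly larger than $\Q$. (The correct computation uses that $\T(E_{\com})^{(2)}/R_{\com}\cong\com(3)$ is one dimensional, so that quotienting by the sum collapses the generators and relations onto $F$ and $S$.) Two smaller points: the finiteness needed for the tensor--hom step is that of $F(2)$, the generators of $\Q$ (you wrote $\P$), and this hypothesis is implicitly required anyway for $\Q^{\shriek}$ to admit the presentation of \cref{prop:formula for Koszul dual operad}. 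Once the two conventions are restored, your outline is correct and coincides with the proof in the cited sources.
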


Therefore, any morphism
\[
\Psi:\lie\bmanin\Q\cong\Q\longrightarrow\P
\]
coming from a quadratic data is equivalent to a morphism
\[
\lie\longrightarrow\P\wmanin\Q^{\shriek}\ .
\]
As explained in \cite[Sect. 3.2]{val08}, the white product is the best binary quadratic approximation of the Hadamard product, and there is a canonical morphism
\[
\P\wmanin\Q^{\shriek}\longrightarrow\P\otimes\Q^{\shriek}\ .
\]

\begin{definition}
	We call the composite
	\[
	\smallmanin_\Psi\coloneqq\big(\lie\longrightarrow\P\wmanin\Q^{\shriek}\longrightarrow\P\otimes\Q^{\shriek}\big)
	\]
	the \emph{Manin morphism}\index{Manin!morphisms} associated to $\Psi$.
\end{definition}

The Manin morphism $\smallmanin_\Psi$ has the following explicit description. Assume $\P = \P(E,R)$ and $\Q = \P(F,S)$, fix a basis $f_1,\ldots,f_k$ of $F$, and let $e_1,\ldots,e_k\in E$ be the images of the $f_i$ under $\Psi$. Then $\smallmanin_\Psi$ is the unique morphism of operads extending
\[
\smallmanin_\Psi(b)\coloneqq\sum_{i=1}^ke_i\otimes s^{-1}\susp_2^{-1}f_i^\vee,
\]
where $b\in\lie(2)$ is the Lie bracket.

\medskip

For any quadratic binary operad $\Q$, there is a canonical Manin morphism, namely the one associated to the identity of $\Q$, giving
\[
\smallmanin_\Q\coloneqq\smallmanin_{\id_\Q}:\lie\longrightarrow\Q\otimes\Q^!\ .
\]
It is easy to see that
\[
\smallmanin_\Psi = (\Psi\otimes 1)\smallmanin_\Q\ ,
\]
so that it is only necessary to know $\smallmanin_\Q$ to compute $\smallmanin_\Psi$.

\medskip

We fix two binary quadratic data $(E,R)$ and $(F,S)$ and denote by $\P=\P(E,R)$ and $\Q = \P(F,S)$ the two associated operads. Furthermore, we assume that $F$ is finite dimensional. We fix a morphism
\[
\Psi:\Q\longrightarrow\P
\]
in the category of binary quadratic operads, and we consider the associated twisting morphism
\[
\psi\coloneqq\left(\Bar\Q\stackrel{\pi}{\longrightarrow}\Q\stackrel{\Psi}{\longrightarrow}\P\right).
\]
The morphism of operads associated to this twisting morphism by \cref{thm:convolution Loo tensor products} is
\[
\manintensor_\Psi:\L_\infty\longrightarrow\P\otimes\Q^{\shriek}_\infty\ ,
\]
after a suspension. Here, we used the fact that
\[
(\susp\otimes\Bar\Q)^\vee\cong\Cobar((\susp^{-1})^c\otimes\Q^\vee)\cong\Cobar\big(\big(\Q^{\shriek}\big)^{\antishriek}\big)\cong\Q^{\shriek}_\infty\ ,
\]
because, since $F$ is finite dimensional, $\Q(n)$ is finite dimensional for all $n\ge0$. The following proposition shows that $\manintensor_\Psi$ gives a direct generalization of $\smallmanin_\Psi$.

\begin{proposition}
	The following square
	\begin{center}
		\begin{tikzpicture}
		\node (a) at (0,0){$\L_\infty$};
		\node (b) at (2.5,0){$\P\otimes\Q^{\shriek}_\infty$};
		\node (c) at (0,-1.5){$\lie$};
		\node (d) at (2.5,-1.5){$\P\otimes\Q^{\shriek}$};
		
		\draw[->] (a)--node[above]{$\manintensor_\Psi$}(b);
		\draw[->] (a)--(c);
		\draw[->] (b)--(d);
		\draw[->] (c)--node[above]{$\smallmanin_\psi$}(d);
		\end{tikzpicture}
	\end{center}
	where the vertical maps are the canonical ones coming from the resolutions, is commutative.
\end{proposition}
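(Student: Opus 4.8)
The plan is to verify commutativity of the square by a direct computation on generators, using the explicit descriptions of all four maps. Since $\L_\infty$ is quasi-free, generated by the operations $\ell_n$ (which in the shifted setting are $s^{-1}\mu_n^\vee$), it suffices to check that the two composites agree on each $\ell_n$. The vertical map $\L_\infty\to\lie$ on the left is the canonical projection coming from the minimal model $\L_\infty = \Cobar\com^\vee\to\lie$ (or rather its shifted version); it sends $\ell_2$ to the Lie bracket $b$ and kills $\ell_n$ for $n\ge3$. The vertical map $\P\otimes\Q^{\shriek}_\infty\to\P\otimes\Q^{\shriek}$ on the right is $1_\P\otimes p$, where $p:\Q^{\shriek}_\infty=\Cobar(\Q^{\shriek})^{\antishriek}\to\Q^{\shriek}$ is the canonical projection, which in arity $2$ is an isomorphism onto the quadratic part and kills everything in higher arity. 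So the clockwise composite, applied to $\ell_n$, vanishes for $n\ge3$ (since the right vertical map kills the arity $\ge3$ part of $\Q^{\shriek}_\infty$), and for $n=2$ it sends $\ell_2$ to $(1_\P\otimes p)\manintensor_\Psi(\ell_2)$.

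First I would pin down $\manintensor_\Psi(\ell_2)$ explicitly. By \cref{thm:convolution Loo tensor products}, $\manintensor_\psi(s^{-1}\mu_n^\vee) = \sum_i \psi(c_i)\otimes c_i^\vee$ where $\{c_i\}$ runs over a basis of $(\Bar\Q)(n)$ (after the appropriate suspension identifications turning $(\susp\otimes\Bar\Q)^\vee$ into $\Q^{\shriek}_\infty$). In arity $2$, $(\Bar\Q)(2) = s F$, so the basis is $\{s f_i\}$ with $\{f_i\}$ a basis of $F$, and $\psi(s f_i) = \pi(s f_i)$ composed with $\Psi$, i.e. $s^{-1}(s f_i)$ pushed into $\Q$ and then mapped by $\Psi$, which is just $\Psi(f_i) = e_i \in E \subseteq \P(2)$. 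Dualizing, $(s f_i)^\vee$ becomes (up to the sign/suspension bookkeeping) $s^{-1}\susp_2^{-1} f_i^\vee$ in $\Q^{\shriek}_\infty(2) = \Cobar((\Q^{\shriek})^{\antishriek})(2)$. Hence $\manintensor_\Psi(\ell_2) = \sum_i e_i\otimes s^{-1}\susp_2^{-1} f_i^\vee$, and applying $1_\P\otimes p$ (which is the identity in arity $2$ after identifying $\Q^{\shriek}_\infty(2)$ with $\Q^{\shriek}(2)$ via the isomorphism $s^{-1}\susp_2^{-1}f_i^\vee \leftrightarrow$ the corresponding generator of $\Q^{\shriek}(2)$) gives exactly $\sum_i e_i\otimes s^{-1}\susp_2^{-1}f_i^\vee$.

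On the other side, the counterclockwise composite sends $\ell_2 \mapsto b \in \lie(2)$ and then $b \mapsto \smallmanin_\psi(b)$. By the explicit formula for the Manin morphism recalled just before the proposition, $\smallmanin_\Psi(b) = \sum_{i=1}^k e_i\otimes s^{-1}\susp_2^{-1}f_i^\vee$, with $e_i = \Psi(f_i)$. This matches the clockwise answer on the nose. For $n\ge3$ both composites vanish, as noted above. So the only real content is the arity $2$ identification, and the whole proof is an unwinding of the two explicit formulas together with the suspension/duality dictionary $(\susp\otimes\Bar\Q)^\vee\cong\Q^{\shriek}_\infty$. I would write this up by (i) stating the arity-$2$ descriptions of all four maps, (ii) doing the one-line check that the clockwise and counterclockwise images of $b$ agree, and (iii) remarking that both composites kill the generators $\ell_n$ for $n\ge3$ because the right-hand vertical projection is trivial in arity $\ge3$.

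The main obstacle — really the only delicate point — is getting the signs and suspension conventions exactly right in the identification of $(s f_i)^\vee$ inside $(\susp\otimes\Bar\Q)^\vee\cong\Q^{\shriek}_\infty$ with $s^{-1}\susp_2^{-1}f_i^\vee$, since this involves the Koszul sign for dualizing $s^n$ and the definition of $\susp^c$. I expect this to go through cleanly because both the definition of $\manintensor$ in \cref{thm:convolution Loo tensor products} and the formula for $\smallmanin_\Psi$ were set up with the same conventions (both feature $s^{-1}\susp_2^{-1}$), so the signs are designed to cancel; but it is the step that needs the most care, and I would flag it explicitly rather than leaving it to the reader.
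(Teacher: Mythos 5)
Your proof follows the same route as the paper's own: evaluate both composites on the generators $\ell_n$, show that both vanish for $n\ge3$, and match the two images of $\ell_2$ using the explicit formulas for $\manintensor_\Psi$ and $\smallmanin_\Psi$. The arity-$2$ computation and the suspension/duality bookkeeping are exactly what the paper does, and your flagging of the sign conventions is reasonable. However, one justification is misstated and, as written, is false: the right vertical map $1_\P\otimes p$ does \emph{not} kill the arity $\ge 3$ part of $\Q^{\shriek}_\infty$. The resolution $p:\Q^{\shriek}_\infty\to\Q^{\shriek}$ is a surjective quasi-isomorphism, hence nonzero in every arity where $\Q^{\shriek}$ is nonzero (e.g.\ for $\Q=\as$ one has $\dim\as^{\shriek}(3)=6$). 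What is true, and what the argument actually needs, is that $p$ vanishes on the space of \emph{generators} $s^{-1}(\Q^{\shriek})^{\antishriek}(n)$ for $n\ge3$, because $\Q^{\shriek}$ is quadratic and $p$ is defined on generators as the identity in arity $2$ and zero above. This must be combined with the observation --- already implicit in your formula $\manintensor_\Psi(\ell_n)=\sum_i\psi(c_i)\otimes c_i^\vee$, since the twisting morphism $\psi=\Psi\pi$ is supported on the single-vertex part $s\overline{\Q}$ of $\Bar\Q$, whose dual is precisely the generating $\S$-module of $\Q^{\shriek}_\infty$ --- that $\manintensor_\Psi$ sends each $\ell_n$ into $\P(n)\otimes s^{-1}(\Q^{\shriek})^{\antishriek}(n)$, i.e.\ into the generators of the second tensor factor. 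With that correction the vanishing for $n\ge3$ is justified and your argument coincides with the paper's.
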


\begin{proof}
	The left vertical arrow sends $\ell_2$ to $b$ and $\ell_n$ to $0$ for all $n\ge3$. Therefore, the south--west composite is the map sends
	\[
	\ell_2\longmapsto \sum_{i=1}^ke_i\otimes s^{-1}\susp^{-1}_2f_i^\vee
	\]
	and all the higher $\ell_n$ to zero. On the other hand, the right vertical arrow is given by tensoring the identity of $\P$ with the canonical resolution map
	\[
	\Q^{\shriek}_\infty\longrightarrow\Q^{\shriek}\ ,
	\]
	which is defined on the generators $s^{-1}(\Q^{\shriek})^{\antishriek}\cong s^{-1}(\susp^{-1})^c\otimes\Q^\vee$ as being the identity on arity $2$ and zero on all higher arities, this because $\Q$ is quadratic. By definition, the morphism $\manin_\Psi$ sends $\ell_n$ to an element of $\P(n)\otimes s^{-1}(\Q^{\shriek})^{\antishriek}(n)$. Therefore, the north--east composite gives zero on $\ell_n$, for $n\ge3$ and sends
	\[
	\ell_2\longmapsto \sum_{i=1}^ke_i\otimes s^{-1}\susp^{-1}_2f_i^\vee
	\]
	just like the other map.
\end{proof}

\subsection{Non-conilpotent coalgebras}\label{subsect:complete cobar cosntruction}

In some applications, such as the one we will see in \cref{ch:representing the deformation oo-groupoid}, we are naturally lead to consider non-conilpotent coalgebras. This can be done, but in order to have the results that we desire we need to take proper complete algebras on the other side.

\medskip

For the rest of this section, fix a cooperad $\C$, an operad $\P$ and a twisting morphism $\alpha$. Given a $\C$-coalgebra\footnote{Remember: non-conilpotent!} $C$, one defines a $\P$-algebra by
\[
\cCobar_\alpha C\coloneqq\left(\widehat{\P}(C)\coloneqq\prod_{n\ge0}\P(n)\otimes_{\S_n}C^{\otimes n}, d_{\cCobar_\alpha C}\coloneqq d_1 + d_2\right),
\]
where $d_1$ and $d_2$ are defined exactly as for the usual cobar construction relative to $\alpha$, but this time $d_2$ passes through $\C\widehat{\circ}C$, instead of $\C\circ C$, cf. \cref{subsection:relative bar and cobar}. The filtration
\[
\F_n\coloneqq\prod_{k\ge n}\P(k)\otimes_{\S_k}C^{\otimes k}
\]
makes it into a proper complete $\P$-algebra.

\begin{definition}
	The proper complete $\P$-algebra $(\cCobar_\alpha C,\F_\bullet)$ is called the \emph{complete cobar construction relative to $\alpha$}\index{Complete!cobar construction}.
\end{definition}

\begin{lemma}\label{lemma:free complete P-alg}
	Let $V$ be a chain complex, seen as a proper complete chain complex by imposing the filtration $\F_1V = V$ and $\F_nV = 0$ for all $n\ge2$. Let $A$ be a proper complete $\P$-algebra. We have a natural bijection
	\[
	\hom_\fchain(V,A)\cong\hom_{\Phatalg}(\Phat(V),A)\ .
	\]
\end{lemma}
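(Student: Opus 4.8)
The plan is to establish this as a standard free-forgetful adjunction, mirroring the proof of \cref{prop:free algebra} but carried out in the proper complete setting. First I would describe the candidate free object: given a chain complex $V$ with the trivial filtration $\F_1 V = V$, $\F_n V = 0$ for $n \ge 2$, the algebra $\Phat(V)$ should be the completion of the free $\P$-algebra, namely $\widehat{\P}(V) = \prod_{n\ge0}\P(n)\otimes_{\S_n}V^{\otimes n}$ with the filtration $\F_n = \prod_{k\ge n}\P(k)\otimes_{\S_k}V^{\otimes k}$ and with the $\P$-algebra structure induced from $\gamma_\P \circ 1_V$ as in \cref{prop:free algebra}, now taking values in the product rather than the direct sum. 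One must first check that this is indeed a well-defined proper complete $\P$-algebra: the composition map $\gamma_{\Phat(V)}$ is filtered because plugging operations from $\P$ into elements of filtration degree $\ge n$ lands in filtration degree $\ge n$ (the arity only goes up), and completeness is immediate since $\widehat{\P}(V)$ is literally a product indexed by arity.

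Next I would construct the bijection. Given a morphism of filtered chain complexes $f \colon V \to A$, I would define $\widetilde{f} \colon \Phat(V) \to A$ by the formula $\widetilde{f} = \gamma_A(1_\P \circ f)$, exactly as in the conilpotent case, but I need to argue that this converges: since $A$ is proper complete and $f$ is filtered, $f(V)$ lands in $A$, and applying an operation of arity $n$ to $n$ copies of $f(V)$ produces, after summing over all $n$, a convergent series in $A = \lim_k A/\F_k A$ because the contribution of arity-$n$ terms increases filtration degree with $n$ once $n \ge 2$ (here the precise bookkeeping uses that the filtration on a proper complete $\P$-algebra satisfies $\gamma_A(\P(n) \otimes \F_{i_1}A \otimes \cdots \otimes \F_{i_n}A) \subseteq \F_{i_1+\cdots+i_n}A$ — this is where the definitions from \cref{appendix:filtered stuff} must be invoked). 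Conversely, given a morphism of proper complete $\P$-algebras $g \colon \Phat(V) \to A$, restricting along the canonical filtered inclusion $V \hookrightarrow \Phat(V)$ (which sits in arity $1$, hence is filtered) gives a filtered chain map $V \to A$. I would then verify these two assignments are mutually inverse: one direction is the universal-property computation that any $\P$-algebra morphism out of $\widehat{\P}(V)$ is determined by its restriction to $V$ (using that $\widehat{\P}(V)$ is generated, in the completed sense, by $V$), and the other is the observation that $\widetilde{f}|_V = f$.

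The main obstacle — really the only non-formal point — is the convergence and continuity argument: one must be careful that $\widetilde{f}$ actually defines a morphism into $A$ and not merely into some completion of $A$, and that it is itself a filtered morphism, and conversely that an arbitrary $\P$-algebra morphism $g \colon \Phat(V)\to A$ between \emph{proper complete} algebras is automatically continuous with respect to the filtrations so that it is determined by its values on a generating subspace. This is where the properness hypothesis (as opposed to mere completeness) is used, and it is the reason the statement is restricted to the category $\Phatalg$ rather than all $\P$-algebras. Once this is in place, the naturality of the bijection in both $V$ and $A$ is routine and I would leave it, as is customary in the excerpt, to the reader.
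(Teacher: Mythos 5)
Your proposal is correct and follows essentially the same route as the paper: the extension of a chain map $f\colon V\to A$ is the composite $\gamma_A\circ\Phat(f)$ (your $\gamma_A(1_\P\circ f)$), the inverse is restriction to $V$, and the only real content is that properness and completeness of $A$ make the arity-wise sum converge. The paper leaves the convergence check as "straightforward," whereas you spell it out; note only that continuity of a morphism out of $\Phat(V)$ is automatic because morphisms in $\Phatalg$ are filtered by definition, and that the reducedness of $\P$ (flagged in the remark following the lemma) is what prevents infinite sums within a single filtration level.
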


\begin{proof}
	Notice that the left hand side is also equal to the set of all morphisms of chain complexes from $V$ to $A$, forgetting the filtrations. Given such a chain map $\phi:V\to A$, we get a map of algebras by
	\[
	\Phat(V)\xrightarrow{\Phat(\phi)}\Phat(A)\xrightarrow{\gamma_A}A\ .
	\]
	It is straightforward to check that it is well-defined and that it respects the filtrations. The other direction is given by restricting a morphism of algebras $\Phat(V)\to A$ to $V$.
\end{proof}

\begin{remark}
	For the result above, it is important that $\P$ is reduced, else we might get infinite sums at every level of the filtration, resulting in the morphisms in the proof not being defined.
\end{remark}

\begin{proposition}\label{prop:partial Rosetta stone for non-conilpotent coalgebras}
	Let $\alpha\in\Tw(\C,\P)$ be a twisting morphism, let $C$ be a $\C$-coalgebra, and let $A$ be a proper complete $\P$-algebra. There is a natural bijection
	\[
	\hom_{\Phatalg}(\cCobar_\alpha C,A)\cong\Tw_\alpha(C,A)\ .
	\]
\end{proposition}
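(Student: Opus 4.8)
The plan is to mimic the proof of \cref{thm:Rosetta stone algebras}, the Rosetta stone for (co)algebras, adapting it to the completed setting. First I would observe that since $\cCobar_\alpha C = \widehat{\P}(C)$ is, by \cref{lemma:free complete P-alg}, the free proper complete $\P$-algebra on the chain complex $C$, a morphism of proper complete $\P$-algebras $f:\cCobar_\alpha C \to A$ is completely determined by its restriction $\varphi \coloneqq f|_C$ to $C$. Conversely, given any chain map $\varphi:C\to A$ (equivalently a degree $0$ element $\varphi\in\hom(C,A)$ satisfying $\partial(\varphi)=0$ — but we will not assume this yet), we recover a morphism of graded $\P$-algebras $f$ by the composite $f = \gamma_A(1_\P\circ\varphi)$, using \cref{lemma:free complete P-alg}; the completeness of $A$ guarantees this is well-defined, since $\widehat{\P}(C)\to\widehat{\P}(A)\to A$ lands in $A$ with no convergence issues (here it is crucial that $\P$ is reduced, as noted in the remark following \cref{lemma:free complete P-alg}). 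So the only thing to check is that $f$ is a \emph{chain map} — i.e.\ commutes with $d_{\cCobar_\alpha C}$ and $d_A$ — if and only if $\varphi$ satisfies the Maurer--Cartan equation \eqref{equation:MC for relative twisting morphisms}.

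Next I would carry out this equivalence by the same computation as in the proof of \cref{thm:Rosetta stone algebras}, restricting the relation $d_A f = f\, d_{\cCobar_\alpha C}$ to $C\subseteq\widehat{\P}(C)$. Writing $d_{\cCobar_\alpha C} = d_1 + d_2$ with $d_1 = d_\P\circ 1_C + 1_\P\circ' d_C$ and $-d_2$ the derivation extending $C\xrightarrow{\Delta_C}\widehat{\C}(C)\xrightarrow{\alpha\circ 1_C}\widehat{\P}(C)$, the restriction of $d_1$ to $C$ contributes $\varphi\, d_C$ (after applying $f$), and the restriction of $-d_2$ to $C$ is precisely $\gamma_A(1_\P\circ\varphi)(\alpha\circ 1_C)\Delta_C = \gamma_A(\alpha\circ\varphi)\Delta_C = \star_\alpha(\varphi)$. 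Thus the equation $d_A f = f\, d_{\cCobar_\alpha C}$ restricted to $C$ reads $d_A\varphi = -\varphi\, d_C - \star_\alpha(\varphi)$, i.e.\ $\partial(\varphi) + \star_\alpha(\varphi) = 0$. Conversely, since $C$ generates $\widehat{\P}(C)$ freely as a proper complete $\P$-algebra and both $d_A f$ and $f\, d_{\cCobar_\alpha C}$ are derivation-like relative to the algebra structure (more precisely, both sides agree on the image of $C$ iff they agree everywhere, because a morphism of graded algebras out of a free complete algebra commuting with the differential is determined by the values on generators), the Maurer--Cartan equation for $\varphi$ implies $f$ is a chain map. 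I would also check naturality in $C$ and in $A$, which follows formally from the functoriality of $\cCobar_\alpha$ (on morphisms of $\C$-coalgebras), of $\Tw_\alpha(-,-)$, and of the free--forgetful adjunction of \cref{lemma:free complete P-alg}.

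The main obstacle is the convergence/well-definedness bookkeeping in the completed setting: one must verify that the derivation $d_2$ of $\cCobar_\alpha C$ is well-defined on the \emph{product} $\widehat{\P}(C) = \prod_n \P(n)\otimes_{\S_n} C^{\otimes n}$ (this is the analogue of the check in \cite[Lemma 11.2.4]{LodayVallette} but now in the complete case, and it is where the filtration $\F_n = \prod_{k\ge n}\P(k)\otimes_{\S_k}C^{\otimes k}$ and properness of $\P$ enter), that $d_2$ is filtered so that $d_{\cCobar_\alpha C}$ makes $\cCobar_\alpha C$ into a proper complete $\P$-algebra, and that the correspondence $f\leftrightarrow\varphi$ respects filtrations on both sides — but note that on the right-hand side $\Tw_\alpha(C,A)$ carries no filtration condition, so one must check that \emph{every} twisting morphism $\varphi:C\to A$ automatically gives a \emph{filtered} morphism $\cCobar_\alpha C\to A$, which holds because $\F_n(\cCobar_\alpha C)$ is generated by terms of arity $\ge n$ and $A$ is complete. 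These are all routine once the definitions are unwound, so the proof is essentially a careful transcription of the conilpotent argument with ``$\prod$'' in place of ``$\bigoplus$'' and an appeal to \cref{lemma:free complete P-alg} in place of the ordinary free-algebra adjunction.
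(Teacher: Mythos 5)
Your proposal is correct and follows essentially the same route as the paper, which simply notes that the argument is the analogue of \cref{thm:Rosetta stone algebras} carried out using \cref{lemma:free complete P-alg} in place of the ordinary free-algebra adjunction. Your additional bookkeeping about well-definedness of $d_2$ on the product and the automatic filteredness of the induced morphism is exactly the content implicitly delegated to the reader there.
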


\begin{proof}
	The proof is analogous to the one for the similar bijection in \cref{thm:Rosetta stone algebras}. One uses \cref{lemma:free complete P-alg}.
\end{proof}

\subsection{Examples}

We give some examples of $\L_\infty$-algebra structures arising through \cref{thm:convolution Loo-algebras}. We put ourselves in the situation of \cref{rem:original case convolution Loo-algebras} and in the tensor product setting. These examples are all extracted from \cite[Sect. 8]{rn17tensor}.

\medskip

We will study the $\L_\infty$-algebra structures obtained from \cref{thm:convolution Loo tensor products} for some canonical morphisms between the three most often appearing operads: the three graces $\com$, $\lie$ and $\ass$. Namely, we will study the identities of these operads and the sequence of morphisms
\[
\lie\stackrel{a}{\longrightarrow}\ass\stackrel{u}{\longrightarrow}\com\ ,
\]
where the first morphism corresponds to the antisymmetrization of the multiplication of an associative algebra, and where the second one corresponds to forgetting that the multiplication of a commutative algebra is commutative to get an associative algebra.

\medskip

Many more examples of less common, but still very interesting operads, both in the symmetric and in the ns case, as well as various morphisms relating them, can be found in \cite[Sect. 13]{LodayVallette}.

\subsubsection*{Notations} \label{subsect:notations examples convolution Loo-algebras}

We will denote by $b\in\lie(2)$ the generating operation of $\lie$, i.e. the Lie bracket. The operad $\ass$ is the symmetric version  the non-symmetric operad $\as$ coding associative algebras. It is given by $\ass(n) = \k[\S_n]$. We denote the canonical basis of $\ass(n)$ by $\{m_\sigma\}_{\sigma\in\S_n}$. The element $m_\sigma\in\ass(n)$ corresponds to the operation
\[
(a_1,\ldots,a_n)\longmapsto a_{\sigma^{-1}(1)}\cdots a_{\sigma^{-1}(n)}
\]
at the level of associative algebras. The action of the symmetric group is of course given by $(m_\sigma)^\tau = m_{\sigma\tau}$. As before, we denote by $\mu_n\in\com(n)$ the canonical element. The morphism $a:\lie\to\ass$ is given by sending $b$ to $m_{\id} - m_{(12)}$, and corresponds to antisymmetrization at the level of algebras. The morphism $u:\ass\to\com$ is given by sending both $m_{\id}$ and $m_{(12)}$ to $\mu_2$. For the homotopy counterparts of the operads mentioned above: as before we denote by $\ell_n\in\L_\infty(n)$ the element of $\L_\infty(n)$ corresponding to the $n$-ary bracket. We have $\ass^{\antishriek}\cong(\susp^{-1})^c\otimes\ass^\vee$, thus in each arity $n\ge2$ the operad $\ass_\infty$ has $n!$ generators
\[
\overline{m}_\sigma\coloneqq s^{-1}\susp^{-1}_nm_\sigma^\vee\in\ass_\infty\ ,\qquad \sigma\in\S_n\ .
\]
The action of the symmetric group on these generators is given by $(\overline{m}_\sigma)^\tau = (-1)^\tau\overline{m}_{\sigma\tau}$. Finally, the operad $\C_\infty$ coding homotopy commutative algebras the same thing as an $\A_\infty$-algebra that vanishes on the sum of all non-trivial shuffles, see \cref{subsect:homotopy commutative algebras}.

\subsubsection*{The identity $\com\longrightarrow\com$} This is the simplest example. The identity of $\com$ induces the morphism
\[
\manintensor_\com:\L_\infty\longrightarrow\com\otimes\L_\infty
\]
which sends the element $\ell_n$ to
\[
\mu_n\otimes s^{-1}\susp_n\mu_n^\vee = \mu_n\otimes\ell_n\ .
\]
Therefore, it is the canonical isomorphism
\[
\L_\infty\cong\com\otimes\L_\infty\ .
\]
If $A$ is a commutative algebra and $C$ is an $\L_\infty$-algebra, then the operations on $A\otimes C$ are given by
\[
\ell_n(a_1\otimes c_1,\ldots,a_n\otimes c_n) = (-1)^\epsilon \mu_n(a_1,\ldots,a_n)\otimes\ell_n(c_1,\ldots,c_n)\ ,
\]
where $(-1)^\epsilon$ is the sign obtained by commuting the $a_i$'s and the $c_i$'s.

\subsubsection*{The identity $\ass\longrightarrow\ass$} Since the operad $\ass$ satisfies $\ass^{\shriek} = \ass$, the induced morphism is
\[
\manintensor_\ass:\L_\infty\longrightarrow\ass\otimes\ass_\infty\ .
\]
It sends $\ell_n$ to
\[
\sum_{\sigma\in\S_n}m_\sigma\otimes s^{-1}\susp_n m_\sigma^\vee = \sum_{\sigma\in\S_n}m_\sigma\otimes \overline{m}_\sigma = \sum_{\sigma\in\S_n}(-1)^\sigma(m_{\id}\otimes\overline{m}_{\id})\ .
\]
If $A$ is an associative algebra and $C$ is an $\ass_\infty$-algebra, then the $\L_\infty$ operations on $A\otimes C$ are given by
\[
\ell_n(a_1\otimes c_1,\ldots,a_n\otimes c_n) = \sum_{\sigma\in\S_n}(-1)^{\sigma+\epsilon} m_e(a_{\sigma^{-1}(1)},\ldots a_{\sigma^{-1}(n)})\otimes\overline{m}_e(c_{\sigma^{-1}(1)},\ldots c_{\sigma^{-1}(n)})\ ,
\]
where $\epsilon$ is the sign obtained by switching the $a_i$'s and the $c_i$'s, and correspond therefore to a kind of antisymmetrization of $\ass_\infty$.

\subsubsection*{The identity $\lie\longrightarrow\lie$} The last identity we have to look at is the identity of the operad $\lie$. It gives rise to a morphism of operads
\[
\manintensor_\lie:\L_\infty\longrightarrow\lie\otimes\C_\infty\ .
\]
It is of more complicated description, but comparing formul{\ae} we see that it is the same structure that is used in a fundamental way in the article \cite[pp.19--20]{tw15} on Hochschild--Pirashvili homology.

\subsubsection*{The forgetful morphism $u:\ass\longrightarrow\com$} This morphism is given by sending
\[
m_\sigma\longmapsto \mu_n
\]
for all $\sigma\in\S_n$. The corresponding morphism
\[
\manintensor_u:\L_\infty\longrightarrow\com\otimes\ass_\infty\cong\ass_\infty
\]
is given by
\[
\manintensor_u(\ell_n) = \sum_{\sigma\in\S_n}\mu_n\otimes s^{-1}\susp_nm_\sigma^\vee = \sum_{\sigma\in\S_n}\mu_n\otimes\overline{m}_\sigma = \mu_n\otimes\sum_{\sigma\in\S_n}(-1)^\sigma(\overline{m}_e)^\sigma.
\]
Therefore, under the canonical identification $\com\otimes\ass_\infty\cong\ass_\infty$, it is the standard antisymmetrization of an $\ass_\infty$-algebra structure giving an $\L_\infty$-algebra structure.

\subsubsection*{The antisymmetrization morphism $a:\lie\longrightarrow\ass$} The induced morphism is a morphism of dg operads
\[
\manintensor_a:\L_\infty\longrightarrow\ass\otimes\C_\infty\ .
\]
It can be interpreted as follows: a $\C_\infty$-algebra can be seen as an $\ass_\infty$-algebra vanishing on the sum of all non-trivial shuffles, that is, we have a natural morphism of operads
\[
i:\ass_\infty\longrightarrow\C_\infty\ ,
\]
which is in fact given by $\Omega((\susp^{-1})^c\otimes a^\vee)$. Now \cref{thm:convolution Loo tensor products} tells us that
\[
\manintensor_a = \manintensor_{1_\ass a} = (1\otimes i)\manintensor_{1_\ass}\ .
\]
Therefore, the $\L_\infty$-algebra structure on the tensor product of an associative and a $\C_\infty$-algebra is given by first looking at the $\C_\infty$-algebra as an $\ass_\infty$-algebra, and then antisymmetrizing the resulting $(\ass\otimes\ass_\infty)$-algebra as already done above.

\subsubsection*{The non-symmetric case} The analogues to the three graces in the non-symmetric setting are the operad $\as$ encoding associative algebras, which we already know well, the operad $\mathrm{Dend}$ of \emph{dendriform algebras}\index{Dendriform algebras} (\cite[Sect. 13.6.5]{LodayVallette}), and the operad $\mathrm{Dias}$ encoding \emph{diassociative algebras}\index{Diassociative algebras} (\cite[Sect. 13.6.7]{LodayVallette}). They fit into a sequence of morphisms
\[
\mathrm{Dias}\longrightarrow\as\longrightarrow\mathrm{Dend}\ .
\]
We get induced morphisms from $\A_\infty$ to
\[
\mathrm{Dias}\otimes\mathrm{Dend}_\infty\ ,\quad\A_\infty\ ,\quad\mathrm{Dend}\otimes\mathrm{Dias}_\infty\ ,\quad\mathrm{Dend}_\infty\,\quad\text{and}\quad\mathrm{Dend}\otimes\A_\infty\ .
\]
We leave their explicit computation to the interested reader.

\subsection{Compatibility with (co)limits of (co)algebras}

To complete this section, we prove a first compatibility result between (co)algebras and the induced convolution homotopy algebras. As before, let $\C$ be a cooperad, let $\P$ be an operad, and let $\alpha:\C\to\P$ be a twisting morphism.

\begin{proposition}\label{prop:compatibility convolution and (co)limits}
	Let $I$ be a small category.
	\begin{enumerate}
		\item Let $A:I\to\Palg$ be a functor, suppose $\lim_{i\in I}A(i)$ exists, and let $C$ be a conilpotent $\C$-coalgebra. Then
		\[
		\lim_{i\in I}\homa(C,A(i))\cong\homa\left(C,\lim_{i\in I}A(i)\right)
		\]
		as $\SLoo$-algebras through the canonical morphism.
		\item Dually, let $C:I\to\Ccog$ be a functor, suppose $\colim_{i\in I}C(i)$ exists, and let $A$ be a $\P$-algebra. Then
		\[
		\lim_{i\in I}\homa(C(i),A)\cong\homa\left(\colim_{i\in I}C(i),A\right)
		\]
		as $\SLoo$-algebras through the canonical morphism.
	\end{enumerate}
\end{proposition}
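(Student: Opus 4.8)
The plan is to reduce both statements to the corresponding facts about chain complexes. The key inputs will be: limits of algebras over an operad and colimits of conilpotent coalgebras are computed on underlying chain complexes; the inner hom commutes with limits; and a morphism of algebras over an operad that is an isomorphism of chain complexes is automatically an isomorphism of algebras, by \cref{lemma:isomorphisms of P-algebras} applied to the operad $\SLoo$. I would prove (1) in detail and obtain (2) by the dual argument.

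For (1), first I would record that $\homa(C,-)$ is the composite of the functor $\hom(C,-)\colon\Palg\to\hom(\C,\P)\text{-}\mathsf{alg}$ of \cref{thm:hom of algebras is algebras over hom} with restriction of structure along $\manin_\alpha\colon\SLoo\to\hom(\C,\P)$; in particular $\homa(C,-)\colon\Palg\to\SLooalg$ is a functor. Writing $\pi_i\colon\lim_{j\in I}A(j)\to A(i)$ for the limit projections, the family $\homa(C,\pi_i)$ is a cone over $i\mapsto\homa(C,A(i))$ and hence induces the canonical morphism of $\SLoo$-algebras
\[
\Theta\colon\homa\Bigl(C,\lim_{i\in I}A(i)\Bigr)\longrightarrow\lim_{i\in I}\homa(C,A(i))\ ,
\]
determined by $\pr_i\Theta=\homa(C,\pi_i)$. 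By \cref{lemma:isomorphisms of P-algebras} it then suffices to check that $\Theta$ is an isomorphism of chain complexes.

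The next step is to identify the underlying chain complexes of both sides. The forgetful functor $\SLooalg\to\chain$ is right adjoint to the free $\SLoo$-algebra functor and so preserves limits, whence $\lim_{i}\homa(C,A(i))$ has underlying chain complex $\lim_{i}\hom(C,A(i))$; similarly the forgetful functor $\Palg\to\chain$ preserves limits, so $\lim_{i}A(i)$ has underlying chain complex $\lim_{i}\forget{A(i)}$; and $\hom(C,-)\colon\chain\to\chain$ preserves limits, being a right adjoint in the internal hom adjunction. Putting these together, the underlying chain complex of $\homa(C,\lim_{i}A(i))$ is canonically $\lim_{i}\hom(C,\forget{A(i)})$, and the underlying chain map of $\Theta$ is the canonical comparison map: both are characterized by the property that postcomposition with $\pr_i$ equals $\hom(C,\forget{\pi_i})$, so they coincide. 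This proves (1), and (2) is obtained the same way, using instead that the forgetful functor $\Ccog\to\chain$ is left adjoint to the cofree conilpotent coalgebra functor (hence preserves colimits) and that $\hom(-,A)\colon\chain^{\mathrm{op}}\to\chain$ sends colimits to limits; the functor $\homa(-,A)\colon\Ccog^{\mathrm{op}}\to\SLooalg$ then produces the canonical comparison morphism $\homa(\colim_{i}C(i),A)\to\lim_{i}\homa(C(i),A)$, which is an isomorphism of chain complexes and therefore of $\SLoo$-algebras.

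The main obstacle, such as it is, lies entirely in the bookkeeping of the last step: verifying that the underlying chain map of $\Theta$ (and of its analogue for (2)) really is the standard comparison isomorphism. This amounts to chasing the universal properties of the three limits involved — in $\SLooalg$, in $\Palg$, and in $\chain$ — together with the observation that each of them is detected on underlying chain complexes; the uniqueness parts of those universal properties then force the two maps to agree. Crucially, no manipulation of the explicit higher brackets of the convolution $\SLoo$-algebras is required, precisely because \cref{lemma:isomorphisms of P-algebras} lets the whole verification take place at the level of chain complexes.
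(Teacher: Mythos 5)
Your proof is correct and follows essentially the same route as the paper's: the paper simply observes that the isomorphism holds at the level of chain complexes, that the canonical map is a morphism of $\SLoo$-algebras, and concludes by \cref{lemma:isomorphisms of P-algebras} (resp. \cref{lemma:isomorphisms of C-coalgebras}). Your write-up just makes explicit the adjunction arguments that the paper leaves implicit.
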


\begin{proof}
	The isomorphisms hold at the level of chain complexes, and the canonical maps are morphisms of $\SLoo$-algebras. One concludes by \cref{lemma:isomorphisms of P-algebras}, resp. \cref{lemma:isomorphisms of C-coalgebras}.
\end{proof}

We also have a ``linear maps-tensor products'' duality analogue to what happens for chain complexes.

\begin{proposition}
	Let $\alpha:\C\to\P$ be a twisting morphism, and suppose that $\C$ is finite dimensional in every arity. Let $C$ be a finite-dimensional, conilpotent $\C$-coalgebra, and let $A$ be a $\P$-algebra. Then
	\[
	\homa(C,A)\cong A\otimes^\alpha C^\vee
	\]
	as $\SLoo$-algebras via the canonical isomorphism of chain complexes
	\[
	f\in\hom(C,A)\longmapsto\sum_{j=1}^Nf(x_j)\otimes x_j^\vee\ ,
	\]
	where $\{x_j\}_{j=1}^n$ is a basis $C$.
\end{proposition}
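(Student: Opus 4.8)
The plan is to exhibit the map in the statement as an isomorphism of algebras over the operad $\SLoo$ by realizing it as the transport of the canonical hom--tensor identification of $\S$-modules. Write $\Phi\colon\hom(C,A)\to A\otimes C^\vee$ for the assignment $f\mapsto\sum_{j=1}^N f(x_j)\otimes x_j^\vee$. Since $C$ is finite dimensional, $\Phi$ is the standard isomorphism of chain complexes (independent of the chosen basis, inverse to the natural morphism $A\otimes C^\vee\to\hom(C,A)$ of \cref{subsect:other settings} sending $a\otimes\phi$ to $c\mapsto(-1)^{|a||\phi|}\phi(c)\,a$); I would first record this, the verification that it commutes with the differentials being a routine sign computation using the conventions of \cref{subsection:intro chain complexes,subsection:intro Koszul sign rule}. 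By \cref{lemma:isomorphisms of P-algebras} it then suffices to prove that $\Phi$ is a morphism of $\SLoo$-algebras from $\homa(C,A)$ to $A\otimes^\alpha C^\vee$.

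To do this I would pass through the operad isomorphism $\theta\colon\hom(\C,\P)\xrightarrow{\cong}\P\otimes\C^\vee$ available because $\C$ is finite dimensional in every arity, see \cref{subsect:other settings}; in arity $n$ it is the canonical isomorphism $\hom(\C(n),\P(n))\cong\P(n)\otimes\C(n)^\vee$ sending a linear map $g$ to $\sum_i g(c_i)\otimes c_i^\vee$ for $\{c_i\}$ a basis of $\C(n)$ and $\{c_i^\vee\}$ its dual basis. Two things have to be checked. First, that $\Phi$ is compatible with $\theta$, namely that for all $\varphi\in\hom(\C,\P)(n)$ and $f_1,\dots,f_n\in\hom(C,A)$ one has
\[
\Phi\big(\gamma_{\hom(C,A)}(\varphi\otimes_{\S_n}(f_1\otimes\cdots\otimes f_n))\big)=\gamma_{A\otimes C^\vee}\big(\theta(\varphi)\otimes_{\S_n}(\Phi(f_1)\otimes\cdots\otimes\Phi(f_n))\big)\,,
\]
where on the left $\hom(C,A)$ carries the $\hom(\C,\P)$-algebra structure of \cref{thm:hom of algebras is algebras over hom} and on the right $A\otimes C^\vee$ carries the tensor product of the $\P$-algebra $A$ and the $\C^\vee$-algebra $C^\vee$ obtained by dualizing $\Delta_C$ (legitimate since $C$ is finite dimensional and conilpotent). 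Second, that $\theta$ intertwines the universal maps: $\theta\,\manin_\alpha=\manintensor_\alpha\colon\SLoo\to\P\otimes\C^\vee$.

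The second point is immediate: both morphisms of operads are determined by their values on the generators $s^{-1}\mu_n^\vee$, and $\theta(\manin_\alpha(s^{-1}\mu_n^\vee))=\theta(\alpha(n))=\sum_i\alpha(c_i)\otimes c_i^\vee=\manintensor_\alpha(s^{-1}\mu_n^\vee)$ by \cref{thm:convolution Loo-algebras,thm:convolution Loo tensor products}. Granting the first point, the chain of identifications is then formal: $\homa(C,A)$ is the restriction of the $\hom(\C,\P)$-algebra $\hom(C,A)$ along $\manin_\alpha$, the map $\Phi$ carries this to the restriction of the $\P\otimes\C^\vee$-algebra $A\otimes C^\vee$ along $\theta\,\manin_\alpha=\manintensor_\alpha$, and that restriction is by definition $A\otimes^\alpha C^\vee$; hence $\Phi$ is a morphism of $\SLoo$-algebras, and being a chain isomorphism, an isomorphism of $\SLoo$-algebras by \cref{lemma:isomorphisms of P-algebras}.

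The main obstacle is the first point, which is the only place where genuine work happens: one has to unwind the composite defining $\gamma_{\hom(C,A)}$ --- $\Delta_C^n$, followed by the inclusion of invariants, the sum over $\S_n$ of $\varphi\otimes f_{\sigma(1)}\otimes\cdots\otimes f_{\sigma(n)}$, and $\gamma_A$ --- together with the dual description of the $\C^\vee$-algebra structure on $C^\vee$ and the diagonal formula for the action of $\P\otimes\C^\vee$ on $A\otimes C^\vee$, and check that $\Phi$ matches them term by term. Conceptually this is just the naturality of the hom--tensor adjunction in the symmetric monoidal category of arity-graded chain complexes, promoted to operads and their algebras; concretely it is a bookkeeping of the Koszul signs produced by commuting the $f_j$ past elements of $C$ and by the permutations appearing in $\Delta_C$. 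I would present this as a single ``by inspection'' computation, in the same spirit as the companion statements \cref{thm:hom of algebras is algebras over hom} and \cref{thm:equality MC and twisting in tensor case}.
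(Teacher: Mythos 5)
Your proposal is correct and follows essentially the same route as the paper: reduce via \cref{lemma:isomorphisms of P-algebras} to checking that the canonical chain isomorphism is a morphism of $\SLoo$-algebras, then verify this by unwinding the structure maps against a dual basis. Your packaging through the operad isomorphism $\hom(\C,\P)\cong\P\otimes\C^\vee$ and the identity $\theta\,\manin_\alpha=\manintensor_\alpha$ is a harmless refinement; the paper simply performs the dual-basis computation directly for the generating operations $\ell_n$ (working in the non-symmetric setting and suppressing signs), which is precisely the ``first point'' you defer to inspection.
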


\begin{proof}
	In this proof, we will avoid writing down any explicit signs, and work in the non-symmetric setting. The symmetric case works in exactly the same way, but one has to write down the correct permutations occurring in the formul{\ae}.
	
	\medskip
	
	Since the described morphism is an isomorphism of chain complexes, by \cref{lemma:isomorphisms of P-algebras} it is enough to show that it is a morphism of $\SLoo$-algebras. In order to do this, let $f_1,\ldots,f_n\in\hom(C,A)$, and write $F\coloneqq f_1\otimes\cdots\otimes f_n$. Fix a basis $\{c_k\}_k$ of $\C(n)$. In $\homa(C,A)$, we have
	\[
	\ell_n(F) = \gamma_A(\alpha\otimes F)\Delta_C\in\hom(C,A)\ .
	\]
	This is sent to the element of $A\otimes C^\vee$ given by
	\begin{align*}
		\sum_j(&\gamma_A(\alpha\otimes F)\Delta^n_C(x_j))\otimes x_j^\vee =\\
		=&\ \sum_{k,j,i_1,\ldots,i_n}\pm\langle c_k^\vee\otimes x_{i_1}^\vee\otimes\cdots\otimes x_{i_n}^\vee,\Delta_C(x_j)\rangle\left(\gamma_A(\alpha(c_k)\otimes f_1(x_{i_1})\otimes\cdots\otimes f_n(x_{i_n}))\right)\otimes x_j^\vee\\
		=&\ \sum_{k,i_1,\ldots,i_n}\left(\gamma_A(\alpha(c_k)\otimes f_1(x_{i_1})\otimes\cdots\otimes f_n(x_{i_n}))\right)\otimes \sum_j\pm\langle\gamma_{C^\vee}(c_k^\vee\otimes x_{i_1}^\vee\otimes\cdots\otimes x_{i_n}^\vee),x_j\rangle x_j^\vee\\
		=&\ \sum_{k,i_1,\ldots,i_n}\left(\gamma_A(\alpha(c_k)\otimes f_1(x_{i_1})\otimes\cdots\otimes f_n(x_{i_n}))\right)\otimes \sum_j\pm\gamma_{C^\vee}(c_k^\vee\otimes x_{i_1}^\vee\otimes\cdots\otimes x_{i_n}^\vee)\\
		=&\ \ell_n\left(\sum_{i_1}f_1(x_{i_1})\otimes x_{i_1}^\vee,\ldots,\sum_{i_n}f_n(x_{i_n})\otimes x_{i_n}^\vee\right),
	\end{align*}
	which concludes the proof.
\end{proof}

\section{Compatibility with \texorpdfstring{$\infty$}{infinity}-morphisms}\label{sect:compatibility of convolution algebras with oo-morphisms}

In this section, we study the compatibility between convolution homotopy Lie algebras and $\infty$-morphisms of algebras and coalgebras. More precisely, we will show that the bifunctor $\homa(-,-)$ can be extended two natural ways to take either $\infty_\alpha$-morphisms in the first slot, or in the second one. We prove that those two extension do not admit a further common extension to a bifunctor accepting $\infty_\alpha$-morphisms in both slots. We conclude the section by proving that however, this last extension is possible if one accepts to work up to homotopy.

\subsection{The fundamental theorem of convolution homotopy algebras}

Fix a cooperad $\C$, an operad $\P$, and a twisting morphism $\alpha:\C\to\P$. Let $C$ be a conilpotent $\C$-coalgebra, and let $A,A'$ be two $\P$-algebras. Given an element
\[
x\in\homa(\Bara A,A')\ ,
\]
we define an element
\[
\homa(1,x)\in\homi(\Bari\homa(C,A),\homa(C,A'))
\]
as follows. Let $f_1,\ldots,f_n\in\hom(C,A)$ and denote $F\coloneqq f_1\otimes\cdots\otimes f_n$, then $x_*(\mu_n^\vee\otimes F)$ is given by the composite
\begin{center}
	\begin{tikzpicture}
		\node (a) at (0,0){$C$};
		\node (b) at (3.5,0){$\C(n)\otimes_{\S_n}C^{\otimes n}$};
		\node (c) at (0,-2){$A'$};
		\node (d) at (3.5,-2){$\C(A)$};
		
		\draw[->] (a) -- node[above]{$\Delta^n_C$} (b);
		\draw[->,dashed] (a) -- node[left]{$\homa(1,x)(\mu_n^\vee\otimes F)$} (c);
		\draw[->] (b) -- node[right]{$F$} (d);
		\draw[->] (d) -- node[above]{$x$} (c);
	\end{tikzpicture}
\end{center}
where $F$ acts on $\C(n)\otimes_{\S_n}C^{\otimes n}$ by
\[
F(c\otimes y_1\otimes\cdots y_n)\coloneqq\sum_{\sigma\in\S_n}(-1)^\epsilon c\otimes f_{\sigma(1)}(y_1)\otimes\cdots\otimes f_{\sigma(n)}(y_n)\in\C(n)\otimes_{\S_n}A^{\otimes n}\ .
\]
Dually, let $C,C'$ be two conilpotent $\C$-coalgebras, and let $A$ be a $\P$-algebra. Given
\[
x\in\homa(C',\Cobara C)\ ,
\]
we define an element
\[
\homa(x,1)\in\homi(\Bari\homa(C,A),\homa(C',A))
\]
as follows. Let $f_1,\ldots,f_n\in\hom(C,A)$, then $x^*(\mu_n^\vee\otimes F)$ is given by the composite
\begin{center}
	\begin{tikzpicture}
	\node (a) at (0,0){$C'$};
	\node (b) at (3.5,0){$\P(C)$};
	\node (e) at (3.5,-2){$\P(n)\otimes_{\S_n}C^{\otimes n}$};
	\node (c) at (0,-4){$A'$};
	\node (d) at (3.5,-4){$\P(A)$};
	
	\draw[->] (a) -- node[above]{$x$} (b);
	\draw[->,dashed] (a) -- node[left]{$\homa(x,1)(\mu_n^\vee\otimes F)$} (c);
	\draw[->] (b) -- node[right]{$\proj_n$} (e);
	\draw[->] (e) -- node[right]{$F$} (d);
	\draw[->] (d) -- node[above]{$\gamma_A$} (c);
	\end{tikzpicture}
\end{center}
with the action of $F$ similar to the one defined above.

\begin{theorem}[{\cite[Thm. 3.1]{rnw18}}]\label{thm:fundamental thm of convolution Loo-algebras}
	Let $\C$ be a cooperad, let $\P$ be an operad, and let $\alpha:\C\to\P$ be a twisting morphism. Let $C,C'$ be two conilpotent $\C$-coalgebras, and let $A,A'$ be two $\P$-algebras.
	\begin{enumerate}
		\item The map
		\[
		\homa(1,-):\homa(\Bara A,A')\longrightarrow\homi(\Bari\homa(C,A),\homa(C,A'))
		\]
		defined above is a morphism of $\SLoo$-algebras.
		\item The map
		\[
		\homa(-,1):\homa(C',\Cobara C)\longrightarrow\homi(\Bari\homa(C,A),\homa(C',A))
		\]
		defined above is a morphism of $\SLoo$-algebras.
	\end{enumerate}
\end{theorem}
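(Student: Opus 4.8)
The plan is to prove both assertions by the same device: first re-express each of the maps $\homa(1,-)$ and $\homa(-,1)$ as $\hom(-,-)$ of an identity morphism with a single explicit structural chain map, and then reduce the statement ``is a morphism of $\SLoo$-algebras'' to a coherence property of that structural map, which we check by a diagram chase from the (co)operadic (de)composition axioms. Throughout we work in the shifted setting and over a field of characteristic $0$, so we freely identify invariants with coinvariants; this is what keeps the Koszul signs manageable.

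For part (1), write $\g\coloneqq\homa(C,A)$ and $\g'\coloneqq\homa(C,A')$. Using the isomorphism of operads $\hom(\com^\vee,\P)\cong\P$ recalled earlier, together with \cref{thm:hom of algebras is algebras over hom}, the target $\homi(\Bari\g,\g')$ is the chain complex $\hom(\com^\vee(\g),\g')$ carrying the convolution $\SLoo$-structure relative to $\iota\colon\com^\vee\to\SLoo$, and $\manin_\iota$ corresponds to the identity of $\SLoo$ under that isomorphism, so its $n$-ary bracket is the usual $\ell_n(F_1,\dots,F_n)=\gamma_{\g'}\bigl(\iota\circ(F_1,\dots,F_n)\bigr)\Delta_{\Bari\g}^n$, summed over $\S_n$ with Koszul signs. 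I then observe that the definition of $\homa(1,x)$ exhibits it as $x\circ\Xi$, where
\[
\Xi\colon\Bari\g\otimes C\longrightarrow\Bara A
\]
is the natural map sending $\mu_n^\vee\otimes(g_1\otimes\cdots\otimes g_n)\otimes c$ to $(1_{\C(n)}\otimes g_1\otimes\cdots\otimes g_n)\Delta_C^n(c)\in\C(n)\otimes A^{\otimes n}\subseteq\C(A)=\Bara A$, the one-dimensional factor $\com^\vee(n)$ being absorbed. Under the identification $\hom(\com^\vee(\g),\g')=\hom(\Bari\g\otimes C,A')$ this reads $\homa(1,-)=\hom(\Xi,1_{A'})$, and the theorem for part (1) becomes two statements about $\Xi$. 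First, $\Xi$ is a chain map $\Bari\g\otimes C\to\Bara A$: the internal-differential terms match by naturality of $\Delta_C$, while the twisting term $d_2$ of $\Bari\g$, which by definition of $\Bar_\iota$ encodes all the brackets $\ell_k$ of $\g=\homa(C,A)$ and hence (via \cref{thm:convolution Loo-algebras}) $\alpha$ and $\gamma_A$, is carried by $\Xi$ exactly onto the twisting term $d_2$ of $\Bara A$, using the $\C$-coalgebra relation $(\Delta_\C\circ1_C)\Delta_C=(1_\C\circ\Delta_C)\Delta_C$. Second, $\homa(1,-)$ intertwines the $n$-ary brackets: expanding $\ell_n^{\homi}(\homa(1,x_1),\dots,\homa(1,x_n))$ and $\homa\bigl(1,\ell_n(x_1,\dots,x_n)\bigr)$ on a generic $\mu_m^\vee\otimes(g_1\otimes\cdots\otimes g_m)$ by the explicit convolution-bracket formula on each of $\homa(\Bara A,A')$, $\homi(\Bari\g,\g')$ and $\g'$, both sides reduce to the same sum of composites built from iterated applications of $\Delta_C$ and $\Delta_\C$, the maps $g_j$, the $x_i$, $\alpha$ and $\gamma_{A'}$; the equality is forced by coassociativity of the decomposition maps — reorganising an $n$-fold decomposition followed by $m_i$-fold ones into a single $m$-fold one — together with the $\P$-algebra coherence of $\gamma_{A'}$.

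Part (2) is entirely dual. Here I re-express $\homa(-,1)$ as $\hom(1_{C'},\widetilde\Xi')$, post-composition with the natural chain map $\widetilde\Xi'\colon\Cobara C=\P(C)\to\hom(\Bari\g,A)$ dual to $\Xi$ — built from the projection $\P(C)\to\P(m)\otimes C^{\otimes m}$, the $g_j$, and $\gamma_A$ in place of $\Delta_C$ and $1_A$ — and then run the same two verifications with the rôles of the $\C$-coalgebra axiom and of the operad $\P$ interchanged: the twisting term of $\Cobara C$ and the bracket expansions are now governed by coassociativity of $\Delta_\C$ on the coalgebra $C'$ and by associativity of $\gamma_\P$. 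The main obstacle in both parts is purely combinatorial: keeping track of the Koszul signs produced when permuting the $g_j$ and the $x_i$, and checking that the sums over $\S_n$ appearing in the various convolution brackets reassemble correctly against the symmetry of $\com^\vee$, $\C$ and $\P$ — precisely the place where the shift to $\SLoo$-algebras and the invariants/coinvariants identification do their work. I would close by noting that, although parts (1) and (2) are formally dual, the cooperad $\C$ and the operad $\P$ play genuinely asymmetric rôles (one contributes a coalgebra, the other an algebra), so the second computation has to be carried out on its own rather than deduced from the first.
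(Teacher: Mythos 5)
Your proposal is correct and follows essentially the same route as the paper: both reduce the statement to a direct term-by-term comparison of the expanded brackets and differentials, the key inputs being the projected coassociativity relation $(\Delta_\C\circ 1_C)\Delta_C=(1_\C\circ\Delta_C)\Delta_C$ and the distribution of $F=f_1\otimes\cdots\otimes f_n$ over block decompositions (which the paper isolates as two technical lemmas). Your repackaging of the differential check as the single statement that $\Xi$ is a chain map, via the hom--tensor adjunction, is a clean organizational variant, but the underlying computation --- the cancellation of the $f_id_C$ terms against $F\Delta_C^n d_C$ and the matching of the two twisting terms $d_2$ --- is exactly the one carried out in the paper.
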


This theorem can reasonably be considered one of the fundamental results about convolution homotopy algebras --- at least as of the time of writing of this thesis. It should be considered a refinement of \cite[Prop. 4.4]{rn17tensor} and \cite[Thm. 5.1]{rnw17}, and indeed we will see these results appear as corollaries of the statement above later on.

\medskip

For the proof, we begin with two technical lemmas.

\begin{lemma}\label{lemma:technical lemma 1}
	Let $k,n_1,\ldots,n_k\ge0$, let $M$ be an $\S$-module, let $V,W$ be two chain complex. Denote $n\coloneqq k_1+\cdots+n_k$, let $f_1,\ldots,f_n\in\hom(V,W)$, and as usual denote $F\coloneqq f_1\otimes\cdots\otimes f_n$. Then, under the isomorphism
	\[
	\left(M(k)\otimes\bigotimes_{i=1}^kM(n_i)\right)\otimes V^{\otimes n}\cong M(k)\otimes\bigotimes_{i=1}^k\left(M(n_i)\otimes V^{\otimes n_i}\right),
	\]
	we have
	\[
	F = \sum_{S_1\sqcup\cdots\sqcup S_k=[n]}1_M\circ\left(F^{S_1}\otimes\cdots\otimes F^{S_k}\right),
	\]
	where $F^{S_i}\coloneqq\bigotimes_{j\in S_i}f_j$, with the elements of $S_i$ in ascending order.
\end{lemma}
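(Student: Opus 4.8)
The plan is to reduce the statement to a purely combinatorial identity about the symmetric group $\S_n$, and then to check that the Koszul signs on the two sides agree under that identity. Note first that the sum on the right-hand side is implicitly restricted to the ordered partitions with $|S_i|=n_i$, since only for those does $F^{S_i}$ lie in $\hom(V^{\otimes n_i},W^{\otimes n_i})$ and hence land in the $M(n_i)\otimes(-)^{\otimes n_i}$ summand; all other partitions contribute nothing. Next I would observe that the $\S$-module $M$ plays no active role: on the left-hand side the symmetrized operator $F$ touches only the $V^{\otimes n}$-factor, and the displayed isomorphism is a rearrangement of tensor factors that moves each $M(n_i)$ into its final slot without permuting the copies of $V$ among themselves. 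Transporting $F$ across this isomorphism therefore produces $1_M\circ\widetilde F$, where $\widetilde F$ is the symmetrized action of $f_1\otimes\cdots\otimes f_n$ on $V^{\otimes n}\cong V^{\otimes n_1}\otimes\cdots\otimes V^{\otimes n_k}$, corrected by a global Koszul sign depending only on the degrees of the $M(n_j)$; the same sign is produced on the right-hand side by the tensor-of-operators rule applied to $1_M\circ(F^{S_1}\otimes\cdots\otimes F^{S_k})$, so it drops out of the comparison. Thus it suffices to prove the operator identity after discarding $M$.

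The combinatorial heart of the matter is the bijection
\[
\S_n \;\longleftrightarrow\; \coprod_{\substack{S_1\sqcup\cdots\sqcup S_k=[n]\\ |S_i|=n_i}} \S_{n_1}\times\cdots\times\S_{n_k}\ ,
\]
which sends $\sigma\in\S_n$ to the ordered partition $S_i\coloneqq\sigma(\{n_1+\cdots+n_{i-1}+1,\ldots,n_1+\cdots+n_i\})$ together with, for each $i$, the permutation $\sigma_i\in\S_{n_i}$ recording the order in which the elements of $S_i$ occur along $\sigma$. That this is a bijection is immediate from $n!=\binom{n}{n_1,\ldots,n_k}\prod_i n_i!$. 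Under this bijection the $\sigma$-indexed summand of $\widetilde F$ (given by $f_{\sigma(1)}(y_1)\otimes\cdots\otimes f_{\sigma(n)}(y_n)$) matches, slot by slot across the blocks $V^{\otimes n_1}\otimes\cdots\otimes V^{\otimes n_k}$, the $(\sigma_1,\ldots,\sigma_k)$-indexed summand of $F^{S_1}\otimes\cdots\otimes F^{S_k}$. So once the signs are accounted for, summing over the bijection yields exactly the claimed equality.

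The main obstacle — and the only substantive point — is the sign bookkeeping. I would factor $\sigma=(\sigma_1\times\cdots\times\sigma_k)\circ\rho_{(S_\bullet)}$, where $\rho_{(S_\bullet)}$ is the unshuffle carrying $(f_1,\ldots,f_n)$ to the concatenation of the blocks $(f_j)_{j\in S_1},\ldots,(f_j)_{j\in S_k}$, each listed in ascending order, so that the Koszul sign attached to $\sigma$ splits as $(-1)^{\epsilon(\rho_{(S_\bullet)})}\prod_i(-1)^{\epsilon_i(\sigma_i)}$. One then checks, by another application of the Koszul sign rule of \cref{subsection:intro Koszul sign rule}, that the remaining sign $(-1)^{\epsilon(\rho_{(S_\bullet)})}$ is precisely the sign generated on the right-hand side when the operators $F^{S_1},\ldots,F^{S_k}$ are tensored together and evaluated across the blocks $V^{\otimes n_1}\otimes\cdots\otimes V^{\otimes n_k}$ (this is where the degrees $|F^{S_j}|=\sum_{\ell\in S_j}|f_\ell|$ interact with the inputs sitting in the earlier blocks), together with the part of the sign in the definition of the symmetrized action that records the interaction of the $f_\ell$ with the $y$-inputs. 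Assembling these identities term by term over the bijection completes the proof. All of this is routine — it is the same pattern of reindexing and sign-matching that underlies the associativity of the composite product of \cref{def:composite product} and the $\hom(\C,\P)$-algebra structure of \cref{thm:hom of algebras is algebras over hom} — so I would present only the factorization of $\sigma$ and the two sign verifications, leaving the elementary sign arithmetic to the reader.
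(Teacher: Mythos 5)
Your argument is correct, and it is exactly the computation the paper has in mind: the printed proof of this lemma consists only of the remark that it is ``a straightforward computation --- if messy at the level of signs'' left to the reader, so your sketch simply supplies the structure of that computation. The two essential points you identify --- that both sides must be read as the symmetrized actions (so that the bijection $\S_n\cong\coprod_{|S_i|=n_i}\S_{n_1}\times\cdots\times\S_{n_k}$ over ordered partitions matches the terms, with the other partitions contributing nothing), and that the Koszul sign of $\sigma$ factors through the unshuffle $\rho_{(S_\bullet)}$ into the block signs plus the tensor-of-operators sign --- are precisely the content of the lemma, and the remaining sign arithmetic is forced by the consistency of the Koszul sign rule.
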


\begin{proof}
	This is a straightforward computation --- if messy at the level of signs. It is left to the reader.
\end{proof}

\begin{lemma}\label{lemma:technical lemma 2}
	Let $\C$ be a cooperad, and let $C$ be a conilpotent $\C$-coalgebra. Then
	\[
	(\Delta_{(1)}\circ1_C)\Delta_C^n = \sum_{\substack{n_1+n_2=n+1\\1\le i\le n_1}}(1_\C\circ(1_C^{\otimes(i-1)}\otimes\Delta_C^{n_2}\otimes1_C^{\otimes(n-i)}))\Delta_C^{n_1}\ .
	\]
	Moreover, let $f_1,\ldots,f_n\in\hom(C,V)$ for $V$ a chain complex. Under the canonical inclusion
	\[
	\bigoplus_{\substack{n_1+n_2 = n+1\\1\le i\le n_1}}\C(n_1)\circ(C^{\otimes(i-1)}(\C(n_2)\otimes C^{\otimes n_2})\otimes C^{\otimes(n-i)})\xhookrightarrow{\qquad}(\C\circ\C)(n)\otimes C^{\otimes n}
	\]
	we have
	\[
	F(\Delta_{(1)}\circ1_C)\Delta_C^n = \sum_{\substack{S_1\sqcup S_2=[n]}}((F^{S_1}\Delta_C^{n_1})\otimes F^{S_2})\Delta_C^{n_2}\ ,
	\]
	where $n_1 = |S_1|$ and $n_2 = |S_2|+1$.
\end{lemma}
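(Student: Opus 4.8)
The plan is to read both identities off the coassociativity of the decomposition map $\Delta_C$ of the conilpotent $\C$-coalgebra $C$, together with the description of $\Delta_{(1)}$ as the infinitesimal part of $\Delta_\C$ recalled in the subsection on infinitesimal composites, namely $\Delta_{(1)}=\bigl(1_\C\circ(\varepsilon_\C;1_\C)\bigr)\bigl(1_\C\circ'1_\C\bigr)\Delta_\C$.

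First I would establish the identity for $(\Delta_{(1)}\circ1_C)\Delta_C^n$. Iterating the coassociativity relation $(\Delta_\C\circ1_C)\Delta_C=(1_\C\circ\Delta_C)\Delta_C$ exactly as in the proof of \cref{lemma:property of coradical filtration} gives
\[
(\Delta_\C\circ1_C)\Delta_C^n=\sum_{\substack{k\ge1\\m_1+\cdots+m_k=n}}\bigl(1_\C\circ(\Delta_C^{m_1},\dots,\Delta_C^{m_k})\bigr)\Delta_C^k\ ,
\]
where the sum is finite because $C$ is conilpotent. It then remains to postcompose with $1_\C\circ'1_\C$ and $1_\C\circ(\varepsilon_\C;1_\C)$. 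The first of these spreads the $k$ nested outputs into a sum over a distinguished position $i\in\{1,\dots,k\}$, and the second counits every branch but the $i$-th one; since $\C$ is reduced, $(\varepsilon_\C\circ1_C)\Delta_C=1_C$, so counitting the $j$-th branch forces $m_j=1$ and returns the corresponding copy of $C$ unchanged, while the distinguished branch keeps an unconstrained $\Delta_C^{n_2}$. Writing $n_1$ for the number of surviving branches and using $n_1+n_2=n+1$ produces the first of the two displayed identities; the only thing to monitor is the range of the summation indices and the Koszul signs introduced by $1_\C\circ'1_\C$.

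For the second identity I would simply apply the symmetrized operator $F=f_1\otimes\cdots\otimes f_n$ to both sides of the first one. On the right-hand side, $F$ acts on the $C^{\otimes n}$ sitting in the two-vertex tree appearing there; by \cref{lemma:technical lemma 1} applied with $M=\C$, the operator $F$ decomposes, with the appropriate Koszul sign, into a sum of terms $1_\C\circ(F^{S_1}\otimes F^{S_2})$ indexed by the ways of splitting $[n]=S_1\sqcup S_2$ according to which leaves lie above each of the two cooperadic vertices. Substituting this, collecting the resulting terms according to the induced partition of $[n]$, and using the $\S$-invariance of the cooperadic outputs to absorb the dependence on the grafting slot, one rewrites the whole sum as $\sum_{S_1\sqcup S_2=[n]}\bigl((F^{S_1}\Delta_C^{n_1})\otimes F^{S_2}\bigr)\Delta_C^{n_2}$ with $n_1=|S_1|$ and $n_2=|S_2|+1$, which is the assertion.

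I expect the only genuine obstacle to be bookkeeping: keeping track of the Koszul signs generated by the symmetrized $\S_n$-action defining $F$ and by commuting the $f_i$ past the cooperad elements and past one another, and reindexing between the ``(root arity, graft arity, grafting slot)'' description used in the first identity and the ``(partition of the leaf labels)'' description used in the second, which transposes the roles of $n_1$ and $n_2$. No idea beyond coassociativity of $\Delta_C$ and \cref{lemma:technical lemma 1} is needed; the content of the lemma is precisely that $F$ respects the two-level decomposition of $(\Delta_{(1)}\circ1_C)\Delta_C^n$.
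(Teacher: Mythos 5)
Your proposal is correct and follows essentially the same route as the paper: the paper's proof also takes the coassociativity relation $(\Delta_\C\circ 1_C)\Delta_C=(1_\C\circ\Delta_C)\Delta_C$ and projects onto the subspace $(\C\circ_{(1)}\C)(C)\cong\C\circ(C;\C(C))$, which is exactly your counit-killing of all but one branch, and then deduces the second identity by applying $F$ and \cref{lemma:technical lemma 1}. You have merely spelled out the details that the paper leaves to the reader.
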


\begin{proof}
	For the first identity, one considers the equality
	\[
	(\Delta_\C\circ 1_C)\Delta_C = (1_\C\circ\Delta_C)\Delta_C
	\]
	and then projects on the subspace
	\[
	(\C\circ_{(1)}\C)(C)\cong \C\circ(C;\C(C))\ .
	\]
	We leave the details to the reader. The second statement then follows in a straightforward way.
\end{proof}

\begin{proof}[Proof of \cref{thm:fundamental thm of convolution Loo-algebras}]
	We prove the first case, the second one being dual. Let $x_1,\ldots,x_k\in\homa(\Bara A,A')$. Then we have
	\[
	\ell_k(x_1,\ldots,x_k) = \gamma_{A'}(\alpha\otimes X)(\Delta_\C^k\circ1_A)\ ,
	\]
	and thus for $f_1,\ldots,f_n\in\homa(C,A)$
	\begin{align*}
	\homa(1,\ell_k&(X))(\mu_n^\vee\otimes F) =\\
	=&\ \ell_k(X)F\Delta_C^n\\
	=&\ \gamma_{A'}(\alpha\otimes X)(\Delta_\C^k\circ1_A)F\Delta_C^n\\
	=&\ \gamma_{A'}(\alpha\otimes X)F(\Delta_\C^k\circ1_C)\Delta_C^n\\
	=&\ \sum_{n_1+\cdots+n_k=n}\gamma_{A'}(\alpha\otimes X)F(1_\C\circ(\Delta_C^{n_1}\otimes\cdots\otimes\Delta_C^{n_k}))\Delta_C^k\\
	=&\ \sum_{S_1\sqcup\cdots\sqcup S_k=[n]}(-1)^{\epsilon_1}\gamma_{A'}(\alpha\otimes X)(1_\C\circ(F^{S_1}\Delta_C^{n_1}\otimes\cdots\otimes F^{S_k}\Delta_C^{n_k}))\Delta_C^k\\
	=&\ \sum_{\substack{S_1\sqcup\cdots\sqcup S_k=[n]\\\sigma\in\S_k}}(-1)^{\epsilon_1+\epsilon_2}\gamma_{A'}(\alpha\circ1_{A'})(1_\C\circ(x_{\sigma(1)}F^{S_1}\Delta_C^{n_1}\otimes\cdots \otimes\Phi_{\sigma(k)}F^{S_k}\Delta_C^{n_k}))\Delta_C^k
	\end{align*}
	where the fourth line follows from $(\Delta_\C\circ C)\Delta_C = (1_\C\circ\Delta_C)\Delta_C$, and in the fifth line we used \cref{lemma:technical lemma 1} and denoted $n_i\coloneqq|S_i|$. The Koszul signs are
	\[
	\epsilon_1 = \sum_{i=1}^k\sum_{s\in S_i}|f_s|\sum_{j<i}\sum_{p\in S_j\text{ s.t. }p>s}|f_p|\ ,
	\]
	obtained by shuffling the $f_i$s, and $\epsilon_2$, which is similarly obtained by permuting the $x_i$s and making them jump over the $f_j$s.
	
	\medskip
	
	On the other hand, we have
	\begin{align*}
	\ell_k(\homa(1,X))&(\mu_n^\vee\otimes F) =\\
	=&\ \big(\gamma_{\homa(C,A')}(\iota\otimes\homa(1,X))(\Delta_{\com^\vee}^k\otimes1_{\hom(C,A)})\big)(\mu_n^\vee\otimes F)\\
	=&\ \big(\gamma_{\homa(C,A')}(\iota\otimes\homa(1,X))\big)\left(\sum_{\substack{S_1\sqcup\ldots\sqcup S_k = [n]}}(-1)^{\epsilon_1}\mu_k^\vee\otimes\bigotimes_{i=1}^k(\mu_{n_i}^\vee\otimes F^{S_i})\right)\\
	=&\ \gamma_{\homa(C,A')}\left(\sum_{\substack{S_1\sqcup\ldots\sqcup S_k = [n]\\\sigma\in\S_k}}(-1)^{\epsilon_1+\epsilon_2}s^{-1}\mu_k^\vee\otimes\bigotimes_{i=1}^k\homa(1,x_{\sigma(i)})(\mu_{n_i}^\vee\otimes F^{S_i})\right)\\
	=&\ \sum_{\substack{S_1\sqcup\ldots\sqcup S_k = [n]\\\sigma\in\S_k}}(-1)^{\epsilon_1+\epsilon_2}\gamma_{A'}\left(\alpha\otimes\bigotimes_{i=1}^k\homa(1,x_{\sigma(i)})(\mu_{n_i}^\vee\otimes F^{S_i})\right)\Delta_C^k\\
	=&\ \sum_{\substack{S_1\sqcup\ldots\sqcup S_k = [n]\\\sigma\in\S_k}}(-1)^{\epsilon_1+\epsilon_2}\gamma_{A'}\left(\alpha\otimes\bigotimes_{i=1}^kx_{\sigma(i)}F^{S_i}\Delta_C^{n_i}\right)\Delta_C^k\ .
	\end{align*}
	The reader might have the impression that a sum over permutations coming from $\gamma_{\homa(C,A')}$ has been forgotten in the fourth line. This is not the case, because the term
	\[
	\sum_{\substack{S_1\sqcup\ldots\sqcup S_k = [n]\\\sigma\in\S_k}}(-1)^{\epsilon_1+\epsilon_2}s^{-1}\mu_k^\vee\otimes\bigotimes_{i=1}^k\homa(1,x_{\sigma(i)})(\mu_{n_i}^\vee\otimes F^{S_i})
	\]
	in the third line naturally lives in invariants, not coinvariants.
	
	\medskip
	
	In conclusion, we have
	\[
	\homa(1,\ell_k(X)) = \ell_k(\homa(1,X))\ .
	\]
	We are left to prove that the morphism commutes with the differentials. Let $x\in\homa(\Bara A,A')$, and let $f_1,\ldots,f_n\in\homa(C,A)$. On one hand, we have
	\begin{align*}
	d(x) =&\ d_{A'}x - (-1)^{|x|}x d_{\Bara A}\\
	=&\ d_{A'}x - (-1)^{|x|}x d_{\C\circ A} - (-1)^{|x|}x (1_\C\circ(1_A;\gamma_A))((1_\C\circ_{(1)}\alpha)\circ1_A)(\Delta_{(1)}\circ1_A)\ ,
	\end{align*}
	and thus
	\begin{align*}
	\homa(1,d(x))(\mu_n^\vee\otimes F) =&\ d(x)F\Delta_C^n\\
	=&\ d_{A'}x F\Delta_C^n - (-1)^{|x|}x d_{\C\circ A}F\Delta_C^n\\
	&- (-1)^{|x|}x (1_\C\circ(1_A;\gamma_A))((1_\C\circ_{(1)}\alpha)\circ1_A)(\Delta_{(1)}\circ1_A)F\Delta_C^n\ .
	\end{align*}
	The second term in the second line equals
	\begin{align*}
	xd_{\C\circ A}F\Delta_C^n =&\ x(d_\C\circ1_A)F\Delta_C^n + x(1_\C\circ'd_A)F\Delta_C^n\\
	=&\ (-1)^{|F|}x F(d_\C\circ1_C)\Delta_C^n + x\partial(F)\Delta_C^n + (-1)^{|F|} Fx(1_\C\circ'd_C)\Delta_C^n\\
	=&\ (-1)^{|F|}x F\Delta_C^nd_C + x\partial(F)\Delta_C^n\ ,\tag{\text{T1}}\label{T1}
	\end{align*}
	while the term in the last line gives
	\begin{align*}
	x (1_\C\circ(1_A;\gamma_A))&((1_\C\circ_{(1)}\alpha)\circ1_A)(\Delta_{(1)}\circ1_A)F\Delta_C^n =\\
	=&\ x (1_\C\circ(1_A;\gamma_A))((1_\C\circ_{(1)}\alpha)\circ1_A)F(\Delta_{(1)}\circ1_C)\Delta_C^n\\
	=&\ \sum_{S_1\sqcup S_2=[n]}(-1)^\epsilon x (1_\C\circ(1_A;\gamma_A))((1_\C\circ_{(1)}\alpha)\circ1_A)((F^{S_1}\Delta_C^{n_1})\otimes F^{S_2})\Delta_C^{n_2}\ ,\tag{\text{T2}}\label{T2}
	\end{align*}
	where in the third line we used \cref{lemma:technical lemma 2}. On the other hand,
	\begin{align*}
	d(\homa(1,x)) =&\ d_{\homa(C,A')}\homa(1,x) - (-1)^{|\homa(1,x)|}\homa(1,x)d_{\Bari\homa(C,A)}\ .
	\end{align*}
	We notice that $(-1)^{|\homa(1,x)|} = (-1)^{|x|}$. We apply this to $\mu_n^\vee\otimes F$ and obtain
	\begin{align*}
	d(\homa(1,x))(\mu_n^\vee\otimes F) =&\ d_{A'}\homa(1,x) - (-1)^{|x|+|F|}\homa(1,x)(\mu_n^\vee\otimes F)d_C\\
	&-(-1)^{|x|}\homa(1,x)(d_{\Bari\homa(C,A)}(\mu_n^\vee\otimes F))\ .
	\end{align*}
	The first term equals $d_{A'}\Phi F\Delta_C^n$ and cancels with the first term of $\homa(1,d(\Phi))(\mu_n^\vee\otimes F)$, and the second term equals the first term of (\ref{T1}). For the third term, we have
	\begin{align*}
	\homa(1,\Phi)(d_{\Bari\homa(C,A)}&(\mu_n^\vee\otimes F)) =\\
	=&\ \homa(1,\Phi)(\mu_n^\vee\otimes \partial(F))\\
	&+ \homa(1,\Phi)\left(\sum_{S_1\sqcup S_2=[n]}\mu_{n_2}^\vee\otimes(\ell_{n_1}(F^{S_1})\otimes F^{S_2})\right).
	\end{align*}
	The first term of this expression cancels the remaining term of (\ref{T1}). Therefore, we are left to show that the second term equals (\ref{T2}). We have
	\begin{align*}
	\homa(1,x)&\left(\sum_{S_1\sqcup S_2=[n]}(-1)^\epsilon\mu_{n_2}^\vee\otimes(\ell_{n_1}(F^{S_1})\otimes F^{S_2})\right) =\\
	=&\ \sum_{S_1\sqcup S_2=[n]}(-1)^\epsilon x(\ell_{n_1}(F^{S_1})\otimes F^{S_2})\Delta_C^{n_2}\\
	=&\ \sum_{S_1\sqcup S_2=[n]}(-1)^\epsilon x((\gamma_A(\alpha\otimes F^{S_1})\Delta_C^{n_1})\otimes F^{S_2})\Delta_C^{n_2}\\
	=&\ \sum_{S_1\sqcup S_2=[n]}(-1)^\epsilon x(1_\C\circ(\gamma_A(\alpha\circ1_A)\otimes1_A^{\otimes (n_2-1)}))(1_\C\circ(F^{S_1}\Delta_C^{n_1}\otimes F^{S_2}))\Delta_C^{n_2}\\
	=&\ (\ref{T2})\ ,
	\end{align*}
	where $n_1 = |S_1|$, and $n_2 = |S_2|+1$. This concludes the proof.
\end{proof}

\subsection{Compatibility with \texorpdfstring{$\infty$}{infinity}-morphisms}\label{subsection:compatibility with oo-morphisms}

The first application of \cref{thm:fundamental thm of convolution Loo-algebras} is the fact that convolution $\L_\infty$-algebras are compatible with $\infty$-morphisms (relative to the twisting morphism $\alpha$ under consideration).

\medskip

Let $\C$ be a cooperad, let $\P$ be an operad, and let $\alpha:\C\to\P$ be a twisting morphism. Take two $\P$-algebras $A,A'$, as well as a conilpotent $\C$-coalgebra $C$. By \cref{thm:fundamental thm of convolution Loo-algebras}, we have a morphism of $\SLoo$-algebras
\[
\homa(1,-):\homa(\Bara A,A')\longrightarrow\homi(\Bari\homa(C,A),\homa(C,A'))\ .
\]
Let's look at what happens to Maurer--Cartan elements. By \cref{thm:MC-el of convolution Loo-algebras}, the Maurer--Cartan elements of $\homa(\Bara A,A')$ are exactly the $\infty_\alpha$-morphisms $A\rightsquigarrow A'$. Now, since $\homa(1,-)$ is a morphism of $\SLoo$-algebras, it preserves Maurer--Cartan elements. Thus, an $\infty_\alpha$-morphism $\Psi:A\rightsquigarrow A'$ is sent to a Maurer--Cartan element of $\homi(\Bari\homa(C,A),\homa(C,A'))$. But again by \cref{thm:MC-el of convolution Loo-algebras}, these are exactly the $\infty$-morphisms $\homa(C,A)\rightsquigarrow\homa(C,A')$. The same thing is true for the dual case. In conclusion, we have the following.

\begin{corollary}[{\cite[Thm. 5.1]{rnw17}}]\label{cor:preservation of oo-morphisms 1}
	We place ourselves in the setting of \cref{thm:fundamental thm of convolution Loo-algebras}.
	\begin{enumerate}
		\item Suppose $\Psi:A\rightsquigarrow A'$ is an $\infty_\alpha$-morphism of $\P$-algebras. Then
		\[
		\homa(1,\Psi):\homa(C,A)\rightsquigarrow\homa(C,A')
		\]
		is an $\infty$-morphism of $\SLoo$-algebras.
		\item Dually, suppose that $\Phi:C'\rightsquigarrow C$ is an $\infty_\alpha$-morphism of $\C$-coalgebras. Then
		\[
		\homa(\Phi,1):\homa(C,A)\rightsquigarrow\homa(C',A)
		\]
		is an $\infty$-morphism of $\SLoo$-algebras.
	\end{enumerate}
\end{corollary}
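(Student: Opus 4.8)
\textbf{Proof proposal for Corollary~\ref{cor:preservation of oo-morphisms 1}.}

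The plan is to deduce both statements directly from \cref{thm:fundamental thm of convolution Loo-algebras}, using the characterization of Maurer--Cartan elements of convolution $\SLoo$-algebras provided by \cref{thm:MC-el of convolution Loo-algebras}. First I would recall the general principle that a morphism of $\SLoo$-algebras preserves Maurer--Cartan elements; this is immediate from the fact that an $\infty$-morphism (in particular a strict one) $\psi\colon\mathfrak{g}\rightsquigarrow\mathfrak{h}$ satisfies $\MC(\psi)(x)=\sum_{n\ge1}\frac1{n!}\psi_n(x^{\otimes n})$ being a Maurer--Cartan element of $\mathfrak{h}$ whenever $x$ is one of $\mathfrak{g}$, together with the completeness hypotheses that are automatically met here thanks to \cref{lemma:compatibility filtrations first case}. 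So a strict morphism of $\SLoo$-algebras sends $\MC(-)$ into $\MC(-)$.

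For the first statement, I would fix an $\infty_\alpha$-morphism $\Psi\colon A\rightsquigarrow A'$. By \cref{thm:MC-el of convolution Loo-algebras} applied to the coalgebra $\Bara A$ and the algebra $A'$, together with the identification $\MC(\homa(\Bara A,A'))\cong\hom_{\Ccog}(\Bara A,\Bara A')=\hom_{\infty_\alpha\text{-}\Palg}(A,A')$ recorded in the corollary following \cref{thm:MC-el of convolution Loo-algebras}, the morphism $\Psi$ is precisely a Maurer--Cartan element of $\homa(\Bara A,A')$. Now \cref{thm:fundamental thm of convolution Loo-algebras}(1) tells us that
\[
\homa(1,-)\colon\homa(\Bara A,A')\longrightarrow\homi\bigl(\Bari\homa(C,A),\homa(C,A')\bigr)
\]
is a morphism of $\SLoo$-algebras, hence it carries the Maurer--Cartan element $\Psi$ to a Maurer--Cartan element of $\homi(\Bari\homa(C,A),\homa(C,A'))$. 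By \cref{thm:MC-el of convolution Loo-algebras} once more, this time for the $\iota$-bar construction, such Maurer--Cartan elements are exactly the $\infty$-morphisms $\homa(C,A)\rightsquigarrow\homa(C,A')$ of $\SLoo$-algebras, and the one obtained is by construction $\homa(1,\Psi)$. The second statement is entirely dual: one uses \cref{thm:fundamental thm of convolution Loo-algebras}(2) and the identification $\MC(\homa(C',\Cobara C))\cong\hom_{\infty_\alpha\text{-}\Ccog}(C,C')$ in place of its algebra counterpart.

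There is no real obstacle here---the corollary is a formal consequence of the two cited theorems---so the only point requiring a line of care is checking that the ``$\infty$-morphism'' produced really is the map called $\homa(1,\Psi)$ (resp.\ $\homa(\Phi,1)$), i.e.\ that the Maurer--Cartan element image of $\Psi$ under $\homa(1,-)$ coincides with the element explicitly defined before the theorem. This is immediate from the defining formula of $\homa(1,-)$ on Maurer--Cartan elements, since $\Psi$, being an $\infty_\alpha$-morphism, corresponds to a degree-$0$ twisting morphism $\Bara A\to A'$, so the sum $\sum_n\frac1{n!}\ell_n(\Psi,\dots,\Psi)$ evaluated by $\homa(1,-)$ reproduces the stated composite. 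Hence both claims follow.
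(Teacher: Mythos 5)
Your proposal is correct and follows essentially the same route as the paper: the text preceding the corollary deduces it exactly by observing that the $\SLoo$-algebra morphism $\homa(1,-)$ of \cref{thm:fundamental thm of convolution Loo-algebras} preserves Maurer--Cartan elements, and then identifying the Maurer--Cartan elements on both sides as $\infty_\alpha$-morphisms of algebras and $\infty$-morphisms of $\SLoo$-algebras respectively via \cref{thm:MC-el of convolution Loo-algebras}. The one point you flag as needing care --- that the resulting Maurer--Cartan element is the map called $\homa(1,\Psi)$ --- is likewise handled in the paper, in the lemma immediately following the corollary.
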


But now, in the situation above, $\homa(1,\Psi)$ also preserves Maurer--Cartan elements. Therefore, it sends morphisms $C\to\Bara A$ of $\C$-coalgebras to morphisms $C\to\Bara A'$ of $\C$-coalgebras.

\begin{lemma}\label{lemma:hom of oo-morphism is pre or postcomposition}
	We place ourselves in the situation of \cref{thm:fundamental thm of convolution Loo-algebras} and \cref{cor:preservation of oo-morphisms 1}.
	\begin{enumerate}
		\item Suppose $\Psi:A\rightsquigarrow A'$ is an $\infty_\alpha$-morphism of $\P$-algebras, and let $f:C\to\Bara A$ be a morphism of $\C$-coalgebras. Then
		\[
		\MC(\homa(1,\Psi))(f) = \Psi f = \left(C\xrightarrow{f}\Bara A\xrightarrow{\Psi}\Bara A'\right).
		\]
		\item Suppose $\Phi:C'\rightsquigarrow C$ is an $\infty_\alpha$-morphism of $\C$-coalgebras, and let $g:\Cobara C\to A$ be a morphism of $\P$-algebras. Then
		\[
		\MC(\homa(\Phi,1))(g) = g\Phi= \left(\Cobara C'\xrightarrow{\Phi}\Cobara C\xrightarrow{g}A\right).
		\]
	\end{enumerate}
\end{lemma}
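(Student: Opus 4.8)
The statement identifies the Maurer--Cartan element obtained by applying the $\SLoo$-algebra morphism $\homa(1,\Psi)$, respectively $\homa(\Phi,1)$, to a given (strict) morphism of (co)algebras. The plan is to trace through all the identifications involved and observe that the two descriptions of the Maurer--Cartan element agree tautologically, once one unwinds \cref{thm:MC-el of convolution Loo-algebras}, the definition of $\homa(1,-)$ and $\homa(-,1)$, and \cref{thm:Rosetta stone algebras}. I will only write out the first case, the second being strictly dual.

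First I would recall the precise chain of identifications. By \cref{thm:MC-el of convolution Loo-algebras} and its corollary, a morphism of $\C$-coalgebras $f\colon C\to\Bara A$ corresponds to a Maurer--Cartan element of $\homa(C,A)$, namely the relative twisting morphism $\varphi_f\coloneqq\pr_A f\colon C\to A$ obtained by postcomposing with the canonical projection $\Bara A=\C(A)\to A$; conversely $f$ is recovered from $\varphi_f$ as the unique coalgebra morphism whose corestriction to $A$ is $\varphi_f$ (this is the content of the bijection in \cref{thm:Rosetta stone algebras}). Likewise, the $\infty_\alpha$-morphism $\Psi\colon A\rightsquigarrow A'$ is, by definition, a morphism of $\C$-coalgebras $\Psi\colon\Bara A\to\Bara A'$, and as a Maurer--Cartan element of $\homa(\Bara A,A')$ it is the relative twisting morphism $\psi\coloneqq\pr_{A'}\Psi\colon\Bara A\to A'$. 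Since $\homa(1,-)$ is a morphism of $\SLoo$-algebras by \cref{thm:fundamental thm of convolution Loo-algebras}(1), it sends the Maurer--Cartan element $\psi$ to the Maurer--Cartan element $\homa(1,\psi)$ of $\homi(\Bari\homa(C,A),\homa(C,A'))$, which by \cref{thm:MC-el of convolution Loo-algebras} is a relative twisting morphism, i.e. an $\infty$-morphism $\homa(C,A)\rightsquigarrow\homa(C,A')$; its value on the Maurer--Cartan element $\varphi_f$ is again a Maurer--Cartan element of $\homa(C,A')$, hence corresponds to a morphism of $\C$-coalgebras $C\to\Bara A'$. The claim is that this morphism equals the composite $\Psi f$.

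The key computational step is then to evaluate $\MC(\homa(1,\psi))(\varphi_f)$ directly and identify it with $\pr_{A'}\Psi f\colon C\to A'$, since by the uniqueness in \cref{thm:Rosetta stone algebras} two coalgebra morphisms $C\to\Bara A'$ agree as soon as their corestrictions to $A'$ do. Unwinding the formula $\MC(\Theta)(x)=\sum_{n\ge1}\tfrac{1}{n!}\theta_n(x,\dots,x)$ for the Maurer--Cartan element associated to an $\infty$-morphism $\Theta$ (as in \cref{lemma:property of coradical filtration}... rather, as used throughout \cref{ch:Lie algebras}), and the defining composite of $\homa(1,-)(\mu_n^\vee\otimes F)$, which is $\psi\circ F\circ\Delta_C^n$ with $F=\varphi_f^{\otimes n}$, one gets
\[
\MC(\homa(1,\psi))(\varphi_f)=\sum_{n\ge1}\tfrac{1}{n!}\,\psi\circ\varphi_f^{\otimes n}\circ\Delta_C^n
=\psi\circ\Big(\sum_{n\ge1}\varphi_f^{\otimes n}\circ\Delta_C^n\Big),
\]
where the factor $\tfrac{1}{n!}$ disappears because the $\hom(\C,\P)$-algebra structure sums over all permutations of the inputs (exactly as in the proof of \cref{thm:MC-el of convolution Loo-algebras}). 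Now $\sum_{n\ge1}\varphi_f^{\otimes n}\circ\Delta_C^n$ is precisely the expression of the coalgebra morphism $f\colon C\to\C(A)$ in terms of its corestriction $\varphi_f$, so this composite is $\psi\circ f=\pr_{A'}\Psi\circ f=\pr_{A'}(\Psi f)$. This is the corestriction of $\Psi f$ to $A'$, which concludes the argument.

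The main obstacle I anticipate is purely bookkeeping: being careful that the combinatorial factor $1/n!$ really is absorbed by the summation over the symmetric group implicit in the $\hom(\C,\P)$-algebra action (so that no spurious factor survives), and matching the invariants/coinvariants conventions in \cref{thm:hom of algebras is algebras over hom} with the way $F$ acts on $\C(n)\otimes_{\S_n}C^{\otimes n}$ in the definition of $\homa(1,-)$. Once those conventions are pinned down — as they already are in the proof of \cref{thm:fundamental thm of convolution Loo-algebras} — the identification is immediate, and no genuinely new estimate or construction is needed; the whole proof is a diagram chase through the adjunctions and the Rosetta stone bijections.
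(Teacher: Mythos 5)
Your proposal is correct and follows essentially the same route as the paper's proof: identify $f$ with its corestriction $\varphi_f=\pr_A f$, evaluate $\MC(\homa(1,\Psi))$ on it via the formula $\sum_{n\ge1}\tfrac{1}{n!}(\cdot)$, observe that the $1/n!$ is absorbed by the symmetrization in the $\hom(\C,\P)$-action, and recognize $\sum_{n\ge1}\varphi_f^{\otimes n}\Delta_C^n=(1_\C\circ\varphi_f)\Delta_C$ as the coalgebra morphism $f$ itself, so that the result is $\pr_{A'}(\Psi f)$. The paper's proof is exactly this computation written in the notation $\widetilde{f}=(1_\C\circ f)\Delta_C$, with the second case likewise left as dual.
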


\begin{proof}
	In order to give a clear proof, we will write $f\in\MC(\homa(C,A))$ for the element $f$ seen as a linear map $C\to A$, and $\widetilde{f}$ for the equivalent map of $\C$-coalgebras $C\to\Bara A$. We pass from the former to the latter by
	\[
	\widetilde{f} = (1_\C\circ f)\Delta_C\ .
	\]
	When writing $\Psi$, we will mean the map of $\C$-coalgebras $\Bara A\to\Bara A'$, and the associated Maurer--Cartan element is
	\[
	\widetilde{\Psi}\coloneqq\proj_{A'}\Psi\in\MC(\homa(\Bara A,A'))\ .
	\]
	With this notation, we have
	\begin{align*}
		\MC(\homa(1,\widetilde{\Psi}))(f) =&\ \sum_{n\ge1}\frac{1}{n!}\homa(1,\widetilde{\Psi})(\mu_n^\vee\otimes f^{\otimes n})\\
		=& \sum_{n\ge1}\frac{1}{n!}\widetilde{\Psi}f^{\otimes n}\Delta_C^n\\
		=& \sum_{n\ge1}\proj_{A'}\Psi(1_\C\circ f)\Delta_C^n\\
		=&\ \proj_{A'}\Psi(1_\C\circ f)\Delta_C\\
		=&\ \proj_{A'}\Psi\widetilde{f}\ ,
	\end{align*}
	where $\mu_1^\vee = \id$. The other case is similar, and left to the reader.
\end{proof}

\begin{remark}
	If $\Psi$ is a strict morphism of $\P$-algebras, then the $\infty$-morphism $\MC(\homa(1,\Psi))$ is actually a strict morphism, and it is just given by $\homa(1,\Psi)$, the action of the bifunctor (\ref{eq:bifunctor}) on morphisms.
\end{remark}

Another powerful corollary of \cref{thm:fundamental thm of convolution Loo-algebras} is the following.

\begin{corollary}
	We place ourselves in the setting of \cref{thm:fundamental thm of convolution Loo-algebras}.
	\begin{enumerate}
		\item Suppose $\Psi,\Psi':A\rightsquigarrow A'$ are two $\infty_\alpha$-morphisms of $\P$-algebras. If $\Psi$ and $\Psi'$ are homotopic as Maurer--Cartan elements, then so are $\homa(1,\Psi)$ and $\homa(1,\Psi')$.
		\item Dually, suppose $\Phi,\Phi':C'\rightsquigarrow C$ are two $\infty_\alpha$-morphisms of $\C$-coalgebras. If $\Phi$ and $\Phi'$ are homotopic as Maurer--Cartan elements, then so are $\homa(\Phi,1)$ and $\homa(\Phi',1)$.
	\end{enumerate}
\end{corollary}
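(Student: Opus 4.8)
The plan is to deduce both statements from \cref{thm:fundamental thm of convolution Loo-algebras} together with the functoriality of the Maurer--Cartan space construction. By \cref{thm:fundamental thm of convolution Loo-algebras} the maps
\[
\homa(1,-):\homa(\Bara A,A')\longrightarrow\homi(\Bari\homa(C,A),\homa(C,A'))
\]
and
\[
\homa(-,1):\homa(C',\Cobara C)\longrightarrow\homi(\Bari\homa(C,A),\homa(C',A))
\]
are \emph{strict} morphisms of $\SLoo$-algebras, and by \cref{lemma:compatibility filtrations first case} all four convolution $\SLoo$-algebras appearing are proper complete. The key general fact I would invoke is that a strict filtered morphism $F:\g\to\h$ of proper complete $\SLoo$-algebras induces a morphism of simplicial sets $\MC_\bullet(F):\MC_\bullet(\g)\to\MC_\bullet(\h)$, since such an $F$ is in particular a filtered $\infty$-morphism and $\MC_\bullet$ is functorial on those, cf.\ \cref{subsect:Maurer-Cartan space}. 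Now two Maurer--Cartan elements of $\g$ are homotopic exactly when they are the two faces $d_0\beta,d_1\beta\in\MC_0(\g)$ of some $\beta\in\MC_1(\g)$, as recalled in \cref{subsection:homotopies and gauges}, and a simplicial map commutes with $d_0$ and $d_1$; hence $\MC_\bullet(F)$ carries homotopic Maurer--Cartan elements to homotopic ones. Applying this with $F=\homa(1,-)$ — and using that $\MC(\homa(1,-))$ sends a Maurer--Cartan element $\Psi$ to $\homa(1,\Psi)$, which one reads off as in \cref{lemma:hom of oo-morphism is pre or postcomposition} — yields the first statement, and applying it with $F=\homa(-,1)$ yields the second.

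The one point that requires care is that these two morphisms are \emph{filtered}, so that $\MC_\bullet$ may be applied to them. For $\homa(1,-)$ this is immediate with the filtrations of \cref{lemma:compatibility filtrations first case}: since $\Bara A\cong\C(A)$ is cofree, its coradical filtration $\F^n$ is the sub-$\S$-module of elements of arity $\le n$, so $\F_n\homa(\Bara A,A')$ consists of the linear maps vanishing on all arities $\le n$, and likewise $\F_n\homi(\Bari\homa(C,A),\homa(C,A'))$ consists of those $\varphi$ whose components $\psi_1,\ldots,\psi_n$ vanish (using $\Bari\homa(C,A)\cong\com^\vee(\homa(C,A))$). By the construction of $\homa(1,-)$, the $m$-th component $\homa(1,x)(\mu_m^\vee\otimes-)$ factors through the arity-$m$ piece $\C(m)\otimes_{\S_m}A^{\otimes m}$ of $\C(A)$; so if $x$ vanishes on all arities $\le n$, i.e.\ $x\in\F_n$, then the components of $\homa(1,x)$ in arities $\le n$ vanish, i.e.\ $\homa(1,x)\in\F_n$. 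For $\homa(-,1)$ the naive coradical filtration on the target is not the convenient one; instead I would equip $\homi(\Bari\homa(C,A),\homa(C',A))$ with the proper complete structure induced by the proper complete $\SLoo$-algebra $\homa(C',A)$, declaring $\F_n$ to consist of the maps all of whose components take values in $\F_n\homa(C',A)$. A routine check shows this is again a proper complete $\SLoo$-algebra structure, and that $\homa(-,1)$ is filtered for it: if $x\in\F_n\homa(C',\Cobara C)$ vanishes on $\F^n_\C C'$, then each component of $\homa(x,1)$ — all of which factor through $x$ — takes values in $\F_n\homa(C',A)$. With these filtrations in place the argument of the previous paragraph applies verbatim.

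The main obstacle is exactly this filtration bookkeeping in the dual case, where one must select the ``right'' proper complete structure on the codomain of $\homa(-,1)$, the coradical one not making the map filtered. Once compatible proper complete structures are fixed on both sides the remainder is soft: a morphism of simplicial sets preserves the relation ``connected by an edge with prescribed endpoints'', so no further computation is needed beyond what is already contained in \cref{thm:fundamental thm of convolution Loo-algebras}. An alternative would be to transport gauge equivalences rather than homotopies, using that a strict morphism of $\SLoo$-algebras turns a solution of the gauge equation into a solution of the gauge equation for its image; but this meets the same convergence and filtration subtleties, so going through the functoriality of $\MC_\bullet$ is the cleaner route.
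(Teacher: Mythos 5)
Your argument is correct and is exactly the route the paper intends: the corollary is stated without proof as an immediate consequence of \cref{thm:fundamental thm of convolution Loo-algebras}, the point being that a strict (filtered) morphism of proper complete $\SLoo$-algebras induces a simplicial map on Maurer--Cartan spaces and hence preserves the homotopy relation between Maurer--Cartan elements. Your additional bookkeeping — checking that $\homa(1,-)$ is filtered for the coradical/arity filtrations and equipping the codomain of $\homa(-,1)$ with the proper complete structure induced by $\F_\bullet\homa(C',A)$ — is sound and fills in details the paper leaves implicit.
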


This is especially important when combined with the following result.

\begin{theorem}
	We place ourselves in the setting of \cref{thm:fundamental thm of convolution Loo-algebras}.
	\begin{enumerate}
		\item Two $\infty_\alpha$-morphisms $A\rightsquigarrow A'$ of $\P$-algebras are homotopic if, and only if they are gauge equivalent when seen as Maurer--Cartan elements of $\homa(\Bar_\alpha A,A')$.
		\item Two $\infty_\alpha$-morphisms $C'\rightsquigarrow C$ of $\C$-coalgebras are homotopic if, and only if they are gauge equivalent when seen as Maurer--Cartan elements of $\homa(C',\Cobar_\alpha C)$.
	\end{enumerate}
\end{theorem}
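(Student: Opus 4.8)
\emph{Reduction to the universal twisting morphisms.} The plan is first to split $\alpha$ through a universal twisting morphism, in order to land in a situation with good homotopical behaviour. For part~(1), \cref{thm:universal twisting morphisms} factors $\alpha=(g_\alpha)_*\iota$ with $\iota\colon\C\to\Cobar\C$ and $g_\alpha\colon\Cobar\C\to\P$; then \cref{lemma:equality of oo-morphisms} gives $\Bar_\alpha A=\Bar_\iota(g_\alpha^*A)$, so that $\infty_\alpha$-morphisms $A\rightsquigarrow A'$ are exactly $\infty_\iota$-morphisms $g_\alpha^*A\rightsquigarrow g_\alpha^*A'$ of $\Cobar\C$-algebras, carrying the same homotopy relation, while \cref{lemma:naturality of convolution algebras wrt compositions} gives $\homa(\Bar_\alpha A,A')=\homi(\Bar_\iota(g_\alpha^*A),g_\alpha^*A')$, so that gauge equivalence is preserved as well. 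Dually, part~(2) reduces via $\alpha=(f_\alpha)^*\pi$ (with $f_\alpha\colon\C\to\Bar\P$, $\pi\colon\Bar\P\to\P$) to $\infty_\pi$-morphisms of $\Bar\P$-coalgebras. Both $\iota$ and $\pi$ are Koszul (\cref{lemma:universal twisting morphisms are Koszul}), the counits $\Cobar_\iota\Bar_\iota(-)\to(-)$ are quasi-isomorphisms (\cref{thm:bar-cobar resolution for algebras}), $\Cobar_\iota\dashv\Bar_\iota$ is a Quillen equivalence, $\Cobar_\pi$ takes values in cofibrant $\P$-algebras (\cref{thm:Vallette model structure}), and all $\P$- and $\Cobar\C$-algebras are fibrant; so from now on $\alpha\in\{\iota,\pi\}$.

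\emph{A path object.} Since $\Bar_\iota B$ is cofibrant and $\Bar_\iota B'$ is fibrant, two $\infty_\iota$-morphisms $B\rightsquigarrow B'$ are homotopic exactly when the corresponding morphisms of $\C$-coalgebras $\Bar_\iota B\to\Bar_\iota B'$ admit a right homotopy through a path object, independently of the chosen path object. I would build one from the Sullivan algebra: $\Omega_1$ is acyclic commutative and carries the two augmentations $\partial_0,\partial_1\colon\Omega_1\to\k$, so $B'\xrightarrow{\sim}B'\otimes\Omega_1\twoheadrightarrow B'\times B'$ is a path object for $B'$ in the category of $\Cobar\C$-algebras (with the $\P$-algebra structure induced by the commutative structure of $\Omega_1$, the last map being the levelwise surjection $(\partial_0,\partial_1)$); applying the right Quillen functor $\Bar_\iota$, which preserves products and sends this weak equivalence between fibrant objects and this fibration to, respectively, a weak equivalence and a fibration (by \cref{lemma:Ken Brown} and the $2$-out-of-$3$ property, using that the counits above are quasi-isomorphisms), yields a path object $\Bar_\iota B'\xrightarrow{\sim}\Bar_\iota(B'\otimes\Omega_1)\twoheadrightarrow\Bar_\iota B'\times\Bar_\iota B'$. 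Hence $\Psi_0$ and $\Psi_1$ are homotopic iff there is a morphism of $\C$-coalgebras $H\colon\Bar_\iota B\to\Bar_\iota(B'\otimes\Omega_1)$ whose two composites with $\Bar_\iota\partial_0$ and $\Bar_\iota\partial_1$ are $\Psi_0$ and $\Psi_1$. The analogue for part~(2) uses the path object $(\Cobar_\pi D)\otimes\Omega_1$ for the fibrant $\P$-algebra $\Cobar_\pi D$, together with cofibrancy of $\Cobar_\pi D'$.

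\emph{Translation to the convolution algebra.} By \cref{thm:Rosetta stone algebras} and \cref{thm:MC-el of convolution Loo-algebras}, such an $H$ is the same datum as a Maurer--Cartan element $\xi\in\MC(\homi(\Bar_\iota B,B'\otimes\Omega_1))$ whose images under the $\SLoo$-morphisms induced by $\partial_0,\partial_1$ are $\Psi_0$ and $\Psi_1$. Write $\g\coloneqq\homi(\Bar_\iota B,B')$, equipped with the filtration of \cref{lemma:compatibility filtrations first case}, so that $\MC(\g)$ is precisely the set of $\infty_\iota$-morphisms $B\rightsquigarrow B'$. There is a natural injective morphism of $\SLoo$-algebras $\g\,\widehat{\otimes}\,\Omega_1\hookrightarrow\homi(\Bar_\iota B,B'\otimes\Omega_1)$ (completed with respect to that filtration; an $\SLoo$-morphism by naturality of $\manin$ and commutativity of $\Omega_1$), compatible with the two maps to $\MC(\g)$. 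If $\Psi_0$ and $\Psi_1$ are gauge equivalent, then by the coincidence of gauge equivalence with the relation induced by $\MC_1$ in any proper complete $\SLoo$-algebra (\cref{subsection:homotopies and gauges}) there is an element of $\MC_1(\g)=\MC(\g\,\widehat{\otimes}\,\Omega_1)$ with boundaries $\Psi_0,\Psi_1$; its image is a suitable $\xi$, so $\Psi_0$ and $\Psi_1$ are homotopic. Conversely, given $\xi\colon\Bar_\iota B\to B'\otimes\Omega_1$, evaluating at every $t=s\in\k$ produces $\xi_s\in\MC(\g)$ with $\xi_0=\Psi_0$ and $\xi_1=\Psi_1$, and reading off the $dt$-component of the Maurer--Cartan equation for $\xi$ exhibits $s\mapsto\xi_s$ as a solution of the time-dependent gauge-flow equation of $\g$; integrating this flow through the Baker--Campbell--Hausdorff formula, one stage of the filtration at a time, shows $\Psi_0\gaugeeq\Psi_1$. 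Part~(2) is identical with $\g=\hom^\pi(D',\Cobar_\pi D)$.

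\emph{The main obstacle.} The delicate point is this last comparison. The inclusion $\g\,\widehat{\otimes}\,\Omega_1\hookrightarrow\homi(\Bar_\iota B,B'\otimes\Omega_1)$ is \emph{not} surjective when $B$ or $\C$ is not of finite type, since a relative twisting morphism $\Bar_\iota B\to B'\otimes\Omega_1$ may have unbounded polynomial degree in $t$; one therefore cannot identify the Maurer--Cartan set of the target with $\MC_1(\g)$ on the nose, and the converse direction genuinely requires turning an arbitrary $\xi$ into a bona fide gauge. Making the flow-integration and Baker--Campbell--Hausdorff argument precise \emph{compatibly with the filtration} — nilpotence, and hence existence and uniqueness of the relevant flows, only holds at each finite stage $\g/\F_n\g$, so the whole argument must be organized as a limit over $n$ — is the technical heart of the proof, and is exactly where proper-completeness of the convolution algebra (rather than mere completeness of $B'\otimes\Omega_1$) is used. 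Everything else is bookkeeping translation through the structural results recalled above.
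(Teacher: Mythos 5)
Your setup — reducing to a right-homotopy through the path object $\Bar_\alpha(A'\otimes\Omega_1)$ (resp.\ $\Cobar_\alpha(C)\otimes\Omega_1$), translating such a homotopy into a Maurer--Cartan element of $\homa(\Bar_\alpha A,A'\otimes\Omega_1)$, and observing that the natural map $\homa(\Bar_\alpha A,A')\,\widehat{\otimes}\,\Omega_1\hookrightarrow\homa(\Bar_\alpha A,A'\otimes\Omega_1)$ fails to be surjective — coincides exactly with the paper's proof up to and including the identification of the obstacle. The forward implication (gauge equivalent $\Rightarrow$ homotopic) is complete as you state it. The gap is in the converse, which you correctly flag as ``the technical heart'' but then only sketch. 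Your proposed resolution has two unaddressed difficulties. First, an element $\xi\in\MC(\homi(\Bar_\iota B,B'\otimes\Omega_1))$ is only \emph{pointwise} polynomial in $t$: the family $s\mapsto\xi_s$ need not be a polynomial (or even formal power series) path in $\g=\homi(\Bar_\iota B,B')$, so the ``$dt$-component of the Maurer--Cartan equation'' does not literally produce a flow equation in $\g[[t]]$ of the type treated in \cref{appendix:formal fixed-pt eq and diff eq}. Second, even granting that, a homotopy corresponds to a \emph{time-dependent} gauge $\lambda(t)$, whereas gauge equivalence requires a constant $\lambda$; converting one into the other is precisely the nontrivial content of \cref{prop:rectification for deformation oo-groupoid} (the rectification $\rect_1=I_1P_1$), and Baker--Campbell--Hausdorff alone, which composes gauges, does not accomplish it. So the hardest implication is asserted rather than proved.

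The paper closes this gap by a different, and cleaner, route: one shows that the inclusion $\homa(C,A)\otimes\Omega_\bullet\hookrightarrow\homa(C,A\otimes\Omega_\bullet)$ induces a homotopy equivalence $\MC_\bullet(\homa(C,A))\simeq\MC(\homa(C,A\otimes\Omega_\bullet))$ of simplicial sets. This is done by running the argument of \cref{sect:alternative model for space of MC} (Dupont's contraction plus the homotopy transfer theorem, via \cref{thm:homotopy equivalence MC. and MC(g x C)} and \cref{thm:compatibility with htt}) on \emph{both} sides: each is weakly equivalent to the Maurer--Cartan space of its $C_\bullet$-model, and since $C_\bullet$ is finite dimensional the two models $\homa(C,A)\otimes C_\bullet$ and $\homa(C,A\otimes C_\bullet)$ are isomorphic as $\SLoo$-algebras. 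Taking $\pi_0$ then identifies $\infty_\alpha$-morphisms modulo homotopy with Maurer--Cartan elements modulo homotopy equivalence, and \cref{prop:rectification for deformation oo-groupoid} (already established) converts homotopy equivalence of Maurer--Cartan elements into gauge equivalence. If you want to salvage your direct approach, you would essentially have to reprove the rectification statement in the larger, non-tensor-product convolution algebra, organized as a limit over the filtration of \cref{lemma:compatibility filtrations first case}; the Dupont-contraction comparison does all of this for you at once.
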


\begin{proof}
	We only give a sketch of the proof. The reader can find a detailed version in \cite[Thm. 2.4]{rnw18}.
	
	\medskip
	
	The proof relies on the fact that there is a natural homotopy equivalence
	\[
	\MC_\bullet(\homa(C,A))\simeq\MC(\homa(C,A\otimes\Omega_\bullet))
	\]
	induced by the inclusion of $\homa(C,A)\otimes\Omega_\bullet$ into $\homa(C,A\otimes\Omega_\bullet)$. Assuming that this is true for a second, the statement follows by looking at the $0$th homotopy groups of the two simplicial sets --- corresponding to Maurer--Cartan elements modulo gauges --- and noticing that a good path object for the $\C$-coalgebra $\Bar_\alpha A'$ is given by $\Bar_\alpha(A'\otimes\Omega_1)$, and a good path object for the $\P$-algebra $\Cobar_\alpha C$ is given by $\Cobar_\alpha(C)\otimes\Omega_1$.
	
	\medskip
	
	To prove that we have the aforementioned homotopy equivalence, one proceeds as in \cref{sect:alternative model for space of MC} on both $\homa(C,A)\otimes\Omega_\bullet$ and $\homa(C,A\otimes\Omega_\bullet)$, showing that their Maurer--Cartan spaces are homotopically equivalent to the Maurer--Cartan spaces of $\homa(C,A)\otimes C_\bullet$ and $\hom(C,A\otimes C_\bullet)$, both with the $\SLoo$-algebra structures obtained by homotopy transfer theorem applied on the contraction induced by Dupont's contraction. These two last algebras are easily checked to be isomorphic, concluding the proof.
\end{proof}

\subsection{Two bifunctors}

The results of \cref{subsection:compatibility with oo-morphisms} give us two natural ways to extend the bifunctor
\[
\homa:\Ccog^\mathrm{op}\times\Palg\longrightarrow\SLoo\text{-}\mathsf{alg}
\]
to categories of (co)algebras with $\infty$-morphisms. Namely, we define
\[
\homa_\ell:\infty_\alpha\text{-}\Ccog^\mathrm{op}\times\Palg\longrightarrow\infty\text{-}\SLoo\text{-}\mathsf{alg}
\]
by
\[
\homa_\ell(\Phi,f)\coloneqq\MC(\homa(\Phi,1))\homa(1,f)
\]
for a morphism $f$ of $\P$-algebras and an $\infty_\alpha$-morphism $\Phi$ of $\C$-coalgebras. Dually, we define
\[
\homa_r:\Ccog^\mathrm{op}\times\infty_\alpha\text{-}\Palg\longrightarrow\infty\text{-}\SLoo\text{-}\mathsf{alg}
\]
by
\[
\homa_r(g,\Psi)\coloneqq\homa(g,1)\MC(\homa(1,\Psi))
\]
for a morphism $g$ of $\C$-coalgebras and an $\infty_\alpha$-morphism $\Psi$ of $\P$-algebras.

\begin{theorem}[{\cite[Cor. 5.4]{rnw17}}]
	The two assignments $\homa_\ell$ and $\homa_r$ defined above are bifunctors.
\end{theorem}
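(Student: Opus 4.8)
The plan is to verify that each of the two assignments respects identities and composition; by the symmetry of the constructions it suffices to treat $\homa_\ell$, with $\homa_r$ following by the same argument applied to the dual statements. The first step is to unwind the definition $\homa_\ell(\Phi,f) = \MC(\homa(\Phi,1))\homa(1,f)$ and check functoriality in the two slots separately, since a bifunctor on a product category is the same as a pair of compatible one-variable functors together with a commutation condition. In the $\P$-algebra slot, $\homa(1,-)$ is already known to be a strict functor $\Palg\to\SLoo\text{-}\mathsf{alg}$ (it is the restriction along $\manin_\alpha$ of the $\hom(\C,\P)$-algebra structure of \cref{thm:hom of algebras is algebras over hom}), so $f\mapsto\homa(1,f)$ preserves identities and composition on the nose; here one uses the remark after \cref{lemma:hom of oo-morphism is pre or postcomposition} that for a strict morphism $f$ of $\P$-algebras the induced $\infty$-morphism $\MC(\homa(1,f))$ is just the strict morphism $\homa(1,f)$.

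Next I would treat the $\C$-coalgebra slot. An $\infty_\alpha$-morphism $\Phi\colon C'\rightsquigarrow C$ is by \cref{def:infinity morphisms relative to a twisting morphism} a morphism of $\P$-algebras $\Cobar_\alpha C'\to\Cobar_\alpha C$, and composition of such $\infty_\alpha$-morphisms is the strict composition of these algebra maps. By \cref{thm:fundamental thm of convolution Loo-algebras}(2) the map $\homa(-,1)\colon\homa(C',\Cobar_\alpha C)\to\homi(\Bar_\iota\homa(C,A),\homa(C',A))$ is a morphism of $\SLoo$-algebras, hence preserves Maurer--Cartan elements, and $\MC(\homa(\Phi,1))$ is the corresponding $\infty$-morphism of $\SLoo$-algebras. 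By \cref{lemma:hom of oo-morphism is pre or postcomposition}(2), on underlying Maurer--Cartan elements (i.e. on morphisms $g\colon\Cobar_\alpha C\to A$) this $\infty$-morphism acts by $g\mapsto g\Phi$, that is, by precomposition with the algebra map $\Phi$. Since precomposition is strictly functorial, $\MC(\homa(\id_{C})),1))=\id$ and $\MC(\homa(\Phi'\Phi,1))=\MC(\homa(\Phi,1))\,\MC(\homa(\Phi',1))$ as $\infty$-morphisms of $\SLoo$-algebras. The one subtlety is that an $\infty$-morphism of $\SLoo$-algebras is not determined by its action on Maurer--Cartan elements of an arbitrary target, so one must argue at the level of the maps of $\com^\vee$-coalgebras (equivalently $\SLoo^{\antishriek}$-coalgebras) between bar constructions; but $\homa(-,1)$ is a \emph{strict} $\SLoo$-algebra morphism, and a strict morphism of $\SLoo$-algebras induces a strict morphism of the cofree coalgebras $\Bar_\iota(-)$, so the composite $\MC(\homa(\Phi,1))$ is literally $\Bar_\iota$ applied to the $\SLoo$-algebra morphism $\homa(\Phi,1)$ composed appropriately, and strict functoriality of $\Phi\mapsto\homa(\Phi,1)$ in the $\SLoo$-setting follows from strict functoriality of $\Phi\mapsto(-)\circ\Phi=\homa(\Phi,1)$ on the nose — this last is immediate since $\homa(\Phi,1)$ is, by construction, restriction of structure along $\Phi$ on the linear-maps chain complex.

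Finally I would combine the two slots: for morphisms $(\Phi,f)$ and $(\Phi',f')$ that are composable, one needs $\homa_\ell(\Phi'\Phi,f'f)=\homa_\ell(\Phi',f')\circ\homa_\ell(\Phi,f)$, which after unwinding reduces to the interchange identity $\MC(\homa(\Phi,1))\,\homa(1,f') = \homa(1,f')\,\MC(\homa(\Phi,1))$ for a strict morphism $f'$ of $\P$-algebras and an $\infty_\alpha$-morphism $\Phi$ of $\C$-coalgebras. This commutation holds because $\homa(1,f')$ is induced by postcomposition with $f'$ on $\hom(C,-)$ while $\MC(\homa(\Phi,1))$ is induced by precomposition with $\Phi$ on $\hom(-,\Cobar_\alpha C)$, and pre- and post-composition with fixed maps always commute; on the nose one checks it on the generating operations using the explicit formul\ae\ of \cref{thm:fundamental thm of convolution Loo-algebras} together with \cref{lemma:hom of oo-morphism is pre or postcomposition}. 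I expect this interchange identity to be the main (though still routine) obstacle: it is the only place where the two one-variable functorialities genuinely interact, and it requires being careful that the $\infty$-morphism $\MC(\homa(\Phi,1))$ and the strict morphism $\homa(1,f')$ are composed in the category $\infty\text{-}\SLoo\text{-}\mathsf{alg}$, i.e. as morphisms of $\SLoo^{\antishriek}$-coalgebras, where composition is the ordinary composition of coalgebra maps — once that bookkeeping is set up, the identity is a direct consequence of the functoriality of restriction of structure along operad morphisms and the naturality of $\manin_\alpha$ from \cref{thm:convolution Loo-algebras}.
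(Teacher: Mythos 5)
Your overall route is the same as the paper's: the proof there is a one-line reduction to \cref{lemma:hom of oo-morphism is pre or postcomposition}, and you correctly identify that the second slot is handled by the strictness remark and that an interchange identity is needed. You also put your finger on the genuine subtlety that the paper glosses over, namely that an $\infty$-morphism of $\SLoo$-algebras is \emph{not} determined by its action on Maurer--Cartan elements, so the identities $\MC(\homa(\Phi,1))(g)=g\Phi$ do not by themselves give $\homa(\Phi_2\Phi_1,1)=\homa(\Phi_1,1)\circ\homa(\Phi_2,1)$ as morphisms of $\com^\vee$-coalgebras.

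However, the way you resolve that subtlety is wrong. You claim that $\homa(\Phi,1)$ ``is, by construction, restriction of structure along $\Phi$ on the linear-maps chain complex'' and that the relevant composite is ``$\Bar_\iota$ applied to the $\SLoo$-algebra morphism $\homa(\Phi,1)$''. This conflates the strict $\SLoo$-algebra morphism $\homa(-,1)\colon\homa(C',\Cobara C)\to\homi(\Bari\homa(C,A),\homa(C',A))$ of \cref{thm:fundamental thm of convolution Loo-algebras} with its \emph{value} $\homa(\Phi,1)$ on the Maurer--Cartan element $\Phi$. That value is an $\infty$-morphism whose $n$-th component is $F\mapsto\gamma_A\,F\,\proj_n\Phi$, which is nonzero for $n\ge2$ as soon as $\Phi$ has nontrivial higher components; it is a strict pullback only when $\Phi$ is a strict coalgebra morphism. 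So the step ``a strict morphism of $\SLoo$-algebras induces a strict morphism of cofree coalgebras'' does not apply, and strict functoriality in the first slot is not established by your argument. What is actually needed (and what makes the paper's one-line proof legitimate once unwound) is the component-level check that the $n$-th component of the composite of coalgebra maps $\homa(\Phi_1,1)\circ\homa(\Phi_2,1)$ --- a sum over decompositions $n=n_1+\cdots+n_k$ --- coincides with $\gamma_A\,F\,\proj_n(\Phi_2\Phi_1)$, which follows from associativity of $\gamma_\P$ and the explicit formula for the composite $\P$-algebra morphism $\Cobara C''\to\Cobara C$; this is routine but it is a computation with all the components, not a consequence of the Maurer--Cartan-level statement alone. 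Your interchange paragraph, by contrast, is correct: since $f'$ is a strict $\P$-algebra morphism, post-composition by $f'$ commutes with $\gamma_A\,F\,\proj_n\Phi$ componentwise.
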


\begin{proof}
	This follows immediately from \cref{lemma:hom of oo-morphism is pre or postcomposition}.
\end{proof}

After such a statement, it is very natural to ask if one can extend the original bifunctor $\homa$ to take $\infty_\alpha$-morphisms in both slots, and not just one, in such a way as to agree with $\homa_\ell$ and $\homa_r$ in the obvious subcategories. In \cref{subsect:failure to be a bifunctor}, we will show that this is in fact not possible. However, it is possible to do this \emph{up to homotopy}, as will be shown in \cref{subsect:bifunctor up to homotopy}.

\subsection{Failure of the extension of the two bifunctors}\label{subsect:failure to be a bifunctor}

We will work over a field of characteristic $0$ and in the non-symmetric setting (see \cref{subsect:other settings}). If there were such a bifunctor, then we would necessarily have
\[
\hom^\alpha_\ell(\Phi,1)\hom^\alpha_r(1,\Psi) = \hom^\alpha(\Phi,\Psi) = \hom^\alpha_r(1,\Psi)\hom^\alpha_\ell(\Phi,1)
\]
for any couple of $\infty_\alpha$-morphisms. We will give an explicit example where this is not the case. For reference, notice that the two composites are given by the diagrams
\begin{center}
	\begin{tikzpicture}
	\node (a) at (0,3){$D'$};
	\node (b) at (3,3){$\P(D)$};
	\node (c) at (6,3){$\P(\C(D))$};
	\node (d) at (0,0){$A'$};
	\node (e) at (3,0){$\P(A')$};
	\node (f) at (6,0){$\P(\C(A))$};
	\node (g) at (6,1.5){$(\P\circ\C)(n)\otimes_{\S_n}D^{\otimes n}$};
	
	\draw[->] (a) -- node[above]{$\Phi$} (b);
	\draw[->] (b) -- node[above]{$\P(\Delta_D)$} (c);
	\draw[->] (g) -- node[right]{$F$} (f);
	\draw[->] (f) -- node[above]{$\P(\Psi)$} (e);
	\draw[->] (e) -- node[above]{$\gamma_{A'}$} (d);
	\draw[dashed,->] (a) -- node[left]{$\hom^\alpha_\ell(\Phi,1)\hom^\alpha_r(1,\Psi)(\mu_n^\vee\otimes F)$} (d);
	\draw[->] (c) -- node[right]{$\mathrm{proj}_n$} (g);
	\end{tikzpicture}
\end{center}
and
\begin{center}
	\begin{tikzpicture}
	\node (a) at (0,3){$D'$};
	\node (b) at (3,3){$\C(D')$};
	\node (c) at (6,3){$\C(\P(D))$};
	\node (d) at (0,0){$A'$};
	\node (e) at (3,0){$\C(A)$};
	\node (f) at (6,0){$\C(\P(A))$};
	\node (g) at (6,1.5){$(\C\circ\P)(n)\otimes_{\S_n}D^{\otimes n}$};
	
	\draw[->] (a) -- node[above]{$\Delta_{D'}$} (b);
	\draw[->] (b) -- node[above]{$\C(\Phi)$} (c);
	\draw[->] (g) -- node[right]{$F$} (f);
	\draw[->] (f) -- node[above]{$\C(\gamma_A)$} (e);
	\draw[->] (e) -- node[above]{$\Psi$} (d);
	\draw[dashed,->] (a) -- node[left]{$\hom^\alpha_r(1,\Psi)\hom^\alpha_\ell(\Phi,1)(\mu_n^\vee\otimes F)$} (d);
	\draw[->] (c) -- node[right]{$\mathrm{proj}_n$} (g);
	\end{tikzpicture}
\end{center}
respectively, when applied to $\mu_n^\vee\otimes F\in\Bar_\iota(\hom(D,A))$.

\medskip

We will work with \emph{non-symmetric} associative algebras and (suspended) coassociative coalgebras. Since $\as(n)\cong\k$ for each $n\ge1$, for any associative algebra $A$ we will implicitly identify $\as(n)\otimes A^{\otimes n}$ with $A^{\otimes n}$ in some places, and similarly for coassociative coalgebras.

\subsubsection*{The families \texorpdfstring{$A^n$}{An} and \texorpdfstring{$H^n$}{Hn}}

We define $A^n$ for $n\ge1$ as the commutative algebra
\[
A^n\coloneqq\overline{\k[x,y]}
\]
seen as an associative algebra. The overline means that we take the augmentation ideal of $\k[x,y]$, i.e. that we only consider polynomials with no constant term. The degrees are $|x|=0$ and $|y|=1$ and the differential is given by $dy = x^n$. Notice that $y^2=0$. We have
\[
d(x^a) = 0\ ,\qquad d(x^ay) = x^{a+n}\ .
\]
It follows that, as a chain complex,
\[
H^n\coloneqq H_\bullet(A^n) \cong \bigoplus_{a=1}^{n-1}\k z_a\ ,
\]
where $z_a=[x^a]$ is the class of $x^a$. We have three maps
\begin{center}
	\begin{tikzpicture}
	\node (a) at (0,0){$A_n$};
	\node (b) at (2,0){$H_n$};
	
	\draw[->] (a)++(.3,.1)--node[above]{\mbox{\tiny{$p$}}}+(1.4,0);
	\draw[<-,yshift=-1mm] (a)++(.3,-.1)--node[below]{\mbox{\tiny{$i$}}}+(1.4,0);
	\draw[->] (a) to [out=-150,in=150,looseness=4] node[left]{\mbox{\tiny{$h$}}} (a);
	\end{tikzpicture}
\end{center}
given by
\begin{enumerate}
	\item $i(z_a) = x^a$.
	\item $p(x^a) = z_a$ for $a<n$ and zero on all other monomials.
	\item $h(x^a) = x^{a-n}y$ for $a\ge n$ and zero on all other monomials.
\end{enumerate}

\begin{lemma}
	The maps described above form a contraction.
\end{lemma}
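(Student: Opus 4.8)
The plan is to verify the defining identities of a contraction by a direct computation on the monomial basis of $A^n$. Recall that, since $y^2 = 0$, this basis consists of the elements $x^a$ with $a \geq 1$ and $x^a y$ with $a \geq 0$, that $|x^a| = 0$ and $|x^a y| = 1$, and that the differential is given by $d(x^a) = 0$ and $d(x^a y) = x^a\, dy = x^{a+n}$ (no Koszul sign appears because $|x^a| = 0$). So the verification is really just inspection, but it is worth organising the checks.

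First I would record that $i$ and $p$ are chain maps and that $h$ raises degree by one: the former because $H^n$ has vanishing differential and because $p$ kills $d(x^a y) = x^{a+n}$ (as $a + n \geq n$), the latter because $|x^{a-n} y| = |x^a| + 1$. Next, the easy identities: $pi = 1_{H^n}$ since $p(i(z_a)) = p(x^a) = z_a$ for $1 \leq a \leq n-1$; and the three side conditions $h^2 = 0$, $ph = 0$, $hi = 0$, all of which hold for support reasons --- the image of $h$ lies in the span of monomials divisible by $y$, on which both $h$ and $p$ vanish, while the image of $i$ lies in the span of the $x^a$ with $a < n$, on which $h$ vanishes.

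The substantive step is the homotopy identity $1 - ip = dh + hd$, which I would check on the three families of monomials separately: on $x^a$ with $1 \leq a < n$ both sides are zero; on $x^a$ with $a \geq n$ both sides equal $x^a$, using $dh(x^a) = d(x^{a-n}y) = x^{a-n} x^n = x^a$ and $hd(x^a) = 0$; and on $x^a y$ with $a \geq 0$ both sides equal $x^a y$, using $dh(x^a y) = 0$ and $hd(x^a y) = h(x^{a+n}) = x^a y$ since $a + n \geq n$. I do not expect any real obstacle here: the computation is entirely routine, and the only points deserving a moment's attention are the boundary cases ($a = n$, where $x^{a-n}y = y$, and $a = 0$, where the element is $y$ itself with $d(y) = x^n$) and the Koszul signs, which are all trivial because every power of $x$ sits in degree $0$.
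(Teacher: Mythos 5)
Your proof is correct and is exactly the computation the paper has in mind: its own proof is the one-line remark ``This is a straightforward computation,'' and your case-by-case verification on the monomial basis (including the side conditions and the boundary cases $a=n$ and $a=0$) fills in precisely those details with no gaps.
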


\begin{proof}
	This is a straightforward computation.
\end{proof}

Now we apply the homotopy transfer theorem to obtain an $\as_\infty$-algebra structure on $H^n$ and $\infty$-morphisms between the two algebras.

\begin{lemma}\label{lemma:An is formal}
	The algebra $A^n$ is formal, and
	\[
	H^n\cong\frac{\overline{\k[z]}}{(z^n)}
	\]
	as associative (and $\as_\infty$-) algebras.
\end{lemma}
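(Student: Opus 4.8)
The plan is to compute $H^n$ by hand, to show that the projection $p$ coming from the contraction of the preceding lemma is already a strict morphism of associative dg algebras (which yields formality), and then to check that the $\A_\infty$-structure transferred along that contraction has vanishing higher operations. First I would describe $A^n$ explicitly: it has basis $\{x^a : a\ge 1\}$ in degree $0$ and $\{x^a y : a\ge 0\}$ in degree $1$, with $d(x^a)=0$ and $d(x^a y)=x^{a+n}$. Reading off cycles and boundaries immediately gives $H_{\ge 1}(A^n)=0$ and $H_0(A^n)=\bigoplus_{a=1}^{n-1}\k z_a$ with $z_a=[x^a]$, recovering the description of $H^n$ stated just before the lemma. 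The induced product is $z_a z_b=[x^{a+b}]$, which equals $z_{a+b}$ for $a+b\le n-1$ and $0$ for $a+b\ge n$; setting $z\coloneqq z_1$ this identifies the associative algebra $H^n$ with $\overline{\k[z]}/(z^n)$, with $z_a$ corresponding to $z^a$ and $z^n=0$.

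Next I would prove that $A^n$ is formal by checking that $p\colon A^n\to H^n$ is a morphism of associative dg algebras. It is a chain map because $p$ annihilates every monomial $x^b$ with $b\ge n$ and every monomial divisible by $y$, so $p\,d=0=d_{H^n}p$ (the target having zero differential). A brief monomial-by-monomial inspection of the products $x^a x^b=x^{a+b}$, $x^a\cdot x^b y=x^b y\cdot x^a=x^{a+b}y$ and $x^a y\cdot x^b y=0$ then shows $p$ is multiplicative: in every case where the product lands in a monomial divisible by $y$ or in $x^b$ with $b\ge n$, both $p(uv)$ and $p(u)p(v)$ vanish, and otherwise the identity is immediate. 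Since $(i,p,h)$ is a contraction, $p$ is a quasi-isomorphism, so $A^n$ admits a strict quasi-isomorphism onto its homology and is formal, and the associative algebra underlying $H^n$ is $\overline{\k[z]}/(z^n)$.

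Finally, for the $\A_\infty$-statement I would use the homotopy transfer formula of \cref{subsect:A-infinity algebras}: the transferred operations on $H^n$ are $m_k^{H^n}=\sum_{t\in\PT_k}\pm\,p\,t^h(v\mapsto m_{|v|}^{A^n})\,i^{\otimes k}$. Since $A^n$ is strictly associative, $m_j^{A^n}=0$ for $j\ge 3$, so only planar binary trees contribute; for $k=2$ the only tree is a single vertex and $m_2^{H^n}(z_a,z_b)=p(x^{a+b})$, reproducing the product of $\overline{\k[z]}/(z^n)$ found above. For $k\ge 3$ I would argue that every such tree contributes $0$: the leaves feed in monomials $x^{a_j}$, every internal vertex multiplies its two inputs, and along every internal edge the result passes through $h$, which sends $x^a\mapsto x^{a-n}y$ (or $0$) and kills anything already divisible by $y$; because $y^2=0$ at most one factor $y$ is ever present, and because $k\ge 3$ the root has at least one internal child, so the value reaching the root along that edge is $0$ or divisible by $y$, whence the output of $t^h$ is $0$ or divisible by $y$ and is annihilated by $p$. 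Thus $m_k^{H^n}=0$ for all $k\ge 3$, the transferred $\A_\infty$-structure is the strict one, and $H^n\cong\overline{\k[z]}/(z^n)$ as $\A_\infty$-algebras as well.

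The step I expect to be the main obstacle is this last one: one must be careful enough with the shape of the van der Laan formula to be certain that \emph{every} higher tree vanishes. What makes the argument go through cleanly, and avoids any sign bookkeeping, is the interplay of three facts — that $h$ always produces or is annihilated by the odd variable $y$, that $y^2=0$, and that $p$ kills every monomial divisible by $y$.
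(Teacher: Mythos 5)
Your proof is correct and follows essentially the same route as the paper's: compute $m_2$ from the transfer formula to identify the associative algebra, then kill all higher transferred operations by observing that $h$ only ever outputs $y$-divisible elements, that $y^2=0$, and that $p$ annihilates anything divisible by $y$. The only (harmless) extra is your direct check that $p$ is a strict morphism of dg algebras, which gives formality without reference to the transferred structure; the paper instead deduces formality from the fact that the transferred $\A_\infty$-structure is the strict one.
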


\begin{proof}
	The arity $2$ operation in $H^n$ is given by
	\[
	m_2(z_a,z_b) = p(i(z_a)i(z_b)) = p(x^{a+b}) = \begin{cases}z_{a+b}&\text{if }a+b<n,\\0&\text{otherwise}.
	\end{cases}
	\]
	Therefore, the underlying associative algebra is indeed
	\[
	H^n\cong\frac{\overline{\k[z]}}{(z^n)}
	\]
	(keeping in mind that $d=0$ on $H^n$, so that associativity is indeed satisfied). For the higher operations, we notice that
	\[
	h(i(z_a)i(z_b)) = \begin{cases}x^{a+b-n}y&\text{if }a+b\ge n,\\0&\text{otherwise}.
	\end{cases}
	\]
	it follows that if we multiply by any element of $A_n$ and then apply either $h$ or $p$, we always get $0$. Therefore, all higher operations are $0$, concluding the proof.
\end{proof}

\begin{lemma}
	The $\infty$-quasi-isomorphism $i_\infty$ of $\as_\infty$-algebras extending $i$ is given by $i_1=i$,
	\[
	i_2(z^a,z^b) = \begin{cases}x^{a+b-n}y&\text{if }a+b\ge n,\\0&\text{otherwise},
	\end{cases}
	\]
	and $i_n=0$ for all $n\ge3$.
\end{lemma}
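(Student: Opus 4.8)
The plan is to unwind the explicit tree formula furnished by the homotopy transfer theorem for the $\infty$-quasi-isomorphism $i_\infty$ extending $i$, and then exploit the very rigid combinatorics of the algebra $A^n$. Recall from \cref{subsect:HTT} (specialized to $\P=\as$, cf. \cref{subsect:A-infinity algebras}) that $i_\infty\colon H^n\rightsquigarrow A^n$ has first component $i_1=i$, and that for $m\ge2$ its $m$-ary component is
\[
i_m=\sum_{t\in\PT_m}\pm\,h\,t^h\big(v\mapsto m_{|v|}^{A^n}\big)\,i^{\otimes m}\colon(H^n)^{\otimes m}\longrightarrow A^n\ ,
\]
the sum running over planar trees with $m$ leaves, with the notation $t^h$ of \cref{subsect:HTT} (vertices decorated by the operations of $A^n$, internal edges by the contracting homotopy $h$, leaves by $i$). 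First I would observe that, since $A^n$ is a \emph{strict} associative dg algebra, $m_k^{A^n}=0$ for all $k\ge3$, so that only the planar \emph{binary} trees contribute to the sum; in particular $i_1=i$ is immediate, and it remains to handle $i_2$ and the vanishing $i_m=0$ for $m\ge3$.

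Next I would compute $i_2$. There is a single planar binary tree with two leaves, the $2$-corolla, for which $t^h=m_2^{A^n}$, so $i_2=h\,m_2^{A^n}(i\otimes i)$ up to a global Koszul sign that a routine check (with $|h|=1$, $|i|=0$, $|m_2^{A^n}|=0$) shows to be $+1$. Evaluating on generators gives
\[
i_2(z^a\otimes z^b)=h\big(i(z^a)\,i(z^b)\big)=h(x^{a+b})\ ,
\]
which by the explicit formula for $h$ in this subsection equals $x^{a+b-n}y$ when $a+b\ge n$ and $0$ otherwise, that is, exactly the claimed formula.

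Finally I would show $i_m=0$ for $m\ge3$ by checking that every summand vanishes. The three key observations are: $\mathrm{Im}(h)\subseteq Y\coloneqq\mathrm{span}_\k\{x^ry\mid r\ge0\}$; the multiplication of $A^n$ sends both $Y\otimes A^n$ and $A^n\otimes Y$ into $Y$ (because $y^2=0$); and $h$ vanishes identically on $Y$ (it is nonzero only on the pure powers $x^a$ with $a\ge n$). For $m\ge3$, every planar binary tree with $m$ leaves has at least two vertices, hence at least one internal edge, and at least one child subtree of the root is non-trivial. Walking up the tree and using the first two observations, the element reaching the root already lies in $Y$, so applying the outermost $h$ yields $0$; since the $\pm$ signs are irrelevant here, this gives $i_m=0$ for all $m\ge3$, which is the last assertion of the statement.

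The only genuinely delicate point I anticipate is pinning down the precise shape of the tree formula for $i_\infty$ --- in particular, identifying it as the \emph{canonical} extension of $i$ produced by the homotopy transfer theorem and keeping track of the suspensions and Koszul signs in the $m=2$ term. Once this is settled, the rigidity of $A^n$ (concretely $y^2=0$ together with the explicit description of $h$) makes the remaining computations collapse almost immediately.
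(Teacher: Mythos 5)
Your proof is correct and follows essentially the same route as the paper, which simply invokes ``computations analogous to the proof of \cref{lemma:An is formal}'': there the key points are precisely that $i_2(z_a,z_b)=h(x^{a+b})$ and that multiplying anything in the image of $h$ by an element of $A^n$ and applying $h$ again gives zero. Your write-up just makes explicit the reduction to binary planar trees and the ideal $\mathrm{span}_\k\{x^ry\}$ on which $h$ vanishes, which is exactly the intended argument.
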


\begin{proof}
	This is proven with computations analogous to the ones in the proof of \cref{lemma:An is formal}.
\end{proof}

Notice that the $\infty$-morphism $i_\infty$ is in fact an $\infty_\kappa$-morphism of associative algebras.

\subsubsection{A coalgebra and an \texorpdfstring{$\infty_\kappa$}{ook}-morphism}

A structure of conilpotent dg $\as^{\antishriek}$-coalgebra, that is a shifted coassociative coalgebra, on a graded vector space $V$ is the same thing as a square zero differential $d$ on $\as(V)$ such that
\[
d(V)\subseteq V\oplus V^{\otimes 2}.
\]
Let
\[
V\coloneqq\bigoplus_{i\ge1}\k v_i
\]
with $|v_i| = i$. We define
\[
d:\as(V)\longrightarrow\as(V)
\]
of degree $-1$ by
\[
d(v_i) = \sum_{j+k = i-1}(-1)^jv_j\otimes v_k\ .
\]

\begin{lemma}
	The map $d$ squares to $0$.
\end{lemma}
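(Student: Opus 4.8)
The plan is to verify that $d^2 = 0$ by a direct computation on the generators $v_i$, since $d$ is the unique coderivation of $\as(V)$ extending the given map $V \to V \oplus V^{\otimes 2}$, and a coderivation squares to zero on all of $\as(V)$ as soon as its square vanishes on the cogenerators $V$. So it suffices to show $d^2(v_i) = 0$ for every $i \ge 1$.

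First I would compute $d^2(v_i)$ by applying the coderivation $d$ to $d(v_i) = \sum_{j+k=i-1}(-1)^j v_j \otimes v_k$. Using the Leibniz rule for the coderivation $d$ on the tensor $v_j \otimes v_k$ (with the appropriate Koszul sign, $|v_j| = j$), one gets
\[
d^2(v_i) = \sum_{j+k=i-1}(-1)^j\bigl(d(v_j)\otimes v_k + (-1)^{j} v_j \otimes d(v_k)\bigr),
\]
and then one substitutes the formula for $d(v_j)$ and $d(v_k)$ again. This produces a sum of terms of the form $v_a \otimes v_b \otimes v_c$ with $a+b+c = i-2$, each carrying a sign. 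The key step is to check that the terms cancel in pairs: a given monomial $v_a\otimes v_b\otimes v_c$ arises once from the "left" piece $d(v_j)\otimes v_k$ (with $j = a+b+1$, $k = c$) and once from the "right" piece $v_j\otimes d(v_k)$ (with $j = a$, $k = b+c+1$). I would carefully track the two signs — one of the form $(-1)^{j}\cdot(-1)^{a}$ coming from the left branch and one of the form $(-1)^{j}\cdot(-1)^{j}\cdot(-1)^{b}$ coming from the right branch, where $j$ takes its respective value in each case — and show they are opposite, so the total vanishes.

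The main obstacle I expect is purely bookkeeping: getting the Koszul signs right in the Leibniz rule for the coderivation on $V^{\otimes 2}$ and then matching up the two occurrences of each cubic monomial with the correct relative sign. This is precisely the standard verification that the cobar-type differential on a cofree coassociative coalgebra squares to zero (indeed $(V, d)$ here is essentially $\susp^{-1}$ of the coalgebra $\mathrm{coAs}$-dual picture, or equivalently the shifted version of the bar construction of $\as$), so once the signs are organized correctly the cancellation is forced. I would present the computation compactly, reindexing the triple sum so the cancellation is manifest, and then invoke the general fact (as in the treatment of quasi-free coalgebras, cf. \cref{prop:Poo-alg is coderivation of Pi-cog}) that vanishing on cogenerators implies $d^2 = 0$ on all of $\as(V)$, so that $(\as(V), d)$ is indeed a conilpotent dg $\as^{\antishriek}$-coalgebra.
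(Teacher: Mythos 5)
Your proposal is correct and matches the paper's (unwritten) argument: the paper dismisses this as a ``straightforward routine computation,'' and your reduction to generators followed by the pairwise cancellation of the cubic terms $v_a\otimes v_b\otimes v_c$ --- with signs $(-1)^{b+1}$ from the left branch and $(-1)^{b}$ from the right branch --- is exactly that computation carried out. The only cosmetic point is that in the paper's convention $d$ is the unique \emph{derivation} of the free $\as$-algebra $\as(V)$ (i.e.\ the reduced tensor algebra) extending $d|_V$, rather than a coderivation of a cofree coalgebra, but the reduction ``$d^2=0$ on all of $\as(V)$ iff $d^2|_V=0$'' holds verbatim since $d^2$ is again a derivation.
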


\begin{proof}
	This is a straightforward routine computation.
\end{proof}

Thus, we have an $\as^{\antishriek}$-coalgebra $V$. Notice that, since $d(V)\subseteq V^{\otimes 2}$, the underlying chain complex $V$ of the $\as^{\antishriek}$-coalgebra has the zero differential. We define
\[
\Phi:\as(V)\longrightarrow\as(V)
\]
by
\[
\Phi(v_n) = \sum_{k\ge1}\sum_{i_1+\cdots+i_k = n}v_{i_1}\otimes\cdots\otimes v_{i_k}\ .
\]

\begin{lemma}
	The map $\Phi$ commutes with the differential, and therefore defines an $\infty_\kappa$-morphism $\Phi:V\rightsquigarrow V$.
\end{lemma}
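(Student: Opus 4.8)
The plan is to recognize $\Phi$ as a morphism of free associative algebras and $d$ as a derivation of the same algebra, to reduce the identity $d\Phi=\Phi d$ to the space of generators $V$, and then to match the two sides term by term over compositions of integers.

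First I would make the structures explicit. By \cref{def:infinity morphisms relative to a twisting morphism}, an $\infty_\kappa$-morphism $V\rightsquigarrow V$ is a morphism of $\as$-algebras $\Cobar_\kappa V\to\Cobar_\kappa V$, and the underlying graded $\as$-algebra of $\Cobar_\kappa V$ is the free associative algebra $\as(V)$; so such a map is the same datum as a linear map $V\to\as(V)$ extended multiplicatively. Thus $\Phi$ is the algebra endomorphism of $\as(V)$ determined on generators by $v_n\mapsto\sum_{k\ge1}\sum_{i_1+\cdots+i_k=n}v_{i_1}\otimes\cdots\otimes v_{i_k}$ (``refine each generator into a composition''); this is a finite sum, is homogeneous of degree $0$, and is visibly well defined. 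Since $\as$ and the underlying chain complex of $V$ carry the trivial internal differential, the cobar differential of $\Cobar_\kappa V$ has no $d_1$ part, so $d=d_2$ is the derivation of $\as(V)$ whose restriction to $V$ is $(\kappa\circ1_V)\Delta_V$; because $\kappa$ is concentrated in arity $2$, this equals, up to an overall sign that occurs symmetrically on both sides below, the map $v_i\mapsto\sum_{j+k=i-1}(-1)^j v_j\otimes v_k$, consistent with the formula fixed for $d$ just above.

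Next I would reduce to generators. Since $\Phi$ is an algebra morphism and $d$ a derivation, the graded Leibniz rule gives, for any monomial,
\[
(d\Phi-\Phi d)(v_{j_1}\otimes\cdots\otimes v_{j_m})=\sum_{a=1}^m\pm\,\Phi(v_{j_1})\otimes\cdots\otimes(d\Phi-\Phi d)(v_{j_a})\otimes\cdots\otimes\Phi(v_{j_m})\ ,
\]
so it suffices to prove $d\Phi(v_n)=\Phi d(v_n)$ for every $n\ge1$. Applying the derivation $d$ to $\Phi(v_n)$ produces a sum indexed by a composition $(i_1,\dots,i_k)$ of $n$, a position $a$, and a splitting $i_a=p+q+1$ with $p,q\ge1$, of the monomial $v_{i_1}\otimes\cdots\otimes v_{i_{a-1}}\otimes v_p\otimes v_q\otimes v_{i_{a+1}}\otimes\cdots\otimes v_{i_k}$; while $\Phi d(v_n)=\sum_{p+q=n-1}(-1)^p\Phi(v_p)\otimes\Phi(v_q)$ is a sum indexed by a decomposition $p+q=n-1$, a composition of $p$, and a composition of $q$, of the corresponding concatenated monomial. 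I would then exhibit the evident bijection between the two index sets — from the first side, cut the composition of $n$ at the entry $i_a$ and split it; from the second side, merge the last part of the composition of $p$ with the first part of the composition of $q$ into a single entry — under which matching terms are literally the same monomial.

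The remaining point, and the only genuinely fiddly one, is that the signs agree. Here the Koszul sign incurred when $d$ (of degree $-1$) is commuted past $v_{i_1}\otimes\cdots\otimes v_{i_{a-1}}$, of total degree $i_1+\cdots+i_{a-1}$, combined with the sign $(-1)^p$ coming from the coproduct of $v_{i_a}$, should reproduce exactly the sign $(-1)^{\,i_1+\cdots+i_{a-1}+p}$ attached to the corresponding term of $\Phi d(v_n)$, and the single overall sign of $d_2=-(\kappa\circ1)\Delta$ enters once on each side. I expect this bookkeeping — in particular keeping the sign conventions of the coderivation formula for $d$ of the previous lemma consistent with those of the cobar differential $d_2$ — to be the main obstacle; everything else is the transparent combinatorics of compositions of positive integers. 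Carrying it out yields $d\Phi(v_n)=\Phi d(v_n)$, hence $d\Phi=\Phi d$ on all of $\as(V)$, so $\Phi$ is a morphism of $\as$-algebras $\Cobar_\kappa V\to\Cobar_\kappa V$, i.e. an $\infty_\kappa$-morphism $\Phi\colon V\rightsquigarrow V$.
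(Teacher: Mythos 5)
Your proposal is correct and is essentially the paper's own argument in different clothing: the paper verifies the Maurer--Cartan equation $\partial(\Phi)+\star_\kappa(\Phi)=0$ for $\Phi|_V\colon V\to\Cobar_\kappa V$, which by the proof of \cref{thm:Rosetta stone algebras} is precisely the restriction to generators of the chain-map condition $d\Phi=\Phi d$ that you check directly. The resulting computation — the Leibniz expansion of $d\Phi(v_n)$, the bijection between (composition of $n$, position, splitting) and (pair of compositions of $j$ and $k$ with $j+k=n-1$), and the sign $(-1)^{x_1+\cdots+x_a}$ — coincides term for term with the paper's.
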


\begin{proof}
	We have to show that $\Phi:V\to\Omega_\kappa V$ satisfies the Maurer--Cartan equation
	\[
	\partial(\Phi)+\star_\kappa(\Phi) = 0\ .
	\]
	We have
	\small
	\begin{align*}
	\partial(\Phi)&(v_n) = d_{\Omega_\kappa V}\left(\sum_{k\ge1}\sum_{i_1+\cdots+i_k = n}v_{i_1}\otimes\cdots\otimes v_{i_k}\right)\\
	=&\ \sum_{k\ge1}\sum_{i_1+\cdots+i_k = n}\sum_{j=0}^{k-1}(-1)^{i_1+\cdots+i_j}v_{i_1}\otimes\cdots\otimes v_{i_j}\otimes\left(\sum_{\alpha+\beta = i_{j+1}-1}(-1)^\alpha v_\alpha\otimes v_\beta\right)\otimes v_{i_{j+2}}\otimes\cdots\otimes v_{i_k}\\
	=&\ \sum_{a,b\ge1}\sum_{\substack{x_1+\cdots+x_a+\\+y_1+\cdots+y_b = n-1}}(-1)^{x_1+\cdots+x_a}v_{x_1}\otimes\cdots\otimes v_{x_a}\otimes v_{y_1}\otimes\cdots\otimes v_{y_b}
	\end{align*}
	\normalsize
	where in the first line we used the fact that $d_V = 0$, and in the last line we substituted $a=j+1,b=k-j+1$, $x_s = i_s$ for $s\le a$, $x_a = \alpha$, $y_1 = \beta$, and $y_s = i_{j+s}$ for $s\ge2$. At the same time, we have
	\begin{align*}
	\star_\kappa(\Phi)(v_n) =&\ (\gamma_\as\circ 1)(\kappa\circ\Phi)\Delta_V(v_n)\\
	=&\ -\sum_{i+j=n-1}(-1)^i\Phi(v_i)\otimes\Phi(v_j)\\
	=&\ -\sum_{a,b\ge1}\sum_{\substack{x_1+\cdots+x_a+\\+y_1+\cdots+y_b = n-1}}(-1)^{x_1+\cdots+x_a}v_{x_1}\otimes\cdots\otimes v_{x_a}\otimes v_{y_1}\otimes\cdots\otimes v_{y_b}\ .
	\end{align*}
	Notice the sign in the second line: it comes from the signs in the definition of the differential $d_2$ in the cobar construction. This concludes the proof.
\end{proof}

\subsubsection*{The counterexample}

We now prove what claimed at the beginning of the present section by considering $\C=\as^{\antishriek}$, $\P=\as$, the canonical twisting morphism
\[
\kappa:\as^{\antishriek}\longrightarrow\as\ ,
\]
the associative algebras $A=A^2$, $A'=H^2$, the $\as^{\antishriek}$-coalgebras $D'=D=V$, and the $\infty$-morphisms $\Psi=i_\infty$ and $\Phi$ described above. We take the linear maps $f_1,f_2,f_3:V\to H^2$ such that $f_i(v_1) = z$ for $i=1,2,3$, $f_1(v_2) = z$, and $f_2(v_2) = f_3(v_2) = 0$. Notice that $f_2$ and $f_3$ have degree $-1$, while $f_1$ decomposes as the sum of a degree $-1$ map and a degree $-2$ map.

\medskip

We start by computing how $\hom^\kappa_\ell(\Phi,1)\hom^\kappa_r(1,i_\infty)(\mu^\vee_3\otimes F)$ acts on $v_4\in V$. We have
\begin{align*}
\Phi(v_4) =& \id\otimes v_4 + \mu_2\otimes(v_1\otimes v_3 + v_2\otimes v_2 + v_3\otimes v_1)\\
& + \mu_3\otimes(v_1\otimes v_1\otimes v_2 + v_1\otimes v_2\otimes v_1 + v_2\otimes v_1\otimes v_1) + \mu_4\otimes v_1\otimes v_1\otimes v_1\otimes v_1
\end{align*}
Since we will project on the part with only three copies of $V$, we don't care about the last term and will omit it in the following step. Notice that the coproduct of $V$ is explicitly given by
\small
\[
\Delta_V(v_n) = \id\otimes v_n + \sum_{i_1+i_2 = n-1}(-1)^{i_1}\susp_2^{-1}\mu_2^\vee\otimes v_{i_1}\otimes v_{i_2} - \sum_{j_1+j_2+j_3 = n-2}(-1)^{j_2}\susp_3^{-1}\mu_3^\vee\otimes v_{j_1}\otimes v_{j_2}\otimes v_{j_3} + \cdots\ ,
\]
\normalsize
where the dots indicate terms with at least $4$ copies of $V$. Applying this to the above, and then using $\mathrm{proj}_3$, we get
\begin{align*}
\mathrm{proj}_3\as(\Delta_V)&\Phi(v_4) =\\
=&\ -\mu_2\otimes\Big((\id\otimes v_1)\otimes(\susp_2^{-1}\mu_2^\vee\otimes v_1\otimes v_1) + (\susp_2^{-1}\mu_2^\vee\otimes v_1\otimes v_1)\otimes(\id\otimes v_1)\Big)\\
&\ + \mu_3\otimes\Big((\id\otimes v_1)\otimes(\id\otimes v_1)\otimes(\id\otimes v_2) + (\id\otimes v_1)\otimes(\id\otimes v_2)\otimes(\id\otimes v_1)\\
&\qquad\qquad + (\id\otimes v_2)\otimes(\id\otimes v_1)\otimes(\id\otimes v_1)\Big)
\end{align*}
Applying $F$ gives
\begin{align*}
F\mathrm{proj}_3\as(\Delta_V)&\Phi(v_4) =\\
=&\ \mu_2\otimes\Big((\id\otimes z)\otimes(\susp_2^{-1}\mu_2^\vee\otimes z\otimes z) - (\susp_2^{-1}\mu_2^\vee\otimes z\otimes z)\otimes(\id\otimes z)\Big)\\
&\ - \mu_3\otimes\big((\id\otimes z)\otimes(\id\otimes z)\otimes(\id\otimes z)\big)\ ,
\end{align*}
and thus
\begin{align*}
\as(i_\infty)F\mathrm{proj}_3\as(\Delta_V)&\Phi(v_4) = \mu_2\otimes(x\otimes y - y\otimes x) - \mu_3\otimes x\otimes x\otimes x\ .
\end{align*}
Finally, we have
\begin{align*}
\hom^\kappa_\ell(\Phi,1)\hom^\kappa_r(1,i_\infty)(\mu^\vee_3\otimes F)(v_4) =&\ \gamma_{A_2}\as(i_\infty)F\mathrm{proj}_3\as(\Delta_V) = -x^3.
\end{align*}
Now we look at the action of $\hom^\kappa_r(1,i_\infty)\hom^\kappa_\ell(\Phi,1)(\mu_3^\vee\otimes F)$ on $v_4$. We have
\[
\Delta_V(v_4) = \id\otimes v_4 + \susp_2^{-1}\mu_2^\vee\otimes(-v_1\otimes v_2 + v_2\otimes v_1)\ ,
\]
and thus
\begin{align*}
\mathrm{proj}_3\as^{\antishriek}(\Phi)&\Delta_V(v_4) =\\
=&\ \id\otimes\mu_3\otimes(v_1\otimes v_1\otimes v_2 + v_1\otimes v_2\otimes v_1 + v_2\otimes v_1\otimes v_1)\\
&\ + \susp_2^{-1}\mu_2^\vee\otimes\big(-(\id\otimes v_1)\otimes(\mu_2\otimes v_1\otimes v_1) + (\mu_2\otimes v_1\otimes v_1)\otimes(\id\otimes v_1)\big)\ .
\end{align*}
Applying $F$ we obtain
\begin{align*}
F\mathrm{proj}_3&\as^{\antishriek}(\Phi)\Delta_V(v_4) =\\
=&\ -\id\otimes\mu_3\otimes z\otimes z\otimes z + \susp_2^{-1}\mu_2^\vee\otimes\big(-(\id\otimes z)\otimes(\mu_2\otimes z\otimes z) + (\mu_2\otimes z\otimes z)\otimes(\id\otimes z)\big)\ ,
\end{align*}
and thus
\begin{align*}
\as^{\antishriek}(\gamma_{H^2})F\mathrm{proj}_3\as^{\antishriek}(\Phi)&\Delta_V(v_4) = 0
\end{align*}
since $z^2 = 0$ in $H^2$ and by \cref{lemma:An is formal}. Therefore,
\[
\hom^\kappa_r(1,i_\infty)\hom^\kappa_\ell(\Phi,1)(\mu_3^\vee\otimes F)(v_4) = 0\ ,
\]
showing that
\[
\hom^\kappa_r(1,i_\infty)\hom^\kappa_\ell(\Phi,1)\neq\hom^\kappa_\ell(\Phi,1)\hom^\kappa_r(1,i_\infty)
\]
as claimed. This implies the result we wanted.

\begin{theorem}
	There is no bifunctor
	\[
	\hom^\alpha:\infty_\alpha\text{-}\Ccog^\mathrm{op}\times\infty_\alpha\text{-}\Palg\longrightarrow\infty\text{-}\SLoo.
	\]
	that restricts to the functors
	\[
	\hom^\alpha_\ell:\infty_{\alpha}\text{-}\Ccog^\mathrm{op}\times\Palg\longrightarrow\infty\text{-}\SLoo\ 
	\]
	and
	\[
	\hom^\alpha_r:\Ccog^\mathrm{op}\times\infty_\alpha\text{-}\Palg\longrightarrow\infty\text{-}\SLoo
	\]
	defined above in the respective subcategories.
\end{theorem}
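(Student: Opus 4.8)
The plan is to argue by contradiction, turning bifunctoriality into a rigid constraint and then violating it with an explicit example. Suppose a bifunctor
\[
\hom^\alpha\colon\infty_\alpha\text{-}\Ccog^\mathrm{op}\times\infty_\alpha\text{-}\Palg\longrightarrow\infty\text{-}\SLoo
\]
restricting to $\hom^\alpha_\ell$ on $\infty_\alpha\text{-}\Ccog^\mathrm{op}\times\Palg$ and to $\hom^\alpha_r$ on $\Ccog^\mathrm{op}\times\infty_\alpha\text{-}\Palg$ existed. Given an $\infty_\alpha$-morphism $\Phi\colon D'\rightsquigarrow D$ of $\C$-coalgebras and an $\infty_\alpha$-morphism $\Psi\colon A\rightsquigarrow A'$ of $\P$-algebras, the pair $(\Phi,\Psi)$ factors in the product category as $(\Phi,1_{A'})\circ(1_D,\Psi)$ and also as $(1_{D'},\Psi)\circ(\Phi,1_A)$; applying $\hom^\alpha$ and using the restriction hypothesis would force
\[
\hom^\alpha_\ell(\Phi,1_{A'})\,\hom^\alpha_r(1_D,\Psi)=\hom^\alpha(\Phi,\Psi)=\hom^\alpha_r(1_{D'},\Psi)\,\hom^\alpha_\ell(\Phi,1_A)
\]
as $\infty$-morphisms of $\SLoo$-algebras. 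So it suffices to produce one quadruple $(\C,\P,\alpha)$, objects, and a pair $(\Phi,\Psi)$ for which these two composites disagree.

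I would build the counterexample in the non-symmetric setting (\cref{subsect:other settings}), with $\C=\as^{\antishriek}$, $\P=\as$, and $\alpha=\kappa\colon\as^{\antishriek}\to\as$. On the algebra side, introduce the family $A^n:=\overline{\k[x,y]}$, $|x|=0$, $|y|=1$, $dy=x^n$, together with the obvious contraction onto its homology $H^n\cong\overline{\k[z]}/(z^n)$; a short computation shows $A^n$ is formal, and the homotopy transfer theorem (\cref{subsect:A-infinity algebras}) produces a transferred $\as_\infty$-structure on $H^n$ and an $\infty_\kappa$-quasi-isomorphism $i_\infty\colon A^n\rightsquigarrow H^n$ whose only nonzero components are $i_1=i$ and an explicit $i_2$. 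On the coalgebra side, introduce the shifted coassociative coalgebra $V=\bigoplus_{i\ge1}\k v_i$ with differential $d(v_i)=\sum_{j+k=i-1}(-1)^jv_j\otimes v_k$, and the $\infty_\kappa$-endomorphism $\Phi\colon V\rightsquigarrow V$ given by $\Phi(v_n)=\sum_{k\ge1}\sum_{i_1+\cdots+i_k=n}v_{i_1}\otimes\cdots\otimes v_{i_k}$. The preliminary lemmas — that $d^2=0$ on $\as(V)$, that $A^n$ is formal with the stated $i_\infty$, and that $\Phi$ satisfies the Maurer--Cartan equation (equivalently defines a morphism $\Cobar_\kappa V\to\Cobar_\kappa V$ of $\as$-algebras) — are routine but sign-sensitive verifications.

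Then I would specialize to $A=A^2$, $A'=H^2$, $D'=D=V$, $\Psi=i_\infty$, $\Phi$ as above, and evaluate both composites on $\mu_3^\vee\otimes(f_1\otimes f_2\otimes f_3)\in\Bar_\iota\hom(V,A^2)$ at $v_4\in V$, where $f_i(v_1)=z$ for $i=1,2,3$, $f_1(v_2)=z$ and $f_2(v_2)=f_3(v_2)=0$ (note $f_1$ is not homogeneous, which is harmless). For $\hom^\kappa_\ell(\Phi,1)\hom^\kappa_r(1,i_\infty)$ one expands $\Phi(v_4)$, projects onto weight $3$, applies $\Delta_V$, then the evaluation by $F$, then $i_\infty$, then $\gamma_{A^2}$, and reads off $-x^3$. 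For $\hom^\kappa_r(1,i_\infty)\hom^\kappa_\ell(\Phi,1)$ one expands $\Delta_V(v_4)$ up to weight $3$, applies $\as^{\antishriek}(\Phi)$, projects, applies $F$, then $\as^{\antishriek}(\gamma_{H^2})$, then $i_\infty$; every surviving term carries a factor $z^2=0$ in $H^2$ (using formality), so the result is $0$. Since $-x^3\ne0$, the two composites differ, contradicting the displayed identity, and no such bifunctor can exist.

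The main obstacle is purely computational: carrying the two explicit evaluations through correctly, with all the signs coming from the cobar differential $d_2$ and the Koszul rule, the precise form of $\Delta_V$ in weights $\le 3$, and the transferred maps $i_1,i_2$. The conceptual heart is light — the bifunctoriality constraint above, and the design choice that makes the asymmetry visible, namely evaluating at $v_4$ in weight $3$ and taking $n=2$ so that $z^2=0$ forces one side to vanish while the other does not.
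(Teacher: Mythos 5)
Your proposal is correct and follows essentially the same route as the paper: the same bifunctoriality constraint forcing $\hom^\kappa_\ell(\Phi,1)\hom^\kappa_r(1,i_\infty)=\hom^\kappa_r(1,i_\infty)\hom^\kappa_\ell(\Phi,1)$, and the same counterexample with $A^2$, $H^2$, the coalgebra $V$, the $\infty_\kappa$-morphisms $\Phi$ and $i_\infty$, and the evaluation at $\mu_3^\vee\otimes F$ and $v_4$ yielding $-x^3$ versus $0$. Nothing is missing.
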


\begin{remark}
	The result is true in any characteristic in the non-symmetric case by the same counterexample as above, and in the symmetric case as well, by considering the same counterexample and tensoring the operads by the regular representation of the symmetric groups.
\end{remark}

\subsection{Compatibility with filtrations and quasi-isomorphisms}

We will show that convolution homotopy algebras are compatible with filtrations and quasi-isomorphisms. This is a very useful fact, because it will make it easy to apply the Dolgushev--Rogers theorem to obtain informations about the homotopy type of the homotopy Lie algebras obtained by convolution.

\medskip

Fix a cooperad $\C$, an operad $\P$, and a twisting morphism $\alpha:\C\to\P$. We begin with proving that the convolution homotopy algebra functor $\homa$ is compatible with (co)filtrations.

\begin{proposition}
	Let $C$ be a $\C$-coalgebra, and let $A$ be a $\P$-algebra.
	\begin{enumerate}
		\item Suppose that $C$ is cofiltered with cofiltration $\F^\bullet C$, in the sense of \cref{def:cofiltered coalgebras}. Then the sequence of subspaces
		\[
		\F_n\coloneqq\left\{f\in\homa(C,A)\mid \F^nC\subseteq\ker f\right\}
		\]
		makes the convolution homotopy algebra $\homa(C,A)$ into a complete $\SLoo$-algebra, which is proper if $\F^1C = 0$.
		\item Dually, suppose that $(A,\F_\bullet A)$ is a complete $\P$-algebra. Then the sequence
		\[
		\F_n\coloneqq\homa(C,\F_nA)
		\]
		makes the convolution homotopy algebra $\homa(C,A)$ into a complete $\SLoo$-algebra, which is proper if $A$ is.
	\end{enumerate}
\end{proposition}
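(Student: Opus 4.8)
The plan is to follow the proof of \cref{lemma:compatibility filtrations first case}, of which point (1) is exactly the special case where $\F^\bullet C$ is the coradical filtration, and to deduce the completeness assertions from \cref{prop:compatibility convolution and (co)limits}. Recall from the proof of \cref{thm:MC-el of convolution Loo-algebras} that the $\SLoo$-brackets on $\homa(C,A)$ are given by $\ell_k(f_1,\dots,f_k)=\gamma_A(\alpha\otimes F)\Delta_C^k$, with $F$ the symmetrisation of $f_1\otimes\dots\otimes f_k$, and that $\alpha$ vanishes on $\C(1)$, so that only the weight-$k$ part of the decomposition matters when $k\ge 2$. For point (1), I would first check that $\F_\bullet$ is a filtration of $\SLoo$-algebras. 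It is decreasing since $\F^n C$ grows with $n$. If $f\in\F_n$ and $x\in\F^n C$ then $\partial(f)(x)=d_Af(x)-(-1)^{|f|}f(d_Cx)=0$ by condition (2) of \cref{def:cofiltered coalgebras}, so $\partial(\F_n)\subseteq\F_n$. For the brackets, take $f_i\in\F_{n_i}$ and $x\in\F^n C$ with $n=n_1+\dots+n_k$; by condition (1) of \cref{def:cofiltered coalgebras} (the argument of \cref{lemma:property of coradical filtration} applying verbatim), $\Delta_C^k(x)$ lies in $\bigoplus_{m_1+\dots+m_k=n}\big(\C(k)\otimes\F^{m_1}C\otimes\dots\otimes\F^{m_k}C\big)^{\S_k}$, and in each summand, for every $\sigma\in\S_k$ there is an index $i$ with $m_i\le n_{\sigma(i)}$ (otherwise $\sum_i m_i>\sum_i n_{\sigma(i)}=n$), so $f_{\sigma(i)}$ annihilates the corresponding tensor factor, which lies in $\F^{m_i}C\subseteq\F^{n_{\sigma(i)}}C\subseteq\ker f_{\sigma(i)}$. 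Hence $\ell_k(f_1,\dots,f_k)(x)=0$, i.e. $\ell_k(\F_{n_1},\dots,\F_{n_k})\subseteq\F_{n_1+\dots+n_k}$.

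Next, for completeness in point (1), the same inequality $m_i\le n$ shows via condition (1) that $\F^n C$ is a sub-$\C$-coalgebra of $C$, conilpotent since the truncated cofiltration $0=\F^0 C\subseteq\dots\subseteq\F^n C$ witnesses it. As in \cref{lemma:compatibility filtrations first case}, restriction induces an isomorphism of $\SLoo$-algebras $\homa(C,A)/\F_n\xrightarrow{\cong}\homa(\F^n C,A)$: it is injective by definition of $\F_n$, surjective because a linear map $\F^n C\to A$ extends to $C$ along a graded complement, and it is a strict morphism of $\SLoo$-algebras by inspection, hence an isomorphism of such. Taking the limit and using \cref{prop:compatibility convolution and (co)limits}(2) together with exhaustiveness $\colim_n\F^n C\cong C$ gives
\[
\lim_n \homa(C,A)/\F_n \cong \lim_n\homa(\F^n C,A) \cong \homa\big(\colim_n\F^n C,A\big) = \homa(C,A),
\]
so $\homa(C,A)$ is complete. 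Finally $\F_1=\{f\mid\F^1 C\subseteq\ker f\}$ equals all of $\homa(C,A)$ precisely when $\F^1 C=0$ (otherwise a linear map nonzero on $\F^1 C$ witnesses a strict inclusion), which is the properness claim.

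For point (2), the argument is dual. Here $\F_n\homa(C,A)=\hom(C,\F_nA)$ is the subspace of maps landing in $\F_nA$; it is decreasing, it is preserved by $\partial$ since $d_A(\F_nA)\subseteq\F_nA$, and it is a sub-$\SLoo$-algebra because the structure map $\gamma_A$ of the filtered $\P$-algebra $A$ sends $\P(k)\otimes\F_{n_1}A\otimes\dots\otimes\F_{n_k}A$ into $\F_{n_1+\dots+n_k}A$, whence $\ell_k(\F_{n_1},\dots,\F_{n_k})\subseteq\F_{n_1+\dots+n_k}$. A linear splitting of the surjection $A\twoheadrightarrow A/\F_nA$ makes $\hom(C,A)\twoheadrightarrow\hom(C,A/\F_nA)$ surjective with kernel exactly $\hom(C,\F_nA)$, yielding isomorphisms of $\SLoo$-algebras $\homa(C,A)/\F_n\cong\homa(C,A/\F_nA)$; taking limits, \cref{prop:compatibility convolution and (co)limits}(1) and completeness $A\cong\lim_n A/\F_nA$ give
\[
\lim_n \homa(C,A)/\F_n \cong \homa\big(C,\lim_n A/\F_nA\big) = \homa(C,A).
\]
Properness holds since $\F_1\homa(C,A)=\homa(C,\F_1A)=\homa(C,A)$ when $A$ is proper.

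The only genuinely non-formal step is the bracket-compatibility in point (1): it rests on the cofiltration axiom for $\Delta_C$ and on the elementary counting argument that in each summand of $\Delta_C^k(x)$ the weights $m_i$ cannot all strictly exceed the target weights $n_i$. Everything else is bookkeeping; the two completeness claims become immediate once one recognises them as instances of \cref{prop:compatibility convolution and (co)limits}, and the verification that the quotient isomorphisms respect the brackets — and not merely the underlying chain complexes — is routine and parallels the corresponding step in \cref{lemma:compatibility filtrations first case}.
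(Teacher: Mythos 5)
Your proof is correct and takes essentially the same route as the paper, which simply refers point (1) back to \cref{lemma:compatibility filtrations first case} and declares point (2) "similar"; your write-up supplies the generalisation from the coradical filtration to an arbitrary cofiltration (the pigeonhole argument that some $m_i\le n_{\sigma(i)}$ is exactly the detail the paper leaves implicit) and correctly identifies the completeness claims as instances of \cref{prop:compatibility convolution and (co)limits}. No gaps.
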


\begin{proof}
	The first statement was proven in \cref{lemma:compatibility filtrations first case}. The second one is similar.
\end{proof}

Next, we prove that convolution homotopy algebras behave well with respect to $\infty_\alpha$-quasi-isomorphisms of (co)algebras.

\begin{proposition}\label{prop:oo-qi of (co)algebras induce oo-qi between convolution homotopy algebras}
	Let $C,C'$ be two conilpotent $\C$-coalgebras, and let $A,A'$ be two $\P$-algebras.
	\begin{enumerate}
		\item Suppose $\Phi:C'\rightsquigarrow C$ is an $\infty_\alpha$-quasi-isomorphism of $\C$-coalgebras. Then
		\[
		\homa(\Phi,1):\homa(C,A)\rightsquigarrow\homa(C',A)
		\]
		is an $\infty$-quasi-isomorphism of $\SLoo$-algebras. If $C,C'$ are cofiltered $\C$-coalgebras, and $\Phi$ is a cofiltered $\infty_\alpha$-quasi-isomorphism, in the sense that the restriction of its linear component to every level of the filtration is a quasi-isomorphism, then $\homa(\Phi,1)$ is a filtered $\infty$-quasi-isomorphism with respect to the induced filtration on the convolution $\SLoo$-algebras.
		\item Suppose $\Psi:A\rightsquigarrow A'$ is an $\infty_\alpha$-quasi-isomorphism of $\P$-algebras. Then
		\[
		\homa(1,\Psi):\homa(C,A)\rightsquigarrow\homa(C,A')
		\]
		is an $\infty$-quasi-isomorphism of $\SLoo$-algebras. If $\Psi$ is a filtered $\infty_\alpha$-quasi-isomorphism, then $\homa(1,\Psi)$ is a filtered $\infty$-quasi-isomorphism with respect to the induced filtration on the convolution $\SLoo$-algebras.
	\end{enumerate}
\end{proposition}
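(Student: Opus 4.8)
Proposition (compatibility of convolution with $\infty_\alpha$-quasi-isomorphisms).

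\medskip

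The plan is to reduce both statements to the already-established facts that $\homa(\Phi,1)$ and $\homa(1,\Psi)$ are $\infty$-morphisms of $\SLoo$-algebras (\cref{cor:preservation of oo-morphisms 1}), together with an explicit identification of their linear components. I treat case (1) in detail; case (2) is dual. So let $\Phi:C'\rightsquigarrow C$ be an $\infty_\alpha$-morphism of $\C$-coalgebras, i.e. a morphism of $\P$-algebras $\Phi:\Cobar_\alpha C'\to\Cobar_\alpha C$; recall it is determined by its first component $\phi_1:C'\to C$ (the restriction of $\Phi$ followed by the projection $\P(C)\to C$), and that it is an $\infty_\alpha$-quasi-isomorphism precisely when $\phi_1$ is a quasi-isomorphism. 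The $\infty$-morphism $\homa(\Phi,1):\homa(C,A)\rightsquigarrow\homa(C',A)$ was defined via $\homa(\Phi,1)\in\homi(\Bar_\iota\homa(C,A),\homa(C',A))$, whose $n$-ary component on $f_1,\ldots,f_n\in\hom(C,A)$ sends $\mu_n^\vee\otimes F$ to the composite $C'\xrightarrow{\Phi}\P(C)\xrightarrow{\proj_n}\P(n)\otimes_{\S_n}C^{\otimes n}\xrightarrow{F}\P(A)\xrightarrow{\gamma_A}A$.

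\medskip

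The first key step is to compute the linear component of $\homa(\Phi,1)$. Taking $n=1$: the composite above becomes $C'\xrightarrow{\Phi}\P(C)\xrightarrow{\proj_1}C\xrightarrow{f_1}A$, and since $\proj_1\Phi=\phi_1$ by definition, the linear component of $\homa(\Phi,1)$ is precisely $\hom(\phi_1,1_A):\hom(C,A)\to\hom(C',A)$, $f\mapsto f\phi_1$. So the statement reduces to: if $\phi_1:C'\to C$ is a quasi-isomorphism of chain complexes, then $\hom(\phi_1,1_A):\hom(C,A)\to\hom(C',A)$ is a quasi-isomorphism. This is a standard fact about the inner hom of chain complexes over a field — precompatibility with quasi-isomorphisms — which holds because over a field every chain complex is both fibrant and cofibrant and $\hom(-,A)$ sends quasi-isomorphisms between such to quasi-isomorphisms; alternatively one invokes the long exact sequence / mapping cone argument, or simply the universal coefficient-type spectral sequence $H_\bullet(\hom(C,A))\cong\prod_p\hom(H_p(C),H_{p+\bullet}(A))$ which is natural in $C$. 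Either way $\hom(\phi_1,1_A)$ is a quasi-isomorphism, so $\homa(\Phi,1)$ is an $\infty$-quasi-isomorphism.

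\medskip

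For the filtered refinement, suppose $C,C'$ carry cofiltrations and $\phi_1$ restricts to a quasi-isomorphism $\F^nC'/\F^{n-1}C'\to\F^nC/\F^{n-1}C$ at every level (the meaning of cofiltered $\infty_\alpha$-quasi-isomorphism stated in the proposition, together with the compatibility $\phi_1(\F^nC')\subseteq\F^nC$ which follows from $\Phi$ being a filtered morphism of cobar constructions). The convolution algebras inherit the filtrations $\F_n\homa(C,A)=\{f\mid \F^nC\subseteq\ker f\}$, and there are natural identifications $\homa(C,A)/\F_n\homa(C,A)\cong\homa(\F^nC,A)$ as in the proof of \cref{lemma:compatibility filtrations first case}, hence $\F_n\homa(C,A)\cong\hom(C/\F^nC,A)$ and the graded pieces are $\F_n\homa(C,A)/\F_{n-1}\homa(C,A)\cong\hom(\F^nC/\F^{n-1}C,A)$. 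Under these identifications the map induced by $\homa(\Phi,1)$ on the $n$-th graded piece is $\hom(\bar\phi_1,1_A)$ where $\bar\phi_1:\F^nC'/\F^{n-1}C'\to\F^nC/\F^{n-1}C$ is the induced map, which is a quasi-isomorphism by hypothesis; so $\hom(\bar\phi_1,1_A)$ is again a quasi-isomorphism by the same inner-hom fact. Thus $\homa(\Phi,1)$ is a filtered $\infty$-quasi-isomorphism. Case (2) is entirely dual: the linear component of $\homa(1,\Psi)$ is $\hom(1_C,\psi_1)$, one uses postcompatibility of $\hom(C,-)$ with quasi-isomorphisms, and the graded pieces of the induced filtration $\F_n\homa(C,A)=\homa(C,\F_nA)$ are $\hom(C,\F_nA/\F_{n-1}A)$ on which $\homa(1,\Psi)$ acts by $\hom(1_C,\bar\psi_1)$. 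The only mild subtlety — and the point that deserves care rather than being genuinely hard — is keeping straight that "$\infty_\alpha$-quasi-isomorphism" for coalgebras is defined through $\phi_1$ (not through $\Cobar_\alpha\Phi$), so that no appeal to the behavior of the cobar construction on quasi-isomorphisms is needed; with that in hand everything is a clean consequence of \cref{cor:preservation of oo-morphisms 1} and elementary homological algebra of inner homs.
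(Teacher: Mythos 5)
Your argument is exactly the paper's: identify the linear component of $\homa(\Phi,1)$ as precomposition $\phi_1^*$ (resp.\ of $\homa(1,\Psi)$ as postcomposition $(\psi_1)_*$), observe that over a field $\hom(-,A)$ and $\hom(C,-)$ send quasi-isomorphisms to quasi-isomorphisms, and run the same argument on the (co)filtrations for the filtered refinement; the paper's proof is simply a terser version of this. The only blemish is a harmless indexing slip in your filtered case (the graded piece should be $\F_{n-1}\homa(C,A)/\F_n\homa(C,A)$, since $\F_n\homa(C,A)\subseteq\F_{n-1}\homa(C,A)$), which does not affect the conclusion.
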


\begin{proof}
	We only prove the first statement, and leave the second one to the reader. We have that the first component
	\[
	\homa(\Phi,1)_1 = \phi_1^*
	\]
	is given by the pullback by $\phi_1$. Since $\phi_1$ is a quasi-isomorphism, and we are working over a field, this is also a quasi-isomorphism, proving that $\homa(\Phi,1)$ is an $\infty$-quasi-isomorphism. The same argument applies in the case $\Phi$ is a cofiltered $\infty_\alpha$-quasi-isomorphism to obtain a filtered $\infty$-morphism of $\SLoo$-algebras.
\end{proof}

\begin{corollary}\label{cor:what happens to MC.(conv. alg.) under filtered oo-qi}
	Let $C,C'$ be two conilpotent $\C$-coalgebras, and let $A,A'$ be two $\P$-algebras.
	\begin{enumerate}
		\item Let $\Phi:C'\rightsquigarrow C$ be an $\infty_\alpha$-morphism of $\C$-coalgebras, and suppose either
		\begin{itemize}
			\item $C',C$ are cofiltered $\C$-coalgebras, we endow the convolution homotopy algebras with the induced filtrations, and $\Phi$ is a cofiltered $\infty$-quasi-isomorphism, or
			\item $A$ is a filtered $\P$-algebra, and we endow the convolution homotopy algebras with the filtrations induced by the filtration on $A$.
		\end{itemize}
		Then
		\[
		\MC_\bullet(\homa(\Phi,1)):\MC_\bullet(\homa(C,A))\longrightarrow\MC_\bullet(\homa(C',A))
		\]
		is a weak equivalence of simplicial sets.
		\item Dually, let $\Psi:A\rightsquigarrow A'$ be an $\infty_\alpha$-morphism of $\P$-algebras, and suppose either
		\begin{itemize}
			\item $A,A'$ are filtered $\P$-algebras, we endow the convolution homotopy algebras with the induced filtrations, and $\Psi$ is a filtered $\infty$-quasi-isomorphism, or
			\item $C$ is a cofiltered $\C$-coalgebra, and we endow the convolution homotopy algebras with the filtrations induced by the cofiltration on $C$.
		\end{itemize}
		Then
		\[
		\MC_\bullet(\homa(1,\Psi)):\MC_\bullet(\homa(C,A))\longrightarrow\MC_\bullet(\homa(C,A'))
		\]
		is a weak equivalence of simplicial sets.
	\end{enumerate}
\end{corollary}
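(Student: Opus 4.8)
The plan is to deduce both statements directly from the Dolgushev--Rogers theorem, \cref{thm:Dolgushev-Rogers}, by combining it with the two facts about convolution homotopy Lie algebras established just above: that they are compatible with (co)filtrations, and that they send $\infty_\alpha$-quasi-isomorphisms of (co)algebras to filtered $\infty$-quasi-isomorphisms of $\SLoo$-algebras. I will treat the first statement in detail; the second one is obtained verbatim after exchanging the two slots of $\homa(-,-)$ and invoking the second half of \cref{prop:oo-qi of (co)algebras induce oo-qi between convolution homotopy algebras} in place of the first.

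First I would pin down the filtrations and record that, in each of the two cases listed, both $\homa(C,A)$ and $\homa(C',A)$ are proper complete $\SLoo$-algebras, so that $\MC_\bullet(-)$ of \cref{subsect:Maurer-Cartan space} is defined on them and the maps in the statement make sense: when the convolution algebras carry the filtration induced by a filtration of $A$, proper completeness is inherited from $A$ by the compatibility of $\homa$ with filtrations; when the filtration comes from cofiltrations of $C$ and $C'$, one may take cofiltrations with trivial first term (for instance a suitable reindexing of the coradical filtration, cf.\ \cref{lemma:compatibility filtrations first case}), and proper completeness again holds. Next I would feed $\Phi$ into \cref{prop:oo-qi of (co)algebras induce oo-qi between convolution homotopy algebras}: since $\Phi\colon C'\rightsquigarrow C$ is an $\infty_\alpha$-quasi-isomorphism, that proposition yields that
\[
\homa(\Phi,1)\colon\homa(C,A)\rightsquigarrow\homa(C',A)
\]
is an $\infty$-quasi-isomorphism of $\SLoo$-algebras, and in each of the two cases it is moreover \emph{filtered} for the chosen filtrations: directly from the ``filtered'' clause of the proposition when $\Phi$ is a cofiltered $\infty_\alpha$-quasi-isomorphism, and, in the other case, because $\homa(\Phi,1)$ is built by precomposition with the components of $\Phi$, which does not interfere with the $A$-slot and hence respects the $A$-induced filtration levelwise, where its linear component is still a quasi-isomorphism.

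Having a filtered $\infty$-quasi-isomorphism between two proper complete $\SLoo$-algebras, I would then simply apply \cref{thm:Dolgushev-Rogers} to $\homa(\Phi,1)$ to conclude that $\MC_\bullet(\homa(\Phi,1))$ is a weak equivalence of simplicial sets, which is exactly the assertion; the dual statement for an $\infty_\alpha$-morphism $\Psi$ of $\P$-algebras follows by the same three steps. The argument is a formal concatenation of already-established results, and the only point requiring a little care is the bookkeeping around the properness hypothesis of the Dolgushev--Rogers theorem in the cofiltered case; should one wish to avoid reindexing coradical filtrations, an alternative is to work levelwise with the nilpotent quotients $\homa(C,A)/\F_n$ and pass to the limit exactly as in the concluding step of the proof of \cref{thm:Dolgushev-Rogers}.
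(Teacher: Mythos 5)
Your proof is correct and follows exactly the same route as the paper, which deduces the corollary immediately from \cref{prop:oo-qi of (co)algebras induce oo-qi between convolution homotopy algebras} together with the Dolgushev--Rogers theorem. The extra bookkeeping you supply on proper completeness and on which clause guarantees filteredness in each case is a reasonable elaboration of what the paper leaves implicit.
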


\begin{proof}
	This follows immediately from \cref{prop:oo-qi of (co)algebras induce oo-qi between convolution homotopy algebras} and the Dolgushev--Rogers theorem.
\end{proof}

\subsection{Extension of the bifunctor up to homotopy}\label{subsect:bifunctor up to homotopy}

To conclude the section, we prove that there exists an extension of the convolution homotopy algebra functor accepting $\infty$-morphisms in both slots, provided we accept to work only up to homotopy.

\begin{theorem}[{\cite{rnw18}}]\label{thm:compositions are homotopic}
	Let $\alpha:\C\to\P$ be a Koszul twisting morphism, let $\Phi:C'\rightsquigarrow C$ be an $\infty_\alpha$-morphism of $\C$-coalgebras, and let $\Psi:A\rightsquigarrow A'$ be an $\infty_\alpha$-morphism of $\P$-algebras. The two compositions
	\[
	\homa(\Phi,1)\homa(1,\Psi) \sim \homa(1,\Psi)\homa(\Phi,1)
	\]
	are homotopic.
\end{theorem}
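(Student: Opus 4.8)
By the fundamental theorem of convolution homotopy algebras (\cref{thm:fundamental thm of convolution Loo-algebras}) and \cref{lemma:hom of oo-morphism is pre or postcomposition}, both composites $\homa(\Phi,1)\homa(1,\Psi)$ and $\homa(1,\Psi)\homa(\Phi,1)$ are honest $\infty$-morphisms of $\SLoo$-algebras from $\homa(C,A)$ to $\homa(C',A')$, hence Maurer--Cartan elements of the convolution $\SLoo$-algebra $\mathfrak{g}\coloneqq\homi(\Bari\homa(C,A),\homa(C',A'))$. By the characterization of homotopy of $\infty$-morphisms via gauge equivalence (the theorem stating that two $\infty_\alpha$-morphisms are homotopic if, and only if they are gauge equivalent as Maurer--Cartan elements), the statement to prove is exactly that these two Maurer--Cartan elements lie in the same path component of $\MC_\bullet(\mathfrak{g})$. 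I would first observe that the two composites already have the same linear component, namely the composite of the pullback by $\phi_1$ and the pushforward by $\psi_1$ (which commute, acting on opposite sides of $\hom(-,-)$); the discrepancy is concentrated in the higher components, in accordance with the counterexample of \cref{subsect:failure to be a bifunctor}.

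\textbf{Step 1: the strict case.} I would prove that if either $\Phi$ or $\Psi$ is a \emph{strict} morphism, then the two composites are \emph{equal} on the nose. If $\Phi$ is strict, then $\homa(\Phi,1)=\phi_1^{*}$ is a strict morphism of $\SLoo$-algebras, and the equality $\homa(\phi_1^{*},1)\homa(1,\Psi)=\homa(1,\Psi)\homa(\phi_1^{*},1)$ is precisely the naturality of the construction $\homa(1,-)$ in the coalgebra slot: using the explicit formula $\homa(1,x)(\mu_n^\vee\otimes F)=x\,F\,\Delta_C^{n}$ from \cref{thm:fundamental thm of convolution Loo-algebras}, together with the compatibility $\Delta_C\phi_1=(1_\C\circ\phi_1)\Delta_{C'}$ of a coalgebra morphism with the (iterated) decomposition map, one checks that precomposing the $f_i$ with $\phi_1$ commutes with applying $x$. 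The case where $\Psi$ is strict is dual, using naturality of $\homa(-,1)$ in the algebra slot.

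\textbf{Step 2: reduction to the strict case via rectification.} Since $\alpha$ is Koszul, the units and counits of the bar--cobar adjunction relative to $\alpha$ are quasi-isomorphisms (\cref{thm:bar-cobar resolution for algebras}, \cref{corollary:unit of bar-cobar is a we in the Vallette model structure}), so they define strict $\infty_\alpha$-quasi-isomorphisms $\varepsilon\colon\Cobar_\alpha\Bar_\alpha A\to A$ and (adjointly) natural $\infty_\alpha$-quasi-isomorphisms relating $C$ with its rectification $\Bar_\alpha\Cobar_\alpha C$, by \cref{thm:oo-alpha-qi of algebras iff alpha-we of algebras} and \cref{thm: alpha-we are oo-alpha-qi}. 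Using the bar--cobar adjunction (\cref{thm:Rosetta stone algebras}) one factors $\Phi$ as a strict morphism of $\C$-coalgebras preceded by an $\infty_\alpha$-quasi-isomorphism $\Bar_\alpha\Cobar_\alpha C\rightsquigarrow C$, and dually $\Psi$ as a strict morphism of $\P$-algebras followed by an $\infty_\alpha$-quasi-isomorphism $A\rightsquigarrow\Cobar_\alpha\Bar_\alpha A$. Feeding these factorizations into the two composites and repeatedly commuting the strict factors past everything by Step 1 (which is legitimate because composition with a fixed $\infty$-morphism preserves homotopy classes), one reduces the claim to the corresponding statement for a pair consisting of the two canonical $\infty_\alpha$-quasi-isomorphisms. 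Now \cref{prop:oo-qi of (co)algebras induce oo-qi between convolution homotopy algebras} turns these into $\infty$-quasi-isomorphisms between the relevant convolution $\SLoo$-algebras; working with the coradical filtrations (with respect to which every coalgebra morphism is cofiltered, cf.\ the remark after \cref{def:cofiltered coalgebras}) and the induced filtrations on convolution algebras makes them \emph{filtered} $\infty$-quasi-isomorphisms, so that applying $\homi(\Bari\homa(C,A),-)$ and $\MC_\bullet(-)$ and invoking \cref{cor:what happens to MC.(conv. alg.) under filtered oo-qi} (the Dolgushev--Rogers theorem) produces weak equivalences of simplicial sets. A diagram chase on the $\pi_0$'s of these Maurer--Cartan spaces, using the equalities from Step 1 at the strictified corners, then identifies the classes of the two composites.

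\textbf{Main obstacle.} The conceptual content is light; the real work is the bookkeeping in Step 2: one must carefully track which of the auxiliary $\infty_\alpha$-quasi-isomorphisms coming out of the rectification are strict and which are not, verify that the naturality squares relating $\Phi$ (resp.\ $\Psi$) to its strictification commute up to homotopy of $\infty_\alpha$-morphisms, and ensure that all quasi-isomorphisms appearing are filtered with respect to compatible choices of filtrations so that the Dolgushev--Rogers theorem genuinely applies at each stage. Controlling this web of filtered $\infty$-quasi-isomorphisms and the signs produced by the explicit formulas of \cref{thm:fundamental thm of convolution Loo-algebras} is where essentially all the effort goes.
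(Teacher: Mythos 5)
Your proposal is correct and follows essentially the same route as the paper: rectify via the bar--cobar adjunction so that one of the two morphisms becomes strict (where the square commutes on the nose), then use the counit $\epsilon_A$ to produce a filtered quasi-isomorphism of convolution $\SLoo$-algebras and invoke the Dolgushev--Rogers theorem to conclude that the two composites, having the same image under the induced bijection on $\pi_0$ of Maurer--Cartan spaces, are gauge equivalent and hence homotopic. The only cosmetic difference is that the paper rectifies just one of $\Phi$, $\Psi$ (whichever is opposite to the slot whose filtration is used), which spares most of the bookkeeping you flag as the main obstacle.
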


\begin{proof}
	Denote by $R(A)\coloneqq\Cobar_\alpha\Bar_\alpha A$ the bar-cobar resolution of $A$, and similarly for $A'$. Since $\alpha$ is Koszul, the counit of the adjunction
	\[
	\epsilon_A:R(A)\longrightarrow A
	\]
	is a quasi-isomorphism by \cref{thm:bar-cobar resolution for algebras}. The rectification of the $\infty_\alpha$-morphism $\Psi$ is given by the strict morphism
	\[
	R(\Psi):\Cobar_\alpha\Bar_\alpha A\xrightarrow{\Cobar_\alpha\Psi}\Cobar_\alpha\Bar_\alpha A'\ .
	\]
	The proof is outlined by the following commutative diagram.
	
	\begin{center}
		\begin{tikzpicture}
		\node (a) at (0,0){$\homa(C,A)$};
		\node (b) at (6,6){$\homa(C',A)$};
		\node (c) at (8,0){$\homa(C',R(A'))$};
		\node (d) at (4,0){$\homa(C,R(A))$};
		\node (e) at (6,3.5){$\homa(C',R(A))$};
		\node (f) at (6,-3.5){$\homa(C,R(A'))$};
		\node (g) at (6,-6){$\homa(C,A')$};
		\node (h) at (12,0){$\homa(C',A')$};
		
		\draw[->,line join=round,decorate,decoration={zigzag,segment length=4,amplitude=.9,post=lineto,post length=2pt}] (a) to [out = 90, in = -180] node[above,sloped]{$\homa(\Phi,1)$} (b);
		\draw[->,line join=round,decorate,decoration={zigzag,segment length=4,amplitude=.9,post=lineto,post length=2pt}] (a) to [out = -90, in = 180] node[below,sloped]{$\homa(1,\Psi)$} (g);
		\draw[<-] (a) -- node[above]{\scriptsize $\homa(1,\epsilon_A)$} node[below]{\scriptsize filtered qi} (d);
		\draw[<-] (b) -- node[above,sloped]{\scriptsize $\homa(1,\epsilon_A)$} node[below,sloped]{\scriptsize filtered qi} (e);
		\draw[<-] (g) -- node[above,sloped]{\scriptsize $\homa(1,\epsilon_{A'})$} node[below,sloped]{\scriptsize filtered qi} (f);
		\draw[->,line join=round,decorate,decoration={zigzag,segment length=4,amplitude=.9,post=lineto,post length=2pt}] (d) to [out = 90, in = -155] node[above,sloped]{\small $\homa(\Phi,1)$} (e);
		\draw[->,line join=round,decorate,decoration={zigzag,segment length=4,amplitude=.9,post=lineto,post length=2pt}] (f) to [out = 25, in = -90] node[below,sloped]{\small $\homa(\Phi,1)$} (c);
		\draw[->,line join=round,decorate,decoration={zigzag,segment length=4,amplitude=.9,post=lineto,post length=2pt}] (b) to [out = 0, in = 90] node[above,sloped]{$\homa(1,\Psi)$} (h);
		\draw[->,line join=round,decorate,decoration={zigzag,segment length=4,amplitude=.9,post=lineto,post length=2pt}] (g) to [out = 0, in = -90] node[below,sloped]{$\homa(\Phi,1)$} (h);
		\draw[->] (d) to [out = -90, in = 155] node[below,sloped]{\small $\homa(1,R(\Psi))$} (f);
		\draw[->] (e) to [out = -25, in = 90] node[above,sloped]{\small $\homa(1,R(\Psi))$} (c);
		\draw[<-] (h) -- node[above]{\scriptsize $\homa(1,\epsilon_{A'})$} node[below]{\scriptsize filtered qi} (c);
		\end{tikzpicture}
	\end{center}
	The innermost square is commutative since $R(\Psi)$ is a strict morphism of $\P$-algebras, and the maps passing from the outer rim to the inner one are filtered quasi-isomorphisms. Notice that all squares are commutative, except for the outer one, which fails to be commutative at $\homa(C,A)$.
	
	\medskip
	
	Now consider the morphism of $\SLoo$-algebras
	\small
	\[
	\homi(\Bar_\iota\homa(C,A),\homa(C',A'))\xrightarrow{\homi(\Bar_\iota\homa(1,\epsilon_A),1)}\homi(\Bar_\iota\homa(C,R(A)),\homa(C',A'))\ .
	\]
	\normalsize
	It is a filtered quasi-isomorphism, and it is given on Maurer--Cartan elements by precomposition with $\homa(1,\epsilon_A)$. The two compositions
	\[
	\homa(\Phi,1)\homa(1,\Psi)\quad\text{and}\quad\homa(1,\Psi)\homa(\Phi,1)
	\]
	are naturally elements of $\homi(\Bar_\iota\homa(C,A),\homa(C',A'))$ and are mapped to the same elements, and thus, by the Dolgushev--Rogers theorem, they are homotopic.
\end{proof}

\begin{remark}
	The proof above supposes that we are filtering our convolution homotopy algebras with the filtration induced by a filtration on the $\C$-coalgebras --- usually the coradical filtration. If one filters them by a filtration induced by filtrations on the $\P$-algebras, then the exact same proof goes through with the sole difference that one has to rectify the $\infty_\alpha$-morphism $\Phi$ instead of $\Psi$.
\end{remark}

\begin{remark}
	The assumption that the twisting morphism is Koszul is necessary for the proof of \cref{thm:compositions are homotopic}. If one removes this assumption, then it is possible to find a counterexample to the conclusion of the result by taking e.g. the zero twisting morphism $\as^\vee\to\as$ and some explicit (co)algebras. For one such example, the reader is invited to consult \cite[Appendix A]{rnw18}.
\end{remark}

\section{Compatibility with the homotopy transfer theorem}

Another very powerful tool of homotopical algebra is given by the homotopy transfer theorem, of which we have seen a case in \cref{subsect:HTT}. In this section, we will start by giving a more general case of this theorem, where one does homotopy transfer for algebras over an operad of the form $\Cobar\C$, and not only for minimal models of a Koszul operad. Then we will explain and prove how the convolution $\L_\infty$-algebra functor is compatible with the homotopy transfer theorem.

\subsection{A generalized homotopy transfer theorem for algebras}\label{subsect:generalized HTT}

The homotopy transfer theorem holds in a slightly more general situation than the one presented in \cref{subsect:HTT} with the same exact formul{\ae}, as was proven in \cite[Thm. 1.5]{ber14transfer}.

\medskip

Let $\P$ be an operad of the form $\P=\Cobar\C$ for some reduced cooperad $\C$. In particular, $\P$ is a cofibrant operad. Let $A$ be a $\P$-algebra, and suppose that we have a contraction of chain complexes
\begin{center}
	\begin{tikzpicture}
	\node (a) at (0,0){$A$};
	\node (b) at (2,0){$B$};
	
	\draw[->] (a)++(.3,.1)--node[above]{\mbox{\tiny{$p$}}}+(1.4,0);
	\draw[<-,yshift=-1mm] (a)++(.3,-.1)--node[below]{\mbox{\tiny{$i$}}}+(1.4,0);
	\draw[->] (a) to [out=-150,in=150,looseness=4] node[left]{\mbox{\tiny{$h$}}} (a);
	\end{tikzpicture}
\end{center}
The structure of $\P$-algebra on $A$ is equivalent to a twisting morphism $\varphi_A\in\Tw(\C,\End_A)$. Define
\[
\varphi_B = \left(\C\xrightarrow{\Delta_{\C}^\mathrm{monadic}}\T^c\big(\overline{\C}\big)\xrightarrow{\T^c(s\varphi_A)}\Bar(\End_A)\xrightarrow{\vdl_B}\End_B\right),
\]
and
\[
i_\infty\coloneqq\left(\C\circ B\xrightarrow{\Delta_{\C}^\mathrm{monadic}\circ1_B}\T^c\big(\overline{\C}\big)\circ B\xrightarrow{\T^c(s\varphi_A)\circ1_B}\Bar(\End_A)\circ B\xrightarrow{\vdl_B^i\circ1_B}\End^B_A\circ B\longrightarrow A\right)\ .
\]
The same proof as for the case presented in \cref{subsect:HTT} gives the following result.

\begin{theorem}[Homotopy transfer theorem]\label{thm:generalized HTT}\index{Homotopy!transfer theorem}
	The map $\varphi_B:\C\to\End_B$ is a twisting morphism in $\Tw(\C,\End_B)$, and therefore defines a $\P$-algebra structure on $B$. The map $i_\infty$ defines an $\infty_\iota$-quasi-iso\-mor\-phism $B\rightsquigarrow A$ of $\P$-algebras, and the map $p$ can also be extended to an $\infty_\iota$-quasi-isomorphism $p_\infty$ such that $p_\infty i_\infty = 1_B$.
\end{theorem}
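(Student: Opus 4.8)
The plan is to follow the same strategy as the classical case of \cref{subsect:HTT}, the only new ingredient being that we work with a generic reduced cooperad $\C$ in place of $\P^{\antishriek}$. First I would recall that a $\P$-algebra structure on $A$, where $\P=\Cobar\C$, is by \cref{thm:Rosetta stone operads} and \cref{prop:P-alg is map P->End} precisely the data of a twisting morphism $\varphi_A\in\Tw(\C,\End_A)$, equivalently a morphism of cooperads $f_A\colon\C\to\Bar(\End_A)$. The heart of the argument is the \emph{van der Laan morphism}: by the quoted theorem of \cite{vdL03} (cf. \cite[Prop. 10.3.2]{LodayVallette}) the map $\vdl_B\colon\Bar(\End_A)\to\Bar(\End_B)$ constructed from the contraction is a morphism of cooperads in graded vector spaces. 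One checks directly from the defining recursion of $\tau^h(f)$ and the side conditions $h^2=0$, $ph=0$, $hi=0$ that $\vdl_B$ is in fact a \emph{chain map} — this is where the contraction hypothesis (as opposed to a mere homotopy retraction) is used. Composing, $\vdl_B f_A\colon\C\to\Bar(\End_B)$ is a morphism of cooperads, hence corresponds to a twisting morphism $\varphi_B\in\Tw(\C,\End_B)$, which is exactly the transferred $\P$-algebra structure on $B$; the explicit formula $\varphi_B=\vdl_B\,\T^c(s\varphi_A)\,\Delta_\C^{\mathrm{mon}}$ follows by unwinding the definition of the cofree cooperad adjunction (this is the formula of \cite{gctv12}).

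Next I would treat the $\infty_\iota$-quasi-isomorphism $i_\infty\colon B\rightsquigarrow A$. Recall from \cref{prop:extended bar to oo-morphisms} and \cref{def:infinity morphisms relative to a twisting morphism} that an $\infty_\iota$-morphism of $\P$-algebras is a morphism of $\C$-coalgebras $\Bar_\iota B\to\Bar_\iota A$, equivalently (projecting onto cogenerators) a relative twisting morphism $\C(B)\to A$; the map $i_\infty$ defined in the statement is exactly such a map, and one must verify it satisfies the Maurer--Cartan equation of \cref{equation:MC for relative twisting morphisms}. This is a diagram chase: one expands $\partial(i_\infty)+\star_\iota(i_\infty)$, uses coassociativity of $\Delta_\C^{\mathrm{mon}}$ together with the compatibility of $\vdl_B^i$ and $\vdl_B$ (the "grafting" identity relating $\tau^h(f)$ for a tree and its one-vertex-deeper subtrees, which encodes $1-ip=dh+hd$), and the fact that $\varphi_A$ is already a twisting morphism. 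That $i_\infty$ is an $\infty_\iota$-\emph{quasi}-isomorphism is immediate: its linear component is the chain map $i$, which is a quasi-isomorphism since $1-ip=dh+hd$. For the extension $p_\infty$ of $p$ with $p_\infty i_\infty=1_B$, I would simply invoke the explicit formula and the accompanying verification in \cite[Prop. 10.3.9]{LodayVallette}, \cite{ber14transfer}, and \cite[Thm. 5]{dsv16}, which are stated there for exactly this level of generality (algebras over $\Cobar\C$).

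I expect the main obstacle to be the bookkeeping in the Maurer--Cartan verification for $i_\infty$, i.e. showing that the "tree-summed" formula is a genuine $\infty_\iota$-morphism rather than just a map of the right degree. The combinatorics is governed by how $\Delta_\C^{\mathrm{mon}}$ interacts with the recursive definition of $\tau^h(-)$ and $\vdl_B^i$, and getting the decomposition of trees into a root corolla grafted with subtrees to match the infinitesimal decomposition $\Delta_{(1)}$ of $\C$ requires care — this is the analogue of \cref{lemma:technical lemma 2} in the present tree-indexed setting. Everything else (the cooperad-morphism property of $\vdl_B$, the identification of twisting morphisms with algebra structures, the quasi-isomorphism claims) is either quoted verbatim from \cite{vdL03} and \cite{LodayVallette} or follows from the formal adjunctions of \cref{subsection:operadic bar and cobar} and \cref{subsection:relative bar and cobar}, so no genuinely new idea beyond the classical proof is needed; one only observes that nowhere in that proof was the Koszulness of $\P$ or the quadraticity of $\C$ used.
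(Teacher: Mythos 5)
Your proposal is correct and follows essentially the same route as the paper, which itself proves this theorem by observing that the classical argument of \cref{subsect:HTT} (via the van der Laan morphism of cooperads and the identification of $\Cobar\C$-algebra structures with twisting morphisms $\C\to\End_A$) never uses Koszulness of $\P$ or quadraticity of $\C$, and otherwise defers to \cite[Thm. 10.3.6 and Prop. 10.3.9]{LodayVallette} and \cite[Thm. 1.5]{ber14transfer}. Your identification of the Maurer--Cartan verification for $i_\infty$ as the only genuinely combinatorial step, and of the tree-grafting/infinitesimal-decomposition compatibility as its crux, matches where the actual work lies in those references.
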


Let $\C,\C'$ be two cooperads, and let $f:\C'\to\C$ be a morphism of cooperads. Let $A$ be a $\Cobar\C$-algebra, and suppose we have a contraction
\begin{center}
	\begin{tikzpicture}
	\node (a) at (0,0){$A$};
	\node (b) at (2,0){$B$};
	
	\draw[->] (a)++(.3,.1)--node[above]{\mbox{\tiny{$p$}}}+(1.4,0);
	\draw[<-,yshift=-1mm] (a)++(.3,-.1)--node[below]{\mbox{\tiny{$i$}}}+(1.4,0);
	\draw[->] (a) to [out=-150,in=150,looseness=4] node[left]{\mbox{\tiny{$h$}}} (a);
	\end{tikzpicture}
\end{center}
We have two ways of putting a $\Cobar\C'$-algebra structure on $B$:
\begin{enumerate}
	\item The homotopy transfer theorem applied to the $\Cobar\C$-algebra $A$ gives a $\Cobar\C$-algebra structure on $B$. We obtain a $\Cobar\C'$-algebra structure on $B$ by pullback by the morphism of operads $\Cobar f:\Cobar\C'\to\Cobar\C$.
	\item We obtain a $\Cobar\C'$-algebra structure on $A$ by pullback by the morphism of operads $\Cobar f:\Cobar\C'\to\Cobar\C$. Then the homotopy transfer theorem applied to this algebra gives us a $\Cobar\C'$-algebra structure on $B$.
\end{enumerate}

\begin{proposition}\label{prop:HTT and pullbacks}
	The two $\Cobar\C'$-algebra structures thus obtained on $B$ are the same.
\end{proposition}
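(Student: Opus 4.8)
The plan is to trace both $\Cobar\C'$-algebra structures through the explicit formula for the transferred twisting morphism provided by the generalized homotopy transfer theorem (\cref{thm:generalized HTT}), and to check that they agree on the nose. Recall that a $\Cobar\C$-algebra structure on $A$ is the same as a twisting morphism $\varphi_A\in\Tw(\C,\End_A)$, and a $\Cobar\C'$-algebra structure is a twisting morphism in $\Tw(\C',\End_A)$; the restriction of structure along $\Cobar f:\Cobar\C'\to\Cobar\C$ corresponds, under the Rosetta stone \cref{thm:Rosetta stone operads}, simply to precomposition with $f$, i.e. to the map $\Tw(\C,\P)\to\Tw(\C',\P)$, $\alpha\mapsto \alpha f = f^*\alpha$. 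So the two structures on $B$ to be compared are:
\begin{enumerate}
	\item $(\varphi_B)f'$, where $\varphi_B = \vdl_B \circ \T^c(s\varphi_A)\circ\Delta_\C^{\mathrm{mon}}$ is the transfer of $\varphi_A$ along the contraction, and $f' = f$ viewed as the relevant map;
	\item $\varphi'_B = \vdl_B\circ\T^c(s(\varphi_A f))\circ\Delta_{\C'}^{\mathrm{mon}}$, the transfer of the pulled-back structure $\varphi_A f\in\Tw(\C',\End_A)$.
\end{enumerate}

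First I would establish the compatibility of the monadic decomposition map with morphisms of cooperads: for $f:\C'\to\C$ a morphism of cooperads, the square relating $\Delta_{\C'}^{\mathrm{mon}}:\overline{\C'}\to\T^c(\overline{\C'})$ and $\Delta_\C^{\mathrm{mon}}:\overline{\C}\to\T^c(\overline{\C})$ via $f$ and $\T^c(f)$ commutes, i.e. $\T^c(\bar f)\,\Delta_{\C'}^{\mathrm{mon}} = \Delta_\C^{\mathrm{mon}}\,\bar f$. This is immediate from the iterative definition of $\Delta_\C^{\mathrm{mon}}$ as a colimit of the $\widetilde\Delta_n$ on \cref{page:monadic decomposition map}, together with the fact that $f$ commutes with the (reduced) decomposition maps $\overline\Delta$ and with the infinitesimal decompositions, so one proves $\T^c(\bar f)\widetilde\Delta_n^{\C'} = \widetilde\Delta_n^{\C}\bar f$ by an easy induction on $n$ and passes to the colimit. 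Then I would observe that on the level of trees, $\T^c(s\varphi_A)\circ\T^c(\bar f) = \T^c(s\varphi_A \circ \bar f) = \T^c(s(\varphi_A f))$, since $\T^c$ is functorial and $s(\varphi_A f) = s\varphi_A\circ f$ up to the (trivial, degree-preserving) suspension bookkeeping. Finally, the van der Laan morphism $\vdl_B:\Bar(\End_A)\to\Bar(\End_B)$ (or rather its composite landing in $\End_B$) does not see the cooperad at all — it is built purely from the contraction data $(i,p,h)$ and the grafting of trees — so it is literally the same map in both computations. Concatenating these three observations gives
\[
\varphi'_B = \vdl_B\,\T^c(s(\varphi_A f))\,\Delta_{\C'}^{\mathrm{mon}} = \vdl_B\,\T^c(s\varphi_A)\,\T^c(\bar f)\,\Delta_{\C'}^{\mathrm{mon}} = \vdl_B\,\T^c(s\varphi_A)\,\Delta_\C^{\mathrm{mon}}\,\bar f = \varphi_B\, f,
\]
which is exactly the statement that the two transferred structures coincide. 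The same chain of identities applied to $\vdl_B^i$ in place of $\vdl_B$ shows that the induced $\infty$-quasi-isomorphisms $i_\infty$ (and likewise $p_\infty$) agree as well, so the identification is compatible with the transferred $\infty$-morphisms, not merely with the algebra structures.

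The only genuinely delicate point — the "main obstacle", such as it is — is the bookkeeping in the first step: one must be careful that the $\widetilde\Delta_n$ appearing in the definition of $\Delta^{\mathrm{mon}}$ involve both the identity terms $1_\C$ and the iterated reduced coproducts, and check that a morphism of cooperads interacts correctly with the subtraction of the identity cooperation $\id$ used to define $\overline\Delta_\C$. Since $f(\id_{\C'}) = \id_\C$ by definition of a morphism of coaugmented cooperads, this causes no trouble, but it is the place where a careless argument could go wrong, so I would spell out that compatibility explicitly. Everything else is formal functoriality of $\T^c$ and the observation that $\vdl_B$ is cooperad-agnostic.
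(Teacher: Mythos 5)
Your proof is correct and follows essentially the same route as the paper: the paper simply exhibits the two structures as the two extremal paths of the commutative square relating $\Delta_{\C'}^{\mathrm{mon}}$ and $\Delta_\C^{\mathrm{mon}}$ via $f$ and $\T^c(f)$, composed with $\T^c(s\varphi_A)$ and the (cooperad-independent) van der Laan map, and declares the diagram obviously commutative. Your write-up just makes explicit the three compatibilities that the paper leaves implicit.
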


\begin{proof}
	The proof is given by the following diagram. The two algebra structures on $B$ are the two extremal paths.
	\begin{center}
		\begin{tikzpicture}
		\node (a) at (0,0){$\C'$};
		\node (b) at (3,0){$\T^c(\C')$};
		\node (c) at (6.5,0){$\T^c(s\End_A)$};
		\node (d) at (9,0){$\End_B$};
		\node (e) at (0,-2){$\C$};
		\node (f) at (3,-2){$\T^c(\C)$};
		
		\draw[->] (a) -- node[above]{$\Delta_{\C'}^{\mathrm{monadic}}$} (b);
		\draw[->] (b) -- node[above]{$\T^c(s\varphi_{(\Cobar f)^*A})$} (c);
		\draw[->] (c) -- node[above]{$\vdl_B$} (d);
		\draw[->] (a) -- node[left]{$f$} (e);
		\draw[->] (b) -- node[left]{$\T^c(f)$} (f);
		\draw[->] (e) -- node[above]{$\Delta_\C^{\mathrm{monadic}}$} (f);
		\draw[->] (f) to [out = 0, in = -110] node[below right]{$\T^c(s\varphi_A)$} (c);
		\end{tikzpicture}
	\end{center}
	The diagram is obviously commutative, concluding the proof.
\end{proof}

\subsection{Compatibility between convolution algebras and homotopy transfer}\label{subsect:compatibility HTT algebra side}

We can now prove that taking convolution algebras is compatible with the homotopy transfer theorem.

\medskip

Let $\C$ be a cooperad, and denote by $\iota:\C\to\Cobar\C$ the canonical twisting morphism. Let $A$ be a $\Cobar\C$-algebra, let $B$ be a chain complex and suppose we have a contraction
\begin{center}
	\begin{tikzpicture}
	\node (a) at (0,0){$A$};
	\node (b) at (2,0){$B$};
	
	\draw[->] (a)++(.3,.1)--node[above]{\mbox{\tiny{$p$}}}+(1.4,0);
	\draw[<-,yshift=-1mm] (a)++(.3,-.1)--node[below]{\mbox{\tiny{$i$}}}+(1.4,0);
	\draw[->] (a) to [out=-150,in=150,looseness=4] node[left]{\mbox{\tiny{$h$}}} (a);
	\end{tikzpicture}
\end{center}
Let $C$ be a $\C$-coalgebra. Then, we have two ways to endow the chain complex $\hom(C,B)$ with a $\SLoo$-algebra structure.
\begin{enumerate}
	\item We consider the $\SLoo$-algebra $\hom^\iota(C,A)$ . The contraction above induces a contraction
	\begin{center}
		\begin{tikzpicture}
		\node (a) at (0,0){$\hom^\iota(C,A)$};
		\node (b) at (3.5,0){$\hom(C,B)$};
		
		\draw[->] (a)++(1,.1)--node[above]{\mbox{\tiny{$p_*$}}}+(1.4,0);
		\draw[<-,yshift=-1mm] (a)++(1,-.1)--node[below]{\mbox{\tiny{$i_*$}}}+(1.4,0);
		\draw[->] (a) to [out=-160,in=160,looseness=4] node[left]{\mbox{\tiny{$h_*$}}} (a);
		\end{tikzpicture}
	\end{center}
	and thus the homotopy transfer theorem gives a $\SLoo$-algebra structure on $\hom(C,B)$. We will denote this algebra by $\hom^{HTT}(C,B)$.
	\item The generalized homotopy transfer theorem gives a $\Cobar\C$-algebra structure on $B$. Then we take the convolution $\SLoo$-algebra $\hom^\iota(C,B)$.
\end{enumerate}

\begin{theorem}\label{thm:compatibility with htt}
	The two $\SLoo$-algebra structures on $\hom(C,B)$ are the same. Moreover, we have that
	\[
	(i_*)_\infty = \hom_r^\iota(1,i_\infty),\qquad\text{and}\qquad(p_*)_\infty = \hom_r^\iota(1,p_\infty)\ .
	\]
\end{theorem}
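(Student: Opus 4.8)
The plan is to reduce everything to the explicit formul\ae\ of the van der Laan morphism and the monadic decomposition map, and to exploit the functoriality of the van der Laan morphism established in \cref{prop:HTT and pullbacks}. The key observation is that both the transferred $\SLoo$-algebra structure on $\hom(C,B)$ obtained via route (1) and the convolution $\SLoo$-algebra structure obtained via route (2) are, after unwinding, pushforwards along $\manin_\iota\colon\SLoo\to\hom(\C,\Cobar\C)$ of $\Cobar\C$-algebra structures on $B$; and those two $\Cobar\C$-algebra structures coincide by the generalized homotopy transfer theorem applied in two slightly different ways.

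\textbf{Key steps, in order.} First I would set up notation: the $\Cobar\C$-algebra structure on $A$ is a twisting morphism $\varphi_A\in\Tw(\C,\End_A)$, and by \cref{thm:convolution Loo-algebras} the convolution $\SLoo$-algebra $\hom^\iota(C,A)$ is the restriction of structure along $\manin_\iota$ of the $\hom(\C,\Cobar\C)$-algebra $\hom(C,A)$ from \cref{thm:hom of algebras is algebras over hom}. Second, I would observe that the induced contraction $(i_*,p_*,h_*)$ on $\hom(C,A)$ is a contraction of chain complexes only after forgetting algebra structures, so the homotopy transfer theorem of \cref{thm:generalized HTT} applies to it as a $\Cobar(\hom(\C,\Cobar\C)^{\text{something}})$-algebra --- but in fact the cleaner route is: the transferred structure is governed by the van der Laan morphism $\vdl_{\hom(C,B)}\colon\Bar(\End_{\hom(C,A)})\to\Bar(\End_{\hom(C,B)})$. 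Third --- and this is the crux --- I would compare this with $\vdl_B\colon\Bar(\End_A)\to\Bar(\End_B)$. The point is that $\End_{\hom(C,A)}$ receives a map from $\hom(\C,\End_A)$ (pre/post-composing linear maps $C\to A$ with operations acting through $\C$), the homotopy $h_*=h_\ast$ acts "pointwise" via $h$, and a tree computation shows $\vdl_{\hom(C,B)}$ restricted along this map factors through $\vdl_B$ composed with the same "pointwise" map into $\End_{\hom(C,B)}$. Concretely: for a tree $\tau(f)$ with vertices labelled by elements of $s\End_{\hom(C,A)}$ coming from $s\hom(\C,\End_A)$, the quantity $\tau^{h_*}(f)$ applied to $\varphi_1,\dots,\varphi_n\in\hom(C,B)$ equals, after expanding via the coproduct $\Delta_C$ and \cref{lemma:technical lemma 1}, an expression $\gamma_A\bigl(\tau^h(g)\circ(i\varphi_1,\dots,i\varphi_n)\bigr)\Delta_C^n$ where $g$ labels the same tree by elements of $s\End_A$. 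Fourth, I would combine this with the explicit formul\ae\ for $\varphi_B$ and $i_\infty,p_\infty$ in \cref{subsect:generalized HTT} (themselves built from $\Delta_\C^{\mathrm{monadic}}$, $\T^c(s\varphi_A)$, and $\vdl_B$, resp.\ $\vdl_B^i$) to match, term by term with \cref{thm:fundamental thm of convolution Loo-algebras}, the transferred structure $\hom^{HTT}(C,B)$ with $\manin_\iota^*\bigl(\hom(C,B)\bigr)$ where $B$ carries the transferred $\Cobar\C$-structure, and simultaneously to identify $(i_\ast)_\infty$ with $\hom_r^\iota(1,i_\infty)$ and $(p_\ast)_\infty$ with $\hom_r^\iota(1,p_\infty)$. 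Since the $\SLoo$-algebra structures are then seen to agree on the nose and the $\infty$-morphisms $(i_\ast)_\infty,(p_\ast)_\infty$ are determined by the same van der Laan data, \cref{lemma:isomorphisms of P-algebras} (or rather a direct check) finishes it.

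\textbf{Main obstacle.} The hard part will be the precise tree-level bookkeeping in the third step: making rigorous the claim that $\vdl_{\hom(C,B)}$ "factors pointwise" through $\vdl_B$. One must track carefully how the single homotopy $h_*$ on $\hom(C,A)$ --- which is literally postcomposition by $h$ --- interacts with the fact that each edge of a tree in $\Bar(\End_{\hom(C,A)})$ is, in the convolution setting, a linear map built from $\alpha=\iota$, several copies of $h$, and the comultiplication $\Delta_C$. Untangling the Koszul signs when the maps $\varphi_i$ (of various degrees) are permuted past the tree operations, and checking that the combinatorial coefficients $F(\tau)$ in the formula of \cref{subsect:explicit formula for gauges}-style expansions match on both sides, is where all the real work lies; I expect this to reduce to an inductive argument on the tree $\tau$ using the recursion $\tau=c_k\circ(\tau_1,\dots,\tau_k)$ defining $\tau^h$, together with the compatibility of $\Delta_C^n$ with the coproduct already exploited in \cref{lemma:technical lemma 2}. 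Everything else --- the identification of the two $\Cobar\C$-structures on $B$ via \cref{prop:HTT and pullbacks}, and the bookkeeping for $i_\infty$ and $p_\infty$ --- should then follow formally.
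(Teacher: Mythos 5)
Your proposal is correct and follows essentially the same route as the paper: both structures are expanded explicitly via the van der Laan morphism and the monadic decomposition map, and the comparison is reduced to a tree-level identity proved by induction on the recursive decomposition $\tau = c_k\circ(\tau_1,\ldots,\tau_k)$, with the $\infty$-morphism statements following from the analogous computation for $\vdl_B^i$. The one device you should make precise is the family of tree-shaped iterated coproducts $\Delta_C^\tau\colon C\to\T^c(\C)\circ C$ together with the identity $(\Delta_\C^{\mathrm{mon}}\circ 1_C)\Delta_C^n=\sum_{\tau\in\RT_n}\Delta_C^\tau$ --- your expression with a bare $\Delta_C^n$ should really carry $\Delta_C^\tau$, which is exactly what makes the term-by-term matching over trees work.
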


The proof of this result is postponed to later in this section.

\begin{corollary}
	Let $f:\D\to\C$ be a morphism of cooperads, let
	\[
	\alpha\coloneqq f^*\iota = \left(\D\stackrel{f}{\longrightarrow}\C\stackrel{\iota}{\longrightarrow}\Cobar\C\right),
	\]
	and now suppose that $C$ is a conilpotent $\D$-coalgebra. Then the same constructions as for \cref{thm:compatibility with htt} can be done to obtain two $\SLoo$-algebra structures on $\hom(C,B)$. Once again, the two structures are the same.
\end{corollary}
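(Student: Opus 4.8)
The plan is to reduce this corollary to \cref{thm:compatibility with htt} by the same pullback argument already used in \cref{prop:HTT and pullbacks}. Recall that by \cref{lemma:naturality of convolution algebras wrt compositions}(\ref{pt:1 naturality convolution}) we have an equality of $\SLoo$-algebras
\[
\hom^{f^*\iota}(C,A) = \hom^\iota(f_*C,A)
\]
whenever $C$ is a conilpotent $\D$-coalgebra, where $f_*C$ denotes $C$ viewed as a conilpotent $\C$-coalgebra by pushforward of structure along $f$. The same identity holds with $A$ replaced by $B$ once $B$ has been given a $\Cobar\C$-algebra structure. So the two constructions we must compare --- transferring the structure on $\hom^{f^*\iota}(C,A)$ along the contraction $(i_*,p_*,h_*)$, versus first transferring $A$ to $B$ and then forming $\hom^{f^*\iota}(C,B)$ --- become, after this identification, exactly the two constructions compared in \cref{thm:compatibility with htt} applied to the conilpotent $\C$-coalgebra $f_*C$.

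First I would spell out that the identification $\hom^{f^*\iota}(C,A) = \hom^\iota(f_*C,A)$ is compatible with the induced contractions: the maps $i_*, p_*, h_*$ on $\hom^{f^*\iota}(C,A)$ are literally the postcompositions $i\circ-$, $p\circ-$, $h\circ-$, and these do not see the coalgebra structure at all, so they agree with the corresponding maps on $\hom^\iota(f_*C,A)$ under the identification. Hence the transferred $\SLoo$-structure on $\hom(C,B)$ obtained via route (1) for $f^*\iota$ coincides with the one obtained via route (1) of \cref{thm:compatibility with htt} for $f_*C$. Next, for route (2): the generalized homotopy transfer theorem gives a $\Cobar\C$-algebra structure on $B$ (this is the same structure as in \cref{thm:compatibility with htt}, since the contraction and the $\Cobar\C$-algebra $A$ are unchanged), and then $\hom^{f^*\iota}(C,B) = \hom^\iota(f_*C,B)$ again by \cref{lemma:naturality of convolution algebras wrt compositions}. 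Applying \cref{thm:compatibility with htt} to $f_*C$ now yields that these two $\SLoo$-structures on $\hom(C,B) = \hom(f_*C,B)$ agree, which is precisely the claim. Moreover the same theorem gives $(i_*)_\infty = \hom^\iota_r(1,i_\infty)$ and $(p_*)_\infty = \hom^\iota_r(1,p_\infty)$ for $f_*C$, and pulling this back through the identification gives the analogous statement for $C$ over $\D$.

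The only genuine point to check is that $\D$ being conilpotent is preserved under $f_*$ --- that is, that $f_*C$ really is a conilpotent $\C$-coalgebra so that \cref{thm:compatibility with htt} applies. This is straightforward: pushforward along a morphism of cooperads sends the coradical filtration of $C$ into (a subfiltration of) the coradical filtration of $f_*C$, so exhaustiveness is preserved. I expect this to be the main (and essentially only) obstacle, and it is a minor one; everything else is a matter of transporting \cref{thm:compatibility with htt} across the natural isomorphisms of \cref{lemma:naturality of convolution algebras wrt compositions}, with no new computation required. One could alternatively give a direct diagrammatic proof in the style of \cref{prop:HTT and pullbacks}, inserting the extra layer $\Cobar f$ into the van der Laan diagram, but the reduction above is cleaner and reuses the work already done.
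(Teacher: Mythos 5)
Your proposal is correct and follows essentially the same route as the paper: both reduce to \cref{thm:compatibility with htt} via the identification $\hom^{f^*\iota}(C,-) = \hom^\iota(f_*C,-)$ from \cref{lemma:naturality of convolution algebras wrt compositions}, and both observe that the transferred structure only depends on the $\SLoo$-algebra $\hom^\alpha(C,A) = \hom^\iota(f_*C,A)$ and the (unchanged) contraction, the paper making this explicit through the van der Laan formula. Your additional remark on conilpotency of $f_*C$ is a reasonable, if minor, point that the paper leaves implicit.
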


\begin{proof}
	By \cref{lemma:naturality of convolution algebras wrt compositions}(\ref{pt:1 naturality convolution}) and \cref{thm:compatibility with htt}, we have
	\[
	\hom^\alpha(C,B) = \hom^\iota(f_*C,B) = \hom^{HTT}(f_*C,B)\ .
	\]
	Then for $s^{-1}\mu_n^\vee\otimes F\in s^{-1}\com^\vee\otimes_{\S_n}\hom(C,B)^{\otimes n}$ we notice that
	\begin{align*}
	\gamma_{\hom^{HTT}(f_*D,B)}(s^{-1}\mu_n^\vee\otimes F) =&\ \vdl_{\hom(D,B)}\T(\varphi_{\hom^\iota(f_*D,A)})\Delta_{\com^\vee}^{\mathrm{monadic}}(\mu_n^\vee)\\
	=&\ \vdl_{\hom(D,B)}\T(\varphi_{\hom^\alpha(D,A)})\Delta_{\com^\vee}^{\mathrm{monadic}}(\mu_n^\vee)\\
	=&\ \gamma_{\hom^{HTT}(D,B)}(s^{-1}\mu_n^\vee\otimes F)\ ,
	\end{align*}
	where in the second line we used \cref{lemma:naturality of convolution algebras wrt compositions} once again. Therefore, we have
	\[
	\hom^\alpha(C,B) = \hom^{HTT}(C,B)\ ,
	\]
	concluding the proof.
\end{proof}

We will now prove \cref{thm:compatibility with htt}. Let's introduce some notation. Let $\C$ be a cooperad, and let $C$ be a conilpotent $\C$-coalgebra. For every reduced rooted tree $\tau\in\rRT$ we define
\[
\Delta_C^\tau:C\longrightarrow\T^c(\C)\circ C
\]
recursively as follows. If $\tau=c_n$ is the $n$-corolla, with $n\ge2$, then
\[
\Delta_C^{c_n}\coloneqq\Delta_C^n\ .
\]
Else, we have $\tau=c_k\circ(\tau_1,\ldots,\tau_k)$, where $k\ge2$, the $\tau_i$ are allowed to be the empty tree, and we define
\[
\Delta_C^\tau\coloneqq\big(1_\C\circ(\Delta_C^{\tau_1},\ldots,\Delta_C^{\tau_1})\big)\Delta_C^k\ ,
\]
where $\Delta_C^\emptyset\coloneqq1_C$. Notice that $\id\in\C(1)$ will never appear in the image of such an operator.

\begin{lemma}\label{lemma:monadic decomposition map}
	For any $n\ge2$, we have
	\[
	\big(\Delta_\C^\mathrm{mon}\circ1_C\big)\Delta_C^n = \sum_{\tau\in RT_n}\Delta_C^\tau\ ,
	\]
	where the monadic decomposition map $\Delta_\C^\mathrm{mon}$ was defined at page \pageref{page:monadic decomposition map}.
\end{lemma}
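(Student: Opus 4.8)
The plan is to factor both sides of the identity through a ``cooperadic'' version of the tree-decomposition operator. For each reduced rooted tree $\sigma$ with $n$ leaves I would introduce the map
\[
\widehat\Delta_\C^\sigma : \overline\C(n) \longrightarrow \T^c(\overline\C)(n),
\]
defined by exactly the same recursion as $\Delta_C^\sigma$ but with the cooperad structure of $\C$ replacing the coalgebra structure of $C$: one sets $\widehat\Delta_\C^{c_n} := 1_{\C(n)}$ on the $n$-corolla, $\widehat\Delta_\C^\emptyset := 1_{\C(1)}$ by convention, and $\widehat\Delta_\C^\sigma := \bigl(1_\C \circ (\widehat\Delta_\C^{\sigma_1},\ldots,\widehat\Delta_\C^{\sigma_k})\bigr)\,\widehat\Delta_\C^{(k;n_1,\ldots,n_k)}$ when $\sigma = c_k\circ(\sigma_1,\ldots,\sigma_k)$, where $\widehat\Delta_\C^{(k;n_1,\ldots,n_k)}\colon \C(n)\to \C(k)\otimes\C(n_1)\otimes\cdots\otimes\C(n_k)$ is the two-level component of $(\Delta_\C\circ 1_\C)\Delta_\C$. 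Unwinding the definition of the monadic decomposition map from the recursion for $\widetilde\Delta_m$ (a short induction), or simply from its description as the sum of all decompositions not involving $\id$, one gets
\[
\Delta_\C^{\mathrm{mon}}\big|_{\overline\C(n)} \;=\; \sum_{\tau\in RT_n}\widehat\Delta_\C^\tau.
\]

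The key step will be to show, by induction on the number of internal vertices of $\tau$, that
\[
\Delta_C^\tau \;=\; \bigl(\widehat\Delta_\C^\tau\circ 1_C\bigr)\Delta_C^n \qquad\text{for all }\tau\in RT_n.
\]
The base case $\tau=c_n$ is immediate, since $\widehat\Delta_\C^{c_n}=1_{\C(n)}$ and $\Delta_C^{c_n}=\Delta_C^n$. For the inductive step write $\tau = c_k\circ(\tau_1,\ldots,\tau_k)$, let $m_i$ be the number of leaves of $\tau_i$ (with $m_i=1$, $\tau_i=\emptyset$, $\Delta_C^1=1_C$ under $\C(1)\otimes C\cong C$ allowed), and apply the induction hypothesis $\Delta_C^{\tau_i}=(\widehat\Delta_\C^{\tau_i}\circ 1_C)\Delta_C^{m_i}$ inside the recursion for $\Delta_C^\tau$. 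Using functoriality of $\circ$ to pull the $\widehat\Delta_\C^{\tau_i}$ out of the grafting, this rewrites $\Delta_C^\tau$ as $\bigl(1_\C\circ(\widehat\Delta_\C^{\tau_1},\ldots,\widehat\Delta_\C^{\tau_k})\bigr)$ composed with $\bigl(1_\C\circ(\Delta_C^{m_1},\ldots,\Delta_C^{m_k})\bigr)\Delta_C^k$. The latter composite is precisely the $(k;m_1,\ldots,m_k)$-component of $(1_\C\circ\Delta_C)\Delta_C$, which by the coalgebra-over-a-cooperad coassociativity axiom $(\Delta_\C\circ 1_C)\Delta_C=(1_\C\circ\Delta_C)\Delta_C$ equals $\bigl(\widehat\Delta_\C^{(k;m_1,\ldots,m_k)}\circ 1_C\bigr)\Delta_C^n$. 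Substituting this and recognising the recursive formula for $\widehat\Delta_\C^\tau$ closes the induction.

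Finally I would sum the boxed identity over $\tau\in RT_n$, pull the finite sum inside the composite, and use the two displayed facts:
\[
\sum_{\tau\in RT_n}\Delta_C^\tau \;=\; \Bigl(\bigl(\textstyle\sum_{\tau\in RT_n}\widehat\Delta_\C^\tau\bigr)\circ 1_C\Bigr)\Delta_C^n \;=\; \bigl(\Delta_\C^{\mathrm{mon}}\circ 1_C\bigr)\Delta_C^n,
\]
which is the assertion. The step I expect to be the main obstacle is the bookkeeping around arity-one pieces: when some $m_i=1$ the two-level decomposition $\widehat\Delta_\C^{(k;m_1,\ldots,m_k)}$ involves $\id\in\C(1)$ on a leaf branch, and one must check this is harmless, i.e.\ that the reduced trees indexing $\Delta_\C^{\mathrm{mon}}$ correspond exactly to the trees of $RT_n$ with no missing or spurious terms. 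The Koszul signs introduced by the canonical reshuffling identifications $\bigl(M(k)\otimes\bigotimes_i M(n_i)\bigr)\otimes C^{\otimes n}\cong M(k)\otimes\bigotimes_i\bigl(M(n_i)\otimes C^{\otimes n_i}\bigr)$ appear identically on both sides and therefore need no separate treatment.
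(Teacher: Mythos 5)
Your proof is correct, and it refines the paper's argument rather than merely reproducing it. The paper proves the identity by a single induction on the arity $n$, carried out directly at the coalgebra level: it splits the sum over $\RT_n$ according to the root corolla, applies the induction hypothesis to the subtrees, and then invokes coassociativity in the form $(\overline{\Delta}_\C\circ 1_C)\Delta_C^n=\sum_{k,n_1+\cdots+n_k=n}\bigl(1_\C\circ(\Delta_C^{n_1},\ldots,\Delta_C^{n_k})\bigr)\Delta_C^k$ together with the recursion $\widetilde{\Delta}_m=1_\C+(1_\C\circ\widetilde{\Delta}_{m-1})\overline{\Delta}_\C$ defining $\Delta_\C^{\mathrm{mon}}$. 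You instead factor the statement into two pieces: a purely cooperadic identity $\Delta_\C^{\mathrm{mon}}|_{\overline{\C}(n)}=\sum_{\tau}\widehat{\Delta}_\C^{\tau}$, and a term-by-term identification $\Delta_C^\tau=(\widehat{\Delta}_\C^\tau\circ 1_C)\Delta_C^n$ proved by induction on the vertices of $\tau$. The combinatorial engine (root-corolla decomposition plus coassociativity) is the same, but your version buys a strictly stronger conclusion — the summands on the two sides of the lemma are matched tree by tree, not just in total — and it cleanly separates what is a fact about the cooperad $\C$ from what is a fact about the coalgebra $C$. The one point you rightly flag, the arity-one branches, is indeed harmless: since $\C$ is reduced, $\C(1)=\k\,\id$, so the slots of the two-level component $\widehat{\Delta}_\C^{(k;m_1,\ldots,m_k)}$ with $m_i=1$ act as the identity on the corresponding copy of $C$, which is exactly the convention $\Delta_C^\emptyset=1_C$; this matches the paper's own caveat that the empty tree must be admitted as a subtree for the root-corolla decomposition of $\RT_n$ to be a bijection.
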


\begin{proof}
	The proof is done by induction on $n$. For $n=2$, the statement is trivial, since the cooperad is supposed to be reduced, so that we have that $\Delta_\C^\mathrm{monadic}$ is the identity on $\C(2)$. For $n>2$, we have
	\begin{align*}
	\sum_{\tau\in RT_n}\Delta_C^\tau =&\ \sum_{\substack{k\ge2,\ n_1+\cdots+n_k=n\\\tau_i\in RT_{n_i}\ \forall 1\le i\le k}}\Delta_C^{c_k\circ(\tau_1,\ldots,\tau_k)}\\
	=&\ \sum_{\substack{k\ge2,\ n_1+\cdots+n_k=n\\\tau_i\in RT_{n_i}\ \forall 1\le i\le k}}\big(1_\C\circ(\Delta_C^{\tau_1},\ldots,\Delta_C^{\tau_1})\big)\Delta_C^k\\
	=&\ \sum_{\substack{k\ge2\\n_1+\cdots+n_k=n}}\left(1_\C\circ\left(\sum_{\tau_1\in RT_{n_1}}\Delta_C^{\tau_1},\ldots,\sum_{\tau_k\in RT_{n_k}}\Delta_C^{\tau_k}\right)\right)\Delta_C^k\\
	=&\ \sum_{\substack{k\ge2\\n_1+\cdots+n_k=n}}\left(1_\C\circ\left(\big(\Delta_\C^\mathrm{monadic}\circ1_C\big)\Delta_C^{n_1},\ldots,\big(\Delta_\C^\mathrm{monadic}\circ1_C\big)\Delta_C^{n_k}\right)\right)\Delta_C^k\\
	=&\ \left(1_\C\circ\Delta_\C^\mathrm{monadic}\circ1_C\right)\sum_{\substack{k\ge2\\n_1+\cdots+n_k=n}}\big(1_\C\circ(\Delta_C^{n_1},\ldots,\Delta_C^{n_k})\big)\Delta_C^k\\
	=&\ \left(1_\C\circ\Delta_\C^\mathrm{monadic}\circ1_C\right)(\overline{\Delta}_\C\circ1_C)\Delta_C^n\\
	=&\ \big(\Delta_\C^\mathrm{monadic}\circ1_C\big)\Delta_C^n\ ,
	\end{align*}
	where we consider the empty tree as a rooted tree in order for the first equality to hold, and where in the third line we used the induction hypothesis.
\end{proof}

Recall that the Van der Laan map
\[
\vdl_B:\T(s\End_A)\longrightarrow\End_B
\]
is given by
\[
\vdl_B(\tau(f)) = p\tau^h(f)i^{\otimes n},
\]
cf. \cref{subsect:HTT}, and similarly for the contraction from $\hom(C,A)$ to $\hom(C,B)$.

\medskip

Every chain complex $V$ is an $\End_V$-algebra in a canonical way, we will denote by
\[
\widetilde{\gamma}_V:\End_V\circ V\longrightarrow V
\]
its composition map.

\begin{proof}[Proof of \cref{thm:compatibility with htt}]
	We will prove our claims by explicitly comparing the two $\SLoo$-algebra structures on $\hom(C,B)$. We will denote by
	\[
	\varphi_A\in\Tw(\C,\End_A)\ ,\qquad\rho_A:\Cobar\C\longrightarrow\End_A\ ,\qquad\text{and}\qquad\gamma_A:\Cobar\C\circ A\longrightarrow A
	\]
	the $\Cobar\C$-algebra structure of $A$, seen in three equivalent ways, and similarly for the other algebras. Notice that we have
	\[
	\varphi_A = \rho_A\iota\ ,\qquad\text{and}\qquad\gamma_A = \widetilde{\gamma}_A(\rho_A\circ1_A)\ ,
	\]
	as well as
	\[
	\gamma_A = \widetilde{\gamma}_A(\T(s\varphi_A)\circ1_A)\ ,
	\]
	where we used $\Cobar\C = \T(s^{-1}\C)$.
	
	\medskip
	
	Fix $n\ge2$ and $f_1,\ldots,f_n\in\hom(C,B)$. As usual, denote $F\coloneqq f_1\otimes\cdots\otimes f_n$.
	
	\medskip
	
	We begin by making explicit the structure obtained by taking the convolution algebra between $C$ and $B$ with the $\Cobar\C$-algebra structure given by the homotopy transfer theorem. The algebraic structure of $B$ is given by the twisting morphism
	\[
	\varphi_B = \vdl_B\T(s\varphi_A)\Delta_\C^\mathrm{mon},
	\]
	and thus
	\[
	\gamma_B = \widetilde{\gamma}_B\Big(\T(s\vdl_B\T(s\varphi_A)\Delta_\C^\mathrm{mon})\circ1_B\Big)\ .
	\]
	For the structure on $\hom(C,B)$, we have
	\begin{align*}
		\varphi_{\hom^\iota(C,B)}(\mu_n^\vee)(F) =&\ \gamma_B(\iota\otimes F)\Delta^n_C\\
		=&\ \widetilde{\gamma}_B\Big(\T(s\vdl_B\T(s\varphi_A)\Delta_\C^\mathrm{mon})\circ1_B\Big)(\iota\otimes F)\Delta^n_C\\
		=&\ \widetilde{\gamma}_B\Big(s\vdl_B\T(s\varphi_A)\Delta_\C^\mathrm{mon}\circ1_B\Big)(\iota\otimes F)\Delta^n_C\ .
	\end{align*}
	In the third line, we used the fact that the image of $\iota:\C\to\Cobar\C$ is $s^{-1}\overline{\C}$. Using that $\iota$ is essentially just given by desuspension, we have
	\[
	\Delta_\C^\mathrm{mon}s\iota = \Delta_\C^\mathrm{mon}\ ,
	\]
	and thus
	\begin{align*}
	\varphi_{\hom^\iota(C,B)}(\mu_n^\vee)(F) =&\ \widetilde{\gamma}_B\Big(\vdl_B\T(s\varphi_A)\circ1_B\Big)(1_\C\otimes F)(\Delta_\C^\mathrm{mon}\circ1_C)\Delta^n_C\\
	=&\ \sum_{\tau\in\RT_n}\widetilde{\gamma}_B\Big(\vdl_B\T(s\varphi_A)\circ1_B\Big)(1_\C\otimes F)\Delta^\tau_C\\
	=&\ \sum_{\tau\in\RT_n}\widetilde{\gamma}_B\Big(p\tau^h(\varphi_A)i^{\otimes n}\circ1_B\Big)(1_\C\otimes F)\Delta^\tau_C\\
	=&\ \sum_{\tau\in\RT_n}p\widetilde{\gamma}_A(\tau^h(\varphi_A)\otimes iF)\Delta^\tau_C\tag{A}\label{eq:A}
	\end{align*}
	where in the second line we used \cref{lemma:monadic decomposition map}.
	
	\medskip
	
	The next step is to make as explicit as possible the other $\SLoo$-algebra structure on $\hom(C,B)$, obtained by homotopy transfer theorem between the two hom spaces. We have
	\[
	\varphi_{\hom^{HTT}(C,B)} = \vdl_{\hom(C,B)}\T(s\varphi_{\hom^\iota(C,A)})\Delta_{\com^\vee}^\mathrm{mon}\ .
	\]
	Therefore, we compute
	\begin{align*}
		\varphi_{\hom^{HTT}(C,B)}(\mu_n^\vee)(F) =&\ \left(\vdl_{\hom(C,B)}\T(s\varphi_{\hom^\iota(C,A)})\Delta_{\com^\vee}^\mathrm{mon}(\mu_n^\vee)\right)(F)\\
		=&\ \left(\vdl_{\hom(C,B)}\T(s\varphi_{\hom^\iota(C,A)})\sum_{\tau\in\RT_n}\tau(v\mapsto\mu_{|v|}^\vee)\right)(F)\\
		=&\ \sum_{\tau\in\RT_n}\left(\vdl_{\hom(C,B)}\tau\left(v\mapsto s\gamma_A(\iota\otimes-)\Delta_\C^{|v|}\right)\right)(F)\\
		=&\ \sum_{\tau\in\RT_n}\left(p_*\tau^{h_*}\left(v\mapsto \gamma_A(\iota\otimes-)\Delta_\C^{|v|}\right)i_*^{\otimes n}\right)(F)\\
		=&\ \sum_{\tau\in\RT_n}p\left(\tau^{h_*}\left(v\mapsto \gamma_A(\iota\otimes-)\Delta_\C^{|v|}\right)\right)(iF)\\
		=&\ \sum_{\tau\in\RT_n}p\left(\tau^{h_*}\left(v\mapsto \widetilde{\gamma}_A(\varphi_A\otimes-)\Delta_\C^{|v|}\right)\right)(iF)\ ,\tag{B}\label{eq:B}
	\end{align*}
	where in the last line we used
	\[
	\gamma_A(\iota\circ1_A) = \widetilde{\gamma}_A(\rho_A\circ1_A)(\iota\circ1_A) = \widetilde{\gamma}_A(\varphi_A\circ1_A)\ .
	\]
	The last step is proving that $(\ref{eq:A})=(\ref{eq:B})$. We will use induction to prove that, for any $n\ge2$ and $\tau\in\RT_n$, we have
	\begin{equation}
	\widetilde{\gamma}_A(\tau^h(\varphi_A)\otimes iF)\Delta^\tau_C = \left(\tau^{h_*}\left(v\mapsto \widetilde{\gamma}_A(\varphi_A\otimes-)\Delta_\C^{|v|}\right)\right)(iF)\ ,\label{eq:comparison on trees}
	\end{equation}
	which implies the claim. If $n=2$, the only possible tree is the $2$-corolla, and one immediately sees that both sides of (\ref{eq:comparison on trees}) are equal to $p\widetilde{\gamma}_A(\varphi_A\otimes iF)\Delta_C^2$. Similarly, for all the corollas it is straightforward to see that the identity holds. If $\tau\in\RT_n$ is a composite tree, we can write it as
	\[
	\tau = c_k\circ(\tau_1,\ldots,\tau_k)\ .
	\]
	Then we have
	\begin{align*}
		\bigg(\tau^{h_*}\bigg(v\mapsto& \widetilde{\gamma}_A(\varphi_A\otimes-)\Delta_\C^{|v|}\bigg)\bigg)(iF) =\\
		&= \sum_{S_1\sqcup\cdots\sqcup S_k=[n]}\widetilde{\gamma}_A\left(\varphi_A\circ\bigotimes_{j=1}^kh\left(\tau_j^{h_*}\left(v\mapsto \widetilde{\gamma}_A(\varphi_A\otimes-)\Delta_\C^{|v|}\right)\right)(iF^{S_j})\right)\Delta_C^k\\
		&= \sum_{S_1\sqcup\cdots\sqcup S_k=[n]}\widetilde{\gamma}_A\left(\varphi_A\circ\bigotimes_{j=1}^kh\widetilde{\gamma}_A(\tau_j^h(\varphi_A)\otimes iF)\Delta^{\tau_j}_C\right)\Delta_C^k\\
		&= \widetilde{\gamma}_A(1_{\End_A}\circ\widetilde{\gamma}_A)\left(\sum_{S_1\sqcup\cdots\sqcup S_k=[n]}\varphi_A\circ\bigotimes_{j=1}^kh(\tau_j^h(\varphi_A)\otimes iF)\right)(1_\C\circ(\Delta^{\tau_1}_C),\ldots,\Delta^{\tau_k}_C)\Delta_C^k\\
		&= \widetilde{\gamma}_A(\tau^h(\varphi_A)\otimes iF)\Delta_C^\tau\ ,
	\end{align*}
	as desired.
	
	\medskip
	
	The fact that
	\[
	(i_*)_\infty = \homi_r(1,i_\infty)
	\]
	is proven in a completely analogous way, and then we have
	\[
	\homi_r(1,p_\infty)\homi_r(1,i_\infty) = \homi_r(1,p_\infty i_\infty) = 1_{\hom(C,B)}\ ,
	\]
	which shows that we can take $(p_*)_\infty = \homi_r(1,p_\infty)$.
\end{proof}

\subsection{Compatibility on the coalgebra side}

We want to do the same thing as in \cref{subsect:compatibility HTT algebra side}, but this time on the coalgebra side. While we believe that one should be able to write down a sensible version of the homotopy transfer theorem for coalgebras --- probably by working on coalgebras over \emph{operads}, see \cite[Sect. 5.2.15]{LodayVallette} --- for simplicity we will only prove the dual version of the result we think to be true, working with tensor products of algebras.

\medskip

Let $\P$ be an operad which is finite dimensional in every arity, and as always let $\pi:\Bar\P\to\P$ be the canonical twisting morphism. Denote by $\C\coloneqq\P^\vee$ the dual cooperad of $\P$. Suppose that we are given a $\P$-algebra $X$, a $\Cobar\C$-algebra $A$, and a contraction
\begin{center}
	\begin{tikzpicture}
	\node (a) at (0,0){$A$};
	\node (b) at (2,0){$B$};
	
	\draw[->] (a)++(.3,.1)--node[above]{\mbox{\tiny{$p$}}}+(1.4,0);
	\draw[<-,yshift=-1mm] (a)++(.3,-.1)--node[below]{\mbox{\tiny{$i$}}}+(1.4,0);
	\draw[->] (a) to [out=-150,in=150,looseness=4] node[left]{\mbox{\tiny{$h$}}} (a);
	\end{tikzpicture}
\end{center}
from $A$ to $B$. There are two ways to endow the chain complex $X\otimes B$ with an $\SLoo$-algebra structure.
\begin{enumerate}
	\item We consider the $\SLoo$-algebra $A\otimes^\iota X$ . The contraction above induces a contraction
	\begin{center}
		\begin{tikzpicture}
		\node (a) at (0,0){$X\otimes^\pi A$};
		\node (b) at (2.8,0){$X\otimes B$};
		
		\draw[->] (a)++(0.7,.1)--node[above]{\mbox{\tiny{$1_X\otimes p$}}}+(1.4,0);
		\draw[<-,yshift=-1mm] (a)++(0.7,-.1)--node[below]{\mbox{\tiny{$1_X\otimes i$}}}+(1.4,0);
		\draw[->] (a) to [out=-160,in=160,looseness=4] node[left]{\mbox{\tiny{$1_X\otimes h$}}} (a);
		\end{tikzpicture}
	\end{center}
	and thus the homotopy transfer theorem gives a $\SLoo$-algebra structure on $B\otimes X$.
	\item The generalized homotopy transfer theorem gives a $\Cobar\C$-algebra structure on $B$. Then we take $B\otimes^\pi X$.
\end{enumerate}

\begin{theorem}\label{thm:compatibility htt and convolution algebras on coalgebra side}
	The two $\SLoo$-algebra structures on $X\otimes B$ are equal. Moreover,
	\[
	(1\otimes i)_\infty = 1\otimes^\pi(i_\infty)\qquad\text{and}\qquad(1\otimes p)_\infty = 1\otimes^\pi(p_\infty)\ .
	\]
\end{theorem}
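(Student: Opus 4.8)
The plan is to reduce \cref{thm:compatibility htt and convolution algebras on coalgebra side} to the algebra-side statement \cref{thm:compatibility with htt} that was already proven, by dualizing the cooperad into the picture. Since $\P$ is finite dimensional in every arity, the $\C=\P^\vee$-algebra structures we transfer correspond, under the canonical identification $\hom(C,-)\cong -\otimes C^\vee$ for a dualizable $\C$-coalgebra $C$, to exactly the convolution tensor products $-\otimes^\alpha-$ studied in \cref{subsect:other settings}. The subtlety is that here $X$ is a genuine $\P$-algebra (not the dual of a coalgebra), so $X$ need not be dualizable; thus one cannot directly apply \cref{thm:compatibility with htt} to $\hom(X^\vee,B)$. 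The workaround I would use is to observe that both constructions of the $\SLoo$-structure on $X\otimes B$ are \emph{natural} in $X$ with respect to morphisms of $\P$-algebras, and that every $\P$-algebra is a filtered colimit of its dualizable sub-$\P$-algebras (or, even more simply, that the $\SLoo$-algebra structure maps $\ell_n$ only ever touch finitely many tensor factors at a time, so the identity can be checked elementwise).

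The key steps, in order, would be the following. First, unwind both sides explicitly: using \cref{thm:convolution Loo tensor products} the $\SLoo$-algebra $B\otimes^\pi X$ is obtained by restricting the natural $\P\otimes\Cobar\C$-algebra structure on $B\otimes X$ (with $B$ carrying its transferred $\Cobar\C$-structure) along $\manintensor_\pi:\SLoo\to\P\otimes\Cobar\C$, while the homotopy-transferred structure on $X\otimes B$ is $\vdl_{X\otimes B}$ applied to the van der Laan image of $(X\otimes^\pi A)$'s structure under the contraction $1_X\otimes p$. Second, I would run exactly the same tree-indexed computation as in the proof of \cref{thm:compatibility with htt}: rewrite both $\SLoo$-brackets as sums over reduced rooted trees $\tau\in\rRT_n$, push the van der Laan map through the monadic decomposition of $\com^\vee$ (\cref{lemma:monadic decomposition map}), and compare the two resulting tree-summands $\tau^h$ and $\tau^{h_*}$. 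The essential observation is that the composition map $\gamma_{X\otimes^\pi A}$ factors as $(\gamma_X\otimes\widetilde\gamma_A)$ composed with the decomposition $\Delta_\C$ together with $\pi$, in the same way that $\gamma_{\hom^\iota(C,A)}$ factored in the algebra case; the inductive identity on trees, analogous to \eqref{eq:comparison on trees}, then goes through verbatim because $\gamma_X$ is strictly functorial and commutes with everything in sight.

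Third, once the equality of structures is established, the identities $(1\otimes i)_\infty = 1\otimes^\pi(i_\infty)$ and $(1\otimes p)_\infty=1\otimes^\pi(p_\infty)$ follow by the same argument used at the end of the proof of \cref{thm:compatibility with htt}: the $\infty$-quasi-isomorphism $(1\otimes i)_\infty$ is built from $\vdl^i_{X\otimes B}$ via the same monadic-decomposition recipe, which one recognizes as $1\otimes^\pi$ applied to the analogous recipe defining $i_\infty$; and then $1\otimes^\pi(p_\infty)\circ 1\otimes^\pi(i_\infty) = 1\otimes^\pi(p_\infty i_\infty) = 1_{X\otimes B}$ shows that $1\otimes^\pi(p_\infty)$ is a legitimate choice of extension of $1\otimes p$, matching $(1\otimes p)_\infty$ under the convention fixed in \cref{subsect:HTT}.

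The main obstacle I anticipate is bookkeeping the Koszul signs coming from the decomposition map of $(\susp^{-1})^c$ (equivalently, from $\manintensor_\pi$ landing in $\Cobar\C = \Cobar\P^\vee$ rather than in $\hom(\C,\P)$) and making sure they agree on both sides; working, as the chapter does, with \emph{shifted} $\L_\infty$-algebras should keep this under control, and one can further simplify by first proving the non-symmetric case (where $\P(n)\cong\k$ and there are no invariant/coinvariant identifications) and then deducing the symmetric statement by tensoring with the regular representation, exactly as done elsewhere in this chapter. A secondary, minor point to address carefully is the non-dualizability of $X$: I would handle it by remarking that the identity of two $\SLoo$-algebra structures on a fixed underlying chain complex can be verified on homogeneous elements, each of which lies in $V\otimes X$ for some finite-dimensional subcomplex $V\subseteq B$ and involves only finitely many tensor factors, so no completion or dualizability hypothesis on $X$ is actually needed.
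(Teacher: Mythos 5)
Your proposal is essentially correct, and after you abandon the first idea (reducing to \cref{thm:compatibility with htt} via $\hom(X^\vee,B)$, which indeed fails because $X$ need not be dualizable) the second plan is the same strategy as the paper's proof: a direct, explicit comparison of the two transferred structures, dual to the algebra-side argument, never dualizing $X$ but encoding its structure by $\rho_X:\P\to\End_X$. The one place where your route genuinely differs is the finishing move. You propose to compare the two structures tree by tree via an induction analogous to the identity on trees in the proof of \cref{thm:compatibility with htt}. The paper instead factors both brackets through the morphism $\manintensor_\pi$ and the splitting $\Phi:\T(M\otimes N)\to\T(M)\otimes\T(N)$, peels off the common outer layer $\rho_X\otimes\vdl_B\T(s\varphi_A)$, and reduces everything to the single identity
\[
\left(\gamma_\P\otimes1_{\T(\C)}\right)\Phi\,\T(s\manintensor_\pi s^{-1})\Delta_{\com^\vee}^\mathrm{mon}(\mu_n^\vee) = \left(1_\P\otimes\Delta_\C^\mathrm{mon}\right)s\manintensor_\pi s^{-1}(\mu_n^\vee)
\]
in $\P(n)\otimes\T(\C)(n)$, which holds because both sides are the composition map $\gamma_\P:\T(\P)\to\P$ read as an element of $\P\otimes\T(\P)^\vee$. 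Your inductive tree comparison would have to establish exactly this duality between the monadic decomposition of $\C=\P^\vee$ and tree-wise composition in $\P$ at each stage of the induction, so it works but distributes over all trees what the paper settles in one line; the paper's version also makes the sign bookkeeping you worry about essentially automatic, since the signs are absorbed into the identification $(-1)^{c_i}s^{-1}c_i=(sp_i)^\vee$. Your treatment of the $\infty$-morphism identities (same recipe plus $p_\infty i_\infty=1$) matches the paper, which disposes of them as ``proven in an analogous way.''
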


\begin{remark}
	This result of course looks very similar to \cref{thm:compatibility with htt}, being its version for the ``coalgebra side". One should think of the proof as a dual version of \cref{thm:compatibility with htt}. The above theorem is a direct generalization of \cite[Thm. 5.1]{rn17tensor}.
\end{remark}

\begin{proof}
	We compare the two structures explicitly. In order to do so, let
	\[
	\manintensor_\pi:\SLoo\longrightarrow\P\otimes\Cobar\C
	\]
	be the map defined in \cref{thm:convolution Loo tensor products}. Explicitly, it is given by
	\[
	\manintensor_\pi(s^{-1}\mu_n^\vee) = \sum_{i\in I(n)}(-1)^{c_i}p_i\otimes s^{-1}c_i\ ,
	\]
	where $\{p_i\}_{i\in I(n)}$ is a basis of $\P(n)$, and $c_i\coloneqq p_i^\vee$ is the dual basis. Notice that $(-1)^{c_i}s^{-1}c_i = (sp_i)^\vee$. We encode the $\P$-algebra structure of $X$ by the morphism of operads
	\[
	\rho_X:\P\longrightarrow\End_X\ ,
	\]
	and the $\Cobar\C$-algebra structures of $A$, $B$ (obtained by homotopy transfer theorem), and $X\otimes A$ by the twisting morphisms
	\[
	\varphi_A:\C\longrightarrow\End_A\ ,
	\]
	and similarly for the other algebras. Given two $\S$-modules $M$ and $N$, the map
	\[
	\Phi:\T(M\otimes N)\longrightarrow\T(M)\otimes\T(N)
	\]
	is given by sending a tree with vertices indexed by pure tensors in $M\otimes N$ to the tensor product of two copies of the underlying tree, the first one with node indexed by the respective elements of $M$, the second one by the respective elements of $N$, all multiplied by the appropriate Koszul sign.
	
	\medskip	
	
	The first $\SLoo$-algebra structure on $X\otimes B$ is given by
	\begin{align*}
		\ell_n =&\ \vdl_{X\otimes B}\T(s\varphi_{X\otimes A})\Delta_{\com^\vee}^\mathrm{mon}(\mu_n^\vee)\\
		=&\ \left(\gamma_{\End_X}\otimes\vdl_B\right)\Phi\T(s\varphi_{X\otimes A})\Delta_{\com^\vee}^\mathrm{mon}(\mu_n^\vee)\\
		=&\ \left(\gamma_{\End_X}\otimes\vdl_B\right)\Phi\T\left((\rho_X\otimes s\varphi_A)s\manintensor_\pi s^{-1}\right)\Delta_{\com^\vee}^\mathrm{mon}(\mu_n^\vee)\\
		=&\ \left(\gamma_{\End_X}\otimes\vdl_B\right)\left(\T(\rho_X)\otimes\T(s\varphi_A)\right)\Phi\T(s\manintensor_\pi s^{-1})\Delta_{\com^\vee}^\mathrm{mon}(\mu_n^\vee)\\
		=&\ \left(\rho_X\otimes\vdl_B\T(s\varphi_A)\right)\left(\gamma_\P\otimes1_{\T(\C)}\right)\Phi\T(s\manintensor_\pi s^{-1})\Delta_{\com^\vee}^\mathrm{mon}(\mu_n^\vee)
	\end{align*}
	where in the third line we see $s\manintensor_\pi s^{-1}$ as a map $\com^\vee\to\P\otimes\C$, given by
	\[
	s\manintensor_\pi s^{-1}(\mu_n^\vee) = \sum_{i\in I(n)}p_i\otimes c_i\ ,
	\]
	and in the last line we used the fact that
	\[
	\gamma_{\End_X}\T(\rho_X) = \rho_X\gamma_\P\ .
	\]
	For the second structure, we have
	\begin{align*}
		\ell_n =&\ \left(\rho_X\otimes\varphi_B\right)s\manintensor_\pi s^{-1}(\mu_n^\vee)\\
		=&\ \left(\rho_X\otimes\vdl_B\T(s\varphi_A)\Delta_\C^\mathrm{mon}\right)s\manintensor_\pi s^{-1}(\mu_n^\vee)\\
		=&\ \left(\rho_X\otimes\vdl_B\T(s\varphi_A)\right)\left(1_\P\otimes\Delta_\C^\mathrm{mon}\right)s\manintensor_\pi s^{-1}(\mu_n^\vee)\ .
	\end{align*}
	Therefore, to conclude we need to show that
	\[
	\left(\gamma_\P\otimes1_{\T(\C)}\right)\Phi\T(s\manintensor_\pi s^{-1})\Delta_{\com^\vee}^\mathrm{mon}(\mu_n^\vee) = \left(1_\P\otimes\Delta_\C^\mathrm{mon}\right)s\manintensor_\pi s^{-1}(\mu_n^\vee)\ .
	\]
	Using the notation introduced at the end of \cref{sect:rooted trees}, we have that
	\[
	\Delta_{\com^\vee}^\mathrm{mon}(\mu_n^\vee) = \sum_{\tau\in\RT_n}\tau(v\mapsto\mu_{|v|}^\vee)\ .
	\]
	Therefore, the left-hand side above is equal to
	\[
	\sum_{\tau\in\RT_n}(-1)^\epsilon\sum_{i_v\in I(|v|)\text{ for }v\in V_\tau}\gamma_\P(\tau(v\mapsto p_{i_v}))\otimes\tau(v\mapsto c_{i_v})\ ,
	\]
	where $\epsilon$ is the Koszul sign coming from $\Phi$. At the same time, the right-hand side is given by
	\[
	\sum_{i\in I(n)}p_i\otimes\Delta_\C^\mathrm{mon}(c_i)\ .
	\]
	Both are expression for the map
	\[
	\gamma_\P:\T(\P)\longrightarrow\P
	\]
	seen as an element of $\P\otimes\T(\P)^\vee$, and thus they are equal.
	
	\medskip
	
	Once again, the statement on the $\infty$-morphisms is proven in an analogous way.
\end{proof}
	
	\chapter{Representation of the deformation \texorpdfstring{$\infty$}{infinity}-groupoid}\label{ch:representing the deformation oo-groupoid}

We present here a first important application of the theory developed in \cref{chapter:convolution homotopy algebras}. It was done in \cite{rn17cosimplicial}, and it was the main goal of the author when developing the material of \cite{rn17tensor}. In a sense, it should be considered the central result of the present work. Some of the results given here are newer, and are extracted from \cite{rnv18}.

\medskip

The idea is the following. A very important object in various areas of mathematics, e.g. deformation theory and rational homotopy theory, is given by the Maurer--Cartan space $\MC_\bullet(\g)$ of a homotopy Lie algebra. However, since the Sullivan algebra $\Omega_\bullet$ is infinite dimensional, this object is always ``really big". One has Getzler's $\infty$-groupoid $\gamma_\bullet(\g)$, which is much smaller, but this object is somewhat complicated --- at least in its original presentation --- as Dupont's contraction map $h_\bullet$ is. The new idea to obtain a ``nice" model for the space of Maurer--Cartan elements\footnote{Namely, we want Kan complex which is homotopically equivalent to $\MC_\bullet(\g)$ in a natural way.} is the following. Start with Dupont's contraction
\begin{center}
	\begin{tikzpicture}
	\node (a) at (0,0){$\Omega_\bullet$};
	\node (b) at (2,0){$C_\bullet$};
	
	\draw[->] (a)++(.3,.1)--node[above]{\mbox{\tiny{$p_\bullet$}}}+(1.4,0);
	\draw[<-,yshift=-1mm] (a)++(.3,-.1)--node[below]{\mbox{\tiny{$i_\bullet$}}}+(1.4,0);
	\draw[->] (a) to [out=-150,in=150,looseness=4] node[left]{\mbox{\tiny{$h_\bullet$}}} (a);
	\end{tikzpicture}
\end{center}
and transfer the simplicial commutative algebra structure on $\Omega_\bullet$ to a simplicial $\C_\infty$-algebra structure on $C_\bullet$. Since $C_\bullet$ is finite dimensional at every simplicial degree, we can take its dual, which is a cosimplicial $\Bar\lie$-coalgebra (up to a suspension). Then we take the complete cobar construction with respect to the canonical twisting morphism $\Bar\lie\to\lie$ to obtain a cosimplicial Lie algebra
\[
\mc_\bullet\coloneqq\widehat{\Cobar}_\pi(sC_\bullet^\vee)\ .
\]
Intuitively, this should be a model for Maurer--Cartan elements at cosimplicial degree $0$, for gauges at cosimplicial degree $1$, and so on. One of the main results of this chapter formalizes this in the form of a natural homotopy equivalence of simplicial sets
\[
\hom_{\dgl}(\mc_\bullet,\g)\simeq\MC_\bullet(\g)
\]
when $\g$ is a Lie algebra. The proof of this result is made in two steps.
\begin{enumerate}
	\item First, we prove that
	\[
	\MC_\bullet(\g)\simeq\MC(\g\otimes C_\bullet)\ ,
	\]
	where $\g\otimes C_\bullet$ is given a homotopy Lie algebra structure by the homotopy transfer theorem applied to the contraction induced by Dupont's contraction. The proof is similar to the demonstration of the Dolgushev--Rogers theorem, and mainly uses methods of simplicial homotopy theory.
	\item Then, we prove that
	\[
	\hom_{\dgl}(\mc_\bullet,\g)\cong\MC(\g\otimes C_\bullet)\ .
	\]
	This fact will be a straightforward consequence of the results of \cref{chapter:convolution homotopy algebras}.
\end{enumerate}
As a nice, immediate consequence, we have that taking the space of Maurer--Cartan elements commutes with limits --- only up to homotopy, if we take the functor $\MC_\bullet(-)$. We will also study some properties of $\MC(\g\otimes C_\bullet)$ and of $\mc_\bullet$. As a corollary, we will obtain an explicit way of ``rectifying" homotopy equivalences between Maurer--Cartan elements to gauges between the same elements. We will also present an extension of these results to the case where $\g$ is a $\SLoo$-algebra. This was not present in \cite{rn17cosimplicial}, but will be contained in \cite{rnv18}.

\medskip

It should be remarked that the author was not aware of Bandiera's results --- see \cref{sect:formal Kuranishi} --- at the time of publication of \cite{rn17cosimplicial}. They give another way of proving that $\MC_\bullet(\g)\simeq\MC(\g\otimes C_\bullet)$, using very different methods. We believe that the two ways of proceeding are complementary, completing each other to yield a picture of the various relations between the three known models of the space of Maurer--Cartan elements of an $\L_\infty$-algebra.

\medskip

Contrarily to what we did in most of the rest of this thesis, in this chapter we will work exclusively over cochain complexes. We will also work over shifted Lie and homotopy Lie algebras, but as usual the theory behaves in exactly the same way in the unshifted setting. In particular, Maurer--Cartan elements are in degree $0$, and gauges are in degree $-1$.

\section{Another model for the space of Maurer--Cartan elements}\label{sect:alternative model for space of MC}

We begin by giving a simplicial set which is smaller, but homotopically equivalent to the space of Maurer--Cartan elements $\MC_\bullet(\g)$. It is given by the Maurer--Cartan elements of the tensorization of the $\SLoo$-algebra into consideration with the cellular cochains of the geometric simplices.

\subsection{Statement of the main theorem}

Let $\g$ be a complete $\SLoo$-algebra. Dupont's contraction induces a contraction
\begin{center}
	\begin{tikzpicture}
	\node (a) at (0,0){$\g\otimes\Omega_\bullet$};
	\node (b) at (2.8,0){$\g\otimes C_\bullet$};
	
	\draw[->] (a)++(.6,.1)--node[above]{\mbox{\tiny{$1\otimes p_\bullet$}}}+(1.6,0);
	\draw[<-,yshift=-1mm] (a)++(.6,-.1)--node[below]{\mbox{\tiny{$1\otimes i_\bullet$}}}+(1.6,0);
	\draw[->] (a) to [out=-160,in=160,looseness=4] node[left]{\mbox{\tiny{$1\otimes h_\bullet$}}} (a);
	\end{tikzpicture}
\end{center}
of $\g\otimes\Omega_\bullet$ onto $\g\otimes C_\bullet$. Applying the homotopy transfer theorem to this contraction, we obtain a simplicial $\SLoo$-algebra structure on $\g\otimes C_\bullet$. We also know that we can extend the maps $1\otimes p_\bullet$ and $1\otimes i_\bullet$ to simplicial $\infty$-morphisms of simplicial $\SLoo$-algebras $(1\otimes p_\bullet)_\infty$ and $(1\otimes i_\bullet)_\infty$. We denote $P_\bullet$ and $I_\bullet$ the induced maps on Maurer--Cartan elements. We will also use the notation
\[
(1\otimes r_\bullet)_\infty \coloneqq (1\otimes i_\bullet)_\infty(1\otimes p_\bullet)_\infty\ ,
\]
and we dub $\rect_\bullet$ the map induced by $(1\otimes r_\bullet)_\infty$ on Maurer--Cartan elements.

\begin{theorem} \label{thm:homotopy equivalence MC. and MC(g x C)}
	Let $\g$ be a filtered $\SLoo$-algebra. The maps $P_\bullet$ and $I_\bullet$ are inverse one to the other in homotopy, and thus provide a weak equivalence
	\[
	\MC_\bullet(\g)\simeq\MC(\g\otimes C_\bullet)
	\]
	of simplicial sets which is natural in $\g$.
\end{theorem}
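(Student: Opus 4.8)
The plan is to mimic the structure of the proof of the Dolgushev--Rogers theorem (\cref{thm:Dolgushev-Rogers}), using the fact that $1\otimes p_\bullet$ and $1\otimes i_\bullet$ are part of a contraction and that the homotopy transfer theorem produces $\infty$-quasi-isomorphisms. First I would reduce to the nilpotent case by a limit argument: since $\MC_\bullet(\g) = \lim_n \MC(\g/\F_n\g\otimes\Omega_\bullet)$ and the contraction is compatible with the filtration, $\g\otimes C_\bullet$ inherits a filtration for which $\MC(\g\otimes C_\bullet) = \lim_n\MC(\g/\F_n\g\otimes C_\bullet)$ (one uses here that the transferred brackets are filtered, which follows from the formul\ae{} of \cref{subsect:HTT} since $1\otimes h_\bullet$ is filtered). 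Both towers consist of Kan complexes, and the transition maps are fibrations by \cref{thm:filtered surjections induce fibrations of MC} — so it suffices to prove the statement levelwise for nilpotent $\g$, and then conclude by the right Quillen argument (limit of a levelwise weak equivalence of fibrant towers) exactly as in the conclusion of the proof of \cref{thm:Dolgushev-Rogers}.

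Next, for nilpotent (equivalently, abelian-filtered finite) $\g$, I would run an induction on the nilpotency degree. The base case is the abelian one, where $\MC_\bullet(\g) = \cyc_0(\g\otimes\Omega_\bullet)$ and $\MC(\g\otimes C_\bullet) = \cyc_0(\g\otimes C_\bullet)$ are simplicial vector spaces; here the statement that $1\otimes p_\bullet$ induces a weak equivalence on $0$-cocycles is precisely what is established inside the proof of \cref{prop:abelian Loo-alg DR} (the short exact sequence $0\to d(1\otimes h_\bullet)(\g\otimes\Omega_\bullet)_0\to\cyc_0(\g\otimes\Omega_\bullet)\to\cyc_0(\g\otimes C_\bullet)\to0$ with acyclic kernel). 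For the induction step, I would use the short exact sequence of $\SLoo$-algebras $0\to\F_n\g/\F_{n+1}\g\to\g^{(n+1)}\to\g^{(n)}\to0$, which is sent by $(-)\otimes\Omega_\bullet$ and by $(-)\otimes C_\bullet$ to fibration sequences of Maurer--Cartan spaces (again \cref{thm:filtered surjections induce fibrations of MC}, since the transferred structure is filtered so the projection $\g^{(n+1)}\otimes C_\bullet\to\g^{(n)}\otimes C_\bullet$ is a levelwise-surjective filtered $\infty$-morphism). The map $P_\bullet$ is a morphism of fibration sequences, an equivalence on the base by induction and an equivalence on the fibre by the abelian case; the long exact sequences of homotopy groups and the obstruction-theoretic five-lemma argument of \cref{subsect:induction step DR} (including the $\pi_1$ case handled by hand, and $\pi_0$ which here reduces to the same bijection statement as in \cref{thm:bijection of pi0 (DR)}, itself a consequence of $\infty$-quasi-isomorphism invariance) finish the step. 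To invoke that argument I need that $\pi_0 P_\bullet$ is a bijection in general; this follows from the fact that $(1\otimes p_\bullet)_\infty$ is an $\infty$-quasi-isomorphism of filtered $\SLoo$-algebras together with \cref{thm:bijection of pi0 (DR)}.

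The naturality in $\g$ is immediate from the functoriality of the homotopy transfer theorem formul\ae{} with respect to strict morphisms of $\SLoo$-algebras tensored with the fixed contraction $(\Omega_\bullet,C_\bullet)$: a morphism $\phi:\g\to\h$ gives a morphism of contractions, hence commuting squares relating $P_\bullet^\g, P_\bullet^\h$ and $\MC_\bullet(\phi),\MC(\phi\otimes1_{C_\bullet})$, and similarly for $I_\bullet$. The fact that $P_\bullet$ and $I_\bullet$ are mutually inverse \emph{in homotopy} (not just that $P_\bullet$ is a weak equivalence) follows because $(1\otimes p_\bullet)_\infty(1\otimes i_\bullet)_\infty = 1$ on the nose, so $P_\bullet I_\bullet = \id$, and then $I_\bullet P_\bullet$ is homotopic to the identity because it is a self-map of $\MC_\bullet(\g)$ inducing the identity on all homotopy groups with $\MC_\bullet(\g)$ a Kan complex — or, more cleanly, one checks $I_\bullet P_\bullet$ and $\id$ are homotopic via the natural homotopy coming from the fact that $(1\otimes r_\bullet)_\infty = (1\otimes i_\bullet)_\infty(1\otimes p_\bullet)_\infty$ is $\infty$-homotopic to the identity $\infty$-morphism of $\g\otimes\Omega_\bullet$ (the van der Laan data provide an explicit such homotopy), and $\MC_\bullet(-)$ turns $\infty$-homotopic $\infty$-morphisms into homotopic maps.

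The main obstacle I expect is the bookkeeping in the induction step: verifying that the transferred $\SLoo$-structure on $\g\otimes C_\bullet$ is genuinely filtered and that the projection $\g^{(n+1)}\otimes C_\bullet\to\g^{(n)}\otimes C_\bullet$ is a levelwise-surjective filtered $\infty$-morphism with the expected fibre $\F_n\g/\F_{n+1}\g\otimes C_\bullet$, so that \cref{thm:filtered surjections induce fibrations of MC} genuinely applies on the transferred side — the $\Omega_\bullet$-side is exactly \cref{subsect:induction step DR}, but the $C_\bullet$-side needs this small extra check. Everything else is either a direct transcription of the Dolgushev--Rogers proof with $\Omega_\bullet$ replaced throughout by the pair $(\Omega_\bullet, C_\bullet)$ and $\phi$ replaced by the transfer $\infty$-morphisms, or a formal consequence of the homotopy transfer theorem.
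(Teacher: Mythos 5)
Your outline is essentially the paper's proof: abelian base case via Moore complexes, induction up the filtration tower using the fibration sequences of \cref{thm:filtered surjections induce fibrations of MC} and the obstruction-theoretic five-lemma argument of \cref{subsect:induction step DR}, a right-Quillen limit argument over the tower, and $P_\bullet I_\bullet=\id$ upgrading the weak equivalence to a homotopy inverse. The one organizational difference is exactly where your acknowledged loose end sits: you run the induction on $P_\bullet$ viewed as a morphism between the $\Omega_\bullet$-tower and the $C_\bullet$-tower, which forces you to know that $\MC(\g^{(n+1)}\otimes C_\bullet)\to\MC(\g^{(n)}\otimes C_\bullet)$ is a fibration with the expected fibre. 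The paper sidesteps this entirely by running the whole induction on the composite self-map $\rect_\bullet=I_\bullet P_\bullet$ of $\MC_\bullet(\g)$, so that only the $\Omega_\bullet$-side fibration sequences are ever needed, and the $\pi_0$ step becomes trivial because $\rect_0=\id_{\MC(\g)}$. (If you do want the $C_\bullet$-side fibrations, they follow from the $\Omega_\bullet$-side ones by the retract argument using $P_\bullet I_\bullet=\id$, as the paper notes afterwards; but it is cleaner to avoid them.) One small correction: \cref{thm:bijection of pi0 (DR)} concerns the internal $\pi_0\MC_\bullet(\h)$ of a fixed algebra, whereas here the relevant $\pi_0$ is taken in the external simplicial direction, so that citation does not apply as stated --- but no such input is needed, since $P_0=I_0=\id$ already forces the bijection on $\pi_0$.
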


\subsection{Proof of the main theorem}

The rest of this section is dedicated to the proof of this result. We begin with the following lemma.

\begin{lemma} \label{lemma:PIis1}
	We have
	\[
	P_\bullet I_\bullet = \id_{\MC(\g\otimes C_\bullet)}\ .
	\]
\end{lemma}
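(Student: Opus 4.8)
The plan is to exploit the fact that $(1\otimes p_\bullet)_\infty$ and $(1\otimes i_\bullet)_\infty$ are obtained from the homotopy transfer theorem applied to the contraction induced by Dupont's contraction, and that the homotopy transfer theorem (in the form recalled in \cref{subsect:HTT}, applied level-wise) gives not just an arbitrary extension of $i_\bullet$ to an $\infty$-morphism, but one satisfying $(1\otimes p_\bullet)_\infty(1\otimes i_\bullet)_\infty = 1_{\g\otimes C_\bullet}$ as $\infty$-morphisms of simplicial $\SLoo$-algebras. This is the content of the proposition following the homotopy transfer theorem, which states precisely that the extension $p_\infty$ can be chosen so that $p_\infty i_\infty = 1_B$. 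Here $p=1\otimes p_\bullet$, $i=1\otimes i_\bullet$, $B=\g\otimes C_\bullet$.

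First I would recall that the Maurer--Cartan functor $\MC(-)$ is functorial with respect to filtered $\infty$-morphisms of complete $\SLoo$-algebras, and that it respects composition: if $\Phi$ and $\Psi$ are composable filtered $\infty$-morphisms then $\MC(\Psi)\MC(\Phi)=\MC(\Psi\Phi)$ (this is the lemma on the action of $\MC$ on filtered $\infty$-morphisms in \cref{subsect:Maurer-Cartan space}). By \cref{thm:compatibility with htt}, the transferred structure on $\g\otimes C_\bullet$ is a genuine simplicial $\SLoo$-algebra structure and the maps $(1\otimes i_\bullet)_\infty$, $(1\otimes p_\bullet)_\infty$ are filtered $\infty$-morphisms with respect to the induced filtrations (filtered because the original contraction maps $1\otimes i_\bullet$, $1\otimes (i_\bullet p_\bullet)$ are filtered, $\g$ being filtered and the tensor factors playing no role in the filtration). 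Hence $P_\bullet = \MC((1\otimes p_\bullet)_\infty)$ and $I_\bullet = \MC((1\otimes i_\bullet)_\infty)$ are well-defined morphisms of simplicial sets.

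Then the computation is immediate: using functoriality of $\MC(-)$ and the identity $(1\otimes p_\bullet)_\infty(1\otimes i_\bullet)_\infty = 1_{\g\otimes C_\bullet}$ from the homotopy transfer theorem, we get
\[
P_\bullet I_\bullet = \MC((1\otimes p_\bullet)_\infty)\,\MC((1\otimes i_\bullet)_\infty) = \MC\big((1\otimes p_\bullet)_\infty(1\otimes i_\bullet)_\infty\big) = \MC(1_{\g\otimes C_\bullet}) = \id_{\MC(\g\otimes C_\bullet)}\ ,
\]
which is the claim. There is essentially no obstacle here: the only point requiring a small amount of care is checking that the level-wise homotopy transfer theorem assembles into a statement about \emph{simplicial} $\SLoo$-algebras and \emph{simplicial} $\infty$-morphisms — i.e. that the transferred structure and the extended morphisms are natural in the simplicial degree — but this follows because Dupont's contraction is simplicial (the maps $p_\bullet$, $i_\bullet$, $h_\bullet$ commute with the simplicial structure maps) and the homotopy transfer formulæ are natural in the contraction, by \cref{prop:compatibility of htt with morphisms of contractions}-type compatibility (the functoriality of the homotopy transfer theorem with respect to morphisms of contractions, as recorded in \cref{subsect:complete contractions and htt}). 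The genuinely hard part of \cref{thm:homotopy equivalence MC. and MC(g x C)} — showing that $I_\bullet P_\bullet$ is homotopic to the identity on $\MC_\bullet(\g)$ — is not addressed by this lemma and will require the simplicial-homotopy-theoretic argument analogous to the proof of the Dolgushev--Rogers theorem; the present lemma only disposes of the easy half.
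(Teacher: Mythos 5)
Your proof is correct and is essentially the paper's own argument: the paper cites precisely the identity $(1\otimes p_\bullet)_\infty(1\otimes i_\bullet)_\infty = 1_{\g\otimes C_\bullet}$ coming from the homotopy transfer theorem together with the functoriality of the Maurer--Cartan functor $\MC(-)$. The additional points you verify (filteredness of the $\infty$-morphisms and simpliciality of the transferred structure) are legitimate but treated as immediate in the paper.
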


\begin{proof}
	This is because $(1\otimes p_\bullet)_\infty(1\otimes i_\bullet)_\infty$ is the identity, see e.g. \cite[Theorem 5]{dsv16}, and the functoriality of the Maurer--Cartan functor $\MC$.
\end{proof}

Therefore, it is enough to prove that the map
\[
\rect = I_\bullet P_\bullet:\MC_\bullet(\g)\longrightarrow\MC_\bullet(\g)
\]
is a weak equivalence. The idea is to use the same methods as for the proof of the Dolgushev--Rogers theorem, cf. \cref{sect:Dolgushev-Rogers thm}. The situation is however slightly different, as the map $\rect_\bullet$ is not of the form $\Phi\otimes1_{\Omega_\bullet}$, and thus the Dolgushev--Rogers theorem itself cannot be directly applied. The first, easy step is to understand what happens at the level of the zeroth homotopy group.

\begin{lemma} \label{lemma:pi0}
	The map
	\[
	\pi_0(\rect):\pi_0\MC_\bullet(\g)\longrightarrow\pi_0\MC_\bullet(\g)
	\]
	is a bijection.
\end{lemma}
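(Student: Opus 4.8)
The plan is to show that $\rect_\bullet$ is already the identity on the set of $0$-simplices, so that $\pi_0(\rect)$ is the identity of $\pi_0\MC_\bullet(\g)$ and hence a bijection; here the situation is genuinely simpler than in the Dolgushev--Rogers theorem, where the corresponding statement at the level of $\pi_0$ is nontrivial.

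First I would unwind the simplicial degree $0$ part of the constructions involved. By definition $\MC_\bullet(\g)_0 = \lim_n\MC\big(\g/\F_n\g\otimes\Omega_0\big)$, and $\Omega_0 = \APL(\Delta[0])\cong\k$ is one-dimensional, concentrated in degree $0$, with zero differential; likewise $C_0\cong\k$, the maps $p_0\colon\Omega_0\to C_0$ and $i_0\colon C_0\to\Omega_0$ of Dupont's contraction are both $\id_\k$, and $h_0 = 0$, since a one-dimensional complex with zero differential admits no nonzero contracting homotopy. In particular, in the homotopy transfer formula only the corolla survives when $h = 0$, so the transferred $\SLoo$-algebra structure on $\g\otimes C_0$ coincides with that of $\g$.

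Next I would examine the $\infty$-morphisms $(1\otimes p_\bullet)_\infty$ and $(1\otimes i_\bullet)_\infty$ at simplicial degree $0$. Inspecting the explicit homotopy transfer formulæ recalled in \cref{subsect:HTT} (the van der Laan morphism $\vdl$ and the extensions $i_\infty$, $p_\infty$ of $i$ and $p$), every component of arity $\ge2$ of either $\infty$-morphism carries at least one factor of the contracting homotopy $1\otimes h_\bullet$. Since $h_0 = 0$, both $(1\otimes p_\bullet)_\infty$ and $(1\otimes i_\bullet)_\infty$ restrict at simplicial degree $0$ to the identity $\infty$-morphism of $\g$, and therefore so does their composite $(1\otimes r_\bullet)_\infty = (1\otimes i_\bullet)_\infty(1\otimes p_\bullet)_\infty$. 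Consequently the map it induces on $0$-simplices, $(\rect_\bullet)_0$, is $\id_{\MC_\bullet(\g)_0}$.

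Finally, since for a map of Kan complexes the induced map on $\pi_0$ is obtained by passing its action on $0$-simplices to homotopy classes, $\pi_0(\rect)$ sends the class of a $0$-simplex $x$ to the class of $(\rect_\bullet)_0(x) = x$; hence $\pi_0(\rect) = \id_{\pi_0\MC_\bullet(\g)}$, which is a bijection. The only delicate point is the bookkeeping that each higher homotopy transfer term genuinely contains a factor of $h_\bullet$; a reader wishing to bypass it can instead combine \cref{lemma:PIis1} with the fact that $(1\otimes p_\bullet)_\infty$ and $(1\otimes i_\bullet)_\infty$ are filtered $\infty$-quasi-isomorphisms and run the $\pi_0$-argument used in the proof of the Dolgushev--Rogers theorem, but the direct computation above is considerably shorter here.
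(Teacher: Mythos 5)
Your argument is correct and is essentially the paper's own proof: both rest on the observation that $\Omega_0 = C_0 = \k$ with $i_0 = p_0 = \id$ (and $h_0 = 0$), so that $\rect_\bullet$ is the identity on $0$-simplices and hence induces the identity on $\pi_0$. The extra bookkeeping you supply about the higher homotopy-transfer terms vanishing is a welcome elaboration of what the paper leaves implicit, but the route is the same.
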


\begin{proof}
	We have $\Omega_0=C_0=\k$, and the maps $i_0$ and $p_0$ both are the identity of $\k$. Therefore, the map $R_0$ is the identity of $\MC_0(\g)$, and thus obviously induces a bijection on $\pi_0$.
\end{proof}

For the higher homotopy groups, we start with a simplified version of \cref{prop:abelian Loo-alg DR}, which gives in some sense the base for an inductive argument. If the $\SLoo$-algebra $\g$ is abelian, i.e. all of its brackets vanish, then so do the brackets at all levels of $\g\otimes\Omega_\bullet$. In this case, the Maurer--Cartan elements are exactly the cocycles of the underlying cochain complex, and therefore $\MC_\bullet(\g)$ is a simplicial vector space.

\begin{lemma} \label{lemma:abelianDGL}
	If the $\SLoo$-algebra $\g$ is abelian, then $\rect_\bullet$ is a weak equivalence of simplicial vector spaces.
\end{lemma}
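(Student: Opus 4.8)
The plan is to deduce everything from the computation already carried out in the proof of \cref{prop:abelian Loo-alg DR}. Since $\g$ is abelian, all the brackets of $\g\otimes\Omega_\bullet$ vanish (its $n$-ary bracket being obtained by tensoring the multiplication $\mu_n$ of $\Omega_\bullet$ with the bracket $\ell_n$ of $\g$), so that $\g\otimes\Omega_\bullet$ is merely a simplicial cochain complex; the structure transferred onto $\g\otimes C_\bullet$ along the contraction induced by Dupont's contraction is therefore also abelian, and the $\infty$-morphisms $(1\otimes i_\bullet)_\infty$ and $(1\otimes p_\bullet)_\infty$ reduce to the strict morphisms $1\otimes i_\bullet$ and $1\otimes p_\bullet$, all of their higher components being built out of the vanishing brackets. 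Consequently $\MC_\bullet(\g)=\cyc_0(\g\otimes\Omega_\bullet)$ and $\MC(\g\otimes C_\bullet)=\cyc_0(\g\otimes C_\bullet)$ as simplicial vector spaces, the maps $P_\bullet$ and $I_\bullet$ are the restrictions of $1\otimes p_\bullet$ and $1\otimes i_\bullet$ to degree $0$ cocycles, and $\rect_\bullet$ is the restriction of $1_\g\otimes(i_\bullet p_\bullet)$.

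First I would show that $P_\bullet$ is a weak equivalence. By the same limit argument used at the end of the proof of \cref{thm:Dolgushev-Rogers} — the towers $\{\cyc_0((\g/\F_n\g)\otimes\Omega_\bullet)\}_n$ and $\{\cyc_0((\g/\F_n\g)\otimes C_\bullet)\}_n$ consist of Kan complexes with fibrations as structure maps by \cref{thm:filtered surjections induce fibrations of MC}, and the limit functor preserves levelwise weak equivalences between such towers — it suffices to treat the case where $\g$ is nilpotent. In that case one repeats verbatim the first half of the proof of \cref{prop:abelian Loo-alg DR}: the sequence of simplicial vector spaces
\[
0\longrightarrow d(1_\g\otimes h_\bullet)(\g\otimes\Omega_\bullet)_0\hookrightarrow\cyc_0(\g\otimes\Omega_\bullet)\xrightarrow{\ 1_\g\otimes p_\bullet\ }\cyc_0(\g\otimes C_\bullet)\longrightarrow0
\]
is exact; its first term is a retract of $(\g\otimes\Omega_\bullet)_0$, hence contractible at each simplicial level (using that $(\Omega_\bullet)_k$ is contractible, \cite[Lemma 3.2]{get09}), so its Moore complex is acyclic; applying the Moore complex functor to this sequence and invoking \cref{thm:homotopy of simplicial vector space is homology of Moore complex} shows that $P_\bullet$ is a weak equivalence.

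Finally, \cref{lemma:PIis1} gives $P_\bullet I_\bullet=\id_{\MC(\g\otimes C_\bullet)}$. Since $P_\bullet$ and the identity are weak equivalences, the $2$-out-of-$3$ property forces $I_\bullet$ to be a weak equivalence as well, and hence so is the composite $\rect_\bullet=I_\bullet P_\bullet$; being a morphism of simplicial vector spaces, it is then a weak equivalence of simplicial vector spaces. I expect the only mildly delicate points to be the bookkeeping that shows the transferred structure and the transfer $\infty$-morphisms genuinely degenerate to the abelian, respectively strict, ones, and the passage to the inverse limit over the filtration; neither is a real obstacle, since both are minor adaptations of arguments already present in the text.
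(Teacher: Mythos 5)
Your argument is correct and follows essentially the same route as the paper: both reduce to the short exact sequence and Moore complex computation from the proof of \cref{prop:abelian Loo-alg DR} to see that $P_\bullet$ is a weak equivalence, and then use $P_\bullet I_\bullet=\id$ to conclude that $I_\bullet$, and hence $\rect_\bullet$, is one too. The extra limit argument you insert is harmless but not needed, since for an abelian algebra the Maurer--Cartan set is the space of cocycles regardless of the filtration and the exact-sequence argument never uses nilpotency.
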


\begin{proof}
	Recall that the Moore complex of a simplicial vector space $V_\bullet$ is defined by
	\[
	\mathcal{M}(V_\bullet)_n\coloneqq s^nV_n
	\]
	endowed with the differential
	\[
	\partial\coloneqq\sum_{i=0}^n(-1)^id_i\ ,
	\]
	where the maps $d_i$ are the face maps of the simplicial set $V_\bullet$. It is a standard result that
	\[
	\pi_0(V_\bullet) = H_0(\mathcal{M}(V_\bullet))\ ,\qquad \pi_i(V_\bullet,v)\cong\pi_i(V_\bullet,0) = H_i(\mathcal{M}(V_\bullet))
	\]
	for all $i\ge1$ and $v\in V_0$, and that a map of simplicial vector spaces is a weak equivalence if and only if it induces a quasi-isomorphism between the respective Moore complexes \cite[Cor. 2.5, Sect. III.2]{GoerssJardine}.
	
	In our case,
	\[
	V_\bullet\coloneqq\MC_\bullet(\g) = \mathcal{Z}^1(\g\otimes\Omega_\bullet)
	\]
	is the simplicial vector space of $1$-cocycles of $\g\otimes\Omega_\bullet$. As in the proof of \cref{prop:abelian Loo-alg DR}, it can be proven that the map
	\[
	\mathcal{M}(1\otimes p_\bullet):\mathcal{M}(\mathcal{Z}^1(\g\otimes\Omega_\bullet))\longrightarrow\mathcal{M}(\mathcal{Z}^1(\g\otimes C_\bullet))
	\]
	is a quasi-isomorphism.  But as the bracket vanishes, this is exactly $P_\bullet$. Now
	\[
	\mathcal{M}(1\otimes p_\bullet)\mathcal{M}(1\otimes i_\bullet) = 1_{\mathcal{M}(\mathcal{Z}^1(\g\otimes\Omega_\bullet))}\ ,
	\]
	which implies that $\mathcal{M}(1\otimes i_\bullet)$ also is a quasi-isomorphism. It follows that $\rect_\bullet$ is a weak equivalence, concluding the proof.
\end{proof}

Now we basically follow the structure of the proof of \cref{thm:Dolgushev-Rogers}. We define a filtration of $\g\otimes\Omega_\bullet$ by
\[
\F_k(\g\otimes\Omega_\bullet)\coloneqq (\F_k\g)\otimes\Omega_\bullet\ .
\]
We denote by
\[
(\g\otimes\Omega_\bullet)^{(k)}\coloneqq\g\otimes\Omega_\bullet/\F_k(\g\otimes\Omega_\bullet) = \g^{(k)}\otimes\Omega_\bullet\ .
\]
The composite $(1\otimes i_\bullet)(1\otimes p_\bullet)$ induces an endomorphism $(1\otimes i_\bullet)^{(k)}(1\otimes p_\bullet)^{(k)}$ of $(\g\otimes\Omega_\bullet)^{(k)}$. All the $\infty$-morphisms coming into play obviously respect this filtration, and moreover $1\otimes h_\bullet$ passes to the quotients, so that we have
\[
1_{(\g\otimes\Omega_\bullet)^{(k)}} - (1\otimes i_\bullet)^{(k)}(1\otimes p_\bullet)^{(k)} = d(1\otimes h_\bullet)^{(k)} + (1\otimes h_\bullet)^{(k)}d
\]
for all $k$, which shows that $(1\otimes r_\bullet)_\infty$ is a filtered $\infty$-quasi isomorphism.

\medskip

The next step is to reduce the study of the homotopy groups with arbitrary basepoint to the study of the homotopy groups with basepoint $0\in\MC_0(\g)$.

\begin{lemma} \label{lemma:shift to zero}
	Let $\alpha\in\MC(\g)$, and let $\g^\alpha$ be the $\L_\infty$-algebra obtained by twisting $\g$ by $\alpha$, that is the $\L_\infty$-algebra with the same underlying graded vector space, but with differential
	\[
	d^\alpha(x) \coloneqq dx + \sum_{n\ge2}\frac{1}{(n-1)!}\ell_n(\alpha,\ldots,\alpha,x)
	\]
	and brackets
	\[
	\ell^\alpha(x_1,\ldots,x_m) \coloneqq \sum_{n\ge m}\frac{1}{(n-m)!}\ell_n(\alpha,\ldots,\alpha,x_1,\ldots,x_m)\ .
	\]
	Let
	\[
	\mathrm{Shift}_\alpha:\MC_\bullet(\g^\alpha)\longrightarrow\MC_\bullet(\g)
	\]
	be the isomorphism of simplicial sets induced by the map given by
	\[
	\beta\in\g\longmapsto\alpha+\beta\in\g^\alpha\ .
	\]
	Then the following diagram commutes
	\begin{center}
		\begin{tikzpicture}
			\node (a) at (0,0){$\MC_\bullet(\g^\alpha)$};
			\node (b) at (3.5,0){$\MC_\bullet(\g)$};
			\node (c) at (0,-2){$\MC_\bullet(\g^\alpha)$};
			\node (d) at (3.5,-2){$\MC_\bullet(\g)$};
			
			\draw[->] (a) to node[above]{$\mathrm{Shift}_\alpha$} (b);
			\draw[->] (a) to node[left]{$\rect_\bullet^\alpha$} (c);
			\draw[->] (c) to node[above]{$\mathrm{Shift}_\alpha$} (d);
			\draw[->] (b) to node[right]{$\rect_\bullet$} (d);
		\end{tikzpicture}
	\end{center}
	where
	\[
	\rect^\alpha(\beta)\coloneqq \sum_{k\ge1}(1\otimes r_\bullet)^\alpha_k(\beta^{\otimes k})
	\]
	and
	\[
	(1\otimes r_\bullet)^\alpha_k(\beta_1\otimes\cdots\otimes\beta_k)\coloneqq\sum_{j\ge0}\frac{1}{j!}(1\otimes r_\bullet)_{k+j}(\alpha^{\otimes j}\otimes\beta_1\otimes\ldots\otimes\beta_k)
	\]
	is the twist of $(1\otimes r_\bullet)_\infty$ by the Maurer--Cartan element $\alpha$. Here, we identified $\alpha\in\g$ with $\alpha\otimes1\in\g\otimes\Omega_\bullet$.
\end{lemma}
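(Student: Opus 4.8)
The plan is to unwind the definitions on both sides of the diagram and check that the two composites agree as maps of sets (from which the commutativity of the diagram of simplicial sets follows level-wise and naturally in the simplicial direction). The statement is really just a bookkeeping identity relating the twisted transfer $\infty$-morphism applied to a shifted Maurer--Cartan element with the untwisted one applied to the original element, and the twisting procedure for $\infty$-morphisms of $\SLoo$-algebras was already recalled in \cref{lemma:shift to zero DR}; so no new idea is needed, only care with the combinatorial identity. Throughout I will identify $\alpha\in\g$ with $\alpha\otimes 1\in\g\otimes\Omega_\bullet$, which is legitimate because $\Omega_0=\k$ and $\alpha$ has degree $0$.

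The first step is to record what the two paths do on an element $\beta\in\MC_\bullet(\g^\alpha)$. Going right then down, we first form $\mathrm{Shift}_\alpha(\beta)=\alpha+\beta\in\MC_\bullet(\g)$, and then apply $\rect_\bullet=\MC((1\otimes r_\bullet)_\infty)$, obtaining
\[
\rect_\bullet(\alpha+\beta)=\sum_{k\ge1}\frac{1}{k!}(1\otimes r_\bullet)_k\big((\alpha+\beta)^{\otimes k}\big)\ .
\]
Going down then right, we first apply $\rect^\alpha_\bullet$, which by the formula given in the statement equals $\sum_{k\ge1}(1\otimes r_\bullet)^\alpha_k(\beta^{\otimes k})$ with $(1\otimes r_\bullet)^\alpha_k$ the twist of the components of $(1\otimes r_\bullet)_\infty$ by $\alpha$, and then apply $\mathrm{Shift}_\alpha$, adding $\alpha$. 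So the identity to prove is
\[
\alpha+\sum_{k\ge1}\sum_{j\ge0}\frac{1}{j!}(1\otimes r_\bullet)_{k+j}\big(\alpha^{\otimes j}\otimes\beta^{\otimes k}\big)=\sum_{k\ge1}\frac{1}{k!}(1\otimes r_\bullet)_k\big((\alpha+\beta)^{\otimes k}\big)\ .
\]

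The second step is the elementary combinatorial expansion. By multilinearity and graded symmetry of the operations $(1\otimes r_\bullet)_k$ (they are the components of an $\infty$-morphism of shifted homotopy Lie algebras, hence symmetric), one has
\[
\frac{1}{k!}(1\otimes r_\bullet)_k\big((\alpha+\beta)^{\otimes k}\big)=\sum_{j=0}^{k}\frac{1}{j!\,(k-j)!}(1\otimes r_\bullet)_k\big(\alpha^{\otimes j}\otimes\beta^{\otimes (k-j)}\big)\ ,
\]
and summing over $k\ge1$ and reindexing $k-j\rightsquigarrow k$ splits the right-hand side into the $k=0$ term $\sum_{j\ge1}\frac1{j!}(1\otimes r_\bullet)_j(\alpha^{\otimes j})$ plus $\sum_{k\ge1}\sum_{j\ge0}\frac{1}{j!\,k!}(1\otimes r_\bullet)_{k+j}(\alpha^{\otimes j}\otimes\beta^{\otimes k})$. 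This second double sum is exactly $\sum_{k\ge1}\frac1{k!}\sum_{j\ge0}\frac1{j!}(1\otimes r_\bullet)_{k+j}(\alpha^{\otimes j}\otimes\beta^{\otimes k})$, i.e.\ precisely the double sum on the left-hand side above once one rewrites $(1\otimes r_\bullet)^\alpha_k(\beta^{\otimes k})=\frac{1}{k!}\sum_{j\ge0}\frac1{j!}(1\otimes r_\bullet)_{k+j}(\alpha^{\otimes j}\otimes\beta^{\otimes k})$ (the $1/k!$ being absorbed when one writes $\rect^\alpha_\bullet(\beta)=\sum_k(1\otimes r_\bullet)^\alpha_k(\beta^{\otimes k})$ with the convention of \cref{lemma:shift to zero DR}). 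Thus the two sides match provided the remaining $k=0$ term on the right equals $\alpha$, i.e.\ $\sum_{j\ge1}\frac1{j!}(1\otimes r_\bullet)_j(\alpha^{\otimes j})=\alpha$. This last point is where the one genuine input enters, and I expect it to be the main (though still easy) obstacle: one must use that $\alpha$ is a Maurer--Cartan element of $\g^{?}$ and that $(1\otimes r_\bullet)_\infty=(1\otimes i_\bullet)_\infty(1\otimes p_\bullet)_\infty$ with $(1\otimes p_\bullet)_0=(1\otimes i_\bullet)_0=\id_\g$ at simplicial level $0$; since $\mathrm{MC}$ of an $\infty$-morphism sends Maurer--Cartan elements to Maurer--Cartan elements and $\alpha$, viewed in $\g\otimes\Omega_\bullet$ via $\Omega_0=\k$, lies in the image of $\g=\g\otimes\Omega_0\hookrightarrow\g\otimes C_\bullet$ where the transfer morphism restricts to the identity, one gets $\mathrm{MC}((1\otimes r_\bullet)_\infty)(\alpha)=\alpha$, which is exactly the needed identity. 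Finally I would note that the whole computation is natural in the simplicial degree (all maps are simplicial), so the displayed equality of set maps upgrades to the asserted commutative square of simplicial sets, and that $\mathrm{Shift}_\alpha$ is an isomorphism of simplicial sets by construction; this completes the proof, and the ``straightforward unwinding of definitions'' mentioned after the statement is precisely the two-step expansion above.
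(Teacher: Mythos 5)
Your proof is correct and is precisely the ``straightforward unwinding of definitions'' that the paper leaves implicit (its proof of this lemma just defers to \cite[Lemma 4.3]{dr17}): the binomial expansion of $\rect_\bullet(\alpha\otimes 1+\beta)$ using the symmetry of the components $(1\otimes r_\bullet)_k$, the identification of the $\beta$-free part of that expansion with $\rect_\bullet(\alpha\otimes 1)=\alpha\otimes 1$, and the matching of the remaining double sum with the $\alpha$-twisted components. You are also right on the two points that need care: the displayed formula for $\rect^\alpha_\bullet$ must be read with the $1/k!$ normalization coming from $\MC(\Phi)(x)=\sum_n\frac{1}{n!}\phi_n(x^{\otimes n})$, and the key input $\rect_\bullet(\alpha\otimes 1)=\alpha\otimes 1$ is perhaps most cleanly justified by observing that $\alpha\otimes 1$ is a degenerate simplex, $\rect_\bullet$ is simplicial, and $\rect_0=\mathrm{id}$ (equivalently, every higher component of $(1\otimes i_\bullet)_\infty$ and $(1\otimes p_\bullet)_\infty$ involves $1\otimes h_\bullet$, which kills constant elements).
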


\begin{proof}
	The proof of \cite[Lemma 4.3]{dr17} goes through \emph{mutatis mutandis}.
\end{proof}

\begin{remark}
	The $\L_\infty$-algebra $\g^\alpha$ in \cref{lemma:shift to zero} is endowed with the same filtration as $\g$.
\end{remark}

Now we proceed by induction to show that $\rect^{(k)}$ is a weak equivalence from $\MC_\bullet(\g^{(k)})$ to itself for all $k\ge2$. As the $\L_\infty$-algebra $(\g\otimes\Omega_\bullet)^{(2)}$ is abelian, the base step of the induction is given by Lemma \ref{lemma:abelianDGL}.

\begin{lemma} \label{lemma:inductionStep}
	Let $m\ge2$. Suppose that
	\[
	\rect^{(k)}:\MC(\g^{(k)})\longrightarrow\MC(\g^{(k)})
	\]
	is a weak equivalence for all $2\le k\le m$. Then $\rect^{(m+1)}$ is also a weak equivalence.
\end{lemma}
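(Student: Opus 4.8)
The strategy mirrors the induction step in the Dolgushev--Rogers theorem (\cref{prop:induction step DR}), with the key modification that the map under consideration is the endomorphism $\rect^{(m+1)}$ rather than a map induced by an $\infty$-morphism between two different algebras. The plan is to use the short exact sequence of shifted $\SLoo$-algebras
\[
0\longrightarrow\F_m\g/\F_{m+1}\g\longrightarrow\g^{(m+1)}\longrightarrow\g^{(m)}\longrightarrow0\ ,
\]
apply the Maurer--Cartan space functor $\MC(-\otimes\Omega_\bullet)$ throughout, and compare the resulting towers of fibrations with the endomorphisms induced by $(1\otimes r_\bullet)_\infty$ at each stage. First I would observe that $\F_m\g/\F_{m+1}\g$ is an abelian $\SLoo$-algebra, so that by \cref{lemma:abelianDGL} the map $\rect_\bullet$ restricted to it is a weak equivalence; by the induction hypothesis the same holds for $\rect^{(m)}$ on $\MC_\bullet(\g^{(m)})$. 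Since the canonical projection $\g^{(m+1)}\to\g^{(m)}$ is a filtered surjection, \cref{thm:filtered surjections induce fibrations of MC} shows that the induced map $\MC_\bullet(\g^{(m+1)})\to\MC_\bullet(\g^{(m)})$ is a fibration of simplicial sets, and one checks by hand that $\MC_\bullet(\F_m\g/\F_{m+1}\g)$ is the fibre over the basepoint $0$.

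Next I would handle the basepoint issue: using \cref{lemma:shift to zero}, for any Maurer--Cartan element $\alpha\in\MC(\g^{(m+1)})$ the twisting isomorphisms $\mathrm{Shift}_\alpha$ intertwine $\rect^{(m+1)}$ with $(\rect^{(m+1)})^\alpha$, the rectification map built from the twisted $\infty$-quasi-isomorphism $(1\otimes r_\bullet)^\alpha_\infty$. Since the twisted $\SLoo$-algebra $(\g^{(m+1)})^\alpha$ carries the same filtration and $(1\otimes r_\bullet)^\alpha_\infty$ is again a filtered $\infty$-quasi-isomorphism (being a twist of one), it suffices to check that the induced map on homotopy groups based at $0$ is a bijection in every degree. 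This reduces everything to the long exact sequence of homotopy groups associated to the fibration above. For $k\ge2$, the relevant portion fits into a ladder
\begin{center}
	\begin{tikzpicture}
		\node (a) at (0,1.6){$\pi_{k+1}\MC_\bullet(\g^{(m)})$};
		\node (b) at (3.6,1.6){$\pi_k\MC_\bullet\!\left(\frac{\F_m\g}{\F_{m+1}\g}\right)$};
		\node (c) at (7.4,1.6){$\pi_k\MC_\bullet(\g^{(m+1)})$};
		\node (d) at (10.6,1.6){$\pi_k\MC_\bullet(\g^{(m)})$};
		\node (A) at (0,0){$\pi_{k+1}\MC_\bullet(\g^{(m)})$};
		\node (B) at (3.6,0){$\pi_k\MC_\bullet\!\left(\frac{\F_m\g}{\F_{m+1}\g}\right)$};
		\node (C) at (7.4,0){$\pi_k\MC_\bullet(\g^{(m+1)})$};
		\node (D) at (10.6,0){$\pi_k\MC_\bullet(\g^{(m)})$};
		\draw[->] (a) to (b);
		\draw[->] (b) to (c);
		\draw[->] (c) to (d);
		\draw[->] (A) to (B);
		\draw[->] (B) to (C);
		\draw[->] (C) to (D);
		\draw[->] (a) to node[left]{$\cong$} (A);
		\draw[->] (b) to node[left]{$\cong$} (B);
		\draw[->] (c) to (C);
		\draw[->] (d) to node[right]{$\cong$} (D);
	\end{tikzpicture}
\end{center}
whose outer vertical maps are isomorphisms by \cref{lemma:abelianDGL} and the induction hypothesis, so the five lemma forces the middle map to be an isomorphism; the case $k=0$ is \cref{lemma:pi0}.

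The main obstacle is the case $k=1$, exactly as in the proof of \cref{prop:induction step DR}: the segment of the long exact sequence involved has $\pi_0\MC_\bullet(\F_m\g/\F_{m+1}\g)$ at one end, which is merely a pointed set, so the five lemma does not apply directly. Here I would run the same obstruction-theoretic diagram chase used in \cref{prop:induction step DR} --- proving surjectivity and injectivity of the middle vertical map separately by lifting classes through the exact sequence, using that $\pi_1$ of the base acts on $\pi_0$ of the fibre and that the relevant maps of pointed sets are bijections --- adapted to the present setting where source and target of the ladder are the \emph{same} tower and the vertical maps are all induced by $\rect^{(-)}$. Since $\rect^{(m+1)}$ is an endomorphism, one has to be slightly careful that the diagram chase produces a genuine two-sided inverse on $\pi_1$ rather than just matching cardinalities, but because all the outer vertical maps are already known to be isomorphisms the argument goes through verbatim. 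Assembling the bijections on all $\pi_k$ and invoking the characterization of weak equivalences between Kan complexes (all the $\MC_\bullet$ here are Kan by \cref{thm:filtered surjections induce fibrations of MC}) completes the induction step.
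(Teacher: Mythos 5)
Your proposal is correct and follows essentially the same route as the paper: the fibration sequence $\F_m\g/\F_{m+1}\g\to\g^{(m+1)}\to\g^{(m)}$, reduction of basepoints via the shift lemma, the five lemma on the long exact sequence for $\pi_k$ with $k\ge2$, and the separate obstruction-theoretic chase for $\pi_1$. The only detail worth making explicit is why the induced endomorphism of the fibre coincides with the strict map $(1\otimes i_\bullet)(1\otimes p_\bullet)$ (so that the abelian case applies): the higher components of $(1\otimes r_\bullet)_\infty$ vanish on $\F_m\g/\F_{m+1}\g$ because they raise the filtration degree, as one reads off from the explicit homotopy transfer formul{\ae}.
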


\begin{proof}
	The zeroth homotopy set $\pi_0$ has already been taken care of in Lemma \ref{lemma:pi0}. Thanks to Lemma \ref{lemma:shift to zero}, it is enough to prove that $\rect^{(m+1)}$ induces isomorphisms of homotopy groups $\pi_i$ based at $0$, for all $i\ge1$.
	
	\medskip
	
	Consider the following commutative diagram
	\begin{center}
		\begin{tikzpicture}
			\node (a) at (0,0){$0$};
			\node (b) at (3,0){$\frac{\F_m(\g\otimes\Omega_\bullet)}{\F_{m+1}(\g\otimes\Omega_\bullet)}$};
			\node (c) at (6.5,0){$(\g\otimes\Omega_\bullet)^{(m+1)}$};
			\node (d) at (10,0){$(\g\otimes\Omega_\bullet)^{(m)}$};
			\node (e) at (13,0){$0$};
			
			\node (A) at (0,-2.5){$0$};
			\node (B) at (3,-2.5){$\frac{\F_m(\g\otimes\Omega_\bullet)}{\F_{m+1}(\g\otimes\Omega_\bullet)}$};
			\node (C) at (6.5,-2.5){$(\g\otimes\Omega_\bullet)^{(m+1)}$};
			\node (D) at (10,-2.5){$(\g\otimes\Omega_\bullet)^{(m)}$};
			\node (E) at (13,-2.5){$0$};
			
			\draw[->] (a) to (b);
			\draw[->] (b) to (c);
			\draw[->] (c) to (d);
			\draw[->] (d) to (e);
			
			\draw[->] (A) to (B);
			\draw[->] (B) to (C);
			\draw[->] (C) to (D);
			\draw[->] (D) to (E);
			
			\draw[->] (b) to (B);
			\draw[->] (c) to node[left]{$(1\otimes r_\bullet)^{(m+1)}_\infty$} (C);
			\draw[->] (d) to node[left]{$(1\otimes r_\bullet)^{(m)}_\infty$} (D);
		\end{tikzpicture}
	\end{center}
	where the leftmost vertical arrow is given by the linear term $(1\otimes i_\bullet)(1\otimes p_\bullet)$ of $(1\otimes r_\bullet)_\infty$ since all higher terms vanish, as can be seen by the explicit formul{\ae} for the $\infty$-quasi isomorphisms induced by the homotopy transfer theorem given in \cite[Sect. 10.3.5--6]{LodayVallette}. Therefore, it is a weak equivalence as the $\SLoo$-algebras in question are abelian. The first term in each row is the fibre of the next map, which is surjective. By \cref{thm:filtered surjections induce fibrations of MC}, we know that applying the $\MC_\bullet$ functor makes the horizontal maps on the right into fibrations of simplicial sets, while the objects we obtain on the left are easily seen to be the fibres. Taking the long sequence in homotopy and using the five-lemma, we see that all we are left to do is to prove that $\rect_\bullet^{(m+1)}$ induces an isomorphism on $\pi_1$. Notice that it is necessary to prove this, as the long sequence is exact everywhere except on the level of $\pi_0$.
	
	\medskip
	
	The long exact sequence of homotopy groups (truncated on both sides) reads
	\[
	\pi_2\MC_\bullet(\g^{(m)})\stackrel{\partial}{\longrightarrow}\pi_1\MC_\bullet\left(\tfrac{F_m\g}{F_{m+1}\g}\right)\to\pi_1\MC_\bullet(\g^{(m+1)})\to\pi_1\MC_\bullet(\g^{(m)})\stackrel{\partial}{\longrightarrow}\pi_0\MC_\bullet\left(\tfrac{F_m\g}{F_{m+1}\g}\right)\ ,
	\]
	where in the higher homotopy groups we left the basepoint implicit (as it is always $0$). The map
	\[
	\partial:\pi_1\MC_\bullet(\g^{(m)})\longrightarrow\pi_0\MC_\bullet\left(\frac{F_m\g}{F_{m+1}\g}\right) = H^1\left(\frac{F_{m+1}\g}{F_m\g}\right)
	\]
	encodes the obstruction to lifting an element of $\pi_1\MC_\bullet(\g^{(m)})$ to an element of $\pi_1\MC_\bullet(\g^{(m+1)})$ (see e.g. \cref{subsect:LES of homotopy groups}).
	
	\medskip
	
	\textit{The map $\pi_1(\rect^{(m+1)})$ is surjective:} Let $y\in\pi_1\MC_\bullet(\g^{(m+1)})$ be any element, and denote by $\overline{y}$ its image in $\pi_1\MC_\bullet(\g^{(m)})$. By the induction hypothesis, there exists a unique $\overline{x}\in\pi_1\MC_\bullet(\g^{(m)})$ which is mapped to $\overline{y}$ under $\rect_\bullet^{(m)}$. As $\overline{y}$ is the image of $y$, we have $\partial(\overline{y}) = 0$, and this implies that $\partial(\overline{x})=0$, too. Therefore, there exists $x\in\pi_1\MC_\bullet(\g^{(m+1)})$ mapping to $\overline{x}$. Denote by $y'$ the image of $x$ under $\rect_\bullet^{(m+1)}$. Then $y'y^{-1}$ is in the kernel of the map
	\[
	\pi_1\MC_\bullet(\g^{(m+1)})\longrightarrow\pi_1\MC_\bullet(\g^{(m)})\ .
	\]
	By exactness of the long sequence, and the fact that $\rect_\bullet$ induces an automorphism of
	\[
	\pi_1\MC_\bullet\left(\frac{F_{m+1}\g}{F_m\g}\right)\ .
	\]
	there exists an element $z\in\pi_1(\MC_\bullet(F_{m+1}\g/F_m\g))$ mapping to $y'y^{-1}$ under the composite
	\[
	\pi_1\MC_\bullet\left(\frac{F_{m+1}\g}{F_m\g}\right)\xrightarrow{\rect_\bullet}\pi_1\MC_\bullet\left(\frac{F_{m+1}\g}{F_m\g}\right)\longrightarrow\pi_1\MC_\bullet(\g^{(m+1)})\ .
	\]
	Let $x'$ be the image of $z$ in $\pi_1\MC_\bullet(\g^{(m+1)})$, then $(x')^{-1}x$ maps to $y$ under $\rect_\bullet^{(m+1)}$. This proves the surjectivity of the map $\pi_1(\rect^{(m+1)})$.
	
	\medskip
	
	\textit{The map $\pi_1(\rect_\bullet^{(m+1)})$ is injective:} Assume $x,x'\in\pi_1\MC_\bullet(\g^{(m+1)})$ map to the same element under $\rect^{(m+1)}$. Then $x(x')^{-1}$ maps to the neutral element $0$ under $\rect^{(m+1)}$. It follows that there is a
	\[
	z\in\pi_1\MC_\bullet\left(\frac{F_{m+1}\g}{F_m\g}\right)
	\]
	mapping to $x(x')^{-1}$, which must be such that its image $w$ is itself the image of some element $\widetilde{w}\in\pi_2\MC_\bullet(\g^{(m)})$ under the map $\partial$. But by the induction hypothesis and the exactness of the long sequence, this implies that $z$ is in the kernel of the next map, and thus that $x(x')^{-1}$ is the identity element. Therefore, the map $\pi_1(\rect_\bullet^{(m+1)})$ is injective.
	
	\medskip
	
	This ends the proof of the lemma.
\end{proof}

Finally, we can conclude the proof of \cref{thm:homotopy equivalence MC. and MC(g x C)}.

\begin{proof}[Proof of \cref{thm:homotopy equivalence MC. and MC(g x C)}]
	Lemma \ref{lemma:inductionStep}, together with all we have said before, shows that $\rect_\bullet^{(m)}$ is a weak equivalence for all $m\ge2$. Therefore, we have the following commutative diagram:
	\begin{center}
		\begin{tikzpicture}
			\node (a) at (0,0){$\vdots$};
			\node (b) at (4,0){$\vdots$};
			\node (c) at (0,-2){$\MC_\bullet(\g^{(4)})$};
			\node (d) at (4,-2){$\MC_\bullet(\g^{(4)})$};
			\node (e) at (0,-4){$\MC_\bullet(\g^{(3)})$};
			\node (f) at (4,-4){$\MC_\bullet(\g^{(3)})$};
			\node (g) at (0,-6){$\MC_\bullet(\g^{(2)})$};
			\node (h) at (4,-6){$\MC_\bullet(\g^{(2)})$};
			
			\draw[->] (a) to (c);
			\draw[->] (c) to (e);
			\draw[->] (e) to (g);
			
			\draw[->] (b) to (d);
			\draw[->] (d) to (f);
			\draw[->] (f) to (h);
			
			\draw[->] (c) to node[above]{$\sim$} node[below]{$\rect_\bullet^{(4)}$} (d);
			\draw[->] (e) to node[above]{$\sim$} node[below]{$\rect_\bullet^{(3)}$} (f);
			\draw[->] (g) to node[above]{$\sim$} node[below]{$\rect_\bullet^{(2)}$} (h);
		\end{tikzpicture}
	\end{center}
	where all objects are Kan complexes, all horizontal arrows are weak equivalences, and all vertical arrows are (Kan) fibrations by \cref{thm:filtered surjections induce fibrations of MC}. It follows that the collection of horizontal arrows defines a weak equivalence between fibrant objects in the model category of tower of simplicial sets, see \cite[Sect. VI.1]{GoerssJardine}. The functor from towers of simplicial sets to simplicial sets given by taking the limit is right adjoint to the constant tower functor, which trivially preserves cofibrations and weak equivalences. Thus, the constant tower functor is a left Quillen functor, and it follows that the limit functor is a right Quillen functor. In particular, it preserves weak equivalences between fibrant objects. Applying this to the diagram above proves that $\rect_\bullet$ is a weak equivalence.
\end{proof}

\begin{remark}
	An alternative proof of the fact that $\rect_\bullet$ induces equivalences on $\pi_n$, for all $n\ge1$, was put forward by A. Berglund in a private communication. It suggests using \cite[Thm. 1.1]{ber15} and the explicit formula given for the map $B$, together with \cref{prop:rectification for deformation oo-groupoid}, and to verify that the composite
	\[
	H_{n-1}(\g)\stackrel{B}{\longrightarrow}\pi_n(\MC_\bullet(\g))\xrightarrow{\pi_n(\rect_\bullet)}\pi_n(\MC_\bullet(\g))\xrightarrow{B^{-1}}H_{n-1}(\g)
	\]
	is an isomorphism.
\end{remark}

\section{Properties and comparison with Getzler's functor}\label{sect:properties and comparison with Getzler}

\cref{thm:homotopy equivalence MC. and MC(g x C)} shows that the simplicial set $\MC(\g\otimes C_\bullet)$ is a new model for the deformation $\infty$-groupoid. This section is dedicated to the study of some properties of this object. We start by showing that it is a Kan complex, then we give some conditions on the differential forms representing its simplices. We show how we can use it to rectify cells of the deformation $\infty$-groupoid, which provides an alternative, simpler proof of \cite[Lemma B.2]{dr15}. Finally we compare it with Getzler's functor $\gamma_\bullet$, proving that our model is contained in Getzler's. Independent results by Bandiera \cite{Bandiera}, \cite{ban17} imply that the two models are actually isomorphic.

\subsection{Properties of \texorpdfstring{$\MC_\bullet(\g\otimes C_\bullet)$}{MC(g x C.)}}\label{subsect:properties of MC(g x C.)}

The following proposition is the analogue to \cref{thm:filtered surjections induce fibrations of MC} for our model.

\begin{proposition}
	Let $\g,\h$ be two complete proper $\SLoo$-algebras, and suppose that $\Phi:\g\rightsquigarrow\h$ is an $\infty$-morphism of $\L_\infty$-algebras inducing a fibration of simplicial sets under the functor $\MC_\bullet$, see for example \cref{thm:filtered surjections induce fibrations of MC} for possible sufficient conditions. Then the induced morphism
	\[
	\MC(\phi\otimes\id_{C_\bullet}):\MC(\g\otimes C_\bullet)\longrightarrow\MC(\h\otimes C_\bullet)
	\]
	is also a fibration of simplicial sets. In particular, for any complete proper $\SLoo$-algebra $\g$, the simplicial set $\MC(\g\otimes C_\bullet)$ is a Kan complex.
\end{proposition}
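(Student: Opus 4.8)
The plan is to transfer the fibration structure along the homotopy equivalence of \cref{thm:homotopy equivalence MC. and MC(g x C)}, reducing the statement to the known fact \cref{thm:filtered surjections induce fibrations of MC}. First I would observe that the maps $P_\bullet$, $I_\bullet$ and $R_\bullet$ are all natural in the $\SLoo$-algebra, so that for the $\infty$-morphism $\Phi:\g\rightsquigarrow\h$ we obtain a square relating $\MC_\bullet(\Phi)$ and $\MC(\phi\otimes1_{C_\bullet})$ through the vertical maps $P_\bullet^\g:\MC_\bullet(\g)\to\MC(\g\otimes C_\bullet)$ and $P_\bullet^\h:\MC_\bullet(\h)\to\MC(\h\otimes C_\bullet)$ (together with their homotopy inverses $I_\bullet$). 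The square commutes on the nose by naturality of the homotopy transfer theorem $\infty$-morphisms (the maps $(1\otimes p_\bullet)_\infty$ are natural in the algebra by the explicit van der Laan formul\ae), and both vertical maps are weak equivalences.

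The core of the argument is then a purely model-categorical fact: in the model category of simplicial sets, if a commutative square has two parallel edges that are weak equivalences, the source and target are fibrant, and one of the remaining edges is a fibration, then (under suitable hypotheses) so is the other. The cleanest route is to factor: given the fibration $f\coloneqq\MC_\bullet(\Phi)$ and the weak equivalences $P_\bullet^\g,P_\bullet^\h$, the composite $g\coloneqq P_\bullet^\h\circ f$ is a weak equivalence composed with... no — instead I would replace $\MC(\phi\otimes1_{C_\bullet})$ up to homotopy. Precisely, since $\MC_\bullet(\h)$ and $\MC(\h\otimes C_\bullet)$ are Kan complexes (the latter being known to be Kan once we prove the ``in particular'' clause, which I will establish first by taking $\h=0$ or rather by the argument below applied to $\g\to 0$), and since $P_\bullet^\h I_\bullet^\h\simeq 1$ while $P_\bullet^\g I_\bullet^\g = 1$ by \cref{lemma:PIis1}, we get $\MC(\phi\otimes1_{C_\bullet})\simeq P_\bullet^\h\,\MC_\bullet(\Phi)\,I_\bullet^\g$. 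Now $\MC_\bullet(\Phi)$ is a fibration by hypothesis, $I_\bullet^\g$ and $P_\bullet^\h$ are weak equivalences between Kan complexes, hence trivial fibrations up to the standard factorization; a fibration precomposed by a trivial fibration and postcomposed by a trivial fibration is, up to the relevant pullback and the fact that trivial fibrations and fibrations compose, a map weakly equivalent to a fibration over a Kan base — and a map between Kan complexes weakly equivalent to a fibration is itself a fibration. For the last implication I would invoke the standard statement (e.g.\ in \cite{GoerssJardine}) that any map between fibrant objects that becomes a fibration after composition with trivial fibrations on either side is a fibration.

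The step I expect to be the main obstacle is making the ``up to homotopy'' comparison rigorous: the identity $\MC(\phi\otimes1_{C_\bullet})\simeq P_\bullet^\h\,\MC_\bullet(\Phi)\,I_\bullet^\g$ only holds up to simplicial homotopy, and being a fibration is \emph{not} a homotopy-invariant property of a single map, only of a map together with a homotopy-coherent choice of factorization. The correct fix is to avoid homotopies on the nose and instead use that the square
\[
\begin{array}{ccc}
\MC_\bullet(\g) & \xrightarrow{\MC_\bullet(\Phi)} & \MC_\bullet(\h)\\
\downarrow P_\bullet^\g & & \downarrow P_\bullet^\h\\
\MC(\g\otimes C_\bullet) & \xrightarrow{\MC(\phi\otimes1)} & \MC(\h\otimes C_\bullet)
\end{array}
\]
commutes \emph{strictly} (by naturality of $P_\bullet$), with $P_\bullet^\g, P_\bullet^\h$ weak equivalences between Kan complexes and $\MC_\bullet(\Phi)$ a fibration; then one appeals to the lemma that in a strictly commuting square of Kan complexes with the vertical maps weak equivalences and the top map a fibration, the bottom map is a fibration if and only if it is, which follows from the right-properness of the Kan–Quillen model structure together with the gluing/cube lemma (compare fibers, use the long exact sequence of homotopy groups and the five lemma on fibers, exactly as in the proof of \cref{prop:induction step DR}). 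Finally, the ``in particular'' clause follows by applying the general statement to the canonical $\SLoo$-morphism $\g\to 0$, whose image $\MC_\bullet(\g)\to\MC_\bullet(0)=*$ is a fibration by \cref{thm:filtered surjections induce fibrations of MC}, so $\MC(\g\otimes C_\bullet)\to *$ is a fibration, i.e.\ $\MC(\g\otimes C_\bullet)$ is Kan.
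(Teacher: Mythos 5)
There is a genuine gap: the model-categorical lemma your argument hinges on is false. It is not true that a map between Kan complexes sitting in a strictly commuting square with weak equivalences on the vertical edges and a fibration on top must itself be a fibration. For a counterexample, take any weak equivalence $f:X\to Y$ between Kan complexes that is not a fibration, and form the square with $\id_X$ on top, $\id_X$ and $f$ as the vertical edges, and $f$ on the bottom: the top map is a fibration and the vertical maps are weak equivalences, yet the bottom map is not a fibration. Right-properness and the five lemma cannot repair this, because being a fibration is not detected by homotopy groups of fibers (indeed the five-lemma argument you sketch would at best show the bottom map is a weak equivalence when the top one is, which is a different statement). Your own worry that ``being a fibration is not a homotopy-invariant property'' is exactly the fatal issue, and the proposed fix does not address it.

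The correct argument uses precisely the data you have already assembled, but packaged as a \emph{retract} rather than a zig-zag of weak equivalences. Since $P_\bullet I_\bullet = \id_{\MC(\g\otimes C_\bullet)}$ by \cref{lemma:PIis1}, and since $I_\bullet$ and $P_\bullet$ are natural in the algebra, the commuting diagram
\[
\MC(\g\otimes C_\bullet)\xrightarrow{\ I_\bullet\ }\MC_\bullet(\g)\xrightarrow{\ P_\bullet\ }\MC(\g\otimes C_\bullet)
\]
over the analogous row for $\h$, with vertical maps $\MC(\phi\otimes\id_{C_\bullet})$, $\MC_\bullet(\Phi)$, $\MC(\phi\otimes\id_{C_\bullet})$, exhibits $\MC(\phi\otimes\id_{C_\bullet})$ as a retract of the fibration $\MC_\bullet(\Phi)$ in the arrow category. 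Since the class of fibrations in any model category is closed under retracts, $\MC(\phi\otimes\id_{C_\bullet})$ is a fibration, and the ``in particular'' clause follows by taking $\h=0$ as you indicate. This is the paper's proof; note that no weak equivalences, Kan-ness of the intermediate objects, or properness are needed at all.
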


\begin{proof}
	By assumption, the morphism
	\[
	\MC_\bullet(\phi):\MC_\bullet(\g)\longrightarrow\MC_\bullet(\h)
	\]
	is a fibration of simplicial set, and by Lemma \ref{lemma:PIis1} the following diagram exhibits $\MC(\phi\otimes\id_{C_\bullet})$ as a retract of $\MC_\bullet(\phi)$.
	\begin{center}
		\begin{tikzpicture}
		\node (a) at (0,0) {$\MC(\g\otimes C_\bullet)$};
		\node (b) at (3,0) {$\MC_\bullet(\g)$};
		\node (c) at (6,0) {$\MC(\g\otimes C_\bullet)$};
		\node (d) at (0,-2) {$\MC(\g\otimes C_\bullet)$};
		\node (e) at (3,-2) {$\MC_\bullet(\g)$};
		\node (f) at (6,-2) {$\MC(\g\otimes C_\bullet)$};
		
		\draw[->] (a) -- node[above]{$I_\bullet$} (b);
		\draw[->] (b) -- node[above]{$P_\bullet$} (c);
		\draw[->] (d) -- node[below]{$I_\bullet$} (e);
		\draw[->] (e) -- node[below]{$P_\bullet$} (f);
		\draw[->] (a) -- node[left]{$\MC(\phi\otimes\id_{C_\bullet})$} (d);
		\draw[->] (b) -- node[left]{$\MC_\bullet(\phi)$} (e);
		\draw[->] (c) -- node[right]{$\MC(\phi\otimes\id_{C_\bullet})$} (f);
		\end{tikzpicture}
	\end{center}
	As the class of fibrations of a model category is closed under retracts, this concludes the proof.
\end{proof}

We also consider the composite $\rect_\bullet\coloneqq I_\bullet P_\bullet$, which is not the identity in general.

\begin{definition}
	We call the morphism
	\[
	\rect_\bullet:\MC_\bullet(\g)\longrightarrow\MC_\bullet(\g)
	\]
	the \emph{rectification map}.
\end{definition}

The following result is a wide generalization of \cite[Lemma B.2]{dr15}, as well as a motivation for the name ``rectification map'' for $\rect_\bullet$.

\begin{proposition} \label{prop:rectification for deformation oo-groupoid}
	We consider an element
	\[
	\alpha \coloneqq \alpha_1(t_0,\ldots,t_n) + \cdots\in\MC_n(\g)\ ,
	\]
	where the dots indicate terms in $\g^{1-k}\otimes\Omega_n^k$ with $1\le k\le n$. Then $\beta\coloneqq \rect_n(\alpha)\in\MC_n(\g)$ is of the form
	\[
	\beta = \beta_1(t_0,\ldots,t_n) + \cdots + \xi \otimes\omega_{0\ldots n}\ ,
	\]
	where the dots indicate terms in $\g^{1-k}\otimes\Omega_n^k$ with $1\le k\le n-1$, where $\xi$ is an element of $\g^{1-n}$, and where $\alpha_1$ and $\beta_1$ agree on the vertices of $\Delta^n$. In particular, if $\alpha\in\MC_1(\g)$, then $\beta = F(\alpha)\in\MC_1(\g)$ is of the form
	\[
	\beta = \beta_1(t) +\lambda dt
	\]
	for some $\lambda\in\g^0$, and satisfies
	\[
	\beta_1(0) = \alpha_1(0)\qquad\text{and}\qquad\beta_1(1) = \alpha_1(1)\ .
	\]
\end{proposition}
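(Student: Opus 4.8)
The plan is to analyze the explicit formula for the $\infty$-quasi-isomorphism $(1\otimes i_\bullet)_\infty$ induced by the homotopy transfer theorem applied to the contraction
\[
\begin{tikzpicture}[baseline]
\node (a) at (0,0){$\g\otimes\Omega_\bullet$};
\node (b) at (2.8,0){$\g\otimes C_\bullet$};
\draw[->] (a)++(.6,.1)--node[above]{\mbox{\tiny{$1\otimes p_\bullet$}}}+(1.6,0);
\draw[<-,yshift=-1mm] (a)++(.6,-.1)--node[below]{\mbox{\tiny{$1\otimes i_\bullet$}}}+(1.6,0);
\draw[->] (a) to [out=-160,in=160,looseness=4] node[left]{\mbox{\tiny{$1\otimes h_\bullet$}}} (a);
\end{tikzpicture}
\]
and to track, degree by degree in $\Omega_n$, where the components of $\rect_n(\alpha) = (1\otimes r_\bullet)_\infty(\alpha)$ land. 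Recall that in $\Omega_n$ the top form $\omega_{0\ldots n}$ spans the one-dimensional top piece of $C_n$, and that all $C_n^k$ for $1\le k\le n$ are spanned by the $\omega_{i_0\ldots i_k}$. First I would recall from the van der Laan formula that $(1\otimes i_\bullet)_\infty$ is a sum over rooted trees $\tau$ of expressions of the form $(1\otimes i)\,\tau^{h}(\cdots)$, where the internal edges carry the homotopy $1\otimes h_\bullet$ and the output is hit by $1\otimes i_\bullet$; since $i_\bullet$ is the inclusion of $C_\bullet$ into $\Omega_\bullet$, the image of $\rect_n(\alpha)$ lies in $\g\otimes C_n$, hence is a linear combination of elements of the form $\xi_{i_0\ldots i_k}\otimes\omega_{i_0\ldots i_k}$. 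This already shows that the top-degree part of $\rect_n(\alpha)$ is of the form $\xi\otimes\omega_{0\ldots n}$ for some $\xi\in\g^{1-n}$, and that all the intermediate terms live in $\g^{1-k}\otimes\Omega_n^k$ with $1\le k\le n-1$, except for the degree-zero part $\beta_1(t_0,\ldots,t_n)$.

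The second step is to identify the degree-zero part $\beta_1$ and to pin down its values at the vertices of $\Delta^n$. Here I would use two facts: first, that $(1\otimes p_\bullet)_\infty$ and $(1\otimes i_\bullet)_\infty$ both have linear component given by $1\otimes p_\bullet$ and $1\otimes i_\bullet$ respectively, while all higher components of these $\infty$-morphisms involve at least one factor of the homotopy $h_\bullet$, which strictly raises the form-degree or kills things by the side conditions; second, that evaluation at a vertex $e_j$ of $\Delta^n$ is a simplicial map $\partial_{\hat\jmath}\colon\Omega_n\to\Omega_0=\k$ under which $h_\bullet$ and the higher $C_n^{\ge1}$-components all vanish, since $\Omega_0$ is concentrated in form-degree $0$ and $h_\bullet$ takes image in the complement of the cocycles. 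Concretely, restricting the identity $\rect_n(\alpha)=\beta$ along the simplicial map to the $0$-cells reduces everything to the linear terms: $p_0$ and $i_0$ are both the identity of $\k$ (as in the proof of \cref{lemma:pi0}), so $\beta_1$ evaluated at $e_j$ equals $\alpha_1$ evaluated at $e_j$ for each $j$. This gives the statement ``$\alpha_1$ and $\beta_1$ agree on the vertices of $\Delta^n$''. For the case $n=1$, one then simply reads off: in $\Omega_1$ a general element of $(\g\otimes\Omega_1)^0$ is $\beta_1(t)+\lambda\,dt$ with $\lambda\in\g^0$, $\omega_{01}=dt$ is the top form, the agreement on vertices gives $\beta_1(0)=\alpha_1(0)$ and $\beta_1(1)=\alpha_1(1)$, and $\beta\in\MC_1(\g)$ because $\rect_1=I_1P_1$ maps Maurer--Cartan elements to Maurer--Cartan elements by functoriality of $\MC$.

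The main obstacle I anticipate is making the ``higher components of $(1\otimes i_\bullet)_\infty$ contribute nothing in form-degree $0$ and nothing at the vertices'' argument fully rigorous, i.e.\ being careful with the combinatorics of the van der Laan trees and the exact placement of the homotopy $h_\bullet$ on each internal edge together with the side conditions $h^2=0$, $p h = 0$, $h i = 0$ of the Dupont contraction. The key point one must verify is that every tree $\tau$ with at least one internal edge feeds a factor $h_\bullet(\cdots)$ into a bracket $\ell_k$, and that $h_\bullet$ lands in $\mathrm{Im}(h_\bullet)\subseteq\bigoplus_{k\ge1}\Omega_n^k$ (it strictly increases form-degree relative to the $C$-part, or more precisely its image avoids $C_n^0=\k$), so that the output of any such tree, after applying $1\otimes i_\bullet$, lies in $\g\otimes\Omega_n^{\ge1}$ and therefore does not affect $\beta_1$. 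Dually, when we restrict along the simplicial face map to a vertex, the identity $\partial_{\hat\jmath}h_\bullet=h_0\partial_{\hat\jmath}=0$ (since $h_0=0$ on $\Omega_0$) kills all these terms, leaving only the linear part. Once this bookkeeping lemma is in place, the rest of the proof is a direct unwinding of the van der Laan formula and the structure of $C_n$, with no further difficulty.
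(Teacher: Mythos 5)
Your argument for the values of $\beta_1$ at the vertices is correct and is exactly the paper's: $\rect_\bullet$ commutes with the face maps and is the identity on $0$-simplices, so evaluation at the vertices only sees the linear data. The problem is in your first step. You read the homotopy transfer formula for $(1\otimes i_n)_\infty$ backwards: in the van der Laan formula of \cref{subsect:HTT}, the component of $i_\infty$ associated to a tree $\tau$ is $h\,\tau^h(f)\,i^{\otimes k}$, so the inclusion $i$ is placed at the \emph{leaves} and the homotopy $h$ at the \emph{root}, not the other way around. Consequently the higher-arity corrections of $I_n$ land in $\g\otimes\mathrm{Im}(1\otimes h_n)$, and the image of $\rect_n$ is \emph{not} contained in $\g\otimes C_n$; it is only contained in $\ker(1\otimes h_n)$, which is strictly larger (this is precisely the content of \cref{lemma:comparisonGamma}, which identifies the image with Getzler's $\gamma_\bullet(\g)$, not with $\MC(\g\otimes C_\bullet)$ sitting inside $\g\otimes C_\bullet$). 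Your claim would force $\beta_1$ to be affine in the barycentric coordinates $t_i$, since $C_n^0$ is spanned by the $\omega_i=t_i$; this already fails for $n=1$, where $\beta_1(t)$ is the solution $e^{t\ad_\lambda}(x_0)+\cdots$ of the gauge equation (\cref{lemma:formula for gauges in Lie algebra}), which is not affine in $t$. So the first step, and with it your derivation of the shape of the top-degree term, collapses.

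The assertion about the top form-degree part is nevertheless true, and the correct reason is exactly the placement of $h$ at the root: since $1\otimes h_n$ lowers the form-degree in $\Omega_n$ by one, every component of $(1\otimes i_n)_\infty$ of arity at least $2$ outputs into $\g\otimes\Omega_n^{\le n-1}$ and hence cannot contribute to $\g^{1-n}\otimes\Omega_n^n$. The only term of $I_n(P_n(\alpha))$ reaching the top form-degree is therefore the linear one, $(1\otimes i_n)(P_n(\alpha))$, whose component in form-degree $n$ lies in $\g^{1-n}\otimes C_n^n=\g^{1-n}\otimes\k\,\omega_{0\ldots n}$. This is the argument the paper uses. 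Note also that no analysis of whether the higher trees affect $\beta_1$ in form-degree $0$ is needed: the proposition only claims agreement of $\alpha_1$ and $\beta_1$ at the vertices, which the simplicial argument already gives, so the bookkeeping you flag as the main obstacle can be dispensed with entirely.
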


\begin{remark}
	As $\rect_\bullet$ is a projector, this proposition in fact gives information on the form of all the elements of $\MC(\g\otimes C_\bullet)$.
\end{remark}

\begin{proof}
	First notice that the map $\rect_\bullet$ commutes with the face maps and is the identity on $0$-simplices, thus evaluation of the part of $\beta$ in $\g^1\otimes\Omega_n^0$ at the vertices gives the same result as evaluation at the vertices of $\alpha_1$. Next, we notice that $\beta$ is in the image of $I_\bullet$. We use the explicit formula for $(1\otimes i_n)_\infty$ of \cref{subsect:HTT}: the operator acting on arity $k\ge2$ is given, up to signs, by the sum over all rooted trees with $1\otimes i_n$ put at the leaves, the brackets $\ell_n$ of the corresponding arity at all vertices, and $1\otimes h$ at the inner edges and at the root. But the $1\otimes h$ at the root lowers the degree of the part of the form in $\Omega_n$ by $1$, and thus we cannot get something in $\g^{1-n}\otimes\Omega_n^n$ from these terms. The only surviving term is therefore the one coming from $(1\otimes i_n)(P(\alpha))$, given by $\xi\otimes\omega_{0\ldots n}$ for some $\xi\in\g^{1-n}$.
\end{proof}

In particular, take $n=1$. Then a gauge between two Maurer--Cartan elements is exactly the same as an element $\beta\in\MC_1(\g)$ of the form
\[
\beta = \beta_1(t) +\lambda dt
\]
such that $\beta_1(t)$ evaluates to the two Maurer--Cartan elements at $t=0,1$, cf. \cref{subsection:homotopies and gauges}. With this in our minds, we notice that \cref{prop:rectification for deformation oo-groupoid} above immediately implies the following two facts.
\begin{enumerate}
	\item The set $\MC(\g\otimes C_1)$ is included into $\MC_1(\g)$ as the subset consisting of gauge equivalences via the map $I_1$.
	\item The rectification map $\rect_1$ gives us an explicit formula to rectify a homotopy between two Maurer--Cartan elements to a gauge between the same elements.
\end{enumerate}
Analogously, \cref{prop:rectification for deformation oo-groupoid} also tells us that the higher maps $\rect_n$ ``rectify" higher relations, making the term of lowest degree in $\g$ become constant.

\subsection{Comparison with Getzler's \texorpdfstring{$\infty$}{infinity}-groupoid}

Finally, we compare the simplicial set $\MC(\g\otimes C_\bullet)$ with Getzler's Kan complex $\gamma_\bullet(\g)$. We start with an easy result that follows directly from our approach, before using Bandiera's results --- see \cref{subsect:formal Kuranishi theorem} --- to prove that these two simplicial sets are actually isomorphic.

\begin{lemma} \label{lemma:comparisonGamma}
	We have
	\[
	I_\bullet\MC(\g\otimes C_\bullet)\subseteq\gamma_\bullet(\g)\ .
	\]
\end{lemma}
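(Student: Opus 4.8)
Recall that $\gamma_\bullet(\g) = \MC_\bullet(\g)\cap\ker(1_\g\otimes h_\bullet)$, so the claim is that every element of $\MC(\g\otimes C_\bullet)$, once included into $\MC_\bullet(\g) = \MC(\g\otimes\Omega_\bullet)$ via $I_\bullet$, is killed by the Dupont homotopy $1_\g\otimes h_\bullet$. The plan is to track carefully where $1_\g\otimes h_\bullet$ appears in the explicit formula for the $\infty$-quasi-isomorphism $(1_\g\otimes i_\bullet)_\infty$ provided by the homotopy transfer theorem, cf. \cref{subsect:HTT} and \cref{subsect:homotopy Lie algebras}. Concretely, for $x\in\MC(\g\otimes C_\bullet)$ one has
\[
I_\bullet(x) = \MC((1_\g\otimes i_\bullet)_\infty)(x) = \sum_{n\ge1}\frac{1}{n!}(1_\g\otimes i_\bullet)_n(x,\ldots,x)\ ,
\]
where $(1_\g\otimes i_\bullet)_1 = 1_\g\otimes i_\bullet$ and, for $n\ge2$, the component $(1_\g\otimes i_\bullet)_n$ is a sum over rooted trees $\tau\in\RT_n$ of expressions of the form $\tau^h$ built by labelling the leaves with $1_\g\otimes i_\bullet$, the vertices with the brackets of $\g\otimes\Omega_\bullet$, and \emph{the internal edges and the root} with the homotopy $1_\g\otimes h_\bullet$.

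The key observation is the third side condition of Dupont's contraction, namely $h_\bullet i_\bullet = 0$, which tensors up to $(1_\g\otimes h_\bullet)(1_\g\otimes i_\bullet) = 0$, and the side condition $h_\bullet^2 = 0$, giving $(1_\g\otimes h_\bullet)^2 = 0$. First I would treat the linear term: $(1_\g\otimes h_\bullet)(1_\g\otimes i_\bullet)(x) = 0$ immediately by $h_\bullet i_\bullet = 0$. Then for $n\ge2$: each tree expression $\tau^h$ in $(1_\g\otimes i_\bullet)_n$ has its outermost operation equal to $1_\g\otimes h_\bullet$ (the label at the root), so applying $1_\g\otimes h_\bullet$ to it produces a leading factor $(1_\g\otimes h_\bullet)^2 = 0$. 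Hence every term of $I_\bullet(x)$ beyond possibly the constant one is annihilated by $1_\g\otimes h_\bullet$, and in fact the linear term is annihilated too; therefore $(1_\g\otimes h_\bullet)I_\bullet(x) = 0$, i.e. $I_\bullet(x)\in\ker(1_\g\otimes h_\bullet)$. Combined with the fact that $I_\bullet(x)\in\MC_\bullet(\g)$ since $I_\bullet$ lands in Maurer--Cartan elements (being induced by an $\infty$-morphism on Maurer--Cartan sets), we conclude $I_\bullet(x)\in\gamma_\bullet(\g)$.

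The only mild subtlety — and what I expect to be the main point requiring care rather than a genuine obstacle — is making sure that the root of every tree in the HTT formula for $i_\infty$ is indeed decorated by $h$ (and not, say, by $p$ as in the formula for the transferred \emph{structure} $\varphi_B$, or by $p$ as in the formula for $p_\infty$); this is precisely the content of the formula recalled in \cref{subsect:HTT} for $i_\infty$, where the outermost map applied is $hs^{-1}$ applied recursively and the overall output is $\tau^h(f)$ itself (not $p\tau^h(f)$). Once this bookkeeping is fixed, the argument is a one-line application of the side conditions $h i = 0$ and $h^2 = 0$, extended $\g$-linearly. I would also remark that by \cref{thm:homotopy equivalence MC. and MC(g x C)} the map $I_\bullet$ is a homotopy equivalence onto its image, so this inclusion $I_\bullet\MC(\g\otimes C_\bullet)\subseteq\gamma_\bullet(\g)$ together with \cref{thm:getzler gamma is we to MC.} already shows the two models are homotopy equivalent; the promised isomorphism then follows from Bandiera's formal Kuranishi theorem as indicated after the statement.
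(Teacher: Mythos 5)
Your proof is correct and takes essentially the same route as the paper: both arguments read off from the explicit homotopy transfer formula for $(1_\g\otimes i_\bullet)_\infty$ that every component of $I_\bullet(x)$ is annihilated by $1_\g\otimes h_\bullet$, using the side conditions $h_\bullet i_\bullet=0$ (linear term) and $h_\bullet^2=0$ (higher terms, whose outermost factor is $h_\bullet$). The paper's version is simply terser, citing only $h_\bullet i_\bullet=0$ and the shape of the formula, while you spell out the tree-by-tree bookkeeping.
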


\begin{proof}
	We have $h_\bullet i_\bullet = 0$. Therefore, by the explicit formula formula for $(i_\bullet)_\infty$ given in \cref{subsect:HTT}, we have $h_\bullet(\beta) = 0$ for any $\beta\in\g\otimes\Omega_\bullet$ in the image of $I_\bullet$. Thus
	\[
	h_\bullet(\MC(\g\otimes C_\bullet)) = h_\bullet I_\bullet P_\bullet(\MC_\bullet(\g)) = 0\ ,
	\]
	which proves the claim.
\end{proof}

An immediate consequence of the formal Kuranishi theorem is the following proposition.

\begin{theorem}[{\cite[Prop. 2.5]{ban17}}]\label{thm:bandiera}
	The map
	\[
	(P_\bullet,1\otimes h_\bullet):\MC_\bullet(\g)\longrightarrow\MC(\g\otimes C_\bullet)\times\big(\mathrm{Im}(1\otimes h_\bullet)\cap (\g\otimes\Omega_\bullet)^1\big)
	\]
	is bijective. In particular, its restriction to $\gamma_\bullet(\g) = \ker(1\otimes h_\bullet)\cap\MC_\bullet(\g)$ gives a isomorphism of simplicial sets
	\[
	P_\bullet:\gamma_\bullet(\g)\longrightarrow\MC(\g\otimes C_\bullet)\ .
	\]
\end{theorem}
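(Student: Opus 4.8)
The plan is to obtain the statement as a levelwise application of the formal Kuranishi theorem, \cref{thm:formal Kuranishi theorem}. Fix a simplicial degree $n\ge0$. Dupont's contraction restricts at level $n$ to a contraction of the underlying (co)chain complexes of $\Omega_n$ onto $C_n$ with contracting homotopy $h_n$, and tensoring with $\g$ produces the contraction of $\g\otimes\Omega_n$ onto $\g\otimes C_n$ with homotopy $1_\g\otimes h_n$ underlying the contraction of \cref{subsect:formal Kuranishi theorem}. Since $C_n$ is finite dimensional and $\g$ is proper complete, the filtration $\F_k(\g\otimes C_n)\coloneqq\F_k\g\otimes C_n$ makes $\g\otimes C_n$ a proper complete chain complex on which $1_\g\otimes h_n$ and $1_\g\otimes i_np_n$ are filtered, so this is a complete contraction in the sense of \cref{subsect:complete contractions and htt}. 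The one delicate point is that $\g\otimes\Omega_n$ is not itself complete, since $\Omega_n$ is infinite dimensional; I would handle this by first replacing $\g$ by each nilpotent quotient $\g/\F_k\g$, for which the tensored contraction is complete, applying the theorem there, and then passing to the limit, using $\MC_n(\g)=\lim_k\MC\bigl((\g/\F_k\g)\otimes\Omega_n\bigr)$ together with the analogous presentation of $\MC(\g\otimes C_n)$, noting that $1_\g\otimes h_n$ is compatible with these towers.

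Granting this, \cref{thm:formal Kuranishi theorem} applied with $\g\otimes\Omega_n$ in the role of $\g$ and $\g\otimes C_n$ in the role of $\h$ asserts that
\[
\bigl(\MC((1_\g\otimes p_n)_\infty),\,1_\g\otimes h_n\bigr)\colon\ \MC_n(\g)\ \longrightarrow\ \MC(\g\otimes C_n)\times\bigl(\mathrm{Im}(1_\g\otimes h_n)\cap(\g\otimes\Omega_n)^1\bigr)
\]
is a bijection, whose first component is precisely $P_n$ by the definition of $P_\bullet$ in \cref{sect:alternative model for space of MC}. It remains to see that these bijections assemble into an isomorphism of simplicial sets, and this is where simpliciality of Dupont's contraction enters: since $(p_\bullet,i_\bullet,h_\bullet)$ is a morphism of simplicial objects, the homotopy transfer theorem — being functorial with respect to morphisms of complete contractions, cf. \cref{subsect:complete contractions and htt} — endows $\g\otimes C_\bullet$ with a simplicial $\SLoo$-algebra structure and makes $(1_\g\otimes p_\bullet)_\infty$ and $(1_\g\otimes i_\bullet)_\infty$ simplicial $\infty$-morphisms. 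Hence $P_\bullet$, $I_\bullet$ and the homotopy $1_\g\otimes h_\bullet$ are all morphisms of simplicial sets, and the displayed bijection is an isomorphism of simplicial sets.

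For the last assertion, observe that $\gamma_n(\g)=\MC_n(\g)\cap\ker(1_\g\otimes h_n)$ is exactly the preimage under the above bijection of $\MC(\g\otimes C_n)\times\{0\}$; consequently $P_\bullet$ restricts to a bijection $\gamma_\bullet(\g)\to\MC(\g\otimes C_\bullet)$. By \cref{lemma:comparisonGamma} the map $I_\bullet$ takes values in $\gamma_\bullet(\g)$, and $P_\bullet I_\bullet=\id$ by \cref{lemma:PIis1}; combined with injectivity of $P_\bullet|_{\gamma_\bullet(\g)}$ this forces $I_\bullet$ to be a two-sided inverse of $P_\bullet|_{\gamma_\bullet(\g)}$, which is moreover simplicial. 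Thus $P_\bullet\colon\gamma_\bullet(\g)\to\MC(\g\otimes C_\bullet)$ is an isomorphism of simplicial sets. The substantive input is the formal Kuranishi theorem, which I take as given; the only genuine obstacle in the reduction is the limit argument identifying $\MC(\g\otimes\Omega_\bullet)$ with $\MC_\bullet(\g)$, made necessary by the failure of $\g\otimes\Omega_\bullet$ to be complete in the naive sense.
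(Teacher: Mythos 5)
Your proposal is correct and follows essentially the same route as the paper: apply the formal Kuranishi theorem (\cref{thm:formal Kuranishi theorem}) to the contraction of $\g\otimes\Omega_\bullet$ onto $\g\otimes C_\bullet$ induced by Dupont's contraction, then obtain the second statement by restricting to $\gamma_\bullet(\g)=\MC_\bullet(\g)\cap\ker(1\otimes h_\bullet)$. Your additional care with completeness (passing through the quotients $\g/\F_k\g$ and taking limits, which is consistent with the definition of $\MC_\bullet$) and with simpliciality is a welcome elaboration of details the paper leaves implicit, not a different argument.
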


\begin{proof}
	The first statement is obtained by applying \cref{thm:formal Kuranishi theorem} to the contraction
	\begin{center}
		\begin{tikzpicture}
		\node (a) at (0,0){$\g\otimes\Omega_\bullet$};
		\node (b) at (2.8,0){$\g\otimes C_\bullet$};
		
		\draw[->] (a)++(.6,.1)--node[above]{\mbox{\tiny{$1\otimes p_\bullet$}}}+(1.6,0);
		\draw[<-,yshift=-1mm] (a)++(.6,-.1)--node[below]{\mbox{\tiny{$1\otimes i_\bullet$}}}+(1.6,0);
		\draw[->] (a) to [out=-160,in=160,looseness=4] node[left]{\mbox{\tiny{$1\otimes h_\bullet$}}} (a);
		\end{tikzpicture}
	\end{center}
	The second statement is a straightforward consequence of the first one, obtained by restricting the map to $\gamma_\bullet(\g) = \MC_\bullet(\g)\cap\ker(1\otimes h_\bullet)$.
\end{proof}

\begin{remark}
	Thanks to our approach, we immediately have an inverse for the map $P_\bullet$: it is of course the map $I_\bullet$.
\end{remark}

As a consequence of Bandiera's result and of \cref{prop:rectification for deformation oo-groupoid}, we can partially characterize the thin elements of $\gamma_\bullet(\g)$.

\begin{lemma}
	For each $n\ge1$, the thin elements contained in $\gamma_n(\g)$ are those with no term in $\g^{1-n}\otimes\Omega_n^n$.
\end{lemma}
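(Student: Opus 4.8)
The plan is to combine the formal Kuranishi isomorphism $P_\bullet:\gamma_\bullet(\g)\to\MC(\g\otimes C_\bullet)$ of \cref{thm:bandiera} with the structural description of the elements of $\MC(\g\otimes C_\bullet)$ provided by \cref{prop:rectification for deformation oo-groupoid}, together with a direct analysis of the integration functional. Recall that an $n$-simplex $x\in\gamma_n(\g)$ is thin precisely when $\int_{\Delta[n]}x=0$, where integration acts on the part of $x$ lying in $\Omega_n$. So the statement to prove is: for $x\in\gamma_n(\g)$, one has $\int_{\Delta[n]}x=0$ if and only if $x$ has no component in $\g^{1-n}\otimes\Omega_n^n$.

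First I would set up notation. Write a generic element of $\g\otimes\Omega_n$ decomposed by form-degree as $x=x^{(0)}+x^{(1)}+\cdots+x^{(n)}$ with $x^{(k)}\in\g^{1-k}\otimes\Omega_n^k$. The integration map $\int_{\Delta[n]}$ kills all $x^{(k)}$ with $k<n$ for degree reasons (only top forms integrate to something nonzero over the $n$-simplex), so $\int_{\Delta[n]}x$ depends only on $x^{(n)}$, the component in $\g^{1-n}\otimes\Omega_n^n$. This already gives one implication immediately: if $x$ has no component in $\g^{1-n}\otimes\Omega_n^n$, i.e. $x^{(n)}=0$, then $\int_{\Delta[n]}x=0$, so $x$ is thin. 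The content is the converse, and here I would use the structure theorem. By \cref{thm:bandiera}, $x=I_n(P_n(x))$ since $x\in\gamma_n(\g)=\ker(1\otimes h_\bullet)\cap\MC_\bullet(\g)$ and $P_\bullet$ restricted to $\gamma_\bullet(\g)$ is a bijection with inverse $I_\bullet$. Now \cref{prop:rectification for deformation oo-groupoid} (applied to $\alpha=x$, noting $\rect_n=I_nP_n$ and $\rect_n(x)=x$) tells us that $x^{(n)}$ is of the very restricted form $\xi\otimes\omega_{0\ldots n}$ for a single element $\xi\in\g^{1-n}$, where $\omega_{0\ldots n}\in\Omega_n^n$ is the canonical top form from Dupont's construction, cf. \cref{subsect:Dupont contraction}.

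The key computational input is then $\int_{\Delta[n]}\omega_{0\ldots n}\neq 0$: indeed, by the definition $\omega_{i_0\ldots i_k}=k!\sum_j(-1)^jt_{i_j}dt_{i_0}\cdots\widehat{dt_{i_j}}\cdots dt_{i_k}$, the form $\omega_{0\ldots n}$ is (a nonzero multiple of) the standard volume form on $\Delta^n$, and its integral over the geometric simplex is a nonzero rational number (one can compute $\int_{\Delta^n}\omega_{0\ldots n}=1$ with the normalization chosen, or at worst a nonzero constant). Therefore $\int_{\Delta[n]}x=\int_{\Delta[n]}(\xi\otimes\omega_{0\ldots n})=\xi\cdot\int_{\Delta^n}\omega_{0\ldots n}$, which vanishes if and only if $\xi=0$, i.e. if and only if $x^{(n)}=0$. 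This closes the equivalence. The only real obstacle I anticipate is pinning down the precise constant $\int_{\Delta^n}\omega_{0\ldots n}$ and making sure the sign and normalization conventions for Dupont's forms match those used implicitly in Getzler's definition of ``thin'' — but since all that matters is that the constant is nonzero, this is a routine check rather than a genuine difficulty. A secondary point to be careful about is that \cref{prop:rectification for deformation oo-groupoid} is stated for $\MC_n(\g)$ and its rectification; I would just note explicitly that an element of $\gamma_n(\g)$, viewed inside $\MC_n(\g)$, is fixed by $\rect_n$ (being already in the image of $I_\bullet$ with $h_\bullet$-component zero), so the proposition applies verbatim with $\beta=\alpha=x$.
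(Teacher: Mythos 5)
Your proposal is correct and follows essentially the same route as the paper: both combine \cref{thm:bandiera} with \cref{prop:rectification for deformation oo-groupoid} to see that the top form-degree component of an element of $\gamma_n(\g)$ is $\xi\otimes\omega_{0\ldots n}$, observe that only this component survives integration, and conclude using $\int_{\Delta^n}\omega_{0\ldots n}=1\neq0$ that thinness is equivalent to $\xi=0$. Your extra remark that elements of $\gamma_n(\g)$ are fixed by $\rect_n$ is exactly the point the paper leaves implicit, so nothing is missing.
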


\begin{proof}
	By \cref{prop:rectification for deformation oo-groupoid} and \cref{thm:bandiera}, we know that if $\alpha\in\gamma_n(\g)$, then $\alpha$ is of the form
	\[
	\alpha = \cdots + \xi\otimes\omega_{0\ldots n}
	\]
	for some $\xi\in\g^{1-n}$, where the dots indicate terms in $\g^{1-k}\otimes\Omega_n^k$ for $0\le k\le n-1$, which will give zero after integration. Integrating, we get
	\[
	\int_{\Delta^n}\alpha = \xi\otimes\int_{\Delta^n}\omega_{0\ldots n} = \xi\otimes1\ .
	\]
	Therefore, $\alpha$ is thin if, and only if $\xi=0$. 
\end{proof}

\section{A model for Maurer--Cartan elements of Lie algebras} \label{sect:cosimplicial model for Lie algebras}

Our next goal is to represent the Maurer--Cartan functor $\MC(\g\otimes C_\bullet)$ by a cosimplicial object. We begin by doing this in the case of Lie algebras, as it was originally done in \cite{rn17cosimplicial}. The case of $\SLoo$-algebras will be treated in \cref{sect:model for MC-el in Loo-alg}.

\medskip

Since we are in the shifted setting, we consider shifted Lie algebras, that is algebras over the operad $\susp\otimes\lie$.

\subsection{Representing \texorpdfstring{$\MC(\g\otimes C_\bullet)$}{MC(g x C.)}}\label{subsect:representing MC for Lie algebras}

Using the Dupont contraction, the homotopy transfer theorem gives the structure of a simplicial $\C_\infty$-algebra to $C_\bullet$. As the underlying cochain complex $C_n$ is finite dimensional for each $n$, it follows that its dual is a cosimplicial $\Bar(\susp\otimes\lie)$-coalgebra. Therefore, we can take its complete cobar construction relative to the canonical twisting morphism
\[
\pi:\Bar(\susp\otimes\lie)\longrightarrow\susp\otimes\lie
\]
to obtain a shifted Lie algebra.

\begin{definition}\label{def:cosimplicial mc for Lie}
	We denote the cosimplicial shifted Lie algebra obtained this way by $\mc_\bullet\coloneqq\widehat{\Omega}_\pi(C_\bullet^\vee)$.\index{$\mc_\bullet$}
\end{definition}

\begin{theorem}\label{thm:cosimplicialModel}
	Let $\g$ be a proper complete shifted Lie algebra. There is a canonical isomorphism
	\[
	\MC(\g\otimes C_\bullet)\cong\hom_{\mathsf{dgLie}}(\mc_\bullet,\g)\ .
	\]
	It is natural in $\g$.
\end{theorem}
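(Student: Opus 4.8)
<br>

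The plan is to derive the isomorphism as a chain of natural bijections, each of which has already been established earlier in the excerpt. The starting point is the observation that $C_\bullet$ carries a simplicial $\C_\infty = \Cobar(\susp\otimes\lie)^{\antishriek}$-algebra structure via the homotopy transfer theorem applied to Dupont's contraction, and that since each $C_n$ is finite-dimensional, $C_n^\vee$ is a cosimplicial coalgebra over the cooperad $(\susp\otimes\lie)^{\antishriek}$, or equivalently (after the appropriate identification and using $\Bar(\susp\otimes\lie)$ as in the paper's convention) a $\Bar(\susp\otimes\lie)$-coalgebra. The key structural fact is then \cref{prop:partial Rosetta stone for non-conilpotent coalgebras}: for the canonical twisting morphism $\pi:\Bar(\susp\otimes\lie)\to\susp\otimes\lie$, a non-conilpotent $\Bar(\susp\otimes\lie)$-coalgebra $C_n^\vee$, and a proper complete $\susp\otimes\lie$-algebra $\g$, we have
\[
\hom_{\mathsf{dgLie}}(\cCobar_\pi(C_n^\vee),\g) = \hom_{\widehat{\susp\otimes\lie}\text{-}\mathsf{alg}}(\cCobar_\pi(C_n^\vee),\g)\cong\Tw_\pi(C_n^\vee,\g)\ ,
\]
where on the left I am using that $\g$ being a strict Lie algebra means Lie algebra morphisms out of $\cCobar_\pi(C_n^\vee)$ are the same as proper complete algebra morphisms (every strict morphism respects the filtration in the relevant sense). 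So the first step is to record this bijection at each cosimplicial level $n$ and check naturality in $[n]\in\Delta$, so that it assembles into an isomorphism of simplicial sets $\hom_{\mathsf{dgLie}}(\mc_\bullet,\g)\cong\Tw_\pi(C_\bullet^\vee,\g)$.

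The second step is to identify $\Tw_\pi(C_\bullet^\vee,\g)$ with $\MC(\g\otimes C_\bullet)$. Here I would invoke \cref{thm:equality MC and twisting in tensor case} from the tensor-product setting of \cref{subsect:other settings}: for a twisting morphism $\pi:\C\to\P$ with $\C$ finite-dimensional in each arity, a dualizable $\C^\vee$-algebra $D$ (here $D = C_n$, a $\C_\infty$-algebra, with $\C = (\susp\otimes\lie)^{\antishriek}$ — modulo matching up $\Bar(\susp\otimes\lie)$ with $(\susp\otimes\lie)^{\antishriek}$ via the Koszul-dual identification) and a $\P$-algebra $\g$, one has $\MC(\g\otimes^\pi D)\cong\Tw_\pi(D^\vee,\g)$, i.e. $\MC(\g\otimes^\pi C_n)\cong\Tw_\pi(C_n^\vee,\g)$. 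The $\SLoo$-algebra structure on $\g\otimes^\pi C_n$ is precisely the one obtained by pulling back along $\manintensor_\pi:\SLoo\to (\susp\otimes\lie)\otimes\Bar(\susp\otimes\lie)$, which by \cref{thm:compatibility htt and convolution algebras on coalgebra side} coincides with the structure transferred from $\g\otimes\Omega_\bullet$ to $\g\otimes C_\bullet$ — that is, exactly the $\SLoo$-algebra $\g\otimes C_\bullet$ appearing in the statement. Again one checks that all these identifications are compatible with the cosimplicial/simplicial structure maps, using the naturality clauses of \cref{thm:convolution Loo tensor products} (e.g. $\manintensor_{f^*\alpha} = (1\otimes f^\vee)\manintensor_\alpha$) together with the fact that the simplicial structure maps of $C_\bullet$ are $\C_\infty$-algebra maps (since the homotopy transfer theorem is functorial in complete contractions and Dupont's contraction is simplicial).

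Composing the two bijections gives the desired natural isomorphism $\MC(\g\otimes C_\bullet)\cong\hom_{\mathsf{dgLie}}(\mc_\bullet,\g)$. The main obstacle, I expect, is not any single step but rather the bookkeeping needed to make the identification $\Bar(\susp\otimes\lie)\leftrightarrow(\susp\otimes\lie)^{\antishriek}$ and the associated twisting morphisms $\pi\leftrightarrow\iota$ (or $\kappa$) completely precise, and then to verify that every bijection in sight is genuinely natural with respect to the cosimplicial maps — this requires unwinding the homotopy-transfer formulas just enough to see that the codegeneracy and coface maps of $C_\bullet^\vee$ induce, under $\cCobar_\pi$, exactly the structure maps of $\mc_\bullet$, and that on the $\MC$ side they induce the face and degeneracy maps of $\MC(\g\otimes C_\bullet)$. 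A secondary subtlety is checking that replacing $\hom_{\widehat{\susp\otimes\lie}\text{-}\mathsf{alg}}$ by $\hom_{\mathsf{dgLie}}$ is harmless: since $\mc_\bullet = \cCobar_\pi(C_\bullet^\vee)$ is a genuine (complete) dg Lie algebra and $\g$ is an arbitrary dg Lie algebra, one should confirm that there is no further completeness condition hiding in the hom-set on the right — i.e. that $\mc_\bullet$ is the free-type object for which \cref{prop:partial Rosetta stone for non-conilpotent coalgebras} and \cref{lemma:free complete P-alg} apply verbatim.
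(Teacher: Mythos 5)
Your proposal is correct and follows essentially the same route as the paper, which likewise proves the statement by combining \cref{thm:compatibility htt and convolution algebras on coalgebra side}, \cref{prop:partial Rosetta stone for non-conilpotent coalgebras}, and \cref{thm:equality MC and twisting in tensor case}; the naturality and bookkeeping checks you flag are precisely what the paper leaves implicit. (Only a cosmetic slip: $\C_\infty=\Cobar\,\com^{\antishriek}$, not $\Cobar(\susp\otimes\lie)^{\antishriek}$ --- the latter is $\SLoo$ --- while the dual of $C_n$ is indeed a $\Bar(\susp\otimes\lie)$-coalgebra as you use it.)
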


\begin{proof}
	By \cref{thm:compatibility htt and convolution algebras on coalgebra side}, the $\SLoo$-algebra structure we have on $\g\otimes C_\bullet$ is the same as the structure that we obtain on the tensor product of the shifted Lie algebra $\g$ with the simplicial $\C_\infty$-algebra $C_\bullet$ by using \cref{thm:convolution Loo tensor products} the twisting morphism $\pi$. Therefore, we can apply \cref{prop:partial Rosetta stone for non-conilpotent coalgebras} and \cref{thm:equality MC and twisting in tensor case} to obtain the desired isomorphism.
\end{proof}

With this form for $\MC(\g\otimes C_\bullet)$, \cref{thm:homotopy equivalence MC. and MC(g x C)} reads as follows.

\begin{corollary} \label{cor:exMainThm}
	Let $\g$ be a proper complete dg Lie algebra. There is a weak equivalence of simplicial sets
	\[
	\MC_\bullet(\g)\simeq\hom_{\mathsf{dgLie}}(\mc_\bullet,\g)\ ,
	\]
	natural in $\g$.
\end{corollary}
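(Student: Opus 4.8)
The statement to prove, \cref{cor:exMainThm}, is the combination of two results that have both already been established in the chapter: \cref{thm:homotopy equivalence MC. and MC(g x C)}, which gives a natural weak equivalence $\MC_\bullet(\g)\simeq\MC(\g\otimes C_\bullet)$ for any filtered (in particular proper complete) shifted $\SLoo$-algebra, and \cref{thm:cosimplicialModel}, which gives a natural isomorphism $\MC(\g\otimes C_\bullet)\cong\hom_{\dgl}(\mc_\bullet,\g)$ when $\g$ is a proper complete shifted Lie algebra. The plan is therefore simply to chain these two together. First I would note that a proper complete shifted dg Lie algebra $\g$ is in particular a proper complete shifted $\SLoo$-algebra, via restriction of structure along the canonical projection $\SLoo = \susp\otimes\L_\infty \to \susp\otimes\lie$ (equivalently, a Lie algebra is an $\L_\infty$-algebra with vanishing higher brackets), so \cref{thm:homotopy equivalence MC. and MC(g x C)} applies and yields the natural weak equivalence
\[
\MC_\bullet(\g)\simeq\MC(\g\otimes C_\bullet)\ .
\]

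Second, I would invoke \cref{thm:cosimplicialModel} to rewrite the right-hand side: since $\g$ is a proper complete shifted Lie algebra, there is a canonical isomorphism of simplicial sets
\[
\MC(\g\otimes C_\bullet)\cong\hom_{\dgl}(\mc_\bullet,\g)\ ,
\]
natural in $\g$. Composing the weak equivalence with this isomorphism gives the desired natural weak equivalence $\MC_\bullet(\g)\simeq\hom_{\dgl}(\mc_\bullet,\g)$. The only remaining point is naturality: both the weak equivalence of \cref{thm:homotopy equivalence MC. and MC(g x C)} (given explicitly by the maps $P_\bullet$, $I_\bullet$ induced from the homotopy transfer theorem, which are functorial in $\g$) and the isomorphism of \cref{thm:cosimplicialModel} (coming from \cref{prop:partial Rosetta stone for non-conilpotent coalgebras} and \cref{thm:equality MC and twisting in tensor case}, both natural) are natural in $\g$, so their composite is as well; one should also check that restriction of structure along $\SLoo\to\susp\otimes\lie$ is compatible with morphisms of Lie algebras, which is immediate.

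There is essentially no genuine obstacle here, since all the real work is in the two theorems being cited. The one mild subtlety worth spelling out is the transition in framework: \cref{thm:homotopy equivalence MC. and MC(g x C)} is phrased for $\SLoo$-algebras and uses the $\SLoo$-structure on $\g\otimes C_\bullet$ obtained by homotopy transfer from $\g\otimes\Omega_\bullet$, whereas \cref{thm:cosimplicialModel} describes $\g\otimes C_\bullet$ via the convolution construction of \cref{thm:convolution Loo tensor products} applied to the twisting morphism $\pi:\Bar(\susp\otimes\lie)\to\susp\otimes\lie$. The compatibility of these two descriptions of the $\SLoo$-structure on $\g\otimes C_\bullet$ is exactly the content of \cref{thm:compatibility htt and convolution algebras on coalgebra side}, which is already used inside the proof of \cref{thm:cosimplicialModel}, so no further argument is needed; I would just remark on it for the reader's benefit. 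Thus the proof is a three-line citation-and-compose argument.

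\begin{proof}
	A proper complete shifted dg Lie algebra $\g$ is, by restriction of structure along the canonical operad map $\SLoo = \susp\otimes\L_\infty\to\susp\otimes\lie$, a proper complete shifted $\SLoo$-algebra. Hence \cref{thm:homotopy equivalence MC. and MC(g x C)} applies and provides a weak equivalence of simplicial sets $\MC_\bullet(\g)\simeq\MC(\g\otimes C_\bullet)$, natural in $\g$, where $\g\otimes C_\bullet$ carries the $\SLoo$-algebra structure obtained by the homotopy transfer theorem from Dupont's contraction (which, by \cref{thm:compatibility htt and convolution algebras on coalgebra side}, agrees with the convolution structure induced by $\pi$). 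On the other hand, \cref{thm:cosimplicialModel} gives a canonical isomorphism $\MC(\g\otimes C_\bullet)\cong\hom_{\dgl}(\mc_\bullet,\g)$, natural in $\g$. Composing the two, we obtain the natural weak equivalence $\MC_\bullet(\g)\simeq\hom_{\dgl}(\mc_\bullet,\g)$.
\end{proof}
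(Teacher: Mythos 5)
Your proposal is correct and matches the paper's own reasoning exactly: the corollary is stated there as the immediate combination of \cref{thm:homotopy equivalence MC. and MC(g x C)} and \cref{thm:cosimplicialModel}, which is precisely your argument. Your remark on the compatibility of the transferred and convolution $\SLoo$-structures on $\g\otimes C_\bullet$ via \cref{thm:compatibility htt and convolution algebras on coalgebra side} is a helpful clarification but, as you note, is already absorbed into the proof of \cref{thm:cosimplicialModel}.
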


\begin{remark}
	This result was proven independently and simultaneously in \cite[Thm. 0.1]{bfmt17}.
\end{remark}

We can completely characterize the first levels of the cosimplicial Lie algebra $\mc_\bullet$. Recall from the Lawrence--Sullivan algebra from \cref{subsect:LS algebra}: it is the unique free complete dg Lie algebra generated by two Maurer--Cartan elements in degree $1$ and a single element in degree $0$ such that the element in degree $0$ is a gauge between the two generating Maurer--Cartan elements.

\begin{proposition}\label{prop:first levels of mc}
	The first two levels of the cosimplicial dg Lie algebra $\mc_\bullet$ are as follows.
	\begin{enumerate}
		\item \label{n1} The dg Lie algebra $\mc_0$ is isomorphic to the free dg Lie algebra with a single Maurer--Cartan element as the only generator.
		\item \label{n2} The dg Lie algebra $\mc_1$ is isomorphic to the Lawrence--Sullivan algebra, shifted by $1$.
	\end{enumerate}
\end{proposition}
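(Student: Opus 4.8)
The plan is to compute the first two levels of the cosimplicial dg Lie algebra $\mc_\bullet = \widehat{\Omega}_\pi(C_\bullet^\vee)$ directly from the definition, unwinding the Dupont contraction and the transferred $\C_\infty$-structure in simplicial degrees $0$ and $1$. First I would recall that $C_0 = \k$ concentrated in degree $0$ (in cohomological conventions), spanned by the constant function, so the transferred $\C_\infty$-algebra structure on $C_0$ is trivial: there is a single generating operation of arity $1$, the identity, and no higher operations, because all the structure maps factor through $h_0 = 0$ and $C_0$ is one-dimensional. Dualizing, $C_0^\vee$ is the cofree $\Bar(\susp\otimes\lie)$-coalgebra on a one-dimensional space in the appropriate degree with zero cooperations beyond the counit, and hence $\widehat{\Omega}_\pi(C_0^\vee) = \widehat{(\susp\otimes\lie)}(C_0^\vee)$ is the free complete shifted Lie algebra on a single generator $x$, with differential coming solely from $d_2$. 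By \cref{prop:partial Rosetta stone for non-conilpotent coalgebras} and \cref{thm:equality MC and twisting in tensor case}, morphisms $\mc_0 \to \g$ of shifted Lie algebras correspond to Maurer--Cartan elements of $\g \otimes C_0 = \g$, so the generator $x$ must be a Maurer--Cartan element; tracking the formula for $-d_2$ in the complete cobar construction (\cref{subsect:complete cobar cosntruction}) shows $dx = -\tfrac12[x,x]$ up to sign, identifying $\mc_0$ with the free dg shifted Lie algebra on one Maurer--Cartan generator. This gives (\ref{n1}).

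For (\ref{n2}), the key computation is the explicit description of $C_1$ and of the transferred $\C_\infty$-structure on it. Here $\Omega_1 \cong \k[t, dt]$ with $|t| = 0$, $|dt| = 1$, and $C_1$ is the span of $\omega_0 = t_0 = 1-t$, $\omega_1 = t_1 = t$ in degree $0$ and $\omega_{01} = t_0\,dt_1 - t_1\,dt_0 = dt$ in degree $1$; equivalently, as a $2$-term complex $C_1 = (\k\langle\omega_0,\omega_1\rangle \xrightarrow{d} \k\langle\omega_{01}\rangle)$ with $d\omega_i = \pm\omega_{01}$. The transferred $\C_\infty$-algebra structure is governed by the Dupont homotopy $h_1$, and the standard computation (see \cite{get09}, \cite{cg08}) identifies this transferred structure: the products iterate the Bernoulli-number coefficients coming from $h_1$ applied to products of the $\omega_i$. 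Dualizing $C_1$ to a cosimplicial-degree-$1$ $\Bar(\susp\otimes\lie)$-coalgebra and applying $\widehat{\Omega}_\pi$, the two dual basis elements $\omega_0^\vee, \omega_1^\vee$ become degree-$1$ free generators, and $\omega_{01}^\vee$ becomes a degree-$0$ free generator $\lambda$; the differential, read off from the transferred coproduct via the $-d_2$ formula of the complete cobar construction, puts $\omega_i^\vee$ on Maurer--Cartan elements $x_0, x_1$ and, on $\lambda$, produces exactly the Bernoulli-number series $d\lambda = \sum_{n\ge 0}\tfrac{B_n}{n!}\ad_\lambda^n(x_1 - x_0) - \ad_\lambda(x_0)$ that defines the Lawrence--Sullivan algebra (\cref{subsect:LS algebra}) — this is the content of \cref{lemma:formula for gauges in Lie algebra} and the derivation following it, now appearing on the nose because the transferred coefficients are precisely the Bernoulli numbers. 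Since $\mc_\bullet$ is built to be $1$-shifted relative to the unshifted Lawrence--Sullivan picture, this yields the claimed isomorphism with the Lawrence--Sullivan algebra shifted by $1$.

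I expect the main obstacle to be the bookkeeping in identifying the transferred $\C_\infty$-coproduct on $C_1^\vee$ with the precise Bernoulli series, including all signs introduced by the suspension $\susp$, by the shift conventions, and by the Koszul sign rule in the complete cobar differential $-d_2$. Conceptually the argument is forced — $\mc_1$ represents $\MC(\g \otimes C_1)$, which by \cref{prop:rectification for deformation oo-groupoid} and \cref{subsection:homotopies and gauges} is exactly the set of gauges between pairs of Maurer--Cartan elements, and the Lawrence--Sullivan algebra is the universal such object, so by the universal property (freeness of both sides as complete shifted Lie algebras on the appropriate generators, plus agreement of the differentials) the two must coincide — but making the differential match term by term requires either a careful direct evaluation of $h_1$ on the relevant products or an appeal to the uniqueness of the universal gauge object combined with \cref{thm:cosimplicialModel}. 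I would present the clean version: establish freeness of $\mc_1$ on two degree-$1$ and one degree-$0$ generator from the cofreeness of $C_1^\vee$ and the form of $\widehat{\Omega}_\pi$, then invoke \cref{thm:cosimplicialModel} to see that $\hom_{\dgl}(\mc_1, \g) \cong \MC(\g\otimes C_1)$ naturally, recognize the right-hand side as the gauge relation, and conclude by the defining universal property of the Lawrence--Sullivan algebra that $\mc_1$ is its $1$-shift, which also pins down the differential without a separate sign computation.
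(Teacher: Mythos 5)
Your ``clean version'' is essentially the paper's own proof: the paper likewise reads off freeness of $\mc_0$ and $\mc_1$ on the (suspended) dual bases of $C_0$ and $C_1$, uses the representability $\hom_{\dgl}(\mc_\bullet,\g)\cong\MC(\g\otimes C_\bullet)$ together with $I_1$ and \cref{prop:rectification for deformation oo-groupoid} to identify the represented functor as a single Maurer--Cartan element (resp.\ two Maurer--Cartan elements and a gauge between them), and concludes by the same Yoneda-type argument that the differential is forced, so $\mc_1$ must be the shifted Lawrence--Sullivan algebra. The explicit transferred-$\C_\infty$-structure/Bernoulli-number computation you sketch as an alternative is exactly what the paper relegates to a remark as a viable but more laborious route.
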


\begin{proof}
	For (\ref{n1}), we have $\Omega_0\cong\k\cong C_0$, both $p_0$ and $i_0$ are the identity, and $h_0 = 0$. It follows that, as a complete graded free Lie algebra, $\mc_0$ is given by
	\[
	\mc_0 = \widehat{\lie}(s\k)\ .
	\]
	We denote the generator by $\alpha\coloneqq s1^\vee$. It has degree $1$. Let $\g$ be any complete dg Lie algebra, then a morphism
	\[
	\phi:\mc_0\longrightarrow\g
	\]
	is equivalent to the Maurer--Cartan element
	\[
	\phi(\alpha)\otimes 1\in\MC(\g\otimes C_\bullet) \cong \MC(\g)\ .
	\]
	Conversely, through $P_0$ every Maurer--Cartan element of $\g$ induces a morphism $\mc_0\to\g$. As this is true for any dg Lie algebra $\g$, it follows that $\alpha$ is a Maurer--Cartan element.
	
	\medskip
	
	To prove (\ref{n2}), we start by noticing that
	\[
	C_1\coloneqq \k\omega_0\oplus\k\omega_1\oplus\k\omega_{01}
	\]
	with $\omega_0,\omega_1$ of degree $0$ and $\omega_{01}$ of degree $1$. Denoting by $\alpha_i\coloneqq s\omega_i^\vee$ and by $\lambda\coloneqq s\omega_{01}^\vee$, we have
	\[
	\mc_1 = \widehat{\lie}(\alpha_0,\alpha_1,\lambda)
	\]
	as a graded Lie algebra. Let $\g$ be any dg Lie algebra, then a morphism
	\[
	\phi:\mc_1\longrightarrow\g
	\]
	is equivalent to a Maurer--Cartan element
	\[
	\phi(\alpha_0)\otimes\omega_0 + \phi(\alpha_1)\otimes\omega_1 + \phi(\lambda)\otimes\omega_{01}\in\MC(\g\otimes C_1)\ ,
	\]
	see \cite[Sect. 6.3--4]{rn17tensor}. Applying $I_1$, as in the proof of \cref{prop:rectification for deformation oo-groupoid} we obtain
	\[
	I_1(\phi(\alpha_0)\otimes\omega_0 + \phi(\alpha_1)\otimes\omega_1 + \phi(\lambda)\otimes\omega_{01}) = a(t_0,t_1) + \phi(\lambda)\otimes\omega_{01}\in\MC_1(\g)
	\]
	with $a(1,0) = \phi(\alpha_0)$ and $a(0,1) = \phi(\alpha_1)$. The Maurer--Cartan equation for $a(t_0,t_1) + \phi(\lambda)\otimes\omega_{01}$ then shows that $\phi(\lambda)$ is a gauge from $\phi(\alpha_0)$ to $\phi(\alpha_1)$. Conversely, if we are given the data of two Maurer--Cartan elements of $\g$ and a gauge equivalence between them, then this data gives us a Maurer--Cartan element of $\g\otimes\Omega_1$. Applying $P_1$ then gives back a non-trivial morphism $\mc_1\to\g$. As this is true for any $\g$, it follows that $\mc_1$ is isomorphic to the Lawrence--Sullivan algebra.
\end{proof}

\begin{remark}
	Alternatively, one could write down explicitly the differentials for both $\mc_0$ (which is straightforward) and $\mc_1$ (with the help of \cite[Prop. 19]{cg08}). An explicit description of $\mc_\bullet$ is made difficult by the fact that one needs to know the whole $\C_\infty$-algebra structure on $C_\bullet$ in order to write down a formula for the differential.
\end{remark}

\subsection{Relations to rational homotopy theory}

The cosimplicial dg Lie algebra $\mc_\bullet$ has already made its appearance in the literature not long ago, in the paper \cite{bfmt15}, in the context of rational homotopy theory, where it plays the role of a Lie model for the geometric $n$-simplex. With the goal of simplifying comparison and interaction between our work and theirs, we provide here a short review and a dictionary between our vocabulary and the notations used in \cite{bfmt15}.

\medskip

\begin{center}
	\begin{tabular}{|l|l|}
		\hline
		Notation in the present work & Notation of \cite{bfmt15}\\
		\hline
		$\mc_\bullet$ & $\mathfrak{L}_\bullet$ or $\mathfrak{L}_{\Delta^\bullet}$\\
		$\Omega_\bullet$ & $A_{PL}(\Delta^\bullet)$\\
		$\Bar_\iota$ & Quillen functor $\mathcal{C}$\\
		$\hom_{\mathsf{dgLie}}(\mc_\bullet,-)$ & $\langle-\rangle$\\
		$\hom_{\mathsf{dgCom}}(-,\Omega_\bullet)$ & $\langle -\rangle_S$\\
		\hline
	\end{tabular}
\end{center}

\medskip

\cite[Def. 2.1 and Thm. 2.8]{bfmt15} give a very satisfactory uniqueness result. They define a notion of \emph{sequence of compatible models for $\Delta$} which gives conditions for a cosimplicial Lie algebra to give sensible rational models for the geometric simplices, and they show that any two such objects are isomorphic. We refer the reader to the original reference for more details.

\medskip

The following theorem has non-empty intersection with our results. We say a shifted Lie algebra is of \emph{finite type} if it is finite dimensional in every degree and if its degrees are bounded either above or below.

\begin{theorem}[{\cite[Th. 8.1]{bfmt15}}]
	Let $\g$ be a dg Lie algebra of finite type with $H^n(\g,d) = 0$ for all $n>0$. Then there is a homotopy equivalence of simplicial sets
	\[
	\hom_{\mathsf{dgLie}}(\mc_\bullet,\g)\simeq\hom_{\mathsf{dgCom}}(\Bar_\iota(s\g)^\vee,\Omega_\bullet)\ .
	\]
\end{theorem}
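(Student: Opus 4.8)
The plan is to identify the right-hand side, $\hom_{\mathsf{dgCom}}(\Bar_\iota(s\g)^\vee,\Omega_\bullet)$, as a Maurer--Cartan-type mapping space and then connect it to $\hom_{\mathsf{dgLie}}(\mc_\bullet,\g)$ through the chain of equivalences established in this chapter. The key observation is that $\Bar_\iota(s\g)^\vee$ is, up to the finite-type duality, a cocommutative coalgebra, and its dual $\Bar_\iota(s\g)$ is the Chevalley--Eilenberg coalgebra of $\g$. So first I would unwind the right-hand side: a morphism of commutative algebras $\Bar_\iota(s\g)^\vee \to \Omega_n$ is, by the finite-type assumption on $\g$ (which makes $\Bar_\iota(s\g)$ of finite type in each degree) and the duality between $\hom$ and $\otimes$, the same as a twisting-morphism-flavored datum on $\Omega_n^\vee$, or equivalently on the tensor product $\g\otimes\Omega_n$. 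More precisely, using \cref{thm:Rosetta stone algebras} / \cref{prop:partial Rosetta stone for non-conilpotent coalgebras} relative to the twisting morphism $\pi:\Bar(\susp\otimes\lie)\to\susp\otimes\lie$, together with the duality identification $\hom(\Bar_\iota(s\g)^\vee, \Omega_n)\cong \Omega_n^\vee \otimes^\pi \Bar_\iota(s\g)^{\vee\vee} \cong \g\otimes\Omega_n$ as dg objects, one should get that $\hom_{\mathsf{dgCom}}(\Bar_\iota(s\g)^\vee,\Omega_n)$ is naturally in bijection with $\MC(\g\otimes\Omega_n)$, i.e. that $\hom_{\mathsf{dgCom}}(\Bar_\iota(s\g)^\vee,\Omega_\bullet) \cong \MC_\bullet(\g)$ as simplicial sets. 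The hypothesis $H^n(\g) = 0$ for $n>0$ enters to guarantee that the cobar construction behaves well under dualization — it is precisely the kind of connectivity/finiteness condition needed so that the double dual and the relevant (co)bar constructions are quasi-isomorphic.

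Second, I would invoke \cref{cor:exMainThm}: for $\g$ a proper complete dg Lie algebra there is a natural weak equivalence $\MC_\bullet(\g)\simeq\hom_{\mathsf{dgLie}}(\mc_\bullet,\g)$. Here one must check the completeness hypotheses: a finite-type dg Lie algebra with vanishing positive cohomology need not be literally complete, but since $\hom_{\mathsf{dgLie}}(\mc_\bullet,-)$ and $\MC_\bullet(-)$ both only see $\g$ through filtration quotients in the relevant degrees — and $\mc_\bullet$ is built by the cobar construction, hence is cofibrant/quasi-free with a weight grading — the statement passes to the general case by a filtration/limit argument of the same flavor as the one concluding the proof of \cref{thm:homotopy equivalence MC. and MC(g x C)}. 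Composing the bijection $\hom_{\mathsf{dgCom}}(\Bar_\iota(s\g)^\vee,\Omega_\bullet)\cong\MC_\bullet(\g)$ with the weak equivalence $\MC_\bullet(\g)\simeq\hom_{\mathsf{dgLie}}(\mc_\bullet,\g)$ yields the desired homotopy equivalence.

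An alternative, perhaps cleaner, route that I would keep in reserve: observe that both sides of the claimed equivalence are instances of convolution Maurer--Cartan spaces. On one side, $\hom_{\mathsf{dgLie}}(\mc_\bullet,\g) \cong \MC(\g\otimes C_\bullet)$ by \cref{thm:cosimplicialModel}. On the other side, $\hom_{\mathsf{dgCom}}(\Bar_\iota(s\g)^\vee,\Omega_\bullet)$ should, via the same finite-type duality, be identified with $\MC(\homi(\text{something}, \Omega_\bullet))$ or with $\MC(\g\otimes\Omega_\bullet) = \MC_\bullet(\g)$ directly. Then the equivalence is exactly \cref{thm:homotopy equivalence MC. and MC(g x C)}, transported through \cref{thm:cosimplicialModel}. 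The advantage of this formulation is that naturality and the Kan-complex properties come for free from the results of \cref{chapter:convolution homotopy algebras}.

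The main obstacle, as I see it, is the careful handling of the dualization $\Bar_\iota(s\g)^\vee$ and making the identification $\hom_{\mathsf{dgCom}}(\Bar_\iota(s\g)^\vee,\Omega_n) \cong \MC(\g\otimes\Omega_n)$ rigorous — one genuinely needs the finite-type hypothesis and the vanishing-cohomology hypothesis here, and one must be precise about whether one works with $\Bar_\iota$ or its dual, and about which twisting morphism ($\iota$ versus $\pi$ versus $\kappa$) governs each construction. This is essentially a bookkeeping exercise in (co)operadic Koszul duality of the type carried out in \cref{chapter:RHT}, specifically the kind of argument in the proof of \cref{thm:dualization of (co)commutative models}, but it must be done with enough care that the naturality in $\g$ is visibly preserved at each step. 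Once that identification is in hand, the rest is a direct appeal to \cref{cor:exMainThm} (equivalently \cref{thm:homotopy equivalence MC. and MC(g x C)} combined with \cref{thm:cosimplicialModel}) plus a routine reduction to the complete case.
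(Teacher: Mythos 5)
Your proposal is correct and follows essentially the same route as the paper's own argument (carried out for the closely analogous proposition proved right after this cited theorem): use the finite-type hypothesis to identify $\Bar_\iota(s\g)^\vee$ with the complete cobar construction $\widehat{\Omega}_\pi(s^{-1}\g^\vee)$, identify $\hom_{\mathsf{dgCom}}(\widehat{\Omega}_\pi(s^{-1}\g^\vee),\Omega_\bullet)$ with $\MC(\g\otimes\Omega_\bullet)=\MC_\bullet(\g)$ via the relative bar--cobar adjunction and convolution-algebra machinery, and conclude by \cref{cor:exMainThm}. The one small divergence is your reading of the degree hypothesis: in the paper's version it is a condition ($\g^{-1}=0$) ensuring that morphisms out of the \emph{complete} cobar construction into the non-complete algebra $\Omega_\bullet$ are still determined by their restriction to generators, rather than a condition controlling double dualization of the (co)bar constructions.
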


We can easily recover an analogous result, which works on complete proper shifted Lie algebras of finite type such that $\g^{-1}=0$, but without restrictions on the cohomology, using our main theorem and some results of \cite{rn17tensor}.

\begin{proposition}
	Let $\g$ be a complete dg Lie algebra of finite type such that $\g^{-1}=0$. Then there is a weak equivalence of simplicial sets
	\[
	\hom_{\mathsf{dgLie}}(\mc_\bullet,\g)\simeq\hom_{\mathsf{dgCom}}(\Bar_\iota(s\g)^\vee,\Omega_\bullet)\ .
	\]
\end{proposition}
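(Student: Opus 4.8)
The strategy is to combine the main theorem of this chapter with a known compatibility between the cobar construction relative to $\pi$ and linear duality. The starting point is \cref{cor:exMainThm}, which gives a natural weak equivalence
\[
\MC_\bullet(\g)\simeq\hom_{\mathsf{dgLie}}(\mc_\bullet,\g)\ .
\]
So it suffices to produce a weak equivalence $\MC_\bullet(\g)\simeq\hom_{\mathsf{dgCom}}(\Bar_\iota(s\g)^\vee,\Omega_\bullet)$. The idea is that the right-hand side is, essentially by definition of the relative bar construction and of $\APL$, the set of Maurer--Cartan elements of a convolution homotopy Lie algebra built out of $s\g$ and $C_\bullet^\vee$ (or $\Omega_\bullet$), and that the finiteness hypothesis lets us dualize freely.

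\textbf{Key steps.} First I would unwind $\hom_{\mathsf{dgCom}}(\Bar_\iota(s\g)^\vee,\Omega_\bullet)$: a morphism of commutative algebras from $\Bar_\iota(s\g)^\vee=\Bar_\kappa(s\g)^\vee$ into $\Omega_n$ is, by the Rosetta stone for (co)algebras \cref{thm:Rosetta stone algebras} together with linear duality (here the hypothesis that $\g$ is of finite type and $\g^{-1}=0$ is used, so that $\Bar_\kappa(s\g)$ is a degreewise finite-dimensional cocommutative coalgebra and its dual commutative algebra is well-behaved, and the bar construction is well-defined), the same data as a twisting morphism relative to $\kappa$ between the cocommutative coalgebra $\Bar_\kappa(s\g)$-dual... more precisely, as a Maurer--Cartan element in a convolution $\SLoo$-algebra of the form $\hom^{?}(-,-)$ built from $\g$ and $\Omega_n$. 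Concretely, using that $\lie^{\shriek}=\com$ and \cref{thm:convolution Loo tensor products}, the chain complex $\g\otimes\Omega_n$ carries a natural $\SLoo$-algebra structure whose Maurer--Cartan elements are exactly these morphisms, so that one gets a natural isomorphism of simplicial sets
\[
\hom_{\mathsf{dgCom}}(\Bar_\iota(s\g)^\vee,\Omega_\bullet)\cong\MC\big(\g\otimes\Omega_\bullet\big) = \MC_\bullet(\g)\ ,
\]
where the last equality is the definition of the Maurer--Cartan space (after taking the appropriate limit over the filtration, which is legitimate since $\g$ is complete). This last isomorphism would occupy the bulk of the argument, and it is exactly the content of the relevant results of \cite{rn17tensor} cited in the statement; I would invoke \cref{thm:equality MC and twisting in tensor case} and \cref{prop:partial Rosetta stone for non-conilpotent coalgebras} here rather than redo the bookkeeping. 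Combining with \cref{cor:exMainThm} then yields the chain of weak equivalences
\[
\hom_{\mathsf{dgLie}}(\mc_\bullet,\g)\simeq\MC_\bullet(\g)\cong\hom_{\mathsf{dgCom}}(\Bar_\iota(s\g)^\vee,\Omega_\bullet)\ ,
\]
and naturality in $\g$ is inherited from naturality of each of the maps involved.

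\textbf{Main obstacle.} The delicate point is the dualization: one must check that $\Bar_\iota(s\g)^\vee$ is genuinely a commutative dg algebra and that duality exchanges $\Bar_\kappa$ of the Lie algebra with (the completed) $\Cobar_\kappa$ of the dual, which is where the finite-type and $\g^{-1}=0$ hypotheses are essential — without them the dual of an infinite-dimensional coalgebra need not be the expected algebra, and sign and completion issues appear. Once the identification $\hom_{\mathsf{dgCom}}(\Bar_\iota(s\g)^\vee,\Omega_\bullet)\cong\MC_\bullet(\g)$ is established the rest is formal. One should also take care, when passing to $\Omega_\bullet$ instead of $C_\bullet$, to use the same twisting morphism $\pi$ throughout so that \cref{thm:compatibility htt and convolution algebras on coalgebra side} applies and the two $\SLoo$-structures coincide; I expect no real difficulty there, only a routine check.
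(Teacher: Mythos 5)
Your proposal is correct and follows essentially the same route as the paper: dualize $\Bar_\iota(s\g)^\vee$ into the completed cobar construction $\widehat{\Omega}_\pi(s^{-1}\g^\vee)$ of the dual cocommutative coalgebra (using finite type and $\g^{-1}=0$), identify morphisms out of it into $\Omega_\bullet$ with $\MC(\g\otimes\Omega_\bullet)=\MC_\bullet(\g)$ via the Rosetta-stone/convolution-algebra machinery, and conclude with \cref{cor:exMainThm}. The only refinement worth noting is that the paper uses $\g^{-1}=0$ specifically to ensure that morphisms from the \emph{completed} cobar construction into the non-complete algebra $\Omega_\bullet$ are still just degree-zero maps on generators, a point your "main obstacle" paragraph gestures at without pinning down.
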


\begin{proof}
	The proof is given by the sequence of equivalences
	\begin{align*}
	\hom_{\mathsf{dgCom}}(\Bar_\iota(s\g)^\vee,\Omega_\bullet)\cong&\ \hom_{\mathsf{dgCom}}\left(\widehat{\Omega}_\pi(s^{-1}\g^\vee),\Omega_\bullet\right)\\
	\cong&\ \MC(\g\otimes\Omega_\bullet)\\
	\simeq&\ \hom_{\mathsf{dgLie}}(\mc_\bullet,\g).
	\end{align*}
	In the first line we used the natural isomorphism
	\[
	\Bar_\iota(s\g)^\vee\cong\widehat{\Omega}_\pi(s^{-1}\g^\vee)\ .
	\]
	Notice that the assumptions on $\g$ make it so that $\g^\vee$ is a $\lie^\vee$-coalgebra. In the second line we used a slight generalization of \cite[Cor. 6.6]{rn17tensor} for $\Q=\P=\com$ and $\Psi$ the identity morphism of $\com$. Notice that here the assumption that $\g^{-1}=0$ makes it so that
	\[
	\hom_{\mathsf{dgCom}}\left(\widehat{\Omega}_\pi(s^{-1}\g^\vee),\Omega_\bullet\right)\cong\hom(s^{-1}\g^\vee,\Omega_\bullet)^0
	\]
	even though $\Omega_\bullet$ is not complete. Finally, in the third line we used our Corollary \ref{cor:exMainThm}.
\end{proof}

\section{A model for Maurer--Cartan elements of homotopy Lie algebras}\label{sect:model for MC-el in Loo-alg}

One would like to do the same as in \cref{sect:cosimplicial model for Lie algebras} for $\SLoo$-algebras. It is possible to do so, but one critically has to use the generalized homotopy transfer theorem --- \cref{thm:generalized HTT} --- and consider the operad $\Cobar\Bar\com$ instead of $\C_\infty$ as cofibrant resolution of the operad $\com$. Doing this, we obtain a new cosimplicial $\SLoo$-algebra $\mcoo_\bullet$ such that
\[
\MC_\bullet(\g)\simeq\hom_{\SLooalg}(\mcoo_\bullet,\g)
\]
for any complete proper $\SLoo$-algebra $\g$. There is a natural morphism
\[
\rho:\mcoo_\bullet\longrightarrow\mc_\bullet
\]
through which every morphism of $\SLoo$-algebras from $\mcoo_\bullet$ to a strict Lie algebra splits in a canonical way, showing coherence with the results of \cref{sect:cosimplicial model for Lie algebras}. We will explicit $\mcoo_1$, which is a higher analogue of the Lawrence--Sullivan algebra in the context of homotopy Lie algebras. The work presented in this section is extracted from \cite{rnv18}.

\subsection{Representing \texorpdfstring{$\MC(\g\otimes C_\bullet)$}{MC(g x C.)}}\label{subsect:representation for Loo-alg}

Let $\g$ be an $\SLoo$-algebra. If one writes down the first operations for $\g\otimes C_\bullet$ with the $\SLoo$-algebra structure obtained through homotopy transfer along the contraction
\begin{center}
	\begin{tikzpicture}
	\node (a) at (0,0){$\g\otimes\Omega_\bullet$};
	\node (b) at (2.8,0){$\g\otimes C_\bullet$};
	
	\draw[->] (a)++(.6,.1)--node[above]{\mbox{\tiny{$1\otimes p_\bullet$}}}+(1.6,0);
	\draw[<-,yshift=-1mm] (a)++(.6,-.1)--node[below]{\mbox{\tiny{$1\otimes i_\bullet$}}}+(1.6,0);
	\draw[->] (a) to [out=-160,in=160,looseness=4] node[left]{\mbox{\tiny{$1\otimes h_\bullet$}}} (a);
	\end{tikzpicture}
\end{center}
and compares it with the the structure of the operations when $\g$ is a strict (shifted) Lie algebra, one immediately realizes that there are many more trees appearing in the former case than in the latter. This hints to the fact that we need a finer algebraic structure on $C_\bullet$ than the one of a $\C_\infty$-algebra we obtained by homotopy transfer in \cref{subsect:representing MC for Lie algebras}.

\medskip

In order to obtain a good model, we consider the resolution of the operad $\com$ given by $\Cobar\Bar\com$, and denote by
\[
\pi^\infty:\Bar(\SLoo)\longrightarrow\SLoo
\]
the canonical twisting morphism. Notice that $\Bar(\SLoo) \cong (\Cobar\Bar\com)^\vee$. In this section, we reserve the letter $\pi$ for the canonical twisting morphism
\[
\pi:\Bar(\susp\otimes\lie)\longrightarrow\susp\otimes\lie\ ,
\]
which played the same role in \cref{sect:cosimplicial model for Lie algebras} as $\pi^\infty$ will in this section.

\medskip

In Dupont's contraction
\begin{center}
	\begin{tikzpicture}
	\node (a) at (0,0){$\Omega_\bullet$};
	\node (b) at (2,0){$C_\bullet$};
	
	\draw[->] (a)++(.3,.1)--node[above]{\mbox{\tiny{$p_\bullet$}}}+(1.4,0);
	\draw[<-,yshift=-1mm] (a)++(.3,-.1)--node[below]{\mbox{\tiny{$i_\bullet$}}}+(1.4,0);
	\draw[->] (a) to [out=-150,in=150,looseness=4] node[left]{\mbox{\tiny{$h_\bullet$}}} (a);
	\end{tikzpicture}
\end{center}
we see the simplicial commutative algebra $\Omega_\bullet$ as a simplicial $\Cobar\Bar\com$-algebra. Then the generalized homotopy transfer theorem gives us a $\Cobar\Bar\com$-algebra structure on $C_\bullet$. We will denote the chain complex $C_\bullet$ endowed with this $\Cobar\Bar\com$-algebra structure by $\widetilde{C}_\bullet$. Since --- as already mentioned above --- we have
\[
\Bar(\SLoo) \cong (\Cobar\Bar\com)^\vee,
\]
we can take the cobar construction of $\widetilde{C}_\bullet^\vee$ relative to the twisting morphism $\pi^\infty$ to obtain a cosimplicial $\SLoo$-algebra.

\begin{definition}
	We denote the cosimplicial shifted homotopy Lie algebra obtained this way by $\mcoo_\bullet\coloneqq\widehat{\Cobar}_{\pi^\infty}(\widetilde{C}_\bullet^\vee)$.\index{$\mcoo_\bullet$}
\end{definition}

We immediately recover similar results as in \cref{subsect:representing MC for Lie algebras}.

\begin{theorem}\label{thm:cosimplicialModel SLoo}
	Let $\g$ be a proper complete $\SLoo$-algebra. There is a canonical isomorphism of simplicial sets
	\[
	\MC(\g\otimes C_\bullet)\cong\hom_{\SLooalg}(\mcoo_\bullet,\g)\ .
	\]
	It is natural in $\g$.
\end{theorem}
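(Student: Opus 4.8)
The plan is to reduce this to the machinery developed in Chapter~\ref{chapter:convolution homotopy algebras}, exactly as in the proof of \cref{thm:cosimplicialModel}, but now with the generalized homotopy transfer theorem playing the role that the ordinary one played for Lie algebras. First I would observe that the $\SLoo$-algebra structure on $\g \otimes C_\bullet$ appearing in the statement — namely the one obtained by applying the homotopy transfer theorem to the contraction $\g \otimes \Omega_\bullet \leftrightarrows \g \otimes C_\bullet$ induced by Dupont's contraction — is the \emph{same} structure as the one obtained by the following alternative route: view $\Omega_\bullet$ as a simplicial $\Cobar\Bar\com$-algebra, transfer this structure to $\widetilde{C}_\bullet$ via the generalized homotopy transfer theorem (\cref{thm:generalized HTT}), and then form the convolution $\SLoo$-algebra $\g \otimes^{\pi^\infty} \widetilde{C}_\bullet^\vee$ using \cref{thm:convolution Loo tensor products} applied to the canonical twisting morphism $\pi^\infty : \Bar(\SLoo) \to \SLoo$. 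This identification of the two $\SLoo$-algebra structures is precisely the content of \cref{thm:compatibility htt and convolution algebras on coalgebra side}, applied with $X = \g$, the operad $\P = \com$ (so $\C = \P^\vee = \com^\vee$, and $\Cobar\C$ must be replaced by $\Cobar\Bar\com$, which is the setting of the generalized homotopy transfer theorem and is covered by the more general statement we are invoking), $A = \Omega_\bullet$ at each simplicial level, and $B = C_\bullet$, the contraction being Dupont's.

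Once the two structures are known to coincide, the rest is a chain of the bijections already established. By \cref{thm:equality MC and twisting in tensor case} — or its complete-algebra analogue, since $\g$ is only assumed proper complete and $\widetilde C_\bullet^\vee$ may be infinite-dimensional, in which case one invokes \cref{prop:partial Rosetta stone for non-conilpotent coalgebras} together with the complete cobar construction of \cref{subsect:complete cobar cosntruction} — the Maurer--Cartan elements of $\g \otimes^{\pi^\infty} \widetilde C_\bullet^\vee$ are in natural bijection with the relative twisting morphisms $\Tw_{\pi^\infty}(\widetilde C_\bullet, \g)$, which in turn are in natural bijection with $\hom_{\Phatalg}(\cCobar_{\pi^\infty}\widetilde C_\bullet, \g) = \hom_{\SLooalg}(\mcoo_\bullet, \g)$ by \cref{prop:partial Rosetta stone for non-conilpotent coalgebras} (here $\cCobar_{\pi^\infty}\widetilde C_\bullet^\vee = \mcoo_\bullet$ by definition, and the algebra $\g$, being a proper complete $\SLoo$-algebra, is the target in that bijection). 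Stringing these together with $\MC(\g \otimes C_\bullet) = \MC(\g \otimes^{\pi^\infty} \widetilde C_\bullet^\vee)$ gives the desired isomorphism $\MC(\g \otimes C_\bullet) \cong \hom_{\SLooalg}(\mcoo_\bullet, \g)$. Naturality in $\g$ follows because each of the bijections in the chain is natural: the naturality of $\manintensor$ in the operad slot (\cref{thm:convolution Loo tensor products}), the naturality of the Maurer--Cartan functor, and the naturality of the Rosetta-stone bijection for the complete cobar construction.

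The main obstacle I anticipate is the careful bookkeeping at the interface between the finite-dimensionality hypotheses: \cref{thm:convolution Loo tensor products} and \cref{thm:compatibility htt and convolution algebras on coalgebra side} as stated require the operad $\P$ to be finite-dimensional in every arity, which fails for $\com$ only in the sense that $\Bar\com$ is not finite-dimensional, but $\com(n) \cong \k$ is, so the dualization $\widetilde C_n^\vee$ is legitimate at each \emph{simplicial} level since $C_n$ is finite-dimensional; one must check that the cosimplicial structure maps are compatible with all the dualizations and with the complete cobar construction. A second point requiring care is that \cref{thm:compatibility htt and convolution algebras on coalgebra side} is phrased for a fixed contraction, whereas here we have a \emph{simplicial} contraction (Dupont's), so the identification of the two $\SLoo$-structures must be checked to be compatible with the simplicial (hence, after dualizing, cosimplicial) structure — this is routine but is where a genuine verification is needed rather than a pure citation. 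Everything else is a formal consequence of results already in hand, so the proof in the paper will essentially be a three-line string of isomorphisms with the substantive input being the (generalized) compatibility-with-HTT theorem.
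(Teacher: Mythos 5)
Your proposal follows essentially the same route as the paper: the proof of \cref{thm:cosimplicialModel SLoo} is given there simply as ``analogous to the proof of \cref{thm:cosimplicialModel}, using $\pi^\infty$ instead of $\pi$,'' and that earlier proof is precisely your chain --- identify the transferred structure on $\g\otimes C_\bullet$ with the convolution tensor product via \cref{thm:compatibility htt and convolution algebras on coalgebra side}, then apply \cref{prop:partial Rosetta stone for non-conilpotent coalgebras} and \cref{thm:equality MC and twisting in tensor case}. Your added remarks on the level-wise finite-dimensionality of $C_n$ and the compatibility with the (co)simplicial structure are sensible bookkeeping that the paper leaves implicit.
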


\begin{proof}
	Analogous to the proof of \cref{thm:cosimplicialModel}, using $\pi^\infty$ instead of $\pi$.
\end{proof}

\begin{corollary}
	Let $\g$ be a proper complete $\SLoo$-algebra. There is a weak equivalence of simplicial sets
	\[
	\MC_\bullet(\g)\simeq\hom_{\SLooalg}(\mcoo_\bullet,\g)\ .
	\]
	It is natural in $\g$.
\end{corollary}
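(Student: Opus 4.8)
The plan is to combine the two theorems that have just been assembled in this section. By \cref{thm:cosimplicialModel SLoo}, there is a canonical isomorphism of simplicial sets
\[
\MC(\g\otimes C_\bullet)\cong\hom_{\SLooalg}(\mcoo_\bullet,\g)\ ,
\]
natural in $\g$. On the other hand, \cref{thm:homotopy equivalence MC. and MC(g x C)} provides a natural weak equivalence of simplicial sets
\[
\MC_\bullet(\g)\simeq\MC(\g\otimes C_\bullet)\ ,
\]
realized explicitly by the maps $P_\bullet$ and $I_\bullet$ induced by the transferred $\infty$-morphisms $(1\otimes p_\bullet)_\infty$ and $(1\otimes i_\bullet)_\infty$. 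Concatenating these two statements gives precisely the desired weak equivalence $\MC_\bullet(\g)\simeq\hom_{\SLooalg}(\mcoo_\bullet,\g)$.

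\textbf{Steps.} First I would invoke \cref{thm:cosimplicialModel SLoo} to rewrite the target $\hom_{\SLooalg}(\mcoo_\bullet,\g)$ as $\MC(\g\otimes C_\bullet)$; since the isomorphism there is natural in $\g$, this identification is compatible with all the structure maps. Second, I would apply \cref{thm:homotopy equivalence MC. and MC(g x C)} to the proper complete $\SLoo$-algebra $\g$ to obtain the natural weak equivalence between $\MC_\bullet(\g)$ and $\MC(\g\otimes C_\bullet)$. The composite of these two is then a weak equivalence $\MC_\bullet(\g)\simeq\hom_{\SLooalg}(\mcoo_\bullet,\g)$. Finally, I would note that naturality in $\g$ follows from the naturality of each of the two constituent statements, since a composite of natural transformations (resp. natural weak equivalences) is again natural.

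\textbf{Remark on the obstacle.} There is essentially no obstacle here: the corollary is an immediate consequence of the two preceding theorems, which carry all the real content. The only point requiring a moment of care is to check that both intermediate statements are natural in $\g$ so that the composition is legitimate as a statement about functors on proper complete $\SLoo$-algebras (with the appropriate morphisms); but this is explicitly recorded in the statements of \cref{thm:homotopy equivalence MC. and MC(g x C)} and \cref{thm:cosimplicialModel SLoo}. Thus the proof is a one-line concatenation, and I would write it simply as: this follows immediately from \cref{thm:homotopy equivalence MC. and MC(g x C)} and \cref{thm:cosimplicialModel SLoo}.
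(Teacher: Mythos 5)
Your proof is correct and is exactly the intended argument: the corollary is the concatenation of the natural isomorphism of \cref{thm:cosimplicialModel SLoo} with the natural weak equivalence of \cref{thm:homotopy equivalence MC. and MC(g x C)}, precisely mirroring how \cref{cor:exMainThm} follows in the strict Lie case. Nothing further is needed.
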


\subsection{Compatibility with the strict case}

Since Maurer--Cartan elements, gauges and so on for strict shifted Lie algebras are the same as the respective notions as for the same algebras seen as $\SLoo$-algebras, one expects some compatibility between the cosimplicial algebras $\mc_\bullet$ and $\mcoo_\bullet$.

\medskip

There is a canonical morphism of operads
\[
g_\kappa:\SLoo\longrightarrow\susp\otimes\lie\ ,
\]
obtained by applying \cref{thm:universal twisting morphisms} to the canonical twisting morphism
\[
\kappa:(\susp\otimes\lie)^{\antishriek}\cong\susp^c\otimes\lie^{\antishriek}\longrightarrow\susp\otimes\lie
\]
given by Koszul duality. It induces a map
\[
\rho:\SLoo\circ C_\bullet^\vee\xrightarrow{g_\kappa\circ1_{C_\bullet^\vee}}(\susp\otimes\lie)\circ C_\bullet^\vee
\]
of chain complexes.

\begin{lemma}
	The map described above is a filtered morphism
	\[
	\rho:\mcoo_\bullet\longrightarrow g_\kappa^*\mc_\bullet
	\]
	of cosimplicial $\SLoo$-algebras.
\end{lemma}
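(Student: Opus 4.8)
The statement to prove has two parts: first, that the linear map $\rho$ lands in the correct space and is a morphism of $\SLoo$-algebras (where $\mc_\bullet$ is viewed as an $\SLoo$-algebra via restriction along $g_\kappa$); second, that it respects the cosimplicial structure and the filtrations. The plan is to reduce everything to functoriality of the constructions involved, in the spirit of \cref{lemma:naturality of convolution algebras wrt compositions} and \cref{prop:HTT and pullbacks}, rather than computing with trees directly.

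First I would set up the comparison at the level of coalgebra structures on $C_\bullet$. The operad morphism $g_\kappa:\SLoo\to\susp\otimes\lie$ is, after applying \cref{thm:universal twisting morphisms} to $\kappa$, the same datum as a morphism of cooperads $\Bar(\susp\otimes\lie)\to\Bar(\SLoo)$ (one takes the bar construction; equivalently one dualizes the morphism $\Cobar\Bar\com\to\C_\infty$, using $\Bar(\SLoo)\cong(\Cobar\Bar\com)^\vee$ and $\Bar(\susp\otimes\lie)\cong\C_\infty^\vee$ as in \cref{subsection:operadic bar and cobar}). Call this cooperad morphism $f$. Now $\widetilde{C}_\bullet$ carries a $\Cobar\Bar\com$-algebra structure obtained by the generalized homotopy transfer theorem, \cref{thm:generalized HTT}, from the $\Cobar\Bar\com$-algebra $\Omega_\bullet$, and $C_\bullet$ (the one used to build $\mc_\bullet$) carries a $\C_\infty$-algebra structure obtained by homotopy transfer from $\Omega_\bullet$ as a $\C_\infty$-algebra. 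The crucial point is that the $\C_\infty$-algebra structure of $\Omega_\bullet$ is the pushforward of its $\Cobar\Bar\com$-structure along $\Cobar\Bar\com\to\C_\infty$; hence by \cref{prop:HTT and pullbacks} the transferred $\C_\infty$-structure on $C_\bullet$ is exactly the pushforward of the transferred $\Cobar\Bar\com$-structure on $\widetilde{C}_\bullet$. Dualizing, the cosimplicial $\Bar(\susp\otimes\lie)$-coalgebra $C_\bullet^\vee$ is obtained from the $\Bar(\SLoo)$-coalgebra $\widetilde{C}_\bullet^\vee$ by pushforward along $f$.

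Second I would invoke \cref{lemma:equality of oo-morphisms} (or rather its analogue for the complete cobar construction of \cref{subsect:complete cobar cosntruction}): for a morphism of cooperads $f$ and twisting morphisms related by $f^*\pi = \pi^\infty$ — which holds because $\pi$ and $\pi^\infty$ are the canonical twisting morphisms and $g_\kappa$ is compatible with them, equivalently $\kappa = g_\kappa{}^*$ of the Koszul morphism — one has
\[
\cCobar_{\pi^\infty}(\widetilde{C}_\bullet^\vee) = \cCobar_{\pi}(f_*\widetilde{C}_\bullet^\vee) = g_\kappa^*\,\cCobar_{\pi}(C_\bullet^\vee)
\]
at the level of $\SLoo$-algebras, where the last equality is the pullback-of-structure identity from \cref{lemma:equality of oo-morphisms}. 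Wait — one must be slightly careful: $\cCobar_\pi(C_\bullet^\vee)$ is built over the operad $\susp\otimes\lie$, and $g_\kappa^*$ of it is an $\SLoo$-algebra whose underlying object is still $\widehat{(\susp\otimes\lie)}(C_\bullet^\vee)$, whereas $\mcoo_\bullet$ has underlying object $\widehat{\SLoo}(\widetilde{C}_\bullet^\vee)$. So $\rho$ is not an identity but genuinely the map $g_\kappa\circ 1_{C_\bullet^\vee}$ applied arity-wise, and the statement is that this intertwines the two $\SLoo$-algebra structures — which is precisely what the chain of identities above, unwound through \cref{prop:partial Rosetta stone for non-conilpotent coalgebras} and the functoriality of $\manintensor$ in \cref{thm:convolution Loo tensor products}, delivers. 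Finally, compatibility with the cosimplicial structure is automatic: all face and degeneracy maps of $\widetilde{C}_\bullet$ and $C_\bullet$ are morphisms of $\Cobar\Bar\com$-, resp. $\C_\infty$-algebras (the homotopy transfer theorem being natural in the contraction, applied to the simplicial contraction of \cref{subsect:Dupont contraction}), so $\rho$ commutes with the induced maps by naturality of the cobar construction. Compatibility with filtrations follows since $g_\kappa$ is arity-preserving, so $\rho$ preserves the filtration by number of copies of $C_\bullet^\vee$ that defines the proper complete structure in \cref{subsect:complete cobar cosntruction}.

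The main obstacle I anticipate is purely bookkeeping: verifying that the dual of $g_\kappa$ really is the cooperad morphism $f$ inducing the pushforward identification of coalgebra structures, i.e. carefully checking $f^*\pi = \pi^\infty$ and that \cref{prop:HTT and pullbacks} applies verbatim when one of the operads is $\Cobar\Bar\com$ rather than the minimal model — the hypotheses of \cref{prop:HTT and pullbacks} and \cref{thm:generalized HTT} do cover operads of the form $\Cobar\C$ with $\C$ reduced, and both $\Bar\com$ and $\Bar(\susp^c\otimes\lie^\vee)$ are reduced cooperads, so this goes through, but the sign and suspension conventions tying $\Bar(\SLoo)$ to $(\Cobar\Bar\com)^\vee$ need to be tracked. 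Once those identifications are in place, no tree computation is needed: the result is a formal consequence of the naturality statements already proven in \cref{chapter:convolution homotopy algebras}.
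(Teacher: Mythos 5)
Your argument follows essentially the same route as the paper's: the only non-trivial point is that $\rho$ commutes with the differentials, and the key input is \cref{prop:HTT and pullbacks}, which identifies the transferred $\C_\infty$-structure on $C_\bullet$ with the restriction of the transferred $\Cobar\Bar\com$-structure on $\widetilde{C}_\bullet$ along $\Cobar f_\kappa$; dualizing this and combining it with the naturality of the universal twisting morphism is precisely the commutative diagram the paper writes down. One imprecision to fix: the identity $f^*\pi=\pi^\infty$ cannot hold as stated, since $f^*\pi=\pi\circ\Bar g_\kappa$ has target $\susp\otimes\lie$ while $\pi^\infty$ has target $\SLoo$ (and note $\Bar g_\kappa$ goes from $\Bar(\SLoo)$ to $\Bar(\susp\otimes\lie)$, not the other way); the correct relation is the commutativity of the square $\pi\circ\Bar g_\kappa=g_\kappa\circ\pi^\infty$, i.e. $f^*\pi=(g_\kappa)_*\pi^\infty$, and it is this two-sided compatibility --- a cooperad morphism on the source together with an operad morphism on the target, rather than either one-sided case of \cref{lemma:equality of oo-morphisms} alone --- that makes $\rho=g_\kappa\circ 1$ a chain map. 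You essentially notice this yourself in your parenthetical correction, so the gap is one of bookkeeping rather than substance.
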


\begin{proof}
	The fact that the map commutes with the algebraic structure is trivial, as well as the fact that it preserves the filtration, and that it respects the cosimplicial structure. In order to see that it also commutes with differentials, we begin by noticing that by \cref{prop:HTT and pullbacks} we have an equality of $\C_\infty$-algebras
	\[
	C_\bullet = (\Cobar f_\kappa)^*\widetilde{C}_\bullet\ ,
	\]
	where
	\[
	f_\kappa:\com^{\antishriek}\longrightarrow\Bar\com
	\]
	is the canonical map. The dual of $f_\kappa$ is the map of operads
	\[
	g_\kappa:\SLoo\longrightarrow\susp\otimes\lie
	\]
	mentioned above. Therefore, we have the diagram
	\begin{center}
		\begin{tikzpicture}
			\node (a) at (0,0){$\widetilde{C}_\bullet^\vee$};
			\node (b) at (4,0){$\Bar(\SLoo)\circ\widetilde{C}_\bullet^\vee$};
			\node (c) at (9,0){$\SLoo\circ\widetilde{C}_\bullet^\vee$};
			\node (d) at (0,-2.5){$C_\bullet^\vee$};
()			\node (e) at (4,-2.5){$\Bar(\susp\otimes\lie)\circ C_\bullet^\vee$};
			\node (f) at (9,-2.5){$(\susp\otimes\lie)\circ C_\bullet^\vee$};
			
			\draw[double equal sign distance] (a) to (d);
			\draw[->] (a) to node[above]{$\Delta_{\widetilde{C}_\bullet^\vee}$} (b);
			\draw[->] (b) to node[above]{$\pi^\infty\circ1$} (c);
			\draw[->] (d) to node[above]{$\Delta_{C_\bullet^\vee}$} (e);
			\draw[->] (e) to node[above]{$\pi\circ1$} (f);
			\draw[->] (b) to node[right]{$\Bar g_\kappa\circ1$} (e);
			\draw[->] (c) to node[right]{$g_\kappa\circ1$} (f);
		\end{tikzpicture}
	\end{center}
	which proves that $\rho$ commutes with the differentials of $\mc_\bullet$ and $\mcoo_\bullet$ (which are completely determined by the horizontal lines of the diagram above).
\end{proof}

\begin{remark}
	One would like to show that $\rho$ is a filtered quasi-isomorphism, so that $\mcoo_\bullet$ would be a resolution of $\mc_\bullet$. We do not have a proof of this fact for the moment.
\end{remark}

\begin{proposition}
	Let $\g$ be a shifted Lie algebra. Every morphism of $\SLoo$-algebras $\mcoo_\bullet\to g_\kappa^*\g$ splits in a unique way through $\rho$. In other words, the pullback
	\[
	\rho^*:\hom_{\dgl}(\mc_\bullet,\g)\cong\hom_{\SLooalg}(g_\kappa^*\mc_\bullet,g_\kappa^*\g)\longrightarrow\hom_{\SLooalg}(\mcoo_\bullet,g_\kappa^*\g)
	\]
	is an isomorphism of simplicial sets (the first isomorphism being given by the fact that shifted Lie algebras form a full subcategory of $\SLoo$-algebras).
\end{proposition}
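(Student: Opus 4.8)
The plan is to reduce the statement to the two representability isomorphisms already established, namely \cref{thm:cosimplicialModel} on one side and \cref{thm:cosimplicialModel SLoo} on the other, and to check that the bijection they produce is exactly the one induced by $\rho^*$. First I would unwind both sides using naturality. For the left-hand side, \cref{thm:cosimplicialModel} gives a natural isomorphism
\[
\hom_{\dgl}(\mc_\bullet,\g)\cong\MC(\g\otimes C_\bullet)\ ,
\]
where $\g\otimes C_\bullet$ carries the $\SLoo$-algebra structure obtained by transferring the simplicial $\C_\infty$-algebra structure of $C_\bullet$ along Dupont's contraction (via \cref{thm:convolution Loo tensor products} for the twisting morphism $\pi$). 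For the right-hand side, \cref{thm:cosimplicialModel SLoo} gives
\[
\hom_{\SLooalg}(\mcoo_\bullet,g_\kappa^*\g)\cong\MC\big((g_\kappa^*\g)\otimes C_\bullet\big)\ ,
\]
where now $C_\bullet$ is equipped with the $\Cobar\Bar\com$-algebra structure $\widetilde{C}_\bullet$ from the generalized homotopy transfer theorem (\cref{thm:generalized HTT}), and the $\SLoo$-structure on the tensor product comes from $\pi^\infty$ via \cref{thm:convolution Loo tensor products} in the tensor-product setting.

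The key step is then to identify the two chain complexes $\g\otimes C_\bullet$ and $(g_\kappa^*\g)\otimes C_\bullet$ appearing on the two sides as \emph{the same} $\SLoo$-algebra, so that their Maurer--Cartan sets coincide. Here I would invoke \cref{prop:HTT and pullbacks}: the equality of $\C_\infty$-algebras $C_\bullet = (\Cobar f_\kappa)^*\widetilde{C}_\bullet$, dualized, says precisely that the $\Cobar\Bar\com$-algebra $\widetilde{C}_\bullet$, restricted along $\Cobar f_\kappa$, recovers the $\C_\infty$-algebra $C_\bullet$ — equivalently, on the coalgebra side, $\widetilde{C}_\bullet^\vee$ restricted along $\Bar g_\kappa$ recovers $C_\bullet^\vee$. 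Combining this with \cref{lemma:naturality of convolution algebras wrt compositions}(\ref{pt:1 naturality convolution}) (in its tensor-product form, which holds by the same proof, cf.\ \cref{subsect:other settings}) and with the naturality of $\manintensor$ in the operad slot from \cref{thm:convolution Loo tensor products}, the $\SLoo$-algebra $(g_\kappa^*\g)\otimes^{\pi^\infty}\widetilde{C}_\bullet^\vee$ is identified with $\g\otimes^{\pi}C_\bullet^\vee$; taking the canonical identification with $\hom$-spaces (both $C_\bullet$ and $\widetilde{C}_\bullet$ are finite-dimensional in each simplicial degree) we get an equality of $\SLoo$-algebras $(g_\kappa^*\g)\otimes C_\bullet = \g\otimes C_\bullet$, hence an equality of Maurer--Cartan sets.

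Finally I would check that under these identifications the composite of the two isomorphisms above agrees with $\rho^*$. This is the step requiring the most care: one traces through the bijections of \cref{prop:partial Rosetta stone for non-conilpotent coalgebras} and \cref{thm:equality MC and twisting in tensor case} on both sides and verifies that precomposition with $\rho\colon \mcoo_\bullet\to g_\kappa^*\mc_\bullet$ corresponds, on the Maurer--Cartan side, to the identity map on the (now identified) sets $\MC\big((g_\kappa^*\g)\otimes C_\bullet\big)=\MC(\g\otimes C_\bullet)$. This follows because $\rho$ was \emph{defined} as $g_\kappa\circ 1_{C_\bullet^\vee}$, which is exactly the restriction-of-structure map along $\Cobar f_\kappa$ that implements the equality $C_\bullet=(\Cobar f_\kappa)^*\widetilde{C}_\bullet$; so the diagram comparing $\Delta_{\widetilde{C}_\bullet^\vee}$, $\pi^\infty$ with $\Delta_{C_\bullet^\vee}$, $\pi$ (the one drawn in the proof of the preceding lemma) is precisely what makes $\rho^*$ compatible with the two representations. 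Once the diagram chase closes, $\rho^*$ is a composite of bijections, hence a bijection; since all maps involved are morphisms of simplicial sets (everything being natural in the simplicial degree), $\rho^*$ is an isomorphism of simplicial sets. The main obstacle I anticipate is purely bookkeeping: keeping the two different homotopy-transferred structures on $C_\bullet$ straight and making sure the finite-type dualizations and the tensor-product-vs-$\hom$ identifications are applied consistently on both sides so that the two instances of \cref{thm:convolution Loo-algebras}/\cref{thm:convolution Loo tensor products} really produce the \emph{same} $\SLoo$-algebra and not merely isomorphic ones.
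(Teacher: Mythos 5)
Your proposal is correct and follows essentially the same route as the paper: the paper's proof consists of exactly the commutative square you describe, with the two representability theorems (\cref{thm:cosimplicialModel} and \cref{thm:cosimplicialModel SLoo}) as horizontal isomorphisms, the identification $\g\otimes^\pi C_\bullet = (g_\kappa^*\g)\otimes^{\pi^\infty}\widetilde{C}_\bullet$ coming from the compatibility with morphisms in \cref{thm:convolution Loo tensor products}, and the observation that the left vertical arrow is $\rho^*$ by unwinding definitions. Your extra detail on why the identification holds (via \cref{prop:HTT and pullbacks} and the equality $C_\bullet = (\Cobar f_\kappa)^*\widetilde{C}_\bullet$) is exactly the content of the lemma preceding this proposition in the paper.
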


\begin{proof}
	First, one notice that
	\[
	\g\otimes^\pi C_\bullet = (g_\kappa^*\g)\otimes^{\pi^\infty}\widetilde{C}_\bullet
	\]
	by the compatibility with morphisms expressed in \cref{thm:convolution Loo tensor products}. The proof is then given by the following commutative diagram.
	\begin{center}
		\begin{tikzpicture}
			\node (a) at (0,0) {$\hom_{\dgl}(\mc_\bullet,\g)$};
			\node (b) at (5,0) {$\MC(\g\otimes^\pi C_\bullet)$};
			\node (c) at (0,-2.5) {$\hom_{\SLooalg}(\mcoo_\bullet,g_\kappa^*\g)$};
			\node (d) at (5,-2.5) {$\MC((g_\kappa^*\g)\otimes^{\pi^\infty}\widetilde{C}_\bullet)$};
			
			\draw[->] (a) to node[left]{$\rho^*$} (c);
			\draw[double equal sign distance] (a) to node[above]{$\sim$} (b);
			\draw[double equal sign distance] (b) to node[above, sloped]{$\sim$} (d);
			\draw[double equal sign distance] (c) to node[above]{$\sim$} (d);
		\end{tikzpicture}
	\end{center}
	The horizontal arrows are given by \cref{thm:cosimplicialModel} and \cref{thm:cosimplicialModel SLoo} respectively, while proving that the vertical arrow is given by pullback by $\rho$ is just a matter of unwinding definitions.
\end{proof}

\subsection{A higher version of the Lawrence--Sullivan algebra}

We want now to give explicit formul{\ae} for the first two levels of $\mcoo_\bullet$. The same arguments as for \cref{prop:first levels of mc} apply, and thus we immediately have the following.

\begin{lemma}
	We have
	\[
	\mcoo_0 = \widehat{\SLoo}(\k\alpha)
	\]
	with $|\alpha| = 0$ and
	\[
	d\alpha = -\sum_{n\ge2}\frac{1}{n!}\ell_n(\alpha,\ldots,\alpha)\ .
	\]
	In other words, $\mcoo_0$ is the free $\SLoo$-algebra generated by a single Maurer--Cartan element.
\end{lemma}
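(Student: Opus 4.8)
The plan is to mimic the argument used for the strict case in the proof of \cref{prop:first levels of mc}, feeding it through the representability statement \cref{thm:cosimplicialModel SLoo}.

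First I would identify the underlying graded $\SLoo$-algebra. By the definition of the complete cobar construction (\cref{subsect:complete cobar cosntruction}), the underlying graded $\SLoo$-algebra of $\mcoo_0 = \widehat{\Cobar}_{\pi^\infty}(\widetilde{C}_0^\vee)$ is the free complete $\SLoo$-algebra $\widehat{\SLoo}(\widetilde{C}_0^\vee)$ on the underlying graded vector space of $\widetilde{C}_0^\vee$. Since $\Omega_0\cong C_0\cong\k$ is concentrated in degree $0$, we have $\widetilde{C}_0^\vee\cong\k\alpha$ with $|\alpha|=0$, hence $\mcoo_0=\widehat{\SLoo}(\k\alpha)$ as graded $\SLoo$-algebras. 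In particular the differential of $\mcoo_0$ is an $\SLoo$-algebra derivation, and so is completely determined by its value on the single generator $\alpha$ — a degree $-1$ element of $\widehat{\SLoo}(\k\alpha)$ — which is what remains to be pinned down.

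Next I would record the representability input. By construction $\mcoo_0$ is a proper complete $\SLoo$-algebra, so it lies in the domain of \cref{thm:cosimplicialModel SLoo}. At cosimplicial level $0$ the Dupont contraction restricts to the trivial contraction on $\k$ (one has $i_0=p_0=\id_\k$ and $h_0=0$), so the generalized homotopy transfer theorem leaves the $\SLoo$-structure on $\g\otimes C_0$ equal to the original one on $\g$; thus $\MC(\g\otimes C_0)=\MC(\g)$. Tracing the isomorphism of \cref{thm:cosimplicialModel SLoo} through \cref{prop:partial Rosetta stone for non-conilpotent coalgebras}, it is given by restriction of a morphism $\mcoo_0\to\g$ to the generators $\widetilde{C}_0^\vee$, i.e. by $\phi\mapsto\phi(\alpha)$. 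Altogether, for every proper complete $\SLoo$-algebra $\g$ there is a natural bijection
\[
\hom_{\SLooalg}(\mcoo_0,\g)\xrightarrow{\ \cong\ }\MC(\g),\qquad \phi\longmapsto\phi(\alpha).
\]

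Finally I would apply this with $\g=\mcoo_0$ and $\phi=\id_{\mcoo_0}$: the image of the identity is $\alpha$ itself, so $\alpha\in\MC(\mcoo_0)$, that is
\[
d_{\mcoo_0}\alpha+\sum_{n\ge2}\frac{1}{n!}\ell_n(\alpha,\dots,\alpha)=0.
\]
Combined with the first step this yields $d\alpha=-\sum_{n\ge2}\frac{1}{n!}\ell_n(\alpha,\dots,\alpha)$ and identifies $\mcoo_0$ with the free complete $\SLoo$-algebra on a single Maurer--Cartan element (in particular re-proving $d^2=0$ for this differential, since $\mcoo_0$ genuinely carries one). The only points requiring care — and the only places where the argument is more than a formality — are the verification that the isomorphism of \cref{thm:cosimplicialModel SLoo} is literally evaluation at $\alpha$, and the observation that the level-$0$ transfer is trivial so that $\MC(\g\otimes C_0)=\MC(\g)$; both are routine unwindings of the definitions, exactly as in the strict case treated in \cref{prop:first levels of mc}.
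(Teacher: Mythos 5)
Your proof is correct and follows essentially the same route as the paper, which simply invokes the argument of \cref{prop:first levels of mc}: identify the underlying free complete $\SLoo$-algebra on the one generator dual to $1\in C_0$, then use the representability isomorphism to deduce that the generator must be a Maurer--Cartan element. Your instantiation of the natural bijection at $\g=\mcoo_0$ with $\phi=\id$ is just a crisper way of phrasing the paper's ``as this is true for any $\g$'' step.
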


We also know that $\mcoo_1$ is the free $\SLoo$-algebra generated by two Maurer--Cartan elements $\alpha_0,\alpha_1$ in degree $0$, and a gauge $\lambda$ from $\alpha_0$ to $\alpha_1$ in degree $1$. In order to translate this into explicit formul{\ae} we will need to do some work. Our starting point is \cref{subsect:explicit formula for gauges}. From there, we can deduce the formula
\[
\alpha_1 = \sum_{\tau\in\PT}\frac{1}{F(\tau)}\tau(\alpha_0)
\]
using \cref{prop:formula for formal ODE} to the formal differential equation with
\[
f_{n,1}(y_1,\ldots,y_n)\coloneqq\frac{1}{n!}\ell_{n+1}(y_1,\ldots,y_n,\lambda)
\]
for $n\ge1$, $f_{0,1}\coloneqq d\lambda$, and no other operators. Notice that the only weight appearing in the trees is $1$, so that we simply work over $\PT$. Rearranging terms, we obtain the fixed-point equation
\[
d\lambda = \alpha_1 - \alpha_0 - \sum_{\tau\in\PT\backslash\{\emptyset,c_0\}}\frac{1}{F(\tau)}\tau(\alpha_0)\ ,
\]
where one should notice that $d\lambda$ appears in the right-hand side whenever a tree contains a $0$-corolla. The existence and uniqueness of a solution is thus guaranteed by \cref{thm:existence and uniqueness of solution for fixed point equations}. We can however give an explicit formula for this solution in this case.

\medskip

We will denote by $\PTt$ the following set of trees. An element of $\PTt$ is a non-empty\footnote{Meaning that their set of vertices is non-empty.} planar rooted tree $T$ such that
\begin{enumerate}
	\item every vertex of $T$ has arity at least $1$,
	\item every leaf of $T$ is labeled by either ``black" or ``white", and
	\item every vertex $v$ of $T$, say of arity $k$, is labeled by a planar tree $\tau_v$ of arity $k$ and whose vertices all have arity at least $1$.
\end{enumerate}
Here are some examples of such trees:
\begin{center}
	\begin{tikzpicture}
		\begin{scope}
			\coordinate (r1) at (0,0); 
			\coordinate (v11) at (0,1);
			\coordinate (v12) at (-1,2);
			\coordinate (v13) at (-1.5,2.5);
			\coordinate (v14) at (1,2.5);
			\coordinate (l11) at (-2,3);
			\coordinate (l12) at (-1.25,3);
			\coordinate (l13) at (-0.75,3);
			\coordinate (l14) at (0,3);
			\coordinate (l15) at (1,3);
			
			\draw (r1) to (l14);
			\draw (v11) to (l11);
			\draw (v13) to (l12);
			\draw (v12) to (l13);
			\draw (v11) to (v14);
			\draw (v14) to (l15);
			\fill (v11) circle [radius=2pt];
			\fill (v12) circle [radius=2pt];
			\fill (v13) circle [radius=2pt];
			\fill (v14) circle [radius=2pt];
			\draw (l11)+(-2pt,0pt) rectangle +(2pt,4pt);
			\fill (l12)+(-2pt,0pt) rectangle +(2pt,4pt);
			\fill (l13)+(-2pt,0pt) rectangle +(2pt,4pt);
			\draw (l14)+(-2pt,0pt) rectangle +(2pt,4pt);
			\draw (l15)+(-2pt,0pt) rectangle +(2pt,4pt);
			\draw (v11) circle [radius=12pt];
			\draw (-1.25,2.25) circle [radius=15pt];
			\draw (v14) circle [radius=8pt];
		\end{scope}
		
		\begin{scope}[shift = {(5,0)}]
			\coordinate (r2) at (0,0);
			\coordinate (v21) at (0,.75);
			\coordinate (v22) at (0,1.5);
			\coordinate (v23) at (0,2.5);
			\coordinate (l21) at (0,3);
			
			\draw (r2) to (l21);
			\fill (v21) circle [radius=2pt];
			\fill (v22) circle [radius=2pt];
			\fill (v23) circle [radius=2pt];
			\draw (l21)+(-2pt,0pt) rectangle +(2pt,4pt);
			\draw (0,1.125) circle [radius=15pt];
			\draw (v23) circle [radius=8pt];
		\end{scope}
	\end{tikzpicture}
\end{center}
and some non-examples:
\begin{center}
	\begin{tikzpicture}
		\begin{scope}
			\coordinate (r1) at (0,0);
			\coordinate (v11) at (0,1);
			\coordinate (l11) at (-1,2);
			\coordinate (l12) at (0,2);
			\coordinate (l13) at (1,2);
			
			\draw (r1) to (l12);
			\draw (v11) to (l11);
			\draw (v11) to (l13);
			\fill (v11) circle [radius=2pt];
			\draw (v11) circle [radius=12pt];
			\draw (l11)+(-2pt,0pt) rectangle +(2pt,4pt);
			\fill (l13)+(-2pt,0pt) rectangle +(2pt,4pt);
		\end{scope}
		
		\begin{scope}[shift = {(4,0)}]
			\coordinate (r2) at (0,0);
			\coordinate (v21) at (0,1);
			\coordinate (l21) at (-.5,1.5);
			\coordinate (l22) at (0,2);
			\coordinate (l23) at (1,2);
			
			\draw (r2) to (l22);
			\draw (v21) to (l21);
			\draw (v21) to (l23);
			\fill (v21) circle [radius=2pt];
			\fill (l21) circle [radius=2pt];
			\draw (-.25,1.25) circle [radius=15pt];
			\draw (l23)+(-2pt,0pt) rectangle +(2pt,4pt);
			\draw (l22)+(-2pt,0pt) rectangle +(2pt,4pt);
		\end{scope}
		
		\begin{scope}[shift = {(8,0)}]
			\coordinate (r3) at (0,0);
			\coordinate (v31) at (0,1);
			\coordinate (l31) at (-1.2,2);
			\coordinate (l32) at (-.4,2);
			\coordinate (l33) at (.4,2);
			\coordinate (l34) at (1.2,2);
			
			\draw (r3) to (v31);
			\draw (v31) to (l31);
			\draw (v31) to (l32);
			\draw (v31) to (l33);
			\draw (v31) to (l34);
			\fill (v31) circle [radius=2pt];
			\draw (l31)+(-2pt,0pt) rectangle +(2pt,4pt);
			\draw (l32)+(-2pt,0pt) rectangle +(2pt,4pt);
			\fill (l33)+(-2pt,0pt) rectangle +(2pt,4pt);
			\fill (l34)+(-2pt,0pt) rectangle +(2pt,4pt);
		\end{scope}
	\end{tikzpicture}
\end{center}
The trees given by a single black or white leaf --- which we will denote by $\blackleaf$ and $\whiteleaf$ respectively --- are not in $\PTt$, since they don't have any vertex.

\medskip

Let $T\in\PTt$, and let $v$ be a vertex of $T$ of arity $n$ with associated planar tree $\tau_v$. Then we denote by $\overline{\tau_v}$ the planar tree obtained by taking $\tau_v$ and composing a $0$-corolla at every leaf that is linked to either another vertex of $T$ or to a black leaf of $T$. For example,
\begin{center}
	\begin{tikzpicture}
		\begin{scope}
			\coordinate (r1) at (0,0); 
			\coordinate (v11) at (0,1);
			\coordinate (v12) at (-1,2);
			\coordinate (v13) at (-1.5,2.5);
			\coordinate (v14) at (1,2.5);
			\coordinate (l11) at (-2,3);
			\coordinate (l12) at (-1.25,3);
			\coordinate (l13) at (-0.75,3);
			\coordinate (l14) at (0,3);
			\coordinate (l15) at (1,3);
			
			\draw (r1) to (l14);
			\draw (v11) to (l11);
			\draw (v13) to (l12);
			\draw (v12) to (l13);
			\draw (v11) to (v14);
			\draw (v14) to (l15);
			\fill (v11) circle [radius=2pt];
			\fill (v12) circle [radius=2pt];
			\fill (v13) circle [radius=2pt];
			\fill (v14) circle [radius=2pt];
			\draw (l11)+(-2pt,0pt) rectangle +(2pt,4pt);
			\fill (l12)+(-2pt,0pt) rectangle +(2pt,4pt);
			\fill (l13)+(-2pt,0pt) rectangle +(2pt,4pt);
			\draw (l14)+(-2pt,0pt) rectangle +(2pt,4pt);
			\draw (l15)+(-2pt,0pt) rectangle +(2pt,4pt);
			\draw (v11) circle [radius=12pt];
			\draw (-1.25,2.25) circle [radius=15pt];
			\draw (v14) circle [radius=8pt];
			
			\node at (-1.75,1.75){$v_1$};
			\node at (-.455,.545){$v_2$};
			\node at (1.4,2.1){$v_3$};
		\end{scope}
		
		\begin{scope}[shift = {(5,1.75)}]
			\coordinate (r2) at (0,0);
			\coordinate (v21) at (0,.75);
			\coordinate (l21) at (-.75,1.5);
			\coordinate (l22) at (0,1.5);
			\coordinate (l23) at (.75,1.5);
			
			\draw (r2) to (l22);
			\draw (v21) to (l21);
			\draw (v21) to (l23);
			\fill (v21) circle [radius=2pt];
			\fill (l22) circle [radius=2pt];
			\fill (l23) circle [radius=2pt];
			
			\node at (-1.2,.75){$\overline{\tau_{v_1}} =$};
		\end{scope}
		
		\begin{scope}[shift = {(5,-.25)}]
		\coordinate (r2) at (0,0);
		\coordinate (v21) at (0,.75);
		\coordinate (l21) at (-.75,1.5);
		\coordinate (l22) at (0,1.5);
		\coordinate (l23) at (.75,1.5);
		
		\draw (r2) to (l22);
		\draw (v21) to (l21);
		\draw (v21) to (l23);
		\fill (v21) circle [radius=2pt];
		\fill (l21) circle [radius=2pt];
		\fill (l23) circle [radius=2pt];
		
		\node at (-1.2,.75){$\overline{\tau_{v_2}} =$};
		\end{scope}
		
		\begin{scope}[shift = {(8,1)}]
			\coordinate (r3) at (0,0);
			\coordinate (v31) at (0,.75);
			\coordinate (l31) at (0,1.5);
			
			\draw (r3) to (l31);
			\fill (v31) circle [radius=2pt];
			
			\node at (-.5,.75){$\overline{\tau_{v_3}} =$};
		\end{scope}
	\end{tikzpicture}
\end{center}
We define a function
\[
G:\PTt\longrightarrow\k
\]
by
\[
G(T)\coloneqq\prod_{v\in V_T}-\frac{1}{F(\overline{\tau_v})}\ .
\]
To a tree $T\in\PTt$ we also associate a function
\[
T:\g\times\g\longrightarrow\g
\]
recursively by
\[
T(x,y)\coloneqq\tau_r(T_1(x,y),\ldots,T_k(x,y))
\]
whenever $T = \tau_r\circ(T_1,\ldots,T_k)$, and by setting $\whiteleaf(x,y)\coloneqq x$, and $\blackleaf(x,y)\coloneqq y$, even though the trees given by a single white or black leaf are not in $\PTt$, strictly speaking.

\begin{proposition}\label{prop:structure of mcoo1}
	The $\L_\infty$-algebra $\mcoo_1$ is the free complete $\L_\infty$-algebra generated by two degree $1$ elements $\alpha_0$ and $\alpha_1$ and a single degree $0$ element $\lambda$ satisfying
	\[
	d\lambda= \alpha_1 - \alpha_0 + \sum_{T\in\PTt}G(T)\,T(\alpha_0,\alpha_1-\alpha_0)\ .
	\]
\end{proposition}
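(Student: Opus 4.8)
The idea is to unwind the definition of $\mcoo_1 = \widehat{\Cobar}_{\pi^\infty}(\widetilde{C}_1^\vee)$ explicitly. By construction, the underlying graded $\SLoo$-algebra of $\mcoo_1$ is the free complete $\SLoo$-algebra on $s^{-1}\overline{\widetilde{C}_1^\vee}$, so I first identify the generators: as in the proof of \cref{prop:first levels of mc}, we have $C_1 = \k\omega_0\oplus\k\omega_1\oplus\k\omega_{01}$ with $|\omega_0|=|\omega_1|=0$ and $|\omega_{01}|=1$, and dualizing gives generators $\alpha_0,\alpha_1$ in degree $1-0 = $ (after the appropriate shift/desuspension) degree $0$ and $\lambda$ in degree $1$, matching the statement. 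What remains is purely a differential computation: the differential on $\mcoo_1$ is $d_1 + d_2$ where $d_1$ is induced by the (zero) internal differential of $\widetilde{C}_1^\vee$ together with the part coming from its $\Cobar\Bar\com$-coalgebra structure, and $d_2$ comes from the twisting morphism $\pi^\infty$; both are dual to the $\Cobar\Bar\com$-algebra structure maps of $\widetilde{C}_1$. Since $\alpha_0,\alpha_1$ are Maurer--Cartan elements (by the same argument as in \cref{prop:first levels of mc}, using that $\mcoo_1$ corepresents $\MC(\g\otimes C_\bullet)$ via \cref{thm:cosimplicialModel SLoo}), the only nontrivial output is $d\lambda$.

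To compute $d\lambda$ I would proceed as the excerpt itself suggests in \cref{subsect:explicit formula for gauges}. A morphism $\mcoo_1 \to \g$ is the same as a Maurer--Cartan element of $\g\otimes^{\pi^\infty}\widetilde{C}_1$, hence via \cref{thm:homotopy equivalence MC. and MC(g x C)} and \cref{prop:rectification for deformation oo-groupoid} the same as a gauge in $\g$ between $\alpha_0$ and $\alpha_1$. The explicit formula for the gauge action of $\lambda$ on the Maurer--Cartan element $\alpha_0$ is given by applying \cref{prop:formula for formal ODE} to the formal differential equation with operators $f_{n,1}(y_1,\ldots,y_n) = \frac{1}{n!}\ell_{n+1}(y_1,\ldots,y_n,\lambda)$ for $n\ge 1$ and $f_{0,1} = d\lambda$, yielding
\[
\alpha_1 = \sum_{\tau\in\PT}\frac{1}{F(\tau)}\tau(\alpha_0)\ .
\]
Rearranging to isolate the terms containing no $0$-corolla (i.e. the bare $\alpha_0$ together with $\alpha_1-\alpha_0$ once we account for the $\tau = c_0$ term) gives a fixed-point equation for $d\lambda$, since $d\lambda$ re-enters on the right-hand side through every $0$-corolla. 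The existence and uniqueness of its solution follows from \cref{thm:existence and uniqueness of solution for fixed point equations} in the appendix on formal fixed-point equations.

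The heart of the proof is then to solve this fixed-point equation in closed form and recognize the answer as $\alpha_1 - \alpha_0 + \sum_{T\in\PTt} G(T)\, T(\alpha_0,\alpha_1-\alpha_0)$. The combinatorial bookkeeping here is the main obstacle. Iterating the fixed-point equation produces a sum over nested planar trees: each time $d\lambda$ is substituted into a $0$-corolla slot, one pastes in another copy of the whole tree expansion. This nesting is exactly encoded by the two-level trees of $\PTt$ --- the ``outer'' tree records the iteration structure and the ``inner'' labels $\tau_v$ record which $\PT$-tree was used at each stage; the decoration of leaves by ``black'' ($\alpha_1-\alpha_0$) versus ``white'' ($\alpha_0$) distinguishes the $c_0$-terminated branches that got resolved to $\alpha_1-\alpha_0$ from the bare $\alpha_0$ leaves. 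I would make this precise by induction on the number of vertices of the outer tree: the leading term (single-vertex outer tree with inner label a corolla) gives $-\frac{1}{F(\overline{c_n})}\ell_n(\ldots)$, and the inductive step grafts subtrees at the $0$-corolla positions of $\overline{\tau_v}$, with the coefficients multiplying precisely because $F$ is multiplicative under grafting in the way recorded by the product $G(T) = \prod_{v}-1/F(\overline{\tau_v})$. Finally I would double-check the sign and the placement of the $\alpha_1-\alpha_0$ versus $\alpha_0$ decorations against the low-order terms --- comparing with the $\L_\infty$-Lawrence--Sullivan formula of \cref{subsect:LS algebra} restricted to the strict-Lie case via the morphism $\rho:\mcoo_\bullet\to\mc_\bullet$ --- to confirm that the expansion matches the classical formula $d\lambda = \sum_{n\ge 0}\frac{B_n}{n!}\ad_\lambda^n(\alpha_1-\alpha_0) - \ad_\lambda(\alpha_0)$ when all higher brackets vanish, which serves as a useful consistency check.
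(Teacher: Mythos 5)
Your proposal is correct and follows essentially the same route as the paper: derive the fixed-point equation for $d\lambda$ from the gauge ODE via \cref{prop:formula for formal ODE}, invoke \cref{thm:existence and uniqueness of solution for fixed point equations} for uniqueness, and then identify the $\PTt$-sum as the solution --- the only cosmetic difference being that the paper checks the closed formula by back-substituting it into the fixed-point equation and peeling off the root vertex of each $T\in\PTt$, whereas you build it up by forward iteration, which is the same recursion read in the opposite direction. (Just be careful with the degree bookkeeping: in the shifted convention used for $\mcoo_\bullet$ the Maurer--Cartan generators sit in degree $0$ and the gauge in degree $-1$, so your assignment does not literally ``match the statement'' as written, though the paper itself wobbles between the shifted and unshifted conventions here.)
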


This $\SLoo$-algebra is a higher analogue of the Lawrence--Sulliven algebra \cite{ls10}, which is a Lie model for the $1$-simplex, cf. also \cref{prop:first levels of mc}.

\begin{proof}
	We have to prove that this $d\lambda$ is the unique solution to the fixed-point equation
	\[
	d\lambda = \alpha_1 - \alpha_0 - \sum_{\tau\in\PT\backslash\{\emptyset,c_0\}}\frac{1}{F(\tau)}\tau(\alpha_0)\ ,
	\]
	obtained above. Notice that $d\lambda$ appears in the right-hand side whenever there is a $0$-corolla in a tree. The fact that the solution exists and is unique is given by \cref{thm:existence and uniqueness of solution for fixed point equations}.
	
	\medskip
	
	We have
	\small
	\begin{align*}
	d\lambda =&\ \alpha_1 - \alpha_0 + \sum_{T\in\PTt}G(T)\,T(\alpha_0,\alpha_1-\alpha_0)\\
	=&\ \alpha_1 - \alpha_0 - \sum_{\substack{k\ge1,\ \tau_r\in\PT_k\text{ reduced}\\T_1,\ldots,T_k\in\PTt\cup\{\whiteleaf,\blackleaf\}}}\frac{1}{F(\overline{\tau_r})}\left(\prod_{i=1}^kG(T_i)\right)\tau_r(T_1(\alpha_0,\alpha_1-\alpha_0),\ldots,T_k(\alpha_0,\alpha_1-\alpha_0))\\
	=&\ \alpha_1 - \alpha_0 -\\
	&\ -\sum_{\substack{k\ge1,\\\tau_r\in\PT_k\text{ red.}}}\frac{1}{F(\overline{\tau_r})}\tau_r\left(\sum_{T_1\in\PTt\cup\{\whiteleaf,\blackleaf\}}G(T_1)\,T_1(\alpha_0,\alpha_1-\alpha_0),\ldots,\sum_{T_k\in\PTt\cup\{\whiteleaf,\blackleaf\}}G(T_k)\,T_k(\alpha_0,\alpha_1-\alpha_0)\right)\\
	=&\ \alpha_1 - \alpha_0 - \sum_{\substack{k\ge1,\\\tau_r\in\PT_k\text{ reduced}}}\frac{1}{F(\overline{\tau_r})}\tau_r\left(\alpha_0 + d\lambda,\ldots,\alpha_0 + d\lambda\right)\\
	=&\ \alpha_1 - \alpha_0 - \sum_{\tau\in\PT\backslash\{\emptyset,c_0\}}\frac{1}{F(\tau)}\tau\left(\alpha_0\right)
	\end{align*}
	\normalsize
	where a tree is \emph{reduced} if it is non-empty and has no arity $0$ vertices, in the second line we have $T = \tau_r\circ(T_1,\ldots,T_k)$, and $G(\whiteleaf) = G(\blackleaf)\coloneqq1$.
\end{proof}

\subsection{Homotopies between \texorpdfstring{$\infty$}{infinity}-morphisms of \texorpdfstring{$\SLoo$-}{shifted homotopy Lie }algebras}

We sketch how, in principle, one can use the formul{\ae} for $\mcoo_1$ given above to describe an explicit notion of homotopy between $\infty$-morphisms of $\SLoo$-algebras. More details will appear in \cite{rnv18}.

\medskip

Let $\g$ be a $\SLoo$-algebra, and consider the contraction
\begin{center}
	\begin{tikzpicture}
	\node (a) at (0,0){$\g\otimes\Omega_1$};
	\node (b) at (2.8,0){$\g\otimes C_1$};
	
	\draw[->] (a)++(.6,.1)--node[above]{\mbox{\tiny{$1\otimes p$}}}+(1.6,0);
	\draw[<-,yshift=-1mm] (a)++(.6,-.1)--node[below]{\mbox{\tiny{$1\otimes i$}}}+(1.6,0);
	\draw[->] (a) to [out=-160,in=160,looseness=4] node[left]{\mbox{\tiny{$1\otimes h$}}} (a);
	\end{tikzpicture}
\end{center}
induced by Dupont's contraction. Endow $\g\otimes C_1$ with the $\SLoo$-algebra structure obtained via the homotopy transfer theorem.

\begin{proposition}[{\cite[Prop. 3.3]{val14}}]
	The $\SLoo$-algebra $\g\otimes C_1$ is a cylinder for $\g$ in the category of $\SLoo$-algebras with their $\infty$-morphisms. In other words, the cocommutative coalgebra $\Bari(\g\otimes C_1)$ is a cylinder for $\Bari\g$.
\end{proposition}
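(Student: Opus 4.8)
The cleanest strategy is to reduce the claim to the already-established machinery relating cylinder objects to path objects under duality, combined with the homotopy transfer theorem for the generalized operad $\Cobar\Bar\com$ and the compatibility results of the previous chapter. Concretely, the statement will follow once I exhibit a factorization
\[
\ell_\bullet(\g) = \g \hookrightarrow \g\otimes C_1 \twoheadrightarrow \g\times\g = r_\bullet(\g)
\]
(in degrees $0$, i.e.\ at the level of $\Bari$) of the natural map, with the first arrow a weak equivalence (an $\infty$-quasi-isomorphism) and the second a fibration, and then recognize this as precisely the data of a simplicial frame / cylinder in the sense of \cref{sect:framings} and the Vallette model structure. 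Since $\alpha = \iota : \com^{\antishriek}\to\Cobar\com^{\antishriek}$ is Koszul, the category of $\SLoo$-algebras with $\infty$-morphisms is modeled by the Vallette model structure on $\cocom$-coalgebras via $\Bari$ (see \cref{subsect:homotopy theory of algebras with oo-moprhisms} and \cref{prop:extended bar to oo-morphisms}), so it suffices to verify the two properties after applying $\Bari$.

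\textbf{Key steps, in order.} First, I would recall that $C_1 = \k\omega_0\oplus\k\omega_1\oplus\k\omega_{01}$ is a simplicial $\C_\infty$-algebra (indeed a $\Cobar\Bar\com$-algebra, by the generalized homotopy transfer theorem of \cref{subsect:generalized HTT}) obtained by transferring the simplicial commutative algebra structure on $\Omega_1$ along Dupont's contraction; the two face maps $C_1\to C_0 = \k$ and the degeneracy $C_0\to C_1$ are morphisms of $\C_\infty$-algebras. Second, using \cref{thm:compatibility htt and convolution algebras on coalgebra side} (compatibility of convolution algebras with the homotopy transfer theorem on the coalgebra side), the $\SLoo$-algebra $\g\otimes C_1$ is identified with the convolution tensor product $\g\otimes^{\pi^\infty}\widetilde{C}_1$, and the structure maps of $C_1$ induce strict morphisms of $\SLoo$-algebras $\g\otimes C_1\to\g\otimes C_0 = \g$ (two of them, one per face) and $\g\to\g\otimes C_1$ (from the degeneracy). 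Third, I would check that the induced map $\g\to\g\otimes C_1$ is an $\infty$-quasi-isomorphism: its linear part is $1_\g\otimes s_0$ where $s_0:\k\to C_1$ is the degeneracy, which is a quasi-isomorphism of chain complexes since $C_1$ is contractible with homology $\k$ concentrated in degree $0$ — so by \cref{prop:oo-qi of (co)algebras induce oo-qi between convolution homotopy algebras}, the map is an $\infty$-quasi-isomorphism, hence (after $\Bari$) a weak equivalence in the Vallette model structure. Fourth, I would show the combined map $\g\otimes C_1\to\g\times\g$ induced by the two face maps $(d_0,d_1)$ is a fibration: this is where the fibrancy criterion of \cref{thm:Vallette model structure}\,(\ref{pt:Vms3})–(\ref{pt:Vms4}) and \cref{thm:filtered surjections induce fibrations of MC} enter — the map is a filtered surjection at every level of the coradical filtration, and one checks the relevant right lifting property against trivial cofibrations, or appeals to the known fact (\cite[Prop. 3.3]{val14}) that this is exactly the path-object situation for $\Bari\g$ after dualizing. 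Finally, assembling these, $\g\otimes C_1$ (equivalently $\Bari(\g\otimes C_1)$) is a cylinder for $\g$.

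\textbf{Main obstacle.} The delicate point is the fibration claim in step four. Verifying directly that $(d_0,d_1):\g\otimes C_1\to\g\times\g$ has the right lifting property against all trivial cofibrations of coalgebras is not something one wants to do by hand; the right move is to invoke the already-proven statement that for a fibrant $x$, a simplicial frame $X_\bullet$ gives $\hom_\cat(-,X_\bullet)$ preserving fibrations (\cref{prop:framings}), combined with the observation — which I would need to make precise — that $\g\otimes C_\bullet$ is in fact a \emph{simplicial frame} on $\g$ in the Vallette model structure on $\SLoo$-algebras (seen via $\Bari$), of which $\g\otimes C_1$ is the degree-$1$ piece. Once the simplicial frame statement is in place, the cylinder/path-object structure on each level is automatic by the general theory of \cref{sect:framings}, and the case $\bullet = 1$ is the assertion. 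Alternatively, and perhaps more economically, one cites \cite[Prop. 3.3]{val14} directly as the reference does, since the construction there is literally $\Bari(\g\otimes C_1)$; the only thing to add is the identification, via \cref{thm:compatibility htt and convolution algebras on coalgebra side} and \cref{def:cosimplicial mc for Lie}/\cref{subsect:representation for Loo-alg}, of our $\g\otimes C_1$ (transferred-structure version) with the one appearing in \emph{op.\ cit.}, which is a routine comparison of the two homotopy-transfer outputs along the same contraction.
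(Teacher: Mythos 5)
The paper offers no argument for this statement beyond the citation of \cite[Prop. 3.3]{val14}, so the only question is whether your sketch would actually prove it; it would not, and the problem is a conflation of path objects with cylinder objects. The factorization you propose,
\[
\g\longrightarrow\g\otimes C_1\longrightarrow\g\times\g\ ,
\]
with the first arrow a weak equivalence (induced by the degeneracy $1\mapsto\omega_0+\omega_1$) and the second a fibration (induced by the two face maps), factors the \emph{diagonal}; after applying $\Bari$ it exhibits $\Bari(\g\otimes C_1)$ as a \emph{path object} for $\Bari\g$. A cylinder object is a factorization of the \emph{fold} map $\Bari\g\sqcup\Bari\g\to\Bari\g$ as a cofibration followed by a weak equivalence, and none of your steps produce the two required coalgebra maps $\Bari\g\to\Bari(\g\otimes C_1)$. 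The shape you announce ("first arrow a weak equivalence, second a fibration") is the path-object shape, and simplicial frames yield path objects, not cylinders: in \cref{lemma:mc1Cylinder} and \cref{ch:model structure for GM} the paper does convert the path-object structure on $\Bar_\iota(s^{-1}C_\bullet)$ into cylinders, but only after passing through the \emph{anti}-equivalence with pro-nilpotent Lie algebras, and no such contravariant duality is available here since $\Bari$ is covariant.

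The missing content is genuinely nontrivial. The naive endpoint inclusions $x\mapsto x\otimes\omega_k$ are not even chain maps, because $\omega_0$ and $\omega_1$ are not closed in $C_1$ (one has $d\omega_0=-\omega_{01}$ and $d\omega_1=\omega_{01}$); since the arity-one component of an $\infty$-morphism is forced by the Maurer--Cartan equation to be a chain map, there is no $\infty$-morphism $\g\rightsquigarrow\g\otimes C_1$ whose linear part is $x\mapsto x\otimes\omega_k$. So the two structure maps of the claimed cylinder, whatever they are, must be extracted from \cite[Prop. 3.3]{val14} or reconstructed by hand, and this is exactly the point your sketch defers. If what you ultimately need is the homotopy relation on $\infty$-morphisms, the route consistent with your computation is to prove the path-object statement (which your steps two through four essentially do, and which is how the paper itself proceeds in \cref{subsection:compatibility with oo-morphisms}, where $\Bar_\alpha(A'\otimes\Omega_1)$ serves as a path object) and then invoke the coincidence of left and right homotopy for maps from a cofibrant object to a fibrant one; but that is a different statement from the one you set out to prove.
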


But by \cref{thm:compatibility htt and convolution algebras on coalgebra side}, $\g\otimes C_1$ is the same thing as $\g\otimes^{\pi^\infty}\widetilde{C}_1$, where $\widetilde{C}_1$ is $C_1$ the $\Cobar\Bar\com$-algebra structure obtained as in \cref{subsect:representation for Loo-alg}. Therefore, if $\Phi,\Psi:\g\rightsquigarrow\h$ are two $\infty$-morphisms, then a homotopy between them is an $\infty$-morphism
\[
H:\g\otimes^{\pi^\infty}\widetilde{C}_1\rightsquigarrow\h\ ,
\]
which translates into a certain collection of morphisms $\g^{\otimes n}\to\h$, of which two extremal subcollections give back $\Phi$ and $\Psi$, and which must respect certain compatibilities encoded by the algebraic structure of $\widetilde{C}_1$. Moreover, one can give an explicit description of the algebraic structure of $\widetilde{C}_1$, which is essentially dual to the $\SLoo$-algebra structure of $\mcoo_1$, using \cref{prop:structure of mcoo1}.
	
	\chapter{Rational models for mapping spaces}\label{ch:rational models mapping spaces}

In this chapter, we present an application of the theory developed in \cref{chapter:convolution homotopy algebras} to rational homotopy theory, using it to generalize a result of Berglund \cite{ber15}. Most of the material presented here is extracted from \cite{rnw17}.

\medskip

Another application of the methods of \cref{chapter:convolution homotopy algebras} to rational homotopy theory, but which is outside of the scope of the present work, is given in \cite{w16} and \cite{w17}, where F. Wierstra uses them to construct a complete invariant of the real or rational homotopy classes of maps between simply connected manifolds.

\medskip

In this chapter, we work exclusively over the field $\mathbb{Q}$ of rational numbers.

\section{Homotopy Lie and homotopy cocommutative models}

In this section, we define homotopy Lie and homotopy commutative models for spaces, and define certain additional conditions we will impose on the homotopy Lie models.

\subsection{Conditions on \texorpdfstring{$\SLoo$-}{shifted homotopy Lie }algebras}

Later, we will consider complete proper $\SLoo$-algebras satisfying some additional finiteness conditions, which we introduce here.

\begin{definition}
	A proper complete $\SLoo$-algebra $(\g,\F_\bullet\g)$ is \emph{locally finite}\index{Locally finite} if all of the quotients
	\[
	\g^{(n)}\coloneqq \g/\F_n\g\ ,\qquad\text{for }n\ge2\ ,
	\]
	are finite dimensional.
\end{definition}

Being locally finite is a good generalization of being finite dimensional, as the following result shows.

\begin{lemma}\label{lemma:isomHomTensor}
	Let $(\g,\F_\bullet\g)$ be a proper complete $\L_\infty$-algebra, and let $C$ be a cocommutative coalgebra. Suppose that either
	\begin{enumerate}
		\item the cocommutative coalgebra $C$ is finite dimensional, or
		\item the proper complete $\SLoo$-algebra $(\g,\F_\bullet\g)$ is locally finite.
	\end{enumerate}
	Then we have an isomorphism
	\[
	\g\widehat{\otimes}C^\vee\cong\hom^\iota(C,\g)
	\]
	of $\SLoo$-algebras, where $\iota:\com^\vee\to\SLoo$ is the canonical twisting morphism.
\end{lemma}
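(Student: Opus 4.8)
The plan is to reduce the statement to the general theory of convolution homotopy Lie algebras developed earlier, specifically the relation between $\hom^\iota(C,\g)$ and $\g\widehat\otimes C^\vee$. First I would observe that the underlying chain complex statement is the familiar linear-algebra fact that the canonical map
\[
\g\widehat\otimes C^\vee\longrightarrow\hom(C,\g),\qquad \xi\otimes c^\vee\longmapsto\big(x\mapsto\langle c^\vee,x\rangle\,\xi\big),
\]
is an isomorphism whenever $C$ is finite dimensional, and, in the locally finite case, whenever one completes appropriately: indeed $\hom(C,\g)=\lim_n\hom(C,\g^{(n)})$ and, since each $\g^{(n)}$ is finite dimensional, $\hom(C,\g^{(n)})\cong\g^{(n)}\otimes C^\vee$, so passing to the limit identifies $\hom^\iota(C,\g)$ with $\g\widehat\otimes C^\vee$ as chain complexes in both cases. (Here one should be slightly careful about which completed tensor product $\widehat\otimes$ means — the completion with respect to the filtration $\F_\bullet\g$ on the first factor, matching the filtration on $\hom^\iota(C,\g)$ coming from the coradical filtration of $C$ or from $\F_\bullet\g$ — but this is exactly the setup of \cref{subsect:complete cobar cosntruction} and \cref{subsect:other settings}.)

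Next I would upgrade this isomorphism of chain complexes to an isomorphism of $\SLoo$-algebras. The $\SLoo$-structure on $\hom^\iota(C,\g)$ is, by definition, the restriction of the $\hom(\com^\vee,\susp\otimes\lie)$-algebra structure along $\manin_\iota$, and the $\SLoo$-structure on $\g\widehat\otimes C^\vee$ is the restriction of the $(\susp\otimes\lie)\otimes\com$-structure along $\manintensor_\iota$ (in the completed sense); these are compatible exactly as explained in the tensor-product discussion of \cref{subsect:other settings}, where it is noted that the natural map $A\otimes C^\vee\to\hom(C,A)$ is a morphism of $\SLoo$-algebras $A\otimes^\alpha C^\vee\to\homa(C,A)$, which is an isomorphism if $C$ is dualizable. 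So in case (1) the result is immediate from that statement applied to $\alpha=\iota$. In case (2), one argues level by level: for each $n\ge2$ the finite-dimensional coalgebra $C$ is dualizable against $\g^{(n)}$, giving $\g^{(n)}\otimes^\iota C^\vee\cong\hom^\iota(C,\g^{(n)})$ as $\SLoo$-algebras, and these isomorphisms are compatible with the structure maps of the towers $\{\g^{(n)}\otimes^\iota C^\vee\}_n$ and $\{\hom^\iota(C,\g^{(n)})\}_n$; taking the limit and using \cref{prop:compatibility convolution and (co)limits} (compatibility of convolution homotopy algebras with limits of $\P$-algebras, applied to the tower $n\mapsto\g^{(n)}$) yields the desired isomorphism of $\SLoo$-algebras, with $\g\widehat\otimes C^\vee=\lim_n(\g^{(n)}\otimes^\iota C^\vee)$ by definition of the completed tensor product.

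The main obstacle I anticipate is purely bookkeeping rather than conceptual: one must check that the various completions line up — that the filtration on $\hom^\iota(C,\g)$ induced from $\F_\bullet\g$ (or from the coradical filtration on $C$) corresponds under the linear isomorphism to the filtration defining $\g\widehat\otimes C^\vee$, and that the canonical map is filtered — and that the finite-dimensionality hypotheses are used in exactly the right spot to pass the isomorphism through the (inverse) limit. Since we work over a field and everything in sight is already a morphism of $\SLoo$-algebras, once the underlying map is shown to be a bijection we may invoke \cref{lemma:isomorphisms of P-algebras} to conclude it is an isomorphism of $\SLoo$-algebras, so no further structure needs to be transported by hand. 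I would therefore structure the proof as: (i) identify the linear isomorphism and check it is filtered; (ii) identify the two $\SLoo$-structures as restrictions along $\manin_\iota$ and $\manintensor_\iota$ and invoke the compatibility of \cref{subsect:other settings} in the finite case; (iii) in the locally finite case, apply this levelwise and pass to the limit using \cref{prop:compatibility convolution and (co)limits} and \cref{lemma:isomorphisms of P-algebras}.
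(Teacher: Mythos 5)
Your proposal is correct and follows essentially the same route as the paper: reduce to the case of a finite-dimensional algebra and pass to the limit over the tower $\g^{(n)}=\g/\F_n\g$, the only cosmetic difference being that the paper constructs the inverse map explicitly via a homogeneous basis of $\g$ where you instead use the canonical map together with \cref{lemma:isomorphisms of P-algebras}. One small slip to fix: in case (2) it is $\g^{(n)}$, not $C$, that is finite dimensional, so the levelwise isomorphism $\g^{(n)}\otimes^\iota C^\vee\cong\hom^\iota(C,\g^{(n)})$ comes from finite dimensionality of the quotients (as your opening linear-algebra paragraph correctly states), not from dualizability of $C$ as in the statement you cite from \cref{subsect:other settings}.
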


Here, we use the notation
\[
\g\widehat{\otimes}C^\vee\coloneqq\lim_n\big(\g/\F_n\g\otimes C^\vee\big)\ .
\]
It can be seen as a tensor product between filtered algebras, where we have endowed $C^\vee$ with the constant filtration $\F_nC^\vee = C^\vee$.

\begin{proof}
	The first case is straightforward, so we only give some details for the second one. First begin by considering the case where $\g$ is finite dimensional. If we fix a homogeneous basis $\{x_i\}_i$ of $\g$, then we obtain an isomorphism
	\[
	\hom^\iota(C,\g)\longrightarrow\g\otimes^\iota C^\vee
	\]
	by sending
	\[
	\phi\longmapsto\sum_ix_i\otimes(x_i^\vee\phi)\ .
	\]
	It is a straightforward exercise to check that this is independent of the chosen basis, and to see that the isomorphism holds true at the level of $\SLoo$-algebras.
	
	\medskip
	
	Now if $\g$ is not necessarily finite dimensional, but only locally finite, we have
	\begin{align*}
	\hom^\iota(C,\g) \cong&\ \hom^\iota(C,\lim_n\g/\F_n\g)\\
	\cong&\ \lim_n\hom^\iota(C,\g/\F_n\g)\\
	\cong&\ \lim_n\big(\g/\F_n\g\otimes C^\vee\big)\\
	=&\ \g\widehat{\otimes}C^\vee\ ,
	\end{align*}
	where the fact that the second isomorphism holds at the level of $\SLoo$-algebras is straightforward to check, and in the third line we used the fact that $\g^{(n)}$ is finite dimensional for all $n$ in order to apply what said above.
\end{proof}

There is another condition we will impose on some of our $\SLoo$-algebras. It was first introduced in \cite{ber15}.

\begin{definition}\label{def:degree-wise nilpotent}
	Let $(\g,\F_\bullet\g)$ be a proper complete $\SLoo$-algebra. We say that $(\g,\F_\bullet\g)$ is \emph{degree-wise nilpotent}\index{Degree-wise nilpotent} if for any $n\in\mathbb{Z}$ there is a $k\ge1$ such that $(\F_k\g)_n = 0$.
\end{definition}

The functor $\MC_\bullet$ acts in a very straightforward manner on degree-wise nilpotent filtered $\L_\infty$-algebras satisfying a boundedness condition with respect to the homological degree, as the following result demonstrates.

\begin{proposition}\label{prop:MC with deg-wise nilpotent filtration}
	Let $(\g,\F_\bullet\g)$ be a degree-wise nilpotent proper complete $\SLoo$-algebra, and suppose that the degrees in which $\g$ is non-zero are bounded below. Then
	\[
	\MC_\bullet(\g,\F\g)\cong\MC(\g\otimes\Omega_\bullet)\ .
	\]
	In particular, $\MC_\bullet(\g,\F_\bullet\g)$ is independent of the filtration $\F_\bullet\g$, as long as $(\g,\F_\bullet\g)$ is degree-wise nilpotent.
\end{proposition}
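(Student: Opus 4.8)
The plan is to reduce the general statement to the definition of the simplicial set $\MC_\bullet$ and then exploit the degree-wise nilpotency to kill the limit that appears there. Recall that by definition
\[
\MC_\bullet(\g,\F_\bullet\g) = \lim_n\MC\big(\g/\F_n\g\otimes\Omega_\bullet\big)\ ,
\]
so the content of the proposition is that this inverse limit collapses to $\MC(\g\otimes\Omega_\bullet)$ via the canonical map. First I would fix a simplicial degree $m\ge0$ and analyse $\MC(\g\otimes\Omega_m)$. A Maurer--Cartan element here is a degree $0$ element $x\in(\g\otimes\Omega_m)_0$ satisfying the Maurer--Cartan equation. The key point is that $\Omega_m$ is concentrated in a bounded range of degrees (degrees $0$ through $m$), so $(\g\otimes\Omega_m)_0$ only involves the pieces $\g_0,\g_{-1},\ldots,\g_{-m}$ (in the cochain convention of this chapter, adjust signs accordingly). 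Since $\g$ is bounded below in homological degree and degree-wise nilpotent, for each of these finitely many degrees $j$ there is a $k_j$ with $(\F_{k_j}\g)_j = 0$; taking $N$ to be the maximum of these finitely many $k_j$, we get that the projection $\g\to\g/\F_N\g$ is an isomorphism in all degrees relevant to $(\g\otimes\Omega_m)_0$. Hence $\g\otimes\Omega_m \to \g/\F_N\g\otimes\Omega_m$ is an isomorphism in degree $0$ and a monomorphism in degree $-1$ (or the appropriate adjacent degree), which is enough to conclude that it induces a bijection on Maurer--Cartan sets.

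Next I would assemble these degree-wise bijections. For fixed $m$, the tower $\big(\MC(\g/\F_n\g\otimes\Omega_m)\big)_n$ is eventually constant: for $n\ge N$ (with $N$ depending only on $m$) all the transition maps are bijections, by the argument above applied to each quotient in place of $\g$. Therefore its inverse limit is $\MC(\g/\F_N\g\otimes\Omega_m) \cong \MC(\g\otimes\Omega_m)$, and the canonical map $\MC(\g\otimes\Omega_m)\to\lim_n\MC(\g/\F_n\g\otimes\Omega_m)$ is a bijection. Since limits of simplicial sets are computed degreewise, and the face and degeneracy maps are induced by those of $\Omega_\bullet$ and hence are compatible with the identifications just made, this gives an isomorphism of simplicial sets
\[
\MC(\g\otimes\Omega_\bullet)\xrightarrow{\ \cong\ }\lim_n\MC\big(\g/\F_n\g\otimes\Omega_\bullet\big) = \MC_\bullet(\g,\F_\bullet\g)\ .
\]
The final sentence of the proposition is then immediate: the right-hand side $\MC(\g\otimes\Omega_\bullet)$ makes no reference to the filtration, so any two degree-wise nilpotent filtrations on $\g$ (both compatible with the boundedness hypothesis) yield canonically isomorphic Maurer--Cartan spaces.

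The only step requiring genuine care is the first one: verifying that an isomorphism of $\g\otimes\Omega_m$ with $\g/\F_N\g\otimes\Omega_m$ in degree $0$, together with injectivity in the neighbouring degree controlling the Maurer--Cartan equation, really does induce a bijection on $\MC$. One direction is clear (a Maurer--Cartan element of $\g\otimes\Omega_m$ maps to one of the quotient); for surjectivity one lifts a Maurer--Cartan element $\bar x$ along the degree $0$ isomorphism to a unique $x\in(\g\otimes\Omega_m)_0$, and then checks that the Maurer--Cartan equation $dx + \sum_{n\ge2}\tfrac{1}{n!}\ell_n(x,\ldots,x) = 0$ holds in $\g\otimes\Omega_m$: its left-hand side lives in the degree adjacent to $0$, where the projection to $\g/\F_N\g\otimes\Omega_m$ is injective, and its image there vanishes because $\bar x$ is Maurer--Cartan. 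I expect this to be the main obstacle only in the bookkeeping sense --- one must be attentive to which homological degrees $\Omega_m$ occupies and hence which finitely many pieces of $\g$ intervene --- but there is no conceptual difficulty, since degree-wise nilpotency is precisely the hypothesis that makes ``finitely many degrees'' translate into ``some finite stage of the filtration.''
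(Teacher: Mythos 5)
Your proof is correct and follows essentially the same route as the paper: fix the simplicial degree, use that $\Omega_m$ occupies only finitely many degrees together with degree-wise nilpotency and boundedness below to find a finite filtration stage $N$ past which the projection $\g\otimes\Omega_m\to\g/\F_k\g\otimes\Omega_m$ is an isomorphism in the degrees governing the Maurer--Cartan set, so the tower is eventually constant and the limit collapses. The paper phrases this as the vanishing of $(\F_k\g\otimes\Omega_m)$ in degrees $0$ and $-1$ for $k\ge k_0$; your lifting argument for surjectivity is just a slightly more explicit version of the same point.
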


\begin{proof}
	Suppose that $(\g,\F_\bullet\g)$ satisfies the assumptions above. Fix $n\ge0$, then there exists $k_0\ge1$ such that $(\F_{k_0}\g)_{m} = 0$ for all $m\le n$, since the degrees of $\g$ are bounded below. It follows that
	\[
	(\F_k\g\otimes\Omega_n)_0 = 0\qquad\text{and}\qquad(\F_k\g\otimes\Omega_n)_{-1} = 0
	\]
	for all $k\ge k_0$, as $\Omega_n$ is concentrated in degrees from $-n$ up to $0$. The projection
	\[
	\g\otimes\Omega_n\longrightarrow\g/\F_k\g\otimes\Omega_n
	\]
	has kernel $\F_k\g\otimes\Omega_n$, and is therefore an isomorphism in degrees $0$ and $-1$. Therefore, the set of Maurer--Cartan elements $\MC(\g/\F_k\g\otimes\Omega_n)$ is constant in $k$ for $k\ge k_0$, which implies the statement.
\end{proof}

For any $\SLoo$-algebra $\g$, one can consider the filtration $\F^\mathrm{can}\g$ which at level $n$ is given by the elements that can be obtained by bracketing at least $n$ elements of $\g$.

\begin{lemma}\label{lemma:sccftGivesDegWiseNil}
	Let $\g$ be a simply-connected, proper complete $\SLoo$-algebra. Then $(\g,\F^\mathrm{can}\g)$ is degree-wise nilpotent.
\end{lemma}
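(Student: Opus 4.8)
The claim is that for a simply-connected, proper complete $\SLoo$-algebra $\g$, the canonical filtration $\F^\mathrm{can}\g$ is degree-wise nilpotent, i.e. for every $n\in\Z$ there exists $k\ge 1$ with $(\F^\mathrm{can}_k\g)_n=0$. The plan is to unwind the definitions: ``simply-connected'' for a shifted homotopy Lie algebra should mean that $\g$ is concentrated in non-negative degrees and $\g_0=0$ (after the shift, this corresponds to the usual connectivity assumption; in any case $\g$ is concentrated in degrees $\ge 1$, since shifted Maurer--Cartan elements sit in degree $0$ and we want a well-behaved theory). So first I would fix this convention and note that $\g_m=0$ for all $m\le 0$.

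Next, the key observation is about how the brackets interact with the grading. The generating operations $\ell_j$ of an $\SLoo$-algebra all have degree $-1$. Hence an iterated bracket of $p$ elements $x_1,\dots,x_p$ of $\g$ lands in degree $|x_1|+\cdots+|x_p| - (p-1)$, because forming a bracket tree with $p$ leaves uses exactly $p-1$ internal operations (counting with multiplicity: a corolla of arity $j$ contributes $j-1$ to ``leaves consumed'' and has degree $-1$, and the total arity deficit telescopes to $p-1$). Since each $|x_i|\ge 1$, such an iterated bracket lies in degree $\ge p-(p-1)=1$; more importantly, $\F^\mathrm{can}_k\g$ is spanned by iterated brackets involving at least $k$ inputs, so every element of $\F^\mathrm{can}_k\g$ that is homogeneous of degree $n$ must arise from some bracket tree with $p\ge k$ leaves, each leaf in degree $\ge 1$, giving $n \ge p-(p-1)=1$ trivially — that is not yet enough, so I need the sharper count. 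The right estimate: a tree with $p$ leaves, all of degree $\ge 1$, and using operations of degree $-1$ with $p-1$ ``merges'' total, produces output of degree $\ge p - (p-1) = 1$ only, but if we instead track that each leaf contributes at least $1$ and the total degree drop from the operations is exactly $p-1$... Actually the correct bound uses that leaves in degree exactly $1$ are the cheapest: output degree $= \sum |x_i| - (\text{number of operations weighted})$. Let me restate: for a planar/rooted tree with vertex set $V$ and $p$ leaves, $\sum_{v}(\mathrm{ar}(v)-1) = p-1$, and the output degree is $\sum_{i=1}^p |x_i| + \sum_v (\deg \ell_{\mathrm{ar}(v)}) = \sum|x_i| - \#V$. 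Since $\#V \le p-1$ (each vertex has arity $\ge 2$), and $\sum|x_i|\ge p$, the output degree is $\ge p - (p-1) = 1$. To get degree-wise nilpotence I instead bound the other way: output degree $= \sum|x_i| - \#V \ge \sum|x_i| - (p-1) \ge p - (p-1) = 1$; but also, crucially, $\#V\le p-1$ forces output degree $\ge \sum |x_i| - p + 1$. Combining with $|x_i|\ge 1$: if all inputs are in degree $1$ we only get output degree $1$ regardless of $p$, so this naive bound fails.

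Therefore the real argument must use properness, not just connectivity. Here is the fix I would carry out: by \textbf{properness} (see the filtered-algebras appendix), the canonical filtration $\F^\mathrm{can}$ is a \emph{proper} filtration, meaning $\F^\mathrm{can}_k\g$ eventually consists only of elements obtained from brackets whose inputs themselves lie in deeper filtration stages — but more directly, I would invoke \cref{lemma:sccftGivesDegWiseNil}'s likely intended proof: since $\g$ is simply-connected, $\g = \bigoplus_{m\ge 1}\g_m$, and the bracket of $p$ elements lands in degree $\ge \max(1, \text{(something growing in }p))$ once we use that at most finitely many of the $\g_m$ below a given bound are nonzero — no wait, $\g_m$ need not vanish for large $m$. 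The genuinely correct route: an element of $\F^\mathrm{can}_k\g \cap \g_n$ is a sum of iterated brackets with $\ge k$ inputs; by the degree formula each such bracket with $p\ge k$ leaves has $\#V$ vertices with $\#V = \sum|x_i| - n$, so $\#V \le \sum|x_i| - 1$ and also $\#V\ge 1$ (for $p\ge 2$), whence $\sum |x_i| \le n + \#V \le n + (p-1)$ while $\sum|x_i|\ge p$; combining, $p\le n+p-1$, which is vacuous. So I would conclude that the statement as I've set it up is false without more hypotheses — meaning ``simply-connected'' here must be a \emph{stronger} condition, namely $\g$ is concentrated in degrees $\ge 2$ (the shifted analogue of ``$1$-connected''), or equivalently $\g_1 = 0$. \textbf{With $\g$ concentrated in degrees $\ge 2$}: then each input has $|x_i|\ge 2$, so $\sum|x_i|\ge 2p$, and output degree $= \sum|x_i| - \#V \ge 2p - (p-1) = p+1$. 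Hence any element of $\F^\mathrm{can}_k\g$ lies in degrees $\ge k+1$, so for a given $n$ we may take $k = n$, giving $(\F^\mathrm{can}_k\g)_n = 0$ for all $k > n-1$. That is exactly degree-wise nilpotence.

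\textbf{Main obstacle.} The crux is pinning down the precise meaning of ``simply-connected'' for a shifted homotopy Lie algebra (degrees $\ge 2$ versus $\ge 1$) so that the degree-count above actually closes; the combinatorial lemma $\#V = \sum|x_i| - n$ for a bracket tree (via $\deg\ell_j = -1$ and $\sum_v(\mathrm{ar}(v)-1)=p-1$) is then routine, and properness guarantees the canonical filtration makes sense and is exhaustive. So the proof I would write is: (1) recall the convention that simply-connected means concentrated in degrees $\ge 2$; (2) state and prove the one-line lemma that an iterated bracket of $p$ homogeneous elements of degrees $d_1,\dots,d_p$ has degree $\sum d_i - \#V$ with $\#V\le p-1$; (3) deduce $\F^\mathrm{can}_k\g \subseteq \bigoplus_{m\ge k+1}\g_m$; (4) conclude $(\F^\mathrm{can}_{n}\g)_n = 0$, hence $(\g,\F^\mathrm{can}\g)$ is degree-wise nilpotent.
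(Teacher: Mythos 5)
Your final argument is correct and is essentially the paper's own proof: ``simply-connected'' does mean concentrated in degrees $\ge 2$ here, and the paper's one-line computation is exactly your degree count $\sum_i|x_i|-\#V\ge 2p-(p-1)=p+1$ (stated there for binary brackets, ``at most $k-1$ brackets, each element of degree at least $2$, hence degree $\ge (1-k)+2k=k+1$''), giving $\F^\mathrm{can}_k\g\subseteq\g_{\ge k+1}$ and hence degree-wise nilpotence. The long detour through the degrees-$\ge 1$ convention could be cut, since the bound you correctly observe to fail there is not the one the paper needs.
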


\begin{proof}
	Suppose we take $k$ elements and we bracket them together. We can use at most $k-1$ brackets (taking only binary brackets), and every element has degree at least $2$. It follows that the resulting element has degree at least
	\[
	(1-k) + 2k = k + 1\ .
	\]
	Thus,
	\[
	\F_k^\mathrm{can}\g\subseteq\g_{\ge k+1}\ ,
	\]
	and the statement follows.
\end{proof}

\subsection{Homotopy Lie models}

One possible generalization of the Lie rational models for spaces of \cref{subsect:Lie models for spaces} is to use homotopy Lie algebras. These have been considered e.g. in \cite{bfm11}, \cite{laz13}, and \cite{ber15}. For coherence, we work in the shifted setting.

\begin{definition}
	Let $K\in\ssets_{1}$ be a $1$-reduced simplicial set. A proper complete $\SLoo$-algebra $(\g,\F_\bullet\g)$ is a \emph{rational model}\index{Homotopy!Lie rational model} for $K$ if there is a homotopy equivalence
	\[
	\MC_\bullet(\g,\F_\bullet\g)\simeq K_\mathbb{Q}\ ,
	\]
	where $K_\mathbb{Q}$ is the rationalization of $K$, cf. \cref{thm:rationalization exists}.
\end{definition}

We will require that our $\SLoo$-models are locally finite and degree-wise nilpotent. This assumption is needed e.g. for \cref{thm:Berglund} to hold. This condition is not that strong, as we can model many spaces of interest to us through such algebras.

\begin{proposition}\label{prop:condition ok}
	Every $1$-reduced simplicial set $K$ with only finitely many non-degenerate simplices admits a locally finite, degree-wise nilpotent $\SLoo$-model.
\end{proposition}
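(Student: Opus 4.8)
The plan is to produce an explicit locally finite, degree-wise nilpotent $\SLoo$-model for such a $K$ by combining the Sullivan/commutative side with the convolution machinery of \cref{chapter:convolution homotopy algebras}. First I would recall that since $K$ has only finitely many non-degenerate simplices, the cellular cochain functor $\CPL(K)$ of \cref{subsect:Dupont contraction} is a finite-dimensional cochain complex, and the PL de Rham theorem gives a quasi-isomorphism $\APL(K)\xrightarrow{\simeq}\CPL(K)$; transferring the commutative algebra structure of $\APL(K)$ along Dupont's contraction endows $\CPL(K)$ with a $\C_\infty$-algebra structure (indeed a $\Cobar\Bar\com$-algebra structure, following \cref{subsect:representation for Loo-alg}), and this $\C_\infty$-algebra $A$ is finite-dimensional in every degree, concentrated in degrees $0$ up to $\dim K$, and $1$-reduced since $K$ is $1$-reduced. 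This $A$ is a (homotopy) commutative rational model for $K$.

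Next I would dualize: because $A$ is finite-dimensional, $A^\vee$ is a (suspended) $\Bar\lie$-coalgebra $C$, conilpotent and concentrated in a bounded range of non-negative degrees, with $C(2)=0$ in low enough degree to be simply connected. Then I would form the complete cobar construction relative to the canonical twisting morphism $\pi:\Bar(\susp\otimes\lie)\to\susp\otimes\lie$ — or rather $\pi^\infty:\Bar(\SLoo)\to\SLoo$ if one wants the full $\Cobar\Bar\com$-structure — to obtain a cosimplicial-free-style $\SLoo$-algebra $\g\coloneqq\widehat{\Cobar}_{\pi}(C)$ endowed with the filtration $\F_n\g=\prod_{k\ge n}(\susp\otimes\lie)(k)\otimes_{\S_k}C^{\otimes k}$ described in \cref{subsect:complete cobar cosntruction}, which makes it a proper complete $\SLoo$-algebra. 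I would then check that $\g$ is locally finite: each quotient $\g^{(n)}=\g/\F_n\g$ is a finite sum $\bigoplus_{k<n}(\susp\otimes\lie)(k)\otimes_{\S_k}C^{\otimes k}$, and since $(\susp\otimes\lie)(k)$ is finite-dimensional in every degree and $C$ is finite-dimensional, each $\g^{(n)}$ is finite-dimensional, as required. Degree-wise nilpotence follows from \cref{lemma:sccftGivesDegWiseNil} once simple connectivity of $\g$ is verified: since $C$ is $1$-reduced and concentrated in degrees $\ge 2$ after the appropriate shift, the generators of $\Cobar$ sit in degrees $\ge 2$, so $\g$ is simply connected and its canonical filtration is degree-wise nilpotent; by \cref{prop:MC with deg-wise nilpotent filtration} the chosen filtration and the canonical one give the same Maurer--Cartan space anyway.

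Finally I would show $\g$ is actually a rational model, i.e. $\MC_\bullet(\g)\simeq K_\mathbb{Q}$. By \cref{thm:cosimplicialModel} (or its $\SLoo$-version \cref{thm:cosimplicialModel SLoo}, with the tensor-product/convolution identification of \cref{thm:equality MC and twisting in tensor case} and \cref{prop:partial Rosetta stone for non-conilpotent coalgebras}) one has $\MC(\g\otimes C_\bullet)\cong\hom_{\SLooalg}(\mcoo_\bullet,\g)$, and by \cref{thm:homotopy equivalence MC. and MC(g x C)} this is weakly equivalent to $\MC_\bullet(\g)$; on the other hand, dualizing everything, $\MC_\bullet(\g)$ can be identified with $\hom_{\mathsf{dgCom}}(A,\Omega_\bullet)$ by the analogue of \cite[Cor. 6.6]{rn17tensor} used in \cref{sect:cosimplicial model for Lie algebras}, and since $A$ is a finite, $1$-reduced commutative model for $K$ and $K$ has finitely many cells, $\hom_{\mathsf{dgCom}}(A,\Omega_\bullet)=\APL(K)$-realization is precisely Sullivan's spatial realization of a finite commutative model, which is $K_\mathbb{Q}$ by the fundamental theorem of rational homotopy theory (cf. \cref{thm:existence of minimal Sullivan model} and the discussion in \cref{subsect:PL differential forms}).

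The main obstacle I expect is bookkeeping rather than conceptual: one must be careful that the (co)algebra $C=A^\vee$ really lands in the conilpotent, simply connected, bounded range where all the cited theorems apply — in particular that dualizing the $\Cobar\Bar\com$-algebra $A$ yields a genuine conilpotent $\Bar(\SLoo)$-coalgebra with the coradical filtration behaving well — and that the identification $\MC_\bullet(\g)\simeq K_\mathbb{Q}$ on the nose matches the rationalization, not merely something weakly equivalent to it. A clean way to sidestep part of this is to invoke directly that $A$ (being quasi-isomorphic to $\APL(K)$) is a commutative model for $K$, whence by \cref{thm:dualization of (co)commutative models} its dual $A^\vee$ is a cocommutative model, and then to observe that $\widehat{\Cobar}_\pi(A^\vee)$ is exactly the (complete) Quillen-type Lie model whose Maurer--Cartan space realizes that cocommutative model; this routes the verification through results already in the excerpt and leaves only the finiteness/nilpotence checks, which are immediate from the explicit form of $\widehat{\Cobar}_\pi(A^\vee)$.
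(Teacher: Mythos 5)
Your construction takes a genuinely different route from the paper's, and it has a gap precisely at the step you dismiss as ``bookkeeping'': the identification $\MC_\bullet(\g)\simeq K_{\mathbb{Q}}$. The results you cite do not deliver it. \cref{thm:cosimplicialModel SLoo} and \cref{thm:homotopy equivalence MC. and MC(g x C)} only re-express $\MC_\bullet(\g)$ in terms of $\mcoo_\bullet$ or $\g\otimes C_\bullet$; they say nothing about $K_{\mathbb{Q}}$. The statement you actually need --- that the Sullivan spatial realization $\hom_{\mathsf{dgCom}}(A,\Omega_\bullet)$ of a commutative model is the rationalization --- is the Bousfield--Gugenheim realization theorem, which appears nowhere in the text (\cref{thm:existence of minimal Sullivan model} is only an existence statement for minimal models), and it requires $A$ to be a cofibrant Sullivan algebra; your $A=\CPL(K)$ is neither cofibrant nor even strictly commutative, being only a transferred $\C_\infty$-algebra, so $\hom_{\mathsf{dgCom}}(A,\Omega_\bullet)$ does not literally parse. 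Your proposed repair via \cref{thm:dualization of (co)commutative models} and Majewski's theorem (\cref{thm:majewski}) runs into the same obstruction: both are stated for \emph{strict} commutative models of finite type, so you would first have to rectify the $\C_\infty$-structure on $\CPL(K)$, and the rectification is no longer finite-dimensional --- which defeats the purpose of passing to $\CPL(K)$ in the first place. All of this can be patched (for instance by showing that $\widehat{\Cobar}_\kappa(\CPL(K)^\vee)$ is quasi-isomorphic to a genuine Lie model and then invoking Berglund's realization theorem recalled in \cref{subsect:Lie models for spaces}), but as written the realization step is asserted rather than proved.

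The paper sidesteps all of this by working on the chain side from the start: by \cite[Sect.~24.(e)]{FelixHalperinThomas}, a finite $1$-reduced $K$ admits a free Lie model $\lie(C_\bullet(K))$ on its simplicial chains, which is finitely generated, and \cite[Prop.~6.1]{ber15} applies directly to give $\MC_\bullet\simeq K_{\mathbb{Q}}$; one then completes to $\widehat{\lie}(C_\bullet(K))$, uses \cref{prop:MC with deg-wise nilpotent filtration} and \cref{lemma:sccftGivesDegWiseNil} to see that completion changes nothing and that the canonical filtration is degree-wise nilpotent, and local finiteness is immediate since $\lie(n)$ and $C_\bullet(K)$ are finite-dimensional. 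Your finiteness and nilpotence checks for $\widehat{\Cobar}(A^\vee)$ are correct and exactly parallel the paper's; the only essential difference, and the only gap, lies in how the realization $\MC_\bullet(\g)\simeq K_{\mathbb{Q}}$ is obtained.
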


\begin{proof}
	According to \cite[Sect. 24.(e)]{FelixHalperinThomas}, every such simplicial set $K$ admits a free Lie model --- in the sense of the Definition found at \cite[p.322]{FelixHalperinThomas} --- of the form $\g=\lie(C_\bullet(X))$, where $C_\bullet(X)$ is the complex of simplicial chains of $K$. Since we supposed that $K$ is finite, $\g$ is a finitely generated Lie algebra, and thus it satisfies the assumptions of \cite[Prop. 6.1]{ber15}. It follows that we have
	\[
	\MC_\bullet(\g\otimes\Omega_\bullet)\simeq K_\mathbb{Q}\ .
	\]
	An apparent problem is the fact that $\g$ is not complete. However, we can replace $\g$ by
	\[
	\widehat{\g}\coloneqq\widehat{\lie}(C_\bullet(X)) = \prod_{n\ge1}\lie(n)\otimes_{\S_n}C_\bullet(X)^{\otimes n}\ .
	\]
	Then \cref{prop:MC with deg-wise nilpotent filtration} and \cref{lemma:sccftGivesDegWiseNil} give
	\[
	\MC_\bullet(\g\otimes\Omega_\bullet)\cong\MC_\bullet(\widehat{\g},\F^{can}\widehat{\g})\ ,
	\]
	so that we may use $(\widehat{\g},\F^\mathrm{can}\widehat{\g})$ as a $\L_\infty$-model. This is obviously locally finite, since the operad $\lie$ is finite dimensional in every arity and $C_\bullet(X)$ is finite dimensional, and degree-wise nilpotent e.g. by \cref{lemma:sccftGivesDegWiseNil}. Now suspend $\g$ to obtain an $\SLoo$-model satisfying the desired properties.
\end{proof}

\subsection{Homotopy cocommutative models}

By a cocommutative coalgebra up to homotopy, we mean a conilpotent coalgebra over the cooperad $\Bar\Cobar\com^\vee$. Other resolutions of the cooperad $\com^\vee$ are possible, but the chosen one has the advantage to provide the canonical commuting diagram
\begin{center}
	\begin{tikzpicture}
	\node (a) at (0,1.5){$\com^\vee$};
	\node (b) at (0,0){$\Bar\Cobar\com^\vee$};
	\node (d) at (3,0){$\L_\infty$};
	
	\draw[->] (a) -- node[left]{$f_\iota$} (b);
	\draw[->] (a) edge[out=0,in=120] node[above]{$\iota$} (d);
	\draw[->] (b) -- node[above]{$\pi$} (d);
	\end{tikzpicture}
\end{center}
where the quasi-isomorphism $f_\iota$ is obtained by \cref{thm:universal twisting morphisms} --- it is in fact the unit of the bar-cobar adjunction --- and where all twisting morphisms are Koszul.

\begin{definition}
	Let $X$ be a simply-connected, pointed topological space. A homotopy cocommutative coalgebra $C$ is a \emph{rational model}\index{Homotopy!cocommutative rational model} for $X$ if there exists a zig-zag of weak equivalences of homotopy cocommutative algebras
	\[
	(f_\iota)_*s\Bar_\kappa\lambda(X)\longrightarrow\bullet\longleftarrow\cdots\longrightarrow\bullet\longleftarrow C\ ,
	\]
	or equivalently (by \cref{prop:alpha-we of coalgebras is zig-zag of we}) if there exists a $\pi$-weak equivalence $(f_\iota)_*s\Bar_\kappa\lambda(X)\rightsquigarrow C$.
\end{definition}

\section{Rational models for mapping spaces}

Given two spaces $K$ and $L$, a natural question is the following one. Suppose we are given rational models for both $K$ and $L$. Is it possible to use them to construct a rational model of the mapping space $\map(K,L)$? A possible answer to this question was given by Berglund \cite{ber15} in the case when we have a strictly commutative model for the first space, and an $\SLoo$-model for the second one.

\begin{theorem}[\cite{ber15} Theorem 6.3]\label{thm:Berglund}
	Let $K$ be a simply-connected simplicial set, let $L$ be a nilpotent space (e.g. a simply-connected space) of finite $\mathbb{Q}$-type and $L_{\mathbb{Q}}$ the rationalization of $L$. Let $A$  be a commutative model for $K$ and $(\g,\F_\bullet\g)$ a degree-wise nilpotent, locally finite $\SLoo$-model of finite type for $L$. There is a homotopy equivalence of simplicial sets
	\[
	\map(K,L_{\mathbb{Q}})\simeq\MC_\bullet(A \widehat{\otimes}\g) ,
	\]
	i.e. the $\SLoo$-algebra $A \widehat{\otimes }\g$ is a $\SLoo$-model for the mapping space.
\end{theorem}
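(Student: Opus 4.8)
The plan is to replace the rationalized target $L_\mathbb{Q}$ by the Kan complex $\MC_\bullet(\g,\F_\bullet\g)$ and then recognize the mapping space into it as a Maurer--Cartan space of a convolution $\SLoo$-algebra. Since $\g$ is an $\SLoo$-model for $L$ there is a natural weak equivalence $\MC_\bullet(\g)\simeq L_\mathbb{Q}$, and $\MC_\bullet(\g)$ is a Kan complex by \cref{thm:filtered surjections induce fibrations of MC}. As $K$ is cofibrant and all objects in sight are fibrant, the simplicial mapping space functor $\map(K,-)$ preserves this weak equivalence (by the preservation property of Ken Brown's lemma, \cref{lemma:Ken Brown}, in the simplicial model structure on $\ssets$), so $\map(K,L_\mathbb{Q})\simeq\map(K,\MC_\bullet(\g))$. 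It therefore suffices to produce a natural weak equivalence $\map(K,\MC_\bullet(\g))\simeq\MC_\bullet(A\widehat{\otimes}\g)$.

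First I would dispose of the particular choice of commutative model. A commutative model $A$ for $K$ is linked to $\APL(K)$ by a zig-zag of quasi-isomorphisms of commutative algebras; tensoring this zig-zag with $\g$ through the Koszul twisting morphism $\iota\colon\com^\vee\to\SLoo$ and invoking the tensor-product version of \cref{prop:oo-qi of (co)algebras induce oo-qi between convolution homotopy algebras} yields filtered $\infty$-quasi-isomorphisms of complete $\SLoo$-algebras between $A\widehat{\otimes}\g$ and $\APL(K)\widehat{\otimes}\g$, the filtrations being those induced by the degree-wise nilpotent filtration of $\g$. The Dolgushev--Rogers theorem (\cref{thm:Dolgushev-Rogers}) then gives $\MC_\bullet(A\widehat{\otimes}\g)\simeq\MC_\bullet(\APL(K)\widehat{\otimes}\g)$, so I may assume $A=\APL(K)$ henceforth.

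Next comes the identification of the two simplicial sets. Since $(\g,\F_\bullet\g)$ is degree-wise nilpotent and, being a finite-type model of a simply connected space, bounded below, \cref{prop:MC with deg-wise nilpotent filtration} gives $\MC_\bullet(\g)_m=\MC(\g\otimes\Omega_m)$ on the nose. Writing each $K\times\Delta[n]$ as the colimit of its simplices, using that $\MC$ preserves limits (limits of $\SLoo$-algebras are computed in chain complexes and the Maurer--Cartan condition is checked componentwise), and recalling the definition of $\APL$ as $\lim_{\Delta[m]\to-}\Omega_m$, I obtain a natural isomorphism of simplicial sets
\[
\map(K,\MC_\bullet(\g))\;\cong\;\MC\big(\g\,\widehat{\otimes}\,\APL(K\times\Delta[\bullet])\big),
\]
where \cref{lemma:isomHomTensor} lets me read each $\g\widehat{\otimes}\APL(-)$ as the convolution $\SLoo$-algebra $\homi(-,\g)$, so that the machinery of \cref{chapter:convolution homotopy algebras} applies. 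By contrast, $\MC_\bullet(\APL(K)\widehat{\otimes}\g)$ is the simplicial set $n\mapsto\MC(\g\,\widehat{\otimes}\,\APL(K)\otimes\Omega_n)$.

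The remaining, and hardest, step is to compare these two simplicial sets. The Eilenberg--Zilber map is a quasi-isomorphism of commutative algebras $\APL(K)\otimes\Omega_n\xrightarrow{\sim}\APL(K\times\Delta[n])$, natural in $n$; tensoring with $\g$ and passing to Maurer--Cartan sets yields a morphism $\MC_\bullet(\APL(K)\widehat{\otimes}\g)\to\map(K,\MC_\bullet(\g))$, which I must show is a weak equivalence. The difficulty is that a filtered quasi-isomorphism of $\SLoo$-algebras need not induce a bijection on Maurer--Cartan sets, so this cannot be verified one simplicial degree at a time. Instead I would first treat $K=\Delta[p]$, where both sides are weakly equivalent to $\MC_\bullet(\g)$: on the left because $\Omega_p$ is contractible, whence $\APL(\Delta[p])\widehat{\otimes}\g\to\g$ is a filtered quasi-isomorphism and \cref{thm:Dolgushev-Rogers} applies, and on the right because $\Delta[p]$ is contractible. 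The general case should then follow by a Reedy/homotopy-limit argument over the category of simplices of $K$: both functors send the canonical decomposition $K=\colim_{\Delta[p]\to K}\Delta[p]$ to homotopy limits of Kan complexes, using that the structural maps are fibrations by \cref{thm:filtered surjections induce fibrations of MC} and \cref{cor:what happens to MC.(conv. alg.) under filtered oo-qi}, and the comparison map is a levelwise equivalence of these diagrams. I expect this final homotopy-limit comparison — reconciling $\APL$ of a product with a tensor product coherently over all of $K$ — to be the main technical obstacle, and it is exactly where Berglund's original argument does the real work.
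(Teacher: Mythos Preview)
The paper does not prove this theorem: it is quoted from \cite{ber15} and only accompanied by a remark explaining how the present formulation (using $\MC_\bullet$ rather than Getzler's $\gamma_\bullet$, and completing with respect to $\F_\bullet\g$ rather than the degree filtration) follows from Berglund's original statement via \cref{thm:getzler gamma is we to MC.} and an inspection of his proof. There is therefore no proof in the paper to compare your proposal against.

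As for your sketch itself, the overall architecture is the standard one and is close to Berglund's: reduce to $A=\APL(K)$ by Dolgushev--Rogers, identify $\map(K,\MC_\bullet(\g))$ with a Maurer--Cartan simplicial set built from $\APL(K\times\Delta[\bullet])$, and compare the two via an Eilenberg--Zilber type map. You correctly flag the last step as the substantive one and honestly defer it to Berglund's argument. Two small points are worth noting. First, your invocation of \cref{prop:MC with deg-wise nilpotent filtration} requires $\g$ to be bounded below, which you justify by ``being a finite-type model of a simply connected space''; but the hypothesis is only that $L$ is nilpotent of finite $\mathbb{Q}$-type, so this boundedness needs a separate argument (or a different route to the level-wise identification of $\MC_\bullet(\g)$). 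Second, the homotopy-limit argument you propose over the category of simplices of $K$ is delicate: $\APL$ turns colimits into limits, but Maurer--Cartan sets of limits are limits of Maurer--Cartan sets only for \emph{strict} limits of $\SLoo$-algebras, and the passage to a \emph{homotopy} limit of Kan complexes on both sides requires checking compatible fibrancy conditions that are not all covered by \cref{thm:filtered surjections induce fibrations of MC}. These are exactly the places where Berglund's original proof does real work, so your outline is a fair roadmap rather than a self-contained proof.
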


\begin{remark}
	In \cite{ber15}, this theorem is stated in terms of the Getzler $\infty$-groupoid $\gamma_{\bullet}(\g)$. However, the $\infty$-groupoid $\gamma_{\bullet}(\g)$ is homotopy equivalent to $\MC_\bullet(\g)$ by \cref{thm:getzler gamma is we to MC.}, and thus the statement above is equivalent to the original one. Also notice that we supposed that $(\g,\F_\bullet\g)$ is locally finite, and completed the tensor product with respect to the filtration $\F_\bullet\g$ and not with the degree filtration, as in \cite{ber15}. An inspection of the original proof reveals that the result still holds in this slightly more general context.
\end{remark}

We will now improve Berglund's Theorem in two ways: we will show that we can take homotopy cocommutative coalgebra models for $K$ instead of just cocommutative ones, and that this model is natural with respect to $\infty_\pi$-quasi-isomorphisms of  $\SLoo$-algebras, respectively $\pi$-weak equivalences of homotopy cocommutative coalgebras. We will also show that, under certain restrictions on $C$ and $\g$, this model only depends on the homotopy types of $C$ and $\g$, i.e. different choices for $C$ and $\g$ will give homotopy equivalent models for the mapping space. The first result is the following one.

\begin{lemma}
	Let $K$ be a simply-connected simplicial set, let $L$ be a nilpotent space (e.g. a simply-connected space) of finite $\mathbb{Q}$-type and $L_\mathbb{Q}$ the $\mathbb{Q}$-localization of $L$. Let $C$  be a cocommutative model for $K$ and $(\g,\F_\bullet\g)$ a degree-wise nilpotent, locally finite $\SLoo$-model of finite type for $L$. There is a homotopy equivalence of simplicial sets
	\[
	Map_*(K,L_\mathbb{Q}) \simeq MC_\bullet(\hom^\pi((f_\iota)_*C,\g)) ,
	\]
	i.e. the convolution $\SLoo$-algebra $\hom^\pi((f_\iota)_*C,\g)$ is an $\L_\infty$-model for the mapping space.
\end{lemma}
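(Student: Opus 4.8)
The strategy is to reduce the statement to Berglund's theorem (\cref{thm:Berglund}) via a chain of weak equivalences connecting the convolution algebra $\hom^\pi((f_\iota)_*C,\g)$ to an algebra of the form $A\widehat{\otimes}\g$ for a strictly commutative model $A$ of $K$. First I would invoke \cref{thm:majewski}, \cref{thm:dualization of (co)commutative models}, and the theory of rational homotopy developed in \cref{chapter:RHT} to obtain a strictly cocommutative rational model for $K$ that can be compared to $C$; alternatively, since $K$ is simply connected, take a minimal Sullivan commutative model $M_K$ of finite type (\cref{thm:existence of minimal Sullivan model}), whose dual $M_K^\vee$ is a cocommutative model for $K$ by \cref{thm:dualization of (co)commutative models}. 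The point is that any two cocommutative models for $K$ are connected by a zig-zag of weak equivalences in the Vallette model structure, equivalently (by \cref{prop:alpha-we of coalgebras is zig-zag of we}) by a $\kappa$-weak equivalence, and hence — after pushing forward along $f_\iota$ — by a $\pi$-weak equivalence of homotopy cocommutative coalgebras.

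Next, I would show that a $\pi$-weak equivalence between homotopy cocommutative coalgebras $D\rightsquigarrow D'$ induces a weak equivalence of Maurer--Cartan spaces $\MC_\bullet(\hom^\pi(D,\g))\simeq\MC_\bullet(\hom^\pi(D',\g))$. By \cref{thm: alpha-we are oo-alpha-qi}, a $\pi$-weak equivalence is an $\infty_\pi$-quasi-isomorphism of coalgebras; then \cref{prop:oo-qi of (co)algebras induce oo-qi between convolution homotopy algebras} gives that $\hom^\pi(-,\g)$ sends it to an $\infty$-quasi-isomorphism of $\SLoo$-algebras, which — after checking the filtration hypotheses of \cref{cor:what happens to MC.(conv. alg.) under filtered oo-qi}, using that $(\g,\F_\bullet\g)$ is locally finite and degree-wise nilpotent so that the induced filtration on the convolution algebra is well-behaved and the Dolgushev--Rogers theorem applies — yields the desired weak equivalence of $\MC_\bullet$'s. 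This reduces the problem to computing $\MC_\bullet(\hom^\pi((f_\iota)_*M_K^\vee,\g))$ for the strictly cocommutative model $M_K^\vee$.

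Then I would identify $\hom^\pi((f_\iota)_*M_K^\vee,\g)$ with $M_K\widehat{\otimes}\g$ (up to suitable suspension) as $\SLoo$-algebras. By \cref{lemma:equality of oo-morphisms} or rather \cref{lemma:naturality of convolution algebras wrt compositions}, $\hom^\pi((f_\iota)_*M_K^\vee,\g) = \hom^{f_\iota^*\pi}(M_K^\vee,\g) = \hom^\iota(M_K^\vee,\g)$, since $f_\iota^*\pi$ is the twisting morphism $\iota:\com^\vee\to\SLoo$. Now \cref{lemma:isomHomTensor}, applied under the locally finite hypothesis on $\g$, gives an isomorphism $\hom^\iota(M_K^\vee,\g)\cong \g\widehat{\otimes}M_K^{\vee\vee}\cong \g\widehat{\otimes}M_K$ (here $M_K$ of finite type so the double dual is $M_K$ again), which is the algebra $A\widehat{\otimes}\g$ of \cref{thm:Berglund} with $A=M_K$. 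Applying \cref{thm:Berglund} to the commutative model $M_K$ and the $\SLoo$-model $(\g,\F_\bullet\g)$ then gives $\map_*(K,L_\mathbb{Q})\simeq\MC_\bullet(M_K\widehat{\otimes}\g)\simeq\MC_\bullet(\hom^\pi((f_\iota)_*M_K^\vee,\g))\simeq\MC_\bullet(\hom^\pi((f_\iota)_*C,\g))$, combining the previous two steps.

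The main obstacle I anticipate is the bookkeeping around filtrations and the hypotheses of \cref{cor:what happens to MC.(conv. alg.) under filtered oo-qi}: one must make sure that the $\pi$-weak equivalence connecting $C$ and $M_K^\vee$ can be chosen compatibly with cofiltrations (e.g. the coradical filtration), so that the induced map on convolution algebras is a \emph{filtered} $\infty$-quasi-isomorphism and the Dolgushev--Rogers theorem is genuinely applicable; the degree-wise nilpotence and local finiteness of $\g$ are exactly what make this work, as in the proof of \cref{prop:condition ok}, but verifying the compatibility carefully — and handling the finite-type and boundedness conditions needed for \cref{lemma:isomHomTensor} and \cref{prop:MC with deg-wise nilpotent filtration} — is where the real care is required. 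A secondary subtlety is tracking the suspension: the identification $\lambda(X)$ versus its shifted version and the passage between $\com^\vee$ and $\susp^c\otimes\com^\vee$ in $\iota$, which must be done consistently with the conventions of \cref{sect:shifted Loo-alg}.
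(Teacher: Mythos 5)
Your core identification is the right one — $\hom^\pi((f_\iota)_*C,\g)=\hom^\iota(C,\g)\cong\g\widehat{\otimes}C^\vee$ via \cref{lemma:naturality of convolution algebras wrt compositions} and \cref{lemma:isomHomTensor} (using that $\g$ is locally finite), followed by Berglund's theorem — but the paper's proof is exactly this and nothing more. Since $C$ in this lemma is already a \emph{strict} cocommutative model, one simply dualizes it: part (\ref{2}) of \cref{thm:dualization of (co)commutative models} says $C^\vee$ is a commutative model for $K$ with \emph{no} finiteness assumption on $C$, so \cref{thm:Berglund} applies directly with $A=C^\vee$, and the three displayed isomorphisms finish the proof. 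Your detour — replacing $C$ by $M_K^\vee$ via a zig-zag of weak equivalences and invoking \cref{thm: alpha-we are oo-alpha-qi}, \cref{prop:oo-qi of (co)algebras induce oo-qi between convolution homotopy algebras} and \cref{cor:what happens to MC.(conv. alg.) under filtered oo-qi} — is the strategy the paper reserves for the \emph{next} results (the invariance proposition and \cref{cor:generalization of Berglund}), where $C$ is only a homotopy cocommutative model and such a comparison is genuinely needed.

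Beyond being longer than necessary, the detour introduces a real issue: your identification $\g\widehat{\otimes}M_K^{\vee\vee}\cong\g\widehat{\otimes}M_K$ needs $M_K$ to be of finite type, and \cref{thm:existence of minimal Sullivan model} only delivers a finite-type minimal model when $K$ itself is of finite $\mathbb{Q}$-type — a hypothesis that is not in the statement of the lemma. The direct route avoids this entirely because \cref{lemma:isomHomTensor} in the locally-finite-$\g$ case imposes no finiteness on the coalgebra side. The filtration bookkeeping you flag is likewise dissolved by the direct argument: the only filtration in play is $\F_\bullet\g$, which is exactly the one used in \cref{thm:Berglund}.
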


\begin{proof}
	By \cref{thm:dualization of (co)commutative models} and \cref{thm:Berglund}, we know that $\g\otimes C^\vee$ is an $\L_\infty$-model for the mapping space. Further, by \cref{lemma:isomHomTensor} we know that
	\[
	\g\widehat{\otimes}C^\vee\cong\hom^\iota(C,\g) = \hom^\pi((f_\iota)_*C,\g)\ ,
	\]
	where the second equality is \cref{lemma:naturality of convolution algebras wrt compositions}.
\end{proof}

\begin{proposition}
	Let $C$ be a homotopy cocommutative coalgebra, and let
	\[
	\Psi:(\h,\F_\bullet\h)\rightsquigarrow(\g,\F_\bullet\g)\ .
	\]
	be a filtered $\infty$-morphism of $\SLoo$-algebras. Then there is a weak equivalence of simplicial sets
	\[
	\MC_\bullet(\hom^\pi(C,\h))\simeq\MC_\bullet(\hom^\pi(C,\g))\ .
	\]
\end{proposition}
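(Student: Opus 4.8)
The plan is to reduce the statement to \cref{cor:what happens to MC.(conv. alg.) under filtered oo-qi}, which is the natural home for assertions of this kind. The first thing I would observe is that the hypothesis ``filtered $\infty$-morphism of $\SLoo$-algebras'' in the proposition as stated is almost certainly meant to be a \emph{filtered $\infty$-quasi-isomorphism}: indeed, a general filtered $\infty$-morphism need not induce a weak equivalence of Maurer--Cartan spaces (that is precisely the content of the Dolgushev--Rogers theorem, which requires the quasi-isomorphism hypothesis). So I would first state the argument under that standing assumption, namely that $\psi_1 : \h \to \g$ is a quasi-isomorphism and $\Psi$ respects the filtrations $\F_\bullet\h$ and $\F_\bullet\g$ levelwise.

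The key steps, in order. First, I would apply \cref{prop:oo-qi of (co)algebras induce oo-qi between convolution homotopy algebras}(2): since $\Psi : \h \rightsquigarrow \g$ is a filtered $\infty$-quasi-isomorphism of $\SLoo$-algebras (here playing the role of ``$\P$-algebras'' with $\P = \SLoo = \Cobar\com^\vee$, so that the twisting morphism in question is the canonical $\iota : \com^\vee \to \SLoo$, or here $\pi$), the induced map
\[
\hom^\pi(1,\Psi) : \hom^\pi(C,\h) \rightsquigarrow \hom^\pi(C,\g)
\]
is a filtered $\infty$-quasi-isomorphism of $\SLoo$-algebras, where the convolution algebras carry the filtrations induced by $\F_\bullet\h$ and $\F_\bullet\g$ respectively. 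One should be slightly careful here that $C$ is a homotopy cocommutative coalgebra, i.e.\ a conilpotent coalgebra over $\Bar\Cobar\com^\vee$, and $\pi$ is the twisting morphism $\Bar\Cobar\com^\vee$... $\to \SLoo$ arising from the previous chapter; but \cref{thm:convolution Loo-algebras} and the functoriality of $\manin$ in \cref{lemma:naturality of convolution algebras wrt compositions} show the convolution construction makes sense in exactly this setting, and \cref{prop:oo-qi of (co)algebras induce oo-qi between convolution homotopy algebras} applies verbatim since it is stated for an arbitrary twisting morphism.

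Second, I would invoke the Dolgushev--Rogers theorem (\cref{thm:Dolgushev-Rogers}) — or more precisely its packaging in \cref{cor:what happens to MC.(conv. alg.) under filtered oo-qi}(2) — to conclude that
\[
\MC_\bullet(\hom^\pi(1,\Psi)) : \MC_\bullet(\hom^\pi(C,\h)) \longrightarrow \MC_\bullet(\hom^\pi(C,\g))
\]
is a weak equivalence of simplicial sets, which is the desired conclusion. I should double-check that the convolution $\SLoo$-algebras involved are complete and proper, which is the hypothesis needed for $\MC_\bullet(-)$ to be defined; this follows because $C$ being conilpotent gives the coradical filtration, and the induced filtration on $\hom^\pi(C,-)$ together with the filtration $\F_\bullet$ on $\h$ (resp.\ $\g$) yields completeness by the argument in \cref{lemma:compatibility filtrations first case} and its stated generalizations. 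The main obstacle, as I see it, is purely bookkeeping: making sure the two filtrations (the one induced from the $\C$-coalgebra side and the one induced from the $\SLoo$-algebra side) are compatible enough that $\hom^\pi(1,\Psi)$ is genuinely \emph{filtered} — this is exactly what the second bullet of \cref{cor:what happens to MC.(conv. alg.) under filtered oo-qi}(2) is set up to handle, so the real work has already been done upstream, and the proof is essentially a one-line citation of \cref{prop:oo-qi of (co)algebras induce oo-qi between convolution homotopy algebras} followed by \cref{cor:what happens to MC.(conv. alg.) under filtered oo-qi}.
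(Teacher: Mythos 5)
Your overall strategy --- feeding the functoriality of convolution algebras into the Dolgushev--Rogers machine --- is the right one, and your observation that the hypothesis should read ``filtered $\infty$-quasi-isomorphism'' is correct. But there is a genuine gap at your first step, and it is precisely the point the paper's proof is organized around. The morphism $\Psi:\h\rightsquigarrow\g$ is an $\infty$-morphism of $\SLoo$-algebras, i.e.\ an $\infty_\iota$-morphism for the twisting morphism $\iota:\com^\vee\to\SLoo$: it is a morphism of $\com^\vee$-coalgebras $\Bar_\iota\h\to\Bar_\iota\g$. The convolution algebra in the statement, however, is taken relative to $\pi:\Bar\Cobar\com^\vee\to\SLoo$, and \cref{prop:oo-qi of (co)algebras induce oo-qi between convolution homotopy algebras}(2) only produces $\hom^\alpha(1,\Psi)$ when $\Psi$ is an $\infty_\alpha$-morphism for the \emph{same} twisting morphism $\alpha$ used to form the convolution algebra. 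An $\infty_\iota$-morphism is not an $\infty_\pi$-morphism --- the two notions live on different bar constructions, $\com^\vee(\h)$ versus $\Bar(\SLoo)(\h)$ --- so the expression $\hom^\pi(1,\Psi)$ in your first step is simply not defined. Writing ``the canonical $\iota:\com^\vee\to\SLoo$, or here $\pi$'' conflates two different twisting morphisms with different source cooperads.

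The paper bridges this gap by rectifying the coalgebra rather than the morphism. One replaces $C$ by $R_{\pi,f_\iota}(C)=(f_\iota)_*\Bar_\iota\Cobar_\pi C$, where $f_\iota:\com^\vee\to\Bar\Cobar\com^\vee$ is the unit of the bar--cobar adjunction, so that $f_\iota^*\pi=\iota$. By \cref{lemma:naturality of convolution algebras wrt compositions} one then has the identification
\[
\hom^\pi(R_{\pi,f_\iota}(C),\h)=\hom^\iota(\Bar_\iota\Cobar_\pi C,\h)\ ,
\]
and on the right-hand side the twisting morphism is $\iota$, so $\hom^\iota(1,\Psi)$ \emph{is} defined and is a filtered $\infty$-quasi-isomorphism by \cref{prop:oo-qi of (co)algebras induce oo-qi between convolution homotopy algebras}. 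Finally, the natural $\infty_\pi$-quasi-isomorphism $E_C:R_{\pi,f_\iota}(C)\rightsquigarrow C$ of \cref{prop:rectification is oo-qi to original} induces filtered $\infty$-quasi-isomorphisms $\hom^\pi(E_C,1)$ comparing the rectified convolution algebras with the original ones; applying $\MC_\bullet$ to the resulting zig-zag and invoking \cref{cor:what happens to MC.(conv. alg.) under filtered oo-qi} gives the claimed weak equivalence. So the Dolgushev--Rogers input you identified is indeed the engine, but the rectification detour is not optional bookkeeping: without it the map you want to apply the theorem to does not exist.
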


\begin{proof}
	This looks very similar to \cref{cor:what happens to MC.(conv. alg.) under filtered oo-qi} --- and indeed, that result is a fundamental ingredient of the proof --- but notice that here we have $\infty$-morphisms of $\SLoo$-algebras, i.e. $\infty_\iota$-morphisms, instead of $\infty_\pi$-morphisms.
	
	\medskip
	
	The proof is schematized by the following diagram.
	\begin{center}
		\begin{tikzpicture}
		\node (a) at (0,4){$\hom^\pi(C,\h)$};
		\node (b) at (0,2){$\hom^\pi(R_{\pi,f_\iota}(C),\h)$};		\node (c) at (0,0){$\hom^\iota(\Bar_\iota\Cobar_\pi C,\h)$};
		\node (d) at (6,0){$\hom^\iota(\Bar_\iota\Cobar_\pi C,\g)$};
		\node (e) at (6,2){$\hom^\pi(R_{\pi,f_\iota}(C),\g)$};
		\node (f) at (6,4){$\hom^\pi(C,\g)$};
		
		\draw[->,line join=round,decorate,decoration={zigzag,segment length=4,amplitude=.9,post=lineto,post length=2pt}] (b) -- node[left]{$\hom^\pi(E_D,1)$} (a);
		\draw[double equal sign distance] (b) -- (c);
		\draw[->,line join=round,decorate,decoration={zigzag,segment length=4,amplitude=.9,post=lineto,post length=2pt}] (c) -- node[above]{$\hom^\iota(1,\Psi)$} (d);
		\draw[double equal sign distance] (d) -- (e);
		\draw[->,line join=round,decorate,decoration={zigzag,segment length=4,amplitude=.9,post=lineto,post length=2pt}] (e) -- node[left]{$\hom^\pi(E_D,1)$} (f);
		\end{tikzpicture}
	\end{center}
	The vertical equalities are given by \cref{lemma:naturality of convolution algebras wrt compositions} and the definition of the rectification $R_{\pi,f_\iota}$. We apply the functor $\MC_\bullet$ on the whole diagram, and all the squiggly arrows become homotopy equivalences of simplicial sets by \cref{cor:what happens to MC.(conv. alg.) under filtered oo-qi} and \cref{prop:rectification is oo-qi to original}. The result follows.
\end{proof}

Our generalization of \cref{thm:Berglund} is a direct consequence of this result.

\begin{theorem}\label{cor:generalization of Berglund}
	Let $K$ be a simply-connected simplicial set, let $L$ be a nilpotent space (e.g. a simply-connected space) of finite $\mathbb{Q}$-type and $L_\mathbb{Q}$ the $\mathbb{Q}$-localization of $L$. Let $C$  be a homotopy cocommutative model for $K$, let $(\g,\F_\bullet\g)$ be a degree-wise nilpotent, locally finite $\SLoo$-model of finite type for $L$, and let $(\h,\F_\bullet\h)$ be an $\SLoo$-algebra such that there exists a filtered $\infty$-quasi-isomorphism
	\[
	\Psi:(\h,\F_\bullet\h)\rightsquigarrow(\g,\F_\bullet\g)\ .
	\]
	There is a homotopy equivalence of simplicial sets
	\[
	\map(K,L_\mathbb{Q})\simeq\MC_\bullet(\hom^\pi(C,\h)) ,
	\]
	i.e. the convolution $\SLoo$-algebra $\hom^\pi(C,\h)$ is an $\SLoo$-model for the mapping space.
\end{theorem}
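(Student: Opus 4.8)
The plan is to chain together the results already assembled in this chapter. By hypothesis, $C$ is a homotopy cocommutative model for $K$, $(\g,\F_\bullet\g)$ is a degree-wise nilpotent, locally finite $\SLoo$-model of finite type for $L$, and there is a filtered $\infty$-quasi-isomorphism $\Psi\colon(\h,\F_\bullet\h)\rightsquigarrow(\g,\F_\bullet\g)$. The first step is to compare $C$ with a \emph{strict} cocommutative model. Since $C$ is a homotopy cocommutative model, there is a $\pi$-weak equivalence from $(f_\iota)_*s\Bar_\kappa\lambda(K)$ to $C$, and by \cref{prop:alpha-we of coalgebras is zig-zag of we} this produces a zig-zag of weak equivalences of conilpotent $\Bar\Cobar\com^\vee$-coalgebras connecting $C$ to the strict model $(f_\iota)_*s\Bar_\kappa\lambda(K)$. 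Applying the functor $\MC_\bullet(\hom^\pi(-,\h))$ along this zig-zag, and invoking \cref{cor:what happens to MC.(conv. alg.) under filtered oo-qi} (with the filtrations coming from the coradical filtration on the coalgebra side, which makes every such weak equivalence into a cofiltered $\infty_\pi$-quasi-isomorphism of the convolution $\SLoo$-algebras), we obtain a chain of homotopy equivalences
\[
\MC_\bullet(\hom^\pi(C,\h))\simeq\MC_\bullet(\hom^\pi((f_\iota)_*C_0,\h))\ ,
\]
where $C_0$ denotes a strict cocommutative model for $K$ obtained this way.

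The second step is to eliminate $\h$ in favour of $\g$. Since $\Psi\colon(\h,\F_\bullet\h)\rightsquigarrow(\g,\F_\bullet\g)$ is a filtered $\infty$-quasi-isomorphism of $\SLoo$-algebras, i.e.\ an $\infty_\iota$-quasi-isomorphism, the previous proposition (the one immediately preceding this statement, whose proof goes through the rectification functor $R_{\pi,f_\iota}$, the natural $\infty_\pi$-quasi-isomorphism $E_D$ of \cref{prop:rectification is oo-qi to original}, and \cref{cor:what happens to MC.(conv. alg.) under filtered oo-qi}) yields a weak equivalence
\[
\MC_\bullet(\hom^\pi((f_\iota)_*C_0,\h))\simeq\MC_\bullet(\hom^\pi((f_\iota)_*C_0,\g))\ .
\]
Here one should be slightly careful that the filtered $\infty$-quasi-isomorphism hypothesis on $\Psi$ is what is needed to apply the filtered version of \cref{cor:what happens to MC.(conv. alg.) under filtered oo-qi}; this is exactly the role played by the assumption that $\Psi$ is \emph{filtered}.

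The third step is to identify the right-hand side with the Berglund model. By \cref{lemma:naturality of convolution algebras wrt compositions}(\ref{pt:1 naturality convolution}), $\hom^\pi((f_\iota)_*C_0,\g)=\hom^\iota(C_0,\g)$, and by \cref{lemma:isomHomTensor}, since $(\g,\F_\bullet\g)$ is locally finite, $\hom^\iota(C_0,\g)\cong\g\widehat{\otimes}C_0^\vee$ as $\SLoo$-algebras. Now \cref{thm:dualization of (co)commutative models} tells us that $C_0^\vee$ is a commutative model for $K$ — here we use that $C_0$ is a cocommutative model, with no finiteness assumption needed on $C_0$ itself — so by \cref{thm:Berglund} (applied to the commutative model $C_0^\vee$ and the degree-wise nilpotent, locally finite $\SLoo$-model $\g$ of finite type for $L$) we get
\[
\map(K,L_\mathbb{Q})\simeq\MC_\bullet(C_0^\vee\widehat{\otimes}\g)\simeq\MC_\bullet(\hom^\pi((f_\iota)_*C_0,\g))\ .
\]
Concatenating the homotopy equivalences of the three steps yields $\map(K,L_\mathbb{Q})\simeq\MC_\bullet(\hom^\pi(C,\h))$, as claimed.

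The main obstacle, and the place requiring the most care, is the bookkeeping of filtrations throughout. The convolution $\SLoo$-algebras $\hom^\pi(C,\h)$ must be given a proper complete filtration for the Dolgushev--Rogers machinery behind \cref{cor:what happens to MC.(conv. alg.) under filtered oo-qi} to apply, and there are two natural choices: the one induced by the coradical filtration on $C$, and the one induced by $\F_\bullet\h$. Step one wants the coalgebra-induced filtration (so that weak equivalences of coalgebras become cofiltered $\infty_\pi$-quasi-isomorphisms), whereas step two, involving the filtered $\infty$-quasi-isomorphism $\Psi$ of $\SLoo$-algebras, wants the algebra-induced filtration. One must check that these two filtrations yield the same simplicial set up to homotopy — which is reasonable because $(\g,\F_\bullet\g)$ and $(\h,\F_\bullet\h)$ are degree-wise nilpotent and of finite type, so that $\MC_\bullet$ is, by \cref{prop:MC with deg-wise nilpotent filtration} and \cref{prop:MC with deg-wise nilpotent filtration}-type arguments, essentially insensitive to the choice of admissible filtration — or else one threads a single compatible filtration through all the comparisons. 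Once this is pinned down, the rest of the argument is a formal concatenation of results already proved.
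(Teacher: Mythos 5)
Your overall architecture is the paper's: reduce to the strict cocommutative model $(f_\iota)_*s\Bar_\kappa\lambda(K)$ on the coalgebra side, use the proposition immediately preceding the statement to replace $\h$ by $\g$, and identify the result with Berglund's model via \cref{lemma:isomHomTensor} and \cref{thm:Berglund}. The genuine problem is in your first step. You filter the convolution algebras by the coradical filtration of the coalgebras and assert that every weak equivalence in your zig-zag ``becomes a cofiltered $\infty_\pi$-quasi-isomorphism''. That is not justified: by \cref{thm:characterization of we in the Vallette model structure} the weak equivalences of conilpotent coalgebras are only the closure of the cofiltered quasi-isomorphisms under the $2$-out-of-$3$ property, so an individual weak equivalence need not be a cofiltered quasi-isomorphism for the coradical filtration (or for any single preferred cofiltration), and hypothesis (a) of \cref{cor:what happens to MC.(conv. alg.) under filtered oo-qi} is therefore not verified. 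As written, step one does not go through.

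The fix is to drop the coalgebra-side filtration altogether. By definition of a homotopy cocommutative model there is a single $\pi$-weak equivalence $(f_\iota)_*s\Bar_\kappa\lambda(K)\rightsquigarrow C$, which is an $\infty_\pi$-quasi-isomorphism by \cref{thm: alpha-we are oo-alpha-qi}; endow all convolution algebras with the filtration induced by $\F_\bullet\h$ (respectively $\F_\bullet\g$) and apply hypothesis (b) of \cref{cor:what happens to MC.(conv. alg.) under filtered oo-qi}, noting that by \cref{prop:oo-qi of (co)algebras induce oo-qi between convolution homotopy algebras} the induced $\infty$-morphism of convolution algebras is automatically filtered for this filtration since it only depends on the algebra side. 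This is also the filtration used implicitly in the proof of the proposition you invoke in step two and in \cref{thm:Berglund}, where the completed tensor product is taken with respect to $\F_\bullet\g$, so the mismatch you worry about in your final paragraph simply does not arise. Your proposed repair via \cref{prop:MC with deg-wise nilpotent filtration} would not work in any case: that statement concerns the algebra $\g$ itself being degree-wise nilpotent with degrees bounded below, whereas the convolution algebra $\hom^\pi(C,\h)$ is in general neither. With this single correction the remainder of your argument is a valid concatenation of the results you cite.
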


An example of an application of this theorem is an alternative proof of \cite[Thm. 3.2]{bg16}, see \cite[Cor. 11.1]{w16}.
	
	\chapter{A model structure for the Goldman--Millson theorem}\label{ch:model structure for GM}

The Goldman--Millson and the Dolgushev--Rogers theorem, \cref{thm:Goldman-Millson,thm:Dolgushev-Rogers}, have a distinct homotopical flavor. After all, they state that some class of morphisms, namely filtered ($\infty$-)quasi-isomorphisms, is sent to weak equivalences of simplicial sets under the Maurer--Cartan space functor. However, the proofs of these theorems are not done by purely homotopical methods, but rather by working explicitly with the algebras and performing some induction using the filtrations, as we have seen in \cref{sect:Dolgushev-Rogers thm}. There is a reason behind this: the filtered quasi-isomorphisms are not very well behaved and do not form the class of weak equivalences of any model structure on the category of differential graded Lie algebras that we know of, even after closing them by the 2-out-of-3 property.

\medskip

The goal of this chapter is to provide a fully homotopical and self-contained approach to the proof of the Goldman--Millson theorem and to the Dolgushev--Rogers theorem. The idea is to consider the Vallette model structure on conilpotent Lie coalgebras. Linear dualization is unfortunately not an equivalence of categories with Lie algebras, but we can work around this fact by using some results of \cite{leg16} to prove that the category of conilpotent Lie coalgebras is equivalent to the category of pro-objects in finite dimensional, nilpotent Lie algebras. This way, we obtain a model structure on this last category. An interplay between this model category and the limit functor allows us to see the gauge relation for Maurer--Cartan elements in a Lie algebra as a homotopy relation between certain morphisms representing the Maurer--Cartan elements. A model categorical argument then immediately gives us a version of the Goldman--Millson theorem. Further, using simplicial framings, we extend the argument to prove a version of the Dolgushev--Rogers theorem for strict morphisms. These results are weaker than the original ones, as they only work on morphisms that are obtained via linear dualization from weak equivalences of conilpotent Lie coalgebras, i.e. essentially duals of filtered quasi-isomorphisms of Lie coalgebras.

\medskip

The results presented here are those of \cite{rn18}. They have close links with recent works by other authors, which we will try to explain throughout the text. The model structure we produce on the category of pro-objects in finite dimensional, nilpotent Lie algebras starting from the Vallette model structure on conilpotent Lie coalgebras is the same as the model structure described directly on this category of pro-objects by A. Lazarev and M. Markl in \cite{lm13}. The idea of a homotopical approach to the proof of the Goldman--Millson and the Dolgushev--Rogers theorems is already present in the work of U. Buijs, Y. F\'elix, A. Murillo, and D. Tanr\'e --- more specifically in \cite{bfmt16bis}, where they obtain a result which is strictly stronger than our \cref{thm:homotopical GM,thm:homotopical DR}. However, they need to use a version of the Dolgushev--Rogers theorem in their proof, while our approach is more self-contained, if not as powerful. Moreover, the techniques we present in \cref{sect:DRThm} can be applied to the results of Buijs--F\'elix--Murillo--Tanr\'e to give a modest generalization.

\medskip

In this chapter, we will work over cochain complexes instead of chain complexes, as we did in \cref{ch:representing the deformation oo-groupoid}.

\section{Some notions of category theory} \label{sect:ModelCat}

In this section, we give a reminder of the basic categorical notions we will need later, such as some basic results on equivalences of categories, and ind- and pro-objects in a category (i.e. categories formal colimits and limits).

\subsection{Equivalences of categories}

Given two categories $\cat$ and $\mathsf{D}$, an \emph{isomorphism of categories}\index{Isomorphism of categories} between them is a functor $F:\cat\to\mathsf{D}$ such that there exists another functor $F^{-1}:\mathsf{D}\to\cat$ satisfying $F^{-1}F = 1_\cat$ and $FF^{-1} = 1_\mathsf{D}$. This notion is far too strict to be really useful. A more sensible notion to compare categories is the following one.

\begin{definition}
	An \emph{equivalence of categories}\index{Equivalence of categories} between $\cat$ and $\mathsf{D}$ is a functor $F:\cat\to\mathsf{D}$ such that there exists a functor $G:\mathsf{D}\to\cat$ and two natural isomorphisms $GF\cong1_\cat$ and $FG\cong1_\mathsf{D}$. An \emph{anti-equivalence of categories}\index{Anti-equivalence of categories} between $\cat$ and $\mathsf{D}$ is an equivalence of categories between $\cat^{op}$ and $\mathsf{D}$.
\end{definition}

There is another notion of equivalence of categories which is at first sight stronger than the previous one.

\begin{definition}
	An \emph{adjoint equivalence of categories}\index{Adjoint equivalence of categories} between $\cat$ and $\mathsf{D}$ is an adjoint pair
	\[
	\adjunction{F}{\cat}{\mathsf{D}}{G}
	\]
	such that the unit and counit maps are natural isomorphisms.
\end{definition}

In fact, the two notions of equivalence of categories are the same.

\begin{theorem} \label{thm:equivOfCats}
	Let $F:\cat\to\mathsf{D}$ be a functor. The following statements are equivalent.
	\begin{enumerate}
		\item The functor $F$ is an equivalence of categories.
		\item The functor $F$ is part of an adjoint equivalence of categories.
		\item The functor $F$ is fully faithful and essentially surjective, i.e. for every object $d\in\mathsf{D}$ there exists $c\in\cat$ such that $d\cong F(c)$.
	\end{enumerate}
\end{theorem}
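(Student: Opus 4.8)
This is a classical result from category theory (it appears as Theorem 1.5.9 in Mac Lane, for instance), so the plan is to reproduce the standard proof, emphasizing the construction of the adjunction from an arbitrary equivalence since that is the only part requiring genuine care.

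First I would prove the easy implications. The implication $(2)\implies(1)$ is immediate: an adjoint equivalence is in particular an equivalence of categories, since the unit and counit being natural isomorphisms furnishes exactly the required natural isomorphisms $GF\cong 1_\cat$ and $FG\cong 1_\mathsf{D}$. For $(1)\implies(3)$, suppose $F$ is an equivalence with quasi-inverse $G$ and natural isomorphisms $\eta\colon 1_\cat\Rightarrow GF$ and $\varepsilon\colon FG\Rightarrow 1_\mathsf{D}$. Essential surjectivity is clear: for $d\in\mathsf{D}$ we have $d\cong FG(d) = F(G(d))$ via $\varepsilon_d$. For fullness and faithfulness, given $c,c'\in\cat$, the composite $\cat(c,c')\xrightarrow{F}\mathsf{D}(Fc,Fc')\xrightarrow{G}\cat(GFc,GFc')$ is, by naturality of $\eta$, conjugation by the isomorphisms $\eta_c,\eta_{c'}$, hence a bijection; therefore $F$ on hom-sets is injective. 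By the symmetric argument applied to $G$ (using $\varepsilon$), $G$ on hom-sets is also injective, and combined with the fact that $G\circ F$ on hom-sets is a bijection, $F$ on hom-sets is also surjective. Thus $F$ is fully faithful.

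The substantive step is $(3)\implies(2)$. Assume $F$ is fully faithful and essentially surjective. For each $d\in\mathsf{D}$ choose, using essential surjectivity, an object $G_0(d)\in\cat$ together with an isomorphism $\varepsilon_d\colon F(G_0(d))\xrightarrow{\cong} d$ (this uses a choice, i.e. the axiom of choice for the class of objects of $\mathsf{D}$; I would simply invoke it). To make $G_0$ into a functor, given $g\colon d\to d'$ in $\mathsf{D}$, the composite $\varepsilon_{d'}^{-1}\circ g\circ\varepsilon_d\colon F(G_0 d)\to F(G_0 d')$ is a morphism between objects in the image of $F$, so by fullness and faithfulness there is a \emph{unique} morphism $G(g)\colon G_0 d\to G_0 d'$ with $F(G(g)) = \varepsilon_{d'}^{-1}\circ g\circ\varepsilon_d$. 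Uniqueness makes $G$ functorial and makes $\varepsilon\colon FG\Rightarrow 1_\mathsf{D}$ a natural isomorphism essentially by construction. It remains to build the unit. For $c\in\cat$, the morphism $\varepsilon_{Fc}\colon FGFc\to Fc$ is an isomorphism between objects in the image of $F$, so by full faithfulness there is a unique $\eta_c\colon c\to GFc$ with $F(\eta_c) = \varepsilon_{Fc}^{-1}$; since $\varepsilon_{Fc}^{-1}$ is an isomorphism and $F$ is fully faithful, $\eta_c$ is an isomorphism. Naturality of $\eta$ follows from naturality of $\varepsilon$ and faithfulness of $F$. Finally I would verify the two triangle identities $\varepsilon_{F(-)}\circ F(\eta_{(-)}) = 1_F$ and $G(\varepsilon_{(-)})\circ\eta_{G(-)} = 1_G$: the first holds by the very definition $F(\eta_c)=\varepsilon_{Fc}^{-1}$, and the second can be checked after applying the faithful functor $F$, where it reduces to a manipulation of the already-established relations among $\varepsilon$ and $F\eta$. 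This exhibits $(F,G,\eta,\varepsilon)$ as an adjunction whose unit and counit are isomorphisms, i.e. an adjoint equivalence.

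The main obstacle — really the only delicate point — is the bookkeeping in $(3)\implies(2)$: one must be careful that every morphism of $G$ is \emph{defined} via the full-faithfulness bijection so that functoriality, naturality, and the triangle identities all follow formally from uniqueness, rather than requiring separate verification. I would organize the write-up so that this "everything is forced by uniqueness" principle is stated once and then invoked repeatedly. I would also remark in passing that the proof uses a choice of objects and isomorphisms, which is unavoidable, and note that this is precisely why constructing the inverse functor $G$ explicitly is harder than merely knowing it exists.
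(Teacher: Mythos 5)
Your proposal is correct; note that the paper does not actually prove this theorem but simply states it and refers the reader to Mac Lane's book, and your argument is precisely the standard one found there (the easy implications plus the construction of the quasi-inverse via choice and full faithfulness, with functoriality, naturality, and the triangle identities forced by the uniqueness clause of full faithfulness). Nothing further is needed.
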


For details on these notions, see for example the book \cite[pp. 92--95]{MacLane}.

\subsection{Categories of ind-objects and categories of pro-objects}

Let $\cat$ be a category. One can then consider the category of ``formal colimits'' in $\cat$ (e.g. \cite[Sect. 8]{SGA4.1}).

\begin{definition}
	A (non-empty) small category $\cat$ is \emph{filtered}\index{Filtered!category} if
	\begin{enumerate}
		\item for every two objects $x,y\in\cat$, there exists an object $z\in\cat$ and two arrows $z\to x$ and $z\to y$, and
		\item for every two arrows $f,g:b\to a$ in $\cat$, there exists an arrow $h:c\to b$ such that $fh=gh$.
	\end{enumerate}
	The category $\cat$ is \emph{cofiltered}\index{Cofiltered!category} if it satisfies the dual properties.
\end{definition}

\begin{definition}
	The category $\ind{\cat}$ of \emph{ind-objects}\index{Ind-objects} in $\cat$ is the category that has as objects all the diagrams $F:\mathcal{D}\to\cat$ with $\mathcal{D}$ a small filtered category. If $F:\mathcal{D}\to\cat$ and $G:\mathcal{E}\to\cat$ are two objects in $\ind{\cat}$, the set of morphisms between them is
	\[
	\hom_{\ind{\cat}}(F,G)\coloneqq\lim_{d\in\mathcal{D}}\colim_{e\in\mathcal{E}}\hom_\cat(F(d),G(e))\ ,
	\]
	where the limit and the colimit are taken in $\sets$.
\end{definition}

Dually, one can also consider the category of ``formal limits'' in $\cat$ (e.g. \cite[Sect. A.2]{gro60} and \cite[Sect. 8]{SGA4.1}).

\begin{definition}
	The category $\pro{\cat}$ of \emph{pro-objects}\index{Pro-objects} in $\cat$ is the category that has as objects all the diagrams $F:\mathcal{D}\to\cat$ with $\mathcal{D}$ a small cofiltered category. If $F:\mathcal{D}\to\cat$ and ,$G:\mathcal{E}\to\cat$ are two objects in $\pro{\cat}$, the set of morphisms between them is
	\[
	\hom_{\pro{\cat}}(F,G)\coloneqq\lim_{e\in\mathcal{E}}\colim_{d\in\mathcal{D}}\hom_\cat(F(d),G(e))\ ,
	\]
	where the limit and the colimit are taken in $\sets$.
\end{definition}

\begin{lemma} \label{lemma:antiEquivIndPro}
	Let $\cat'$ and $\cat''$ be two equivalent or anti-equivalent categories. Then $\ind{\cat'}$ is equivalent to $\ind{\cat''}$, respectively anti-equivalent to $\pro{\cat''}$.
\end{lemma}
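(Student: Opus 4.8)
The statement is a formal consequence of two basic facts: first, that the construction of ind-objects and pro-objects is functorial in the underlying category (an equivalence $F\colon\cat'\to\cat''$ induces equivalences $\ind{F}\colon\ind{\cat'}\to\ind{\cat''}$ and $\pro{F}\colon\pro{\cat'}\to\pro{\cat''}$ by post-composition of diagrams); and second, that passing to opposite categories interchanges ind- and pro-objects, i.e. $\ind{\cat}^{\mathrm{op}}\cong\pro{\cat^{\mathrm{op}}}$ (and dually $\pro{\cat}^{\mathrm{op}}\cong\ind{\cat^{\mathrm{op}}}$). Granting these, the lemma follows: if $\cat'$ and $\cat''$ are equivalent, compose to get $\ind{\cat'}\simeq\ind{\cat''}$; if they are anti-equivalent, meaning $\cat'^{\mathrm{op}}\simeq\cat''$, then $\pro{\cat''}\simeq\pro{\cat'^{\mathrm{op}}}\cong\ind{\cat'}^{\mathrm{op}}$, so $\ind{\cat'}$ and $\pro{\cat''}$ are anti-equivalent, as claimed.

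First I would record the functoriality statement. Given a functor $F\colon\cat'\to\cat''$, define $\ind{F}$ on objects by sending a filtered diagram $D\colon\mathcal{D}\to\cat'$ to $F\circ D\colon\mathcal{D}\to\cat''$, and on morphisms using the fact that the hom-sets in $\ind{\cat'}$ are built as $\lim_{d}\colim_{e}\hom_{\cat'}(D(d),D'(e))$, which $F$ maps into the corresponding expression for $\cat''$; this assignment is evidently a functor. The key point is that if $F$ is an equivalence with quasi-inverse $G$ and natural isomorphisms $\eta\colon GF\cong 1_{\cat'}$, $\varepsilon\colon FG\cong 1_{\cat''}$, then $\ind{G}$ is a quasi-inverse to $\ind{F}$: a natural isomorphism $\theta\colon\alpha\Rightarrow\beta$ of diagrams induces a morphism in $\ind{\cat}$ which is an isomorphism (its inverse being induced by $\theta^{-1}$), and $\eta$, $\varepsilon$ are precisely such natural isomorphisms at the level of each diagram, whence they induce $\ind{G}\,\ind{F}\cong 1_{\ind{\cat'}}$ and $\ind{F}\,\ind{G}\cong 1_{\ind{\cat''}}$. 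The same verbatim argument applies to $\pro{(-)}$, since a cofiltered diagram stays cofiltered under post-composition and the hom-formula for pro-objects is the analogous $\lim\colim$ expression.

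Next I would spell out the duality isomorphism $\ind{\cat}^{\mathrm{op}}\cong\pro{\cat^{\mathrm{op}}}$. A small category $\mathcal{D}$ is filtered if and only if $\mathcal{D}^{\mathrm{op}}$ is cofiltered, so a filtered diagram $D\colon\mathcal{D}\to\cat$ corresponds to a cofiltered diagram $D^{\mathrm{op}}\colon\mathcal{D}^{\mathrm{op}}\to\cat^{\mathrm{op}}$, giving a bijection on objects. On morphisms, one compares
\[
\hom_{\ind{\cat}}(D,D') = \lim_{d}\colim_{e}\hom_{\cat}(D(d),D'(e))
\]
with
\[
\hom_{\pro{\cat^{\mathrm{op}}}}(D'^{\mathrm{op}},D^{\mathrm{op}}) = \lim_{d}\colim_{e}\hom_{\cat^{\mathrm{op}}}(D'^{\mathrm{op}}(e'),\ldots)\ ;
\]
unwinding the definitions and using $\hom_{\cat^{\mathrm{op}}}(x,y)=\hom_{\cat}(y,x)$ together with the fact that taking opposites turns a limit over $\mathcal{D}$ indexing hom-sets into the same limit, one checks the two hom-sets agree naturally and that composition is respected, so this is an isomorphism of categories $\ind{\cat}^{\mathrm{op}}\cong\pro{\cat^{\mathrm{op}}}$. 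Dually (or by applying this to $\cat^{\mathrm{op}}$) one gets $\pro{\cat}^{\mathrm{op}}\cong\ind{\cat^{\mathrm{op}}}$.

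I do not expect any genuine obstacle here; the result is formal. The only point requiring a little care is bookkeeping in the hom-set comparison of the previous paragraph — making sure the $\lim$ and $\colim$ are indexed correctly after dualizing and that the identification is natural and compatible with composition — but this is a routine diagram-chase rather than a conceptual difficulty. An alternative, slicker route, if one prefers to avoid the explicit hom-set manipulations, is to invoke the universal property: $\ind{\cat}$ is the free cocompletion of $\cat$ under filtered colimits and $\pro{\cat}$ is the free completion under cofiltered limits, so both constructions automatically take equivalences to equivalences and intertwine with $(-)^{\mathrm{op}}$; I would mention this as a remark but carry out the elementary argument as the main proof for self-containedness.
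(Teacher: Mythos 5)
Your proposal is correct; the paper's own proof is just the single line ``This is straightforward,'' and your argument (functoriality of $\ind{(-)}$ and $\pro{(-)}$ under equivalences via post-composition, plus the standard duality $\ind{\cat}^{\mathrm{op}}\cong\pro{\cat^{\mathrm{op}}}$) is precisely the routine verification the paper leaves implicit. Nothing further is needed.
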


\begin{proof}
	This is straightforward.
\end{proof}

We recall the definition of a compact object in a category.

\begin{definition}
	Let $\cat$ be a category that admits filtered colimits. An object $c\in\cat$ is said to be \emph{compact}\index{Compact object} if the functor
	\[
	\hom_\cat(c,-):\cat\longrightarrow\sets
	\]
	preserves filtered colimits.
\end{definition}

\begin{proposition} \label{prop:CEquivToIndC}
	Let $\cat$ be a category and let $\cat'$ be a full subcategory of $\cat$. Further, assume that
	\begin{enumerate}
		\item \label{it:CCocomplete} the category $\cat$ is cocomplete,
		\item \label{it:delta} there exists a functor
		\[
		\delta:\cat\longrightarrow\ind{\cat'}
		\]
		such that the composite $\colim{}\,\delta$ is naturally isomorphic to the identity functor of $\cat$, where $\colim{}$ is (a choice for) the functor
		\[
		\colim{}:\ind{\cat'}\longrightarrow\cat
		\]
		given by taking the colimit in $\cat$, and
		\item \label{it:compactObj} every object in $\cat'$ is compact in $\cat$.
	\end{enumerate}
	Then the functors $\delta$ and $\colim{}$ exhibit an equivalence of categories between $\cat$ and $\ind{\cat'}$.
\end{proposition}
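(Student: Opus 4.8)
The plan is to verify the three conditions of \cref{thm:equivOfCats} for the adjunction exhibited by $\delta$ and $\colim{}$, i.e. to show that $\delta$ and $\colim{}$ form an adjoint pair whose unit and counit are natural isomorphisms. First I would set up the adjunction: I claim that $\colim{}$ is left adjoint to $\delta$. To produce the unit and counit, note that hypothesis (\ref{it:delta}) already gives a natural isomorphism $\colim{}\,\delta\cong 1_\cat$, which will serve as the counit. For the unit $1_{\ind{\cat'}}\to\delta\,\colim{}$, I would argue componentwise: given a filtered diagram $F\colon\mathcal{D}\to\cat'$ with colimit $c\coloneqq\colim{}F$ in $\cat$, I must produce a natural map $F\to\delta(c)$ in $\ind{\cat'}$ and show it is an isomorphism.

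The key computation is the following. Using hypothesis (\ref{it:compactObj}), every object $F(d)\in\cat'$ is compact in $\cat$, so
\[
\hom_\cat(F(d),c)=\hom_\cat\Big(F(d),\colim_{e\in\mathcal{D}}F(e)\Big)\cong\colim_{e\in\mathcal{D}}\hom_\cat(F(d),F(e))=\colim_{e\in\mathcal{D}}\hom_{\cat'}(F(d),F(e)),
\]
the last equality because $\cat'$ is full in $\cat$. More generally, for any filtered diagram $G\colon\mathcal{E}\to\cat'$ one computes, using compactness of each $G(e)$ and fullness of $\cat'$,
\[
\hom_{\ind{\cat'}}(G,\delta(c))=\lim_{e}\colim_{?}\hom_{\cat'}(G(e),-)\cong\lim_e\hom_\cat(G(e),c)\cong\hom_{\cat}\Big(\colim_e G(e),c\Big)=\hom_{\cat}(\colim{}\,G,c),
\]
where the middle identification uses that $\delta(c)$, being the value of $\delta$ on $c=\colim{}F$, is (up to isomorphism in $\ind{\cat'}$) the diagram $F$ itself, together with the compactness computation above; this is precisely the adjunction bijection $\hom_{\ind{\cat'}}(G,\delta(c))\cong\hom_\cat(\colim{}\,G,c)$, natural in $G$ and $c$. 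Taking $G=F$ and $c=\colim{}F$ and tracing the identity of $\colim{}F$ through this bijection produces the unit map $F\to\delta\,\colim{}F$; the same naturality shows it is a bijection on all hom-sets out of objects of $\cat'$, hence an isomorphism in $\ind{\cat'}$ (since $\cat'$-objects, viewed as constant ind-objects, form a generating family and the map becomes invertible after applying any representable functor $\hom_{\ind{\cat'}}(G,-)$ with $G$ filtered in $\cat'$, by a cofinality/filtered-colimit argument).

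Once the adjunction is established with the counit $\colim{}\,\delta\cong 1_\cat$ invertible by (\ref{it:delta}) and the unit $1_{\ind{\cat'}}\cong\delta\,\colim{}$ invertible by the computation above, \cref{thm:equivOfCats} (specifically the equivalence of its conditions (1) and (2)) immediately yields that $\delta$ and $\colim{}$ are an equivalence of categories between $\cat$ and $\ind{\cat'}$. Hypothesis (\ref{it:CCocomplete}) is used throughout to guarantee that $\colim{}\colon\ind{\cat'}\to\cat$ is well defined on all filtered diagrams.

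\textbf{Main obstacle.} The delicate point is the proof that the unit map $F\to\delta\,\colim{}F$ is an isomorphism in $\ind{\cat'}$, rather than merely a map inducing a bijection on hom-sets out of single objects of $\cat'$. This requires knowing that the Yoneda-type family of functors $\{\hom_{\ind{\cat'}}(G,-)\}$, with $G$ ranging over filtered diagrams in $\cat'$, is jointly conservative, and that the hom-set computation above (which crucially invokes compactness of the objects of $\cat'$ inside $\cat$, \emph{not} inside $\ind{\cat'}$) can be promoted from representables $G=F(e)$ to arbitrary filtered $G$ by writing $G$ as a filtered colimit of its components and commuting this colimit past the relevant limits — this commutation step is where filteredness of the indexing categories is essential and must be checked with care.
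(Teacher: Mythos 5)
Your argument is in substance the paper's own: the identification $\hom_{\ind{\cat'}}(G,\delta(c))\cong\hom_\cat(\colim{}\,G,c)$ is exactly the paper's computation $\hom_{\ind{\cat'}}(F,G)\cong\hom_\cat(\colim{}\,F,\colim{}\,G)$ specialized to $G=\delta(c)$, using fullness of $\cat'$, compactness of its objects in $\cat$ (to pull the inner colimit out of the hom), and the universal property of the colimit (to handle the outer limit); packaging this as an adjunction $\colim{}\dashv\delta$ and invoking \cref{thm:equivOfCats} is a cosmetic difference.

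Two corrections to your write-up. First, your justification of the middle identification --- ``uses that $\delta(c)$ is, up to isomorphism in $\ind{\cat'}$, the diagram $F$ itself'' --- is circular as stated, since $\delta\,\colim{}\,F\cong F$ is precisely what you are trying to prove. The step does not need this input: one has
\[
\colim_{x}\hom_{\cat'}(G(e),\delta(c)(x))\cong\colim_x\hom_\cat(G(e),\delta(c)(x))\cong\hom_\cat(G(e),\colim{}\,\delta(c))\cong\hom_\cat(G(e),c)
\]
by fullness, compactness of $G(e)$, and hypothesis (\ref{it:delta}) applied to $c$, with no reference to $F$. Second, the ``main obstacle'' you flag is not an obstacle: the bijection $\hom_{\ind{\cat'}}(G,\delta\,\colim{}\,F)\cong\hom_\cat(\colim{}\,G,\colim{}\,F)\cong\hom_{\ind{\cat'}}(G,F)$ holds for \emph{every} object $G$ of $\ind{\cat'}$ (the outer limit over the indexing category of $G$ is absorbed by the universal property of $\colim{}\,G$, with no delicate interchange of limits and colimits), so the unit is an isomorphism by plain Yoneda; no joint-conservativity or cofinality argument is required. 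Equivalently, the computation shows $\colim{}$ is fully faithful, hence reflects isomorphisms, and the triangle identity together with the invertible counit then forces the unit to be invertible.
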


\begin{proof}
	Assumption (\ref{it:CCocomplete}) guarantees the existence of a colimit functor. The natural isomorphism
	\[
	\colim{}\,\delta\cong\id_\cat
	\]
	is given by assumption (\ref{it:delta}). We are left to prove that there exists a natural isomorphism
	\[
	\delta\,\colim{}\cong\id_{\ind{\cat'}}\ .
	\]
	Let $F:\mathcal{D}\to\cat'$ and $G:\mathcal{E}\to\cat'$ be two objects in $\ind{\cat'}$. Then we have
	\begin{align*}
	\hom_{\ind{\cat'}}(F,G) =&\ \lim_{d\in\mathcal{D}}\colim_{e\in\mathcal{E}}\hom_{\cat'}(F(d),G(e))\\
	\cong&\ \lim_{d\in\mathcal{D}}\colim_{e\in\mathcal{E}}\hom_\cat(F(d),G(e))\\
	\cong&\ \lim_{d\in\mathcal{D}}\hom_\cat(F(d),\colim\,G)\\
	\cong&\ \hom_\cat(\colim{}\,F,\colim{}\,G)\ .
	\end{align*}
	In the second line we used the fact that $\cat'$ is a full subcategory of $\cat$, and in the third one the fact that $F(d)\in\cat'$ is always a compact object by assumption (\ref{it:compactObj}). It follows that for any $F\in\ind{\cat'}$ we have
	\begin{align*}
	\hom_{\ind{\cat'}}(\delta\,\colim{}\,F,F) \cong&\ \hom_\cat(\colim{}\,\delta\,\colim{}\,F,\colim{}\,F)\\
	\cong&\ \hom_\cat(\colim{}\,F,\colim{}\,F)\ ,
	\end{align*}
	where we used the fact that $\colim{}\,\delta\cong\id_\cat$. Then, the identity morphism of $\colim{}\ F$ provides a natural isomorphism $\delta\,\colim{}\cong\id_{\ind{\cat'}}$ as we wanted.
\end{proof}

\begin{example}
	The easiest example of application of \cref{prop:CEquivToIndC} is the following one. Let $\cat\coloneqq\sets$ be the category of (small) sets and let $\cat'\coloneqq\fsets$ be the full subcategory of finite sets. Every set is the colimit of its finite subsets, so taking the diagram of all finite subsets gives a functor
	\[
	\delta:\sets\longrightarrow\ind{\fsets}\ .
	\]
	As $\sets$ is cocomplete and finite sets are compact objects in $\sets$, \cref{prop:CEquivToIndC} tells us that $\sets$ is equivalent to $\ind{\fsets}$.
\end{example}

\begin{example}
	Another classical example is as follows. Let $\cat \coloneqq \vect$ be the category of vector spaces and let $\cat' = \cat'' \coloneqq \fdvect$ be the full subcategory of finite dimensional vector spaces. As every vector space is naturally isomorphic to the colimit of all its finite dimensional subspaces, taking the diagram of all the finite dimensional subspaces of a vector space gives a functor
	\[
	\delta:\vect\longrightarrow\ind{\fdvect}\ .
	\]
	It is well-known that $\vect$ is cocomplete and that the finite dimensional vector spaces are compact objects in $\vect$. Moreover, linear duality gives an anti-equivalence of $\fdvect$ with itself.  Therefore, by \cref{prop:CEquivToIndC} and \cref{lemma:antiEquivIndPro} we have that $\vect$ is anti-equivalent to $\pro{\fdvect}$.
\end{example}

\begin{example} \label{ex:GG}
	The following example is a result of \cite{gg99}. Let $\cat\coloneqq\mathsf{Cog}$ be the category of coassociative counital coalgebras, let $\cat'\coloneqq \mathsf{fCog}$ be the full subcategory of finite dimensional coassociative counital coalgebras, and let $\cat''\coloneqq\mathsf{fAlg}$ be the category of finite dimensional unital associative algebras. One can prove that every coassociative counital coalgebra is the colimit of its finite dimensional sub-coalgebras By the same arguments as in the previous two examples, \cref{prop:CEquivToIndC} gives an equivalence of categories between $\mathsf{Cog}$ and $\ind{\mathsf{fCog}}$. Linear duality induces an anti-equivalence of categories between $\mathsf{fCog}$ and $\mathsf{fAlg}$. Therefore, by \cref{lemma:antiEquivIndPro} we have that $\mathsf{Cog}$ is anti-equivalent to $\pro{\mathsf{fAlg}}$.
\end{example}

\section{A model structure for the Goldman--Millson theorem} \label{sect:GMThm}

In this section, we show that a version of the Goldman--Millson theorem \cite{gm88} can be obtained by a purely homotopical argument in the model category of Lie coalgebras.

\subsection{The model structure on conilpotent Lie coalgebras} \label{subsect:coLieCog}

We are interested in the study of certain dg Lie algebras, namely the ones obtained dualizing conilpotent Lie coalgebras.

\begin{definition}
	A \emph{conilpotent Lie coalgebra}\index{Lie!coalgebras} is a conilpotent coalgebra over the cooperad $\colie\coloneqq\lie^\vee$, the linear dual of the operad $\lie$ encoding Lie algebras.
\end{definition}

Notice that the cooperad $\colie$ is Koszul and that its Koszul dual operad is the operad $\susp\otimes\com$. Both are connected, and we endow them with the canonical weight grading. By what exposed in \cref{subsection:Vallette model structure on coalgebras}, we obtain a model structure on the category of conilpotent Lie coalgebras by pulling back the Hinich model structure on (suspended) commutative algebras along the Koszul twisting morphism
\[
\kappa:\colie\longrightarrow\susp\otimes\com\ .
\]
We fix the canonical weight grading on $\colie$. It is given by
\[
\colie^{(w)}\coloneqq\colie(w+1)\ .
\]
Therefore, we can change our conventions a bit and we define a \emph{filtration} on a conilpotent Lie coalgebra $C$ to be a sequence of sub-chain complexes
\[
0\subseteq F_1C\subseteq F_2C\subseteq F_3C\subseteq\cdots\subseteq C
\]
such that
\begin{enumerate}
	\item the filtration is exhaustive: $\colim_nF_nC\cong C$,
	\item it respects the coproduct, that is
	\[
	\overline{\Delta}_C(F_nC)\subseteq\bigoplus_{\substack{k\ge1\\n_1+\cdots+n_k = n}}\big(\colie(k)\otimes F_{n_1}C\otimes\cdots\otimes F_{n_k}C\big)^{\S_k},
	\]
	and
	\item it is preserved by the differential: $d_C(F_nC)\subseteq F_nC$.
\end{enumerate}
Notice that this is equivalent to the notion of cofiltration of \cref{def:cofiltered coalgebras} by shifting the indices by $1$.

\medskip

Explicitly, the model structure on the category $\mathsf{coLie}$ of conilpotent Lie coalgebras has
\begin{itemize}
	\item the closure of the class of filtered quasi-isomorphisms under the $2$-out-of-$3$ property as weak equivalences,
	\item the monomorphisms as cofibrations, and
	\item the class of morphisms with the right lifting property with respect to trivial cofibrations as fibrations.
\end{itemize}
Every conilpotent Lie coalgebra is cofibrant, and the fibrant objects are the quasi-free conilpotent Lie coalgebras.

\medskip

Dualizing a filtered conilpotent Lie coalgebra we obtain a complete Lie algebra.

\begin{lemma}
	Let $(C,F_\bullet C)$ be a filtered conilpotent Lie coalgebra. Then the filtration on $\g\coloneqq C^\vee$ defined by
	\[
	F_n\g\coloneqq(F_nC)^\perp = \{c^\vee\in C^\vee\mid c^\vee(\F_nC) = 0\}
	\]
	makes $\g$ into a complete Lie algebra. Moreover, the dual of a filtered quasi-isomorphism of conilpotent Lie coalgebras is a filtered quasi-isomorphism of Lie algebras.
\end{lemma}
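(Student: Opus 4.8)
The plan is to deduce everything by dualizing the corresponding structure on the conilpotent Lie coalgebra $C$. First I would record the underlying Lie algebra structure on $\g = C^\vee$: since the operad $\lie$ is finite dimensional in every arity, the cooperad $\colie = \lie^\vee$ has $\colie(n)$ finite dimensional for all $n$, so the decomposition map $\Delta_C\colon C\to\widehat{\colie}(C) = \prod_n(\colie(n)\otimes C^{\otimes n})^{\S_n}$ dualizes to a map \emph{out of a direct sum}, and using the identifications $\big((\colie(n)\otimes C^{\otimes n})^{\S_n}\big)^\vee\cong(\lie(n)\otimes(C^\vee)^{\otimes n})_{\S_n}$ (finite dimensionality of $\colie(n)$ together with the identification of invariants and coinvariants in characteristic $0$, cf. \cref{subsect:invariants and coinvariants}) one obtains a structure map $\lie(C^\vee)\to C^\vee$. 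Dualizing the comonoid axioms for $C$ yields the monoid axioms, so $\g$ is a Lie algebra; this is the standard ``dual of a coalgebra'' construction and I would treat it as routine. The notion of complete filtered Lie algebra I would take from \cref{appendix:filtered stuff}.

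Next I would check the three filtration axioms for $F_n\g\coloneqq(F_nC)^\perp$. Since $F_1C\subseteq F_2C\subseteq\cdots$ is increasing, the orthogonals form a decreasing sequence $\g\supseteq F_1\g\supseteq F_2\g\supseteq\cdots$. Stability under the differential is immediate: if $f\in(F_nC)^\perp$ and $x\in F_nC$ then $(d_\g f)(x) = \pm f(d_Cx) = 0$ because $d_C(F_nC)\subseteq F_nC$. For compatibility with the bracket, note that the bracket $[f,g]$ on $\g$ is dual to the binary part of $\overline{\Delta}_C$; given $f\in F_p\g$, $g\in F_q\g$ and $x\in F_{p+q}C$, the coproduct-compatibility in the definition of a filtration of a conilpotent Lie coalgebra (\cref{subsect:coLieCog}) places the arity-$2$ part of $\overline{\Delta}_C(x)$ inside $\bigoplus_{n_1+n_2=p+q}(\colie(2)\otimes F_{n_1}C\otimes F_{n_2}C)^{\S_2}$; for each such $(n_1,n_2)$ one has $n_1\le p$ or $n_2\le q$, hence $f$ annihilates the first tensor factor or $g$ annihilates the second, so $[f,g](x)=0$. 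Thus $[F_p\g,F_q\g]\subseteq F_{p+q}\g$.

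For completeness I would use that, over a field, the short exact sequence $0\to F_nC\to C\to C/F_nC\to 0$ dualizes to $0\to(C/F_nC)^\vee\to C^\vee\to(F_nC)^\vee\to 0$, which identifies $\g/F_n\g = C^\vee/(F_nC)^\perp\cong(F_nC)^\vee$. Since linear dualization sends colimits to limits and the filtration of $C$ is exhaustive, $\lim_n\g/F_n\g\cong\lim_n(F_nC)^\vee\cong(\colim_nF_nC)^\vee = C^\vee=\g$, and one checks the canonical map $\g\to\lim_n\g/F_n\g$ is precisely this isomorphism; Hausdorffness ($\bigcap_n(F_nC)^\perp=(\bigcup_nF_nC)^\perp=0$) and surjectivity both follow at once from exhaustiveness. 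Hence $(\g,F_\bullet\g)$ is a complete Lie algebra, and the analogous short exact sequences give $F_n\g/F_{n+1}\g\cong(F_{n+1}C/F_nC)^\vee$, identifying the associated graded of $\g$ with the dual of that of $C$ up to a shift of indices.

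Finally, for the ``moreover'': let $\phi\colon C\to D$ be a filtered quasi-isomorphism of conilpotent Lie coalgebras, so $\phi(F_nC)\subseteq F_nD$ and $\phi$ induces quasi-isomorphisms on associated graded pieces. Then $\phi^\vee\colon D^\vee\to C^\vee$ is a morphism of Lie algebras, and $\phi(F_nC)\subseteq F_nD$ gives $\phi^\vee\big((F_nD)^\perp\big)\subseteq(F_nC)^\perp$, i.e. $\phi^\vee$ is filtered. Over a field $\phi^\vee$ is a quasi-isomorphism because $\phi$ is (dualization is exact, $H^\bullet(V^\vee)\cong H_\bullet(V)^\vee$), and via the identification of the associated gradeds above the map induced by $\phi^\vee$ on associated graded is the linear dual of the one induced by $\phi$, hence again a quasi-isomorphism. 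Therefore $\phi^\vee$ is a filtered quasi-isomorphism of Lie algebras. I expect the only genuinely delicate points to be the orthogonal-complement bookkeeping establishing $[F_p\g,F_q\g]\subseteq F_{p+q}\g$ and the compatible identification of the quotients $\g/F_n\g$ and of the associated gradeds (where the conilpotency and the filtration axioms of $C$ are really used); everything else is a formal consequence of the exactness of linear dualization over a field.
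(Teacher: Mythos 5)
The paper states this lemma without proof (it is followed immediately by a remark, with no proof environment), so there is nothing to compare your argument against; the point is simply whether your argument is sound, and it is. The orthogonal-complement bookkeeping for the bracket ($n_1+n_2=p+q$ forces $n_1\le p$ or $n_2\le q$, so one of $f,g$ kills its tensor factor), the identification $\g/F_n\g\cong(F_nC)^\vee$ via the dual short exact sequence, and the completeness argument $\lim_n(F_nC)^\vee\cong(\colim_nF_nC)^\vee$ are all correct and are exactly the content the author evidently regarded as routine.

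Two small points of precision. First, the isomorphism $\big((\colie(n)\otimes C^{\otimes n})^{\S_n}\big)^\vee\cong(\lie(n)\otimes(C^\vee)^{\otimes n})_{\S_n}$ you invoke is false for infinite-dimensional $C$, since $(C^{\otimes n})^\vee\neq(C^\vee)^{\otimes n}$ in general; but the dual-of-a-coalgebra construction only needs the canonical map $(C^\vee)^{\otimes n}\to(C^{\otimes n})^\vee$ in the one direction you actually use, so the Lie algebra structure on $\g$ is well defined regardless. Second, in the ``moreover'' you conclude from quasi-isomorphisms on associated graded pieces that $\phi^\vee$ is a filtered quasi-isomorphism, whereas the paper's convention (\cref{appendix:filtered stuff}) asks that the restriction to each filtration level $(F_nD)^\perp\to(F_nC)^\perp$ be a quasi-isomorphism. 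This does follow, but via one more step you should make explicit: $(F_nC)^\perp\cong(C/F_nC)^\vee$, the quotient $C/F_nC$ carries a \emph{finite} filtration with graded pieces $F_kC/F_{k-1}C$ for $k\le n$, so a finite induction with the five lemma upgrades the graded statement to a quasi-isomorphism $C/F_nC\to D/F_nD$, whose dual is the restriction of $\phi^\vee$ to the $n$th filtration level. With those two clarifications the proof is complete.
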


\begin{remark}\label{rem:not all filtered qi are duals}
	Not all filtered quasi-isomorphisms of Lie algebras can be obtained by linear dualization. For example, consider abelian (co)Lie (co)algebras, i.e. cochain complexes. For simplicity, take $\k=\mathbb{Q}$. Let
	\[
	V\coloneqq\bigoplus_{n\in\mathbb{Z}}\mathbb{Q}
	\]
	be concentrated in degree $0$, seen as an abelian Lie coalgebra. A quasi-isomorphism from $V$ to itself is just an isomorphism, and we have
	\[
	|\operatorname{Aut}(V)|\le|\operatorname{End}(V)|=(\dim V)^{|V|} = |\mathbb{N}|^{|\mathbb{N}|}.
	\]
	At the same time, we have $\dim(V^\vee) = |\mathbb{R}|$. Notice that if we fix a basis of $V^\vee$, then bijections from the basis to itself are automorphisms. Therefore, we have
	\[
	|\operatorname{Aut}(V^\vee)|\ge|\operatorname{Sym}(\mathbb{R})|=|\mathbb{R}|^{|\mathbb{R}|}>|\mathbb{N}|^{|\mathbb{N}|}.
	\]
	It follows that there must exist automorphisms of $V^\vee$ that are not given as the dual of an automorphism of $V$.
\end{remark}

There is another twisting morphism we can consider to construct a model structure on conilpotent Lie coalgebras, namely the canonical twisting morphism
\begin{equation} \label{eq:iota}
	\iota:\colie\cong\susp^c\otimes\com^{\antishriek}\longrightarrow\Omega(\susp^c\otimes\com^{\antishriek})\cong\susp\otimes\C_\infty\ .
\end{equation}

\begin{lemma} \label{lemma:modelStrKappaIota}
	The model structure obtained on conilpotent Lie coalgebras by pulling back the Hinich model structure on suspended $\C_\infty$-algebras along $\Omega_\iota$ is equal to the one obtained by pulling back along $\Omega_\kappa$.
\end{lemma}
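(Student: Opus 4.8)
<br>

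The plan is to apply the general principle recorded in \cref{thm:Vallette model structure}(\ref{pt:Vms2}): two operadic twisting morphisms that differ by postcomposition with a quasi-isomorphism of operads induce the same model structure on conilpotent coalgebras. So the first step is to produce an honest morphism of operads relating the two targets $\susp\otimes\com$ and $\susp\otimes\C_\infty$, and to check it is a quasi-isomorphism making the relevant triangle commute.

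First I would recall that $\C_\infty=\Cobar\com^{\antishriek}$ is the minimal model of $\com$ (see \cref{subsect:homotopy commutative algebras}), and that the projection $p=g_\kappa:\Cobar\com^{\antishriek}\to\com$ of \cref{subsect:Koszul duality} is a quasi-isomorphism of operads since $\com$ is Koszul. Suspending, we get a quasi-isomorphism of operads
\[
1_\susp\otimes p:\susp\otimes\C_\infty\longrightarrow\susp\otimes\com\ .
\]
Next I would check commutativity of the triangle
\[
\kappa=(1_\susp\otimes p)\circ\iota'\ ,
\]
where $\iota'$ is the twisting morphism $\colie\cong\susp^c\otimes\com^{\antishriek}\to\susp\otimes\C_\infty$ from \eqref{eq:iota} (note $\susp^c\otimes\com^{\antishriek}\cong(\susp\otimes\lie)^{\antishriek}\cong\colie$, by the definitions in \cref{subsect:Koszul duality} and the identification $\com^{\antishriek}\cong(\susp^{-1})^c\otimes\lie^\vee$, so the Koszul dual cooperad of the shifted operad $\susp\otimes\lie$ is $\colie$). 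This is the diagram appearing in \cref{thm:universal twisting morphisms} applied to $\kappa$: the twisting morphism $\iota'$ is precisely $f_\kappa$ composed into the cobar construction, and $p=g_\kappa$, so $p\iota' = \kappa$ holds by the very construction of $g_\kappa$ — this is the universality statement of \cref{thm:universal twisting morphisms}, suspended. Concretely, $\iota'$ lands in the weight-$1$ part $s^{-1}\overline{\com^{\antishriek}}$ of generators of $\C_\infty$, $p$ is the identity on weight-$1$ generators and zero on higher weight, and $\kappa$ is by definition the composite through the weight-$1$ part; so the triangle commutes essentially by inspection.

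With the commuting triangle $\kappa = (1_\susp\otimes p)\iota'$ and $1_\susp\otimes p$ a quasi-isomorphism of operads in hand, \cref{thm:Vallette model structure}(\ref{pt:Vms2}) applies verbatim (with $\alpha=\iota'$, $f=1_\susp\otimes p$, $\alpha'=\kappa$): the model structure on $\cCcog$ for $\C=\colie$ induced by $\Omega_\kappa$ coincides with the one induced by $\Omega_{\iota'}=\Omega_\iota$. This is exactly the assertion of \cref{lemma:modelStrKappaIota}, so the proof is complete.

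The only genuinely delicate point is bookkeeping rather than mathematics: one must be careful that the suspension conventions match, i.e. that the Koszul dual cooperad of $\susp\otimes\lie$ really is $\colie=\lie^\vee$ (so that \eqref{eq:iota} and the map $\kappa:\colie\to\susp\otimes\com$ of \cref{subsect:coLieCog} have the asserted sources and targets), and that $\iota$ of \eqref{eq:iota} agrees with the $f_\kappa$-through-cobar description so that the triangle is literally the one from \cref{thm:universal twisting morphisms}. Once these identifications are pinned down, there is nothing left to do; in particular no explicit computation with the $\C_\infty$-structure is needed, since \cref{thm:Vallette model structure}(\ref{pt:Vms2}) does all the work.
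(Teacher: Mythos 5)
Your proof is correct and follows essentially the same route as the paper: the paper's proof simply cites \cite[Prop. 32]{leg16}, which is exactly the statement recorded as \cref{thm:Vallette model structure}(\ref{pt:Vms2}) that you invoke, applied to the factorization $\kappa = g_\kappa\iota$ coming from \cref{thm:universal twisting morphisms} with $g_\kappa$ a quasi-isomorphism since $\com$ is Koszul. Your write-up just makes explicit the bookkeeping on suspensions and the commuting triangle that the paper leaves implicit.
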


\begin{proof}
	This is an immediate corollary of \cite[Prop. 32]{leg16}.
\end{proof}

This fact will be of fundamental importance later on.

\subsection{Dualization of conilpotent Lie coalgebras}

We start by giving the correct framework in which to dualize conilpotent Lie coalgebras.

\medskip

In the article \cite{leg16}, the following result is proven, generalizing what can be found in \cite[Sect. 1]{gg99} for coassociative counital coalgebras. See also \cref{ex:GG}.

\begin{proposition}[{\cite[Lemma 4 and 5 and Prop. 12]{leg16}}]
	Let $\C$ be a cooperad. The category $\Ccog$ of conilpotent $\C$-coalgebras is cocomplete, every conilpotent $\C$-coalgebra is the colimit of the diagram of all its finite-dimensional sub-coalgebras with the relative inclusions, and all finite-dimensional conilpotent $\C$-coalgebras are compact objects in $\Ccog$. In particular, the category $\Ccog$ is presentable.
\end{proposition}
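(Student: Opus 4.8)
The statement to prove is Proposition \cite[Lemma 4 and 5 and Prop. 12]{leg16}: for a cooperad $\C$, the category $\Ccog$ of conilpotent $\C$-coalgebras is cocomplete, every conilpotent $\C$-coalgebra is the filtered colimit of its finite-dimensional sub-coalgebras, and finite-dimensional conilpotent $\C$-coalgebras are compact objects; hence $\Ccog$ is presentable. The strategy I would follow breaks into three essentially independent pieces, and the whole thing is designed so that presentability drops out from a general recognition theorem once the first two pieces are in place.

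First I would establish cocompleteness. Coproducts and filtered colimits pose no difficulty: the forgetful functor $\forget{(-)}:\Ccog\to\chain$ preserves colimits (it is a left adjoint to the cofree conilpotent coalgebra functor $\C$, as recalled just before \cref{prop:products of coalgebras}), so one may compute coproducts and filtered colimits on underlying chain complexes and then observe that the resulting object carries a canonical conilpotent $\C$-coalgebra structure — this is already sketched in the excerpt for coproducts. For coequalizers one has to work slightly harder, since the forgetful functor does not obviously create them; the standard move is to build the coequalizer of $f,g\colon C\rightrightarrows D$ as a suitable quotient of $D$ by the sub-coalgebra generated by the image of $f-g$, checking that the coradical filtration descends so that conilpotency is preserved. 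Having coproducts and coequalizers yields all colimits. This is the most technical of the three pieces but it is routine homological/coalgebraic bookkeeping, and I would not grind through the signs.

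Second I would prove the "local finiteness" statement: any conilpotent $\C$-coalgebra $C$ is the union (i.e. filtered colimit along inclusions) of its finite-dimensional sub-coalgebras. The key input is conilpotency itself: for $x\in C$, the coradical filtration gives $x\in\F^n_\C C$ for some $n$, and \cref{lemma:property of coradical filtration} shows $\Delta_C(\F^n_\C C)$ lands in a sum of tensor powers of lower filtration steps. Iterating the (monadic) decomposition map on $x$ therefore involves only finitely many elements of $C$, and the linear span of all elements appearing in all iterated decompositions of $x$ is a finite-dimensional subspace that one checks is closed under $\Delta_C$ and $d_C$, hence a finite-dimensional sub-coalgebra containing $x$. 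The poset of finite-dimensional sub-coalgebras is filtered (the sub-coalgebra generated by two finite-dimensional sub-coalgebras is again finite-dimensional, using that $\C$ is reduced so arity bounds the relevant decompositions), and $C$ is visibly its colimit.

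Third I would verify compactness of finite-dimensional conilpotent $\C$-coalgebras $F$: that $\hom_{\Ccog}(F,-)$ commutes with filtered colimits. Since filtered colimits in $\Ccog$ are computed on underlying chain complexes and $F$ is finite-dimensional, a morphism $F\to\colim_i D_i$ has image landing in a finite-dimensional subspace, which by the argument just given lies in some finite-dimensional sub-coalgebra of $\colim_i D_i$, and by cofilteredness/finiteness this factors through some $D_{i_0}$; one then checks the factorization is essentially unique up to moving further along the diagram. Finally, presentability: $\Ccog$ is cocomplete (piece one), and pieces two and three exhibit a \emph{set} of compact objects — isomorphism classes of finite-dimensional conilpotent $\C$-coalgebras, which form a set — such that every object is a filtered colimit of them; this is exactly the hypothesis of the recognition theorem for locally presentable categories (e.g. \cite[Thm. 1.11]{ar94}-style statements), so $\Ccog$ is locally presentable. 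The main obstacle I anticipate is piece one, specifically making the construction of coequalizers clean enough that conilpotency and the compatibility of the coradical filtration are manifestly preserved; everything else is a direct consequence of conilpotency plus the reducedness hypothesis on $\C$.
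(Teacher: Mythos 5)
Your outline is correct and is essentially the standard argument; note that the paper itself offers no proof of this statement (it is quoted verbatim from Le Grignou, and cocompleteness is separately recorded in \cref{subsection:(co)algebras over (co)operads} as a citation to \cite{gj94}), and the cited proofs proceed along exactly the lines you describe: colimits via coproducts and coequalizers, local finiteness via the coradical filtration, then compactness and the recognition theorem for locally presentable categories.

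One small correction to the step you flag as the main obstacle, which in fact makes it easier than you anticipate: the coequalizer of $f,g\colon C\rightrightarrows D$ is the quotient of $D$ by the \emph{coideal} $I\coloneqq\mathrm{im}(f-g)$, not by a sub-coalgebra generated by it (a quotient by a sub-coalgebra is not a coalgebra in general). Moreover no generation is needed: the telescoping identity
\[
f^{\otimes n}-g^{\otimes n}=\sum_{i=1}^{n}f^{\otimes(i-1)}\otimes(f-g)\otimes g^{\otimes(n-i)}
\]
applied to $(1_\C\circ f^{\otimes n})\Delta_C^n-(1_\C\circ g^{\otimes n})\Delta_C^n$ shows directly that $\Delta_D(I)$ lies in the span of tensors having at least one factor in $I$, so $I$ is already a coideal, $D/I$ inherits a (conilpotent, since it is a quotient of a conilpotent) $\C$-coalgebra structure, and the universal property is immediate from surjectivity of $D\to D/I$. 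With that adjustment, and with the usual precision in piece two that one takes the \emph{smallest} subspace $V$ with $\Delta_C(x)\in\bigoplus_k\big(\C(k)\otimes V^{\otimes k}\big)^{\S_k}$ (the "elements appearing" in a tensor are not canonically defined), your proof goes through.
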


Denote by $\mathsf{fd\ coLie}$ the full subcategory of $\mathsf{coLie}$ of finite dimensional conilpotent Lie coalgebras. As a direct consequence of these facts and of \cref{prop:CEquivToIndC} in the case where $\cat = \mathsf{coLie}$, we have the following.

\begin{corollary} \label{cor:equivCats}
	The categories $\mathsf{ coLie}$ and $\ind{\mathsf{fd\ coLie}}$ are equivalent. The equivalence is given by the functors
	\[
	\colim{}:\ind{\mathsf{fd\ coLie}}\longrightarrow\mathsf{coLie}
	\]
	given by taking a functorial choice of colimit of diagrams and
	\[
	\delta:\mathsf{coLie}\longrightarrow\ind{\mathsf{fd\ coLie}}
	\]
	given by associating to a conilpotent Lie coalgebra the diagram of all its finite-dimensional sub-coalgebras with the relative inclusions.
\end{corollary}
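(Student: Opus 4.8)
\textbf{Proof plan for \cref{cor:equivCats}.}
The statement to prove is an instance of \cref{prop:CEquivToIndC} with $\cat = \mathsf{coLie}$ and $\cat' = \mathsf{fd\ coLie}$. The plan is therefore to simply verify the three hypotheses of that proposition, using the results of Le Grignou cited just above, and then let the abstract machinery do the rest. So the proof is short, provided I organize the input correctly.

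First I would recall that the category $\mathsf{coLie} = \Ccog$ for $\C = \colie$ is cocomplete --- this is \cite[Lemma 4 and 5]{leg16} (or the general completeness/cocompleteness result for conilpotent coalgebras over a cooperad quoted in \cref{subsection:(co)algebras over (co)operads}), so hypothesis (\ref{it:CCocomplete}) of \cref{prop:CEquivToIndC} holds. This also lets us fix, once and for all, a functorial choice of colimit, which is the functor $\colim{}: \ind{\mathsf{fd\ coLie}} \to \mathsf{coLie}$ appearing in the statement. Second, by \cite[Prop. 12]{leg16} every conilpotent Lie coalgebra $C$ is (naturally isomorphic to) the colimit of the diagram of all its finite-dimensional sub-coalgebras, with the inclusion maps; since $\mathsf{coLie}$ is a presentable category this diagram is filtered, so assigning to $C$ this diagram defines a functor $\delta: \mathsf{coLie} \to \ind{\mathsf{fd\ coLie}}$, and the natural isomorphism $\colim{}\,\delta \cong \id_{\mathsf{coLie}}$ is exactly the statement that $C \cong \colim$ of its finite-dimensional sub-coalgebras. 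This gives hypothesis (\ref{it:delta}). Third, \cite[Lemma 5]{leg16} states that finite-dimensional conilpotent Lie coalgebras are compact objects in $\mathsf{coLie}$, which is hypothesis (\ref{it:compactObj}).

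Having checked all three hypotheses, \cref{prop:CEquivToIndC} applies verbatim and yields that $\delta$ and $\colim{}$ exhibit an equivalence of categories between $\mathsf{coLie}$ and $\ind{\mathsf{fd\ coLie}}$, which is precisely the assertion of \cref{cor:equivCats}, including the explicit description of the two functors. The only mild subtlety --- and the place where I would be slightly careful --- is the functoriality of $\delta$: one has to make sure that a morphism $f: C \to D$ of conilpotent Lie coalgebras carries each finite-dimensional sub-coalgebra of $C$ into a finite-dimensional sub-coalgebra of $D$ (the image $f(C')$ is again finite-dimensional and is a sub-coalgebra, since $f$ is a coalgebra map), so that $\delta(f)$ is a well-defined morphism of ind-objects; this is routine but is the one point that is not literally quoted from \cite{leg16}. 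I do not expect any real obstacle here: the entire content has been front-loaded into \cref{prop:CEquivToIndC} and the cited lemmas, and the proof is essentially a one-line invocation once the dictionary is set up.
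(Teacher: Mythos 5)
Your proposal is correct and follows exactly the paper's route: the corollary is stated there as a direct consequence of \cref{prop:CEquivToIndC} applied to $\cat = \mathsf{coLie}$, with the three hypotheses supplied by the cited results of Le Grignou (cocompleteness, every conilpotent Lie coalgebra being the colimit of its finite-dimensional sub-coalgebras, and compactness of the finite-dimensional ones). Your extra care about the functoriality of $\delta$ on morphisms is a reasonable addition but does not change the argument.
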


It is always true that the linear dual of a conilpotent Lie coalgebra is a Lie algebra, while the converse holds in the finite dimensional case. Denote by $\mathsf{fd\ nil.\ Lie}$ the category of finite dimensional nilpotent Lie algebras. As a direct consequence of this fact and of \cref{lemma:antiEquivIndPro}, we have the following.

\begin{lemma}
	Linear duality between finite dimensional Lie coalgebras and finite dimensional Lie algebras induces an anti-equivalence of categories between $\ind{\mathsf{fd\ conil.\ coLie}}$ and $\pro{\mathsf{fd\ nil.\ Lie}}$.
\end{lemma}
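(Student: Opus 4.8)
The plan is to reduce the statement to the finite-dimensional level and then invoke \cref{lemma:antiEquivIndPro}. Concretely, it suffices to show that the linear duality functor $(-)^\vee$ restricts to an anti-equivalence of categories between $\mathsf{fd\ conil.\ coLie}$ and $\mathsf{fd\ nil.\ Lie}$; once this is established, \cref{lemma:antiEquivIndPro} applied to this anti-equivalence immediately yields that $\ind{\mathsf{fd\ conil.\ coLie}}$ is anti-equivalent to $\pro{\mathsf{fd\ nil.\ Lie}}$, which is the claim.

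First I would check that $(-)^\vee$ actually lands in the correct subcategories. It is standard that the linear dual of a finite-dimensional Lie coalgebra is a Lie algebra, and conversely, with the bracket and the cobracket exchanged by transposition; the only point requiring attention is that conilpotency on the coalgebra side corresponds exactly to nilpotency on the algebra side. For this I would use the coradical filtration $\F^\bullet_\colie C$ of \cref{subsection:(co)algebras over (co)operads}: by definition $\F^n_\colie C$ consists of the elements killed by all sufficiently deep iterated reduced coproducts, and $C$ is conilpotent precisely when this filtration is exhaustive. Transposing, $(\F^n_\colie C)^\perp$ is the $n$-th term of the lower central series of $\g\coloneqq C^\vee$, so conilpotency of $C$ --- which in finite dimensions forces $\F^N_\colie C = C$ for some $N$ --- is equivalent to the lower central series of $\g$ terminating, i.e.\ to $\g$ being nilpotent. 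Running the same argument with the roles of algebras and coalgebras exchanged shows that the dual of a finite-dimensional nilpotent Lie algebra is a finite-dimensional conilpotent Lie coalgebra.

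Next I would make the anti-equivalence precise. For a finite-dimensional vector space $V$ the evaluation map $\mathrm{ev}_V\colon V\to V^{\vee\vee}$ is a natural isomorphism, and a routine verification shows that when $V$ carries a Lie coalgebra (resp.\ Lie algebra) structure, $\mathrm{ev}_V$ is compatible with the (co)bracket. Hence the contravariant functors
\[
(-)^\vee\colon (\mathsf{fd\ conil.\ coLie})^{\mathrm{op}}\longrightarrow\mathsf{fd\ nil.\ Lie},\qquad (-)^\vee\colon \mathsf{fd\ nil.\ Lie}\longrightarrow(\mathsf{fd\ conil.\ coLie})^{\mathrm{op}}
\]
are quasi-inverse to one another, with the double-dual natural isomorphisms as unit and counit. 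By \cref{thm:equivOfCats} this exhibits an adjoint anti-equivalence between $\mathsf{fd\ conil.\ coLie}$ and $\mathsf{fd\ nil.\ Lie}$.

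Finally, \cref{lemma:antiEquivIndPro} applied to this anti-equivalence gives that $\ind{\mathsf{fd\ conil.\ coLie}}$ is anti-equivalent to $\ind{(\mathsf{fd\ nil.\ Lie})^{\mathrm{op}}}=(\pro{\mathsf{fd\ nil.\ Lie}})^{\mathrm{op}}$, i.e.\ an anti-equivalence between $\ind{\mathsf{fd\ conil.\ coLie}}$ and $\pro{\mathsf{fd\ nil.\ Lie}}$. The only step that needs genuine care is the identification of the coradical filtration of $C$ with the annihilators of the lower central series of $C^\vee$; the rest is bookkeeping with transposes and the classical finite-dimensional duality of vector spaces, and no completeness or size issues intervene, those having already been handled in \cref{cor:equivCats}.
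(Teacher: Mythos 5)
Your proof is correct and follows essentially the same route as the paper: establish that linear duality restricts to an anti-equivalence between finite-dimensional conilpotent Lie coalgebras and finite-dimensional nilpotent Lie algebras, then apply \cref{lemma:antiEquivIndPro}. The paper simply asserts the finite-dimensional duality without spelling out the correspondence between conilpotency and nilpotency, so your extra detail on the coradical filtration versus the lower central series is a welcome (and correct) elaboration rather than a deviation.
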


In summary, we have the following commutative diagram, where $\mathsf{pronil.\ Lie}$ is the subcategory of Lie algebras which is the image of linear dualization from conilpotent Lie coalgebras, called the category of \emph{pronilpotent Lie algebras}\index{Pronilpotent Lie algebras}.
\begin{center}
	\begin{tikzpicture}
	\node (a) at (0,0) {$\ind{\mathsf{fd\ coLie}}$};
	\node (b) at (5,0) {$\pro{\mathsf{fd\ nil.\ Lie}}$};
	\node (c) at (0,-2) {$\mathsf{coLie}$};
	\node (d) at (5,-2) {$\mathsf{pronil.\ Lie}$};
	
	\draw[->] (c) -- (d);
	\draw[->] (a) -- node[right]{$\colim{}$} (c);
	\draw[->] (b) -- node[right]{$\lim$} (d);
	\draw[->] (b) -- (a);
	\end{tikzpicture}
\end{center}
where the horizontal arrows are given by linear dualization and the upper horizontal and left vertical arrows are an anti-equivalence of categories and an equivalence of categories respectively. Notice that all pronilpotent Lie algebras are complete: one can always write them as the dual of a conilpotent Lie coalgebras an take the orthogonal of the coradical filtration.

\begin{remark}
	What stated above works in greater generality for conilpotent coalgebras over a cooperad. In a sense, this is the correct framework in which to dualize a conilpotent coalgebra, obtaining not an algebra, but a diagram of finite dimensional algebras.
\end{remark}

We endow the category $\pro{\mathsf{fd\ nil.\ Lie}}$ with the model structure obtained by transporting the Vallette model structure on Lie coalgebras along the two (anti-)equivalences of categories. The two model structures are Quillen equivalent. Notice that fibrations and cofibrations change roles as we pass through an anti-equivalence.

\begin{remark}
	One can check that the model structure thus obtained is exactly the one of \cite[Def. 9.9]{lm13}. The main difference between our approach and theirs is the fact that we work directly with Lie coalgebras, while they only ever work with pronilpotent Lie algebras. We hope that the point of view we used in the present paper will appear more natural to some readers, as it avoids dualizing every coalgebra in order to only work with algebras.
\end{remark}

\subsection{The Goldman--Millson theorem}

The idea is now to use the model structure given above to obtain statements about pronilpotent Lie algebras. By what we have seen in \cref{sect:properties and comparison with Getzler},  we have that the set of Maurer--Cartan elements of a Lie algebra $\g$ is in natural bijection with the set of morphisms
\[
\mc_0\longrightarrow\g
\]
of dg Lie algebras, where $\mc_0$ is the free Lie algebra on a Maurer--Cartan element. Similarly, gauge equivalences between Maurer--Cartan elements are coded by morphisms
\[
\mc_1\longrightarrow\g\ .
\]
Intuitively, $\mc_1$ behaves like a cylinder object for $\mc_0$, so that the gauge relation looks very similar to a left homotopy between Maurer--Cartan elements seen as maps. However, we do not have a good model structure on complete Lie algebras, and that's why we have to lift everything to the category of diagrams.

\medskip

In order to push down the results we will obtain for diagrams to actual algebras, we will need the following technical lemma.

\begin{lemma} \label{lemma:isomOfHoms}
	Let $\g$ be a Lie algebra which is complete with respect to its canonical filtration, and such that $\g^{(n)}\coloneqq\g/F^\mathrm{can}_n\g$ is finite dimensional for all $n\ge2$. Let $\widetilde{\g}\in\pro{\mathsf{fd\ nil.\ Lie}}$ be the diagram
	\[
	\widetilde{\g} = \cdots\longrightarrow \g^{(4)}\longrightarrow \g^{(3)}\longrightarrow \g^{(2)}\longrightarrow 0\ .
	\]
	Let $H:\mathsf{D}\to\mathsf{fd\ nil.\ Lie}$ be any object of $\pro{\mathsf{fd\ nil.\ Lie}}$. Then we have a natural isomorphism
	\[
	\hom_{\pro{\mathsf{fd\ nil.\ Lie}}}(\widetilde{\g},H) \cong \hom_{\mathsf{dgLie}}(\g,\lim H)\ .
	\]
\end{lemma}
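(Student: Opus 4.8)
The plan is to unwind both sides of the claimed isomorphism down to their definitions and check that they literally agree. First I would recall that, by definition of the hom-sets in a category of pro-objects,
\[
\hom_{\pro{\mathsf{fd\ nil.\ Lie}}}(\widetilde{\g},H) = \lim_{d\in\mathsf{D}}\colim_{n\ge2}\hom_{\mathsf{fd\ nil.\ Lie}}(\g^{(n)},H(d))\ ,
\]
since $\widetilde{\g}$ is indexed by the (cofiltered) poset $\{\cdots\to 4\to 3\to 2\}$ and its value at $n$ is $\g^{(n)}$. So it suffices to produce, naturally in each finite-dimensional nilpotent Lie algebra $L$, a bijection
\[
\colim_{n\ge2}\hom_{\mathsf{fd\ nil.\ Lie}}(\g^{(n)},L)\cong\hom_{\mathsf{dgLie}}(\g,L)\ ,
\]
and then pass to the limit over $d\in\mathsf{D}$ (the right-hand side becomes $\lim_d\hom_{\mathsf{dgLie}}(\g,H(d))\cong\hom_{\mathsf{dgLie}}(\g,\lim_d H(d))$ because $\hom_{\mathsf{dgLie}}(\g,-)$ preserves limits, being a representable functor on a complete category).

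The content is therefore the displayed bijection for a fixed finite-dimensional nilpotent $L$. The map from left to right sends a morphism $\g^{(n)}\to L$ to its precomposition with the canonical projection $\g\to\g^{(n)}$; this is well defined on the colimit because the projections are compatible with the tower maps $\g^{(n+1)}\to\g^{(n)}$. For injectivity and surjectivity the key point is nilpotency of $L$: if $L^{\mathrm{nil}}$ has nilpotency class $c$, say $F_k^\mathrm{can}L = 0$ for $k\ge c$ (here I use that $L$ is finite-dimensional nilpotent, so its lower central/canonical filtration terminates), then any Lie algebra map $\phi:\g\to L$ automatically kills $F^\mathrm{can}_c\g$, because $\phi$ is filtered for the canonical filtrations and $F^\mathrm{can}_c L = 0$. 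Hence $\phi$ factors uniquely through $\g^{(c)}$, and thus represents an element of the colimit; conversely two maps $\g^{(n)}\to L$ and $\g^{(m)}\to L$ with the same precomposite into $\g$ agree after pulling back to $\g^{(\max(n,m,c))}$, since $\g\to\g^{(k)}$ is surjective. This gives the bijection, and naturality in $L$ (hence in $d$) is immediate from the construction.

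The only mild subtlety — and the step I'd expect to need the most care — is making sure the identifications are all \emph{natural}, so that taking $\lim_{d\in\mathsf{D}}$ on both sides is legitimate and produces the stated natural isomorphism in $H$; this amounts to checking that the colimit-over-$n$ bijection is a natural transformation of functors $\mathsf{fd\ nil.\ Lie}\to\mathsf{Sets}$ and that it is compatible with the structure maps of the diagram $H$, which is routine once the colimit is identified with "eventually-defined maps". A secondary point to state carefully is that $\lim_d H(d)$, computed in pronilpotent Lie algebras, coincides with the limit computed in $\mathsf{dgLie}$ (the forgetful functor to chain complexes, and from there the limit, is compatible with the pro-object description); this is where one uses that $\g$ is complete with finite-dimensional quotients $\g^{(n)}$, exactly as in the analogous statements of \cref{subsection:relative bar and cobar} and \cref{lemma:free complete P-alg}. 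Everything else is bookkeeping with cofiltered limits and filtered colimits of sets.
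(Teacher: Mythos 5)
Your proof is correct and follows essentially the same route as the paper's: both reduce to the observation that, since each $H(d)$ is nilpotent of some finite degree, every Lie algebra morphism $\g\to H(d)$ kills a term of the canonical filtration and hence factors through some finite-dimensional quotient $\g^{(n)}$, which identifies $\colim_n\hom(\g^{(n)},H(d))$ with $\hom_{\mathsf{dgLie}}(\g,H(d))$ before passing to the limit over $d$. The only cosmetic difference is that the paper phrases the colimit computation via the injectivity of the restriction maps (coming from surjectivity of $\g^{(n+1)}\to\g^{(n)}$) rather than via your direct "eventually-defined maps" description.
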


\begin{proof}
	We have
	\[
	\hom_{\mathsf{dgLie}}(\g,\lim H) = \lim_{d\in\mathsf{D}}\hom_{\mathsf{dgLie}}(\g,H(d))
	\]
	with all $H(d)$ finite dimensional. Fix $d\in\mathsf{D}$, then we have the linear diagram
	\[
	\cdots\longleftarrow\hom_{\mathsf{dgLie}}(\g^{(4)},H(d))\longleftarrow\hom_{\mathsf{dgLie}}(\g^{(3)},H(d))\longleftarrow\hom_{\mathsf{dgLie}}(\g^{(2)},H(d))\longleftarrow0\ .
	\]
	Since all the maps in the diagram $\widetilde{\g}$ were surjective, all the maps in this diagram are injective. Let $N$ be the nilpotency degree of $H(d)$. Then every map $\g\to H(d)$ splits through $\g^{(n)}$ for all $n\ge N$, so that we have
	\[
	\hom_{\mathsf{dgLie}}(\g,H(d))\cong\hom_{\mathsf{dgLie}}(\g^{(n)},H(d))
	\]
	for all $n\ge N$. It follows that
	\[
	\hom_{\mathsf{dgLie}}(\g,H(d))\cong\colim_n\hom_{\mathsf{dgLie}}(\g^{(n)},H(d))\ ,
	\]
	which concludes the proof.
\end{proof}

In particular, this is true for any $\g$ of the form $\g\coloneqq\widehat{\lie}(V)$, for $V$ a finite dimensional graded vector space, with any differential. The examples of main interest to us will be the pronilpotent Lie algebras
\[
\mc_n\coloneqq \widehat{\Omega}_\pi(sC_n^\vee)
\]
introduced in \cref{def:cosimplicial mc for Lie}.

\begin{lemma} \label{lemma:mcnCofibrant}
	Let $n\ge0$. We have
	\[
	\colim\ \widetilde{\mc}_n^\vee = \Bar_\iota(s^{-1}C_n)\ .
	\]
	In particular, $\widetilde{\mc}_n$ is cofibrant.
\end{lemma}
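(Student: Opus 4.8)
The key point is that $\widetilde{\mc}_n$ is, by construction, the pro-object associated (via \cref{lemma:isomOfHoms}) to the complete Lie algebra $\mc_n = \widehat{\Cobar}_\pi(sC_n^\vee)$. Linear dualization turns this pro-object of Lie algebras into an ind-object of Lie coalgebras; since $\mc_n = \widehat{\Cobar}_\pi(sC_n^\vee)$ is a complete free Lie algebra on the finite-dimensional graded vector space $sC_n^\vee$, and its quotients $\mc_n/F_k^{\mathrm{can}}\mc_n$ are the finite-dimensional nilpotent Lie algebras assembling $\widetilde{\mc}_n$, the colimit of the dual diagram is the conilpotent Lie coalgebra obtained by dualizing $\widehat{\Cobar}_\pi(sC_n^\vee)$ level by level. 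The claim is that this colimit is precisely $\Bar_\iota(s^{-1}C_n)$, the bar construction relative to the twisting morphism $\iota:\colie\to\susp\otimes\C_\infty$ of \eqref{eq:iota}, applied to the $\C_\infty$-algebra $s^{-1}C_n$ (the desuspension of the transferred $\C_\infty$-algebra structure on $C_n$).

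First I would set up the duality carefully: since $C_n$ is finite-dimensional in each degree, $sC_n^\vee$ is a finite-dimensional graded vector space, and the completed cobar construction $\widehat{\Cobar}_\pi(sC_n^\vee) = \big(\widehat{\P}(sC_n^\vee), d_1 + d_2\big)$ with $\P = \susp\otimes\lie$ dualizes termwise to $\big(\colie(s^{-1}C_n), \partial\big)$ because dualizing $\P(n)\otimes_{\S_n}(-)^{\otimes n}$ with everything finite-dimensional gives $\colie(n)\otimes_{\S_n}(-)^{\otimes n}$, and the completed product on the algebra side becomes a direct sum on the coalgebra side once we pass to the colimit of finite-dimensional pieces. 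The differential $d_2$ on $\widehat{\Cobar}_\pi$, which is built from $\Delta_{(1)}$ on $sC_n^\vee$ together with $\pi$, dualizes to the coderivation built from the infinitesimal decomposition of $\colie$ together with the transferred $\C_\infty$-structure; this is exactly the formula for $d_2$ in the relative bar construction $\Bar_\iota$ from \cref{subsection:relative bar and cobar}. The identification $\Bar_\pi A^\vee \cong (\Cobar_\pi A)^\vee$ for finite-type $A$ is the standard duality between the relative bar and cobar constructions, and here it needs the completed version on the cobar side — this is the content of \cref{subsect:complete cobar cosntruction}, in particular the natural isomorphism $\Bar_\iota(s\g)^\vee\cong\widehat{\Cobar}_\pi(s^{-1}\g^\vee)$ invoked in the proof of the previous proposition. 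Applying this with $\g$ replaced appropriately and using that the transferred structure on $\widetilde{C}_\bullet$ dualizes to the structure on $C_\bullet^\vee$ (\cref{thm:compatibility htt and convolution algebras on coalgebra side}, or just the finite-dimensional duality) yields $\colim\,\widetilde{\mc}_n^\vee = \Bar_\iota(s^{-1}C_n)$.

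For the "in particular" clause: a bar construction $\Bar_\iota B$ is quasi-free as a conilpotent $\colie$-coalgebra, hence fibrant in the Vallette model structure on $\mathsf{coLie}$ by \cref{thm:Vallette model structure}(\ref{pt:Vms3}) (using that $\iota$ is Koszul, \cref{lemma:universal twisting morphisms are Koszul} in the connected weight graded case, together with \cref{lemma:modelStrKappaIota} which says the $\kappa$- and $\iota$-model structures coincide). Under the anti-equivalence of categories $\mathsf{coLie}^{\mathrm{op}}\simeq\ind{\mathsf{fd\,coLie}}^{\mathrm{op}}\simeq\pro{\mathsf{fd\,nil.\,Lie}}$, fibrant objects correspond to cofibrant objects, so $\widetilde{\mc}_n$ is cofibrant in $\pro{\mathsf{fd\,nil.\,Lie}}$.

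\textbf{Main obstacle.} The delicate part is not the abstract duality but keeping track of the completions and the colimit: one must check that dualizing the pro-object $\widetilde{\mc}_n$ (a cofiltered diagram of finite-dimensional nilpotent Lie algebras, the truncations of $\mc_n$ by the canonical filtration) and then taking the colimit in $\mathsf{coLie}$ genuinely recovers $\Bar_\iota(s^{-1}C_n)$ with its full differential, rather than some completion or subobject of it. This amounts to verifying that the canonical filtration on $\widehat{\Cobar}_\pi(sC_n^\vee)$ dualizes to the coradical (weight) filtration on $\Bar_\iota(s^{-1}C_n)$, so that the associated graded pieces match up and the colimit is exhaustive — which it is, because $\Bar_\iota(s^{-1}C_n) = \colie(s^{-1}C_n)$ as a graded object is a direct sum over arities, each arity being finite-dimensional in each degree since $C_n$ is. I expect this bookkeeping to be routine once the finite-dimensionality of $C_n$ in each simplicial and homological degree is made explicit, but it is where all the care is needed.
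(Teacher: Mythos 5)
Your proposal is correct and follows essentially the same route as the paper: the paper's proof consists precisely of the observation that the dual of the finite-dimensional quotient $\mc_n^{(k)}=\mc_n/F^{\mathrm{can}}_k\mc_n$ is the $k$th space of the coradical filtration of $\Bar_\iota(s^{-1}C_n)$, with the projections dualizing to the inclusions, so that the colimit is exhaustive by conilpotency — exactly the "main obstacle" you identify and resolve. Your treatment of the cofibrancy clause (quasi-free, hence fibrant in the Vallette structure, hence cofibrant under the anti-equivalence) is also the intended argument.
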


\begin{proof}
	The dual of the quotient
	\[
	\mc_n^{(k)} = \mc_n/F^\mathrm{can}_k\mc_n
	\]
	of $\mc_n$ by the $k$th space of the canonical filtration is exactly the $k$th space of the coradical filtration of $\Bar_\iota(s^{-1}C_n)$, and the projections
	\[
	\mc_n^{(k+1)}\longrightarrow\mc_n^{(k)}
	\]
	become the inclusions of the various spaces of the coradical filtration of $\Bar_\iota(s^{-1}C_n)$. From this, the result follows.
\end{proof}

\begin{lemma} \label{lemma:mc1Cylinder}
	The diagram $\widetilde{\mc}_1$ is a cylinder object for $\widetilde{\mc}_0$. After passing to the limit, the splitting of the codiagonal map becomes
	\[
	\mc_0\sqcup\mc_0\stackrel{i}{\longrightarrow}\mc_1\stackrel{t}{\longrightarrow}\mc_0
	\]
	where the two maps are as follows. The first Lie algebra is the free complete Lie algebra generated by two Maurer--Cartan elements $\alpha_0$ and $\alpha_1$, while the second one is the free complete Lie algebra generated by two Maurer--Cartan elements $\beta_1,\beta_2$ and a gauge $\lambda$ from $\beta_1$ to $\beta_2$, and the last algebra $\mc_0$ is the free Lie algebra on a single Maurer--Cartan element $\alpha$. The first map is determined by $i(\alpha_j) = \beta_j$ for $j=1,2$, the second map is determined by $t(\beta_j) = \alpha$ for $j=1,2$ and $t(\lambda) = 0$.
\end{lemma}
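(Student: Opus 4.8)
The plan is to prove the statement on the side of conilpotent Lie coalgebras, where the Vallette model structure actually lives, and then transport it back. By \cref{lemma:mcnCofibrant} the anti-equivalence between $\pro{\mathsf{fd\ nil.\ Lie}}$ and $\mathsf{coLie}$ (\cref{cor:equivCats}, \cref{lemma:antiEquivIndPro}) sends $\widetilde{\mc}_n$ to the conilpotent Lie coalgebra $\Bar_\iota(s^{-1}C_n)$, where $\iota\colon\colie\to\susp\otimes\C_\infty$ is the twisting morphism \eqref{eq:iota}. An anti-equivalence reverses arrows, exchanges cofibrations with fibrations, preserves weak equivalences, and turns coproducts into products; hence it carries cylinder objects to path objects. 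So it is enough to exhibit $\Bar_\iota(s^{-1}C_1)$, together with the map $\Bar_\iota(s^{-1}s_0)\colon\Bar_\iota(s^{-1}C_0)\to\Bar_\iota(s^{-1}C_1)$ coming from the degeneracy $s_0\colon C_0\to C_1$ and the map $\big(\Bar_\iota(s^{-1}\partial_0),\Bar_\iota(s^{-1}\partial_1)\big)\colon\Bar_\iota(s^{-1}C_1)\to\Bar_\iota(s^{-1}C_0)\times\Bar_\iota(s^{-1}C_0)$ coming from the two faces of the simplicial $\C_\infty$-algebra $C_\bullet$ of \cref{subsect:Dupont contraction}, as a path object for $\Bar_\iota(s^{-1}C_0)$ in the Vallette model structure.

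Since $C_0=\k$, the $\C_\infty$-algebra $s^{-1}C_1$ is $(s^{-1}C_0)\otimes C_1$, and $s_0$, $\partial_0$, $\partial_1$ are the inclusion of constants and the two evaluations; so the factorization asked for is exactly the instance $A\coloneqq s^{-1}C_0$ of the Lie-coalgebra version of \cite[Prop. 3.3]{val14} — for any $\susp\otimes\C_\infty$-algebra $A$ the coalgebra $\Bar_\iota(A\otimes C_1)$ is a path object for $\Bar_\iota A$ — whose argument goes through for any Koszul twisting morphism between connected (co)operads and uses only that $\Bar_\iota$, being a right adjoint, preserves products. The weak-equivalence half can also be checked by hand: $s_0\colon C_0\to C_1$ is a quasi-isomorphism (it is the inclusion of the unit into $C_1$, whose cohomology is $\k$ concentrated in degree $0$), hence $\Bar_\iota(s^{-1}s_0)$ is a quasi-isomorphism of conilpotent Lie coalgebras by \cref{prop:bar of qi is qi}, and since $\iota$ is Koszul the counits $\Omega_\iota\Bar_\iota(s^{-1}C_n)\xrightarrow{\sim}s^{-1}C_n$ of \cref{thm:bar-cobar resolution for algebras} make $\Omega_\iota\Bar_\iota(s^{-1}s_0)$ a quasi-isomorphism, i.e.\ $\Bar_\iota(s^{-1}s_0)$ is a weak equivalence. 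The substantive point is the other half — that the evaluation map to the product is a \emph{fibration} in the Vallette model structure; as fibrations there are only characterised by a lifting property, this is where one genuinely leans on the construction behind \cite[Prop. 3.3]{val14} (or, alternatively, on a simplicial-framing argument in the spirit of \cref{sect:framings}), and I expect it to be the main obstacle of the proof.

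Transporting this path object back along the anti-equivalence produces a cylinder object $\widetilde{\mc}_0\sqcup\widetilde{\mc}_0\xrightarrow{i}\widetilde{\mc}_1\xrightarrow{w}\widetilde{\mc}_0$, with $i$ assembled from the two cofaces $d^0,d^1\colon\widetilde{\mc}_0\to\widetilde{\mc}_1$ of the cosimplicial pro-object $\widetilde{\mc}_\bullet$ (dual to $\partial_0,\partial_1$) and $w=\widetilde{\mc}(s^0)$ (dual to $s_0$); the cosimplicial identities $s^0d^0=s^0d^1=\mathrm{id}$ give $w\circ i=\nabla$, so this is indeed a factorization of the fold map. It remains to pass to the limit. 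Since $\widetilde{\mc}_n$ is the pro-object $(\cdots\to\mc_n^{(3)}\to\mc_n^{(2)}\to 0)$ and $\mc_n$ is complete, $\lim\widetilde{\mc}_n=\mc_n$; dually $\lim$ turns the pro-coproduct $\widetilde{\mc}_0\sqcup\widetilde{\mc}_0$ into the completed coproduct $\mc_0\sqcup\mc_0$, the free complete Lie algebra on two Maurer--Cartan generators, and one may invoke \cref{lemma:isomOfHoms} to control the passage between the pro-objects and their limits. Finally \cref{prop:first levels of mc} identifies $\mc_0$ as free on one Maurer--Cartan element $\alpha$ and $\mc_1$ as the Lawrence--Sullivan algebra, free on $\beta_1,\beta_2$ and a gauge $\lambda$ from $\beta_1$ to $\beta_2$; tracing the generators through the dualities — the codegeneracy collapses the interval to the point, the cofaces pick out its two endpoints — gives $i(\alpha_j)=\beta_j$ and $t(\beta_j)=\alpha$, $t(\lambda)=0$, as claimed.
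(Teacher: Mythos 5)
Your proposal is correct and follows essentially the same route as the paper: both pass to the coalgebra side and exhibit $\Bar_\iota(s^{-1}C_1)$, with structure maps induced by the degeneracy and the two faces of the simplicial $\C_\infty$-algebra $C_\bullet$, as a path object for $\Bar_\iota(s^{-1}C_0)$, then transport back and identify the generators via \cref{prop:first levels of mc}. The one step you flag as the main obstacle --- that the map to the product is a fibration --- is closed in the paper by a single citation rather than by redoing the construction behind \cite[Prop.~3.3]{val14}: the map $I\colon C_1\to C_0\times C_0$ is surjective, and the bar construction sends surjections of algebras to fibrations of conilpotent coalgebras by \cite[Thm.~2.9(2)]{val14}, stated there for $\kappa$ but valid for $\iota$ by \cref{lemma:modelStrKappaIota}.
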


\begin{proof}
	By \cref{lemma:mcnCofibrant}, we know that the diagram $\widetilde{\mc}_n$ corresponds to the conilpotent Lie coalgebra $\Bar_\iota(s^{-1}C_n)$. Since $\Bar_\iota$ is right adjoint, we have
	\[
	\Bar_\iota(s^{-1}C_0)\times\Bar_\iota(s^{-1}C_0)\cong\Bar_\iota(s^{-1}C_0\times s^{-1}C_0)
	\]
	with the obvious differential in the category of conilpotent Lie coalgebras. The diagonal map
	\[
	C_0\stackrel{\Delta}{\longrightarrow}C_0\times C_0
	\]
	in the category of $\C_\infty$-algebras splits into
	\[
	C_0\stackrel{T}{\longrightarrow}C_1\stackrel{I}{\longrightarrow}C_0\times C_0
	\]
	where $T(1) = \omega_0 + \omega_1$, $I(\omega_0) = (1,0)$, $I(\omega_1) = (0,1)$, and $I(\omega_{01}) = (0,0)$. The map $T$ is a quasi-isomorphism as it is induced by the homotopy equivalence $\Delta^1\to\Delta^0$. Therefore, by a result analogous to \cref{lemma:BarOfQIisFQI}, it is sent to a filtered quasi-isomorphism $\Bar_\iota(T)$ by the bar construction. The map $I$ is clearly surjective, and thus $\Bar_\iota(I)$ is a fibration by \cite[Thm. 2.9(2)]{val14}. Notice that the mentioned theorem is stated for the twisting morphism $\kappa$, but it also holds for $\iota$ by repeating the same proof and using \cref{lemma:modelStrKappaIota}. It follows that $\Bar_\iota(s^{-1}C_1)$ is a path object for $\Bar_\iota(s^{-1}C_0)$, which is equivalent to the statement we wanted to prove. Recovering the exact structure of the maps after passing to the limit is straightforward.
\end{proof}

We are now set for the proof of the Goldman--Millson theorem for pronilpotent Lie algebras.

\begin{theorem}\label{thm:homotopical GM}
	Let $\g,\h$ be two pronilpotent dg Lie algebras, and let $\phi:\g\to\h$ be the dual of a weak equivalence of conilpotent Lie coalgebras. Then $\phi$ induces a bijection
	\[
	\MCbar(\g)\cong\MCbar(\h)\ .
	\]
\end{theorem}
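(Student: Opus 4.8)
The plan is to realize the Maurer--Cartan set and the moduli space of Maurer--Cartan elements entirely through mapping sets out of the cofibrant object $\widetilde{\mc}_0$ in the model category $\pro{\mathsf{fd\ nil.\ Lie}}$, and then to invoke the standard model-categorical fact that a weak equivalence between bifibrant objects induces a bijection on homotopy classes of maps out of a cofibrant object. First I would observe that, by \cref{lemma:isomOfHoms} applied to $\g = \mc_0 = \widehat{\lie}(s\k)$ (which is of the required form), together with the description of $\MC(\g)$ as $\hom_{\mathsf{dgLie}}(\mc_0,\g)$ from \cref{subsect:representing MC for Lie algebras}, there is a natural bijection
\[
\MC(\g) \cong \hom_{\pro{\mathsf{fd\ nil.\ Lie}}}(\widetilde{\mc}_0,\widetilde{\g})
\]
for any pronilpotent $\g$, where $\widetilde{\g}$ is the diagram of finite-dimensional nilpotent quotients of $\g$. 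The object $\widetilde{\mc}_0$ is cofibrant by \cref{lemma:mcnCofibrant}, and $\widetilde{\g}$ is fibrant in this model structure (the model structure on $\pro{\mathsf{fd\ nil.\ Lie}}$ is obtained by transporting the Vallette model structure along the anti-equivalence with conilpotent Lie coalgebras, where every object is cofibrant, so every object of $\pro{\mathsf{fd\ nil.\ Lie}}$ is fibrant).

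Next I would identify the gauge equivalence relation on $\MC(\g)$ with the left homotopy relation on $\hom_{\pro{\mathsf{fd\ nil.\ Lie}}}(\widetilde{\mc}_0,\widetilde{\g})$. This is exactly what \cref{lemma:mc1Cylinder} provides: $\widetilde{\mc}_1$ is a cylinder object for $\widetilde{\mc}_0$, and after passing to the limit the maps $\mc_0\sqcup\mc_0\to\mc_1\to\mc_0$ encode ``a pair of Maurer--Cartan elements'' and ``a gauge between them'' respectively. Concretely, a left homotopy $\widetilde{\mc}_1\to\widetilde{\g}$ between two maps $\widetilde{\mc}_0\to\widetilde{\g}$ corresponds under \cref{lemma:isomOfHoms} to a morphism $\mc_1\to\g$, i.e.\ to a gauge between the two corresponding Maurer--Cartan elements; I would spell out that the endpoint conditions match on the nose using the explicit form of $i$ in \cref{lemma:mc1Cylinder}. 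Since $\widetilde{\mc}_0$ is cofibrant and $\widetilde{\g}$ is fibrant, left homotopy is an equivalence relation and agrees with the general notion of homotopy, so
\[
\MCbar(\g) \cong \pi(\widetilde{\mc}_0,\widetilde{\g}) = \hom_{\ho{\pro{\mathsf{fd\ nil.\ Lie}}}}(\widetilde{\mc}_0,\widetilde{\g})\ ,
\]
naturally in $\g$.

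Finally I would conclude: if $\phi:\g\to\h$ is the linear dual of a weak equivalence $C\to D$ of conilpotent Lie coalgebras, then the induced map $\widetilde{\g}\to\widetilde{\h}$ is a weak equivalence in $\pro{\mathsf{fd\ nil.\ Lie}}$ (this is precisely how the model structure was transported, and one checks that dualization of the given coalgebra map produces the diagram-level map). Since all objects in sight are bifibrant, \cref{lemma:we induce bijections of hom sets} gives that $\phi_*\colon \hom_{\ho{\pro{\mathsf{fd\ nil.\ Lie}}}}(\widetilde{\mc}_0,\widetilde{\g}) \to \hom_{\ho{\pro{\mathsf{fd\ nil.\ Lie}}}}(\widetilde{\mc}_0,\widetilde{\h})$ is a bijection, and under the identification above this is exactly the claimed bijection $\MCbar(\g)\cong\MCbar(\h)$. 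The main obstacle I anticipate is the careful bookkeeping in the second paragraph: verifying that the abstract left-homotopy relation coming from the cylinder $\widetilde{\mc}_1$, once pushed through the adjunction/limit identification of \cref{lemma:isomOfHoms}, really does coincide with the gauge relation as defined in \cref{subsection:homotopies and gauges} — in particular that a single cylinder suffices (rather than needing to compose several gauges), which is where the equivalence-relation property of left homotopy between bifibrant objects does the real work.
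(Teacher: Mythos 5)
Your proposal is correct and follows essentially the same route as the paper's proof: representing $\MC(\g)$ as $\hom_{\pro{\mathsf{fd\ nil.\ Lie}}}(\widetilde{\mc}_0,\widetilde{\g})$ via \cref{lemma:isomOfHoms}, identifying the gauge relation with left homotopy through the cylinder $\widetilde{\mc}_1$ of \cref{lemma:mc1Cylinder}, and concluding with \cref{lemma:we induce bijections of hom sets} using cofibrancy of $\widetilde{\mc}_0$ and fibrancy of all objects. The paper's argument uses exactly these three lemmas in the same order, so there is nothing to add.
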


\begin{proof}
	Let $G,H\in\pro{\mathsf{fd\ nil.\ Lie}}$ and
	\[
	\Phi\in\hom_{\pro{\mathsf{fd\ nil.\ Lie}}}(G,H)
	\]
	be a weak equivalence such that $\lim G = \g$, $\lim H = \h$, and such that $\Phi$ corresponds to $\phi$ after passing to the limits. By definition, the moduli space of Maurer--Cartan elements of a Lie algebra $\g$ is the quotient
	\[
	\MCbar(\g) = \MC(\g)/\sim_{\mathrm{gauge}}
	\]
	of the space of Maurer--Cartan elements of $\g$ by the gauge relation. As explained above, we can see Maurer--Cartan elements in $\g$ as morphisms $\mc_0\to\g$ and gauge equivalences are coded by morphisms $\mc_1\to\g$. That is to say, two Maurer--Cartan elements $x_0,x_1\in\MC(\g)$ are gauge equivalent if, and only if there exists a morphism $\mc_1\to\g$ making the following diagram commute.
	\begin{center}
		\begin{tikzpicture}
		\node (a) at (0,0){$\mc_0$};
		\node (b) at (1,-1.5){$\mc_1$};
		\node (c) at (3,-1.5){$\g$};
		\node (d) at (0,-3){$\mc_0$};
		
		\draw[->] (a) -- node[right]{$i_0$} (b);
		\draw[->] (d) -- node[right]{$i_1$} (b);
		\draw[->] (b) -- (c);
		\draw[->] (a) to[out=0,in=140] node[above right]{$x_0$} (c);
		\draw[->] (d) to[out=0,in=-140] node[below right]{$x_1$} (c);
		\end{tikzpicture}
	\end{center}
	But by \cref{lemma:isomOfHoms} and \cref{lemma:mc1Cylinder}, this is exactly the left homotopy relation on morphisms $\widetilde{\mc}_0\to G$. Therefore,
	\[
	\MCbar(\g)\cong\hom_{\pro{\mathsf{fd\ nil.\ Lie}}}(\widetilde{\mc}_0,G)/\sim_\ell = \hom_{\mathsf{Ho}(\pro{\mathsf{fd\ nil.\ Lie}})}(\widetilde{\mc}_0,G)\ ,
	\]
	where the last equality is given by the fact that all elements of $\pro{\mathsf{fd\ nil.\ Lie}}$ are fibrant, as well as the fact that $\widetilde{\mc}_0$ is cofibrant by \cref{lemma:mcnCofibrant}. The same thing is of course true for $\h$, and since $\phi:\g\to\h$ comes from a weak equivalence, it naturally induces a bijection
	\[
	\MCbar(\g)\cong\MCbar(\h)
	\]
	by \cref{lemma:we induce bijections of hom sets}, as desired.
\end{proof}

Although this result is slightly weaker than the full Goldman--Millson theorem (it works only on some algebras, and we don't have all the morphisms we would like, cf. \cref{rem:not all filtered qi are duals}), it has the advantage of having a fully homotopical proof, which is good considering the homotopical flavor of the statement.

\section{Framings and the Dolgushev--Rogers theorem} \label{sect:DRThm}

The last section can be seen as the $0$th level of the Dolgushev--Rogers theorem --- it is in fact a statement at the level of the $0$th homotopy group of the Maurer--Cartan spaces. Here, we will apply the simplicial framings introduced in \cref{sect:framings} to our model category, showing how the whole cosimplicial Lie algebra $\mc_\bullet$ appears naturally in this context. We are then able to recover the Dolgushev--Rogers theorem in this context by arguments similar to the ones used for the Goldman--Millson theorem in the last section. We conclude by comparing the results of this chapter with the recent literature.

\subsection{Using framings to recover the Dolgushev--Rogers theorem}

We take for $\cat$ the category of conilpotent Lie coalgebras and we consider the Lie coalgebra $\Bar_\iota(s^{-1}C_0)$.

\begin{lemma}
	The simplicial conilpotent Lie coalgebra $\Bar_\iota(s^{-1}C_\bullet)$ is a simplicial frame on $\Bar_\iota(s^{-1}C_0)$.
\end{lemma}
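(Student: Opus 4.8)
To show that $\Bar_\iota(s^{-1}C_\bullet)$ is a simplicial frame on $\Bar_\iota(s^{-1}C_0)$, I need to produce the factorization of the canonical map $\ell_\bullet\Bar_\iota(s^{-1}C_0)\to r_\bullet\Bar_\iota(s^{-1}C_0)$ as a weak equivalence followed by a fibration in the Reedy model structure on simplicial conilpotent Lie coalgebras. The plan is to transport the simplicial structure of $C_\bullet$ through the bar construction: recall that the Dupont contraction exhibits $C_\bullet$ as a simplicial sub-$\C_\infty$-algebra of $\Omega_\bullet$ which is finite-dimensional in each simplicial degree, and that $\Omega_\bullet$ (hence $C_\bullet$) is \emph{extendable} in the sense that the natural maps from $\Omega_n$ to the matching objects $M_n\Omega_\bullet$ are surjective; the same holds for $C_\bullet$ since the $p_\bullet$ maps are simplicial and surjective onto matching objects. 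Applying the functor $\Bar_\iota(s^{-1}-)$, which by \cref{prop:bar of qi is qi} preserves quasi-isomorphisms and by \cite[Thm. 2.9(2)]{val14} (which holds for $\iota$ as well, by \cref{lemma:modelStrKappaIota}) sends surjections to fibrations of conilpotent Lie coalgebras, should yield the Reedy-fibrancy and the needed factorization.

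Concretely, first I would observe that $\Bar_\iota(s^{-1}C_0) = \Bar_\iota(\k)$ is the cofree conilpotent Lie coalgebra-type object associated to the point, and that the natural simplicial map $\ell_\bullet\Bar_\iota(s^{-1}C_0)\to\Bar_\iota(s^{-1}C_\bullet)$ is induced by the unit inclusion $\k = C_0\to C_n$ (the image of the unit under the degeneracies), which is a quasi-isomorphism of $\C_\infty$-algebras for every $n$ because $C_n\simeq\Omega_n\simeq\k$ by \cite[Lemma 3.2]{get09}. Hence by \cref{prop:bar of qi is qi} the map $\Bar_\iota(s^{-1}C_0)\to\Bar_\iota(s^{-1}C_n)$ is a quasi-isomorphism for every $n$, and since in the Vallette model structure on conilpotent coalgebras quasi-isomorphisms between connected objects are weak equivalences (by \cref{prop:cobar construction and qis}, or \cref{thm: alpha-we are oo-alpha-qi}), $\ell_\bullet\Bar_\iota(s^{-1}C_0)\to\Bar_\iota(s^{-1}C_\bullet)$ is a level-wise weak equivalence, i.e. a Reedy weak equivalence.

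Next I would check that $\Bar_\iota(s^{-1}C_\bullet)\to r_\bullet\Bar_\iota(s^{-1}C_0)$ is a Reedy fibration, equivalently that for each $n$ the relative matching map $\Bar_\iota(s^{-1}C_n)\to (r_\bullet\Bar_\iota(s^{-1}C_0))_n\times_{M_n(r_\bullet\Bar_\iota(s^{-1}C_0))}M_n\Bar_\iota(s^{-1}C_\bullet)$ is a fibration in $\Ccog$. Since $r_\bullet\Bar_\iota(s^{-1}C_0)$ is the simplicial object with matching objects built from products of $\Bar_\iota(s^{-1}C_0)$ and the comparison reduces, after unwinding, to the statement that $\Bar_\iota(s^{-1}C_n)\to M_n\Bar_\iota(s^{-1}C_\bullet)$ is a fibration; this follows from the fact that the matching map $C_n\to M_nC_\bullet$ is a surjection of $\C_\infty$-algebras — a consequence of $C_\bullet$ being a simplicial frame on $C_0$ in the category of $\C_\infty$-algebras, which in turn one verifies directly from Dupont's explicit formulas (the forms $\omega_{i_0\ldots i_k}$ for index sets contained in a proper face of $[n]$ span $M_nC_\bullet$ and lift to $C_n$) — together with the fact that $\Bar_\iota$ carries surjections of $\C_\infty$-algebras to fibrations of conilpotent Lie coalgebras and is a right adjoint, hence preserves the relevant pullbacks.

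The main obstacle I anticipate is the bookkeeping in the Reedy matching-object computation: one must identify $M_n\Bar_\iota(s^{-1}C_\bullet)$ with $\Bar_\iota(s^{-1}M_nC_\bullet)$ (using that $\Bar_\iota$ preserves limits as a right adjoint and that the matching object is a finite limit over $\partial\Delta[n]$) and then check that the induced map is precisely $\Bar_\iota$ applied to the matching map of $C_\bullet$; once this identification is in place, Reedy fibrancy is immediate. A secondary point requiring care is that all the model-categorical statements about $\Bar_\iota$ — preservation of quasi-isomorphisms and of fibrations — are available for the twisting morphism $\iota:\colie\to\Omega\colie$ and not only for $\kappa$, which is exactly what \cref{lemma:modelStrKappaIota} guarantees. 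With these in hand the factorization $\ell_\bullet\Bar_\iota(s^{-1}C_0)\xrightarrow{\sim}\Bar_\iota(s^{-1}C_\bullet)\twoheadrightarrow r_\bullet\Bar_\iota(s^{-1}C_0)$ is exactly a simplicial frame, completing the proof.
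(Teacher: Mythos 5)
Your strategy coincides with the paper's: factor through the level-wise maps induced by $w\colon C_0\to C_n$, $1\mapsto\omega_0+\cdots+\omega_n$, and $p\colon C_n\to C_0^{\times(n+1)}$, and use that the bar construction turns quasi-isomorphisms of algebras into weak equivalences of conilpotent Lie coalgebras and surjections into fibrations. Where you genuinely go further is on the fibration half: the paper only records that the level-wise maps $C_n\to C_0^{\times(n+1)}$ are surjective, whereas you correctly observe that a Reedy fibration onto $r_\bullet x$ requires the relative matching maps to be fibrations, which (since $r_nx\to M_n(r_\bullet x)$ is an isomorphism for $n\ge1$) amounts to surjectivity of the matching map $C_n\to M_nC_\bullet$, i.e. extendability of $C_\bullet$; combined with the fact that $\Bar_\iota$ is a right adjoint (so commutes with matching objects) and sends surjections to fibrations, this yields the full Reedy condition. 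The extendability claim does require the small verification you sketch with the forms $\omega_I$, but it is true and standard. One correction on the weak-equivalence half: the upgrade from ``quasi-isomorphism of coalgebras'' to ``weak equivalence in the Vallette model structure'' should not be justified by \cref{prop:cobar construction and qis} (the coalgebras $\Bar_\iota(s^{-1}C_n)$ are not connected in the required sense) nor by \cref{thm: alpha-we are oo-alpha-qi} (which states the converse implication). The correct route, and the one the paper takes, is that $\Bar_\iota$ applied to a quasi-isomorphism of algebras is a cofiltered quasi-isomorphism of coalgebras (\cref{lemma:BarOfQIisFQI}), hence a weak equivalence by \cref{thm:characterization of we in the Vallette model structure}.
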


\begin{proof}
	At level $0$, both maps are the identity. For $n\ge0$ the maps are induced by the maps of $\C_\infty$-algebras
	\[
	C_0\stackrel{w}{\longrightarrow}C_n\stackrel{p}{\longrightarrow}C_0\times\cdots\times C_0
	\]
	explicitly given by
	\[
	w(1) = \omega_0+\cdots+\omega_n
	\]
	and
	\[
	p(\omega_i) = (0,\ldots,0,\underbrace{1}_{i},0,\ldots,0),\qquad\mathrm{and}\qquad p(\omega_I) = 0\quad\forall |I|\ge2\ .
	\]
	The map $w$ is the pullback by the unique map $\Delta^n\to\Delta^0$, which is a homotopy equivalence. Thus, $w$ is a quasi-isomorphism and we obtain a weak equivalence when we apply the bar construction to it by an analogue to \cref{lemma:BarOfQIisFQI}. Similarly, the map $p$ is surjective, and thus is sent to a fibration by the bar construction, again by \cite[Thm. 2.9(2)]{val14} as in the proof of \cref{lemma:mc1Cylinder}.
\end{proof}

This simple fact is enough to recover all the most important results about $\hom_{\dgl}(\mc_\bullet,\g)$.

\begin{theorem}\label{thm:homotopical DR}
	The functor
	\[
	\hom_{\dgl}(\mc_\bullet,-):\mathsf{pronil.\ Lie}\longrightarrow\ssets
	\]
	sends duals of injections of conilpotent Lie coalgebras to fibrations and duals of weak equivalences of conilpotent Lie coalgebras to weak equivalences. In particular, it has image in the full subcategory of Kan complexes.
\end{theorem}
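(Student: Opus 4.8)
The plan is to combine the framing result from \cref{sect:framings} with the (anti-)equivalence of categories from \cref{cor:equivCats} and \cref{lemma:isomOfHoms}, exactly as in the proof of \cref{thm:homotopical GM} but now at the level of the whole simplicial set rather than just $\pi_0$. First I would transport the situation to the model category $\pro{\mathsf{fd\ nil.\ Lie}}$: given a pronilpotent Lie algebra $\g$, write $\widetilde\g\in\pro{\mathsf{fd\ nil.\ Lie}}$ for the tower $\cdots\to\g^{(4)}\to\g^{(3)}\to\g^{(2)}\to0$ of finite-dimensional nilpotent quotients, so that $\lim\widetilde\g=\g$, and similarly for morphisms. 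By \cref{lemma:isomOfHoms} and its proof applied levelwise to $\mc_n=\widehat\Cobar_\pi(sC_n^\vee)$ we get a natural isomorphism of simplicial sets
\[
\hom_{\dgl}(\mc_\bullet,\g)\cong\hom_{\pro{\mathsf{fd\ nil.\ Lie}}}(\widetilde{\mc}_\bullet,\widetilde\g)\ ,
\]
where on the right we use the simplicial mapping space $\hom_{\pro{\mathsf{fd\ nil.\ Lie}}}(\widetilde{\mc}_\bullet,-)$ associated to the cosimplicial object $\widetilde{\mc}_\bullet$. Under the anti-equivalence with $\ind{\mathsf{fd\ coLie}}\simeq\mathsf{coLie}$, the cosimplicial object $\widetilde{\mc}_\bullet$ corresponds to the simplicial conilpotent Lie coalgebra $\Bar_\iota(s^{-1}C_\bullet)$, by \cref{lemma:mcnCofibrant}, so that
\[
\hom_{\dgl}(\mc_\bullet,\g)\cong\hom_{\mathsf{coLie}}(C^\vee{}^\vee,\Bar_\iota(s^{-1}C_\bullet))^{\mathrm{op-variance}}
\]
— more precisely, writing $D\coloneqq\g^\vee$ for the conilpotent Lie coalgebra whose dual is $\g$ (which exists precisely because $\g$ is pronilpotent), dualizing turns $\hom_{\dgl}(\mc_\bullet,\g)$ into $\hom_{\mathsf{coLie}}(D,\Bar_\iota(s^{-1}C_\bullet))$, the simplicial set obtained by applying the functor $\hom_{\mathsf{coLie}}(-,\Bar_\iota(s^{-1}C_\bullet))$ to $D$. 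Here I am using that the simplicial frame $\Bar_\iota(s^{-1}C_\bullet)$ is a frame on the fibrant object $\Bar_\iota(s^{-1}C_0)$, which is established by the preceding lemma in this section together with \cref{thm:Vallette model structure}(\ref{pt:Vms3}).

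Having identified $\hom_{\dgl}(\mc_\bullet,\g)$ with $\hom_{\mathsf{coLie}^{\mathrm{op}}}(\Bar_\iota(s^{-1}C_\bullet),D)$ — i.e.\ with the functor $\hom_{\mathsf{coLie}}(-,X_\bullet)$ applied at $D$, where $X_\bullet=\Bar_\iota(s^{-1}C_\bullet)$ is a simplicial frame on the fibrant object $X_0$ — I would invoke \cref{prop:framings} directly. That proposition states that for a simplicial frame $X_\bullet$ on a fibrant object, the functor $\hom_{\cat}(-,X_\bullet)\colon\cat^{\mathrm{op}}\to\ssets$ preserves fibrations, trivial fibrations, and weak equivalences between fibrant objects. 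Applying this to $\cat=\mathsf{coLie}$: a dual of an injection of conilpotent Lie coalgebras is, under the anti-equivalence, an injection $C_1\hookrightarrow C_2$ of conilpotent Lie coalgebras, which is a cofibration in the Vallette model structure; cofibrations in $\mathsf{coLie}$ become fibrations in $\mathsf{coLie}^{\mathrm{op}}$, so $\hom_{\mathsf{coLie}}(-,X_\bullet)$ sends it to a fibration of simplicial sets. Likewise a dual of a weak equivalence of conilpotent Lie coalgebras is a weak equivalence in $\mathsf{coLie}^{\mathrm{op}}$ between fibrant objects — recall every conilpotent Lie coalgebra is cofibrant, hence fibrant in the opposite category — so it goes to a weak equivalence of simplicial sets. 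Finally, taking $C_1=0$ (the initial object, which is also a sub-coalgebra of anything) shows every map $\hom_{\dgl}(\mc_\bullet,\g)\to\ast$ is a fibration, so each $\hom_{\dgl}(\mc_\bullet,\g)$ is a Kan complex.

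The steps in order are: (1) set up $\widetilde{\mc}_\bullet$ and prove the natural isomorphism $\hom_{\dgl}(\mc_\bullet,\g)\cong\hom_{\pro{\mathsf{fd\ nil.\ Lie}}}(\widetilde{\mc}_\bullet,\widetilde\g)$ of simplicial sets, levelwise via \cref{lemma:isomOfHoms}, checking compatibility with the cosimplicial structure maps; (2) transport through the anti-equivalence of \cref{cor:equivCats} and \cref{lemma:mcnCofibrant} to rewrite this as $\hom_{\mathsf{coLie}}(-,\Bar_\iota(s^{-1}C_\bullet))$ evaluated at $\g^\vee$; (3) cite the lemma of this section that $\Bar_\iota(s^{-1}C_\bullet)$ is a simplicial frame on the fibrant object $\Bar_\iota(s^{-1}C_0)$; (4) apply \cref{prop:framings} and translate fibrations/weak equivalences back and forth across the dualization, using that cofibrations in $\mathsf{coLie}$ are monomorphisms and that all objects of $\mathsf{coLie}$ are cofibrant. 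The main obstacle I expect is step (1)–(2): one must check carefully that the natural isomorphism of \cref{lemma:isomOfHoms} is compatible with all coface and codegeneracy maps of $\mc_\bullet$ — equivalently that the tower-of-quotients construction $\g\mapsto\widetilde\g$ and the bar-construction-of-Dupont-cochains $C_n\mapsto\Bar_\iota(s^{-1}C_n)$ match up as (co)simplicial objects — and, more delicately, that passing to the limit $\lim$ on $\pro{\mathsf{fd\ nil.\ Lie}}$ commutes appropriately with the simplicial mapping-space construction so that the identification is genuinely an isomorphism of simplicial sets and not merely a levelwise bijection. Once that bookkeeping is done, everything else is a formal consequence of \cref{prop:framings} and the properties of the Vallette model structure already recorded in \cref{thm:Vallette model structure}.
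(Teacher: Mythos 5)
Your argument is essentially the paper's own proof: the same chain of identifications via \cref{lemma:isomOfHoms}, \cref{cor:equivCats} and \cref{lemma:mcnCofibrant}, followed by \cref{prop:framings} applied to the simplicial frame $\Bar_\iota(s^{-1}C_\bullet)$ on the fibrant object $\Bar_\iota(s^{-1}C_0)$, with fibrations and weak equivalences translated across the anti-equivalence exactly as you describe. The only small correction is that for an arbitrary pronilpotent $\g$ the canonical-filtration quotients $\g^{(n)}$ need not be finite dimensional, so in your step (1) the object placed in the second slot of \cref{lemma:isomOfHoms} should be the defining pro-presentation $G\in\pro{\mathsf{fd\ nil.\ Lie}}$ with $\lim G=\g$ (whose existence is what ``pronilpotent'' means) rather than the tower $\widetilde\g$ of canonical quotients; since that lemma allows an arbitrary object of $\pro{\mathsf{fd\ nil.\ Lie}}$ in that slot, nothing else in your argument changes.
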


\begin{proof}
	Let $\g$ be a pronilpotent dg Lie algebra. Then $\g = \lim G$ for some $G\in\pro{\mathsf{fd\ nil.\ Lie}}$. Let $C\coloneqq \colim{}\,G^\vee$, so that $C^\vee = \g$. Then
	\begin{align*}
	\hom_{\dgl}(\mc_\bullet,\g)\cong&\ \hom_{\pro{\mathsf{fd\ nil.\ Lie}}}(\widetilde{\mc}_\bullet,G)\\
	\cong&\ \hom_{\mathsf{conil.\ coLie}}(C,\Bar_\iota(s^{-1}C_\bullet))\ ,
	\end{align*}
	where the first isomorphism is given by \cref{lemma:mcnCofibrant}, and the second one by the equivalence of categories of \cref{cor:equivCats}. By \cref{thm:Vallette model structure}, we know that $C$ is always cofibrant (and thus fibrant in the opposite category), and that $\Bar_\iota(s^{-1}C_0)$ is fibrant, since it is quasi-free. \cref{prop:framings} concludes the proof.
\end{proof}

The statement that the functor sends duals of injections to fibrations should be seen analogous to \cref{thm:Getzler}, while the assertion that duals of weak equivalences of coalgebras are sent to weak equivalences is a weaker version of \cref{thm:Dolgushev-Rogers}, in the same way that \cref{thm:homotopical GM} is a weaker version of \cref{thm:Goldman-Millson}.

\subsection{Relations with the BFMT model structure on Lie algebras}\label{subsect:relations with BFMT}

In the article \cite{bfmt16bis}, Buijs--F\'elix--Murillo--Tanr\'e introduced a model structure on a slightly larger category: the category of all complete Lie algebras and filtered morphisms. They have the declared goal of developing a way to do rational homotopy theory for all spaces using Lie algebra models. We summarize their results and compare them with the present work.

\medskip

Let $\g$ be a complete Lie algebra, and let $x\in\MC(\g)$. Then one can use $x$ to twist the differential to get
\[
d^x\coloneqq d + \ad_x\ .
\]
This new operator $d^x$ is such that $(\g,d^x)$ is once again a Lie algebra.

\begin{definition}
	The \emph{component of $\g$ at $x$} is the Lie algebra obtained by truncating $(\g,d^x)$ in positive degree and taking only $\ker d^x$ in degree $0$, i.e. it is the cochain complex
	\[
	\cdots\xrightarrow{d^x}\g^{-2}\xrightarrow{d^x}\g^{-1}\xrightarrow{d^x}\ker d^x\longrightarrow0
	\]
	endowed with the Lie algebra structure inherited from $\g$.
\end{definition}

One defines the following three classes of maps:
\begin{itemize}
	\item A filtered morphism is a \emph{fibration}\index{Fibrations!in the BFMT model structure} if it is surjective in non-negative degrees.
	\item A filtered morphism $\phi:\g\to\h$ is a \emph{weak equivalence}\index{Weak equivalences!in the BFMT model structure} if $\MCbar(\phi)$ is a bijection and $\phi:\g^x\to\h^{\phi(x)}$ is a quasi-isomorphism for all $x\in\MC(\g)$.
	\item A filtered morphism is a \emph{cofibration}\index{Cofibrations!in the BFMT model structure} if it has the left lifting property with respect to all trivial cofibrations.
\end{itemize}

\begin{theorem}[{\cite[Thm. 3.1]{bfmt16bis}}]
	These three classes of maps define a model structure on the category of complete Lie algebras and filtered morphisms. We call this model structure the \emph{BFMT model structure}\index{BFMT model structure} on complete Lie algebras.
\end{theorem}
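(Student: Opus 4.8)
This is proved in \cite[Thm. 3.1]{bfmt16bis}; here I describe the approach I would take. The plan is to obtain the model structure by right transfer, via Kan's recognition theorem, along the adjunction
\[
\adjunction{\mathfrak{L}}{\ssets}{\widehat{\mathsf{cLie}}}{\langle-\rangle}\ ,
\]
where $\widehat{\mathsf{cLie}}$ denotes the category of complete Lie algebras and filtered morphisms, $\langle\g\rangle\coloneqq\hom_{\dgl}(\mc_\bullet,\g)$ is the realization functor built from the cosimplicial Lie algebra $\mc_\bullet$ of \cref{def:cosimplicial mc for Lie}, and $\mathfrak{L}$ is its left adjoint, defined on representables by $\mathfrak{L}(\Delta[n])\coloneqq\mc_n$ and extended to all simplicial sets by colimits. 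The first thing to record is that the three classes of maps appearing in the statement are exactly the $\langle-\rangle$-preimages of the fibrations, weak equivalences and cofibrations of the Kan--Quillen model structure on $\ssets$: indeed $\pi_0\langle\g\rangle=\MCbar(\g)$, since a gauge is the same as a simplicial homotopy between Maurer--Cartan elements, while for $x\in\MC(\g)$ one has $\pi_n(\langle\g\rangle,x)\cong H^{-n}(\g^x)$, the cohomology of the component at $x$ (the twisted version of the map $B$ of \cite[Thm. 1.1]{ber15}); hence a filtered morphism $\phi$ is a weak equivalence in the stated sense precisely when $\langle\phi\rangle$ is a weak equivalence of simplicial sets, and, using \cref{thm:filtered surjections induce fibrations of MC}, surjectivity in non-negative degrees corresponds to $\langle\phi\rangle$ being a Kan fibration.

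Granting this dictionary, the verification of the model category axioms reduces, by the transfer theorem, to three points. First, $\widehat{\mathsf{cLie}}$ is complete and cocomplete: limits are computed on the underlying filtered cochain complexes, and colimits are built from the free/forgetful adjunction together with reflexive coequalizers, exactly as for algebras over an operad. Second, the domains of the maps $\mathfrak{L}(\partial\Delta[n]\hookrightarrow\Delta[n])$ and $\mathfrak{L}(\Lambda^i[n]\hookrightarrow\Delta[n])$ are small, which follows once one knows that $\langle-\rangle$ commutes with filtered colimits. Third --- and this is the crux --- one must check the \emph{acyclicity condition}: every map obtained from the set $\{\mathfrak{L}(\Lambda^i[n]\hookrightarrow\Delta[n])\}$ by pushouts, transfinite composition and retracts is sent by $\langle-\rangle$ to a weak equivalence of simplicial sets.

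I expect the acyclicity condition to be the main obstacle, and the natural way around it is to produce functorial path objects in $\widehat{\mathsf{cLie}}$. For a complete Lie algebra $\g$ one should take $\g\,\widehat{\otimes}\,\Omega_1$, with $\Omega_1$ the polynomial de Rham forms on $\Delta^1$, together with the two endpoint evaluations to $\g\times\g$: the inclusion $\g=\g\,\widehat{\otimes}\,\Omega_0\hookrightarrow\g\,\widehat{\otimes}\,\Omega_1$ is a filtered quasi-isomorphism on each component and hence a weak equivalence by the component-wise form of \cref{thm:Dolgushev-Rogers} (equivalently by \cref{cor:exMainThm}), while the endpoint evaluations are surjective in every degree, hence a fibration. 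Feeding these path objects and the small object argument into the transfer machinery yields the two factorizations and the lifting axioms, after which one identifies the transferred fibrations with the maps surjective in non-negative degrees by testing against the $\mathfrak{L}(\Lambda^i[n]\hookrightarrow\Delta[n])$ and using the explicit descriptions of $\mc_0$ and $\mc_1$ --- the latter being the shifted Lawrence--Sullivan interval, see \cref{prop:first levels of mc}. The underlying obstruction-theoretic input, that pushouts of the $\mathfrak{L}$(horn inclusions) are weak equivalences, is handled by an induction on the filtration degree in the spirit of the proof of the Dolgushev--Rogers theorem presented in \cref{sect:Dolgushev-Rogers thm}; this is where the real work lies, and it is the step carried out in detail in \cite[Sect. 3]{bfmt16bis}.
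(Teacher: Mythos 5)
The paper does not prove this statement; it imports it from \cite{bfmt16bis}, where Theorem 3.1 is established by a \emph{direct} verification of the model category axioms --- explicit factorizations built from free extensions by Lawrence--Sullivan-type cells, the path object $\g\,\widehat{\otimes}\,\Omega_1$, and a hands-on analysis of surjections in non-negative degrees --- and the adjunction $\mathfrak{L}\dashv\langle-\rangle$ is only shown to be a Quillen pair \emph{a posteriori} (this is the \cite[Cor. 3.6]{bfmt16bis} quoted in \cref{subsect:relations with BFMT}). Your plan to right-transfer the Kan--Quillen structure along $\langle-\rangle$ is therefore a genuinely different route, and it contains a gap I do not see how to close cheaply.

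The gap is in your ``dictionary''. In a right-transferred model structure the fibrations are, by definition, the $\langle-\rangle$-preimages of the Kan fibrations, and the cofibrations are then determined by lifting --- they are certainly not the preimages of the monomorphisms of simplicial sets, as you assert. More seriously, \cref{thm:filtered surjections induce fibrations of MC} gives only one inclusion: a filtered morphism that is surjective in non-negative degrees is sent to a Kan fibration. Nothing forces the converse, so the transferred fibrations form an a priori strictly larger class than the fibrations in the statement, and the transfer --- even granting cocompleteness, smallness, and acyclicity --- would produce a model structure that need not be the BFMT one. Repairing this requires showing that every filtered morphism with the right lifting property against the transferred trivial cofibrations is surjective in non-negative degrees; testing against $\mathfrak{L}(\Lambda^i[n]\hookrightarrow\Delta[n])$ does not obviously achieve this, and that identification is essentially the hard content of the direct proof you are trying to bypass. (Your identification of the weak equivalences, via $\pi_0\langle\g\rangle\cong\MCbar(\g)$ and Berglund's computation of the higher homotopy groups in terms of the components $\g^x$, is correct and is indeed how the definition is motivated in \emph{op. cit.}) A secondary concern is circularity: you invoke \cref{thm:Dolgushev-Rogers} (via \cref{cor:exMainThm}) to see that $\g\to\g\,\widehat{\otimes}\,\Omega_1$ is a weak equivalence, whereas for this particular map an elementary argument using the explicit contraction of $\Omega_1$ onto $\k$ is available and should be used instead; as the paper notes at the end of \cref{subsect:relations with BFMT}, a self-contained construction of this model structure should not presuppose the Dolgushev--Rogers theorem.
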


The weak equivalences and fibrations of the model structure considered in the present paper are contained in the weak equivalences and fibrations of the BFMT model structure, as is proven in \cite[Sect. 6]{bfmt16bis}.

\medskip

One also has a Quillen pair
\[
\adjunction{\mathfrak{L}}{\ssets}{\cdgl}{\hom_{\dgl}(\mc_\bullet,-)}\ ,
\]
where the functor $\mathfrak{L}$ is obtained by sending the standard $n$-simplex $\Delta^n$ to $\mc_n$ and then applying the Yoneda lemma to extend it to all simplicial sets. This is proven in \cite[Cor. 3.6]{bfmt16bis}. In particular, both functors preserve weak equivalences. Then one proves:

\begin{proposition}[{\cite[Prop. 3.8]{bfmt16bis}}]\label{prop:filterd qi are we in BFMT}
	The class of filtered quasi-isomorphisms is contained in the class of weak equivalences.
\end{proposition}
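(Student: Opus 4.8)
The plan is to deduce \cref{prop:filterd qi are we in BFMT} from the machinery already assembled: the Quillen pair
\[
\adjunction{\mathfrak{L}}{\ssets}{\cdgl}{\hom_{\dgl}(\mc_\bullet,-)}
\]
together with \cref{cor:exMainThm}, which identifies $\hom_{\dgl}(\mc_\bullet,\g)$ with $\MC_\bullet(\g)$ up to natural weak equivalence of simplicial sets, and the Dolgushev--Rogers theorem, \cref{thm:Dolgushev-Rogers}. The key observation is that a weak equivalence in the BFMT model structure is exactly a filtered morphism $\phi$ such that $\MCbar(\phi)$ is bijective and $\phi$ induces a quasi-isomorphism between components at every Maurer--Cartan element. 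Both of these conditions are homotopy-theoretic statements about the simplicial sets $\MC_\bullet(\g)$ and $\MC_\bullet(\h)$: the first is the statement that $\pi_0\MC_\bullet(\phi)$ is a bijection, and the second, after twisting by a Maurer--Cartan element $x$ and using \cref{lemma:shift to zero} to move the basepoint to $0$, is the statement that $\pi_n(\MC_\bullet(\phi),x)$ is an isomorphism for all $n\ge1$ (using that $\pi_n$ of $\MC_\bullet(\g)$ based at $x$ is computed by the component $\g^x$, e.g. via \cref{thm:homotopy of simplicial vector space is homology of Moore complex} applied to the abelianization in the relevant truncation).

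First I would recall that a filtered quasi-isomorphism $\phi:\g\to\h$ of complete Lie algebras is in particular a filtered $\infty$-quasi-isomorphism of $\SLoo$-algebras (after the shift of \cref{sect:shifted Loo-alg}), so by the Dolgushev--Rogers theorem \cref{thm:Dolgushev-Rogers} the induced map
\[
\MC_\bullet(\phi):\MC_\bullet(\g)\longrightarrow\MC_\bullet(\h)
\]
is a weak equivalence of simplicial sets. Next I would translate this conclusion into the defining conditions of a BFMT weak equivalence. On $\pi_0$ this is immediate: $\pi_0\MC_\bullet(\phi)$ is the map $\MCbar(\phi)$ up to the natural identification $\pi_0\MC_\bullet(\g)\cong\MCbar(\g)$ coming from \cref{subsection:homotopies and gauges}, so it is a bijection. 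For the higher homotopy groups, I would fix $x\in\MC(\g)$, apply \cref{lemma:shift to zero DR} to twist $\g$ and $\h$ so that $x$ becomes $0\in\MC(\g^x)$ and $\phi$ becomes $\phi^x:\g^x\rightsquigarrow\h^{\phi(x)}$, which is again a filtered quasi-isomorphism; then $\MC_\bullet(\phi^x)$ is a weak equivalence, hence an isomorphism on $\pi_n$ based at $0$ for all $n\ge1$, and one identifies $\pi_n(\MC_\bullet(\g^x),0)$ with $H^{-n}$ of the component of $\g$ at $x$ (truncated appropriately). This shows $\phi:\g^x\to\h^{\phi(x)}$ is a quasi-isomorphism for every $x$, so $\phi$ is a weak equivalence in the BFMT model structure.

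A cleaner alternative route, which I would present if the homotopy-group bookkeeping becomes cumbersome, is to invoke the Quillen pair directly: since $\hom_{\dgl}(\mc_\bullet,-)$ is the right adjoint of a Quillen pair and every complete Lie algebra is fibrant in the BFMT structure by \cite[Thm. 3.1]{bfmt16bis}, it suffices to show that $\hom_{\dgl}(\mc_\bullet,\phi)$ is a weak equivalence of simplicial sets — but this is exactly $\MC_\bullet(\phi)$ up to the natural weak equivalence of \cref{cor:exMainThm}, and it is a weak equivalence by Dolgushev--Rogers. One then uses that a right Quillen functor between model categories detects weak equivalences between fibrant objects precisely when it reflects them, which here follows from the fact that the BFMT weak equivalences are, by their very definition, characterized by what $\MC_\bullet(-)$ does — so no reflection argument is even needed, one just reads off the definition. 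The main obstacle I anticipate is the identification of $\pi_n(\MC_\bullet(\g),x)$ with the cohomology of the component $\g^x$ in the correct degrees and with the correct naturality, i.e. making precise the sense in which Dolgushev--Rogers ``is'' the statement that filtered quasi-isomorphisms are BFMT weak equivalences; once that dictionary is nailed down the proof is a two-line deduction, so the substance of the argument is entirely in carefully matching the two notions of weak equivalence rather than in any new computation.
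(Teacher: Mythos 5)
The paper does not prove this proposition at all: it is quoted verbatim from \cite[Prop. 3.8]{bfmt16bis}, and the paper itself remarks in \cref{subsect:relations with BFMT} that the proof in \emph{op. cit.} relies on the Dolgushev--Rogers theorem. Your strategy --- feed a filtered quasi-isomorphism into \cref{thm:Dolgushev-Rogers}, read off the $\pi_0$ statement as bijectivity of $\MCbar(\phi)$ via the identification of homotopy and gauge equivalence (\cref{subsection:homotopies and gauges}, \cref{subsect:properties of MC(g x C.)}), and read off the $\pi_{\ge1}$ statements as quasi-isomorphisms of components after twisting the basepoint to $0$ --- is therefore the same route as the source, and it is legitimate within this thesis since \cref{thm:Dolgushev-Rogers} is proved independently earlier.

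There are two points you should tighten. First, the step you yourself flag as the ``main obstacle'' is a genuine external input, not bookkeeping: the natural isomorphism $\pi_n(\MC_\bullet(\g),x)\cong H^{1-n}(\g^x)$ for $n\ge1$ is Berglund's theorem \cite[Thm. 1.1]{ber15}, and your proposed shortcut via \cref{thm:homotopy of simplicial vector space is homology of Moore complex} does not apply, because $\MC_\bullet(\g)$ is a simplicial vector space only when $\g$ is abelian; ``applying it to the abelianization in the relevant truncation'' is not a construction that exists on the level of the simplicial set $\MC_\bullet(\g)$ itself. You must either cite Berglund's result or reconstruct it, and you must check its naturality in $\g$ so that the isomorphism on $\pi_n$ transports to an isomorphism on $H^\ast$ of the components. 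Second, your ``cleaner alternative route'' is circular: you assert that the BFMT weak equivalences ``are, by their very definition, characterized by what $\MC_\bullet(-)$ does'', but their definition is in terms of $\MCbar$ and quasi-isomorphisms of components, and the equivalence of that definition with $\MC_\bullet(\phi)$ being a weak equivalence of simplicial sets is precisely the content of the Berglund identification from the first route. Drop the second route and carry out the first one carefully.
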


Thus, we can give the following alternative proof of the Dolgushev--Rogers theorem.

\begin{proof}[Proof of \cref{thm:Dolgushev-Rogers}]
	This is a direct consequence of the two results above and the fact that if $\g$ is a complete Lie algebra, then there is a canonical homotopy equivalence of simplicial sets
	\[
	\MC_\bullet(\g)\simeq\hom_{\dgl}(\mc_\bullet,\g)
	\]
	as was proven in \cite[Cor. 5.3]{rn17cosimplicial} (see also \cite{bfmt17}).
\end{proof}

In fact, our argument using framings holds in this model category, too. Indeed, the natural sequence of maps
\[
\mc_0\sqcup\ldots\sqcup\mc_0\longrightarrow\mc_n\longrightarrow\mc_0\ ,
\]
which is dual to the one showing that $\Bar_\iota(s^{-1}C_\bullet)$ is a simplicial frame on $\Bar_\iota(s^{-1}C_0)$, exhibits $\mc_\bullet$ as a cosimplicial frame on $\mc_0$ in the category of complete Lie algebras with the BFMT model structure: the second map is a weak equivalence, since it comes from a weak equivalence in the Vallette model structure, while the first one is a cofibration by \cite[Thm. 4.2]{bfmt16bis}. This gives yet another alternative proof of \cref{thm:Dolgushev-Rogers}. However, one should remark that the proof of \cref{prop:filterd qi are we in BFMT} in \emph{op. cit.} relies on the Dolgushev--Rogers theorem itself, and thus their proof of \cref{thm:Dolgushev-Rogers} is --- in a sense --- not self-contained. On the other hand, \cref{thm:homotopical GM} and \cref{thm:homotopical DR} are  proved using only our homotopical approach, but they do not recover the full strength of the Dolgushev--Rogers theorem.
	
	\cleardoublepage
	\addcontentsline{toc}{part}{Appendices}
	\part*{Appendices}
	
	\appendix
	
	\chapter{Trees}\label{chapter:appendix on trees}

It is very natural to use trees to express operadic concepts, and those objects appear --- implicitly and explicitly --- throughout the whole text of the present work. Here, we give the basic definitions and the notations that we need.

\section{Rooted trees}\label{sect:rooted trees}

We begin with the\footnote{Better: one. There are many different definitions of the notion of a tree in the literature, most of which are equivalent.} basic definition of a rooted tree.

\begin{definition}
	A \emph{rooted tree}\index{Rooted tree} $\tau$ is a tuple $(V,E,n,N)$ where
	\begin{enumerate}
		\item $V$ is a (possibly empty) finite set, called the \emph{set of vertices}\index{Vertex of a tree} of $\tau$,
		\item $E$ is a partial order such that
		\begin{itemize}
			\item for every $v_1,v_2\in V$, there exists a $w\in V$ such that $w\le v_1$ and $w\le v_2$, and
			\item every $v\in V$ has at most one parent, i.e. an element $w\in V$ such that $w<v$ and there exists no $w'\in V$ such that $w<w'<v$,
		\end{itemize}
	\item $n$ is an integer, and
	\item $N$ is a function
	\[
	N:\{1,\ldots,n\}\longrightarrow V
	\]
	from the set $\{1,\ldots,n\}$ to the set of vertices.
	\end{enumerate}
	These conditions imply that there is a unique minimal element of $V$, which is called the \emph{root}\index{Root of a tree} of $\tau$. The set of \emph{inner edges}\index{Inner!edges of a tree} of $\tau$ --- which equivalent to $E$, and thus will also be denoted by $E$ --- is the set of couples $(w,v)\in V\times V$ such that $w$ is a parent of $v$, and the set of \emph{leaves}\index{Leaves of a tree} of $\tau$ is the set
	\[
	\left\{(v,i)\mid v\in V_\mathrm{max}\text{ and }i\in N^{-1}(v)\right\}.
	\]
	For $v\in V$, the \emph{arity}\index{Arity!of a vertex} $|v|$ of $v$ is the number of elements of $V$ of which $v$ is the parent plus $|N^{-1}(v)|$. The natural number $n$ is called the \emph{arity}\index{Arity!of a tree} of the tree $\tau$.
\end{definition}

The graphical interpretation we have in mind when working with this definition is illustrated by the following example. Let $V\coloneqq\{a,b,c\}$, take the partial order generated by $a<b$ and $a<c$, let $n=6$, and take $N:\{1,\ldots,6\}\to\{b,c\}$ be given by sending $\{1,3,4\}$ to $a$, $5$ to $c$, and the rest to $b$. Then we draw the associated tree as
\begin{center}
	\begin{tikzpicture}
		\node (a) at (0,0){$a$};
		\node (b) at (-1,1){$b$};
		\node (c) at (2,1){$c$};
		\node (l1) at (-2,1){$1$};
		\node (l2) at (-1.5,2){$2$};
		\node (l3) at (0,1){$3$};
		\node (l4) at (1,1){$4$};
		\node (l5) at (2,2){$5$};
		\node (l6) at (-.5,2){$6$};
		
		\draw (0,-1) to (a);
		\draw (a) to (l1);
		\draw (a) to (b);
		\draw (a) to (l3);
		\draw (a) to (l4);
		\draw (a) to (c);
		\draw (b) to (l2);
		\draw (b) to (l6);
		\draw (c) to (l5);
	\end{tikzpicture}
\end{center}
In this example, the arity of the vertex $a$ is $5$, and for the other vertices we have $|b| = 2$ and $|c| = 1$.

\begin{definition}
	The \emph{$n$-corolla}\index{Corolla} $c_n$ is the tree given by
	\[
	c_n\coloneqq(*,\emptyset,n,N:k\mapsto*)\ .
	\]
\end{definition}

Graphically, the corollas are given by
\begin{center}
	\begin{tikzpicture}
		\begin{scope}
			\coordinate (r0) at (0,-.375);
			\coordinate (p0) at (0,.375);
			
			\fill (p0) circle [radius = 2pt];
			\draw (r0) to (p0);
			
			\node at (-.5,0){$c_0 =$};
		\end{scope}
		
		\begin{scope}[shift = {(3,0)}]
			\coordinate (r1) at (0,-.75);
			\coordinate (p1) at (0,0);
			\coordinate (l11) at (0,.75);
			
			\fill (p1) circle [radius = 2pt];
			\draw (r1) to (p1);
			\draw (p1) to (l11);
			
			\node at (-.7,0){$c_1 =$};
		\end{scope}
		
		\begin{scope}[shift = {(6,0)}]
			\coordinate (r2) at (0,-.75);
			\coordinate (p2) at (0,0);
			\coordinate (l21) at (-.5,.75);
			\coordinate (l22) at (.5,.75);
			
			\fill (p2) circle [radius = 2pt];
			\draw (r2) to (p2);
			\draw (p2) to (l21);
			\draw (p2) to (l22);
			
			\node at (-.7,0){$c_2 =$};
		\end{scope}
		
		\begin{scope}[shift = {(9,0)}]
			\coordinate (rn) at (0,-.75);
			\coordinate (pn) at (0,0);
			\coordinate (ln1) at (-1,.75);
			\coordinate (ln2) at (-.5,.75);
			\node at (0,.75){$\cdots$};
			\coordinate (ln3) at (.5,.75);
			\coordinate (ln4) at (1,.75);
			
			\node at (0,1.25){$n$ leaves};
			\draw [decorate,decoration={brace,amplitude=5pt}]
			(-1.2,.85) to (1.2,.85);
			
			\fill (pn) circle [radius = 2pt];
			\draw (rn) to (pn);
			\draw (pn) to (ln1);
			\draw (pn) to (ln2);
			\draw (pn) to (ln3);
			\draw (pn) to (ln4);
			
			\node at (-.5,0){$c_n =$};
		\end{scope}
	\end{tikzpicture}
\end{center}
Let $v_1,v_2\in V$, then the \emph{first common ancestor} $\FCA(v_1,v_2)$ of $v_1$ and $v_2$ is defined by
\[
\FCA(v_1,v_2)\coloneqq\max\{v\in V\mid v\le v_1\text{ and }v\le v_2\}\ .
\]

\begin{definition}
	Let $\tau_1=(V_1,E_1,n_1,N_1)$ and $\tau_2=(V_2,E_2,n_2,N_2)$ be two rooted trees. A \emph{morphism of rooted trees} $\phi:\tau_1\to\tau_2$ is the data of two maps
	\[
	\phi_V:V_1\longrightarrow V_2\quad\text{and}\quad \phi_N:\{1,\ldots,n_1\}\longrightarrow\{1,\ldots,n_2\}
	\]
	such that
	\begin{enumerate}
		\item $\phi_V$ is a map of posets, and $\phi_N$ is increasing,
		\item the diagram
		\begin{center}
			\begin{tikzpicture}
				\node (a) at (0,0) {$V_1$};
				\node (b) at (3.5,0) {$V_2$};
				\node (c) at (0,1.5) {$\{1,\ldots,n_1\}$};
				\node (d) at (3.5,1.5) {$\{1,\ldots,n_2\}$};
				
				\draw[->] (a) -- node[above]{$\phi_V$} (b);
				\draw[->] (c) -- node[left]{$N_1$} (a);
				\draw[->] (d) -- node[right]{$N_2$} (b);
				\draw[->] (c) -- node[above]{$\phi_N$} (d);
			\end{tikzpicture}
		\end{center}
	is commutative,
	\item for all $v'\in\phi_V(V_1)$, if $w'\in V_2$ is such that $w'<v'$, then $w'\in\phi_V(V_1)$, and
	\item\label{item:condition for morphisms of trees} if $v_1,v_2\in V$ are such that $\phi_V(v_1)=v'=\phi_V(v_2)$, then $\phi_V(\FCA(v_1,v_2)) = v'$.
	\end{enumerate}
\end{definition}

\begin{remark}
	Equivalently to condition (\ref{item:condition for morphisms of trees}), one can ask that the preimage of any vertex under the morphism is again a rooted tree.
\end{remark}

\begin{definition}
	A tree is \emph{reduced}\index{Reduced!tree} if all of its vertices have arity at least $2$. It is \emph{binary}\index{Binary!tree} if all of its vertices have arity exactly $2$.
\end{definition}

\begin{definition}
	We denote by $\RT$ the set of isomorphism classes of rooted trees, and by $\rRT$ the set of isomorphism classes of reduced rooted trees. For $n\ge0$, we denote by $\RT_n$ the arity $n$ elements of $\RT$, and similarly for $\rRT_n$.
\end{definition}

Let $M$ be an $\S$-module, let $\tau=(V,E,n,N)$ be a rooted tree, and let $f:V\to M$ be a function such that $f(v)\in M(|v|)$. Then we denote by $\tau(f)\in\T(M)$ the element with underlying tree $\tau$ and the vertices labeled by their image in $M$ under $f$.

\section{Planar trees}

A planar tree is just a rooted tree with a chosen way of drawing it in the plane. This can be encoded by a total order on the set of children of a vertex, for every vertex of the tree.

\begin{definition}
	Let $\tau\in\RT$ be a rooted tree, and let $v$ be a vertex of $\tau$. The set $\mathrm{ch}(v)$ of \emph{children of $v$} is
	\[
	\mathrm{ch}(v)\coloneqq N^{-1}(v)\sqcup\{w\in V\mid v\text{ is the parent of }w\}\ .
	\]
\end{definition}

Notice that $|v| = |\mathrm{ch}(v)|$.

\begin{definition}
	A \emph{planar tree}\index{Planar tree} $t$ is a rooted tree $\tau$ together with a total order on $\mathrm{ch}(v)$ for every vertex $v$ of $\tau$.
\end{definition}
	
	\chapter{Filtered operads and filtered algebras}\label{appendix:filtered stuff}

In order to make sense of certain naturally occurring infinite sums, such as in the Maurer--Cartan equation for $\L_\infty$-algebras, cf. \cref{subsect:MC el of Loo algebras}, one has to consider additional data on chain complexes and algebras. One convenient way to do this is to look at filtered chain complexes, i.e. chain complexes equipped with a descending filtration. These ideas go back at least to Lazard's PhD thesis \cite{laz54}. We base ourselves on \cite[Sect. 2]{dsv18} for a clean, modern treatment in the context of operad theory.

\section{Filtered chain complexes}

We begin by defining the category of filtered chain complexes and its monoidal structure.

\subsection{Definitions}

\begin{definition}
	A \emph{filtered chain complex}\index{Filtered!chain complex} is a couple $(V,\F_\bullet V)$ where $V$ is a chain complex, and
	\[
	V=\F_0V\supseteq\F_1V\supseteq\F_2V\supseteq\cdots
	\]
	is a descending chain of sub-chain complexes. We will often omit the filtration from the notation when there is no risk of confusion, and speak of the filtered chain complex $V$. Moreover, we say that $V$ is \emph{proper}\index{Proper!filtered chain complex} if $\F_1V=V$.
\end{definition}

Any filtered chain complex $V$ automatically carries a first-countable topology of which a basis is given by
\[
\{v+\F_kV\mid v\in V\text{ and }k\ge0\}\ .
\]
With this topology, the chain complex $V$ becomes a Fr\'echet-Urysohn space, i.e. topological closure and sequential closure in $V$ are the same. Notice that scalar multiplication and sum are continuous with respect to this topology.

\begin{definition}
	Let $V$ and $W$ be filtered chain complexes. A \emph{filtered morphism}\index{Filtered!morphism of chain complexes} $f:V\to W$ is a morphism of chain complexes from $V$ to $W$, i.e. a chain map, such that for all $n\ge0$ we have
	\[
	f(\F_nV)\subseteq\F_nW\ .
	\]
	We denote by $\fchain$ the category of filtered chain complexes and filtered morphisms.
\end{definition}

\begin{lemma}
	Every filtered morphism is continuous with respect to topologies induced by the filtrations.
\end{lemma}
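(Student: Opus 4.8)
The plan is to verify continuity directly at the level of basic open sets, using only the defining property $f(\F_n V)\subseteq\F_n W$ of a filtered morphism. Recall that the topology on a filtered chain complex $V$ has as a basis the family of cosets $\{v+\F_k V\mid v\in V,\ k\ge 0\}$, and that a map between topological spaces is continuous if and only if the preimage of every member of a basis is open. So it suffices to show that for any basic open set $w+\F_k W\subseteq W$, the preimage $f^{-1}(w+\F_k W)$ is open in $V$.

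First I would dispose of the trivial case: if $f^{-1}(w+\F_k W)=\emptyset$, it is open and there is nothing to prove. Otherwise, I would pick an arbitrary point $v_0\in f^{-1}(w+\F_k W)$, so that $f(v_0)-w\in\F_k W$, and then claim that the whole basic open neighbourhood $v_0+\F_k V$ of $v_0$ is contained in $f^{-1}(w+\F_k W)$. Indeed, any element of $v_0+\F_k V$ is of the form $v_0+u$ with $u\in\F_k V$; since $f$ is filtered we have $f(u)\in\F_k W$, hence
\[
f(v_0+u)-w=\bigl(f(v_0)-w\bigr)+f(u)\in\F_k W,
\]
because $\F_k W$ is a subgroup of $W$. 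Thus $v_0+u\in f^{-1}(w+\F_k W)$, which shows $f^{-1}(w+\F_k W)$ is a neighbourhood of each of its points, i.e. open.

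There is essentially no obstacle here: the argument is a one-line coset computation, and the only facts used are that $\F_k W$ is closed under addition (it is a sub-chain complex, in particular a subgroup) and that $f$ respects the filtration. One could alternatively invoke the Fréchet--Urysohn property recorded just above and argue via sequential continuity — noting that a sequence $v_n\to v$ converges precisely when for every $k$ one eventually has $v_n-v\in\F_k V$, which is plainly preserved by a filtered map — but the basic-open-set argument is shorter and self-contained, so that is the route I would take.
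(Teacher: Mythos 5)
Your proof is correct and follows essentially the same route as the paper: both arguments show that the preimage of a basic open set is a union of basic open sets via the coset computation $f(v_0+\F_kV)\subseteq f(v_0)+\F_kW$. The only difference is that you treat a general basic open $w+\F_kW$ while the paper writes out only the case $f^{-1}(\F_nW)$, so your version is if anything slightly more complete.
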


\begin{proof}
	Let $f:V\to W$ be a filtered morphism, and let $n\ge0$. Then, for any $v\in V$ such that $f(v)\in\F_nW$, we have
	\[
	f(v+\F_nV) = f(v)+f(\F_nV)\subseteq\F_nW\ .
	\]
	Therefore, we have
	\[
	f^{-1}(\F_nW) = \bigcup_{v\in f^{-1}(\F_nW)} v+\F_nV\ ,
	\]
	which implies the claim. We denote by $\cchain$ the full subcategory of $\fchain$ spanned by complete filtered morphisms.
\end{proof}

\subsection{Limits and colimits of filtered chain complexes}

We can describe all limits and colimits in the category of filtered chain complexes.

\begin{proposition}
	The category $\fchain$ of filtered chain complexes is complete and cocomplete, with limits and colimits given as follows.
	\begin{enumerate}
		\item The product of a collection $\{(V^i,\F_\bullet V^i)\}_{i\in I}$ of filtered chain complexes is the chain complex $\prod_{i\in I}V^i$ together with the filtration
		\[
		\F_n\left(\prod_{i\in I}V^i\right)\coloneqq\prod_{i\in I}\F_nV^i.
		\]
		\item The coproduct of a collection $\{(V^i,\F_\bullet V^i)\}_{i\in I}$ of filtered chain complexes is the chain complex $\bigoplus_{i\in I}V^i$ together with the filtration
		\[
		\F_n\left(\bigoplus_{i\in I}V^i\right)\coloneqq\bigoplus_{i\in I}\F_nV^i.
		\]
		\item If $f:(V,\F_\bullet V)\to(W,\F_\bullet W)$ is a filtered chain map, then the kernel of $f$ is the chain complex $\ker f\subseteq V$ endowed with the filtration
		\[
		\F_n\ker f\coloneqq\ker f\cap\F_n V\ .
		\]
		Given two filtered chain maps $f,g$ with the same domain and range, their equalizer is $\ker(f-g)$.
		\item If $f:(V,\F_\bullet V)\to(W,\F_\bullet W)$ is a filtered chain map, then the cokernel of $f$ is the chain complex $W/f(V)$ endowed with the filtration
		\[
		\F_n(W/f(V))\coloneqq \F_nW/(f(V)\cap\F_nW)\ .
		\]
		Given two filtered chain maps $f,g$ with the same domain and range, their coequalizer is the cokernel of the difference $f-g$.
	\end{enumerate}
	In particular, limits and colimits of proper filtered chain complexes are again proper.
\end{proposition}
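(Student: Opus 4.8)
The statement is essentially a bookkeeping exercise: we already know that the underlying category $\chain$ of chain complexes is complete and cocomplete, so the strategy is to construct the (co)limits in $\fchain$ by equipping the (co)limits in $\chain$ with the evident filtrations listed in the statement, and then to verify the relevant universal property in each case. Concretely, I would first recall that a filtered chain complex is just a chain complex with a descending chain of subcomplexes, and that a filtered morphism is a chain map preserving these. The forgetful functor $\fchain\to\chain$ will be the organizing tool: for each of the four constructions I will show that the filtered object proposed is carried by the forgetful functor to the corresponding (co)limit in $\chain$, and that the universal cone (resp. cocone) consists of filtered morphisms, and that the induced comparison map is itself filtered.

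First I would treat products and kernels (equivalently, all limits, since a category with products and equalizers has all limits, and equalizers are obtained from kernels as stated). For the product $\prod_{i\in I} V^i$ with $\F_n(\prod_i V^i)\coloneqq\prod_i \F_n V^i$, the projections $\pr_i$ are visibly filtered since $\pr_i(\prod_j \F_n V^j)=\F_n V^i$; given any cone of filtered maps $f_i\colon W\to V^i$, the unique chain map $f\colon W\to\prod_i V^i$ satisfies $f(\F_n W)\subseteq\prod_i f_i(\F_n W)\subseteq\prod_i \F_n V^i$, so $f$ is filtered. For kernels: $\ker f$ with $\F_n\ker f\coloneqq\ker f\cap\F_n V$ is a subcomplex (the differential of $V$ restricts, and intersecting with the subcomplex $\F_n V$ of the subcomplex $\ker f$ is still a subcomplex), the inclusion is filtered by construction, and the universal property is checked exactly as for products, using that if $g\colon W\to V$ is filtered and factors through $\ker f$ in $\chain$, then the factorization $\bar g\colon W\to\ker f$ satisfies $\bar g(\F_n W)\subseteq\F_n V\cap\ker f=\F_n\ker f$. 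The equalizer statement follows since $\mathrm{eq}(f,g)=\ker(f-g)$ in $\chain$ and the filtration on $\ker(f-g)$ is the one described.

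Then I would dualize for coproducts and cokernels. For the coproduct $\bigoplus_i V^i$ with $\F_n(\bigoplus_i V^i)\coloneqq\bigoplus_i \F_n V^i$, the inclusions are filtered, and for any cocone of filtered maps $f_i\colon V^i\to W$, the induced map $f\colon\bigoplus_i V^i\to W$ sends $\bigoplus_i \F_n V^i$ into $\sum_i f_i(\F_n V^i)\subseteq\F_n W$, so $f$ is filtered. For cokernels, the key point to check is that $\F_n(W/f(V))\coloneqq\F_n W/(f(V)\cap\F_n W)$ is genuinely a descending filtration by subcomplexes of $W/f(V)$: the natural map $\F_n W/(f(V)\cap\F_n W)\to W/f(V)$ is injective (since $\F_n W\cap f(V)$ is exactly the part of $\F_n W$ killed in the quotient), its image is a subcomplex, the differential of $W/f(V)$ restricts to it, and the chain $\F_1\supseteq\F_2\supseteq\cdots$ is inherited from that on $W$. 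The projection $W\to W/f(V)$ is then filtered, and the universal property transfers: a filtered $g\colon W\to Z$ vanishing on $f(V)$ induces $\bar g\colon W/f(V)\to Z$ with $\bar g(\F_n W/(f(V)\cap\F_n W))=g(\F_n W)\subseteq\F_n Z$. The coequalizer claim is then immediate from $\mathrm{coeq}(f,g)=\mathrm{coker}(f-g)$.

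Finally I would note the ``properness'' remark: if all $V^i$ (resp. the codomain $W$) are proper, i.e. $\F_1=\mathrm{id}$, then the described filtrations satisfy $\F_1\left(\prod_i V^i\right)=\prod_i V^i$, $\F_1\left(\bigoplus_i V^i\right)=\bigoplus_i V^i$, $\F_1\ker f=\ker f\cap V=\ker f$, and $\F_1(W/f(V))=W/(f(V)\cap W)=W/f(V)$, so properness is preserved by all limits and colimits. I do not expect any real obstacle here; the only point requiring a moment's care is verifying that the proposed filtration on the cokernel is a well-defined filtration by subcomplexes (the injectivity of $\F_n W/(f(V)\cap\F_n W)\hookrightarrow W/f(V)$), and everything else is a direct diagram chase using the completeness and cocompleteness of $\chain$ together with the fact that filtered morphisms are exactly the chain maps respecting the filtration degreewise.
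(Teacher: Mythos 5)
Your proof is correct, and since the paper states this proposition without any proof, your argument supplies exactly the standard verification one would expect: transport the known (co)limits of $\chain$ along the forgetful functor, check that the proposed filtrations make the structure maps filtered, and confirm that the induced comparison maps respect filtrations. The one point genuinely deserving care — the injectivity of $\F_nW/(f(V)\cap\F_nW)\to W/f(V)$ so that the cokernel filtration really is a filtration by subcomplexes — is correctly identified and handled.
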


\subsection{The closed symmetric monoidal structure}

Let $V,W$ be two filtered chain complexes. On the chain complex $\hom(V,W)$ we put the filtration
\[
\F_n\hom(V,W)\coloneqq\{f:V\to W\mid f(\F_kV)\subseteq\F_{k+n}W\text{ for all }k\ge0\}\ .
\]
This makes $\hom(V,W)$ into a filtered chain complex. Similarly, we endow the chain complex $V\otimes W$ with the filtration
\[
\F_n(V\otimes W)\coloneqq\sum_{m+k=n}\F_mV\otimes\F_kW\ .
\]
This makes $V\otimes W$ into a filtered chain complex.

\begin{theorem}
	The category $\fchain$ with the tensor product and the inner hom defined as above is a closed symmetric monoidal category.
\end{theorem}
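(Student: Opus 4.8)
The plan is to deduce the statement from the well-known fact that $(\chain,\otimes,\k)$ is a closed symmetric monoidal category, by verifying that \emph{all} of its structure lifts to $\fchain$ along the forgetful functor $U\colon\fchain\to\chain$. The key elementary observation is that $U$ is faithful: a filtered morphism is determined by its underlying chain map, so any diagram of filtered morphisms that becomes commutative after applying $U$ was already commutative in $\fchain$. Hence, once we check that the unitors, the associator, the braiding of $\chain$ (and their inverses) are \emph{filtered} maps, all coherence axioms (pentagon, triangle, hexagon) are automatic; likewise, once we check that the tensor--hom adjunction bijection of $\chain$ restricts to $\fchain$, closedness follows.

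First I would verify well-definedness. For filtered chain complexes $V,W$, each $\F_n(V\otimes W)=\sum_{m+k=n}\F_mV\otimes\F_kW$ is a sub-chain complex of $V\otimes W$ (a sum of sub-chain complexes is one), we have $\F_{n+1}(V\otimes W)\subseteq\F_n(V\otimes W)$ since $\F_{m+1}V\otimes\F_kW\subseteq\F_mV\otimes\F_kW$, and $\F_0(V\otimes W)=\F_0V\otimes\F_0W=V\otimes W$. Similarly each $\F_n\hom(V,W)$ is stable under the differential $\partial$ (using that the $\F_jW$ are subcomplexes) and nested decreasingly; one small point to record here is that $\{f\mid f(\F_kV)\subseteq\F_kW\ \forall k\}$ is in general a proper subspace of $\hom(V,W)$, so the bottom term of the filtration is to be understood as $\F_0\hom(V,W)\coloneqq\hom(V,W)$, the displayed formula being read for $n\ge1$. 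Bifunctoriality is then immediate: if $f\colon V\to V'$, $g\colon W\to W'$ are filtered, then $(f\otimes g)(\F_mV\otimes\F_kW)\subseteq\F_mV'\otimes\F_kW'$ and analogously for $\hom(f,g)$, so $\otimes$ and $\hom$ are (bi)functors into $\fchain$.

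Next I would treat the structure isomorphisms. Taking $\F_0\k=\k$ and $\F_n\k=0$ for $n\ge1$ gives $\F_n(V\otimes\k)=\F_nV\otimes\k$, so the canonical isomorphism $V\otimes\k\cong V$ is filtered; the braiding $v\otimes w\mapsto(-1)^{|v||w|}w\otimes v$ merely permutes the summands defining the filtration, hence is a filtered isomorphism. For the associator, a short computation using distributivity of $\otimes$ over sums of subspaces (valid over a field) yields $\F_n\big((V\otimes W)\otimes U\big)=\sum_{i+j+k=n}\F_iV\otimes\F_jW\otimes\F_kU=\F_n\big(V\otimes(W\otimes U)\big)$, so the associativity isomorphism and its inverse are filtered. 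Faithfulness of $U$ now upgrades the coherence of $\chain$ to coherence of $\fchain$, giving the symmetric monoidal structure. For closedness I would unwind the definitions: a chain map $f\colon V\otimes W\to U$ is filtered iff $f(\F_mV\otimes\F_kW)\subseteq\F_{m+k}U$ for all $m,k\ge0$, while its adjunct $\widetilde f\colon V\to\hom(W,U)$, $v\mapsto(w\mapsto f(v\otimes w))$, is filtered iff $\widetilde f(\F_mV)\subseteq\F_m\hom(W,U)$, i.e.\ iff $f(\F_mV\otimes\F_kW)\subseteq\F_{m+k}U$ for all $m,k$; the two conditions coincide, so the adjunction bijection $\hom_\chain(V\otimes W,U)\cong\hom_\chain(V,\hom(W,U))$ restricts to one in $\fchain$, and naturality in $V,W,U$ is inherited from $\chain$ together with the filtered bifunctoriality already established. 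This exhibits $\hom(W,-)$ as right adjoint to $-\otimes W$, completing the proof. I expect no genuine difficulty — the content is bookkeeping — but the step most prone to slips is the inner-hom filtration: fixing the convention for $\F_0\hom(V,W)$ and, in the adjunction step, correctly matching the "raises filtration by $m+k$'' condition on $V\otimes W$ with the "lands in $\F_m\hom(W,U)$'' condition on $V$.
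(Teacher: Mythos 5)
The paper states this theorem without proof, so there is no argument of its own to compare against; your overall strategy (lift the closed symmetric monoidal structure of $\chain$ along the faithful forgetful functor, checking that every structure map is filtered) is the natural one, and the verifications for the tensor product, the unit, the braiding, the associator, and the coherence axioms are all correct.

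There is, however, a genuine gap in the closedness step, located exactly at the point you flagged as delicate. You resolve the $\F_0$ issue by declaring $\F_0\hom(V,W)\coloneqq\hom(V,W)$ and reading the displayed formula only for $n\ge1$; but then your later claim that filteredness of $\widetilde f\colon V\to\hom(W,U)$ is equivalent to $f(\F_mV\otimes\F_kW)\subseteq\F_{m+k}U$ \emph{for all} $m,k\ge0$ silently uses the formula at $m=0$, contradicting your own convention. Under your convention the condition $\widetilde f(\F_0V)\subseteq\F_0\hom(W,U)$ is vacuous, so a filtered $\widetilde f$ imposes no constraint on $f|_{V\otimes\F_kW}$, whereas filteredness of $f\colon V\otimes W\to U$ requires $f(V\otimes\F_kW)=f(\F_0V\otimes\F_kW)\subseteq\F_kU$. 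Concretely: take $V=\k$ with $\F_0\k=\k$, $\F_n\k=0$ for $n\ge1$, and let $\widetilde f\colon\k\to\hom(W,U)$ send $1$ to a chain map $g\colon W\to U$ that is \emph{not} filtered; then $\widetilde f$ is filtered in your sense but its adjunct $g\colon\k\otimes W\cong W\to U$ is not, so the adjunction bijection fails to be surjective. The repair is to go the other way: keep the displayed formula for all $n\ge0$, so that $\F_0\hom(W,U)$ is the subcomplex of filtered maps. This does mean the internal hom fails the axiom $\F_0=\text{everything}$ from the paper's definition of a filtered chain complex (an inconsistency in the paper's own conventions, usually resolved by dropping that axiom, or by taking the underlying complex of the internal hom to be $\F_0\hom(W,U)$ itself), but it is the convention under which your adjunction computation, including the $m=0$ case, goes through and under which $\hom_{\fchain}(\k,\hom(W,U))$ recovers the filtered chain maps $W\to U$ as it must in a closed monoidal category.
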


\subsection{Complete chain complexes}

If one desires to consider infinite sums in a chain complex, asking for a filtration is not enough. Therefore, one defines a complete chain complex as follows.

\begin{definition}
	A \emph{complete chain complex}\index{Complete!chain complex} $V$ is a filtered chain complex such that the canonical map
	\[
	V\longrightarrow\lim_{n}V/\F_nV
	\]
	is an isomorphism.
\end{definition}

In a complete chain complex, an infinite sum $\sum_{k\ge0}v_k$ of terms $v_k\in V$ makes sense as long as for all $n\ge0$ there is a $k_0$ such that $v_k\in\F_nV$ for all $k\ge k_0$.

\begin{lemma}\label{lemma:complete then intersection of filtration is zero}
	If a filtered chain complex $V$ is complete, then
	\[
	\bigcap_{n\ge1}\F_nV = \{0\}\ .
	\]
\end{lemma}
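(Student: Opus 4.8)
The statement to prove is Lemma~\ref{lemma:complete then intersection of filtration is zero}: if a filtered chain complex $V$ is complete, then $\bigcap_{n\ge1}\F_nV = \{0\}$.

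The plan is to argue directly from the definition of completeness. Recall that $V$ being complete means the canonical map $V\to\lim_n V/\F_nV$ is an isomorphism, in particular injective. First I would take an arbitrary element $v\in\bigcap_{n\ge1}\F_nV$ and show that it lies in the kernel of this canonical map. The canonical map sends $v$ to the compatible family $(v+\F_nV)_{n\ge1}$ of cosets in the quotients $V/\F_nV$. Since $v\in\F_nV$ for every $n\ge1$, each coset $v+\F_nV$ is the zero coset in $V/\F_nV$, so the image of $v$ in the limit is the zero element of $\lim_n V/\F_nV$ (using that the limit is computed componentwise, so its zero element is precisely the all-zero compatible family). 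One small point to handle cleanly: the index $n=0$ gives $\F_0V=V$, so $V/\F_0V=0$ and that component is automatically zero; the content is entirely in the components $n\ge1$, which is exactly the intersection appearing in the statement.

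Then, since the canonical map $V\to\lim_n V/\F_nV$ is injective (being an isomorphism), and $v$ maps to $0$, I conclude $v=0$. As $v$ was an arbitrary element of $\bigcap_{n\ge1}\F_nV$, this gives $\bigcap_{n\ge1}\F_nV\subseteq\{0\}$. The reverse inclusion $\{0\}\subseteq\bigcap_{n\ge1}\F_nV$ is immediate because each $\F_nV$ is a sub-chain complex and hence contains $0$. Combining the two inclusions yields the equality.

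There is essentially no obstacle here: the only thing to be careful about is spelling out that the zero element of the limit $\lim_n V/\F_nV$ is the family of zero cosets, and that injectivity of the canonical map is what does the work. No calculation is needed beyond unwinding the definition of the canonical map $v\mapsto (v+\F_nV)_n$. The proof is therefore short and purely formal, and I would present it in two or three sentences following the structure above.
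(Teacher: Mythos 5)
Your proof is correct and is essentially the same as the paper's: the paper simply observes that the kernel of the canonical map $V\to\lim_n V/\F_nV$ is exactly $\bigcap_{n\ge1}\F_nV$, and you have spelled out that identification together with the injectivity that completeness provides. No gap; your version is just a more detailed write-up of the same one-line argument.
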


\begin{proof}
	The kernel of the canonical map
	\[
	V\longrightarrow\lim_{n}V/\F_nV
	\]
	is the intersection $\bigcap_{n\ge1}\F_nV$.
\end{proof}

Given a filtered chain complex $V$, one defines its \emph{completion} as the filtered chain complex
\[
\widehat{V}\coloneqq\lim_nV/\F_nV\ ,\qquad\text{and}\qquad\F_n\widehat{V}\coloneqq\lim_k\F_nV/\F_kV = \widehat{\F_nV}\ .
\]

\begin{lemma}
	Let $V$ be a filtered chain complex. The completion $\widehat{V}$ is complete.
\end{lemma}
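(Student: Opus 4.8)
Let $V$ be a filtered chain complex. We must show that its completion $\widehat{V} = \lim_n V/\F_n V$, with the filtration $\F_n\widehat{V} = \widehat{\F_n V} = \lim_k \F_n V/\F_k V$, is a complete chain complex, i.e. that the canonical map $\widehat{V} \to \lim_n \widehat{V}/\F_n\widehat{V}$ is an isomorphism.

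\textbf{Approach.} The plan is to compute the quotient $\widehat{V}/\F_n\widehat{V}$ explicitly and show it is naturally isomorphic to $V/\F_n V$, and then to pass to the limit. First I would observe that an element of $\widehat{V}$ is a compatible sequence $(v_k + \F_k V)_{k\ge 0}$ with $v_k - v_{k+1} \in \F_k V$, and that $\F_n\widehat{V}$ consists of those sequences with $v_k \in \F_n V$ for all $k$ (equivalently, whose image in $V/\F_n V$, namely $v_n + \F_n V$, is zero). This identifies $\F_n\widehat{V}$ precisely as the kernel of the ``truncation at level $n$'' map $\pi_n \colon \widehat{V} \to V/\F_n V$ sending $(v_k + \F_k V)_k$ to $v_n + \F_n V$. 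The map $\pi_n$ is surjective (given $v \in V$, the constant-ish sequence $(v + \F_k V)_k$ works, or more carefully one uses the surjectivity of the transition maps $V/\F_{k+1}V \to V/\F_k V$), so $\widehat{V}/\F_n\widehat{V} \cong V/\F_n V$ naturally in $n$.

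\textbf{Passing to the limit.} Once the isomorphisms $\widehat{V}/\F_n\widehat{V} \cong V/\F_n V$ are established and checked to be compatible with the transition maps of the two towers, taking the inverse limit over $n$ gives
\[
\lim_n \widehat{V}/\F_n\widehat{V} \;\cong\; \lim_n V/\F_n V \;=\; \widehat{V}\,,
\]
and one verifies that this composite isomorphism is exactly the canonical map $\widehat{V} \to \lim_n \widehat{V}/\F_n\widehat{V}$ (this is a diagram chase: both maps send a compatible sequence to the evident induced sequence). That is precisely the statement that $\widehat{V}$ is complete.

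\textbf{Main obstacle.} The genuinely delicate point is the surjectivity of $\pi_n \colon \widehat{V} \to V/\F_n V$, which is what is needed to compute the quotient $\widehat{V}/\F_n\widehat{V}$ correctly; this rests on the fact that the inverse system $\{V/\F_k V\}_k$ has surjective transition maps (a Mittag--Leffler type condition), so that lifting an element of $V/\F_n V$ to a compatible tower is unobstructed. Everything else is bookkeeping: checking that $\F_n\widehat{V}$ is a sub-chain complex (it is, being a limit of sub-chain complexes), that the identifications are natural, and that the limit isomorphism coincides with the canonical structure map. I would also remark in passing that this shows completion is idempotent, $\widehat{\widehat{V}} \cong \widehat{V}$, which is the content of the statement rephrased.
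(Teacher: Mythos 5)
Your proof is correct and follows essentially the same route as the paper: both arguments reduce the claim to the natural isomorphism $\widehat{V}/\F_n\widehat{V}\cong V/\F_nV$ and then pass to the limit. The paper establishes that isomorphism by explicitly splitting a compatible sequence into an element of $\F_n\widehat{V}$ plus an eventually constant tail, whereas you identify $\F_n\widehat{V}$ as the kernel of the surjective truncation $\pi_n\colon\widehat{V}\to V/\F_nV$ — the same computation in a slightly different presentation.
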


\begin{proof}
	The statement follows immediately from the fact that
	\[
	\widehat{V}/\F_n\widehat{V}\cong V/\F_nV\ .
	\]
	Fix $n\ge1$. An element of $\widehat{V}$ can be given as a sequence $(v_k)_{k=1}^\infty$ with $v_k\in V/\F_kV$, and $v_{k+1}$ mapping to $v_k$ under the canonical projection $\widehat{V}/\F_{k+1}\widehat{V}\to\widehat{V}/\F_k\widehat{V}$. Fix such an element, and choose representatives in $V$ for the $v_k$. We will abuse of notation and again write $v_k$ for the chosen representative of $v_k$. We define a sequence $(w_k)_{k=1}^\infty$ by
	\[
	w_k\coloneqq\begin{cases}
	0&\text{if }k\le n\ ,\\
	v_k-v_n&\text{if }k\ge n+1\ ,
	\end{cases}
	\]
	where we see $w_k$ as an element of $V/\F_kV$. It is straightforward to check that $(w_k)_{k=1}^\infty$ defines an element of $\F_n\widehat{V}$. Then
	\[
	(v_k)_{k=1}^\infty - (w_k)_{k=1}^\infty = (v_1,v_2,\ldots,v_n,v_n,v_n,\ldots)\ .
	\]
	Thus, we can map the class of $(v_k)_{k=1}^\infty$ in $\widehat{V}/\F_n\widehat{V}$ to $v_n\in V/\F_nV$. This is well defined, and gives us a map
	\[
	\widehat{V}/\F_n\widehat{V}\longrightarrow V/\F_nV\ .
	\]
	This map is an isomorphism, as it has an inverse which is given by sending $v\in V/\F_nV$ to $(v,v,v,v,\ldots)\in\widehat{V}/\F_n\widehat{V}$ (where once again we confound equivalence classes and representatives).
\end{proof}

Morphisms between filtered chain complexes induce morphisms between their completions. With this action, completion defines a functor
\[
\widehat{\ }:\fchain\longrightarrow\cchain
\]
from filtered chain complexes to complete chain complexes.

\begin{lemma}
	The completion functor is left adjoint to the forgetful functor
	\[
	(-)^\#:\cchain\longrightarrow\fchain\ .
	\]
\end{lemma}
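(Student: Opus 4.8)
The statement to prove is that completion $\widehat{(-)} : \fchain \to \cchain$ is left adjoint to the forgetful functor $(-)^\# : \cchain \to \fchain$.

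\medskip

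The plan is to exhibit, for any filtered chain complex $V$ and any complete chain complex $W$, a natural bijection
\[
\hom_{\cchain}(\widehat{V}, W) \cong \hom_{\fchain}(V, W^\#)\ .
\]
First I would construct the unit of the adjunction: for a filtered chain complex $V$, the canonical map $\eta_V : V \to \widehat{V} = \lim_n V/\F_n V$ is a filtered morphism (it sends $\F_n V$ into $\F_n \widehat{V}$ by construction), and it is clearly natural in $V$. Given a filtered morphism $f : V \to W^\#$ with $W$ complete, I would produce the adjunct $\widehat{f} : \widehat{V} \to W$ by applying the completion functor and then postcomposing with the canonical isomorphism $\widehat{W^\#} \xrightarrow{\cong} W$ coming from completeness of $W$; concretely, $\widehat{f}$ sends a coherent sequence $(v_k)_k \in \lim_n V/\F_n V$ to $\lim_k f(v_k) \in W$, where the limit makes sense and is well-defined precisely because $W$ is complete and $f$ is filtered (so $f(v_{k+1}) - f(v_k) \in \F_k W$). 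This assignment is a filtered chain map because $f$ is.

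\medskip

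Conversely, given a complete chain map $g : \widehat{V} \to W$, I would recover a filtered morphism $V \to W^\#$ by precomposition with the unit: $g \circ \eta_V : V \to W^\#$. I then need to check the two triangle identities, or equivalently that these two assignments $f \mapsto \widehat{f}$ and $g \mapsto g \circ \eta_V$ are mutually inverse. One direction is immediate: $\widehat{f} \circ \eta_V = f$ follows by unwinding the definitions, since $\eta_V$ sends $v$ to the constant sequence $(v, v, \dots)$ modulo the filtration and $\widehat{f}$ then reads off $f(v)$. For the other direction, given $g : \widehat{V} \to W$, I must show $\widehat{g \circ \eta_V} = g$; this uses that $g$ is continuous (every filtered morphism is continuous, and a complete chain map between complete complexes respects limits) so that $g$ is determined by its values on $\eta_V(V)$, whose image is dense in $\widehat{V}$ in the topology induced by the filtration. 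Naturality of the bijection in both $V$ and $W$ is then routine, following from naturality of $\eta$ and of the completion functor.

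\medskip

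The main obstacle I anticipate is the density/continuity argument showing $\widehat{g \circ \eta_V} = g$: one needs that a complete chain map out of $\widehat{V}$ is uniquely determined by its restriction along $\eta_V$, which requires knowing that $\eta_V(V)$ is dense in $\widehat{V}$ and that $W$ being complete (so that, by \cref{lemma:complete then intersection of filtration is zero}, $\bigcap_n \F_n W = 0$) forces two such maps agreeing on a dense subset to be equal. This is where the completeness hypotheses on both sides are genuinely used — without them the correspondence would fail either because the target limit does not exist or because maps are not determined by a dense subset. Everything else — that $\eta$ is a filtered morphism, that $\widehat{(-)}$ is functorial, that the constructions are chain maps and respect filtrations — is bookkeeping already implicit in the preceding lemmas of this appendix.
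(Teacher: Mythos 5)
Your proof is correct; note that the paper states this lemma without any proof at all, so your argument fills in a genuine omission rather than paralleling an existing one. The construction you give -- unit $\eta_V:V\to\widehat{V}$, adjunct $\widehat{f}$ via functoriality of completion and the isomorphism $\widehat{W^\#}\cong W$, and the density/Hausdorff argument (using that $\bigcap_n\F_nW=0$ for complete $W$, \cref{lemma:complete then intersection of filtration is zero}) to show $\widehat{g\circ\eta_V}=g$ -- is the standard one and correctly identifies where completeness of the target is actually used.
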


\begin{proposition}
	The category of complete chain complexes is complete and cocomplete. Its limits are the same as the limits taken in the category of filtered chain complexes, and its colimits are the completion of the colimits taken in the category of filtered chain complexes. The subcategory of proper complete chain complexes is also complete and cocomplete, with the same limits and colimits.
\end{proposition}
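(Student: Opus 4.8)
The plan is to establish the four assertions in turn, essentially by transporting the corresponding facts about $\fchain$ through the completion–forgetful adjunction, while checking that the properness condition $\F_1V = V$ is preserved by all the constructions involved. First I would recall that $\cchain$ is a reflective subcategory of $\fchain$ via the adjunction $\widehat{(-)} \dashv (-)^\#$ established just above, so that limits in $\cchain$ are computed as in $\fchain$ (a right adjoint, namely $(-)^\#$, preserves them, and one checks directly that a limit in $\fchain$ of complete chain complexes is again complete: the relevant statement is that the canonical map $\lim_i V^i \to \lim_n (\lim_i V^i)/\F_n(\lim_i V^i)$ is an isomorphism, which follows from the explicit description of products, kernels and equalizers in $\fchain$ together with the fact that limits commute with limits). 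For colimits, the standard reflective-subcategory argument gives that the colimit in $\cchain$ is the completion of the colimit taken in $\fchain$; concretely, one takes the $\fchain$-colimit and applies $\widehat{(-)}$, and the universal property transfers because any cocone into a complete chain complex factors through the $\fchain$-colimit and then, by completeness of the target and the adjunction, through its completion uniquely.

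Next I would verify completeness and cocompleteness of $\cchain$ itself: this is immediate once we know limits and colimits exist and land in $\cchain$, since $\fchain$ is complete and cocomplete and $\widehat{(-)}$ exists as a functor. The only genuine content beyond formal nonsense is (i) checking a limit of complete chain complexes is complete, and (ii) checking that completion genuinely produces a colimit in $\cchain$ — both of which I would handle by the short computations sketched above, using $\widehat{V}/\F_n\widehat{V}\cong V/\F_nV$, which is proved earlier in the excerpt.

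Finally, for the claim about proper complete chain complexes, I would argue that the full subcategory of proper objects is closed under the limits and colimits just described. On the $\fchain$ side this is already asserted in the excerpt: products, coproducts, kernels, and cokernels of proper filtered chain complexes are proper, since $\F_1$ of each construction is built out of the $\F_1$'s of the inputs, all of which equal the whole complex. It then remains only to observe that completion preserves properness: if $\F_1 V = V$ then $\F_1\widehat{V} = \widehat{\F_1 V} = \widehat{V}$. Hence the $\cchain$-limits and $\cchain$-colimits of proper objects are again proper, and the subcategory of proper complete chain complexes is complete and cocomplete with the same limits and colimits.

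The main obstacle, such as it is, is step (i): showing that a limit (in $\fchain$) of complete chain complexes is still complete. Products and kernels are easy, but one should be slightly careful with equalizers and with infinite products, making sure that $\lim_n \bigl(\lim_i V^i / \F_n(\lim_i V^i)\bigr)$ really recovers $\lim_i V^i$; this reduces to interchanging two limits, which is legitimate, but it is the one spot where a genuine verification (rather than invocation of adjunction formalism) is needed. Everything else is either an application of the reflective-adjunction machinery or a direct unwinding of the explicit formulas for the filtrations given in the excerpt.
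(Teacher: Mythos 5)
The paper states this proposition without proof, so there is nothing to compare your argument against; on its own terms, your proposal is correct and complete. The strategy --- treat $\cchain$ as a reflective subcategory of $\fchain$ via the adjunction $\widehat{(-)}\dashv(-)^\#$, compute limits in $\fchain$ and colimits as completions of $\fchain$-colimits, and then check that properness is preserved using the explicit formulas for the filtrations and the identity $\F_1\widehat{V}=\widehat{\F_1V}=\widehat{V}$ --- is exactly what is needed. One small remark: the step you single out as the ``main obstacle,'' namely that an $\fchain$-limit of complete chain complexes is again complete, is in fact automatic from the reflective-subcategory formalism (a full reflective subcategory is always closed under whatever limits exist in the ambient category, by the standard unit/counit retraction argument), so no separate verification is logically required. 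Your direct verification is nonetheless valid and arguably more in the spirit of this concrete appendix; if you carry it out, note that for kernels and equalizers the surjectivity of $K\to\lim_n K/(K\cap\F_nV)$ uses $\bigcap_n\F_nW=0$ in the target, which is supplied by the paper's Lemma stating that complete implies Hausdorff, and that the interchange of limits needed for infinite products is the usual commutation of limits with limits.
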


Finally, one defines a tensor product in the category of complete chain complexes as the completion of the tensor product in the category of filtered chain complexes.

\section{Proper complete algebras over an operad}\label{sect:proper complete algebras}

We are interested in the notion of filtered, proper, and complete algebras over an operad. One can also define a natural notion of filtered operad, but it is not extremely interesting for the applications we have in mind in the rest of the present work.

\subsection{Filtered algebras over operads and related notions}\label{subsect:filtered algebras etc}

Let $V$ be a filtered chain complex. We define the \emph{filtered endomorphism operad}\index{Filtered!endomorphism operad} by
\[
\End_V(n)\coloneqq(\hom(V^{\otimes n},V),\F_\bullet\hom(V^{\otimes n},V))\ .
\]
Notice that the composition map is a filtered morphism. Also, if $V$ is a complete chain complex, it is indifferent if we take the completed tensor product or the usual tensor product of filtered chain complexes in the definition.

\medskip

Let $\P$ be an operad, then a \emph{filtered}\index{Filtered!$\P$-algebra}, respectively \emph{complete}\index{Complete!$\P$-algebra} (and possibly \emph{proper}\index{Proper!$\P$-algebra}) \emph{$\P$-algebra} is a filtered, resp. complete (proper) chain complex $A$ together with a morphism of operads
\[
\rho_A:\P\longrightarrow\End_A\ .
\]
In other words, it is a map
\[
\gamma_A:\P(A)\longrightarrow A
\]
satisfying the usual axioms for a $\P$-algebra and which is such that
\[
\gamma_A(\P(k)\otimes\F_{n_1}A\otimes\cdots\otimes\F_{n_k}A)\subseteq\F_{n_1+\cdots+n_k}A\ .
\]
We will sometimes denote the category of proper complete $\P$-algebras with filtered morphisms of $\P$-algebras by $\Phatalg$.

\subsection{Filtered \texorpdfstring{$\infty$}{infinity}-morphisms}

The notion of a filtered morphism of algebra over an operad is immediate. Filtered $\infty$-mor\-phi\-sms are not much more complicated.

\medskip

Let $\alpha:\C\to\P$ be a twisting morphism, and let $A,B$ be two filtered $\P$-algebras.

\begin{definition}
	A \emph{filtered $\infty_\alpha$-morphism}\index{Filtered!$\infty_\alpha$-morphism} is an $\infty_\alpha$-morphism\footnote{See \cref{chapter:oo-morphisms relative to a twisting morphism}.} $\Phi:A\rightsquigarrow B$ such that
	\[
	\phi_n(\C(n)\otimes\F_{n_1}A\otimes\cdots\otimes\F_{n_k}A)\subseteq\F_{n_1+\cdots+n_k}B\ .
	\]
	It is a \emph{filtered $\infty_\alpha$-quasi-isomorphism}\index{Filtered!$\infty_\alpha$-quasi-isomorphism} if for every $n\ge1$, the chain map
	\[
	\phi_1:\F_nA\longrightarrow\F_nB
	\]
	is a quasi-isomorphism.
\end{definition}

\subsection{Filtrations and cofiltrations}

In \cref{subsection:Vallette model structure on coalgebras}, we mentioned te notion of a cofiltered coalgebra over a cooperad, see \cref{def:cofiltered coalgebras}. The relation between filtered algebras and cofiltered coalgebras is strightforward.

\begin{lemma}
	Let $C$ be a cofiltered $\C$-coalgebra. Then $C^\vee$ is a complete $\C^\vee$-algebra when endowed with the orthogonal filtration
	\[
	\F_nC^\vee\coloneqq(\F^nC)^\perp.
	\]
\end{lemma}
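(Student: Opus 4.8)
The plan is to unpack the statement into three tasks: produce the $\C^\vee$-algebra structure on $C^\vee$, check that $\F_\bullet C^\vee$ is a descending, differential-preserving, algebra-compatible filtration of it, and check completeness.

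First I would recall that the linear dual $\C^\vee$ of a cooperad is always an operad (dualize the decomposition map and the counit of $\C$), and that $C^\vee$ is a $\C^\vee$-algebra. Since $\k$ has characteristic $0$, so that invariants and coinvariants agree (\cref{subsect:invariants and coinvariants}), the composition map $\gamma_{C^\vee}$ is obtained by precomposing the transpose $\Delta_C^\vee\colon\widehat{\C}(C)^\vee\to C^\vee$ with the canonical map $\C^\vee(C^\vee)\to\widehat{\C}(C)^\vee$ — built arity by arity from the inclusions $(C^\vee)^{\otimes n}\hookrightarrow(C^{\otimes n})^\vee$, from $\C^\vee(n)\otimes(C^{\otimes n})^\vee\to(\C(n)\otimes C^{\otimes n})^\vee$, and from the canonical map from a direct sum of duals into the dual of the corresponding product:
\[
\gamma_{C^\vee}\colon\C^\vee(C^\vee)\longrightarrow\widehat{\C}(C)^\vee\xrightarrow{\ \Delta_C^\vee\ }C^\vee .
\]
The $\C^\vee$-algebra axioms then follow by transposing the $\C$-coalgebra axioms of $C$. (Equivalently, write $C$ as the filtered colimit of its finite-dimensional sub-coalgebras, for which this duality is classical, and obtain $C^\vee$ as the cofiltered limit of the corresponding $\C^\vee$-algebras, using that the category of $\C^\vee$-algebras is complete.)

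Second I would verify that $\F_nC^\vee\coloneqq(\F^nC)^\perp$ is a filtration of the chain complex $C^\vee$ making $\gamma_{C^\vee}$ filtered. Since $\F^\bullet C$ is ascending, the orthogonals are descending; since $\F^0C=0$ we get $\F_0C^\vee=C^\vee$; and $d_C(\F^nC)\subseteq\F^nC$ forces the transpose differential to preserve $(\F^nC)^\perp$, so each $\F_nC^\vee$ is a sub-chain complex. For compatibility with $\gamma_{C^\vee}$: for $k=1$ the action of $\C^\vee(1)=\k$ is by scalars and visibly preserves the filtration, while for $k\ge2$, given $\phi\in\C^\vee(k)$, $f_i\in\F_{n_i}C^\vee$ and $x\in\F^nC$ with $n=n_1+\cdots+n_k$, the arity-$k$ component of $\Delta_C(x)$ lies in $\bigoplus\bigl(\C(k)\otimes\F^{n_1'}C\otimes\cdots\otimes\F^{n_k'}C\bigr)^{\S_k}$ over tuples with $\sum n_j'=n$ by the coproduct axiom of \cref{def:cofiltered coalgebras}; in each such summand a pigeonhole argument gives some $j$ with $n_j'\le n_j$ (otherwise $\sum n_j'\ge\sum(n_j+1)>n$), and then $f_j\in(\F^{n_j}C)^\perp$ annihilates the $j$-th tensor factor. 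Hence $\gamma_{C^\vee}(\phi\otimes f_1\otimes\cdots\otimes f_k)$ vanishes on $\F^nC$, i.e. lies in $\F_nC^\vee$.

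Finally, completeness. Using that the cofiltration is exhaustive, $C\cong\colim_n\F^nC$ along the inclusions (this holds for the coradical filtration, and more generally whenever "cofiltered" is understood in the cocomplete sense of \cref{def:cofiltered coalgebras}), we get $C^\vee\cong\lim_n(\F^nC)^\vee$; over a field every functional on $\F^nC$ extends to $C$, so $(\F^nC)^\vee\cong C^\vee/(\F^nC)^\perp=C^\vee/\F_nC^\vee$ compatibly with the transition maps, whence $C^\vee\cong\lim_n C^\vee/\F_nC^\vee$, which is exactly completeness (and $C^\vee$ is proper iff $\F^1C=0$). I do not expect a genuine obstacle: the only delicate points are the bookkeeping of the symmetric-group (co)invariants when transposing the structure map, and making sure the right hypothesis on the cofiltration — exhaustivity — is in force so that $\bigcap_n\F_nC^\vee=0$; without it $C^\vee$ would be filtered but not complete, cf. \cref{lemma:complete then intersection of filtration is zero}.
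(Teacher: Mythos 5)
Your proof is correct and is exactly the routine verification the paper leaves to the reader (its proof of this lemma is the single word ``Straightforward''): dualize the structure map through $\C^\vee(C^\vee)\to\widehat{\C}(C)^\vee\to C^\vee$, check that the orthogonal filtration is preserved via the pigeonhole comparison of the weights $n_j'$ against $n_j$, and identify $C^\vee/\F_nC^\vee$ with $(\F^nC)^\vee$ to obtain completeness. Your closing remark is the one point of genuine substance: completeness really does require the cofiltration to be exhaustive, i.e.\ $C$ cocomplete in the sense of \cref{def:cofiltered coalgebras}, a hypothesis the lemma's statement leaves implicit.
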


\begin{proof}
	Straightforward.
\end{proof}

Notice that in the definition of cofiltered coalgebra we require $\colim_n\F^nC\cong C$, which corresponds to completeness on the algebra side. The condition of being proper corresponds to $\F^1C = 0$.
	
	\chapter{Formal fixed-point equations and differential equations}\label{appendix:formal fixed-pt eq and diff eq}

We present some results about fixed-point equations and differential equations in the setting of complete, possibly graded, vector spaces --- and in particular, complete chain complexes. We prove that under reasonable assumptions, both fixed-point equations and differential equations admit unique solutions in this context.

\section{Formal fixed-point equations}

We begin by defining formal fixed-point equations in complete graded vector spaces, i.e. equations of the form $x=f(x)$, with some conditions on $f$. The conditions we impose imply that any such equation admits a unique solution.

\subsection{Definitions}

Let $V$ be a complete graded vector space.

\begin{definition}
	A \emph{homogeneous polynomial operator} $P$ of (polynomial) degree $k$ on $V$ is a map $P:V\to V$ of the form
	\[
	P(v) = F(v^{\otimes k})
	\]
	for some linear map
	\[
	F:V^{\otimes k}\longrightarrow V\ .
	\]
	We call $F$ the linear map associated to $P$. A \emph{polynomial operator}\index{Polynomial!operator} on $V$ is a finite sum of homogeneous polynomial operators.
\end{definition}

\begin{definition}
	A \emph{formal fixed-point equation}\index{Formal!fixed-point equation} on $V$ is an equation of the form
	\[
	x = P_0 + \sum_{n\ge1}P_n(x)\ ,
	\]
	in the variable $x$, where $P_0\in V$ and for $n\ge2$, $P_n$ is a polynomial operator on $V$ such that the linear map associated to any homogeneous polynomial composing $P_n$ increases the filtration by $n$.
\end{definition}

\subsection{Existence and uniqueness of solutions}

We show that every formal fixed-point equation admits a unique solution.

\begin{theorem}\label{thm:existence and uniqueness of solution for fixed point equations}
	Let
	\begin{equation}\label{eq:formal fixed-pt equation}
		x = P_0 + \sum_{n\ge1}P_n(x)\ ,
	\end{equation}
	be a formal fixed-point equation on $V$. Then there exists a unique element of $V$ solving the equation.
\end{theorem}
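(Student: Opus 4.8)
The plan is to construct the solution by successive approximation, exploiting the completeness of $V$ together with the filtration-shifting property of the polynomial operators $P_n$ for $n\ge 1$. Define a sequence $(x_k)_{k\ge 0}$ in $V$ by setting $x_0 \coloneqq P_0$ and
\[
x_{k+1} \coloneqq P_0 + \sum_{n\ge 1}P_n(x_k)\ .
\]
The first thing to check is that this recursion makes sense: since each homogeneous polynomial composing $P_n$ has associated linear map increasing the filtration by $n$, the term $P_n(x_k)$ lies in $\F_n V$, so for any fixed $m$ only the finitely many terms with $n < m$ contribute modulo $\F_m V$, and the infinite sum converges in the complete graded vector space $V$. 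So each $x_k$ is well defined.

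The heart of the argument is to show that $(x_k)_{k\ge 0}$ is a Cauchy sequence for the topology induced by the filtration, and more precisely that $x_{k+1} - x_k \in \F_{k+1}V$ for all $k\ge 0$. I would prove this by induction on $k$. The base case $x_1 - x_0 = \sum_{n\ge 1}P_n(P_0) \in \F_1 V$ is immediate. For the inductive step, one writes
\[
x_{k+1} - x_k = \sum_{n\ge 1}\bigl(P_n(x_k) - P_n(x_{k-1})\bigr)\ ,
\]
and the key observation is that for a homogeneous polynomial operator $P$ of degree $d$ with associated linear map $F$ increasing the filtration by $n$, the difference $P(x_k) - P(x_{k-1}) = F(x_k^{\otimes d}) - F(x_{k-1}^{\otimes d})$ can be expanded telescopically as a sum of terms each containing at least one tensor factor equal to $x_k - x_{k-1} \in \F_k V$ (by the inductive hypothesis) and the remaining factors in $\F_0 V = V$; hence each such term lies in $\F_{n+k}V \subseteq \F_{k+1}V$ (using $n\ge 1$). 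Summing over $n$ and using completeness, $x_{k+1}-x_k\in\F_{k+1}V$. Therefore $(x_k)$ converges to some $x_\infty \in V$; passing to the limit in the recursion — which is legitimate because the polynomial operators are continuous, each being a finite sum of compositions of the continuous maps $V \to V^{\otimes d}$ and the filtered linear maps $F$ — shows that $x_\infty$ solves \eqref{eq:formal fixed-pt equation}.

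For uniqueness, suppose $x$ and $y$ both solve the equation; then $x - y = \sum_{n\ge 1}\bigl(P_n(x)-P_n(y)\bigr)$, and by the same telescoping expansion as above, if $x - y \in \F_m V$ then $x - y \in \F_{m+1}V$. Since trivially $x - y \in \F_0 V$, induction gives $x - y \in \bigcap_{m\ge 0}\F_m V$, which is $\{0\}$ by \cref{lemma:complete then intersection of filtration is zero} since $V$ is complete. Hence $x = y$. I expect the main obstacle to be purely bookkeeping: making the telescoping expansion of $F(x^{\otimes d}) - F(y^{\otimes d})$ precise (writing it as $\sum_{i=1}^{d} F(x^{\otimes(i-1)}\otimes(x-y)\otimes y^{\otimes(d-i)})$) and verifying the filtration estimates uniformly, together with carefully justifying convergence of the infinite sums at each stage; none of this is deep, but it must be stated cleanly. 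Everything else follows the standard Banach-fixed-point template transported to the filtered-complete setting.
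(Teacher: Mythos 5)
Your proof is correct and is essentially the paper's own argument: the iteration $x_{k+1}=P_0+\sum_{n\ge1}P_n(x_k)$ coincides with the paper's recursively built partial sums $\widetilde{v}_i$, your estimate $x_{k+1}-x_k\in\F_{k+1}V$ is exactly the paper's claim $v_{i+1}\in\F_{i+1}V$, and the uniqueness argument via $\bigcap_m\F_mV=\{0\}$ is identical. The only (welcome) difference is that you make the telescoping identity for $F(x^{\otimes d})-F(y^{\otimes d})$ explicit, where the paper leaves this step implicit.
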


\begin{proof}
	We begin by constructing a solution recursively. At step $i\ge0$ we want to have an element $v_i\in\F_iV$ such that $\widetilde{v}_i\coloneqq v0+v_1+\cdots+v_i$ solves the equation in $V/\F_{i+1}V$.
	
	\medskip
	
	For steps $0$ and $1$, in $V/\F_2V$ the equation becomes
	\[
	x = P_0\in V=\F_1V\ ,
	\]
	as all other terms live at least in $\F_2V$. Therefore, we fix $v_0\coloneqq P_0$ and $v_1 = 0$.
	
	\medskip
	
	Suppose we have our procedure successfully up to step $i\ge1$. We take equation (\ref{eq:formal fixed-pt equation}), substitute the variable $x$ by $\widetilde{v}_i+y$ and project in $V/\F_{i+2}V$. Under the additional assumption that $y\in\F_{i+1}V$, the resulting equation reads
	\begin{align*}
		\widetilde{v}_i+y =&\ P_0 + \sum_{n\ge1}P_n(\widetilde{v}_i+y)\\
		=&\ P_0 + \sum_{n\ge1}P_n(\widetilde{v}_i)\ ,
	\end{align*}
	since by the definition of a formal fixed-point equation we have
	\[
	P_n(\widetilde{v}_i+y) = P_n(\widetilde{v}_i) + \text{terms in }\F_{i+2}V\ .
	\]
	Therefore, we define
	\[
	v_{i+1}\coloneqq P_0 + \sum_{n\ge1}P_n(\widetilde{v}_i) - \widetilde{v}_i\in V\ .
	\]
	It is in fact an element of $\F_{i+1}V$ because its projection to $V/\F_{i+1}V$ gives back the equation that we solved at step $i$.
	
	\medskip
	
	Passing to the limit for $i$ going to infinity is allowed because we are in a complete graded vector space. Therefore, the process above gives us a solution of the fixed point equation.
	
	\medskip
	
	For uniqueness, suppose that $v,w\in V$ both solve the fixed-point equation. We show by induction that $v-w\in\F_iV$ for all $i\ge0$. The cases $i=0$ and $1$ are trivially satisfied. For $i=2$, in $V/\F_2V$ we have
	\[
	v = P_0 = w\ .
	\]
	Therefore, we have $v-w\in\F_2V$.
	
	\medskip
	
	Suppose the statement is true up to $i\ge2$. Then the fixed-point equation (\ref{eq:formal fixed-pt equation}) in $V/\F_{i+1}V$ reads
	\begin{align*}
		v =&\ P_0 + \sum_{n\ge1}P_n(v)\\
		=&\ P_0 + \sum_{n\ge1}P_n(w)\\
		=&\ w\ ,
	\end{align*}
	where in the second line we used the fact that for all $n\ge1$ we have
	\begin{align*}
		P_n(v) =&\ P_n(w + (v-w))\\
		=&\ P_n(w) + \text{terms in }\F_{i+1}V
	\end{align*}
	by induction hypothesis and the definition of a formal fixed-point equation. Therefore, we have $v-w\in\F_{i+1}V$.
	
	\medskip
	
	We conclude by noticing that this implies that
	\[
	v-w\in\bigcap_{i\ge1}\F_iV = \{0\}\ ,
	\]
	where we used \cref{lemma:complete then intersection of filtration is zero}.
\end{proof}

\section{Formal differential equations on vector spaces}

We define formal differential equations, and prove an existence and uniqueness result for their solution.

\subsection{Definitions}

Let $\k[[t]]$ be the formal power series in the variable $t$. If $V$ is a (potentially graded) vector space, we denote
\[
V[[t]]\coloneqq V\otimes\k[[t]]\ ,
\]
the formal power series in $t$ with coefficients in $V$. We have the ``differentiation in $t$" operator
\[
\frac{d}{dt}:V[[t]]\longrightarrow V[[t]]
\]
acting by
\[
\frac{d}{dt}\left(\sum_{n\ge0}v_nt^n\right)\coloneqq \sum_{n\ge1}nv_nt^{n-1}\ .
\]
Given a linear map
\[
f:V^{\otimes n}\longrightarrow V\ ,
\]
there is a canonical extension
\[
f:V[[t]]^{\otimes k}\longrightarrow V[[t]]\ .
\]
It is given by
\[
f\left(\sum_{n_1\ge0}v_{n_1,1}t^{n_1},\ldots,\sum_{n_k\ge0}v_{n_k,k}t^{n_k}\right)\coloneqq\sum_{n\ge0}\sum_{n_1+\cdots+n_k = n}f(v_{n_1,1},\ldots,v_{n_k,k})t^n\ .
\]
Differentiation in $t$ acts as one expects on such maps.

\begin{definition}
	Let $V$ be a vector space. For each $k,n\ge0$ let
	\[
	f_{n,k}:V^{\otimes n}\longrightarrow V
	\]
	be linear maps, and suppose that for any $k$, there are finitely many non-zero $f_{n,k}$. A \emph{formal differential equation}\index{Formal!differential equation} in $V$ is an equation of the form
	\begin{equation}\label{eq:formal differential equation}
		\frac{d}{dt}v(t) = \sum_{n,k\ge0}t^kf_{n,k}(\underbrace{v(t),\ldots,v(t)}_{n\text{\ times}})
	\end{equation}
	in $V[[t]]$.
\end{definition}

\begin{remark}
	The condition that for any $k$ there are finitely many non-zero $f_{n,k}$ is necessary in order for equation (\ref{eq:formal differential equation}) to make sense. However, it can sometimes be relaxed. For example, if $V$ itself is a proper complete chain complex --- see \cref{appendix:filtered stuff} --- and the $f_{k,n}$ are filtered maps, then we don't have to make any finiteness assumption.
\end{remark}

Notice that for any $v(t)\in V[[t]]$, the value $v(0)\in V$ is well-defined. This in not true for evaluation at any other $t\in\k$ if one does not make any further assumptions.

\begin{lemma}
	A solution of an equation on the form (\ref{eq:formal differential equation}) exists and is unique for any fixed initial value at $t=0$.
\end{lemma}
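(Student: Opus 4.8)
The natural approach is a coefficient-by-coefficient recursion in the variable $t$, exactly parallel to the proof of \cref{thm:existence and uniqueness of solution for fixed point equations}. Write a candidate solution as $v(t) = \sum_{m\ge0}v_mt^m$ with $v_0$ prescribed by the initial condition, and observe that equating coefficients of $t^{m}$ on both sides of (\ref{eq:formal differential equation}) gives
\[
(m+1)v_{m+1} = \sum_{k\ge0}\sum_{\substack{n\ge0\\m_1+\cdots+m_n = m-k}}f_{n,k}(v_{m_1},\ldots,v_{m_n})\ .
\]
Since we work over a field of characteristic $0$, the factor $m+1$ is invertible, so this determines $v_{m+1}$ uniquely in terms of $v_0,\ldots,v_m$. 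The right-hand side is a finite sum: for each fixed $k$ only finitely many $f_{n,k}$ are non-zero, and the constraint $m_1+\cdots+m_n = m-k\ge0$ forces $k\le m$, so only finitely many values of $k$ contribute, and for each of them only finitely many tuples $(m_1,\ldots,m_n)$. Hence the recursion is well-posed and produces a unique formal power series $v(t)\in V[[t]]$.

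First I would set up this recursion carefully and check that both sides of (\ref{eq:formal differential equation}), after extending each $f_{n,k}$ to $V[[t]]^{\otimes n}$ as described in the excerpt, do have the coefficient of $t^m$ given by the displayed formula; this is just unwinding the definition of the extension of a multilinear map to power series and of $\frac{d}{dt}$. Then existence follows by defining $v_{m+1}$ via the recursion, and uniqueness follows because any solution must satisfy the same recursion, and the recursion has at most one solution with a given $v_0$ by downward induction on $m$ (the base case $m=0$ being the initial condition). Finally I would remark that if instead $V$ is a proper complete chain complex and the $f_{n,k}$ are filtered, the same recursion works without the finiteness hypothesis on the $f_{n,k}$, since then for fixed $m$ the terms with $k$ large, or with many arguments, land in deep filtration layers and the sum converges in $V$; but this refinement is only needed for the applications (e.g. gauges in $\SLoo$-algebras) and can be stated as a remark.

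The only genuinely delicate point — and the main obstacle — is bookkeeping: making sure that the coefficient extraction really does yield a \emph{finite} sum and that no convergence issue is swept under the rug. Once one pins down that $k$ ranges over $0\le k\le m$ and, for each such $k$, $n$ ranges over the finitely many indices with $f_{n,k}\neq 0$, the finiteness is immediate and the rest is a routine induction. I do not expect any conceptual difficulty beyond this; the characteristic-zero hypothesis is what makes the recursion solvable, and it is already standing in the excerpt.
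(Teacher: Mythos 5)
Your proposal is correct and follows essentially the same route as the paper: the paper's proof is exactly the coefficient-by-coefficient induction you describe (the $m$th coefficient is determined by the previous ones, and the initial value fixes the $0$th), with your displayed recursion being the explicit form of that induction. The paper additionally remarks that one could instead integrate formally and invoke the fixed-point theorem, which matches the alternative you mention in passing.
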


\begin{proof}
	This is a straightforward induction: one proves that the $n$th coefficient is completely determined by the coefficients up to the $(n-1)$th, together with the fact that the initial value fixes the $0$th coefficient.
\end{proof}

\begin{remark}
	One can also prove this lemma by imitating formally the proof of the classical Cauchy--Lipschitz theorem\footnote{Or Picard--Lindel\"of theorem, depending on who was the reader's lecturer in his or her first course in real analysis.} to obtain a formal fixed-point equation
	\[
	v(t) = v_0 + \int_0^t\sum_{n,k\ge0}s^kf_{n,k}(v(s),\ldots,v(s))\,ds
	\]
	in the complete graded vector space $V[[t]]$, where the filtration is given by
	\[
	\F_nV[[t]]\coloneqq\{\text{power series starting at the term }t^n\}\ ,
	\]
	and then conclude by \cref{thm:existence and uniqueness of solution for fixed point equations}.
\end{remark}

\subsection{Solving formal differential equations}

We will now exhibit an explicit formula for the solution of a formal differential equation.

\medskip

Denote by $\wPT$ the set of rooted weighted planar trees, that is planar trees with the assignment of an integer weight greater or equal to $1$ to each vertex. We include the empty tree $\emptyset$ and the arity $0$ and $1$ corollas in $wPT$. Grafting the empty tree to any tree gives back the original tree. We denote by $c_n^{(w)}$ the $n$-corolla of weight $w$.

\medskip

Denoting $\tau = c_n^{(w)}\circ(\tau_1,\ldots,\tau_n)\in\wPT$ the grafting of the trees $\tau_1,\ldots,\tau_n\in\wPT$ to $c_n^{(w)}$, we construct some functions on $\wPT$ recursively. First we have the total weight function
\[
W(\emptyset) = 0,\qquad W(c_n^{(w)}) = w,\qquad W(\tau) = w + \sum_{i=1}^nW(\tau_i)\ ,
\]
which gives the sum of all the weights of the tree. Then we have the following coefficient
\[
F(\emptyset) = 1,\qquad F(c_n^{(w)}) = w,\qquad F(\tau) = W(\tau)\prod_{i=1}^nF(\tau_i)\ .
\]
We associate a function
\[
\tau:V[[t]]^{\otimes n}\longrightarrow V[[t]]
\]
to each tree in $\wPT$ by
\[
\emptyset\coloneqq \id,\qquad c_n^{(w)}\coloneqq t^wf_{n,w-1},\qquad \tau\coloneqq t^wf_{n,w-1}\circ(\tau_1,\ldots,\tau_n)\ .
\]
Finally, fix $v_0\in V$. We assign an element of $V[[t]]$ to each tree in $\wPT$ by
\[
\tau(v_0)\coloneqq\tau(v_0,\ldots,v_0)\ .
\]

\begin{proposition}\label{prop:formula for formal ODE}
	The element of $V[[t]]$ given by
	\[
	v(t) = \sum_{\tau\in wPT}\frac{1}{F(\tau)}\tau(v_0)
	\]
	is the unique solution for the formal differential equation (\ref{eq:formal differential equation}) with initial value $v(0) = v_0$.
\end{proposition}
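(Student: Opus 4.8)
The plan is to prove the statement by a direct verification that the proposed series solves both the differential equation and the initial condition, and then invoke the uniqueness result already established. First I would observe that evaluation at $t=0$ is unproblematic: every tree $\tau\in\wPT$ with at least one vertex has total weight $W(\tau)\ge1$, so $\tau(v_0)$ is divisible by $t$, and only the empty tree $\emptyset$ contributes a constant term, namely $\emptyset(v_0)=v_0$. Hence the proposed $v(t)$ indeed satisfies $v(0)=v_0$.

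The heart of the argument is to differentiate the series term by term and match it against the right-hand side of (\ref{eq:formal differential equation}). The key combinatorial identity is that differentiation of $\tau(v_0)$ interacts with the tree structure. Writing a nonempty tree as $\tau=c_n^{(w)}\circ(\tau_1,\ldots,\tau_n)$, one has $\tau(v_0)=t^wf_{n,w-1}(\tau_1(v_0),\ldots,\tau_n(v_0))$, so
\[
\frac{d}{dt}\tau(v_0) = wt^{w-1}f_{n,w-1}(\tau_1(v_0),\ldots,\tau_n(v_0)) + \sum_{i=1}^n t^wf_{n,w-1}\!\left(\tau_1(v_0),\ldots,\tfrac{d}{dt}\tau_i(v_0),\ldots,\tau_n(v_0)\right).
\]
Summing over all $\tau$, weighted by $1/F(\tau)$, I would reorganize the sum by isolating the root corolla $c_n^{(w)}$ and the subtrees $\tau_1,\ldots,\tau_n$, and use the multiplicativity $F(\tau)=W(\tau)\prod_i F(\tau_i)$ together with $W(\tau)=w+\sum_i W(\tau_i)$ to see that the factor $wt^{w-1}$ coming from the first term above combines with the "derivatives of subtrees" coming from the second term to exactly produce $\sum_{n,w}t^{w-1}f_{n,w-1}(v(t),\ldots,v(t))$, i.e. the right-hand side of the equation (after reindexing $k=w-1$). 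More precisely, the claim reduces to the generating-function identity
\[
\frac{d}{dt}\sum_{\tau\in\wPT}\frac{1}{F(\tau)}\tau(v_0) = \sum_{n,k\ge0}t^k f_{n,k}\!\left(\sum_{\sigma\in\wPT}\frac{1}{F(\sigma)}\sigma(v_0),\ldots,\sum_{\sigma\in\wPT}\frac{1}{F(\sigma)}\sigma(v_0)\right),
\]
which I would prove by comparing coefficients of each "decorated tree monomial" on both sides: a tree appearing on the right as $c_n^{(k+1)}\circ(\sigma_1,\ldots,\sigma_n)$ gets coefficient $\prod_i 1/F(\sigma_i)$, while the same tree on the left appears once from the root-weight term (with the $\frac{d}{dt}$ producing the factor $W$) and $n$ times from the subtree-derivative terms; the bookkeeping of the factor $W(\tau)$ versus $F(\tau)=W(\tau)\prod F(\tau_i)$ is exactly what makes the two sides agree.

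The main obstacle is this combinatorial reorganization: one must be careful that differentiating an infinite series term by term is legitimate here (it is, since we work in $V[[t]]$ where each coefficient of $t^m$ receives contributions from only finitely many trees, because $W(\tau)\le m$ forces boundedness and the finiteness hypothesis on the $f_{n,k}$ bounds arities and weights), and that the reindexing of the sum over all trees by (root corolla, tuple of subtrees) is a bijection onto $\wPT\setminus\{\emptyset\}$ compatible with the weight and $F$ functions. Once the identity above is established, the element $v(t)$ satisfies the formal differential equation with the prescribed initial value, and uniqueness is immediate from the lemma preceding the proposition (or equivalently from \cref{thm:existence and uniqueness of solution for fixed point equations} applied to the associated formal fixed-point equation in $V[[t]]$), so $v(t)$ is \emph{the} solution. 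I would close by remarking that the same proof goes through verbatim when $V$ is a proper complete chain complex and the $f_{n,k}$ are filtered, which is the setting in which \cref{prop:formula for formal ODE} is actually applied in \cref{subsect:explicit formula for gauges}.
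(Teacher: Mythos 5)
Your proposal is correct and follows essentially the same route as the paper: term-by-term differentiation of the tree sum, reorganization by root corolla and tuple of subtrees, and the identities $F(\tau)=W(\tau)\prod_iF(\tau_i)$, $W(\tau)=w+\sum_iW(\tau_i)$ to make the coefficients match, with uniqueness imported from the preceding lemma. The only cosmetic difference is that the paper bypasses your Leibniz-rule bookkeeping by noting that $\tau(v_0)$ is a $t$-monomial of total degree $W(\tau)$, so $\frac{d}{dt}\tau(v_0)=W(\tau)t^{-1}\tau(v_0)$ in one step.
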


\begin{proof}
	Let $\tau\in\wPT$, then notice that the total exponent of the formal variable $t$ in $\tau(v_0)$ is given by the weight $W(\tau)$. Therefore, we have
	\begin{align*}
	\frac{d}{dt}v(t) =&\ \frac{d}{dt}\left(\sum_{\tau\in wPT}\frac{1}{F(\tau)}\tau(v_0)\right)\\
	=&\ \frac{d}{dt}\left(v_0 + \sum_{\substack{n\ge0,w\ge1\\\tau_1,\ldots,\tau_n\in wPT}}\frac{1}{F(\tau)}\tau(v_0)\right)\\
	=&\ \sum_{\substack{n\ge0,w\ge1\\\tau_1,\ldots,\tau_n\in wPT}}\frac{1}{F(\tau)}\frac{d}{dt}\tau(v_0)\\
	=&\ \sum_{\substack{n\ge0,w\ge1\\\tau_1,\ldots,\tau_n\in wPT}}\frac{W(\tau)}{F(\tau)}t^{w-1}f_{n,w-1}(\tau_1(v_0),\ldots,\tau_n(v_0))\\
	=&\ \sum_{\substack{n\ge0,w\ge1\\\tau_1,\ldots,\tau_n\in wPT}}\frac{1}{\prod_{i=1}^nF(\tau_i)}t^{w-1}f_{n,w-1}(\tau_1(v_0),\ldots,\tau_n(v_0))\\
	=&\ \sum_{n,k\ge0}t^kf_{n,k}\left(\sum_{\tau_1\in wPT}\frac{1}{F(\tau_1)}\tau_1(v_0),\ldots,\sum_{\tau_n\in wPT}\frac{1}{F(\tau_n)}\tau_n(v_0)\right)\\
	=&\ \sum_{n,k\ge0}t^kf_{n,k}(v(t),\ldots,v(t))\ .
	\end{align*}
	In the second line, we wrote the sum over all weighted planar trees as the sum over all possible combinations of a corolla at the root with subtrees grafted at its leaves. Implicitly, we denoted $\tau = c_n^{(w)}\circ(\tau_1,\ldots,\tau_n)$. In the fourth and fifth line, we used the definition of $\tau(v_0)$ and of $F(\tau)$. In the sixth line, we renamed $k=w-1$ and used the fact that $f_{n,k}$ is multilinear. The fact that we have the correct initial value is obvious from the fact that the only term which doesn't involve the formal variable $t$ is $\emptyset(v_0) = v_0$.
\end{proof}

\begin{remark}
	The material presented here has close relations with the B-series studied in numerical analysis, cf. e.g. with \cite[Ch. 38]{Butcher}.
\end{remark}
	
	
	\bibliographystyle{alpha}
	\bibliography{Bibliography}
	
	
	\printindex

\end{document}